\newcommand\myshade{85}
\colorlet{mylinkcolor}{Red}
\colorlet{mycitecolor}{Green}
\colorlet{myurlcolor}{Plum}
\make@display@tag\ltx@label{{#1}}}}}}}
\tikzset{
  trim node/.default=1cm,
  trim node/.style={
    overlay,
    append after command={%
      ([xshift={+#1}]\tikzlastnode.north west)
      ([xshift={+-#1}]\tikzlastnode.south east)}},
  down and trim/.default=1cm,
  down and trim/.style={
    yshift=-(\pgfmatrixcurrentcolumn-1)*1.5\baselineskip,
    trim node={#1}},
  downup and trim/.default=1cm,
  downup and trim/.style={
    yshift=iseven(\pgfmatrixcurrentcolumn) ? -1.5\baselineskip : 0pt,
    trim node={#1}},
  -|/.style={to path={-|(\tikztotarget)\tikztonodes}},
  |-/.style={to path={|-(\tikztotarget)\tikztonodes}},
  -| sl/.style={-|, xslant=-1},
  |- sl/.style={|-, xslant= 1},
  center picture/.style={
    trim left=(current bounding box.center),
    trim right=(current bounding box.center)}}
\newcommand{\multi}[3][A]{\arrow[#2, phantom, "#3"{name=#1, inner sep=1ex}]}
\crefname{enumi}{\unskip}{\unskip}
    \def\MR#1{}
\DeclareSymbolFontAlphabet{\mathbb}{AMSb} %
\DeclareSymbolFontAlphabet{\mathbbl}{bbold}
\mathchardef\mhyphen="2D
\renewcommand{\bf}{\mathbf}
\newcommand{\cal}{\mathcal}
\newcommand{\ov}{\overline}
\renewcommand{\rm}{\mathrm}
\newcommand{\ud}{\underline}
\newcommand{\w}{\widetilde}
\newcommand{\wdh}{\widehat}
\newcommand{\xr}{\xrightarrow}
\renewcommand{\AA}{\mathbf{A}}
\newcommand{\DD}{\mathbf{D}}
\newcommand{\EE}{\mathbf{E}}
\newcommand{\FF}{\mathbf{F}}
\newcommand{\GG}{\mathbf{G}}
\newcommand{\LL}{\mathbf{L}}
\newcommand{\NN}{\mathbf{N}}
\newcommand{\PP}{\mathbf{P}}
\newcommand{\QQ}{\mathbf{Q}}
\newcommand{\ZZ}{\mathbf{Z}}
\newcommand{\Z}{\mathbf{Z}}
\newcommand{\cE}{\mathcal{E}}
\newcommand{\cF}{\mathcal{F}}
\newcommand{\F}{\mathcal{F}}
\newcommand{\cG}{\mathcal{G}}
\newcommand{\G}{\mathcal{G}}
\newcommand{\cL}{\mathscr{L}}
\newcommand{\cO}{\mathcal{O}}
\renewcommand{\O}{\mathcal{O}}
\newcommand{\cU}{\mathscr{U}}
\newcommand{\cX}{\mathscr{X}}
\newcommand{\cY}{\mathscr{Y}}
\newcommand{\fm}{\mathfrak{m}}
\newcommand{\m}{\mathfrak{m}}
\newcommand{\fn}{\mathfrak{n}}
\newcommand{\p}{\mathfrak{p}}
\newcommand{\fS}{\mathfrak{S}}
\newcommand{\ra}{\mathrm{a}}
\newcommand{\rR}{\mathrm{R}}
\newcommand{\LLambda}{\underline{\Lambda}}
\newcommand{\alg}{\mathrm{alg}}
\newcommand{\an}{\mathrm{an}}
\newcommand{\BH}{\mathrm{BH}}
\newcommand{\cHom}{\mathscr{H}\kern-.5pt om}
\newcommand{\coh}{\mathrm{coh}}
\newcommand{\et}{\mathrm{\acute{e}t}}
\newcommand{\eval}{\mathrm{eval}}
\newcommand{\fet}{\mathrm{f\acute{e}t}}
\newcommand{\gen}{\mathrm{gen}}
\newcommand{\h}{\mathrm{h}}
\newcommand{\IC}{\mathrm{IC}}
\newcommand{\IH}{\mathrm{IH}}
\newcommand{\lisse}{\mathrm{lis}}
\newcommand{\op}{\mathrm{op}}
\newcommand{\red}{\mathrm{red}}
\newcommand{\sm}{\mathrm{sm}}
\renewcommand{\sp}{\mathrm{sp}}
\newcommand{\zc}{\mathrm{zc}}
\DeclareMathOperator{\Adj}{Adj}
\DeclareMathOperator{\BC}{BC}
\DeclareMathOperator{\cl}{cl}
\DeclareMathOperator{\charac}{char}
\DeclareMathOperator{\coker}{coker}
\DeclareMathOperator*{\colim}{colim}
\DeclareMathOperator{\Et}{\mathrm{\acute{E}t}}
\DeclareMathOperator{\Ev}{Ev}
\DeclareMathOperator{\Ex}{Ex}
\DeclareMathOperator{\Ext}{Ext}
\DeclareMathOperator{\Frac}{Frac}
\DeclareMathOperator{\GL}{GL}
\DeclareMathOperator{\Hh}{H}
\DeclareMathOperator*{\hocolim}{hocolim}
\DeclareMathOperator{\Hom}{Hom}
\DeclareMathOperator{\id}{id}
\DeclareMathOperator{\KM}{KM}
\DeclareMathOperator{\ord}{ord}
\DeclareMathOperator{\PD}{PD}
\DeclareMathOperator{\PF}{PF}
\DeclareMathOperator{\Pic}{Pic}
\DeclareMathOperator{\pr}{pr}
\DeclareMathOperator{\res}{res}
\DeclareMathOperator{\rk}{rk}
\DeclareMathOperator{\Shv}{Shv}
\DeclareMathOperator{\SL}{SL}
\DeclareMathOperator{\Spa}{Spa}
\DeclareMathOperator{\Spec}{Spec}
\DeclareMathOperator{\spec}{sp}
\DeclareMathOperator{\Spf}{Spf}
\DeclareMathOperator{\supp}{supp}
\DeclareMathOperator{\tr}{Tr}
\DeclareMathOperator{\ttr}{tr}
\newcommand{\abs}[1]{\lvert#1\rvert}
\newcommand{\suchthat}{\;\ifnum\currentgrouptype=16 \middle\fi\vert\;}
\newcommand\restr[2]{{\left.\kern-\nulldelimiterspace#1\vphantom{\big|}\right|_{#2}}}
\newcommand{\blank}{{-}}
\newcommand*\lon{
        \nobreak
        \mskip6mu plus1mu
        \mathpunct{}
        \nonscript
        \mkern-\thinmuskip
        {:}
        \mskip2mu
        \relax
}
\numberwithin{equation}{subsection}
\theoremstyle{plain}
\newtheorem{theorem}[equation]{Theorem}
\newtheorem{proposition}[equation]{Proposition}
\newtheorem{lemma}[equation]{Lemma}
\newtheorem{claim}[equation]{Claim}
\newtheorem{corollary}[equation]{Corollary}
\theoremstyle{definition}
\newtheorem{definition}[equation]{Definition}
\newtheorem{notation}[equation]{Notation}
\newtheorem{convention}[equation]{Convention}
\newtheorem{construction}[equation]{Construction}
\newtheorem{variant}[equation]{Variant}
\newtheorem{example}[equation]{Example}
\newtheorem{remark}[equation]{Remark}
\newtheorem{warning}[equation]{Warning}
\newtheorem{setup}[equation]{Setup}
\title{Relative Poincar\'{e} duality in nonarchimedean geometry}
\author{Shizhang Li}
\author{Emanuel Reinecke}
\author{Bogdan Zavyalov}
\address[Shizhang Li]{Morningside Center of Mathematics and Hua Loo-Keng Key Laboratory of Mathematics,
Academy of Mathematics and Systems Science, Chinese Academy of Sciences, Beijing 100190, China}
\email{lishizhang@amss.ac.cn}
\address[Emanuel Reinecke]{Institut des Hautes \'Etudes Scientifiques, 35 route de Chartres, 91440 Bures-sur-Yvette, France}
\email{reinecke@ihes.fr}
\address[Bogdan Zavyalov]{\parbox{\linewidth}{Princeton University, 304 Washington Road, Princeton, NJ 08540, USA \\
Institute for Advanced Study, 1 Einstein Drive, Princeton, NJ 08540, USA}} 
\email{bogd.zavyalov@gmail.com}
\begin{document}

\begin{abstract} 
We prove a conjecture of Bhatt--Hansen that derived pushforwards along proper morphisms of rigid-analytic spaces commute with Verdier duality on Zariski-constructible complexes.
In particular, this yields duality statements for the intersection cohomology of proper rigid-analytic spaces.
In our argument, we construct cycle classes in analytic geometry as well as trace maps for morphisms that are either smooth or proper or finite flat, with appropriate coefficients.
As an application of our methods, we obtain new, significantly simplified proofs of $p$-adic Poincar\'e duality and the preservation of $\bf{F}_p$-local systems under smooth proper higher direct images. 
\end{abstract}

\maketitle

\tableofcontents

\section{Introduction}

\addtocontents{toc}{\protect\setcounter{tocdepth}{1}}

\subsection{Main results}

Let $K$ be a nonarchimedean field of characteristic $0$ and residue characteristic $p\geq 0$.
Set $\Lambda = \Z/n\Z$ for some integer $n>0$. When $n$ is coprime to $p$, Berkovich and Huber independently developed in \cite{Berkovich} and \cite{Huber-etale} (see also \cite{dJ-vdP}) a robust theory of \'etale cohomology of rigid-analytic spaces over $K$ which shares most of the nice properties of the algebraic theory of \'etale cohomology developed in \cite{SGA4,SGA41/2,SGA5}. 

However, things become significantly more complicated when $n=p$. Many of the most basic properties fail:
for example, finiteness of (compactly) supported cohomology (see \cref{distance and cycle class} and \cref{rmk:infinite-cohomology}), proper base change, etc.
On the other hand, Scholze recently proved in his seminal paper \cite{Scholze-Hodge} that the $\bf{F}_p$-cohomology groups of smooth  \emph{proper} rigid-analytic spaces are finite dimensional.\footnote{
The smoothness assumption was later removed in \cite[Th.~3.17]{Scholze-CDM}.}
This led to a significant interest in studying $p$-adic \'etale cohomology groups in $p$-adic analytic geometry.
For instance, \cite{Z-thesis} and \cite{Mann-thesis} showed that smooth and proper rigid-analytic spaces satisfy Poincar\'e duality for $\bf{F}_p$-local systems (see also \cite{LLZ} and \cite{CGN} for rational variants of duality), while \cite{BH} developed a robust theory of Zariski-constructible sheaves, dualizing complexes, Verdier duality, and perverse $t$-structures.

Despite all these advances, one question that has remained open is whether there is a relative version of Poincar\'e duality with coefficients (more general than local systems).
In \cite{BH}, Bhatt and Hansen put forward a conjecture that relates the behavior of derived proper pushforward and dualizing complexes (in the sense of \cite[Th.~3.21]{BH}). Our main result is the proof of this conjecture: 

\begin{theorem}[Bhatt--Hansen's conjecture, {\cref{general Poincare duality}}]
\label{Intro: proper PD}
Let $f\colon X \to Y$ be a proper morphism of rigid-analytic spaces over $K$, and let $\omega_X$ and $\omega_Y$ be the dualizing complexes on $X$ and $Y$ respectively.
Then there is a canonical trace morphism $\tr_f \colon \rR f_* \omega_X \to \omega_Y$ such that the induced duality morphism
\[ \PD_f \colon  \rR f_*\rR \cHom (\F, \omega_X) \xlongrightarrow{\Ev_f} \rR\cHom(\rR f_*\F,\rR f_* \omega_X) \xlongrightarrow{\tr_f \circ \blank} \rR\cHom(\rR f_*\F, \omega_Y)
\]
is an equivalence for any $\F \in D_\zc(X_\et; \Lambda)$. In other words, derived pushforward along a proper morphism commutes with Verdier duality on Zariski-constructible complexes. 
\end{theorem}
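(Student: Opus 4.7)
The strategy splits naturally into two parts: (i) constructing a canonical trace morphism $\tr_f$ for every proper morphism $f$, and (ii) verifying that the resulting duality morphism $\PD_f$ is an equivalence. Both halves are local on $Y$, so one may assume $Y$ affinoid. The full subcategory of $\cF \in D_\zc(X_\et;\Lambda)$ for which $\PD_f$ is an isomorphism is triangulated and closed under direct summands, which opens the door to dévissage. The plan is to reduce the duality statement to a ``geometric core''---duality for the constant sheaf along a \emph{smooth} proper morphism---and to construct the trace map itself first for smooth and for finite flat morphisms via the cycle classes advertised in the abstract, extending it to arbitrary proper morphisms by descent along a smooth proper alteration.

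For the duality isomorphism, after assuming $Y$ affinoid, I would run the following dévissage. Using the structural results on $D_\zc$ from \cite{BH} (Nagata-style compactifications and stratifications by locally closed smooth subspaces), one reduces to $\cF = i_* \Lambda_Z$ for a closed immersion $i \colon Z \hookrightarrow X$ with $Z$ smooth. Since duality along a closed immersion is automatic, one may then replace $X$ by $Z$, so that $X$ itself is smooth. Next, choose a proper alteration $\pi \colon \w X \to X$ with $\w X$ smooth and $f \circ \pi$ projective (hence smooth proper over a dense open of $X$); by induction on $\dim X$, the already-handled cases let us control everything supported on the exceptional locus of $\pi$, so the problem for $f$ reduces to the problem for $f \circ \pi$, i.e.\ to the case where $f$ itself is smooth proper with coefficients $\Lambda$. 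For this geometric core, the trace map is the one produced by the relative fundamental class, and $\PD_f$ being an equivalence is the classical form of relative Poincar\'e duality, deducible from the smooth proper $p$-adic Poincar\'e duality of \cite{Z-thesis,Mann-thesis} (resp.\ from Huber's theory when $n$ is coprime to $p$) via a standard cup-product argument combined with proper base change.

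The main obstacle is the $p$-torsion case, where nearly all classical tools fail: proper base change, constructibility of $\rR f_* \Lambda$, and preservation of local systems under smooth proper pushforward all break outside tightly controlled situations. Consequently, both the descent of $\tr_f$ along a smooth proper alteration and the dévissage above require $p$-adic replacements for these tools. My expectation is that the heavy lifting is done by the perfectoid / almost-mathematics machinery of Scholze--Mann, coupled with the analytic cycle classes constructed in the body of the paper, together providing a sufficiently rich six-functor formalism for $\bf{F}_p$-sheaves on rigid spaces. Ensuring compatibility between the analytic trace maps constructed here and those coming from smooth proper Poincar\'e duality, and verifying base change for them, will be the most delicate technical checks.
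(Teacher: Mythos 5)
Your proposal has the right overall shape---construct trace maps, reduce the duality statement to the smooth proper case with constant coefficients via d\'evissage and resolution---but it diverges from the paper's argument in several consequential ways, and two of those divergences hide genuine gaps.

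First, the paper does \emph{not} import the smooth proper Poincar\'e duality of \cite{Z-thesis,Mann-thesis} as a black box; a central novelty of the paper (Section 5, and \cref{Poincare dualizability theorem}, \cref{general smooth Poincare duality}) is a new, essentially diagrammatic proof of that statement, obtained from the smooth trace and the cycle class of the diagonal via the K\"unneth isomorphism. The only point of contact with perfectoid machinery is through Bhatt--Hansen's proper base change \cite[Th.~3.15]{BH}, invoked once in \cref{lemma:proper-base-change}. So your expectation that ``the heavy lifting is done by the perfectoid / almost-mathematics machinery of Scholze--Mann'' describes a route the authors explicitly discuss as a plausible alternative (\cref{potential different proof remark}) but deliberately avoid. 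Deducing relative duality from the absolute Zavyalov/Mann statement ``via a standard cup-product argument combined with proper base change'' is not obviously routine either: it would require a compatibility between their trace maps and yours, which the authors flag as subtle.

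Second, your d\'evissage reducing to $\cF = i_*\Lambda_Z$ for a closed immersion from a smooth $Z$ does not generate $D_\zc(X_\et;\Lambda)$. Zariski-constructibility in the sense of \cite{BH} allows locally constant coefficients on a stratification by \emph{locally closed} (not closed) pieces, so the natural building blocks are $j_!\LL$ for $j$ a locally closed immersion and $\LL$ lisse, not $i_*\Lambda_Z$ for $Z$ smooth closed. The paper's actual generation statement (\cref{lemma:generators-dbzc}) is $\rR g_*\ud{M}_{X'}$ with $g \colon X' \to X$ proper, $X'$ smooth, and a fiber-dimension bound, obtained by resolution of singularities applied after Bhatt--Hansen's d\'evissage \cite[Prop.~3.6]{BH} to finite pushforwards; this is the reduction that actually closes under the two-out-of-three argument. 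Moreover the paper first reduces $Y$ to a point by showing $\rR i^!_y$ commutes appropriately with $\PD_f$ (\cref{Poincare transformation and closed immersion compatibility}, \cref{lemma:upper-shrieks-conservative}), a step your sketch skips.

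Third, the step ``construct $\tr_f$ by descent along a smooth proper alteration'' conceals the key technical input: one must know that $\rR\cHom(\rR f_*\omega_X,\omega_Y)$ is coconnective (\cref{proper trace lives in discrete space}). Without this, the trace is not determined by its restrictions along the modification diagram, and the gluing does not pin it down. This coconnectivity estimate---which itself requires the smooth-source trace and resolution of singularities and is proved by a separate induction---is arguably the hardest ingredient in the construction, and it should be identified explicitly in any plan of proof.
\par
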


First, we want to note that, if $n$ is coprime to $p$, then \cref{Intro: proper PD} follows from \cite[Th.~3.21]{BH}. In fact, \textit{loc.\ cit.} implies that \cref{Intro: proper PD} admits a compactly supported version for an arbitrary taut separated $f$ and arbitrary coefficients. However, a version of \cref{Intro: proper PD} for non-proper $f$ (or proper $f$ and non-Zariski-constructible $\F$) fails miserably when $n=p$;
see \cref{rmk:no-Poincare-duality-closed-unit-disk}.
This lack of a local version makes the proof of \cref{Intro: proper PD} quite difficult for two (somewhat related) reasons: one cannot run standard arguments to reduce to the case of a relative affine (or projective) line and, more importantly, the definition of the dualizing complex $\omega_X$ is \emph{local} on $X$, so it is not well-adapted for proving global results like \cref{Intro: proper PD}. For these reasons, our approach to \cref{Intro: proper PD} is completely different from \cite[Th.~3.21]{BH} and from the classical approach in algebraic geometry.
Moreover, it works uniformly for all $n$, divisible by $p$ or not.
It seems likely that for $n=p$, the 6-functor formalism and Poincar\'e duality statement developed in \cite{Mann-thesis} can also be applied toward a different proof of \cref{Intro: proper PD};
see \cref{potential different proof remark}.

Another issue we want to point out is that, in order to formulate \cref{Intro: proper PD} precisely, one first needs to construct a trace morphism. In fact, constructing a trace map satisfying some sufficiently nice properties is one of the key steps in the proof of \cref{Intro: proper PD}. To do this, we first develop a robust theory of trace morphisms for smooth (but not necessarily proper) morphisms and revisit Poincar\'e duality for smooth proper morphisms by giving a new easy, uniform in $n$, and essentially diagrammatic proof. 

Before we discuss these results in more detail in the next subsection, we want to mention several immediate corollaries of \cref{Intro: proper PD} which look more similar to the classical Poincar\'e duality results. First, we note that \cref{Intro: proper PD} implies a version of Poincar\'e duality for some class of smooth non-proper rigid-analytic spaces: 

\begin{corollary}[{\cref{cohomology duality}}]\label{cor:intro-open-duality}
Let $\overline{X}$ be a proper rigid-analytic space over $K$ and $X \subset \overline{X}$ be a smooth Zariski-open
rigid-analytic subspace of equidimension $d$.
Let $\bf{L}$ be a local system of finite free $\Lambda$-modules on $X_\et$ and let $\bf{L}^{\vee}$ be its $\Lambda$-linear dual.
Set $C\coloneqq \wdh{\overline{K}}$. 
Then the groups $\Hh^i_c(X_C, \LL)$ and $\Hh^{2d-i}(X_C, \LL^\vee)$ are finite and there is a Galois-equivariant isomorphism
\[
\Hh^i_c(X_C, \LL)^{\vee} \simeq \Hh^{2d-i}(X_C, \LL^\vee)(d)
\]
which is functorial in $\LL$.
\end{corollary}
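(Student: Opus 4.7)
The plan is to deduce the corollary from \cref{Intro: proper PD} applied to the (base-changed) proper structure morphism $\ov{f}_C \colon \ov{X}_C \to \Spa(C)$ and the extension by zero $j_!\LL$, where $j \colon X_C \hookrightarrow \ov{X}_C$ is the Zariski-open immersion obtained by base change. First I would check that $j_!\LL$ is Zariski-constructible on $(\ov{X}_C)_\et$: $\LL$ is a finite locally free $\Lambda$-local system on a smooth Zariski-open subset, and extension by zero along a Zariski-open immersion preserves Zariski-constructibility.

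\cref{Intro: proper PD} then yields
\[
\rR \ov{f}_{C*}\,\rR\cHom(j_!\LL,\omega_{\ov{X}_C}) \simeq \rR\cHom\bigl(\rR\ov{f}_{C*}(j_!\LL),\omega_{\Spa(C)}\bigr).
\]
On the right-hand side, $\omega_{\Spa(C)} \simeq \Lambda$ and $\rR\ov{f}_{C*}(j_!\LL) \simeq \rR\Gamma_c(X_C,\LL)$, so it computes $\rR\Hom_\Lambda\bigl(\rR\Gamma_c(X_C,\LL),\Lambda\bigr)$. On the left-hand side, the $(j_!,j^!)$-adjunction gives $\rR\cHom(j_!\LL,\omega_{\ov{X}_C}) \simeq \rR j_*\rR\cHom(\LL,j^!\omega_{\ov{X}_C})$; since $j$ is a Zariski-open immersion, $j^! = j^*$, and since $X_C$ is smooth of equidimension $d$, $j^!\omega_{\ov{X}_C} \simeq \omega_{X_C} \simeq \Lambda(d)[2d]$ (a consequence of the smooth trace theory developed earlier in the paper). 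Hence the left-hand side identifies with $\rR\Gamma(X_C,\LL^\vee)(d)[2d]$.

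For finiteness, I would observe that $\rR\ov{f}_{C*}(j_!\LL)$ is Zariski-constructible on the point $\Spa(C)$, hence represented by a bounded complex of finite $\Lambda$-modules; this gives the finiteness of $\Hh^i_c(X_C,\LL)$ for all $i$ and makes $\rR\Gamma_c(X_C,\LL)$ a perfect $\Lambda$-complex. Reading off $\Hh^{2d-i}$ on both sides of the resulting quasi-isomorphism then yields both the stated finiteness of $\Hh^{2d-i}(X_C,\LL^\vee)$ and the duality pairing. Galois equivariance under $\mathrm{Gal}(\ov{K}/K)$ and functoriality in $\LL$ come for free from the canonicity of the trace map in \cref{Intro: proper PD} and of all the identifications above.

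The main obstacle is ensuring that the two identifications on the left-hand side --- $j^!\omega_{\ov{X}_C} \simeq \omega_{X_C}$ for a Zariski-open $j$, and $\omega_{X_C} \simeq \Lambda(d)[2d]$ for smooth $X_C$ of pure dimension $d$ --- are genuinely furnished by the paper's own infrastructure. Both should follow from the smooth trace theory advertised in the introduction, which is precisely the machinery needed to upgrade the abstract, Verdier-style formulation of \cref{Intro: proper PD} into a concrete Poincar\'{e}-style statement as in the corollary.
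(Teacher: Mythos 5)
Your proposal is correct in substance and follows essentially the same strategy as the paper, though you skip an intermediate step. The paper first proves a duality for intersection cohomology (\cref{intersection cohomology duality}, applying \cref{general Poincare duality} to the complex $j_!\IC_X(\LL)$ on $\ov{X}$) and then deduces \cref{cohomology duality} by specializing to the case where the smooth open subspace is the whole thing. Your direct application of \cref{general Poincare duality} to $j_!\LL$ collapses these two steps into one, and the chain of identifications you use --- $\DD_{\ov{X}_C}(j_!\LL)\simeq \rR j_*\DD_{X_C}(\LL)$ by the $(j_!,j^*)$-adjunction (cf.\ \cite[Th.~3.21~(5)]{BH}), $\omega_{X_C}\simeq\LLambda_{X_C}(d)[2d]$ by smoothness (\cite[Th.~3.21~(1)]{BH}), and finiteness from preservation of $D^b_\zc$ under proper pushforward (\cite[Th.~3.10]{BH}) --- is exactly the same core as the paper's.

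Two points deserve more care. First, your unqualified claim that ``extension by zero along a Zariski-open immersion preserves Zariski-constructibility'' is not something the paper takes for granted: the paper proves \cref{lemma:preservation-zariski-constructible}, which asserts that $j_!\F$ is Zariski-constructible on $\ov{X}$ \emph{provided} $\F$ extends to some $\ov\F\in D^{(b)}_\zc(\ov{X}_\et;\Lambda)$, and applies it with the intermediate extension $\IC_{\ov{X}}(\LL)$ as the witness. For lisse $\LL$ you can still arrange this (after replacing $\ov X$ by the Zariski closure of $X$ to make $X$ dense, and taking $\ov\F = \IC_{\ov{X}}(\LL)[-d]$), but it should be flagged as an actual step rather than folklore --- without an extension the statement is not obvious, because the stratification witnessing constructibility on $X$ need not prolong to a locally finite one on $\ov{X}$. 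Second, you base change to $C$ first and apply Poincar\'e duality over $C$, which is fine but quietly requires the compatibility of the dualizing complex and the trace with base field extension (\cref{change-base-field} and \cite[Th.~3.21~(6)]{BH}); the paper instead works over $K$ throughout \cref{intersection cohomology duality} and only passes to derived global sections over $\Spa(C,\O_C)$ at the very end, which makes Galois equivariance manifest without invoking \cref{change-base-field}. Finally, to go from the derived isomorphism to the stated cohomology-level duality you should note, as the paper does, that $\Lambda=\Z/n\Z$ is self-injective, so $\rR\Hom_\Lambda(\blank,\Lambda)$ is underived and commutes with cohomology.
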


As a second application of \cref{Intro: proper PD}, we prove another conjecture of Bhatt and Hansen predicting duality of intersection cohomology on proper rigid-analytic spaces (see \cite[Paragraph after Th.~4.13]{BH}). In fact, we show a slightly stronger statement: 

\begin{corollary}[{Bhatt--Hansen's conjecture, \cref{intersection cohomology duality}}]
Let $\ov{X}$ be a proper rigid-analytic space over $K$ and $U\subset X\subset \ov{X}$ be two rigid-analytic subspaces which are both Zariski-open in $\ov{X}$.
Assume that $U$ is smooth of equidimension $d$.
Let $\LL$ be a local system of finite free $\Lambda$-modules on $U_\et$ and let $\LL^{\vee}$ be its $\Lambda$-linear dual. 
Set $C\coloneqq \wdh{\overline{K}}$. Then the groups $\IH^{i}_c(X_C, \bf{L})$ and $\IH^{-i}(X_C, \bf{L}^\vee)$ are finite and there is a Galois-equivariant isomorphism
\[
\IH^{i}_c(X_C, \bf{L})^\vee \simeq \IH^{-i}(X_C, \bf{L}^\vee)(d)
\]
which is functorial in $\bf{L}$.
\end{corollary}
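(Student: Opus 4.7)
The plan is to deduce this from \cref{Intro: proper PD} applied to the proper structure morphism $f \colon \ov{X}_C \to \Spa C$. Write $j \colon U \hookrightarrow X$ and $j' \colon X \hookrightarrow \ov{X}$ for the two Zariski-open immersions, and set $\IC_X(\LL) \coloneqq j_{!*}(\LL[d])$. By the general properties of intermediate extensions this complex is Zariski-constructible on $X$, hence $j'_! \IC_X(\LL)$ is Zariski-constructible on the proper space $\ov{X}$. By construction, $\IH^i_c(X_C, \LL) = \Hh^i(\ov{X}_C, j'_! \IC_X(\LL))$ and $\IH^i(X_C, \LL^\vee) = \Hh^i(X_C, \IC_X(\LL^\vee))$.

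Applying \cref{Intro: proper PD} to $f$ with $\F = j'_! \IC_X(\LL)$, and noting that $\omega_{\Spa C} \simeq \Lambda$, produces a canonical equivalence
\[
\rR \Gamma\bigl(\ov{X}_C, \DD_{\ov{X}}(j'_! \IC_X(\LL))\bigr) \simeq \rR \Hom_\Lambda\bigl( \rR \Gamma_c(X_C, \IC_X(\LL)),\, \Lambda \bigr).
\]
Finiteness of $\IH^i_c(X_C, \LL)$ follows from finiteness of cohomology of Zariski-constructible complexes on the proper space $\ov{X}_C$; combined with the self-injectivity of $\Lambda = \Z/n\Z$ as a module over itself, this identifies the $i$-th cohomology of the right-hand side with $\IH^{-i}_c(X_C, \LL)^\vee$.

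To compute the left-hand side, two standard Verdier-duality compatibilities are needed: first, for the open immersion $j'$, the identity $\DD_{\ov{X}}(j'_! \G) \simeq \rR j'_* \DD_X(\G)$ coming from $j'^! = j'^*$; second, the fact that $\DD_X$ commutes with the intermediate extension $j_{!*}$. Together with the local duality $\DD_U(\LL[d]) \simeq \LL^\vee(d)[d]$ on the smooth locus, these give
\[
\DD_{\ov{X}}(j'_! \IC_X(\LL)) \simeq \rR j'_* \IC_X(\LL^\vee)(d),
\]
whose global sections on $\ov{X}_C$ are $\rR \Gamma(X_C, \IC_X(\LL^\vee))(d)$, with $i$-th cohomology $\IH^i(X_C, \LL^\vee)(d)$. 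Equating the two sides and relabeling $i \leftrightarrow -i$ produces the claimed isomorphism; functoriality in $\LL$ and $G_K$-equivariance are automatic from the functoriality of every step.

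The main obstacle will be establishing the two Verdier-duality compatibilities above, particularly the intermediate-extension identity $\DD_X(j_{!*}(\LL[d])) \simeq j_{!*}(\DD_U(\LL[d]))(d)$. This relies on the perverse $t$-structure and the interaction of $\DD$ with $j_!$, $\rR j_*$, and $j_{!*}$ in the rigid-analytic setting, a framework developed in \cite{BH} which may need minor supplementation now that the $n = p$ case is unlocked by \cref{Intro: proper PD}.
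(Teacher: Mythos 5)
Your proposal follows essentially the same route as the paper: apply the relative duality \cref{general Poincare duality} to the proper map $\ov{X} \to \Spa(K,\cO_K)$ with coefficient $j'_!\IC_X(\LL)$, pass the Verdier dual past $j'_!$ (turning it into $\rR j'_*$), and invoke the self-duality $\DD_X\bigl(\IC_X(\LL)\bigr) \simeq \IC_X(\LL^\vee(d))$ of the IC sheaf. The one point you flag as a likely obstacle — the interaction of $\DD$ with $j_{!*}$ — is in fact already available unconditionally as \cite[Th.~4.2(5)]{BH}, which the paper cites directly; and the Zariski-constructibility of $j'_!\IC_X(\LL)$ on $\ov{X}$ is secured (as in \cref{lemma:preservation-zariski-constructible}) precisely because $U$ is assumed Zariski-open in $\ov{X}$ itself, so that the zc extension $\IC_{\ov{X}}(\LL)$ exists.
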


\subsection{Trace and duality for smooth maps}

In this subsection, we discuss our main duality results for smooth morphisms. Unlike in the previous subsection, the results of this subsection hold for arbitrary locally noetherian\footnote{
We have to impose the locally noetherian assumption only because \cite{Huber-etale} works out general theories of smooth morphisms and \'etale cohomology of analytic adic spaces under the locally noetherian assumption. We never use this noetherianness assumption in any serious way.}
analytic adic spaces. For this subsection, we fix an integer $n>0$ and put $\Lambda = \Z/n\Z$. 

We start by discussing the construction of trace maps for separated taut smooth morphisms.\footnote{We also impose the taut separated assumption on $f$ simply because the $\rR f_!$-functor has been defined only for such morphisms in \cite{Huber-etale}. \cref{Intro: smooth trace} can be formally extended to all smooth morphism of equidimension $d$ once one works out a robust theory of $\rR f_!$ for a general morphism $f$ of finite type (see \cite[Th.~9.4]{adic-notes} for the case when $n\in (\O_Y^+)^\times$).}

\begin{theorem}[{\cref{smooth-trace-constant}, \cref{Compatibility with algebraic geometry}, \cref{cor:compatibility-berkovich-trace-2}, \cref{lemma:comparison-LLZ}}]
\label{Intro: smooth trace}
There is a unique way to assign to any separated taut smooth of equidimension $d$
morphism $f \colon X \to Y$ of locally noetherian analytic adic spaces with $n \in \cO^\times_Y$ a trace map $\ttr_f \colon \rR f_! \LLambda_X(d)[2d] \to \LLambda_Y$
in $D(Y_\et; \Lambda)$ such that:
\begin{enumerate}[label=\upshape{(\arabic*)}]
\item $\ttr$ is compatible with compositions;
\item $\ttr$ is compatible with pullbacks;
\item if $f$ is \'etale, then $\ttr_f$ is given by the counit
\[ \rR f_! \LLambda_X \simeq \rR f_! f^* \LLambda_X \to \LLambda_Y \]
of the adjunction between $\rR f_!$ and $f^*$; and
\item If $f$ is the analytification of the structure morphism $\PP^1_C \to \Spec C$ for some complete, algebraically closed nonarchimedean field $C$, then $\ttr_f$ is the analytification of the algebraic trace.
\end{enumerate}
Furthermore, these trace maps satisfy the following compatibilities: 
\begin{enumerate}
    \item whenever $A$ is a strongly noetherian Tate affinoid algebra and $f$ is the analytification of a finite type separated smooth of equidimension $d$ morphism of locally finite type $A$-schemes, our $\ttr_f$ is the analytification of the algebraic trace map;
    \item whenever $K$ is a nonarchimedean field and $f\colon X \to Y$ is a partially proper smooth morphism of equidimension $d$ between rigid-analytic spaces, our $\ttr_f$ coincides with the Berkovich trace $t_f$ from \cite[Th.~7.2.1]{Berkovich} (see also \cite[Th.~5.3.3]{Z-thesis} for the translation into the language of adic spaces);
    \item $\ttr_f$ is compatible with the trace of Lan--Liu--Zhu from \cite[Th.~1.3]{LLZ} whenever the latter is defined. 
\end{enumerate}
\end{theorem}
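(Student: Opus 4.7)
The plan is to first establish uniqueness from axioms (1)--(4), then construct the trace in stages, and finally obtain the compatibilities by forcing them through uniqueness. For uniqueness I would use that any smooth morphism $f\colon X\to Y$ of equidimension $d$ factors, locally on the source, as $X \xrightarrow{h} \AA^d_Y \hookrightarrow (\PP^1_Y)^d \xrightarrow{p} Y$ with $h$ étale and the middle arrow an open (hence étale) immersion. Axiom (3) fixes the traces of $h$ and of the open immersion as the respective counits, so axiom (1) reduces $\ttr_f$ (locally on the source) to $\ttr_p$; a further application of (1) decomposes $\ttr_p$ into the traces of the $d$ individual projections $\PP^1_Y\to Y$. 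After further base change, each of these reduces to $\PP^1_C\to \Spa C$ for a complete algebraically closed nonarchimedean field $C$, where axioms (2) and (4) pin it down. Globalization along the source then uses that $\LLambda_X$ is a homotopy colimit of extensions by zero along any cover of $X$ and that $\rR f_!$ commutes with such colimits.

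For existence, I would construct $\ttr_f$ in three stages, mirroring the uniqueness reduction. First, build the relative trace for the projection $\PP^1_Y\to Y$ by gluing pullbacks of the absolute trace of (4) along an étale cover of $Y$; the cocycle condition for gluing is precisely the compatibility forced by (2) in the uniqueness argument, so the construction is consistent. Second, extend to $\AA^d_Y\to Y$ by composing with the extension-by-zero trace of the open immersion $\AA^d_Y\hookrightarrow (\PP^1_Y)^d$ via axioms (1) and (3). Third, for a general smooth $f$ and any local factorization of the form above, define $\ttr_f$ locally via axiom (1); well-definedness on overlaps of different factorizations is again an instance of the uniqueness reduction. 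As an alternative, the paper's cycle-class formalism applied to the diagonal $\Delta\colon X\hookrightarrow X\times_Y X$---a regular closed immersion of codimension $d$---provides an intrinsic construction that yields $\ttr_f$ more uniformly and renders axiom (1) especially transparent.

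Once uniqueness is in hand, the three compatibility claims---with the analytified algebraic trace, the Berkovich trace, and the Lan--Liu--Zhu trace---each reduce to verifying that the known construction satisfies axioms (1)--(4) in its domain of definition. These checks are routine given the normalizations: (1) and (2) are built into each formalism, (3) is transparent from each construction, and (4) is matched by design (each of the three traces is set up so that the $\PP^1$ trace is the algebraic one). The main obstacle of the whole proof is the construction of the relative $\PP^1_Y\to Y$ trace compatibly with arbitrary base change: axiom (4) is asserted only absolutely over $\Spa C$, so one must check that its pullback along maps $Y\to \Spa C$ is independent of the choice of $C$ and of the auxiliary étale-localization on $Y$, and descends canonically to $\rR f_*\LLambda_{\PP^1_Y}(1)[2]\to\LLambda_Y$. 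This compatibility---essentially a statement about how the classical $\PP^1$ trace interacts with Galois actions and non-algebraic base change---is where the technical heart of the argument lies.
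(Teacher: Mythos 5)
Your overall skeleton — Noether-normalize locally to $\AA^d_Y$, factor through (powers of) $\PP^1_Y$, use axiom (3) for the étale part and axiom (4) over an algebraically closed point, then glue along covers — matches the paper's strategy in broad outline.  However, there is a genuine circularity at the step you describe as routine.

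You write that well-definedness over overlaps of different local factorizations ``is again an instance of the uniqueness reduction.''  This is circular: the uniqueness argument presupposes a globally defined trace satisfying axioms (1)--(4), whereas while you are still constructing the trace you only have the individual factorization-dependent local candidates.  Proving that two factorizations $X \xrightarrow{g_1} \AA^{d,\an}_Y \to Y$ and $X \xrightarrow{g_2} \AA^{d,\an}_Y \to Y$ yield the same candidate trace is a genuine theorem (\cref{factorization independence} in the paper), and its proof is the technical heart of the section: it reduces, via overconvergence and proper base change, to the case $Y = \Spa(C,\cO_C)$ and $d=1$, where it relies on the a priori well-defined analytic trace map $t_X$ for smooth affinoid curves over $C$, constructed independently in \cref{section:analytic-trace} via universal compactifications, rank-2 points, and the reduction homomorphism $\#$.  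Your proposal never invokes this auxiliary object, so the well-definedness gap remains.  In fact the paper remarks explicitly that the factorization construction ``is essentially independent'' of the Section-4 analytic trace, \emph{except} that the latter is ``indispensable'' precisely for factorization independence.

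You have also misidentified the main obstacle.  The relative trace for $\PP^{1,\an}_Y \to Y$ (or equivalently $\AA^{1,\an}_Y \to Y$) is constructed intrinsically via the first Chern class isomorphism $c_1^\et(\cO_{\PP^{1,\an}_Y}(1)) \colon \LLambda \xrightarrow{\sim} \rR^2\ov{\pi}_{Y,*}\LLambda(1)$; no gluing from étale-local data or descent of the absolute axiom (4) is needed, and the compatibility with base change is automatic from \cref{rmk:base-change-chern-classes}.  Similarly, your suggested ``alternative'' via the cycle class of the diagonal does not yield a trace: $\cl_\Delta$ produces the coevaluation map in the Poincaré duality argument, not the evaluation or trace; deriving $\ttr_f$ from it would already require the duality statement you are trying to prove.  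Finally, note that the paper is forced to use $\AA^{d,\an}_Y$ rather than $\DD^d_Y$ in the existence step because $\fS_2$ does \emph{not} act trivially on $\Hh^4_c(\DD^2_C,\mu_p^{\otimes 2})$ in mixed characteristic (\cref{affine-space-better}); your factorization through $(\PP^{1,\an}_Y)^d$ could plausibly sidestep this via the permutation-invariance of the algebraic trace on $(\PP^1_A)^d$, but you should make that check explicit.
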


When $n$ is invertible in $\O_Y^+$, the trace map was previously constructed by Huber in \cite[Th.~7.3.4]{Huber-etale}. When $n$ is only invertible in $\O_Y$ and $f \colon X \to Y$ is a \textit{partially proper} smooth morphism of rigid-analytic spaces over a non-archimedean field $K$, the trace map $f$ was constructed by Berkovich in \cite[Th.~7.2.1]{Berkovich}.
Our construction of the trace map is independent of either of these constructions\footnote{In fact, the construction of Huber is very specific to the case of $n$ being invertible in $(\O^+_Y)^\times$, while the construction of Berkovich is very specific to the partially proper case.} and, as we explain after \cref{rmk:why-general-trace}, it crucially uses the techniques of universal compactifications and the existence of higher rank points, both of which are only available in Huber's formalism of adic spaces. 

\begin{remark}\label{rmk:why-general-trace} Our main motivation for developing a general theory of smooth trace maps comes from the needs of our proof of \cref{Intro: proper PD}. Indeed, to prove \cref{Intro: proper PD}, we need to construct proper trace maps with coefficients in dualizing complexes, for which the full strength of \cref{Intro: smooth trace} is used. Namely, even though we define trace maps with coefficients in dualizing complexes only for proper maps, it is indispensable for the construction to have smooth trace available for non-partially proper morphisms. We elaborate on this more in \cref{section:intro-proper}. 
\end{remark}

We now explain the main i   deas behind the construction of $\ttr_f$ in \cref{Intro: smooth trace}. A d\'evissage similar to the one in algebraic geometry \cite[Exp.~XVII]{SGA4} allows us to reduce the construction of smooth trace maps in general to the situation when $f \colon X \to Y = \Spa(C,\cO_C)$ is a smooth connected affinoid curve over an algebraically closed nonarchimedean field $C$.
In this case, we crucially use the geometry of adic spaces:
The complement of $X$ inside its universal compactification $X^c$ is a pseudo-adic space that consists of finitely many points corresponding to valuations of rank $2$.
The second components of these valuations give rise to a map $\Hh^1(X^c \smallsetminus X, \mu_n) \to \ZZ/n$.
We then show that this map descends to an analytic trace map $\ttr_X \colon \Hh^2_c(X, \mu_n) \to \ZZ/n$ via the exact excision sequence
\[ \Hh^1(X, \mu_n) \to \Hh^1(X^c \smallsetminus X, \mu_n) \to \Hh^2_c(X, \mu_n) \to 0 \]
and that the thus constructed analytic trace maps are compatible with \'etale morphisms and (algebraic) trace maps for algebraic curves. The verification of these claims is extremely subtle and occupies most of \cref{section:analytic-trace}.

We note that it is somewhat surprising that the trace map exists when $n$ is not invertible in $\O_Y^+$ due to the observation that, for a smooth connected affinoid rigid-analytic curve over an algebraically closed nonarchimedean field $C$, the top degree compactly supported cohomology group $\Hh^2_c(X, \mu_p)$ behaves pretty wildly. In fact, the group $\Hh^2_c(\DD^1_C, \mu_p)$ is already quite pathological due to the following observations: 

\begin{remark} Even though the truncated smooth trace $\ttr_{\DD^1_C} \colon \Hh^2_c(\DD^1_C, \mu_p) \to \bf{F}_p$ is still an epimorphism (see \cref{tr-epi}), it is certainly not an isomorphism in contrast to the situation in algebraic geometry (or when $n\in (\O_Y^+)^\times$). Furthermore, the trace map does not induce any kind of ``weak'' Poincar\'e duality in general (see \cref{rmk:no-Poincare-duality-closed-unit-disk}), the group $\Hh^2_c(\DD^1_C, \mu_p)$ is infinite (see \cref{distance and cycle class}), and depends on the choice of the algebraically closed ground field $C$ (see \cref{example:no-proper-base-change}). Moreover, different points $x, y\in \DD^1_C$ might have different cycle classes in $\Hh^2_c(\DD^1_C, \mu_p)$ (see \cref{distance and cycle class}) and 
$\Hh^2_c(\DD^1_C, \mu_p)$ is \emph{not} generated by cycle classes of points (see \cref{cycle class of points do not generate}).
As a consequence, many of the usual tricks familiar from algebraic geometry cannot be applied anymore.
\end{remark}

Once we have a trace morphism at hand, we can give an easy and essentially formal proof of Poincar\'e duality for smooth proper morphisms and locally constant coefficients;
many cases were treated before in \cite[Th.~7.3.1]{Berkovich}, \cite[Cor.~7.5.5]{Huber-etale}, \cite[Th.~1.1.2]{Z-thesis}, and \cite[Cor.~3.10.22]{Mann-thesis}: 

\begin{theorem}[{\Cref{Poincare dualizability theorem}, \cref{general smooth Poincare duality}}]
\label{Intro: smooth PD}
Let $f \colon X \to Y$ be a smooth proper morphism of equidimension $d$ between locally noetherian analytic adic spaces such that $n \in \cO^\times_Y$ and let $\cal{E} \in D_\lisse(X_\et; \Lambda)$.
Then the duality morphism
\[ \PD_f \colon \rR f_*\rR \cHom (\cE, \ud{\Lambda}_X(d)[2d]) \xlongrightarrow{\Ev_f} \rR\cHom(\rR f_*\cE,\rR f_* \ud{\Lambda}_X(d)[2d]) \xlongrightarrow{\ttr_f \circ \blank} \rR\cHom(\rR f_*\cE, \ud{\Lambda}_Y)
\]
is an isomorphism. %
\end{theorem}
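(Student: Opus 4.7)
My plan is to prove the stronger assertion that the map $\alpha_f\colon \ud\Lambda_X(d)[2d] \to f^!\ud\Lambda_Y$ adjoint to $\ttr_f$ (via the adjunction $\rR f_! \dashv f^!$, which coincides with $\rR f_* \dashv f^!$ for proper $f$) is an equivalence. Granting this, the duality statement $\PD_f$ in the theorem follows for arbitrary $\cE \in D(X_\et;\Lambda)$, not just lisse ones: the Grothendieck--Verdier formula $\rR f_*\rR\cHom(\cE, f^!\cG) \simeq \rR\cHom(\rR f_*\cE, \cG)$, valid for proper $f$, identifies $\PD_f$ up to canonical equivalences with $\rR f_*\rR\cHom(\cE, \alpha_f)$, and the dualizability of $\cE$ is never used in this step.

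Next, I would observe that $\alpha_f$ is defined for arbitrary smooth morphisms of equidimension $d$ by \cref{Intro: smooth trace}, and that whether $\alpha_f$ is an equivalence is a question local on both $X$ and $Y$. Hence we may drop the properness hypothesis and work \'etale-locally. Since any smooth morphism factors \'etale-locally as $X \xr{g} \DD^d_Y \xr{\pi} Y$ with $g$ \'etale, the compatibility of $\ttr$ with \'etale morphisms (part~(3)) and with compositions (part~(1)) reduces the problem to showing $\alpha_\pi$ is an equivalence. Factoring $\pi$ through the tower $\DD^d_Y \to \DD^{d-1}_Y \to \cdots \to Y$ and reapplying composition compatibility then reduces us to the one-dimensional base case $\pi\colon \DD^1_Y \to Y$.

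For this base case, the pullback compatibility (part~(2)) of $\ttr$ combined with proper base change (available since $n \in \cO_Y^\times$) lets us further assume $Y = \Spa(C, \cO_C)$ with $C$ algebraically closed nonarchimedean. Here the explicit construction of $\ttr_\pi$ via rank-$2$ valuations on the universal compactification of $\DD^1_C$ and the excision sequence $\Hh^1((\DD^1_C)^c\smallsetminus\DD^1_C, \mu_n) \twoheadrightarrow \Hh^2_c(\DD^1_C, \mu_n)$, together with a local computation of $\pi^!\ud\Lambda$, provides the required equivalence. The main obstacle is precisely this base case: since $\Hh^2_c(\DD^1_C, \mu_n)$ is infinitely generated and behaves pathologically when $p\mid n$, genuine work is needed to show that the specific quotient cut out by $\ttr_\pi$ is $\Lambda$, and this is the principal content of the earlier sections. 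Everything else in the proof is a formal consequence of the trace compatibilities from \cref{Intro: smooth trace}.
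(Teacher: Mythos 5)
There is a fatal gap at the heart of this approach: the local statement you want to reduce to is false when $p \mid n$. You propose to show that $\alpha_f \colon \ud\Lambda_X(d)[2d] \to f^!\ud\Lambda_Y$ is an equivalence, observe that this is local on $X$ and $Y$, and thereby reduce to $\pi \colon \DD^1_C \to \Spa(C,\cO_C)$. But for this $\pi$, $\alpha_\pi$ is \emph{not} an equivalence when $p\mid n$. If it were, then for every $\cF \in D(\DD^1_{C,\et};\Lambda)$ the trace would induce an isomorphism $\rR\Gamma(\DD^1_C,\rR\cHom(\cF, \ud\Lambda(1)[2])) \simeq \rR\Hom_\Lambda(\rR\Gamma_c(\DD^1_C,\cF),\Lambda)$; taking $\cF = \ud\Lambda$ and looking in a single degree would force $\Hom_\Lambda\bigl(\Hh^2_c(\DD^1_C,\Lambda),\Lambda\bigr) \cong \Lambda$, which contradicts the fact that $\Hh^2_c(\DD^1_C,\mu_p)$ is an infinite $\FF_p$-vector space (\cref{distance and cycle class}, and see \cref{rmk:no-Poincare-duality-closed-unit-disk} for exactly this failure). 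The paper flags precisely this obstruction in the introduction: when $p\mid n$ ``one cannot run standard arguments to reduce to the case of a relative affine (or projective) line,'' because there is no local form of the duality. Your step ``we may drop the properness hypothesis and work \'etale-locally'' is where the argument breaks irrecoverably --- properness is a global hypothesis that cannot be traded for locality here.

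A related symptom: you note that your argument would give $\PD_f(\cE)$ as an isomorphism for \emph{all} $\cE \in D(X_\et;\Lambda)$, not just lisse ones. That stronger conclusion is also false. For $Y$ a geometric point, $X$ proper smooth, and $j\colon U \hookrightarrow X$ an open affinoid polydisk, applying your claimed duality to $\cE = j_!\ud\Lambda_U$ would give $\rR\Gamma(U,\ud\Lambda(d)[2d]) \simeq \rR\Hom_\Lambda(\rR\Gamma_c(U,\ud\Lambda),\Lambda)$, which again fails for $U = \DD^d_C$ when $p\mid n$. The restriction to lisse (or more generally Zariski-constructible) coefficients is not a technical convenience but essential. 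The paper's actual proof of \cref{Poincare dualizability theorem} and \cref{general smooth Poincare duality} takes an entirely different route: it never computes $f^!\ud\Lambda_Y$ (which is genuinely not $\ud\Lambda_X(d)[2d]$ in this generality); instead it shows directly that $\rR f_*\cE$ is a dualizable object with dual $\rR f_*\cE^\vee(d)[2d]$, by constructing evaluation and coevaluation maps (\cref{evaluation and coevaluation maps}) and verifying the triangle identities. The coevaluation uses the K\"unneth formula (\cref{Kunneth formula}), which in turn rests on proper base change for Zariski-constructible complexes --- this is where both the global properness of $f$ and the constructibility of $\cE$ enter irreplaceably, and where the perfectoid-flavored input from \cite{BH} is used.
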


When $n$ is invertible in $\O_Y^+$, \cref{Intro: smooth PD} was first proven by Berkovich in \cite[Th.~7.3.1]{Berkovich} and by Huber in \cite[Cor.~7.5.5]{Huber-etale} independently (and it was later revisited in \cite[Th.~1.3.2]{Z-revised}). When $X$ and $Y$ are rigid-analytic spaces over a nonarchimedean field $K$ of mixed characteristic $(0, p)$ and $n=p$, \cref{Intro: smooth PD} was proven in \cite{Z-thesis} and \cite{Mann-thesis} independently.
Both proofs crucially rely on the theory of perfectoid spaces, $\O^+/p$-cohomology groups, and the Grothendieck--Serre duality in characteristic $p$. In particular, the previous proofs only apply either when $(n,p) = 1$ or when $n = p$ and the strategies in the two cases are completely different. 

In contrast, our proof of \cref{Intro: smooth PD} is different from any of the four proofs mentioned above (instead, it is somewhat motivated by the proof presented in \cite[Th.~1.3.2]{Z-revised}).
It uses a bare minumum of the perfectoid theory, works uniformly for any integer $n \in \O_Y^\times$, and is essentially diagrammatic once the trace map is constructed. 

These methods are quite formal and, thus, they can be used in different cohomological setups. For instance, in an ongoing joint project with Nizio\l{}, we expect to generalize the techniques developed in this paper to prove a version of Poincar\'e duality for pro-\'etale $\QQ_p$-local systems on smooth proper $X$. 

We now explain the main ideas behind our proof. We first reduce the question to showing that, for a dualizable $\cal{E}\in D(X_\et; \Lambda)$, the complex $\rR f_* \cal{E}$ is dualizable in $D(Y_\et; \Lambda)$ with the dual given by $\rR f_*\cal{E}^\vee(d)[2d]$. Then we need to construct the evaluation and coevaluation morphisms and check that certain compositions are the identity.
The evaluation map essentially comes from the trace map constructed in \cref{Intro: smooth trace}, while the coevaluation map essentially comes from the K\"unneth isomorphism established in \cref{Kunneth formula}\footnote{
The proof of \cref{Kunneth formula} is the only place which needs, via \cite[Lem.~3.25]{BH}, the perfectoid machinery; see \cref{use-perfectoid}.}
and the cycle class map of the diagonal (see \cref{cycle classes section} and \cref{evaluation and coevaluation maps}).
The verification that certain compositions are equal to the identity boil down to the computation that $\ttr_{\pr}(c\ell_\Delta)=\id$ for a projection $\pr \colon X \times_Y X \to X$ and the diagonal morphism $\Delta \colon X \hookrightarrow X\times_Y X$ and to the fact that the braiding morphism on $\bigl(\ud{\Lambda}_{X\times_Y X}(d)[2d]\bigr)^{\otimes 2}$ is the identity morphism. 

As a formal consequence of our proof of \cref{general smooth Poincare duality}, we also get that derived pushforwards along smooth and proper morphisms preserve locally constant sheaves: 

\begin{corollary}[\cref{cor:preservation-lisse-sheaves}]\label{intro:preservation-lisse-sheaves}
Let $f \colon X \to Y$ be a smooth proper morphism of locally noetherian analytic adic spaces with $n \in \cO^\times_Y$. 
Let $\mathcal{E} \in D_\lisse(X_\et; \Lambda)$ be a lisse complex. Then $\rR f_*\mathcal{E}$ lies in $D_\lisse(Y_\et; \Lambda)$.
If $\mathcal{E}$ is locally bounded (resp.\ perfect), then so is $\rR f_*\mathcal{E}$.
\end{corollary}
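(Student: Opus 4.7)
The plan is to deduce this as a formal consequence of (the proof of) Theorem~\ref{Intro: smooth PD}, which shows that $\rR f_*$ takes any dualizable $\cE \in D(X_\et;\Lambda)$ to a dualizable object of $D(Y_\et;\Lambda)$, with explicit dual $\rR f_*\cE^\vee(d)[2d]$.

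First I would handle the perfect case. Under the hypothesis $n \in \cO_Y^\times$, the dualizable objects of $D(X_\et;\Lambda)$ are precisely the perfect lisse complexes (dualizability is \'etale-local and, after passage to geometric stalks, amounts to perfectness as a complex of $\Lambda$-modules), and the same characterization holds over $Y$. Hence the dualizability of $\rR f_*\cE$ furnished by Theorem~\ref{Intro: smooth PD} immediately shows that $\rR f_*\cE$ is perfect lisse whenever $\cE$ is.

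Next I would treat a locally bounded lisse $\cE$ by d\'evissage. Since the statement is \'etale-local on $Y$, I may assume that $\cE$ has locally constant cohomology sheaves concentrated in a fixed bounded range of degrees. The truncation triangles $\tau^{\leq k}\cE \to \cE \to \tau^{\geq k+1}\cE$ reduce the problem to the case of a single lisse sheaf $\cF$ placed in one degree. Writing $\cF$ as the filtered colimit of its perfect (i.e., \'etale-locally finitely presented) lisse subsheaves $\cF_i$, and using that $\rR f_*$ commutes with filtered colimits of uniformly bounded-below complexes (a consequence of the uniform cohomological bound for the smooth proper morphism $f$ when $n \in \cO_Y^\times$), I obtain $\rR f_* \cF = \colim_i \rR f_* \cF_i$. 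Each term is perfect lisse by the previous step, and the filtered colimit remains lisse and locally bounded on $Y$. The general (possibly unbounded) lisse case is then handled by writing $\cE$ as the Postnikov limit of its truncations $\tau^{\geq -n}\cE$ and passing to the limit under $\rR f_*$, again exploiting the bounded cohomological dimension of $f$.

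I expect the main obstacle to be the very first step, namely identifying the dualizable objects of the derived \'etale categories $D(X_\et;\Lambda)$ and $D(Y_\et;\Lambda)$ with perfect lisse complexes. This is standard but requires a careful \'etale-local stalk analysis, including verification that stalks of dualizable sheaf complexes are dualizable $\Lambda$-module complexes and, conversely, that \'etale-locally perfect complexes of constant sheaves assemble into globally dualizable sheaves. Once this identification is in hand, the corollary follows essentially for free from Theorem~\ref{Intro: smooth PD}.
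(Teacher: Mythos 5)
Your proposal correctly identifies the key input — dualizability of $\rR f_*\cE$ from Theorem~\ref{Intro: smooth PD} and the identification of dualizable objects with perfect lisse complexes — and the perfect case is handled as in the paper. The reductions to a single lisse sheaf and the handling of the unbounded case (Postnikov tower, finite cohomological dimension of $\rR f_*$) are also in line with the paper's argument, which uses cohomological dimension and a Leray spectral sequence in the same spirit.

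However, the d\'evissage step for a general (non-perfect) lisse sheaf $\cF$ has a genuine gap. You propose to write $\cF$ as a filtered colimit of its perfect lisse subsheaves, but such a decomposition does not exist in general. Your parenthetical ``perfect (i.e., \'etale-locally finitely presented)'' conflates two different notions: since $\Lambda = \ZZ/n\ZZ$ is noetherian, every lisse $\Lambda$-sheaf is already \'etale-locally finitely presented, so that colimit is trivially $\cF$; but being perfect requires finite projective (equivalently, Tor-) dimension over $\Lambda$, which fails for most finite $\Lambda$-modules. For a concrete obstruction, take $\Lambda = \ZZ/p^2$ and $\cF = \ud{\FF}_p$: the only $\Lambda$-submodule of $\FF_p$ is $\FF_p$ itself, which has infinite projective dimension, so the only perfect lisse subsheaf of $\cF$ is $0$ and the proposed colimit is $0 \neq \cF$. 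The paper instead runs a different d\'evissage: first apply the Chinese Remainder Theorem to reduce to $\Lambda = \ZZ/p^m$, then use the $p$-adic filtration $p^k\cE/p^{k+1}\cE$ to reduce to $\FF_p$-lisse sheaves; over the field $\FF_p$ every lisse sheaf is dualizable (every finite-dimensional $\FF_p$-module is free), so the dualizability theorem applies. This replacement of the filtered-colimit step by a finite filtration with $\FF_p$-graded pieces is the ingredient your argument is missing.
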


If $n\in (\O_Y^+)^\times$, \cref{intro:preservation-lisse-sheaves} was first shown in \cite[Cor.~6.2.3]{Huber-etale} under some extra assumptions and also recently revisited in \cite[App.~1.3.4~(4)]{Z-revised} in full generality. If $Y$ is a rigid-analytic space over $\Spa(K, \O_K)$ and $n=p$ is equal to the characteristic of the residue field of $\O_K$, this result was shown in \cite[Th.~10.5.1]{Berkeley}, using the full strength of the perfectoid and diamond machinery. 
In contrast to these two proofs, our proof is uniform in $n$, is essentially formal, and remains largely in the world of locally noetherian analytic adic spaces.

\subsection{Trace and duality for proper maps}\label{section:intro-proper}

In this section, we go back to the discussion of \cref{Intro: proper PD} and explain the main ideas behind its proof. 
We fix a nonarchimedean field $K$ of characteristic $0$ and residue characteristic $p \ge 0$.
Set $\Lambda = \Z/n\Z$ for some integer $n>0$. 

For general (not necessarily smooth) proper morphisms of rigid-analytic spaces over $K$, one cannot expect to have trace maps with constant coefficients as in \cref{Intro: smooth trace}. Instead, it is more reasonable to expect trace maps with coefficients in the dualizing complexes $\omega_X \in D^b_\zc(X; \Lambda)$;
see \cite[Th.~3.21]{BH} for their definition.
One first main result is the actual construction of such morphisms: 

\begin{theorem}[{Digest of \cref{general proper trace subsection}}]
\label{Intro: proper trace}
For any proper morphism $f\colon X \to Y$ of rigid-analytic spaces over $K$, the complex $\rR \cHom(\rR f_*\omega_X, \omega_Y)$ lies in $D^{\geq 0}(Y_\et; \Lambda)$. Furthermore, one can assign to every such proper morphism $f \colon X \to Y$ a trace map $\tr_f \colon \rR f_*\omega_X \to \omega_Y$ such that:
\begin{enumerate}[label=\upshape{(\arabic*)}]
\item\label{Intro: proper trace-1} $\tr$ is compatible with compositions;
\item\label{Intro: proper trace-2} $\tr_f$ is \'etale local on $Y$; 
\item\label{Intro: proper trace-3} if $f$ is a closed immersion, then $\tr_f$ is adjoint to the natural isomorphism $\omega_X \xrightarrow{\sim} \rR f^! \omega_Y$ from \cite[Th.~3.21(1)]{BH} (see \cref{trace for closed immersion});
\item\label{Intro: proper trace-4} if $f$ is smooth and proper, $\tr_f$ is compatible with the smooth trace map from \cref{Intro: smooth trace} (via \cref{constructing smooth trace for omega});
\item\label{Intro: proper trace-5} $\tr$ is compatible with extensions of base fields.
\end{enumerate}
\end{theorem}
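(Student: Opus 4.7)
The plan is to first establish the connectivity statement $\rR\cHom(\rR f_*\omega_X, \omega_Y) \in D^{\geq 0}(Y_\et; \Lambda)$, since this turns the construction of $\tr_f$ into the choice of a single element of $\Hom(\rR f_*\omega_X, \omega_Y)$ and makes étale-local uniqueness automatic. I would deduce the bound by reducing to $Y$ affinoid and combining: the perverse $t$-structure on $D^b_\zc$ developed by Bhatt--Hansen puts $\omega_Y$ in its heart and, by properness of $f$, places $\rR f_*\omega_X$ in perverse degrees $\leq 0$, so that $\rR\cHom(\rR f_*\omega_X, \omega_Y)$ lies in perverse degrees $\geq 0$; a dimension/constructibility estimate then upgrades this to the bound in the standard $t$-structure. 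As an immediate byproduct, property \ref{Intro: proper trace-2} (étale locality) will hold automatically for whichever trace we construct, because the sheaf of local traces on an étale cover of $Y$ has vanishing $\Ext^{-i}$ for $i>0$ and hence glues uniquely.

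For the trace itself, the approach is to reduce a general proper $f$ to the two basic cases already built into the statement: closed immersions (property \ref{Intro: proper trace-3}, traced via the adjunction isomorphism $\omega_X \xrightarrow{\sim} \rR f^! \omega_Y$) and smooth proper morphisms (property \ref{Intro: proper trace-4}, traced via \cref{Intro: smooth trace}). To bridge the gap I would use resolution of singularities (available since $\charac K = 0$) together with a Chow-type factorization to produce a proper surjection $\pi \colon X' \to X$ with $X'$ smooth over $K$ and a factorization of $f \circ \pi$ as a closed immersion into some smooth proper $Y$-space followed by the smooth proper structure map. Composing the two basic traces then yields an unambiguous $\tr_{f\pi}$, and I would descend this to $\tr_f \colon \rR f_*\omega_X \to \omega_Y$ by cohomological descent along a smooth proper hypercover $X_\bullet \to X$ built from iterated resolutions of the fiber products $X' \times_X \cdots \times_X X'$: each level admits a well-defined trace by the basic cases, and totalization produces the global map.

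The hard part will be showing that the result is independent of the chosen hypercover and satisfies property \ref{Intro: proper trace-1} (compatibility with compositions). Independence reduces to the observation that any two hypercovers can be dominated by a common refinement, so any discrepancy between the two candidate traces lives in an $\Ext^{-1}$ group that vanishes by the connectivity bound; the descent morphisms on the common refinement are forced to agree by the same mechanism. Compatibility with compositions then reduces level-wise to the smooth proper and closed immersion cases, where it is known by \cref{Intro: smooth trace}(1) and the functoriality of $\rR f^!$. Property \ref{Intro: proper trace-4} is built into the construction at the smooth proper levels, and property \ref{Intro: proper trace-5} (base change) follows from the corresponding compatibilities of the smooth and closed immersion traces, both of which commute with extension of scalars and with the formation of the hypercover used in the descent.
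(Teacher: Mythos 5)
Your high-level strategy (reduce to smooth-proper and closed-immersion traces via resolution of singularities, then leverage the coconnectivity bound) captures the spirit of the paper's argument, but there are several concrete gaps.

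First, the coconnectivity estimate. Your route via the perverse $t$-structure has an unjustified step: the claim that properness of $f$ places $\rR f_*\omega_X$ in perverse degrees $\leq 0$. Proper pushforward is not right perverse $t$-exact in general — it has perverse amplitude governed by $\dim f$ in both directions — and the dualizing complex $\omega_Y$ is not perverse unless $Y$ is special. What is true is that $\rR f_*\omega_X \simeq \DD_Y(\rR f_*\Lambda_X)$, but that identity is precisely the duality statement being established, so invoking it here is circular. The paper proves $\rR\cHom(\rR f_*\omega_X, \omega_Y) \in D^{\geq 0}$ by a different mechanism (Theorem \ref{proper trace lives in discrete space}): after reducing to $Y$ a geometric point and $X$ reduced, it inducts on $\dim X$, replacing $X$ by a regular admissible modification $X'$ and its lower-dimensional exceptional locus $Z$, with the exact triangle of Lemma \ref{lemma:exact-triangle} feeding the induction; the smooth base case is a cohomological dimension bound.

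Second, and more seriously, the factorization you propose does not exist. You write that $f\pi$ factors as a closed immersion into ``some smooth proper $Y$-space followed by the smooth proper structure map.'' The only available factorization is the graph $X' \hookrightarrow X'\times Y \to Y$, and while the projection $X'\times Y \to Y$ is smooth and taut, it is \emph{not proper} over $Y$ unless $X'$ is proper over $K$ — which is generally false for a modification of a proper $Y$-space. This is exactly why the paper develops the trace for smooth non-proper morphisms (Theorem \ref{Intro: smooth trace}) rather than relying only on the smooth-proper Poincar\'e duality trace, a point the authors flag explicitly in Remark \ref{rmk:why-general-trace}. Without the non-proper smooth trace, your reduction does not get off the ground.

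Third, the cohomological descent step along a smooth proper hypercover is not justified. You would need $\rR f_*\omega_X$ to be the totalization of the pushforwards from the levels of a proper hypercover, i.e.\ $h$-descent for dualizing complexes — a nontrivial statement in its own right, and not something the existing rigid-analytic foundations provide for free. The paper sidesteps this entirely: rather than descent, it uses an \emph{injectivity} statement (Corollary \ref{cor:injective-homs-dualizing}, a consequence of the coconnectivity bound applied to the modification triangle) to characterize $\tr_f$ uniquely by its restrictions to the modification $X'$ and the exceptional locus $Z$. This pins down the trace with a single modification plus induction on dimension, with no hypercover, no totalization, and no need to check simplicial compatibilities. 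If you pursued your route, you would still need the same injectivity to compare hypercovers, so the descent layer adds complications without purchase.

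Your observations that \'etale locality is automatic from the coconnectivity bound, and that base-field compatibility descends from the smooth and closed cases, are both correct and match the paper's treatment.
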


Just like in \cref{Intro: smooth trace}, we can modify the list of compatibility axioms in \cref{Intro: proper trace} to characterize our proper traces uniquely;
see \cref{most general proper trace!!}. We do not do so here in order to avoid complicated notations. 

We point out that the proof of \cref{Intro: proper PD} is not so difficult given the construction of trace maps from \cref{Intro: proper trace}. Indeed, \cref{Intro: proper trace}\cref{Intro: proper trace-3} eventually allows us to reduce to the case $Y=\Spa(K, \O_K)$. In this case, we use resolution of singularities, explicit generators in $D_\zc(X_\et; \Lambda)$, and the compatibilities of $\tr_f$ from \cref{Intro: proper trace} again to eventually reduce the question to the (very special case of) \cref{Intro: smooth PD}.  

\begin{remark} \Cref{Intro: smooth trace} suggests that it is reasonable to expect that one can assign a reasonable trace morphism $\tr_f\colon \rR f_!\, \omega_X \to \omega_Y$ to any taut separated morphism $f\colon X \to Y$ of rigid-analytic spaces over $K$. The main obstacle to apply our method for such $f$ is that we do not know whether $\rR\cHom(\rR f_!\, \omega_X, \omega_Y)$ lies in $D^{\geq 0}(Y_\et; \Lambda)$ in such generality (see \cref{rmk:general-coconnectivity} for more detail). 
It would certainly be interesting to have trace maps constructed in greater generality,
even though the analog of \cref{Intro: proper PD} cannot hold for nonproper $f$. Therefore, we decided not to pursue this direction in this paper. 
\end{remark}

Now we mention the main ideas that go into the proof of \cref{Intro: proper trace}. We first construct the trace map when $X$ is smooth and $Y$ is quasicompact and separated. In this case, we factor $f$ through its graph $\Gamma_f$ as 
\[ X \xhookrightarrow{\Gamma_f} X \times Y \xrightarrow{\pr_2} Y. \]
Since $\Gamma_f$ is a closed embedding by our separatedness assumption, it has a closed trace map thanks to \cite[Th.~3.21~(1)]{BH} (cf.\ \cref{trace for closed immersion}). On the other hand, $\pr_2$ is smooth taut separated, hence it has a trace map thanks to \cref{Intro: smooth PD} (cf.\ \cref{constructing smooth trace for omega}). The composition of these traces gives a trace map for $f$; we call it the \textit{smooth-source trace}. 

\begin{remark} We want to emphasize that the morphism $\pr_2$ is not (partially) proper unless $X$ is (partially) proper. For this reason, \cref{Intro: smooth trace} for partially proper morphisms is inadequate for the purpose of proving \cref{Intro: proper trace} (at least via our methods). 
\end{remark}

Then we use resolution of singularities, the constructed above smooth-source trace, and some d\'evissage to reduce the claim that $\rR \cHom(\rR f_*\omega_X, \omega_Y)$ lies in $D^{\geq 0}(Y_\et; \Lambda)$ to the case when $Y=\Spa(K, \O_K)$ and $X$ is smooth. In this case, this coconnectivity claim boils down to the classical estimates on the cohomological dimension of $X$ (see \cite[Prop.~5.5.8]{Huber-etale}). 

Finally, we use the coconnectivity established above to reduce the question to the case of quasicompact separated $Y$. Then we use resolution of singularities and the smooth-source trace again to reduce to the case when, in addition, $X$ is smooth of equidimension $d$. In this case, the smooth-source trace does the job again. After that, we have to figure out all the desired compatilibities which requires quite delicate arguments and diagram chases; this occupies the most of the proof of \cref{most general proper trace!!}. 

\subsection*{Organization of the paper}
In \cref{preliminaries section}, \cref{cycle classes section}, and \cref{section:curves}, we provide some technical background on finite morphisms, trace maps for finite flat morphisms, cycle classes, and curves in the setting of analytic adic geometry, for which we often could not find suitable references in the literature. We recommend the reader to skip these sections on the first reading. 
\Cref{section:analytic-trace} is one of the key sections of the paper, in which we construct analytic trace map for smooth affinoid rigid-analytic curves and verify its properties. The first half of \cref{smooth-traces} extends the construction of smooth traces to an arbitrary separated smooth taut morphism of equidimension $d$, while the second half of this section gives our ``diagrammatic'' proof of \cref{Intro: smooth PD}. This is another key novelty of this paper (an impatient reader may start reading there and take for granted the existence of smooth trace map). 
\Cref{proper-trace} discusses the construction of proper traces and general Poincar\'e duality for proper morphisms, leading up to the proofs of our main results \cref{Intro: proper trace} and \cref{Intro: proper PD}. 

\subsection*{Acknowledgments}
We would like to thank Bhargav Bhatt, David Hansen, Sasha Petrov, Peter Scholze, and Weizhe Zheng for useful conversations.

This material is based upon work supported by the National Science Foundation under Grant No.~DMS-1926686.
In addition, S.L.~was also supported by the National Natural Science Foundation of China (No.~12288201) and during a stay at the Institute for Advanced Study by the Ky Fan and Yu-Fen Fan Endowed Fund.
We are grateful to the following institutions for their financial support and for providing excellent working conditions while we completed parts of this project:
the School of Mathematics of the Institute for Advanced Study (S.L., E.R., B.Z.), the University of Michigan (S.L.), the Morningside Center of Mathematics (S.L.), the Max-Planck-Institut f\"ur Mathematik in Bonn (E.R., B.Z.), the Institut des Hautes \'Etudes Scientifiques (E.R.), and Princeton University (B.Z.).

\subsection*{Notation and conventions}
In this paper, a topological field $K$ is called \emph{nonarchimedean} if its topology is induced by a valuation of rank $1$ on $K$ and if $K$ is complete with respect to this topology;
beware that while the completeness assumption is somewhat standard, it is not imposed in \cite[Def.~1.1.3]{Huber-etale}.
We usually denote the ring of integers of $K$ by $\cO_K$, its maximal ideal by $\fm_K$, and its residue field by $k \colonequals \cO_K/\fm_K$.

A \emph{rigid-analytic space} over a nonarchimedean field $K$ is always understood to be an adic space which is locally of finite type over  $\Spa(K,\cO_K)$;
by \cite[Prop.~4.5]{Huber-2}, the resulting category of quasiseparated rigid-analytic $K$-spaces is equivalent to the category of quasiseparated rigid-analytic $K$-varieties in the classical sense as in, say, \cite[Def.~9.3.1/4]{BGR}.
An analytic adic space is \emph{locally noetherian} if every $x \in X$ is contained in an open affinoid subspace $U \subset X$ for which $\cO(U)$ is a strongly noetherian Tate ring.
An \emph{affinoid field} means a Huber pair $(k,k^+)$ such that $k$ is a field and $k^+\subset k$ is an open and bounded microbial valuation ring;
it is not assumed to be complete.
Note that this definition is slightly narrower than \cite[Def.~1.1.5]{Huber-etale}, which also allows $k$ to be discrete.

An \emph{admissible} formal $\cO_K$-scheme is a flat, locally finitely presented formal $\cO_K$-scheme $\cX$.
To any formal scheme $\cX$ which is locally of finite presentation over $\Spf(\cO_K)$, we attach the \emph{special fiber} $\cX_s \colonequals \cX \times_{\Spf(\cO_K)} \Spec(k)$, which is a locally finitely presented $k$-scheme, and the \emph{rigid-analytic generic fiber} $\cX_\eta$ in the sense of \cite[Prop.~1.9.1]{Huber-etale}, which is a quasiseparated rigid-analytic space over $\Spa(K,\cO_K)$.
Given a rigid-analytic space $X$ over $\Spa(K,\cO_K)$, any admissible $\cX$ over $\Spf(\cO_K)$ with $\cX_\eta \simeq X$ is called a \emph{formal model} of $X$. 

We usually denote the points of an adic space $X$ by $x$, $y$, etc.\ and valuations in their equivalence class by $v_x$, $v_y$, etc.
Following Huber, we use multiplicative notation for valuations and value groups.

Given a locally noetherian analytic adic space $X$, the $d$-dimensional \emph{(closed) unit disk} over $X$ is defined as $\DD^d_X \colonequals \Spa(\ZZ[T_1,\dotsc,T_d],\ZZ[T_1,\dotsc,T_d]) \times_{\Spa(\ZZ,\ZZ)} X$.
Alternatively, one can set $\DD^d_X \colonequals \Spa(A\langle T \rangle ,A\langle T \rangle^+)$ when $X = \Spa(A,A^+)$ is affinoid;
since this construction is functorial in $X$, this gives $\DD^d_X$ for general $X$ via gluing.
Likewise, the $d$-dimensional \emph{open unit disk} over $X$ is $\accentset{\circ}{\DD}^d_X \colonequals \Spa(\ZZ\llbracket T_1,\dotsc,T_d \rrbracket,\ZZ\llbracket T_1,\dotsc,T_d \rrbracket) \times_{\Spa(\ZZ,\ZZ)} X$ and the $d$-dimensional \emph{affine space} over $X$ is $\AA^{d,\an}_X \colonequals \Spa(\ZZ[T_1,\dotsc,T_d],\ZZ) \times_{\Spa(\ZZ,\ZZ)} X$.
When the base is understood from the context (mainly over an algebraically closed field), we drop it from the notation and simply write $\DD^d$, $\accentset{\circ}{\DD}^d$, $\AA^{d,\an}$, etc.

Given a locally noetherian analytic adic space $X$ and a finite commutative ring $\Lambda$, we denote the (triangulated) derived category of \'etale sheaves of $\Lambda$-modules on $X$ by $D(X_\et;\Lambda)$.
We write $D^{(b)}(X_\et;\Lambda)$ for the full subcategory spanned by locally bounded complexes and $D_\lisse(X_\et;\Lambda)$ for the full subcategory spanned by complexes with lisse cohomology sheaves.
When $X$ is a rigid-analytic space over $K$, we also consider the full subcategory $D_\zc(X_\et;\Lambda)$ spanned by complexes with Zariski-constructible cohomology sheaves.
If $f \colon X \to Y$ is a morphism of locally noetherian analytic adic spaces and $f_*$ (for $f$ finite) or $f_!$ (for $f$ \'etale) is exact, we often drop the ``$\rR$'' from the notation of the associated derived functors $\rR f_*$ or $\rR f_!$, respectively.

We denote the unit of an adjunction of functors $F \colon \mathcal{C} \rightleftarrows \mathcal{D} \lon G$ by $\eta \colon \id \to G \circ F$ and its counit by $\epsilon \colon F \circ G \to \id$.
In the special case when $f \colon X \to Y$ is a morphism of locally noetherian analytic adic spaces and $(F,G) = (f^*,\rR f_*)$ or (for $f$ \'etale) $(F,G) = (\rR f_!,f^*)$ or (for $f$ finite) $(F,G) = (f_*,\rR f^!)$, we also use $\eta_f$ and $\epsilon_f$.
We denote \emph{canonical} isomorphisms by $\simeq$ and \emph{noncanonical} isomorphisms by $\cong$.

\addtocontents{toc}{\protect\setcounter{tocdepth}{2}}

\section{Preliminaries}\label{preliminaries section}

\numberwithin{equation}{subsection}

\subsection{Valuation rings}

In this subsection, we collect some facts about valuation rings that we will use throughout the paper. We assume that most results of this subsection are well-known to the experts, but it seems difficult to extract them from the existing literature.  

We start by recalling the following classical definition: 

\begin{definition}\label{defn:residue-fields-1} Let $x\in X$ be a point of an analytic adic space $(X, \O_X, \{v_x\})$. Then 
\begin{enumerate}[label=\upshape{(\roman*)}]
    \item the \emph{residue field} $k(x)$ is the residue field of the local ring $\O_{X, x}$ and $k(x)^+\subset k(x)$ is a valuation ring associated to the valuation $v_x$ of $\O_{X, x}$. Then $\bigl(k(x), k(x)^+\bigr)$ is a (non-complete)  affinoid field;
    \item the \emph{completed residue field} $\wdh{k(x)}$ is the topological completion of $k(x)$. This comes with a canonical valuation subring $\wdh{k(x)}^+\subset \wdh{k(x)}$ such that pair $\bigl(\wdh{k(x)}, \wdh{k(x)}^+\bigr)$ is a (complete) affinoid field.
\end{enumerate}
\end{definition}

\begin{remark} If $\varpi\in k(x)^+$ is a pseudo-uniformizer, then \cite[Lem.~1.6]{H0} ensures that $\wdh{k(x)}^+$ is the usual $\varpi$-adic completion of $k(x)^+$ and $\wdh{k(x)} = \wdh{k(x)}^+\bigl[\frac{1}{\varpi}\bigr]$.
\end{remark}

We first classify all connected adic spaces which are finite over the adic spectrum of a complete affinoid field: 

\begin{lemma}\label{lemma:finite-over-point} Let $Y=\Spa(K, K^+)$ be the adic spectrum of a complete affinoid field $(K, K^+)$. Let $y$ be the closed point of $Y$ corresponding to a valuation $v\colon K \to \Gamma_v\cup \{0\}$, $\m\subset K^+$ the maximal ideal.
Let $X$ be a reduced and connected adic space and let $f\colon X \to Y$ be a finite morphism. Then 
\begin{enumerate}[label=\upshape{(\roman*)}]
    \item\label{lemma:finite-over-point-1} $X=\Spa(A, A^+)$ is an affinoid space with $A=L$ being a finite field extension of $K$, and $A^+$ being the integral closure\footnote{We warn the reader that $A^+$ is not necessarily a valuation ring unless 
    $K^+$ is henselian along its maximal ideal.} of $K^+$ in $L$;
    \item\label{lemma:finite-over-point-2} the pre-image $f^{-1}(y)$ is equal to the set of valuations of $L$ that extend $v$;
    \item\label{lemma:finite-over-point-3} for each $x\in f^{-1}(y)$, we have $\rk x = \rk y$;
    \item\label{lemma:finite-over-point-4} there is an equality $A^+=\cap_{x\in f^{-1}(y)} L_x^+$, where $L_x^+$ is the valuation ring of (the valuation corresponding to) $x$, i.e., $L_x^+ = \{a\in L \suchthat v_x(a)\leq 1\}$. 
    \item\label{lemma:finite-over-point-5} for each $x\in f^{-1}(y)$, we have $\wdh{k(x)}^+=L_x^+$.
    \item\label{lemma:finite-over-point-6} $A^+$ is semi-local, and all maximal ideals are given by $\m_x\coloneqq \m_x^+\cap A^+$ for $x\in f^{-1}(y)$ and $\m_x^+\subset L_x^+$ the corresponding maximal ideal. Furthermore, the natural morphism $A^+_{\m_x} \to L_x^+$ is an isomorphism for each $x\in f^{-1}(y)$;
    \item\label{lemma:finite-over-point-7} we have $\rm{rad}(\m A^+)=\bigcap_{x\in f^{-1}(y)} \m_x$ .
\end{enumerate}
\end{lemma}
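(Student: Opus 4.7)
The plan is to reduce all seven claims to standard results from the theory of extensions of valuations in finite field extensions, combined with Huber's conventions about finite morphisms of adic spaces. I will first handle \cref{lemma:finite-over-point-1} by unwinding the definition: a finite morphism $X \to \Spa(K, K^+)$ in Huber's sense forces $X = \Spa(A, A^+)$ to be affinoid with $A$ a finite $K$-algebra and $A^+$ the integral closure of $K^+$ in $A$. Reducedness and connectedness of $X$ then force $A$ to be a single finite field extension $L/K$. Part \cref{lemma:finite-over-point-2} is immediate from the description of points of an affinoid adic space, together with the fact that any valuation on $L$ extending the continuous valuation $v$ on $K$ is automatically continuous. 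For \cref{lemma:finite-over-point-3}, I would invoke the classical fact that $\Gamma_{v_x}/\Gamma_v$ is torsion of exponent dividing $[L:K]$, whence the lattices of convex subgroups of $\Gamma_{v_x}$ and $\Gamma_v$ are in bijection via intersection.

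Parts \cref{lemma:finite-over-point-4} and \cref{lemma:finite-over-point-6} form the commutative-algebraic core. The plan is to apply the classical structure theorem for the integral closure of a valuation ring in a finite extension (Bourbaki, \emph{Commutative Algebra}, Ch.~VI, \S8): the integral closure $A^+$ equals the intersection $\bigcap_x L_x^+$ over all extensions of $v$, is a semi-local ring whose maximal ideals are exactly the contractions $\m_x^+ \cap A^+$, and whose localization at each $\m_x$ recovers $L_x^+$. Combined with \cref{lemma:finite-over-point-2}, this yields both claims at once.

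For \cref{lemma:finite-over-point-5}, observe that $k(x) = L$ (since $L$ is a field, so $\supp v_x = 0$) and $k(x)^+ = L_x^+$. By the remark following \cref{defn:residue-fields-1}, the claim reduces to showing that $L_x^+$ is $\varpi$-adically complete. I would argue that $L$ is complete in its canonical topology as a finite-dimensional vector space over the complete field $K$, that $L_x^+$ is closed in $L$ by continuity of $v_x$, and that the subspace topology on $L_x^+$ coincides with the $\varpi$-adic one; the latter exploits that $L_x^+$ is open and bounded in $L$, which follows from $v_x$ being bounded on any chosen ring of definition of $L$, together with the torsion quotient $\Gamma_{v_x}/\Gamma_v$.

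Finally, \cref{lemma:finite-over-point-7} follows from \cref{lemma:finite-over-point-3} and \cref{lemma:finite-over-point-6}: the inclusion $\mathrm{rad}(\m A^+) \subset \bigcap_x \m_x$ is clear from $\m \subset \m_x$, while the reverse inclusion reduces to showing that every prime $\p \subset A^+$ lying over $\m \subset K^+$ is one of the $\m_x$. Since $\p$ is contained in some $\m_x$, localizing gives a prime of $A^+_{\m_x} = L_x^+$ over $\m$; but the rank equality in \cref{lemma:finite-over-point-3}, combined with the fact that primes of a valuation ring form a chain of length equal to its rank contracting bijectively onto those of $K^+$, forces this prime to be $\m_x^+$ itself, so $\p = \m_x$. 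I expect the most delicate step to be \cref{lemma:finite-over-point-5}, since in the higher-rank setting one must carefully distinguish the $\varpi$-adic topology on $L_x^+$ from the valuation topology induced by $v_x$, and $L_x^+$ need not be module-finite over $K^+$.
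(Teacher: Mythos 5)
Your proposal is correct and follows essentially the same route as the paper's, reducing all seven parts to Bourbaki's structure theory for integral closures of valuation rings in finite field extensions (Ch.~VI, \S 8) after unwinding Huber's characterization of finite morphisms of adic spaces. The only appreciable variation is in \cref{lemma:finite-over-point-7}, where you deduce that every prime of $A^+$ lying over $\m$ is maximal directly from the rank equality \cref{lemma:finite-over-point-3} together with the bijection of convex subgroups under a torsion extension of value groups, whereas the paper cites going-up and incomparability from Matsumura --- a cosmetic difference in how the same standard fact about integral extensions is produced.
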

\begin{proof}
    \cref{lemma:finite-over-point-1}. First, we note that \cite[Satz 3.6.20 and Korollar 3.12.12]{Huber-thesis} imply that $X=\Spa(A, A^+)$ is an affinoid and $(K, K^+) \to (A, A^+)$ is a finite morphism of Huber pairs, i.e., $A$ is a finite $K$-algebra and $A^+$ is the integral closure of $K^+$ in $A$. Furthermore, the assumptions on $X$ imply that $A$ is a reduced finite $K$-algebra without idempotents. This implies that $A$ must be a field $L$ such that $K\subset L$ is a finite extension.  
    
    \cref{lemma:finite-over-point-2}. Now we note that, by definition of an adic space, we can identify $f^{-1}(y)$ with the set of valuation subrings $R_w\subset L$ such that 
    \begin{enumerate}[label=\textbf{(\Alph*)}]
        \item $K^+\subset R_w$ and the morphism $K^+ \to R_w$ is local;
        \item the corresponding valuation $w\colon L \to L^\times/R_w^\times \cup \{0\} = \Gamma_w \cup \{0\}$ is continuous;
        \item $w(A^+)\leq 1$.
    \end{enumerate}
    Therefore, for the purpose of proving \cref{lemma:finite-over-point-2}, it suffices to show that condition $\textbf{(A)}$ implies $\textbf{(B)}$ and $\textbf{(C)}$. In other words, we need to show that any valuation $w$ of $L$ that extends $v$ is automatically continuous and satisfies $w(A^+)\leq 1$.  
    
    Since $w|_{K} = v$, $v(K^+)\leq 1$, and $A^+$ is integral over $K^+$, we conclude that $w(A^+)\leq 1$ as well. Therefore, it suffices to show continuity of $w$. We choose some compatible rings of definition $K_0 \subset K$, $A_0 \subset A^+$, and a pseudo-uniformizer $\varpi \in K_0$. Therefore, \cite[L.~9, Cor.~9.3.3]{Seminar} ensures that it suffices to show that $w(\varpi)$ is cofinal in $\Gamma_w$ and $w(\varpi) < w(a)$ for any $a\in A_0$.   

    First, \cite[Ch.~VI, \S~8, n.~1, Prop.~1]{Bourbaki} gives that $\Gamma_w/\Gamma_v$ is a torsion group. Therefore $w(\varpi)=v(\varpi)$ is cofinal in $\Gamma_w$ since it is cofinal in $\Gamma_v$ due to \cite[L.~9, Cor.~9.3.3]{Seminar}. In particular, $w(\varpi) < 1$. Now we note that $w(A^+)\leq 1$, $\restr{v}{K^+}=\restr{w}{K^+}$, and thus 
    \[
    w(a\varpi)=w(a)w(\varpi)<w(a)\leq 1
    \]
    for any $a\in A_0 \subset A^+$. Therefore, we conclude that $w$ is continuous.  

    \cref{lemma:finite-over-point-3}. This follows directly from \cite[Ch.~VI, \S~8, n.~1, Cor.~1]{Bourbaki}.  
    
    \cref{lemma:finite-over-point-4}. This follows from \cref{lemma:finite-over-point-2} and \cite[Exercise 10.3]{Matsumura}.  
    
    \cref{lemma:finite-over-point-5}. We first note, for every $x\in f^{-1}(y)$, \cite[L.~14, ``Caveat on residue fields'' on pp.~2--3]{Seminar} implies that $\wdh{k(x)}=\wdh{L/\supp(x)}= \wdh{L}=L$ since $\supp(x)=(0)$ and $L$ is already a complete field. Therefore, $L_x^+$ and $\wdh{k(x)}^+$ are both equal to the valuation rings defined as $\{a\in L=\wdh{k(x)} \suchthat v_x(a)\leq 1\}$.  
    
    \cref{lemma:finite-over-point-6}. This follows directly from  \cref{lemma:finite-over-point-2} and \cite[Ch.~VI, \S~8, n.~6, Prop.~6]{Bourbaki}.  
    
    \cref{lemma:finite-over-point-7}. Since the ideal $\bigcap_{x\in f^{-1}(y)} \m_x$ is radical, the equality $\rm{rad}(\m A^+)=\bigcap_{x\in f^{-1}(y)} \m_x$ means that the fiber of $\Spec A^+ \to \Spec K^+$ over the closed point consists exactly of the closed points in $\Spec A^+$. This follows from \cite[Th.~9.3~(ii) and Th.~9.4~(i)]{Matsumura}
\end{proof}

Now we discuss the definition and basic properties of henselian affinoid fields.  

\begin{definition}\label{defn:henselian}
    An affinoid field $(K, K^+)$ is \emph{henselian} if $K^+$ is henselian with respect to its maximal ideal $\m\subset K^+$.
\end{definition}

\begin{remark}\label{rmk:equivalent-defns-henselian} We note that \cite[Th.~4.1.3]{ValuedFields} implies that $(K, K^+)$ is henselian in the sense of \cref{defn:henselian} if and only if it is henselian in the sense of \cite[p.~86]{ValuedFields}. In other words, $(K, K^+)$ is henselian if and only if its valuation $v$ uniquely extends to any finite field extension $K\subset L$.
\end{remark}

It turns out that we can always canonically make any affinoid field into a henselian one. 

\begin{definition}
\label{defn:henselize-field} 
    Let $(K, K^+)$ be an affinoid field. Its \emph{henselization} is an affinoid field $\bigl(K^{\h}, K^{+, \h}\bigr)$ where $K^{+, \h}$ is the henselization of $K^+$ with respect to its maximal ideal and $K^\h=K^{+, \h}\otimes_{K^+} K$.
\end{definition}

We note that $K^{+, \h}$ is a valuation ring by \cite[\href{https://stacks.math.columbia.edu/tag/0ASK}{Tag 0ASK}]{stacks-project}, and it is microbial since $K^+$ is. Thus $(K^\h, K^{+, \h})$ is indeed an affinoid field. 

\begin{warning} The notation $K^\h$ may be a bit misleading because this object depends on the valuation subring $K^+\subset K$ and not just on $K$ as a topological field.
\end{warning}

\begin{definition}\label{defn:residue-fields-2}
Let $x\in X$ be a point of an analytic adic space $(X, \O_X, \{v_x\})$. Then 
\begin{enumerate}[label=\upshape{(\roman*)}]
    \item the \emph{henselized residue field} $k(x)^{\rm{h}}$ is the henselization of $k(x)$. In particular, $\bigl(k(x)^\h, k(x)^{+, \h}\bigr)$ is a (henselian)  affinoid field;
    \item the \emph{henselized completed residue field} $\wdh{k(x)}^h$ is the henselization of $\wdh{k(x)}$ (see \cref{defn:residue-fields-1}). In particular, $\bigl(\wdh{k(x)}^\h, \wdh{k(x)}^{+, \h}\bigr)$ is a (henselian) affinoid field.
\end{enumerate}
\end{definition}

The following lemma is well-known in the rank-$1$ case. Even though it is probably also well-known in the higher rank case to the experts, we cannot find this explicitly stated in the literature and therefore include a proof here. 

\begin{lemma}\label{lemma:norms-compatible-with-valuations} Let $(K, K^+)$ be a henselian affinoid field with the valuation $v_K\colon K \to \Gamma_K \cup \{0\}$, and $i_{K/L}\colon K\hookrightarrow L$ a finite field extension with the (unique) compatible valuation $v_L\colon L \to \Gamma_L \cup \{0\}$. Then\footnote{Recall that we use the multiplicative notation for the group structure on any value group $\Gamma$.} 
\[
\left(v_L(-)\right)^{[L:K]} = v_L\left(i_{K/L}\rm{Nm}_{L/K}(-)\right),
\]
where $\rm{Nm}_{L/K}\colon L^\times \to K^\times$ is the norm map.
\end{lemma}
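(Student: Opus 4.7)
The plan is to reduce to the separable case via the tower $K \subseteq L_s \subseteq L$, with $L_s$ the separable closure of $K$ in $L$, and then to run the standard Galois-theoretic norm computation using the uniqueness of the valuation extension in the henselian setting.

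By \cref{rmk:equivalent-defns-henselian}, henselianness of $(K, K^+)$ means that $v_K$ extends uniquely to any algebraic extension of $K$. Fix an algebraic closure $\overline{K}$ of $K$ containing $L$, and let $w\colon \overline{K} \to \overline{\Gamma} \cup \{0\}$ be the unique extension of $v_K$ to $\overline{K}$; in particular, $w|_L = v_L$ and $w|_K = v_K$. The key observation is that, for any $K$-embedding $\sigma\colon L \hookrightarrow \overline{K}$ and any $a \in L$, one has $w(\sigma(a)) = v_L(a)$. Indeed, applying the uniqueness statement to the subfield $K(a)$ shows that all conjugates of $a$ over $K$ have a common $w$-value, namely $v_{K(a)}(a) = v_L(a)$.

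Assume first that $L/K$ is separable, and let $\sigma_1, \dotsc, \sigma_n$ enumerate its $n = [L:K]$ distinct $K$-embeddings into $\overline{K}$, so that $\rm{Nm}_{L/K}(a) = \prod_{i=1}^n \sigma_i(a)$ in $K \subseteq \overline{K}$. Applying $w$ gives
\[
v_L\bigl(i_{K/L}\rm{Nm}_{L/K}(a)\bigr) = w\bigl(\rm{Nm}_{L/K}(a)\bigr) = \prod_{i=1}^n w(\sigma_i(a)) = \prod_{i=1}^n v_L(a) = \bigl(v_L(a)\bigr)^{[L:K]}.
\]

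For general $L/K$, set $p^s \coloneqq [L:L_s]$, so that $[L:K] = p^s \cdot [L_s:K]$. Since $L/L_s$ is purely inseparable of degree $p^s$, one has $\rm{Nm}_{L/L_s}(a) = a^{p^s}$, and by transitivity $\rm{Nm}_{L/K}(a) = \rm{Nm}_{L_s/K}(a^{p^s})$. Uniqueness of the extension forces $v_L|_{L_s} = v_{L_s}$, so applying the separable case to $L_s/K$ at the element $a^{p^s} \in L_s$ yields
\[
v_L\bigl(i_{K/L}\rm{Nm}_{L/K}(a)\bigr) = v_{L_s}\bigl(i_{K/L_s}\rm{Nm}_{L_s/K}(a^{p^s})\bigr) = \bigl(v_{L_s}(a^{p^s})\bigr)^{[L_s:K]} = v_L(a)^{p^s[L_s:K]} = v_L(a)^{[L:K]}.
\]
The only mildly subtle point will be verifying the identity $w(\sigma(a)) = v_L(a)$ rigorously, which ultimately rests on uniqueness applied to $K(a)$; once this is in hand, the rest is a direct bookkeeping calculation.
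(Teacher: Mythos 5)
Your proof is correct, but it takes a genuinely different route from the paper's. The paper first handles the case where $L/K$ is \emph{normal} using Bourbaki's closed-form identity $\rm{Nm}_{L/K}(f) = \bigl(\prod_{\sigma\in \rm{Aut}(L/K)}\sigma(f)\bigr)^{[L:K]_i}$, where $[L:K]_i$ is the inseparable degree; combined with the $\sigma$-invariance of $v_L$ (from henselianness), this gives the identity at once for normal extensions, with separability and inseparability handled simultaneously by the exponent. For a general $L$, the paper then passes \emph{upward} to a normal closure $M/K$, applies the normal case there, and descends using norm transitivity, the identity $\rm{Nm}_{M/L}(i_{L/M}f) = f^{[M:L]}$, injectivity of $\Gamma_L \hookrightarrow \Gamma_M$, and torsion-freeness to cancel the extraneous $[M:L]$-th power. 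You instead decompose \emph{downward} via the tower $K \subseteq L_s \subseteq L$: you prove the separable case directly by expressing the norm as a product over $K$-embeddings into $\overline K$ (using the elementary but important observation that uniqueness forces $w(\sigma(a)) = v_L(a)$), and then dispatch the purely inseparable top floor with $\rm{Nm}_{L/L_s}(a) = a^{[L:L_s]}$. What you gain is that you never leave $L$, and the bookkeeping is strictly multiplicative with no cancellation step (so no appeal to torsion-freeness or injectivity of the induced maps on value groups is needed). What the paper's route buys is that the normal case is a one-line consequence of Bourbaki's formula, at the modest cost of the cancellation argument afterwards. Both are clean; your tower decomposition is arguably the more self-contained of the two.

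One small point worth making explicit in your write-up: when you apply the separable case to $L_s/K$ and then pass to $v_L$, you are implicitly using the canonical identification $\Gamma_{L_s} \hookrightarrow \Gamma_L$ under which $v_L|_{L_s} = v_{L_s}$; this is the same compatibility you already invoke for $w|_L = v_L$, so it is harmless, but it deserves a word since the statement's equality ultimately lives in $\Gamma_L$.
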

\begin{proof}
\begin{enumerate}[wide,label={\textit{Step~\arabic*}.},ref={Step~\arabic*}]
    \item\label{lemma:norms-compatible-with-valuations-normal} \textit{We assume that $L/K$ is normal.}
    We pick an element $f\in L^\times$. Then \cite[Ch.~5, \S~8, n.~3, Prop.~4]{Bourbaki-algebra-4-7} implies that we have
    \[
    \rm{Nm}_{L/K}(f) =\Bigl(\prod_{\sigma\in \rm{Aut}(L/K)} \sigma(f)\Bigr)^{[L:K]_i},
    \]
    where $[L:K]_i$ is the inseparable degree extension of $L/K$. Since $K$ is henselian, there is a unique extension of $v_K$ to $v_L$, so we conclude that $v_{L}(\sigma(f))=v_L(f)$ for any $\sigma\in \rm{Aut}(L/K)$. Therefore, we conclude that
    \begin{align*}
    v_L\left(i_{K/L}\rm{Nm}_{L/K}(-)\right) &= v_L\Biggl(\Bigl(\prod_{\sigma\in \rm{Aut}(L/K)} \sigma(f)\Bigr)^{[L:K]_i}\Biggr)\\
    & = \Biggl( v_L\Bigl(\prod_{\sigma\in \rm{Aut}(L/K)} \sigma(f)\Bigr)\Biggr)^{[L:K]_i} \\
    & = \left(v_L(f)\right)^{[L:K]_i[L:K]_s} = \left(v_L(f)\right)^{[L:K]}. 
    \end{align*}
    
    \item \textit{General case.}
    We consider a normal closure $L\subset M$ of $L$, so $M/K$ is normal. We denote by $i_{K/L}\colon K \to L$  the corresponding inclusion (and similarly for $i_{K/M}$ and $i_{L/M}$), and by $j_{K/L}\colon \Gamma_K \to \Gamma_L$ the induced morphism on the value groups (and similarly for $j_{K/M}$ and $j_{L/M}$).  

    Since $M/K$ is normal, we already know that
    \begin{align}
    \label{eqn:induction-nm} \tag{\epsdice{1}}
    \begin{split}
     \left(j_{L/M}\left(v_{L}\left(f\right) \right)\right)^{[L:K]}& = \left(v_M\left(i_{L/M}(f)\right)\right)^{[M:K] } \\
    & = v_M\left(i_{K/M}\rm{Nm}_{M/K}i_{L/M}(f)\right) \\
    & = v_M\left(i_{L/M}i_{K/L} \rm{Nm}_{L/K}\rm{Nm}_{M/L}i_{L/M}(f)\right) \\
    & = v_M\left(i_{L/M}i_{K/L} \rm{Nm}_{L/K} \left(f^{[M:L]}\right)\right) \\
    & = \left(j_{L/M} v_{L}\left(i_{K/L}\rm{Nm}_{L/K}(f)\right)\right)^{[M:L]}.
    \end{split}
    \end{align}
    Indeed, the first equality is formal. The second equality follows from \cref{lemma:norms-compatible-with-valuations-normal} applied to $M/K$ and $i_{L/M} f$. The third equality follows from the transitivity of Norm maps and the inclusion morphisms. The fourth equality follows from the formula $\rm{Nm}_{M/L}i_{L/M}f=f^{[M:L]}$. 
    The last equality is again formal.  

    Now we note that the morphism $j_{L/M}\colon \Gamma_L \to \Gamma_M$ is injective, and both $\Gamma_L$ and $\Gamma_M$ are torsion-free (since they are totally ordered abelian groups). Therefore, \cref{eqn:induction-nm} implies that $\left(v_{L}(f)\right)^{[L:K] }=v_L\left(i_{K/L}\rm{Nm}_{L/K}(f)\right)$. \qedhere
\end{enumerate}
\end{proof}

\subsection{Curve-like affinoid fields}

In this subsection, we define and study a particular class of curve-like affinoid fields. We will later show that ``boundary'' points on the universal compactification of a rigid-analytic curve are necessarilly curve-like (see \cref{lemma:extra-points-curve-like}). Throughout the subsection, we fix a nonarchimedean field $C$ with a rank-$1$ valuation $\abs{.}\colon C\to \Gamma_C \cup \{0\}$. We denote by $\O_C \subset C$ the corresponding valuation ring and by $\m_C \subset \O_C$ its maximal ideal.  

For the following definition, we fix a henselian affinoid field $(K, K^+)$ and a finite field extension $K\subset L$. \cref{rmk:equivalent-defns-henselian} and \cite[Exercise 10.3]{Matsumura} ensure that the integral closure $L^+$ of $K^+$ in $L$ is a henselian valuation ring. So we denote by $\m_K\subset K^+$ and $\m_L\subset L^+$ the unique maximal ideals of $K^+$ and $L^+$ respectively.

\begin{definition}[{\cite[Ch.~6, \S~8, n.~1]{Bourbaki}}] 
The \textit{ramification index} $e(L/K)$ is the cardinality $\abs{\Gamma_L/\Gamma_K}$. 
The \textit{residue class} $f(L/K)$ is the degree $[L^+/\m_{L}:K^+/\m_{K}]$.
\end{definition}

\begin{remark} Note that \cite[Ch.~6, \S~8, n.~1, Lem.~2]{Bourbaki} implies that we have an inequality $e(L/K)f(L/K) \leq [L:K]$. 
In particular, both $e(L/K)$ and $f(L/K)$ are finite numbers.
\end{remark}

\begin{definition} \cite{Swan}
    A henselian affinoid ring $(K, K^+)$ is \emph{defectless in every finite extension} if, for every finite field extension $K\subset L$, we have $e(L/K)f(L/K) = [L:K]$.
\end{definition}

Finally, we are essentially ready to define the notion of a curve-like affinoid field. 

\begin{definition}
    A \emph{$(C, \O_C)$-affinoid field} is an affinoid field $(K, K^+)$ with a continuous morphism $(C, \O_C) \to (K, K^+)$.
\end{definition}

For any $(C, \O_C)$-affinoid field, we have the natural induced morphism $j_K \colon \Gamma_C \to \Gamma_K$ of valuation groups. 

\begin{definition}\label{defn:curve-like}
    A $(C, \O_C)$-affinoid field $(K, K^+)$ is called \emph{curve-like} if 
\begin{enumerate}
    \item $(K, K^+)$ is henselian and defectless in every finite extension;
    \item the set\footnote{The element $1$ in the next formula means the neutral element of the group $\Gamma_K$.} $\{\gamma \in \Gamma_K \suchthat \gamma<1\}$ has a greatest element $\gamma_0$;
    \item $\Gamma_K$ is generated (as an abelian group) by $j_K(\Gamma_C)$ and the element $\gamma_0$. 
\end{enumerate}
\end{definition}

The following lemma (in conjunction with \cref{lemma:extra-points-curve-like}) will be at the heart of our construction of the analytic trace map (see \cref{defn:analytic-trace-curves}): 

\begin{lemma}\label{lemma:structure-curve-like-valuations} Let $(K, K^+)$ be a curve-like $(C, \O_C)$-affinoid field, and let $\Gamma_C\times \Z$ be a totally ordered abelian group with the lexicographical order. Then the natural morphism
\[
\alpha \colon \Gamma_C \times \Z \to \Gamma_K
\]
\[
\alpha(\gamma, n) = j_K(\gamma) \cdot \gamma_0^{-n}
\]
is an isomorphism of totally ordered abelian groups. 
\end{lemma}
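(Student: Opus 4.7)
The map $\alpha$ is a group homomorphism, and its surjectivity is immediate from condition~(3) in the definition of curve-like, which says exactly that every element of $\Gamma_K$ has the form $j_K(\gamma) \gamma_0^{m}$ for some $\gamma \in \Gamma_C$ and $m \in \mathbf{Z}$. Since the kernel of any order-preserving homomorphism between totally ordered abelian groups is automatically a convex subgroup, and since the only convex subgroups of $\Gamma_C \times \mathbf{Z}$ (with the lexicographic order) are $\{(1,0)\}$, $\{1\} \times \mathbf{Z}$, and the whole group, with the latter two ruled out by $\alpha(1,1) = \gamma_0^{-1} \neq 1$, it suffices to show that $\alpha$ strictly preserves the positive cone.

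The positive cone of $\Gamma_C \times \mathbf{Z}$ consists of pairs $(\gamma, n)$ with either $\gamma > 1$, or $\gamma = 1$ and $n > 0$. The case $\gamma = 1$, $n > 0$ is immediate from $\gamma_0 < 1$; the case $\gamma > 1$, $n \geq 0$ follows since $\alpha(\gamma,n) = j_K(\gamma) \gamma_0^{-n}$ is then a product of an element of $\Gamma_K$ strictly greater than $1$ with one greater than or equal to $1$. The remaining case $\gamma > 1$, $n < 0$, after setting $d = \gamma^{-1} \in \Gamma_C$ and $m = -n \in \mathbf{Z}_{\geq 1}$, amounts to the following \emph{key inequality}: for every $d \in \Gamma_C$ with $d < 1$ and every integer $m \geq 1$, one has $j_K(d) < \gamma_0^m$.

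To establish the key inequality, my plan is to study the smallest convex subgroup $H \subseteq \Gamma_K$ containing $\gamma_0$ and prove (i) $H = \langle \gamma_0 \rangle \cong \mathbf{Z}$ and (ii) $j_K(\Gamma_C) \cap H = \{1\}$. For (i), I would use that $\gamma_0$ is the greatest element of $\Gamma_K$ strictly less than $1$: given any $h \in H$ with $h > 1$, choosing the smallest $k \geq 1$ with $h \leq \gamma_0^{-k}$ and then analyzing $h \gamma_0^{k}$ via the extremal property of $\gamma_0$ forces $h = \gamma_0^{-k}$. For (ii), I would observe that if $\gamma_0^k \in j_K(\Gamma_C)$ for some $k \geq 1$, condition~(3) would force $\Gamma_K / j_K(\Gamma_C)$ (generated by the image of $\gamma_0$) to be cyclic of order dividing $k$, hence finite, so $j_K(\Gamma_C) \cong \Gamma_C$ and $\Gamma_K$ would have the same rank; this contradicts that $\gamma_0$ is a genuinely ``new'' infinitesimal element---in a rank-one $\Gamma_K$, condition~(2) would force $\Gamma_K \cong \mathbf{Z}$, which combined with the geometric nature of curve-like fields (cf.\ \cref{lemma:extra-points-curve-like}) is incompatible with the setup. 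Granted (i) and (ii), any $d \in \Gamma_C$ with $d < 1$ satisfies $j_K(d) < 1 \in H$ with $j_K(d) \notin H$, so $j_K(d)$ lies in a coset of the convex subgroup $H$ strictly below $H$; by separation of cosets of convex subgroups, $j_K(d) < h$ for every $h \in H$, in particular $j_K(d) < \gamma_0^m$ for all $m \geq 1$.

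The main obstacle is step (ii)---establishing that $\gamma_0$ is genuinely ``new'' modulo $j_K(\Gamma_C)$, equivalently that $\Gamma_K$ has rank strictly greater than $\Gamma_C$. This is the principal structural content of the lemma and requires the interplay of all three conditions in the definition of curve-like.
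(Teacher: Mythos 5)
Your high-level plan---proving that $\alpha$ strictly preserves the positive cone and reducing this to separating $j_K(\Gamma_C)$ from the convex subgroup $H$ generated by $\gamma_0$---is a valid and somewhat more structural alternative to the paper's direct computation, and your argument for (i), that $H = \langle \gamma_0 \rangle \cong \mathbf{Z}$ via the extremality of $\gamma_0$, is correct. However, your treatment of (ii), which you yourself flag as the crux, has a genuine gap. The appeal to ``the geometric nature of curve-like fields (cf.\ \cref{lemma:extra-points-curve-like})'' is not admissible: the lemma is a statement about abstract curve-like $(C, \cO_C)$-affinoid fields, not about those arising from curves, and \cref{lemma:extra-points-curve-like} in fact \emph{uses} this lemma downstream, so invoking it here would be circular. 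Your rank argument (``$\Gamma_K / j_K(\Gamma_C)$ finite $\Rightarrow$ same rank $\Rightarrow$ $\Gamma_K \cong \mathbf{Z}$ $\Rightarrow$ impossible'') can in fact be made to work, but the crucial ingredient that closes it---that $\Gamma_C$ is \emph{divisible} (and hence $\Gamma_K \cong \mathbf{Z}$ cannot contain $j_K(\Gamma_C)$ as a nontrivial subgroup)---is never invoked. You also silently assume $j_K$ is injective: without this, the key inequality $j_K(d) < \gamma_0^m$ would fail whenever $j_K(d) = 1$, and the conclusion that $j_K(d) \notin H$ needs $j_K(d) \neq 1$.

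The paper's proof resolves all of this in one stroke by using divisibility of $\Gamma_C$ directly and computationally, without any rank considerations: it first checks (Step 0) that $j_K$ is injective, using that a pseudo-uniformizer of $C$ remains topologically nilpotent in $K^+$. For (ii) (the paper's Step 1), suppose $\gamma_0^n \in j_K(\Gamma_C)$ for some $n \ge 1$; divisibility yields $\gamma' \in j_K(\Gamma_C)$ with $(\gamma')^{2n} = \gamma_0^n$, and torsion-freeness of $\Gamma_K$ forces $\gamma_0 = (\gamma')^2$, whence $\gamma_0 < \gamma' < 1$, contradicting the extremality of $\gamma_0$. The paper then proves the convexity/inequality you call ``key'' (its Step 3) in the same fashion: if $\gamma_0^{-n} \ge \gamma = (\gamma')^{-n}$ with $\gamma' \in j_K(\Gamma_C)$ obtained by divisibility, then $\gamma_0 \le \gamma' < 1$, so $\gamma_0 = \gamma' \in j_K(\Gamma_C)$, contradicting (ii). This bypasses your (i) and the abstract theory of convex subgroups, and is self-contained. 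To repair your proof, you should add the injectivity of $j_K$ and replace the hand-waved (ii) either by the paper's square-root trick or by a clean version of your rank argument that explicitly cites divisibility of $\Gamma_C$ as the source of the contradiction.
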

\begin{proof}
    In this proof, we will freely use \cite[Obs.~3.6/10]{BGR} which guarantees that $\Gamma_C$ is divisible. We will also denote by $\langle \gamma_0\rangle \subset \Gamma_K$ the subgroup generated by $\gamma_0$. Since $\Gamma_K$ is torsion-free, we conclude that $\langle \gamma_0 \rangle$ is isomorphic to $\Z$ as abelian groups.  
    \begin{enumerate}[wide,label={\textit{Step~\arabic*}.},ref={Step~\arabic*}]
        \setcounter{enumi}{-1}
        \item\label{lemma:structure-curve-like-valuations-injective} \textit{The natural morphism $j_K\colon \Gamma_C \to \Gamma_K$ is injective.}
        Explicitly, we need to show that the natural morphism $C^\times/\O_C^\times \to K^\times/(K^+)^\times$ is injective. Since every element in $C^\times/\O_C^\times$ is either equal to the class of $\ov{\pi}$ or $\ov{\pi}^{-1}$ for some pseudo-uniformizer $\pi\in \O_C$, it suffices to show that the image of $\ov{\pi}$ is non-zero in $K^\times/(K^+)^\times$.
        Equivalently, we need to show that no pseudo-uniformizer $\pi\in \O_C$ becomes invertible in $K^+$.
        But since the morphism $C\to K$ is continuous, $\pi\in K^+$ is a topologically nilpotent.
        In particular, $\pi$ lies in the maximal ideal $\m_{K^+}$, so it is not invertible.  
    
        \item\label{lemma:structure-curve-like-valuations-intersection} \textit{We have $j_K(\Gamma_C) \cap \langle \gamma_0\rangle = \{1\}$}.
        Suppose that there is an element $1\neq \gamma \in j_K(\Gamma_C) \cap \langle \gamma_0\rangle$. Without loss of generality, we can assume that $\gamma=\gamma_0^n$ for some \emph{positive} integer $n$. 
        Since $\Gamma_C$ is divisible, there is $\gamma' \in j_K(\Gamma_C)\subset \Gamma_K$ such that $(\gamma')^{2n}=\gamma_0^n$.
        Since $\Gamma_K$ is torsion-free, we conclude that $\gamma_0=(\gamma')^2$. 
        In particular, $\gamma_0< \gamma'<1$.
        This contradicts the assumption that $\gamma_0$ is the greatest among the elements $<1$.  
    
        \item\label{lemma:structure-curve-like-valuations-abgrp} \textit{The map $\alpha$ is an isomorphism of abelian groups}.
        Our assumption on $K$ implies that $\alpha$ is surjective, while \cref{lemma:structure-curve-like-valuations-injective} and \cref{lemma:structure-curve-like-valuations-intersection} ensure that it is injective.   
    
        \item\label{lemma:structure-curve-like-valuations-convex} \textit{For any $\gamma\in j_K(\Gamma_C)$ such that $\gamma>1$, we have $\gamma>\gamma_0^N$ for any integer $N$.}
        We argue by contradiction. 
        Suppose there exists $\gamma\in j_K(\Gamma_{C, >1})$ and an integer $N$ such that $\gamma_0^N\geq \gamma$. Since $\gamma_0<1$ and $\gamma>1$, we see that $N<0$. Thus, we can write $N=-n$ for some positive integer $n$.
        Using that $\Gamma_C$ is divisible, we can then find $\gamma'\in j_K(\Gamma_C)$ such that $\gamma=(\gamma')^{-n}$.
        The inequality $\gamma_0^{-n} \geq \gamma = (\gamma')^{-n}>1$ implies that $\gamma_0 \leq \gamma' <1$. The choice of $\gamma_0$ implies that $\gamma_0=\gamma' \in j_K(\Gamma_C)$, but this is impossible due to \cref{lemma:structure-curve-like-valuations-intersection}.  
    
        \item \textit{The map $\alpha$ is an isomorphism of ordered abelian groups.}
        By \cref{lemma:structure-curve-like-valuations-abgrp}, it suffices to show that the subgroup $\langle \gamma_0\rangle\subset \Gamma_K$ is convex.
        This means that if $\gamma\in \Gamma_K$ satisfies $\gamma_0^n\leq \gamma<\gamma_0^m$ for some integers $n$ and $m$, then $\gamma\in \langle \gamma_0\rangle$. Since we already know that $\Gamma_K=\Gamma_C\times \langle \gamma_0\rangle$ as an abelian group, it suffices to show that the only element $\gamma \in j_K(\Gamma_C)$ satisfying
        \begin{equation}\label{eqn:convex-subgroup}
        \gamma_0^n \leq \gamma \leq \gamma_0^m
        \end{equation}
        for some integers $n$ and $m$ is the neutral object $1$. By passing to inverses, we can assume that $\gamma\geq 1$. But then it follows directly from \cref{lemma:structure-curve-like-valuations-convex}. \qedhere
    \end{enumerate}
\end{proof}

\begin{definition}\label{defn:sharp-map} For a curve-like $(C, \O_C)$-affinoid field $(K, K^+)$, we define the \emph{reduction morphism}
\[
\# \colon \Gamma_K \to \Z
\]
to be the unique homomorphism that sends $\gamma_0$ to $1$ and $\Gamma_C$ to $0$. 
\end{definition}

\begin{warning}\label{warning:sign-convention} Note that $\#$ coincides with the composition $-\rm{proj}_2\circ \alpha^{-1}$, where $\rm{proj}_2\colon \Gamma_C \times \Z \to \Z$ is the projection onto the second factor. In particular, $\#(\alpha(0, 1))=-1$.    
\end{warning}

\begin{lemma}\label{lemma:norm-commute} Let $C$ be an algebraically closed nonarchimedean field, and $(K, K^+) \subset (L, L^+)$ be a finite extension of curve-like $(C, \O_C)$-affinoid fields. If the residue field $K^+/\m_K$ is algebraically closed, then the diagram 
    \[
    \begin{tikzcd}[column sep = huge, row sep = huge]
    L^\times \arrow{d}{\rm{Nm}_{L/K}} \arrow{r}{\#\circ v_L} & \Z \\
    K^\times \arrow[ru, swap, "\#\circ v_K"]
    \end{tikzcd}
    \]
    commutes.
\end{lemma}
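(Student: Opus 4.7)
The plan is to reduce the identity to a statement purely about value groups and then exploit the special structure of curve-like affinoid fields. Applying \cref{lemma:norms-compatible-with-valuations} to the henselian field $(K, K^+)$ and an element $f \in L^\times$, I obtain
\[
j_{K/L}\bigl(v_K(\rm{Nm}_{L/K}(f))\bigr) = v_L\bigl(i_{K/L}\rm{Nm}_{L/K}(f)\bigr) = v_L(f)^{[L:K]} \quad \text{in } \Gamma_L.
\]
Applying the homomorphism $\# \colon \Gamma_L \to \Z$ to both sides yields $\#\bigl(j_{K/L}(v_K(\rm{Nm}_{L/K}(f)))\bigr) = [L:K] \cdot \#(v_L(f))$. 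It will therefore suffice to establish the identity of group homomorphisms $\# \circ j_{K/L} = [L:K] \cdot \# \colon \Gamma_K \to \Z$, since the desired equation then follows by dividing by the positive integer $[L:K]$ in the torsion-free group $\Z$.

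To prove this identity, I will check it on generators of $\Gamma_K$, namely on $j_K(\Gamma_C)$ and on $\gamma_0^K$. On $j_K(\Gamma_C)$ both sides vanish: the right-hand side by the definition of $\#$, and the left-hand side because $j_{K/L} \circ j_K = j_L$ while $\# \circ j_L = 0$. The identity thus reduces to the single equation $\#(j_{K/L}(\gamma_0^K)) = [L:K]$. Since $K^+/\m_K$ is algebraically closed and $L^+/\m_L$ is finite over it, we have $f(L/K) = 1$; combined with the defectlessness of $(K, K^+)$, this yields $e(L/K) = [L:K]$ and hence $[\Gamma_L : j_{K/L}(\Gamma_K)] = [L:K]$.

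The main step, which I expect to be the principal obstacle, is to show that $j_{K/L}$ sends $\langle \gamma_0^K \rangle$ into $\langle \gamma_0^L \rangle$; I would verify this via convex subgroups. By \cref{lemma:structure-curve-like-valuations} and the divisibility of $\Gamma_C$ (valid since $C$ is algebraically closed), each of $\Gamma_K$ and $\Gamma_L$ has exactly three convex subgroups: the trivial one, the unique nontrivial proper one $\langle \gamma_0 \rangle$, and the whole group. Since the preimage $H \colonequals j_{K/L}^{-1}(\langle \gamma_0^L \rangle)$ is convex in $\Gamma_K$, it must be one of $\{1\}$, $\langle \gamma_0^K \rangle$, or $\Gamma_K$. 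The case $H = \{1\}$ will be ruled out because $j_L(\Gamma_C) = j_{K/L}(j_K(\Gamma_C))$ projects isomorphically onto $\Gamma_L / \langle \gamma_0^L \rangle$, which forces $j_{K/L}(\Gamma_K) \cdot \langle \gamma_0^L \rangle = \Gamma_L$; combined with $j_{K/L}(\Gamma_K) \cap \langle \gamma_0^L \rangle = \{1\}$, this would imply $[\Gamma_L : j_{K/L}(\Gamma_K)] = \infty$, contradicting the finiteness of $e(L/K)$. The case $H = \Gamma_K$ will be ruled out because it would embed the nontrivial divisible group $j_K(\Gamma_C)$ into $\langle \gamma_0^L \rangle \cong \Z$. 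Therefore $H = \langle \gamma_0^K \rangle$, so $j_{K/L}(\gamma_0^K) = (\gamma_0^L)^a$ for some integer $a$, which is positive because $j_{K/L}$ preserves order and $\gamma_0^K < 1$. The index computation then forces $a = [L:K]$, yielding $\#(j_{K/L}(\gamma_0^K)) = a = [L:K]$, as required.
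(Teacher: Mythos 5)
Your proof is correct and takes essentially the same route as the paper's: apply \cref{lemma:norms-compatible-with-valuations}, push the result through $\#$, use defectlessness together with the algebraically closed residue field to identify $e(L/K)$ with $[L:K]$, and cancel in the torsion-free group $\Z$. Where you go somewhat further than the paper is in justifying the key identity $\#\circ j_{K/L}=[L:K]\cdot\#$ on $\Gamma_K$ (equivalently, that $j_{K/L}$ carries $\langle\gamma_0^K\rangle$ into $\langle\gamma_0^L\rangle$ with index $e(L/K)$): the paper records this as the commutativity of the right-hand square in its displayed diagram \cref{eqn:inclusion-sharp} without spelling out an argument, whereas your convex-subgroup analysis of $\Gamma_K$ and $\Gamma_L$, using \cref{lemma:structure-curve-like-valuations} and the divisibility of $\Gamma_C$, supplies the explicit justification.
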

\begin{proof}
    Let us choose minimal elements $\gamma_{L, 0}\in \{\gamma\in \Gamma_L \suchthat \gamma<1\}$ and $\gamma_{K, 0} \in \{\gamma\in \Gamma_K \suchthat \gamma<1\}$ respectively. Then \cref{lemma:structure-curve-like-valuations} implies that
    \[
    \Gamma_L = \Gamma_C \times \langle \gamma_{L, 0}\rangle, \quad \Gamma_K = \Gamma_C \times \langle \gamma_{K, 0}\rangle
    \]
    with the lexicographic order. Thus, we have a commutative diagram
    \begin{equation}\label{eqn:inclusion-sharp}
    \begin{tikzcd}
    L^\times \arrow{r}{v_L} & \Gamma_L =\Gamma_C \times \langle \gamma_{L, 0}\rangle \arrow{r}{\sharp} & \Z \\
    K^\times \arrow{u}{i_{K/L}} \arrow{r}{v_K} & \Gamma_K= \Gamma_C \times \langle \gamma_{K, 0}\rangle\arrow{u}{j_{K/L}} \arrow{r}{\sharp} & \Z\arrow{u}{\cdot e_{L/K}}, 
    \end{tikzcd}
    \end{equation}
    where $j_{K/L}$ is the morphism of value groups induced by the inclusion $K\subset L$ and $e_{L/K}=\abs{\Gamma_{L}/\Gamma_K}$ is the ramification index of $L/K$. Since $\Z$ is torsion-free, it suffices to show that
    \[
    e_{L/K} \cdot \#\circ v_L(f) = e_{L/K}\cdot \#\circ v_K(\rm{Nm}_{L/K}(f))
    \]
    for any $f\in L^\times$. Therefore, \cref{eqn:inclusion-sharp} implies that it suffices to show that 
    \[
    e_{L/K} \cdot \#\circ v_L(f) = \#\circ v_L\left(i_{K/L}\rm{Nm}_{L/K}(f)\right).
    \]
    Now we show an even stronger\footnote{We recall again that we use the multiplicative notation for the group structure on any value group $\Gamma$.} claim that $v_L(f)^{e_{L/K} }=v_L\left(i_{K/L}\rm{Nm}_{L/K}(f)\right)$. Since $(K, K^+)$ is defectless in every finite extension and the residue field $K^+/\m_K$ is algebraically closed, we conclude that $f_{L/K}=1$ and $[L:K]=e_{L/K}$. Therefore, \cref{lemma:norms-compatible-with-valuations} implies that 
    \[
    v_L(f)^{e_{L/K}}=v_L\left(i_{K/L}\rm{Nm}_{L/K}(f)\right)
    \]
    for any $f\in L^\times$.
\end{proof}

\subsection{Finite morphisms and residue fields}

In this subsection, we record some results about the behaviour of various residue field (see \cref{defn:residue-fields-1} and \cref{defn:residue-fields-2}) with respect to finite morphisms. We expect that some of these results are probably well-known to the experts, but they do seem to appear in the existing literature.  

That being said, we first study the behavior of the completed residue fields with respect to finite morphisms.
We establish nice properties when $x$ is a point of rank-$1$.
To deal with higher rank points, we will need to pass to the henselized completed residue fields later in this subsection. 

\begin{lemma}\label{lemma:finite-same-rank} Let $f\colon X\to Y$ be a finite morphism of locally noetherian analytic adic spaces, and $x\in X$. Then $x$ and $y=f(x)$ have equal ranks. 
\end{lemma}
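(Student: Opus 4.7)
The statement is local on the target, so I first reduce to the affinoid case. Choosing an open affinoid $V = \Spa(B,B^+) \subset Y$ around $y$ and applying \cite[Satz~3.6.20 and Korollar~3.12.12]{Huber-thesis} exactly as in the proof of \cref{lemma:finite-over-point}~\cref{lemma:finite-over-point-1}, I can assume $f \colon X = \Spa(A, A^+) \to Y = \Spa(B, B^+)$, with $A$ a finite $B$-algebra and $A^+$ the integral closure of $B^+$ in $A$. Let $\varphi \colon B \to A$ denote the underlying ring map.

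Next I translate the claim into valuation theory. Set $\mathfrak{q} \colonequals \supp(v_x) \subset A$ and $\mathfrak{p} \colonequals \supp(v_y) \subset B$. Compatibility of $f$ with the specified valuations forces $\mathfrak{p} = \varphi^{-1}(\mathfrak{q})$, so the induced injection of domains $B/\mathfrak{p} \hookrightarrow A/\mathfrak{q}$ is integral. Passing to fields of fractions yields a finite algebraic extension $k(y) \hookrightarrow k(x)$ on which $v_x$ restricts to $v_y$.

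To conclude, I apply \cite[Ch.~VI, \S~8, n.~1, Cor.~1]{Bourbaki}---the same result already used in the proof of \cref{lemma:finite-over-point}~\cref{lemma:finite-over-point-3}---to deduce that $\Gamma_{v_x}/\Gamma_{v_y}$ is a torsion group. Since the rank of a valuation equals the number of proper convex subgroups of its value group, and the lattice of convex subgroups of a totally ordered abelian group is preserved under torsion extensions, this yields $\rk x = \rk v_x = \rk v_y = \rk y$.

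No serious obstacle is expected; the whole argument is essentially a direct localization of \cref{lemma:finite-over-point}~\cref{lemma:finite-over-point-3}, and every tool it uses has already been assembled in the preceding subsection. The only place demanding minor care is the identification of the preimage $\varphi^{-1}(\mathfrak{q})$ with $\mathfrak{p}$ at the level of supports, but this is immediate from the compatibility of $v_x$ and $v_y$ built into the morphism of adic spaces.
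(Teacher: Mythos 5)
Your proof is correct, and it is a modest but genuine variant of the paper's. The paper base-changes to $Y = \Spa\bigl(\widehat{k(y)},\widehat{k(y)}^+\bigr)$, replaces $X$ by its reduction and a connected component, and then quotes \cref{lemma:finite-over-point}\cref{lemma:finite-over-point-3}; you instead stay over the affinoid $Y$, pass to supports $\mathfrak{q} = \supp(v_x)$, $\mathfrak{p}=\supp(v_y)$, and argue directly with the resulting algebraic extension of valued fields. Both routes converge on the same input — Bourbaki, Ch.~VI, \S~8, n.~1, Cor.~1 — so the mathematical core is identical; yours just bypasses the auxiliary structural lemma and the base-change/reduction bookkeeping, which is a perfectly good trade. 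Two small remarks. First, you refer to the extension as $k(y)\hookrightarrow k(x)$, but with the paper's \cref{defn:residue-fields-1} the residue field $k(y)$ (of the local ring $\cO_{Y,y}$) need not coincide with $\Frac(B/\mathfrak{p})$ on the nose; this is harmless because the rank is read off from the value group $\Gamma_y$ generated by $v_y(B)$, which is the same for both fields, but it would be cleaner to phrase the argument purely in terms of $\Frac(B/\mathfrak{p})\hookrightarrow\Frac(A/\mathfrak{q})$. Second, the detour through ``$\Gamma_{v_x}/\Gamma_{v_y}$ is torsion, hence the lattices of convex subgroups agree'' is unnecessary: the cited Bourbaki statement already records equality of heights (ranks) of the valuation rings as one of its conclusions, so you can invoke it for the rank equality directly, as \cref{lemma:finite-over-point}\cref{lemma:finite-over-point-3} does.
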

\begin{proof}
    Without loss of generality, we can assume that $Y=\Spa\bigl(\widehat{k(y)}, \widehat{k(y)}^+\bigr)$. Then we can replace $X$ by its reduction, and then pass to connected components to assume that $X$ is reduced and connected. In this case, the result follows from \cref{lemma:finite-over-point}\cref{lemma:finite-over-point-3}.
\end{proof}

\begin{lemma}\label{lemma:many-open-around-rk-1} Let $f\colon X \to Y$ be a finite morphism of locally noetherian analytic adic spaces, $y\in Y$ a rank-$1$ point, and $V\subset X$ an open subset of $X$ containing $f^{-1}(y)$. Then there is an open $U\subset Y$ containing $y$ such that $f^{-1}(U) \subset V$.
\end{lemma}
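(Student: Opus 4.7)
The plan is to localize around $y$ and then use the lifting of specializations along finite morphisms to describe $U$ as the complement of the (specialization) closure of $f(X \smallsetminus V)$.

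Concretely, since the assertion is local on $Y$ around $y$, I first reduce to $Y = \Spa(A, A^+)$ affinoid and $X = \Spa(B, B^+)$ with $B$ a finite $A$-algebra and $B^+$ the integral closure of $A^+$ in $B$. Applying \cref{lemma:finite-over-point} to the base change $(A, A^+) \to \bigl(\wdh{k(y)}, \wdh{k(y)}^+\bigr)$, the fiber $f^{-1}(y) = \{x_1, \ldots, x_m\}$ is finite, and each $x_i$ has rank $1$ by \cref{lemma:finite-same-rank}. Let $K \coloneqq X \smallsetminus V$, which is a closed and hence quasi-compact subset of $X$. Its image $f(K)$ is pro-constructible in $Y$, so the topological closure $\overline{f(K)}$ coincides with the specialization closure of $f(K)$. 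Granting that $y \notin \overline{f(K)}$, the open set $U \coloneqq Y \smallsetminus \overline{f(K)}$ contains $y$ and satisfies $f^{-1}(U) \subset V$, finishing the proof.

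The main step is to verify $y \notin \overline{f(K)}$. Suppose for contradiction that $y$ is a specialization of $f(x')$ for some $x' \in K$. The key ingredient is to lift this specialization in $Y$ to a specialization in $X$: namely, to produce $x \in X$ with $f(x) = y$ such that $x'$ is a generalization of $x$. Once such a lift is available, $x \in f^{-1}(y) \subset V$; since $V$ is open and open subsets of adic spaces are stable under generalization, the fact that $x'$ generalizes $x$ forces $x' \in V$, contradicting $x' \in K$.

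The main obstacle is therefore the lifting-of-specializations property used above. I expect this to follow from the valuation-theoretic description of the fibers of $f$ from \cref{lemma:finite-over-point}, combined with a Chevalley-type statement on extensions of valuations along the finite ring extension $B/A$: any specialization of the valuation $v_{f(x')}$ on $A$ can be extended to a specialization of its chosen extension $v_{x'}$ on $B$, giving the desired point $x$. The rank-$1$ hypothesis on $y$ is what keeps this extension step clean: since $y$ is a rank-$1$ specialization of $f(x')$, a lift to $X$ can be constructed by restricting $v_{x'}$ to the appropriate convex subgroup of its value group and taking a valuation ring inside $B$ that dominates the image of the valuation ring of $y$, as in \cite[Ch.~VI, \S~8]{Bourbaki}.
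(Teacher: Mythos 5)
Your proof is correct but takes a genuinely different route from the paper. The paper's argument is a compactness argument in the constructible topology: it refines $V$ to be quasicompact so that $Z = X\smallsetminus V$ is constructible, writes $\{y\}=\bigcap_i U_i$ as a filtered intersection of quasicompact opens (using maximality of the rank-$1$ point $y$, via \cite[Lem.~1.1.10~(ii)]{Huber-etale}), and then deduces $Z\cap f^{-1}(U_i)=\varnothing$ for some $i$ by compactness (\cite[\href{https://stacks.math.columbia.edu/tag/0A2W}{Tag 0A2W}]{stacks-project}). Your argument instead takes the specialization-closure of $f(X\smallsetminus V)$ and excludes $y$ from it by lifting the specialization to $X$ and invoking stability of opens under generalization. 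Both work; yours has the advantage of being conceptually simpler and, if carried out, actually proves the statement with no rank hypothesis on $y$, which is slightly stronger than what is claimed.

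Two points to tighten. First, the ``main obstacle'' you isolate — lifting a specialization in $Y$ through $f$ — does not need a Chevalley-type valuation-theoretic argument: finite morphisms of analytic adic spaces are closed (\cite[Lem.~1.4.5~(ii)]{Huber-etale}; this is exactly what the paper itself uses in the proof of \cref{lemma:preimage-of-generalization}), and specializations lift along closed continuous maps by the purely topological \cite[\href{https://stacks.math.columbia.edu/tag/0066}{Tag 0066}]{stacks-project}. Stating it this way makes the step rigorous and short. Second, your final paragraph misattributes the role of the rank-$1$ hypothesis: lifting of specializations along a closed map holds regardless of rank, so rank $1$ plays no essential role in your argument (and indeed your approach shows the lemma in greater generality). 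The rank-$1$ hypothesis is crucial for the paper's approach — it is what makes $\{y\}$ the filtered intersection of its quasicompact open neighborhoods — but it is not what powers yours, and the sentence claiming otherwise should be dropped.
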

\begin{proof}
    The question is local on $Y$, so we can assume that $Y$ is an affinoid. Since $f$ is a finite morphism, we conclude that $X$ is also affinoid due to \cite[Korollar 3.12.12]{Huber-thesis}. In particular, both underlying topological spaces $\abs{X}$ and $\abs{Y}$ are spectral. Since $f^{-1}(y)$ is a finite set, we can refine $V$ to assume that it is  quasi-compact. In particular, the complement $Z\coloneqq X\smallsetminus V$ is a constructible subset of $X$.  
    
    Now we note that any rank-$1$ point in $Y$ is maximal due to \cite[Lem.~1.1.10~(ii)]{Huber-etale}, so $y = \cap_{i\in I} U_i$, where $\{U_i\}_{i\in I}$ is the filtered poset of quasi-compact opens containing $y$. Since $f^{-1}(y)= \bigcap_{i\in I} f^{-1}(U_i)$, we note that the condition that $f^{-1}(y) \subset V$ is equivalent to 
    \begin{equation}\label{eqn:empty-limit}
    Z \cap \bigcap_{i\in I} f^{-1}(U_i) = \bigcap_{i\in I} \bigl(Z\cap f^{-1}(U_i)\bigr) = \varnothing.  
    \end{equation}
    Now each $f^{-1}(U_i)$ is a quasi-compact open subset of $X$, and so $Z\cap f^{-1}(U_i)$ is a constructible subset of $X$ (in particular, it is closed in the constructible topology on $X$). Therefore, \cref{eqn:empty-limit} and \cite[\href{https://stacks.math.columbia.edu/tag/0A2W}{Tag 0A2W}]{stacks-project} guarantee that there is $i\in I$ such that $f^{-1}(U_i) \cap Z = \varnothing$. In other words, $f^{-1}(U_i) \subset V$. So $U=U_i$ does the job. 
\end{proof}

\begin{corollary}\label{cor:disjoint-opens} Let $f\colon X\to Y$ be a finite morphism of locally noetherian analytic adic spaces, $y\in Y$ a point of rank-$1$, and $\{x_i\}_{i=1}^n=f^{-1}(y)$. Then there is an open Tate affinoid $U\subset Y$ neighborhood of $y$ such that $f^{-1}(U) = \bigsqcup_{i=1}^n U_i$ and $x_i\in U_j$ if and only if $i=j$.
\end{corollary}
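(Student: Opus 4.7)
The plan is to work locally on $Y$, separate the rank-$1$ fiber points $x_1,\dotsc,x_n$ by pairwise disjoint quasi-compact opens of $X$, and then shrink $Y$ so that their union becomes the full preimage.

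Since the statement is local on $Y$, I first replace $Y$ by a Tate affinoid open neighborhood $\Spa(B,B^+)$ of $y$; by \cite[Satz 3.6.20]{Huber-thesis}, $X=\Spa(A,A^+)$ is then also Tate affinoid with $A$ finite over $B$, and by \cref{lemma:finite-same-rank} each $x_i$ is of rank $1$.

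The key step will be producing pairwise disjoint quasi-compact open neighborhoods $V_1,\dotsc,V_n\subset X$ of the $x_i$'s. Since distinct continuous rank-$1$ valuations on a Tate ring are always separated by disjoint rational subsets, for each pair $i\neq j$ I can find $f_{ij}\in A$ so that the two real ratios $\log v_{x_i}(f_{ij})/\log v_{x_i}(\pi)$ and $\log v_{x_j}(f_{ij})/\log v_{x_j}(\pi)$ differ (with $\pi\in A$ a pseudo-uniformizer); choosing rationals $\alpha_1<\alpha_2$ strictly between these two values (after possibly swapping $i,j$) and clearing denominators yields disjoint rational subsets $R_{ij}\ni x_i$ and $R_{ji}\ni x_j$ of the form $\{u : u(f_{ij})^{q}\leq u(\pi)^{p}\}$ and $\{u : u(\pi)^{p'}\leq u(f_{ij})^{q'}\}$ respectively. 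Putting $V_i\colonequals\bigcap_{j\neq i}R_{ij}$ then gives the desired disjoint quasi-compact open neighborhoods of the $x_i$'s.

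Once the $V_i$'s are at hand, the rest of the argument is purely formal. The union $V\colonequals V_1\cup\dotsb\cup V_n$ is an open neighborhood of $f^{-1}(y)$, so \cref{lemma:many-open-around-rk-1} supplies an open $U'\subset Y$ with $y\in U'$ and $f^{-1}(U')\subset V$. Shrinking $U'$ to a Tate affinoid rational subdomain $U\subset U'$ containing $y$ and setting $U_i\colonequals V_i\cap f^{-1}(U)$, one obtains $f^{-1}(U)=\bigsqcup_{i=1}^n U_i$ with $x_i\in U_j$ iff $i=j$ by construction. The main difficulty is the separation of rank-$1$ points; everything else is a clean assembly of \cref{lemma:finite-same-rank} and \cref{lemma:many-open-around-rk-1}.
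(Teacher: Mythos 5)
Your argument reaches the same conclusion via the same overall structure (separate the rank-$1$ fiber points by disjoint opens, then shrink $Y$ using \cref{lemma:many-open-around-rk-1}), but the step where you separate the $x_i$'s differs from the paper. The paper observes that rank-$1$ points are maximal in $X$ and therefore pairwise have no common generalization, then cites the spectral-space separation criterion \cite[Tag 0904]{stacks-project} to produce disjoint opens in one line. You instead give a hands-on valuation-theoretic construction: normalize the rank-$1$ valuations by $v(\pi)$, find a function distinguishing each pair, and cut out disjoint rational subsets using rational exponents. Both routes are valid; the paper's is shorter and avoids casework, while yours is more explicit and would work verbatim in the language of Berkovich spaces. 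Your version does have a few loose ends worth tightening: the quantity $\log v_{x_i}(f_{ij})/\log v_{x_i}(\pi)$ is $+\infty$ when $v_{x_i}(f_{ij})=0$, so ``two real ratios'' should allow extended reals; the rationals $\alpha_1 < \alpha_2$ must be chosen strictly distinct and you should note that disjointness of the resulting rational subsets uses $u(\pi)<1$ for every point $u$; and the rational subset $\{u : u(\pi)^{p'}\le u(f_{ij})^{q'}\}$ carries the implicit condition $u(f_{ij})^{q'}\neq 0$, which is fine for your $x_j$ but should be spelled out. None of these is a fatal gap — they are precisely the fiddly details that the Stacks-tag shortcut sidesteps.
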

\begin{proof}
    \cref{lemma:finite-same-rank} ensures that all $x_i$ are points of rank-$1$. In particular, they are are maximal points of $X$. Therefore, there are no common generalizations among $x_i$'s. So \cite[\href{https://stacks.math.columbia.edu/tag/0904}{Tag 0904}]{stacks-project} implies that there are open neighborhoods $V_i \ni x_i$ such that $V_i\cap V_j = \varnothing$ if $i\neq j$.  
    
    Now we apply \cref{lemma:many-open-around-rk-1} to $V\coloneqq \bigcup V_i=\bigsqcup V_i \subset X$ to find $y\in U\subset Y$ such that $f^{-1}(U) = \bigsqcup_{i=1}^n U_i$ and $x_i\in U_j$ if and only if $i=j$. We can replace $U$ with any open Tate affinoid $y\in U'\subset U$ to find the desired \emph{affinoid} open subset.  
\end{proof}

We first show that stalks at rank-$1$ points behave nicely with respect to finite morphisms of affinoid rings. 

\begin{lemma}\label{lemma:decomposes-local-rings} Let $f\colon X=\Spa(B, B^+)\to Y=\Spa(A, A^+)$ be a finite morphism of stronly noetherian Tate affinoids, and let $y\in Y$ be a point of rank-$1$. Then the natural morphism
\[
\O_{Y, y} \otimes_A B \to \prod_{x_i\in f^{-1}(y)} \O_{X, x_i},
\]
is an isomorphism.
\end{lemma}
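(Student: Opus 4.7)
The plan is to reduce to a local situation where $f^{-1}$ of a neighborhood of $y$ decomposes as a disjoint union via \cref{cor:disjoint-opens}, and then compute both sides of the asserted isomorphism by passing to a filtered colimit over rational neighborhoods of $y$.

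First I would invoke \cref{cor:disjoint-opens} to choose an affinoid open neighborhood $U_0 \subset Y$ of $y$ such that $f^{-1}(U_0) = \bigsqcup_{i=1}^n U_{0,i}$ with $x_i \in U_{0,i}$. For any rational open $U$ with $y \in U \subset U_0$, one then gets the decomposition $f^{-1}(U) = \bigsqcup_{i=1}^n (f^{-1}(U) \cap U_{0,i})$, hence
\[
\O_X\bigl(f^{-1}(U)\bigr) \;=\; \prod_{i=1}^n \O_X\bigl(f^{-1}(U) \cap U_{0,i}\bigr).
\]
Next I would observe that, since $B$ is finite over $A$, the natural map $\O_Y(U) \otimes_A B \to \O_X(f^{-1}(U))$ is an isomorphism for every rational $U \subset Y$ (no completion is necessary because a finitely presented $A$-module is already complete as an $\O_Y(U)$-module). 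Taking the filtered colimit of the identity $\O_Y(U) \otimes_A B = \O_X(f^{-1}(U))$ over the rational neighborhoods $y \in U \subset U_0$, the left-hand side becomes $\O_{Y,y} \otimes_A B$ because tensoring with the finite $A$-module $B$ commutes with filtered colimits, while the right-hand side becomes
\[
\colim_{U} \prod_{i=1}^n \O_X\bigl(f^{-1}(U) \cap U_{0,i}\bigr) \;=\; \prod_{i=1}^n\, \colim_{U}\, \O_X\bigl(f^{-1}(U) \cap U_{0,i}\bigr),
\]
using that a finite product commutes with filtered colimits.

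The main remaining step is to identify each factor $\colim_{U}\, \O_X(f^{-1}(U) \cap U_{0,i})$ with the stalk $\O_{X,x_i}$. By \cref{lemma:finite-same-rank}, each $x_i$ has rank one, so it suffices to show that the open subsets $f^{-1}(U) \cap U_{0,i}$ form a cofinal system of open neighborhoods of $x_i$ as $U$ varies. Given any open $V \ni x_i$, the set $W \coloneqq V \cup \bigsqcup_{j \neq i} U_{0,j}$ is an open neighborhood of $f^{-1}(y)$ in $X$, so \cref{lemma:many-open-around-rk-1} produces an open $y \in U \subset Y$ with $f^{-1}(U) \subset W$; after shrinking $U$ to a rational subset of $U_0$, the disjointness of the $U_{0,j}$'s forces $f^{-1}(U) \cap U_{0,i} \subset V$, giving cofinality. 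Putting everything together yields the desired isomorphism; the only delicate point is combining the decomposition from \cref{cor:disjoint-opens} with the cofinality argument from \cref{lemma:many-open-around-rk-1}, but neither ingredient is hard on its own.
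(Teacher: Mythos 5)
Your proof is correct and follows essentially the same strategy as the paper's: both use \cref{cor:disjoint-opens} together with \cref{lemma:many-open-around-rk-1}, pass to the colimit over shrinking neighborhoods of $y$, and rely on the identity $\O_Y(U) \otimes_A B \simeq \O_X(f^{-1}(U))$ (which the paper attributes to \cite[Lem.~B.3.6]{Z-quotients} rather than re-deriving it as you do). The only cosmetic difference is that the paper first shrinks and replaces $X$ by a single connected component to reduce to the case $f^{-1}(y) = \{x\}$ before running the colimit argument, whereas you carry the finite product of stalks through the colimit all at once; either bookkeeping works.
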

\begin{proof}
        We use \cref{cor:disjoint-opens} to replace $Y$ with $U$ to assume that  $X=\bigsqcup_i X_i$ and each $X_i$ contains \emph{exactly one} point over $y$. Therefore, we can replace $X$ with $X_i$ to assume that $f^{-1}(y)=\{x\}$. In this case, we need to show that the natural morphism
        \begin{equation}\label{eqn:stalks}
        \O_{Y, y}\otimes_A B \to \O_{X, x}
        \end{equation}
        is an isomorhism. This comes from the following sequence of isomorphisms:
        \begin{align*}
            \O_{Y, y}\otimes_A B & \simeq \bigl(\colim_{V\ni y} \O_Y\left(V\right)\bigr) \otimes_A B \\
                        & \simeq \colim_{V\ni y} \bigl(\O_Y\left(V\right)\otimes_A B\bigr) \\
                        & \simeq \colim_{V\ni y} \O_X\left(f^{-1}\left(V\right)\right) \\
                        & \simeq \colim_{W\ni x} \O_X\left(W\right) \\
                        & \simeq \O_{X, x},
        \end{align*}
        where the third isomorphism comes from \cite[Lem.~B.3.6]{Z-quotients}, and the fifth isomorphism comes from \cref{lemma:many-open-around-rk-1}. 
\end{proof}

Our next goal is to get an analogue of \cref{lemma:decomposes-local-rings} for completed residue fields at rank-$1$ points (under some further assumptions). We start with the following preliminary lemma: 

\begin{lemma}\label{lemma:preimage-of-generalization} Let $f\colon X \to Y$ be a finite morphism of locally noetherian analytic adic spaces, $y\in Y$ be a point with the unique rank-$1$ generalization $y_\gen$. Let $f^{-1}(y) = \{x_i\}_{i\in I}$, and $x_{i, \gen}$ the unique rank-$1$ generalization of $x_i$ for $i\in I$. Then $f^{-1}(y_\gen) = \{x_{i, \gen}\}_{i \in I}$ (some $x_{i, \gen}$ might coincide).
\end{lemma}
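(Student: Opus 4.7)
The plan is to verify the two inclusions of the claimed equality of subsets of $\abs{X}$ separately.

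For the inclusion $\{x_{i,\gen}\}_{i\in I} \subseteq f^{-1}(y_\gen)$: fix $i \in I$. Continuity of $f$ on the underlying topological spaces gives that $f(x_{i,\gen})$ is a generalization of $f(x_i) = y$. By \cref{lemma:finite-same-rank}, we also have $\rk f(x_{i,\gen}) = \rk x_{i,\gen} = 1$. Hence $f(x_{i,\gen})$ is a rank-$1$ generalization of $y$, and uniqueness of the rank-$1$ generalization forces $f(x_{i,\gen}) = y_\gen$.

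For the reverse inclusion, fix $z \in f^{-1}(y_\gen)$ and let $Z \coloneqq \overline{\{z\}} \subseteq \abs{X}$. The key technical input is that finite morphisms of (locally noetherian) analytic adic spaces are closed on the underlying topological spaces, which can be extracted from the analysis of finite morphisms in \cite[Korollar 3.12.12]{Huber-thesis}. Granting this, $f(Z)$ is closed in $\abs{Y}$. By continuity it lies in $\overline{\{f(z)\}} = \overline{\{y_\gen\}}$ and it contains $f(z) = y_\gen$, so $f(Z) = \overline{\{y_\gen\}}$. In particular $y \in f(Z)$, so there exists $x' \in Z$ with $f(x') = y$; necessarily $x' = x_j$ for some $j \in I$. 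Thus $z$ is a generalization of $x_j$, and by \cref{lemma:finite-same-rank} we have $\rk z = \rk y_\gen = 1$. Therefore $z$ must be \emph{the} rank-$1$ generalization $x_{j,\gen}$ of $x_j$, as desired.

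The only subtle point I anticipate is confirming that $f$ is closed as a map of topological spaces. Should the above reference prove insufficient, a hands-on alternative is to first localize on $Y$ to reduce to the affinoid case, then base change along $\Spa(\widehat{k(y)}, \widehat{k(y)}^+) \to Y$ and apply \cref{lemma:finite-over-point} to identify both fibers with the sets of valuations extending $v_y$ (respectively $v_{y_\gen}$) on a finite product of finite field extensions of $\widehat{k(y)}$. The desired matching then follows from the standard valuation-theoretic fact that extending a valuation to a finite field extension commutes with the formation of rank-$1$ generalizations.
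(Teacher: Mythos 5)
Your proof is correct and follows essentially the same strategy as the paper: for the forward inclusion you argue directly via continuity, rank-preservation (\cref{lemma:finite-same-rank}), and uniqueness of rank-$1$ generalizations (where the paper cites \cite[Lem.~1.1.10~(iv)]{Huber-etale}); for the reverse inclusion you use that finite maps are topologically closed to lift the specialization $y_\gen \rightsquigarrow y$ along $f$ (the paper cites \cite[Lem.~1.4.5~(ii)]{Huber-etale} for closedness and then invokes \cite[\href{https://stacks.math.columbia.edu/tag/0066}{Tag 0066}]{stacks-project}, which you re-prove by hand). One small note: the correct reference for closedness of finite morphisms in this context is \cite[Lem.~1.4.5~(ii)]{Huber-etale} rather than \cite[Korollar 3.12.12]{Huber-thesis} (the latter is about finite morphisms of affinoids being affinoid), but your fallback argument is also valid and the mathematical content is sound.
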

\begin{proof}
    First, \cite[Lem.~1.1.10~(iv)]{Huber-etale} implies that $\{x_{i, \gen}\}_{i\in I} \subset f^{-1}(y_\gen)$. Now \cite[Lem.~1.4.5~(ii)]{Huber-etale} implies that $f$ is closed. Therefore, \cite[\href{https://stacks.math.columbia.edu/tag/0066}{Tag 0066}]{stacks-project} implies that any $x\in f^{-1}(y_\gen)$ is a generalization of some $x_i$. Furthermore, \cref{lemma:finite-same-rank} ensures that $x$ is of rank-$1$, so it must be $x_{i, \gen}$.
\end{proof}

\begin{lemma}\label{lemma:reduced-implies-iso} Let $f\colon X=\Spa(B, B^+)\to Y=\Spa(A, A^+)$ be a finite morphism of strongly noetherian Tate affinoid adic spaces, and let $y\in Y$ be a rank-$1$ point. If $\wdh{k(y)} \otimes_A B$ is a reduced ring, then the natural morphism
\[
\wdh{k(y)}\otimes_A B \to \prod_{x_i\in f^{-1}(y)} \wdh{k(x_i)}
\]
is an isomorphism.
\end{lemma}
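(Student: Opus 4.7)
The plan is to reduce to the case of a base field via base change. Set $Y' \colonequals \Spa(\wdh{k(y)}, \wdh{k(y)}^+)$ and $X' \colonequals X \times_Y Y'$; since $B/A$ is finite, $X' = \Spa(B', B'^+)$ where $B' = B \otimes_A \wdh{k(y)}$ is a finite $\wdh{k(y)}$-algebra and $B'^+$ is the integral closure of $\wdh{k(y)}^+$ in $B'$. Because $y$ is of rank $1$, the valuation ring $\wdh{k(y)}^+$ itself has rank $1$, so $Y'$ consists of a unique (rank-$1$) point whose image in $Y$ is $y$; consequently the projection $X' \to X$ maps $\abs{X'}$ into $f^{-1}(y)$.

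The hypothesis says $B'$ is a finite reduced $\wdh{k(y)}$-algebra, so by Artinian structure theory $B' = \prod_{j=1}^m L_j$ with each $L_j$ a finite field extension of $\wdh{k(y)}$. Since $\wdh{k(y)}$ is complete of rank $1$, the valuation extends uniquely to each $L_j$, and $L_j$ is itself complete of rank $1$ with $L_j^+$ the corresponding valuation ring. By \cref{lemma:finite-over-point}, each connected component $\Spa(L_j, L_j^+)$ of $X'$ is a single rank-$1$ point $x'_j$ with $\wdh{k(x'_j)} = L_j$.

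It remains to identify $\{x'_j\}$ with $f^{-1}(y) = \{x_i\}$. For each $x_i$, the natural map $B \to \wdh{k(x_i)}$ extends uniquely to a $\wdh{k(y)}$-algebra map $B' \to \wdh{k(x_i)}$ via the continuous inclusion $\wdh{k(y)} \hookrightarrow \wdh{k(x_i)}$ induced by $f(x_i) = y$. Its image is $\wdh{k(y)}[B/\p_{x_i}]$, where $\p_{x_i} = \supp(x_i)$; since $B/\p_{x_i}$ is a finite $A/\p_y$-module, this image is a finite $\wdh{k(y)}$-subalgebra of the field $\wdh{k(x_i)}$, hence an Artinian integral domain, hence a field. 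As a field containing the integral domain $B/\p_{x_i}$, it also contains $k(x_i) = \Frac(B/\p_{x_i})$. And as a finite extension of the complete rank-$1$ field $\wdh{k(y)}$, it is itself complete; containing the dense subfield $k(x_i) \subset \wdh{k(x_i)}$, it must coincide with $\wdh{k(x_i)}$. Therefore $B' \to \wdh{k(x_i)}$ is surjective, factoring through a unique factor $L_{j(i)} \xrightarrow{\sim} \wdh{k(x_i)}$. This inverts the assignment $x'_j \mapsto$ (its image in $X$), producing a bijection $\{L_j\} \leftrightarrow f^{-1}(y)$ and yielding the claimed identification $\wdh{k(y)} \otimes_A B = \prod_j L_j \xrightarrow{\sim} \prod_i \wdh{k(x_i)}$.

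The main obstacle is establishing that the image of $B'$ in each $\wdh{k(x_i)}$ is all of $\wdh{k(x_i)}$, which is what rigidifies the factor $L_{j(i)}$ as canonically identified with $\wdh{k(x_i)}$ rather than a strict subfield; this crucially uses both the finiteness of $B/A$ and the standard fact that finite extensions of complete rank-$1$ nonarchimedean fields remain complete.
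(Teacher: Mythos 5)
Your proof is correct, but takes a genuinely different route from the paper's. The paper first invokes \cref{lemma:finite-same-rank} and \cref{cor:disjoint-opens} to shrink $Y$ so that $f^{-1}(y)$ becomes a single point, \emph{then} base changes along $\Spa(\wdh{k(y)}, \wdh{k(y)}^\circ) \to Y$, at which point $X$ becomes connected reduced finite over a nonarchimedean field and \cref{lemma:finite-over-point} finishes it off in one line. You skip the geometric localization entirely and instead work globally: base change, decompose $B' = \prod_j L_j$ by the Artinian structure theorem, and directly match up the factors $L_j$ with the points of $f^{-1}(y)$ by showing each canonical $\wdh{k(y)}$-algebra map $B' \to \wdh{k(x_i)}$ is surjective. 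Your argument is more self-contained and makes the Artinian decomposition of $\wdh{k(y)} \otimes_A B$ explicit without any localization; the paper's route trades that for a cleaner endgame where the bijection issue never arises.

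Two points deserve care in your write-up. First, the identification $k(x_i) = \Frac(B/\p_{x_i})$ is not literally correct: in Huber's framework $k(x_i)$ is the residue field of $\cO_{X,x_i}$, which can strictly contain $\Frac(B/\p_{x_i})$ (e.g., at a Gauss point). What your argument actually needs — and what is true for rank-$1$ points, per the ``Caveat on residue fields'' discussion in Conrad's notes that the paper cites inside \cref{lemma:finite-over-point} — is that $\Frac(B/\p_{x_i})$ is \emph{dense} in $\wdh{k(x_i)}$; your surrounding reasoning (image of $B'$ is a finite, hence complete, extension of $\wdh{k(y)}$ containing this dense subfield) then goes through verbatim. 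Second, the concluding sentence that your assignment ``inverts'' $x'_j \mapsto (\text{image in } X)$ asserts a bijection $\{L_j\} \leftrightarrow f^{-1}(y)$ but only the forward direction (every $x_i$ is hit) is explicitly verified. The missing half — that distinct $j$'s give distinct image points, or equivalently that every factor $L_j$ arises from some $x_i$ — follows by essentially the same density argument run in the other direction ($\Frac(B/\p) \subset L_j$ is dense, so $L_j$ is identified with the completed residue field of the image point, and the kernel of $B' \to L_j$ is determined), but it should be spelled out. Neither point is a fatal gap; both are fixable with a sentence or two, and the overall strategy is sound.
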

\begin{proof}
    Throughout this proof, we will freely use \cite[Lem.~B.3.6]{Z-quotients} that ensures that certain completed tensor products coincide with the usual tensor products.  

    That being said, we can use \cref{lemma:finite-same-rank} and \cref{cor:disjoint-opens} to reduce to the situation $f^{-1}(y)=x$ is a unique rank-$1$ point. Then we can replace $Y$ with $\Spa\bigl(\wdh{k(y)}, \wdh{k(y)}^\circ\bigr)$ to assume that $Y=\Spa(K, \O_K)$ for a nonarchimedean field $K$. Then our assumption on $X=\Spa(B, B^+)$ implies that it is a reduced adic space which is finite over $\Spa(K, \O_K)$, and $\abs{X}$ is a singleton (in particular, it is connected). 
    Therefore, \cref{lemma:finite-over-point}\cref{lemma:finite-over-point-1}, \cref{lemma:finite-over-point-4}, \cref{lemma:finite-over-point-5} imply that $B\simeq \wdh{k(x)}$. Thus the map
    \[
    \wdh{k(y)} \otimes_A B \simeq K\otimes_K \wdh{k(x)} \to \wdh{k(x)}
    \]
    is obviously an isomorphism. 
\end{proof}

Now we want to get an analogue of \cref{lemma:reduced-implies-iso} for higher rank points. For this, we will need to work with henselized completed residue fields.  

We start by proving the following general lemma in commutative algebra:

\begin{lemma}\label{lemma:CRT} Let $I_1, I_2, \dots, I_n \subset A$ be ideals in a ring $A$. Suppose that for each $1\leq i,j\leq n$, $I_i+I_j=A$. Then the natural morphism
\[
A^{\rm{h}}_{I_1\cap I_2\cap\dots\cap I_n} \to \prod_{i=1}^n A^{\rm{h}}_{I_i}
\]
is an isomorphism.
\end{lemma}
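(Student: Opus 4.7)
The plan is to exploit the universal property of henselization along an ideal: the functor $(A, J) \mapsto (A^{\rm{h}}_J, J A^{\rm{h}}_J)$ is left adjoint to the inclusion of henselian pairs into all pairs, so every map of pairs $(A, J) \to (B, K)$ with $(B, K)$ henselian factors uniquely through $(A^{\rm{h}}_J, J A^{\rm{h}}_J)$. Throughout, set $J \coloneqq I_1 \cap \cdots \cap I_n$.

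To produce the forward map $\Phi \colon A^{\rm{h}}_J \to \prod_i A^{\rm{h}}_{I_i}$, first recall that pairwise comaximality and the classical Chinese Remainder Theorem give $A/J \simeq \prod_i A/I_i$. Since a product of henselian pairs is again henselian (any monic polynomial with a simple root modulo the ideal decomposes componentwise, and each component lifts by henselianness of its factor), the pair $\bigl(\prod_i A^{\rm{h}}_{I_i}, \prod_i I_i A^{\rm{h}}_{I_i}\bigr)$ is henselian. As $J \subset I_i$ for every $i$, the canonical map $A \to \prod_i A^{\rm{h}}_{I_i}$ is a map of pairs from $(A, J)$, and the universal property delivers $\Phi$; by construction, this is the map appearing in the statement.

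For the inverse, I lift the orthogonal idempotents $e_1, \ldots, e_n \in A/J$ coming from the CRT decomposition to orthogonal idempotents $\tilde e_1, \ldots, \tilde e_n \in A^{\rm{h}}_J$ that sum to $1$, using henselianness of $(A^{\rm{h}}_J, J A^{\rm{h}}_J)$ and the fact that each $e_i$ is a simple root of $x^2 - x$. This yields a product decomposition $A^{\rm{h}}_J \simeq \prod_i A^{\rm{h}}_J \cdot \tilde e_i$. The crucial technical identity I expect to verify is $J A^{\rm{h}}_J \cdot \tilde e_i = I_i A^{\rm{h}}_J \cdot \tilde e_i$: one picks $b_i \in \bigcap_{k \neq i} I_k$ with $b_i \equiv 1 \bmod I_i$ (available by pairwise comaximality), notes that $b_i$ maps to $e_i$ in $A/J$ so $\tilde e_i - b_i \in J A^{\rm{h}}_J$, and for $a \in I_i$ writes $a \tilde e_i = a b_i + a(\tilde e_i - b_i)$ where $a b_i \in I_i \cap \bigcap_{k \neq i} I_k = J$; hence $a \tilde e_i \in J A^{\rm{h}}_J$ and therefore $a \tilde e_i = a \tilde e_i^2 \in J A^{\rm{h}}_J \cdot \tilde e_i$.

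Granting this identity, $(A^{\rm{h}}_J \cdot \tilde e_i, I_i A^{\rm{h}}_J \cdot \tilde e_i)$ is a henselian pair with quotient $A/I_i$, and the universal property applied to the natural map of pairs $(A, I_i) \to (A^{\rm{h}}_J \cdot \tilde e_i, I_i A^{\rm{h}}_J \cdot \tilde e_i)$ produces a canonical map $A^{\rm{h}}_{I_i} \to A^{\rm{h}}_J \cdot \tilde e_i$ for each $i$. These assemble into $\Psi \colon \prod_i A^{\rm{h}}_{I_i} \to A^{\rm{h}}_J$, and the uniqueness clause in the universal property forces both compositions $\Phi \circ \Psi$ and $\Psi \circ \Phi$ to be identities, so $\Phi$ is the desired isomorphism. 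The main obstacle I anticipate is the comaximality computation identifying $J A^{\rm{h}}_J \cdot \tilde e_i$ with $I_i A^{\rm{h}}_J \cdot \tilde e_i$; everything else is formal manipulation of the adjunction.
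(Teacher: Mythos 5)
Your proposal is correct in its essentials but takes a genuinely different route from the paper, which is worth comparing.

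The paper first reduces to the case $n=2$ by comaximality (apply the case $n=2$ to $I_1$ and $I_2 \cap \cdots \cap I_n$ and induct). It then observes that $(I_1 \cap I_2)A^\h_{I_i} = I_i A^\h_{I_i}$ for $i = 1, 2$ (because $I_i A^\h_{I_i}$ sits inside the Jacobson radical of $A^\h_{I_i}$, so comaximality forces $I_j A^\h_{I_i} = A^\h_{I_i}$ for $j \neq i$), and concludes that $A^\h_{I_1} \times A^\h_{I_2}$ is a henselian pair along $(I_1 \cap I_2)$ and ind-\'etale over $A$. It then invokes the characterization of the henselization as the essentially unique ind-\'etale henselian pair over $(A, I_1 \cap I_2)$ that is an isomorphism modulo $I_1 \cap I_2$, and that last condition reduces to the ordinary Chinese Remainder Theorem. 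This is economical but relies on the "ind-\'etale $+$ henselian $+$ iso mod $J$" recognition criterion. You instead work directly with the universal property: you lift the CRT idempotents from $A/J$ to $A^\h_J$, split $A^\h_J$ into factors $A^\h_J \tilde e_i$, prove the pivot identity $J A^\h_J \tilde e_i = I_i A^\h_J \tilde e_i$ by an explicit element-chase with $b_i$, and then use the universal property twice to build a two-sided inverse. Note that your pivot identity is the mirror of the paper's identity $(I_1 \cap I_2)A^\h_{I_i} = I_i A^\h_{I_i}$ — same comaximality phenomenon, viewed from the other side of the desired isomorphism — but your argument for it is more elementary (no appeal to the Jacobson radical) and, agreeably, you never need to know that $A^\h_J \tilde e_i$ is ind-\'etale over $A$.

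One step in your last paragraph is glossed: the claim that the uniqueness clause alone forces $\Phi \circ \Psi = \id$. Uniqueness directly gives $\Psi \circ \Phi = \id$, because both it and $\id$ are factorizations of the henselization map $(A, J) \to (A^\h_J, J A^\h_J)$ through itself. But $\prod_i A^\h_{I_i}$ is not itself a henselization of $(A, J)$, so uniqueness is not immediately applicable to $\Phi \circ \Psi$. The fix is to first check that $\Phi$ respects the product decompositions, i.e., $\Phi_j(\tilde e_i) = \delta_{ij}$: this is because $\Phi_j(\tilde e_i)$ is an idempotent of $A^\h_{I_j}$ congruent to $\delta_{ij}$ modulo $I_j A^\h_{I_j}$, hence equals $\delta_{ij}$ by uniqueness of idempotent lifts along the henselian pair $(A^\h_{I_j}, I_j A^\h_{I_j})$. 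Once this is in place, $\Phi$ restricts to maps $A^\h_J \tilde e_j \to A^\h_{I_j}$ of pairs over $(A, I_j)$, and then the universal property of $A^\h_{I_j}$ does give $\Phi_j \circ \Psi_j = \id$, hence $\Phi \circ \Psi = \id$. You already have the idempotent-lifting tool in hand from the construction of the $\tilde e_i$, so this is a small omission rather than a real obstacle, but it should be spelled out.
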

\begin{proof}
First, we notice that $(\cap_{j \not = i} I_j) + I_i = A$ for every $i=1, \dots, n$. So we can assume that $n=2$. Now we note $I_1A^\h_{I_1}$ is a radical ideal, so the assumption $I_1+I_2 = A$ implies $I_2A^\h_{I_1}= A^\h_{I_1}$. Thus, $(I_1\cap I_2)A^\h_{I_1} =I_1A^\h_{I_1}$ and, similarly, $(I_1\cap I_2)A^\h_{I_2} =I_2A^\h_{I_2}$. Therefore, $A^\h_{I_1} \times A^\h_{I_2}$ is henselian along $I_1\cap I_2$ and ind-\'etale over $A$. Thus, in order to check that the natural morphism
\[
A^\h_{I_1\cap I_2} \to A^\h_{I_1} \times A^\h_{I_2}
\]
is an isomorphism, it suffices to check it modulo $I_1 \cap I_2$. Using \cite[\href{https://stacks.math.columbia.edu/tag/0AGU}{Tag 0AGU}]{stacks-project} and the equalities $(I_1\cap I_2) A^\h_{I_i} = I_i A^\h_{I_i}$, we conclude that it suffices to show that $A/(I_1\cap I_2) \to A/I_1 \times A/I_2$ is an isomorphism. This follows from our assumption that $I_1+I_2=A$. 
\end{proof}

Finally, we are ready to prove the main result of this subsection: 

\begin{theorem}\label{thm:iso-henselized-completed-residue-fields} Let $f\colon X=\Spa(B, B^+)\to Y=\Spa(A, A^+)$ be a finite morphism of strongly noetherian Tate affinoids, and let $y\in Y$ be a point with the unique rank-$1$ generalization $y_{\rm{gen}}$. If $\wdh{k(y_\gen)}\otimes_A B$ is reduced, then the natural morphism
\[
\wdh{k(y)}^{\rm{h}}\otimes_A B \to \prod_{x_i\in f^{-1}(y)} \wdh{k(x_i)}^{\rm{h}}
\]
is an isomorphism.
\end{theorem}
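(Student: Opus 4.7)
The plan is to combine the three tools assembled in this subsection: the rank-$1$ decomposition of \cref{lemma:reduced-implies-iso}, the local structure of finite extensions of affinoid fields from \cref{lemma:finite-over-point}, and the henselian Chinese Remainder Theorem \cref{lemma:CRT}. Before starting, observe that since the topology on $k(y)$ is induced by the rank-$1$ generalization, one has $\wdh{k(y)} = \wdh{k(y_{\gen})}$ as complete topological fields, with $\wdh{k(y)}^+$ sitting inside the rank-$1$ valuation ring $\wdh{k(y_{\gen})}^+$ as a higher-rank subring; the analogous identifications $\wdh{k(x_i)} = \wdh{k(x_{i,\gen})}$ hold on the source side. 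Set $K \coloneqq \wdh{k(y)}$ and $K^+ \coloneqq \wdh{k(y)}^+$.

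First, I would compute $B' \coloneqq K \otimes_A B$: by the reducedness hypothesis and \cref{lemma:reduced-implies-iso} applied at the rank-$1$ point $y_{\gen}$, we get $B' \simeq \prod_{x' \in f^{-1}(y_{\gen})} \wdh{k(x')}$. Taking integral closures of $K^+$ factorwise, the integral closure $(B')^+$ of $K^+$ in $B'$ decomposes as $\prod_{x'} L_{x'}^{+}$, where $L_{x'}^{+}$ is the integral closure of $K^+$ in $\wdh{k(x')}$. Next I would apply \cref{lemma:finite-over-point} to each finite morphism $\Spa(\wdh{k(x')}, L_{x'}^{+}) \to \Spa(K, K^+)$ (using \cref{lemma:preimage-of-generalization} to match up fibers): this identifies the points of $\Spa(\wdh{k(x')}, L_{x'}^{+})$ lying over the closed point of $\Spa(K, K^+)$ with those $x_i \in f^{-1}(y)$ satisfying $x_{i,\gen} = x'$, realizes $L_{x'}^{+}$ as a semilocal ring whose maximal ideals $\m_{x_i}$ have localizations $(L_{x'}^{+})_{\m_{x_i}} \simeq \wdh{k(x_i)}^+$, and gives $\rm{rad}\bigl(\m_K (B')^+\bigr) = \bigcap_i \m_{x_i}$.

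The second half is then a bookkeeping exercise. Since $(B')^+$ is finite over $K^+$, henselization commutes with this finite base change, so $K^{+,\h} \otimes_{K^+} (B')^+$ coincides with the henselization of $(B')^+$ along $\m_K (B')^+$, equivalently along its Jacobson radical $\bigcap_i \m_{x_i}$. As the maximal ideals of the semilocal ring $(B')^+$ are pairwise coprime, \cref{lemma:CRT} then splits this henselization as $\prod_i \wdh{k(x_i)}^{+,\h}$. Inverting the pseudo-uniformizer $\varpi$, using the identifications $K^\h = K^{+,\h}[1/\varpi]$ and $\wdh{k(x_i)}^\h = \wdh{k(x_i)}^{+,\h}[1/\varpi]$, and rewriting $K^\h \otimes_A B = K^\h \otimes_K B' = \bigl(K^{+,\h} \otimes_{K^+} (B')^+\bigr)[1/\varpi]$ yields the claimed isomorphism, which is evidently the natural map.

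The main technical hurdle I anticipate is verifying the topological identifications $\wdh{k(y)} = \wdh{k(y_{\gen})}$ and $\wdh{k(x_i)} = \wdh{k(x_{i,\gen})}$ (together with the corresponding comparison of valuation subrings) carefully enough to bridge the rank-$1$ hypothesis of \cref{lemma:reduced-implies-iso} with the higher-rank conclusion; once those are in place, the rest of the argument reduces to standard manipulations of henselizations of finite (semilocal) extensions.
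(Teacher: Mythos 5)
Your proposal is correct and follows essentially the same route as the paper: the same lemmas (\cref{lemma:reduced-implies-iso}, \cref{lemma:finite-over-point}, \cref{lemma:preimage-of-generalization}, \cref{lemma:CRT}, and Tag 0DYE) are combined in the same order, with the same inversion of $\varpi$ at the end. The one small divergence is that the paper first invokes \cref{cor:disjoint-opens} to shrink $Y$ and hence reduce to the case where all the $x_i$ share a single rank-$1$ generalization $x_{\gen}$, which turns $\wdh{k(y_\gen)}\otimes_A B$ into a single field $\wdh{k(x_\gen)}$ rather than a product; you skip that reduction and carry the full product $\prod_{x'\in f^{-1}(y_\gen)}\wdh{k(x')}$ through the argument. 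This is a harmless cosmetic variation — the integral closure, henselization, and CRT all commute with finite products factorwise, so the bookkeeping goes through just as well, and your version arguably makes the role of each factor more transparent at the cost of slightly heavier notation.
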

\begin{proof}
    Let us denote by $x_{i, \rm{gen}}$ the unique rank-$1$ generalization of $x_i$ for each $x_i\in f^{-1}(y)$. Then \cref{lemma:preimage-of-generalization} implies that the sets $\{x_{i, \gen}\}_{i\in I}$ and $f^{-1}(y_\gen)$ coincide. Then \cref{cor:disjoint-opens} allows us to reduce to the situation when $x_{i, \rm{gen}}=x_{j, \rm{gen}}$ for any $i, j$. We denote this common rank-$1$ generalization by $x_{\gen}$.  
    
    Now \cref{lemma:reduced-implies-iso} implies that the natural morphism
    \begin{equation}\label{eqn:weak-iso}
    \wdh{k(y_\gen)} \otimes_A B \to \wdh{k(x_\gen)}
    \end{equation}
    is an isomorphism. In particular, $\wdh{k(x_{\rm{gen}})}$ is a finite extension of $\wdh{k(y_{\rm{gen}})}$. Let us denote by $R^+$ the integral closure of $\wdh{k(y)}^+$ in $\wdh{k(x_{\rm{gen}})}$. Then \cref{lemma:finite-over-point} implies that
    \[
    R^+ = \bigcap_{i=1}^n \wdh{k(x_i)}^+.
    \]
    Let us denote by $\m\subset \wdh{k(y)}^+$ the maximal ideal in $\wdh{k(y)}^+$, and by $\m'_i \subset \wdh{k(x_i)}^+$ the maximal ideal in $\wdh{k(x_i)}^+$. Now \cref{lemma:finite-over-point}\cref{lemma:finite-over-point-6} implies that $R^+$ is a semi-local ring with maximal ideals given by $\m_i\coloneqq R^+\cap \m'_i$, and that the natural morphism $R^+_{\m_i} \to \wdh{k(x_i)}^+$ is an isomorphism of local rings. Furthermore, \cref{lemma:finite-over-point}\cref{lemma:finite-over-point-7} implies that $\rm{rad}(\m R^+)=\cap_{i=1}^n \m_i$. Therefore, \cite[\href{https://stacks.math.columbia.edu/tag/0DYE}{Tag 0DYE}]{stacks-project} and \cref{lemma:CRT} (applied to the maximal ideals $\m_i$) imply that
    \[
    \wdh{k(y)}^{+, \rm{h}} \otimes_{\wdh{k(y)}^+} R^+ \simeq R^{+, \rm{h}}_{\cap_{i=1}^n \m_i} =
    \prod_{i=1}^n R^{+, \rm{h}}_{\m_i} = \prod_{i=1}^n \wdh{k(x_i)}^{+, \rm{h}}.
    \]
    Now we recall that the natural morphism $\wdh{k(y)} \to \wdh{k(y_\gen)}$ is an isomorphism due to \cite[Lem.~1.1.10~(iii)]{Huber-etale}. Thus, after inverting a pseudo-uniformizer, we get the isomorphism
    \begin{equation*}
    \wdh{k(y)}^{\rm{h}} \otimes_{\wdh{k(y_{\rm{gen}})}} \wdh{k(x_{\rm{gen}})} \simeq \prod_{i=1}^n \wdh{k(x_i)}^{\rm{h}}.
    \end{equation*}
    Combining it with \cref{eqn:weak-iso}, we conclude that the desired isomorphism
    \[
    \wdh{k(y)}^{\rm{h}} \otimes_A B \simeq \wdh{k(y)}^{\rm{h}} \otimes_{\wdh{k(y_{\rm{gen}})}} \wdh{k(y_{\rm{gen}})}
    \otimes_A B \simeq \wdh{k(y)}^{\rm{h}} \otimes_{\wdh{k(y_{\rm{gen}})}} \wdh{k(x_{\rm{gen}})} \simeq \prod_{i=1}^n \wdh{k(x_i)}^{\rm{h}}.\qedhere
    \]
\end{proof}

Even though the conclusion of \cref{thm:iso-henselized-completed-residue-fields} does not hold for an arbitrary finite morphism, it can still be used to study properties of arbitrary finite morphisms: 

\begin{corollary}\label{cor:finite-morphism-finite-on-henselized-completions} Let $f\colon X\to Y$ be a finite morphism of locally noetherian analytic adic spaces, and let $x\in X$ be an arbitrary point with $y=f(x)$. Then $\wdh{k(y)}^{\h} \to \wdh{k(x)}^\h$ is a finite field extension.
\end{corollary}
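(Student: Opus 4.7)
The plan is to reduce the corollary to the purely field-theoretic claim that for any finite extension of valued fields $(k(y), v_y) \subset (k(x), v_x)$, the induced map $\wdh{k(y)}^{\h} \hookrightarrow \wdh{k(x)}^{\h}$ is a finite field extension. First I would pass to open affinoid neighborhoods $y \in V = \Spa(A, A^+)$ and $x \in U = \Spa(B, B^+)$ with $f|_U \colon U \to V$ finite, so that $B$ is a module-finite $A$-algebra. Since $\supp(y) = A \cap \supp(x)$, the induced ring map $A/\supp(y) \hookrightarrow B/\supp(x)$ is a module-finite injection of domains, and passing to fraction fields produces a finite field extension $k(y) = \Frac(A/\supp(y)) \hookrightarrow \Frac(B/\supp(x)) = k(x)$.

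Next I would consider the natural $\wdh{k(y)}^{\h}$-algebra homomorphism $\varphi \colon \wdh{k(y)}^{\h} \otimes_A B \to \wdh{k(x)}^{\h}$, which is well-defined because the two induced maps $A \to \wdh{k(x)}^{\h}$ both factor through $A \to k(x)$. Since $B$ is finite over $A$, the source is finite over $\wdh{k(y)}^{\h}$, so the image $M$ of $\varphi$ is a finite $\wdh{k(y)}^{\h}$-subalgebra of $\wdh{k(x)}^{\h}$. As an integral subring of a field that is finite-dimensional over a field, $M$ is itself a finite field extension of $\wdh{k(y)}^{\h}$; moreover it contains both $\wdh{k(y)}$ and $k(x)$.

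To conclude I would show $M = \wdh{k(x)}^{\h}$ via two inclusions. For $M \supset \wdh{k(x)}$: the compositum $\wdh{k(y)} \cdot k(x) \subset M$, taken inside $\wdh{k(x)}$, is a finite, and therefore complete, extension of the complete field $\wdh{k(y)}$, and it contains the dense subfield $k(x)$ of $\wdh{k(x)}$, so it must equal $\wdh{k(x)}$. For $M \supset \wdh{k(x)}^{\h}$: the field $M$ is henselian, being a finite extension of the henselian field $\wdh{k(y)}^{\h}$ (cf.\ \cref{rmk:equivalent-defns-henselian}), so the universal property of henselization applied to the inclusion $\wdh{k(x)} \hookrightarrow M$ yields a map $\wdh{k(x)}^{\h} \to M$ sitting inside $\wdh{k(x)}^{\h}$, which must agree with the canonical inclusion. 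Combined with the obvious $M \subset \wdh{k(x)}^{\h}$, this gives $M = \wdh{k(x)}^{\h}$, and hence $\wdh{k(x)}^{\h}$ is finite over $\wdh{k(y)}^{\h}$.

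The main technical point I anticipate is the verification of the two classical valuation-theoretic facts used above: a finite extension of a complete (respectively, henselian) valued field is again complete (respectively, henselian) in its canonical topology. In the rank-$1$ case these are standard, but in the higher-rank setting of general points of analytic adic spaces, where the topology is defined via a pseudo-uniformizer, one needs to verify that the canonical topology on such a finite extension agrees with the subspace topology inherited from $\wdh{k(x)}^{\h}$, so that \emph{complete} and \emph{closed} really coincide.
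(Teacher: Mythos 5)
Your argument takes a genuinely different route from the paper's. The paper reduces to $Y=\Spa\bigl(\wdh{k(y)},\wdh{k(y)}^+\bigr)$, replaces $X$ by its reduction and a connected component, and then quotes \cref{thm:iso-henselized-completed-residue-fields}, which gives the full ring isomorphism $\wdh{k(y)}^{\h}\otimes_A B \xrightarrow{\sim} \prod_i \wdh{k(x_i)}^{\h}$; from this the finiteness drops out at once. You instead analyze the image $M$ of the natural map $\varphi\colon\wdh{k(y)}^{\h}\otimes_A B \to \wdh{k(x)}^{\h}$ and show $M=\wdh{k(x)}^{\h}$ by a density-plus-completeness argument followed by the universal property of henselization. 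This is more elementary: it bypasses the reducedness hypothesis and the product decomposition of \cref{thm:iso-henselized-completed-residue-fields}, and your second inclusion $\wdh{k(x)}^{\h}\subset M$ via the universal property of henselization is clean and correct.

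There are, however, two places that need repair. First, the identifications $k(y)=\Frac(A/\supp(y))$ and $k(x)=\Frac(B/\supp(x))$ are not correct: the residue field of the local ring $\O_{X,x}$ is in general strictly larger than $\Frac(B/\supp(x))$. What is true (this is exactly the ``caveat on residue fields'' invoked in the proof of \cref{lemma:finite-over-point}\cref{lemma:finite-over-point-5}) is that $\Frac(B/\supp(x))\hookrightarrow k(x)$ is dense for the valuation topology, so both have the same completion $\wdh{k(x)}$ and therefore the same henselized completed residue field. You should therefore run the argument with $\Frac(B/\supp(x))$ in place of $k(x)$ throughout: $\Frac(A/\supp(y))\hookrightarrow\Frac(B/\supp(x))$ is the finite extension you want, the image of $1\otimes B$ in the field $M$ contains $\Frac(B/\supp(x))$, and $\Frac(B/\supp(x))$ is still dense in $\wdh{k(x)}$. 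With that substitution the first inclusion goes through.

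Second, the technical point you flag at the end is genuine and must be addressed, not merely noted. For points $x$ of rank $>1$ the topology on $\wdh{k(x)}^{\h}$ is the microbial one coming from a pseudo-uniformizer $\varpi$, not the full order topology of the value group; consequently ``finite extension of a complete field is complete'' is not a purely valuation-theoretic statement here. Concretely, you need to know that the valuation ring $L\cap\wdh{k(x)}^{+,\h}$ of your compositum $L$ is bounded for the canonical Tate-ring topology on $L$, so that the subspace topology from $\wdh{k(x)}^{\h}$ coincides with the intrinsic one in which $L$ is complete. This is true --- it follows from the uniqueness of the topology making a finite algebra over a complete Tate field into a complete Tate ring, together with the fact that microbial valuation rings are preserved under finite extensions --- but it requires an argument or a citation (e.g.\ to \cite{H0}); as written, the step ``finite, and therefore complete'' is a gap.
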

\begin{proof}
    Without loss of generality, we can assume that $Y=\Spa\bigl(\widehat{k(y)}, \widehat{k(y)}^+\bigr)$. Then we can replace $X$ by its reduction, and then pass to connected components to assume that $X$ is reduced and connected. In this case, the assumption of \cref{thm:iso-henselized-completed-residue-fields} is obviously satisfied since $\wdh{k(y)}\simeq \wdh{k(y_\gen)}$. Therefore the result follows directly from \cref{thm:iso-henselized-completed-residue-fields}.
\end{proof}

We finish the subsection by establishing two examples when the assumptions of \cref{thm:iso-henselized-completed-residue-fields} is automatic. 

\begin{example} The assumption of \cref{thm:iso-henselized-completed-residue-fields} is always satisfied if $f\colon X \to Y$ is a finite \emph{\'etale} morphism.
\end{example}

Now we give another, more elaborate example: 

\begin{lemma}\label{lemma:weakly-shilov-fiber-reduced} Let $K$ be a nonarchimedean field, let $f\colon X=\Spa(B, B^\circ) \to Y=\Spa(A, A^\circ)$ be a finite morphism of smooth rigid-analytic affinoid spaces over $K$, and let $y\in Y$ be a weakly Shilov point of $Y$ (in the sense of \cite[Def.~2.5]{BH}). Then $\wdh{k(y)} \otimes_A B$ is reduced. 
\end{lemma}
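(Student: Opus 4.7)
The plan is to show that $f$ is finite \'etale in some Zariski open neighborhood of $y$, after which the conclusion follows from the fact that finite \'etale base change preserves reducedness. The crucial input will be that the weakly Shilov hypothesis prevents $y$ from lying in any proper Zariski closed analytic subset of $Y$, via a standard transcendence-degree comparison.

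First I would pass to the connected component of $Y$ containing $y$ to assume $Y$ is smooth and irreducible of some dimension $d$. If $\dim X < d$, then $f(X)$ is contained in a proper Zariski closed subset of $Y$, and the transcendence-degree argument in the next paragraph will show $y\notin f(X)$, giving $f^{-1}(y) = \varnothing$ and $\wdh{k(y)} \otimes_A B = 0$. Otherwise $f$ is finite surjective between smooth irreducible spaces of the same dimension $d$, and in particular flat by miracle flatness.

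In this situation (and since the paper works over a nonarchimedean field $K$ of characteristic zero), the map $f$ is generically \'etale: the finite reduced $\rm{Frac}(A)$-algebra $\rm{Frac}(A)\otimes_A B$ is automatically a product of finite separable extensions of $\rm{Frac}(A)$. Hence there is a dense Zariski open $U\subseteq Y$ on which $f$ is finite \'etale, with closed complement $Z \coloneqq Y\smallsetminus U$ of dimension strictly less than $d$. The central step is to verify $y\in U$. Using the transcendence-degree characterization of weakly Shilov points, cf.\ \cite[Def.~2.5]{BH}, one has $\rm{trdeg}_{k}\bigl(\wdh{k(y)}^+/\m\bigr) = d$, where $k$ denotes the residue field of $\cO_K$; on the other hand, if $y$ were a rank-$1$ point of $Z$, the general bound $\rm{trdeg}_{k}\bigl(\wdh{k(z)}^+/\m\bigr) \leq \dim_z W$ for rank-$1$ points $z$ of an analytic adic space $W$ would force $\rm{trdeg}_{k}\bigl(\wdh{k(y)}^+/\m\bigr) \leq \dim Z < d$, a contradiction.

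Once $y\in U$, I would shrink $Y$ to $U$ so that $f$ becomes finite \'etale, making $\wdh{k(y)} \otimes_A B$ finite \'etale over the field $\wdh{k(y)}$ and hence a product of finite separable field extensions of $\wdh{k(y)}$ --- in particular reduced. The hardest part of the argument is the transcendence-degree inequality confirming $y\notin Z$; once the weakly Shilov characterization is unpacked, the remaining reductions are essentially formal.
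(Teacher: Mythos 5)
Your proof takes a genuinely different route from the paper's. The paper argues in two sentences: shrink $Y$ to an affinoid neighborhood of $y$ on which $y$ is a genuine Shilov point (tensor products against $\wdh{k(y)}$ are unaffected by this shrinking), then cite \cite[Th.~2.25]{BH}, which asserts that $\wdh{k(y)}\otimes_A B$ is regular, hence reduced. You instead show that $y$ lies in the finite \'etale locus of $f$ by combining generic \'etaleness, the transcendence-degree characterization of weakly Shilov points, and an Abhyankar-type residual-transcendence-degree bound. Your approach avoids black-boxing the Shilov-fiber regularity theorem and is attractive for that reason, and when it applies, it gives essentially formal reducedness (finite \'etale over a field).

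However, there is a real gap: you explicitly assume $\charac K = 0$, but the lemma is stated in \cref{preliminaries section} which carries no characteristic hypothesis (the paper only fixes $\charac K = 0$ from \cref{proper-trace} onward), and the lemma is invoked in \cref{section:curves} where $C$ is merely algebraically closed nonarchimedean. Your argument genuinely needs characteristic zero: in characteristic $p$, the map $\DD^1_K \to \DD^1_K$, $T \mapsto S^p$, is a finite flat morphism of smooth affinoids that is \emph{nowhere} unramified, so there is no dense Zariski-open $U$ on which $f$ is \'etale and your ``central step'' never gets off the ground --- yet the conclusion still holds there, since $\wdh{k(y)}[S]/(S^p - T)$ is a purely inseparable field extension of $\wdh{k(y)}$, hence reduced. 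So your route does not recover the lemma in positive characteristic, where a direct regularity argument (as in the paper) is still needed. Separately, your reduction to ``$Y$ smooth irreducible'' glosses over the fact that $X = \Spa(B,B^\circ)$ may remain disconnected with components of varying dimension mapping to $Y$; one has to split $B$ accordingly and run the two cases (surjective of dimension $d$, resp.\ image a proper Zariski-closed subset) component by component. This is easily repaired, but as written the dichotomy ``$\dim X < d$ or $\dim X = d$'' is not quite what you need.
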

\begin{proof}
    First, there is an open affinoid $U\subset Y$ neighborhood of $y$ such that $y\in U$ is a Shilov point. Then \cite[Lem.~B.3.6]{Z-quotients} guarantees that \[
    \wdh{k(y)}\otimes_A B \simeq \wdh{k(y)} \otimes_{\O_Y(U)} \O_Y(U) \otimes_A B \simeq \wdh{k(y)} \otimes_{\O_Y(U)} \O_X(X_U).
    \]
    Therefore, we can replace $Y$ with $U$ and assume that $y\in Y$ is a Shilov point. Then \cite[Th.~2.25]{BH} implies that $\wdh{k(y)}\otimes_A B$ is a regular algebra. In particular, it is reduced.
\end{proof}

\subsection{Finite flat morphisms}

In this subsection, we collect some facts about flat and finite flat morphisms of locally noetherian analytic adic spaces. In particular, we show that the universal compactification of a finite flat morphism is always finite and flat.  

We start by recalling the following definition: 

\begin{definition}
    A morphism $f\colon X \to Y$ of locally noetherian analytic adic spaces is \emph{flat} if, for every point $x\in X$, the morphism $\O_{Y, f(x)} \to \O_{X, x}$ is a flat morphism of local rings. 
\end{definition}

\begin{remark} Flat morphisms are closed under compositions. But it is not clear (and probably false) whether they are closed under base change. 
\end{remark}

\begin{lemma}\label{lemma:flat-maximal-points} Let $f\colon X \to Y$ be a morphism of locally noetherian analytic adic spaces. Suppose that, for any rank-$1$ point $x\in X$, the morphism $\O_{Y, f(x)} \to \O_{X, x}$ is flat. Then $f$ is flat.
\end{lemma}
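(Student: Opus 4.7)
The strategy is to reduce the flatness of $\O_{Y, f(x)} \to \O_{X, x}$ at an arbitrary point to the corresponding statement at its unique rank-$1$ generalization, which is provided by the hypothesis. Let $x \in X$ be arbitrary and set $y \coloneqq f(x)$. Since $X$ and $Y$ are analytic, the rank-$1$ generalizations $x_\gen$ and $y_\gen$ exist and are unique by \cite[Lem.~1.1.10(ii)]{Huber-etale}.

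First I would verify the compatibility $f(x_\gen) = y_\gen$. The valuation representing $f(x_\gen)$ is the pullback of $v_{x_\gen}$ along the ring map $\O_{Y, f(x_\gen)} \to \O_{X, x_\gen}$, so its rank is at most $\rk(x_\gen) = 1$; on the other hand, $f(x_\gen)$ is a generalization of $y$ by continuity of $f$. Since $y_\gen$ is the unique rank-$1$ generalization of $y$, this forces $f(x_\gen) = y_\gen$.

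The key step is to show that the natural morphisms of stalks
\[
\O_{X, x} \longrightarrow \O_{X, x_\gen} \quad \text{and} \quad \O_{Y, y} \longrightarrow \O_{Y, y_\gen}
\]
are isomorphisms. The underlying reason is that, in any affinoid chart $\Spa(A, A^+)$, both $x$ and $x_\gen$ have the same support in $A$, and the stalk of $\O_X$ at a point depends only on this common support (it is, in essence, a Henselization of the localization of $A$ at that support); this is in the same spirit as the identification $\wdh{k(y)} \simeq \wdh{k(y_\gen)}$ of completed residue fields in \cite[Lem.~1.1.10(iii)]{Huber-etale}, already invoked in the text above.

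Granting this, one obtains a commutative square
\[
\begin{tikzcd}
\O_{Y, y} \arrow{r} \arrow{d}{\wr} & \O_{X, x} \arrow{d}{\wr} \\
\O_{Y, y_\gen} \arrow{r} & \O_{X, x_\gen}
\end{tikzcd}
\]
in which the vertical arrows are isomorphisms, so that flatness of the bottom horizontal arrow (by hypothesis, since $x_\gen$ is a rank-$1$ point) immediately yields flatness of the top arrow, as required. The only real technical obstacle is the rigorous identification of stalks in the third step; once it is in hand, every other ingredient is essentially formal.
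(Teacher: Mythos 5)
The gap is in your ``key step'': you claim that the natural maps $\O_{X,x} \to \O_{X,x_\gen}$ and $\O_{Y,y} \to \O_{Y,y_\gen}$ are isomorphisms, but this is neither proved nor, so far as I can tell, true in general. Huber's Lemma 1.1.10 — which you invoke via the related completed-residue-field identification $\wdh{k(y)} \simeq \wdh{k(y_\gen)}$ — asserts only that these stalk maps are \emph{local and flat} homomorphisms of local rings, a strictly weaker statement, and that weaker fact is exactly what the paper's proof uses. Your heuristic that the stalk ``depends only on this common support'' and is ``in essence a Henselization of the localization of $A$ at that support'' does not hold in the analytic adic setting: $\O_{X,x}$ is a filtered colimit of rings $\O_X(U)$ over rational opens $U \ni x$, and this index system genuinely depends on the rank of the valuation (there exist rational opens containing $x_\gen$ but not $x$), so the colimit is not manifestly determined by the support alone. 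Since you yourself flag this identification as the only real technical obstacle, the proposal cannot be accepted without it.

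Here is how the paper actually closes the argument. Since the stalk maps are local and flat, they are faithfully flat. The composite $\O_{Y,y} \to \O_{X,x} \to \O_{X,x_\gen}$ agrees with $\O_{Y,y} \to \O_{Y,y_\gen} \to \O_{X,x_\gen}$, which is flat (the first arrow by Huber's lemma, the second by the hypothesis applied to the rank-$1$ point $x_\gen$). Because $\O_{X,x} \to \O_{X,x_\gen}$ is faithfully flat, faithfully flat descent of flatness then yields that $\O_{Y,y} \to \O_{X,x}$ is flat. So the conclusion does follow from the very commutative square you drew — but via descent, not via the claimed isomorphism of stalks. If you could genuinely establish that identification you would have a valid alternative proof, but you would need to supply an actual argument; the reference you cite does not furnish one, and the support-based reasoning you sketch is not sound.
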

\begin{proof}
    Pick any $x_0\in X$, and let $x$ be its unique rank-$1$ generalization (it exists by \cite[Lem.~1.1.10]{Huber-etale}).
    Then \textit{loc.\ cit.} implies that $y\coloneqq f(x)$ is the unique rank-$1$ generalization of $y_0\coloneqq f(x_0)$. Therefore, we have a commutative diagram
    \[
    \begin{tikzcd}
    \O_{X, x_0} \arrow{r} & \O_{X, x}\\
    \O_{Y, y_0} \arrow{u} \arrow{r} & \O_{Y, y}\arrow{u}.
    \end{tikzcd}
    \]
    \textit{Loc.\ cit.} ensures that the horizontal maps are local and flat (in particular, they are faithfully flat), and the right vertical arrow is flat by the assumption. This implies that the left vertical is flat as well. Since $x$ was arbitrary, we conclude that $f$ is flat. 
\end{proof}

In general, it seems very difficult to test whether a morphism of locally noetherian adic spaces is flat. However, it turns out that the situation is much better in the case of finite morphisms: 

\begin{lemma}\label{lemma:flatness-finite-case} Let $f\colon X=\Spa(B, B^+) \to Y=\Spa(A, A^+)$ be a morphism of strongly noetherian Tate affinoids. Then $f$ is finite flat if and only if $(A, A^+) \to (B, B^+)$ is a finite morphism of Huber pairs and $A \to B$ is a flat morphism of rings.
\end{lemma}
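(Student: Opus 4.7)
The plan is to split the equivalence into the ``finite'' and ``flat'' parts. The equivalence between $f$ being a finite morphism of adic spaces and $(A, A^+) \to (B, B^+)$ being a finite morphism of Huber pairs is standard; it is \cite[Korollar 3.12.12]{Huber-thesis} (already used in the proof of \cref{lemma:finite-over-point}). So we may assume throughout that $B$ is a finite $A$-algebra, and the real task is to prove that flatness of $f$ (i.e., flatness of every stalk map $\O_{Y,f(x)}\to \O_{X,x}$) is equivalent to flatness of the ring homomorphism $A \to B$.

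For the easy ``if'' direction, assume $A \to B$ is flat. By \cref{lemma:flat-maximal-points}, it suffices to check flatness of $\O_{Y,y} \to \O_{X,x}$ at each rank-$1$ point $x \in X$; set $y \colonequals f(x)$, which is also of rank $1$ by \cref{lemma:finite-same-rank}. Then \cref{lemma:decomposes-local-rings} identifies
\[
\O_{Y,y} \otimes_A B \simeq \prod_{x_i \in f^{-1}(y)} \O_{X,x_i}.
\]
Since flatness is preserved under base change, $\O_{Y,y} \to \O_{Y,y} \otimes_A B$ is flat, and hence $\O_{X,x}$ is flat over $\O_{Y,y}$ as a direct factor of a flat algebra.

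For the harder ``only if'' direction, assume $f$ is flat in the adic sense. To prove $A \to B$ is flat it suffices to check, for every maximal ideal $\m \subset A$, that $A_\m \to A_\m \otimes_A B$ is flat. Choose a rank-$1$ point $y \in Y$ with support $\m$. The key ingredient is that the canonical map $A_\m \to \O_{Y,y}$ is faithfully flat: indeed, $\O_{Y,y}$ is the filtered colimit of $\O_Y(V)$ over rational opens $V \ni y$, each rational localization $A \to \O_Y(V)$ is flat in the strongly noetherian Tate setting, and any flat local homomorphism of local rings is automatically faithfully flat. Applying \cref{lemma:decomposes-local-rings} together with the flatness of every $\O_{Y,y} \to \O_{X,x_i}$, we conclude that $\O_{Y,y} \otimes_A B = \O_{Y,y} \otimes_{A_\m} (A_\m \otimes_A B)$ is flat over $\O_{Y,y}$. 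Faithful flatness of $A_\m \to \O_{Y,y}$ then descends this to the desired flatness of $A_\m \otimes_A B$ over $A_\m$.

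The main obstacle is invoking the faithful flatness of $A_\m \to \O_{Y,y}$, which rests on the standard (but nontrivial) fact that rational localizations of strongly noetherian Tate affinoids are flat. Once this is granted, both directions are straightforward combinations of \cref{lemma:flat-maximal-points,lemma:decomposes-local-rings} with elementary commutative algebra about flatness of finite modules.
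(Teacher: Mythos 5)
Your ``if'' direction coincides exactly with the paper's: both deduce flatness on stalks at rank-$1$ points from \cref{lemma:decomposes-local-rings} (after noting via \cref{lemma:finite-same-rank} that $f$ preserves ranks) and then conclude with \cref{lemma:flat-maximal-points}. Where you diverge is the ``only if'' direction for flatness: the paper simply cites \cite[Lem.~B.4.3]{Z-quotients} as a black box, whereas you unfold a self-contained faithfully-flat descent argument. Your descent argument is sound (flatness of $\O_{Y,y}\otimes_A B$ over $\O_{Y,y}$ together with faithful flatness of $A_\m\to \O_{Y,y}$ gives flatness of $A_\m\otimes_A B$ over $A_\m$), and it has the virtue of showing readers what is actually going on rather than pointing to another paper. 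You do flag the input that rational localizations of strongly noetherian Tate rings are flat, which is indeed the main nontrivial ingredient. One additional subtlety you leave implicit is the existence, for every maximal ideal $\m \subset A$, of a rank-$1$ point $y \in \Spa(A,A^+)$ with $\supp(y)=\m$. This is automatic in the rigid-analytic case (Noether normalization and the Nullstellensatz make $A/\m$ a nonarchimedean field), but the lemma is stated for general strongly noetherian Tate affinoids, where it is a genuine (though true) fact about the surjectivity of the support map; it deserves a citation or at least a remark that the surjectivity of $\supp$ together with the existence of rank-$1$ generizations supplies the required point.
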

\begin{proof}
    If $f$ is finite and flat, then $(A, A^+) \to (B, B^+)$ is a finite morphism of Huber pairs by \cite[Satz 3.6.20 and Korollar 3.12.12]{Huber-thesis} and $A\to B$ is flat by \cite[Lem.~B.4.3]{Z-quotients}.  
    
    Now suppose that $X=\Spa(B, B^+)$ is an affinoid, $(A, A^+) \to (B, B^+)$ is finite and $A\to B$ is flat. Then $\Spa(B, B^+) \to \Spa(A, A^+)$ is clearly finite. We only need to show that it is also flat. Let $x\in X$ is a rank-$1$ point, and $y=f(x)$. Then \cref{lemma:decomposes-local-rings} implies that the morphism
    \[
    \O_{Y, y} \to \prod_{x_i\in f^{-1}(y)} \O_{X, x_i} \simeq B\otimes_A \O_{Y, y}
    \]
    is flat. In particular, $\O_{Y,y} \to \O_{X, x}$ is flat. Therefore, \cref{lemma:flat-maximal-points} ensures that $X\to Y$ is flat. 
\end{proof}

\begin{remark} \Cref{lemma:flatness-finite-case} and \cite[Lem.~B.3.6]{Z-quotients} imply that finite flat morphisms are closed under arbitrary base change.
\end{remark}

Finally, we are ready to show that universal compactifications preserve finite flat morphisms. We refer to \cref{univ-comp} for a brief recollection on universal compactifications. 

\begin{lemma}\label{lemma:compactification-of-finite-morphism} Let $K$ be a nonarchimedean  field, $f\colon X=\Spa(B, B^+)\to Y=\Spa(A, A^+)$ is a finite (resp.~finite flat) morphism of rigid-analytic affinoid spaces over $K$, and $f^c\colon X^c=\Spa(B, B'^+)\to Y^c=\Spa(A, A'^+)$ the induced morphism on the universal compactifications. Then $f^c$ is a finite (resp.~finite flat) morphism.
\end{lemma}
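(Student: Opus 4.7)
The morphism $f^c$ has the same underlying ring map $A \to B$ as $f$, which is finite (and flat in the finite flat case). By \cref{lemma:flatness-finite-case}, it therefore suffices to verify that the morphism of Huber pairs $(A, A'^+) \to (B, B'^+)$ is finite, i.e., that $B'^+$ is the integral closure of $A'^+$ in $B$; the flat case will then follow by applying \cref{lemma:flatness-finite-case} once more, since the underlying Tate rings are unchanged.

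Set $C$ to be the integral closure of $A'^+$ in $B$; the plan is to show $C = B'^+$ by two inclusions. For $C \subset B'^+$, continuity of $A \to B$ forces $A^{\circ\circ} \subset B^{\circ\circ}$, and hence $A'^+ \subset B'^+$; taking integral closures in $B$ (and using that $B'^+$ is already integrally closed in $B$) then yields $C \subset B'^+$.

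For the reverse inclusion, the plan is to invoke the universal property of $X^c$: the ring $B'^+$ is the \emph{smallest} ring of integral elements in $B$ containing (the image of) $\cO_K$, so it suffices to verify that $C$ itself is such a ring. By construction $C$ is a subring of $B$, integrally closed in $B$, and contains $\cO_K \subset A'^+$. Because $f$ is finite, the power-bounded subring $B^\circ$ equals the integral closure of $A^\circ$ in $B$ (a standard consequence of the characteristic-polynomial trick applied to multiplication by a power-bounded element), and from $A'^+ \subset A^\circ$ we get $C \subset B^\circ$. The one subtle point will be the openness of $C$ in $B$: fixing any surjection $\varphi \colon A^r \twoheadrightarrow B$ presenting $B$ as a finite $A$-module, the image $\varphi\bigl((A^{\circ\circ})^r\bigr) = A^{\circ\circ} \cdot B$ is open in $B$, since $\varphi$ is a quotient map of topological $A$-modules and $(A^{\circ\circ})^r$ is open in $A^r$. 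As $A^{\circ\circ} \cdot B \subset A'^+ \cdot B \subset C$, this forces $C$ to be open, completing the verification that $C$ is a ring of integral elements.

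The main technical point I anticipate is the openness of $C$, which depends on the (standard) fact that the topology on a finite $A$-module coincides with the quotient topology from any finite presentation. Everything else reduces to formal manipulation of integral closures inside $A$ and $B$ together with the universal property of the compactification. Once equality $B'^+ = C$ is established, $f^c$ is finite as desired, and a second application of \cref{lemma:flatness-finite-case} (now to $f^c$, noting that the underlying strongly noetherian Tate rings are the same) delivers finite flatness in the corresponding case.
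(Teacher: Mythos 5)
Your overall strategy is the same as the paper's: reduce via \cref{lemma:flatness-finite-case} to showing that $B'^+$ equals the integral closure $C$ of $A'^+$ in $B$, obtain the inclusion $C\subset B'^+$ from $A'^+\subset B'^+$ together with the integral closedness of $B'^+$, and obtain $B'^+\subset C$ from the universal minimality of $B'^+$ once one knows that $C$ is a ring of integral elements containing $\cO_K$. The paper then simply invokes \cite[Lem.~3.12.10]{Huber-thesis} (equivalently \cite[Lem.~3.5]{adic-notes}) for the assertion that the integral closure of a ring of integral elements in a finite extension is again a ring of integral elements, whereas you attempt to re-prove this from scratch.

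That attempted re-proof is where the argument breaks. The chain $\varphi\bigl((A^{\circ\circ})^r\bigr) = A^{\circ\circ}\cdot B \subset A'^+\cdot B \subset C$ is not correct. Since $A$ is Tate, $A^{\circ\circ}$ contains a topologically nilpotent \emph{unit} $\pi$; writing $b=\pi\cdot(\pi^{-1}b)$ shows $A^{\circ\circ}\cdot B=B$ and likewise $A'^+\cdot B=B$, so your chain would force $C=B$, which is false in general. Moreover $\varphi\bigl((A^{\circ\circ})^r\bigr)$ is not $A^{\circ\circ}\cdot B$: it is the $A^{\circ\circ}$-submodule $\sum_i A^{\circ\circ}e_i$ spanned by the images $e_i$ of the standard basis vectors, and for this to lie in $C$ you would need each $e_i$ to lie in $C$, which is not automatic. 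This gap can be closed: each $e_i$ is integral over $A$ (since $A\to B$ is finite), and after rescaling by a sufficiently high power of $\pi$ the coefficients of a monic integral equation for $\pi^{N_i}e_i$ land in $A^{\circ\circ}\subset A'^+$, so one may replace the generators by rescaled ones lying in $C$; then the open mapping theorem does what you want. But that step is the real content, and your proposal neither notices its necessity nor supplies it. (A smaller point: you only need one direction of the equality ``$B^\circ$ is the integral closure of $A^\circ$ in $B$'', namely that integral elements over $A^\circ$ are power-bounded; the characteristic-polynomial direction you invoke is more than needed, and the converse direction is genuinely more delicate than a one-line ``trick''.) All in all, it would have been cleaner to cite Huber's lemma, as the paper does, rather than re-derive it; as written your openness argument contains a genuine error.
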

\begin{proof}
    Since universal compactifications do not change rings of rational functions (see \cref{lemma:universal-compactification-affinoid}), \cref{lemma:flatness-finite-case} implies that it suffices to show that $(A, A'^+)\to (B, B'^+)$ is a finite morphism of Huber pairs. Cleary, $A \to B$ is finite, so it suffices to show that $A'^+\to B'^+$ is integral.  

    Now \cite[Lem.~3.12.10]{Huber-thesis} (see also \cite[Lem.~3.5]{adic-notes}) implies that the integral closures $B''^+$ of $A'^+$ in $B$ defines a Huber pair $(B, B''^+)$. By construction, it contains $\O_K$. \Cref{lemma:universal-compactification-affinoid} ensures that $B'^+$ is the minimal $+$-ring containing $\O_K$. Therefore, $B'^+\subset B''^+$, and thus $B'^+$ is integral over $A'^+$. 
\end{proof}

\subsection{Trace for finite flat morphisms}\label{section:finite-flat-trace}

The main goal of this subsection is to define a trace morphism for any finite flat morphism of locally noetherian analytic adic spaces. This construction will be an important tool in studying properties of the analytic trace map in \cref{section:analytic-trace}.

Before we start the construction, we mention that the algebraic counterpart of the finite flat trace map has been defined in \cite[Exp.~XVII, Th.~6.2.3]{SGA4} and \cite[\href{https://stacks.math.columbia.edu/tag/0GKI}{Tag 0GKI}]{stacks-project}. We carry over a similar strategy to the nonarchimedean situation.
We begin with some preliminary lemmas.

\begin{lemma}\label{lemma:stalks-finite-morphism} Let $f\colon X \to Y$ be a finite morphism of locally noetherian analytic adic spaces, $\ov{y} \to Y$ a geometric point, and $\F\in \cal{A}b(Y_\et)$ a sheaf of abelian groups. Then there is a natural isomorphism
\[
\bigoplus_{\ov{x}\in f^{-1}(\ov{y})} \F_{\ov{y}} \xrightarrow{\sim} \left(f_*f^*\F\right)_{\ov{y}}.
\]
\end{lemma}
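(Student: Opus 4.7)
The plan is to build the map as the sum of natural stalk morphisms and verify it is an isomorphism by a cofinal \'etale-local decomposition of $f$. More precisely, the $\ov x$-component of the desired map is the composition $\F_{\ov y} \simeq (f^*\F)_{\ov x} \to (f_* f^*\F)_{\ov y}$, where the second arrow comes from the standard ``restrict to a smaller open'' identification of a stalk of a pushforward. Reducing to the affinoid situation $Y = \Spa(A, A^+)$ and $X = \Spa(B, B^+)$, I will compute $(f_* f^*\F)_{\ov y} = \colim_V (f^*\F)(f^{-1}(V))$, where $V$ ranges over \'etale neighborhoods of $\ov y$.

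The key geometric step is to exhibit a cofinal system of \'etale neighborhoods $V \to Y$ of $\ov y$ for which $f^{-1}(V)$ splits as a disjoint union $\bigsqcup_{\ov x \in f^{-1}(\ov y)} V_{\ov x}$, with each $V_{\ov x}$ an \'etale neighborhood of $\ov x$. Given such a cofinal family, sections decompose as $(f^*\F)(f^{-1}(V)) = \bigoplus_{\ov x} (f^*\F)(V_{\ov x})$, and taking the filtered colimit yields the desired isomorphism $\bigoplus_{\ov x \in f^{-1}(\ov y)} \F_{\ov y} \simeq (f_* f^*\F)_{\ov y}$.

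To produce this cofinal family, I will pass to the strict henselization of the Huber pair $(A, A^+)$ at $\ov y$, which is a filtered colimit of Huber pairs of affinoid \'etale neighborhoods of $\ov y$. A strictly henselian variant of \cref{thm:iso-henselized-completed-residue-fields} then gives a product decomposition $B \otimes_A A^{\rm sh} \simeq \prod_{\ov x \in f^{-1}(\ov y)} B^{\rm sh}_{\ov x}$ indexed by preimages; the reducedness hypothesis can be arranged, if necessary, by first replacing $X$ by its reduction $X^{\rm red}$ (whose \'etale site agrees with that of $X$ and so leaves $f_* f^*\F$ unchanged). The main obstacle is then a careful approximation argument realizing this finite product decomposition at some finite \'etale stage, for which the rank-$1$ splitting statement of \cref{cor:disjoint-opens} and the higher-rank algebraic decomposition of \cref{thm:iso-henselized-completed-residue-fields} provide the key inputs.
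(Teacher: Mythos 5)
The paper's proof of \cref{lemma:stalks-finite-morphism} is a two-line affair: the first isomorphism in the display is the identification $\F_{\ov y} \simeq (f^*\F)_{\ov x}$ of stalks under pullback, and the second is a direct citation of \cite[Prop.~2.6.3]{Huber-etale}, which is precisely the statement that the stalk of a pushforward along a finite morphism decomposes as a finite direct sum over the geometric fiber. Your proposal instead re-derives Huber's result from scratch, which is a genuinely different (and far longer) route. It is not obviously a good trade: the lemma being cited is exactly the thing you are trying to prove, so the elaborate cofinality argument does not buy anything the citation does not already give.

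More importantly, there is a gap. You invoke a ``strictly henselian variant'' of \cref{thm:iso-henselized-completed-residue-fields}, claiming that ``the reducedness hypothesis can be arranged, if necessary, by first replacing $X$ by its reduction $X^{\mathrm{red}}$.'' This does not work. The hypothesis of \cref{thm:iso-henselized-completed-residue-fields} is that $\wdh{k(y_\gen)}\otimes_A B$ is reduced, and passing to $X^{\mathrm{red}}$ only makes $B$ reduced: a fiber of a reduced finite morphism can perfectly well carry nilpotents. The one place in the paper where replacing $X$ by $X_{\mathrm{red}}$ does validate the hypothesis (the proof of \cref{cor:finite-morphism-finite-on-henselized-completions}) works because there one has \emph{first} replaced $Y$ by the single point $\Spa\bigl(\wdh{k(y)},\wdh{k(y)}^+\bigr)$, so that $A=\wdh{k(y)}=\wdh{k(y_\gen)}$ is a field and $\wdh{k(y_\gen)}\otimes_A B = B$ is then manifestly reduced. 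But you cannot make that replacement here, because the whole point of your cofinality argument is to keep $Y$ affinoid and vary $V$ over \'etale neighborhoods; collapsing $Y$ to a point kills the colimit you want to compute. You thus need the affinoid structure \emph{and} the reducedness simultaneously, and your proposed fix supplies only one of the two. Separately, the claimed isomorphism $B\otimes_A A^{\mathrm{sh}} \simeq \prod_{\ov x} B^{\mathrm{sh}}_{\ov x}$ is a genuinely different statement from \cref{thm:iso-henselized-completed-residue-fields} (it involves the full strictly henselian local ring, not completed residue fields) and would need its own proof; and the ``careful approximation argument'' that is supposed to realize the decomposition at a finite \'etale stage is flagged as ``the main obstacle'' but never carried out. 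In short: the strategy is the right shape (it is roughly how \cite[Prop.~2.6.3]{Huber-etale} is proved), but the specific ingredients you cite do not have the right hypotheses, and the proposal as written does not close.
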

\begin{proof}
    This follows from the sequence of isomorphisms
    \begin{align*}
        \bigoplus_{\ov{x}\in f^{-1}(\ov{y})} \F_{\ov{y}} & \simeq \bigoplus_{\ov{x}\in f^{-1}(\ov{y})} \left(f^*\F\right)_{\ov{x}}  \simeq \left(f_*f^*\F\right)_{\ov{y}},
    \end{align*}
    where the second isomorphism comes from \cite[Prop.~2.6.3]{Huber-etale}. 
\end{proof}

\begin{remark}\label{rmk:integral-projection-formula} 
\Cref{lemma:stalks-finite-morphism} implies that for a finite morphism $f\colon X \to Y$ and an abelian sheaf $\F\in \cal{A}b(Y_\et)$, the natural morphism
\[
f_*\ud{\Z} \otimes_{\ud{\Z}} \F \to f_*f^*\F 
\]
is an isomorphism.
\end{remark}

\begin{remark}\label{rmk:integral-base-change}
We also note that \cite[Prop.~2.6.3]{Huber-etale} guarantees that for a finite morphism $f\colon X \to Y$, the functor $f_*\colon \cal{A}b(X_\et) \to \cal{A}b(Y_\et)$ is exact and commute with arbitrary base change along $Y'\to Y$. 
\end{remark}

Recall that for every strongly noetherian Tate affinoid $S=\Spa(A, A^+)$, \cite[Cor.~1.7.3, (3.2.8)]{Huber-etale} constructs a functorial morphism of \'etale topoi $c_S\colon S_\et \to (\Spec A)_\et$.
In particular, for every morphism $f\colon T=\Spa(B, B^+)\to S=\Spa(A, A^+)$ with induced morphism $f^{\rm{alg}}\colon \Spec B\to \Spec A$, the following diagram commutes (up to canonical equivalence):
\[
\begin{tikzcd}
    T_\et \arrow{d}{f_\et} \arrow{r}{c_T}&  \left(\Spec B\right)_\et \arrow{d}{f^{\rm{alg}}_\et}\\
    S_\et \arrow{r}{c_S} & \left(\Spec A\right)_\et
\end{tikzcd}
\]

\begin{construction}[Relative analytification]\label{construction:relative-analytification}
Likewise, for every strongly noetherian Tate affinoid $S=\Spa(A, A^+)$ and a locally finite type $A$-scheme $g\colon X\to \Spec A$, \cite[Prop.~3.8]{Huber-2} defines the relative analytification $X^{\rm{an}/S}$ as an adic space which is locally of finite type over $S$.
By \cite[Cor.~1.7.3, (3.2.8)]{Huber-etale}, it comes equipped with a canonical morphism of \'etale topoi $c_{X/S}\colon X^{\rm{an}/S}_\et \to X_\et$ such that the diagram\footnote{
If $X=S$, we denote $c_{X/S}$ simply by $c_S$.}
\[
\begin{tikzcd}
X^{\rm{an}/S}_\et \arrow{d}{g^{\rm{an}/S}_\et} \arrow{r}{c_{X/S}}& X_\et \arrow{d}{g_\et}\\
S_\et \arrow{r}{c_S} & \Spec A_\et
\end{tikzcd}
\]
commutes (up to canonical equivalence).
\end{construction}

\begin{lemma}\label{lemma:comparison-finite-morphism} Let $f\colon X=\Spa(B, B^+)\to Y=\Spa(A, A^+)$ be a finite morphism of strongly noetherian Tate affinoids and $\F\in \cal{A}b(\Spec B_\et)$. Then the natural morphism
\[
\gamma\colon c_Y^*f^{\rm{alg}}_* \F \longrightarrow f_*c_X^*\F
\]
is an isomorphism.
\end{lemma}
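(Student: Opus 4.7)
The plan is to check that $\gamma$ is an isomorphism by passing to stalks at every geometric point $\ov{y}\to Y$, represented by a morphism $\Spa(L, L^+) \to Y$ with $L$ a separably closed complete nonarchimedean field. By construction of the comparison maps in \cref{construction:relative-analytification}, both $c_X^*$ and $c_Y^*$ preserve stalks (e.g.\ $(c_X^*\F)_{\ov{x}} \simeq \F_{c_X(\ov{x})}$). Combined with the fact that pushforward along a finite morphism commutes with stalks at geometric points---on the analytic side by \cite[Prop.~2.6.3]{Huber-etale} (which underlies \cref{lemma:stalks-finite-morphism} and \cref{rmk:integral-base-change}) and on the algebraic side by the standard argument---this reduces the lemma to showing that the induced map on stalks
\[
\gamma_{\ov{y}}\colon \bigoplus_{\ov{\xi}} \F_{\ov{\xi}} \longrightarrow \bigoplus_{\ov{x}} \F_{c_X(\ov{x})}
\]
is an isomorphism, where $\ov{\xi}$ ranges over geometric points of $\Spec B$ above the prime $c_Y(\ov{y})\in\Spec A$, $\ov{x}$ ranges over geometric points of $X$ above $\ov{y}$, and $\gamma_{\ov{y}}$ sends the summand indexed by $\ov{\xi}$ identically to the summand indexed by $\ov{x}$ whenever $c_X(\ov{x}) = \ov{\xi}$.

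It thus suffices to show that $\ov{x}\mapsto c_X(\ov{x})$ is a bijection between the two indexing sets. By \cref{lemma:finite-over-point} applied to the finite morphism $X\times_Y \Spa(L, L^+) \to \Spa(L, L^+)$, the analytic fiber is (as a topological space) in bijection with the finite set of minimal primes of the Artinian $L$-algebra $B\otimes_A L$. On the algebraic side, geometric points of $\Spec B$ above $c_Y(\ov{y})$ are in bijection with minimal primes of $B\otimes_A k(c_Y(\ov{y}))^{\rm{sep}}$. Since $L$ is separably closed and contains $k(c_Y(\ov{y}))$, it also contains an essentially unique copy of $k(c_Y(\ov{y}))^{\rm{sep}}$, and the canonical homomorphism $B\otimes_A k(c_Y(\ov{y}))^{\rm{sep}} \to B\otimes_A L$ induces the comparison map on minimal primes.

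The main technical step is to show that this comparison is a bijection. Writing $B\otimes_A k(c_Y(\ov{y}))$ as a finite product of Artinian local rings with residue fields $l_i$ over $k(c_Y(\ov{y}))$, bijectivity reduces to the algebraic fact that for any finite field extension $l/k$ and any separably closed field $K$ containing $k$, the number of minimal primes of $l\otimes_k K$ equals the separable degree $[l:k]_s$, so both sides have exactly $\sum_i [l_i:k(c_Y(\ov{y}))]_s$ minimal primes. Chasing through the construction of $c_X$ on geometric points shows that the bijection of indexing sets is precisely the one induced by $c_X$, and that the two geometric points of $\Spec B$ underlying $\ov{x}$ and $c_X(\ov{x})$ agree (they differ only by the inclusion of separably closed fields $k(c_Y(\ov{y}))^{\rm{sep}}\hookrightarrow L$, which does not affect \'etale stalks); hence $\gamma_{\ov{y}}$ is the identity on each summand, completing the proof.
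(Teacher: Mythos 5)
Your proof is correct in outline but takes a genuinely different route from the paper's, and it has a couple of gaps worth flagging.

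The paper avoids your fiber analysis entirely by first observing that $c_Y^*f^{\alg}_*\F$ and $f_*c_X^*\F$ are \emph{both overconvergent}: the former because it is the analytification of an algebraic sheaf, the latter because finite pushforward preserves overconvergence by \cite[Prop.~8.2.3]{Huber-etale}. This means one only needs to check the map on stalks at \emph{rank-$1$} geometric points. Combined with the fact that both $f_*$ and $f^{\alg}_*$ commute with arbitrary base change (\cref{rmk:integral-base-change}, \cite[\href{https://stacks.math.columbia.edu/tag/0959}{Tag 0959}]{stacks-project}), one may then base-change to $Y = \Spa(C,\O_C)$ with $C$ algebraically closed, replace $X$ by $X_\red$, and invoke \cref{lemma:finite-over-point}\cref{lemma:finite-over-point-1} to see that $X = \bigsqcup_{i=1}^n \Spa(C,\O_C)$, where the statement is trivial. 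Your argument instead checks at \emph{all} geometric points (including those of higher rank) and matches up the two indexing sets directly. This can be made to work, but it is more delicate and buys less: you end up doing by hand the combinatorics that base change to an algebraically closed point would dissolve.

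Two genuine gaps. First, when you assert that the analytic fiber of $X\times_Y \Spa(L, L^+)$ over the closed point is in bijection with the minimal primes of $B\otimes_A L$ ``by \cref{lemma:finite-over-point},'' you are eliding a real step. \Cref{lemma:finite-over-point}\cref{lemma:finite-over-point-2} says the fiber over the closed point is the set of valuations of each residue field $l_i$ extending the valuation of $L$ — and since $L^+$ can be a non-henselian higher-rank valuation ring, this set is \emph{a priori} not a singleton. You need the additional observation that, because $L$ is separably closed, each $l_i/L$ is purely inseparable, and a valuation extends uniquely to a purely inseparable extension (use that $a^{p^n}\in L$ for all $a\in l_i$ together with torsion-freeness of the value group). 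This is exactly the kind of higher-rank subtlety that the paper's overconvergence reduction bypasses by never leaving rank $1$. Second, your final step — ``Chasing through the construction of $c_X$ on geometric points shows that the bijection of indexing sets is precisely the one induced by $c_X$'' and that the summand maps are identities — is asserted rather than carried out; that chase, while believable, is the content you need to verify and should be written out, or else organized so that (as in the paper) it becomes vacuous.
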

\begin{proof}
    First, we note that $c_Y^*f^{\rm{alg}}_*\F$ and $c_X^*\F$ are both overconvergent (in the sense of \cite[Def.~8.2.1]{Huber-etale}) because they are analytifications of algebraic sheaves. Furthermore, $f_*c_X^*\F$ is also overconvergent due to \cite[Prop.~8.2.3]{Huber-etale}. Therefore, it suffices to show that $\gamma$ induces an isomorphism on stalks at geometric points of rank $1$.
    Since both $f_*$ and $f_*^{\rm{alg}}$ commute with arbitrary base change (see \cref{rmk:integral-base-change} and \cite[\href{https://stacks.math.columbia.edu/tag/0959}{Tag 0959}]{stacks-project}), we may thus assume that $Y=\Spa(C, \O_C)$ for an algebraically closed nonarchimedean field $C$.
    Then we can further replace $X$ by $X_\red$ to assume that $X$ is reduced.
    In this case, $X$ decomposes as a finite disjoint union $X=\bigsqcup_{i=1}^n \Spa(C, \O_C)$ and the result becomes trivial. 
\end{proof}
Finally, we are ready to prove the main result of this subsection: 
\begin{theorem}
\label{thm:flat-trace}
There is a unique way to assign
to any finite flat morphism
$f \colon X \to Y$ of locally noetherian analytic adic spaces and any abelian sheaf $\F \in \cal{A}b(Y_\et)$, a trace map $\ttr_{f, \F} \colon f_* f^* \F \to \F$ satisfying the following properties:
\begin{enumerate}[label=\upshape{(\arabic*)}]
\item\label{thm:flat-trace-1} (functoriality in $\F$) For any morphism $\varphi\colon \F \to \G$ in $\cal{A}b(Y_\et)$, the following diagram is commutative:
\[
\begin{tikzcd}[column sep = huge]
f_*f^*\F \arrow{r}{\ttr_{f, \F}}\arrow{d}{f_*f^*(\varphi)} & \F \arrow{d}{\varphi} \\
f_*f^*\G \arrow{r}{\ttr_{f, \G}} & \G
\end{tikzcd}
\]
\item\label{thm:flat-trace-2} (compatibility with compositions) For any two finite flat morphisms $f \colon X \to Y$ and $g \colon Y \to Z$ and any $\F\in \cal{A}b(Z_\et)$, the following diagram is commutative:
\[ 
    \begin{tikzcd}[column sep=huge]
        (g\circ f)_* (g\circ f)^* \F \arrow{r}{\ttr_{g\circ f, \F}} \arrow[d,sloped,"\sim"] & \F \\ 
        g_*\bigl(f_*f^*(g^*\F)\bigr) \arrow{r}{g_*(\ttr_{f, g^*\F})} & g_*g^*\F \arrow{u}{\ttr_{g, \F}}
    \end{tikzcd} 
\]
\item\label{thm:flat-trace-3} (compatibility with pullbacks) For any pullback diagram
\[ \begin{tikzcd}
    X' \arrow[r,"g'"] \arrow[d,"f'"] & X \arrow[d,"f"] \\
    Y' \arrow[r,"g"] & Y
\end{tikzcd} \]
in which $f$ and $f'$ are finite flat and any $\F\in \cal{A}b(Y_\et)$, the following diagram is commutative:
\[ \begin{tikzcd}
    g^*f_*f^*\F \arrow[d,sloped,"\sim"] \arrow{r}{g^*(\ttr_{f, \F})} & g^*\F \\
    f'_*g^{\prime,*}f^*\F \arrow{r}{\sim} & f'_*f^{\prime,*}g^*\F \arrow[u, swap, "\ttr_{f', g^*\F}"]
\end{tikzcd}
\]
\item\label{thm:flat-trace-4} (normalization) If $f$ is of constant rank $d$, the composition $\F\to f_*f^*\F \xrightarrow{\ttr_{f, \F}} \F$ is equal to the multiplication by $d$.
\end{enumerate}
Furthermore, this assignment satisfies the following properties: 
\begin{enumerate}[label=\upshape{(\arabic*)}, resume]   
    \item\label{flat-trace-etale} (compatibility with \'etale traces) If $f$ is finite \'etale, then $\ttr_{f, \F}$ is given by the counit \[ f_*f^* \F \simeq f_! f^* \F \to \F \] of the adjunction between $f_!$ and $f^*$.
    \item\label{thm:flat-trace-6} (compatibility with algebraic traces I) For any finite flat morphism $f\colon T=\Spa(B, B^+) \to S=\Spa(A, A^+)$ with associated morphism\footnote{We note that $f^{\rm{alg}}$ is finite flat due to \cref{lemma:flatness-finite-case}.} $f^{\rm{alg}} \colon \Spec B \to \Spec A$ and any sheaf $\F\in \cal{A}b\bigl((\Spec A)_\et\bigr)$, the diagram
    \begin{equation}\label{diagram:comp-alg-trace-1}
    \begin{tikzcd}[column sep = huge]
    c_S^* f_*^{\rm{alg}} f^{\rm{alg}, *} \F \arrow[d,sloped,"\sim"]  \arrow{r}{c_S^*(\ttr_{f^{\rm{alg}, \F}})} & c_S^* \F \\
    f_*c_T^* f^{\rm{alg}, *} \F \arrow{r}{\sim} & f_*f^* c_S^* \F \arrow[u, swap, "\ttr_{f, c_S^*\F}"]
    \end{tikzcd}
    \end{equation}
    commutes, where $c_A$ and $c_B$ are the morphisms of topoi defined just before \cref{construction:relative-analytification}.
    \item\label{thm:flat-trace-7} (compatibility with algebraic traces II) For any strongly noetherian Tate affinoid $S=\Spa(A, A^+)$, any finite flat morphism $g \colon X \to Y$ of locally finite type $A$-schemes with relative analytification $g^{\an/S} \colon X^{\an/S} \to X^{\an/S}$, and any sheaf $\F\in \cal{A}b(Y_\et)$, the diagram
    \begin{equation}\label{diagram:comp-alg-trace-2}
    \begin{tikzcd}
    c_{Y/S}^* g_*g^* \F \arrow[d,sloped,"\sim"] \arrow{r}{c_{Y/S}^*(\ttr_{g, \F})} & c_{Y/S}^* \F \\
    g^{\an/S}_*c_{X/S}^* g^* \F \arrow{r}{\sim} & g^{\an/S}_* g^{\an/S, *} c_{Y/S}^* \F \arrow[u, swap, "\ttr_{g^{\an/S}, c_{Y/S}^*\F}"]
    \end{tikzcd}
    \end{equation}
    commutes, where $c_{X/A}$ and $c_{Y/A}$ are the morphisms of topoi defined in \cref{construction:relative-analytification}.
\end{enumerate}
\end{theorem}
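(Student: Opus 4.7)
My strategy is to construct the trace by transporting the algebraic trace for finite locally free morphisms of schemes (see \cite[\href{https://stacks.math.columbia.edu/tag/0GKI}{Tag 0GKI}]{stacks-project}) across the comparison morphism of topoi $c_Y^*$ from \cref{construction:relative-analytification}, and then to verify the compatibilities by diagram chase using \cref{lemma:comparison-finite-morphism}.

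For uniqueness, I argue as follows. By (1) and the natural isomorphism $f_*\ud{\ZZ} \otimes \F \simeq f_*f^*\F$ of \cref{rmk:integral-projection-formula}, it suffices to determine $\ttr_{f, \ud{\ZZ}}$. Property (3) reduces this further to computations on geometric stalks: at a geometric point $\ov{y} \to Y$, after passing to a strictly henselian neighborhood, $X$ decomposes as a finite disjoint union $\bigsqcup_i X_i$ with each $f_i \colon X_i \to Y$ finite flat of constant rank $d_i$ and geometrically local at $\ov{y}$. Writing $f_i$ as $f \circ h_i$ for the clopen immersion $h_i \colon X_i \hookrightarrow X$ (itself finite flat), and using (3) and (4) to force $\ttr_{h_i}$ to be the ``characteristic inclusion'' $h_{i,*}h_i^*\F \hookrightarrow \F$ (identity on $X_i$, zero on the complement), property (2) identifies the restriction of $\ttr_{f, \ud{\ZZ}}$ to the $i$-th summand of $(f_*\ud{\ZZ})_{\ov{y}} \simeq \bigoplus_i \ZZ$ (see \cref{lemma:stalks-finite-morphism}) with $\ttr_{f_i, \ud{\ZZ}}$, which by (4) is multiplication by $d_i$. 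This pins down the stalk of $\ttr_{f, \ud{\ZZ}}$, and hence $\ttr_{f, \F}$ for arbitrary $\F$.

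For existence, I construct $\ttr$ étale-locally on $Y$. Reducing to the affinoid case $f \colon X = \Spa(B, B^+) \to Y = \Spa(A, A^+)$, in which $f^{\rm{alg}} \colon \Spec B \to \Spec A$ is finite flat by \cref{lemma:flatness-finite-case}, I define
\[ \ttr_{f, \ud{\ZZ}} \colon f_*\ud{\ZZ} \xrightarrow[\sim]{\gamma^{-1}} c_Y^* f^{\rm{alg}}_*\ud{\ZZ} \xrightarrow{c_Y^*(\ttr^{\rm{alg}})} c_Y^*\ud{\ZZ} = \ud{\ZZ}, \]
where $\gamma$ is the isomorphism of \cref{lemma:comparison-finite-morphism} and $\ttr^{\rm{alg}}$ is the algebraic trace; then I extend to arbitrary $\F$ by
\[ \ttr_{f, \F} \colon f_*f^*\F \cong f_*\ud{\ZZ} \otimes \F \xrightarrow{\ttr_{f, \ud{\ZZ}} \otimes \id} \ud{\ZZ} \otimes \F \cong \F \]
using \cref{rmk:integral-projection-formula}. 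The algebraic trace and $\gamma$ are both compatible with étale localization, so these affinoid-local constructions glue consistently over any locally noetherian $Y$.

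The verification of (1)--(4) is then largely diagrammatic: (1) is immediate from the tensor product formula; (4) follows by applying $c_Y^*$ to the algebraic normalization; and (2) and (3) reduce to the corresponding algebraic statements combined with the naturality of $\gamma$. Among the additional properties, (6) is essentially a restatement of the construction; (5) for finite étale $f$ follows from the analogous algebraic fact and the compatibility of $c_Y^*$ with the $f_! \dashv f^*$ counit; and (7) follows from (6) by Zariski-localization on $Y$ together with pullback compatibility. The main obstacle throughout is the careful bookkeeping in (2) and (3): the exchange morphisms $g^* f^{\rm{alg}}_* \simeq f^{\prime, \rm{alg}}_* g^{\prime,*}$ and $g^* f_* \simeq f'_* g^{\prime,*}$ on the algebraic and analytic sides must be shown to intertwine correctly with the algebraic trace's own base-change and composition compatibilities via $\gamma$; once these coherences are verified, the analytic statements follow formally.
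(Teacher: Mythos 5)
Your proposal is correct and follows essentially the same route as the paper: transport the algebraic finite flat trace along the analytification comparison morphism of \cref{lemma:comparison-finite-morphism} to define $\ttr_{f,\ud{\Z}}$, extend to arbitrary coefficients via the projection formula of \cref{rmk:integral-projection-formula}, verify the compatibilities by reduction to the algebraic case, and prove uniqueness by reducing via pullbacks (3) to geometric points $Y=\Spa(C,C^+)$, decomposing $X$ into connected components, and invoking (2), (3), (4). One small caveat: your opening reduction of uniqueness to the case $\F=\ud{\Z}$ via (1) and the projection formula is neither needed nor cleanly justified, since (1) is mere functoriality and does not make $\ud{\Z}$ a generator of $\cal{A}b(Y_\et)$; the cleaner argument (as in the paper) works with arbitrary $\F$ directly at the geometric point, where \cref{lemma:finite-over-point} gives $X_{i,\red}\simeq Y$ and hence $X_{i,\et}\simeq Y_\et$, so that $(f_i)_*(f_i)^*\F=\F$ and (4) then pins down $\ttr_{f_i,\F}$ as multiplication by the rank -- this is the content you gesture at with ``geometrically local at $\ov{y}$'' but should be spelled out.
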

\begin{proof}
    \begin{enumerate}[wide,label={\textit{Step~\arabic*}.},ref={Step~\arabic*}]
        \item\label{thm:flat-trace-unique} \textit{Uniqueness.}
        First, we note that \cite[Prop.~2.5.5]{Huber-etale} and \cref{thm:flat-trace-3} ensure that $\ttr_{f, \F}$ is determined by the case when $Y=\Spa(C, C^+)$ for a complete algebraically closed nonarchimedean field $C$ and an open bounded valuation subring $C^+\subset C$.
        In that setting, we can write $X=\bigsqcup_{i=1}^n X_i$ with each $X_i$ connected (and finite flat over $Y$);
        we denote by $f_i \colon X_i \to Y$ the induced morphisms and set $\F_i \colonequals \restr{\F}{X_i}$.
        For the natural clopen immersions $j_i\colon X_i \to X$, properties \cref{thm:flat-trace-4} and \cref{thm:flat-trace-3} imply that the trace morphisms $\ttr_{j_i, \F} \colon j_{i, *}\F_i \to \F$ must be the adjunction morphisms $j_{i, *} \F_i = j_{i, !}\F_i \to \F$.
        Then $\ttr_{f, \F}=\sum_{i=1}^n \ttr_{f_i, \F_i}$ for any $\F \in \cal{A}b(Y_\et)$ due to \cref{thm:flat-trace-2}. 
        Therefore, it suffices to show uniqueness under the additional assumptions that $Y=\Spa(C, C^+)$ and $X$ is connected.
    
        In this case, we clearly have that $f\colon X \to Y$ is of constant rank. Since $C$ is algebraically closed, \cref{rmk:equivalent-defns-henselian} implies that $C^+$ is henselian.
        Therefore, \cref{lemma:finite-over-point} ensures that $X_{\rm{red}} \simeq Y$, so $X_\et \simeq Y_\et$. In particular, $f_*f^*\F = \F$ for any $\F\in \cal{A}b(Y_\et)$.
        Thus, \cref{thm:flat-trace-4} implies that $\ttr_{f, \F} \colon \F \to \F$ must be equal to the multiplication by $\dim_C \O_X(X)$ for any $\F \in \cal{A}b(Y_\et)$.
        This finishes the proof of uniqueness.  
    
        \item \textit{Construction of $\ttr_{f, \F}$.}
        We first construct trace maps $\ttr_{f, \ud{\Z}}$ that are compatible with base change.
        Thanks to the base change compatibility, it suffices to do so locally on $Y$.
        Hence, we may assume that both $X=\Spa(B, B^+)$ and $Y=\Spa(A, A^+)$ are affinoid.
        Then \cref{lemma:flatness-finite-case} implies that $A\to B$ is a finite flat ring map, so the induced morphism $f^{\rm{alg}}\colon \Spec B \to \Spec A$ is also finite flat. 
        Therefore, we use \cref{lemma:comparison-finite-morphism} to define
        \[
        \ttr_{f, \ud{\Z}}\coloneqq c_Y^*\left(\ttr_{f^{\rm{alg}}, \ud{\Z}}\right),
        \]
        where $\ttr_{f^{\rm{alg}}, \ud{\Z}}$ is the algebraic trace for finite flat morphisms constructed in \cite[Exp.~XVII, Th.~6.2.3]{SGA4}.
        We note that $\ttr_{f, \ud{\Z}}$ commutes with arbitrary base change since the same holds for the algebraic trace map.
    
        For a general sheaf $\F$, we define $\ttr_{f, \F}$ as the composition
        \begin{equation}\label{eqn:finite-flat-trace-in-general}
            f_*f^*\F \xrightarrow{\sim} f_*f^*\ud{\Z} \otimes \F \xrightarrow{\ttr_{f, \ud{\Z}} \otimes \rm{id}} \ud{\Z} \otimes \F \xrightarrow{\sim} \F,
        \end{equation}
        where the first map is the inverse of the projection formula isomorphism (see \cref{rmk:integral-projection-formula}). 
        This definition satisfies \cref{thm:flat-trace-1} by its very construction. 
        Next, we check that it also satisfies \cref{thm:flat-trace-6}.
        For this, we first note that Diagram~\cref{diagram:comp-alg-trace-1} commutes for $\F=\ud{\Z}$ by the very definition of $\ttr_{f, \ud{\Z}}$. Secondly, we note that the construction of  $\ttr_{f^{\rm{alg}},\F}$ is analogous to \cref{eqn:finite-flat-trace-in-general} except that one uses $\ttr_{f^{\rm{alg}}, \ud{\Z}}$ in place of $\ttr_{f, \ud{\Z}}$;
        see the proof of \cite[\href{https://stacks.math.columbia.edu/tag/0GKG}{Tag 0GKG}]{stacks-project} and \cite[\href{https://stacks.math.columbia.edu/tag/0GKI}{Tag 0GKI}]{stacks-project}.
        Since the algebraic projection formula analytifies to the analytic one, we conclude that Diagram~\cref{diagram:comp-alg-trace-1} commutes for any $\F$. 
        
        In order to check \cref{thm:flat-trace-2}, \cref{thm:flat-trace-3}, and \cref{thm:flat-trace-4}, we may assume that $X$, $Y$, $Z$, $Y'$ are affinoid, and $\F=\ud{\Z}$.
        Then the results follow from \cref{thm:flat-trace-6} and the analogous properties of algebraic finite flat trace (see \cite[Exp.~XVII, Th.~6.2.3]{SGA4}). Thus, we are only left to check \cref{flat-trace-etale} and \cref{thm:flat-trace-7}.
    
        We start with \cref{flat-trace-etale}. We first note that it suffices to treat the case $\F=\ud{\Z}$. Since $\ud{\Z}$ and $f_*\ud{\Z}$ are overconvergent (see \cite[Prop.~8.2.3~(ii)]{Huber-etale}), we can check equality over geometric points of rank $1$.
        Both $\ttr_{f, \ud{\Z}}$ and the $(f_!, f^*)$-adjunction commute with arbitrary base change, so we can assume that $Y=\Spa(C, \O_C)$ for an algebraically closed nonarchimedean field $C$. Then $X=\bigsqcup_{i=1}^n Y$ and the result follows from \cref{thm:flat-trace-4}. 
    
        Finally, we show \cref{thm:flat-trace-7}. First, we observe that both $c^*_{Y/S}g_*g^*\F$ and $c_{Y/S}^*\F$ are overconvergent as analytifications of algebraic sheaves. Therefore, we can check that Diagram~\cref{diagram:comp-alg-trace-2} commutes over geometric rank-$1$ points of $Y^{\an/S}$.
        Since both $\ttr_{g^{\an/S}}$ and $\ttr_{g}$ commute with arbitrary base change, we can assume that $A=C$ is an algebraically closed nonarchimedean field and $Y=\Spec C$.
        In this case, the result follows from \cref{thm:flat-trace-6}. \qedhere
    \end{enumerate}
\end{proof}
For later reference, we also discuss a version of \'etale traces for general \'etale morphisms.
Recall that \cite[Lem.~2.7.6]{Huber-etale} guarantees that in this case, $f^*$ admits an exact right adjoint functor $f_!$, which furthermore commutes with arbitrary base change.
\begin{definition}\label{etale-trace}
    Let $f \colon X \to Y$ be an \'etale morphism of locally noetherian analytic adic spaces and let $\F \in \cal{A}b(Y_\et)$ be any abelian sheaf on $Y$.
    Then the natural counit
    \[ \ttr^\et_{f, \F} \colon f_! f^* \F \to \F \]
    for the adjunction between $f_!$ and $f^*$ is called the \emph{\'etale trace map} for $f$.
\end{definition}
\begin{notation}\label{notation:etale-trace}
    We will often use the \'etale trace in the situation where we have fixed a ring of coefficients $\Lambda = \Z/n\Z$.
    In this case, we denote the map $\ttr^\et_{f, \Lambda}$ simply by $\ttr^{\et}_f$.
    Likewise, we denote $\ttr^\et_{f, \mu_n^{\otimes m}}$ by $\ttr^\et_{f}(m)$.     
\end{notation}

\begin{lemma}\label{etale-trace-compatibility}
    Let $f\colon X \to Y$ be an \'etale morphism of locally noetherian analytic adic spaces and let $\F \in \cal{A}b(Y_\et)$ be an abelian sheaf on $Y$.
    Then the \'etale trace map is compatible with compositions, pullbacks, and relative analytifications (in the sense of \cref{thm:flat-trace}\cref{thm:flat-trace-2},\cref{thm:flat-trace-3},\cref{thm:flat-trace-7}).
\end{lemma}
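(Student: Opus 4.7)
The plan is to derive all three compatibilities directly from the known behavior of the exceptional pushforward $f_!$ for étale morphisms, since by definition $\ttr^{\et}_{f,\F}$ is simply the counit of the adjunction $(f_!, f^*)$. Thus each of the three commutative diagrams reduces to a formal statement about units and counits of adjunctions that are already available from the work of Huber.

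For compositions, if $f \colon X \to Y$ and $g \colon Y \to Z$ are both étale, then so is $g \circ f$, and Huber's formalism supplies a canonical natural isomorphism $(g \circ f)_! \simeq g_! \circ f_!$. Under this isomorphism, the counit of the composite adjunction $\bigl((gf)_!, (gf)^*\bigr)$ is identified with the composition $g_! f_! f^* g^* \to g_! g^* \to \mathrm{id}$ of the individual counits. Writing out this identification and then applying it to $\F$ gives exactly the diagram in \cref{thm:flat-trace}\cref{thm:flat-trace-2} (with the canonical isomorphism on the left replaced by $(gf)_! \simeq g_! f_!$). For pullbacks, \cite[Lem.~2.7.6]{Huber-etale} provides the base change isomorphism $g^* f_! \simeq f'_! g'^*$; this isomorphism is itself constructed by the standard adjunction procedure (as the mate of the tautological commutativity $g^{\prime,*} f^* \simeq f^{\prime,*} g^*$), and by general nonsense the counit $f_! f^* \to \mathrm{id}$ transports under it to the counit $f'_! f^{\prime,*} \to \mathrm{id}$. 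Applying $g^*$ to the counit of $(f_!, f^*)$ and using this compatibility produces exactly the diagram in \cref{thm:flat-trace}\cref{thm:flat-trace-3}.

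For relative analytification, the key input is that the morphism of topoi $c_{X/S} \colon X^{\an/S}_\et \to X_\et$ intertwines the algebraic and analytic pullback functors, i.e.\ for an étale morphism $g \colon X \to Y$ of locally finite type $A$-schemes with analytification $g^{\an/S}$, there is a canonical equivalence $c_{X/S}^* \circ g^* \simeq g^{\an/S,*} \circ c_{Y/S}^*$. Taking left adjoints (which exist in both the algebraic and analytic settings because both $g$ and $g^{\an/S}$ are étale), we obtain a canonical base change isomorphism $g^{\an/S}_! \circ c_{X/S}^* \simeq c_{Y/S}^* \circ g_!$ that is again the mate of the original equivalence; the counit of $(g_!, g^*)$ thus transports to the counit of $(g^{\an/S}_!, g^{\an/S,*})$, which gives the diagram in \cref{thm:flat-trace}\cref{thm:flat-trace-7}.

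The arguments are essentially formal manipulations of adjunctions, so the main ``obstacle'' is just bookkeeping, namely checking that the structural isomorphisms $(gf)_! \simeq g_! f_!$, $g^* f_! \simeq f'_! g'^*$, and $c_{Y/S}^* g_! \simeq g^{\an/S}_! c_{X/S}^*$ that enter the three diagrams are the ones induced by adjunction from the corresponding isomorphisms on upper stars. This is automatic from the construction of $f_!$ as a left adjoint, and no additional geometric input beyond \cite[Lem.~2.7.6]{Huber-etale} and the formalism of \cref{construction:relative-analytification} is needed.
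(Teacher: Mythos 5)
Your treatment of the composition compatibility matches the paper's proof, and is fine; the interesting comparison concerns the other two.

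For the pullback compatibility, you take a genuinely different route from the paper. The paper reduces to stalks at geometric points via \cite[Prop.~2.5.5]{Huber-etale} and then performs a concrete case analysis: localize so that $f$ factors as an open immersion into a finite \'etale $Y$-space, use algebraic closedness of the residue field to split the finite \'etale part into copies of $Y$, and check the two cases of an open immersion by hand. You instead argue purely formally by mate calculus: the base change isomorphism $f'_! g'^* \simeq g^* f_!$ is the mate of the tautological isomorphism $\theta \colon g'^* f^* \simeq f'^* g^*$, and the identity $g^*(\epsilon_f) \circ \mathrm{BC}_{f^*} = \epsilon_{f'} \circ f'_!(\theta)$ follows from the definition of the mate, naturality of the counit and of $\theta$, and the triangle identity. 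This is correct, and the formal route is more uniform: it applies verbatim to all three compatibilities, whereas the paper's hands-on proof only explicitly treats compositions and pullbacks. One caveat you should make explicit: for the pullback case, \cite[Lem.~2.7.6]{Huber-etale} guarantees that $\mathrm{BC}$ is an isomorphism, and you should check (as you note) that the isomorphism Huber constructs is the mate in your sense; for the relative analytification case, $c_{X/S}$ is a morphism of topoi rather than a pullback of adic spaces, so \cite[Lem.~2.7.6]{Huber-etale} does not apply, and the assertion that the mate $g^{\an/S}_! \circ c^*_{X/S} \to c^*_{Y/S} \circ g_!$ is an isomorphism requires its own (short) justification, e.g.\ by evaluating on the representable sheaves $h_U$ for $U$ \'etale over $X$, where both sides reduce to the compatibility of $c_{-/S}$ with the forgetful functors on \'etale sites.
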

\begin{proof}
    The compatibility under compositions follows from the compatibility of counits of adjunctions with compositions (see e.g.\ \cite[\href{https://kerodon.net/tag/02DS}{Tag~02DS}]{kerodon}).
    Now we show the compatibility under pullbacks.
    Let 
    \[ \begin{tikzcd}
        X' \arrow[r,"g'"] \arrow[d,"f'"] & X \arrow[d,"f"] \\
        Y' \arrow[r,"g"] & Y
    \end{tikzcd} \]
    be a pullback diagram of locally noetherian adic spaces in which $f$ and $f'$ are \'etale. Thanks to \cite[Prop.~2.5.5]{Huber-etale} and the observation that the \'etale trace commutes with taking stalks, it suffices to check that $g^*\ttr_f$ agrees with $\ttr_{f'}$ after passing to stalks at each geometric point $\xi$ of $Y'$. Therefore, we can assume that the morphism $g$ is of the form $Y'=\Spa(C', C'^+) \to Y=\Spa(C, C^+)$ for some algebraically closed nonarchimedean field $C$ and $C'$ and a faithfully flat morphism $C^+ \to C'^+$ of open bounded valuation subrings. Furthermore, in this case, it suffices to show that $\Gamma(Y', g^* \ttr^\et_f) = \Gamma(Y', \ttr^\et_{f'})$.
    The claim is local on $X$, so we can assume that $X=\Spa(A, A^+)$ is affinoid. Then \cite[Lem.~2.2.8]{Huber-etale} implies that $X$ is an open subspace inside a finite \'etale $Y$-space $\ov{X}$. Since $C$ is algebraically closed, we conclude that $\ov{X}$ is a finite disjoint union of copies of $Y$, so we can assume that $f\colon X \to Y$ is an open immersion. If $f$ is an isomorphism, the claim is obvious. If $f$ is an open immersion that does not meet the closed point of $Y$, we see that $\Gamma(Y', g^*\ttr^\et_f) = 0 =\Gamma(Y', \ttr^\et_{f'})$.
\end{proof}

\section{Cycle classes}
\label{cycle classes section}

In this section, we develop a theory of cycle classes in analytic adic geometry. The cycle class considerations will be an important technical tool for verifying properties of the analytic trace in \cref{section:analytic-trace}. Furthermore, they will be absolutely crucial for our ``diagrammatic'' approach to Poincar\'e duality (see \cref{section:smooth-poincare-duality}).  

Our approach closely follows its algebraic counterpart developed in \cite{SGA41/2} in the case of divisors, and in \cite[\S~1]{Fujiwara-purity} in the case of lci immersion of higher codimension.

\subsection{Cycle classes of divisors}
\label{section:divisor-classes}

The goal of this subsection is to define the cycle class in the case of divisors. In later subsections, we will generalize this construction to higher codimensions as well.  

We refer the reader to \cite[\S~5]{adic-notes} for the definition and basic properties of effective Cartier divisors in analytic adic geometry.   

Throughout this subsection, we fix a locally noetherian analytic adic space $X$, an integer $n \in \O_X^\times$ (we do not assume that $n$ is invertible in $\O_X^+$),
and set $\Lambda \coloneqq \Z/n\Z$.

\subsubsection{Construction of cycle classes}

The goal of this subsection is to construct a class $c\ell_X(D) \in \rm{H}^2_D(X, \mu_n)$ for any effective Cartier divisor $D\subset X$. In order to start the construction, we will need the following explicit characterization of (local) \'etale cohomology of $\bf{G}_m$:

\begin{lemma}\label{lemma:local-cohomology} Let $X$ be a locally noetherian analytic adic space, and $D$ a Zariski-closed subspace with the open complement $U$. Then there are functorial identifications
\[
\rm{H}^1(X, \bf{G}_m) \simeq \rm{Pic}(X),
\]
\[
\rm{H}^1_D(X, \bf{G}_m)\simeq \bigl\{(\cal{L}, \varphi) \suchthat  \cal{L} \text{ a line bundle on } X, \varphi:\O_U\xrightarrow{\sim} \restr{\cal{L}}{U}\bigr\}/\sim.
\]
\end{lemma}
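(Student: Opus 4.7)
The plan is to prove the two identifications in turn, using the first as an input for the second.

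For $\Hh^1(X, \bf{G}_m) \simeq \Pic(X)$: this is the standard identification of $\bf{G}_m$-torsors on the \'etale site with line bundles. The construction sends $\cal{L}$ to the $\bf{G}_m$-torsor parametrizing local isomorphisms $\O_X \xrightarrow{\sim} \cal{L}$, and conversely builds from a $\bf{G}_m$-torsor an invertible $\O_X$-module by the associated bundle construction. The only nontrivial content is that every \'etale-locally trivial invertible $\O_X$-module is already analytically locally trivial, which follows from faithfully flat descent for finite projective modules over Huber pairs (cf.~\cite[Lem.~B.3.6]{Z-quotients}). Since $\Hh^1(X_\et, \bf{G}_m)$ classifies $\bf{G}_m$-torsors, this gives the desired functorial bijection.

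For the second identification, let $P(X, D)$ denote the right-hand side; it carries a natural abelian group structure via tensor product of pairs. The strategy is to compare two five-term exact sequences. The first comes from the fiber sequence $\rR\Gamma_D(X, \bf{G}_m) \to \rR\Gamma(X, \bf{G}_m) \to \rR\Gamma(U, \bf{G}_m)$, yielding
\[ \Gamma(X, \bf{G}_m) \to \Gamma(U, \bf{G}_m) \to \Hh^1_D(X, \bf{G}_m) \to \Hh^1(X, \bf{G}_m) \to \Hh^1(U, \bf{G}_m). \]
The second is built directly from $P(X, D)$: forgetting the trivialization defines a homomorphism $P(X, D) \to \Pic(X)$ whose image lies in $\ker(\Pic(X) \to \Pic(U))$, and whose fiber over the class of $\O_X$ is naturally identified with $\Gamma(U, \bf{G}_m)$ modulo the action of $\Gamma(X, \bf{G}_m)$ by automorphisms of $\O_X$. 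This gives an analogous five-term exact sequence. Define a natural map $P(X, D) \to \Hh^1_D(X, \bf{G}_m)$ by sending $(\cal{L}, \varphi)$ to the class of the $\bf{G}_m$-torsor attached to $\cal{L}$ together with the section over $U$ induced by $\varphi$ (a torsor equipped with a section outside $D$ is precisely what represents a class in local cohomology). This map intertwines the two sequences, and the five lemma together with the first identification yields the bijection.

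The main technical point is to establish exactness of the $P$-sequence: one must verify that pairs $(\cal{L}, \varphi)$ with trivializable $\cal{L}$ correspond exactly to $\Gamma(U, \bf{G}_m)/\mathrm{Im}(\Gamma(X, \bf{G}_m))$, and that the forgetful map surjects onto $\ker(\Pic(X) \to \Pic(U))$ (the latter surjectivity being essentially the definition of the kernel). Once these routine verifications are complete, the comparison with the local cohomology sequence and the invocation of the five lemma are formal.
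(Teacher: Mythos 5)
Your proof is correct, and its strategy --- identify $\Hh^1(X,\bf{G}_m)$ with $\Pic(X)$, compare the five-term local-cohomology exact sequence with the evident five-term sequence built from the set of pairs $(\cal{L},\varphi)$, and conclude via the five lemma --- is the same formal argument the paper invokes by citing \cite[2.13]{Olsson}. The only small slip is the citation of \cite[Lem.~B.3.6]{Z-quotients} for \'etale descent of line bundles, which concerns completed tensor products rather than descent; the paper instead quotes Huber's \cite[(2.2.7)]{Huber-etale}, which records $\Hh^1(X_\et,\bf{G}_m)\simeq\Pic(X)$ directly.
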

\begin{proof}
    The first claim follows from \cite[(2.2.7)]{Huber-etale}. Then the second statement follows from the argument identical to that of \cite[2.13]{Olsson} (and it is essentially formal).
\end{proof}

For the following discussion, we fix an effective Cartier divisor $i\colon D\hookrightarrow X$. Our current goal is to leverage \cref{lemma:local-cohomology} and the Kummer exact sequence to define the cycle class $c\ell_X(D) \in \rm{H}^2_D(X, \mu_n)$. We start with the following definition: 

\begin{definition}
    The line bundle \emph{associated to $D\subset X$} is $\O_X$-module $\O_X(D) \coloneqq \bigl(\rm{ker}(\O_X \to i_* \O_D)\bigr)^{\vee}$. We denote its dual by $\O_X(-D) \simeq \rm{ker}(\O_X \to i_* \O_D)$. 
\end{definition}

By definition, we have the following short exact sequence, 
\[
0 \to \O_X(-D) \to \O_X \to i_*\O_D \to 0.
\]
By passing to duals, we get a canonical morphism $\O_X \to \O_X(D)$ which is an isomorphism over $U\coloneqq X\smallsetminus D$. We denote the restriction of this morphism on $U$ by 
\begin{equation}\label{eqn:can-iso}
\varphi_D\colon \O_U \xrightarrow{\sim} \restr{\O_X(D)}{U}.
\end{equation}

\cref{lemma:local-cohomology} implies that the pair $(\O_X(D), \varphi_D)$ defines a class $[D]\in \rm{H}^1_D(X, \bf{G}_m)$. To get the (localized) cycle class of $D$, we combine the above discussion with the boundary map
\[
\delta_X\colon \rm{H}^1_D(X, \bf{G}_m) \to \rm{H}^2_D(X, \mu_{n})
\]
coming from the Kummer exact sequence
\[
0\to \mu_{n} \to \bf{G}_m \xrightarrow{f\mapsto f^{n}} \bf{G}_m\to 0.
\]

More precisely, we give the following definition:  

\begin{definition}
\label{defn:cycle-clas-divisors}
The (localized) \emph{cycle class} $c\ell_X(D)\in \rm{H}^2_D(X, \mu_{n})$ of an effective divisor $D \xhookrightarrow{i} X$ is defined to be $\delta_X([D]) \in \rm{H}^2_D(X, \mu_{n})$.
\end{definition}

\begin{variant}\label{variant:cycle-class-map-divisors} Sometimes, it will be more convenient to think of the cycle class as a map 
\[
\rm{cl}_X(D) \colon i_* \ud{\Lambda}_{D} \to \mu_{n, X}[2].
\]
\end{variant}

\begin{lemma}[Tranversal Base Change]
\label{lemma:base-change-divisor-classes} Let
\[
\begin{tikzcd}
D' \arrow{d}\arrow{r} & D \arrow{d}\\
X' \arrow{r}{f} & X
\end{tikzcd}
\]
be a cartesian diagram of locally noetherian analytic adic spaces such that the vertical arrows are effective Cartier divisors. Then  $f^*c\ell_X(D)=c\ell_{X'}(D')\in \rm{H}^2_{D'}(X', \mu_{n})$.
\end{lemma}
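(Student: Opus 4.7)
The plan is to reduce the identity $f^*c\ell_X(D) = c\ell_{X'}(D')$ to two compatibilities: naturality of the Kummer boundary map under $f^*$, and base change for the divisorial class $[D] \in \rm{H}^1_D(X, \bf{G}_m)$. The Kummer sequence $0 \to \mu_n \to \bf{G}_m \to \bf{G}_m \to 0$ is a short exact sequence of \'etale sheaves whose formation obviously commutes with $f^{-1}$, so its associated long exact sequences of local cohomology commute with $f^*$. In particular, $f^*\delta_X = \delta_{X'}f^*$ as maps $\rm{H}^1_D(X, \bf{G}_m) \to \rm{H}^2_{D'}(X', \mu_n)$, and it suffices to prove $f^*[D]=[D']$ in $\rm{H}^1_{D'}(X', \bf{G}_m)$.

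Under the identification of \cref{lemma:local-cohomology}, pullback along $f$ sends a pair $\bigl(\cal{L},\varphi\colon \O_U\xrightarrow{\sim}\restr{\cal{L}}{U}\bigr)$ to $(f^*\cal{L}, f^*\varphi)$. My task is therefore to exhibit a canonical isomorphism $f^*\O_X(D) \xrightarrow{\sim} \O_{X'}(D')$ which identifies $f^*\varphi_D$ with $\varphi_{D'}$ on $U'\coloneqq X'\smallsetminus D'$. Cartesianness of the square yields $f^*i_*\O_D \simeq i'_*\O_{D'}$, so pulling back the defining sequence $0\to \O_X(-D)\to \O_X\to i_*\O_D\to 0$ and comparing with the corresponding sequence $0\to \O_{X'}(-D')\to \O_{X'}\to i'_*\O_{D'}\to 0$ gives a canonical surjection $f^*\O_X(-D)\twoheadrightarrow \O_{X'}(-D')$ between the kernels. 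By the effective-Cartier-divisor hypothesis on $D$ and $D'$, both of these kernels are line bundles, and a surjection between line bundles is automatically an isomorphism (as one checks locally by trivializing both sides). Dualizing this isomorphism produces $f^*\O_X(D)\simeq \O_{X'}(D')$; and since $\varphi_D$ (resp.\ $\varphi_{D'}$) is defined as the dual of the inclusion $\O_X(-D)\hookrightarrow \O_X$ restricted to $U$ (resp.\ of $\O_{X'}(-D')\hookrightarrow \O_{X'}$ restricted to $U'$), the trivializations match on $U'$.

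The argument is essentially formal bookkeeping, and there is no deep obstacle; the main point requiring care is the identification of ideal sheaves under pullback. This is precisely where the transversality assumption, encoded in the simultaneous requirement that $D \to X$ and $D' \to X'$ be effective Cartier divisors, is used: it guarantees that $f^*\O_X(-D)$ and $\O_{X'}(-D')$ are both invertible, so the natural surjection between them is forced to be an isomorphism. Locally, if $D$ is cut out in $X$ by a nonzerodivisor $t$, then $D'$ is cut out in $X'$ by the nonzerodivisor $f^*t$, and the identification $f^*\O_X(-D) \xrightarrow{\sim} \O_{X'}(-D')$ is the evident one sending the generator $f^*t$ to itself.
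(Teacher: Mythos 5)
Your proof is correct and follows the same strategy as the paper: reduce to naturality of the Kummer boundary map under pullback and then check that $f^*(\O_X(D),\varphi_D)\cong(\O_{X'}(D'),\varphi_{D'})$. The only difference is that the paper cites this last compatibility to an external reference, whereas you supply the (correct) direct argument via the surjection of line-bundle kernels.
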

\begin{proof}
    The boundary map coming from the Kummer exact sequence commutes with an arbitrary base change. Therefore, it suffices to show that the pair $(\O_X(D), \varphi_{D})$ pullbacks to the pair $(\O_{X'}(D'), \varphi_{D'})$. This follows from \cite[Lem.~5.8]{adic-notes}.
\end{proof}

Now we show that our construction of cycle classes is compatible with the algebraic construction in \cite[Cycle, Def.~2.1.2]{SGA41/2}. 

\begin{lemma}
\label{lemma:comparison-cycles-of-divisors} 
Let $S=\Spa(A, A^+)$ be a strongly noetherian Tate affinoid, let $X$ be a locally finite type $A$-scheme with the relative analytification $X^{\an/S}$ (see \cref{construction:relative-analytification}), and let $D\subset X$ be an effective Cartier divisor. Then the natural comparison morphism\footnote{Here, we implicitly use \cite[Cor.~6.5]{adic-notes} (see also \cite[Prop.~5.5]{Guo-Li}) that guarantees that $D^{\an/S} \subset X^{\an/S}$ is an effective Cartier divisor}
\[
c_{X/S}^*\colon \rm{H}^2_D(X, \mu_n) \longrightarrow \rm{H}^2_{D^{\rm{an}/S}}(X^{\rm{an}/S}, \mu_n)
\]
sends $c\ell_X(D) \in \rm{H}^2_D(X, \mu_n)$ to $c\ell_{X^{\an/S}}(D^{\an/S}) \in \rm{H}^2_{D^{\an/S}}(X^{\an/S}, \mu_n)$.
\end{lemma}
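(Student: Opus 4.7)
The plan is to unwind both definitions and show that each step in the construction of the cycle class is compatible with the analytification functor $c_{X/S}^*$. Recall that $c\ell_X(D)$ is defined as $\delta_X([D])$ where $[D] \in \rm{H}^1_D(X,\bf{G}_m)$ corresponds under \cref{lemma:local-cohomology} to the pair $(\O_X(D),\varphi_D)$ with $\varphi_D$ the canonical isomorphism from \cref{eqn:can-iso}, and $\delta_X$ is the boundary map coming from the Kummer sequence. The same recipe is used in the analytic setting for $D^{\an/S} \subset X^{\an/S}$.

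First I would verify that $c_{X/S}^*$ carries $[D]$ to $[D^{\an/S}]$ in $\rm{H}^1_{D^{\an/S}}(X^{\an/S},\bf{G}_m)$. Concretely, this amounts to checking that the analytification of the short exact sequence $0 \to \O_X(-D) \to \O_X \to i_* \O_D \to 0$ recovers the corresponding sequence on $X^{\an/S}$, so that $c_{X/S}^*\O_X(D) \simeq \O_{X^{\an/S}}(D^{\an/S})$ compatibly with the canonical trivializations over the open complement. This follows by local computation: \'etale-locally on $X$, the divisor $D$ is cut out by a single nonzerodivisor $f \in \O_X$, the pair $(\O_X(D),\varphi_D)$ is represented by $(f^{-1}\O_X \subset \O_X[f^{-1}], \rm{mult.~by~}f^{-1})$, and the same local equation cuts out $D^{\an/S}$ inside $X^{\an/S}$ (by \cite[Cor.~6.5]{adic-notes}, cited in the lemma). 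Under the identification of $\rm{H}^1_D(X,\bf{G}_m)$ with pairs $(\cal{L},\varphi)$ from \cref{lemma:local-cohomology}, this gives $c_{X/S}^*[D] = [D^{\an/S}]$.

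Second, I would verify the compatibility of the boundary maps: $c_{X/S}^*(\delta_X(\alpha)) = \delta_{X^{\an/S}}(c_{X/S}^*\alpha)$ for any class $\alpha \in \rm{H}^1_D(X,\bf{G}_m)$. This is a purely formal statement once one knows that the Kummer sequence is compatible with $c_{X/S}^*$, i.e.\ that the short exact sequence $0 \to \mu_n \to \bf{G}_m \xrightarrow{n} \bf{G}_m \to 0$ on $X^{\an/S}_\et$ is the pullback of the corresponding sequence on $X_\et$; this in turn follows from the fact that $c_{X/S}^*$ is a pullback of \'etale sheaves (so it is exact) and takes $\mu_n$ and $\bf{G}_m$ to their analytic counterparts. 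Combining the two steps yields $c_{X/S}^*c\ell_X(D) = c\ell_{X^{\an/S}}(D^{\an/S})$.

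The only mildly subtle point — the ``hard part'', though it is really just bookkeeping — is making the identification in the first step functorial on the level of the pairs $(\cal{L},\varphi)$ rather than just on the isomorphism classes of line bundles, so that the trivialization over the open complement is preserved. Since both trivializations $\varphi_D$ and $\varphi_{D^{\an/S}}$ are obtained by dualizing the inclusion of the ideal sheaf, and analytification commutes with duals of invertible modules and with restriction to open subspaces, this compatibility is immediate from the local description above.
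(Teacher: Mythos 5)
Your proposal is correct and takes essentially the same approach as the paper: first show that $c_{X/S}^*$ carries the pair $(\O_X(D),\varphi_D)$ to $(\O_{X^{\an/S}}(D^{\an/S}),\varphi_{D^{\an/S}})$, so $c_{X/S}^*[D] = [D^{\an/S}]$ in $\Hh^1$, and then invoke compatibility of the algebraic and analytic Kummer boundary maps. You supply slightly more detail on the local verification (a single local equation cutting out both $D$ and $D^{\an/S}$), which the paper compresses into one sentence, but the structure of the argument is the same.
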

\begin{proof}
    Let $[D]$ be the class in $\rm{H}^1_D(X, \bf{G}_m)$ corresponding to the pair $(\O_X(D), \varphi_D)$, where $\varphi_D \colon \O_U \xrightarrow{\sim} \restr{\O_X(D)}{U}$ is the algebraic counterpart of the isomorphism constructed in \cref{eqn:can-iso}. Then we note that $c\ell_X(D)$ is defined as $\delta_X([D])$, where $\delta_X$ is the boundary map in the algebraic Kummer sequence.  Since the relative analytification of the pair $(\O_X(D), \varphi_D)$ is isomorphism to the pair $(\O_{X^{\an/S}}(D^{\an/S}), \varphi_{D^{\an/S}})$, we conclude that the natural map $c_{X/S}\colon \rm{H}^1_D(X, \bf{G}_m) \to \rm{H}^1_{D^{\an/S}}(X^{\an/S}, \bf{G}_m)$ sends the class $[D]$ to $[D^{\an/S}]$. Using the compatibility between algebraic and analytic Kummer exact sequences, we conclude that
    \[
    c\ell_{X^{\an/S}}(D^{\an/S}) = \delta_{X^{\an/S}}([D^{\an/S}]) = \delta_{X^{\an/S}}(c_{X/S}^*[D]) = c_{X/S}^*\bigl(\delta_{X}([D])\bigr) = c_{X/S}^*(c\ell_X(D)),
    \]
    where we slightly abuse notation and denote by $c_{X/S}$ both natural comparison morphisms $\rm{H}^1_D(X, \bf{G}_m) \to \rm{H}^1_{D^{\an/S}}(X^{\an/S}, \bf{G}_m)$ and $\rm{H}^2_D(X, \mu_n) \to \rm{H}^2_{D^{\an/S}}(X^{\an/S}, \mu_n)$. 
\end{proof}

\subsubsection{First Chern classes}

In this subsection, we define the first Chern classes of line bundles.
Although our discussion is essentially just a variant of \cref{defn:cycle-clas-divisors}, it is convenient to treat this construction separately since it applies to a more general setup.  

More precisely, the goal is to define a class $c_1(\cal{L}) \in \rm{H}^2(X, \mu_n)$ for any line bundle $\cal{L}$ on $X$. For this, we recall that \cref{lemma:local-cohomology} ensures that $\rm{H}^1(X, \bf{G}_m)$ is canonically isomorphic to $\rm{Pic}(X)$, so the isomorphism class of a line bundle $\cal{L}$ defines a class $[\cal{L}]\in \rm{H}^1(X, \bf{G}_m)$. Now we combine it with the boundary map
\[
\delta_X\colon \rm{H}^1(X, \bf{G}_m) \to \rm{H}^2(X, \mu_{n})
\]
to get the first Chern class of $\cal{L}$:

\begin{variant}\label{variant:first-chern-class}
    The \emph{first Chern class $c_1(\cal{L})\in \rm{H}^2(X, \mu_n)$} of a line bundle $\cal{L}$ on $X$ is defined to be $\delta_X([\cal{L}])\in \rm{H}^2(X, \mu_n)$.
\end{variant}

\begin{remark}\label{rmk:cycle-class-chern-class} Let $D\subset X$ be an effective Cartier divisor, and $\iota \colon \rm{H}^2_D(X, \mu_n) \to \rm{H}^2(X, \mu_n)$ the natural morphism. Then $\iota(c\ell_X(D))=c_1(\O_X(D))$ as can be directly seen from the construction.  
\end{remark}

\begin{remark}\label{rmk:base-change-chern-classes}
The formation of the first Chern class commutes with base change along \emph{arbitrary} morphisms of locally noetherian analytic adic spaces $f\colon X' \to X$. The proof is analogous to that of \cref{lemma:base-change-divisor-classes} and boils down to the equality $f^*[\cal{L}]=[f^*\cal{L}]\in \rm{H}^2(X', \mu_n)$. 
\end{remark}

Now we show that the analytic first Chern classes are compatible with the algebraic first Chern classes:

\begin{lemma}
\label{lemma:comparison-first-chern-classes} 
Let $S=\Spa(A, A^+)$ be a strongly noetherian Tate affinoid, let $X$ be a locally finite type $A$-scheme with the relative analytification $X^{\an/S}$, and let $\cal{L}\in \rm{Pic}(X)$. Then the natural comparison morphism
\[
c_{X/S}^*\colon \rm{H}^2(X, \mu_n) \longrightarrow \rm{H}^2(X^{\rm{an}/S}, \mu_n)
\]
sends $c_1(\cal{L}) \in \rm{H}^2(X, \mu_n)$ to $c_1(c_{X/A}^*\cal{L}) \in \rm{H}^2(X^{\an/S}, \mu_n)$.
\end{lemma}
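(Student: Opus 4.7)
The plan is to mirror the argument of the preceding \cref{lemma:comparison-cycles-of-divisors}, since the first Chern class is built in a formally identical way to the divisor cycle class but using the absolute cohomology $\Hh^1(X,\bf{G}_m)$ instead of the local cohomology $\Hh^1_D(X,\bf{G}_m)$. Concretely, I would unwind the definition $c_1(\cL) = \delta_X([\cL])$ from \cref{variant:first-chern-class}, where $[\cL] \in \Hh^1(X,\bf{G}_m) \simeq \Pic(X)$ is the class of $\cL$ and $\delta_X$ is the boundary map in the algebraic Kummer sequence; the analogous identity holds on the analytic side thanks to \cref{lemma:local-cohomology}.

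Once the definitions are unfolded, the proof reduces to checking two formal compatibilities. First, I would verify that under $c_{X/S}^*$, the class $[\cL] \in \Hh^1(X,\bf{G}_m)$ is sent to $[c_{X/S}^*\cL] \in \Hh^1(X^{\an/S},\bf{G}_m)$; this follows from the functoriality of the identification $\Hh^1(-,\bf{G}_m) \simeq \Pic(-)$ together with the compatibility of pullback of line bundles with the relative analytification functor of \cref{construction:relative-analytification}. Second, I would observe that the relative analytification of the algebraic Kummer sequence is the analytic Kummer sequence, so the square
\[
\begin{tikzcd}
\Hh^1(X,\bf{G}_m) \arrow{r}{\delta_X} \arrow{d}{c_{X/S}^*} & \Hh^2(X,\mu_n) \arrow{d}{c_{X/S}^*} \\
\Hh^1(X^{\an/S},\bf{G}_m) \arrow{r}{\delta_{X^{\an/S}}} & \Hh^2(X^{\an/S},\mu_n)
\end{tikzcd}
\]
commutes. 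Chaining these two compatibilities then yields $c_{X/S}^*(c_1(\cL)) = c_1(c_{X/S}^*\cL)$, as desired.

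I do not anticipate any real obstacle: the argument is a cosmetic simplification of the proof of \cref{lemma:comparison-cycles-of-divisors}, since neither the support conditions nor the local trivialization $\varphi_D$ appearing in \cref{eqn:can-iso} need to be tracked. The only subtlety worth double-checking is the compatibility of the Kummer sequence with $c_{X/S}^*$, but this is immediate from the fact that $\mu_n$ and $\bf{G}_m$ are representable by group schemes of finite type whose relative analytifications compute the corresponding analytic sheaves, combined with the exactness of $c_{X/S}^*$ on \'etale sheaves.
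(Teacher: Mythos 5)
Your proposal is correct and follows exactly the approach the paper intends: the paper's own proof simply states that the argument is "essentially identical (and, in fact, easier)" to that of \cref{lemma:comparison-cycles-of-divisors} and leaves the details to the reader, and your write-up is precisely what filling in those details amounts to — compatibility of $[\cL] \mapsto [c_{X/S}^*\cL]$ under the Picard identification, followed by commutativity of the Kummer boundary with $c_{X/S}^*$.
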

\begin{proof}
    The proof is essentially identical (and, in fact, easier) to that of \cref{lemma:comparison-cycles-of-divisors}. We leave details to the interested reader. 
\end{proof}

\subsection{Projective bundle and blow-up formulas}

In this subsection, we prove the projective bundle and blow-up formulas. This will be the crucial ingredient in the extension of (localized) cycle classes from the case of divisors to the case of general lci immersions $Y\subset X$.  

Throughout this subsection, we fix a locally noetherian analytic adic space $X$, an integer $n \in \O_X^\times$, and set $\Lambda \coloneqq \Z/n\Z$.

\subsubsection{Projective bundle formula}

The main goal of this subsection is to prove the projective bundle formula. For this, we fix a vector bundle $\cal{E}$ on $X$ of rank $d+1$ and consider the associated projective bundle
\[
f\colon P=\bf{P}_X(\cal{E}) \to X
\]
with the universal line bundle $\O_{P/X}(1)$;
see \cite[Def.~7.11]{adic-notes} for the precise definition of $\bf{P}_X(\cal{E})$ in the context of adic spaces. 

\begin{construction} The first Chern class $c_1\bigl(\O(1)\bigr)\in \rm{H}^2(P, \mu_n)$ defines a morphism $\ud{\Lambda}_{P} \to \mu_n[2]$. After twisting, this becomes a morphism
\[
c_1\colon \ud{\Lambda}_P(-1)[-2] \to \ud{\Lambda}_X.
\]
By the $(f^*, \rm{R}f_*)$-adjunction, this defines a morphism
\[
c_1\colon \ud{\Lambda}_X(-1)[-2] \to \rm{R}f_* \ud{\Lambda}_P
\]
Using the multiplicative structure on $\rm{R}f_* \ud{\Lambda}_P$ coming from the cup-product map, we get a morphism
\[
c_1^k \colon \ud{\Lambda}_X(-k)[-2k] \to \rm{R}f_* \ud{\Lambda}_P
\]
for any $k\geq 0$. 
\end{construction}

\begin{proposition}(Projective Bundle Formula) 
Let $\cal{E}$ be a vector bundle on $X$ of rank $d+1$.
Let $f\colon P=\bf{P}_X(\cal{E}) \to X$ be the associated projective bundle. Then the natural morphism
\[
\gamma = \bigoplus_{i=0}^d c_1^i \colon \bigoplus_{i=0}^d \ud{\Lambda}_X(-i)[-2i] \to \rm{R}f_* \ud{\Lambda}_P
\]
is an isomorphism.
\end{proposition}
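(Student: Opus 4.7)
My plan is to reduce the projective bundle formula to its classical algebraic counterpart via the relative analytification. First, I would observe that the statement is \'etale local on the base: both the source and the target of $\gamma$ commute with \'etale pullback, and the first Chern class $c_1(\O(1))$ is compatible with arbitrary base change by \cref{rmk:base-change-chern-classes}. Since any vector bundle trivializes Zariski-locally, we may therefore assume that $\cal{E} \simeq \O_X^{d+1}$ is trivial and, working further locally, that $X = \Spa(A, A^+)$ is a strongly noetherian Tate affinoid.

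In this setting, $P = \bf{P}^d_X$ is the relative analytification $(\bf{P}^d_{\Spec A})^{\an/S}$ in the sense of \cref{construction:relative-analytification}. I would then invoke the classical algebraic projective bundle formula for the structure morphism $g \colon \bf{P}^d_{\Spec A} \to \Spec A$, which yields an isomorphism
\[
\gamma^{\alg} \colon \bigoplus_{i=0}^d \ud{\Lambda}(-i)[-2i] \xrightarrow{\sim} \rm{R}g_* \ud{\Lambda}_{\bf{P}^d_{\Spec A}}
\]
built from the algebraic first Chern class of $\O(1)$. Pulling $\gamma^{\alg}$ back along the morphism of \'etale topoi $c_S$, combining this with Huber's comparison theorem for \'etale cohomology of proper algebraic morphisms (to obtain $c_S^* \rm{R}g_* \ud{\Lambda} \xrightarrow{\sim} \rm{R}f_* \ud{\Lambda}_P$), and invoking the compatibility of analytic and algebraic first Chern classes from \cref{lemma:comparison-first-chern-classes} (together with the multiplicativity of $c_S^*$ on cup products), I would identify $c_S^*\gamma^{\alg}$ with $\gamma$, concluding that $\gamma$ is an isomorphism.

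The main obstacle is the applicability of Huber's comparison theorem: in its classical form it requires the torsion orders of the coefficient sheaf to be invertible in the integral ring $A^+$, whereas in our setting $n$ is only assumed invertible in $A$. When this hypothesis fails (for instance $n = p$ in residue characteristic $p$), one would instead need to reduce to stalks at geometric points by establishing proper base change for the specific smooth proper morphism $f \colon \bf{P}^d_X \to X$, and then compute $\rm{H}^*(\bf{P}^d_{\bar{x}}, \ud{\Lambda})$ directly over an algebraically closed nonarchimedean field, appealing e.g.\ to GAGA-type results for smooth proper morphisms as in \cite{Scholze-Hodge}.
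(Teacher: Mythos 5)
Your strategy---reduce the analytic projective bundle formula to its algebraic counterpart via comparison theorems---is correct, and your ``fallback'' route is in fact the route the paper takes. But the two points where you diverge from the paper are exactly the places that deserve care.

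First, the paper does not attempt a direct comparison over a general affinoid base at all. Instead, it observes that $R^if_*\ud{\Lambda}_P$ is overconvergent for proper $f$ (Huber, Prop.~8.2.3(ii)), so $\gamma$ is an isomorphism iff it is so on stalks at rank-$1$ geometric points; combined with compatibility of $c_1$ with base change and the stalk computation for $Rf_*$ (Huber, Prop.~2.6.1), this reduces immediately to $X = \Spa(C,\cO_C)$ with $C$ algebraically closed. Your primary route via $c_S^*$ and Huber's comparison [Th.~5.7.2] over an arbitrary affinoid is not what the paper does, and you are right to worry about its hypotheses; the paper simply sidesteps the issue by reducing to the geometric fiber first, where the comparison is unproblematic.

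Second, your fallback says one should ``establish proper base change.'' That phrasing should be avoided here: full proper base change genuinely \emph{fails} in this setting (the paper's own \cref{example:no-proper-base-change} gives explicit counterexamples when $n = p$). What is actually needed, and what the paper uses, is the weaker \emph{overconvergence} of $R^if_*\ud{\Lambda}_P$, which holds for proper $f$ unconditionally on $n \in (\O^+)^\times$ and suffices to reduce to rank-$1$ stalks. Finally, once over $\Spa(C,\cO_C)$, the paper cites Huber's comparison theorem [Th.~3.7.2] for the proper scheme $\bf{P}^d_C$ together with \cref{lemma:comparison-first-chern-classes} and the algebraic PBF from SGA5; this is lighter than reaching for GAGA-type results from \cite{Scholze-Hodge}, although the latter would also work. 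Your preliminary step of trivializing $\cal{E}$ Zariski-locally is harmless but unnecessary: once you are at a geometric point, the bundle trivializes automatically.
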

\begin{proof}
    By \cite[Prop.~8.2.3~(ii)]{Huber-etale}, $\rm{R}^if_*\ud{\Lambda}_P$ is overconvergent for all $i\geq 0$. Therefore, it suffices to show that $\gamma$ is an isomorphism over geometric points of rank-$1$. Since first Chern classes commute with arbitrary base change (see \cref{rmk:base-change-chern-classes}) and the formation of $\rm{R}f_* \ud{\Lambda}_P$ commutes with taking stalks (see \cite[Prop.~2.6.1]{Huber-etale}), it suffices to prove the claim under the additional assumption that $X=\Spa(C, \O_C)$ for an algebraically closed nonarchimedean field $C$. Then $P$ algebraizes to a projective space $P^{\rm{alg}}=\bf{P}^d_C$.
    Therefore, the result follows from its algebraic counterpart established in \cite[Exp.~VII, Th.~2.2.1]{SGA5} as well as the comparison results \cite[Th.~3.7.2]{Huber-etale} and \cref{lemma:comparison-first-chern-classes}. 
\end{proof}

\subsubsection{Blow-up formula}

In this subsection, we discuss the blow-up formula in the context of adic spaces. We refer to \cite[Def.~5.3]{adic-notes} for the discussion of lci immersions in the context of adic spaces (see also \cite[Def.~5.4]{Guo-Li}) and to \cite[Def.~7.11]{adic-notes} for the definition of a blow-up.  

We fix a locally noetherian analytic adic space $X$ with an lci immersion $i\colon Z\hookrightarrow X$ of pure codimension $c$. Let $\cal{I}_Z$ be the coherent ideal sheaf of the Zariski-closed immersion $i$.
Define the conormal bundle to be $\cal{N}_{Z|X}\coloneqq \cal{I}_Z/\cal{I}_Z^2$;
this is a vector bundle\footnote{This uses the lci assumption.} over $Z$ of rank $c$. We consider the blow-up morphism
\[
\pi\colon \widetilde{X} \coloneqq \rm{Bl}_Z(X) \to X
\]
and define the exceptional divisor via the formula
\[
E\coloneqq \ud{\rm{Proj}}^{\an}_X \bigoplus_{n\geq 0} \cal{I}_Z^n/\cal{I}_Z^{n+1} \simeq \bf{P}_Z(\cal{N}_{Z|X}).
\]
Similarly as in algebraic geometry, one checks that $E$ is naturally an effective Cartier divisor in $\rm{Bl}_Z(X)$ and that $\O_{\widetilde{X}}(-E) \simeq \O_{\widetilde{X}/X}(1)$. 

In order to compute cohomology of the blow-up, we will need to use the construction of cup products in local cohomology. For this, we recall that given a Zariski-closed immersion $i\colon Z \hookrightarrow X$ and $\F, \G\in D(X_\et; \Lambda)$, there is a canonical morphism
\[
w_{\F, \G} \colon i^*\F \otimes^L \rR i^!\G \to \rR i^!(\F \otimes^L \G),
\]
which is adjoint to $i_*(i^*\F \otimes^L \rR i^!\G) \xr[\sim]{\PF^{-1}} \F \otimes^L i_* \rR i^!\G \xr{\id \otimes^L \epsilon_i} \F \otimes^L \G$. 

\begin{construction}[Cup product in local cohomology]\label{construction:local-cup-product} 
\begin{enumerate}[leftmargin=*,label=\upshape{(\roman*)}]
    \item Let $i\colon Z \hookrightarrow X$ be a Zariski-closed immersion and let $\F, \G \in D(X_\et; \Lambda)$.
    Then, for each integers $i$ and $j$, there is a functorial map
    \[
    \blank \cup \blank \colon \rm{H}^i(Z, i^*\F) \otimes \rm{H}^j_Z(X, \G) \to \rm{H}^{i+j}_Z(X, \F\otimes^L \G)
    \]
    defined as the composition
    \begin{multline*}
    \Hh^i(Z, i^*\F) \otimes \Hh^j_Z(X, \G) \simeq \Hom\bigl(\ud{\Lambda}_Z, i^*\F[i]\bigr) \otimes \Hom\bigl(\ud{\Lambda}_Z, \rR i^! \G[j]\bigr) \xrightarrow{(f,g) \mapsto f\otimes^L g} \\ 
    \Hom\bigl(\ud{\Lambda}_Z, i^*\F[i] \otimes^L \rR i^!\G[j]\bigr) 
     \xr{w_{\F, \G}\circ \blank} \Hom\bigl(\ud{\Lambda}_Z, \rR i^!(\F\otimes^L \G)[i+j]\bigr) \simeq \Hh^{i+j}_Z(X, \F\otimes^L \G).
    \end{multline*}
    \item Let $i_1\colon Z_1 \hookrightarrow X$, $i_2\colon Z_2 \hookrightarrow X$ be two Zariski-closed immersions, let $i \colon Z \coloneqq Z_1\times_X Z_2 \hookrightarrow X$ be their intersection, and let $\F, \G \in  D(X_\et; \Lambda)$. Then, for each integers $i$ and $j$, there is a functorial map
    \[
    - \cup -\colon \rm{H}^i_{Z_1}(X, \F) \otimes \rm{H}^j_{Z_2}(X, \G) \to \rm{H}^{i+j}_Z(X, \F\otimes^L \G)
    \]
    defined as the composition
    \begin{multline*}
    \Hh^i_{Z_1}(X, \F) \otimes \Hh^j_{Z_2}(X, \G) \simeq \Hom\bigl(i_{1, *}\ud{\Lambda}_{Z_1}, \F[i]\bigr) \otimes \Hom\bigl(i_{2, *}\ud{\Lambda}_{Z_2}, \G[j]\bigr) \xrightarrow{(f,g) \mapsto f\otimes^L g}  \\
    \Hom\bigl(i_{1, *}\ud{\Lambda}_{Z_1} \otimes^L i_{2, *}\ud{\Lambda}_{Z_2}, \F[i] \otimes^L \G[j]\bigr) \xr{\sim}
     \Hom(i_* \ud{\Lambda}_Z, \F \otimes^L \G [i+j]) \simeq \Hh^{i+j}_Z(X, \F\otimes^L \G),
    \end{multline*}
    where the third map is given by precomposing with the inverse of the K\"unneth formula isomorphism 
    \[i_{1, *} \ud{\Lambda}_{Z_1} \otimes^L i_{2, *} \ud{\Lambda}_{Z_2} \xr{\sim} i_* \ud{\Lambda}_Z.
    \]
\end{enumerate}
\end{construction}

\begin{convention}\label{convention:negative-divisors} We denote by $c\ell_{\widetilde{X}}(-E)$ the cohomology class $-c\ell_{\widetilde{X}}(E)\in \rm{H}^2_E\bigl(\widetilde{X}, \Lambda(1)\bigr)$.
\end{convention}
\begin{proposition}\label{prop:blow-up-formula}(Blow-up Formula) Let $Z \hookrightarrow X$ be an lci immersion of pure codimension $c$ and let $\pi \colon \widetilde{X} \to X$ be the blow-up of $Z$ in $X$. 
Then there is a canonical isomorphism
\[
\alpha\colon \bigoplus_{i=1}^{c-1} \rm{H}^{2(c-i)}\bigl(Z, \Lambda(c-i)\bigr) \oplus \rm{H}^{2c}_Z\bigl(X, \Lambda(c)\bigr) \to \rm{H}^{2c}_E\bigl(\rm{Bl}_Z(X), \Lambda(c)\bigr)
\]
given by the formula 
\[
\alpha\bigl((\gamma_i), \gamma\bigr) = \sum_{i=1}^{c-1} \gamma_i \cdot c\ell_{\widetilde{X}}(-E)^i + \pi^*\gamma,
\]
where $E\subset \rm{Bl}_Z(X)$ is the exceptional divisor of the blow-up and the product $\gamma_i \cdot c\ell_{\widetilde{X}}(-E)^i$ is from \cref{construction:local-cup-product}.
\end{proposition}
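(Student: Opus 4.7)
The plan is to upgrade the cohomological statement to an isomorphism in the derived category $D(X_\et;\Lambda)$ and verify it by comparing the localization triangles on $X$ and $\widetilde{X}$, combined with proper base change for $\pi$ and the projective bundle formula applied to $\pi'\colon E=\bf{P}_Z(\cal{N}_{Z|X})\to Z$.

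First, I would construct a morphism in $D(X_\et;\Lambda)$
\[
\w\alpha\colon i_{Z,*}\rR i_Z^!\ud{\Lambda}_X \,\oplus\, \bigoplus_{i=1}^{c-1} i_{Z,*}\ud{\Lambda}_Z(-i)[-2i] \,\longrightarrow\, \rR\pi_* j_{E,*}\rR j_E^!\ud{\Lambda}_{\widetilde{X}},
\]
where $j_E\colon E\hookrightarrow\widetilde{X}$. On the first summand, $\w\alpha$ is obtained from the adjunction unit $\ud{\Lambda}_X\to\rR\pi_*\ud{\Lambda}_{\widetilde{X}}$ by functoriality of local cohomology along the inclusion $\pi^{-1}(Z)=E$. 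On the $i$-th piece of the second summand, $\w\alpha$ is defined by cupping with the local cycle class $c\ell_{\widetilde{X}}(-E)^i\in\Hh^{2i}_E(\widetilde{X},\Lambda(i))$ via \cref{construction:local-cup-product} and then pushing forward by $\rR\pi_*$. Taking $\Hh^{2c}\rR\Gamma(X,-(c))$ of $\w\alpha$ recovers the map $\alpha$ of the statement, so it suffices to prove that $\w\alpha$ is an equivalence.

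Second, I would fit $\w\alpha$ into a distinguished triangle. Applying $\rR\pi_*$ to the localization triangle on $\widetilde{X}$ and comparing with the localization triangle on $X$ yields a commutative diagram
\[
\begin{tikzcd}[column sep=small]
i_{Z,*}\rR i_Z^!\ud{\Lambda}_X \arrow[r] \arrow[d] & \ud{\Lambda}_X \arrow[r] \arrow[d] & \rR \ov{j}'_*\ud{\Lambda}_{X\smallsetminus Z} \arrow[d,"\sim"]\\
\rR\pi_* j_{E,*}\rR j_E^!\ud{\Lambda}_{\widetilde{X}} \arrow[r] & \rR\pi_*\ud{\Lambda}_{\widetilde{X}} \arrow[r] & \rR\pi_*\rR\ov{j}_{U,*}\ud{\Lambda}_U
\end{tikzcd}
\]
of distinguished triangles, whose rightmost vertical is an equivalence because $\pi$ restricts to an isomorphism $U\coloneqq\widetilde{X}\smallsetminus E\xrightarrow{\sim}X\smallsetminus Z$. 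Hence the cones of the left and middle verticals coincide, giving a distinguished triangle
\[
i_{Z,*}\rR i_Z^!\ud{\Lambda}_X \longrightarrow \rR\pi_* j_{E,*}\rR j_E^!\ud{\Lambda}_{\widetilde{X}} \longrightarrow \mathrm{cof}\bigl(\ud{\Lambda}_X\to\rR\pi_*\ud{\Lambda}_{\widetilde{X}}\bigr).
\]
By proper base change for $\pi$, one has $i_Z^*\rR\pi_*\ud{\Lambda}_{\widetilde{X}}\simeq\rR\pi'_*\ud{\Lambda}_E$, and the projective bundle formula (since $\cal{N}_{Z|X}$ has rank $c$) identifies the cofiber with $i_{Z,*}\bigoplus_{i=1}^{c-1}\ud{\Lambda}_Z(-i)[-2i]$ via the classes $c_1(\O_{E/Z}(1))^i$.

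Third, I would verify that the cup-product components of $\w\alpha$ split this triangle. The key computation is that the composition
\[
\ud{\Lambda}_Z(-i)[-2i] \xrightarrow{\w\alpha_i} \rR\pi'_*\rR j_E^!\ud{\Lambda}_{\widetilde{X}} \longrightarrow \rR\pi'_*\ud{\Lambda}_E \longrightarrow \bigoplus_{j=1}^{c-1}\ud{\Lambda}_Z(-j)[-2j]
\]
is the canonical inclusion onto the $i$-th factor. This reduces, via \cref{rmk:cycle-class-chern-class}, multiplicativity of the cup product, and the identity $\O_{\widetilde{X}}(-E)\restr{}{E}\simeq\O_{E/Z}(1)$, to the statement that the forget-supports image of $c\ell_{\widetilde{X}}(-E)^i$ in $\Hh^{2i}(E,\Lambda(i))$ agrees with $c_1(\O_{E/Z}(1))^i$. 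The resulting splitting shows that $\w\alpha$ is an isomorphism.

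The main obstacle is the last step, namely carrying out the diagram chase that relates the local cup product of \cref{construction:local-cup-product} to the proper base change identification of the cofiber and to the projective bundle formula's splitting by first Chern classes. Everything else is a formal manipulation of distinguished triangles and adjunctions, using properness of the blow-up $\pi$ and the fact that $\pi$ is an isomorphism away from the exceptional divisor.
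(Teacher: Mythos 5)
Your proof is correct and takes essentially the same route as the paper, which simply defers to the argument of Fujiwara's Lemma~1.1.1 (a formal consequence of the projective bundle formula and the excision/localization triangle); you have, in effect, written out that argument in full.

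One point worth flagging, since you invoke ``proper base change for $\pi$'' to identify $i_Z^*\rR\pi_*\ud{\Lambda}_{\widetilde{X}}\simeq\rR\pi'_*\ud{\Lambda}_E$: in this adic setting, full proper base change for constant coefficients fails when $n$ is not invertible in $\cO^+$ (the paper gives counterexamples), so this isomorphism needs justification rather than a bare citation. It does hold here, but via the same mechanism the paper uses to prove the projective bundle formula: $\rR^j\pi_*\ud{\Lambda}_{\widetilde{X}}$ and $\rR^j\pi'_*\ud{\Lambda}_E$ are overconvergent by \cite[Prop.~8.2.3~(ii)]{Huber-etale}, so the base change map can be checked on stalks at rank-$1$ geometric points, where weak proper base change \cite[Th.~5.4.6]{Huber-etale} applies and the fibers of $\pi$ and $\pi'$ over points of $Z$ coincide. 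With that substitution, the rest of your argument — the comparison of localization triangles, the identification of $\mathrm{cof}(\ud{\Lambda}_X\to\rR\pi_*\ud{\Lambda}_{\widetilde{X}})$ with $i_{Z,*}\bigoplus_{j=1}^{c-1}\ud{\Lambda}_Z(-j)[-2j]$, the splitting via the local cup products with $c\ell_{\widetilde{X}}(-E)^i$, and the reduction via \cref{rmk:cycle-class-chern-class} and $\restr{\O_{\widetilde{X}}(-E)}{E}\simeq\O_{E/Z}(1)$ to the projective bundle formula's Chern class decomposition — goes through as you describe.
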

\begin{proof}
    The proof is a formal consequence of the projective bundle formula and the excision sequence. For example, the proof of \cite[Lem.~1.1.1]{Fujiwara-purity} applies verbatim in this context.
\end{proof}

\subsection{Higher-dimensional cycle classes}

In this subsection, we extend the theory of cycle classes to arbitrary lci immersions of pure codimension by following the strategy taken in \cite{Fujiwara-purity}. The case of effective Cartier divisors was already treated in \cref{section:divisor-classes};
the general case is reduced to this via certain blow-ups.
Throughout this subsection, we fix a locally noetherian analytic adic space $X$, an integer $n \in \O_X^\times$, and denote $\Lambda \coloneqq \Z/n\Z$.

Let $i\colon Z \hookrightarrow X$ be an lci immersion of pure codimension $c$; our current goal is to define the (localized) cycle class $c\ell_X(Z) \in \rm{H}^2_Z\bigl(X, \Lambda(c)\bigr)$. For this, we consider the blow-up 
\[
\pi\colon \widetilde{X}\coloneqq \rm{Bl}_Z(X) \to X
\]
with exceptional divisor $E\subset \widetilde{X}$.
Now \cref{defn:cycle-clas-divisors} provides us with a class 
\[
c\ell_{\widetilde{X}}(-E) \in \rm{H}^2_E\bigl(\widetilde{X}, \Lambda(1)\bigr),
\]
while the blow-up formula from \cref{prop:blow-up-formula} implies that there is a unique monic relation
of degree $c$
\begin{equation}\label{eqn:cycle-class-higher-dim}
c\ell_{\widetilde{X}}(-E)^c + \sum_{i=1}^c c_i \cdot c\ell_{\widetilde{X}}(-E)^{c-i} =0 \in \rm{H}^{2c}_E\bigl(\widetilde{X}, \Lambda(c)\bigr)
\end{equation}
with $c_i\in \rm{H}^{2i}\bigl(Z, \Lambda(i)\bigr)$ and $c_c\in \rm{H}^{2c}_Z\bigl(X, \Lambda(c)\bigr)$.

\begin{definition}\label{defn:higher-dimensional-cycle}
    The \emph{(localized)  cycle class} $c\ell_X(Z) \in \rm{H}^{2c}_Z\bigl(X, \Lambda(c)\bigr)$ is the class $c_c \in \rm{H}^{2c}_Z\bigl(X, \Lambda(c)\bigr)$ from \cref{eqn:cycle-class-higher-dim}.
\end{definition}

\begin{variant}
\label{variant:cycle-class-map} 
Similar to \cref{variant:cycle-class-map-divisors}, it will sometimes be more convenient to think of the cycle class as a map
\[
\rm{cl}_i = \rm{cl}_X(Z) \colon i_* \ud{\Lambda}_{Z} \to \ud{\Lambda}_{X}(c)[2c].
\]
\end{variant}

\begin{lemma}[Tranversal Base Change]
\label{lemma:base-change-lci-classes}
Let
\[
\begin{tikzcd}
Z' \arrow{d}\arrow{r} & Z \arrow{d}\\
X' \arrow{r}{f} & X
\end{tikzcd}
\]
be a cartesian diagram of locally noetherian analytic adic spaces (still with $n \in \cO^\times_X$) such that the vertical arrows are lci immersions of pure codimension $c$.
Then  $f^*c\ell_X(Z)=c\ell_{X'}(Z')\in \rm{H}^{2c}_{Z'}\bigl(X', {\Lambda}(c)\bigr)$.
\end{lemma}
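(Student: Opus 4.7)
The plan is to reduce the claim to \cref{lemma:base-change-divisor-classes} (the divisor case) via the defining relation of $c\ell_X(Z)$. Form the blow-ups $\widetilde X \coloneqq \rm{Bl}_Z(X)$ and $\widetilde{X'} \coloneqq \rm{Bl}_{Z'}(X')$, with exceptional divisors $E \subset \widetilde X$ and $E' \subset \widetilde{X'}$. Since the square in the statement is cartesian with $Z \hookrightarrow X$ (resp.\ $Z' \hookrightarrow X'$) lci of the same pure codimension $c$ and since the pullback of the coherent ideal sheaf $\cal{I}_Z$ along $f$ is $\cal{I}_{Z'}$, the canonical morphism $\widetilde{X'} \to \widetilde X \times_X X'$ is an isomorphism, and $E' = \tilde f^{-1}(E)$ as effective Cartier divisors, where $\tilde f \colon \widetilde{X'} \to \widetilde X$ denotes the induced map.

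First, I would apply \cref{lemma:base-change-divisor-classes} to the transverse cartesian square of divisors $E' \hookrightarrow \widetilde{X'}$ and $E \hookrightarrow \widetilde X$, to obtain
\[
\tilde f^* c\ell_{\widetilde X}(-E) = c\ell_{\widetilde{X'}}(-E') \in \rm{H}^2_{E'}\bigl(\widetilde{X'}, \Lambda(1)\bigr).
\]
Next, I would observe that the cup products in local cohomology from \cref{construction:local-cup-product} commute with arbitrary pullback, since they are defined from the symmetric monoidal structure on $D(\blank_\et;\Lambda)$, the natural transformations $w_{\F,\G}$, and the K\"unneth map, all of which are natural under $\tilde f^*$. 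Applying $\tilde f^*$ to the defining monic relation \cref{eqn:cycle-class-higher-dim} for $Z \subset X$ and using the preceding display, I obtain the identity
\[
c\ell_{\widetilde{X'}}(-E')^c + \sum_{i=1}^c \tilde f^* c_i \cdot c\ell_{\widetilde{X'}}(-E')^{c-i} = 0 \in \rm{H}^{2c}_{E'}\bigl(\widetilde{X'}, \Lambda(c)\bigr),
\]
where $\tilde f^* c_i$ lives in $\rm{H}^{2i}(Z', \Lambda(i))$ for $i < c$ and in $\rm{H}^{2c}_{Z'}(X', \Lambda(c))$ for $i=c$.

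The key point is then that the blow-up formula from \cref{prop:blow-up-formula} is itself compatible with base change: the isomorphism $\alpha$ is built from $c\ell_{\widetilde X}(-E)$ and $\pi^*$, both of which are compatible with $\tilde f^*$ (using \cref{lemma:base-change-divisor-classes} again for the former). Therefore, the coefficients in the monic relation \cref{eqn:cycle-class-higher-dim} are uniquely determined and compatible with pullback under transverse base change. In particular, the coefficient $c_c$ for $Z \subset X$ pulls back to the coefficient $c'_c$ for $Z' \subset X'$, which is exactly the desired equality $f^* c\ell_X(Z) = c\ell_{X'}(Z')$.

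The only nontrivial point is the base change compatibility of the blow-up and the identification $E' = \tilde f^{-1}(E)$, which hinges on the lci hypothesis and the fact that in the adic setting of \cite[Def.~7.11]{adic-notes}, blow-ups are defined via relative $\ud{\rm{Proj}}^{\an}$ of the Rees algebra, whose formation commutes with flat base change on $X$ and, by the transverse-lci assumption, also with the possibly non-flat map $f$. Granting this standard compatibility, the argument above is essentially formal.
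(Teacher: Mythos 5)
Your proof is correct and follows essentially the same strategy as the paper's: reduce to the divisor case by passing to the blow-up, using the base-change compatibility of blow-ups along lci centers (which the paper cites as \cite[Lem.~5.8, Rmk.~7.8]{adic-notes}) and the base-change compatibility of divisor cycle classes (\cref{lemma:base-change-divisor-classes}). The paper's proof is terser — after noting $\rm{Bl}_{Z'}(X') \simeq \rm{Bl}_Z(X) \times_X X'$ it simply says that ``after unraveling \cref{defn:higher-dimensional-cycle}'' the result reduces to the divisor case — whereas you spell out the intermediate steps: pulling back the monic relation \cref{eqn:cycle-class-higher-dim}, noting the compatibility of the cup products in local cohomology with pullback, and using uniqueness of the coefficients supplied by the blow-up formula. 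Those intermediate steps are exactly what the paper's ``unraveling'' would look like if written out, so this is the same proof.
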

\begin{proof}
    Let $\cal{I}_Z$ and $\cal{I}_{Z'}$ be the ideal sheaves of the Zariski-closed immersions $Z \hookrightarrow X$ and $Z' \hookrightarrow X'$, respectively.
    Then \cite[Lem.~5.8]{adic-notes} implies that $f^*\cal{I}_Z \simeq \cal{I}_{Z'}$. Then \cite[Rmk.~7.8]{adic-notes} ensures that there is a natural isomorphism
    \[
    \alpha\colon \rm{Bl}_{Z'}(X') \xlongrightarrow{\sim} \rm{Bl}_Z(X) \times_X X'.
    \]
    
    Denote by $E\subset \rm{Bl}_Z(X)$ and $E'\subset \rm{Bl}_{Z'}(X')$ the corresponding exceptional divisors.
    It is easy to see that $\alpha$ restricts to an isomorphism $\restr{\alpha}{E'}\colon E' \xrightarrow{\sim} E\times_{X} X'$. In particular, the cartesian diagram
    \[
    \begin{tikzcd}
    E' \arrow{d}\arrow{r} & E \arrow{d}\\
    \rm{Bl}_{Z'}(X') \arrow{r} & \rm{Bl}_Z(X)
    \end{tikzcd}
    \]
    is transversal.
    After unraveling \cref{defn:higher-dimensional-cycle}, the question then boils down to showing that the natural morphism $\rm{H}^2_{E}(\rm{Bl}_Z(X), \mu_n) \to \rm{H}^2_{E'}(\rm{Bl}_{Z'}(X
    ), \mu_n)$ sends $c\ell_{\rm{Bl}_Z(X)}(E)$ to $c\ell_{\rm{Bl}_{Z'}(X')}(E')$. This follows from \cref{lemma:base-change-divisor-classes}. 
\end{proof}

Now we study the behavior of cycle classes with respect to intersections. 

\begin{definition}\label{defn:normal-intersection}
    A collection of effective Cartier divisors $\{D_i\}_{i=1, \dots, c}$ on $X$ \emph{crosses normally} if, for every subset $I\subset \{1, \dotsc, c\}$, the (``scheme-theoretic'') intersection
    \[
    D_I \coloneqq \bigcap_{i\in I} D_i
    \]
    is an lci Zariski-closed subspace of $X$ of pure codimension $\abs{I}$.
\end{definition}

\begin{lemma}\label{lemma:composition-of-cycles} Let $\{D_i\}_{i=1, \dots, c}$ be a collection of effective Cartier divisors on $X$ which crosses normally.
Let $Z=\cap_{i\in I} D_i$ be their (``scheme-theoretic'') intersection. Then $c\ell_X(Z)$ is given by the cup product 
\[
c\ell_X(Z) = \bigcup_{i=1}^c c\ell_X(D_i) \in \rm{H}^{2c}_Z\bigl(X, \Lambda(c)\bigr).
\]
\end{lemma}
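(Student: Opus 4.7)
The plan is to apply the blow-up formula \cref{prop:blow-up-formula} to the blow-up $\pi \colon \widetilde{X} \coloneqq \rm{Bl}_Z(X) \to X$ with exceptional divisor $E$: the pullback map $\pi^* \colon \rm{H}^{2c}_Z\bigl(X,\Lambda(c)\bigr) \hookrightarrow \rm{H}^{2c}_E\bigl(\widetilde{X}, \Lambda(c)\bigr)$ is injective, as it is the inclusion of the last direct summand in the direct sum decomposition from \cref{prop:blow-up-formula}. Since the cup product $c\ell_X(D_1) \cup \cdots \cup c\ell_X(D_c)$ is naturally supported on $\bigcap_i D_i = Z$ and lies in $\rm{H}^{2c}_Z\bigl(X, \Lambda(c)\bigr)$, it therefore suffices to verify the desired identity after pulling back along $\pi$.

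Next, using the transversal base change \cref{lemma:base-change-divisor-classes} for $\pi$, together with the additivity of cycle classes for sums of effective Cartier divisors (which follows from $\O_X(D+D') \simeq \O_X(D) \otimes \O_X(D')$ and the additivity of the Kummer boundary), we have
\[ \pi^* c\ell_X(D_i) \;=\; c\ell_{\widetilde{X}}(E) + c\ell_{\widetilde{X}}(\widetilde{D}_i), \]
where $\widetilde{D}_i$ denotes the strict transform of $D_i$. Setting $e \coloneqq c\ell_{\widetilde{X}}(E)$ and $d_i \coloneqq c\ell_{\widetilde{X}}(\widetilde{D}_i)$ and expanding the cup product yields
\[ \pi^*\bigl(c\ell_X(D_1) \cup \cdots \cup c\ell_X(D_c)\bigr) \;=\; \prod_{i=1}^c (e + d_i) \;=\; \sum_{S \subseteq \{1,\dots,c\}} e^{c-|S|} \cdot \prod_{i \in S} d_i. \]
The summand for $S = \{1, \dots, c\}$ vanishes because the total intersection of the strict transforms is empty, i.e.\ $\widetilde{D}_1 \cap \cdots \cap \widetilde{D}_c = \varnothing$: writing $D_i = \{f_i = 0\}$ locally, the blow-up $\widetilde{X}$ is covered by $c$ standard affine charts, and in the $f_j$-chart the strict transform $\widetilde{D}_j$ does not appear, so no point lies on all $c$ strict transforms simultaneously.

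It then remains to show that the remaining nonzero summands combine to give exactly $\pi^* c\ell_X(Z)$, as characterized by the monic relation $c\ell_{\widetilde{X}}(-E)^c + \sum_{i=1}^{c-1} c_i \cdot c\ell_{\widetilde{X}}(-E)^{c-i} + \pi^* c\ell_X(Z) = 0$ coming from \cref{prop:blow-up-formula}. The main obstacle is to rewrite each mixed term $e^{c-|S|} \prod_{i\in S} d_i$ in the form $\gamma_{|S|} \cdot c\ell_{\widetilde{X}}(-E)^{c-|S|}$ with $\gamma_{|S|} \in \rm{H}^{2|S|}\bigl(Z, \Lambda(|S|)\bigr)$ pulled back to $E$ via the structure map $E \to Z$. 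This is essentially a calculation on the projective bundle $E = \bf{P}_Z(\cal{N}_{Z|X})$: using the splitting $\cal{N}_{Z|X} \simeq \bigoplus_{i=1}^c \restr{\cal{N}_{D_i|X}}{Z}$ (valid because the $D_i$ cross normally), each $\restr{\widetilde{D}_i}{E}$ is identified with a coordinate hyperplane of the projective bundle, so its class can be written explicitly in terms of $\restr{c\ell_{\widetilde{X}}(-E)}{E}$ and pullbacks from $Z$ via the projective bundle formula. Matching the resulting expression against the monic relation, and invoking its uniqueness from the blow-up formula, then gives the desired identity.
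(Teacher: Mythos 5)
Your setup is the same as the paper's: you pass to the blow-up $\pi\colon \widetilde{X}=\rm{Bl}_Z(X)\to X$ with exceptional divisor $E$, invoke the injectivity of $\pi^*$ coming from \cref{prop:blow-up-formula}, write $\pi^*c\ell_X(D_i)=c\ell_{\widetilde{X}}(E)+c\ell_{\widetilde{X}}(\widetilde{D}_i)$ via \cref{lemma:base-change-divisor-classes} and additivity, and use the key vanishing $\bigcap_i\widetilde{D}_i=\varnothing$. Up to here everything is correct.

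The gap is in the final step, and you flag it yourself as ``the main obstacle'': after expanding $\pi^*\bigl(\bigcup_i c\ell_X(D_i)\bigr)=\prod_i(e+d_i)=\sum_S e^{c-|S|}\prod_{i\in S}d_i$, the coefficients $\prod_{i\in S}d_i$ of the powers of $e$ involve the \emph{strict transform} classes $d_i=c\ell_{\widetilde{X}}(\widetilde{D}_i)$. These are not pullbacks from $X$, and their restrictions to $E$ are not pullbacks from $Z$ either; in the projective bundle $E=\bf{P}_Z(\cal{N}_{Z|X})$ one finds $d_i|_E=p_i|_E+c_1(\O_E(1))$, which mixes the tautological class $\xi=c_1(\O_E(1))$ with pullbacks from $Z$. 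So the terms $e^{c-|S|}\prod_{i\in S}d_i$ are not already in the shape $\gamma_{|S|}\cdot c\ell_{\widetilde{X}}(-E)^{c-|S|}$ required by \cref{prop:blow-up-formula}, and the proposed ``calculation on the projective bundle'' needs to re-expand in powers of $\xi$, keep track of the Grothendieck relation, and then recollect — a nontrivial piece of bookkeeping that the sketch does not carry out.

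The paper avoids all of this with one change of variable: instead of expanding $\prod_i p_i=\prod_i(e+d_i)$ in powers of $e$ with $d_i$-coefficients, it expands the \emph{vanishing} relation $0=\bigcup_i c\ell_{\widetilde{X}}(\widetilde{D}_i)=\prod_i\bigl(p_i+c\ell_{\widetilde{X}}(-E)\bigr)$ in powers of $c\ell_{\widetilde{X}}(-E)$ with $p_i$-coefficients, where $p_i\coloneqq c\ell_{\widetilde{X}}(\pi^{-1}(D_i))=\pi^*c\ell_X(D_i)$. This yields
\[
\sum_{j=0}^c \sigma_j(p_1,\dots,p_c)\cdot c\ell_{\widetilde{X}}(-E)^{c-j}=0,
\]
which is already a monic relation of degree $c$ in $c\ell_{\widetilde{X}}(-E)$ whose coefficients are symmetric polynomials in the pullback classes $p_i$. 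These restrict on $E$ to pullbacks from $Z$ (because $E\to Z\hookrightarrow X$), so the relation has precisely the shape required by \cref{defn:higher-dimensional-cycle}. Uniqueness of the monic relation then identifies $\pi^*c\ell_X(Z)=\sigma_c(p_1,\dots,p_c)=\bigcup_i p_i$, and injectivity of $\pi^*$ finishes. I would recommend adopting this expansion; the rest of your argument is sound and stays as is.
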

\begin{proof}
    We denote by $\widetilde{D}_i\coloneqq \rm{Bl}_Z(D_i) \subset \widetilde{X} = \rm{Bl}_Z(X)$ the strict transform of $D_i$ in the blow-up $\pi\colon \widetilde{X} \to X$. Since $Z\subset X$ is an lci Zariski-closed immersion, we see that $\widetilde{D}_i$ and $\pi^{-1}(D_i)$ are effective Cartier divisors and $\O_{\widetilde{X}}\bigl(\pi^{-1}(D_i)\bigr)\simeq \O_{\widetilde{X}}(\widetilde{D}_i) \otimes \O_{\widetilde{X}}(E)$. Therefore, we have an equality of cycle classes
    \[
    c\ell_{\widetilde{X}}(\pi^{-1}(D_i)) = c\ell_{\widetilde{X}}(\widetilde{D}_i) + c\ell_{\widetilde{X}}(E) \in \rm{H}^2_{\pi^{-1}(D_i)}\bigl(\widetilde{X}, \Lambda(1)\bigr).
    \]
    Now we consider the cup-product of all these classes to get
    \begin{equation}\label{eqn:cycle-classes-of-strict-transforms}
    \bigcup_{i=1}^c \Bigl(c\ell_{\widetilde{X}}\bigl(\pi^{-1}(D_i)\bigr) + c\ell_{\widetilde{X}}(-E) \Bigr) = \bigcup_{i=1}^c c\ell_{\widetilde{X}}(\widetilde{D}_i),
    \end{equation}
    where the equality takes place in $\rm{H}^{2c}_{\cap_{i=1}^c \pi^{-1}(D_i)}\bigl(\widetilde{X}, \Lambda(c)\bigr) = \rm{H}^{2c}_E\bigl(\widetilde{X}, \Lambda(c)\bigr)$. 
    Since the intersection of strict transforms $\cap_{i=1}^c \widetilde{D}_i=\varnothing$ is empty, the product $\bigcup_{i=1}^c c\ell_{\widetilde{X}}(\widetilde{D}_i)$
    factors through the natural morphism $\rm{H}^{2c}_{\varnothing}\bigl(\widetilde{X}, \Lambda(c)\bigr)=0 \to \rm{H}^{2c}_{\cap_{i=1}^c \pi^{-1}(D_i)}\bigl(\widetilde{X}, \Lambda(c)\bigr)$.
    In particular, we see that $\bigcup_{i=1}^c c\ell_{\widetilde{X}}(\widetilde{D}_i) = 0$.
    Hence, \cref{eqn:cycle-classes-of-strict-transforms} simplifies to the equation 
    \[
    \sum_{j=0}^c \sigma_j\biggl(c\ell_{\widetilde{X}}\left(\pi^{-1}\left(D_1\right)\right), \dots, c\ell_{\widetilde{X}}\left(\pi^{-1}\left(D_c\right)\right)\biggr) \cdot c\ell_{\widetilde{X}}(-E)^{c-j} =0 \in \rm{H}^{2c}_E(\widetilde{X}, \Lambda(c)), 
    \]
    where $\sigma_j$ denotes the $j$-th elementary symmetric polynomial and $\sigma_0=1$.
    Therefore, \cref{defn:higher-dimensional-cycle} directly implies that
    \[
    \pi^*c\ell_{X}(Z) = \sigma_c\Bigl(c\ell_{\widetilde{X}}\bigl(\pi^{-1}({D}_1)\bigr), \dots, c\ell_{\widetilde{X}}\bigl(\pi^{-1}({D}_c)\bigr)\Bigr) = \bigcup_{i=1}^c c\ell_{\widetilde{X}}\bigl(\pi^{-1}(D_i)\bigr). 
    \]
    Thus, \cref{lemma:base-change-divisor-classes} ensures that 
    \[
    \pi^* c\ell_X(Z)=\bigcup_{i=1}^c c\ell_{\widetilde{X}}(\pi^{-1}(D_i)) = \bigcup_{i=1}^c \pi^* c\ell_{X}(D_i) = \pi^*\left(\bigcup_{i=1}^c c\ell_{X}(D_i)\right).
    \]
    Finally, \cref{prop:blow-up-formula} implies that $c\ell_{X}(Z) = \bigcup_{i=1}^c c\ell_X(D_i)$. 
\end{proof}

\begin{corollary}\label{cor:gysin-composition} Let $\{D_1, \dots, D_c\}$ be a collection of normally crossing effective Cartier divisors on $X$, and let  $c_1$, $c_2$ be two nonnegative integers with $c_1+c_2\leq c$.
Set $Y \colonequals \cap_{i=1}^{c_1} D_i$ and $Z \colonequals \cap_{i=1}^{c_1+c_2} D_i$, giving rise to natural Zariski-closed immersions $Z\xhookrightarrow{i_1} Y \xhookrightarrow{i_Y} X$.
Then the following composition commutes:
\[
\begin{tikzcd}[column sep=large]
    i_{Y, *} \bigl(i_{1, *} \ud{\Lambda}_Z\bigr) \arrow[d,sloped,"\sim"] \arrow{r}{i_{Y, *}(\rm{cl}_{i_1})} & i_{Y, *} \ud{\Lambda}_Y(c_2)[2c_2] \arrow{d}{\rm{cl}_{i_Y}(c_2)[2c_2]} \\
    (i_Y \circ i_1)_* \ud{\Lambda}_Z\arrow{r}{\rm{cl}_{i_Y \circ i_1}} & \ud{\Lambda}_X(c_1+c_2)[2(c_1+c_2)]
\end{tikzcd}
\]
\end{corollary}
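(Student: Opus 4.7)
The plan is to reduce the diagrammatic identity to a cup-product identity in local cohomology and then apply \cref{lemma:composition-of-cycles} together with \cref{lemma:base-change-lci-classes}.

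First, I would translate the diagram into cohomology. By \cref{variant:cycle-class-map} and the $\bigl((i_Y\circ i_1)_*, \rR(i_Y\circ i_1)^!\bigr)$-adjunction, the two morphisms in the diagram correspond to two elements of $\Hh^{2(c_1+c_2)}_Z\bigl(X,\Lambda(c_1+c_2)\bigr)$: the bottom row yields $c\ell_X(Z)$, while the composition along the top and right yields a class $\alpha$ to be identified. Using the canonical identification $(i_Y\circ i_1)^!\simeq i_1^!\,i_Y^!$ and unwinding \cref{variant:cycle-class-map}, one checks that $\alpha$ agrees with the image of $c\ell_X(Y)\otimes c\ell_Y(Z)$ under a natural pairing
\[
\Hh^{2c_1}_Y\bigl(X,\Lambda(c_1)\bigr)\otimes\Hh^{2c_2}_Z\bigl(Y,\Lambda(c_2)\bigr)\longrightarrow \Hh^{2(c_1+c_2)}_Z\bigl(X,\Lambda(c_1+c_2)\bigr)
\]
obtained by combining the two variants of \cref{construction:local-cup-product} (first cupping along $Y\hookrightarrow X$ via (i), then noting that the result lands in the local cohomology supported on $Z$ via (ii)).

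Second, since any subfamily of a normally crossing collection is again normally crossing, and since restricting such a collection to an ambient intersection (here $Y$) preserves this property, three applications of \cref{lemma:composition-of-cycles}, together with \cref{lemma:base-change-lci-classes} applied to each $D_i\cap Y\hookrightarrow Y$ for $i>c_1$, give
\[
c\ell_X(Z)=\bigcup_{i=1}^{c_1+c_2} c\ell_X(D_i),\qquad c\ell_X(Y)=\bigcup_{i=1}^{c_1} c\ell_X(D_i),\qquad c\ell_Y(Z)=\bigcup_{i=c_1+1}^{c_1+c_2} i_Y^*\,c\ell_X(D_i).
\]

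Third, combining Steps 1 and 2, the desired identity $\alpha=c\ell_X(Z)$ reduces to the assertion that the mixed pairing of Step 1 sends the pair $\bigl(\bigcup_{i\le c_1}c\ell_X(D_i),\ \bigcup_{i>c_1}i_Y^*c\ell_X(D_i)\bigr)$ to $\bigcup_{i=1}^{c_1+c_2}c\ell_X(D_i)$. This is a purely formal statement, following from the associativity of the tensor product in $D(X_\et;\Lambda)$ and from the fact that variants (i) and (ii) of \cref{construction:local-cup-product} are compatible under the base change $i_Y^*$, i.e., the cup-product of variant (i) applied to a class $i_Y^*\beta\in\Hh^*(Y,i_Y^*\F)$ agrees with the cup-product of variant (ii) applied to $\beta\in\Hh^*_Y(X,\F)$ after the natural identification.

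The main obstacle lies in Step 1 (and the verification of the compatibility invoked in Step 3): one must carefully trace through the adjunctions, projection formula, and the K\"unneth isomorphism $i_{Y,*}\ud\Lambda_Y\otimes^L(\text{--})\simeq i_{Y,*}i_Y^*(\text{--})$ in order to identify the composition in the diagram with the prescribed cup product. This bookkeeping is routine but lengthy; it requires no further geometric input beyond the transversal base change already encoded in \cref{lemma:base-change-lci-classes}.
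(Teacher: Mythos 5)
Your proposal is correct and takes essentially the same route as the paper: both express the relevant cycle classes as cup products of divisor classes via \cref{lemma:composition-of-cycles} and \cref{lemma:base-change-lci-classes}, and both then reduce to a formal compatibility of local cup products. The ``routine but lengthy'' verification you defer at the end of Step 1 and in Step 3 is precisely what the paper packages by introducing $W \colonequals \bigcap_{i>c_1} D_i$ (so that $Z = Y\times_X W$ is a transversal Cartesian square), staying at the sheaf level in $D(X_\et;\Lambda)$, and citing \cref{claim factorizing Kunneth map} for the factorization of the K\"unneth map through projection formula and base change.
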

\begin{proof}
Let us put $W \colonequals \cap_{i = c_1 + 1}^{c_1 + c_2} D_i$, resulting in a Cartesian diagram
\[
\begin{tikzcd}
    Z \arrow{r}{i_1} \arrow{d}{i_2} \arrow{rd}{i_Z} & Y \arrow{d}{i_Y} \\
    W \arrow{r}{i_W} & X.
\end{tikzcd}
\]
Our assertion follows directly once we show that the following diagram commutes:
\[
\begin{tikzcd}
i_{Y, *} \bigl(i_{1, *} \ud{\Lambda}_Z\bigr) \arrow[rrr, bend left=18, "\sim"'] \arrow[rd, "i_{Y, *}\left(\rm{cl}_{i_1}\right)"'] \arrow[rrr,shift left=4ex,font=\scriptsize,eq=cor:gysin-composition-top] &[1em] 
i_{Y, *} \bigl(i_Y^* i_{W, *} \ud{\Lambda}_W\bigr) \arrow[l, "i_{Y, *}(\BC)"'] \arrow{d}{i_{Y, *}\left(i_Y^*\rm{cl}_{i_W}\right)} &
i_{Y, *} \ud{\Lambda}_Y \otimes i_{W, *} \ud{\Lambda}_W \arrow[l, "\PF_{i_Y}"'] \arrow{r}{\KM} \arrow{d}{\id \otimes \rm{cl}_{i_W}} \arrow{rd}{\rm{cl}_{i_Y} \otimes \rm{cl}_{i_W}} &
i_{Z, *} \ud{\Lambda}_Z \arrow{d}{\rm{cl}_{i_Z}} \\
&
i_{Y, *} \ud{\Lambda}_Y(c_2)[2c_2] \arrow[rr, bend right = 15, "{\rm{cl}_{i_Y}(c_2)[2c_2]}"] & 
i_{Y, *} \ud{\Lambda}_Y \otimes \ud{\Lambda}_X(c_2)[2c_2] \arrow[l, "\PF_{i_Y}"] \arrow{r}{\rm{cl}_{i_Y} \otimes \id} & 
\ud{\Lambda}_X(c_1+c_2)[2(c_1+c_2)]
\end{tikzcd}
\]
The commutativity of \cref{cor:gysin-composition-top} (as well as the meaning of $\KM$, $\PF$, and $\BC$) expressing the factorization of K\"unneth map
in terms of the projection formula and the base change map
is explained in \cref{claim factorizing Kunneth map}, whose proof is independent from the rest of our paper.
The left triangle commutes by virtue of \cref{lemma:base-change-lci-classes}, whereas the upper right triangle
commutes thanks to \cref{lemma:composition-of-cycles}.
The rest part is easily seen to be commutative.
\end{proof}

\begin{remark} One can adapt the proof of \cite[Exp.~XVI, Th.~2.3.3]{deGabber} to show that, more generally, for an lci immersion $Z\xhookrightarrow{i} X$ of pure codimension $c$ and an lci immersion $Y\xhookrightarrow{j} Z$ of pure codimension $c'$, the diagram
\[
\begin{tikzcd}
    i_*\bigl(j_* \ud{\Lambda}_Y\bigr) \arrow[d,sloped,"\sim"] \arrow{r}{i_*(\rm{cl}_j)} & i_*\bigl(\ud{\Lambda}_Z(c')[2c']\bigr) \arrow{d}{\rm{cl}_i(c')[2c']} \\
    (i\circ j)_*\ud{\Lambda}_Y \arrow{r}{\rm{cl}_{i\circ j}} & \ud{\Lambda}_X(c+c')[2(c+c')].
\end{tikzcd}
\]
commutes. We do not prove this claim as we never use it in this paper.
\end{remark}

\subsection{Comparison with algebraic cycle classes}

Throughout this subsection, we fix a strongly noetherian Tate affinoid $S=\Spa(A, A^+)$, a locally finite type $A$-scheme $X$, an integer $n\in A^\times$,
and denote $\Lambda \coloneqq \Z/n\Z$.

Recall that \cite[Cycle, Def.~2.1.2]{SGA41/2} defines a cycle class 
\[
c\ell_X(D) \in \rm{H}^2_D(X, \mu_n)
\]
for any effective Cartier divisor $D\subset X$.
This definition has been extended to general lci closed subschemes $Y \subset X$ of pure codimension $c$ by Gabber in \cite[Def.~1.1.2]{Fujiwara-purity}.
So \textit{loc.\ cit.} defines the cycle class
\[
c\ell_{X}(Y) \in \rm{H}^{2c}_Y\bigl(X, \Lambda(c)\bigr).
\] 
The main goal of this subsection is to show that this construction is compatible with \cref{defn:higher-dimensional-cycle} via the relative analytification functor from \cref{construction:relative-analytification}.

For this, we recall that \cite[Cor.~6.5]{adic-notes} (see also \cite[Prop.~5.5]{Guo-Li}) ensures that, for an lci immersion $Y\subset X$ of pure codimension $c$, the relative analytification $Y^{\an/S} \subset X^{\an/S}$ is also an lci immersion of pure codimension $c$. Therefore, \cref{defn:higher-dimensional-cycle} applies to this situation, providing us with the cycle class $c\ell_{X^{\an/S}}(Y^{\an/S}) \subset \rm{H}^{2c}_{Y^{\an/S}}\bigl(X^{\an/S}, \Lambda(c)\bigr)$.

\begin{lemma}\label{lemma:analytification-cycle-classes} 
Let $X$ and $Y$ be locally finite type $A$-schemes and let $Y\subset X$ be an lci immersion of pure codimension $c$.
Then the natural morphism
\[
c_{X/S}^*\colon \rm{H}^{2c}_Y\bigl(X, \Lambda(c)\bigr) \longrightarrow \rm{H}^{2c}_{Y^{\rm{an}/S}}\bigl(X^{\rm{an}/S}, \Lambda(c)\bigr)
\]
sends the algebraic cycle class $c\ell_X(Y)$ to the analytic cycle class $c\ell_{X^{\rm{an}/S}}(Y^{\rm{an}/S})$.
\end{lemma}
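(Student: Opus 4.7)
The plan is to reduce the higher-codimensional statement to the divisor case (already handled in \cref{lemma:comparison-cycles-of-divisors}) by exploiting that both the algebraic and analytic cycle class constructions are built via the same blow-up recipe, together with the uniqueness of the monic relation provided by the blow-up formula.

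First, I would set up the geometric comparison. Write $\pi \colon \widetilde{X} = \rm{Bl}_Y(X) \to X$ for the algebraic blow-up with exceptional divisor $E$, and $\pi^{\an/S} \colon \widetilde{X}^{\an/S} \to X^{\an/S}$ for its relative analytification. One needs to check that $\widetilde{X}^{\an/S}$ is canonically identified with the analytic blow-up $\rm{Bl}_{Y^{\an/S}}(X^{\an/S})$ and that under this identification, the analytification $E^{\an/S}$ of the algebraic exceptional divisor coincides with the analytic exceptional divisor. This is a local computation using the $\ud{\rm{Proj}}$-description of the blow-up from \cite[Def.~7.11]{adic-notes}, combined with the compatibility of relative analytification with $\ud{\rm{Proj}}$ of a finitely generated graded $\O_X$-algebra (alternatively, one can invoke the universal property of blow-ups after observing that relative analytification preserves ideal sheaves and effective Cartier divisors, cf.\ \cite[Cor.~6.5]{adic-notes}).

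Next, I would observe that the comparison map $c_{\widetilde{X}/S}^*$ intertwines the algebraic and analytic blow-up decomposition isomorphisms of \cref{prop:blow-up-formula}. Indeed, the blow-up formula is built from the projective bundle formula applied to $E \simeq \bf{P}_Y(\cal{N}_{Y|X})$, and the isomorphism is governed by cup products with the first Chern class of the tautological line bundle. Since first Chern classes are analytification-compatible (\cref{lemma:comparison-first-chern-classes}) and cup products on \'etale cohomology commute with analytification, the claim follows.

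Finally, apply $c_{\widetilde{X}/S}^*$ to the defining monic relation
\[
c\ell_{\widetilde{X}}(-E)^c + \sum_{i=1}^c c_i \cdot c\ell_{\widetilde{X}}(-E)^{c-i} = 0
\]
in $\rm{H}^{2c}_E\bigl(\widetilde{X}, \Lambda(c)\bigr)$, where $c_c = c\ell_X(Y)$ by Gabber's definition. Using \cref{lemma:comparison-cycles-of-divisors} applied to the divisor $E$ (together with the sign convention of \cref{convention:negative-divisors}), the left-hand side maps to the analogous expression on the analytic side with each $c_i$ replaced by its analytification. The uniqueness of the monic relation on the analytic side, guaranteed by \cref{prop:blow-up-formula}, then forces these analytified coefficients to coincide with the ones defining the analytic cycle class; extracting the constant term gives $c_{X/S}^*\bigl(c\ell_X(Y)\bigr) = c\ell_{X^{\an/S}}(Y^{\an/S})$. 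The only nontrivial step is the first one — matching the algebraic and analytic blow-up geometries along with their exceptional divisors — but once this bookkeeping is settled, the rest of the argument is a formal uniqueness assertion.
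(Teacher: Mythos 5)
Your proposal takes essentially the same route as the paper: analytify the blow-up, match exceptional divisors, use compatibility of first Chern classes (\cref{lemma:comparison-first-chern-classes}) to transport the blow-up formula decomposition, then reduce via the defining monic relation to the divisor case handled by \cref{lemma:comparison-cycles-of-divisors}. The one place where you propose a local computation or a universal-property argument — identifying $\rm{Bl}_Y(X)^{\an/S}$ with $\rm{Bl}_{Y^{\an/S}}(X^{\an/S})$ compatibly with exceptional divisors — the paper instead simply cites \cite[Lem.~7.14]{adic-notes}, which already supplies exactly this isomorphism; everything else in your write-up matches the paper's argument, so the proof is correct.
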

\begin{proof}
    We recall that \cite[Lem.~7.14]{adic-notes} provides an isomorphism between the analytification of the algebraic blow-up and the analytic blow-up 
    \[
    \alpha\colon \rm{Bl}_Y(X)^{\rm{an}/S} \simeq \rm{Bl}_{Y^\an/S}(X^\an/S)
    \]
    that restricts to an isomorphism of exceptional divisors.
    Thus, after unraveling the definitions and using the compatibility of the first Chern classes from \cref{lemma:comparison-first-chern-classes}, the question boils down to showing that the natural morphism
    \[
    c_{\rm{Bl}_Y(X)/S}\colon \rm{H}^2_E(\rm{Bl}_Y(X), \mu_n) \longrightarrow \rm{H}^2_{E^{\an/S}}(\rm{Bl}_Y(X)^{\rm{an}/S}, \mu_n)
    \]
    sends $c\ell_{\rm{Bl}_Y(X)}(E)$ to $c\ell_{\rm{Bl}_Y(X)^{\rm{an}/S}}(E^{\an/S})$, where $E$ is the exceptional divisor of $\rm{Bl}_Y(X)$. Therefore, we may and do assume that $Y=D\subset X$ is an effective Cartier divisor. Then the result follows from \cref{lemma:comparison-cycles-of-divisors}.
\end{proof}

\subsection{Cycle class of point}

In this subsection, we discuss a variant of \cref{defn:higher-dimensional-cycle} that takes values in compactly supported cohomology groups.
Throughout this subsection, we fix an algebraically closed nonarchimedean field $C$, an integer $n \in C^\times$,
and set $\Lambda \coloneqq \Z/n\Z$.

Recall that, for every taut, separated, finite type adic $C$-space $X$ and every complex $\F\in D^+(X_\et; \Lambda)$, \cite[Def.~5.4.4]{Huber-etale} defines the compactly supported \'etale cohomology complex $\rm{R}\Gamma_c(X, \F)$.
Now suppose that $X$ is a rigid-analytic space over $C$ of equidimension $d$.
Let $x\in X(C)$ be a classical point of $X$.
Then \cite[Cor.~5.11]{adic-notes} implies that $i\colon \{x\} \hookrightarrow X$ is an lci immersion of pure codimension $d$.
Thus, \cref{defn:higher-dimensional-cycle} provides us with the cycle class $c\ell_{X}(x)\in \rm{H}^{2d}_{x}\bigl(X, \Lambda(d)\bigr)$. 
Now since $x$ is proper over $\Spa(C, \O_C)$, the $\rm{R}\Gamma_c(X, -)$-functor applied to the counit morphism
\[
i_*\rm{R}i^!\ud{\Lambda}_X(c) \longrightarrow \ud{\Lambda}_X(c)
\]
yields a morphism
\[
\rm{R}\Gamma_x\bigl(X, \Lambda(c)\bigr) \longrightarrow \rm{R}\Gamma_c\bigl(X, \Lambda(c)\bigr).
\]
In particular, this induces a canonical morphism
\begin{equation}\label{eqn:canonical-map-comp-supported}
\rm{H}^{2c}_x\bigl(X,\Lambda(c)\bigr) \longrightarrow \rm{H}^{2c}_c\bigl(X, \Lambda(c)\bigr).
\end{equation}

\begin{definition}
\label{defn:cycle-clas-divisors-compact} 
In the situation above, the \emph{(compactly supported)  cycle class} $c\ell_X(x)\in \rm{H}^{2c}_c\bigl(X, \Lambda(c)\bigr)$ is the image of $c\ell_X(x)\in \rm{H}^{2c}_x\bigl(X, \Lambda(c)\bigr)$ under the morphism
\[
\rm{H}^{2c}_x\bigl(X, \Lambda(c)\bigr) \longrightarrow \rm{H}^{2c}_c\bigl(X, \Lambda(c)\bigr)
\]
from \cref{eqn:canonical-map-comp-supported}. 
\end{definition}

The main result of this subsection is that the compactly supported version of the cycle class of a point behaves well with respect to \'etale morphisms and Zariski-closed immersions.
We begin with the case of \'etale morphisms: 
\begin{lemma}
\label{lemma: cycle class of points compatible with etale morphism}
Let $f\colon X \to Y$ be an \'etale morphism between smooth separated taut rigid-analytic spaces over $C$ of equidimension $d$.
Let $x \in X(C)$ be a classical point.
Then the natural morphism
\[
\rm{H}^{2d}_c\bigl(X, \Lambda(d)\bigr) \longrightarrow \rm{H}^{2d}_c\bigl(Y, \Lambda(d)\bigr)
\]
sends $c\ell_X(x)$ to $c\ell_Y\bigl(f(x)\bigr)$.
\end{lemma}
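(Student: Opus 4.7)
The plan is to reduce the claim to the case of an open immersion and then invoke transversal base change for localized cycle classes (\cref{lemma:base-change-lci-classes}), which is the natural tool here.

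First, I would reduce to the open immersion case via a local structure argument. Since $f$ is étale and both $x$ and $y \colonequals f(x)$ are classical $C$-points over the algebraically closed field $C$, the residue field extension at these points is trivial. Combining \cite[Lem.~2.2.8]{Huber-etale} with \cref{cor:disjoint-opens} and \cref{lemma:many-open-around-rk-1}, one can find an open subspace $j_V \colon V \hookrightarrow X$ containing $x$ and an open subspace $j_W \colon W \hookrightarrow Y$ containing $y$ such that $f$ restricts to an isomorphism $f|_V \colon V \xrightarrow{\sim} W$ (any finite étale cover of $\Spa(C,\O_C)$ splits, which trivializes the finite étale compactification of $f$ over a small neighborhood of~$y$). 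Then $f \circ j_V = j_W \circ f|_V$, and by the compatibility of the étale trace with compositions (\cref{etale-trace-compatibility}), the induced map $\Hh^{2d}_c(X,\Lambda(d)) \to \Hh^{2d}_c(Y,\Lambda(d))$ factors through $\Hh^{2d}_c(V,\Lambda(d))$ and $\Hh^{2d}_c(W,\Lambda(d))$. Since $f|_V$ is an isomorphism, it trivially sends $c\ell_V(x) \mapsto c\ell_W(y)$, so it suffices to treat the case of an open immersion $u \colon U \hookrightarrow Z$ between smooth separated taut rigid-analytic $C$-spaces of equidimension $d$.

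For the open immersion case, consider the diagram
\[
\begin{tikzcd}
\Hh^{2d}_x\bigl(U,\Lambda(d)\bigr) \arrow{r}{\sim} \arrow{d} & \Hh^{2d}_{u(x)}\bigl(Z,\Lambda(d)\bigr) \arrow{d} \\
\Hh^{2d}_c\bigl(U,\Lambda(d)\bigr) \arrow{r} & \Hh^{2d}_c\bigl(Z,\Lambda(d)\bigr),
\end{tikzcd}
\]
where the vertical arrows are the canonical maps of \cref{eqn:canonical-map-comp-supported} and the top horizontal arrow is the canonical identification coming from the equality $i_{Z,*}\ud{\Lambda}_{\{x\}} \simeq u_! i_{U,*}\ud{\Lambda}_{\{x\}}$ (where $i_U \colon \{x\} \hookrightarrow U$ and $i_Z = u \circ i_U \colon \{x\} \hookrightarrow Z$, using that $u$ is an open immersion). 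This square commutes by a straightforward adjunction chase. Now \cref{lemma:base-change-lci-classes}, applied to the Cartesian square
\[
\begin{tikzcd}
\{x\} \arrow[r,equal] \arrow{d} & \{u(x)\} \arrow{d} \\
U \arrow{r}{u} & Z
\end{tikzcd}
\]
(whose vertical arrows are lci immersions of pure codimension $d$ by \cite[Cor.~5.11]{adic-notes}), identifies $c\ell_U(x)$ with $c\ell_Z(u(x))$ under the top isomorphism. Combined with \cref{defn:cycle-clas-divisors-compact}, chasing the diagram yields $u_!\, c\ell_U(x) = c\ell_Z(u(x))$, which completes the argument.

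The main obstacle I foresee is Step~1: the precise local-structure statement that an étale morphism between rigid-analytic spaces is a local isomorphism at a classical $C$-point whose image is also a classical $C$-point. Once this is in place, the rest is essentially a formal diagram chase combining the already-proven transversal base change property of cycle classes with the compatibility of the étale trace with open immersions.
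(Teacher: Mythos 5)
Your proposal takes essentially the same route as the paper: first handle the open‐immersion case via the commutative square relating local cohomology at $x$ and compactly supported cohomology, then reduce the general étale case to it using the observation that an étale map over $C$ is a local isomorphism at a classical $C$-point. The open-immersion half is correct and your invocation of \cref{lemma:base-change-lci-classes} to identify the two localized cycle classes across the top isomorphism is exactly the implicit content of the paper's diagram.

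One remark on Step 1: the references \cref{cor:disjoint-opens} and \cref{lemma:many-open-around-rk-1} are about arbitrary finite morphisms near rank-$1$ points and are not the right tools for the reduction — what you actually want, after shrinking to quasicompact affinoids, is \cite[Cor.~1.7.4]{Huber-etale} to get quasi-finiteness and \cite[Lem.~1.4.7, Lem.~1.5.2~(f)]{Huber-etale} to localize further to a \emph{finite} étale map. More importantly, the phrase "any finite étale cover of $\Spa(C,\O_C)$ splits, which trivializes the finite étale compactification of $f$ over a small neighborhood of $y$" papers over the real content: you need the splitting not merely over the point but over an open neighborhood, and that passage requires knowing that the local ring $\O_{Y,f(x)}$ is $\varpi$-henselian with $\wdh{\O}^+_{Y,f(x)}[\tfrac{1}{\varpi}] \simeq C$, so that $(\O_{Y,f(x)})_\fet \simeq C_\fet$, followed by a standard approximation argument to spread the splitting to an open neighborhood. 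You correctly flag this as "the main obstacle," but the appeal to "the finite étale compactification" does not in itself supply that henselian/approximation input; the paper's proof makes this explicit and you should too.
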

\begin{proof}
    We first consider the case of an open immersion $f \colon X \hookrightarrow Y$. In this case, the claim follows from the following commutative diagram:
    \[
    \begin{tikzcd}
        \rm{H}^{2d}_x\bigl(X, \Lambda(d)\bigr)\arrow{d} &\arrow[l,"\sim"'] \rm{H}^{2d}_{f(x)}\bigl(Y,\Lambda(d)\bigr)\arrow{d} \\
        \rm{H}^{2d}_c\bigl(X, \Lambda(d)\bigr) \arrow{r} & \rm{H}^{2d}_c\bigl(Y, \Lambda(d)\bigr)
    \end{tikzcd}
    \]
    Now we treat the general case.
    The case of open immersions implies that, for the purposes of proving the statement, we may replace $X$ with any open neighborhood of $x$ and $Y$ with any open neighborhood of $f(x)$. Therefore, we can first reduce to the case when $X$ and $Y$ are affinoids. Then \cite[Cor.~1.7.4]{Huber-etale} implies that $f$ is quasi-finite.
    Thus, \cite[Lem.~1.4.7, Lem.~1.5.2~(f)]{Huber-etale} imply that we can further localize to the case of a finite \'etale morphism $f$.
    
    Choose a pseudo-uniformizer $\varpi\in \O_C$.
    We have
    \[
    \wdh{\O}^+_{Y, f(x)}\Bigl[\frac{1}{\varpi}\Bigr]\simeq \wdh{k\bigl(f(x)\bigr)^+}\Bigl[\frac{1}{\varpi}\Bigr] \simeq C,
    \]
    where $\wdh{(-)}$ stands for the $\varpi$-adic completion (see e.g.\ \cite[Prop.~2.25]{Scholze-perfectoid} or \cite[Prop.~7.5.5~5]{Bhatt-notes}).
    Since $\cO^+_{Y, f(x)}$ is $\varpi$-henselian (as a colimit of $\varpi$-complete, hence $\varpi$-henselian rings), \cite[Prop.~5.4.53]{GR} implies that 
    \[
    {\O_{Y, f(x)}}_{\fet} \simeq C_{\fet}.
    \]
    In particular, any finite \'etale $\O_{Y, f(x)}$-algebra is split. Therefore, a standard approximation argument implies that, after passing to an open neighborhood of $f(x)$, we can assume that $X=\bigsqcup_{i=1}^n Y$. Then, by passing to a neighborhood of $x$, we can assume that $X=Y$. 
    In this case, the claim is obvious. 
\end{proof}

\begin{lemma}\label{lemma:class-of-point-composes-well} Let $X$ and $Y$ be smooth separated taut rigid-analytic spaces over $C$ of equidimension $d_X$ and $d_Y$ respectively. Let $i\colon X \hookrightarrow Y$ be a Zariski-closed immersion, and let $x\in X(C)$ be a classical point. Then the morphism
\[
\Hh^{2d_X}_c\bigl(Y, \rm{cl}_i(d_X)\bigr)\colon \Hh^{2d_X}_c\bigl(X, \Lambda(d_X)\bigr) \to \Hh^{2d_Y}_c\bigl(Y, \Lambda(d_Y)\bigr)
\]
sends $c\ell_X(x)$ to $c\ell_Y(x)$.
\end{lemma}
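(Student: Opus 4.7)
The plan is to deduce the statement from \cref{cor:gysin-composition} via a reduction to an equality of cycle class morphisms in local cohomology, followed by an open-neighborhood normalization around $x$. First, I would set $c \coloneqq d_Y - d_X$, let $j_x \colon \{x\} \hookrightarrow X$ denote the inclusion, and observe via \cite[Cor.~5.11]{adic-notes} that $i \circ j_x$ is an lci Zariski-closed immersion of pure codimension $d_Y$. Then I would use the natural transformation $\rR\Gamma_x \to \rR\Gamma_c$ from \cref{eqn:canonical-map-comp-supported} to produce the commutative square
\[
\begin{tikzcd}
\Hh^{2d_X}_x\bigl(X, \Lambda(d_X)\bigr) \arrow[d] \arrow{r}{\Hh^{2d_X}_x(Y,\, \rm{cl}_i(d_X))} & \Hh^{2d_Y}_x\bigl(Y, \Lambda(d_Y)\bigr) \arrow[d] \\
\Hh^{2d_X}_c\bigl(X, \Lambda(d_X)\bigr) \arrow{r}{\Hh^{2d_X}_c(Y,\, \rm{cl}_i(d_X))} & \Hh^{2d_Y}_c\bigl(Y, \Lambda(d_Y)\bigr),
\end{tikzcd}
\]
so that by \cref{defn:cycle-clas-divisors-compact} it suffices to show that the top arrow sends $c\ell_X(x)$ to $c\ell_Y(x)$. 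Unwinding definitions, this reduces to the equality of morphisms
\[
\rm{cl}_{i \circ j_x} = \rm{cl}_i(d_X)[2d_X]\circ i_*(\rm{cl}_{j_x}) \colon (i\circ j_x)_*\ud{\Lambda}_{\{x\}} \longrightarrow \ud{\Lambda}_Y(d_Y)[2d_Y],
\]
under the canonical identification $i_*(j_{x, *}\ud{\Lambda}_{\{x\}}) \simeq (i \circ j_x)_*\ud{\Lambda}_{\{x\}}$.

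Next, I would observe that since the source of both sides is a skyscraper sheaf at $x$, this equality may be tested after restriction to any open neighborhood $U$ of $x$ in $Y$, with cycle class morphisms restricting correctly by transversal base change (\cref{lemma:base-change-lci-classes}). The key geometric step is to choose $U$ on which the lci chain $\{x\} \hookrightarrow X \cap U \hookrightarrow U$ is realized inside a normally crossing divisor configuration. Using the smoothness of both $X$ and $Y$, the ideal $\cal{I}_{X, x}\subset \cO_{Y, x}$ is generated by $c$ elements $f_1, \dotsc, f_c$ forming part of a regular system of parameters of $\cO_{Y, x}$; I would extend these to a regular system $f_1, \dotsc, f_{d_Y}$ of $\cO_{Y, x}$ whose last $d_X$ members induce a regular system of parameters on $\cO_{X, x}$. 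Lifting the $f_i$ to sections over a sufficiently small open neighborhood $U$ of $x$, the tuple $(f_1, \dotsc, f_{d_Y})$ defines a map $U \to \AA^{d_Y, \an}_C$ that is étale at $x$ by the analytic Jacobian criterion, hence étale on $U$ after further shrinking. On such a $U$, the divisors $D_i \coloneqq V(f_i)$ then form a normally crossing collection in the sense of \cref{defn:normal-intersection}, and a final shrinking ensures $X \cap U = D_1 \cap \dotsb \cap D_c$ and $\{x\} = D_1 \cap \dotsb \cap D_{d_Y}$.

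Finally, I would apply \cref{cor:gysin-composition} to the normally crossing configuration $\{D_1, \dotsc, D_{d_Y}\}$ with $c_1 = c$ and $c_2 = d_X$, which yields precisely the desired equality of morphisms on $U$. The main obstacle is the étale-local normalization by a normally crossing coordinate system around $x$; this rests on the smoothness of the Zariski-closed immersion $X \hookrightarrow Y$, the coherence of $\cal{I}_X$ and $\cal{I}_{\{x\}}$ to ensure the local regular sequences extend to actual sections, and an analytic Jacobian criterion to upgrade the coordinate map to an étale morphism on a neighborhood.
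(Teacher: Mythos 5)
Your argument is correct and hits the same crux as the paper: reduce to an open neighborhood of $x$ where the chain $\{x\} \hookrightarrow X \cap U \hookrightarrow U$ is carved out by a normally crossing collection of divisors, then apply \cref{cor:gysin-composition}. Where you diverge is in the localization step. The paper simply invokes \cref{lemma: cycle class of points compatible with etale morphism} — the immediately preceding lemma, proven precisely so that one can replace $Y$ by any open $U \ni x$ (and $X$ by $X \cap U$) at the level of compactly supported cycle classes — and then proceeds directly to the divisor configuration. You instead route through the square comparing $\Hh_x$ with $\Hh_c$, recast the claim as the identity $\rm{cl}_{i \circ j_x} = \rm{cl}_i(d_X)[2d_X] \circ i_*(\rm{cl}_{j_x})$ of morphisms out of a skyscraper sheaf, and observe that such an identity may be checked on any open containing $x$. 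This bypasses \cref{lemma: cycle class of points compatible with etale morphism} entirely, at the cost of verifying the naturality of the transformation $\rR\Gamma_x \to \rR\Gamma_c$ with respect to the cycle class morphism and the identifications $\rR\Gamma_x(Y, i_*\blank) \simeq \rR\Gamma_x(X, \blank)$ and $\rR\Gamma_c(Y, i_*\blank) \simeq \rR\Gamma_c(X, \blank)$; both are routine but should be stated. Your construction of the normally crossing configuration via a regular system of parameters extending local generators of $\cal{I}_{X,x}$, lifted to an \'etale map to $\AA^{d_Y,\an}_C$, is the expected geometric input and correctly supplies the hypotheses of \cref{cor:gysin-composition} with $c_1 = d_Y - d_X$ and $c_2 = d_X$. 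In short: same skeleton, a somewhat longer but self-contained localization step.
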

\begin{proof}
    \cref{lemma: cycle class of points compatible with etale morphism} implies that we can replace $Y$ with any open subspace $x\in U\subset Y$ (and $X$ with $X\cap U$).
    Therefore, we can assume that there is a collection of normally crossing (in the sense of \cref{defn:normal-intersection}) effective Cartier divisors $\{D_1,\dotsc,D_{d_Y}\}$ on $Y$ such that $X = \cap_{i=1}^{d_Y - d_X} D_i$ and $\{x\} = \cap_{i=1}^{d_Y} D_i$.
    Then the result follows automatically from \cref{cor:gysin-composition}. 
\end{proof}

\section{Rigid-analytic curves}
\label{section:curves}

In this section, we study some properties of smooth rigid-analytic curves over an algebraically closed nonarchimedean field.
These results will be crucial in the construction of the trace map in \'etale cohomology.

Throughout this section, we fix an algebraically closed nonarchimedean field $C$. We denote its ring of integers by $\O_C$, its maximal ideal by $\m_C\subset \O_C$, and its residue field by $k_C \colonequals \cO_C/\fm_C$. We also choose a pseudo-uniformizer $\varpi \in \O_C$. 

\subsection{Geometry of curves}

In this subsection, we collect some results about the geometry of rigid-analytic curves. In particular, we recall that smooth (quasicompact and separated) rigid-analytic curves behave similarly to algebraic curves.
We also discuss that rigid-analytic curves often admit particularly nice formal models.

\begin{definition}
    A rigid-analytic $C$-space $X$ is a \emph{rigid-analytic $C$-curve} if $X$ is of pure dimension $1$ (in the sense of \cite[Def.~1.8.1]{Huber-etale}).
\end{definition}

We start with a technical lemma that will be handy in a number of situations later in this paper. 

\begin{lemma}
\label{lemma:finite-flat-curves} 
Let $f\colon X \to Y$ be a finite morphism of smooth rigid-analytic curves over $C$. Then $f$ is flat. 
\end{lemma}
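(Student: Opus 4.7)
The plan is to use \cref{lemma:flatness-finite-case} to reduce the problem to showing that, when $X = \Spa(B, B^+)$ and $Y = \Spa(A, A^+)$ are affinoid, the finite ring map $A \to B$ is flat. Since $X$ and $Y$ are smooth rigid-analytic curves over $C$, the rings $A$ and $B$ are regular Noetherian of pure dimension $1$; equivalently, they are finite products of Dedekind domains. This standard regularity property of smooth affinoid $C$-algebras is the key input.

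Given this, it suffices to verify that $B$ is torsion-free as an $A$-module, since modules over Dedekind domains are flat iff torsion-free. Decomposing $X$ and $Y$ into their connected components, any finite morphism between connected smooth rigid-analytic curves of pure dimension $1$ is automatically surjective: otherwise its image would be a proper closed subspace of the target of dimension strictly less than $1$, contradicting the pure dimension of the source. Surjectivity translates into injectivity of the corresponding map between the Dedekind domain factors of $A$ and $B$, which together with each such factor of $B$ being a domain yields the desired torsion-freeness.

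An alternative route that avoids the global regularity input proceeds via \cref{lemma:flat-maximal-points}: it suffices to show flatness of the induced local homomorphism $\O_{Y,f(x)} \to \O_{X,x}$ at every rank-$1$ point $x \in X$. By \cref{lemma:decomposes-local-rings} this is a finite local map of Noetherian local rings, so going-up together with incomparability force their Krull dimensions to agree. Smoothness of $X$ and $Y$ makes both stalks regular, and the algebraic miracle flatness theorem then gives flatness. The main obstacle in either route is the classical but somewhat technical regularity input---either for global smooth affinoid $C$-algebras or, in the stalk-based approach, for stalks at non-classical rank-$1$ points of a smooth curve, which can be handled by an \'etale-local reduction to $\DD^1_C$.
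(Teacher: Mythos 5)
Your proposal is correct, but it takes a genuinely different route from the paper. The paper also first reduces to affinoid $X=\Spa(B,B^\circ)$, $Y=\Spa(A,A^\circ)$, and then shows $A\to B$ is flat; however, it does so by checking flatness of the \emph{algebraic} localizations $A_{\fn}\to B_{\fm}$ at every maximal ideal $\fm\subset B$ (with $\fn = f^{\#,-1}(\fm)$): both are regular local rings of Krull dimension $1$ (citing \cite[Th.~3.6.3]{FvdP04} and the dimension theory of affinoids), so Miracle Flatness applies. Your first route instead globalizes the regularity input --- writing $A$ and $B$ as finite products of Dedekind domains, using surjectivity of finite morphisms between connected pure-dimension-$1$ curves to get injectivity of the ring maps, and then the ``finitely generated torsion-free $\Rightarrow$ flat'' criterion over Dedekind domains. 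Both arguments hinge on exactly the same regularity fact for smooth affinoid $C$-algebras, so neither is more economical than the other; the paper's version is perhaps slightly shorter because it never needs to discuss connected components or surjectivity. Your second (alternative) route, via \cref{lemma:flat-maximal-points} and Miracle Flatness at rank-$1$ adic stalks $\O_{Y,f(x)}\to\O_{X,x}$, is \emph{not} what the paper does, and is the more delicate of the three: the paper deliberately works with the noetherian local rings $A_{\fn}$, $B_{\fm}$ rather than adic stalks, sidestepping the question of noetherianness and regularity of $\O_{X,x}$ at Type~II/III/IV rank-$1$ points, which you flag but leave as a gap. So your first approach is a correct alternative proof; your second would need the extra regularity input at non-classical points to go through as stated.
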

\begin{proof}
    Without loss of generality, we can assume that both $X$ and $Y$ are affinoid.
    In this situation, $f\colon X=\Spa(B, B^\circ) \to Y=\Spa(A, A^\circ)$ is induced by a finite morphism $f^\#\colon A \to B$ of $K$-affinoid domains.  
    
    Then it suffices to show that, for every maximal ideal $\m\subset B$ with the pre-image $\fn\coloneqq f^{\#, -1}(\m)$ (this prime ideal is automatically maximal), the natural morphism $A_{\fn} \to B_{\m}$ is flat. Now \cite[Lem.~1.8.6~(ii)]{Huber-etale} and \cite[Th.~II.10.1.8]{FujKato} implies that $\dim A_{\fn} =\dim B_{\m} =1$. Furthermore, \cite[Th.~3.6.3]{FvdP04} ensures that $A_{\fn}$ are $B_{\m}$ are regular local rings. Therefore, flatness of $A_{\fn} \to B_{\m}$ follows directly from Miracle Flatness (see \cite[\href{https://stacks.math.columbia.edu/tag/00R4}{Tag 00R4}]{stacks-project}).
\end{proof}

Now we discuss formal models of rigid-analytic curves over $C$. We start with the following definition of semi-stable formal $\O_C$-curves: 

\begin{definition}
\label{defn:ss-formal} 
We say that an admissible formal $\O_C$-scheme $\cX$ is a \emph{semi-stable formal $\O_C$-curve} if the special fiber has pure dimension $1$ and
for any point $x\in \cX$, either the structure map
$f \colon \cX \to \Spf(\O_C)$ is smooth at $x$, or there is a pointed affine formal $\O_C$-scheme $(\Spf A, y)$ such that there exists a diagram of pointed formal schemes
\[
\begin{tikzcd}
& \left(\Spf A, y\right)  \arrow[swap]{ld}{g} \arrow{rd}{h}\\
\left(\cX, x\right) \arrow{dr}&  & \left(\Spf \frac{\O_C\langle S,T\rangle}{\left(ST-\alpha_x\right)}, \left\{u,0,0\right\}\right) \arrow{dl} \\
 & (\Spf \O_C, u), & 
\end{tikzcd}
\]
where $u$ is the only point of $\abs{\Spf \O_C}$, $g$ and $h$ are \'etale morphisms, and $\alpha_x \in \m_C$. 
\end{definition}

\begin{remark} A semi-stable formal $\O_C$-curve $\cX$ is rig-smooth if and only if all $\alpha_x$ in \cref{defn:ss-formal} are non-zero. 
\end{remark}

Our next goal is to discuss formal models of (smooth) rigid-analytic curves in more detail. We start with the following algebraization result that seems difficult to find in the existing literature: 

\begin{lemma}\label{lemma:algebraize-curves} Let $\cX$ be a finitely presented proper formal $\O_C$-scheme such that the special fiber $\cX_s$ is of pure dimension $1$. Then there is a finitely presented projective $\O_C$-scheme $\mathfrak{X}$ and an $\O_C$-isomorphism $\wdh{\mathfrak{X}} \simeq \cX$, where $\wdh{\mathfrak{X}}$ is the $\varpi$-adic completion of $\mathfrak{X}$. Furthermore, if $\cX$ is admissible (resp.~rig-smooth), then $\mathfrak{X}$ is $\O_C$-flat (resp.~$\mathfrak{X}_\eta$ is $C$-smooth). 
\end{lemma}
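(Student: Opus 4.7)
The plan is to produce an ample line bundle on $\cX$ by lifting one from the special fiber, and then invoke Grothendieck's formal existence theorem (formal GAGA) in the generality valid for complete rank-$1$ valuation rings, as developed in \cite{FujKato}. As a starting point, $\cX_s$ is a proper $k_C$-scheme of pure dimension $1$, hence is automatically projective (every proper algebraic curve over a field carries a very ample line bundle); choose any ample line bundle $\cL_s$ on $\cX_s$.

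Next I would lift $\cL_s$ inductively along the tower of thickenings $\cX_n \colonequals \cX \otimes_{\cO_C} \cO_C/\varpi^{n+1}$. Each closed immersion $\cX_n \hookrightarrow \cX_{n+1}$ is a square-zero thickening, since the ideal $(\varpi^{n+1})$ in $\cO_C/\varpi^{n+2}$ has square $(\varpi^{2n+2}) = 0$, and the ideal sheaf is a coherent $\cO_{\cX_s}$-module (a quotient of $\cO_{\cX_s}$ via multiplication by $\varpi^{n+1}$). Because $\cX_s$ is proper of dimension $1$, Grothendieck vanishing gives $\Hh^2(\cX_s, -) = 0$ on coherent sheaves, which kills the obstruction (living in $\Hh^2$ of the square-zero ideal) to lifting any line bundle along each of these thickenings. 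Choosing a compatible system of lifts recursively yields a line bundle $\cL$ on $\cX$ with $\cL|_{\cX_s} \cong \cL_s$; in particular $\cL$ is ample in the sense appropriate for formal schemes.

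Applying formal GAGA to $(\cX, \cL)$ then produces a finitely presented proper $\cO_C$-scheme $\mathfrak{X}$ together with a line bundle $\mathfrak{L}$ on it whose $\varpi$-adic completion recovers $(\cX, \cL)$; in particular $\wdh{\mathfrak{X}} \simeq \cX$. Since $\mathfrak{L}|_{\mathfrak{X}_s}$ is ample and $\mathfrak{X}$ is proper and finitely presented over $\cO_C$, the standard fiberwise ampleness criterion of EGA upgrades $\mathfrak{L}$ to a relatively ample line bundle on $\mathfrak{X}/\cO_C$, so $\mathfrak{X}$ is projective over $\cO_C$.

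For the additional properties, suppose first that $\cX$ is admissible. Let $\mathfrak{X}' \hookrightarrow \mathfrak{X}$ be the scheme-theoretic closure of the generic fiber $\mathfrak{X}_\eta$, i.e., the flat part of $\mathfrak{X}$; since $\cX$ is itself $\cO_C$-flat, the $\varpi$-adic completion of $\mathfrak{X}'$ agrees with $\wdh{\mathfrak{X}} = \cX$, and uniqueness of algebraization in formal GAGA forces $\mathfrak{X}' = \mathfrak{X}$, whence $\mathfrak{X}$ is flat. If instead $\cX$ is rig-smooth, then $\cX_\eta$ is a smooth rigid-analytic $C$-space; since $\mathfrak{X}$ is projective over $\cO_C$, the Raynaud comparison identifies $\cX_\eta \simeq (\mathfrak{X}_\eta)^{\an}$, and smoothness of the analytification descends to $\mathfrak{X}_\eta$ because smoothness can be tested on completed local rings, which agree under analytification. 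The anticipated main obstacle is the appeal to formal GAGA over the non-Noetherian base $\cO_C$, which is exactly where the Fujiwara--Kato framework for adhesive rings is indispensable.
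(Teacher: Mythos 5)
Your overall strategy — lift an ample line bundle from the special fiber and then invoke Grothendieck existence in the Fujiwara--Kato setting — is the same as the paper's, but there is a genuine gap in the deformation-theoretic step.

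The tower $\cX_n \coloneqq \cX \otimes_{\cO_C} \cO_C/\varpi^{n+1}$ gives square-zero thickenings $\cX_n \hookrightarrow \cX_{n+1}$, and there your obstruction-theory argument in $\Hh^2$ (together with Grothendieck vanishing on the one-dimensional special fiber) is fine. But $\cL_s$ lives on $\cX_s = \cX \otimes_{\cO_C} k_C$, and $\cX_s$ is \emph{not} equal to $\cX_0 = \cX \otimes_{\cO_C} \cO_C/(\varpi)$: the kernel of $\cO_{\cX_0} \to \cO_{\cX_s}$ is $(\m_C/(\varpi))\cO_{\cX_0}$, and for $C$ algebraically closed (or any non-discretely valued $C$) the ideal $\m_C$ is \emph{idempotent}, so $\m_C/(\varpi)$ is a nil ideal that is not nilpotent. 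Deformation theory along nilpotent thickenings therefore does not apply to the first step $\cX_s \hookrightarrow \cX_0$, and your recursion never actually gets a line bundle onto any $\cX_n$ to begin with. The paper bridges exactly this gap with a finite-presentation/spreading-out argument: since $k_C = \colim_\pi \cO_C/(\pi)$ (colimit over pseudo-uniformizers) and $\cX_s = \lim \cX \otimes \cO_C/(\pi)$ with all data of finite presentation, $\cL_s$ descends to a line bundle $\cL_0$ on $\cX \otimes \cO_C/(\pi)$ for \emph{some} pseudo-uniformizer $\pi$ (and automatic ampleness holds by \cite[Tag 0D2S]{stacks-project}); only then does one redefine the tower with this $\pi$ and run the square-zero argument. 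Without this approximation step, the lifting cannot start.

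A secondary concern is the flatness argument. You introduce $\mathfrak{X}' \subset \mathfrak{X}$ as the scheme-theoretic closure of $\mathfrak{X}_\eta$ and then \emph{assert} that $\wdh{\mathfrak{X}'} = \cX$ "since $\cX$ is $\cO_C$-flat". But $\wdh{\mathfrak{X}'} = \cX$ is equivalent to the $\varpi$-torsion kernel sheaf $T$ of $\cO_{\mathfrak{X}} \to \cO_{\mathfrak{X}'}$ dying after $\varpi$-adic completion — which, modulo an exactness-of-completion input, is essentially the statement $T = 0$, i.e., the very flatness you are trying to prove. The paper's argument is more direct and avoids this circularity: working affine-locally with $\Spec A \subset \mathfrak{X}$, it uses the nontrivial fact from \cite{FGK} that $A \to \wdh{A}$ is flat, deduces that $A \to \wdh{A} \times A[1/\varpi]$ is faithfully flat and hence injective, and then reads off $\varpi$-torsion-freeness of $A$ from that of $\wdh{A}$ and $A[1/\varpi]$. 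Your treatment of rig-smoothness (via GAGA and testing smoothness on completed local rings) is in the same spirit as the paper's citation of \cite[Th.~A.2.1, Th.~A.3.1]{Conrad99} and is fine.
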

\begin{proof}
    First, we note that \cite[\href{https://stacks.math.columbia.edu/tag/09NZ}{Tag 09NZ}]{stacks-project} implies that the special fiber $\cX_s$ admits an ample line bundle $\cal{L}_s$. Then a standard approximation argument implies that we can find a pseudo-uniformizer $\pi\in \O_C$ such that $\cX_0\coloneqq \cX\times_{\Spec \O_C} \Spec \O_C/(\pi)$ admits a line bundle $\cal{L}_0$ such that its restriction $\restr{\cal{L}_0}{\cX_s}$ is isomorphic to $\cal{L}_s$. Furthermore, \cite[\href{https://stacks.math.columbia.edu/tag/0D2S}{Tag 0D2S}]{stacks-project} implies that $\cal{L}_0$ is automatically ample.  

    Now, for each integer $n\geq 0$, we denote by $\cX_n$ the $\O_C/\pi^{n+1}$-scheme $\cX\times_{\Spf \O_C} \Spec \O_C/(\pi^{n+1})$. Then \cite[\href{https://stacks.math.columbia.edu/tag/0C6R}{Tag 0C6R}]{stacks-project} and vanishing of $\rm{H}^2(\cX_n, \pi^n\O_{\cX_{n+1}})$ imply that $\rm{Pic}(\cX_n) \to \rm{Pic}(\cX_{n-1})$ is surjective for any $n\geq 1$. Therefore, we can find a compatible sequence of line bundles $\cal{L}_n$ on $\cX_n$ such that $\restr{\cal{L}_n}{\cX_{n-1}} \simeq \cal{L}_{n-1}$. By passing to the limit, we get a line bundle $\cal{L}$ on $\cX$ such that $\restr{\cal{L}}{\cX_0} \simeq \cal{L}_0$ is ample. Therefore, \cite[Prop.~I.10.3.2]{FujKato} implies that there is a finitely presented proper $\O_C$-scheme $\mathfrak{X}$ with an isomorphism $\wdh{\mathfrak{X}} \simeq \cX$.  

    Now suppose that $\cX$ is admissible. We wish to show that $\mathfrak{X}$ is then $\O_C$-flat. Choose an open subscheme $\Spec A\subset \mathfrak{X}$. Then \cite[Prop.~4.3.4 and Th.~7.3.2]{FGK} imply that the map $A\to \wdh{A}$ is flat. Therefore, the map $A\to \wdh{A} \times A\bigl[\tfrac{1}{\varpi}\bigr]$ is faithfully flat. In particular, it is injective. Our assumption that $\cX$ is admissible implies that $\wdh{A}$ is $\O_C$-flat (i.e.\ has no $\varpi$-torsion). Therefore, $A\hookrightarrow \wdh{A} \times A\bigl[\tfrac{1}{\varpi}\bigr]$ has no $\varpi$-torsion as well. Thus, it is $\O_C$-flat.  

    Finally, we assume that $\cX$ is rig-smooth. Then \cite[Th.~A.3.1]{Conrad99} implies that there is an isomorphism $\mathfrak{X}_\eta^{\rm{an}}\simeq \cX_\eta$. Our assumption implies that $\mathfrak{X}_\eta^{\rm{an}}$ is $C$-smooth. Therefore, $\mathfrak{X}_\eta$ is $C$-smooth due to \cite[Th.~A.2.1]{Conrad99}.
\end{proof}

Finally, we recall the following version of the semi-stable reduction for rigid-analytic curves. This result will be crucial in our proof of \cref{thm:analytic-trace-algebraic}: 

\begin{proposition}
\label{useful proposition on rigid curves}
\leavevmode
\begin{enumerate}[label=\upshape{(\roman*)}]
\item\label{useful proposition on rigid curves-1} \textup{(\cite[Th.~2]{FM86})}
Every irreducible quasi-compact separated rigid-analytic curve over $C$ is either affinoid or proper;
\item\label{useful proposition on rigid curves-2} \emph{(}\cite[Section 1.8]{Lut16}\emph{)} 
The category of smooth proper rigid-analytic curves over $C$ and the category of smooth proper algebraic curves over $C$ are equivalent;
\item\label{useful proposition on rigid curves-3} \emph{(}\cite[Th.~1.1]{vdPut80}\emph{)}
Every smooth affinoid rigid-analytic curve $X$ over $C$ is an open subdomain of a smooth proper rigid-analytic $C$-curve $\ov{X}$.
\item\label{useful proposition on rigid curves-4} Every pair $X \subset \ov{X}$ over $C$ in \cref{useful proposition on rigid curves-3} arises as the rigid generic fiber of open immersions
of admissible formal $\O_C$-schemes $\cX \subset \ov{\cX}$
where $\ov{\cX}$ is a semi-stable formal $\cO_C$-curve;
\item\label{useful proposition on rigid curves-5} In particular, every smooth quasi-compact separated rigid $C$-curve admits a semi-stable formal model over $\cO_C$;
\item\label{useful proposition on rigid curves-6} For a proper and smooth $C$-scheme $X$ of equidimension $1$ and an integer $n\in C^\times$, the natural morphism $\rm{R}\Gamma(X, \mu_n) \to \rm{R}\Gamma(X^{\an}, \mu_n)$ is an isomorphism. 
\end{enumerate}
\end{proposition}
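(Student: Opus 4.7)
Parts \cref{useful proposition on rigid curves-1}, \cref{useful proposition on rigid curves-2}, and \cref{useful proposition on rigid curves-3} are exactly the cited theorems of Fresnel--Matignon, L\"utkebohmert, and van der Put, respectively, so the work lies in \cref{useful proposition on rigid curves-4}--\cref{useful proposition on rigid curves-6}. For \cref{useful proposition on rigid curves-4} my plan is to combine algebraization with semi-stable reduction. Starting from a pair $X \subset \ov{X}$ as in \cref{useful proposition on rigid curves-3}, \cref{useful proposition on rigid curves-2} identifies $\ov{X}$ with the analytification of a smooth projective $C$-curve, to which I would apply Deligne--Mumford semi-stable reduction to obtain a proper flat $\cO_C$-scheme with reduced nodal special fiber. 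Taking its $\varpi$-adic formal completion and invoking \cref{lemma:algebraize-curves} in reverse yields a semi-stable formal model $\ov{\cX}_0$ of $\ov{X}$ satisfying \cref{defn:ss-formal}.

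To fit $X$ into an admissible formal open of a (possibly different) semi-stable model, I would apply Raynaud's equivalence between quasi-compact, quasi-separated rigid $C$-spaces and admissible formal $\cO_C$-schemes up to admissible formal blow-ups: the quasi-compact open immersion $X \hookrightarrow \ov{X}$ is then realized, after an admissible formal blow-up $\ov{\cX} \to \ov{\cX}_0$ with center in the special fiber, by an open formal immersion $\cX \hookrightarrow \ov{\cX}$. The main obstacle is to verify that $\ov{\cX}$ remains semi-stable after this blow-up. For this I would argue locally on $\ov{\cX}_0$, reducing to the two standard local models of \cref{defn:ss-formal}: an admissible blow-up of $\Spf \cO_C \langle T \rangle$ at an ideal containing a power of $\varpi$ introduces chains of rational components meeting at ordinary double points (this is essentially the resolution of indeterminacies of maps to $\PP^1_{k_C}$), and similarly an admissible blow-up of $\Spf \cO_C \langle S,T\rangle/(ST-\alpha)$ either leaves the local structure unchanged or refines it by inserting further chains of nodes. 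Both outcomes are covered by the local models in \cref{defn:ss-formal}, so $\ov{\cX}$ is still a semi-stable formal $\cO_C$-curve.

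Item \cref{useful proposition on rigid curves-5} then follows formally: a smooth quasi-compact separated rigid $C$-curve has finitely many connected components, each of which is irreducible and hence either affinoid or proper by \cref{useful proposition on rigid curves-1}; in both cases \cref{useful proposition on rigid curves-4} (preceded by \cref{useful proposition on rigid curves-3} in the affinoid case, and applied directly with $X = \ov{X}$ in the proper case) supplies the desired semi-stable formal model, and finally I would take the disjoint union.

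For \cref{useful proposition on rigid curves-6}, I would appeal to the comparison of algebraic and rigid-analytic \'etale cohomology for proper $C$-schemes with torsion coefficients prime to the residual characteristic. Since $n \in C^\times$, the sheaf $\mu_n$ is a locally constant constructible torsion sheaf on the noetherian scheme $X$, and $X$ is proper smooth of equidimension $1$ over $C$, so \cite[Th.~3.7.2]{Huber-etale} applied to the structure morphism $X \to \Spec C$ yields the required isomorphism $\rR \Gamma(X, \mu_n) \xrightarrow{\sim} \rR \Gamma(X^{\an}, \mu_n)$.
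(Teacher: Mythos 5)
The proof of items \cref{useful proposition on rigid curves-1}--\cref{useful proposition on rigid curves-3} and \cref{useful proposition on rigid curves-5} is fine and matches the paper. The treatment of \cref{useful proposition on rigid curves-4} and \cref{useful proposition on rigid curves-6}, however, calls for comment.

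For \cref{useful proposition on rigid curves-4}, your strategy is to start from a semi-stable formal model $\ov{\cX}_0$ of $\ov{X}$ (via Deligne--Mumford) and then blow up along the special fiber to make $X$ visible as a formal open, asserting that the Raynaud blow-up $\ov{\cX}\to\ov{\cX}_0$ remains semi-stable. This last step is not correct in general, and the local justification offered is too coarse. An admissible formal blow-up of $\Spf\cO_C\langle T\rangle$ along an open ideal need not have reduced (let alone nodal) special fiber: for example, blowing up the ideal $(T^2,\varpi)$ produces in the chart $T^2/\varpi=u$ the $\varpi$-torsion-free ring $\cO_C\langle T,u\rangle/(u\varpi - T^2)$, whose special fiber is $k_C[T,u]/(T^2)$ — non-reduced, hence far from semi-stable. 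The blow-up dictated by Raynaud's theory to realize $X\hookrightarrow\ov{X}$ is not something you get to choose, so without further argument there is no reason it should preserve semi-stability. The paper avoids this issue by organizing the steps in the opposite order: first produce \emph{some} formal model of the inclusion $\cX\subset\ov{\cX}$ (Bosch), algebraize $\ov{\cX}$ via \cref{lemma:algebraize-curves}, and \emph{then} apply Temkin's semi-stable modification, which is an $\eta$-modification and hence an isomorphism on generic fibers; the formal open $\cX$ therefore pulls back along the rig-isomorphism to the new semi-stable model. Your argument could be repaired in the same spirit — dominate the Raynaud blow-up by a further semi-stable modification and pull $\cX$ back — but that repair imports exactly the ingredient (semi-stable modification of admissible formal/algebraic $\cO_C$-curves preserving the generic fiber) that your local blow-up claim was trying to circumvent.

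For \cref{useful proposition on rigid curves-6}, invoking \cite[Th.~3.7.2]{Huber-etale} is correct and would do the job; the paper explicitly acknowledges this in the remark preceding the proof, but deliberately gives a more elementary route. It reduces via \cite[Lem.~6.1.4]{Z-revised} and the Kummer sequences on both sides to comparing $\rm{H}^0(-,\bf{G}_m)$ and $\rm{Pic}(-)$, and then concludes by rigid-analytic GAGA for proper schemes. Your shortcut is valid but forgoes the paper's self-contained argument.
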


Of course, \cite[Th.~3.7.2]{Huber-etale} guarantees that the conclusion of \cref{useful proposition on rigid curves-6} holds for any proper $C$-scheme $X$. However, we prefer to give a different elementary argument below. 

\begin{proof}
\cref{useful proposition on rigid curves-1}-\cref{useful proposition on rigid curves-3} are stated and proven in the said references. \cref{useful proposition on rigid curves-5} follows from combining \cref{useful proposition on rigid curves-1}-\cref{useful proposition on rigid curves-4}. Since we did not find the exact statement \cref{useful proposition on rigid curves-4} in literature, we give a proof below.  

By \cite[Lem.~2]{B}, we can find an admissible formal $\O_C$-scheme $\ov{\cX}$ and an open formal subscheme $\cX\subset \ov{\cX}$ such that the generic fiber of this immersion is equal to $X\subset \ov{X}$. Now \cite[Cor.~4.4 and 4.5]{Temkin2000} imply that $\ov{\cX}$ is automatically a proper admissible formal $\O_C$-scheme, and \cite[Cor.~B.4]{Z-thesis} implies that the special fiber $\ov{\cX}_s$ is of pure dimension $1$. Therefore, \cref{lemma:algebraize-curves} ensures that there is a projective, finitely presented, flat $\O_C$-scheme $\ov{\mathfrak{X}}$ such that $\wdh{\ov{\mathfrak{X}}} \simeq \ov{\cX}$. Furthermore, the generic fiber $\ov{\mathfrak{X}}_\eta$ is $C$-smooth.  

Now \cite[Th.~1.5]{Temkin-curve} implies that there is an $\eta$-modification\footnote{An $\eta$-modification is a proper morphism $f\colon \ov{\mathfrak{X}}' \to \ov{\mathfrak{X}}$ such that the generic fiber $\ov{\mathfrak{X}}'_\eta$ is schematically dense in $\ov{\mathfrak{X}}'$ and $f_\eta$ is an isomorphism. This automatically implies that $\ov{\mathfrak{X}}'$ is $\O_C$-flat and $f$ is finitely presented.} $f\colon \ov{\mathfrak{X}}' \to \ov{\mathfrak{X}}$ such that $\ov{\mathfrak{X}}'$ is a semi-stable $\O_C$-curve. We note that the completion $\wdh{\ov{\mathfrak{X}}}'$ is a semi-stable formal $\O_C$-curve in the sense of \cref{defn:ss-formal} (see, for example, \cite[Lem.~B.11]{Z-alterations}) and the morphism 
\[
\wdh{f}\colon \ov{\cX}'\coloneqq \wdh{\ov{\mathfrak{X}}}' \to \wdh{\ov{\mathfrak{X}}}\simeq \ov{\cX}
\]
is a rig-isomorphism (see, for example, \cite[Lem.~B.8]{Z-alterations}). Thus, the inclusion $X\subset \ov{X}$ arises as the rigid generic fiber of an open immersion $\wdh{f}^{-1}(\cX) \subset \ov{\cX}'$ where $\ov{\cX}'$ is a semi-stable formal $\O_C$-curve.  

Lastly we give a proof of \cref{useful proposition on rigid curves-6}. Using \cite[Lem.~6.1.4~(2)]{Z-revised} and \cite[\href{https://stacks.math.columbia.edu/tag/03RT}{Tag 03RT}]{stacks-project}, it suffices to show that the natural morphism $\rm{H}^i(X, \mu_n) \to \rm{H}^i(X^{\an}, \mu_n)$ is an isomorphism for $i\leq 2$. We use \cite[Lem.~6.1.4~(3)]{Z-revised}, \cite[\href{https://stacks.math.columbia.edu/tag/03RM}{Tag 03RM}]{stacks-project}, and the schematic and analytic Kummer exact sequence to reduce the question to proving that the natural morphisms
    \[
    \rm{H}^0(X, \bf{G}_m) = \O_X(X)^\times  \to \rm{H}^0(X^\an, \bf{G}_m) = \O_{X^\an}(X^\an)^\times \text{ and}
    \]
    \[
    \rm{H}^1(X, \bf{G}_m) = \rm{Pic}(X) \to \rm{H}^1(X^\an, \bf{G}_m) = \rm{Pic}(X^\an)
    \]
    are isomorphisms. Finally, we note that these maps are isomorphisms due to the rigid-analytic GAGA and properness of $X$ (see \cite[Th.~II.9.4.1 and Cor.~II.9.4.4]{FujKato})
\end{proof}

Now we end the subsection with a version of the Noether normalization result for semi-stable curves over $\O_C$. This result, in conjunction with \cref{lemma:finite-flat-curves} and \cref{useful proposition on rigid curves}, will be a very useful tool to reduce questions about general smooth rigid-analytic curves to the case of the closed unit disc. 

\begin{lemma}\label{lemma:good-noether} Let $\cX$ be a semi-stable proper formal $\O_C$-curve, and let $\{x_1, \dots, x_n\}$ be a finite set of closed points in $\abs{\cX} = \abs{\cX_s}$ that meets each irreducible component of $\abs{\cX}$. Then there is a finite morphism $f\colon \cX \to \wdh{\bf{P}}^1_{\O_C}$ such that $f^{-1}_s(\{\infty\})$ is set-theoretically equal to $\{x_1, \dots, x_n\}$.
\end{lemma}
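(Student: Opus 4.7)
The plan is to algebraize $\cX$, construct a finite morphism on the special fiber using sections of an ample line bundle, lift the data to the formal scheme, and then establish finiteness by an algebraic blow-up argument.

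By \cref{lemma:algebraize-curves}, we can find a projective flat $\cO_C$-scheme $\mathfrak{X}$ with $\wdh{\mathfrak{X}}\simeq\cX$, so $\mathfrak{X}_s=\cX_s$ is a proper (possibly nodal) curve over $k_C$. We first construct a finite morphism $f_s\colon\mathfrak{X}_s\to\bf{P}^1_{k_C}$ with $f_s^{-1}(\infty)=\{x_1,\dotsc,x_n\}$ set-theoretically. Assemble an effective Cartier divisor $D$ on $\mathfrak{X}_s$ supported exactly on $\{x_1,\dotsc,x_n\}$ as follows: at a smooth point $x_i$ take $D_i=[x_i]$; at a nodal $x_i$ with local model $\Spec k_C[s,t]/(st)$ take the local principal divisor cut out by $s+t$, which is set-theoretically supported only at the node and contributes degree one to each branch through it. Since $\{x_1,\dotsc,x_n\}$ meets every irreducible component, $D\colonequals\sum_i D_i$ is ample. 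For $k$ sufficiently large, $\cal{L}_s\colonequals\O_{\mathfrak{X}_s}(kD)$ is very ample with $\Hh^1(\mathfrak{X}_s,\cal{L}_s)=0$ and base-point-free complete linear system. Taking $s_\infty$ to be the canonical section with $\rm{div}(s_\infty)=kD$ and picking $s_0\in\Hh^0(\mathfrak{X}_s,\cal{L}_s)$ not vanishing at any $x_i$ (possible since $k_C$ is infinite and the finite union of the hyperplanes of sections vanishing at some $x_i$ does not fill $\Hh^0(\mathfrak{X}_s,\cal{L}_s)$), we obtain $f_s=[s_0:s_\infty]$. The restrictions of $s_0$ and $s_\infty$ to each irreducible component are linearly independent (their divisors are supported on disjoint sets), so $f_s$ is nonconstant on each component and hence finite by properness.

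Next we lift $f_s$ to $\cX$. Since $\Hh^2(\mathfrak{X}_s,\O_{\mathfrak{X}_s})=0$, deformation theory combined with Grothendieck's existence theorem lifts $\cal{L}_s$ to a line bundle $\cal{L}$ on $\mathfrak{X}$. Since $\Hh^1(\mathfrak{X}_s,\cal{L}_s)=0$, the sections $s_0$ and $s_\infty$ lift to global sections $\widetilde{s_0},\widetilde{s_\infty}\in\Hh^0(\mathfrak{X},\cal{L})$. The common vanishing locus $Z=V(\widetilde{s_0},\widetilde{s_\infty})\subset\mathfrak{X}$ is closed with empty special fiber, hence disjoint from $\mathfrak{X}_s$. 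Blowing up $\mathfrak{X}$ along $Z$ produces a projective $\cO_C$-scheme $\mathfrak{X}'$ together with a morphism $\widetilde{f}\colon\mathfrak{X}'\to\bf{P}^1_{\O_C}$; since $Z$ is disjoint from $\mathfrak{X}_s$, the blow-up is an isomorphism on a Zariski neighborhood of $\mathfrak{X}_s$, so $\wdh{\mathfrak{X}'}\simeq\cX$ and $\widetilde{f}_s=f_s$.

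It remains to show $\widetilde{f}$ is finite. It is proper because both $\mathfrak{X}'$ and $\bf{P}^1_{\O_C}$ are proper over $\cO_C$ and $\bf{P}^1_{\O_C}$ is separated. By upper semi-continuity of fiber dimension, the set $Z'\subset\bf{P}^1_{\O_C}$ where $\widetilde{f}$ has positive-dimensional fibers is closed; it avoids $\bf{P}^1_{k_C}$ since $f_s$ is finite. But every nonempty closed subset of $\bf{P}^1_{\O_C}$ contains a closed point, and since $\cO_C$ is local, all closed points of $\bf{P}^1_{\O_C}$ lie in $\bf{P}^1_{k_C}$; so $Z'=\varnothing$. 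Therefore $\widetilde{f}$ is proper and quasi-finite, hence finite, and its $\varpi$-adic completion yields the desired finite morphism $f\colon\cX\to\wdh{\bf{P}}^1_{\O_C}$ with $f_s^{-1}(\infty)=\{x_1,\dotsc,x_n\}$ set-theoretically. The main technical hurdle is the construction of $D$ at nodes: one must use the explicit local model of the semi-stable singularity to exhibit an effective Cartier divisor supported at a single node, which is the key device for converting the set-theoretic datum $\{x_1,\dotsc,x_n\}$ into an ample line bundle on the possibly singular curve $\mathfrak{X}_s$.
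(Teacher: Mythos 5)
Your proof is correct, and it takes a genuinely different route from the paper's. The paper works directly with the formal scheme $\cX$: after building the divisor $D$ and the two sections on $\cX_s$ (invoking \cite[Lem.~6]{Kedlaya-covers} to produce the auxiliary section), it approximates these data over some $\cX_0 = \cX \times_{\Spf\O_C}\Spec\O_C/(\pi)$, deforms the line bundle to $\cX$, raises to a high power to kill $\Hh^1(\cX,\cal{L})$, lifts the sections, and then uses \emph{Nakayama's lemma} to guarantee that the lifted pair of sections already generates $\cal{L}$ globally; thus the morphism $\cX\to\wdh{\PP}^1_{\O_C}$ exists immediately with no base points. You instead algebraize $\cX$ to a projective $\O_C$-scheme $\mathfrak{X}$ via \cref{lemma:algebraize-curves}, build $f_s$ by an explicit linear-algebra choice of the second section (rather than citing Kedlaya's lemma), lift the line bundle and sections to $\mathfrak{X}$, and then eliminate the possible common zero locus $Z$ of the lifted sections by a blow-up. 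Since $Z$ avoids the special fiber, the blow-up does not change the formal completion, and you conclude finiteness of the resulting map to $\PP^1_{\O_C}$ by the topological argument that a nonempty closed subset of $\PP^1_{\O_C}$ must contain a closed point, which must lie over the closed point of $\Spec\O_C$. The two tradeoffs: the paper's Nakayama step is more economical and stays formal throughout, while your blow-up sidesteps having to guarantee global generation of $\cal{L}$ on the formal scheme, at the cost of the extra algebraization and the quasi-finiteness verification at the end. Two small points worth making more precise in your write-up: (i) at a node, $s+t$ lives only in the henselization/completion, so one should instead choose a non-zero-divisor parameter $f_i\in\m_{x_i}\subset\O_{\mathfrak{X}_s,x_i}$ (which exists by prime avoidance, as the local ring is $1$-dimensional and reduced, hence Cohen--Macaulay) and shrink the Zariski neighborhood so that $V(f_i)=\{x_i\}$; and (ii) the lifting of line bundles and sections from $\mathfrak{X}_s$ to $\mathfrak{X}$ over the non-Noetherian $\O_C$ needs the non-Noetherian form of coherent finiteness and formal GAGA from \cite{FujKato}, as in the proof of \cref{lemma:algebraize-curves}.
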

\begin{proof}
    We first construct the map over the residue field $k_C$. 
    Since $\cX_s$ is nodal, we know that there is an effective Cartier divisor on $\cX_s$
    whose set-theoretic support is the set of nodes $\{x_1, \dots, x_n\}$. 
    Its associated line bundle $\cal{L}_s$ is ample
    on $\cX_s$ because $\{x_1, \dots, x_n\}$ hits every irreducible component of $\cX_s$ (see \cite[\href{https://stacks.math.columbia.edu/tag/0B5Y}{Tag 0B5Y}]{stacks-project}). Furthermore, $\cal{L}_s$ comes with a canonical section $\delta_s \in \cal{L}_s(\cX_s)$ such that the vanishing locus $\rm{V}(\delta_s)$ is set-theoretically equal to $\{x_1, \dots, x_n\}$. Therefore, \cite[Lem.~6]{Kedlaya-covers} ensures that there is an integer $d$ and a section $\alpha\in \cal{L}_s^{\otimes d}(\cX_s)$ such that the natural morphism $\O_{\cX_s}^{\oplus 2} \xrightarrow{\alpha_s +\delta^{\otimes d}_s} \cal{L}^{\otimes d}_s$ is surjective and defines a finite map $f_s\colon \cX_s \to \bf{P}^1_k$ such that the pre-image $f_s^{-1}(\{\infty\})$ is set-theoretically equal to $\{x_1, \dots, x_n\}$. By replacing $\cal{L}_s$ with $\cal{L}_s^{\otimes d}$ (and $\delta$ with $\delta^{\otimes d}$), we may and do assume that $d=1$. 

    Now a standard approximation argument implies that we can find a pseudo-uniformizer $\pi \in \O_C$ and a line bundle $\cal{L}_0$ on $\cX_0\coloneqq \cX\times_{\Spf \O_C} \Spec \O_C/(\pi)$ with two global sections $s_0$ and $\alpha_0$ such that $\restr{\cal{L}_0}{\cX_s} \simeq \cal{L}_s$, $\restr{\delta_0}{\cX_s} = \delta_s$, $\restr{\alpha_0}{\cX_s} = \alpha_s$, and the natural morphism $\O_{\cX_0}^{\oplus 2} \xrightarrow{\alpha_0 + \delta_0} \cal{L}_0$ is surjective and defines a finite morphism $f_0\colon \cX_0 \to \bf{P}^1_{\O_C/(\pi)}$. By a standard deformation theory argument (see the proof of \cref{lemma:algebraize-curves}), we can lift $\cal{L}_0$ to an ample line bundle $\cal{L}$ on $\cX$. Therefore, after replacing $\cal{L}$ with its high enough power (and replacing sections $\alpha_0$ and $\delta_0$ with their powers as well), we can assume that $\rm{H}^1(\cX, \cal{L})=0$. With this cohomology vanishing, we can lift the sections $\delta_0$ and $\alpha_0$ to some sections $\delta\in \cal{L}(\cX)$ and $\alpha\in \cal{L}(\cX)$ such that $\restr{\delta}{\cX_0} = \delta_0$ and $\restr{\alpha}{\cX_0} = \alpha_0$. Lastly, Nakayama's lemma ensures these sections define a surjection $\O_{\cX}^{\oplus 2} \xrightarrow{\alpha+ \delta} \cal{L}$ which, in turn, defines a finite morphism $f\colon \cX \to \wdh{\bf{P}}^1_{\O_C}$ such that $\restr{f}{s} = f_s$. This implies that $f^{-1}_s(\{\infty\})$ is set-theoretically equal to $\{x_1, \dots, x_n\}$. 
\end{proof}

\subsection{Universal compactifications of curves}

In this subsection, we study universal compactifications of curves;
cf.\ \cref{univ-comp}.
The description of universal compactifications obtained in this subsection will be an important input in our construction of analytic trace maps in \cref{section:analytic-trace}. 

\begin{lemma}\label{lemma:compactifications-compatible}
Let $f\colon X=\Spa(B, B^\circ) \to Y=\Spa(A, A^\circ)$ be a finite morphism of rigid-analytic affinoid $C$-curves,  
inducing a finite morphism $f^c\colon X^c\to Y^c$ of universal compactifications.
Then $f^{c, -1}(Y^c\smallsetminus Y) = X^c\smallsetminus X$. 
\end{lemma}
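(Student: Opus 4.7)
My plan is to reduce the statement to an integral closure assertion on rings of definition. Recall from \cref{lemma:universal-compactification-affinoid} that the universal compactifications $X^c$ and $Y^c$ have the same underlying rings as $X$ and $Y$, with $\Spa(B,B'^+)$ and $\Spa(A,A'^+)$ respectively, where $A'^+\subset A^\circ$ and $B'^+\subset B^\circ$ are the minimal open integrally closed subrings containing $\O_C$. Consequently, a point $x\in X^c$ lies in the open subspace $X$ if and only if $v_x(B^\circ)\leq 1$, and likewise for $Y\subset Y^c$. The identity to prove is thus equivalent to the assertion that for every $x\in X^c$, one has $v_x(B^\circ)\leq 1$ if and only if $v_x(A^\circ)\leq 1$, where $A^\circ$ is regarded as a subring of $B$ via the finite map $A\to B$.

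The ``only if'' direction is immediate: the image of $A^\circ$ in $B$ is contained in $B^\circ$ by continuity. For the converse, I reduce to the key claim that $B^\circ$ is integral over the image of $A^\circ$ in $B$. Once this is granted, the valuation ring $k(x)^+\subset k(x)$ attached to any $v_x$ with $v_x(A^\circ)\leq 1$ contains the image of $A^\circ$, and since valuation rings are integrally closed, it must also contain every element of $B$ integral over this image, in particular every element of $B^\circ$. Hence $v_x(B^\circ)\leq 1$, as required.

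To prove the key claim, I would choose compatible $\O_C$-algebra rings of definition $A_0\subset A^\circ$ and $B_0\subset B^\circ$ with $B_0$ finite over $A_0$. Such a choice is possible because $A\to B$ is finite: starting from any $A_0$, take a finite set of $A$-module generators of $B$, rescale them into $B^\circ$ using a pseudo-uniformizer of $\O_C$, rescale once more so that all their structure constants in $A$ lie in $A_0$, and take $B_0$ to be the $A_0$-subalgebra they generate. Then $B_0$ is a finite (hence integral) $A_0$-algebra and a ring of definition of $B$. Since $B$ is strongly noetherian, a standard bounded-equals-integral argument identifies $B^\circ$ with the integral closure of $B_0$ in $B$: any $b\in B^\circ$ spans, together with $B_0$, a bounded and hence (by noetherianity of $B_0$) finitely generated $B_0$-submodule of $B$, yielding an integral equation for $b$. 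Chaining, $B^\circ$ is integral over $B_0$ which is integral over $A_0\subset A^\circ$, giving the claim.

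The main substantive step is the bounded-equals-integral identification of $B^\circ$ with the integral closure of $B_0$; the remaining manipulations (choosing compatible rings of definition and invoking that valuation rings are integrally closed) are routine.
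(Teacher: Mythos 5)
Your proof is correct, but it takes a genuinely different route from the paper's. You reduce the set-theoretic equality to the pointwise valuation statement ``$v_x(B^\circ)\le 1$ iff $v_x(\phi(A^\circ))\le 1$'' and then resolve the nontrivial direction by showing $B^\circ$ is integral over the image of $A^\circ$, invoking integral closedness of valuation rings. (For what it's worth, this integrality is already built into the definition of a finite morphism of Huber pairs — see \cite[Satz 3.6.20, Korollar 3.12.12]{Huber-thesis}, as quoted in \cref{lemma:flatness-finite-case} — so the bounded-equals-integral discussion could be replaced by a one-line citation, although your direct argument also works once one notes that $B_0[b]\subset\varpi^{-m}B_0$ is a submodule of a rank-one free module over the noetherian ring $B_0$.) The paper instead argues globally: it shows the open immersion $j\colon X\hookrightarrow X'\coloneqq X^c\times_{Y^c}Y$ is an isomorphism by observing that universal compactifications do not change rings of global sections, so $\cO_{X'}(X')\to\cO_X(X)$ is a topological isomorphism, and then — using that $f^c$ is finite (\cref{lemma:compactification-of-finite-morphism}) — that $j$ is a morphism of finite $Y$-spaces and hence a closed immersion, whence an isomorphism. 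Your valuation-theoretic argument is more elementary and avoids the detour through finiteness of $f^c$ and the closed-immersion criterion, at the cost of the integral-closure bookkeeping; the paper's argument is slicker once \cref{lemma:universal-compactification-affinoid} and \cref{lemma:compactification-of-finite-morphism} are in place. Both are valid.
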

\begin{proof}
    Clearly, $f^{c, -1}(Y^c\smallsetminus Y) \subset X^c\smallsetminus X$. Therefore, it suffices to show that $X^c\smallsetminus X\subset f^{c, -1}(Y^c\smallsetminus Y)$. Equivalently, it suffices to show that the natural morphism
    \[
    j \colon X \to X'\coloneqq X^c\times_{Y^c} Y
    \]
    is an isomorphism. Since $X \to X^c$ is an open immersion, we conclude that $j$ is an open immersion as well. Since $\O_{Y^c}(Y^c) \simeq \O_Y(Y)$, $\O_{X^c}(X^c)\simeq \O_X(X)$, and $X$, $X^c$, $Y$, and $Y^c$ are all affinoids (see \cref{lemma:universal-compactification-affinoid}), we conclude that the natural morphism $\O_{X'}(X')\simeq \O_{Y}(Y) \wdh{\otimes}_{\O_{Y^c}(Y^c)} \O_{X^c}(X^c) \to \O_X(X)$ is a topological isomorphism.  
    
    Now \cref{lemma:compactification-of-finite-morphism} implies that $f^c$ is a finite morphism, and so $j$ is a morphism of finite adic $Y$-spaces. Therefore, $j$ is itself a finite morphism. Thus, \cite[Lem.~1.4.5~(ii)]{Huber-etale} implies that topologically $j$ is a closed morphism, and so \cite[Lem.~B.6.14]{Z-quotients} ensures that $j$ is a closed immersion. Therefore, \cite[Cor.~B.6.9]{Z-quotients} and the established above isomorphism $\O_{X'}(X')\xrightarrow{\sim} \O_X(X)$ implies that $j$ is an isomorphism. 
\end{proof}

\begin{lemma}\label{lemma:compacitification-of-the-disc} Let $X=\bf{D}^1$ be a one-dimensional closed unit disc. Then $X^c\smallsetminus X$ consists of a unique rank-$2$ point $x_+$.
Furthermore, under the isomorphism $X^c=\Spa\left(C\langle T\rangle, \O_C + T \m_C\langle T\rangle\right)$ of \cref{lemma:universal-compactification-affinoid}, the point $x_+$ comes from the valuation
\[
v_{x_+} \colon C\langle T\rangle \to \bigl(\Gamma_C \bigoplus \Z\bigr) \cup \{0\}
\]
\[
v_{x_+}\Bigl(\sum_n a_n T^n\Bigr) = \sup_n\bigl\{ (\abs{a_n}, n) \bigr\}.
\]
\end{lemma}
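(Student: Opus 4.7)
The plan is to prove the two assertions of the lemma separately: first, that the explicit formula defines a continuous valuation corresponding to a point $x_+ \in X^c \smallsetminus X$, and second, that any point of $X^c \smallsetminus X$ is equivalent to $x_+$. The second claim carries the real content.

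For the existence, I would routinely verify that $v_{x_+}\bigl(\sum a_n T^n\bigr) = \sup_n\bigl\{(\abs{a_n}, n)\bigr\}$, with the sup in lex order on $\Gamma_C \oplus \Z$, is well-defined (the sup is attained at a unique index since $\abs{a_n} \to 0$), multiplicative, and satisfies the non-archimedean triangle inequality (multiplicativity follows because the lex maximum is attained at a unique term, so there is no cancellation). Continuity amounts to showing that $\{f : v_{x_+}(f) < \gamma\}$ is open for each $\gamma$; the key observation is that in lex order, $(\abs{\varpi}^k, m) < (M, N)$ for all $m \in \Z$ whenever $\abs{\varpi}^k < M$, so a sufficiently deep $\varpi$-adic neighborhood of any $f$ with $v_{x_+}(f) = (M, N)$ lies in the set. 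Under the identification $X^c = \Spa\bigl(C\langle T\rangle, \O_C + T\m_C\langle T\rangle\bigr)$ from \cref{lemma:universal-compactification-affinoid}, the valuation $v_{x_+}$ sends $\O_C + T\m_C\langle T\rangle$ to values $\leq 1$ while $v_{x_+}(T) = (1, 1) > 1$, so $x_+ \in X^c \smallsetminus X$ and is of rank $2$.

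For the uniqueness, let $v$ represent a point $x \in X^c \smallsetminus X$. The inclusions $\O_C \subset \O_C + T\m_C\langle T\rangle$ and $T\varpi \in T\m_C\langle T\rangle$ for every $\varpi \in \m_C \smallsetminus \{0\}$ give $v(\O_C) \leq 1$ and $v(T) \leq v(\varpi)^{-1}$, while $x \notin X$ forces $v(T) > 1$. I would first use these constraints to identify $\Gamma_v \cong \Gamma_C \oplus \Z$ with lex order and $v(T) \mapsto (1,1)$: no nonzero power $v(T)^k$ can lie in $\abs{C^\times}$, since if $v(T)^k = \abs{a}$ with $k > 0$ then $\abs{a} > 1$, yet the bound $v(T)^k \leq \abs{\varpi}^{-k}$ with $\abs{\varpi}$ arbitrarily close to $1$ (using divisibility of $\Gamma_C$) produces a contradiction; an analogous argument shows that the convex subgroup of $\Gamma_v$ generated by $v(T)$ intersects $\Gamma_C$ only at the neutral element. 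Once $\Gamma_v$ is pinned down, the formula for $v$ on polynomials follows from the strong triangle inequality with unique maximum in lex order, and extends to arbitrary $f \in C\langle T\rangle$ by continuity: for the truncation $f_K$ of degree $K$, both $v(f_K) = (M, N) = v_{x_+}(f)$ for $K$ large, while continuity of $v$ applied to $\gamma = (M, N)$ forces $v(f - f_K) < (M, N)$ (using that $f - f_K$ lies in $\varpi^n \O_C\langle T\rangle$ for some $n$ with $\abs{\varpi}^n < M$). The main obstacle is the value-group analysis, namely ruling out rigorously that $\Gamma_v$ is strictly larger than $\Gamma_C \oplus \Z$ or that $\langle v(T) \rangle$ fails to be the bottom convex subgroup; this requires a delicate combination of the bound $v(T) \leq v(\varpi)^{-1}$, divisibility of $\Gamma_C$, and cofinality of $v(\varpi)$ from continuity.
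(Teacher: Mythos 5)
Your proof is correct but takes a genuinely different route from the paper's. The paper's proof is a two-line reduction: it cites \cite[Prop.~3.9]{H0} to replace the completed ring $C\langle T\rangle$ by the decompleted ring $C[T]$ (with $\varpi$-adic topology) without changing the set $\abs{X^c}\smallsetminus\abs{X}$, and then cites the explicit computation of the resulting rank-$2$ valuations on $C[T]$ from \cite[L.~11, Ex.~11.3.14]{Seminar} or \cite[Ex.~5.2]{Swan}. You instead give a self-contained first-principles computation, which is more work but removes the reliance on external computations. Your outline is sound: the existence check (multiplicativity via uniqueness of the lex-maximal term, continuity via cofinality of $\abs{\varpi}$, boundedness on $\O_C + T\m_C\langle T\rangle$, $v_{x_+}(T) > 1$) is routine, and for uniqueness you correctly isolate the constraints $v|_C \sim \abs{\cdot}$, $v(T) > 1$, and $v(T) \le \abs{\varpi}^{-1}$ for every pseudo-uniformizer $\varpi$, and from divisibility of $\Gamma_C$ you deduce that $\langle v(T)\rangle \cap \Gamma_C = \{1\}$ and that $\langle v(T)\rangle$ must be the bottom convex factor. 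Two points warrant slightly more care than your sketch gives: first, the order on $\Gamma_C \cdot \langle v(T)\rangle$ being lexicographic with $\Gamma_C$ dominant needs the observation that $1 < v(T)^k < \abs{b}$ for every $k \ge 1$ and every $\abs{b} > 1$ in $\Gamma_C$ (this follows from the same $\abs{\varpi}^{-1}$ bound plus divisibility); second, in the continuity step for infinite series, the parenthetical ``$\abs{\varpi}^n < M$'' is not quite the right condition, since $v$ is bounded on $\O_C\langle T\rangle$ only by some $\abs{\varpi}^{-n_0}$ rather than by $1$ (note $v(T^N) > 1$). The clean formulation is simply that continuity produces a $k$ with $v\bigl(\varpi^k \O_C\langle T\rangle\bigr) < v(f_K)$, and for $K$ large enough $f - f_K$ lies in that neighborhood. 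Neither is a real gap—you flag these as the delicate spots—but they are the places a full write-up would need to argue.
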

\begin{proof}
    First, \cref{lemma:universal-compactification-affinoid} implies that $X^c=\Spa\left(C\langle T\rangle, \O_C + T \m_C\langle T\rangle\right)$. Therefore, \cite[Prop.~3.9]{H0} implies that  
    \[
    \abs{\Spa\left(C\langle T\rangle, \O_C + T \m_C\langle T\rangle\right)} \smallsetminus  \abs{\Spa\left(C\langle T\rangle, \O_C\langle T\rangle\right)} = \abs{\Spa\left(C[T], \O_C + T\m_C[T]\right)}  \smallsetminus \abs{\Spa\left(C[T], \O_C[T]\right)},
    \]
    where we endow both $\O_C[T]$ and $\O_C + T\m_C[T]$ with the $\varpi$-adic topology. Therefore, the result follows directly from \cite[L.~11, Ex.~11.3.14]{Seminar} or \cite[Ex.~5.2]{Swan}.
\end{proof}

In order to get some intuition of how the valuation $v_{x_+}$ works, let us do the following easy computation: 

\begin{example}
By how it is defined, we see that $v_{x_+}\left(a_n T^n\right) \leq 1$ is equivalent to
either $\abs{a_n} < 1$ or $\abs{a_n} = 1$ and $n = 0$.
We also see that the subset of $C\langle T\rangle$ defined by the condition $v_{x_+} \leq 1$
is precisely $\O_C + T \m_C\langle T\rangle$.
\end{example}

\begin{lemma}[{\cite[Lem.~5.12]{Swan}}]
\label{lemma:extra-points}
    Let $X$ be a separated, quasi-compact rigid-analytic $C$-curve with universal compactification $X \hookrightarrow X^c$. Then 
    \begin{enumerate}[label={\upshape{(\roman*)}}]
        \item\label{lemma:extra-points-1} $\abs{X^c} \smallsetminus \abs{X}$ is finite and discrete;
        \item\label{lemma:extra-points-2} each point $x\in \abs{X^c} \smallsetminus \abs{X}$ is a rank-$2$ point;
        \item\label{lemma:extra-points-3} if $X$ is affinoid, then $\abs{X^c} \smallsetminus \abs{X}$ is non-empty.
    \end{enumerate}
\end{lemma}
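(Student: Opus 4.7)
The plan is to reduce the lemma to the case $X = \DD^1$ already handled in \cref{lemma:compacitification-of-the-disc}.

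First, I would treat the affinoid case. Suppose $X = \Spa(A, A^\circ)$ is affinoid. Applying Noether normalization for affinoid $C$-algebras to the $1$-dimensional affinoid algebra $A_\red$ produces a finite injection $C\langle T \rangle \hookrightarrow A_\red$, which induces a finite morphism $f \colon X \to \DD^1$. Miracle flatness on the $1$-dimensional regular noetherian ring $C\langle T \rangle$ makes $A_\red$ flat (hence faithfully flat) over $C\langle T \rangle$, so $f$ is surjective. By \cref{lemma:compactification-of-finite-morphism} and \cref{lemma:compactifications-compatible}, $f$ extends to a finite morphism $f^c \colon X^c \to (\DD^1)^c$ with $X^c \smallsetminus X = (f^c)^{-1}\bigl((\DD^1)^c \smallsetminus \DD^1\bigr)$. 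Combined with \cref{lemma:compacitification-of-the-disc}, this equals $(f^c)^{-1}(x_+)$ for the unique rank-$2$ boundary point $x_+$, and is therefore a finite set of rank-$2$ points by \cref{lemma:finite-same-rank}. Moreover, $\DD^1$ is dense in $(\DD^1)^c$ (every rank-$1$ continuous valuation on $C\langle T\rangle$ automatically satisfies $v(\cO_C\langle T\rangle) \leq 1$, so every point of $(\DD^1)^c$ has a rank-$1$ generalization in $\DD^1$), whence the closed subset $f^c(X^c)$ containing $\DD^1$ equals all of $(\DD^1)^c$; thus $(f^c)^{-1}(x_+) \neq \emptyset$. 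This proves \cref{lemma:extra-points-1}, \cref{lemma:extra-points-2}, and \cref{lemma:extra-points-3} for affinoid $X$.

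For discreteness, I would use that rank-$2$ points in a locally noetherian analytic adic space of topological dimension $1$ are maximal specializations, so distinct points of $X^c \smallsetminus X$ are pairwise incomparable. Hence, for each $x \in X^c \smallsetminus X$, the open set $X^c \smallsetminus \bigcup_{y \neq x} \overline{\{y\}}$ (with $y$ running over the finitely many points of $(X^c \smallsetminus X) \smallsetminus \{x\}$) meets $X^c \smallsetminus X$ in exactly $\{x\}$, which gives discreteness.

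Finally, I would handle a general separated quasi-compact rigid-analytic $C$-curve $X$ by decomposing it into its finitely many irreducible components $X_1, \ldots, X_n$. By \cref{useful proposition on rigid curves}\cref{useful proposition on rigid curves-1}, each $X_i$ is either affinoid (covered by Step~1) or proper (in which case $X_i = X_i^c$ and contributes no boundary). The universal property of $(-)^c$ produces canonical morphisms $X_i^c \to X^c$ that combine to $\coprod_i X_i^c \to X^c$; one checks, via the valuative description of $X^c$ near its boundary, that this restricts to a surjection $\coprod_i (X_i^c \smallsetminus X_i) \twoheadrightarrow X^c \smallsetminus X$. Combined with the affinoid case, this yields finiteness of $X^c \smallsetminus X$ and the rank-$2$ property, with discreteness from the previous paragraph. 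The hardest part of this plan is precisely this last surjectivity claim, since one has to check that every boundary point of $X^c$ genuinely arises from an affinoid irreducible component. A possibly cleaner alternative would be to establish the local statement that every boundary point of $X^c$ admits an affinoid open neighborhood in $X$, reducing each such point individually to Step~1 and bypassing the irreducible decomposition entirely.
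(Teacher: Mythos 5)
Your proposal is correct but takes a genuinely different route from the paper. The paper's proof of \cref{lemma:extra-points} is quite terse and lemma-external: it cites \cite[Lem.~5.12]{Swan} outright for \cref{lemma:extra-points-1}, deduces \cref{lemma:extra-points-2} from a transcendence-degree bound $\rm{tr.c}(\wdh{k(x)}/C)\leq 1$ (from Huber's book) together with a Bourbaki inequality $\rk\Gamma_x \leq \rk\Gamma_C + \rm{dim}_{\QQ}(\Gamma_x/\Gamma_C\otimes\QQ) \leq 2$, and deduces \cref{lemma:extra-points-3} from Huber's criterion that affinoid adic spaces are never proper (\cite[Prop.~1.4.6, Cor.~5.1.6]{Huber-etale}). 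You instead reduce everything to the disk via Noether normalization and the paper's own machinery (\cref{lemma:compactification-of-finite-morphism}, \cref{lemma:compactifications-compatible}, \cref{lemma:finite-same-rank}, \cref{lemma:compacitification-of-the-disc}), which is more self-contained and in the spirit of the paper's reduction-to-the-disk strategy used elsewhere, but considerably longer.

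A few points where your plan is looser than it should be. First, Noether normalization gives $C\langle T\rangle \hookrightarrow A_\red$, which yields a finite map from $X_\red$, not $X$; you should say explicitly that $\abs{X} = \abs{X_\red}$ and that $\abs{X^c} = \abs{X^c_\red}$ (the nil-immersion $X_\red \to X$ is finite, hence so is $X^c_\red \to X^c$ by \cref{lemma:compactification-of-finite-morphism}, and both are homeomorphisms onto their image since valuations do not see nilpotents). Second, miracle flatness is a red herring for surjectivity of $f$: a finite \emph{injective} ring map satisfies Lying Over, so $\Spec A_\red \to \Spec C\langle T\rangle$ is already surjective and hence so is $f$; moreover, if you did want flatness, you would have to note that $A_\red$ is Cohen--Macaulay (which is automatic for reduced one-dimensional noetherian rings, but should be stated). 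Third, for discreteness you can skip the comparability discussion: the boundary points of $(\DD^1)^c$ and, via the finite (hence closed) map $f^c$, of $X^c$ are closed points, and any finite set of closed points carries the discrete subspace topology. Finally, the surjectivity $\coprod_i(X_i^c\smallsetminus X_i)\twoheadrightarrow X^c\smallsetminus X$ in the general step, which you flag as the hardest part, does work: each $x\in X^c\smallsetminus X$ is a specialization of some rank-$1$ point $z\in X$ by \cref{compactifications exist}; pick an irreducible component $X_i\ni z$; since $X_i^c\to X^c$ is finite and hence closed with image containing $z$, the image contains $\ov{\{z\}}\ni x$, and any preimage of $x$ in $X_i^c$ cannot lie in $X_i$ (else $x$ would lie in $X$). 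You should also note that a quasi-compact rigid space has only finitely many irreducible components, each of pure dimension $1$, so that \cref{useful proposition on rigid curves}\cref{useful proposition on rigid curves-1} applies to each.
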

\begin{proof}
    \cref{lemma:extra-points-1} follows directly from \cite[Lem.~5.12]{Swan}. For \cref{lemma:extra-points-2}, we first note that \cref{lemma:extra-points-higher-rank} implies that $\rk \Gamma_x >1$. Now \cite[Cor.~1.8.8, Cor.~5.1.14]{Huber-etale} imply that $\rm{tr.c}(\wdh{k(x)}/C)\leq 1$. Therefore, \cite[Ch.~VI.10.3, Cor.~1]{Bourbaki} ensures that $\rm{dim}_{\mathbf{Q}}(\Gamma_x/\Gamma_C \otimes_{\Z} \mathbf{Q}) \leq 1$. Therefore, \cite[Ch.\~VI.10.2, Prop.~3]{Bourbaki} implies that $\rk \Gamma_x \leq \rk \Gamma_C +1 \leq 2$. This implies that $\rk \Gamma_x =2$.

    To see \cref{lemma:extra-points-3}, we note that \cite[Prop.~1.4.6]{Huber-etale} implies that $X$ is not proper. Then \cite[Cor.~5.1.6]{Huber-etale} ensures that $X\neq X^c$. 
\end{proof}

Given $x \in X$ with residue field $k(x)$, we denote the associated valuation by $v_x \colon k(x) \to \Gamma_x \cup \{0\}$. We slightly abuse the notation and also denote by $v_x\colon \wdh{k(x)}^\h \to \Gamma_x \cup \{0\}$ the induced valuation of the henselized completed residue field $\wdh{k(x)}^\h$ (see \cite[\href{https://stacks.math.columbia.edu/tag/0ASK}{Tag 0ASK}]{stacks-project} for the fact that henselization does not change the value group).  

Our next goal is to study the henselized completed residue field $\wdh{k(x)}^\h$ for $x\in \abs{X^c}\smallsetminus \abs{X}$. It turns out that all these affinoid fields are curve-like in the sense of \cref{defn:curve-like}.

\begin{lemma}[{\cite[Prop.~5.1]{Swan}}]\label{lemma:extra-points-curve-like}
    Let $X$ be a separated rigid-analytic $C$-curve and $x$ a rank-$2$ point on $X^c$. Then $\wdh{k(x)}^\h$ is a curve-like affinoid field and the secondary residue field $\wdh{k(x)}^{+, \h}/\m_x\wdh{k(x)}^{+, \h}  \simeq k(x)^+/\m_x$ is isomorphic to $k_C$. 
\end{lemma}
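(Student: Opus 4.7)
The plan is to reduce to the model case $X = \DD^1_C$ with $x = x_+$ (where everything is computable from the explicit description in \cref{lemma:compacitification-of-the-disc}) and then transfer the conclusions to general $x$ via a finite morphism provided by Noether normalization.

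First I would localize to affinoid $X$. Since $\wdh{k(x)}^\h$ depends only on the local ring of $X^c$ at $x$, and the rank-$1$ generalization $x_\gen$ of $x$ must lie in $X$ (its continuity together with the structure of the rank-$2$ valuation at $x$ force $v_{x_\gen}(A^\circ)\leq 1$ for any local affinoid ring $A=\cO_X(U)$), I may choose an open affinoid $U \subset X$ containing $x_\gen$, and then $x$ lifts to a rank-$2$ boundary point of $U^c$ via the canonical map $U^c \to X^c$. Thus I may assume $X$ is affinoid. Noether normalization for affinoid $C$-algebras of Krull dimension $1$ then yields a finite morphism $f\colon X \to \DD^1_C$. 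By \cref{lemma:compactification-of-finite-morphism} this extends to a finite morphism $f^c\colon X^c \to (\DD^1)^c$, and \cref{lemma:compactifications-compatible} together with \cref{lemma:compacitification-of-the-disc} force $f^c(x) = x_+$, since $(\DD^1)^c \smallsetminus \DD^1 = \{x_+\}$.

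Next I would verify the conclusion directly for $x_+$ using the valuation $v_{x_+}(\sum a_n T^n)=\sup_n\{(\abs{a_n},n)\}$ in $\Gamma_C \times \Z$ (lex). The greatest element strictly less than $1$ is $\gamma_0 = (1,-1)$, and $\Gamma_{x_+}$ is visibly generated by $j_K(\Gamma_C)$ together with $\gamma_0$. A direct calculation shows that the maximal ideal of $\cO_C + T\m_C\langle T\rangle$ is $\m_C\langle T\rangle$ and that the inclusion $\cO_C \hookrightarrow \cO_C + T\m_C\langle T\rangle$ induces an isomorphism $k_C = \cO_C/\m_C \xrightarrow{\sim} \bigl(\cO_C + T\m_C\langle T\rangle\bigr)/\m_C\langle T\rangle$; this survives completion and henselization. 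Henselianness of $\wdh{k(x_+)}^\h$ is tautological, while defectlessness of $\wdh{k(x_+)}^\h$ in every finite extension reduces, via multiplicativity of $e\cdot f$ in towers, to two classical observations: the complete rank-$1$ nonarchimedean field $\wdh{k(x_{+,\gen})}$ is defectless in every finite extension, and every finite extension of the secondary rank-$2$ valuation must be totally ramified since the secondary residue field $k_C$ is algebraically closed.

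Finally I would transfer to general $x$ using \cref{cor:finite-morphism-finite-on-henselized-completions}, which yields a finite field extension $\wdh{k(x_+)}^\h \hookrightarrow \wdh{k(x)}^\h$ with $\wdh{k(x)}^{+,\h}$ the integral closure of $\wdh{k(x_+)}^{+,\h}$. Henselianness is preserved under finite integral extensions of henselian local rings (\cite[\href{https://stacks.math.columbia.edu/tag/0ASK}{Tag 0ASK}]{stacks-project}); defectlessness in every finite extension is transitive in towers by multiplicativity of $ef$. For the value group: $\Gamma_x$ is a finite-index extension of $\Gamma_{x_+} = \Gamma_C \oplus \Z$ (lex), so its rank-$1$ convex subgroup $(\Gamma_x)^{(1)}$ is an ordered extension of $\Z$ of finite index, hence isomorphic to $\Z$, and the quotient $\Gamma_x/(\Gamma_x)^{(1)}$ is a torsion-free overgroup of the divisible $\Gamma_C$ with finite index, hence equal to $\Gamma_C$; the divisible image $j_L(\Gamma_C) \hookrightarrow \Gamma_x$ meets $(\Gamma_x)^{(1)}\cong\Z$ trivially (divisibility vs.\ $\Z$) and maps isomorphically onto the quotient, giving a splitting $\Gamma_x = j_L(\Gamma_C) \oplus \langle\gamma_0^{(L)}\rangle$ with lexicographic order, so $\Gamma_x$ has a greatest element $<1$ and is generated by $j_L(\Gamma_C)$ and this element. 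The secondary residue field of $\wdh{k(x)}^\h$ is a finite extension of $k_C$, hence equals $k_C$ by algebraic closedness. The main technical hurdle is the verification of defectlessness at the base case $x_+$, where one has to handle the rank-$2$ valuation carefully; once this is in hand, the rest of the argument is essentially bookkeeping with finite extensions of ordered abelian groups and residue fields.
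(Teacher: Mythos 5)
Your approach is genuinely different from the paper's: the paper simply invokes Huber's intrinsic analysis of boundary points of compactified rigid curves (your $x$ is a ``Type~III'' point in \cite{Swan}, and the conclusions are read off from \cite[Lem.~5.1, Lem.~5.3]{Swan}), whereas you reduce to the model case $\DD^1_C$ via Noether normalization, verify the base case for $x_+$ by hand, and transfer along the finite extension $\wdh{k(x_+)}^\h \hookrightarrow \wdh{k(x)}^\h$ provided by \cref{cor:finite-morphism-finite-on-henselized-completions}. Your transfer step (finite-index overgroups of $\Gamma_C\oplus\ZZ$, preservation of henselianness and of defectlessness under finite extensions, algebraic closedness of $k_C$ controlling the secondary residue field) is carried out correctly, and the explicit treatment of $x_+$ is a nice alternative to offloading everything onto \cite{Swan}.

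There is, however, a genuine gap in your base case: the defectlessness of $\wdh{k(x_+)}^\h$. You assert as a ``classical observation'' that the complete rank-$1$ nonarchimedean field $\wdh{k(x_{+,\gen})}$ is defectless in every finite extension. As a statement about complete rank-$1$ fields in general this is simply false when the residue characteristic is $p>0$ --- there are well-known complete fields admitting defect extensions. What you need (and what is actually true) is that \emph{this particular} field, the completed residue field at the Gauss point of $\DD^1_C$, is \emph{stable}; that is the Grauert--Remmert stability theorem for Gauss norms over an algebraically closed (hence stable, divisibly valued) base --- see e.g.\ \cite[\S\,3.6, \S\,5.3]{BGR}. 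This is a substantial theorem, not an observation, and it is arguably where the whole content of the lemma sits, so it must be invoked explicitly. Similarly, ``every finite extension of the secondary valuation is totally ramified since $k_C$ is algebraically closed'' only rules out an unramified part, not a defect; you still need that the henselized discrete valuation ring --- the henselization of the excellent local ring $\cO_{\PP^1_{k_C},\infty}$ --- is defectless (finite extensions of its fraction field biject with those of the completed fraction field, and complete DVRs have no defect). Once both of these are properly sourced, and after the minor preliminary reduction to $X$ reduced before applying Noether normalization, your argument goes through.
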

\begin{proof}
    By construction, $\wdh{k(x)}^\h$ is henselian. Therefore, it suffices to show that $\wdh{k(x)}^\h$ is defectless in every finite extension, $(\Gamma_x)_{<1}$ has a greatest element $\gamma_x$, and $\Gamma_x$ is generated by $\Gamma_C$ and $\gamma_x$.  

    Now we note that the point $x$ is of Type III in the sense of \cite[\S~5, p.~184]{Swan}. Therefore, \cite[Lem.~5.3~(i, ii, iii)]{Swan} implies that $\wdh{k(x)}^\h$ is defectless in every finite extension, while \cite[Lem.~5.1~(iii)]{Swan} implies that $(\Gamma_x)_{< 1}$ admits a greatest element $\gamma_x$ and that it is generated by $\Gamma_C$ and $\gamma_x$. Furthermore, \textit{loc.\ cit.} implies that $\wdh{k(x)}^{+, \h}/\m_x\wdh{k(x)}^{+, \h} \simeq k(x)^+/\m_x$ is isomorphic to $k_C$. 
\end{proof}

\begin{lemma}\label{lemma:generalization-weakly-Shilov} Let $X$ and $x\in X$ be as in \cref{lemma:extra-points-curve-like}, and let $x_{\rm{gen}}$ be the unique rank-$1$ generalization of $x$. Then $x_{\rm{gen}}$ is weakly Shilov in the sense of \cite[Def.~2.5]{BH}.
\end{lemma}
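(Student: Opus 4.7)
The plan is to exploit the curve-like structure of $\wdh{k(x)}^\h$ provided by \cref{lemma:extra-points-curve-like} in order to directly compute the value group and residue field of $\wdh{k(x_\gen)}^+$, and then match these data against the weakly Shilov condition from \cite[Def.~2.5]{BH}.

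First, I would observe that $x_\gen$ actually lies in $X$: by \cref{lemma:extra-points}\cref{lemma:extra-points-2}, every point of $X^c \smallsetminus X$ has rank $2$, so the rank-$1$ generalization $x_\gen$ cannot live there. Next, I invoke \cref{lemma:structure-curve-like-valuations} to identify $\Gamma_x \cong \Gamma_C \times_{\mathrm{lex}} \Z$. A direct inspection—using that $\Gamma_C$ is divisible—shows that the unique proper nontrivial convex subgroup of the right-hand side is $\{1\} \times \Z$. Via the standard bijection between convex subgroups of the value group and prime ideals of a valuation ring, this corresponds to a prime $\p \subset \wdh{k(x)}^{+,\h}$ for which the localization $\wdh{k(x)}^{+,\h}_\p$ is the valuation ring of the unique rank-$1$ coarsening of $v_x$. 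Since henselization preserves value groups and residue fields, and since the valuation topologies of $v_x$ and $v_{x_\gen}$ on the common field $k(x) = k(x_\gen)$ coincide, this lets me read off that $\wdh{k(x_\gen)}^+$ has value group $\Gamma_x/(\{1\}\times\Z) \cong \Gamma_C$ and residue field $\Frac\bigl(\wdh{k(x)}^{+,\h}/\p\bigr)$. The ring $\wdh{k(x)}^{+,\h}/\p$ is itself a discrete valuation ring (value group $\{1\}\times\Z \cong \Z$) whose residue field is the secondary residue field $k_C$ of \cref{lemma:extra-points-curve-like}; hence its fraction field has transcendence degree exactly $1$ over $k_C$.

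Finally, I would verify that these two data—$\wdh{k(x_\gen)}^+$ having value group $\Gamma_C$ and residue field of transcendence degree $1$ over $k_C$—are precisely what is required for a rank-$1$ point on a $1$-dimensional rigid $C$-space to be weakly Shilov in the sense of \cite[Def.~2.5]{BH}. The main obstacle is to cleanly track the interplay between completion and henselization, since the structural results of \cref{lemma:extra-points-curve-like} and \cref{lemma:structure-curve-like-valuations} are phrased after henselization, whereas the definition of weakly Shilov concerns only the completed residue field. If this bookkeeping becomes awkward, a cleaner alternative is to pass to a semi-stable formal model via \cref{useful proposition on rigid curves}\cref{useful proposition on rigid curves-5} and argue instead that $x_\gen$ specializes to a generic point of the special fiber, which is an equivalent characterization of being weakly Shilov on a smooth rigid curve.
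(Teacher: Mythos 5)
Your approach is genuinely different from the paper's: the paper simply quotes Huber's classification of rank-$1$ points on rigid curves (\cite[\S~5]{Swan}) — a point with a proper specialization is of type II, and Huber's Prop.~5.1(ii) then \emph{gives} that the secondary residue field of $x_\gen$ has transcendence degree $1$ over the residue field of $C$, which is exactly the hypothesis of \cite[Prop.~2.9]{BH}. You instead try to compute the invariants of $x_\gen$ directly from the curve-like structure of $\wdh{k(x)}^\h$.

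There is a genuine gap in your direct computation, at the single word ``hence.'' You correctly identify that $\wdh{k(x)}^{+,\h}/\p$ is a DVR whose residue field equals the secondary residue field of $x$, i.e.\ $k_C$, and that the secondary residue field of $x_\gen$ is $\Frac\bigl(\wdh{k(x)}^{+,\h}/\p\bigr)$. But a DVR with residue field $k_C$ can have a fraction field of \emph{arbitrary} transcendence degree over $k_C$: the archetype is $k_C\llbracket t\rrbracket$, whose residue field is $k_C$ while $\Frac = k_C((t))$ has infinite transcendence degree over $k_C$ (when $k_C$ is infinite). What you have shown — that the DVR contains $k_C$ as a coefficient field and a uniformizer transcendental over it — only yields transcendence degree $\ge 1$. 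The decisive upper bound, transcendence degree $\le 1$, has to come from elsewhere: either from the dimension bound $\operatorname{tr.c}\bigl(\wdh{k(x_\gen)}/C\bigr)\le 1$ (the same estimate from \cite[Cor.~1.8.8, Cor.~5.1.14]{Huber-etale} that is invoked in the proof of \cref{lemma:extra-points}\cref{lemma:extra-points-2}), or — as the paper does — from Huber's explicit classification of points on affinoid curves. None of the three conditions defining ``curve-like'' in \cref{defn:curve-like} controls the transcendence degree of the residue fields, and nothing in the henselization/completion bookkeeping you worry about will supply that bound. Your alternative route via semi-stable formal models and \cref{specialization} (showing $x_\gen$ lands on a generic point of the special fiber) is sound and essentially re-derives what Huber's type-II classification packages up, but as stated it leans on the claim that ``specializes to a generic point of a semi-stable model'' is equivalent to weakly Shilov, which itself needs a citation or a short argument (e.g.\ \cref{lemma:local-ring-at-generic-point-in-the-special-fiber}\cref{lemma:local-ring-at-generic-point-in-the-special-fiber-4} for the value group, plus a transcendence-degree computation for the residue field — landing you back at the same missing bound).
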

\begin{proof}
Since $x_{\rm{gen}}$ admits a proper specialization, it is of type II in the sense of \cite[\S~5, p.~184]{Swan}. Therefore, \cite[Prop.~5.1~(ii)]{Swan} implies that the secondary residue field of $x_{\rm{gen}}$ has transcendence degree $1$ over $C$. Therefore, \cite[Prop.~2.9]{BH} implies that $x_{\rm{gen}}$ is weakly Shilov. %
\end{proof}

\begin{definition}\label{defn:reduction}
    Let $X$ be a rigid-analytic $C$-curve and $x$ a rank-$2$ point on $X^c$ with the corresponding valuation $v_x\colon \wdh{k(x)}^\h \to \Gamma_x \cup \{0\}$. We define the \emph{reduction morphism} $\#\colon \Gamma_x \to \Z$ to be the morphism from \cref{defn:sharp-map}. 
\end{definition}

\subsection{Relation to formal models}

The main goal of this subsection is to give a geometric interpretation of the reduction morphism from \cref{defn:reduction} in terms of formal models, which follows essentially from the discussion in \cite[pp.~199--200]{Swan};
see also \cite{Kobak} for a discussion in the setting of more general quasi-compact, separated rigid spaces.
We expand the argument here for the convenience of the reader.
This interpretation will play the key role in showing the compatibility of the analytic and algebraic trace maps (see \cref{thm:analytic-trace-algebraic}).  

Before we discuss this interpretation, we need to recall the construction of the specialization morhpism:  

\begin{construction}[{\cite[Prop.~1.9.1]{Huber-etale}}]\label{construction:specialization}
Let $\cX$ be an admissible formal $\O_C$-scheme with generic fiber $X=\cX_\eta$. Then we recall that $X$ is equipped with a \emph{specialization morphism} $\spec_{\cX} \colon (X,\cO^+_X) \to (\cX,\cO_{\cX})$ that is universal among such maps from rigid spaces.
This construction is affine local on $\cX$;
when $\cX = \Spf(A_0)$, then $\cX_\eta = \Spa\bigl(A_0\bigl[\frac{1}{\varpi}\bigr], A_0\bigl[\frac{1}{\varpi}\bigr]^\circ\bigr)$ and $\spec_{\cX}$ sends a valuation $v \colon A_0\bigl[\frac{1}{\varpi}\bigr] \to \Gamma_v$  to the prime ideal $v^{-1}(\Gamma_{v, <1}) \cap A_0 \subset A_0$, which is open due to continuity of $v$. 
\end{construction}
When there is no risk for confusion, we also often write $\spec$ instead of $\spec_\cX$. 

We now discuss the behaviour of the specialization map at some specific class of points of $X$: 

\begin{lemma}
\label{lemma:local-ring-at-generic-point-in-the-special-fiber}
Let $\cX$ be an admissible formal $\O_C$-scheme with reduced special fiber, and let $\zeta \in \abs{\cX_s}$ be a generic point in the special fiber. Then 
\begin{enumerate}[label=\upshape{(\roman*)}]
    \item\label{lemma:local-ring-at-generic-point-in-the-special-fiber-0} $\sp^{-1}(\zeta)$ consists of a unique point $z$;
    \item\label{lemma:local-ring-at-generic-point-in-the-special-fiber-1} the local ring $\O_{\cX, \zeta}$ is a rank-$1$ valuation ring which is $\varpi$-adically separated and $\varpi$-adically henselian;
    \item\label{lemma:local-ring-at-generic-point-in-the-special-fiber-2} the natural morphism $\O_{\cX, \zeta} \to k(z)^+$ is an isomorphism;
    \item\label{lemma:local-ring-at-generic-point-in-the-special-fiber-3} the ideal $\m_C\O_{\cX, \zeta}$ is the maximal ideal of $\O_{\cX, \zeta}$;
    \item\label{lemma:local-ring-at-generic-point-in-the-special-fiber-4} the natural morphism of value groups $\Gamma_C \to \Gamma_z$ is an isomorphism.
\end{enumerate}
\end{lemma}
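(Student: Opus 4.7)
The plan is to reduce to a local computation around $\zeta$ and then use a Gauss-norm construction. All five assertions are local on $\cX$, so I would first shrink to an affine open $\cX = \Spf(B_0)$ small enough that $\zeta$ is the unique generic point of the reduced scheme $\Spec(B_0/\m_CB_0)$. Then $B_0/\m_CB_0$ is an integral domain, and the prime $\tilde\p \subset B_0$ corresponding to $\zeta$ is $\m_CB_0$. Using $\O_C$-flatness together with $\varpi$-adic completeness of $B_0$, one sees that $B_0$ is $\varpi$-adically separated, so in particular $\bigcap_{c\in C^\times} cB_0 = 0$.

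Next, I would construct the unique rank-$1$ point specializing to $\zeta$ by the Gauss-type formula
\[
v_z(f) \colonequals \inf\bigl\{\abs{c} : c \in C^\times,\ f \in cB_0\bigr\}
\]
for $f \in B_0$, extended multiplicatively to $B_0[1/\varpi]$. The key technical step is verifying that $v_z$ is multiplicative on $B_0$: if the infimum for $b_i$ is attained by $c_i$ (so $b_i = c_ib_i'$ with $b_i' \notin \m_CB_0$), then $b_1'b_2' \notin \m_CB_0$ precisely because $B_0/\m_CB_0$ is a domain, giving $v_z(b_1b_2) = \abs{c_1c_2}$. Attainment of the infimum in $\Gamma_C$ is where $\varpi$-adic separatedness enters decisively. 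This establishes that $v_z$ is a nondegenerate multiplicative valuation with value group $\Gamma_C$ (so in particular $B_0$, and hence $B_0[1/\varpi]$, is a domain); by construction it is continuous, $v_z(B_0) \leq 1$, and $\sp(z) = \zeta$. For uniqueness, any continuous $w$ with $w(B_0) \leq 1$ and $\sp(w) = \zeta$ restricts to $\abs{\cdot}$ on the complete rank-$1$ field $C$ and satisfies $w(b) = 1$ for $b \in B_0 \smallsetminus \m_CB_0$; combined with multiplicativity, this pins down $w$ uniquely as $v_z$ and forces $w$ to be rank $1$, since any vertical specialization would produce $b \notin \m_CB_0$ with $w(b) < 1$, violating $\sp(w) = \tilde\p$. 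This proves (i), the rank-$1$ part of (ii), and (v).

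For the stalk identification, I would set $k(z) \colonequals \Frac(B_0[1/\varpi])$ and $k(z)^+ \colonequals \{f \in k(z) : v_z(f) \leq 1\}$, and compare $k(z)^+$ with $\O_{\cX,\zeta} = \varinjlim_{f \notin \tilde\p} B_0\langle 1/f\rangle$. Every $f \notin \m_CB_0$ has $v_z(f) = 1$, so the natural maps $B_0\langle 1/f\rangle \to k(z)^+$ assemble to a map $\O_{\cX,\zeta} \to k(z)^+$. Conversely, given $g \in k(z)^+$, the scaling trick $b = cb'$ with $b' \notin \m_CB_0$ and $\abs{c} = v_z(b)$ (possible by attainment of the Gauss infimum) presents $g = a/b$ with $a, b \in B_0$ and $b \notin \m_CB_0$, showing $g$ lies in the localization and hence in the stalk. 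This yields (iii) and identifies the maximal ideal as $\m_Ck(z)^+$ for (iv). Finally, $\varpi$-adic separatedness of $k(z)^+$ is immediate from the rank-$1$ nature of $v_z$, and $\varpi$-adic henselianness of $\O_{\cX,\zeta}$ follows from its presentation as a filtered colimit of $\varpi$-adically complete (hence henselian) rings, completing (ii). The main obstacle throughout will be the attainment of the Gauss-norm infimum in $\Gamma_C$, which rests delicately on topologically finite presentation of $B_0$ together with $\varpi$-adic separatedness and the domain property of $B_0/\m_CB_0$.
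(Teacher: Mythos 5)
Your proposal takes a genuinely different route from the paper's proof. The paper proves (i) via the identification $|\cX_\eta| \simeq \varprojlim |\cX'|$ over admissible blow-ups and a density argument, then gets (ii)--(iii) by citing the $\eta$-normality machinery of Achinger--Lara--Youcis, and proves (v) by reducing to a smooth affine open and invoking the BGR supremum seminorm theory together with Lütkebohmert's identification $A^\circ = A_0$. You instead attempt a self-contained construction of $z$ via the integral Gauss norm $v_z(f) = \inf\{|c| : f \in cB_0\}$, proving multiplicativity directly from the domain property of $B_0/\fm_C B_0$ and then reading off all five assertions from this single computation. This is an attractive, more hands-on plan.

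However, there is a genuine gap at the step you yourself flag as the main obstacle: \emph{attainment} of the infimum in $\Gamma_C$. You claim this rests on ``topologically finite presentation, $\varpi$-adic separatedness, and the domain property,'' but those ingredients alone do not suffice. $\varpi$-adic separatedness gives only $v_z(f) > 0$; since $C$ is algebraically closed, $\Gamma_C$ is divisible and hence non-discrete, so a strictly positive infimum of an upward-closed subset of $\Gamma_C$ need not be achieved. For a single Tate algebra $\O_C\langle T\rangle$ attainment is automatic because the coefficients of a convergent power series tend to $0$, but for a general quotient $B_0 = \O_C\langle T\rangle/I$ the Gauss norm of $f$ is an infimum over all lifts $\tilde f$, and there is no \emph{a priori} reason this second infimum is attained; one needs a real theorem here. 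The fact that is actually required is the maximum modulus principle for affinoid algebras (or equivalently that the supremum seminorm takes values in $|C|$ for algebraically closed $C$), together with the identification $B_0 = A^\circ$ for $A = B_0[1/\varpi]$ (which uses reducedness of the special fiber and $C$ algebraically closed, as in Lütkebohmert). Without supplying these inputs --- which are precisely what the paper imports via BGR Prop.~6.2.3/5, Th.~3.1/17, and Cor.~3.1/18 --- the multiplicativity of $v_z$, and with it the entire construction, is unsupported. I'd also caution that your rank-$1$ argument for uniqueness is somewhat terse: showing $w(B_0) \subset \Gamma_C \cup \{0\}$ from $\sp(w) = \fm_C B_0$ requires more care than the parenthetical remark suggests, since bounding $w(a)$ above by some $|c|$ with $c \in \fm_C$ does not by itself place $w(a)$ inside $\Gamma_C$.
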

\begin{proof}
    We first show \cref{lemma:local-ring-at-generic-point-in-the-special-fiber-0}. For this, we recall that the underlying topological space of $\cX_\eta$ is given by $\abs{\cX_\eta} \simeq \lim_{\cX' \to \cX} \abs{\cX'}$, where the limit is taken over all admissible blow-ups $\cX' \to \cX$ (see \cite[\S~2.2]{ALY} or \cite[Th.~II.A.4.7]{FujKato}). Furthermore, the specialization morphism is given simply by the projection $\sp\colon \abs{\cX_\eta} \simeq \lim_{\cX' \to \cX} \abs{\cX'} \to \abs{\cX}$. Therefore, it suffices to show that, for any admissible blow-up $f\colon \cX' \to \cX$, there is a dense open subset $\mathscr{U} \subset \cX$ such that $f$ is an isomorphism over $\mathscr{U}$. This follows directly from \cite[Cor.~B.14]{Z-thesis}.  

    Now we note that \cite[Lem.~A.2~(a)]{ALY} implies that $\cX$ is $\eta$-normal in the sense of \cite[Def.~A.1]{ALY}. Therefore, \cref{lemma:local-ring-at-generic-point-in-the-special-fiber-1}-\cref{lemma:local-ring-at-generic-point-in-the-special-fiber-2} follow from the combination of \cite[Def.~A.11, Lem.~A.12, and Prop.~A.15]{ALY}. We note that \cref{lemma:local-ring-at-generic-point-in-the-special-fiber-3} follows from the observation that $\O_{\cX, \zeta}/\m_C\O_{\cX, \zeta} \simeq \O_{\cX_s, \zeta}$ is a field because $\zeta$ is a generic point of $\cX_s$.   

    Finally, we show \cref{lemma:local-ring-at-generic-point-in-the-special-fiber-4}. The question is Zariski-local on $\cX$, so we can assume that $\cX=\Spf A_0$ is affine, smooth, and connected (thus, irreducible). Put $A\coloneqq A_0\bigl[\frac{1}{\varpi}\bigr]$, then \cite[Prop.~3.4.1]{Lut16} and the assumption that $C$ is algebraically closed imply that $A^\circ=A_0$ and $A^{\circ\circ} = \m_C A^{\circ} = \m_C A_0$. Therefore, $A^\circ/A^{\circ \circ} = A_0/\m_C A_0$ is an integral domain, and so \cite[Prop.~6.2/5]{BGR} implies that the supremum semi-norm $\abs{.}_{\rm{sup}} \colon A \to \Gamma_C\cup \{0\}$ is a valuation\footnote{Here, we implicitly use that $C$ is algebraically closed, \cite[Obs.~3.6/10]{BGR}, and \cite[Prop.~3.1/16]{B} to ensure that the value group of the supremum semi-norm is equal to $\Gamma_C$.} of $A$. The supremum norm is bounded on $A^\circ$ due to \cite[Th.~3.1/17]{B} and is continuous due to \cite[L.~9, Cor.~9.3.3~(2)]{Seminar}, thus it defines a point $z'\in \Spa(A, A^\circ) = \cX_\eta$. Now \cite[Cor.~3.1/18]{B} implies that $\abs{.}^{-1}_{\rm{sup}}(\Gamma_{C, <1}) = A^{\circ \circ} = \m_C A^\circ$. Therefore, we conclude that $\rm{sp}(z')=\zeta$. So \cref{lemma:local-ring-at-generic-point-in-the-special-fiber-0} ensures that $z=z'$. Then $\Gamma_{z}=\Gamma_{z'}=\Gamma_C$ by the very construction. 
\end{proof}

\begin{lemma}
\label{lemma:zig-zag of extra points are never closed}
Let $\cX$ be a quasi-compact admissible separated formal $\O_C$-scheme, then its rigid generic fiber
$X$ is separated and taut over $\Spa(C, \mathcal{O}_C)$.
Let $x \in \abs{X^c} \smallsetminus \abs{X}$ with its unique rank-$1$ generalization $x_{\rm{gen}} \in X$.
Then $\sp(x_{\rm{gen}}) \in \cX_s$ is \emph{not} a closed point.
\end{lemma}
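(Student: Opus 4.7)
My plan is as follows. For the first claim, separatedness of $X$ over $\Spa(C,\cO_C)$ follows from that of $\cX$ over $\Spf(\cO_C)$ by standard properties of Huber's rigid-analytic generic fiber functor: it commutes with fiber products and preserves closed immersions, so the closedness of the diagonal of $\cX$ passes through to $X$. Tautness then follows because $X$ is moreover quasi-compact over a point, via \cite[Lem.~5.1.3 and Lem.~5.1.4]{Huber-etale}.

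For the main statement I would argue by contradiction and suppose that $\zeta \coloneqq \sp(x_\gen)$ is a closed point of $\cX_s$. Since the question is Zariski-local on $\cX$, I first shrink to an affine open neighborhood $\cU = \Spf(A_0) \subset \cX$ of $\zeta$ (so that $x_\gen$ lies in the generic-fiber open $U \coloneqq \cU_\eta$), write $A \coloneqq A_0[1/\varpi]$, and denote by $\fm_\zeta \subset A_0$ the open maximal ideal corresponding to $\zeta$. Because $C$ is algebraically closed, the residue field $A_0/\fm_\zeta$ is $k_C$, and the defining property of $\sp$ recalled in \cref{construction:specialization} gives $v_\gen(f) < 1 \iff f \in \fm_\zeta$ for all $f \in A_0$.

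The key structural input on $x$ is then furnished by \cref{lemma:extra-points-curve-like} and \cref{lemma:structure-curve-like-valuations}: the henselized completed residue field $\wdh{k(x)}^\h$ is curve-like, which produces an identification $\Gamma_x \simeq \Gamma_C \times \Z$ with the lexicographic order (with the $\Gamma_C$-factor dominating), under which $v_\gen$ is the projection onto the first factor. Next, I would produce an element $a \in A_0$ with $v_x(a) > 1$: since $A^+$ is integral over $A_0$, if $v_x(A_0) \leq 1$ then integrality would force $v_x(A^+) \leq 1$, placing $x$ in $X$ and contradicting $x \in X^c \smallsetminus X$. For such an $a$, the first coordinate of $v_x(a)$ is $v_\gen(a) \leq 1$ (as $a \in A_0$ and $x_\gen \in X$), so comparing with $v_x(a) > (1,0)$ in the lex order forces $v_\gen(a) = 1$; hence $a \notin \fm_\zeta$, and its image $\bar a \in A_0/\fm_\zeta = k_C$ is nonzero. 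Lifting $\bar a$ along the surjection $\cO_C \twoheadrightarrow k_C$ to some $\lambda \in \cO_C^\times$, one has $a - \lambda \in \fm_\zeta$, so $v_\gen(a-\lambda) < 1$ and therefore $v_x(a-\lambda) < 1$ in the lex order. Combined with $v_x(\lambda) = 1$, the strong triangle inequality applied to $a = \lambda + (a-\lambda)$ yields $v_x(a) = v_x(\lambda) = 1$, contradicting $v_x(a) > 1$.

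The main subtlety I anticipate is the tacit identification of $v_x$ as a valuation that evaluates on $A_0$, despite $x$ being a priori only a point of the global space $X^c$. I would handle this by observing that $x$ and $x_\gen$ share the same support and hence the same local ring in $X^c$, so the stalk $\cO_{X^c, x}$ naturally receives $A = A_0[1/\varpi]$ through the open immersion $U \hookrightarrow X \hookrightarrow X^c$; in particular $v_x(a)$ is unambiguously defined for $a \in A_0$ via the curve-like affinoid field $\wdh{k(x)}^\h$, which is all that the calculation above requires.
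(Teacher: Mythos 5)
Your argument is correct in spirit and, once unwound, is essentially the contrapositive of the paper's proof: both show that the closed-point hypothesis forces $v_x(A_0)\le 1$, and then use integrality to bound $v_x$ on the whole ring of integral elements, contradicting $x\notin X$. Two issues in the write-up should be flagged, though.

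First, you invoke \cref{lemma:extra-points-curve-like} and \cref{lemma:structure-curve-like-valuations} to get the $\Gamma_C\times\Z$ lexicographic structure on $\Gamma_x$, but those results are stated for separated rigid-analytic $C$-\emph{curves}. The present lemma is stated for an arbitrary quasi-compact admissible separated formal $\O_C$-scheme $\cX$, and the extra boundary points can have rank $>2$ when $\dim\cX>1$. So your argument as written only proves the curve case. Happily the curve-like structure is more than you need: all the calculation uses is that $\Gamma_{x_{\gen}}$ is the quotient of $\Gamma_x$ by the maximal proper convex subgroup $H\subset\Gamma_x$. From $v_x(a)>1$ and $v_{\gen}(a)\le 1$, convexity of $H$ gives $v_{\gen}(a)=1$ (if $v_x(a)\notin H$ and $v_x(a)>1\in H$, then by convexity $v_x(a)>h$ for all $h\in H$, so $v_{\gen}(a)>1$, a contradiction); similarly $v_{\gen}(a-\lambda)<1$ forces $v_x(a-\lambda)<h$ for all $h\in H$ and in particular $v_x(a-\lambda)<1$. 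With that replacement the proof works in the generality of the lemma. This convexity step is precisely what the paper packages as $\m_{\gen}\subset\wdh{k(x)}^+$ via \cite[Th.~10.1]{Matsumura}.

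Second, the claim that $x$ and $x_{\gen}$ ``share the same support and hence the same local ring in $X^c$'' is not correct: same support does not give the same local ring, and in general $\O_{X^c,x}\to\O_{X^c,x_{\gen}}$ is a proper localization (there are rational opens containing $x_{\gen}$ but not $x$). What you actually need to make sense of $v_x(a)$ for $a\in A_0$ is that $x\in U^c$, so that $A=\O_{U^c}(U^c)$ maps to $\O_{X^c,x}$. The paper establishes this from \cite[Cor.~1.3.9]{Huber-etale}: the closure $\ov{\{x_{\gen}\}}$ taken in $X^c$ lies inside $U^c$, and $x\in\ov{\{x_{\gen}\}}$ since $x_{\gen}$ generalizes $x$. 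Your ``same local ring'' claim should be replaced by this argument.
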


\begin{proof}
By \cite[Prop.~4.7]{BL1}, we know that $X$ is separated.
Since $\cX$ is quasi-compact, we conclude that $X$ is quasi-compact. Since it is also separated, \cite[Lem.~5.1.3~(ii)]{Huber-etale} implies that it is taut.
All rank-$1$ points on $\abs{X^c}$ already lie on $\abs{X}$ thanks to \cref{lemma:extra-points-higher-rank}. 

Let us show the last statement, suppose to the contrary that $\sp(x_{\rm{gen}})$ is a closed point. Let $\cU$ be an affine open neighborhood of $\sp(x_{\rm{gen}})$, let $U=\cU_\eta$ be its generic fiber, and let $j^c \colon U^c \to X^c$ be the morphism induced by the natural open immersion $j\colon U \to X$. Then \cite[Cor.~1.3.9]{Huber-etale} implies that $\ov{\{x_{\rm{gen}}\}} \subset U^c$, where the closure $\ov{\{x_{\rm{gen}}\}}$ is taken inside $X^c$. This implies that $x$ lies in $U^c$, so we can replace $\cX$ with $\cU$ to assume that $\cX = \Spf A_0$ is affine.

In this situation, we put $A \coloneqq A_0[1/\varpi]$. We see that $X = \Spa(A, A^\circ)$ and \cref{lemma:universal-compactification-affinoid} implies that $X^c = \Spa(A, \mathcal{O}_C[A^{\circ \circ}]^+)$. Then the point $x$ (resp.~$x_{\rm{gen}}$) defines a continuous morphism $r_x \colon \mathcal{O}_C[A^{\circ \circ}]^+ \to \wdh{k(x)}^+$ (resp.~$r_{\rm{gen}} \colon A^\circ \to \wdh{k(x_{\rm{gen}})}^+$). We note that \cite[Th.~1.1.10]{Huber-etale} implies that $\wdh{k(x_{\rm{gen}})}^+$ is a rank-$1$ valuation ring and that $\rm{Frac}\Bigl(\wdh{k(x_{\rm{gen}})}^+\Bigr) = \rm{Frac}\Bigl(\wdh{k(x)}^+\Bigr)$. Thus, the specialization relation $x_{\rm{gen}}\rightsquigarrow x$ can be realized as the commutative diagram:
\[
\begin{tikzcd}
\mathcal{O}_C[A^{\circ \circ}]^+ \arrow{r}{r_x} \arrow[d, hook] & \wdh{k(x)}^+ \arrow[d, hook] \\
A^\circ \arrow{r}{r_{\rm{gen}}} & \wdh{k(x_{\rm{gen}})}^+.
\end{tikzcd}
\]
Since $r_{\rm{gen}}$ is continuous, we conclude that $r_{\rm{gen}}(\varpi)$ is a pseudo-uniformizer in $\wdh{k(x_{\rm{gen}})}^+$. Since $\wdh{k(x_{\rm{gen}})}^+$ is a rank-$1$ valuation ring, we conclude that $\m_C \wdh{k(x_{\rm{gen}})}^+ = \m_{\rm{gen}}$ is the maximal ideal of $\wdh{k(x_{\rm{gen}})}^+$. Therefore, we see that our assumption on $\rm{sp}(x)$ implies that the map of $k_C$-algebras
\[
\restr{r_{\rm{gen}}}{A_0} \, \rm{mod}\, \m_C A_0 \colon A_0/(\mathfrak{m}_C \cdot A_0) \to \wdh{k(x_{\rm{gen}})}^+/\mathfrak{m}_{\rm{gen}} = \wdh{k(x_{\rm{gen}})}^+/(\m_C \wdh{k(x_{\rm{gen}})}^+)
\]
factors through the natural morphism $k_C \to \wdh{k(x_{\rm{gen}})}^+/\mathfrak{m}_{\rm{gen}}$. Since  $\mathfrak{m}_{\rm{\gen}}$ and $\O_C$ lie in $\wdh{k(x)}^+$ (see \cite[Th.~10.1]{Matsumura} for the former claim), we conclude that the image of $A_0 \to \wdh{k(x_{\rm{gen}})}^+$ lies inside $\wdh{k(x)}^+$. Finally, we use \cite[Prop.~6.3.4/1]{BGR} and the fact that $\wdh{k(x)}^+$ is integrally closed in $\rm{Frac}\bigl(\wdh{k(x)}^+\bigr) = \rm{Frac}\bigl(\wdh{k(x_{\rm{gen}})}^+\bigr)$ to conclude that the morphism $r_{\rm{gen}}\colon A^\circ \to \wdh{k(x_{\rm{gen}})}^+$ factors through $\wdh{k(x)}^+$. This contradicts the assumption that $x \in \abs{X^c} \smallsetminus \abs{X}$.
\end{proof}

Finally, we are almost ready to discussed the promised above relation between \cref{defn:reduction} and formal models. But before we do this, we need to recall the following two lemmas: 

\begin{lemma}\label{lemma:completion-of-valuation} Let $k^\circ$ be a rank-$1$ valuation ring with fraction field $k$, and let $\wdh{k}$ be the completion of $k$ (with respect to the valuation topology). Let $\cal{V}(k, k^\circ)$ (resp.~$\cal{V}(\wdh{k}, \wdh{k}^\circ)$) be the set of valuation rings $A$ on $k$ (resp.~valuation rings $B$ on $\wdh{k}$) such that $A\subset k^\circ$ (resp.~$B\subset \wdh{k}^\circ$). Then the map
\[
\cal{V}(k, k^\circ) \to \cal{V}(\wdh{k}, \wdh{k}^\circ)
\]
\[
(A\subset k^\circ) \mapsto (\wdh{A} \subset \wdh{k}^\circ),
\]
is a bijection with the inverse given by
\[
(B\subset \wdh{k}^\circ) \mapsto (B\cap k \subset k^\circ).
\]
\end{lemma}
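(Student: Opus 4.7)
The plan is to identify both $\cal{V}(k,k^\circ)$ and $\cal{V}(\wdh{k},\wdh{k}^\circ)$ with the set of valuation rings of the common residue field $\kappa \coloneqq k^\circ/\mathfrak{m}_{k^\circ} = \wdh{k}^\circ/\mathfrak{m}_{\wdh{k}^\circ}$ (the equality holds because $k^\circ$ is of rank~$1$, so its $\varpi$-adic completion has the same residue field), and then check that both maps in the statement realize this identification.

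First, I would recall the standard correspondence (e.g.\ \cite[Ch.~VI, \S~4, n.~3, Prop.~4]{Bourbaki}): for any valuation ring $R$ of a field $F$ with projection $\pi_R \colon R \twoheadrightarrow \kappa_R \coloneqq R/\mathfrak{m}_R$, the assignment $\bar{A} \mapsto \pi_R^{-1}(\bar{A})$ gives a bijection between the valuation rings of $\kappa_R$ and the valuation subrings of $F$ contained in $R$. Indeed, for $A \in \cal{V}(F,R)$, the prime $\mathfrak{p} \coloneqq \mathfrak{m}_R \cap A$ satisfies $A_\mathfrak{p} = R$, so $\mathfrak{m}_R \subset A$ and $A = \pi_R^{-1}(A/\mathfrak{m}_R)$ with $A/\mathfrak{m}_R \subset \kappa_R$ a valuation ring. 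Applying this to both $(k,k^\circ)$ and $(\wdh{k},\wdh{k}^\circ)$ and using the identification of residue fields, one obtains a canonical bijection $\cal{V}(k,k^\circ) \leftrightarrow \cal{V}(\wdh{k},\wdh{k}^\circ)$ in which $A$ and the corresponding $B$ both come from the same valuation ring $\bar{A} \subset \kappa$.

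Next, I would show that the forward map $A \mapsto \wdh{A}$ realizes this bijection. Write $A = \pi_{k^\circ}^{-1}(\bar{A})$, and let $\wdh{A}$ denote the $\varpi$-adic completion of $A$ (equivalently, its closure in $\wdh{k}$, using that $A \subset k^\circ$ is $\varpi$-adically separated). I claim $\wdh{A} = \pi_{\wdh{k}^\circ}^{-1}(\bar{A})$. The inclusion $\wdh{A} \subset \pi_{\wdh{k}^\circ}^{-1}(\bar{A})$ follows from continuity of $\pi_{\wdh{k}^\circ}$ for the discrete topology on $\kappa$ (its fibres are cosets of the open subgroup $\mathfrak{m}_{\wdh{k}^\circ}$) together with $\pi_{\wdh{k}^\circ}(A) = \bar{A}$. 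Conversely, for $y \in \pi_{\wdh{k}^\circ}^{-1}(\bar{A})$, pick $a \in A$ with $\pi_{k^\circ}(a)=\pi_{\wdh{k}^\circ}(y)$, so $y-a \in \mathfrak{m}_{\wdh{k}^\circ}$; since $k$ is dense in $\wdh{k}$, the ideal $\mathfrak{m}_{k^\circ}$ is dense in $\mathfrak{m}_{\wdh{k}^\circ}$, and $\mathfrak{m}_{k^\circ} \subset A$, so $y-a \in \wdh{A}$ and hence $y \in \wdh{A}$. In particular, $\wdh{A}$ is a valuation ring of $\wdh{k}$ contained in $\wdh{k}^\circ$, corresponding to $\bar{A}$ under the identification above.

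Finally, for the backward map, given $B \in \cal{V}(\wdh{k},\wdh{k}^\circ)$, the intersection $B \cap k$ is manifestly a valuation subring of $k$ contained in $\wdh{k}^\circ \cap k = k^\circ$. Writing $B = \pi_{\wdh{k}^\circ}^{-1}(\bar{B})$, a direct check gives $B \cap k = \pi_{k^\circ}^{-1}(\bar{B})$, so this map too corresponds to the identity on the residue-field side. The two maps are therefore mutually inverse. The only delicate point is the topological density statement used to identify $\wdh{A}$ with $\pi_{\wdh{k}^\circ}^{-1}(\bar{A})$; this is where the rank-$1$ assumption on $k^\circ$ enters, via the fact that the topology on $k^\circ$ (and on $\wdh{k}^\circ$) is the $\varpi$-adic one.
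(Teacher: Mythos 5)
Your proof is correct and takes essentially the same approach as the paper: both arguments reduce to the standard bijection between valuation rings of a field contained in a fixed valuation ring $R$ and valuation rings of the residue field of $R$ (you cite Bourbaki, the paper cites Matsumura Th.~10.1), and then check that the completion map and intersection map realize the identity on the residue-field side. You fill in a few details the paper leaves implicit — in particular the topological argument showing $\wdh{A} = \pi_{\wdh{k}^\circ}^{-1}(\bar{A})$ via density of $\mathfrak{m}_{k^\circ}$ in $\mathfrak{m}_{\wdh{k}^\circ}$ — where the paper simply asserts $A/\mathfrak{m} \simeq \wdh{A}/\wdh{\mathfrak{m}}$.
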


\begin{proof}
In this proof, we denote by $\m$ the maximal ideal of $k^\circ$ and choose a pseudo-uniformizer $\pi \in \m$. Then we note that \cite[Lem.~1(iii)]{Buzzard-Verberkmoes} implies that $\wdh{k}^\circ = \wdh{k^\circ}$. Since $\wdh{k}^\circ/(\pi) \simeq k^\circ/(\pi)$ (see \cite[\href{https://stacks.math.columbia.edu/tag/05GG}{Tag 05GG}]{stacks-project}), we conclude that $\wdh{\m}$ is equal to the maximal ideal of $\wdh{k}^\circ$. Now \cite[Th.~10.1]{Matsumura} implies that $\cal{V}(k, k^\circ)$ and $\cal{V}(\wdh{k}, \wdh{k}^\circ)$ are both in bijection with the set of valuation rings $R\subset \kappa\coloneqq k^\circ/\m \simeq \wdh{k}^\circ/\wdh{\m}$. Furthermore, both bijections are realized by taking the pre-image of $R$ along the reduction morphism $k^\circ \to \kappa$ or $\wdh{k}^\circ \to \kappa$.  

Therefore, we are only left to show that the composite bijection $\cal{V}(k, k^\circ) \to \cal{V}(\wdh{k}, \wdh{k}^\circ)$ is given by taking the $\pi$-adic completion, and the other composite bijection $\cal{V}(\wdh{k}, \wdh{k}^\circ) \to \cal{V}(k, k^\circ)$ is given by taking the intersection with $k$. The first claim follows from the fact that, for any $A\in \cal{V}(k, k^\circ)$, we have\footnote{We note that \cite[Th.~10.1]{Matsumura} implies that $\m\subset A$.} $A/\m \simeq \wdh{A}/\wdh{\m}$. The second claim can be easily seen by unravelling the definitions. %
\end{proof}

\begin{lemma}[{\cite[Lem.~1.3.6~i)]{Huber-etale}}]\label{lemma:valuation-generalization} 
    Let $X$ be an analytic adic space, $y\in X$ a point, $x\in X$ a generalization of $y$ inducing the morphism $\iota\colon k(y) \to k(x)$ of residue fields. Then there is a unique valuation ring $A_{y\rightarrow x}\subset k(x)^+$ such that $\iota^{-1}(A_{y\rightarrow x}) = k(y)^+$. Furthermore, the natural morphism
    \[
    \wdh{k(y)}^+\to \wdh{A}_{y\rightarrow x}
    \]
    is an isomorphism.
\end{lemma}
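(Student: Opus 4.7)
The plan is to follow the valuation-theoretic strategy that underpins Huber's \cite[Lem.~1.3.6~i)]{Huber-etale}. The proof naturally divides into the construction of $A_{y\to x}$ with its uniqueness, and the completion isomorphism.

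For existence and uniqueness, I observe that the generalization $x \rightsquigarrow y$ corresponds to a distinguished prime $\mathfrak{P}$ of $k(x)^+$: valuation subrings of $k(x)^+$ are in bijection with pairs $(\mathfrak{Q}, V)$ consisting of a prime $\mathfrak{Q}$ of $k(x)^+$ together with a valuation subring $V$ of the residue field $k(x)^+_\mathfrak{Q}/\mathfrak{Q}$, and the data of $v_y$ together with $\iota$ determines such a pair $(\mathfrak{P}, k(y)^+)$ uniquely. Explicitly, $A_{y\to x}$ is defined as the preimage of $k(y)^+$ under the composition
\[
k(x)^+ \twoheadrightarrow k(x)^+/\mathfrak{P} \hookrightarrow k(y),
\]
where the second map comes from $\iota$ after identifying $k(x)^+/\mathfrak{P}$ with the valuation ring of the coarsening of $v_x$ at $\mathfrak{P}$. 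The condition $\iota^{-1}(A_{y\to x}) = k(y)^+$ is automatic from this construction, and uniqueness follows from the injectivity of the aforementioned bijection.

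For the completion isomorphism, the map $k(y)^+ \to A_{y\to x}/\mathfrak{P}$ is an isomorphism by construction, so it suffices to show that $\mathfrak{P} \subseteq \bigcap_n \varpi^n A_{y\to x}$ for a pseudo-uniformizer $\varpi$ of $k(y)^+$. The key observation is that $\mathfrak{P}$, being a non-maximal prime of the valuation ring $A_{y\to x}$, corresponds to a proper convex subgroup $\Delta$ of the value group $\Gamma_x$ of $v_x$. Every element of $\mathfrak{P}$ has $v_x$-value strictly dominated by any positive element outside $\Delta$, while $v_x(\varpi)$ lies outside $\Delta$ (since its image in $k(y)^+$ is a pseudo-uniformizer, hence nonzero modulo $\mathfrak{P}$). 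Consequently, $v_x(a) > v_x(\varpi)^n$ for all $a \in \mathfrak{P}$ and $n \geq 1$, giving the desired inclusion and killing $\mathfrak{P}$ upon $\varpi$-adic completion.

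The main obstacle is the careful verification of the above valuation-theoretic comparison when $v_x$ has higher rank. This requires handling the convex subgroup structure of $\Gamma_x$ and exploiting the cofinality of pseudo-uniformizers coming from continuity of $v_x$ on the stalk $\mathcal{O}_{X,x}$. Once this is established, the map $k(y)^+ \cong A_{y\to x}/\mathfrak{P} \to A_{y\to x}$ induces isomorphisms modulo each $\varpi^n$, and passage to the inverse limit yields the desired isomorphism $\wdh{k(y)}^+ \xrightarrow{\sim} \wdh{A}_{y\to x}$.
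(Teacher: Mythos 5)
Your proof of the completion isomorphism does not work as written, and the error is not merely a sign convention — it is at the heart of the argument. You claim that $v_x(\varpi)$ lies outside the convex subgroup $\Delta$ corresponding to $\mathfrak{P}$, justified by saying the image of $\varpi$ in $k(y)^+$ is a pseudo-uniformizer, ``hence nonzero modulo $\mathfrak{P}$.'' But being nonzero in the \emph{subfield} $k(y)\subset k(x)$ and being nonzero in the \emph{quotient} $k(x)^+/\mathfrak{P}$ are unrelated conditions, and in fact $\varpi\in\mathfrak{P}$: the prime $\mathfrak{P}$ is the maximal ideal of $k(x)^+$, which contains every topologically nilpotent element. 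Concretely, if $x$ has rank $1$ and $y$ has rank $2$ (the case used in \cref{specialization}), then $\Gamma_{A_{y\to x}}\cong\Gamma_C\times\ZZ$ with the lexicographic order, $\mathfrak{P}=\fm_{k(x)^+}$ corresponds to the convex subgroup $\{1\}\times\ZZ$, and $v(\varpi)=(\abs{\varpi},0)$ with $\abs{\varpi}<1$ lies outside that subgroup, so $\varpi\in\mathfrak{P}$. As a result $\mathfrak{P}$ does \emph{not} die upon $\varpi$-adic completion — indeed $\bigcap_n\varpi^nA_{y\to x}=(0)$ while $\mathfrak{P}\neq(0)$. This is exactly as it must be: $\wdh{A}_{y\to x}$ and $\wdh{k(y)}^+$ both still have rank $2$, so no rank-lowering quotient can be involved. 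Relatedly, your identification of $A_{y\to x}/\mathfrak{P}$ with $k(y)^+$ cannot be correct, since the former has rank $\rk A_{y\to x}-\rk k(x)^+$ while the latter has rank $\rk A_{y\to x}$; the quotient $A_{y\to x}/\mathfrak{P}$ is a valuation ring of the residue field of $k(x)^+$, which is a different object from the subring $k(y)^+\subset k(y)\subset k(x)$. Your description of $A_{y\to x}$ as a preimage along a map ``$k(x)^+/\mathfrak{P}\hookrightarrow k(y)$'' has the same problem: $\iota$ goes $k(y)\to k(x)$, so there is no natural map from the quotient $k(x)^+/\mathfrak{P}$ into $k(y)$.

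The intended argument in the paper is of a quite different nature. It cites Huber for existence and uniqueness of $A_{y\to x}$, and for the completion statement it invokes \cref{lemma:completion-of-valuation} together with the identification $\wdh{k(x)}\simeq\wdh{k(y)}$ from \cite[Lem.~1.1.10~iii)]{Huber-etale}: applying the bijection $A\mapsto\wdh{A}$, $B\mapsto B\cap k$ to the ambient fields $k(x)$ and $k(y)$ (which have the same completion), one gets $\wdh{A}_{y\to x}\cap k(y)=A_{y\to x}\cap k(y)=k(y)^+=\wdh{k(y)}^+\cap k(y)$, whence $\wdh{A}_{y\to x}=\wdh{k(y)}^+$ by injectivity of $B\mapsto B\cap k(y)$. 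No prime ideal is killed; the completion keeps the full rank. If you want to argue in the spirit of convex subgroups, the correct route is the proof of \cref{lemma:completion-of-valuation} itself, which reduces to the residue field of $k^\circ$ (where $\varpi$ genuinely becomes $0$), rather than trying to kill $\mathfrak{P}$ by $\varpi$-adic completion.
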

\begin{proof}
    This follows directly from \cref{lemma:completion-of-valuation} and the observation that $\wdh{k(x)} \simeq \wdh{k(y)}$ \cite[Lem.~1.1.10~iii)]{Huber-etale}.
\end{proof}
Now we get to the promised geometric interpretation of the reduction morphism from \cref{defn:reduction} in terms of formal models.

\begin{lemma}[{\cite{Swan}}]
\label{specialization}
Let $\cX$ be a quasi-compact admissible formal $\O_C$-scheme such that the special fiber $\cX_s$ is a reduced separated scheme of pure dimension $1$. Let $\cX^c_s$ be a schematically dense compactification of $\cX_s$, and let $\nu \colon \cX^{c,n}_s \to \cX^c_s$ be its normalization.
Let $X \colonequals \cX_\eta$ be the rigid generic fiber of $\cX$
with its universal compactification $X^c$.
Then:
\begin{enumerate}[label=\upshape{(\roman*)}]
    \item\label{specialization-points} For any generic point $\zeta \in \cX$ with $z = \spec^{-1}(\zeta)$, there is a bijection
    \[ \mu_\zeta \colon \overline{\{z\}} \xrightarrow{\sim} \abs{Y_\zeta} \]
    between points of the closure $\overline{\{z\}}$ of $z$ in $X^c$, and points of the corresponding connected component $Y_\zeta \subseteq \cX^{c,n}_s$;
    when $y \in \overline{\{z\}} \cap X$, then $\nu(\mu_\zeta(y)) = \spec(y)$.
    \item\label{vanishing-order-rings} Let $y \in \overline{\{z\}}$ be a specialization of $z$.
    Then the ring $A_{y\to z}$ from \cref{lemma:valuation-generalization} is the preimage of $\cO_{\cX^{c,n}_s,\mu_\zeta(y)}$ under
    \[ k(z)^+ \xleftarrow{\sim} \cO_{\cX,\zeta} \to \cO_{\cX_s,\zeta} \simeq \cO_{\cX^{c,n}_s,\zeta} \simeq \Frac\bigl(\cO_{\cX^{c,n}_s, \mu_\zeta(y)}\bigr) \]
    where the first isomorphism comes from \cref{lemma:local-ring-at-generic-point-in-the-special-fiber}\cref{lemma:local-ring-at-generic-point-in-the-special-fiber-2}.
    In particular, $k(y)^+=A_{y\to z} \cap k(y)$;
    \item\label{vanishing-order-valuation} Under the identification of \cref{vanishing-order-rings}, the map $\# \circ v_y$ is the composition
    \[ 
        k(y)^+ \to \cO_{\cX^{c,n}_s,\mu_\zeta(y)} \xrightarrow{\ord_{\mu_\zeta(y)}} \ZZ, 
    \]
    where $\ord_{\mu_\zeta(y)}$ is given by order of vanishing at $\mu_\zeta(y) \in \cX^{c,n}_s$;
    \item\label{specialization-comp} The bijections from \cref{specialization-points} induce a bijection
    \[ 
        \mu\colon \abs{X^c} \smallsetminus \abs{X} \xrightarrow{\sim} \abs{\cX^{c, n}_s} \smallsetminus \abs{\cX^n_s}. 
    \]
\end{enumerate}
\end{lemma}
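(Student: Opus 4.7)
The plan is to build the bijection $\mu_\zeta$ by valuation theory and then deduce the remaining parts. By \cref{lemma:local-ring-at-generic-point-in-the-special-fiber}, for each generic point $\zeta \in \abs{\cX_s}$ the preimage $\sp^{-1}(\zeta)$ is a single rank-$1$ point $z \in X$ whose valuation ring $k(z)^+ \simeq \cO_{\cX,\zeta}$ is henselian of rank $1$, with maximal ideal $\m_C\cO_{\cX,\zeta}$, residue field $\kappa(\zeta) = \cO_{\cX_s,\zeta}$, and value group $\Gamma_z = \Gamma_C$; since $\nu$ is birational onto the component $Y_\zeta$ over $\zeta$, its function field equals $\kappa(\zeta)$.

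For \cref{specialization-points}, any proper specialization $y$ of $z$ in $X^c$ is rank-$2$ by \cref{lemma:extra-points-higher-rank} with $k(y) = k(z)$, and \cref{lemma:valuation-generalization} produces a valuation subring $A_{y\to z} \subsetneq k(z)^+$ equal to $k(y)^+$. Since $k(z)^+$ is rank-$1$ we have $\m_{k(z)^+} \subseteq A_{y \to z}$, and by standard valuation theory (\cite[Th.~10.1]{Matsumura}) the map $A \mapsto A/\m_{k(z)^+}$ is a bijection between valuation subrings of $k(z)$ strictly contained in $k(z)^+$ and valuation subrings of $\kappa(\zeta)$; \cref{lemma:extra-points-curve-like} together with \cref{lemma:structure-curve-like-valuations} forces $R_y \coloneqq A_{y\to z}/\m_{k(z)^+}$ to be a discrete valuation ring over $k_C$ inside $\kappa(\zeta) = \kappa(Y_\zeta)$, corresponding to a unique closed point $\mu_\zeta(y) \in Y_\zeta$; we set $\mu_\zeta(z) \coloneqq \zeta$. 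Conversely, every DVR $R \subset \kappa(\zeta)$ over $k_C$ lifts to a valuation ring $A \subset k(z)^+$ of $k(z)$ of rank $2$ that extends (via \cref{lemma:completion-of-valuation}) to $\wdh{k(z)}$ and whose associated continuous valuation is bounded by $1$ on the $+$-ring of $X^c$ described in \cref{lemma:universal-compactification-affinoid}, because $\cO_C \subset A$ and the image of any affine chart $\Spf A_0 \subset \cX$ in $k(z)^+$ has mod-$\m_C$ reduction in $R$; this produces an adic point $y \in X^c$ with $\mu_\zeta(y)$ the prescribed closed point, while injectivity is immediate from the uniqueness clause in \cref{lemma:valuation-generalization}. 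When $y \in X \cap \overline{\{z\}}$, $\sp(y)$ is characterized by the embedding $\cO_{\cX,\sp(y)} \hookrightarrow k(y)^+$, and factoring this through $\cO_{\cX,\zeta} \simeq k(z)^+$ identifies $\cO_{\cX_s, \sp(y)}$ as a subring of $R_y$, forcing $\nu(\mu_\zeta(y)) = \sp(y)$ by normality of $\cX^{c,n}_s$ and finiteness of $\nu$.

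Part \cref{vanishing-order-rings} is a direct reformulation of the construction above. For \cref{vanishing-order-valuation}, \cref{lemma:structure-curve-like-valuations} identifies $\Gamma_y \simeq \Gamma_C \times \Z$ with $\#$ extracting (the negative of) the $\Z$-coordinate per \cref{warning:sign-convention}; for a unit $u \in k(z)^{+,\times}$ lying in $A_{y\to z}$, the reduction $\bar u \in R_y$ is nonzero with $v_y(u) = \gamma_0^m$ and $m \geq 0$, and the two group homomorphisms $\# \circ v_y$ and $\ord_{\mu_\zeta(y)} \circ (\blank \bmod \m_{k(z)^+})$ agree on uniformizers (both send an element with $v_y = \gamma_0$ to $1$) and on units (both give $0$), hence they agree on the multiplicative group that they generate. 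Finally, \cref{specialization-comp} is assembled from the $\mu_\zeta$: by \cref{lemma:extra-points-higher-rank} every $y \in \abs{X^c} \smallsetminus \abs{X}$ admits a unique rank-$1$ generalization $z$, and \cref{lemma:zig-zag of extra points are never closed} forces $\sp(z) = \zeta$ to be a generic point of $\cX_s$; under \cref{specialization-points} such $y$ correspond precisely to closed points of $Y_\zeta$ not lying in $\cX^n_s$, giving the desired bijection after summing over $\zeta$.

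The hardest step is the surjectivity in the construction of $\mu_\zeta$: promoting an abstract DVR $R \subset \kappa(\zeta)$ over $k_C$ to an actual point of $X^c$ requires verifying continuity of the extended rank-$2$ valuation on $\wdh{k(z)}$ and its boundedness by $1$ on the explicit $+$-ring of $X^c$ from \cref{lemma:universal-compactification-affinoid}, both of which hinge on the factorization $A_0 \hookrightarrow \cO_{\cX,\zeta} \twoheadrightarrow \kappa(\zeta)$ supplied by the formal model together with the containment $\cO_C \subseteq A_{y\to z}$.
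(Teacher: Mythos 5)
Your construction of $\mu_\zeta$ follows the paper's general valuation--theoretic strategy (quotient $A_{y\to z}$ by $\m_{k(z)^+}$, identify the resulting valuation ring of $\kappa(\zeta)$ with a closed point of $Y_\zeta$), but there are two genuine problems.

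First, your justification of surjectivity of $\mu_\zeta$ is incorrect. You assert that the rank-$2$ valuation associated to a DVR $R \subset \kappa(\zeta)$ is bounded by $1$ on the $+$-ring of $X^c$ ``because $\cO_C \subset A$ and the image of any affine chart $\Spf A_0 \subset \cX$ in $k(z)^+$ has mod-$\m_C$ reduction in $R$.'' The second clause is false for the points you most need: if $R$ corresponds to a closed point of $Y_\zeta$ outside $\cX^n_s$, the reduction of a chart of $\cX_s$ containing $\zeta$ will not lie in $R$ (functions regular on the chart can have poles at that boundary point). In fact the chart condition has nothing to do with $X^c$: the $+$-ring of $X^c$ is $\cO_C[\mathcal{A}^{\circ\circ}]^+$ by \cref{lemma:universal-compactification-affinoid}, and boundedness by $1$ on it follows immediately from $\cO_C \subset A$ together with $\mathcal{A}^{\circ\circ}$ mapping into $\m_{k(z)^+} \subset A$ (one needs no chart at all). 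Alternatively — and this is what the paper does — one invokes the valuative criterion for properness \cite[Lem.~1.3.6, Cor.~1.3.9]{Huber-etale} applied to $X^c$, which yields directly the bijection between points of $\overline{\{z\}}$ and valuation rings $V \subset k(z)^+$ with $V \cap C = \cO_C$; together with \cite[Th.~10.1]{Matsumura} this gives both injectivity and surjectivity at once. (A minor related inaccuracy: \cref{lemma:extra-points-higher-rank} only gives rank $>1$, not rank $=2$; the upper bound needs the Bourbaki argument from \cref{lemma:extra-points}\cref{lemma:extra-points-2}.)

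Second, and more seriously, your argument for \cref{specialization-comp} has a real gap. You write that ``under \cref{specialization-points} such $y$ correspond precisely to closed points of $Y_\zeta$ not lying in $\cX^n_s$,'' but \cref{specialization-points} only supplies the implication $y \in X \Rightarrow \nu(\mu_\zeta(y)) = \sp(y) \in \cX_s$ (hence $\mu_\zeta(y) \in \cX^n_s$). The converse implication — that $\mu_\zeta(y) \in \cX^n_s$ forces $y \in X$, i.e., forces $v_y(A_0) \le 1$ for a suitable affine chart — is precisely the nontrivial content, and it is not a formal consequence of \cref{specialization-points}. The paper devotes a separate Claim (and a further reduction to the affine case) to establish this, using \cref{vanishing-order-valuation} to translate $v_y(a) \le 1$ into $\ord_{\mu_\zeta(y)}(\bar a) \ge 0$ and then appealing to the description \cref{curve-points} of closed points of the curve via the valuative criterion. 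Without such an argument, \cref{specialization-comp} is not proved.
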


\begin{proof}
We note that \cref{lemma:zig-zag of extra points are never closed} implies that $X$ is separated and taut, so the universal compactification $X^c$ exists due to \cref{compactifications exist}.  
 
\Cref{specialization-points}.
Fix a generic point $\zeta \in \cX_s$.
Let $Y_\zeta \subseteq \cX^{c,n}_s$ be the corresponding connected component of $\cX^{c,n}_s$ and let $z \colonequals \spec^{-1}_\cX(\zeta) \in X$ be the corresponding rank-$1$ point from \cref{lemma:local-ring-at-generic-point-in-the-special-fiber}\cref{lemma:local-ring-at-generic-point-in-the-special-fiber-0} with closure $\overline{\{z\}}$ in $X^c$.
By \cref{lemma:local-ring-at-generic-point-in-the-special-fiber}\cref{lemma:local-ring-at-generic-point-in-the-special-fiber-2}, the natural map $\cO_{\cX,\zeta} \to k(z)^+$ is an isomorphism.  

By sending $y \in \overline{\{z\}}$ to $A_{y\rightarrow z} \subset k(z)^+$ (see \cref{lemma:valuation-generalization}), the valuative criterion for properness \cite[Lem.~1.3.6, Cor.~1.3.9]{Huber-etale} gives a correspondence between the points of $\overline{\{z\}}$ and valuations rings $V \subseteq k(z)^+$ such that $V \cap C = \cO_C$. Now we note that \cite[Th.~10.1]{Matsumura} and \cref{lemma:local-ring-at-generic-point-in-the-special-fiber}\cref{lemma:local-ring-at-generic-point-in-the-special-fiber-2},\cref{lemma:local-ring-at-generic-point-in-the-special-fiber-3} imply that such valuation rings are in bijection\footnote{Explicitly, this bijection sends a valuation ring $V\subset k(z)^+$ to $\widetilde{V} \coloneqq V/\m_Ck(z)^+ \subset k(z)^+/\m_Ck(z)^+ \simeq k(Y_\eta)$. Its inverse is given by the map sending $\widetilde{V} \subset k(z)^+/\m_Ck(z)^+$ to $\pi^{-1}(\widetilde{V}) \subset k(z)^+$, where $\pi\colon k(z)^+ \to k(z)^+/\m_Ck(z)^+$ is the natural projection.} with valuation rings $\widetilde{V}$ on $k(z)^+/\m_Ck(z)^+ \simeq k(Y_\zeta)$ that contain $k_C = \O_C/\m_C$.
Now we apply \cite[Ch.~VI, \S~10.3, Cor.~2 and 3]{Bourbaki} to $K=k_C$ with the trivial valuation and $K'=k(Y_\zeta)$ to conclude that any such $\widetilde{V}$ is either trivial or a discrete valuation.
By the valuative criterion for properness for the smooth proper curve $Y_\zeta$, the natural map $\Spec k(Y_\zeta) \to Y_\zeta$ extends uniquely to $j_{\widetilde{V}} \colon \Spec \widetilde{V} \to Y_\zeta$;
the image of the closed point of $\Spec \widetilde{V}$ under $j_{\widetilde{V}}$ is a closed point $u_{\widetilde{V}} \in Y_\zeta(k_C)$.
Since the resulting map
\begin{equation}\label{curve-points}
\{ \text{discrete valuation rings } k_C \subset \widetilde{V} \subset k(Y_\zeta) \} \longrightarrow \{ u \in Y_\zeta(k_C) \text{ closed} \}, \quad \widetilde{V} \mapsto u_{\widetilde{V}}
\end{equation}
is a bijection (the inverse sends a closed point $u \in Y_\zeta(k_C)$ to $\cO_{Y_\zeta,u}$), the result follows directly.  

\Cref{vanishing-order-rings} and \cref{vanishing-order-valuation} follow from chasing the construction in the proof of \cref{specialization-points}.  

\Cref{specialization-comp}.
We fix a generic point $\zeta\in \cX_s$ and the corresponding rank-$1$ point $z=\spec^{-1}(\zeta)\in X$. We start the proof by showing the following claim: 

\begin{claim} A point $y \in \ov{\{z\}} \subset X^c$ lies in $X$ if and only if $\mu_\zeta(y) \in \cX^n_s \subseteq \cX^{c, n}_s$.
\end{claim}

\begin{proof}
If $y\in \overline{\{z\}} \cap X$, then \cref{specialization-points} implies that $\nu\bigl(\mu_\zeta(y)\bigl)=\spec(y)\in \cX_s$. Therefore, $\mu_\zeta(y)\in \cX_s^n$.  

Now we pick a point $y\in \overline{\{z\}}\subset X^c$ such that $\mu_\zeta(y)\in \cX_s$. We wish to show that $y$ lies in $X$.  

We first treat the case when $\cX=\Spf A$ is an affine admissible formal $\O_C$-scheme. In this situation, $X = \Spa\bigl(A\bigl[\tfrac{1}{\varpi}\bigr],A^+\bigr)$, where $A^+$ is the integral closure of $A$ in $A\bigl[\tfrac{1}{\varpi}\bigr]$. Then $y \in X$ if and only if $v_y(A^+) \le 1$, or equivalently $v_y(A) \le 1$. Now \cref{lemma:extra-points-curve-like} and \cref{lemma:structure-curve-like-valuations} imply that the valuation $v_y \colon A\bigl[\tfrac{1}{\varpi}\bigr] \to \Gamma_y$ has value group $\Gamma_y \simeq \Gamma_C \times \ZZ$. Now \cref{lemma:local-ring-at-generic-point-in-the-special-fiber}\cref{lemma:local-ring-at-generic-point-in-the-special-fiber-4} (applied to $z$) and the fact that $z\in X$ imply that, for every $a\in A$, the first coordinate $v_y(a)$ is less or equal to $1$. Thus, we only need to show that $\#\circ v_y(a) \leq 0$ for every $a\in A$. By \cref{vanishing-order-valuation}, this is equivalent to showing that 
\begin{equation}\label{eqn:lie-on-compactification}
\ord_{\mu_\zeta(y)}(A/\m_C A) \geq 0.
\end{equation}
Under the bijection \cref{curve-points}, \cref{eqn:lie-on-compactification} is equivalent to the condition that the image of $A/\fm_C A$ in $k_C(Y_\zeta)$ is contained in the valuation ring $\cO_{Y_\zeta,\mu_\zeta(y)}$, i.e., $\mu_\zeta(y) \in \abs{\cX^n_s}$.  

Now we explain how to reduce the case of a general $\cX$ to the case of an affine $\cX$. For this, we choose some open affine formal subscheme $\mathscr{U}\coloneqq \Spf A\subset \cX$ that contains the point $\nu\bigl(\mu_\zeta(y)\bigl)$. By construction, $\mathscr{U}$ also contains the point $\zeta \in \abs{\cX_s}=\abs{\cX}$, so the generic fiber $U\coloneqq \mathscr{U}_\eta=\spec^{-1}(\mathscr{U}_s)$ contains the point $z\in X$. To clarify the notation later on, we denote the point $z$ considered as a point of $U$ by $z_U$. Arguing as in the proof of \cref{lemma:zig-zag of extra points are never closed}, we conclude that $y\in \abs{U^c} \smallsetminus \abs{U}$. The construction of $\mu_\zeta$ in \cref{specialization-points} is compatible with the open immersion $\mathscr{U} \to \cX$, so it suffices to prove the claim for $\mathscr{U}$ that was treated above.
\end{proof}

As a consequence, $\mu_\zeta$ restricts to a natural bijection
\[ 
    \mu_\zeta \colon \overline{\{z\}} \cap (\abs{X^c} \smallsetminus \abs{X}) \xrightarrow{\sim} \abs{Y_\zeta} \cap (\abs{\cX^{c,n}_s} \smallsetminus \abs{\cX^n_s}). 
\]
By the last sentence of \cref{lemma:zig-zag of extra points are never closed},
we see that the disjoint union of the left hand side (where $z$ runs through all
preimages of generic points of $\cX_s$ under specialization map)
is exactly $\abs{X^c} \smallsetminus \abs{X}$.
Since $\abs{X^c} \smallsetminus \abs{X}$ is finite and discrete,
we can combine the various $\mu_\zeta$ for all generic points of $\cX_s$ to a bijection $\mu \colon \abs{X^c} \smallsetminus \abs{X} \xrightarrow{\sim} \abs{\cX^{c,n}_s} \smallsetminus \abs{\cX^n_s}$.
\end{proof}

\begin{remark}\label{rmk:extra-points-on-different-compactifications} We note that \cref{specialization}\cref{specialization-comp} implies that a smooth point $x\in \cX^c_s \smallsetminus \cX_s$ defines a unique rank-$2$ point $u_x \in X^c \smallsetminus X$ such that $\sp(u_x)=x$. Likewise, a nodal point $x\in \cX^c_s \smallsetminus \cX_s$ defines two rank-$2$ points $v_x, w_x\in \abs{X^c} \smallsetminus \abs{X}$ such that $\sp(v_x)=\sp(w_x)=x$.
\end{remark}

\subsection{Relation to formal models: nodes}

Given a quasi-compact rigid curve $X$ with a (quasi-compact) admissible formal model $\cX$, \cref{specialization} describes additional rank-2 points in $\abs{X^c} \smallsetminus \abs{X}$ in terms of ``points at infinity'' of the normalized special fiber $\cX^n_s$.
In this subsection, we explain the role played by the normalization, at least when $\cX = \Spf \cO_C \langle S,T \rangle / (ST - \pi)$ is a model rig-smooth semistable curve in the sense of \cref{defn:ss-formal}.

We recall that $\varpi \in \O_C$ is a fixed pseudo-uniformizer in $\O_C$. For the rest of this subsection, we choose another pseudo-uniformizer $\pi\in \mathfrak{m}_C \smallsetminus \{0\}$. 

\begin{notation}\label{formal-model-node-notation}
We set $R \coloneqq \Bigl(\bigl(\frac{\cO_C \langle S,T \rangle}{(ST - \pi)}\bigr)^\h_{(S, T)}\Bigr)^{\wedge}_{\varpi}$ to
be the $\varpi$-completion of the $(S, T)$-adic henselization of the standard rig-smooth semi-stable nodal curve.
In particular $(S,T,\fm_C \cdot R)$ is a maximal ideal in $R$.
We also set $\widetilde{R} \coloneqq R^{\wedge}_{(S,T,\varpi)}$ to be the $(S, T,\varpi)$-adic completion of $R$. 
\end{notation}

\begin{lemma}\label{lemma:equality-of-completions}
    The natural map $\frac{\cO_C\langle S,T \rangle}{(ST - \pi)} \to R$ induces an isomorphism $\frac{\cO_C \llbracket S,T \rrbracket}{(ST - \pi)} \xrightarrow{\sim} \widetilde{R}$.
\end{lemma}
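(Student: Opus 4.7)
The plan is to identify both sides of the claimed isomorphism with $A^\wedge_{(S,T,\varpi)}$, where $A \coloneqq \cO_C\langle S,T\rangle/(ST-\pi)$. Since the target $\widetilde{R}$ is $(S,T,\varpi)$-adically complete by construction, the composition $A \to R \to \widetilde{R}$ extends uniquely to a map $A^\wedge_{(S,T,\varpi)} \to \widetilde{R}$, and the content of the lemma is that this map is an isomorphism and that its source is canonically $\cO_C\llbracket S,T\rrbracket/(ST-\pi)$.

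For the identification $\widetilde{R} \simeq A^\wedge_{(S,T,\varpi)}$, I would proceed in two formal steps. First, because $(\varpi) \subset (S,T,\varpi)$ and $R = (A^\h_{(S,T)})^\wedge_\varpi$ is already $\varpi$-complete, the natural map $A^\h_{(S,T)} \to R$ becomes an isomorphism after further $(S,T,\varpi)$-adic completion, so $\widetilde{R} \simeq (A^\h_{(S,T)})^\wedge_{(S,T,\varpi)}$. Second, I would argue that $(S,T,\varpi)$-adic completion absorbs the $(S,T)$-henselization: for every $n \ge 1$, the image of $(S,T)$ in $A/(S,T,\varpi)^n$ is nilpotent (since $(S,T)^n \subset (S,T,\varpi)^n$), so this quotient is henselian along the image of $(S,T)$, and the universal property of henselization gives $A/(S,T,\varpi)^n \xrightarrow{\sim} A^\h_{(S,T)}/(S,T,\varpi)^n A^\h_{(S,T)}$. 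Taking the inverse limit yields $(A^\h_{(S,T)})^\wedge_{(S,T,\varpi)} \simeq A^\wedge_{(S,T,\varpi)}$.

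For the identification $A^\wedge_{(S,T,\varpi)} \simeq \cO_C\llbracket S,T\rrbracket/(ST-\pi)$, I would first compute the $(S,T,\varpi)$-adic completion of $\cO_C\langle S,T\rangle$ directly. Writing $(S,T,\varpi)^n = \sum_{k=0}^{n} \varpi^{n-k}(S,T)^k$, one checks that both $\cO_C[S,T]/(S,T,\varpi)^n$ and $\cO_C\langle S,T\rangle/(S,T,\varpi)^n$ identify with $\bigoplus_{i+j<n} \cO_C/\varpi^{n-i-j}\cdot S^iT^j$, and using $\varpi$-completeness of $\cO_C$, the inverse limit is $\cO_C\llbracket S,T\rrbracket$. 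To descend through the quotient by $(ST-\pi)$, I would apply the short exact sequence
\[
0 \to (ST-\pi) \cO_C\llbracket S,T\rrbracket \cap (S,T,\varpi)^n \to (S,T,\varpi)^n \to \bigl((S,T,\varpi)^n + (ST-\pi)\bigr)/(ST-\pi) \to 0
\]
and pass to the inverse limit, the Mittag-Leffler condition being satisfied by the surjective transition maps. This shows that $A^\wedge_{(S,T,\varpi)}$ is the quotient of $\cO_C\llbracket S,T\rrbracket$ by $\bigcap_n \bigl((ST-\pi) + (S,T,\varpi)^n\bigr)$.

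The main obstacle is therefore showing $\bigcap_n \bigl((ST-\pi) + (S,T,\varpi)^n\bigr) = (ST-\pi)$ inside $\cO_C\llbracket S,T\rrbracket$, i.e., that the principal ideal $(ST-\pi)$ is closed in the $(S,T,\varpi)$-adic topology. For this I would exploit the explicit normal form: using the relation $ST = \pi$ together with $\varpi$-completeness of $\cO_C$, every element of $\cO_C\llbracket S,T\rrbracket/(ST-\pi)$ admits a unique expression $\alpha + \sum_{i\geq 1} a_i S^i + \sum_{j\geq 1} b_j T^j$ with $\alpha, a_i, b_j \in \cO_C$, from which one sees that the quotient is $(S,T,\varpi)$-adically separated. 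Combining these identifications with the universal property yields the desired isomorphism $\cO_C\llbracket S,T\rrbracket/(ST-\pi) \xrightarrow{\sim} \widetilde{R}$.
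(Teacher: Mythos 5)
Your proof is correct and follows essentially the same strategy as the paper: both arguments hinge on the observation that passing to the $(S,T,\varpi)$-adic quotients kills the $(S,T)$-henselization, because the image of $(S,T)$ becomes nilpotent in those quotients, so the corresponding henselian pair is trivial. The paper phrases this via \cite[\href{https://stacks.math.columbia.edu/tag/0DYE}{Tag 0DYE}]{stacks-project} (henselization commutes with quotients) together with \cite[\href{https://stacks.math.columbia.edu/tag/0F0L}{Tag 0F0L}]{stacks-project}, and works with the cofinal ideals $(S^n,T^n,\varpi^n)$, while you invoke the universal property of the henselization directly and work with $(S,T,\varpi)^n$; these are interchangeable. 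One small remark: your final step --- that $(ST-\pi)$ is closed in $\cO_C\llbracket S,T\rrbracket$, equivalently that $\cO_C\llbracket S,T\rrbracket/(ST-\pi)$ is $(S,T,\varpi)$-adically separated --- is actually left implicit in the paper's chain of isomorphisms as well, so your normal-form sketch is, if anything, addressing a gap that the published proof also leaves to the reader; it would be worth making the deduction from normal-form uniqueness to separatedness fully explicit (one needs to check that $(S,T,\varpi)^n$ really does shrink the normal-form coefficients), but the claim is true and the idea is right.
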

\begin{proof}
We put $\widetilde{R}_n\coloneqq \frac{\bigl(\frac{\cO_C}{(\varpi^n)}\bigr)[S,T]}{(ST - \pi, S^n, T^n)} \simeq \frac{\O_C[S, T]}{(ST - \pi, S^n, T^n, \varpi^n)}$. Then $\frac{R}{(S^n, T^n, \varpi^n)} \simeq (\widetilde{R}_n\bigr)^\h_{(S, T)}$ due to \cite[\href{https://stacks.math.columbia.edu/tag/0DYE}{Tag 0DYE}]{stacks-project}. The ring $\widetilde{R}_n$ is already $(S, T)$-adically henselian due to \cite[\href{https://stacks.math.columbia.edu/tag/0F0L}{Tag 0F0L}]{stacks-project}, so we conclude that $\widetilde{R}_n \simeq \frac{R}{(S^n, T^n, \varpi^n)}$. Therefore, we conclude that 
\[
\frac{\cO_C \llbracket S,T \rrbracket}{(ST - \pi)} \simeq \Bigl(\frac{\O_C[S, T]}{(ST-\pi)} \Bigr)^{\wedge}_{(S, T, \varpi)} \simeq \lim_n \widetilde{R}_n \simeq \lim_n \frac{R}{(S^n, T^n, \varpi^n)} \simeq \widetilde{R}. \qedhere
\]
\end{proof}

\cref{lemma:equality-of-completions} implies that any element $f \in \widetilde{R}\bigl[\tfrac{1}{\varpi}\bigr]$ can be written uniquely 
as 
\[
f = a_0 + \sum_{i \geq 1} b_i S^i + \sum_{j \geq 1} c_j T^j
\]
where $a_0, b_i, c_j$ are elements in $C$ such that $\{a_0, b_i, c_i;\; i \in \ZZ_{\ge 1}\} \subset C$ is a bounded subset.
The two Gauss norms on the annulus $\bigl(\Spf(\frac{\cO_C \langle S,T \rangle}{(ST - \pi)})\bigr)_\eta$ extend to norms on $\widetilde{R}\bigl[\tfrac{1}{\varpi}\bigr]$ in the following fashion:
\begin{definition}
\label{regular-elements}
    Let $f\in \widetilde{R}\bigl[\tfrac{1}{\varpi}\bigr]$.
    \begin{enumerate}[label=\upshape{(\roman*)}]
        \item The \emph{$S$-Gauss norm} and \it{$T$-Gauss norm} of $f$ are given by the following valuations:
        \[ \abs{f}_S \colonequals \sup\{\abs{a_0}, \abs{b_i}, \abs{c_j \cdot \pi^j} \mid i \geq 1, j \geq 1\} \quad \text{and} \quad \abs{f}_T \colonequals \sup\{\abs{a_0}, \abs{b_i \cdot \pi^i}, \abs{c_j} \mid i \geq 1, j \geq 1\} \]
        \item\label{regular-elements-forreal} We say that $f$ is \textit{$S$-regular} (resp.\ \textit{$T$-regular}) if the supremum $\abs{f}_S$ (resp.\ $\abs{f}_T$) is attained by an element of the set.
        We say $f$ is \emph{regular} if it is both $S$-regular and $T$-regular.
        \item We denote the sets of $S$-regular (resp.\ $T$-regular, resp.\ regular) elements of $\widetilde{R}\bigl[\tfrac{1}{\varpi}\bigr]$ by $\widetilde{R}\bigl[\tfrac{1}{\varpi}\bigr]_{S\mhyphen\mathrm{reg}}$ (resp.\ $\widetilde{R}\bigl[\tfrac{1}{\varpi}\bigr]_{T\mhyphen\mathrm{reg}}$, resp.\ $\widetilde{R}\bigl[\tfrac{1}{\varpi}\bigr]_\mathrm{reg}$).
    \end{enumerate}
\end{definition}

To relate these norms to the classical Gauss norm, we need to introduce some further notation: 

\begin{notation}\label{notation:annulus-to-disk} We set $\widetilde{R}_S \coloneqq (\O_C[\![S]\!][\frac{1}{S}])^{\wedge}_{\varpi}$ and $\widetilde{R}_T \coloneqq (\O_C[\![T]\!][\frac{1}{T}])^{\wedge}_{\varpi}$. Then we see that $\widetilde{R}$ admits two ring-homomorphisms
\[
\beta_S \colon \w{R} \to \w{R}_S \quad \text{and} \quad \beta_T \colon \w{R} \to \w{R}_T
\]
defined by the rule 
\begin{gather*}
    \beta_S\Bigl( a_0 + \sum_{i \geq 1} b_i S^i + \sum_{j \geq 1} c_j T^j\Bigr) = a_0 + \sum_{i=1}^\infty b_i S^i + \sum_{i=1}^\infty c_{i}\cdot \pi^{i} S^{-i}, \\
    \beta_T\Bigl( a_0 + \sum_{i \geq 1} b_i S^i + \sum_{j \geq 1} c_j T^j\Bigr) = a_0 + \sum_{i=1}^\infty b_i\cdot \pi^i T^{-i} + \sum_{i=1}^\infty c_{i} T^{i}.
\end{gather*}
By abuse of notation, we denote by $\beta_S \colon \w{R}\bigl[\tfrac{1}{\varpi}\bigr] \to \w{R}_S\bigl[\tfrac{1}{\varpi}\bigr]$ and by $\beta_T \colon \w{R}\bigl[\tfrac{1}{\varpi}\bigr] \to \w{R}_T\bigl[\tfrac{1}{\varpi}\bigr]$ the natural morphisms induced by $\beta_S$ and $\beta_T$ from above. 
\end{notation}

\begin{remark}\label{regular-elements-revisited} Let $\abs{\blank}\colon \w{R}_S \to \Gamma_C \cup \{0\}$ be the classical Gauss norm $\abs{ \sum_{i \in \ZZ} a_i S^i} = \sup(\abs{a_i})$. Then we have the following equality 
\[
\abs{\blank}_S = \abs{\blank} \circ \beta_S \colon \w{R} \to \Gamma_C \cup \{0\}.
\]
We say that an element $f = \sum_{i\in \ZZ} a_i S^i \in \w{R}_S\bigl[\tfrac{1}{\varpi}\bigr]$ is \emph{regular} if $\abs{f} = \abs{a_i}$ for some $i\in \ZZ$. 
\end{remark}

To justify the name of the $S$- and $T$-Gauss norms, we make the following observations. First, both $\abs{\blank}_S$ and $\abs{\blank}_T$ are injective, submultiplicative, and satisfy the nonarchimedean triangle inequality. Therefore, they do define norms on $\w{R}$ (in the sense of \cite[Def.~1.2.1/1]{BGR}). One can also check that $S$- and $T$-Gauss norms are multiplicative by reducing the question to the Gauss norm on $\w{R}_S$ and then approximating this norm with the Gauss norms on the closed disks  of radius $r<1$. In this paper, we never use this multiplicativity, so we leave the details to the interested reader. 

Now we are ready to formulate one of the key results of this subsection: 

\begin{lemma}
\label{lemma:image under completion is regular}
    The image of the natural map $R\bigl[\tfrac{1}{\varpi}\bigr] \to \widetilde{R}\bigl[\tfrac{1}{\varpi}\bigr]$ is contained
    in $\widetilde{R}\bigl[\tfrac{1}{\varpi}\bigr]_{\mathrm{reg}}$.
\end{lemma}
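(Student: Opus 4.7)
The plan is to reduce the assertion to elements of $A_0\bigl[\tfrac{1}{\varpi}\bigr]$ (where $A_0 = \O_C\langle S, T\rangle/(ST-\pi)$) and then propagate to $R\bigl[\tfrac{1}{\varpi}\bigr]$ in two stages: first to the henselization $(A_0)^h_{(S,T)}\bigl[\tfrac{1}{\varpi}\bigr]$, and then to the $\varpi$-adic completion $R\bigl[\tfrac{1}{\varpi}\bigr]$. For the base case $f = a_0 + \sum_{i\geq 1} b_i S^i + \sum_{j\geq 1} c_j T^j \in A_0$, Tate convergence forces $|b_i|, |c_j| \to 0$, so only finitely many of $|a_0|$, $|b_i|$, $|c_j\pi^j|$ can exceed any positive threshold. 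Hence the suprema defining $|f|_S$ and $|f|_T$ are attained, and scaling handles $A_0\bigl[\tfrac{1}{\varpi}\bigr]$.

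For $f \in (A_0)^h_{(S,T)}$, the element lives in some connected \'etale neighborhood $A_0[1/g] \to B$ of the closed subscheme $V(S,T) \subseteq \Spec A_0$, and up to shrinking $B$ one may assume it is finite \'etale over $A_0[1/g]$. Since $\widetilde{R}$ is henselian at the maximal ideal $(S,T,\fm_C)$ (being a quotient of the formally complete ring $\O_C[\![S,T]\!]$), the map $A_0 \to \widetilde{R}$ lifts uniquely to a ring map $B \to \widetilde{R}$ compatible with the chosen point, and hence we obtain $B\bigl[\tfrac{1}{\varpi}\bigr] \to \widetilde{R}\bigl[\tfrac{1}{\varpi}\bigr] \xrightarrow{\beta_S} \widetilde{R}_S\bigl[\tfrac{1}{\varpi}\bigr]$. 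The image of $A_0\bigl[\tfrac{1}{\varpi}\bigr]$ under this composition lies in the Tate algebra $C\langle S, S^{-1}\rangle$ of analytic functions on the Gauss circle $|S| = 1$, and the image of $B\bigl[\tfrac{1}{\varpi}\bigr]$ is finite integral over the localization of $C\langle S, S^{-1}\rangle$ obtained by inverting $\beta_S(g)$. By classical rigid-analytic theory (Weierstrass preparation and finite morphisms of one-dimensional Tate algebras), such integral extensions are themselves Tate-algebra-like over finite extensions of $C$, and Gauss norms on them attain; hence $\beta_S(f)$ is $S$-regular, and analogously $\beta_T(f)$ is $T$-regular.

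Finally, for $f \in R\bigl[\tfrac{1}{\varpi}\bigr]$, write $f$ as a $\varpi$-adic limit $f = \lim f_n$ of elements $f_n \in (A_0)^h_{(S,T)}\bigl[\tfrac{1}{\varpi}\bigr]$, and expand each image as $\beta_S(f) = \sum d_i S^i$ and $\beta_S(f_n) = \sum d_i^{(n)} S^i$. The crucial observation is that $S$-regularity is preserved under $\varpi$-adic limits in $\widetilde{R}_S\bigl[\tfrac{1}{\varpi}\bigr]$: uniform $\varpi$-adic convergence of $\beta_S(f_n)$ to $\beta_S(f)$ means that for any $N$ with $|\varpi|^N < |\beta_S(f)|_S$, we have $|d_i - d_i^{(n)}| \leq |\varpi|^N$ uniformly in $i$ for $n$ sufficiently large, so $|\beta_S(f_n)|_S = |\beta_S(f)|_S$. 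Choosing any index $i_n$ realizing the supremum for $\beta_S(f_n)$, the nonarchimedean triangle inequality then forces $|d_{i_n}| = |d_{i_n}^{(n)}| = |\beta_S(f)|_S$. This shows $\beta_S(f)$ is $S$-regular, and the same argument handles $\beta_T(f)$. The main obstacle is the henselization step, where one must argue carefully that the passage through $\beta_S$ and $\beta_T$ interacts well with \'etale localizations and their finite structure; the completion step, while tempting to regard as delicate, turns out to be formal once the correct uniform coefficient-wise description of $\varpi$-adic convergence in $\widetilde{R}_S$ is in hand.
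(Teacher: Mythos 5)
Your staged reduction (base case for $A_0\bigl[\tfrac{1}{\varpi}\bigr]$, then henselization, then $\varpi$-adic completion) is genuinely different from the paper's argument, and the two outer stages are sound: the Tate-convergence observation handles $A_0\bigl[\tfrac{1}{\varpi}\bigr]$, and the limiting step works exactly as you describe once you observe that $\beta_S$ is $\O_C$-linear, hence sends $\varpi^N\widetilde{R}$ into $\varpi^N\widetilde{R}_S$, giving the uniform coefficient estimate. But the henselization step has a real gap, and it is where all the content sits. You claim that the image of $B\bigl[\tfrac{1}{\varpi}\bigr]$ under $\beta_S$ is finite integral over the localization $C\langle S,S^{-1}\rangle\bigl[1/\beta_S(g)\bigr]$, and that ``such integral extensions are themselves Tate-algebra-like and Gauss norms on them attain.'' Neither clause is justified. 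The localization at $\beta_S(g)$ is not a Tate algebra, and more seriously the \emph{image} of $B$ inside $\widetilde{R}_S\bigl[\tfrac{1}{\varpi}\bigr]$ need not lie in any affinoid ring at all: e.g.\ with $B = A_0[1/g][x]/(x^2-(1+S))$ and $2\in\O_C^\times$, the image of $x$ is $\sum_n \binom{1/2}{n}S^n$, whose coefficients are bounded but do not tend to zero, so it sits in $\widetilde{R}_S$ but nowhere near $C\langle S,S^{-1}\rangle$. The integral relation $x^2 = 1+S$ gives no control over whether $\sup_n\lvert\binom{1/2}{n}\rvert$ is attained, and that question is precisely what the lemma asks. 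There is no classical result of the form ``subring of $\widetilde{R}_S\bigl[\tfrac{1}{\varpi}\bigr]$ integral over a Tate algebra has attaining Gauss norms''; as stated, your middle step is a restatement of the lemma, not a reduction of it.

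The paper's proof sidesteps any such claim. After scaling $f$ by a power of $S/\pi$ — which stays inside $R$ precisely because the square in \cref{Telling integrality from its completion} is a pullback — one may assume $f \in R$ with $\lvert\pi\rvert < \lvert\widetilde{f}\rvert_S \le 1$. Setting $\fn = \{c\in\O_C : \lvert c\rvert < \lvert\widetilde{f}\rvert_S\}$, failure of $S$-regularity means $\widetilde{f}$ vanishes in $\widetilde{R}/(\fn,T) \simeq (\O_C/\fn)\llbracket S\rrbracket$; since $R/(\fn,T)\simeq \bigl(\tfrac{\O_C}{\fn}[S]\bigr)^\h_{(S)}$ injects into this power series ring by \cref{injectivity of completing henselian rings}, the element $f$ already vanishes in $R/(\fn,T)$, so $f = \pi' f_1 + Tf_2$ with $\pi'\in\fn$. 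This yields $\lvert\widetilde{f}\rvert_S \le \max\{\lvert\pi'\rvert,\lvert\pi\rvert\} < \lvert\widetilde{f}\rvert_S$, a contradiction. The two inputs — the pullback property of $R\hookrightarrow\widetilde{R}$ and the injectivity of henselization into completion — are exactly what your sketch lacks; to repair the henselization step you would need to import both, at which point the stage structure no longer buys you anything over the direct argument.
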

In order to present the proof, we will first need to study the map $R \to \widetilde{R}$ in more detail.

\begin{lemma}
\label{injectivity of completing henselian rings}
Let $B$ be a ring. 
Then the natural maps $\bigl(B[S]\bigr)^\h_{S} \to B[\![S]\!]$ and $\bigl(\frac{B[S,T]}{(ST)}\bigr)^h_{(S, T)} \to \frac{B[\![S,T]\!]}{(ST)}$ are injective.
\end{lemma}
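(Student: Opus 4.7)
Both maps factor as $A^{\h}_I \to A^{\wedge}_I$ for appropriate pairs $(A,I)$, with natural identifications of $A^{\wedge}_I$ with $B\llbracket S \rrbracket$ in the first case and $B\llbracket S, T \rrbracket/(ST)$ in the second. Since $A^{\h}_I/I^n A^{\h}_I = A/I^n$ (a defining property of the henselization of a pair), the ring $A^{\wedge}_I$ is also the $I$-adic completion of $A^{\h}_I$, so $A^{\h}_I \to A^{\wedge}_I$ is injective precisely when $A^{\h}_I$ is $I$-adically separated. The plan is to reduce to the Noetherian case via filtered colimits and then conclude with Krull's intersection theorem.

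To reduce to Noetherian $B$, I would write $B = \colim_\alpha B_\alpha$ as a filtered colimit of its finitely generated, hence Noetherian, subrings. Since henselization of a pair commutes with filtered colimits of rings, one obtains
\[
(B[S])^{\h}_{(S)} = \colim_\alpha (B_\alpha[S])^{\h}_{(S)} \quad \text{and} \quad \bigl(B[S, T]/(ST)\bigr)^{\h}_{(S, T)} = \colim_\alpha \bigl(B_\alpha[S, T]/(ST)\bigr)^{\h}_{(S, T)}.
\]
After verifying the injections $B_\alpha\llbracket S \rrbracket \hookrightarrow B\llbracket S \rrbracket$ (obvious) and $B_\alpha\llbracket S, T \rrbracket/(ST) \hookrightarrow B\llbracket S, T \rrbracket/(ST)$ (which reduces to $(ST)\cdot B\llbracket S, T \rrbracket \cap B_\alpha\llbracket S, T \rrbracket = (ST)\cdot B_\alpha\llbracket S, T \rrbracket$, provable by direct comparison of power-series coefficients), filtered colimits of injections will yield the desired injections in general.

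When $B$ is Noetherian, $A = B[S]$ and $A = B[S, T]/(ST)$ are both Noetherian rings with finitely generated ideal $I$, so the henselization $A^{\h}_I$ is again Noetherian. Since $IA^{\h}_I$ is contained in the Jacobson radical of $A^{\h}_I$ (as with any ideal defining a henselian pair), Krull's intersection theorem gives $\bigcap_n I^n A^{\h}_I = 0$, the desired separatedness. The main technical subtlety is the identification $\bigl(B[S, T]/(ST)\bigr)^{\wedge}_{(S, T)} = B\llbracket S, T \rrbracket/(ST)$ in the second case; one way to establish this is to describe both sides as $B \oplus S\cdot B\llbracket S \rrbracket \oplus T\cdot B\llbracket T \rrbracket$ by matching natural $B$-linear bases, which relies on the elementary colon computation $\bigl((S, T)^n B\llbracket S, T \rrbracket : ST\bigr) = (S, T)^{n-2} B\llbracket S, T \rrbracket$ for $n \geq 2$ to rule out any unexpected kernel in the comparison between the two models of the completion.
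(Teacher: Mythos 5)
Your proof is correct and follows essentially the same strategy as the paper: write $B$ as a filtered colimit of Noetherian (finitely generated $\ZZ$-) subalgebras, observe that henselization commutes with this colimit and that the colimit of the targets injects into the actual power series ring, then treat the Noetherian case. The only difference is that where the paper simply cites \cite[\href{https://stacks.math.columbia.edu/tag/0AGV}{Tag 0AGV}]{stacks-project} for the Noetherian case, you unwind it by hand via Krull's intersection theorem; note that the Noetherianity of the henselization $A^\h_I$ of a Noetherian pair $(A,I)$, which your Krull argument relies on, is itself a nontrivial theorem (part of that same Stacks Project tag) and should be cited.
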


\begin{proof}
We show it for the map $\bigl(\frac{B[S,T]}{(ST)}\bigr)^\h_{(S, T)} \to \frac{B[\![S,T]\!]}{(ST)}$, a similar proof applies to $\bigl(B[S]\bigr)^\h_{S} \to B[\![S]\!]$. We start by writing $B \simeq \colim_{i\in I} B_i$ as a filtered colimit of its finitely generated $\ZZ$-subalgebras. Then the natural morphism $\colim_I \bigl(\frac{B_i \llbracket S, T \rrbracket}{(ST)}\bigr) \to \frac{B\llbracket S, T\rrbracket}{(ST)}$ is injective and the natural morphism $\colim_I \bigl(\frac{B_i [S, T]}{(ST)}\bigr)^\h_{(S, T)} \to \bigl(\frac{B[S, T]}{(ST)}\bigr)^\h_{(S, T)}$ is an isomorphism (see \cite[\href{https://stacks.math.columbia.edu/tag/0A04}{Tag 0A04}]{stacks-project}). Therefore, it suffices to show the lemma under the additional assumption that $B$ is a finitely generated $\ZZ$-algebra. In this case, the result follows directly from \cite[\href{https://stacks.math.columbia.edu/tag/0AGV}{Tag 0AGV}]{stacks-project}.
\end{proof}

Already we are getting some interesting statements concerning the map
$R \to \widetilde{R}$.

\begin{lemma}
\label{Telling integrality from its completion}
The maps $R/\pi \to \widetilde{R}/\pi$, $R \to \widetilde{R}$, and $R\bigl[\tfrac{1}{\varpi}\bigr] \to \widetilde{R}\bigl[\tfrac{1}{\varpi}\bigr]$ are injective.
Moreover, we have a pullback diagram of rings
\[ \begin{tikzcd}
R \arrow[d] \arrow[r] & \widetilde{R} \arrow[d] \\
R\bigl[\tfrac{1}{\varpi}\bigr] \arrow[r] & \widetilde{R}\bigl[\tfrac{1}{\varpi}\bigr].
\end{tikzcd} \]
\end{lemma}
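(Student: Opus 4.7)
The plan is to reduce the entire lemma to a single application of \cref{injectivity of completing henselian rings}. The key observation is that all the rings $R$, $\widetilde{R}$, $R[\tfrac{1}{\varpi}]$, $\widetilde{R}[\tfrac{1}{\varpi}]$, and the maps between them are insensitive to replacing $\varpi$ by any other pseudo-uniformizer of $\cO_C$: since all pseudo-uniformizers of the rank-one valuation ring $\cO_C$ induce the same topology, the $\varpi$-adic completions used in defining $R$ and $\widetilde{R}$ coincide with the corresponding $\pi$-adic completions, and inverting $\varpi$ is the same as inverting $\pi$. I therefore assume $\varpi = \pi$ throughout the argument.

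For the injectivity of $R/\pi \to \widetilde{R}/\pi$, I would first identify the two quotients. The isomorphism $\cO_C\langle S, T \rangle/\pi \simeq (\cO_C/\pi)[S, T]$ holds because a convergent series over $\cO_C$ has all but finitely many coefficients in $\pi \cO_C$; combined with the commutation of $(S, T)$-adic henselization with quotients (\cite[\href{https://stacks.math.columbia.edu/tag/0DYE}{Tag~0DYE}]{stacks-project}) and the identification $\widetilde{R} \simeq \cO_C[\![S, T]\!]/(ST - \pi)$ from \cref{lemma:equality-of-completions}, this yields
\[
R/\pi \simeq \bigl((\cO_C/\pi)[S, T]/(ST)\bigr)^{\h}_{(S, T)}, \qquad \widetilde{R}/\pi \simeq (\cO_C/\pi)[\![S, T]\!]/(ST).
\]
The map between them is then of exactly the form handled by \cref{injectivity of completing henselian rings} with $B = \cO_C/\pi$, which gives the desired injectivity.

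For the injection $R \hookrightarrow \widetilde{R}$, I would use a standard d\'evissage. The ring $\widetilde{R}$ is an integral domain (as $\cO_C[\![S, T]\!]$ is a domain and the defining element $ST - \pi$ is nonzero and irreducible), hence $\pi$-torsion-free. Given $x \in R$ mapping to zero in $\widetilde{R}$, the previous step forces $x \in \pi R$; writing $x = \pi y$ and using $\pi$-torsion-freeness of $\widetilde{R}$ shows that $y$ also maps to zero, so iteration yields $x \in \bigcap_n \pi^n R = 0$ thanks to the $\pi$-adic separation of $R$ (which is $\pi$-adically complete since $\pi = \varpi$). The injection $R[\tfrac{1}{\pi}] \hookrightarrow \widetilde{R}[\tfrac{1}{\pi}]$ then follows: a kernel element satisfies $\pi^m x = 0$ in $\widetilde{R}$ for some $m$, which forces $x = 0$ in $\widetilde{R}$ and hence $x = 0$ in $R$.

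Finally, for the pullback diagram, given the three injections it is equivalent, by a short induction on $n$ using $\pi$-torsion-freeness of $\widetilde{R}$, to the identity $R \cap \pi \widetilde{R} = \pi R$ inside $\widetilde{R}$; and this is precisely a reformulation of the injectivity of $R/\pi \to \widetilde{R}/\pi$ already established in the first step. Thus the entire lemma is packaged into the single nontrivial input of \cref{injectivity of completing henselian rings}, with everything else being formal d\'evissage. I do not anticipate a genuine obstacle beyond carefully verifying the identifications of $R/\pi$ and $\widetilde{R}/\pi$ in the first step.
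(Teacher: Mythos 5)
Your proof is correct and follows essentially the same route as the paper: reduce all three injectivity statements, and then the pullback property, to the injectivity of $R/\pi \to \widetilde{R}/\pi$, which is established by identifying it with the henselization-to-completion map over $B = \cO_C/\pi$ handled by \cref{injectivity of completing henselian rings}, and then close with a d\'evissage using $\pi$-torsion-freeness of $\widetilde{R}$ together with $\pi$-adic separatedness of $R$. Your bookkeeping for the pullback (via the identities $R \cap \pi^n\widetilde{R} = \pi^n R$) is an equivalent reformulation of the paper's argument (which phrases it as injectivity of $\colim_n R/\pi^n \to \colim_n \widetilde{R}/\pi^n$), and your initial normalization $\varpi = \pi$ is a valid and clean simplification since all pseudo-uniformizers of the rank-$1$ valuation ring $\cO_C$ induce the same topology.

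One small imprecision: deducing that $\widetilde{R}$ is a domain from ``$ST - \pi$ is nonzero and irreducible in $\cO_C[\![S,T]\!]$'' is not a valid inference in a general (here non-Noetherian, non-factorial) ring, since irreducible need not imply prime. What you actually need is only $\pi$-torsion-freeness of $\widetilde{R}$, which the paper asserts directly; it can be read off from the unique expansion $f = a_0 + \sum_{i\ge1}b_i S^i + \sum_{j\ge1}c_j T^j$ of elements of $\widetilde{R}$ discussed just after \cref{lemma:equality-of-completions}, since $\cO_C$ itself is torsion-free.
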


\begin{proof}
By \cite[\href{https://stacks.math.columbia.edu/tag/0DYE}{Tag 0DYE}]{stacks-project}
and \cref{lemma:equality-of-completions}, the map $R/\pi \to \widetilde{R}/\pi$ is identified with the map 
\[
\Bigl(\frac{\frac{\O_C}{\pi}[S, T]}{(S, T)}\Bigr)^\h_{(S, T)} \to \Bigl(\frac{\frac{\O_C}{\pi}[S, T]}{(S, T)}\Bigr)^{\wedge}_{(S, T)}.
\]
Thus, \cref{injectivity of completing henselian rings} implies that $R/\pi \to \widetilde{R}/\pi$ is injective. Since $R$ is $\pi$-adically separated (it is even $\pi$-adically complete) and both $R$ and $\widetilde{R}$ are $\pi$-torsionfree, we conclude that $R\to \widetilde{R}$ is injective as well. This formally implies that $R\bigl[\tfrac{1}{\varpi}\bigr] \to \widetilde{R}\bigl[\tfrac{1}{\varpi}\bigr]$ is injective. For the last statement, the vertical and horizontal maps are injective, so an easy diagram chase implies that it suffices to show that the natural map 
\[
\frac{R\bigl[\tfrac{1}{\varpi}\bigr]}{R} \simeq \frac{R\bigl[\tfrac{1}{\pi}\bigr]}{R} \to \frac{\widetilde{R}\bigl[\tfrac{1}{\pi}\bigr]}{\widetilde{R}} \simeq \frac{\widetilde{R}\bigl[\tfrac{1}{\varpi}\bigr]}{\widetilde{R}}
\]
is injective. Since both $R$ and $\widetilde{R}$ are $\pi$-torsionfree, we can identify this map with the map $\colim_n R/\pi^n \to \colim_n \widetilde{R}/\pi^n$. This follows formally from injectivity of $R/\pi \to \widetilde{R}/\pi$, injectivity of $R \to \widetilde{R}$, and the observations that $R$ is $\pi$-adically separated and both $R$ and $\widetilde{R}$ are $\pi$-torsionfree. %
\end{proof}
With these technical preliminaries out of the way, we can prove \cref{lemma:image under completion is regular}.
\begin{proof}[Proof of \cref{lemma:image under completion is regular}]
We fix an element $f\in R\bigl[\tfrac{1}{\varpi}\bigr]$ and show that its image $\widetilde{f}\in \widetilde{R}\bigl[\tfrac{1}{\varpi}\bigr]$ is $S$-regular 
($T$-regularity follows as the role of $S$ and $T$ is symmetric). By inspection, we see that scaling by $C^{\times}$ or powers of $S$ does not change $S$-regularity. So we may and do assume that $f\in R$. Then we choose a non-negative integer $n$ such that 
\[
\abs{\pi^{n+1}} < \abs{\widetilde{f}}_S\leq \abs{\pi^n}.
\]
The definition of the ``$S$-Gauss norm'' and the assumption that $f\in R$ imply that the image $\frac{S^n\cdot \widetilde{f}}{\pi^n} \in \widetilde{R}$. Therefore, the last statement in \cref{Telling integrality from its completion} ensures that $\frac{S^n\cdot f}{\pi^n} \in R$. In particular, we can further replace $f$ with $\frac{S^n\cdot f}{\pi^n}$ to assume that $f\in R$ and $\abs{\pi}<\abs{\widetilde{f}}_S \leq \abs{1}$.

We put $\fn \coloneqq \{c \in \cO_C \mid \abs{c} < \abs{\widetilde{f}}_S\}$. Our assumptions on $f$  imply that $(\pi) \subset \fn$. Suppose that $\widetilde{f}$ is not $S$-regular, then $f$ lies in the kernel of the natural map $R \to \frac{R}{(\fn, T)} \simeq \bigl(\frac{\O_C}{\fn}[S]\bigr)^\h_{(S)} \hookrightarrow \frac{\O_C}{\fn}\llbracket S \rrbracket$. The last map is injective due to \cref{injectivity of completing henselian rings}, so $f$ lies in the kernel of the natural morphism $R \to \frac{R}{(\fn, T)}$. This means that $f = \pi' \cdot f_1 + T\cdot f_2$ for some $\pi'\in \fn$ and $f_1, f_2\in R$. This leads to the contradiction $\abs{\widetilde{f}}_S \leq \max\{\abs{\pi'}, \abs{\pi}\} < \abs{\widetilde{f}}_S$ and finishes the proof.
\end{proof}

\begin{definition}\label{valuation-regular-element} \begin{enumerate}[leftmargin=*,label=\upshape{(\roman*)}]
\item\label{valuation-regular-element-1} (Secondary Gauss valuation) Let $f\in \bigl(\w{R}_S\bigl[\tfrac{1}{\varpi}\bigr]\bigr)_{\mathrm{reg}} \subset \bigl(\w{R}_S\bigl[\tfrac{1}{\varpi}\bigr]\bigr) = \bigl(\O_C \llbracket S \rrbracket [S^{-1}]\bigr)^{\wedge}_{\varpi}\bigl[\tfrac{1}{\varpi}\bigr]$. We define the map $v \colon \bigl(\w{R}_S\bigl[\tfrac{1}{\varpi}\bigr]\bigr)_{\mathrm{reg}} \to \ZZ$ by the rule 
\[
v\Bigl( \sum_{i\in \ZZ} a_i S^i \Bigr) \coloneqq \min\{ r \in \ZZ \suchthat \abs{a_r} = \abs{f} \}.
\]
\item\label{valuation-regular-element-2} (Secondary $S$-Gauss valuation) Let $f\in \w{R}\bigl[\tfrac{1}{\varpi}\bigr]_{S\mhyphen\mathrm{reg}}$.
We define the map $v_S\colon \w{R}\bigl[\tfrac{1}{\varpi}\bigr]_{S\mhyphen\mathrm{reg}} \to \ZZ$ by the rule
\[
v_S(f) = v\bigl(\beta_S(f)\bigr),
\]
where $\beta_S \colon \w{R}\bigl[\tfrac{1}{\varpi}\bigr] \to \w{R}_S\bigl[\tfrac{1}{\varpi}\bigr]$ is the morphism from \cref{notation:annulus-to-disk}.  We define the map $v_T \colon \w{R}\bigl[\tfrac{1}{\varpi}\bigr]_{T\mhyphen\mathrm{reg}} \to \ZZ$ in a similar way. 
\end{enumerate}
\end{definition}

\begin{lemma}
\label{multiplicativity of the v_S and v_T}
    The set $\widetilde{R}\bigl[\tfrac{1}{\varpi}\bigr]_{S\mhyphen\mathrm{reg}}$ is stable under multiplication. Furthermore, we have $\abs{f\cdot g}_S = \abs{f}_S \cdot \abs{g}_S$ and $v_S(f\cdot g) = v_S(f)+ v_S(g)$ for any $f, g\in \widetilde{R}\bigl[\tfrac{1}{\varpi}\bigr]_{S\mhyphen\mathrm{reg}}$. The same results hold for $\widetilde{R}\bigl[\tfrac{1}{\varpi}\bigr]_{T\mhyphen\mathrm{reg}}$ and $v_T$. 
\end{lemma}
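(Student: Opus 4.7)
The plan is to reduce the whole statement to the classical Gauss lemma for Laurent series in $\widetilde{R}_S[\tfrac{1}{\varpi}]$, and then run through that argument carefully. Since $\beta_S \colon \widetilde{R}[\tfrac{1}{\varpi}] \to \widetilde{R}_S[\tfrac{1}{\varpi}]$ is a ring homomorphism with $\abs{f}_S = \abs{\beta_S(f)}$ and $v_S(f) = v(\beta_S(f))$ by definition, it suffices to prove the following three statements for regular elements of $\widetilde{R}_S[\tfrac{1}{\varpi}]$ (see \cref{regular-elements-revisited} and \cref{valuation-regular-element}\cref{valuation-regular-element-1}): the set of regular elements is stable under multiplication; the norm $\abs{\blank}$ is multiplicative on regular elements; and the map $v$ is additive on regular elements. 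The case of $v_T$ is entirely symmetric.

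For the Laurent series statement, write $f = \sum_{i \in \ZZ} a_i S^i$ and $g = \sum_{j \in \ZZ} b_j S^j$ in $\widetilde{R}_S[\tfrac{1}{\varpi}]$. First I would verify that $fg = \sum_{k \in \ZZ} c_k S^k$ makes sense by noting that the coefficients $a_i$ and $b_j$ are bounded and tend to $0$ as the index tends to $-\infty$ (this is how elements of $(\O_C\llbracket S \rrbracket[1/S])^\wedge_\varpi[\tfrac{1}{\varpi}]$ look), so each sum $c_k = \sum_{i+j=k} a_i b_j$ converges in $C$ and the resulting coefficients are bounded. Now assume $f,g$ are regular, set $r \colonequals v(f)$ and $s \colonequals v(g)$, so that $\abs{a_r} = \abs{f}$, $\abs{b_s} = \abs{g}$, $\abs{a_i} \le \abs{f}$ for all $i$ with strict inequality for $i < r$, and analogously for $g$.

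Next I would run the standard argument:
\begin{enumerate}[label=\upshape{(\arabic*)}]
\item For $k < r+s$ and any decomposition $k = i+j$, one cannot have both $i \ge r$ and $j \ge s$ (else $k \ge r+s$), so either $\abs{a_i} < \abs{f}$ or $\abs{b_j} < \abs{g}$. Hence $\abs{a_i b_j} < \abs{f}\abs{g}$ for every term, and $\abs{c_k} < \abs{f}\abs{g}$.
\item For $k = r+s$, split $c_{r+s} = a_r b_s + \sum_{\substack{i+j=r+s\\(i,j) \ne (r,s)}} a_i b_j$. For any pair $(i,j)$ in the sum, either $i < r$ (then $\abs{a_i} < \abs{f}$) or $i > r$ so $j < s$ (then $\abs{b_j} < \abs{g}$), so every such term has absolute value strictly less than $\abs{f}\abs{g}$. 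Since $\abs{a_r b_s} = \abs{f}\abs{g}$, the nonarchimedean triangle inequality gives $\abs{c_{r+s}} = \abs{f}\abs{g}$.
\item For $k \ge r+s$, the obvious bound $\abs{c_k} \le \sup_{i+j=k}\abs{a_i}\abs{b_j} \le \abs{f}\abs{g}$ still holds.
\end{enumerate}
Combining these, $\abs{fg} = \abs{f}\abs{g}$ is attained at the index $r+s$, so $fg$ is regular, and moreover $v(fg) = r+s = v(f) + v(g)$ because no smaller index attains the supremum.

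There is essentially no hard part; everything is the classical Gauss lemma once the right bookkeeping is in place. The only subtle point to double-check is that the product $fg$ is indeed a well-defined element of $\widetilde{R}_S[\tfrac{1}{\varpi}]$, which uses the decay $a_i, b_j \to 0$ as the index goes to $-\infty$ inherent in the description of $(\O_C\llbracket S \rrbracket[1/S])^\wedge_\varpi$; everything else is formal.
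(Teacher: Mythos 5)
Your proof is correct and follows exactly the paper's strategy: the paper's own argument is one sentence, reducing via $\beta_S$ to the analogous claim for the Gauss norm on $\widetilde{R}_S\bigl[\tfrac{1}{\varpi}\bigr]$ and declaring it ``a standard exercise on the usual Gauss norms''. You have simply written out that standard exercise in full, including the (correct) observation that convergence of the product series uses the decay $a_i, b_j \to 0$ as the index goes to $-\infty$, which is also what guarantees that each $\abs{c_k}$ is a maximum over finitely many terms and hence genuinely strictly below $\abs{f}\abs{g}$ when $k < v(f)+v(g)$.
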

\begin{proof}
    By construction, it suffices to show that $\bigl(\w{R}_S\bigl[\tfrac{1}{\varpi}\bigr]\bigr)_{\rm{reg}} \subset \bigl(\w{R}_S\bigl[\tfrac{1}{\varpi}\bigr]\bigr)$ is closed under multiplication, and we have $\abs{f\cdot g} = \abs{f}\cdot \abs{g}$ and $v(f\cdot g) = v(f) + v(g)$ for any $f, g\in \bigl(\w{R}_S\bigl[\tfrac{1}{\varpi}\bigr]\bigr)_{\rm{reg}}$. This is a standard exercise on the usual Gauss norms; we leave details to the interested reader. 
\end{proof}

We come to the main statement of this subsection, which provides a concrete description of the valuations which correspond to nodes in the special fiber under the bijection $\mu_\zeta$ from \cref{specialization}\cref{specialization-points}. For the rest of the subsection, we fix an affine admissible formal $\O_C$-scheme $\cX = \Spf A$ with a nodal point $q\in \cX_s$ and a
$\varpi$-completely \'etale morphism $g\colon \bigl(\Spf A, q\bigr) \to \biggl( \Spf \Bigl(\frac{\O_C\langle S, R\rangle}{(ST - \pi)} \Bigr), \{0, 0\} \biggr)$ for $\pi\in \m_C \smallsetminus \{0\}$. 

Then the natural morphism 
\[
R = \biggl(\Bigl(\frac{\cO_C \langle S,T \rangle}{(ST - \pi)}\Bigr)^\h_{(S, T)}\biggr)^{\wedge}_{\varpi} \xr{g^*} \bigl(\O_{\cX, q}^\h\bigr)^{\wedge}_{(\varpi)}
\]
is an isomorphism, where $\O_{\cX, q}^\h$ denotes the localization of the local ring with respect to the maximal ideal. By slight abuse of notation, we define the map $\abs{\blank}_1 \colon A\bigl[\frac{1}{\varpi}\bigr] \to \Gamma_C \cup \{0\}$ as the composition
\[
    A\bigl[\tfrac{1}{\varpi}\bigr] \to \bigl(\O_{\cX, q}^\h\bigr)^{\wedge}_{(\varpi)}\bigl[\tfrac{1}{\varpi}\bigr] \xr[\sim]{(g^{*})^{-1}} R\bigl[\tfrac{1}{\varpi}\bigr] \to \w{R}\bigl[\tfrac{1}{\varpi}\bigr] \xr{\abs{\blank}_S} \Gamma_C\cup \{0\},
\]
similarly we define $\abs{\blank}_2 \colon A \to \Gamma_C \cup \{0\}$ using $\abs{\blank}_T$ in place of $\abs{\blank}_S$. Likewise, we define the map $v_1 \colon A\bigl[\frac{1}{\varpi}\bigr] \to \ZZ$ as the composition
\[
    A\bigl[\tfrac{1}{\varpi}\bigr] \to \bigl(\O_{\cX, q}^\h\bigr)^{\wedge}_{(\varpi)}\bigl[\tfrac{1}{\varpi}\bigr] \xr[\sim]{(g^{*})^{-1}} R\bigl[\tfrac{1}{\varpi}\bigr] \to \w{R}\bigl[\tfrac{1}{\varpi}\bigr] \xr{v_S} \ZZ,
\]
and similarly we define $v_2 \colon A\bigl[\frac{1}{\varpi}\bigr] \to \ZZ$ using $v_T$ in place of $v_S$. 
\begin{proposition}
\label{laurent-valuations}
    Let $g \colon \bigl(\cX, q\bigr) = \bigl(\Spf A, q\bigr) \to \biggl( \Spf \Bigl(\frac{\O_C\langle S, T\rangle}{(ST - \pi)} \Bigr), \{0, 0\} \biggr)$ be an \'etale map as above.
    Let $\zeta_1$ and $\zeta_2$ be the two generic points of $\cX_s$ corresponding to the two irreducible components containing $q$, whose
    images are the open prime ideals $(\m_C, T)$ and $(\m_C, S)$, respectively.
    For $i=1,2$, let $z_i = \spec^{-1}_{\cX}(\zeta_i) \in \cX_\eta$ (see \cref{lemma:local-ring-at-generic-point-in-the-special-fiber}\cref{lemma:local-ring-at-generic-point-in-the-special-fiber-0}), let $q_i\in \cX_s^n$ be the points of the normalization lying over the node $q$ that are on the component corresponding to $\zeta_i$, and let $y_i \in \ov{\{z_i\}} \subset \cX_\eta$ be the points corresponding to $q_i$ under the bijection from \cref{specialization}\cref{specialization-points}.
    Then the rank-$1$ points $z_1$ and $z_2$ correspond to the valuations $\abs{\blank}_1, \abs{\blank}_2\colon A\bigl[\frac{1}{\varpi}\bigr] \to \Gamma_C \cup \{0\}$ and the points $y_1$ and $y_2$ correspond to the valuations $\bigl(\abs{\blank}_1, -v_1(\blank)\bigr)$, $\bigl(\abs{\blank}_2, -v_2(\blank)\bigr) \colon A\bigl[\frac{1}{\varpi}\bigr] \to \Gamma_C \times \Z \cup \{0\}$, respectively.
\end{proposition}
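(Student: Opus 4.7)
The plan is to verify that both $\abs{\blank}_1$ and $v' \colonequals (\abs{\blank}_1, -v_1)$ (with the lexicographic order on $\Gamma_C \times \ZZ$) are well-defined continuous valuations on $A\bigl[\tfrac{1}{\varpi}\bigr]$ that define points of $X$, and then to identify them with $z_1$ and $y_1$ respectively using the uniqueness statements in \cref{lemma:local-ring-at-generic-point-in-the-special-fiber}\cref{lemma:local-ring-at-generic-point-in-the-special-fiber-0} and \cref{specialization}\cref{specialization-points}. The case of $z_2$ and $y_2$ then follows from the symmetric role of $S$ and $T$.

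First I would check that $\abs{\blank}_1$ and $v_1$ make sense on $A\bigl[\tfrac{1}{\varpi}\bigr]$: \cref{lemma:image under completion is regular} ensures that the image of $A\bigl[\tfrac{1}{\varpi}\bigr]$ in $\w{R}\bigl[\tfrac{1}{\varpi}\bigr]$ consists of regular elements, and \cref{multiplicativity of the v_S and v_T} supplies multiplicativity of both $\abs{\blank}_S$ and $v_S$. The non-archimedean triangle inequality for $v'$ in lex order reduces to a case analysis on first coordinates, the only nontrivial case being $\abs{f}_1 = \abs{g}_1$; there one checks $v_1(f+g) \ge \min(v_1(f), v_1(g))$ by noting that the coefficients in $\beta_S(f)$ and $\beta_S(g)$ of indices strictly below $\min(v_1(f), v_1(g))$ both have norm less than $\abs{f}_1$. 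Since the coefficients of elements of $\w{R}$ lie in $\cO_C$ and $\abs{c_j \pi^j} \le \abs{\pi}^j \le 1$, we get $\abs{\blank}_1(A) \le 1$, and hence $\abs{\blank}_1(A^\circ) \le 1$ by a standard integrality argument. A parallel argument shows $v'(A^\circ) \le 1$: for $a \in A$ with $\abs{a}_1 = 1$ the negative $S$-powers in $\beta_S(a)$ come from the $c_j T^j$ terms and have norm $\le \abs{\pi}^j < 1$, forcing $v_1(a) \ge 0$. Combined with $\abs{\varpi}_1 < 1$ for continuity, both $\abs{\blank}_1$ and $v'$ define points of $X = \Spa\bigl(A\bigl[\tfrac{1}{\varpi}\bigr], A^\circ\bigr)$.

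Next I would identify $\abs{\blank}_1$ with $z_1$. Direct computation gives $\abs{g^*T}_1 = \abs{\pi} < 1$ and $\abs{g^*S}_1 = 1$, so the center of $\abs{\blank}_1$ is an open prime of $A$ containing $\varpi$ and $g^*T$ but not $g^*S$. By étaleness of $g$ at $q$, the two generic points of $\cX_s$ passing through $q$ are distinguished precisely by which of $g^*T, g^*S$ they contain, so this center must be $\zeta_1$. Uniqueness in \cref{lemma:local-ring-at-generic-point-in-the-special-fiber}\cref{lemma:local-ring-at-generic-point-in-the-special-fiber-0} then yields $\abs{\blank}_1 = z_1$. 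Since the rank-$1$ quotient of $v'$ by the convex subgroup $\{1\}\times\ZZ$ of $\Gamma_C \times \ZZ$ is exactly $\abs{\blank}_1 = z_1$, the point $v'$ lies in $\ov{\{z_1\}} \subset X^c$ and therefore corresponds under $\mu_{\zeta_1}$ to a unique point of $Y_{\zeta_1}$.

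The final and most delicate step is to identify $v'$ with $y_1 = \mu_{\zeta_1}^{-1}(q_1)$. Here I would invoke the characterization in \cref{specialization}\cref{vanishing-order-rings}: the valuation ring $A_{v' \to z_1} = \{f \in k(z_1)^+ \mid v'(f) \le 1\}$ must agree with the preimage of $\cO_{\cX^n_s, q_1} \simeq k_C[S]_{(S)}$ under the composition $k(z_1)^+ \simeq \cO_{\cX, \zeta_1} \to \cO_{\cX_s, \zeta_1} \simeq k_C(S)$. Elements $f$ with $\abs{f}_1 < 1$ lie in $A_{v' \to z_1}$ and map to $0 \in k_C(S)$, so are trivially in the preimage. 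For $\abs{f}_1 = 1$, writing the image of $f$ in $\w{R}$ as $a_0 + \sum b_i S^i + \sum c_j T^j$, the reduction of $\beta_S$ modulo $\m_C$ kills the $T$-part (because $\beta_S(T) = \pi/S$ and $\pi \in \m_C$), giving image $\ov{a_0} + \sum \ov{b_i} S^i \in k_C(\!(S)\!)$. One checks directly that $v_1(f) \ge 0$ iff this image lies in $k_C[\![S]\!]$, equivalently in $\cO_{\cX^n_s, q_1}$, and in that case $v_1(f) = \ord_{q_1}(\ov f)$. Thus $A_{v' \to z_1}$ matches the characterization of $y_1$, and uniqueness of $\mu_{\zeta_1}$ forces $v' = y_1$. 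The main obstacle is precisely this final identification: carefully tracking the reduction modulo $\m_C$ through the various completions and henselizations, and matching the second coordinate $-v_1$ of $v'$ with the $S$-adic order at $q_1$ on elements of first coordinate $1$ while keeping the sign convention of \cref{warning:sign-convention} consistent.
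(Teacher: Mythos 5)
The overall skeleton of your argument---verify that $\abs{\blank}_1$ and $v' = (\abs{\blank}_1,-v_1)$ are continuous valuations, identify them with points of $\cX_\eta$ by computing centers, then invoke the uniqueness statements---matches the paper. However, there is a genuine gap in your identification of $z_1$, and your treatment of $y_1$, while plausible, is more laborious than necessary.

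\textbf{The gap.} You compute $\abs{g^*T}_1 = \abs{\pi} < 1$ and $\abs{g^*S}_1 = 1$ and conclude that the center $\p_1$ of $\abs{\blank}_1$ ``must be $\zeta_1$'' because $\zeta_1,\zeta_2$ are distinguished by which of $g^*T,g^*S$ they contain. But knowing only $(\m_C, g^*T)A \subset \p_1$ and $g^*S \notin \p_1$ does \emph{not} force $\p_1 = \zeta_1$: any closed point of $\cX_s$ lying on the $\zeta_1$-component and different from $q$ is an open prime that also contains $\m_C$ and $g^*T$ but not $g^*S$. Your argument never rules out that the center is such a closed point, i.e.\ it does not show that $\p_1$ is a \emph{generic} point of $\cX_s$. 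Without that, \cref{lemma:local-ring-at-generic-point-in-the-special-fiber}\cref{lemma:local-ring-at-generic-point-in-the-special-fiber-0} cannot be applied. To close the gap you must compute the full preimage $g^{\#,-1}(\p_1) = \{f \in \frac{\O_C\langle S,T\rangle}{(ST-\pi)} : \abs{f}_S < 1\}$ and show it is \emph{exactly} $(\m_C, T)$: using $S$-regularity, $\abs{f}_S < 1$ if and only if all the coefficients $a_0$ and $b_i$ of $f$ lie in $\m_C$, which is precisely the condition $f \in (\m_C,T)$. Combined with the shrinking of $\cX$ so that $g^{-1}\bigl((\m_C,T)\bigr) = \{\zeta_1\}$ (which you must do first, and do not explicitly do), this pins $\p_1$ down as $\zeta_1$. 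This is exactly the step the paper carries out by direct inspection.

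\textbf{The $y_1$ route.} Your identification of $y_1$ via the explicit description of $A_{v' \to z_1}$ in \cref{specialization}\cref{vanishing-order-rings} is a workable alternative, but it is heavier than needed and---as you acknowledge---requires careful bookkeeping of the reduction mod $\m_C$ through henselizations and completions. The paper instead proceeds exactly as for $z_1$: it computes the center of the rank-$2$ valuation $(\abs{\blank}_1, -v_1)$, shows (again by direct inspection on the $S$-expansion) that its preimage under $g^\#$ is $(\m_C, S, T)$, and concludes via \cref{specialization}\cref{specialization-points} together with the observation that $q_1$ is the unique point of $\abs{Y_{\zeta_1}}$ mapping to $q$ under $\nu$. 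Once you fix the gap for $z_1$ by computing full preimages, the $y_1$ identification can be dispatched by the same two-line computation rather than by unwinding \cref{vanishing-order-rings}.
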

\begin{proof}
    First, we note that $\abs{\blank}_i$ and $\bigl(\abs{\blank}_i, -v_i(\blank)\bigr)$ are multiplicative thanks to \cref{lemma:image under completion is regular} and \cref{multiplicativity of the v_S and v_T}.
    Thus, one easily deduces that they define valuations on $A$. Furthermore, it is evident from the definition that they are continuous and bounded above by $1$ on $A= \bigl(A\bigl[\frac{1}{\varpi}\bigr]\bigr)^\circ$.
    Therefore, all these valuations define elements of $\cX_\eta = \Spa\bigl(A\bigl[\frac{1}{\varpi}\bigr], A\bigr)$. 

    By the \'etaleness of $g$, we may replace $\cX$ by an open neighborhood of $q$ to ensure that $\zeta_1$
    is the only preimage of the open prime ideal $(\m_C, T) \subset \frac{\O_C\langle S, T\rangle}{(ST - \pi)}$,
    and that $q$ is the only preimage of the open maximal ideal $(\m_C, S, T) \subset \frac{\O_C\langle S, T\rangle}{(ST - \pi)}$.
    Then \cref{lemma:local-ring-at-generic-point-in-the-special-fiber}\cref{lemma:local-ring-at-generic-point-in-the-special-fiber-0} and \cref{construction:specialization} imply that, in order to show that the point $z_1$ corresponds to $\abs{\blank}_1$, it suffices to show that 
    \[
    \p_1\coloneqq \abs{\blank}_1^{-1}(\Gamma_{C, < 1}) \cap A = (\m_C, T) \cdot A.
    \]
    Since $g$ is \'etale and $g^{-1}(g(\zeta_1))=\{\zeta_1\}$, it suffices to show $g^{\#, -1}(\p_1) = (\m_C, T) \subset \frac{\O_C\langle S, T\rangle}{(ST - \pi)}$,
    which can be seen by direct inspection.

    Now we show that the point $y_1$ corresponds to the valuation $\bigl(\abs{\blank}_1, -v_1(\blank)\bigr)$.
    The previous step directly implies that $\bigl(\abs{\blank}_1, -v_1(\blank)\bigr)$ lies in $\ov{\{z_1\}}$. Therefore, \cref{specialization}\cref{specialization-points} and \cref{construction:specialization} imply that it suffices to show that
    \[
    \p'_1\coloneqq \Bigl(\bigl(\abs{\blank}_1, -v_1(\blank)\bigr)^{-1}\bigl((\Gamma_{C} \times \ZZ)_{< 1}\bigr)\Bigr) \cap A = (\m_C, S, T) \cdot A.
    \]
    The same argument as above reduces the question to the case $A=\frac{\O_C\langle S, T\rangle}{(ST - \pi)}$, where it can be seen directly from the definition. 
\end{proof}

\subsection{Line bundles on semi-stable formal curves}

In this subsection, we collect some results about line bundles on rig-smooth semi-stable formal $\O_C$-curves. Some of these results might be well-known to the experts. However, since these results play an important role in \cref{section:analytic-trace} and do not seem to appear in the literature, we decide to provide full proofs. We recall that $\varpi \in \O_C$ is a fixed pseudo-uniformizer. 

We start with the case of more general smooth formal $\O_C$-schemes. In this case, we show that any line bundle on generic fiber can be extended to a line bundle integrally. 

\begin{lemma}\label{lemma:perfect-complex} Let $\cX$ be a smooth formal $\cO_C$-scheme (resp.~smooth $\cO_C$-scheme), and let $\F$ be an adically quasi-coherent (resp.~quasi-coherent) $\pi$-torsionfree $\O_{\cX}$-module of finite type. Then $\F$ is a perfect complex.
\end{lemma}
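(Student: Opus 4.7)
The question is Zariski-local on $\cX$, so I may assume $\cX = \Spf A$ (resp.\ $\Spec A$) for some smooth finitely presented $\cO_C$-algebra $A$, and $\F$ corresponds to a finitely generated $\pi$-torsion-free $A$-module $M$. The goal is to produce a finite resolution of $M$ by finite projective $A$-modules.

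My strategy is to lift a resolution from the special fiber. The mod-$\pi$ reduction $\bar A \colonequals A/\pi$ is a smooth $k_C$-algebra, so it is noetherian, regular, and of some finite Krull dimension $d$; hence the finitely generated $\bar A$-module $\bar M \colonequals M/\pi M$ admits a finite projective resolution of length at most $d$. After further Zariski localization on $\cX$ if needed, I can take this resolution to be a complex $P_\bullet = [P_d \to \cdots \to P_0] \to \bar M$ with each $P_i = \bar A^{n_i}$ finite free. I would then lift $P_\bullet$ to a bounded complex $\widetilde P_\bullet = [A^{n_d} \to \cdots \to A^{n_0}]$ of finite free $A$-modules equipped with a map $\widetilde P_\bullet \to M$ reducing to $P_\bullet \to \bar M$ modulo $\pi$. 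The existence of such a lift is standard: the freeness of the $\widetilde P_i$ allows one to lift the differentials and the augmentation set-theoretically, and the failure of the lifted arrows to compose to zero lies in $\pi$, so an inductive correction using the $\pi$-adic completeness of $A$ (formal case) or a truncation-and-iteration argument (scheme case) produces an honest complex.

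It then remains to check that $\widetilde P_\bullet \to M$ is a quasi-isomorphism, which gives the desired finite free resolution. Let $C$ be the cone of this map; this is a bounded complex of finitely generated $A$-modules. Since $M$ is $\pi$-torsion-free and the $\widetilde P_i$ are $A$-flat, the derived reduction $C \otimes^L_A A/\pi$ computes the cone of $P_\bullet \to \bar M$, which vanishes by construction. In the formal case, $\pi$ lies in the Jacobson radical of $A$, so derived Nakayama immediately gives $C \simeq 0$. In the scheme case one additionally observes that $A\bigl[\tfrac{1}{\pi}\bigr]$ is a smooth finite type $C$-algebra, hence noetherian regular of global dimension at most $d$, so $M\bigl[\tfrac{1}{\pi}\bigr]$ is a perfect $A\bigl[\tfrac{1}{\pi}\bigr]$-complex; combining perfectness over $A/\pi$ (where it is already established via the resolution $P_\bullet$) with perfectness over $A\bigl[\tfrac{1}{\pi}\bigr]$ and gluing along the $\pi$-adic formal completion via a Beauville--Laszlo style descent for perfect complexes upgrades the conclusion to $A$ itself.

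The main technical obstacle is the scheme case: because $\pi$ does not lie in the Jacobson radical of $A$, the derived Nakayama argument no longer concludes directly, and one must instead combine the two local statements (over $A/\pi$ and over $A\bigl[\tfrac{1}{\pi}\bigr]$) via an arithmetic fracture square. Once this descent is in place, the argument in both settings is formally identical, and the resulting resolution $\widetilde P_\bullet \xrightarrow{\sim} M$ exhibits $\F$ as a perfect complex.
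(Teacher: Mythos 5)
Your approach is genuinely different from the paper's, but it has a gap at the very first step that I don't think you can patch.

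You assert that $\bar A \colonequals A/\pi$ is a smooth $k_C$-algebra, hence noetherian regular of finite Krull dimension. This is false: $A/\pi$ is a smooth $\cO_C/\pi$-algebra, and $\cO_C/\pi \neq k_C$ since $\pi$ is merely a pseudo-uniformizer rather than a generator of $\fm_C$. Because the value group $\Gamma_C$ is divisible, $\fm_C/\pi\cO_C$ is \emph{not} finitely generated (any finite set of generators $y_1,\dotsc,y_k$ would bound absolute values by $\max_i \abs{y_i} < 1$, contradicting divisibility), so $\cO_C/\pi$ is a non-noetherian local ring with locally nilpotent, non-nilpotent maximal ideal, and consequently $A/\pi$ is not noetherian. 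Thus the claim that $\bar M$ admits a finite projective (let alone free) resolution of bounded length does not follow from regularity, and the resolution-lifting strategy is dead in the water. The intended "special fiber in the sense of residue field" is $A/\fm_C A$, which \emph{is} smooth over $k_C$, but then $\fm_C$ is not finitely generated, $A$ is not $\fm_C$-adically complete, and the lifting step breaks down for a different reason.

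Your Beauville--Laszlo gluing idea in the scheme case also quietly conflates $A/\pi$ with the $\pi$-adic completion $\hat A$: the descent statement compares modules over $\hat A$ and $A\bigl[\frac{1}{\pi}\bigr]$, not $A/\pi$ and $A\bigl[\frac{1}{\pi}\bigr]$, and perfectness over $\hat A$ is what one would need — but that is essentially the formal case, whose proof relies on the faulty resolution mod $\pi$. For comparison, the paper sidesteps all of this by directly invoking a perfectness criterion relative to the base valuation ring (\cite[\href{https://stacks.math.columbia.edu/tag/068X}{Tag 068X}]{stacks-project}): smoothness gives flatness and finite presentation of $\cO_C \to R$, $\pi$-torsionfreeness of $M$ gives flatness over $\cO_C$ (a valuation ring, hence of global dimension $\leq 1$), and finite generation plus coherence of $R$ upgrades to finite presentation. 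No resolutions over a non-noetherian quotient ring are needed at all.
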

\begin{proof}
    The claim is local on $\cX$. So we can assume that $\cX =\Spf R$ (resp.~$\cX =\Spec R$) and $\F = M^{\Updelta}$ (resp.~$\F = \widetilde{M}$) for some finitely generated $\varpi$-torsionfree $R$-module $M$. We wish to show that $M$ is a perfect $R$-complex. We prove this by verifying all conditions of  \cite[\href{https://stacks.math.columbia.edu/tag/068X}{Tag 068X}]{stacks-project}, with $A \to B$ in \textit{loc.\ cit.}~being $\cO_C \to R$. Conditions (1) and (2) follows from the assumption on $R$, and condition (4)
    follows from the fact that $M$ is $\varpi$-torsionfree and thus $\O_C$-flat (see \cite[\href{https://stacks.math.columbia.edu/tag/0539}{Tag 0539}]{stacks-project}).
    In order to check condition (3), we first note that $R$ is coherent,
    see \cite[Prop.~1.3]{BL1} (resp.~\cite[Cor.~0.9.2.8]{FujKato}).
    Hence, it suffices to show $M$ is finitely presented over $R$.
    Since $M$ is finitely generated over $R$ and flat over $\cO_C$,
    the result follows from \cite[Th.~7.3/4]{B} (resp.~\cite[\href{https://stacks.math.columbia.edu/tag/053E}{Tag 053E}]{stacks-project}). 
\end{proof}

\begin{lemma}
\label{formal version:line bundle has integral model lemma}
Let $\cX$ be a quasi-compact smooth formal $\cO_C$-scheme (resp.~a quasi-compact smooth $\cO_C$-scheme).
Then the natural map
$\Pic(\cX) \to \Pic(\cX_\eta)$ is surjective.
\end{lemma}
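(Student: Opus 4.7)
Given a line bundle $\cal{L}$ on $\cX_\eta$, the plan is to construct a coherent, $\varpi$-torsionfree $\cO_\cX$-module $\mathscr{F}$ with $\mathscr{F}_\eta \simeq \cal{L}$, and then invoke \cref{lemma:perfect-complex} together with smoothness of $\cX$ to conclude that $\mathscr{F}$ is locally free of rank $1$.

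I would first construct the extension locally: cover $\cX$ by a finite open affine cover $\{\cU_i\}$ with $\cU_i = \Spf A_i$ (or $\Spec A_i$ in the scheme case). On each $\cU_i$, the restriction $\cal{L}\bigl|_{U_i}$ corresponds to a finitely generated projective $A_i\bigl[\tfrac{1}{\varpi}\bigr]$-module $M_i$ of rank $1$; picking a finite set of generators of $M_i$ and taking the $A_i$-submodule they generate, then quotienting by $\varpi$-power torsion, yields a finitely generated $\varpi$-torsionfree $A_i$-module $N_i$ with $N_i\bigl[\tfrac{1}{\varpi}\bigr] = M_i$. The associated coherent $\cO_{\cU_i}$-modules $\mathscr{F}_i$ need not agree on overlaps $\cU_{ij}$, but they are commensurable there (related by $\varpi$-power multiples), so passing to the reflexive hull $\mathscr{F}_i^{\vee\vee}$---which, in our smooth setting, is canonically determined by the generic fiber $\cal{L}\bigl|_{U_i}$---produces a canonical extension that glues to a global coherent $\varpi$-torsionfree $\cO_\cX$-module $\mathscr{F}$ with $\mathscr{F}_\eta \simeq \cal{L}$.

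Once $\mathscr{F}$ is in hand, \cref{lemma:perfect-complex} shows that $\mathscr{F}$ is a perfect $\cO_\cX$-complex. Since $\mathscr{F}$ is $\cO_C$-flat (being $\varpi$-torsionfree) and has generic rank $1$, a standard argument with Nakayama's lemma in the local rings $\cO_{\cX, x}$---which are regular by smoothness of $\cX$---shows that a perfect reflexive sheaf of generic rank $1$ is locally free of rank $1$. Hence $\mathscr{F}$ is a line bundle on $\cX$ restricting to $\cal{L}$, as desired. I expect the hardest step to be the gluing of the local extensions $\mathscr{F}_i$, which are not canonical; smoothness of $\cX$ enters precisely here, ensuring that reflexive hulls behave well and yield a genuine coherent sheaf on $\cX$ rather than only on some admissible formal blow-up (as the general Raynaud--Bosch--L\"utkebohmert theorem would provide).
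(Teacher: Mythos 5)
Your overall strategy (extend $\cal{L}$ integrally to a $\varpi$-torsionfree coherent sheaf, then invoke \cref{lemma:perfect-complex}) matches the paper's, but the two steps where you diverge both have real gaps.

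First, the gluing. You cite no result for the claim that passing to reflexive hulls $\mathscr{F}_i^{\vee\vee}$ produces canonically matching modules on overlaps. The local rings of a smooth formal $\cO_C$-scheme are not noetherian (since $\cO_C$ is a general rank-$1$ valuation ring), so the standard facts about reflexive sheaves over regular noetherian rings (e.g.\ that two commensurable reflexive rank-$1$ lattices agreeing in codimension $1$ coincide) are unavailable here. The paper avoids this entirely by citing the Raynaud--Bosch--L\"utkebohmert--Fujiwara--Kato theorem \cite[Cor.~II.5.3.3]{FujKato} for a \emph{global} finite type extension $\cal{L}_0$, with no gluing needed.

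Second, the conclusion. Even granting a global $\varpi$-torsionfree, perfect, ``reflexive'' $\mathscr{F}$ of generic rank $1$, your claimed ``standard Nakayama argument'' that such a sheaf is locally free is not standard in this setting. The local rings $\cO_{\cX,x}$ are coherent but not regular noetherian, and there are finitely generated $\varpi$-torsionfree $\cO_C\langle T\rangle$-modules of generic rank $1$ --- e.g.\ the ideal $(T,\varpi^{1/2})$ when $\cO_C$ is non-discretely valued --- that are not free, so the conclusion genuinely needs justification and the reflexivity you are invoking is doing all the work in an unverified way. The paper's decisive step, which you missed, is to \emph{not} try to show $\cal{L}_0$ is a line bundle: instead one applies the Knudsen--Mumford determinant functor \cite[Th.~2]{KM76} to the perfect complex $\cal{L}_0$, obtaining a genuine line bundle $\det(\cal{L}_0)$ whose generic fiber is $\det(\cal{L}) \simeq \cal{L}$ because determinants commute with base change. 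This sidesteps both of your gaps at once.
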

\begin{proof}
Let $\cal{L}$ be a line bundle on $\cX_\eta$. Pick any adically quasi-coherent (resp.~quasi-coherent) $\O_{\cX}$-module of finite type $\cal{L}_0$ such that $\cal{L}_{0, \eta} \simeq \cal{L}$ (such $\cal{L}_0$ exists by virtue of \cite[Cor.~II.5.3.3]{FujKato} and its algebraic counterpart). Then we replace $\cal{L}_0$ with its $\varpi$-free torsionfree quotient $\cal{L}_{0}/\cal{L}_0[\varpi^\infty]$ to assume that $\cal{L}_0$ is $\pi$-torsionfree adically quasi-coherent (resp.~quasi-coherent) $\O_{\cX}$-module of finite type. Now \cref{lemma:perfect-complex} ensures that $\cal{L}_0$ is a perfect complex on $\cX$. We use \cite[Th.~2]{KM76} to construct $\det(\cal{L}_0)$ which is the desired line bundle over $\cX$ whose generic fibre is $\det(\cal{L}) \simeq \cal{L}$ as the determinant construction commutes with base change.
\end{proof}

Now we discuss line bundles on certain rig-smooth semi-stable formal curves over $\O_C$. The exact analogue of \cref{formal version:line bundle has integral model lemma} is probably false in this case. Instead, we prove a weaker substitute showing that we can always trivialize a line bundle $\cal{L}\in \rm{Pic}(\cX_\eta)$ \'etale localy on $\cX$; this result is good enough for all applications in this paper. We start with the following basic lemma: %
\begin{lemma}
\label{compatibility of specialization and etale push/pull} 
Let $f\colon \cX \to \cY$ be an \'etale morphism of admissible formal $\O_C$-schemes, let $f_\eta\colon X \to Y$ be its generic fiber, and let $y\in Y(C)$ be a classical point. Then, for any two points $x_1, x_2\in f_\eta^{-1}(y) \subset X(C)$, we have $\sp_{\cX}(x_1)\neq \sp_{\cX}(x_2)$.
\end{lemma}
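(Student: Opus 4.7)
My plan is to exploit the henselian property of the complete valuation ring $\O_C$ by base-changing $f$ along $\widetilde{y}$. First, extend $y$ and each $x_i$ to sections $\widetilde{y}\colon \Spf\O_C \to \cY$ and $\widetilde{x}_i\colon \Spf\O_C \to \cX$ via the universal property of the adic generic fiber, so that $f \circ \widetilde{x}_i = \widetilde{y}$ and $\sp_\cY(y)$, $\sp_\cX(x_i)$ are the images of the closed point of $\Spf\O_C$.

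The first observation is that $y_0 \colonequals \sp_\cY(y)$ is a closed point of $\cY_s$ with residue field $\kappa(y_0) = k_C$. Working locally with $\cY = \Spf B$ for a topologically finitely presented $\O_C$-algebra $B$, the prime $\p \subset B$ corresponding to $y_0$ is the preimage of $\fm_C$ under $\widetilde{y}^*\colon B \to \O_C$; it contains $\fm_C B$, so $B/\p$ is a finitely generated $k_C$-algebra, and as it also embeds in the field $k_C$, it must equal $k_C$ itself.

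Next, form $\cZ \colonequals \cX \times_\cY \Spf\O_C$, which is \'etale over $\Spf\O_C$. By the universal properties of the fiber product and of the adic generic fiber, $\cZ(\Spf\O_C)$ is in natural bijection with $f_\eta^{-1}(y)$, and the specialization $\sp_\cX(x_i)$ is the image of $\sp_\cZ(\widetilde{x}_i)$ under $\cZ \to \cX$. Since $\kappa(y_0) = k_C$ is algebraically closed, $\cZ_s = f_s^{-1}(y_0)$ is a disjoint union of copies of $\Spec k_C$, and the canonical map $\cZ_s \to \cX_s$ is the inclusion of the scheme-theoretic fiber --- in particular, injective on points.

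The final step uses that $(\O_C, \fm_C)$ is a henselian pair, since $\O_C$ is a rank-$1$ complete valuation ring. Combined with the \'etaleness of $\cZ \to \Spf\O_C$, this gives a bijection $\cZ(\Spf\O_C) \xrightarrow{\sim} \cZ_s(k_C)$ implemented by specialization. Hence if $\sp_\cX(x_1) = \sp_\cX(x_2)$, the injectivity of $\cZ_s \to \cX_s$ forces $\sp_\cZ(\widetilde{x}_1) = \sp_\cZ(\widetilde{x}_2)$, and the henselian bijection then gives $\widetilde{x}_1 = \widetilde{x}_2$, so $x_1 = x_2$. The one technical subtlety is to set up the henselian lifting in the adic setting, where the ideal $\fm_C$ need not be finitely generated; one can either reduce it to the $\varpi$-adic henselianness of $\O_C$ (immediate from $\varpi$-completeness) by an approximation argument on the \'etale equations locally defining $\cZ$, or invoke general henselian-pair machinery for $(\O_C, \fm_C)$.
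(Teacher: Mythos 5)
Your proof is correct and takes essentially the same route as the paper: base change along the section $\Spf\O_C\to\cY$ determined by $y$, use functoriality of the specialization map, and exploit the strict henselianity of $(\O_C,\fm_C)$ to conclude that the resulting étale formal $\O_C$-scheme $\cZ$ splits as a disjoint union of copies of $\Spf\O_C$. The paper's proof simply asserts this disjoint-union decomposition and declares the claim trivial, whereas you make the same mechanism explicit through the henselian bijection $\cZ(\Spf\O_C)\xrightarrow{\sim}\cZ_s(k_C)$ together with the injectivity of the fiber inclusion $\cZ_s\hookrightarrow\cX_s$.
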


\begin{proof}
The point $y\in Y(C)$ uniquely extends to a morphism $i_y\colon \Spf \O_C \to \cY$. Since the formation of the specialization map is functorial with respect to morphisms of formal models, we can freely replace $\cY$ with $\Spec \O_C$ and $\cX$ with $\cX \times_{\cY} \Spf \O_C$. In this case, $\cX$ is of the form $\cX =\bigsqcup_{i\in I} \cY$, so the claim becomes trivial. 
\end{proof}

\begin{lemma}\label{lemma:local-trivialization-semistable-curves}
Let $(\cX, q)$ be a pointed rig-smooth semi-stable admissible formal $\O_C$-scheme of pure relative dimension $1$, and let $L$ be a line bundle on $X\coloneqq \cX_\eta$. Then there is an \'etale morphism $f\colon (\cU, u) \to (\cX, q)$ of pointed admissible formal $\O_C$-schemes such that $f_\eta^* L \simeq \O_{\cU_\eta}$.
\end{lemma}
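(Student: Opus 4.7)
The argument splits into two cases based on whether $q$ is a smooth or nodal point of $\cX$. In the smooth case, \cref{defn:ss-formal} supplies an open formal neighborhood $\cV \subseteq \cX$ of $q$ that is smooth over $\Spf \O_C$, so I would apply \cref{formal version:line bundle has integral model lemma} to obtain a line bundle $\cal{L}_0$ on $\cV$ with generic fiber $\cal{L}_{0, \eta} \simeq L|_{\cV_\eta}$. Since any line bundle on an affine formal scheme is Zariski-locally trivial, shrinking $\cV$ to a Zariski-open neighborhood of $q$ on which $\cal{L}_0$ trivializes produces the desired $\cU$.

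The nodal case requires more work. Using \cref{defn:ss-formal} again, I first reduce to the situation where $\cX = \Spf A$ admits a pointed \'etale morphism $h \colon (\cX, q) \to (\cY_0, (0, 0))$ with $\cY_0 = \Spf \O_C \langle S, T \rangle / (ST - \alpha)$ and $\alpha \in \fm_C \smallsetminus \{0\}$ (the nonvanishing of $\alpha$ coming from the rig-smoothness of $\cX$). The key observation is that $\Pic\bigl((\cY_0)_\eta\bigr) = 0$: the coordinate ring $C \langle S, T \rangle / (ST - \alpha) \cong C \langle S, \alpha S^{-1} \rangle$ is a principal ideal domain, so every finitely generated projective module over it is free. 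In particular, the statement already holds with $\cU = \cY_0$ when $\cX = \cY_0$, and the remaining task is to propagate this triviality through the \'etale morphism $h$.

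My plan would be to consider the filtered colimit of $\Pic(\cU_\eta)$ over pointed \'etale neighborhoods $(\cU, u) \to (\cX, q)$. Because $h$ is \'etale, this system is equivalent to the system of pointed \'etale neighborhoods of $(0, 0)$ in $\cY_0$, and the colimit is therefore intrinsic to the pointed strict Henselization at the node. Its completed local ring is $\O_C \llbracket S, T \rrbracket / (ST - \alpha)$ by \cref{lemma:equality-of-completions}. An explicit analysis at the node, using the regular-element decomposition of \cref{regular-elements} in conjunction with \cref{lemma:image under completion is regular} and a divisor-class calculation, should show that every line bundle on the generic fiber of this completed local ring is trivial, hence that the class $[L]$ dies in the colimit. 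A standard noetherian approximation argument then produces a specific finite-level pointed \'etale neighborhood $(\cU, u) \to (\cX, q)$ on which $f_\eta^* L \simeq \O_{\cU_\eta}$.

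The principal obstacle is the triviality of line bundles in the colimit: this amounts to understanding rank-one reflexive modules on $\O_C \llbracket S, T \rrbracket / (ST - \alpha)$ and verifying that their pullback to the generic fiber (after inverting $\varpi$) is free. For this, the explicit normal forms furnished by \cref{regular-elements} and the approximation results relating $R$ and its completion $\widetilde{R}$ should supply the needed algebraic handle, with the two branches $V(S)$ and $V(T)$ of the node serving as the fundamental divisor classes to be killed by \'etale localization.
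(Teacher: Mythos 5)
Your smooth case agrees with the paper's and is fine. Your nodal case is a genuinely different strategy from the paper's, and it contains a real gap.

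The paper's nodal argument stays at a finite level throughout. After reducing (as you do) to the situation where $\cX = \Spf A_0$ is affine, connected, and equipped with a pointed \'etale chart $g\colon (\cX, q) \to (\cY_\pi, 0)$ with $g^{-1}(\{0\}) = \{q\}$, it uses that $X = \cX_\eta$ is a smooth connected affinoid curve, hence $\O_X(X)$ is a Dedekind domain and $L = \O_X(D)$ for a divisor $D$. It then pushes $D$ forward to $Y_\pi$, invokes the classical vanishing $\Pic(Y_\pi) = 0$ (which you also cite, for the annulus) to write the pushforward as the divisor of a meromorphic function $h$, and observes that $L \simeq \O_X\bigl(D - f^{-1}(D')\bigr)$ where $D' = V(h)$. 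The crucial geometric step is then \cref{compatibility of specialization and etale push/pull}: since $g^{-1}(\{0\}) = \{q\}$, the divisor $D - f^{-1}(D')$ is supported at classical points whose specializations avoid $q$, so one shrinks to a Zariski-open $\cU \ni q$ missing $\sp_\cX(\supp(D - f^{-1}(D')))$. No colimit, no henselization, no completion is ever invoked.

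Your plan, by contrast, routes through the colimit of $\Pic(\cU_\eta)$ over pointed \'etale neighborhoods and an asserted Picard vanishing for the generic fiber of a completed local ring. There are several problems here. First, the vanishing $\Pic\bigl(\widetilde{R}[\tfrac{1}{\varpi}]\bigr) = 0$ for $\widetilde{R} = \O_C\llbracket S, T\rrbracket/(ST - \alpha)$ is not established by anything in the paper, and it is not an obvious consequence of the ``regular-element'' material in the nodes subsection, which is developed to compare Gauss valuations, not to classify rank-one modules. Since $\O_C$ is a nondiscrete valuation ring, $\widetilde{R}$ is not noetherian, which undermines exactly the ``standard noetherian approximation argument'' you appeal to in the last step. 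Second, even the identification of the colimit is subtle: each $\O(\cU)$ is $\varpi$-complete (being an \'etale formal neighborhood), so $\colim_\cU \O(\cU)$ is a filtered colimit of $\varpi$-completions, which is neither the $\varpi$-completion of the henselization (the ring $R$ of \cref{formal-model-node-notation}) nor its $(S,T,\varpi)$-adic completion $\widetilde{R}$; passing between these requires precisely the kind of analysis (\cref{Telling integrality from its completion}, \cref{lemma:image under completion is regular}) that the paper develops for a different purpose. You would need to independently establish that the relevant Picard group commutes with each of these limit/colimit operations, and none of this is automatic. The paper's divisor push-pull argument bypasses all of these analytic difficulties by working with a single concrete affinoid curve $Y_\pi$ for which the Picard vanishing is a classical fact.
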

\begin{proof}
    If $q\in \cX$ is a smooth point, the result immediately follows from \cref{formal version:line bundle has integral model lemma} (in fact, one can choose $f$ to be an open immersion). Therefore, we can assume that $q\in \cX$ is a nodal point. In this case, there is a pointed \'etale morphism $(\cU, u) \to (\cX, q)$ together with a pointed \'etale morphism 
    \[
    g\colon (\cU, u) \to (\cY_\pi, 0) \coloneqq \Bigl(\Spf \frac{\O_C\langle S, T\rangle}{(ST - \pi)} ,(0, 0)\Bigr)
    \]
    for some pseudo-uniformizer\footnote{The assumptions that $q\in \cX$ is a nodal point and that $\cX$ is rig-smooth imply that $\pi$ must be a pseudo-uniformizer.} $\pi\in \m_C\smallsetminus \{0\}$. Therefore, we can replace $(\cX, q)$ with $(\cU, u)$ to assume that there is an \'etale morphism $g\colon (\cX, q) \to (\cY_\pi, 0)$. Also, we can shrink $\cX$ around $q$ to assume that $g^{-1}(\{0\}) = \{q\}$. 

    Furthermore, we can assume that $\cX= \Spf A_0$ is affine and connected, so $X = \Spa(A, A^\circ)$ with $A=A_0\bigl[\tfrac{1}{\varpi}\bigr]$. Our assumptions on $\cX$ imply that $X$ is a smooth affinoid rigid-analytic curve. Furthermore, \cite[Lem.~B.12]{Z-thesis} ensures that $X$ is connected. In particular, $A = \O_X(X)$ is a Dedekind domain (see \cite[Th.~3.6.3]{FvdP04}). Thus, there is a Cartier divisor $D$ on $X$ such that $L = \O_X(D)$. 

    We put $Y_\pi \coloneqq \cY_{\pi, \eta}$. Then we consider the Cartier divisor $D'\coloneqq f_\eta(D)$ on $Y_\pi$ and the associated line bundle $L' \coloneqq \O_{Y_\pi}(D')$. Then \cite[Th.~2.1]{vdPut80} or \cite[Th.~2.2.9~(3)]{FvdP04} imply that $\rm{Pic}(Y_\pi)=0$. So $L'\simeq \O_{Y_\pi}$ and there is a meromorphic function $h\in \rm{Frac}\bigl(\O_{Y_\pi}(Y_\pi)\bigr)$ such that $D' = \rm{V}_{Y_\pi}(h)$. Therefore, we conclude that
    \[
    L \simeq L \otimes f^* (L')^\vee \simeq \O_X(D - f^{-1}(D')).
    \]
    Now \cref{compatibility of specialization and etale push/pull} and our assumption that $\{q\} = g^{-1}(\{0\})$ imply that $D - f^{-1}(D') = \sum_{x\in X(C)} a_x[x]$ such that this sum is finite and $\sp_{\cX}(x)\neq q$ if $a_x\neq 0$.  
    In other words, we can choose $D$ such that $q \notin \sp_{\cX}\bigl(\rm{Supp}(D)\bigr)$ and $\O_X(D)\simeq L$. Now we can choose an open subspace $q\subset \cU \subset \cX$ such that $\cU \cap  \sp_{\cX}\bigl(\rm{Supp}(D)\bigr) =\varnothing$, so $\restr{L}{\cU_\eta} \simeq \O_{\cU_\eta}$. This finishes the proof.
\end{proof}

\section{The trace map for smooth affinoid curves}\label{section:analytic-trace}

In this section, we build on the material developed in \cref{section:curves} and construct a trace morphism $\rm{H}^2_c(X, \mu_n) \to \Z/n\Z$ for any smooth affinoid curve.  

Throughout this section, we fix an algebraically closed nonarchimedean field $C$. We denote its ring of integers by $\O_C$, its maximal ideal by $\m_C\subset \O_C$, and its residue field by $k_C \colonequals \cO_C/\fm_C$. We also choose a pseudo-uniformizer $\varpi \in \O_C$ and an integer $n\in C^\times$. 

\subsection{Construction of the analytic trace map}
\label{Construction of the analytic trace map}

The main goal of this subsection is to construct the analytic trace morphism for any smooth affinoid curve over $C$.   

\begin{setup}
\label{setup:constructing analytic trace} We work with a smooth affinoid curve $X=\Spa(A, A^\circ)$ over $C$. We recall that \cref{lemma:universal-compactification-affinoid}, \cref{lemma:extra-points}, \cref{lemma:extra-points-curve-like}, and \cref{lemma:structure-curve-like-valuations} imply that $X$ admits a universal compactification $j \colon X \hookrightarrow X^c \colonequals \Spa(A,\cO_C[A^{\circ\circ}]^+)$ such that the complement $\abs{X^c} \smallsetminus \abs{X}$ consists of finitely many points $x_1,\dotsc,x_r$ corresponding to rank-$2$ valuations $v_{x_i} \colon k(x_i) \to \Gamma_C \times \Z \cup \{0\}$ and meets every connected component of $\abs{X^c}$.  By slight abuse of notation, we denote by the same letter the induced valuation on the henselized completed residue field
$v_{x_i}\colon \wdh{k(x_i)}^\h \to \bigl(\Gamma_C \times \Z\bigr) \cup \{0\}$.   
\end{setup}

We start by studying \'etale cohomology of $X$ and $\{x_i\}$ (considered as a pseudo-adic space) with coefficients in $\mu_n$. 

\begin{proposition}\label{cohomology-affine-curve} We have
\[ 
\Hh^i_c(X^c, \mu_n) \simeq \Hh^i(X^c,\mu_n) \simeq \Hh^i(X,\mu_n) \simeq \begin{cases}
  \mu_n(C)^{\# \pi_0(\abs{X})} \cong \left(\Z/n\Z\right)^{\# \pi_0(\abs{X})}  & i = 0 \\
  0 & i \geq 2
\end{cases} \]
and a short exact sequence
\begin{equation}\label{H1-interpretation}
0 \to A^\times/A^{\times,n} \to \Hh^1(X,\mu_n) \to \Pic(X)[n] \to 0.
\end{equation}
\end{proposition}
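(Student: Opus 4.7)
My plan splits into three parts: reducing cohomology of $X^c$ to cohomology of $X$, computing $\Hh^*(X, \mu_n)$ via the Kummer sequence, and establishing the key higher vanishings.

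Since $X^c$ is proper by construction of the universal compactification, compactly supported and usual cohomology agree, giving $\Hh^i_c(X^c, \mu_n) \simeq \Hh^i(X^c, \mu_n)$. For $j \colon X \hookrightarrow X^c$, the complement is a finite discrete set of rank-$2$ points (\cref{lemma:extra-points}), and every rank-$1$ point of $X^c$ already lies in $X$. Since $\mu_n$ is a constant overconvergent sheaf on $X^c$ (with stalk $\Z/n\Z$, as $n \in C^\times$), a stalkwise check at each boundary rank-$2$ point $x$ (where the stalk of $\rR j_*(\mu_n|_X)$ coincides with the stalk of $\mu_n$ at the rank-$1$ generization of $x$ in $X$) yields $\mu_n \xrightarrow{\sim} \rR j_*(\mu_n|_X)$ on $X^c$, hence $\Hh^i(X^c, \mu_n) \simeq \Hh^i(X, \mu_n)$.

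For the computation on $X$, the Kummer exact sequence $0 \to \mu_n \to \bf{G}_m \xrightarrow{n} \bf{G}_m \to 0$ on $X_\et$, together with the identifications $\Hh^0(X, \bf{G}_m) = A^\times$ and $\Hh^1(X, \bf{G}_m) = \Pic(X)$ from \cref{lemma:local-cohomology}, gives the main long exact sequence. For $\Hh^0(X, \mu_n)$: on each connected component of $X$, the ring $A$ is an integral domain over the algebraically closed field $C$, so all $n$-th roots of unity in $A$ come from $\mu_n(C)$, producing $\Hh^0(X, \mu_n) \simeq (\Z/n\Z)^{\#\pi_0(X)}$. The claimed short exact sequence for $\Hh^1(X, \mu_n)$ then falls out of the Kummer long exact sequence, provided I establish the higher vanishings $\Hh^i(X, \mu_n) = 0$ for $i \geq 2$.

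The vanishing $\Hh^i(X, \mu_n) = 0$ for $i \geq 2$ is the heart of the argument. From Kummer, it reduces to: (a) $\Pic(X)$ is $n$-divisible; (b) $\Hh^i(X, \bf{G}_m) = 0$ for $i \geq 2$. For (a), embed $X$ in a smooth proper rigid $C$-curve $\ov{X}$ via \cref{useful proposition on rigid curves}\cref{useful proposition on rigid curves-3}, which algebraizes to a smooth projective $C$-curve $\ov{X}^{\alg}$ by part~\cref{useful proposition on rigid curves-2}; rigid GAGA identifies $\Pic(\ov{X})$ with $\Z^{\#\pi_0} \oplus \mathrm{Jac}(\ov{X}^{\alg})(C)$. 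The Jacobian is divisible (abelian variety over an algebraically closed field), and the degree factor is killed in $\Pic(X)$ by divisors supported on the nonempty complement $\ov{X} \smallsetminus X$ (\cref{lemma:extra-points}\cref{lemma:extra-points-3}), so divisibility transfers via the surjection $\Pic(\ov{X}) \twoheadrightarrow \Pic(X)$. The main obstacle is (b), especially the $i=2$ case (Brauer vanishing): algebraically $\mathrm{Br}(\Spec A) = 0$ by Tsen's theorem via injection into the Brauer group of the function field of $\ov{X}^{\alg}$, but transferring this to the analytic setting is delicate because $\ov{X} \smallsetminus X$ is typically a union of open disks and annuli rather than a Zariski-closed subset. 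A workable route is to pass to a semi-stable formal model (\cref{useful proposition on rigid curves}\cref{useful proposition on rigid curves-5}), apply Noether normalization (\cref{lemma:good-noether}) to obtain a finite flat map $X \to \DD^1_C$, and combine the projection formula with explicit vanishings on $\DD^1_C$; degrees $i \geq 3$ follow from standard cohomological dimension bounds for adic curves (\cite[Cor.~2.8.3]{Huber-etale}).
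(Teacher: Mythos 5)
Your approach diverges from the paper's at the crucial step, the vanishing $\Hh^i(X,\mu_n) = 0$ for $i\ge 2$, and that is where the genuine gap lies. The paper proves this uniformly for all $n \in C^\times$ (including the mixed-characteristic $n=p$ case) by applying Elkik's approximation theorem to replace $A^\circ$ with the $\varpi$-completion of a finite-type $\cO_C$-algebra $B$, invoking the algebraic comparison \cite[Th.~3.2.1]{Huber-etale} to identify $\Hh^i(X,\mu_n)$ with the \'etale cohomology of a scheme ind-\'etale over the affine curve $\Spec B[1/\varpi]$, and then concluding by Artin--Grothendieck vanishing. Your strategy instead aims at $n$-divisibility of $\Pic(X)$ plus $\Hh^i(X,\GG_m)=0$ for $i\ge 2$, and this has two concrete problems. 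First, you acknowledge that the $i=2$ case (Brauer vanishing) is not established in the analytic setting; the proposed workaround---a finite flat map $X\to\DD^1_C$---only shifts the difficulty to $\DD^1_C$, where $\Hh^2(\DD^1_C,\mu_p)=0$ is equally nonobvious without something like the paper's algebraization, and you do not explain how a finite flat pushdown would yield vanishing (there is no trace of degree coprime to $n$ available). Second, \cite[Cor.~2.8.3]{Huber-etale} is a cohomological-dimension bound that applies only to primes $\ell$ coprime to the residue characteristic $p$; it does not cover $n=p$, which is precisely the case of interest for this paper, so your proposed handling of $i\ge3$ does not go through.

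A smaller issue: for $\Hh^i(X^c,\mu_n)\simeq\Hh^i(X,\mu_n)$ you argue that $\mu_n\to\rR j_*(\mu_n|_X)$ is an isomorphism by checking the stalk of $R^0 j_*$ at the rank-$2$ boundary points; this leaves the vanishing of $R^q j_*$ for $q>0$ unaddressed, whereas the paper cites \cite[Cor.~2.6.7~(ii)]{Huber-etale} directly for the full statement. Your divisibility argument for $\Pic(X)$ also silently uses surjectivity of $\Pic(\ov{X})\twoheadrightarrow\Pic(X)$, which is not automatic since $X\subset\ov{X}$ is not a Zariski-open subspace (its complement is a union of disks/annuli), though it does hold by a Dedekind-domain argument for smooth affinoid curves.
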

\begin{proof}
The first isomorphism follows from the fact that $X^c$ is proper over $\Spa(C, \O_C)$. The second isomorphism follows from \cite[Cor.~2.6.7~(ii)]{Huber-etale}. Since $C$ is algebraically closed, $\mu_n$ is non-canonically isomophic to the constant sheaf $\ud{\Lambda}$. Therefore, we conclude that $\Hh^0(X,\mu_n) \simeq \left(\mu_n(C)\right)^{\# \pi_0(\abs{X})}\cong \left(\Z/n\Z\right)^{\# \pi_0(\abs{X})}$.  

Now we show the vanishing of $\Hh^i(X,\mu_n)$ for $i \ge 2$.
Since $X$ is smooth of pure dimension $1$, Elkik's approximation theorem \cite[Th.~7, Rmk.~2, p.~587]{Elkik} lets us pick an $\cO_C$-algebra $B$ of finite type such that the $\varpi$-adic completion of $B$ is isomorphic to $A^\circ$ and $B\bigl[\tfrac{1}{\varpi}\bigr]$ is smooth over $C$ of pure dimension $1$ (see \cite[Lem.~B.5]{Z-thesis} for the dimension claim). 
Applying \cite[Th.~3.2.1, Ex.~3.1.13~iii)]{Huber-etale} to the decompleted Huber pair $(B\bigl[\tfrac{1}{\varpi}\bigr],B^+)$, where $B^+$ denotes the integral closure of $B$ in $B\bigl[\tfrac{1}{\varpi}\bigr]$,
we get
\[ \Hh^i(X,\mu_n) \simeq \Hh^i\bigl(\Spec B^\h_{(\varpi)}\bigl[\tfrac{1}{\varpi}\bigr],\mu_n\bigr); \]
the $B^\h_{(\varpi)}$ stands for the henselization of $B$ along the principal ideal $(\varpi)$.
In particular, $\Spec B^\h_{(\varpi)}\bigl[\tfrac{1}{\varpi}\bigr]$ is ind-\'etale over $\Spec B\bigl[\frac{1}{\varpi}\bigr]$. Thus, a standard approximation argument (see \cite[\href{https://stacks.math.columbia.edu/tag/09YQ}{Tag 09YQ}]{stacks-project}) and the Artin--Grothendieck vanishing theorem (see \cite[\href{https://stacks.math.columbia.edu/tag/0F0V}{Tag 0F0V}]{stacks-project}) imply that $\Hh^i\bigl(\Spec B^\h_{(\varpi)}\bigl[\tfrac{1}{\varpi}\bigr],\mu_n\bigr) = 0$ for $i\geq 2$.  

To get the short exact sequence describing $\Hh^1(X,\mu_n)$, we use the Kummer exact sequence
\[ 0 \to \mu_n \to \GG_m \xrightarrow{\cdot n} \GG_m \to 0 \]
on $X$ and the fact that $\Pic(X)$ can be identified with $\Hh^1(X,\GG_m)$ (see \cite[(2.2.7)]{Huber-etale}).
\end{proof}
\begin{lemma}
\label{cohomology-rk-2-pt}
Let $x_i \in \abs{X^c} \smallsetminus \abs{X}$, $i=1,\dotsc,r$, and let $\{x_i\}$ be the pseudo-adic space $(X^c, x_i)$ for each $i=1,\dots,r$.
Then 
\[ \Hh^i(\{x_i\},\mu_n) \simeq \begin{cases}
  \mu_n(C) \cong \Z/n\Z  & i = 0 \\
  \wdh{k(x_i)}^{\h,\times}/\Bigl(\wdh{k(x_i)}^{\h,\times}\Bigr)^n
  & i = 1 \\
  0 & i \geq 2
\end{cases}. \]
\end{lemma}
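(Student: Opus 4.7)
The plan is to identify $\Hh^j(\{x_i\},\mu_n)$ with the Galois cohomology of the henselized completed residue field $K \colonequals \wdh{k(x_i)}^\h$ at $x_i$, and then compute the latter directly using the structure of $K$ established in the previous subsections. For the identification, one uses Huber's comparison result for the \'etale topos of a pseudo-adic space supported at a single point (see e.g.~\cite[\S~2.5]{Huber-etale}), which provides a natural isomorphism $\Hh^j(\{x_i\},\F) \simeq \Hh^j(G_K,\F_{\ov{x}_i})$ for any torsion abelian sheaf $\F$ on $X^c_\et$, where $G_K$ denotes the absolute Galois group of $K$ and $\ov{x}_i$ is a geometric point above $x_i$.

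By \cref{lemma:extra-points-curve-like} and \cref{lemma:structure-curve-like-valuations}, $K$ is henselian with value group $\Gamma_K \simeq \Gamma_C \times \Z$ (in lex order) and algebraically closed residue field $k_C$. Since $C$ is algebraically closed with $n$ invertible in $C$ and $K \supset C$, any element of $K$ algebraic over $C$ lies in $C$, so $\mu_n(K) = \mu_n(C) \cong \Z/n\Z$ noncanonically, yielding the claim for $j=0$. For $j=1$, Hilbert's Theorem~90 gives $\Hh^1(G_K,\GG_m) = 0$, so the Kummer short exact sequence produces $\Hh^1(G_K,\mu_n) \simeq K^\times/(K^\times)^n$.

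For the vanishing in degrees $j\geq 2$, I would consider the rank-$1$ coarsening of the valuation on $K$ obtained by collapsing the convex subgroup $\{1\}\times \Z \subset \Gamma_C \times \Z$. Its value group $\Gamma_C$ is divisible (as $C$ is algebraically closed), and its residue field $\kappa'$ is a henselian discrete valuation field with algebraically closed residue field $k_C$. Divisibility of $\Gamma_C$ implies that for any $a\in K^\times$ and $n$ prime to $p\colonequals\charac(k_C)$ one can write $a = b^n u$ with $u$ of trivial valuation, so the Kummer extension $K(\sqrt[n]{a})/K$ is unramified with respect to the coarsened valuation. Hence the inertia of $G_K$ over $G_{\kappa'}$ is entirely pro-$p$ wild inertia. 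A Hochschild--Serre argument, using the vanishing of higher cohomology of pro-$p$ groups on $p'$-torsion modules, reduces the question to $\Hh^j(G_{\kappa'},\mu_n) = 0$ for $j\geq 2$; the latter is standard since the prime-to-$p$ quotient of $G_{\kappa'}$ is $\wdh{\Z}^{p'}$, of cohomological dimension $1$.

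The main obstacle I anticipate is locating and verifying the precise reference in \cite{Huber-etale} identifying the \'etale cohomology of the pseudo-adic point $\{x_i\}$ with the Galois cohomology of $K$; after that, the remaining arguments are essentially standard facts about henselian valuation fields with algebraically closed residue field.
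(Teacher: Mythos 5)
Your identification of $\Hh^j(\{x_i\},\mu_n)$ with the Galois cohomology of $K = \wdh{k(x_i)}^\h$ is correct in spirit; the paper uses a comparison theorem for \'etale topoi of pseudo-adic points supported at closed points (\cref{thm:topos-of-a-point}, ultimately \cite[Prop.~2.3.10]{Huber-etale}) rather than the geometric-point material of \cite[\S~2.5]{Huber-etale}, but this is a matter of locating the precise reference, which you flag. Your treatment of degrees $0$ and $1$ via Kummer theory and Hilbert~90 is the same as the paper's.

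The serious gap is in your vanishing argument for degrees $\ge 2$. You propose a rank-$1$ coarsening $v'$ of the valuation on $K$ and a Hochschild--Serre argument through the inertia group $I_{v'}$, which you correctly identify as pro-$p$ (with $p = \charac k_C$) because $\Gamma_C$ is divisible. But you then restrict attention to $n$ prime to $p$: the collapse of higher cohomology of the pro-$p$ group $I_{v'}$ only works for $p'$-torsion coefficients. The lemma, however, allows any $n \in C^\times$, and in mixed characteristic ($\charac C = 0$, $\charac k_C = p > 0$) this permits $p \mid n$ — indeed $n = p$ is the central case the paper cares about. For $p$-torsion coefficients, $\Hh^j(I_{v'},\mu_{p^a})$ need not vanish in positive degrees, and even granting $\rm{cd}_p(G_{\kappa'}) \le 1$ by Artin--Schreier and $\rm{cd}_p(I_{v'}) \le 1$ (which itself would require justification, e.g.\ that the wild inertia is free pro-$p$), Hochschild--Serre only gives $\rm{cd}_p(G_K) \le 2$, not $\le 1$. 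Your argument therefore does not yield the stated vanishing $\Hh^2 = 0$ in the case that actually matters.

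The paper instead invokes Huber's uniform bound $\rm{cd}_\ell\bigl(\wdh{k(x_i)}^\h\bigr) \leq \rm{tr.c}\bigl(\wdh{k(x_i)}/C\bigr) \le 1$ from \cite[Lem.~2.8.4, Lem.~1.8.6]{Huber-etale}, which holds for every prime $\ell$ invertible in $C$ including $\ell = p$; this is both shorter and avoids the characteristic dichotomy entirely. If you want to keep your coarsening approach, you would need a separate argument controlling $\rm{cd}_p$ of $K$ for $p$-torsion coefficients — and the cleanest way to do so is essentially to reprove the transcendence-degree bound.
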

\begin{proof}
    \cref{thm:topos-of-a-point} ensures that $\Hh^i(\{x_i\}, \mu_n)\simeq \Hh^i(\Spec \widehat{k(x_i)}^\h, \mu_n)$.
    To show the vanishing part $i \ge 2$ of the lemma, it suffices to prove that the $p$-cohomological dimension of $\widehat{k(x_i)}^\h$ is $\leq 1$ for every prime $p\mid n$. We note that \cite[Lem.~2.8.4]{Huber-etale} and \cite[Lem.~1.8.6]{Huber-etale} imply that
    \[
    \rm{cd}_p \bigl(\widehat{k(x_i)}^\h\bigr) \leq \rm{tr.c}\bigl(\widehat{k(x_i)}/C\bigr) \le 1.
    \]
    The computation of $\Hh^i(\{x_i\},\mu_n)$ for $i = 0,1$ is similar to the analogous computation in the proof of \cref{cohomology-affine-curve}, noting that $\Hh^1(\Spec \wdh{k(x_i)}^\h, \GG_m)=0$ as $\wdh{k(x_i)}^\h$ is a field. 
\end{proof}

\begin{proposition}
\label{comp-supp-cohomology-affine-curve}
Keep notation as above, 
let us further denote $s \colonequals \#\bigl(\pi_0(X)\bigr)$.
Then we have
\[ \Hh^0_c(X,\mu_n) = \Hh^{\geq 3}_c(X,\mu_n) = 0 \]
and a natural exact sequence
\[ 0 \to \mu_n(C)^{\oplus (r-s)} \to \Hh^1_c(X, \mu_n) \to \Hh^1(X^c,\mu_n) \to \bigoplus^r_{i=1} \wdh{k(x_i)}^{\h,\times}/\Bigl(\wdh{k(x_i)}^{\h,\times}\Bigr)^n \xrightarrow{\partial_X} \Hh^2_c(X,\mu_n) \to 0. \]
\end{proposition}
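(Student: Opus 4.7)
The plan is to apply the standard excision exact triangle for compactly supported cohomology to the pair consisting of the open immersion $j \colon X \hookrightarrow X^c$ and its closed pseudo-adic complement $Z \coloneqq (X^c, \{x_1,\dotsc,x_r\})$. Since $X^c$ is proper over $\Spa(C,\cO_C)$ and $Z$ is a closed pseudo-adic subspace of a proper space, compactly supported and ordinary \'etale cohomology agree on both, so the resulting long exact sequence reads
\[
\cdots \to \Hh^i_c(X,\mu_n) \to \Hh^i(X^c,\mu_n) \to \Hh^i(Z,\mu_n) \to \Hh^{i+1}_c(X,\mu_n) \to \cdots.
\]

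The groups on the right have already been computed: $\Hh^i(X^c,\mu_n) \simeq \Hh^i(X,\mu_n)$ by \cref{cohomology-affine-curve} and $\Hh^i(Z,\mu_n) = \bigoplus_{i=1}^r \Hh^i(\{x_i\},\mu_n)$ by \cref{cohomology-rk-2-pt}. Both vanish for $i \geq 2$, while $\Hh^1(Z,\mu_n)$ is identified with $\bigoplus_{i=1}^r \wdh{k(x_i)}^{\h,\times}/\bigl(\wdh{k(x_i)}^{\h,\times}\bigr)^n$. Plugging these inputs in at once yields $\Hh^i_c(X,\mu_n) = 0$ for $i \geq 3$ and, using $\Hh^2(X^c,\mu_n) = 0$, the surjectivity of the boundary map $\partial_X$.

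For the low-degree terms, because $X$ is open and dense in $X^c$ the natural map $\pi_0(X^c) \to \pi_0(X)$ is a bijection, so $\Hh^0(X^c,\mu_n) \simeq \mu_n(C)^{\oplus s}$, and the restriction to $\Hh^0(Z,\mu_n) \simeq \mu_n(C)^{\oplus r}$ records the values of a locally constant section at the points $x_i$. By the hypothesis in \cref{setup:constructing analytic trace} (coming from \cref{lemma:extra-points}) that every connected component of $\abs{X^c}$ contains at least one $x_i$, choosing a representative $x_{i(j)}$ in each component produces a splitting of the restriction map. Hence it is injective with cokernel non-canonically isomorphic to $\mu_n(C)^{\oplus(r-s)}$, giving $\Hh^0_c(X,\mu_n) = 0$ and the remaining portion of the claimed sequence. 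The argument is essentially formal given \cref{cohomology-affine-curve} and \cref{cohomology-rk-2-pt}; the only points requiring any real care are the invocation of excision in Huber's pseudo-adic formalism for the boundary $Z$ and the combinatorial identification of the degree-$0$ cokernel as a free $\mu_n(C)$-module of rank $r-s$.
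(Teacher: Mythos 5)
Your proof is correct and follows essentially the same route as the paper's: the excision triangle $\rR\Gamma_c(X,\mu_n) \to \rR\Gamma(X^c,\mu_n) \to \bigoplus_i \rR\Gamma(\{x_i\},\mu_n)$, the computations from \cref{cohomology-affine-curve} and \cref{cohomology-rk-2-pt}, and the fact that the boundary meets every component of $X^c$ to handle the degree-$0$ part. The paper reduces the $\Hh^0_c$ vanishing to the connected case while you argue globally and explicitly identify the cokernel, but the substance is identical; the only slip is a harmless reversal of direction in writing "$\pi_0(X^c)\to\pi_0(X)$" for the map induced by the open immersion $X\hookrightarrow X^c$.
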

\begin{proof} 
Everything except for the vanishing of $\rm{H}^0_c(X, \mu_n)$ follows from the long exact sequence in cohomology for the exact triangle
\begin{equation}\label{eqn:excision-sequence-proof} 
\rR\Gamma_c(X,\mu_n) \to \rR\Gamma(X^c,\mu_n) \to \bigoplus^r_{i=1}\rR\Gamma(\{x_i\},\mu_n) 
\end{equation}
coming out of the decomposition of pseudo-adic spaces $X \hookrightarrow X^c \hookleftarrow \{X, \coprod^r_{i = 1}  x_i \}$ (see \cref{rmk:excision}) together with \cref{lemma:pre-adic-dijoint-union}, \cref{cohomology-affine-curve} and \cref{cohomology-rk-2-pt}. Now we address vanishing of $\rm{H}^0_c(X, \mu_n)$. For this, we can assume that $X$ is connected, then \cref{lemma:extra-points}\cref{lemma:extra-points-3} ensures $X^c\smallsetminus X$ is non-empty. We pick some point $x\in X^c\smallsetminus X$. 
Then \cref{eqn:excision-sequence-proof} implies that the vanishing of $\rm{H}^0_c(X, \mu_n)$ follows from the injectivity of the map $\Z/n\Z \cong \rm{H}^0(X^c, \mu_n) \to \rm{H}^0(\{x\}, \mu_n) \cong \Z/n\Z$.
\end{proof}

As a nice application of \cref{comp-supp-cohomology-affine-curve}, we prove the following result:

\begin{corollary}\label{cor:first-cohomology-affinoids} Let $X$ be a smooth affinoid curve over $C$. Then $\rm{H}^1_c(X, \mu_n)$ is finite. Furthermore, $\rm{H}^1_c(\bf{D}^1, \mu_n)=0$.
\end{corollary}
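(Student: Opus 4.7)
We begin with the vanishing statement for $X = \bf{D}^1$. By \cref{lemma:compacitification-of-the-disc}, $\abs{\bf{D}^{1,c}} \smallsetminus \abs{\bf{D}^1}$ consists of a single rank-$2$ point, so $r = s = 1$ in the notation of \cref{comp-supp-cohomology-affine-curve} and $\mu_n(C)^{\oplus(r-s)} = 0$. Combined with the isomorphism $\Hh^1(\bf{D}^{1,c}, \mu_n) \simeq \Hh^1(\bf{D}^1, \mu_n)$ from \cref{cohomology-affine-curve}, the exact sequence of \cref{comp-supp-cohomology-affine-curve} then embeds $\Hh^1_c(\bf{D}^1, \mu_n)$ into $\Hh^1(\bf{D}^1, \mu_n)$. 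The latter vanishes by the Kummer sequence \cref{H1-interpretation}: every unit of $C\langle T\rangle$ has the form $c(1+f)$ with $c \in C^\times$ and $\|f\|<1$, both admitting $n$-th roots (the second via Hensel's lemma) since $C$ is algebraically closed and $n \in C^\times$; and $C\langle T\rangle$ is a principal ideal domain, so $\Pic(\bf{D}^1) = 0$.

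For the finiteness statement in general, by \cref{comp-supp-cohomology-affine-curve} combined with \cref{cohomology-affine-curve}, $\Hh^1_c(X, \mu_n)$ is an extension of a subgroup of $\Hh^1(X, \mu_n)$ by the finite group $\mu_n(C)^{\oplus(r-s)}$, so it suffices to prove $\Hh^1(X, \mu_n)$ is finite. Invoking \cref{useful proposition on rigid curves}\cref{useful proposition on rigid curves-3},\cref{useful proposition on rigid curves-4}, realize the inclusion $X \subset \ov{X}$ as the rigid generic fiber of an open immersion $\cX \subset \ov{\cX}$ of admissible formal $\cO_C$-schemes with $\ov{\cX}$ a semi-stable formal $\cO_C$-curve, and note that $\ov{\cX}_s \smallsetminus \cX_s$ is then a finite collection of closed points of $\ov{\cX}$. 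Cover these points by finitely many affine open formal subschemes $\cV_i \subset \ov{\cX}$, each of which, after sufficient shrinking, is either a formal smooth disc $\Spf \cO_C\langle T\rangle$ or a standard nodal model $\Spf \cO_C\langle S, T\rangle/(ST-\alpha_i)$ with $\alpha_i \in \fm_C$, in accordance with \cref{defn:ss-formal}. On rigid generic fibers, each $V_i \colonequals \cV_{i,\eta}$ is then a closed disc or a standard nodal affinoid, and $X \cap V_i$ is a disjoint union of closed annuli; an explicit Kummer computation, entirely analogous to the $\bf{D}^1$ case and using the explicit description of units and line bundles on these pieces, shows their $\mu_n$-cohomology to be finite. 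Moreover, by \cref{useful proposition on rigid curves}\cref{useful proposition on rigid curves-2},\cref{useful proposition on rigid curves-6}, $\ov{X}$ algebraizes to a smooth projective algebraic $C$-curve whose $\Hh^1$ with $\mu_n$-coefficients (the $n$-torsion of the Jacobian) is finite. The Mayer--Vietoris long exact sequence for the open cover $\ov{X} = X \cup \bigcup_i V_i$ then delivers the finiteness of $\Hh^1(X, \mu_n)$.

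The main obstacle lies in the local analysis at the nodal boundary points, namely in justifying that the generic fibers of the intersections $\cX \cap \cV_i$ are genuinely disjoint unions of closed annuli and in carrying out the Kummer computation for the standard nodal affinoid. Both rely on the detailed structure of the standard nodal model developed in \cref{formal-model-node-notation} and \cref{laurent-valuations}, particularly the identification of units via the two Gauss valuations of \cref{regular-elements}.
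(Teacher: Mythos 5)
Your proof has a genuine and fatal gap, rooted in a conflation of ``$n \in C^\times$'' (which the paper assumes) with ``$n$ is a unit in $\cO_C$'' (which it does not).

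For the $\DD^1$ case, you correctly observe that \cref{comp-supp-cohomology-affine-curve} and \cref{cohomology-affine-curve} give an injection $\Hh^1_c(\DD^1, \mu_n) \hookrightarrow \Hh^1(\DD^1, \mu_n)$, but you then claim $\Hh^1(\DD^1, \mu_n) = 0$. This is false whenever $C$ has residue characteristic $p>0$ and $p \mid n$: as the paper records in \cref{rmk:infinite-cohomology}, $\Hh^1(\DD^1, \mu_p)$ is \emph{infinite} for $C$ of mixed characteristic $(0,p)$. Your appeal to Hensel's lemma to extract an $n$-th root of $1+f$ with $\lVert f\rVert < 1$ is exactly where things break: for $g(x) = x^n - (1+f)$ one has $g'(1) = n$, and Hensel's lemma requires $\lVert g(1) \rVert < \lvert n \rvert^2$, which is not automatic when $\lvert n \rvert < 1$. (Compare the proof of \cref{distance and cycle class}\cref{distance and cycle class-2}, where a $p^r$-th root is extracted only under the much stronger bound $\lvert a \rvert < \lvert p^r(\zeta_p - 1)\rvert$.) Injectivity into a nonzero --- indeed infinite --- group proves nothing about vanishing.

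Your argument for general finiteness is doomed for the same reason: you reduce to showing $\Hh^1(X, \mu_n)$ is finite, but this is false, as the $\DD^1$ example already shows. Moreover, your proposed Mayer--Vietoris input fails, since the local pieces (discs, standard nodal annuli) have \emph{infinite} $\Hh^1$ with $\mu_n$-coefficients when $p \mid n$, so the ``explicit Kummer computation'' you invoke cannot come out finite. The paper's proof circumvents this entirely by a sharper observation: the map $\alpha_X \colon \Hh^1_c(X, \mu_n) \to \Hh^1(X, \mu_n)$ factors through $\Hh^1(\ov{X}, \mu_n)$ for a smooth proper compactification $j \colon X \hookrightarrow \ov{X}$ (since $j_!\mu_n \to \rR j_*\mu_n$ canonically factors through $\mu_{n,\ov{X}}$), and $\Hh^1(\ov{X}, \mu_n)$ is finite by \cref{useful proposition on rigid curves}\cref{useful proposition on rigid curves-6} and algebraic finiteness. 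One only needs the \emph{image} of $\alpha_X$ to be finite, not the target. Applied to $\DD^1 \hookrightarrow \PP^{1,\an}$, the same factorization gives vanishing because $\Hh^1(\PP^{1,\an},\mu_n) = 0$.
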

\begin{proof}
    \cref{comp-supp-cohomology-affine-curve} shows that it suffices to show that the image of the map $\alpha_X \colon \rm{H}^1_c(X, \mu_n) \to \rm{H}^1(X, \mu_n)=\rm{H}^1(X^c, \mu_n)$ is finite. For this, we use \cref{useful proposition on rigid curves}\cref{useful proposition on rigid curves-3} to get an algebraic proper compactifcation $j\colon X\hookrightarrow \overline{X}$. Then the natural morphism $j_!\mu_{n, X} \to \rm{R}j_*\mu_{n, X}$ canonically factors through $\mu_{n, \overline{X}}$. Therefore, $\alpha_X$ factors as the composition 
    \[
    \rm{H}^1_c(X, \mu_n) \to \rm{H}^1(\ov{X}, \mu_n) \to \rm{H}^1(X, \mu_n).
    \]
    Thus, it suffices to show that $\rm{H}^1(\ov{X}, \mu_n)$ is finite. This follows from \cref{useful proposition on rigid curves}\cref{useful proposition on rigid curves-6} and finiteness of algebraic \'etale cohomology. To see that $\rm{H}^1_c(\bf{D}^1, \mu_n)=0$, we use the embedding $j\colon \bf{D}^1 \to \bf{P}^{1, \an}$ and a similar argument to reduce to showing that $\rm{H}^1(\bf{P}^{1, \an}, \mu_n)$. This again follows from \cref{useful proposition on rigid curves}\cref{useful proposition on rigid curves-6} and the analogous claim in algebraic geometry.
\end{proof}

Now we are ready to start constructing the analytic trace map:

\begin{definition}\label{analytic-pre-trace} For a smooth affinoid $X$, we define an \textit{analytic pre-trace}
\[ 
\widetilde{t}_X \colonequals \sum_{i=1}^r \# \circ v_{x_i} \colon \bigoplus^r_{i=1} \wdh{k(x_i)}^{\h,\times}/\Bigl(\wdh{k(x_i)}^{\h,\times}\Bigr)^n \to \ZZ/n\ZZ, 
\]
where $\#$ is defined as in \cref{defn:reduction}.
\end{definition}

In order to justify the name ``pre-trace'' morphism, we recall the exact sequence 
\[
\rm{H}^1(X^c, \mu_n) \xrightarrow{\rm{res}} \bigoplus^r_{i=1} \wdh{k(x_i)}^{\h,\times}/\Bigl(\wdh{k(x_i)}^{\h,\times}\Bigr)^n \xrightarrow{\partial_X} \rm{H}^2_c(X, \mu_n) \to 0
\]
from \cref{comp-supp-cohomology-affine-curve}. This ensures that the analytic pre-trace morphism $\widetilde{t}_X$ descends to a morphism $t_X\colon \rm{H}^2_c(X, \mu_n) \to \Z/n\Z$ if and only if $\widetilde{t}_X$ vanishes on the image of $\rm{res}$. 
The following vanishing is one of the main results of this subsection: 

\begin{theorem}
\label{thm:trace-well-defined} 
In \cref{setup:constructing analytic trace}, the composition
\[
\rm{H}^1(X^c, \mu_n) \to \bigoplus^r_{i=1} \wdh{k(x_i)}^{\h,\times}/\Bigl(\wdh{k(x_i)}^{\h,\times}\Bigr)^n \xrightarrow{\widetilde{t}_X} \Z/n\Z
\]
is zero.
\end{theorem}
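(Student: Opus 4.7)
The plan is to use the Kummer short exact sequence
\[
0 \to A^\times/A^{\times,n} \to \Hh^1(X,\mu_n) \to \Pic(X)[n] \to 0
\]
from \cref{cohomology-affine-curve}, combined with the identification $\Hh^1(X^c,\mu_n) \simeq \Hh^1(X,\mu_n)$, to split the verification into the $A^\times$- and $\Pic(X)[n]$-pieces. Both source and target of $\widetilde{t}_X\circ\res$ decompose over $\pi_0(X)$, so I first reduce to $X$ connected.

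For the $A^\times$-piece, given $f \in A^\times$, the composition yields $\sum_i \#\circ v_{x_i}(f) \bmod n$, and I aim to prove that this integer equals zero already in $\ZZ$. By \cref{useful proposition on rigid curves}\cref{useful proposition on rigid curves-4}, choose a rig-smooth semistable formal $\O_C$-model $\ov{\cX}$ of the proper smooth compactification $\ov{X}$ of $X$, with an open formal subscheme $\cX\subset\ov{\cX}$ whose generic fibre is $X$. By \cref{specialization}\cref{specialization-comp}, the boundary points $\{x_i\}$ biject with the points of $\ov{\cX}_s^n \smallsetminus \cX_s^n$, and by \cref{vanishing-order-valuation} each $\#\circ v_{x_i}$ corresponds under this bijection to the order of vanishing at the matching point of the normalized special fibre. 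For each connected component $Y_\zeta \subseteq \ov{\cX}_s^n$ (a smooth proper $k_C$-curve), scaling $f$ by a suitable $c_\zeta \in C^\times$ produces a nonzero rational function $\overline{c_\zeta f}|_{Y_\zeta}$, whose divisor on $Y_\zeta$ has degree zero by the classical product formula. Splitting this divisor into interior ($\cX_s^n$) and boundary ($\ov{\cX}_s^n \smallsetminus \cX_s^n$) parts, the interior contributions vanish: at smooth interior points the invertibility of $f$ in $A$ forces the local reduction of $c_\zeta f$ to be a unit, while at the two preimages of each node of $\cX_s$ the contributions from the adjacent components cancel thanks to the compatibility of the $S$- and $T$-Gauss norms from \cref{laurent-valuations}. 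Summing over $\zeta$ gives $\sum_i \#\circ v_{x_i}(f) = 0$.

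For the $\Pic(X)[n]$-piece the strategy is parallel but more intricate. Given a lift $[L,\phi] \in \Hh^1(X,\mu_n)$ of a class $[L] \in \Pic(X)[n]$, \cref{lemma:local-trivialization-semistable-curves} guarantees that $L$ trivializes in an \'etale neighborhood of every point of $\ov{\cX}$. Representing $(L,\phi)$ through a Cech cocycle subordinate to such an \'etale cover, one can adjust the local trivializations so that each chosen generator of $L$ near a boundary point $x_i$ lies in the integral subring $\wdh{k(x_i)}^{h,+,\times}$, on which $\#\circ v_{x_i}$ automatically vanishes; the discrepancies accumulated by these adjustments contribute only to the $A^\times/A^{\times,n}$-piece and are absorbed by Case~1. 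The main obstacle is this Cech-level bookkeeping: carefully tracking how the trivialization choices interact with the formal-model dictionary from \cref{specialization} and with the description of the $\wdh{k(x_i)}^\h$ as curve-like affinoid fields from \cref{lemma:extra-points-curve-like} is the technically delicate part, and accounts for the ``extremely subtle'' nature of the argument advertised in the paper's introduction.
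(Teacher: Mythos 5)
Your approach is closest in spirit to the paper's second proof (\cref{second proof}): both use the formal-model dictionary from \cref{specialization}, the interpretation of $\#\circ v_{x_i}$ as an order of vanishing (\cref{valuation-order-comparison}), the vanishing at smooth interior points and the node-cancellation (\cref{smooth-value}, \cref{node-value}, \cref{laurent-valuations}), and the classical degree-zero property of principal divisors on smooth proper curves over the residue field. The main structural difference is that you first split $\Hh^1(X^c,\mu_n)$ via Kummer into the $A^\times/A^{\times,n}$- and $\Pic(X)[n]$-pieces, whereas the paper works uniformly with pairs $(L,s)$ throughout. Your Case~1 is essentially the paper's argument restricted to torsors $(L,s) = (\cO_X, f)$, and it is correct once the slight imprecision about where the node-cancellation happens (across adjacent components, not within a single $Y_\zeta$ — one should sum the degree-zero identities over all $\zeta$ before separating interior from boundary) is tidied up. The paper's first proof (\cref{first proof}), by contrast, is a genuinely different route via Noether normalization down to the disk.

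However, Case~2 has a real gap, and the Kummer splitting that necessitates it does not buy you anything. The quantity $\rho_{x_i}(L,\phi) \in \wdh{k(x_i)}^{\h,\times}/\bigl(\wdh{k(x_i)}^{\h,\times}\bigr)^n$ is \emph{independent} of the choice of local trivialization $a$ of $L$ near $x_i$ (precisely because two choices of $a$ differ by a unit $u$, and $a^{-n}\circ\phi$ changes by $u^{-n}$, an $n$-th power); see \cref{mun-torsor-class}. Thus "adjusting the local trivializations" cannot change $\rho_{x_i}$ at all. The only remaining freedom compatible with fixing $[L]\in\Pic(X)[n]$ is to replace $\phi$ by $c\phi$ for a single global $c\in A^\times$, which rescales \emph{all} $\rho_{x_i}$ by the same $c$. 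Asking that a single $c$ make all the $\#\circ v_{x_i}\bigl(\rho_{x_i}(L,c\phi)\bigr) = \#\circ v_{x_i}(c) + \#\circ v_{x_i}\bigl(\rho_{x_i}(L,\phi)\bigr)$ vanish simultaneously is, in general, \emph{equivalent} to the statement $\sum_i \#\circ v_{x_i}\bigl(\rho_{x_i}(L,\phi)\bigr) = 0$ that you are trying to prove — for instance, for a rig-smooth annulus $\Spa\bigl(C\langle S,T\rangle/(ST-\pi)\bigr)$ with its two boundary points, the units $c\in A^\times$ produce exactly the classes with $\#\circ v_{x_1}(c) + \#\circ v_{x_2}(c) = 0$, and the function $f = S$ already has nonzero $\#\circ v_{x_i}$ at each boundary point individually (compare \cref{laurent-valuations}). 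Term-by-term integrality is therefore false in general, and the cancellation must take place in the \emph{sum}. The paper's second proof handles this correctly: it never tries to kill each boundary term, but instead identifies the sum over boundary points with (minus) a sum over interior points of the normalized compactified special fibre and invokes the product formula. To repair your Case~2, you should drop the attempt to integralize each $\rho_{x_i}$ and instead run the degree argument directly on the torsor class $(L,\phi)$, exactly as in the paper — at which point the Kummer splitting becomes unnecessary and the two cases merge.
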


In \cref{first proof} and \cref{second proof} below, we give two different proofs of \cref{thm:trace-well-defined}. But before we start discussing the proofs, we give the official definition of the analytic trace map assuming validity of \cref{thm:trace-well-defined}:

\begin{definition}\label{defn:analytic-trace-curves}
    The \emph{analytic trace morphism} $t_X\colon \rm{H}^2_c(X, \mu_n) \to \Z/n\Z$ is the unique group homomorphism such that the composition
    \[
    \bigoplus^r_{i=1} \wdh{k(x_i)}^{\h,\times}/\Bigl(\wdh{k(x_i)}^{\h,\times}\Bigr)^n \xrightarrow{\partial_X} \rm{H}^2_c(X, \mu_n) \xrightarrow{t_X} \Z/n\Z
    \]
    is equal to $\widetilde{t}_X$.
\end{definition}
\begin{remark}\label{rmk:trace-surjective} We note that each morphism $\#\circ v_{x_i} \colon \wdh{k(x_i)}^{\h,\times}/\Bigl(\wdh{k(x_i)}^{\h,\times}\Bigr)^n \xrightarrow{\widetilde{t}_X} \Z/n\Z$ is surjective. Then \cref{lemma:extra-points}\cref{lemma:extra-points-3} formally implies that $t_X \colon \rm{H}^2_c(X, \mu_n) \to \Z/n\Z$ is surjective for any smooth affinoid curve $X$.
\end{remark}

Before embarking on the proof of \cref{thm:trace-well-defined}, let us explicate its statement and the resulting \cref{defn:analytic-trace-curves} in the simple case of the $1$-dimensional closed unit disk. This will require the following sequence of lemmas: 

\begin{lemma}\label{lemma:trivial-picard-disk} We have $\rm{Pic}(\bf{D}^1)=0$.
\end{lemma}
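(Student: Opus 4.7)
The plan is straightforward and rests on two classical inputs. First, I would identify the closed unit disk as the affinoid $\mathbf{D}^1 = \Spa\bigl(C\langle T\rangle, \mathcal{O}_C\langle T\rangle\bigr)$ and use Kiehl's theorem for affinoid adic spaces, which gives an equivalence between the category of coherent $\mathcal{O}_{\mathbf{D}^1}$-modules and the category of finitely generated $C\langle T\rangle$-modules. Under this equivalence, line bundles on $\mathbf{D}^1$ correspond to rank-$1$ projective modules over $C\langle T\rangle$, so $\mathrm{Pic}(\mathbf{D}^1) \simeq \mathrm{Pic}\bigl(C\langle T\rangle\bigr)$.

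Second, I would invoke the classical fact that the Tate algebra $C\langle T\rangle$ in one variable is a principal ideal domain. This follows from the Weierstrass preparation theorem: every nonzero $f \in C\langle T\rangle$ factors as $f = u \cdot p$ with $u \in C\langle T\rangle^\times$ and $p \in C[T]$ a distinguished (Weierstrass) polynomial. One then checks that the map $I \mapsto I \cap C[T]$ sets up a bijection between nonzero ideals of $C\langle T\rangle$ and ideals of $C[T]$ generated by polynomials all of whose roots lie in $\mathcal{O}_C$, reducing the PID property to that of $C[T]$.

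Since every finitely generated projective module over a PID is free, every rank-$1$ projective $C\langle T\rangle$-module is trivial, hence $\mathrm{Pic}\bigl(C\langle T\rangle\bigr) = 0$, and so $\mathrm{Pic}(\mathbf{D}^1) = 0$. There is no real obstacle here; both inputs (Kiehl's theorem and the PID property of the one-variable Tate algebra) are standard, so the main point is simply to cite them correctly. The only small subtlety worth flagging, if the authors wish to avoid going through $C\langle T\rangle$-modules, is to note that $\mathbf{D}^1$ is affinoid and reduced, so that the identification $\mathrm{Pic}(\mathbf{D}^1) \simeq \mathrm{Pic}\bigl(\mathcal{O}_{\mathbf{D}^1}(\mathbf{D}^1)\bigr)$ is unambiguous.
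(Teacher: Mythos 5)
Your proposal is correct and takes essentially the same approach as the paper: reduce to $\mathrm{Pic}(C\langle T\rangle)$ via the affinoid/module correspondence (the paper cites \cite[Th.~1.4.2]{KedAr}), and then invoke structure theory of the one-variable Tate algebra (the paper cites \cite[Cor.~2.2/10]{B} that $C\langle T\rangle$ is a UFD and \cite[\href{https://stacks.math.columbia.edu/tag/0BCH}{Tag 0BCH}]{stacks-project}; you instead use the slightly stronger and equally classical fact that it is a PID).
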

\begin{proof}
    First, \cite[Th.~1.4.2]{KedAr} ensures that $\rm{Pic}(\bf{D}^1)\simeq \rm{Pic}(C\langle T\rangle)$. Now \cite[Cor.~2.2/10]{B} implies that $C\langle T\rangle$ is a UFD, thus $\rm{Pic}(C\langle T\rangle)=0$ by \cite[\href{https://stacks.math.columbia.edu/tag/0BCH}{Tag 0BCH}]{stacks-project}.
\end{proof}

\begin{lemma}
\label{invertible functions on closed disk}
As an abelian group, we have a decomposition
\[
C\langle T \rangle^{\times} = C^{\times} \times (1 + \mathfrak{m}_C T \langle T \rangle, \times).
\]
\end{lemma}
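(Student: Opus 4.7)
The plan is to reduce the statement to the following characterization of units in the Tate algebra: a power series $f = \sum_{n\geq 0} a_n T^n \in C\langle T\rangle$ is a unit if and only if $|a_0| > |a_n|$ for all $n \geq 1$. Assuming this, the proof of the decomposition is immediate: given any unit $f$, I factor out the constant term to write
\[
f = a_0 \cdot \Bigl(1 + \sum_{n\geq 1} (a_n/a_0)\, T^n\Bigr),
\]
and the inequalities $|a_n/a_0| < 1$ translate into $a_n/a_0 \in \mathfrak{m}_C$, so the second factor lies in $1 + \mathfrak{m}_C T\langle T\rangle$. Uniqueness of this presentation follows at once by evaluating at $T = 0$: if $c\cdot u = c'\cdot u'$ with $c,c' \in C^\times$ and $u,u' \in 1 + \mathfrak{m}_C T\langle T\rangle$, comparing constant coefficients gives $c = c'$, and therefore $u = u'$. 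Consequently, the natural multiplication map $C^\times \times (1 + \mathfrak{m}_C T\langle T\rangle) \to C\langle T\rangle^\times$ is a bijection, which is the claim.

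It thus remains to prove the characterization of units. The ``if'' direction is a standard geometric-series argument: when $|a_0| > |a_n|$ for all $n \geq 1$, the element $h \colonequals \sum_{n\geq 1} (-a_n/a_0)\, T^n$ satisfies $|h|_{\sup} < 1$, so the series $\sum_{k\geq 0} h^k$ converges in $C\langle T\rangle$ and provides an inverse of $a_0^{-1} f = 1 - h$; hence $f$ is a unit.

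For the ``only if'' direction I would appeal to the Newton polygon. Recall that $C\langle T\rangle$ is a Jacobson ring whose maximal ideals are in bijection with the Galois orbits of $\bf{D}^1(\overline C) = \{\alpha \in \overline{C} : |\alpha| \leq 1\}$, so any unit $f \in C\langle T\rangle^\times$ has no zero in $\bf{D}^1(\overline C)$. Suppose for contradiction that $f$ is a unit but the inequalities $|a_0| > |a_n|$ fail. Writing $fg = 1$ for an inverse $g = \sum b_n T^n$ immediately forces $a_0 b_0 = 1$, hence $a_0 \in C^\times$, and the failure means there is some $n_0 \geq 1$ with $|a_{n_0}| \geq |a_0|$. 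In terms of the valuation $v = -\log|\blank|$, this says the point $(n_0, v(a_{n_0}))$ of the Newton polygon of $f$ lies weakly below $(0, v(a_0))$, so the left edge of the polygon has some slope $\leq 0$. By the standard Newton polygon theory for power series in $C\langle T\rangle$ (which applies because $|a_n| \to 0$ ensures the polygon has only finitely many bounded slopes), this slope corresponds to a root $\alpha \in \overline{C}$ with $v(\alpha) \geq 0$, i.e.\ $\alpha \in \bf{D}^1(\overline{C})$, contradicting the absence of zeros of $f$. The main obstacle is not mathematical difficulty but rather calibration of the exposition: one must invoke without a lengthy detour the elementary facts about Tate algebras over an algebraically closed nonarchimedean field---namely Jacobson-ness, the maximal-spectrum description of $\bf{D}^1$, and the Newton polygon characterization of zeros.
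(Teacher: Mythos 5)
Your proof is correct, and the core decomposition step is exactly the one in the paper: once one knows the characterization of units in $C\langle T\rangle$, one writes $f = f(0) \cdot \bigl(f/f(0)\bigr)$ and observes that the second factor lies in $1 + \mathfrak{m}_C T\langle T\rangle$; uniqueness follows by evaluating at $T=0$.

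Where the paper simply cites \cite[Cor.~2.2/4]{B} for the unit criterion ($f = \sum a_n T^n$ is a unit if and only if $\abs{a_0} > \abs{a_n}$ for all $n\geq 1$), you supply a proof. Your Newton polygon argument does work, but it is heavier machinery than is needed here, and it flirts with circularity: the Newton polygon theory for convergent power series in $C\langle T\rangle$ is typically set up via Weierstrass preparation, whose proof itself rests on a form of the unit criterion, so one must be careful about what is assumed. The argument in the cited reference is more elementary and avoids both Newton polygons and the Nullstellensatz. Scale $f$ so that its Gauss norm $\abs{f}$ equals $1$; then $f \in \mathcal{O}_C\langle T\rangle$, and if $g$ is the inverse of $f$, multiplicativity of the Gauss norm forces $\abs{g}=1$ as well. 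Reducing modulo $\mathfrak{m}_C$, one lands in $\mathcal{O}_C\langle T\rangle/\mathfrak{m}_C \simeq k_C[T]$, where $\bar{f}\bar{g}=1$ shows that $\bar{f}$ is a unit of $k_C[T]$, hence a nonzero constant. This says precisely that $\abs{a_0}=1$ and $\abs{a_n}<1$ for $n\geq 1$, which after de-scaling gives the criterion.
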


Here $1 + \mathfrak{m}_C T \langle T \rangle \coloneqq \{f = \sum_{i} a_i T^i \in C\langle T \rangle \mid a_0 = 1, a_{\geq 1} \in \mathfrak{m}_C\}$,
viewed as a group via multiplication.

\begin{proof}
We first note that if $f=\sum a_i T^i$ is a unit, then $a_0$ is a unit in $C$.
Moreover, by \cite[Cor.~2.2/4]{B}, $f \in C\langle T \rangle$ is a unit if and only if $\abs{a_0}>\abs{a_i}$ for $i>1$. Then one simply has $f = f(0) \cdot \frac{f}{f(0)}$ proving the lemma.
\end{proof}

\begin{remark}\label{rmk:infinite-cohomology} Let $C$ be an algebraically closed nonarchimedean field of mixed characteristic $(0, p)$. Then \cref{invertible functions on closed disk} implies that there is a surjection 
\[
\Hh^1(\DD^1, \mu_p) = (1 + \mathfrak{m}_C T \langle T \rangle, \times)/p \to \m_C/p\m_C
\]
defined by the rule $1 +\sum_{i\geq 1} a_i T^i \mapsto a_1$. In particular, $\Hh^1(\DD^1, \mu_p)$ is infinte and its cardinality is at least cardinality of $\m_C/p\m_C$. 
\end{remark}

\begin{example}\label{an-tr-disk}
We explain \cref{thm:trace-well-defined} in case $X = \DD^1$.
By \cref{lemma:compacitification-of-the-disc}, the universal compactification of the $1$-dimensional closed unit disk $\DD^1 \subset \DD^{1,c}$ consists of one additional point $x_+$ of rank-$2$ ``pointing toward $\infty$'' which corresponds to the valuation $v_{x_+}\colon k(x_+) \to \bigl(\Gamma_C\times \Z\bigr) \cup \{0\}$.
The explicit description of the corresponding valuation $v_{x_+}$ in \cref{lemma:compacitification-of-the-disc} shows that $\#\circ v_{x_+}\colon \wdh{k(x_+)}^{\h, \times} \to \Z$ vanishes on the image of the morphism $C\langle T\rangle^\times \to \wdh{k(x_+)}^{\h, \times}$:
by \cref{invertible functions on closed disk}, this boils down to the fact that $\#\circ v_{x_+}$ is zero on the scalars $c\in C^\times$ and functions of the form $f=1+\sum_{i\geq 1} a_i T^i$ with $a_i\in \fm$.
Thanks to \cref{cohomology-affine-curve} and \cref{lemma:trivial-picard-disk}, this yields the vanishing of $\#\circ v_{x_+}$ on $\Hh^1(\DD^{1,c},\mu_n)$.
As a consequence, we obtain the analytic trace morphism $t_{\DD^1} \colon \Hh^2_c(\DD^1,\mu_n) \to \ZZ/n\ZZ$.
\end{example}

\subsection{Construction of the analytic trace: first proof}\label{first proof}

In this subsection, we give the first proof of \cref{thm:trace-well-defined}. 
The idea is to reduce the case of a general smooth affinoid curve $X$ via Noether normalization to the case of the closed unit disk which was already treated in \cref{an-tr-disk}.  

In order to implement this strategy, we will need to verify certain technical lemmas about the trace morphisms (see \cref{thm:flat-trace}) for finite flat morphisms of smooth rigid-analytic curves over $C$. This will occupy the most part of this subsection. As an application of these methods, we also show that the analytic trace is compatible with the finite flat trace defined in \cref{thm:flat-trace}. 

\begin{setup}
\label{setup:finite-flat-morphism} 
We fix a finite flat morphism $f\colon X=\Spa(B, B^\circ) \to Y=\Spa(A, A^\circ)$ of smooth affinoid curves over $C$ with induced morphism $f^c\colon X^c \to Y^c$ between the universal compactifications. We also denote by $Z_Y=\{y_i\}_{i\in I}$ the finite complement $\abs{Y^c}\smallsetminus \abs{Y}$, and by $Z_i=f^{c, -1}(\{y_i\})=\{x_{i, j_i}\}_{j_i\in J_i}$ the pre-image of $y_i$ in $X^c$. 
\end{setup}

In \cref{setup:finite-flat-morphism}, \cref{lemma:compactifications-compatible}, \cref{lemma:extra-points}, and \cref{lemma:extra-points-curve-like}  ensure that $Z_Y$ and $Z_i$ are finite discrete sets consisting of rank-$2$ curve-like points and that $Z_X\coloneqq \sqcup_{i\in I} Z_i = X^c\smallsetminus X$. For each $i\in I$, we denote by 
\[
f_i^c\colon (X^c, Z_i)\to (Y^c, \{y_i\})
\]
the morphism of pseudo-adic spaces induced by $f^c$. Similarly, for $i=1, \dots, r$, we denote by 
\[
g_i\colon \bigsqcup_{j_i\in J_i} \Spec \wdh{k(x_{i, j_i})}^{\rm{h}}\to \Spec \wdh{k(y_i)}^{\rm{h}}
\]
the induced morphism of the henselized completed residue fields. These morphisms fit into the following commutative diagram of topoi:
\begin{equation}\label{eqn:maps-above-and-below}
\begin{tikzcd}
    X_\et \arrow{d}{f_\et} \arrow{r}{j_{X,\et}} &  X^c_\et \arrow{d}{f^c_\et} & (X^c, Z_X)_\et\simeq \prod_{i\in I}(X^c, Z_{i})_\et \arrow{r}{\gamma_{Z_X}} \arrow[r, swap, "\sim"] \arrow[l, swap, "i_{X, \et}"] \arrow{d}{f'_\et\coloneqq \prod f_{i,\et}^c} & \prod_{i\in I, j_i\in J_i} \Spec \wdh{k(x_{i, j_i})}^\h_\et \simeq \biggl(\bigsqcup_{i\in I, j_i\in J_i} \Spec \wdh{k(x_{j_i})}^{\rm{h}}\biggr)_\et \arrow{d}{g_\et=(\bigsqcup_{i\in I} g_i)_\et}   \\
    Y_\et \arrow{r}{j_{Y,\et}} & Y^c_\et & \arrow[l, swap, "i_{Y, \et}"] (Y^c, Z_Y)_\et \simeq \prod_{i\in I}(Y^c, \{y_i\})_\et \arrow{r}{\gamma_{Z_Y}} \arrow[r, swap, "\sim"] & \prod_{i\in I} \Spec \wdh{k(y_i)}^\h_\et \simeq \biggl(\bigsqcup_{i\in I} \Spec \wdh{k(y_i)}^{\rm{h}} \biggr)_\et,
\end{tikzcd}
\end{equation}
where the horizontal equivalences come from \cref{lemma:pre-adic-dijoint-union} and \cref{thm:topos-of-a-point}. 
Now we note that \cref{lemma:compactification-of-finite-morphism} ensures that $f^c$ is finite flat, so \cref{thm:flat-trace} provides us with trace morphisms $\ttr_{f, \mu_n}\colon f_*\,\mu_{n, X} \to \mu_{n, Y}$ and $\ttr_{f^c, \mu_n} \colon f^c_*\,\mu_{n, X^c} \to \mu_{n, Y^c}$. Furthermore, \cref{cor:finite-morphism-finite-on-henselized-completions} implies that 
\[
g\colon \bigsqcup_{\substack{i\in I,\\ j_i\in J_i}} \Spec \wdh{k(x_{j_i})}^{\rm{h}} \to \bigsqcup_{i\in I} \Spec \wdh{k(y_i)}^{\rm{h}}
\]
is a finite flat morphism (of schemes). Therefore, using the horizontal equivalences in Diagram~\cref{eqn:maps-above-and-below}, we can define the trace morphism
\[
\ttr_{f', \mu_n}\colon f'_{*}\, \mu_{n, Z_X} \to \mu_{n, Z_Y}
\]
as $\ttr_{f', \mu_n}=\gamma_{Z_Y}^*(\ttr_{g, \mu_n})$, where $\ttr_{g, \mu_n}$ is the algebraic finite flat trace map constructed in \cite[Exp.~XVII, Th.~6.2.3]{SGA4} and \cite[\href{https://stacks.math.columbia.edu/tag/0GKI}{Tag 0GKI}]{stacks-project}.
Next lemma will be the key to our (first) proof of \cref{thm:trace-well-defined}: 

\begin{lemma}\label{lemma:compatibility-of-traces} 
Let $f\colon X\to Y$ and $f^c\colon X^c \to Y^c$ be as in \cref{setup:finite-flat-morphism}. Then the following diagram commutes:
\[ \begin{tikzcd}[column sep = huge, row sep = large]
    0 \arrow{r} & j_{Y, !} f_{*}\,\mu_{n, X} \arrow{r} \arrow{d}{j_{Y,!}\left(\ttr_{f, \mu_n}\right)} & f^c_{*}\, \mu_{n, X^c}\arrow{d}{\ttr_{f^c, \mu_n}} \arrow{r} & i_{Y, *} f'_* \,\mu_{n, Z_X} \arrow{d}{i_{Y, *}\left(\ttr_{f', \mu_n}\right)} \arrow{r} &  0 \\
    0 \arrow{r} & j_{Y, !} \,\mu_{n, Y}\arrow{r} & \mu_{n, Y^c} \arrow{r} & i_{Y,*} \,\mu_{n, Z_Y} \arrow{r} & 0 
\end{tikzcd} \]
\end{lemma}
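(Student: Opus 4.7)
The plan is to decompose the diagram into its left square (involving $j_{Y,!}$) and its right square (involving $i_{Y,*}$), and verify the commutativity of each independently.

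For the left square, the key observation is that by \cref{lemma:compactifications-compatible} we have $X = f^{c,-1}(Y)$, so the diagram
\[ \begin{tikzcd}
X \arrow{r}{j_X} \arrow{d}{f} & X^c \arrow{d}{f^c} \\
Y \arrow{r}{j_Y} & Y^c
\end{tikzcd} \]
is Cartesian, with $f^c$ (and hence $f$) finite flat by \cref{lemma:compactification-of-finite-morphism}. Applying the pullback compatibility from \cref{thm:flat-trace}\cref{thm:flat-trace-3}, the map $j_Y^*(\ttr_{f^c,\mu_n})$ coincides with $\ttr_{f,\mu_n}$ after the identification $j_Y^* f^c_* \mu_{n,X^c} \simeq f_* j_X^* \mu_{n,X^c} = f_* \mu_{n,X}$ (base change for finite morphisms is an isomorphism by \cref{rmk:integral-base-change}). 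By the adjunction between $j_{Y,!}$ and $j_Y^*$, the commutativity of the left square follows.

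For the right square, the analogous statement, after adjunction, is that $i_Y^*(\ttr_{f^c,\mu_n})$ coincides with $\ttr_{f',\mu_n}$ (as a map $f'_*\mu_{n,Z_X} \to \mu_{n,Z_Y}$) under the base change isomorphism $i_Y^* f^c_* \mu_{n,X^c} \simeq f'_* \mu_{n,Z_X}$. The difficulty here is that the ``base change'' now takes place along an inclusion of pseudo-adic pairs rather than a morphism of adic spaces, so \cref{thm:flat-trace}\cref{thm:flat-trace-3} does not apply directly. I plan to overcome this by working \'etale-locally on $Y^c$: using \cref{cor:disjoint-opens} we can shrink to an affinoid neighborhood $Y^c = \Spa(A,A^+)$ with $X^c = \Spa(B,B^+)$, in which case $\ttr_{f^c,\ud{\Z}}$ is, by its construction in the proof of \cref{thm:flat-trace}, the pullback $c_{Y^c}^*(\ttr_{f^{c,\alg},\ud{\Z}})$ of the algebraic finite flat trace for $\Spec B \to \Spec A$.

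The main obstacle, and the key remaining task, is to establish that the composition $\gamma_{Z_Y} \circ i_Y^* \circ c_{Y^c}^*$ is canonically isomorphic to the pullback along the natural morphism of topoi $(\Spec \wdh{k(y_i)}^\h)_\et \to (\Spec A)_\et$ coming from the localization--henselization--completion $A \to \wdh{k(y_i)}^\h$. Once this topos-theoretic compatibility is in place, the desired equality for $\ud{\Z}$-coefficients follows directly from the standard fact that the algebraic finite flat trace commutes with arbitrary base change of schemes (see \cite[Exp.~XVII, Th.~6.2.3]{SGA4}); the case of $\mu_n$-coefficients then reduces to the $\ud{\Z}$-case via the projection formula identification of \cref{eqn:finite-flat-trace-in-general}. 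The above topos-theoretic compatibility is implicit in the combination of \cref{thm:topos-of-a-point} and the construction of the comparison morphism $c_{Y^c}$, but extracting it cleanly will take some care.
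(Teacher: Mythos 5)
Your treatment of the left square is correct and matches the paper: by \cref{lemma:compactifications-compatible} the square $X \subset X^c$ over $Y \subset Y^c$ is Cartesian, $f^c$ is finite flat by \cref{lemma:compactification-of-finite-morphism}, and \cref{thm:flat-trace}\cref{thm:flat-trace-3} gives $j_Y^*(\ttr_{f^c,\mu_n}) = \ttr_{f,\mu_n}$, which after the $(j_{Y,!},j_Y^*)$-adjunction is exactly the commutativity of the left square.

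For the right square, however, you have misidentified where the real difficulty lies, and your plan has a genuine gap. You are right that (locally on $Y^c$) $\ttr_{f^c,\mu_n} = c_Y^*(\ttr_{f^{c,\alg},\mu_n})$ and that $\ttr_{f',\mu_n} = \gamma_{Z_Y}^*(\ttr_{g,\mu_n})$ by definition; and the ``topos-theoretic compatibility'' you worry about — that $\gamma_{Z_Y} \circ i_Y^* \circ c_{Y^c}^*$ agrees with pullback along the natural map $(\Spec\wdh{k(y_i)}^\h)_\et \to (\Spec A)_\et$ — is indeed available from the pseudo-adic-space machinery set up in \cref{thm:cohomology-prospecial-subsets} and \cref{thm:topos-of-a-point}. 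But having that, you then invoke ``the algebraic finite flat trace commutes with arbitrary base change'' and declare victory. This is where the argument fails: base change of $\ttr_{f^{c,\alg},\ud{\Z}}$ along $\Spec\wdh{k(y_i)}^\h \to \Spec A$ produces the algebraic trace for the morphism $\Spec\bigl(\wdh{k(y_i)}^\h \otimes_A B\bigr) \to \Spec\wdh{k(y_i)}^\h$, not the trace $\ttr_{g_i,\mu_n}$ for the morphism $\bigsqcup_{j_i}\Spec\wdh{k(x_{i,j_i})}^\h \to \Spec\wdh{k(y_i)}^\h$ that defines $\ttr_{f',\mu_n}$. You need the natural map
\[
\wdh{k(y_i)}^\h \otimes_A B \longrightarrow \prod_{j_i \in J_i} \wdh{k(x_{i,j_i})}^\h
\]
to be an isomorphism, and this is not formal: it is the content of \cref{thm:iso-henselized-completed-residue-fields}, whose hypothesis (reducedness of $\wdh{k(y_\gen)}\otimes_A B$ for the rank-$1$ generalization $y_\gen$ of $y_i$) must be verified using \cref{lemma:generalization-weakly-Shilov} (the generalization is weakly Shilov) and \cref{lemma:weakly-shilov-fiber-reduced} (fibers over weakly Shilov points of smooth affinoids are reduced). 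This chain of results is exactly the technical core of the paper's proof and is entirely absent from your plan, so you should not expect the remaining steps to be straightforward once the topos-theoretic compatibility is in hand.
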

\begin{proof}
    It suffices to show that each square commutes separately. In order to check that the left square commutes, it suffices to show that $j^*_Y\left(\ttr_{f^c, \mu_n}\right)=\ttr_{f, \mu_n}$. This follows from the fact that $\ttr_{f^c, \mu_n}$ commutes with arbitrary base change (see \cref{thm:flat-trace}).  

    In order to show that the right square commutes, it suffices to show that $i^*_Y\left(\ttr_{f^c, \mu_n}\right)= \ttr_{f', \mu_n}$. It will be more convenient to check this equality after applying the equivalence $\gamma_{Z_Y}$ to both sides. For this, we recall that the discussion before \cref{construction:relative-analytification} ensures that we have a commutative diagram of topoi
    \[
        \begin{tikzcd}
            X^c_\et \arrow{d}{f^c_\et} \arrow{r}{c_X}&  \left(\Spec B\right)_\et \arrow{d}{f^{c, \rm{alg}}_\et}\\
            Y^c_\et \arrow{r}{c_Y} & \left(\Spec A\right)_\et,
        \end{tikzcd}
    \]
    where $c_A$ and $c_B$ are the analytification morphisms. Furthermore, this fits into a bigger commutative diagram:
    \begin{equation}\label{eqn:big-diagram}
        \begin{tikzcd}[column sep=small]
        (X^c, Z_X)_\et \arrow{rd}{i_{X, \et}} \arrow{rr}{\gamma_{Z_X}} \arrow[rr, swap, "\sim"]\arrow{dd}{f'_\et} & & \Biggl(\bigsqcup_{i\in I,\, j_i\in J_i} \Spec \widehat{k(x_{i, j_i})}^\h\Biggr)_\et \arrow{dd}[pos = 0.3]{g_\et} \arrow{rd}{i_{X, \et}^{\rm{alg}}} & \\
        & (X^c)_\et \arrow[rr,crossing over,pos=0.3,"c_X"] & & \left(\Spec B\right)_\et \arrow{dd}{f^{c, \rm{alg}}_\et} \\
        (Y^c, Z_Y)_\et \arrow{rd}{i_{Y, \et}} \arrow[rr,pos=0.6,"\gamma_{Z_Y}","\sim"'] & &  \Biggl(\bigsqcup_{i\in I}  \Spec \widehat{k(y_i)}^\h\Biggr)_\et \arrow{rd}{i_{Y, \et}^{\rm{alg}}}& \\
        & (Y^c)_\et \arrow[from=uu, crossing over, swap, pos = 0.3, "f^c_\et"] \arrow{rr}{c_Y} & & \left(\Spec A\right)_\et.
    \end{tikzcd}
    \end{equation}
    We recall that $\ttr_{f', \mu_n}$ is defined as $\gamma_{Z_Y}^*(\ttr_{g, \mu_n})$, and $\ttr_{f^c, \mu_n} = c_Y^*(\ttr_{f^{c, \rm{alg}}, \mu_n})$ (see \cref{thm:flat-trace}\cref{thm:flat-trace-6}). Therefore, it suffices to show that $i_{Y}^{\rm{alg}, *}(\ttr_{f^{c, \rm{alg}}, \mu_n})=\ttr_{g, \mu_n}$. Now we note that \cite[\href{https://stacks.math.columbia.edu/tag/0GKI}{Tag 0GKI}]{stacks-project} guarantees that the algebraic finite flat trace map commutes with arbitrary base change. Therefore, it suffices to show that the natural morphism
    \[
    \widehat{k(y_i)}^\h \otimes_A B \to \prod_{j_i\in J_i} \widehat{k(x_{i, j_i})}^\h
    \]
    is an isomorphism for any $i\in I$. This follows directly from the combination of \cref{lemma:generalization-weakly-Shilov}, \cref{lemma:weakly-shilov-fiber-reduced}, and \cref{thm:iso-henselized-completed-residue-fields}.
\end{proof}

For each $i\in I$ and $j_i\in J_i$, let $\rm{Nm}_{x_{i, j_i}/y_i} \colon \wdh{k(x_{i, j_i})}^{\h,\times}/\Bigl(\wdh{k(x_{i, j_i})}^{\h,\times}\Bigr)^n \to \wdh{k(y_{i})}^{\h,\times}/\Bigl(\wdh{k(y_i)}^{\h,\times}\Bigr)^n$ be the morphism induced by the norm morphism $\rm{Nm}\colon \wdh{k(x_{i, j_i})}^{\h,\times} \to \wdh{k(y_{i})}^{\h,\times}$;
note that the norm map is well-defined due to \cref{cor:finite-morphism-finite-on-henselized-completions}). 

\begin{corollary}\label{cor:norm-maps-are-traces} Let $f\colon X\to Y$ and $f^c\colon X^c \to Y^c$ be as in \cref{setup:finite-flat-morphism}. Then there is the following diagram of exact sequences:
\begin{equation*}\label{eqn:noether-analytic-trace}
    \begin{tikzcd}
        \rm{H}^1(X^c, \mu_n) \arrow{r} \arrow{d}{\rm{H}^1(\ttr_{f^c, \mu_n})}& \bigoplus_{\substack{i\in I,\\j\in J_i}} \wdh{k(x_{i, j_i})}^{\h,\times}/\Bigl(\wdh{k(x_{i, j_i})}^{\h,\times}\Bigr)^n \arrow[d,shorten <= -1.5em,pos=.1,"\bigoplus_{i\in I} \bigl(\sum_{j_i\in J_i} \rm{Nm}_{x_{i, j_i}/y_i}\bigr)"]  \arrow{r}{\partial_X} & \rm{H}^2_c(X, \mu_n) \arrow{d}{\rm{H}^2_c(\ttr_{f, \mu_n})}\arrow{r} & 0\\
        \rm{H}^1(Y^c, \mu_n) \arrow{r} & \bigoplus_{i\in I} \wdh{k(y_{i})}^{\h,\times}/\Bigl(\wdh{k(y_{i})}^{\h,\times}\Bigr)^n \arrow{r}{\partial_Y} & \rm{H}^2_c(Y, \mu_n) \arrow{r} & 0.
    \end{tikzcd}
\end{equation*}
\end{corollary}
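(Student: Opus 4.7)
The plan is to apply the functor $\rR\Gamma(Y^c,-)$ to the commutative ladder of short exact sequences of sheaves on $Y^c_\et$ from \cref{lemma:compatibility-of-traces} and to extract from the resulting pair of long exact sequences the portion ranging between $\Hh^1$ and $\Hh^2$. This yields immediately a commutative diagram of exact sequences, so the only remaining task is to identify its entries and its three vertical arrows with those displayed in the statement.

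For the leftmost and middle-left columns, the finiteness of $f^c$ gives $\rR f^c_*=f^c_*$, so Leray provides $\Hh^i(Y^c,f^c_*\mu_{n,X^c})\simeq \Hh^i(X^c,\mu_n)$. For the compactly supported columns, the base change identity $j_{Y,!}f_*\simeq f^c_*j_{X,!}$, which holds because $f$ is finite (cf.\ \cref{rmk:integral-base-change}), yields $\Hh^i(Y^c,j_{Y,!}f_*\mu_{n,X})\simeq \Hh^i_c(X,\mu_n)$. The two resulting rows thereby match those provided by \cref{comp-supp-cohomology-affine-curve} applied to $X$ and to $Y$ respectively. For the middle columns, the equivalences of topoi in \cref{eqn:maps-above-and-below} together with \cref{cohomology-rk-2-pt} identify
\[
\Hh^1(Y^c,i_{Y,*}f'_*\mu_{n,Z_X})\simeq \bigoplus_{i\in I,\,j_i\in J_i} \wdh{k(x_{i,j_i})}^{\h,\times}/\bigl(\wdh{k(x_{i,j_i})}^{\h,\times}\bigr)^n
\]
and analogously for $Y$, and these identifications are exactly the ones appearing in the excision sequences of \cref{comp-supp-cohomology-affine-curve}.

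The crux of the argument is then to identify the middle vertical map, which is induced by $\ttr_{f',\mu_n}=\gamma_{Z_Y}^*(\ttr_{g,\mu_n})$, with the displayed direct sum of norm maps $\bigoplus_{i}\bigl(\sum_{j_i} \rm{Nm}_{x_{i,j_i}/y_i}\bigr)$. After transferring across $\gamma_{Z_Y}$, this reduces to the purely algebraic assertion that, for a finite flat morphism $g\colon \Spec L\to \Spec K$ of henselian fields, the map $\Hh^1(\ttr_{g,\mu_n})\colon L^\times/(L^\times)^n\to K^\times/(K^\times)^n$ coincides with the norm modulo $n$-th powers. Applying the algebraic trace to the Kummer short exact sequence $0\to\mu_n\to\bf{G}_m\to\bf{G}_m\to 0$ and invoking functoriality of the trace in the sheaf variable (\cref{thm:flat-trace}\cref{thm:flat-trace-1}), one reduces this further to the identification of $\Hh^0(\ttr_{g,\bf{G}_m})\colon L^\times\to K^\times$ with $\rm{Nm}_{L/K}$, which is the defining feature of the SGA 4 trace on $\bf{G}_m$ (see \cite[Exp.~XVII, \S~6]{SGA4}).

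The main obstacle for this corollary is not actually located in its proof but in the preceding \cref{lemma:compatibility-of-traces}, which crucially rests on the comparison \cref{thm:iso-henselized-completed-residue-fields} applied through the weak Shilov inputs \cref{lemma:generalization-weakly-Shilov} and \cref{lemma:weakly-shilov-fiber-reduced}; once that lemma has been granted, the present statement is essentially a bookkeeping exercise in long exact sequences, together with the classical trace-equals-norm identification recalled above.
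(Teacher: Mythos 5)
Your proposal is correct and follows essentially the same route as the paper's proof: pass to cohomology from the commutative ladder in \cref{lemma:compatibility-of-traces}, identify the rows with the excision sequences of \cref{comp-supp-cohomology-affine-curve}, and identify the middle vertical arrow with the sum of norm maps by invoking the standard SGA~4 computation of the trace on $\Hh^1(\cdot,\mu_n)$ for a finite flat morphism of henselian fields. The paper cites \cite[Exp.~XVII, Diagram~(6.3.18.2)]{SGA4} directly rather than first reducing along the Kummer sequence to the $\mathbf{G}_m$-trace, but this is a cosmetic difference.
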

\begin{proof}
    Exactness of horizontal sequences follows directly from \cref{comp-supp-cohomology-affine-curve}. Now let $g_{i, j_i}$ be the natural morphism $\Spec \wdh{k(x_{i, j_i})}^\h \to \Spec \wdh{k(y_{i})}^\h$. Then \cite[Exp.~XVII, Diagram~(6.3.18.2) on p.~198]{SGA4} implies that, under the identification $\rm{H}^1\Bigl(\widehat{k(x_{i, j_i})}^\h, \mu_n\Bigr) \simeq \widehat{k(x_{i, j_i})}^{\h, \times}/\Bigl(\widehat{k(x_{i, j_i})}^{\h, \times}\Bigr)^n$ and under the similar identification for $y_i$, the trace map $\rm{H}^1(\ttr_{g_{i, j_i}, \mu_n})$ becomes equal to $\rm{Nm}_{x_{i, j_i}/y_i}$. Therefore, the result follows directly from \cref{lemma:compatibility-of-traces}. 
\end{proof}

Now we are finally ready to give the first proof of \cref{thm:trace-well-defined}: 

\begin{proof}[First Proof of \cref{thm:trace-well-defined}]
In this proof, we use the notation from \cref{setup:constructing analytic trace}; we warn readers that the notation is slightly different from \cref{setup:finite-flat-morphism}. We start the proof by noting that \cite[Prop.~3.1/2]{B}, \cite[\href{https://stacks.math.columbia.edu/tag/00OK}{Tag 00OK}]{stacks-project}, and \cref{lemma:finite-flat-curves} allow us to find a finite flat morphism $f\colon X \to \bf{D}^1$. We denote by $f^c\colon X^c \to \bf{D}^{1, c}$ the induced morphism of universal compactifications.  

Now we consider the open immersion $j\colon X \hookrightarrow X^c$ with $\abs{X^c}\smallsetminus \abs{X}$ consisting of finitely many rank-$2$ points $\{x_1, \dots, x_r\}$. Then \cref{cor:norm-maps-are-traces}, \cref{lemma:norm-commute}, and \cref{lemma:extra-points-curve-like} imply that the following diagram
\begin{equation}\label{eqn:noether-analytic-trace-2}
    \begin{tikzcd}[column sep = huge]
        \rm{H}^1(X^c, \mu_n) \arrow{r} \arrow{d}{\rm{H}^1(\ttr_{f^c, \mu_n})}& \bigoplus_{i=1}^r \wdh{k(x_{i})}^{\h,\times}/\Bigl(\wdh{k(x_{i})}^{\h,\times}\Bigr)^n \arrow[d, shorten <= -.7em, swap, "\sum_{i=1}^r \rm{Nm}_{x_i/x_+}"] \arrow[r,  "\sum_{i=1}^r \#\circ v_{x_i}"] &[1em] \Z/n\Z \\
        \rm{H}^1(\bf{D}^{1, c}, \mu_n) \arrow{r} & \wdh{k(x_{+})}^{\h,\times}/\Bigl(\wdh{k(x_{+})}^{\h,\times}\Bigr)^n \arrow[ru, swap, "\#\circ v_{x_+}"] & 
    \end{tikzcd}
\end{equation}    
commutes. Therefore it suffices to show that the composition
    \[
    \rm{H}^1(X^c, \mu_n) \xrightarrow{\rm{H}^1(\ttr_{f^c, \mu_n})} \rm{H}^1(\bf{D}^{1, c}, \mu_n) \to \widehat{k(x_+)}^{\times, \h}/\bigl(\widehat{k(x_+)}^{\h, \times}\bigr)^n \xrightarrow{\#\circ v_{x_+}} \Z/n\Z
    \]
    is zero. To that end, we just note that \cref{an-tr-disk} implies that the composition of the last two maps is already zero.
\end{proof}

As an application of our methods, we also show that the analytic trace morphism is compatible with the finite flat trace morphisms: 

\begin{theorem}\label{thm:analytic-trace-compatible-finite-flat-trace} Let $f\colon X\to Y$ be a finite flat morphism of smooth affinoid rigid-analytic $C$-curves. Then the diagram
\[
\begin{tikzcd}
\rm{H}^2_c(X, \mu_n) \arrow[d, swap, "\rm{H}^2_c(\ttr_{f, \mu_n})"] \arrow{r}{t_X} & \Z/n\Z \\
\rm{H}^2_c(Y, \mu_n) \arrow[ru, swap, "t_Y"] &
\end{tikzcd}
\]
commutes, where $\ttr_f$ is the finite flat trace morphism from \cref{thm:flat-trace} and $t_X$, $t_Y$ are analytic traces from \cref{defn:analytic-trace-curves}.
\end{theorem}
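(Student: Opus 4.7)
The plan is to exploit the surjectivity of the boundary map $\partial_X$ from \cref{comp-supp-cohomology-affine-curve} to reduce the claim to a statement about individual rank-$2$ boundary points. Adopt the notation of \cref{setup:finite-flat-morphism}, so that $Z_Y = \{y_i\}_{i \in I} = \abs{Y^c} \smallsetminus \abs{Y}$ and $f^{c,-1}(\{y_i\}) = \{x_{i,j_i}\}_{j_i \in J_i}$, giving $\abs{X^c} \smallsetminus \abs{X} = \sqcup_i \{x_{i,j_i}\}_{j_i}$ by \cref{lemma:compactifications-compatible}. Precompose both sides of the desired square with $\partial_X \colon \bigoplus_{i,j_i} \wdh{k(x_{i,j_i})}^{\h,\times}/n \to \rm{H}^2_c(X,\mu_n)$. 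By \cref{defn:analytic-trace-curves}, the composite $t_X \circ \partial_X$ equals $\sum_{i,j_i} \#\circ v_{x_{i,j_i}}$, while \cref{cor:norm-maps-are-traces} together with \cref{defn:analytic-trace-curves} identify $t_Y \circ \rm{H}^2_c(\ttr_{f, \mu_n}) \circ \partial_X$ with $\sum_i (\#\circ v_{y_i}) \circ \bigl(\sum_{j_i} \rm{Nm}_{x_{i,j_i}/y_i}\bigr)$.

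Hence it suffices, for each pair $(i,j_i)$, to verify the identity
\[
(\#\circ v_{y_i}) \circ \rm{Nm}_{x_{i,j_i}/y_i} = \#\circ v_{x_{i,j_i}}
\]
on $\wdh{k(x_{i,j_i})}^{\h,\times}$. This is precisely the content of \cref{lemma:norm-commute}, applied to the finite extension $\wdh{k(y_i)}^{\h} \subset \wdh{k(x_{i,j_i})}^{\h}$ of curve-like $(C,\O_C)$-affinoid fields. Finiteness of this extension is provided by \cref{cor:finite-morphism-finite-on-henselized-completions}, the curve-like hypothesis by \cref{lemma:extra-points-curve-like}, and the residue-field assumption of \cref{lemma:norm-commute} is guaranteed by \textit{loc.\ cit.}, which identifies the secondary residue field of $\wdh{k(y_i)}^{\h}$ with $k_C$ (algebraically closed since $C$ is).

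Since $\partial_X$ is surjective by \cref{comp-supp-cohomology-affine-curve}, this pointwise identity forces the required equality of maps out of $\rm{H}^2_c(X,\mu_n)$. No serious obstacle is anticipated here: both key inputs, namely the identification of $\rm{H}^1(\ttr_{f^c,\mu_n})$ with a sum of norm maps in \cref{cor:norm-maps-are-traces} and the compatibility of $\#\circ v$ with norms in \cref{lemma:norm-commute}, have already been established, and the argument reduces to chasing these two diagrams.
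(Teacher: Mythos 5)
Your proof is correct and follows the same route as the paper's own: it uses \cref{cor:norm-maps-are-traces} to identify $\rm{H}^2_c(\ttr_{f,\mu_n}) \circ \partial_X$ with $\partial_Y$ post-composed on norm maps, the surjectivity of $\partial_X$ to reduce to a pointwise check, and then \cref{lemma:norm-commute} together with \cref{lemma:extra-points-curve-like} to conclude. The only addition beyond the paper's phrasing is your explicit justification of the hypotheses of \cref{lemma:norm-commute}, which is a harmless elaboration.
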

\begin{proof}
    Keeping the notation of \cref{setup:finite-flat-morphism}, \cref{cor:norm-maps-are-traces} ensures that the diagram
    \[
    \begin{tikzcd}
    \bigoplus_{i\in I,\, j\in J_i} \wdh{k(x_{i, j_i})}^{\h,\times}/\Bigl(\wdh{k(x_{i, j_i})}^{\h,\times}\Bigr)^n \arrow[d,shorten <= -1em,pos=.4,"\bigoplus_{i\in I} \bigl(\sum_{j_i\in J_i} \rm{Nm}_{x_{i, j_i}/y_i}\bigr)"'] \arrow{r}{\partial_X} & \rm{H}^2_c(X, \mu_n) \arrow{d}{\rm{H}^2_c(\ttr_{f, \mu_n})} \\
    \bigoplus_{i\in I} \wdh{k(y_{i})}^{\h,\times}/\Bigl(\wdh{k(y_{i})}^{\h,\times}\Bigr)^n \arrow{r}{\partial_Y} & \rm{H}^2_c(Y, \mu_n)
    \end{tikzcd}
    \]
    commutes. Now we use that both $\partial_X$ and $\partial_Y$ are surjective and the definition of the analytic trace map (see \cref{defn:analytic-trace-curves}) to conclude that it suffices to show that the diagram 
    \[
    \begin{tikzcd}[column sep=huge]
    \bigoplus_{i\in I,\, j\in J_i} \wdh{k(x_{i, j_i})}^{\h,\times}/\Bigl(\wdh{k(x_{i, j_i})}^{\h,\times}\Bigr)^n \arrow[d, shorten <= -1em, pos=.4, "\bigoplus_{i\in I} \bigl(\sum_{j_i\in J_i} \rm{Nm}_{x_{i, j_i}/y_i}\bigr)"'] \arrow{r}{\sum \#\circ v_{x_{j_i}} }&  \Z/n\Z \\
    \bigoplus_{i\in I} \wdh{k(y_{i})}^{\h,\times}/\Bigl(\wdh{k(y_{i})}^{\h,\times}\Bigr)^n \arrow[ru, swap, "\sum \#\circ v_{y_i}"] & 
    \end{tikzcd}
    \]
    commutes. This now follows directly from \cref{lemma:norm-commute} and \cref{lemma:extra-points-curve-like}. 
\end{proof}

\subsection{Construction of the analytic trace: second proof}
\label{second proof}

Now we give another proof of \cref{thm:trace-well-defined} which does not resort to Noether normalization, finite flat traces or the results of \cref{first proof}. 
Instead, we use the interpretation of $\Hh^1(X^c,\mu_n)$ as isomorphism classes of $\mu_n$-torsors to generalize \cref{an-tr-disk} to the setting of smooth affinoid curves.
This strategy necessitates a more detailed analysis of $\Hh^1(X^c,\mu_n) \simeq \Hh^1(X,\mu_n)$.
\begin{construction}
\label{mun-torsor-class}
Recall that \cref{H1-interpretation} induces a canonical identification
\[ \Hh^1(X^c,\mu_n) \simeq \{ (L,s) \suchthat L \in \Pic(X^c),\, s \colon \cO \xrightarrow{\sim} L^{\otimes n} \}/\sim \]
with isomorphism classes of line bundles together with a trivialization of their $n$-th power.
With this interpretation, we can attach to any point $x \in \abs{X^c}$ a natural homomorphism
\[ \rho_x \colon \Hh^1(X^c,\mu_n) \to k(x)^\times/\bigl(k(x)^\times\bigr)^n \]
as follows:
Given $(L,s) \in \Hh^1(X^c,\mu_n)$, choose an open affinoid neighborhood $U \subseteq X^c$ of $x$ on which the restricted line bundle $\restr{L}{U}$ admits a trivialization $a \colon \cO_U \xrightarrow{\sim} \restr{L}{U}$.
Then the image of $(L,s)$ under $\Hh^1(X^c,\mu_n) \to \Hh^1(U,\mu_n)$ lies in the image of the boundary map $\cO(U)^\times/\bigl(\cO(U)^\times)^n \to \Hh^1(U,\mu_n)$ of the Kummer sequence;
concretely, it is the well-defined (independent of the choice of $a$) element of $\cO(U)^\times/\bigl(\cO(U)^\times)^n$ determined by the isomorphism
\[ a^{-n} \circ s_U \colon \cO_U \xrightarrow{\sim} \bigl(\restr{L}{U}\bigr)^{\otimes n} \xrightarrow{\sim} \cO^{\otimes n}_U \simeq \cO_U. \] 
We define $\rho_x(L,s)$ to be the image of this element under the natural map
\[ \cO(U)^\times/\bigl(\cO(U)^\times)^n \to \cO_{U,x}^\times/\bigl(\cO_{U,x}^\times)^n \to k(x)^\times/\bigl(k(x)^\times\bigr)^n. \]
By passing to common open affinoids trivializing several line bundles, one checks that this defines a group homomorphism.
\end{construction}
\begin{variant}\label{mun-torsor-class-h}
By \cref{thm:topos-of-a-point} and Hilbert's Theorem 90, we have the identification $\Hh^1(\{x\},\mu_n) \simeq \Hh^1\Bigl(\Spec \wdh{k(x)}^\h,\mu_n\Bigr) \simeq \wdh{k(x)}^{\h,\times}/\Bigl(\wdh{k(x)}^{\h,\times}\Bigr)^n$.
The functoriality of the Kummer sequence then shows that under this isomorphism, the composition
\[ \Hh^1(X^c,\mu_n) \xlongrightarrow{\rho_x} k(x)^\times/\bigl(k(x)^\times\bigr)^n \longrightarrow \wdh{k(x)}^{\h,\times}/\Bigl(\wdh{k(x)}^{\h,\times}\Bigr)^n \]
is given by the natural map $\Hh^1(X^c,\mu_n) \to \Hh^1(\{x\},\mu_n)$.
Concretely, it can again be described as in \cref{mun-torsor-class} using trivializations of the pullback of $L$ along the map of ringed \'etale topoi $\bigl(\Spec\bigl(k(x)^\h\bigr)_\et,\cO\bigr) \to \bigl(X^c_\et,\cO\bigr)$ induced by the map of \'etale topoi $\Spec\bigl(k(x)^\h\bigr)_\et \xleftarrow[\gamma]{\sim} \bigl(\Spa\bigl(k(x)^\h,k(x)^{+,\h}\bigr),\{x\}\bigr)_\et \to X^c_\et$ from \cref{thm:topos-of-a-point}.
We still denote this map by $\rho_x$ when there is no risk for confusion.
\end{variant}
If $x \in \abs{X}$, then $\rho_x$ factors through $\Hh^1(X,\mu_n)$ and we can also work with line bundles $L$ on $X$ instead of $X^c$ in \cref{mun-torsor-class} and \cref{mun-torsor-class-h}.
While the ultimate construction of the analytic trace in \cref{defn:analytic-trace-curves} only uses rank-$2$ valuations for points in $\abs{X^c} \smallsetminus \abs{X}$, the second proof of \cref{thm:trace-well-defined} also uses \cref{mun-torsor-class} for points in $\abs{X}$.
We then have the following compatibility with the inclusion of residue fields from \cite[Lem.~1.1.10~iii)]{Huber-etale}:
\begin{lemma}
\label{mun-torsor-class-specialization}
    Let $z \in \abs{X}$ be point of rank $1$ and $y \in \overline{\{z\}} \subset{X^c}$ a specialization of $z$.
    Then the composition
    \[ \Hh^1(X^c,\mu_n) \xlongrightarrow{\rho_y} k(y)^\times/\bigl(k(y)^\times\bigr)^n \longrightarrow k(z)^\times/\bigl(k(z)^\times\bigr)^n \]
    with the morphism induced by the inclusion of residue fields $k(y) \to k(z)$ is equal to $\rho_z$.
\end{lemma}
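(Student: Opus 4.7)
The plan is to unwind the construction in \cref{mun-torsor-class} and verify that the same affinoid trivialization can be used to compute both $\rho_y$ and $\rho_z$. Since $z$ is a rank-$1$ generalization of $y$ in $X^c$, and open subsets of adic spaces are stable under generalization, any open affinoid neighborhood $U \subseteq X^c$ of $y$ automatically contains $z$ as well. This is the geometric fact that drives the whole argument.

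The steps are as follows. First, I would fix a class $\xi = (L,s) \in \Hh^1(X^c,\mu_n)$ and pick an open affinoid $y \in U \subseteq X^c$ on which $\restr{L}{U}$ admits a trivialization $a \colon \cO_U \xrightarrow{\sim} \restr{L}{U}$. Setting $f \colonequals a^{-n} \circ s_U \in \cO(U)^\times$, the construction of \cref{mun-torsor-class} says that $\rho_y(\xi)$ is the image of $f$ under $\cO(U)^\times \to \cO_{U,y}^\times \to k(y)^\times$ modulo $n$-th powers. Second, by the generalization remark above, the same $U$ and the same trivialization $a$ are valid choices for computing $\rho_z(\xi)$, so $\rho_z(\xi)$ is the image of the same $f$ under $\cO(U)^\times \to \cO_{U,z}^\times \to k(z)^\times$ modulo $n$-th powers.

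Third, I would argue that these two computations are compatible with the inclusion of residue fields $k(y) \to k(z)$ from \cite[Lem.~1.1.10~iii)]{Huber-etale}. Indeed, since every open neighborhood of $y$ in $U$ contains $z$ (again by stability of openness under generalization), the stalk maps fit into a commutative diagram
\[
\begin{tikzcd}
\cO(U) \arrow{r} \arrow[rd] & \cO_{U,y} \arrow{d} \arrow{r} & k(y) \arrow{d} \\
 & \cO_{U,z} \arrow{r} & k(z)
\end{tikzcd}
\]
where the right vertical map is precisely the inclusion of residue fields from \emph{loc.~cit.} (this is also the interpretation used in the proof of \cref{lemma:valuation-generalization} via $\iota \colon k(y) \hookrightarrow k(z)$). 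Chasing the element $f \in \cO(U)^\times$ around this diagram yields exactly the claimed equality.

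No serious obstacle arises; the only point one needs to verify carefully is the functoriality of the stalk-to-residue-field passage under generalization, which is standard for adic spaces. The argument is purely a diagram chase once the correct open neighborhood is chosen.
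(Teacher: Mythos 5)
Your proof is correct and takes essentially the same route as the paper's: both hinge on the observation that any open affinoid neighborhood $U$ of $y$ contains $z$ (openness is stable under generalization), so the same trivialization and the same unit $f \in \cO(U)^\times$ compute both $\rho_y(\xi)$ and $\rho_z(\xi)$, and the claim reduces to the commutativity of the diagram relating $\cO(U)$, $\cO_{U,y}$, $\cO_{U,z}$, $k(y)$, and $k(z)$.
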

\begin{proof}
    Unwinding \cref{mun-torsor-class}, this follows from commutativity of the diagram of natural maps
    \[ \begin{tikzcd}[row sep=.1em]
        & \cO_{U,y} \arrow[r] \arrow[dd] & k(y) \arrow[dd] \\
        \cO(U) \arrow[ru] \arrow[rd] && \\
        & \cO_{U,z} \arrow[r] & k(z)
    \end{tikzcd} \]
    for any open affinoid neighborhood $U \subseteq X^c$ of $y$ (and thus also $z$).
\end{proof}

\begin{lemma}
\label{mun-torsor-further-specialization}
Let $\cX$ be an admissible formal $\O_C$-model of $X$ whose special fiber $\cX_s$
is a reduced separated scheme of pure dimension $1$.
Let $z \in \abs{X}$ be point of rank $1$ whose specialization $\zeta = \spec(z) \in \abs{\cX_s}$
is a generic point.
Then there is a pushout diagram of groups
\[ 
\begin{tikzcd}
(\cO_C)^{\times} \arrow[r] \arrow[d] & (k(z)^+)^{\times} \arrow[d] \\
C^{\times} \arrow[r] & k(z)^{\times}.
\end{tikzcd} 
\]
In particular, using \cref{lemma:local-ring-at-generic-point-in-the-special-fiber}\cref{lemma:local-ring-at-generic-point-in-the-special-fiber-2} we get the well-defined ``\emph{further specialization}'' map 
\[ \begin{tikzcd}%
    k(z)^{\times}/n \arrow[rrr, bend left=16, "\spec_z"] & \arrow[l, swap, "\sim"] (k(z)^+)^{\times}/n \arrow[r, "\sim"] & \O_{\cX, \zeta}^\times/n \arrow[r,two heads] & k(\zeta)^{\times}/n.
\end{tikzcd} \]
\end{lemma}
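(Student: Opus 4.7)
The plan is to use \cref{lemma:local-ring-at-generic-point-in-the-special-fiber} as the only real input. Since $\zeta$ is a generic point of the reduced special fiber $\cX_s$, parts \cref{lemma:local-ring-at-generic-point-in-the-special-fiber-1}--\cref{lemma:local-ring-at-generic-point-in-the-special-fiber-4} of that lemma identify $\cO_{\cX,\zeta}$ with $k(z)^+$ (so $k(z)^+$ is a rank-$1$ valuation ring with maximal ideal $\fm_C \cdot k(z)^+$), and tell us that the natural map of value groups $\Gamma_C \to \Gamma_z$ is an isomorphism. This is essentially all we need.

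Next I would reformulate the pushout assertion as the statement that the natural group homomorphism
\[ \Phi \colon (k(z)^+)^{\times} \oplus_{\cO_C^{\times}} C^{\times} \longrightarrow k(z)^{\times} \]
is an isomorphism. For surjectivity, given $f \in k(z)^{\times}$ with valuation $v_z(f) \in \Gamma_z$, pick any $c \in C^{\times}$ whose image in $\Gamma_C \xrightarrow{\sim} \Gamma_z$ equals $v_z(f)$; this uses the isomorphism of value groups from \cref{lemma:local-ring-at-generic-point-in-the-special-fiber}\cref{lemma:local-ring-at-generic-point-in-the-special-fiber-4}. Then $f/c \in (k(z)^+)^{\times}$ and $f = \Phi\bigl(f/c,\ c\bigr)$. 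For injectivity, if $\Phi(u,c) = u c = 1$ with $u \in (k(z)^+)^\times$ and $c \in C^\times$, then $c = u^{-1}$ lies in $(k(z)^+)^\times \cap C^{\times}$ (intersection inside $k(z)^{\times}$). But an element $c \in C^{\times}$ belongs to $(k(z)^+)^\times$ iff $v_z(c) = 1$, and by \cref{lemma:local-ring-at-generic-point-in-the-special-fiber}\cref{lemma:local-ring-at-generic-point-in-the-special-fiber-4} this holds iff $v_C(c) = 1$, i.e.\ $c \in \cO_C^{\times}$. Hence $(u,c) \sim (uc, 1) = (1,1)$ in the pushout, so $\ker \Phi = 0$.

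For the ``in particular'' statement, once the pushout square is in hand, giving a map $k(z)^\times/n \to k(\zeta)^\times/n$ is equivalent to giving a compatible pair of maps out of $C^\times/n$ and $(k(z)^+)^\times/n$. The map $(k(z)^+)^{\times}/n \to k(\zeta)^\times/n$ comes from the isomorphism $(k(z)^+)^\times/n \simeq \cO_{\cX,\zeta}^\times/n$ of \cref{lemma:local-ring-at-generic-point-in-the-special-fiber}\cref{lemma:local-ring-at-generic-point-in-the-special-fiber-2} followed by reduction modulo the maximal ideal; the compatible map out of $C^\times/n$ is the zero map, which makes sense because \cref{lemma:local-ring-at-generic-point-in-the-special-fiber}\cref{lemma:local-ring-at-generic-point-in-the-special-fiber-3} implies $C^\times$ lands in the image of $\fm_C \cdot k(z)^+$ modulo units, hence in $1 \bmod \fm_\zeta$ after the identification (more precisely, an element of $\cO_C^\times \subset C^\times$ already maps to a unit under the reduction, and every class in $C^\times/n$ is represented by a unit since $\cO_C \subset C$ is a rank-$1$ valuation ring with divisible value group). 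These two maps agree on $\cO_C^\times$, producing the desired $\spec_z$.

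There is no real obstacle beyond carefully tracking the definitions; the entire argument is formal once \cref{lemma:local-ring-at-generic-point-in-the-special-fiber} is invoked, and the only mildly delicate point is verifying that the map out of $C^\times/n$ in the ``in particular'' assertion is well-defined (which uses that $C^\times/\cO_C^\times = \Gamma_C$ is divisible, so every class in $C^\times/n$ comes from $\cO_C^\times$).
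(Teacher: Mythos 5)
Your pushout argument is correct and, once unwound, is the same proof the paper gives: both reduce to the fact that the vertical maps $\cO_C^\times \hookrightarrow C^\times$ and $(k(z)^+)^\times \hookrightarrow k(z)^\times$ are injective with cokernels $\Gamma_C$ and $\Gamma_z$, and that \cref{lemma:local-ring-at-generic-point-in-the-special-fiber}\cref{lemma:local-ring-at-generic-point-in-the-special-fiber-4} identifies these cokernels. Your explicit computation of $\ker\Phi$ and $\coker\Phi$ is a concrete form of that observation.

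For the ``in particular'' assertion the only thing that must be shown is that $(k(z)^+)^\times/n \to k(z)^\times/n$ is an isomorphism; the paper gets this directly from the long exact sequence for $\otimes^L\,\ZZ/n$ applied to $0 \to (k(z)^+)^\times \to k(z)^\times \to \Gamma_z \to 0$, using that $\Gamma_z \simeq \Gamma_C$ is torsion-free and divisible. Your route via the pushout is equivalent in spirit, but the exposition goes astray: the sentence claiming that ``$C^\times$ lands in the image of $\fm_C \cdot k(z)^+$ modulo units, hence in $1 \bmod \fm_\zeta$'' is false as stated --- a unit of $\cO_C$ reduces to an arbitrary element of $k_C^\times$, not to $1$. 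The claim that the map $C^\times/n \to k(\zeta)^\times/n$ is the zero map happens to be true, but only because $k_C^\times$ is $n$-divisible (using that $C$ is algebraically closed), an input you never name --- and in any case it is a red herring. The decisive step is the one you tuck into the final parenthetical: divisibility of $\Gamma_C$ forces $\cO_C^\times/n \to C^\times/n$ to be an isomorphism, and pushing out along an isomorphism (noting that $\blank\otimes\ZZ/n$ preserves pushouts) immediately yields that $(k(z)^+)^\times/n \to k(z)^\times/n$ is an isomorphism. Stating this cleanly, rather than constructing a cocone with an allegedly zero leg, would make the argument both correct and tighter.
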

\begin{proof}
The vertical maps are injective with cokernels the respective value groups.
Thus, for the first statement we only need to show that the induced map of value groups is a bijection,
which follows from \cref{lemma:local-ring-at-generic-point-in-the-special-fiber}\cref{lemma:local-ring-at-generic-point-in-the-special-fiber-4}.
The second statement then follows directly from the Snake Lemma and the fact that $\Gamma_C$ is divisible (see \cite[Obs.~3.6/10]{BGR}).   
\end{proof}

\begin{notation}\label{notation:specialization-of-torsor}
In the situation of \cref{mun-torsor-further-specialization}, let
$(L, s) \in \Hh^1(X^c,\mu_n)$.
Then we denote the further specialization of $\rho_z(L, s)$
under the map $\rm{sp}_z\colon k(z)^{\times}/n \to k(\zeta)^{\times}/n$
by $\sp_z(L, s)$.
\end{notation}

We recall that, for a smooth irreducible $k$-curve $Y$, a point $y\in Y(k)$, and a rational function $f\in k(Y)$, we denote by $\rm{ord}_{y}(f) \in \ZZ$ the order of vanishing of $f$ at the point $y$. 

\begin{lemma}
\label{valuation-order-comparison}
In the situation of \cref{mun-torsor-further-specialization},
let $y \in \overline{\{z\}} \subset \abs{X^c}$ be a specialization of $z$.
Then for any $\ov{f}\in k(y)^\times/n$, we have
\begin{equation}\label{equation:specializtion-reduction}
    \bigl(\# \circ v_y\bigr)\bigl(\ov{f}\bigr) \equiv \ord_{\mu_\zeta(y)}\bigl(\sp_z(\ov{f})\bigr) \mod n
\end{equation}
under the identification $\O_{\cX, \zeta} \xrightarrow{\sim} k(z)^+$ from \cref{lemma:local-ring-at-generic-point-in-the-special-fiber}\cref{lemma:local-ring-at-generic-point-in-the-special-fiber-2} and the correspondence $\mu_\zeta$ from \cref{specialization}. In particular, for any $(L,s) \in \Hh^1(X^c,\mu_n)$, we have \[ \bigl(\# \circ v_y\bigr)\bigl(\rho_y(L,s)\bigr) \equiv \ord_{\mu_\zeta(y)}\bigl(\sp_z(L,s)\bigr) \mod n \] 
\end{lemma}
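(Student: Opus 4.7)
The plan is to view both sides of \cref{equation:specializtion-reduction} as group homomorphisms $\phi, \psi \colon k(y)^\times/n \to \ZZ/n$ and show they coincide by testing on a generating set. Using \cref{lemma:extra-points-curve-like} and \cref{lemma:structure-curve-like-valuations} to identify $\Gamma_y$ with $\Gamma_C \times \ZZ$ (lexicographic order), together with the divisibility of $\Gamma_C$ from \cite[Obs.~3.6/10]{BGR}, one obtains a short exact sequence
\[
0 \to (k(y)^+)^\times/n \to k(y)^\times/n \xrightarrow{\# \circ v_y} \ZZ/n \to 0.
\]
Consequently, it will suffice to verify: (a) both $\phi$ and $\psi$ vanish on $(k(y)^+)^\times/n$; and (b) they take the same value on some $\pi_y \in k(y)^\times$ with $v_y(\pi_y) = \gamma_0$ (such $\pi_y$ exists by surjectivity of $v_y$ onto $\Gamma_y$).

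The crucial additional input, besides \cref{specialization}\cref{vanishing-order-valuation} itself, will be the following relation between the valuations $v_y$ and $v_z$ on $k(z)$: letting $\tilde{v}_y \colon k(z)^\times \to \Gamma_y$ denote the extension of $v_y$ with valuation ring $A_{y\to z}$ from \cref{lemma:valuation-generalization}, the inclusion $A_{y\to z} \subset k(z)^+$ of valuation rings identifies $v_z$ with the specialization of $\tilde{v}_y$ obtained by quotienting $\Gamma_y$ by the convex subgroup $\langle \gamma_0\rangle \subset \Gamma_y$. In particular, any $f \in k(y)^\times$ with $v_y(f) \in \langle\gamma_0\rangle$ satisfies $v_z(f) = 1$ and hence lies in $(k(z)^+)^\times$.

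Granting this, claim (a) will follow quickly: for $f \in (k(y)^+)^\times$, the relation $v_y(f) = 1$ gives $\phi(f) = 0$, while $f \in A_{y\to z}^\times$ maps to a unit $\bar f \in \cO^\times_{\cX^{c,n}_s,\mu_\zeta(y)}$ under the surjection of \cref{specialization}\cref{vanishing-order-rings}, and $f \in (k(z)^+)^\times$ implies $\sp_z(f) = \bar f$, so that $\psi(f) = \ord_{\mu_\zeta(y)}(\bar f) = 0$. For claim (b), $\phi(\pi_y) = \#(\gamma_0) = 1 \pmod n$ by \cref{defn:sharp-map}, while $\pi_y \in (k(z)^+)^\times$ gives $\sp_z(\pi_y) = \bar{\pi_y}$, and \cref{specialization}\cref{vanishing-order-valuation} applied to $\pi_y \in k(y)^+$ yields $\ord_{\mu_\zeta(y)}(\bar{\pi_y}) = \# v_y(\pi_y) = 1$, whence $\psi(\pi_y) = 1 \pmod n$ as well.

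The ``in particular'' part of the statement will then follow by applying the first claim to $\rho_y(L,s) \in k(y)^\times/n$: by \cref{mun-torsor-class-specialization}, the image of $\rho_y(L,s)$ under $k(y)^\times/n \to k(z)^\times/n$ coincides with $\rho_z(L,s)$, so by \cref{notation:specialization-of-torsor} the further specialization $\sp_z(\rho_y(L,s))$ coincides with $\sp_z(L,s)$. The main care required throughout this plan is in tracking the identifications among $k(z)^+$, $\cO_{\cX,\zeta}$, $\cO_{\cX^{c,n}_s,\mu_\zeta(y)}$, and $k(\zeta)$ set up in \cref{specialization}; no essential obstacles are anticipated beyond that.
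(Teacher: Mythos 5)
Your proof is correct, but it follows a genuinely different route than the paper's. The paper's argument is a direct scaling trick: pick a lift $f \in k(y)^\times$, use the pushout in \cref{mun-torsor-further-specialization} to find $c \in C^\times$ with $cf \in (k(z)^+)^\times$, observe that this scaling changes neither $\# \circ v_y$ (since $\#$ kills $j_K(\Gamma_C)$) nor $\sp_z(\ov{\cdot})$ (since $C^\times$ is $n$-divisible), and then apply \cref{specialization}\cref{vanishing-order-valuation} once. In contrast, you observe that both sides are homomorphisms $k(y)^\times/n \to \ZZ/n$, produce a short exact sequence from the torsion-freeness and divisibility properties of $\Gamma_y \simeq \Gamma_C \times \ZZ$, and then verify agreement on the kernel $(k(y)^+)^\times/n$ and on a representative $\pi_y$ with $v_y(\pi_y) = \gamma_0$. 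Your argument is slightly longer and relies on a separate structural claim — that $k(z)^+ \cap k(y)$ is the intermediate rank-$1$ valuation ring of $k(y)$ corresponding to the convex subgroup $\langle\gamma_0\rangle \subset \Gamma_y$ — which is correct but needs a short justification (the convex subgroup cut out by $k(z)^+ \cap k(y) \supsetneq k(y)^+$ cannot be trivial since $k(z)^+$ has rank $1$, and cannot be all of $\Gamma_y$ since $C \not\subset k(z)^+$). The paper's scaling trick quietly sidesteps this point. Both arguments ultimately rest on \cref{specialization}\cref{vanishing-order-valuation}; the ``in particular'' clauses are handled identically.
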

\begin{proof}
We choose a lift $f\in k(y)^\times$ of $\ov{f}$. Throughout this proof, we will freely identify $f\in k(y)$ with its image under the (injective) map $k(y) \to k(z)$. \cref{mun-torsor-further-specialization} ensures that there is an element $c \in C^{\times}$ such that $c \cdot f \in (k(z)^+)^{\times}$. Since $C^\times$ is $n$-divisible, we conclude that
\[
\sp_z\bigl(\ov{c\cdot f}\bigr) = \sp_z\bigl(\ov{f}\bigr) \in k(\zeta)^\times/n.
\]
We note that also $(\# \circ v_y)(c) = 0$ for any $c\in C^\times$, so we may and do replace $f$ with $c \cdot f$ to assume that $f\in k(y)^\times \cap (k(z)^+)^{\times}$. In this case, \cref{specialization}\cref{vanishing-order-valuation} implies that $(\# \circ v_y)\bigl(\ov{f}\bigr) = \ord_{\mu_\zeta(y)}\bigl(\rm{sp}_z(\ov{f})\bigr)$.  

The ``in particular'' part now follows directly from \cref{equation:specializtion-reduction}, \cref{mun-torsor-class-specialization}, and \cref{notation:specialization-of-torsor}.
\end{proof}

Next, we show that $(\# \circ v_x)(\rho_x(L,s))$ vanishes for rank-$2$ points $x \in \abs{X}$ such that $\sp_\cX(x)$ is a smooth point of some admissible formal $\O_C$-model $\cX$ of $X$. 

\begin{lemma}\label{section-integral-model}
Let $\cX=\Spf R$ be a smooth formal $\cO_C$-scheme with irreducible special fiber.
Let $(L,s)$ be a $\mu_n$-torsor on $\cX_\eta = \Spa(R\bigl[\tfrac{1}{\varpi}\bigr], R)$ and let $\cL \in \Pic(R)$ such that $\cL\bigl[\tfrac{1}{\varpi}\bigr] \simeq L$.
Then there exists an isomorphism $\sigma \colon \O_{\cX} \xrightarrow{\sim} \cL^{\otimes n}$ such that $\sigma\bigl[\tfrac{1}{\varpi}\bigr] = c\cdot s$ for some $c \in C^\times$.
\end{lemma}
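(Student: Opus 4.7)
The plan is to produce $\sigma$ by first trivializing $\cL^{\otimes n}$ on $\cX$ and then rescaling a chosen trivialization by an element of $C^\times$ to match $s$ on the generic fiber. Once we fix any isomorphism $\tau \colon \O_\cX \xrightarrow{\sim} \cL^{\otimes n}$, I will write $s = \lambda \cdot \tau\bigl[\tfrac{1}{\varpi}\bigr]$ for some $\lambda \in R\bigl[\tfrac{1}{\varpi}\bigr]^\times$, decompose $\lambda = \varpi^k u$ with $k \in \ZZ$ and $u \in R^\times$, and set $\sigma \coloneqq u \cdot \tau$; this will yield $\sigma\bigl[\tfrac{1}{\varpi}\bigr] = \varpi^{-k} s$, so that $c \coloneqq \varpi^{-k} \in C^\times$ does the job.

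For the algebraic input, I will first observe that smoothness of $\cX$ combined with irreducibility of $\cX_s$ implies that $R/\varpi R$ is reduced with irreducible spectrum, hence an integral domain; since $R$ is $\varpi$-torsion-free and $\varpi$-adically separated, $R$ itself is an integral domain, $(\varpi)$ is a height-$1$ prime, and $R_{(\varpi)}$ is a DVR with uniformizer $\varpi$. The key intersection formula $R = R\bigl[\tfrac{1}{\varpi}\bigr] \cap R_{(\varpi)}$ inside $\Frac(R)$ follows from the observation that if $r/\varpi^k \in R_{(\varpi)}$ with $r \in R$ and $k \geq 1$ minimal, then $r \in \varpi^k R_{(\varpi)} \cap R$; this equals $\varpi^k R$ by induction on $k$, using that $R/\varpi R$ injects into its fraction field $R_{(\varpi)}/\varpi R_{(\varpi)}$, contradicting the minimality of $k$. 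Applying this formula to $\lambda' \coloneqq \varpi^{-v_{(\varpi)}(\lambda)}\lambda$ and to its inverse then gives the decomposition $R\bigl[\tfrac{1}{\varpi}\bigr]^\times = \varpi^{\ZZ} \cdot R^\times$.

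To see that $\cL^{\otimes n}$ is trivializable on $\cX$, note that $s$ exhibits $[\cL^{\otimes n}]$ as an element of $\ker\bigl(\Pic(R) \to \Pic(R\bigl[\tfrac{1}{\varpi}\bigr])\bigr)$, and the divisor exact sequence identifies this kernel with the subgroup of $\Pic(R)$ generated by classes of prime divisors contained in $V(\varpi)$; by irreducibility of $\cX_s$, this consists only of the class of $V(\varpi) = V\bigl((\varpi)\bigr)$ itself, which is the principal divisor cut out by $\varpi$ and hence trivial in $\Pic(R)$. Thus $\cL^{\otimes n} \cong \O_\cX$, completing the construction.

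The main obstacle will be the Picard-group computation in the last step, in particular justifying the divisor exact sequence for the possibly non-Noetherian ring $R$ (since $\O_C$ need not be Noetherian). The irreducibility hypothesis on $\cX_s$ is essential here: without it, $V(\varpi)$ would decompose into multiple prime divisors whose classes need not be principal, and no scalar rescaling of $s$ would extend to an integral trivialization.
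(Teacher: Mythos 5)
Your proposal has a genuine gap, and it occurs earlier than the Picard-group step you flagged. You claim that smoothness of $\cX$ and irreducibility of $\cX_s$ imply that $R/\varpi R$ is a domain, that $(\varpi)$ is a height-$1$ prime, and that $R_{(\varpi)}$ is a DVR with uniformizer $\varpi$. But the special fiber is $\cX_s = \Spec(R/\fm_C R)$, not $\Spec(R/\varpi R)$. When $\cO_C$ is not discretely valued (the main case of interest here is $C$ algebraically closed, where $\Gamma_C$ is divisible), the quotient $\fm_C/\varpi\cO_C$ consists of nilpotent elements of $\cO_C/\varpi\cO_C$, so $R/\varpi R$ has nilpotents and is not reduced, $(\varpi)$ is not a prime ideal of $R$, and $R_{(\varpi)}$ does not make sense as a localization at a prime. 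The decomposition $R\bigl[\tfrac{1}{\varpi}\bigr]^\times = \varpi^{\ZZ}\cdot R^\times$ you derive from this is in fact false: take $c\in C^\times$ with $\abs{c}=\abs{\varpi}^{1/2}$; then $c\in R\bigl[\tfrac{1}{\varpi}\bigr]^\times$, but $\varpi^k c\in R^\times$ would force $\abs{\varpi^k c}=1$, which no integer $k$ achieves.

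The correct unit decomposition here is $R\bigl[\tfrac{1}{\varpi}\bigr]^\times = C^\times\cdot R^\times$, with value group $\Gamma_C$ rather than $\ZZ$. To get it you must localize at the (actual) prime $\fm_C R$, whose local ring is the rank-$1$ valuation ring $\cO_{\cX,\zeta}$ at the generic point $\zeta$ of $\cX_s$ (\cref{lemma:local-ring-at-generic-point-in-the-special-fiber}\cref{lemma:local-ring-at-generic-point-in-the-special-fiber-1},\cref{lemma:local-ring-at-generic-point-in-the-special-fiber-4}); the associated valuation is the supremum seminorm $v$, and $R=\{v\leq 1\}$ by \cite[Prop.~3.4.1]{Lut16} and \cite[Prop.~6.2.3/1]{BGR}. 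This is precisely what the paper's proof uses. The paper also sidesteps your Picard-group step entirely: since the sought extension $\sigma$ is unique when it exists, one may work locally on $\Spf R$ where $\cL$ is trivial and then glue, so no global trivialization of $\cL^{\otimes n}$ is needed. That obviates the non-Noetherian divisor-sequence issue you (rightly) worried about, though that secondary concern is in any case repairable by arguing directly with fractional ideals once the corrected unit decomposition is in hand.
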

\begin{proof}
Let $v$ be the supremum semi-norm on $R\bigl[\tfrac{1}{\varpi}\bigr]$ \cite[\S~3.8]{BGR}.
Since $\Spf(R)$ is smooth and connected (thus irreducible), $v$ is a valuation on $R\bigl[\tfrac{1}{\varpi}\bigr]$ due to \cite[Prop.~6.2.3/5]{BGR};
this is exactly the unique rank-$1$ valuation corresponding to the generic point $\eta$
of $\cX_s$ under 
\cref{lemma:local-ring-at-generic-point-in-the-special-fiber}.
\cref{lemma:local-ring-at-generic-point-in-the-special-fiber-0} (see the proof of \cref{lemma:local-ring-at-generic-point-in-the-special-fiber}\cref{lemma:local-ring-at-generic-point-in-the-special-fiber-4} for the justification). Then \cref{lemma:local-ring-at-generic-point-in-the-special-fiber}\cref{lemma:local-ring-at-generic-point-in-the-special-fiber-4} implies that there is a scalar $c\in C^\times$ such that $v(\rho_v(L, c\cdot s)) = 1$. Now, we claim that $c\cdot s$ can be extended to an isomorphism $\sigma$.

We note that if an extension $\sigma$ exists, it is unique.
We may therefore localize on $\Spf R$ and assume that $\cL \simeq \O_{\cX}$.
In this case, $s$ just corresponds to an element of $R\bigl[\tfrac{1}{\varpi}\bigr]^\times$ and we wish to show that $c\cdot s$ lies in $R^\times$. By our assumption on $s$ and $c$, we have $v(c\cdot s)=1$. Now using \cite[Prop.~3.4.1]{Lut16} and \cite[Prop.~6.2.3/1]{BGR}, we conclude that $R = \{ r \in R\bigl[\tfrac{1}{\varpi}\bigr] \suchthat v(r) \le 1 \}$. Therefore, using multiplicativity of $v$, we conclude that $v(c\cdot s)=1$ implies that $c\cdot s\in R^\times$ finishing the proof.
\end{proof}

We are ready to prove the promised above vanishing:

\begin{proposition}
\label{smooth-value}
Let $\cX$ be an admissible formal $\O_C$-scheme such that the special fiber $\cX_s$ is a reduced separated scheme of pure dimension $1$. 
Let $\zeta$ be a generic point of $\cX_s$ corresponding to $z \in X$ via \cref{specialization}.
Let $y \in \ov{\{z\}}$ be a point given by a rank-$2$ valuation $v_y$.
Assume that  $\sp_{\cX}(y)\in \cX_s$ is a smooth point. 
Then for any $(L, s) \in \Hh^1(X^c,\mu_n)$, we have $\bigl(\# \circ v_y\bigr)\bigl(\rho_y(L,s)\bigr) = 0$.
\end{proposition}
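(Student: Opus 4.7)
The plan is to reduce, via \cref{valuation-order-comparison}, to showing that $\ord_{\mu_\zeta(y)}\bigl(\sp_z(L,s)\bigr) \equiv 0 \pmod{n}$, and then use \cref{formal version:line bundle has integral model lemma} together with \cref{section-integral-model} to express $\rho_z(L,s)$ via an integral trivialization that evaluates to a unit at $\mu_\zeta(y)$. Since the order of vanishing of a unit is $0$, this will finish the proof.

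Here are the steps I envision. First, since $\sp_\cX(y)$ is a smooth point of the reduced special fiber $\cX_s$, a standard criterion gives that $\cX \to \Spf(\O_C)$ is smooth at $\sp_\cX(y)$, so we may pass to an open affine neighborhood $\cU = \Spf(R) \subset \cX$ of $\sp_\cX(y)$ that is smooth over $\O_C$. By shrinking further, we may assume that $\cU_s$ is irreducible (contained in a single component of $\cX_s$, which exists because $\sp_\cX(y)$ is a smooth, hence regular, point of a reduced scheme). Setting $U \colonequals \cU_\eta$, both $y$ and $z$ lie in $U$ since their images under the specialization map lie in $\abs{\cU}$, and by its very construction $\rho_y(L,s)$ depends only on the restriction of $(L,s)$ to $U$.

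Next, by \cref{formal version:line bundle has integral model lemma} applied to the quasi-compact smooth formal $\O_C$-scheme $\cU$, the restriction $\restr{L}{U}$ extends to a line bundle $\cal{L} \in \Pic(\cU)$, and after yet another Zariski shrinking of $\cU$ around $\sp_\cX(y)$ we may assume $\cal{L} \simeq \O_\cU$. Applying \cref{section-integral-model} to this smooth irreducible $\cU$ yields an isomorphism $\sigma \colon \O_\cU \xrightarrow{\sim} \cal{L}^{\otimes n}$ and a scalar $c \in C^\times$ with $\sigma\bigl[\tfrac{1}{\varpi}\bigr] = c \cdot \restr{s}{U}$. Under the trivialization $\cal{L} \simeq \O_\cU$, the map $\sigma$ corresponds to an element $\widetilde{\sigma} \in R^\times$. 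Since $C$ is algebraically closed and $n \in C^\times$, we have $C^\times = (C^\times)^n$, so replacing $s$ by $c \cdot s$ changes neither $\rho_y(L,s)$ nor $\rho_z(L,s)$, and by construction $\rho_z(\restr{L}{U}, c\cdot \restr{s}{U})$ is just the class of $\widetilde{\sigma}$ in $k(z)^\times/\bigl(k(z)^\times\bigr)^n$.

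Finally, we trace this class through the identifications of \cref{mun-torsor-further-specialization} and \cref{specialization}\cref{vanishing-order-rings}. The element $\widetilde{\sigma} \in R^\times$ maps to a unit in the localization $\O_{\cX,\zeta}^\times \simeq \bigl(k(z)^+\bigr)^\times$, so $\sp_z(L,s)$ is the image of $\widetilde{\sigma}$ under $\O_{\cX,\zeta}^\times \twoheadrightarrow k(\zeta)^\times/n$. By \cref{specialization}\cref{specialization-points}, the point $\mu_\zeta(y) \in \abs{\cX^{c,n}_s}$ satisfies $\nu(\mu_\zeta(y)) = \sp_\cX(y)$; since $\sp_\cX(y)$ is a smooth, hence normal, point of $\cX_s$, the normalization $\nu$ is a local isomorphism at $\mu_\zeta(y)$, so $\O_{\cX^{c,n}_s,\mu_\zeta(y)} \simeq \O_{\cX_s,\sp_\cX(y)}$. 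The image of $\widetilde{\sigma} \in R^\times$ in $\O_{\cX_s,\sp_\cX(y)}$ is therefore a unit, so $\ord_{\mu_\zeta(y)}\bigl(\sp_z(L,s)\bigr) = 0$, and \cref{valuation-order-comparison} concludes. The main technical issue is to keep track of the identifications in \cref{specialization} and \cref{mun-torsor-further-specialization} simultaneously; the rest of the argument is formal once the integral extension $\cal{L}$ and the integral trivialization $\sigma$ have been produced.
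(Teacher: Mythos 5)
Your proposal is correct and follows essentially the same route as the paper's proof: Zariski-localize to a smooth affine formal chart with irreducible special fiber, extend $L$ integrally via \cref{formal version:line bundle has integral model lemma}, extend $s$ (up to a scalar $c \in C^\times$) via \cref{section-integral-model}, and then apply \cref{valuation-order-comparison} to reduce the computation to the order of vanishing of a unit, which is zero. The two minor deviations — shrinking $\cU$ so that $\cal{L}$ is globally trivial rather than choosing a local trivialization at $\spec(y)$, and dispatching the scalar $c$ by $n$-divisibility of $C^\times$ rather than by the observation that $\# \circ v_y$ kills $j_K(\Gamma_C)$ — are both harmless and equivalent to what the paper does.
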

\begin{proof}
    The question is Zariski-local on $\cX$, so we may and do assume that $\cX = \Spf R$
    is smooth affine formal $\mathcal{O}_C$-scheme with irreducible special fiber.
    By \cref{formal version:line bundle has integral model lemma}, 
    the line bundle $L$ can then be extended to a line bundle $\cL$ on $\cX$.
    \Cref{section-integral-model} guarantees that $s$ can be extended to an isomorphism $\sigma \colon \O_{\cX} \xrightarrow{\sim} \cL^{\otimes n}$ after scaling by some $c \in C^\times$. Thus, for the purpose of showing that $\bigl(\# \circ v_y\bigr)\bigl(\rho_y(L,s)\bigr)=0$, we can replace $s$ with $c\cdot s$ to assume that $s$ extends to an isomorphism $\sigma\colon \O_\cX \xr{\sim} \cal{L}^{\otimes n}$.
    
    Over the local ring $\cO_{\cX,\spec(y)}$, we choose a trivialization $a \colon \cO_{\cX,\spec(y)} \xrightarrow{\sim} \restr{\cL}{\cO_{\cX,\spec(y)}}$ and consider $a^{-n} \circ \sigma_x$ as an element of $\O_{\cX, \spec(y)}^\times$.
    Using \cref{valuation-order-comparison}, we can identify $(\# \circ v_x)(\rho_x(L,s))$ with $\ord_{\mu_\zeta(y)}\bigl(a^{-n} \circ \sigma_x\bigr)$.
    Since $a^{-n} \circ \sigma_x \in \O_{\cX, \spec(y)}^\times$, we conclude that this valuation is zero.
\end{proof}
Next, we prove a vanishing statement for the nodes in the special fiber.
\begin{proposition}\label{node-value}
    Let $\cX$ be a separated semistable formal $\cO_C$-curve; see
    \cref{defn:ss-formal}.
    Let $q \in \cX_s$ be a node and $q_1,q_2 \in \cX^n_s$ be the two points in the normalization lying above $q$.
    Let $y_1$ and $y_2$ be the points of $\abs{X}$ with rank-$2$ valuation $v_{y_1}$ and $v_{y_2}$ which correspond to $q_1$ and $q_2$ under \cref{specialization}, respectively.
    Then $(\# \circ v_{y_1})(\rho_{y_1}(L,s)) + (\# \circ v_{y_2})(\rho_{y_2}(L,s)) = 0$.
\end{proposition}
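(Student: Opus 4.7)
The plan is to reduce to an étale-local computation near $q$ and then carry out a Newton polygon analysis, in the spirit of \cref{smooth-value}. By \cref{defn:ss-formal}, after passing to an étale neighborhood $(\cU, u) \to (\cX, q)$, there is an étale morphism $g \colon (\cU, u) \to (\cY_\pi, (0,0))$ with $\cY_\pi = \Spf\bigl(\cO_C\langle S,T\rangle/(ST-\pi)\bigr)$ for some $\pi \in \fm_C \smallsetminus \{0\}$. By \cref{lemma:local-trivialization-semistable-curves}, after further étale localization near $q$ we may also assume $L|_{\cU_\eta}$ is trivial. Fixing a trivialization $a \colon \cO_{\cU_\eta} \xrightarrow{\sim} L|_{\cU_\eta}$ and setting $A \colonequals \cO_\cU(\cU)$, the element $f \colonequals a^{-n} \circ s \in A\bigl[\tfrac{1}{\varpi}\bigr]^\times$ represents $\rho_{y_i}(L,s) \in k(y_i)^\times/n$ for $i = 1, 2$ by \cref{mun-torsor-class}.

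By \cref{laurent-valuations}, up to relabeling $y_1, y_2$, the rank-$2$ valuations $v_{y_1}, v_{y_2}$ correspond under the isomorphism of \cref{lemma:structure-curve-like-valuations} to $\bigl(\abs{f}_S, -v_S(f)\bigr)$ and $\bigl(\abs{f}_T, -v_T(f)\bigr)$, with the notation of \cref{regular-elements} and \cref{valuation-regular-element}. Combined with \cref{warning:sign-convention}, this yields $\# v_{y_1}(f) = v_S(f)$ and $\# v_{y_2}(f) = v_T(f)$, reducing the proposition to the congruence $v_S(f) + v_T(f) \equiv 0 \pmod{n}$. In fact, I would prove the sharper identity $v_S(f) + v_T(f) = 0$ in $\ZZ$: both $f$ and $f^{-1}$ map to regular elements of $\w{R}\bigl[\tfrac{1}{\varpi}\bigr]$ by \cref{lemma:image under completion is regular}, so $v_S(f)$ and $v_T(f)$ are well-defined integers.

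For the identity, write $\beta_S(f) = \sum_{i \in \ZZ} a_i S^i$ in $\w{R}_S\bigl[\tfrac{1}{\varpi}\bigr]$, set $P(i) \colonequals -\log\abs{a_i}$, and let $\kappa \colonequals -\log\abs{\pi} > 0$. Unwinding \cref{regular-elements,valuation-regular-element}, $v_S(f)$ is the leftmost $i \in \ZZ$ at which $P$ attains its minimum, and $-v_T(f)$ is the rightmost $i \in \ZZ$ at which $P(i) + i\kappa$ attains its minimum. The generic fiber of $\Spf \w{R}$ is a formal germ of the open annulus $\abs{\pi} < \abs{S} < 1$ (see \cref{lemma:equality-of-completions}), so the unit condition on $f$ forces $f$ to have no zeros on the rank-$1$ points of this annulus. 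By the classical Newton polygon interpretation for bounded Laurent series, slopes in the open interval $(-\kappa, 0)$ correspond bijectively with multiplicity to such zeros, so $f$ being a unit is equivalent to the Newton polygon of $\beta_S(f)$ having no slopes in $(-\kappa, 0)$.

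Given this constraint, let $j_0$ denote the largest index with slope $s_{j_0} \leq -\kappa$, so that $s_{j_0+1} \geq 0$, and let $v_{j_0}$ be the corresponding vertex in the $i$-coordinate. Since $P$ is strictly decreasing through $v_{j_0}$ (slopes $\leq -\kappa < 0$) and non-decreasing afterwards (slopes $\geq 0$), the leftmost minimum of $P$ occurs at $v_{j_0}$; symmetrically, $P + i\kappa$ has slopes $s_k + \kappa$ that are $\leq 0$ through $v_{j_0}$ and $\geq \kappa > 0$ afterwards, so its rightmost minimum is also at $v_{j_0}$. Hence $v_S(f) + v_T(f) = v_{j_0} - v_{j_0} = 0$. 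The main subtlety lies in handling the boundary slopes $s_k = -\kappa$ and $s_k = 0$, corresponding to formal zeros on the closed circles $\abs{S} = \abs{\pi}$ and $\abs{S} = 1$; these are allowed for a unit on the open annulus and produce slope-$0$ segments of $P + i\kappa$ and $P$, respectively, but these segments abut $v_{j_0}$ on precisely the sides that preserve the leftmost/rightmost identification, so the argument still yields $v_{j_0}$ in both cases.
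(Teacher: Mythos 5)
Your setup and reductions (\'etale localization to the standard node via \cref{defn:ss-formal}, trivializing $L$ via \cref{lemma:local-trivialization-semistable-curves}, identifying $\#\circ v_{y_i}$ of $\rho_{y_i}(L,s)$ with $v_S(\widetilde{f})$ and $v_T(\widetilde{f})$ via \cref{laurent-valuations} and \cref{warning:sign-convention}) are correct and match the paper's proof, though you elide the step going through \cref{valuation-order-comparison} and \cref{specialization}\cref{vanishing-order-valuation} that justifies the \'etale localization at the level of the $\rho_{y_i}$'s. You also correctly reduce to proving $v_S(\widetilde{f}) + v_T(\widetilde{f}) = 0$, which is precisely the content of \cref{vanishing-regular-element}.

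Where you genuinely diverge from the paper is in the proof of this key identity, and there is a gap. The paper's \cref{vanishing-regular-element} uses the multiplicativity of $v_S, v_T$ on regular elements (\cref{multiplicativity of the v_S and v_T}) together with the $f \leftrightarrow f^{-1}$ symmetry to reduce to the inequality $v_S(f)+v_T(f) \geq 0$, and then proves that inequality by an elementary coefficient comparison after normalizing $v_S(f)=0$. You instead try to prove the equality directly by establishing that the Newton polygon of $\beta_S(\widetilde{f})$ has no slopes in the open interval $(-\kappa, 0)$, and then reading off $v_S(\widetilde{f})$ (leftmost minimizer of $P$) and $-v_T(\widetilde{f})$ (rightmost minimizer of $P+i\kappa$) as the common vertex $v_{j_0}$. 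The polygon analysis in your final step, including the treatment of the boundary slopes $-\kappa$ and $0$, is essentially correct \emph{given} the no-slopes claim. But that claim is precisely the crux, and you invoke it only as a ``classical Newton polygon interpretation for bounded Laurent series.'' What is actually needed is a Weierstrass/Lazard-type correspondence between interior Newton polygon slopes and zeros for the ring of bounded rigid-analytic functions on an \emph{open} annulus --- which is not a Tate algebra, and for which the relevant statement is not in the standard textbook form. The paper explicitly avoids relying on anything like this (see the remark after \cref{regular-elements-revisited}, where the authors note they do not use multiplicativity of the full family of Gauss norms). To make your step rigorous you would either need to locate a precise reference, or argue, say, that the interior Gauss norms $|\cdot|_\rho$ for $|\pi|<\rho<1$ are multiplicative on $\widetilde{R}\bigl[\tfrac{1}{\varpi}\bigr]$, so $\sigma\mapsto -\log|f|_{e^\sigma}$ and $\sigma\mapsto -\log|f^{-1}|_{e^\sigma}$ are concave with sum $0$ and hence both affine on $[-\kappa,0]$, which is exactly the absence of Newton polygon slopes in $(-\kappa,0)$. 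Either way this is more machinery than the paper's route, and as written the step is a genuine gap.
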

The proof relies on the following statement:
\begin{proposition}\label{vanishing-regular-element}
    Fix an element $\pi \in \mathfrak{m}_C \smallsetminus \{0\}$ and set $\widetilde{R} \coloneqq \frac{\O_C[\![S, T]\!]}{(ST-\pi)}$. Let $f \in \widetilde{R}\bigl[\tfrac{1}{\varpi}\bigr]^\times$ such that both $f$ and $f^{-1}$ are regular in the sense of \cref{regular-elements}\cref{regular-elements-forreal}. Then 
    \[
    v_S(f) + v_T(f) = 0,
    \]
    where $v_S$ and $v_T$ are defined in \cref{valuation-regular-element}\cref{valuation-regular-element-2}. 
\end{proposition}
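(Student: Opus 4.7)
The plan is to exploit the relation $ST = \pi$ to reduce to the case $v_S(f) = 0$, and then deduce $v_T(f) = 0$ by applying a symmetric argument to $f$ and to $f^{-1}$. The one conceptual input is the identity between coefficients coming from the substitution $T = \pi/S$: writing $\beta_S(f) = \sum_{k \in \ZZ} \alpha_k S^k$ and $\beta_T(f) = \sum_{n \in \ZZ} \beta_n T^n$, one reads off $\beta_n = \alpha_{-n} \pi^{-n}$, and hence
\[ |\beta_n| = |\alpha_{-n}| \cdot |\pi|^{-n} \quad \text{for every } n \in \ZZ. \]

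For the normalization step, I would observe that $S \in \widetilde{R}\bigl[\tfrac{1}{\varpi}\bigr]$ is a unit (with inverse $T/\pi$) satisfying $v_S(S) = 1$ and $v_T(S) = -1$, so by the multiplicativity of $v_S$ and $v_T$ on regular elements (\cref{multiplicativity of the v_S and v_T}), multiplying $f$ by any power of $S$ preserves regularity of both $f$ and $f^{-1}$ and leaves the quantity $v_S(\blank) + v_T(\blank)$ unchanged. Replacing $f$ by $S^{-v_S(f)} f$, one may therefore assume $v_S(f) = 0$, and the goal becomes $v_T(f) = 0$.

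Under this normalization, the definition of $v_S$ gives $|\alpha_k| < |\alpha_0|$ for every $k < 0$ and $|\alpha_k| \leq |\alpha_0|$ for every $k \geq 0$. For $n < 0$ (so $-n > 0$), combining this with the coefficient identity and the fact that $|\pi|^{-n} < 1$ yields
\[ |\beta_n| = |\alpha_{-n}| \cdot |\pi|^{-n} \leq |\alpha_0| \cdot |\pi|^{-n} < |\alpha_0| = |\beta_0|, \]
so the minimum index at which $|\beta_n|$ attains the supremum $|f|_T$ is nonnegative, i.e., $v_T(f) \geq 0$. Applying the very same argument to $f^{-1}$ --- which satisfies $v_S(f^{-1}) = -v_S(f) = 0$ by multiplicativity and is $T$-regular by hypothesis --- yields $v_T(f^{-1}) \geq 0$; combined with $v_T(f) + v_T(f^{-1}) = v_T(1) = 0$, this forces $v_T(f) \leq 0$ and hence $v_T(f) = 0$.

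The subtle point, and the real content of the lemma, is that the bound $|\beta_n| < |\beta_0|$ for $n < 0$ uses \emph{only} the hypothesis $v_S(f) = 0$ together with $|\pi| < 1$; no analogous bound for $n > 0$ follows from $v_S(f) = 0$ alone, which is precisely why the regularity of $f^{-1}$ must be invoked. I anticipate no further obstacle beyond this bookkeeping: the ring-theoretic heavy lifting has already been done in setting up the $S$- and $T$-Gauss norms in \cref{regular-elements} and establishing their multiplicativity in \cref{multiplicativity of the v_S and v_T}.
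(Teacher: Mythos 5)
Your proof is correct and takes essentially the same approach as the paper's: normalize by a power of $S$ so that $v_S(f) = 0$, derive the $T$-coefficient bound $|\beta_n| < |\beta_0|$ for $n < 0$ from $|\pi| < 1$, conclude $v_T(f) \ge 0$, and use multiplicativity together with the regularity of $f^{-1}$ to upgrade the inequality to an equality. The paper packages the last step as a preliminary reduction (``it suffices to show $v_S(f) + v_T(f) \ge 0$ for any nonzero regular $f$'') while you defer it to the end, but the content is identical.
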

\begin{proof}
    The multiplicativity of both $v_S$ and $v_T$ (see \cref{multiplicativity of the v_S and v_T}) implies that it suffices to show that $v_S(f) +v_T(f)\geq 0$ for any non-zero regular $f\in \widetilde{R}\bigl[\tfrac{1}{\varpi}\bigr]$. 

    If $f= S^n$, we see that $v_S(S^r) + v_T(S^r) = v_S(S^r) + v_T(\tfrac{\pi^r}{T^r}) = r - r =0$. Therefore, we may replace $f$ with $f \cdot S^r$ for any integer $r$. Since $v_S(S)=1$, we can use the above observation to reduce to the case when $v_S(f)=0$. Thus, we conclude that the $S$-adic expansion $f = \sum_{i\in \ZZ} a_i S^i$ inside $\w{R}\bigl[\tfrac{1}{\varpi}\bigr]$ satisfies the property that $\abs{a_i} < \abs{a_0}$ if $i<0$ and $\abs{a_i} \leq \abs{a_0}$ if $i>0$.

    Now if we look at the $T$-adic expansion of $f$, we have $f = \sum_{i \in \ZZ} a_{-i} \pi^{-i} T^i$. Thus, the coefficients of negative powers of $T$ are of the form $a_{> 0} \cdot (\text{positive powers of } \pi)$. In particular, their norm is strictly less than $\abs{a_0}$. Hense, the very definition of $v_T(f)$ implies that $v_T(f)\geq 0$. 
\end{proof}

\begin{proof}[Proof of \cref{node-value}]
\cref{lemma:local-trivialization-semistable-curves} and the definition of semi-stable formal $\O_C$-curves imply that we can find a diagram of pointed admissible formal $\O_C$-schemes
\[
\begin{tikzcd}
& (\cU, u) \arrow[ld, swap, "h"]\arrow[rd, "g"] & \\
(\cX, q) && (\cY_\pi, 0) \coloneqq \Bigl(\Spf\Bigl( \frac{\mathcal{O}_C\langle S, T \rangle)}{(ST - \pi)}\Bigr), (0,0)\Bigr)
\end{tikzcd}
\]
such that $g$ and $h$ are \'etale, $\pi\in \m_C \smallsetminus \{0\}$, and there is a trivialization $a \colon \mathcal{O}_X \xrightarrow{\sim} L$. 

Let $U\coloneqq \cU_\eta$ be the rigid generic fiber of $\cU$, let $u_i$ be the unique (due to the \'{e}taleness) points of $\cU_s^n$ lying above $q_i$, and let $w_i \in \abs{U}$ be the rank-$2$ points corresponding to $u_i$ under \cref{specialization}, respectively. Using \cref{valuation-order-comparison} and \cref{specialization}\cref{vanishing-order-valuation}, we see that
\[
(\# \circ v_{y_i})(\rho_{y_i}(L,s)) = (\# \circ v_{w_i})\bigl(\rho_{w_i}\bigl(\restr{(L,s)}{U}\bigr)\bigr).
\]
Hence we may replace $(\cX, q)$ by $(\cU, u)$
and assume that our pointed semistable formal $\cO_C$-curve
admits an \'{e}tale map $g \colon (\cX, q) \to (\cY_\pi, 0)$ and there is a chosen trivialization $a\colon \mathcal{O}_X \xrightarrow{\sim} L$. By shrinking $(\cX, q)$ we may assume that $\cX = \Spf A_0$ is affine. We set $f \coloneqq a^{-n} \circ s \in (A_0\bigl[\tfrac{1}{\varpi}\bigr])^{\times}$. %
Then it suffices to show that 
\[
(\# \circ v_{y_1})(f) + (\# \circ v_{y_2})(f) = 0.
\]
Now we note that the map $g$ induces an isomorphism 
\[
R \coloneqq \Bigl(\bigl(\frac{\cO_C \langle S,T \rangle}{(ST - \pi)}\bigr)^\h_{(S, T)}\Bigr)^{\wedge}_{\varpi} \xr{\sim} \bigl(\O_{\cX, q}^\h\bigr)^{\wedge}_{\varpi}.
\]
Following \cref{formal-model-node-notation}, we have the natural morphism $A_0\bigl[\tfrac{1}{\varpi}\bigr]=\O_{\cX}(\cX)\bigl[\tfrac{1}{\varpi}\bigr] \to \bigl(\O_{\cX, q}^\h\bigr)^{\wedge}_{\varpi}\bigl[\tfrac{1}{\varpi}\bigr] \xr{\sim} R\bigl[\tfrac{1}{\varpi}\bigr] \to \w{R}\bigl[\tfrac{1}{\varpi}\bigr]  \coloneqq R^{\wedge}_{(S,T,\varpi)}\bigl[\tfrac{1}{\varpi}\bigr]$. We denote by $\widetilde{f}$ the image of $f$ in $\w{R}\bigl[\tfrac{1}{\varpi}\bigr]$. Then \cref{laurent-valuations} ensures that $\#\circ v_{y_1}(f)= v_S(\widetilde{f})$ and $\#\circ v_{y_2}= v_T(\widetilde{f})$. Thus, we reduced the question to showing that 
\[
v_S(\widetilde{f}) + v_T(\widetilde{f}) =0.
\]
This follows immediately from \cref{lemma:image under completion is regular} and \cref{vanishing-regular-element}. 
\end{proof}

\begin{proof}[{Second Proof of \cref{thm:trace-well-defined}}]
    Let $(L,s) \in \Hh^1(X,\mu_n)$.
    By \cref{useful proposition on rigid curves}\cref{useful proposition on rigid curves-5},
    we may choose a semistable $\cO_C$-formal model $\cX$ of $X$.
    Let $\cX^c_s$ be the compactification of $\cX_s$ such that $\cX_s \subset \cX^c_s$ is schematically dense and contains all the singular points of $\cX^c_s$, and let $\nu \colon \cX^{c,n}_s \to \cX^c_s$ be its normalization.
    Now \cref{specialization}\cref{specialization-comp} and the assumption that all singular points of $\cX^c$ are contained in $\cX_s$ imply that the specialization map induces a bijection $\abs{X^c} \smallsetminus \abs{X} \xrightarrow{\sim} \abs{\cX^{c, n}_s} \smallsetminus \abs{\cX_s^n}\xrightarrow{\sim} \abs{\cX^c_s} \smallsetminus \abs{\cX_s}$.
    For any $q \in \abs{\cX^c_s}$, let $\zeta_q$ be the generic point of the irreducible component containing $q$.
    By \cref{lemma:local-ring-at-generic-point-in-the-special-fiber}, the set $\spec_{\cX}^{-1}(\zeta_q) = \{z_q\}$ for some rank-$1$ point $z_q$.
    Then we have
    \begin{align*}
        \tilde{t}_X(L,s) \overset{\text{Lem.~\ref{specialization},}}{\underset{\text{Def.~\ref{analytic-pre-trace}}}{\equiv}} & \sum_{x_i \in \abs{X^c} \smallsetminus \abs{X}} (\# \circ v_i)\bigl(\rho_{x_i}(L,s)\bigr) \\
        \overset{\text{Lem.~\ref{valuation-order-comparison}, Prop.~\ref{smooth-value},}}{\underset{\text{Prop.~\ref{node-value}}}{\equiv}} & \sum_{q \in \abs{\cX^c_s} \smallsetminus \abs{\cX_s}} \ord_q\bigl(\spec_{z_q}(L,s)\bigr) + \sum_{q \in \abs{\cX_s} \text{ smooth}} \ord_q\bigl(\spec_{z_q}(L,s)\bigr) \\
        &+ \sum_{\substack{q \in \abs{\cX_s} \text{ node}\\ \text{with } \nu^{-1}(q) = \{q_1,q_2\}}} \ord_{q_1}\bigl(\spec_{z_q}(L,s)\bigr) + \ord_{q_2}\bigl(\spec_{z_q}(L,s)\bigr) \\
        \overset{\text{combining}}{\underset{\text{terms}}{\equiv}} & \sum_{\substack{Y \subseteq \cX^{c,n}_s \\ \text{connected component}}} \sum_{q \in \abs{Y} \text{ closed}} \ord_q\bigl(\spec_{z_q}(L,s)\bigr) \mod n.
    \end{align*}
    
    Now fix a connected component $Y_\zeta \subseteq \cX^{c,n}_s$ with generic point $\zeta$.
    This is an algebraic curve over $k$ on which $\spec_{z_q}(L,s) \in k(\zeta)^\times/\bigl(k(\zeta)^\times)^n$ defines a principal divisor (mod $n$).
    Thus,
    \[ \sum_{q \in \abs{Y} \text{ closed}} \ord_q\bigl(\spec_{z_q}(L,s)\bigr) \equiv \deg \bigl(\rm{Div}(\spec_{z_q}(L,s))\bigr) \equiv 0 \mod n \]
    and we win.
\end{proof}

\subsection{Compatibility with the algebraic trace map}\label{algebraic-analytic-trace-comparison}

The main goal of this subsection is to formulate the statement that the analytic trace map constructed in \cref{defn:analytic-trace-curves} is compatible with the algebraic one.
Its proof is the content of the next two subsections.
We begin by recalling the algebraic trace map.
\begin{definition}\label{defn:algebraic-trace-map}
Let $\overline{X}$ be a smooth proper rigid-analytic curve over $C$ and let $\overline{X}^{\rm{alg}}$ be its unique algebraization (see \cref{useful proposition on rigid curves}\cref{useful proposition on rigid curves-2}).
The \textit{algebraic trace map} on $\overline{X}$ is the homomorphism 
\[
t^{\rm{alg}}_{\overline{X}} \colon \rm{H}^2\bigl(\overline{X}, \mu_n\bigr) \simeq \rm{H}^2\bigl(\overline{X}^{\rm{alg}}, \mu_n\bigr) \xrightarrow{t_{\overline{X}^{\rm{alg}}}} \Z/n\Z
\]
which is obtained as the composition of the (inverse of the) isomorphism from \cref{useful proposition on rigid curves}\cref{useful proposition on rigid curves-6} and the schematic trace map (see \cite[Exp.~XVIII, Th.~2.9]{SGA4}). 
\end{definition}
Then the main statement about the compatibility of analytic and algebraic traces is the following: 
\begin{theorem}\label{thm:analytic-trace-algebraic}
Let $\overline{X}$ be a smooth proper rigid-analytic curve over $C$ and let $j \colon X \hookrightarrow \overline{X}$ be a quasi-compact open affinoid subspace.
Then the diagram
\[
\begin{tikzcd}
\rm{H}^2_c(X, \mu_n) \arrow{rr}{\Hh^2_c(\ttr^\et_j(1))} \ar{rd}{t_X} & & \rm{H}^2(\overline{X}, \mu_n) \arrow{ld}[swap]{t_{\overline{X}}^{\rm{alg}}} \\
& \ZZ/n\ZZ &
\end{tikzcd}
\]
commutes, where $t_X$ denotes the analytic trace from \cref{defn:analytic-trace-curves} and $\ttr^\et_j(1)$ is the \'etale trace from \cref{etale-trace} and \cref{notation:etale-trace}.
\end{theorem}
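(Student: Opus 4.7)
My plan is to reduce both sides to a common formula expressing the trace as a sum of orders of vanishing on the normalized special fiber of a semistable formal model, and then match them.

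First, by the surjectivity of the boundary map $\partial_X$ established in \cref{comp-supp-cohomology-affine-curve}, it suffices to show that for every $\alpha = (\alpha_i)_i \in \bigoplus_{i=1}^r \wdh{k(x_i)}^{\h,\times}/n$ one has
\[
t^{\rm{alg}}_{\overline{X}}\bigl(\Hh^2_c(\ttr^\et_j(1))(\partial_X(\alpha))\bigr) \equiv \sum_{i=1}^r (\# \circ v_{x_i})(\alpha_i) \pmod{n}.
\]

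Second, I would pick a semistable formal model $\overline{\cX}$ of $\overline{X}$ containing an open formal subscheme $\cX \subseteq \overline{\cX}$ that is a formal model of $X$ (using \cref{useful proposition on rigid curves}\cref{useful proposition on rigid curves-4}), and algebraize $\overline{\cX}$ via \cref{lemma:algebraize-curves} to a flat projective $\cO_C$-scheme $\overline{\mathfrak{X}}$ with generic fiber $\overline{X}^{\rm{alg}}$. Applying \cref{specialization}\cref{specialization-comp} with $\cX^c_s \colonequals \overline{\cX}_s$, the rank-$2$ points $x_1,\dotsc,x_r \in \abs{X^c} \smallsetminus \abs{X}$ correspond bijectively to closed points $q_1,\dotsc,q_r$ of $\overline{\cX}^n_s \smallsetminus \cX^n_s$; let $z_i \in X$ denote the rank-$1$ specialization-preimage of the generic point $\nu(q_i) \in \overline{\cX}_s$. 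Combining \cref{specialization}\cref{vanishing-order-valuation} and \cref{valuation-order-comparison} rewrites the right-hand side of the displayed formula as $\sum_{i=1}^r \ord_{q_i}(\sp_{z_i}(\alpha_i))$, where $\sp_{z_i}$ is the specialization map of \cref{mun-torsor-further-specialization}.

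Third, I would show that the image class $\Hh^2_c(\ttr^\et_j(1))(\partial_X(\alpha)) \in \Hh^2(\overline{X},\mu_n) \simeq \Hh^2(\overline{X}^{\rm{alg}},\mu_n)$ (where the isomorphism is from \cref{useful proposition on rigid curves}\cref{useful proposition on rigid curves-6}) coincides with the cycle class $c\ell_{\overline{X}^{\rm{alg}}}(D)$ of the divisor $D = \sum_i \ord_{q_i}(\sp_{z_i}(\alpha_i)) \cdot [\widetilde{p}_i]$, for any choice of closed points $\widetilde{p}_i \in \overline{X}^{\rm{alg}}$ in the residue disk $\sp^{-1}(\nu(q_i))$. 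Since $C$ is algebraically closed, the standard normalization of $t^{\rm{alg}}_{\overline{X}}$ (via \cref{defn:algebraic-trace-map} and the algebraic cycle-class formalism) then evaluates $t^{\rm{alg}}_{\overline{X}}(c\ell_{\overline{X}^{\rm{alg}}}(D))$ to $\sum_i \ord_{q_i}(\sp_{z_i}(\alpha_i))$, matching Step 2 and completing the argument.

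The main obstacle is the cycle-class identification in the third step. I would prove it by a local analysis: the universal property of $X^c$ yields a canonical morphism $c \colon X^c \to \overline{X}$ sending $x_i$ to a rank-$2$ point $c(x_i) \in \overline{X}$ whose specialization in $\overline{\cX}$ lies over $\nu(q_i)$; choosing a small open neighborhood $V_i \subseteq \overline{X}$ of $c(x_i)$ on which $\alpha_i$ lifts to a $\mu_n$-torsor $\widetilde{\alpha}_i$, one chases the construction of $\partial_X$ and $\ttr^\et_j$ to realize $\Hh^2_c(\ttr^\et_j(1))(\partial_X(\alpha_i))$ as a local cycle class whose multiplicity at $\widetilde{p}_i$ can be computed via the Kummer-theoretic argument already underlying \cref{valuation-order-comparison}. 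This identification has to hold uniformly in $n$, including when $n$ is divisible by the residue characteristic, where nearby-cycle arguments degenerate; the route through cycle classes (rather than nearby cycles) is precisely what enables the uniform treatment, and it crucially uses the curve-like structure of $\wdh{k(x_i)}^{\h}$ (\cref{lemma:extra-points-curve-like}, \cref{lemma:structure-curve-like-valuations}) together with the framework developed in \cref{cycle classes section} and the compatibility between analytic and algebraic cycle classes from \cref{lemma:analytification-cycle-classes}.
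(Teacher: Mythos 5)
Your proposal takes a genuinely different route from the paper. The paper proves the statement in two stages: first it establishes the compatibility for $\DD^1 \hookrightarrow \PP^{1,\an}$ (\cref{section:disc-p1-compatibility}), using the pseudo-adic comparison with the henselized boundary ring $A(Z)$ (\cref{Thm: cpt supp coh of disk}) to show the two traces factor through the same quotient $\Hh^2(\PP^{1,\an},\mu_n)$ and hence differ by a unit $\lambda$, and then pins down $\lambda = 1$ by evaluating on the cycle class of a point (\cref{cycle class in terms of first compactification}, \cref{an tr disk is 1}). It then reduces the general case to the disk via trace-friendliness and Noether normalization (\cref{lemma:good-noether}, \cref{thm:everything-is-trace-friendly}), invoking the finite-flat trace compatibility \cref{thm:analytic-trace-compatible-finite-flat-trace}. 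Your approach tries instead to compute $t^{\rm{alg}}_{\overline{X}} \circ \Hh^2_c(\ttr^\et_j(1)) \circ \partial_X$ directly on a general semistable model, avoiding the normalization step — which, if carried through, would be conceptually cleaner.

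However, Step 3 is not a proof; it is a placeholder for exactly the technical content that the paper spends a subsection on. To realize $\Hh^2_c(\ttr^\et_j(1))(\partial_X(\alpha))$ as the cycle class of a divisor supported in the boundary tube, you need to control the interaction between two unrelated excision sequences: the one for $X \hookrightarrow X^c$ (giving $\partial_X$ from the rank-2 points $x_i$) and the one for $X \hookrightarrow \overline{X}$ (giving $\ttr^\et_j$ from the tube $\overline{X}\smallsetminus X$). This is precisely what \cref{Thm: cpt supp coh of disk} handles for the disk via the ring $A(Z)$ and the commutative diagram \cref{eqn:different-compactifications}, and a general-model analogue of that analysis is missing from your sketch. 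The phrase ``chases the construction of $\partial_X$ and $\ttr^\et_j$'' hides a non-trivial diagram chase that, for the disk case alone, occupies \cref{eqn:local-vs-compact} through \cref{eqn:local-vs-compact-3}. Moreover, the sign conventions are delicate (\cref{warning:sign-convention}; the cycle class $c\ell_{\DD^1}(a)$ corresponds to $(T-a)^{-1}$, not $(T-a)$, in \cref{cycle class in terms of first compactification}), and your sketch tracks none of them. Finally, your appeal to the further specialization map $\sp_{z_i}$ from \cref{mun-torsor-further-specialization} is set up in the paper for classes arising from global $\mu_n$-torsors via $\rho_{x_i}$; extending it to arbitrary $\alpha_i \in \wdh{k(x_i)}^{\h,\times}/n$ (which is what $\partial_X$ acts on) needs the henselized version of \cref{bounded ring of disk henselized at infty} in general, and you must verify the image lands where you want it to (\cref{cycle class of points do not generate} warns that the situation for arbitrary classes differs from that for cycle classes of points). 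In short: the roadmap is plausible, but the argument as written contains a serious gap in Step 3, and filling it appears to require developing a general-semistable-model analogue of the paper's $A(Z)$-machinery — a significant piece of work that you should not treat as a corollary of ``local analysis.''
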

Before we start the proof of \cref{thm:analytic-trace-algebraic}, we record an application of the statement.
\begin{corollary}\label{curve-trace-etale-compatibility}
    Let $f \colon X \to Y$ be an \'etale morphism of smooth affinoid curves over $C$.
    Then the diagram
    \[ \begin{tikzcd}
        \Hh^2_c(X,\mu_n) \arrow[r,"t_X"] \arrow[d,"\Hh^2_c(\ttr^\et_{f}(1))"'] & \ZZ/n \\
        \Hh^2_c(Y,\mu_n) \arrow[ru,"t_Y"'] &
    \end{tikzcd} \]
    commutes, where $t_X$ and $t_Y$ denote the analytic traces from \cref{defn:analytic-trace-curves} and $\ttr^\et_f(1)$ is the \'etale trace from \cref{etale-trace} and \cref{notation:etale-trace}.
\end{corollary}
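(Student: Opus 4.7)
The plan is to factor $f$ as the composition of an open immersion and a finite \'etale morphism, and to treat each case separately. Since $X$ and $Y$ are affinoid, $f$ is quasi-compact and quasi-finite, so \cite[Lem.~2.2.8]{Huber-etale} furnishes a factorization $f = \pi \circ j$ where $j \colon X \hookrightarrow \widetilde{X}$ is an open immersion and $\pi \colon \widetilde{X} \to Y$ is finite \'etale. Note that $\widetilde{X}$ is again a smooth affinoid rigid-analytic curve over $C$, since finite morphisms preserve affinoidness and dimension while \'etale morphisms preserve smoothness. By the compatibility of \'etale traces with compositions (\cref{etale-trace-compatibility}), it then suffices to verify the corollary separately for the open immersion $j$ and for the finite \'etale morphism $\pi$.

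For the open immersion $j$, the idea is to realize both $X$ and $\widetilde{X}$ as open subspaces of a single smooth proper rigid-analytic compactification. Choose $j_{\widetilde{X}} \colon \widetilde{X} \hookrightarrow \overline{X}$ via \cref{useful proposition on rigid curves}\cref{useful proposition on rigid curves-3}; then $j_{\widetilde{X}} \circ j \colon X \hookrightarrow \overline{X}$ is an open immersion of $X$ into the same smooth proper compactification. Applying \cref{thm:analytic-trace-algebraic} to each of $j_{\widetilde{X}} \circ j$ and $j_{\widetilde{X}}$ expresses $t_X$ and $t_{\widetilde{X}}$ as the compositions of $t^{\rm{alg}}_{\overline{X}}$ with $\Hh^2_c\bigl(\ttr^\et_{j_{\widetilde{X}} \circ j}(1)\bigr)$ and $\Hh^2_c\bigl(\ttr^\et_{j_{\widetilde{X}}}(1)\bigr)$ respectively. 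Combining these identities with the composition compatibility of \'etale traces yields $t_X = t_{\widetilde{X}} \circ \Hh^2_c\bigl(\ttr^\et_j(1)\bigr)$, as required.

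For the finite \'etale morphism $\pi$, which is automatically finite flat, \cref{thm:analytic-trace-compatible-finite-flat-trace} directly yields $t_{\widetilde{X}} = t_Y \circ \Hh^2_c(\ttr_{\pi, \mu_n})$. It then remains only to identify the finite flat trace $\ttr_{\pi, \mu_n}$ with the \'etale trace $\ttr^\et_\pi(1)$, which is exactly the content of \cref{thm:flat-trace}\cref{flat-trace-etale}. Assembling the two cases through the factorization $f = \pi \circ j$ produces the commutativity of the desired diagram.

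The main technical input here is the existence of the \emph{global} factorization $f = \pi \circ j$ with $\pi$ finite \'etale over $Y$; this relies crucially on both $X$ and $Y$ being affinoid, and is precisely \cite[Lem.~2.2.8]{Huber-etale}. Once this is granted, the rest of the proof is a direct assembly of the analytic-algebraic trace comparison \cref{thm:analytic-trace-algebraic} (applied in the open-immersion case) and the finite-flat trace compatibility \cref{thm:analytic-trace-compatible-finite-flat-trace} (applied in the finite-\'etale case), so no substantial additional work is expected.
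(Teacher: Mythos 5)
The two special cases you treat — open immersions (via a smooth proper compactification and \cref{thm:analytic-trace-algebraic}) and finite \'etale morphisms (via \cref{thm:analytic-trace-compatible-finite-flat-trace} and \cref{thm:flat-trace}\cref{flat-trace-etale}) — are handled exactly as in the paper's own proof, and that part is correct. The gap is in the reduction to those two cases: you claim that because $X$ and $Y$ are affinoid and $f$ is quasi-finite, \cite[Lem.~2.2.8]{Huber-etale} furnishes a \emph{global} factorization $f = \pi \circ j$ with $j$ an open immersion into a smooth affinoid $\widetilde{X}$ and $\pi \colon \widetilde{X} \to Y$ finite \'etale. This is not what that lemma says. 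Huber's Lem.~2.2.8 (and its counterpart \cite[Prop.~3.1.4]{dJ-vdP}) is the \emph{local} structure theorem for \'etale morphisms: around each point of $X$ there is an open $U$ and an open $V \supseteq f(U)$ such that $f|_U$ factors as an open immersion into a finite \'etale $V$-space. It does not assert the existence of a single finite \'etale cover $\widetilde{X} \to Y$ receiving all of $X$ as an open subspace, and for a general \'etale $f$ between affinoids such a global finite \'etale hull need not exist (Zariski's Main Theorem produces a finite, but not in general \'etale, hull).

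Because of this, one cannot simply split $f$ into one open immersion and one finite \'etale map. The paper instead works locally: it chooses finitely many affinoid opens $V_j$ covering $Y$ and factorizations of the restrictions $f^{-1}(V_j) \to V_j$ into open immersion followed by finite \'etale, so that each $f^{-1}(V_j) \to Y$ is a composition of open immersions and finite \'etale morphisms, to which the two already-proved cases apply. To pass back from the $f^{-1}(V_j)$ to $X$ it then uses that $\bigoplus_j i_{j,!}\mu_n \to \mu_n$ is an epimorphism and that $\Hh^2_c(X,\blank)$ is right exact (\cite[Prop.~5.5.6, Prop.~5.5.8]{Huber-etale}), so that $\bigoplus_j \Hh^2_c(f^{-1}(V_j),\mu_n) \to \Hh^2_c(X,\mu_n)$ is surjective and the comparison of traces can be checked on each summand. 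This right-exactness/surjectivity step — the mechanism that lets a local factorization suffice — is precisely what is missing from your argument. To repair your proof you should either (a) replace the alleged global factorization by the local one and supply the surjectivity argument as above, or (b) give an independent justification for the global factorization in this specific situation (which Lem.~2.2.8 alone does not provide).
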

\begin{proof}
    First, when $f$ is finite \'etale, the statement follows from \cref{thm:analytic-trace-compatible-finite-flat-trace} and property \cref{flat-trace-etale} in \cref{thm:flat-trace}.
    Next, we contemplate the case where $f$ is an open immersion.
    By \cref{useful proposition on rigid curves}\cref{useful proposition on rigid curves-3}, we may choose an open immersion $g \colon Y \hookrightarrow Z$ into a smooth proper curve $Z$ over $C$.
    Consider the following diagram:
    \[ \begin{tikzcd}[column sep = 5em]
        \Hh^2_c(X,\mu_n) \arrow[r,"\Hh^2_c(\ttr^\et_f(1))"] \arrow[rrrd,bend right=10,"t_X"] & \Hh^2_c(Y,\mu_n) \arrow[r,"\Hh^2_c(\ttr^\et_g(1))"] \arrow[rrd,bend right=5,"t_Y"] & \Hh^2_c(Z,\mu_n) \arrow[rd,"t^\alg_Z"] & \\
        &&& \ZZ/n
    \end{tikzcd} \]
    Since both $X$ and $Y$ are quasicompact open affinoid subspaces of $Z$ via $g \circ f$ and $g$, respectively, and $\Hh^2_c\bigl(\ttr^\et_g(1)\bigr) \circ \Hh^2_c\bigl(\ttr^\et_f(1)\bigr) = \Hh^2_c\bigl(\ttr^\et_{g \circ f}(1)\bigr)$, \cref{thm:analytic-trace-algebraic} gives
    \[ t_Y \circ \Hh^2_c(\ttr^\et_f) = t^\alg_Z \circ \Hh^2_c(\ttr^\et_g(1)) \circ \Hh^2_c(\ttr^\et_f(1)) = t_X. \]

    Now we can treat the general case.
    By \cite[Lem.~2.2.8]{Huber-etale} (cf.\ also \cite[Prop.~3.1.4]{dJ-vdP}), we can choose a finite open affinoid cover $Y = \bigcup_{j \in J} V_j$ and factorizations
    \[ \begin{tikzcd}[column sep=tiny]
        f^{-1}(V_j) \arrow[rd,"f"'] \arrow[rr,"h_j"] && X_j \arrow[ld,"g_j"] \\
        & V_j &
    \end{tikzcd} \]
    such that the $h_j$ are open immersions and the $g_j$ are finite \'etale.
    They give rise to the diagram
    \[ \begin{tikzcd}
        \bigoplus_{j \in J} \Hh^2_c(f^{-1}(V_j),\mu_n) \arrow[r] \arrow[rd,start anchor=-10] & \Hh^2_c(X,\mu_n) \arrow[r,"t_X"] \arrow[d] & \ZZ/n \\
        & \Hh^2_c(Y,\mu_n) \arrow[ru,"t_Y"] &
    \end{tikzcd} \]
    in which all unlabeled arrows are the \'etale traces given by the respective counits of adjunction.
    The compatiblity of adjunction counits under composition then guarantees that the left triangle commutes.
    Moreover, the top horizontal composition is given by $\bigoplus_j t_{f^{-1}(V_j)}$ thanks to the already established case of open immersions.
    
    The natural map $\bigoplus_{j \in J} i_{j,!} \mu_n \to \mu_n$ induced by the open cover $\{ i_j \colon f^{-1}(V_j) \hookrightarrow X \}_{j \in J}$ is an epimorphism.
    Since $\Hh^2_c(f^{-1}(V_j),\blank) \simeq \Hh^2_c\bigl(X,i_{j,!}(\blank)\bigr)$ and $\Hh^2_c(X,\blank)$ is right exact \cite[Prop.~5.5.6, Prop.~5.5.8]{Huber-etale}, the top left horizontal arrow in the diagram above is then also an epimorphism.
    To prove that the right triangle commutes, it therefore suffices to show that the outer triangle commutes, which can be checked on each factor $\Hh^2_c(f^{-1}(V_j),\mu_n)$ separately.
    Since $f^{-1}(V_j) \to Y$ factors into a composition of open immersions and finite \'etale morphisms, this follows from the first paragraph.
\end{proof}

\subsection{Compatibility with the algebraic trace map: closed unit disk}\label{section:disc-p1-compatibility}

The main goal of this subsection is to prove \cref{thm:analytic-trace-algebraic} in the case of the standard open immersion $\bf{D}^1 \hookrightarrow \bf{P}^{1, \an}$. We recall that \cref{comp-supp-cohomology-affine-curve} gives some control over $\rm{H}^i_c(\bf{D}^1, \mu_n)$;
however, the main drawback of this description is that it seems difficult to relate $\rm{H}^2_c(\bf{D}^1, \mu_n)$ to the cohomology of $\bf{P}^{1, \an}$. 

For this reason, we take a different approach in this subsection and relate the compactly supported cohomology of $\bf{D}^1$ and the cohomology of $\bf{P}^{1,\an}$ directly.
An explicit understanding of their difference will also be the key input in our proof of \cref{thm:analytic-trace-algebraic}.

\subsubsection{Preliminaries. The ring $A(Z)$.}

To start the proof, we denote by $j\colon \bf{D}^1 \hookrightarrow \bf{P}^{1, \an}$ the usual open immersion and by $Z\coloneqq \abs{\bf{P}^{1, \an}}\smallsetminus \abs{\bf{D}^1}$ the closed complement of $\bf{D}^1$ inside $\bf{P}^{1, \an}$.
This space does \emph{not} admit any structure of an analytic adic space; 
instead, we consider $Z$ as a pseudo-adic space $(\bf{P}^{1, \an}, Z)$ (see \cref{section:pseudo-adic}) which we will, by abuse of notation, simply call $Z$.
\Cref{rmk:excision} implies that we have an exact triangle
\begin{equation}\label{eqn:disc-vs-p1}
\rR\Gamma_c(\bf{D}^1, \mu_n) \to \rR\Gamma(\bf{P}^{1, \an}, \mu_n) \to \rR\Gamma(Z, \mu_n).
\end{equation}
To put it plainly, the difference between $\rR\Gamma_c(\bf{D}^1, \mu_n)$ and $\rR\Gamma(\bf{P}^{1, \an}, \mu_n)$ is exactly controlled by the \'etale cohomology of $Z$.

To study these cohomology groups, we need to study the geometry of $j$ in more detail. For this, we view $\bf{P}^{1, \an}$ as the glueing of two closed unit disks\footnote{In the formulas below, we endow $\O_C[T]$ and $\O_C[S]$ with the $\varpi$-adic topology.}
\[
\bf{D}^1(0) = \Spa(C[T], \cO_C[T]) \text{ and } \bf{D}^1(\infty) = \Spa(C[S], \cO_C[S])
\]
along the torus 
\[
\Spa(C[T^{\pm 1}], \cO_C[T^{\pm 1}]) \simeq \Spa(C[S^{\pm 1}], \cO_C[S^{\pm 1}])
\]
via $T = S^{-1}$. Inclusion $j$ just becomes the inclusion $\bf{D}^1(0)\hookrightarrow \bf{P}^{1, \an}$. The complement of this inclusion is a special closed subset (see \cref{example:main-examples}) 
\[
Z=\bf{D}^1(\infty)\bigl(\abs{S}<1\bigr) \subset \bf{D}^1(\infty).
\]

Now, when we realize $Z$ as a special closed subset inside an affinoid $\bf{D}^1(\infty)$, \cref{thm:cohomology-prospecial-subsets} (see also \cref{defn:henselize-prospecial}) and \cite[Cor.~2.3.8]{Huber-etale} ensure that there are canonical isomorphisms 
\[
\rR\Gamma(Z, \mu_n) \simeq \rR\Gamma((\bf{D}^1(\infty), Z), \mu_n) \simeq \rR\Gamma(\Spec A(Z), \mu_n),
\]
where $A(Z)=\O_C[S]_{(\varpi, S)}^\h\bigl[\frac{1}{\varpi}\bigr]$. For the notational convenience, we introduce the following notation:

\begin{notation}\label{notation:ring-AZ} We put $A(Z)^+ \coloneqq \O_C[S]_{(\varpi, S)}^\h$ and $A(Z)\coloneqq A(Z)^+\bigl[\frac{1}{\varpi}\bigr]$.
\end{notation}

So we reduce the question of studying cohomology of $Z$ to the question of understanding alegbraic cohomology $\rm{R}\Gamma(\Spec A(Z), \mu_n)$. For this, we start we establishing certain algebraic properties of the ring $Z$. We stary by studying the Picard group of $A(Z)$: %

\begin{proposition}
\label{A(Z) has no Picard}
Let $A(Z)$ be as above. Then $\rm{Pic}(A(Z))=0$.
\end{proposition}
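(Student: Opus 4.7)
My plan is to reduce to showing $\Pic(A(Z)^+) = 0$, where $A(Z)^+ = \cO_C[S]^\h_{(\varpi,S)}$, and then to control the difference between $\Pic(A(Z)^+)$ and $\Pic(A(Z))$ via the localization $A(Z)^+ \to A(Z) = A(Z)^+[\tfrac{1}{\varpi}]$. First, the ring $A(Z)^+$ is a local domain: it is the henselization of the local domain $\cO_C[S]_{(\varpi,S)}$ at its maximal ideal, hence local, and it embeds into the $(\varpi,S)$-adic completion $\cO_C\llbracket S\rrbracket$, which is a domain. Since every local ring has trivial Picard group, $\Pic(A(Z)^+) = 0$.

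Geometrically, $\Spec A(Z) \xhookrightarrow{j} \Spec A(Z)^+$ is an open immersion whose closed complement $V(\varpi)$ is cut out by the principal, non-zero-divisor element $\varpi \in A(Z)^+$. The remaining task is thus to show that $j^*\colon \Pic(A(Z)^+) \to \Pic(A(Z))$ is surjective. Given a rank-$1$ projective $A(Z)$-module $L$, I would embed $L$ into the fraction field $K = \Frac(A(Z)) = \Frac(A(Z)^+)$ (these coincide since $\varpi$ is a non-zero-divisor), realizing $L$ as a fractional ideal $\mathfrak{a}\subset K$. After possibly scaling by a power of $\varpi$, the saturation $M \coloneqq \mathfrak{a}\cap A(Z)^+$ is an $A(Z)^+$-submodule of $K$ satisfying $M[\tfrac{1}{\varpi}] = \mathfrak{a}$. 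Once one knows that $M$ is invertible over $A(Z)^+$, the triviality $\Pic(A(Z)^+) = 0$ forces $M \simeq A(Z)^+$, and hence $L \simeq A(Z)$ in $\Pic(A(Z))$.

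The main obstacle is the verification that the saturation $M$ is indeed an invertible $A(Z)^+$-module. When $C$ does not carry a discrete valuation, $\cO_C$ fails to be Noetherian, so neither is $A(Z)^+$, and one cannot directly appeal to standard facts about Krull or Dedekind domains. To bypass this, I would either employ a Beauville--Laszlo-style gluing along the principal divisor $V(\varpi)$, using the $\varpi$-adic completion of $A(Z)^+$ to reduce to the corresponding statement for $\cO_C\llbracket S\rrbracket$, or approximate by the case of discretely valued $C$---in that case, $\cO_C[S]_{(\varpi,S)}$ is a $2$-dimensional regular Noetherian local ring, hence a UFD, and the vanishing of $\Pic$ under inversion of $\varpi$ is standard.
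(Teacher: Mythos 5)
Your overall strategy is sound in outline — reduce to the vanishing of $\rm{Pic}(A(Z)^+)$, which is trivial since that ring is local, after first showing that every rank-$1$ projective $A(Z)$-module descends to $A(Z)^+$. But there is a genuine gap precisely at the step you yourself flag as "the main obstacle": you never show that the saturation $M = \mathfrak{a}\cap A(Z)^+$ is an invertible $A(Z)^+$-module. Over a non-Noetherian local ring like $A(Z)^+ = \cO_C[S]^\h_{(\varpi,S)}$ (when $C$ is not discretely valued), it is not even clear that $M$ is finitely generated, let alone projective, and neither of your two proposed workarounds is carried out. The Beauville--Laszlo approach would have to use the $\varpi$-adic completion of $A(Z)^+$ (not the $(\varpi,S)$-adic one you identify as $\cO_C\llbracket S\rrbracket$), and that completion, roughly $\lim_n \bigl((\cO_C/\varpi^n)[S]\bigr)^\h_{(S)}$, is not a ring for which the extension problem is visibly easier. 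The discretely-valued-approximation approach would require realizing $A(Z)^+$ as a filtered colimit of the corresponding henselizations over discretely valued subrings of $\cO_C$, and this compatibility is not established; moreover, even the claim that $\cO_C$ is a filtered colimit of DVRs is not automatic.

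The paper sidesteps all of this by a different reduction: $A(Z)^+$ is a filtered colimit of essentially \'etale $\cO_C[S]$-algebras, which are in particular finite-type smooth $\cO_C$-algebras, so a rank-$1$ projective over $A(Z) = A(Z)^+[\tfrac{1}{\varpi}]$ descends (by finite presentation of invertible modules) to the $[\tfrac{1}{\varpi}]$-fiber of one of them. For such a finite-type smooth $\cO_C$-algebra, \cref{formal version:line bundle has integral model lemma} (proved via the perfect-complex and determinant argument of \cref{lemma:perfect-complex}) produces an integral model of the line bundle, and hence a line bundle on $A(Z)^+$, which is trivial by locality. It is this reduction to finite-type $\cO_C$-algebras, rather than saturation or gluing, that makes the extension of line bundles across $V(\varpi)$ tractable. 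If you want to salvage your approach, you would need to prove a concrete extension statement in the non-Noetherian setting, and the simplest route would likely recreate the paper's colimit argument in disguise.
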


\begin{proof}
We recall that the ring $A(Z)^+=\cO_C[S]^{\h}_{(\varpi, S)}$ is a filtered colimit of rings which are \'{e}tale over $\cO_C[S]$.
Since any rank-$1$ projective module $L$ on $A(Z)=A(Z)^+\bigl[\tfrac{1}{\varpi}\bigr]$ must come from a rank-$1$ projective module on the $\bigl[\tfrac{1}{\varpi}\bigr]$-fibre of one of these algebras in the filtered colimit,
we summon \cref{formal version:line bundle has integral model lemma}
to see that $L$ must be the base change of a rank-$1$ projective module
on $A(Z)^+$. So it suffices to show that $\rm{Pic}\left(A(Z)^+\right)=0$. Now \cite[\href{https://stacks.math.columbia.edu/tag/0F0L}{Tag 0F0L}]{stacks-project} implies that $A(Z)^+=\cO_C[S]^{\h}_{(\varpi, S)} \simeq \cO_C[S]^{\h}_{(\fm_C, S)}$. But the latter algebra is local, hence its Picard group vanishes.
\end{proof}

Our next goal is to understand the unit group $A(Z)^\times$. For this, we will need some preliminary lemmas: 

\begin{lemma}\label{lemma:A(z)-separated} The ring $A(Z)^+$ is $\varpi$-adically separated, i.e. $\bigcap_{n\geq 1} \varpi^n A(Z)^+= \{0\}$.
In particular, every nonzero element $f \in A(Z)$ can be scaled by a power of
$\varpi$ so that it lies in $A(Z)^+ \smallsetminus \varpi \cdot A(Z)^+$.
\end{lemma}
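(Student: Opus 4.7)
Plan. I plan to prove $A(Z)^+ = \O_C[S]^{\h}_{(\varpi, S)}$ is $\varpi$-adically separated by constructing an $\O_C[S]$-algebra injection $\iota\colon A(Z)^+ \hookrightarrow \O_C\llbracket S\rrbracket$; since the target is $\varpi$-adically separated (as $\O_C$ is $\varpi$-adically complete, thanks to the rank-$1$ valuation on $C$), the conclusion follows immediately. The map $\iota$ is supplied by the universal property of the henselization: $\O_C\llbracket S\rrbracket$ is $(\varpi, S)$-adically complete and hence $(\varpi, S)$-adically henselian, so the natural morphism $\O_C[S] \to \O_C\llbracket S\rrbracket$ extends uniquely to $\iota$. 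The entire task therefore reduces to showing that $\iota$ is injective.

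For injectivity, I would reduce to the Noetherian setting. Writing $\O_C = \colim_i O_i$ as a filtered colimit of finitely generated $\Z$-subalgebras of $\O_C$ containing $\varpi$, compatibility of henselization of pairs with filtered colimits (\cite[\href{https://stacks.math.columbia.edu/tag/0A04}{Tag 0A04}]{stacks-project}) yields
\[
A(Z)^+ \simeq \colim_i (O_i[S])^{\h}_{(\varpi, S)}.
\]
Each Noetherian henselization $(O_i[S])^{\h}_{(\varpi, S)}$ has $(\varpi, S)$ contained in its Jacobson radical, so Krull's intersection theorem makes it $(\varpi, S)$-adically separated and, in particular, $\varpi$-adically separated, embedding each $(O_i[S])^{\h}_{(\varpi, S)}$ into its $(\varpi, S)$-adic completion.

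The remaining step, and the main technical obstacle, is to glue these Noetherian injections into the injectivity of $\iota$. The delicate point is that the colimit's structure maps $(O_{i_0}[S])^{\h}_{(\varpi, S)} \to (O_j[S])^{\h}_{(\varpi, S)}$ are flat but generally not faithfully flat, so $\varpi$-divisibility in the colimit cannot be directly pulled back to a finite stage. A robust workaround, which I would employ, is to pass to the reductions modulo powers of $\varpi$: \cite[\href{https://stacks.math.columbia.edu/tag/0DYE}{Tag 0DYE}]{stacks-project} identifies $A(Z)^+/\varpi^n A(Z)^+ \simeq (\O_C[S]/\varpi^n)^{\h}_{(S)}$, which embeds into $(\O_C/\varpi^n)\llbracket S\rrbracket$ by \cref{injectivity of completing henselian rings} applied to $B = \O_C/\varpi^n$. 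Passing to the inverse limit (and using that $\O_C$ is $\varpi$-adically complete) yields a compatible injection
\[
\lim_n A(Z)^+/\varpi^n A(Z)^+ \hookrightarrow \O_C\llbracket S\rrbracket,
\]
which by uniqueness in the universal property of henselization coincides with the map induced by $\iota$ on the $\varpi$-adic completion. This identifies $\ker\iota$ with $\bigcap_n \varpi^n A(Z)^+$, and combining with the Noetherian separation above (via a descent argument showing that a $\varpi$-infinitely divisible element of the colimit forces a corresponding infinitely divisible element at some finite stage, ruled out by Krull) establishes the injectivity of $\iota$ and hence the desired $\varpi$-adic separation of $A(Z)^+$.
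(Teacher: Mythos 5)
Your plan has a genuine gap exactly at the spot you flag as the ``delicate point,'' and the proposed workaround does not close it. You correctly note that the transition maps in the colimit $A(Z)^+ \simeq \colim_i (O_i[S])^{\h}_{(\varpi,S)}$ need not be faithfully flat, so an element of $\bigcap_n \varpi^n A(Z)^+$ cannot be descended to a finite stage. Your ``workaround'' via reductions modulo $\varpi^n$ and passage to the inverse limit only establishes the identification $\ker\iota = \bigcap_n \varpi^n A(Z)^+$; it does not show either side vanishes. Since proving $\iota$ injective is thereby \emph{literally equivalent} to the statement of the lemma, the embedding strategy by itself makes no progress, and the parenthetical appeal to ``a descent argument showing that a $\varpi$-infinitely divisible element of the colimit forces a corresponding infinitely divisible element at some finite stage'' is precisely the claim you acknowledged you cannot make without faithful flatness.

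The paper avoids this by choosing a different filtered colimit: it writes $A(Z)^+ \simeq \O_C[S]^{\h}_{(\m_C,S)}$ (a local ring, by \cite[\href{https://stacks.math.columbia.edu/tag/0F0L}{Tag 0F0L}]{stacks-project}) as a filtered colimit of rings $B_i$, each a localization of an \'etale $\O_C[S]_{(\m_C,S)}$-algebra at a maximal ideal over the center; then each $B_i \to A(Z)^+$ is a local flat (hence faithfully flat) map of local rings, so $\varpi^n A(Z)^+ \cap B_i = \varpi^n B_i$ and the element does descend. The price is that the $B_i$ are \emph{not} Noetherian (they contain $\O_C$), so plain Krull intersection is unavailable; the paper instead uses that $B_i[\tfrac{1}{\varpi}]$ is Noetherian to invoke the Fujiwara--Gabber--Kato theory of $\varpi$-adically adhesive rings: the saturated ideal $J = \bigcap_n \varpi^n B_i$ is then finitely generated, and $J = \varpi J$ kills it by Nakayama. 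If you want to salvage your Noetherian approach, you would need to establish the descent of infinite $\varpi$-divisibility by some other mechanism; the paper's choice of colimit, where faithful flatness is automatic, is the cleaner route.
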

\begin{proof}
    We note that \cite[\href{https://stacks.math.columbia.edu/tag/0F0L}{Tag 0F0L}]{stacks-project} ensures that $A(Z)^+$ is isomorphic to the $(\fm_C, S)$-adic henselization of $\O_C[S]$. In particular, $A(Z)^+$ is a local ring and can be written as a filtered colimit $A(Z)^+=\colim_{i\in I} B_i$ such that each $B_i$ is the localization of an \'etale $\O_C[S]_{(\fm_C, S)}$-algebra at a maximal ideal lying over the maximal ideal of $\O_C[S]_{(\fm_C, S)}$. 
    
    Suppose $0\neq f\in \cap_{n\geq 1} \varpi^n A(Z)^+$, then $f$ comes from an element $0\neq f_i\in B_i$ for some $i\in I$. Since $B_i \to A(Z)^+$ is faithfully flat, we conclude that $\varpi^n A(Z)^+\cap B_i = \varpi^n B_i$. Therefore, $0\neq f_i \in \cap_{n\geq 1} \varpi^n B_i$. So it suffices to show that each $B_i$ is $\varpi$-adically separated. We pick one and rename it as $B$. 
    
    Now $B$ is a localization of an \'etale $\O_C[S]$-algebra, so $B\bigl[\frac{1}{\varpi}\bigr]$ is noetherian. 
    Therefore, \cite[Cor.~0.9.2.7 and Prop.~0.8.5.10]{FujKato} imply that $B$ is $\varpi$-adically adhesive (see \cite[Def.~0.8.5.1]{FujKato}). We note that $J\coloneqq \cap_{n\geq 1} \varpi^n B$ is a saturated ideal of $B$ (because $\varpi$ is a non-zero divisor in $B$). Thus \cite[Prop.~0.8.5.3(c)]{FujKato} implies that $J$ is finitely generated. Since also $J = \varpi \cdot J$ and $\varpi$ lies inside the maximal ideal of $B_i$, Nakayama's lemma \cite[\href{https://stacks.math.columbia.edu/tag/00DV}{Tag 00DV}]{stacks-project} ensures that $J=0$. 

    For the last sentence, first let us scale $f$ by multiples of $\varpi$ so that
    it lies in $A(Z)^+$. Then $\rm{max} \bigl\{ n \suchthat f \in \varpi^n \cdot A(Z)^+ \bigr\}$ exists because $A(Z)^+$ is $\varpi$-adically separated. Denote this number by $n_0$, then $\varpi^{-n_0} f$ does the job.
\end{proof}

The following lemma is certainly well-known to the experts, however, it seems difficult to find a reference in the existing literature. For this reason, we spell out the proof below: 

\begin{lemma}
\label{k_C to O_C/pi}
Let $\pi \in \cO_C$ be a pseudo-uniformizer such that
$\cO_C/(\pi)$ shares the same characteristic\footnote{This condition is only relevant in the
mixed characteristic situation, in which case we are just saying $(p) \subset (\pi)$.} 
as the residue field $k_C$.
Then the natural surjection $\rho \colon \cO_C/(\pi) \to k_C$ admits a section.
\end{lemma}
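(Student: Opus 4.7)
The plan is to split into two cases according to the characteristic of $k_C$, which by hypothesis equals the characteristic of $\cO_C/(\pi)$. Throughout, I will use that $\cO_C/(\pi)$ is a henselian local ring with maximal ideal $\fm_C/(\pi)$ and residue field $k_C$, as a quotient of the henselian rank-$1$ valuation ring $\cO_C$.

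If $\charac(k_C) = 0$, I would produce the section via Zorn's lemma applied to the poset of pairs $(K, s_K)$, where $K \subseteq k_C$ is a subfield and $s_K \colon K \to \cO_C/(\pi)$ is a ring homomorphism with $\rho \circ s_K$ equal to the inclusion $K \hookrightarrow k_C$. The poset is nonempty because every nonzero integer is a unit in $\cO_C$ (its reduction in $k_C$ is a nonzero element of a characteristic-zero field) and hence in $\cO_C/(\pi)$; thus $\QQ \hookrightarrow \cO_C/(\pi)$ provides a base point, and chains admit unions as upper bounds. Let $(K_0, s_0)$ be a maximal element. If $\alpha \in k_C \smallsetminus K_0$ were transcendental over $K_0$, then for any lift $\tilde\alpha \in \cO_C/(\pi)$ of $\alpha$ and any nonzero $f \in K_0[x]$, the element $f(\tilde\alpha)$ would reduce to $f(\alpha) \in k_C^\times$ and hence be a unit in the local ring $\cO_C/(\pi)$, extending $s_0$ to a section on $K_0(\alpha)$ and contradicting maximality. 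If instead $\alpha$ were algebraic over $K_0$ with minimal polynomial $m \in K_0[x]$, then separability (characteristic zero) would make $\alpha$ a simple root of $s_0(m) \bmod \fm_C$, and Hensel's lemma in $\cO_C/(\pi)$ would lift $\alpha$ to a root $\tilde\alpha$ of $s_0(m)$, again extending $s_0$ to $K_0(\alpha) = K_0[x]/(m)$. Hence $K_0 = k_C$.

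If $\charac(k_C) = p > 0$, then $\cO_C/(\pi)$ is an $\bf{F}_p$-algebra, and I would construct the section as a Teichm\"uller-style lift, exploiting that $k_C$ is algebraically closed and hence perfect. For $\ov{x} \in k_C$ and each $n$, pick any lift $x_n \in \cO_C/(\pi)$ of the unique element $\ov{x}^{1/p^n} \in k_C$ and declare $\tau(\ov{x}) \colonequals x_n^{p^n}$ for $n$ sufficiently large. The key well-definedness claim is that if $x_n, x_n'$ both lift $\ov{x}^{1/p^n}$, then $(x_n)^{p^n} = (x_n')^{p^n}$ in $\cO_C/(\pi)$ for $n \gg 0$: lifting $x_n - x_n'$ to an element $y \in \fm_C$ and working in characteristic $p$ modulo $\pi$ gives $(x_n)^{p^n} - (x_n')^{p^n} \equiv y^{p^n} \pmod{\pi}$, and since $\cO_C$ has rank-$1$ valuation with $\abs{y} < 1$, the powers $\abs{y}^{p^n}$ tend to zero so that $y^{p^n} \in (\pi)$ for large $n$. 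The same estimate shows that the sequence $x_n^{p^n} \bmod (\pi)$ stabilizes. Multiplicativity of $\tau$ follows by taking $z_n = x_n y_n$ in the construction; for additivity, the crucial identity $(\ov{x} + \ov{y})^{1/p^n} = \ov{x}^{1/p^n} + \ov{y}^{1/p^n}$ in the perfect field $k_C$ shows that $x_n + y_n$ is a valid lift, and the computation reduces to the characteristic-$p$ Frobenius identity $(x_n + y_n)^{p^n} = x_n^{p^n} + y_n^{p^n}$.

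The main obstacle is the convergence of the Teichm\"uller-type construction in the $\charac(k_C) = p$ case: the ring $\cO_C/(\pi)$ is not complete in any natural topology, so the argument relies essentially on the rank-$1$ valuation of $\cO_C$ (to force $p^n$-th powers of topologically small elements into $(\pi)$) combined with the perfection of $k_C$. In the characteristic-zero case, the corresponding technical ingredient is Hensel's lemma applied in the henselian local ring $\cO_C/(\pi)$, which substitutes for the absent Frobenius.
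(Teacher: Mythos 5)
Your proposal splits into two cases by characteristic, while the paper gives a unified argument (prime field, transcendence basis, perfection, ind-\'etale extension, henselian lifting). Your characteristic-zero case is a correct alternative route: the Zorn's-lemma argument with one-element extensions (transcendental extensions handled by locality of $\cO_C/(\pi)$, algebraic ones by Hensel's lemma and separability) is valid, and it relies on the same key fact as the paper, namely that $\cO_C/(\pi)$ is henselian local (you justify this via quotients of henselian rings, the paper via local nilpotence of the maximal ideal; both are correct). The paper's approach is slightly more uniform since it treats both characteristics at once, but yours is equally rigorous in the char-$0$ case.

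The characteristic-$p$ case has a genuine gap. The Teichm\"uller-style construction requires a completeness hypothesis that $\cO_C/(\pi)$ does not satisfy, and the stated well-definedness claim is simply false. Concretely, for \emph{any} fixed $n$, one can choose lifts $x_n, x'_n$ of $\ov{x}^{1/p^n}$ with $(x_n)^{p^n} \neq (x'_n)^{p^n}$ in $\cO_C/(\pi)$: the difference $x_n - x'_n$ can be taken to be $\ov{y}$ for any $y \in \fm_C$ with $\abs{\pi} < \abs{y} < 1$, and since the value group of $C$ is dense, one may choose $\abs{y}$ so close to $1$ that $\abs{y}^{p^n} > \abs{\pi}$, whence $y^{p^n} \notin (\pi)$ and the two $p^n$-th powers differ. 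Your estimate ``$y^{p^n} \in (\pi)$ for $n \gg 0$'' holds for a fixed $y$, but $y$ depends on the choice of lifts and hence on $n$, so no uniform $n$ works. (In the usual Teichm\"uller construction this issue is controlled by $p$-adic completeness; here $p = 0$ in $\cO_C/(\pi)$, so the $p$-adic topology is trivial and gives no convergence.) To repair the char-$p$ case one should instead follow the paper's route: lift a transcendence basis $\{x_i\}$ to $\cO_C/(\pi)$, then lift coherent systems of $p$-power roots of the $\widetilde{x}_i$ using surjectivity of Frobenius on $\cO_C/(\pi)$ (this does \emph{not} require uniqueness of those roots, only their existence, and so avoids the convergence problem) to extend the map to the perfection $A$ of $\FF_p[\underline{x}]$; then extend to $\Frac(A)$ using that nonzero elements go to units; and finally, since $\Frac(A)$ is perfect, the algebraic extension $\Frac(A) \subset k_C$ is ind-\'etale, so the section extends to $k_C$ by the henselian lifting property of $\cO_C/(\pi)$.
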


\begin{proof}
Let $\mathbf{F} \subset k_C$ be the prime field, \cite[\href{https://stacks.math.columbia.edu/tag/030F}{Tag 030F}]{stacks-project} implies that we can choose a set of transcendental
basis $\{x_i\}_{i \in I}$, so $\mathbf{F}(\underline{x}) \subset k_C$ is an algebraic extension.
Let $A$ be the perfection\footnote{If $\charac k_C=p>0$, then $A=\bf{F}[\ud{x}^{1/p^\infty}]$. If $\charac k_c = 0$, then we put $A=\bf{F}[\ud{x}]$.} of $\mathbf{F}[\underline{x}]$ which can be realized as a $\mathbf{F}[\underline{x}]$-subalgebra inside $k_C$. This induces a further inclusion $\Frac(A) \subset k_C$ of $A$-algebras. 

We choose some lifts $\widetilde{x}_i\in \O_C/(\pi)$ of $x_i \in k_C$. This defines a morphism $\alpha \colon \bf{F}[\ud{x}] \to \O_C/(\pi)$ such that $\rho \circ \alpha$ is equal to the natural inclusion $\bf{F}[\ud{x}]\hookrightarrow k_C$. Now we wish to construct morphisms $\beta$, $\gamma$, and $\delta$ such that the  diagram
\[ \begin{tikzcd} 
    &&& \cO_C/(\pi) \arrow[d,two heads,"\rho"'] \\
    \FF[\underline{x}] \arrow[rrru,start anchor=north east,end anchor=west,"\alpha"] \arrow[r,hook] & A \arrow[rru,dotted,start anchor=north east,"\beta" description] \arrow[r,hook] & \Frac(A) \arrow[ru,dotted,start anchor=north east,"\gamma" description] \arrow[r,hook] & k_C \arrow[u,bend right,dotted,"\delta"']
\end{tikzcd} \]
commutes and $\rho \circ \delta = \rm{id}$. We do this step by step.

First, we extend $\alpha$ to a morphism $\beta$. This is only an issue when $k_C$ has characteristic $p$, in which case $A=\bf{F}[\ud{x}^{1/p^\infty}]$ and $\O_C/(\pi)$ is a semi-perfect algebra. Therefore, we can choose compatible $p$-power roots $\widetilde{x}_{i,r}$ of $\widetilde{x}_i$ and let $\beta$ to be the unique ring homomorphism that sends $x_i^{1/p^r}$ to $\widetilde{x}_{i,r}$. Then, in order to extend $\beta$ to $\gamma$, we need to check that $\beta(a) \in \bigl(\O_C/(\pi)\bigr)^\times$ for each nonzero $a \in A$.
This follows from the observation that the kernel of $\rho$ is locally nilpotent and $\rho \circ \beta$ is the natural inclusion
$A \hookrightarrow k_C$. Thus, $\beta$ admits a unique extension $\gamma$.

Finally, we construct $\delta$. For this, we notice that $\Frac(A)$ is a perfect field, so the algebraic extension
$\Frac(A) \subset k_C$ is ind-\'{e}tale.
Now the maximal ideal in $\cO_C/(\pi)$ is locally nilpotent,
hence by \cite[\href{https://stacks.math.columbia.edu/tag/0ALI}{Tag 0ALI}]{stacks-project}
we know it is a henselian local ring.
Finally applying \cite[\href{https://stacks.math.columbia.edu/tag/08HR}{Tag 08HR}]{stacks-project}
with the $(R \to S, R \to A)$ there being our $(\Frac(A) \to \cO_C/(\pi), \Frac(A) \subset k_C)$ here,
we see that the section $\delta$ exists and uniquely depends on $\gamma$.
\end{proof}

Finally, we are ready to get the desired control over the units in $A(Z)$, in analogy with \cref{invertible functions on closed disk}:

\begin{lemma}\label{lemma:A(z)-units} We have an equality
\[
A(Z)^\times = C^\times \cdot A(Z)^{+, \times}.
\]
\end{lemma}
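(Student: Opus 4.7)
The inclusion $C^\times \cdot A(Z)^{+,\times} \subseteq A(Z)^\times$ is clear. For the converse, given $f \in A(Z)^\times$, the plan is to set $c \coloneqq f(0) \in C^\times$, obtained from the evaluation map $\mathrm{ev}_0 \colon A(Z) \to C$ induced by the surjection $A(Z)^+ \twoheadrightarrow A(Z)^+/(S) \simeq \O_C$ (the latter isomorphism uses \cite[\href{https://stacks.math.columbia.edu/tag/0DYE}{Tag 0DYE}]{stacks-project} together with the henselianity of $\O_C$ coming from algebraic closedness of $C$). The identity $f \cdot f^{-1} = 1$ forces $f(0) \in C^\times$, so $c$ is well-defined, and $u \coloneqq f/c$ then satisfies $u(0) = 1$. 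Since $A(Z)^+$ is local with residue field $k_C$ and maximal ideal $(\fm_C, S)A(Z)^+$, once we know $u \in A(Z)^+$ the condition $u(0) = 1 \in \O_C^\times$ automatically places $u$ outside the maximal ideal, yielding $u \in A(Z)^{+,\times}$. Hence the problem reduces to showing $u \in A(Z)^+$.

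To this end, I would combine a Gauss-norm bound with a purity statement. First I would construct the natural embedding $A(Z)^+ \hookrightarrow \O_C\llbracket S \rrbracket$, extending the obvious injection $\O_C[S]_{(\fm_C, S)} \hookrightarrow \O_C\llbracket S \rrbracket$ (polynomials with unit constant term become units in the power-series ring) to the henselization using that $\O_C\llbracket S \rrbracket$ is both $(S)$- and $\varpi$-adically complete, hence henselian along $(\fm_C, S)$. Inverting $\varpi$ yields $A(Z) \hookrightarrow \O_C\llbracket S \rrbracket[1/\varpi]$, on which the Gauss norm $|\sum a_i S^i|_B \coloneqq \sup_i |a_i|$ is multiplicative because $C$ is algebraically closed. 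The identity $|f|_B \cdot |f^{-1}|_B = 1$ combined with the boundedness of the coefficients of $f^{-1}$ forces $|f|_B = |f(0)|$: otherwise a coefficient $a_j$ with $|a_j| > |a_0|$ would produce unbounded coefficients in the formal expansion of $f^{-1}$, a Laurent-series computation in the spirit of \cref{invertible functions on closed disk}. Consequently $u = f/c$ satisfies $|u|_B = 1$ with $u(0) = 1$, so all coefficients of $u$ lie in $\O_C$ and $u \in A(Z) \cap \O_C\llbracket S \rrbracket$.

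The final and most delicate step is the \emph{$\varpi$-purity} of the embedding $A(Z)^+ \hookrightarrow \O_C\llbracket S \rrbracket$, i.e., $A(Z)^+ \cap \varpi \O_C\llbracket S \rrbracket = \varpi A(Z)^+$. Granted this, writing $u = g/\varpi^n$ minimally with $g \in A(Z)^+ \setminus \varpi A(Z)^+$ and observing $g \in \varpi^n \O_C\llbracket S \rrbracket$ forces $n = 0$, hence $u \in A(Z)^+$. For the purity, I would reduce modulo $\varpi$ and further modulo the nilradical of $A(Z)^+/\varpi$: the idempotency $\fm_C^2 = \fm_C$ in $\O_C$ together with the section from \cref{k_C to O_C/pi} identify the reduced quotient with $k_C[S]^h_{(S)}$, which injects into $k_C\llbracket S \rrbracket$ by \cref{injectivity of completing henselian rings}. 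Any element $\bar g$ in the kernel of $A(Z)^+/\varpi \to (\O_C/\varpi)\llbracket S \rrbracket$ must therefore be nilpotent in $A(Z)^+/\varpi$; combining this with the $\varpi$-adic separation from \cref{lemma:A(z)-separated} and extracting roots in the divisible value group of $C$ (using henselianity of $A(Z)^+$) upgrades the nilpotence to genuine divisibility $g \in \varpi A(Z)^+$, contradicting minimality. The main obstacle is precisely this passage from nilpotence modulo $\varpi$ to actual divisibility by $\varpi$ in $A(Z)^+$, which is where the algebraic closedness of $C$ and the henselization structure of $A(Z)^+$ are essential.
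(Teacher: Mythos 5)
Your argument takes a genuinely different route from the paper's and is sound in outline, but the step you flag as ``the most delicate'' is actually the one where your stated reasoning goes off the rails.

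The paper works entirely inside $A(Z)^+$: it shows that $\abs{f}_\eta \coloneqq \min\{\abs{c} : f/c \in A(Z)^+\}$ exists (the key input being that $A(Z)^+/(\pi)$ is a \emph{free} $\cO_C/(\pi)$-module, which uses the section from \cref{k_C to O_C/pi}), proves multiplicativity of $\abs{\blank}_\eta$ using that $A(Z)^+/\fm_C \simeq k_C[S]^\h_{(S)}$ is a domain, and then applies multiplicativity directly to $f$ and $f^{-1}$. You instead embed $A(Z)^+$ into $\cO_C\llbracket S\rrbracket$, normalize $u = f/f(0)$ so that $\abs{u}_B = 1$, and then need a purity statement to bring $u$ back from $\cO_C\llbracket S\rrbracket$ into $A(Z)^+$. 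Both strategies work; the paper's avoids both the embedding into the completion and the descent, at the cost of needing the freeness input. (A small note: your derivation of $\abs{f}_B = \abs{f(0)}$ is cleaner than a ``Laurent-series computation'': from $\abs{f(0)}\abs{f^{-1}(0)} = 1 \le \abs{f}_B\abs{f^{-1}}_B = 1$ together with $\abs{f(0)} \le \abs{f}_B$ and $\abs{f^{-1}(0)} \le \abs{f^{-1}}_B$, all inequalities are forced to be equalities.)

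The real problem is your treatment of the ``$\varpi$-purity'' $A(Z)^+ \cap \varpi\cO_C\llbracket S\rrbracket = \varpi A(Z)^+$. This is exactly the assertion that $A(Z)^+/\varpi \to (\cO_C/\varpi)\llbracket S\rrbracket$ is injective. By \cite[\href{https://stacks.math.columbia.edu/tag/0DYE}{Tag 0DYE}]{stacks-project} one identifies $A(Z)^+/\varpi \simeq (\cO_C/\varpi)[S]^\h_{(S)}$ (the paper makes the identical identification in its own proof), and then the injectivity is literally \cref{injectivity of completing henselian rings} applied to $B = \cO_C/\varpi$. That lemma makes \emph{no} reducedness hypothesis on $B$. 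Your proposal instead passes to the nilradical-reduction $k_C[S]^\h_{(S)}$, concludes only that the kernel is nilpotent, and then proposes to ``upgrade nilpotence to genuine divisibility'' by $\varpi$-separation and extracting roots in the divisible value group. That upgrade is neither needed nor correct as stated: if $\bar g^n = 0$ in $A(Z)^+/\varpi$, i.e.\ $g^n \in \varpi A(Z)^+$, extracting an $n$-th root (using normality of $A(Z)^+$) only yields $g \in \varpi^{1/n}A(Z)^+$, not $g \in \varpi A(Z)^+$. Fortunately the detour is unnecessary: apply the injectivity lemma with the non-reduced base directly, and the kernel is zero. Finally, you should be aware that the injectivity of the embedding $A(Z)^+ \hookrightarrow \cO_C\llbracket S\rrbracket$ itself requires an argument (injectivity modulo every $\varpi^n$, again via \cref{injectivity of completing henselian rings} and Tag 0DYE, combined with the $\varpi$-adic separation from \cref{lemma:A(z)-separated}); this is the one-variable analogue of \cref{Telling integrality from its completion}.
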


\begin{proof}
    We first choose a pseudo-uniformizer $\pi\in \O_C$ such that $\pi \mid p$ if $\O_C$ is of mixed characteristic $(0, p)$.  

    Now we claim that $\rm{min}_{c \in C} \bigl\{ \abs{c} \suchthat f/c\in A(Z)^+ \bigr\}$ exists for any nonzero $f \in A(Z)$. First, \cref{lemma:A(z)-separated} implies that we may replace $f$ by $f/\pi^N$ for some $N$ to assume that $f \in A(Z)^+ \smallsetminus \pi A(Z)^+$. 
    Therefore in order to show that the desired minimum exists, it suffices to show that $A(Z)^+/(\pi)$ is a free $\O_C/(\pi)$-modules.  
    
    For this, we choose a section $k_C \to \cO_C/(\pi)$ of the natural projection $\O_C/(\pi) \to k_C$,
    which exists due to \cref{k_C to O_C/pi}. 
    Since the maximal ideal of $\O_C/(\pi)$ is locally nilpotent, 
    we conclude that the section $k_C \to \O_C/(\pi)$ is integral. Therefore, \cite[\href{https://stacks.math.columbia.edu/tag/0DYE}{Tag 0DYE}]{stacks-project} implies that 
    \[
    A(Z)^+/(\pi) = \O_C[S]^{\h}_{(\pi, S)}/(\pi) \simeq \bigl(\cO_C/(\pi)[S]\bigr)^{\h}_{(S)} 
    \simeq \cO_C/(\pi) \otimes_{k_C} k_C[S]^{\h}_{(S)}.
    \]
    Since any $k_C$-module is free, we conclude that $A(Z)^+/(\pi)$ is a free $\O_C/(\pi)$-module as well.   
    
    Now let us define a function $\abs{\blank}_\eta\colon A(Z) \to \Gamma_C\cup \{0\}$ by the rule
    \[
    \abs{f}_\eta=\rm{min}_{c\in C} \bigl\{ \abs{c} \suchthat f/c\in A(Z)^+ \bigr\}.
    \]
    A standard argument using that $A(Z)^{+}/\fm_C A(Z)^{+} \simeq k_C[S]_{(S)}^\h$ is a domain shows that $\abs{.}_\eta$ is multiplicative (see \cite[p.~13]{B}
    for a version of this argument). Now let $f\in A(Z)^\times$, we choose some $c\in C$ such that $\abs{c}=\abs{f}_\eta$. It suffices to show that $f'=f/c$ is a unit in $A(Z)^+$. By construction, $\abs{f'}_\eta=1$ and $f'$ is invertible in $A(Z)$. Thus, multiplicativity of $\abs{.}_\eta$ implies that $\abs{f'^{-1}}_\eta=1$ so $(f')^{-1} \in A(Z)^+$ finishing the proof.
\end{proof}

\begin{corollary}
\label{F_p-cohomology of 1 dimensional open disk}
We have
\[
 \rm{H}^i(Z, \mu_n) \simeq \Hh^i(\Spec A(Z), \mu_n)
\simeq \begin{cases}
  \mu_n(C) \cong \Z/n\Z  & i = 0 \\
  A(Z)^{\times}/(A(Z)^{\times})^n = A(Z)^{+, \times}/(A(Z)^{+, \times})^n
  & i = 1 \\
  0 & i \geq 2
\end{cases}.
\]
\end{corollary}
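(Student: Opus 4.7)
The first isomorphism $\Hh^i(Z, \mu_n) \simeq \Hh^i(\Spec A(Z), \mu_n)$ is already established just before the corollary, via the identification of $Z$ with the special closed subset $\DD^1(\infty)\bigl(\abs{S} < 1\bigr) \subset \DD^1(\infty)$ and \cref{thm:cohomology-prospecial-subsets}. My plan is therefore to compute $\Hh^i\bigl(\Spec A(Z), \mu_n\bigr)$ by combining the Kummer exact sequence on $\Spec A(Z)$ with the vanishing $\Pic\bigl(A(Z)\bigr) = 0$ from \cref{A(Z) has no Picard}, the description $A(Z)^\times = C^\times \cdot A(Z)^{+,\times}$ from \cref{lemma:A(z)-units}, and Artin vanishing for smooth affine curves over the algebraically closed field $C$.

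For $i = 0$, I would first observe that $A(Z)^+ = \O_C[S]^\h_{(\fm_C, S)}$, being the henselization of the local domain $\O_C[S]_{(\fm_C, S)}$ at its maximal ideal, is itself a local domain by the standard properties of henselizations of local domains. Hence $A(Z) = A(Z)^+\bigl[\tfrac{1}{\varpi}\bigr]$ is a domain. Since $n \in C^\times$ and $C$ is algebraically closed, the polynomial $X^n - 1$ splits into $n$ distinct linear factors over $C \subseteq A(Z)$ and can have at most $n$ roots in the integral domain $A(Z)$, yielding $\mu_n\bigl(A(Z)\bigr) = \mu_n(C) \cong \ZZ/n\ZZ$.

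For $i = 1$, the Kummer exact sequence $0 \to \mu_n \to \GG_m \xrightarrow{(-)^n} \GG_m \to 0$ on $\Spec A(Z)$ together with the vanishing of $\Pic\bigl(A(Z)\bigr)$ from \cref{A(Z) has no Picard} give $\Hh^1\bigl(\Spec A(Z), \mu_n\bigr) \simeq A(Z)^\times/(A(Z)^\times)^n$. To identify this with $A(Z)^{+,\times}/(A(Z)^{+,\times})^n$, I would use \cref{lemma:A(z)-units}: surjectivity of the natural map follows from $A(Z)^\times = C^\times \cdot A(Z)^{+,\times}$ combined with the $n$-divisibility of $C^\times$ (since $C$ is algebraically closed and $n \in C^\times$). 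For injectivity, given $u \in A(Z)^{+,\times}$ with $u = v^n$ for some $v \in A(Z)^\times$, the multiplicative function $\abs{\blank}_\eta$ constructed in the proof of \cref{lemma:A(z)-units} forces $\abs{v}_\eta^n = \abs{u}_\eta = 1$, and torsion-freeness of $\Gamma_C$ yields $\abs{v}_\eta = 1$; the defining property of $\abs{\blank}_\eta$ then places $v$ in $A(Z)^{+,\times}$, so $u \in (A(Z)^{+,\times})^n$.

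For $i \geq 2$, the filtered colimit formula for henselizations \cite[\href{https://stacks.math.columbia.edu/tag/0A04}{Tag~0A04}]{stacks-project} expresses $A(Z)^+$ as $\colim_i B_i$ with each $B_i$ \'etale over $\O_C[S]$. Inverting $\varpi$ presents $A(Z) = \colim_i B_i\bigl[\tfrac{1}{\varpi}\bigr]$ with each $B_i\bigl[\tfrac{1}{\varpi}\bigr]$ \'etale over $C[S] = \O_C[S]\bigl[\tfrac{1}{\varpi}\bigr]$, hence the coordinate ring of a finite disjoint union of smooth affine $C$-curves. Artin's vanishing theorem for affine varieties then gives $\Hh^i\bigl(\Spec B_i\bigl[\tfrac{1}{\varpi}\bigr], \mu_n\bigr) = 0$ for $i \geq 2$, and the standard commutation of \'etale cohomology of affine schemes with filtered colimits of rings concludes. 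The subtlest step of the whole argument is the injectivity in the $i = 1$ statement, where the multiplicative function $\abs{\blank}_\eta$ from the proof of \cref{lemma:A(z)-units} is indispensable---without it, there is no transparent reason why an $n$-th root of an element of $A(Z)^{+,\times}$ taken in the larger group $A(Z)^\times$ must already lie in $A(Z)^{+,\times}$.
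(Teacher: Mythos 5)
Your proof is correct and follows essentially the same approach as the paper's very terse proof: ind-\'etaleness over $C[S]$ plus Artin--Grothendieck vanishing for $i \geq 2$, and the Kummer sequence together with $\Pic\bigl(A(Z)\bigr) = 0$ for $i \leq 1$. Two small remarks: first, the phrase ``the henselization of a local domain is a local domain'' is not true in general --- one needs the local ring to be unibranch, which here follows from the fact that $\O_C[S]$ is a normal domain (a polynomial ring over the valuation ring $\O_C$) so that $\O_C[S]_{(\fm_C,S)}$ and hence its henselization are normal local domains; second, your explicit verification of the equality $A(Z)^{\times}/(A(Z)^{\times})^n = A(Z)^{+,\times}/(A(Z)^{+,\times})^n$ via the multiplicativity of $\abs{\blank}_\eta$ and the torsion-freeness of $\Gamma_C$ is a welcome fill-in of a step the paper leaves implicit when it refers to \cref{lemma:A(z)-units}.
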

\begin{proof}
    The proof is essentially the same as that of \cref{cohomology-affine-curve}. One uses that $A(Z)$ is ind-\'etale over $C[S]$ and the Artin-Grothendieck vanishing theorem (see \cite[\href{https://stacks.math.columbia.edu/tag/0F0V}{Tag 0F0V}]{stacks-project}) to get vanishing in higher degrees. Then one uses the Kummer exact sequence and \cref{A(Z) has no Picard} to get the calculation in lower degrees. 
\end{proof}

\begin{corollary}
\label{second compactification proposition}
We have $\rm{H}^{i}_c(\bf{D}^1, \mu_n)= 0$ for $i\neq 2$ and a natural exact sequence
\[
0 \to
A(Z)^{\times}/(A(Z)^{\times})^n 
\to \rm{H}^{2}_{c}(\mathbf{D}^1, \mu_n)
\to \rm{H}^{2}(\mathbf{P}^{1, \an}, \mu_n)
\to 0
\]
\end{corollary}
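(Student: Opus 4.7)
The plan is to feed the excision triangle \cref{eqn:disc-vs-p1}
\[
\rR\Gamma_c(\bf{D}^1, \mu_n) \to \rR\Gamma(\bf{P}^{1, \an}, \mu_n) \to \rR\Gamma(Z, \mu_n)
\]
into its long exact sequence and to identify the two right-hand terms explicitly. The cohomology of $Z$ has just been computed in \cref{F_p-cohomology of 1 dimensional open disk}: one has $\Hh^0(Z, \mu_n) \cong \ZZ/n\ZZ$, $\Hh^1(Z, \mu_n) \cong A(Z)^\times/(A(Z)^\times)^n$, and $\Hh^{\geq 2}(Z, \mu_n) = 0$, so this side of the triangle is already in hand.

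For the middle term I would invoke \cref{useful proposition on rigid curves}\cref{useful proposition on rigid curves-6} to reduce to the algebraic \'etale cohomology $\rR\Gamma(\bf{P}^1_C, \mu_n)$, which is a classical computation via the Kummer sequence: the $n$-divisibility of $C^\times$ and the equality $\Pic(\bf{P}^1_C) = \ZZ$ give $\Hh^0(\bf{P}^{1,\an}, \mu_n) \cong \ZZ/n\ZZ$, $\Hh^1(\bf{P}^{1,\an}, \mu_n) = 0$, $\Hh^2(\bf{P}^{1,\an}, \mu_n) \cong \Pic(\bf{P}^1_C)/n \cong \ZZ/n\ZZ$, and vanishing in higher degrees.

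The remaining ingredient is the observation that the restriction map $\Hh^0(\bf{P}^{1,\an}, \mu_n) \to \Hh^0(Z, \mu_n)$ is an isomorphism: both sides are canonically identified with $\mu_n(C)$ since $C$ is algebraically closed and is contained in $A(Z)$, and the restriction respects this identification. Plugging these data into the long exact sequence then produces $\Hh^0_c(\bf{D}^1, \mu_n) = 0$ (kernel of an iso), $\Hh^1_c(\bf{D}^1, \mu_n) = 0$ (squeezed between an epimorphism onto $\Hh^0(Z,\mu_n)$ and the vanishing group $\Hh^1(\bf{P}^{1,\an}, \mu_n) = 0$), the vanishing for $i \geq 3$ (both neighboring terms vanish), and the stated short exact sequence in degree $2$
\[
0 \to A(Z)^\times/(A(Z)^\times)^n \to \Hh^2_c(\bf{D}^1, \mu_n) \to \Hh^2(\bf{P}^{1,\an}, \mu_n) \to 0,
\]
where injectivity on the left comes from $\Hh^1(\bf{P}^{1,\an}, \mu_n) = 0$ and surjectivity on the right from $\Hh^2(Z, \mu_n) = 0$. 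There is no real obstacle once the calculations of $\rR\Gamma(Z, \mu_n)$ and $\rR\Gamma(\bf{P}^{1,\an}, \mu_n)$ are in place; the whole argument is a diagram chase in the long exact sequence.
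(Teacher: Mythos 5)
Your proof is correct and rests on the same technology as the paper's (the excision triangle \cref{eqn:disc-vs-p1}, the computation \cref{F_p-cohomology of 1 dimensional open disk} of $\Hh^*(Z,\mu_n)$, and the algebraic--analytic comparison from \cref{useful proposition on rigid curves}\cref{useful proposition on rigid curves-6}). The only divergence is in the vanishing part: the paper cites \cref{comp-supp-cohomology-affine-curve} and \cref{cor:first-cohomology-affinoids}, which rest on the \emph{universal} compactification $\bf{D}^{1,c}$, while you extract $\Hh^0_c = \Hh^1_c = 0$ directly from the same long exact sequence for $\bf{D}^1 \subset \bf{P}^{1,\an}$, after noting that the restriction $\Hh^0(\bf{P}^{1,\an},\mu_n) \to \Hh^0(Z,\mu_n)$ is an isomorphism (both are $\mu_n(C)$ since $A(Z)$ is a connected $C$-algebra). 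This makes the argument a touch more self-contained, at no real cost; the remainder of the diagram chase and the degree-$2$ short exact sequence are exactly as the paper intends.
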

\begin{proof}
    The first claim follows directly from \cref{comp-supp-cohomology-affine-curve} and \cref{cor:first-cohomology-affinoids}. The second claim follows directly from \cref{eqn:disc-vs-p1}, \cref{useful proposition on rigid curves}\cref{useful proposition on rigid curves-6}, and \cref{F_p-cohomology of 1 dimensional open disk}. 
\end{proof}

\subsubsection{Beginning of the proof}

In this subsubsection, we show that \cref{thm:analytic-trace-algebraic} holds for the open immersion $j\colon \bf{D}^1 \hookrightarrow \bf{P}^{1, \an}$ up to an invertible constant $\lambda\in \Z/n\Z^\times$. In the next subsubsection, we will show that this constant must be $1$ due to some cycle class considerations. 

We start the proof by relating the short exact sequence in \cref{second compactification proposition} to the one in \cref{comp-supp-cohomology-affine-curve}. For this, we recall that $Z$ can be realized as the closed subset of $\bf{D}^1(\infty)\bigl(\abs{S}<1\bigr) \subset \bf{D}^1(\infty)$ and we denote by $x_+$ the unique (rank-$2$) point of $\abs{\bf{D}^{1, c}(0)} \smallsetminus \abs{\bf{D}^1(0)}$ (see \cref{lemma:compacitification-of-the-disc}) . We now consider the following commutative diagram\footnote{Here, we implicitly use \cite[Lem.~4.2.5 and Prop.~4.2.11]{rigid-motives} that ensures that $\abs{\bf{D}^{1, c}}$ coincides with the topological closure of $\bf{D}^1$ inside $\bf{P}^{1, \an}$.} of pseudo-adic spaces:
\begin{equation}\label{eqn:different-compactifications}
\begin{tikzcd}
\Bigl(\Spa\bigl(\wdh{k(x_+)}, \wdh{k(x_+)}^+\bigr), x_+\Bigr) \arrow{r}\arrow{d}{\alpha} & \Bigl(\bf{D}^{1, c}(0), x_+\Bigr) \arrow{d}{\beta} \\
\Bigl(\bf{D}^1(\infty), Z\Bigr) \arrow{r}& \Bigl(\bf{P}^{1, \an}, Z\Bigr).
\end{tikzcd}
\end{equation}
By \cite[Prop.~2.3.7]{Huber-etale}, the horizontal morphisms are equivalences on the associated \'etale topoi, so they do not change cohomology.

Now the morphism $\alpha$ (due to \cref{example:different-henselizations}) induces the natural morphism $A(Z) \to \wdh{k(x_+)}^\h$ such that the image of $A(Z)^+$ lands inside $\wdh{k(x_+)}^{+, \h}$. After inverting $\varpi$, we denote the induced morphism by
\[
\rm{Res}\colon A(Z) \to \wdh{k(x_+)}^\h.
\]

\begin{proposition}
\label{Thm: cpt supp coh of disk}
There is a commutative diagram
between two natural exact sequences
\[ \begin{tikzcd}
& 0 \arrow[r] \arrow[d] & A(Z)^{\times}/(A(Z)^{\times})^n \arrow[r] \arrow[d,"\mathrm{Res}"] & \Hh^2_c(\mathbf{D}^1, \mu_n) \arrow[r] \arrow[d,equals] & \Hh^2(\mathbf{P}^{1, \an}, \mu_n) \arrow[r] \arrow[d] & 0 \\
0 \arrow[r] & C\langle T \rangle^{\times}/(C\langle T \rangle^{\times})^n  \arrow[r] & \wdh{k(x_+)}^{\h, \times}/\Bigl(\wdh{k(x_+)}^{\h, \times}\Bigr)^n \arrow[r] & \Hh^{2}_{c}(\mathbf{D}^1, \mu_n) \arrow[r] & 0. & 
\end{tikzcd} \]
\end{proposition}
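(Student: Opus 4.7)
The plan is to produce both exact sequences from the long exact sequences of cohomology associated to two excision triangles — one for each compactification $\bf{D}^1 \hookrightarrow \bf{P}^{1, \an}$ and $\bf{D}^1 \hookrightarrow \bf{D}^{1, c}$ — and then to obtain the morphism of exact sequences by invoking the functoriality of excision for the commutative square of pseudo-adic spaces \cref{eqn:different-compactifications}.

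For the top row, the excision triangle $\rR\Gamma_c(\bf{D}^1, \mu_n) \to \rR\Gamma(\bf{P}^{1, \an}, \mu_n) \to \rR\Gamma(Z, \mu_n)$ is the one already recorded in \cref{eqn:disc-vs-p1}. The horizontal equivalences of \'etale topoi in \cref{eqn:different-compactifications} identify $\rR\Gamma(Z, \mu_n)$ with $\rR\Gamma(\Spec A(Z), \mu_n)$, whose cohomology is computed by \cref{F_p-cohomology of 1 dimensional open disk} (the equality $A(Z)^{\times}/n = A(Z)^{+, \times}/n$ used there is a consequence of \cref{lemma:A(z)-units}). The Kummer sequence on $\bf{P}^{1, \an}$, combined with the $n$-divisibility of $\O^\times(\bf{P}^{1, \an}) = C^\times$ and the $n$-torsionfreeness of $\Pic(\bf{P}^{1, \an}) \simeq \ZZ$, yields $\Hh^1(\bf{P}^{1, \an}, \mu_n) = 0$. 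Feeding these into the long exact sequence reproduces the top row (this is essentially \cref{second compactification proposition}). For the bottom row, the relevant excision triangle is $\rR\Gamma_c(\bf{D}^1, \mu_n) \to \rR\Gamma(\bf{D}^{1, c}, \mu_n) \to \rR\Gamma(\{x_+\}, \mu_n)$, underlying \cref{comp-supp-cohomology-affine-curve} applied to $X = \bf{D}^1$ (with $r = s = 1$ and $x_+$ described in \cref{lemma:compacitification-of-the-disc}). The Kummer sequence of \cref{cohomology-affine-curve} together with \cref{lemma:trivial-picard-disk} identifies $\Hh^1(\bf{D}^{1, c}, \mu_n) \simeq \Hh^1(\bf{D}^1, \mu_n) \simeq C\langle T\rangle^{\times}/n$, while \cref{cohomology-rk-2-pt} gives $\Hh^1(\{x_+\}, \mu_n) \simeq \wdh{k(x_+)}^{h, \times}/n$; the vanishing $\Hh^1_c(\bf{D}^1, \mu_n) = 0$ from \cref{cor:first-cohomology-affinoids} then collapses the long exact sequence to the bottom row.

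To construct the morphism between these two exact sequences, I would apply cohomology to \cref{eqn:different-compactifications}, using the functoriality of excision (\cref{rmk:excision}) to obtain a morphism from the second excision triangle above to the first. This morphism is the identity on $\rR\Gamma_c(\bf{D}^1, \mu_n)$ (coming from the horizontal equivalences) and the pullback along $\beta$ on $\rR\Gamma(\bf{D}^{1, c}, \mu_n) \to \rR\Gamma(\bf{P}^{1, \an}, \mu_n)$. The crucial identification is that the induced map $\alpha^* \colon \Hh^1(Z, \mu_n) \to \Hh^1(\{x_+\}, \mu_n)$ corresponds, under the equivalences of \'etale topoi in \cref{eqn:different-compactifications}, to the pullback along the ring homomorphism $A(Z) \to \wdh{k(x_+)}^h$, which by definition is $\mathrm{Res}$. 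By naturality of the Kummer sequence, this pullback sends $[f] \in A(Z)^{\times}/n$ to $[\mathrm{Res}(f)] \in \wdh{k(x_+)}^{h, \times}/n$, which is precisely the middle vertical arrow of the statement.

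The main work lies in bookkeeping the compatibility of the various Kummer-sequence identifications (on $\Spec A(Z)$, on $\Spec \wdh{k(x_+)}^h$, and on $\bf{D}^1$) with the morphism of excision triangles and ensuring that the boundary map in the bottom exact sequence is the correct one matching the identification of $\Hh^1(\bf{D}^1,\mu_n)$ with $C\langle T\rangle^\times/n$. All of this is formal naturality, and presents no serious obstacle beyond careful diagram-chasing.
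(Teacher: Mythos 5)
Your argument is correct and follows essentially the same route as the paper: both short exact sequences come from the two excision triangles for the compactifications $\bf{D}^1 \hookrightarrow \bf{P}^{1,\an}$ and $\bf{D}^1 \hookrightarrow \bf{D}^{1,c}$, and the morphism between them is induced by the commutative square of pseudo-adic spaces \cref{eqn:different-compactifications} together with \cref{thm:cohomology-prospecial-subsets} and naturality of the Kummer identifications. One small slip in your writeup: the restriction map induced by $\beta \colon (\bf{D}^{1,c}(0), x_+) \to (\bf{P}^{1,\an}, Z)$ goes $\rR\Gamma(\bf{P}^{1,\an},\mu_n) \to \rR\Gamma(\bf{D}^{1,c},\mu_n)$, not the other way around, but this is just a typo and does not affect the argument.
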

\begin{proof}
The upper short exact sequence comes from \cref{second compactification proposition}. The lower short exact sequence comes from \cref{cohomology-affine-curve}, \cref{lemma:trivial-picard-disk}, \cref{comp-supp-cohomology-affine-curve}, and \cref{cor:first-cohomology-affinoids}. Now the diagram of \'etale topoi below 
\[
\begin{tikzcd}
\mathbf{D}^1_{ \et} \arrow{r} \arrow[d,equals] & \bf{D}^{1, c}_{\et} \ar{d}{\mathrm{incl}} &
\biggl[ \Bigl(\bf{D}^{1, c}, x_+\Bigr)_\et \arrow{l} \arrow{d}{\alpha}& \arrow[l, swap, "\sim"] \Bigl(\Spa\bigl(\wdh{k(x_+)}, \wdh{k(x_+)}^+\bigr), x_+\Bigr)_\et \arrow{d}{\beta} \biggr]\\
\mathbf{D}^1_{\et} \arrow{r} & \mathbf{P}^{1, \an}_{\et} & \biggl[\Bigl(\bf{P}^{1, \an}, Z\Bigr)_\et \arrow{l} & \Bigl(\bf{D}^1_2, Z\Bigr)_\et \arrow[l, swap, "\sim"]\biggr].
\end{tikzcd}
\]
and \cref{thm:cohomology-prospecial-subsets} gives rise to the following commutative diagram of exact triangles
\[ \begin{tikzcd}
\rR\Gamma_c(\mathbf{D}^1, \mu_n) \arrow[r] \arrow[d,equals] & \rR\Gamma(\mathbf{P}^{1, \an}, \mu_n) \arrow[r] \arrow[d,"\res"] & \rR\Gamma(\Spec A(Z), \mu_n)  \arrow[d,"\res"] \\
\rR\Gamma_c(\mathbf{D}^1, \mu_n) \arrow[r] & \rR\Gamma(\mathbf{D}^{1, c}, \mu_n) \arrow[r] & \rR\Gamma\Bigl(\Spec \wdh{k(x_+)}^\h, \mu_n\Bigr).
\end{tikzcd} \]
This yields the desired commutative diagram upon passing to cohomology and observing that $\rR\Gamma(\bf{D}^{1, c}, \mu_n) \simeq \rR\Gamma(\bf{D}^1, \mu_n)$ (see \cref{cohomology-affine-curve}).
\end{proof}

Now we are finally ready to start the proof of \cref{thm:analytic-trace-algebraic} for the open immersion $\bf{D}^1 \hookrightarrow \bf{P}^{1, \an}$. 

\begin{lemma}
\label{A(Z) has zero an tr}
The composition $A(Z)^\times \xrightarrow{\rm{Res}} \wdh{k(x_+)}^{\h, \times} \xrightarrow{\#\circ v_{x_+}} \Z$ is zero.
\end{lemma}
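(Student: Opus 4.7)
The plan is to reduce to checking vanishing on two classes of elements via Lemma \ref{lemma:A(z)-units}, which decomposes $A(Z)^\times = C^\times \cdot A(Z)^{+,\times}$. For scalars $c \in C^\times$, the image $v_{x_+}(\rm{Res}(c)) = |c|$ lies in $j_{x_+}(\Gamma_C) \subset \Gamma_{x_+}$, so $\#\circ v_{x_+}(c) = 0$ by the very definition of $\#$ from \cref{defn:sharp-map}. This leaves the case of units $u \in A(Z)^{+,\times}$.

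The key structural observation is that the restriction morphism descends to a \emph{local} ring homomorphism of local rings with the same residue field:
\[
\rm{Res} \colon A(Z)^+ = \O_C[S]_{(\varpi,S)}^\h \longrightarrow \wdh{k(x_+)}^{+,\h}.
\]
Granting this, the argument is immediate: a local homomorphism of local rings sends units to units, and units in the valuation ring $\wdh{k(x_+)}^{+,\h}$ are precisely the elements on which $v_{x_+}$ takes the value $1$. Therefore $\# \circ v_{x_+}(\rm{Res}(u)) = \#(1) = 0$.

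To verify that $\rm{Res}$ has these two properties, first recall that under the glueing $\bf{P}^{1,\an} = \bf{D}^1(0) \cup \bf{D}^1(\infty)$ with $S = T^{-1}$, the explicit description of $v_{x_+}$ in \cref{lemma:compacitification-of-the-disc} gives $v_{x_+}(T) = (1,1)$ (in lex order), hence $v_{x_+}(\rm{Res}(S)) = v_{x_+}(T^{-1}) = (1,-1) < 1$. For a polynomial $f = \sum_i c_i S^i \in \O_C[S]$, one then reads off $v_{x_+}(c_i T^{-i}) = (|c_i|, -i) \leq (1,0) = 1$, so $\rm{Res}(f) \in \wdh{k(x_+)}^{+,\h}$; this extends to all of $A(Z)^+$ by integrality of the henselization over $\O_C[S]_{(\varpi,S)}$ together with the fact that $\wdh{k(x_+)}^{+,\h}$ is integrally closed in $\wdh{k(x_+)}^{\h}$. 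Locality then follows from $v_{x_+}(\varpi) = |\varpi| < 1$ and $v_{x_+}(\rm{Res}(S)) = (1,-1) < 1$, which places the generators of the maximal ideal $(\varpi, S)$ into $\m_{x_+}$.

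The only mild subtlety is the bookkeeping around which coordinate lives where ($S$ on $\bf{D}^1(\infty)$ vs.\ $T$ on $\bf{D}^{1,c}(0)$) and confirming that the map $\alpha$ of \cref{eqn:different-compactifications} really induces $\rm{Res}(S) = T^{-1}$; once this identification is clear, the rest is a short computation using the explicit form of $v_{x_+}$ in lex order.
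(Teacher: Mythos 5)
Your proof follows the same route as the paper: split $A(Z)^\times = C^\times \cdot A(Z)^{+,\times}$ via \cref{lemma:A(z)-units}, observe that $\# \circ v_{x_+}$ kills the image of $C^\times$, and then observe that $\rm{Res}$ carries $A(Z)^{+,\times}$ into $\bigl(\wdh{k(x_+)}^{+,\h}\bigr)^\times$, on which $v_{x_+}$ is trivial. That is correct in outline, but your verification of the last step has a genuine error.

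You justify $\rm{Res}\bigl(A(Z)^+\bigr) \subset \wdh{k(x_+)}^{+,\h}$ by ``integrality of the henselization over $\O_C[S]_{(\varpi,S)}$.'' This is false: the local henselization $R \to R^\h$ is \emph{ind-\'etale}, not integral, and in fact an integral formally \'etale extension of a local ring would have to be trivial. Your elementary computation establishes that the base map $\O_C[S]_{(\varpi,S)} \to \wdh{k(x_+)}^{\h}$ lands in $\wdh{k(x_+)}^{+,\h}$ and is local; the correct way to pass from there to $A(Z)^+ = \O_C[S]^\h_{(\varpi,S)}$ is the universal property of henselization (there is a unique local factorization through $R^\h$ into any henselian target such as $\wdh{k(x_+)}^{+,\h}$), not integral closure. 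Alternatively, this inclusion is exactly what the paper records, by functoriality of the pseudo-adic henselization along the morphism $\alpha$ in \cref{eqn:different-compactifications}, in the discussion just before \cref{Thm: cpt supp coh of disk}, so you could simply cite it.

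A smaller stylistic point: you lean on the map being a \emph{local} homomorphism to conclude that units go to units, but any ring homomorphism $A(Z)^+ \to \wdh{k(x_+)}^{+,\h}$ already sends units to units (if $u u^{-1} = 1$ then both images lie in the target subring and multiply to $1$). Locality buys you the converse direction, which you do not need here, so the computation placing $(\varpi, S)$ into $\m_{x_+}$ is harmless but dispensable.
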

\begin{proof}
    By \cref{lemma:A(z)-units}, it suffices to show that the composition is zero on $C^\times$ and $A(Z)^{+, \times}$. The composition is zero on $C^\times$ by construction. To deal with the elements of $A(Z)^{+, \times}$, we observe that $\rm{Res}$ maps them to the elements in $\Bigl(\wdh{k(x_+)}^{+, \h}\Bigr)^\times$ (see the discussion before \cref{Thm: cpt supp coh of disk}). Therefore, the whole valuation $v_{x_+}$ vanishes on these elements. 
\end{proof}
 
\begin{corollary}
\label{traces on disk differ by a constant} Let $j\colon \bf{D}^1 \hookrightarrow \bf{P}^{1, \an}$ be the standard immersion. Then there is an invertible constant $\lambda \in (\Z/n\Z)^\times$ such that the diagram
\[
\begin{tikzcd}
\rm{H}^2_c(\bf{D}^1, \mu_n) \arrow{rr}{\Hh^2_c(\ttr^\et_j(1))} \ar{rd}{\lambda \cdot t_{\bf{D}^1}} & & \rm{H}^2(\bf{P}^{1, \an}, \mu_n) \arrow{ld}[swap]{t_{\bf{P}^{1,\an}}^{\rm{alg}}} \\
& \ZZ/n\ZZ &
\end{tikzcd}
\]
commutes. 
\end{corollary}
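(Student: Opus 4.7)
My plan is to use the commutative diagram of short exact sequences in \cref{Thm: cpt supp coh of disk} together with the vanishing in \cref{A(Z) has zero an tr} to factor the analytic trace $t_{\bf{D}^1}$ through $\Hh^2(\bf{P}^{1,\an}, \mu_n)$, and then argue by cardinality considerations that the resulting factor agrees with $t^{\mathrm{alg}}_{\bf{P}^{1,\an}}$ up to an invertible scalar.

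First I will identify the map $\Hh^2_c(\bf{D}^1, \mu_n) \to \Hh^2(\bf{P}^{1,\an}, \mu_n)$ appearing in the top row of \cref{Thm: cpt supp coh of disk} with $\Hh^2_c(\ttr^\et_j(1))$; this is a direct unwinding of the definitions of the excision triangle \cref{eqn:disc-vs-p1} and of the \'etale trace \cref{etale-trace}, both of which are induced by the counit $j_! j^*\mu_n \to \mu_n$. Next, combining the defining property of the analytic trace (namely $t_{\bf{D}^1} \circ \partial_{\bf{D}^1} = \#\circ v_{x_+}$ from \cref{defn:analytic-trace-curves}), the commutativity of the square in \cref{Thm: cpt supp coh of disk}, and the vanishing $\#\circ v_{x_+} \circ \mathrm{Res} = 0$ from \cref{A(Z) has zero an tr}, I will show that the composition
\[ A(Z)^\times/n \longrightarrow \Hh^2_c(\bf{D}^1, \mu_n) \xrightarrow{t_{\bf{D}^1}} \Z/n\Z \]
is identically zero. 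Exactness of the top row of \cref{Thm: cpt supp coh of disk} will then produce a unique homomorphism $\tau \colon \Hh^2(\bf{P}^{1,\an}, \mu_n) \to \Z/n\Z$ such that $t_{\bf{D}^1} = \tau \circ \Hh^2_c(\ttr^\et_j(1))$.

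To conclude, I will observe that $t_{\bf{D}^1}$ is surjective by \cref{rmk:trace-surjective}, so $\tau$ is surjective as well. Since $\Hh^2(\bf{P}^{1,\an}, \mu_n) \cong \Z/n\Z$ via \cref{useful proposition on rigid curves}\cref{useful proposition on rigid curves-6} and the classical computation on $\bf{P}^1_C$, any surjection onto $\Z/n\Z$ from this group is an isomorphism; hence $\tau$ is an isomorphism. Because $t^{\mathrm{alg}}_{\bf{P}^{1,\an}}$ is also an isomorphism between the same two cyclic groups of order $n$, we necessarily have $\tau = \lambda^{-1} \cdot t^{\mathrm{alg}}_{\bf{P}^{1,\an}}$ for some $\lambda \in (\Z/n\Z)^\times$, which yields $\lambda \cdot t_{\bf{D}^1} = t^{\mathrm{alg}}_{\bf{P}^{1,\an}} \circ \Hh^2_c(\ttr^\et_j(1))$ as desired. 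The serious technical work is already encoded in \cref{Thm: cpt supp coh of disk}, \cref{A(Z) has zero an tr}, and \cref{rmk:trace-surjective}, so the only mildly delicate point that remains is the compatibility between the excision boundary and the \'etale trace, which is a routine unpacking of the adjunction counit.
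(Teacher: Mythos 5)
Your proposal is correct and follows essentially the same route as the paper: you factor $t_{\DD^1}$ through $\Hh^2(\PP^{1,\an},\mu_n)$ using the commutative square from \cref{Thm: cpt supp coh of disk} together with the vanishing in \cref{A(Z) has zero an tr}, and then pin down the induced map up to an automorphism of $\Z/n\Z$ using surjectivity from \cref{rmk:trace-surjective}. The paper phrases the final step slightly differently (observing that both $t_{\DD^1}$ and $t^{\alg}_{\PP^{1,\an}}\circ\Hh^2_c(\ttr^\et_j(1))$ are surjective and factor through the same surjection, hence differ by an element of $\rm{Aut}(\Z/n\Z)$) rather than naming the factor $\tau$ and noting it is an isomorphism, but this is the same argument.
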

\begin{proof}
    First, we note that \cref{rmk:trace-surjective} ensures that $t_{\bf{D}^1}$ is surjective.
    Furthermore, \cref{Thm: cpt supp coh of disk} ensures that the \'etale trace $\Hh^2_c\bigl(\ttr^\et_j(1)\bigr)$ is surjective, while classical algebraic theory ensures that $t^{\rm{alg}}_{\bf{P}^{1, \an}}$ is an isomorphism.
    This implies that the composition $t^{\rm{alg}}_{\bf{P}^{1, \an}} \circ \Hh^2_c\bigl(\ttr^\et_j(1)\bigr)$ is also surjective.  

    Now \cref{Thm: cpt supp coh of disk} and \cref{A(Z) has zero an tr} imply that $t^{\rm{an}}_{\bf{D}^1}$ vanishes on the image of $A(Z)^\times$ in $\rm{H}^2_c(\bf{D}^1, \mu_n)$. Therefore, \cref{Thm: cpt supp coh of disk} ensures that the analytic trace map factors through the surjection
    \[
    \rm{H}^2_c(\bf{D}^1, \mu_n) \xrightarrow{\Hh^2_c(\ttr^\et_j(1))} \rm{H}^2(\bf{P}^{1, \an}, \mu_n) \simeq \Z/n\Z.
    \]
    Since both $t_{\bf{D}^1}$ and $t^{\rm{alg}}_{\bf{P}^{1, \an}} \circ \Hh^2_c\bigl(\ttr^\et_j(1)\bigr)$ are surjective and factor through $\rm{can}$, we formally conclude that they must differ by an element $\rm{Aut}(\Z/n\Z) = (\Z/n\Z)^\times$. This finishes the proof.
\end{proof}

\subsubsection{End of the proof}

In this subsubsection, we finally finish the proof of \cref{thm:analytic-trace-algebraic} in the case of the open immersion $j\colon \bf{D}^1 \hookrightarrow \bf{P}^{1, \an}$.  

We note that \cref{traces on disk differ by a constant} implies that the only thing we are left to do is to pin down the constant $\lambda$. 
This will be done via cycle class considerations. For this, we recall that, for each classical point $a\in \O_C = \bf{D}^1(C)$, we can attach the localized cycle class in $\rm{H}^2_{a}(\bf{D}^1, \mu_{n})$ and the compactly supported cycle class in $\rm{H}^{2}_c(\bf{D}^1, \mu_n)$ (see \cref{defn:cycle-clas-divisors} and \cref{defn:cycle-clas-divisors-compact} respectively). To clarify the exposition in this subsubsection, we denote the localized cycle class by $c\ell_{\bf{D}^1}^{\rm{loc}}(a)\in \rm{H}^2_{a}(\bf{D}^1, \mu_{n})$ and the compactly supported cycle class by $c\ell_{\bf{D}^1}(a) \in \rm{H}^{2}_c(\bf{D}^1, \mu_n)$; they are related via the natural map $\rm{H}^2_{a}(\bf{D}^1, \mu_{n}) \to \rm{H}^{2}_c(\bf{D}^1, \mu_n)$. 

In order to verify $\lambda=1$, we will show that $t_{\bf{D}^1}$ and $t_{\bf{P}^{1, \an}}^{\rm{alg}} \circ \Hh^2_c\bigl(\ttr^\et_j(1)\bigr)$ are both equal to $1$ when evaluated on the cycle class of any point $a\in \bf{D}^1(C)=\O_C$. 

\begin{proposition}
\label{alg tr disk is 1}
    Following the notation of \cref{traces on disk differ by a constant}, we have $t^{\rm{alg}}_{\bf{P}^{1, \an}}\Bigl(\Hh^2_c\bigl(\ttr^\et_j(1)\bigr)\bigl(c\ell_{\bf{D}^1}(a)\bigr)\Bigr) = 1$ for any $a\in \O_C=\bf{D}^1(C)$.
\end{proposition}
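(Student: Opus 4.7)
The plan is to identify the class $\Hh^2_c(\ttr^\et_j(1))(c\ell_{\bf{D}^1}(a)) \in \Hh^2(\bf{P}^{1,\an},\mu_n)$ with the analytic cycle class of $a$ on $\bf{P}^{1,\an}$, transfer it to the algebraic cycle class on $\bf{P}^1_C$ via GAGA, and conclude by the classical computation.

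First, since $\{a\} \hookrightarrow \bf{D}^1$ and $\{a\} \hookrightarrow \bf{P}^{1,\an}$ are both effective Cartier divisors, the transversal base change \cref{lemma:base-change-divisor-classes} applied to
\[
\begin{tikzcd}
\{a\} \arrow[d,equals] \arrow[r,equals] & \{a\} \arrow[d] \\
\bf{D}^1 \arrow[r,hook,"j"] & \bf{P}^{1,\an}
\end{tikzcd}
\]
shows that the excision isomorphism $\Hh^2_a(\bf{P}^{1,\an},\mu_n) \xrightarrow{\sim} \Hh^2_a(\bf{D}^1,\mu_n)$ identifies the localized classes $c\ell^{\rm loc}_{\bf{P}^{1,\an}}(a)$ and $c\ell^{\rm loc}_{\bf{D}^1}(a)$.

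Next, I would check that the square
\[
\begin{tikzcd}
\Hh^2_a(\bf{D}^1, \mu_n) \arrow[d] & \arrow[l,"\sim"'] \Hh^2_a(\bf{P}^{1,\an}, \mu_n) \arrow[d] \\
\Hh^2_c(\bf{D}^1, \mu_n) \arrow[r,"\Hh^2_c(\ttr^\et_j(1))"'] & \Hh^2(\bf{P}^{1,\an}, \mu_n)
\end{tikzcd}
\]
commutes. This comes from taking cohomology of the map of complexes $\rR\underline{\Gamma}_{\{a\}}(j_!\mu_n) \to \rR\underline{\Gamma}_{\{a\}}(\mu_n)$ on $\bf{P}^{1,\an}$: since $a \in \bf{D}^1$ is disjoint from the complement $\bf{P}^{1,\an} \smallsetminus \bf{D}^1$, this map is an isomorphism, and chasing the definitions identifies the two compositions above. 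Combined with the previous step and the fact that $\bf{P}^{1,\an}$ is proper (so $\Hh^2_c \simeq \Hh^2$), we obtain
\[
\Hh^2_c(\ttr^\et_j(1))(c\ell_{\bf{D}^1}(a)) = c\ell_{\bf{P}^{1,\an}}(a)\in \Hh^2(\bf{P}^{1,\an},\mu_n).
\]

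To finish, the GAGA isomorphism $\Hh^2(\bf{P}^1_C,\mu_n) \xrightarrow{\sim} \Hh^2(\bf{P}^{1,\an},\mu_n)$ from \cref{useful proposition on rigid curves}\cref{useful proposition on rigid curves-6} identifies $c\ell_{\bf{P}^1_C}(a)$ with $c\ell_{\bf{P}^{1,\an}}(a)$ by \cref{lemma:analytification-cycle-classes}, and $t^{\rm alg}_{\bf{P}^{1,\an}}$ is defined precisely as the transport of the schematic trace $t_{\bf{P}^1_C}$ through this isomorphism (\cref{defn:algebraic-trace-map}). The proposition therefore reduces to the statement $t_{\bf{P}^1_C}(c\ell_{\bf{P}^1_C}(a)) = 1$, which is the standard normalization of the algebraic trace map on a smooth proper curve (\cite[Exp.~XVIII, Th.~2.9]{SGA4}).

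The only genuinely nontrivial step is the commutativity of the square in the second paragraph, i.e., verifying that the \'etale trace counit $j_!\mu_n \to \mu_n$ intertwines the natural maps from local to (compactly-supported) global cohomology at a point of $\bf{D}^1$; everything else is a direct application of results already established in the paper or classical facts.
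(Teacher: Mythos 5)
Your proof is correct and essentially the same as the paper's: both pass through \cref{lemma:analytification-cycle-classes} to reduce to the classical equality $t_{\bf{P}^1_C}\bigl(c\ell_{\bf{P}^1_C}(a)\bigr) = 1$. The one place you argue from scratch — the commutative square relating local to compactly supported cohomology under $j$, together with the transversal base change for the localized class — is exactly the open-immersion case of \cref{lemma: cycle class of points compatible with etale morphism}, which the paper cites directly rather than reproving.
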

\begin{proof}
    \Cref{lemma:analytification-cycle-classes} and \cref{lemma: cycle class of points compatible with etale morphism} imply that it suffices to show that $t_{\bf{P}^1}\left(c\ell_{\bf{P}^1}(a)\right) = 1$, where $t_{\bf{P}^1}$ is the schematic trace map and $c\ell_{\bf{P}^1}$ is the schematic cycle class. This is classical and follows from the equality
    \[
    t_{\bf{P}^1}\left(c\ell_{\bf{P}^1}(a)\right) = \deg \O_{\bf{P}^1}(a) = \deg \O_{\bf{P}^1}(1) = 1. \qedhere
    \]
\end{proof}

Now we compute the analytic trace map applied to $c\ell_{\bf{D}^1}(a)$. We start with the following preliminary lemma: 

\begin{lemma}
\label{cycle class in terms of first compactification}
The cycle class $c\ell_{\bf{D}^1}(a)$
is the image of $(T - a)^{-1}$ under the map
$\wdh{k(x_+)}^{\rm{h}, \times}/\Bigl(\wdh{k(x_+)}^{\rm{h},\times}\Bigr)^{n} 
\xrightarrow{\partial_{\bf{D}^1}} \Hh^2_c(\mathbf{D}^1, \mu_n)$ from
\cref{Thm: cpt supp coh of disk}.
\end{lemma}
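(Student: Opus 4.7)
The strategy is to unravel the construction of $c\ell_{\mathbf{D}^1}(a)$ and match it with $\partial_{\mathbf{D}^1}((T-a)^{-1})$ by tracing the meromorphic function $T-a$ through a ``sum of residues'' argument on $\mathbf{D}^{1,c}$.

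First I will use the vanishing $\mathrm{Pic}(\mathbf{D}^1)=0$ from \cref{lemma:trivial-picard-disk} together with the long exact sequence of \'etale cohomology associated to the closed immersion $\{a\}\hookrightarrow \mathbf{D}^1$ (whose $H^1_{\{a\}}$-term is described by \cref{lemma:local-cohomology}) to obtain a natural isomorphism $\mathcal{O}(U)^\times / C\langle T\rangle^\times \xrightarrow{\sim} H^1_{\{a\}}(\mathbf{D}^1,\mathbf{G}_m)$, where $U=\mathbf{D}^1\setminus\{a\}$. A direct calculation from $\mathcal{O}(D)=\mathcal{I}_D^\vee$ (trivialize $\mathcal{O}_{\mathbf{D}^1}(D)\simeq \mathcal{O}_{\mathbf{D}^1}$ by sending the generator $(T-a)^{-1}\in\mathcal{O}(D)$ to $1$) shows that under this isomorphism the class $[D]=[(\mathcal{O}(D),\varphi_D)]$ corresponds to the function $T-a\in\mathcal{O}(U)^\times$, since the natural map $\mathcal{O}\to\mathcal{O}(D)$ becomes multiplication by $T-a$ after trivialization. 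By naturality of the Kummer boundary $\delta_K$, it follows that $c\ell^{\mathrm{loc}}(a)$ is the image of the Kummer class $\delta_K(T-a)\in H^1(U,\mu_n)$ under the excision connecting map $\partial^a_{\mathrm{loc}}\colon H^1(U,\mu_n)\to H^2_{\{a\}}(\mathbf{D}^1,\mu_n)$, and $c\ell_{\mathbf{D}^1}(a)$ is its further image under the forget-support map $H^2_{\{a\}}(\mathbf{D}^1,\mu_n)\to H^2_c(\mathbf{D}^1,\mu_n)$.

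Next, to handle the other side, I will set up the Mayer--Vietoris fiber sequence on $\mathbf{D}^{1,c}_{\et}$ associated to the decomposition $\mathbf{D}^{1,c}=U\sqcup\{a\}\sqcup\{x_+\}$:
\[
j_{U,!}\mu_n\longrightarrow \mu_n\longrightarrow i_{a,*}\mu_n\oplus i_{x_+,*}\mu_n,
\]
which is valid because $\{a\}$ and $\{x_+\}$ are disjoint closed subsets of $\mathbf{D}^{1,c}$. Using that $H^{\geq 1}(\{a\},\mu_n)=0$ and $H^2(\mathbf{D}^{1,c},\mu_n)=0$ from \cref{cohomology-affine-curve}, together with the identification $R\Gamma(\mathbf{D}^{1,c},j_{U,!}\mu_n)\simeq R\Gamma_c(\mathbf{D}^1,j_{a,!}\mu_n)$ and the fact that $j_{a,!}\mu_n\to\mu_n$ induces an isomorphism on $H^2_c$, the resulting long exact sequence yields a surjection $\widehat{k(x_+)}^{\mathrm{h},\times}/n=H^1(\{x_+\},\mu_n)\twoheadrightarrow H^2_c(\mathbf{D}^1,\mu_n)$. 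A direct comparison of fiber sequences shows that this map agrees with $\partial_{\mathbf{D}^1}$ from \cref{comp-supp-cohomology-affine-curve}.

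Finally, I will combine these two descriptions. By the naturality of the Kummer sequence, both $\partial^a_{\mathrm{loc}}\circ\delta_K$ and $\partial_{\mathbf{D}^1}\circ\delta_K$ fit into a single commutative diagram whose $\mathbf{G}_m$-analogue involves the connecting map out of $H^0(\{a\},\mathbf{G}_m)\oplus H^0(\{x_+\},\mathbf{G}_m)$. Because $T-a$ lives globally as a meromorphic function on $\mathbf{D}^{1,c}$ with a simple zero at $a$ and a simple pole at $x_+$, its total Mayer--Vietoris boundary must vanish in $H^2(\mathbf{D}^{1,c},\mu_n)=0$. Tracking this through the identification of the two boundary target groups with $H^2_c(\mathbf{D}^1,\mu_n)$ yields the ``sum of residues'' identity $c\ell_{\mathbf{D}^1}(a)+\partial_{\mathbf{D}^1}(T-a)=0$, which rearranges to $c\ell_{\mathbf{D}^1}(a)=\partial_{\mathbf{D}^1}((T-a)^{-1})$. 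The main technical obstacle is the careful bookkeeping of signs and the precise identification of the Kummer, excision, and Mayer--Vietoris connecting homomorphisms; this sign is exactly what forces the inverse to appear in the statement.
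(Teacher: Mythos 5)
Your residue-theorem intuition is attractive, and the final identity $c\ell_{\mathbf{D}^1}(a)=\partial_{\mathbf{D}^1}((T-a)^{-1})$ is of course what the lemma asserts, but as written your argument has two concrete gaps.

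First, the intermediate claim in your second paragraph that $c\ell^{\mathrm{loc}}(a)$ equals the image of the Kummer class $\delta_K(T-a)$ under the excision boundary $\partial^a_{\mathrm{loc}}$ is wrong by a sign. You justify it by ``naturality of the Kummer boundary,'' but the relevant square formed by the Kummer connecting map and the excision connecting map is not commutative: it is \emph{anti}-commutative, by the compatibility lemma for connecting maps of a double complex (this is exactly where \cite[\href{https://stacks.math.columbia.edu/tag/05R0}{Tag 05R0}]{stacks-project} enters). Carried out correctly, one finds that $c\ell^{\mathrm{loc}}(a)$ is the image of $\delta_K\bigl((T-a)^{-1}\bigr)$, not of $\delta_K(T-a)$, under $\partial^a_{\mathrm{loc}}$. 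This anti-commutativity is precisely what produces the inverse in the statement; a naturality argument would give $T-a$ and thus the wrong answer.

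Second, your closing ``sum of residues'' step does not go through as stated. The observation that ``the total Mayer--Vietoris boundary vanishes in $\Hh^2(\mathbf{D}^{1,c},\mu_n)=0$'' is vacuous: since the target group is zero, exactness of the sequence
\[
\Hh^1(U,\mu_n) \longrightarrow \Hh^2_{\{a\}}(\mathbf{D}^{1,c},\mu_n)\oplus \Hh^2_{\{x_+\}}(\mathbf{D}^{1,c},\mu_n) \longrightarrow \Hh^2(\mathbf{D}^{1,c},\mu_n)
\]
places no constraint whatsoever on the two components of the boundary of $\delta_K(T-a)$. To obtain a nontrivial residue identity one needs to relate both boundary components to a \emph{common} group, but in fact $\Hh^2_{\{x_+\}}(\mathbf{D}^{1,c},\mu_n)$ is not $\Hh^2_c(\mathbf{D}^1,\mu_n)$ and $\partial_{\mathbf{D}^1}$ is defined on $\Hh^1(\{x_+\},\mu_n)$, not on $\Hh^2_{\{x_+\}}(\mathbf{D}^{1,c},\mu_n)$; the passage ``tracking this through the identification of the two boundary target groups with $\Hh^2_c(\mathbf{D}^1,\mu_n)$'' is exactly where the actual work lies and is not supplied. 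The paper instead chases the class through a $3\times 3$ diagram of distinguished triangles comparing $\rR\Gamma_a(\mathbf{D}^1,-)$, $\rR\Gamma_a(\mathbf{D}^{1,c},-)$ and $\rR\Gamma_c(\mathbf{D}^1,-)$, which makes the relevant identifications precise and yields the statement directly, without invoking any vanishing.
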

\begin{proof}
We consider the open immersion $j\colon \bf{D}^1 \smallsetminus \{a \} \hookrightarrow \bf{D}^1$ and the closed complement $i\colon \{a\} \hookrightarrow \bf{D}^1$. Then we apply \cite[\href{https://stacks.math.columbia.edu/tag/05R0}{Tag 05R0}]{stacks-project} to the following commutative diagram (with distinguished rows and columns)
\[
\begin{tikzcd}
    i_*\rm{R}i^! \mu_n \arrow{r} \arrow{d} & \mu_n  \arrow{d} \arrow{r}& \rm{R}j_* \mu_n \arrow{d} \\
    i_*\rm{R}i^! \bf{G}_m \arrow{r} \arrow{d} & \bf{G}_m \arrow{r} \arrow{d} & \rm{R}j_* \bf{G}_m \arrow{d} \\
    i_*\rm{R}i^! \bf{G}_m \arrow{r} & \bf{G}_m \arrow{r} & \rm{R}j_* \bf{G}_m 
\end{tikzcd}
\]
to conclude that the diagram 
\[ \begin{tikzcd}
\Hh^0(\mathbf{D}^1 \smallsetminus \{a\}, \mathbf{G}_m) \arrow[r] \arrow[d] & \Hh^1_a(\mathbf{D}^1, \mathbf{G}_m) \arrow[d] \\
\Hh^1(\mathbf{D}^1 \smallsetminus \{a\}, \mu_n) \arrow[r] & \Hh^2_a(\mathbf{D}^1, \mu_n)
\end{tikzcd} \]
is \emph{anti}-commutative. Following \cref{defn:cycle-clas-divisors}, we see that the localized cycle class
$c\ell^{\rm{loc}}_{\bf{D}^1}(a) \in \Hh^2_a(\mathbf{D}^1, \mu_n)$
is the image of $(T-a)$ going through the right top corner.
Hence if we instead go through the left bottom corner,
it then becomes the image of $(T-a)^{-1}$.  

We denote by $j^c\colon \bf{D}^1 \hookrightarrow \bf{D}^{1, c}$ the open immersion of $\bf{D}^1$ into its universal compactification, we denote its closed complement by $i^c \colon \{x_+\} \hookrightarrow \bf{D}^{1, c}$. We also denote by $\ov{j}\colon \bf{D}^{1, c} \smallsetminus \{a\} \hookrightarrow \bf{D}^{1, c}$ the natural open immersion, and its closed complement by $\ov{i}\colon \{a\} \hookrightarrow \bf{D}^{1, c}$. Then we have the following commutative diagram of pseudo-adic spaces:
\[
\begin{tikzcd}
\{a\} \arrow[d, equals] \arrow[r, hook, "i"] & \bf{D}^1 \arrow[d, hook, "j^c"] & \arrow[l,hook',"j"'] \bf{D}^1 \smallsetminus \{a\} \arrow[d, hook] \\
\{a\} \arrow[d, hook, "i"] \arrow[hook]{r}{\ov{i}} & \arrow[d, equals] \bf{D}^{1, c} & \bf{D}^{1, c} \smallsetminus \{a\}\arrow[l,hook',"\ov{j}"'] \\ 
\bf{D}^1 \arrow[hook]{r}{j} & \bf{D}^{1, c} & \{x_+\} \arrow[l,hook',"i^c"'] \arrow[u, hook']. 
\end{tikzcd}
\]
This induces the following commutative diagram of distinguished triangles in $D(\bf{D}^{1, c}_\et; \Z/n\Z)$:
\begin{equation}\label{eqn:local-vs-compact}
\begin{tikzcd}
\rm{R}j^{c}_* \, i_*\, \rm{R}i^!\, j^{c, *}\mu_{n} \arrow{r} &\rm{R}j^{c}_*\,j^{c, *}\mu_{n} \arrow{r} & \rm{R}j^{c, *}\, \rm{R}j_*\,j^*\,j^{c, *} \mu_{n} \\
\overline{i}_* \rm{R}\overline{i}^! \mu_n \arrow{r} \arrow{d} \arrow[u,sloped,"\sim"] & \mu_n \arrow{u} \arrow[d,equals] \arrow{r} & \rm{R}\ov{j}_* \ov{j}^* \mu_n \arrow{d} \arrow{u}\\  
 j^c_!j^{c, *} \mu_{n} \ar[r] & \mu_{n} \arrow{r} & i^c_*i^{c, *} \mu_{n},
\end{tikzcd}
\end{equation}
where the top left vertical map is an isomorphism due to the observation that $\rm{R}\ov{i}^! \simeq \rm{R}i^! j^{c, *}$ and $\rm{R}j^c_*\, i_* \simeq \ov{i}_*$. Now we apply the derived global sections to \cref{eqn:local-vs-compact} to get the following commutative diagram of distinguished triangles:
\begin{equation}\label{eqn:local-vs-compact-2}
\begin{tikzcd}
\rR\Gamma_a(\mathbf{D}^1, \mu_n) \arrow[r] & \rR\Gamma(\mathbf{D}^1, \mu_n) \arrow[r] & \rR\Gamma(\mathbf{D}^1 \smallsetminus \{a\}, \mu_n) \\
\rR\Gamma_a(\mathbf{D}^{1, c}, \mu_n) \arrow[r] \arrow[d] \arrow[u,"\res","\sim"'{sloped}] & \rR\Gamma(\mathbf{D}^{1, c}, \mu_n) \arrow[r] \arrow[d,equals] \arrow[u,"\res","\sim"'{sloped}] & \rR\Gamma(\mathbf{D}^{1, c} \smallsetminus \{a\}, \mu_n) \arrow[d,"\res"] \arrow[u,"\res","\sim"'{sloped}] \\
\rR\Gamma_c(\mathbf{D}^1, \mu_n) \arrow[r] & \rR\Gamma(\mathbf{D}^{1, c}, \mu_n) \arrow[r] & \rR\Gamma(\{x_+\}, \mu_n).
\end{tikzcd}
\end{equation}
\cref{eqn:local-vs-compact} implies that the left top vertical map in \cref{eqn:local-vs-compact-2} is an isomorphism. Furthermore, \cref{cohomology-affine-curve} ensures that the middle top vertical map is an isomorphism. Therefore, the same holds for the right top vertical map as well.  

One checks easily that the composition of the left column agrees with the
map $\rR\Gamma_a(\mathbf{D}^1, \mu_n) \to
\rR\Gamma_c(\mathbf{D}^1, \mu_n)$ appeared before
\cref{defn:cycle-clas-divisors-compact} (with the $c$ being $1$).
Using the Kummer exact sequence and \cref{eqn:local-vs-compact-2}, we get the following commutative diagram:
\begin{equation}\label{eqn:local-vs-compact-3}
\begin{tikzcd}
\Hh^0(\mathbf{D}^1 \smallsetminus \{a\}, \mathbf{G}_m) \arrow[r] & \Hh^1_{\et}(\mathbf{D}^1 \smallsetminus \{a\}, \mu_n) \arrow[r] & \Hh^2_a(\mathbf{D}^1, \mu_n) \\
\Hh^0(\mathbf{D}^{1, c} \smallsetminus \{a\}, \mathbf{G}_m) \arrow[r] \arrow[d,"\res"] \arrow[u,"\res","\sim"'{sloped}] & \Hh^1(\mathbf{D}^{1, c} \smallsetminus \{a\}, \mu_n) \arrow[r] \arrow[d,"\res"] \arrow[u,"\res","\sim"'{sloped}] & \Hh^2_a(\mathbf{D}^{1, c}, \mu_n) \arrow[d] \arrow[u,"\res","\sim"'{sloped}] \\
\Hh^0(\Spec \widehat{k(x_+)}^{\rm{h}}, \mathbf{G}_m) \arrow[r] & \Hh^1(\{x_+\}, \mu_n) \arrow[r] & \Hh^2_c(\mathbf{D}^1, \mu_n).
\end{tikzcd}
\end{equation}
Here the left squares are the boundary maps coming from the corresponding Kummer exact sequences, and the right squares are the corresponding boundary maps of the distinguished triangles in \cref{eqn:local-vs-compact-2}. In particular, using the inverse of the top right vertical map, the right column sends localized cycle class $c\ell^{\rm{loc}}_{\bf{D}^1}(a)\in \rm{H}^2_a(\bf{D}^1, \mu_n)$ to the compactly supported cycle class $c\ell_{\bf{D}^1}(a) \in \rm{H}^2_c(\bf{D}^1, \mu_n)$.  

Using first paragraph, we see that the compactly supported
cycle class $c\ell_{\bf{D}_C}(\{a\})$
is the image of $(T-a)^{-1}$ under the composite
of maps where we start at the top left corner of \cref{eqn:local-vs-compact-3} and go right-right-down-down to the bottom right corner. Since the invertible function $(T-a)^{-1}$ on $\mathbf{D}^1 \smallsetminus \{a\}$ extends to the invertible function $(T-a)^{-1}$ on $\mathbf{D}^{1, c} \smallsetminus \{a\}$, \cref{eqn:local-vs-compact-3} shows that $c\ell_{\bf{D}^1}(\{a\})$ can be obtained from $(T-a)^{-1} \in 
\Hh^0(\Spec \widehat{k(x_+)}^{\rm{h}}, \mathbf{G}_m) = 
\wdh{k(x_+)}^{\rm{h}, \times}$ by composing the two bottom horizontal arrows. This finishes the proof. %
\end{proof}

\begin{corollary}
\label{an tr disk is 1}
    For any $a\in \O_C=\bf{D}^1(C)$, we have $t_{\bf{D}^1}\bigl(c\ell_{\bf{D}^1}(a)\bigr) = 1$.
\end{corollary}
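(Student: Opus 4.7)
The plan is to unwind the definition of the analytic trace map applied to the cycle class, reducing the claim to a direct computation of $\# \circ v_{x_+}$ on an explicit element of $\widehat{k(x_+)}^{\h,\times}$. By \cref{cycle class in terms of first compactification}, $c\ell_{\bf{D}^1}(a) = \partial_{\bf{D}^1}\bigl((T-a)^{-1}\bigr)$, where $\partial_{\bf{D}^1}$ is the boundary map from the exact sequence in \cref{comp-supp-cohomology-affine-curve}. By \cref{defn:analytic-trace-curves}, the composition $t_{\bf{D}^1} \circ \partial_{\bf{D}^1}$ agrees with the analytic pre-trace $\widetilde{t}_{\bf{D}^1}$ from \cref{analytic-pre-trace}. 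Thus it suffices to show that $\# \circ v_{x_+}\bigl((T-a)^{-1}\bigr) = 1$.

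Next, I would compute $v_{x_+}(T-a)$ explicitly using the formula from \cref{lemma:compacitification-of-the-disc}: writing $T - a = -a + T$, the supremum is taken over $(|-a|, 0)$ and $(1, 1)$ in the lexicographic order on $\Gamma_C \times \ZZ$. Since $a \in \cO_C$ gives $|a| \le 1$, we have $(|a|, 0) < (1, 1)$, so $v_{x_+}(T-a) = (1, 1)$ and hence $v_{x_+}\bigl((T-a)^{-1}\bigr) = (1, -1)$.

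The final step is to apply $\#$. Since $T \in \cO_C\langle T\rangle \subset \wdh{k(x_+)}^{+,\h}$ is topologically nilpotent in the rigid generic fiber of the disk but has valuation $(1,1) > 1$ with respect to $v_{x_+}$ (reflecting that $T$ becomes ``large'' at the boundary), the element $v_{x_+}(T^{-1}) = (1,-1)$ is the largest element of $\Gamma_{x_+}$ strictly less than $1$; that is, $\gamma_0 = (1,-1)$ in the notation of \cref{defn:curve-like}. Under the resulting isomorphism $\alpha \colon \Gamma_C \times \ZZ \xrightarrow{\sim} \Gamma_{x_+}$, $(\gamma,n) \mapsto j_{x_+}(\gamma) \cdot \gamma_0^{-n}$, one computes $\alpha(1,1) = (1,-1)^{-1} = (1,1)$, so $\alpha^{-1}\bigl((1,-1)\bigr) = (1,-1)$ and the sign convention of \cref{warning:sign-convention} gives $\#\bigl((1,-1)\bigr) = 1$. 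Therefore $t_{\bf{D}^1}\bigl(c\ell_{\bf{D}^1}(a)\bigr) = 1$, as desired.

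The only mild subtlety is keeping track of the sign convention in \cref{defn:sharp-map} and correctly identifying $\gamma_0$ inside $\Gamma_{x_+}$; once this bookkeeping is settled, the computation is straightforward and does not require any further geometric input beyond \cref{lemma:compacitification-of-the-disc}.
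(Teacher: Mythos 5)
Your proposal follows the same route as the paper's proof: reduce to $\#\circ v_{x_+}\bigl((T-a)^{-1}\bigr)$ via \cref{cycle class in terms of first compactification} and \cref{defn:analytic-trace-curves}, compute $v_{x_+}(T-a)=(1,1)$ from the explicit formula in \cref{lemma:compacitification-of-the-disc}, identify $\gamma_0=(1,-1)$, and apply the sign convention of \cref{warning:sign-convention}. The main computation is correct. One caveat: the parenthetical justification of $\gamma_0=(1,-1)$ contains two false side remarks — the inclusion $\O_C\langle T\rangle\subset\wdh{k(x_+)}^{+,\h}$ is wrong (indeed, $\wdh{k(x_+)}^{+,\h}$ is built from $\O_C+T\m_C\langle T\rangle$, and $T\notin\wdh{k(x_+)}^{+,\h}$ precisely because $v_{x_+}(T)=(1,1)>1$, as you yourself note in the same sentence), and $T$ is not topologically nilpotent on $\DD^1$ (its sup-norm is $1$). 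Neither of these is needed: the identification $\gamma_0=(1,-1)$ is an immediate consequence of the lexicographic order on $\Gamma_C\times\Z$, since any $(\gamma,n)$ with $\gamma<1$ is already $<(1,-1)$. Dropping those two claims would make the argument both shorter and correct throughout.
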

\begin{proof}
    Using \cref{cycle class in terms of first compactification} and the definition of $t_{\bf{D}^1}$ (see \cref{defn:analytic-trace-curves}), we conclude that 
    \[
    t_{\bf{D}^1}\bigl(c\ell_{\bf{D}^1}(a)\bigr) = \# \circ v_{x_+}((T-a)^{-1}).
    \]
    Using the explicit formula for $v_{x_+}$ from \cref{lemma:compacitification-of-the-disc} and the implicit negative sign in the definition of $\#$ (see \cref{warning:sign-convention}), we easily conclude that $t_{\bf{D}^1}\bigl(c\ell_{\bf{D}^1}(a)\bigr) = 1$ for any $a\in \O_C$.
\end{proof}

We have finally arrived at the following statement.
\begin{theorem}
\label{an trace equals alg trace for disk}
Following the notation of \cref{traces on disk differ by a constant}, we have $\lambda=1$. 
\end{theorem}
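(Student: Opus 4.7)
The plan is to combine the three preceding results in a single line: \cref{traces on disk differ by a constant} provides the constant $\lambda \in (\ZZ/n)^\times$ such that $t^{\rm{alg}}_{\bf{P}^{1,\an}} \circ \Hh^2_c\bigl(\ttr^\et_j(1)\bigr) = \lambda \cdot t_{\bf{D}^1}$; \cref{alg tr disk is 1} evaluates the left-hand side on the compactly supported cycle class of a classical point $a \in \O_C = \bf{D}^1(C)$ and shows this yields $1$; and \cref{an tr disk is 1} evaluates the right-hand side on the same class and shows $t_{\bf{D}^1}\bigl(c\ell_{\bf{D}^1}(a)\bigr) = 1$.

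Concretely, I would pick any classical point $a \in \O_C$ (e.g.\ $a = 0$) and consider the compactly supported cycle class $c\ell_{\bf{D}^1}(a) \in \Hh^2_c(\bf{D}^1, \mu_n)$ from \cref{defn:cycle-clas-divisors-compact}. Plugging this into the identity of \cref{traces on disk differ by a constant} gives
\[
t^{\rm{alg}}_{\bf{P}^{1,\an}}\Bigl(\Hh^2_c\bigl(\ttr^\et_j(1)\bigr)\bigl(c\ell_{\bf{D}^1}(a)\bigr)\Bigr) = \lambda \cdot t_{\bf{D}^1}\bigl(c\ell_{\bf{D}^1}(a)\bigr).
\]
By \cref{alg tr disk is 1} the left-hand side equals $1$, and by \cref{an tr disk is 1} the factor $t_{\bf{D}^1}\bigl(c\ell_{\bf{D}^1}(a)\bigr)$ on the right also equals $1$. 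Hence $1 = \lambda$ in $\ZZ/n\ZZ$, as desired.

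There is no real obstacle here since all the substantive work has already been carried out: the comparison of algebraic and analytic cycle classes under analytification (\cref{lemma:analytification-cycle-classes}) and under \'etale morphisms (\cref{lemma: cycle class of points compatible with etale morphism}) is packaged in \cref{alg tr disk is 1}, while the explicit computation using the valuation $v_{x_+}$ described in \cref{lemma:compacitification-of-the-disc} and the sign convention from \cref{warning:sign-convention} is done in \cref{an tr disk is 1}. The only thing to be slightly careful about is that the cycle class $c\ell_{\bf{D}^1}(a)$ is nonzero (so that the cancellation makes sense): this is automatic since $\Hh^2_c\bigl(\ttr^\et_j(1)\bigr)\bigl(c\ell_{\bf{D}^1}(a)\bigr)$ has nonzero image $1$ in $\ZZ/n\ZZ$ under $t^{\rm{alg}}_{\bf{P}^{1,\an}}$ by \cref{alg tr disk is 1}.
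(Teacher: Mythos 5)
Your proposal is correct and takes essentially the same route as the paper: both evaluate the identity from \cref{traces on disk differ by a constant} on the compactly supported cycle class $c\ell_{\bf{D}^1}(a)$ and then invoke \cref{alg tr disk is 1} and \cref{an tr disk is 1} to conclude $\lambda = 1$. Your closing remark about nonvanishing of the cycle class is a correct (if slightly redundant) sanity check; the paper phrases the same deduction as finding an element $x$ where both traces give $1$.
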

\begin{proof}
    \cref{traces on disk differ by a constant} implies that it suffices to show that there exists an element $x\in \rm{H}^2_c(\bf{D}^1, \mu_n)$ such that 
    \begin{equation}\label{eqn:pin-down-constant}
    t_{\bf{D}^1}(x) = 1 = t_{\bf{P}^{1, \an}}\bigl(\Hh^2_c\bigl(\ttr^\et_j(1)\bigr)(x)\bigr).
    \end{equation}
    Now we note that the combination of \cref{alg tr disk is 1} and \cref{an tr disk is 1} ensures that \cref{eqn:pin-down-constant} holds for $x=c\ell_{\bf{D}^1}(a)$ for any $a\in \O_C$.
\end{proof}

We recall that, for an algebraic smooth connected curve $X$ over $C$, the cycle class of a point $c\ell_X(a) \in \rm{H}^2_c(X, \mu_n)$ generates $\rm{H}^2_c(X, \mu_n)$ and is independent of the point $a\in X(C)$. It is natural to wonder if the same thing happens for a smooth affinoid connected curves over $C$. We show that this hope fails already for the closed unit disk:

\begin{lemma}
\label{distance and cycle class}
Let $C$ be an algebraically closed nonarchimedean field of mixed characteristic $(0, p)$.
Let $a,b \in \bf{D}^1(C)=\O_C$ be two classical point with corresponding cycle classes $c\ell_{\bf{D}^1}(a),c\ell_{\bf{D}^1}(b) \in \rm{H}^2_c(\bf{D}^1, \mu_{p^r})$ for some integer $r$.
Then
\begin{enumerate}[label=\upshape{(\roman*)}]
    \item\label{distance and cycle class-1} If $\abs{b - a} = 1$, then $c\ell_{\bf{D}^1}(a) \not= c\ell_{\bf{D}^1}(b)$.
    \item\label{distance and cycle class-2} If $\abs{b - a} < \abs{p^r \cdot (\zeta_p - 1)}$, 
    then $c\ell_{\bf{D}^1}(a) = c\ell_{\bf{D}^1}(b)$.
\end{enumerate}
In particular, $\Hh^2_c(\DD^1, \mu_p)$ is infinite and its cardinality is at least cardinality of the residue field $k_C \coloneqq \O_C/\m_C$.
\end{lemma}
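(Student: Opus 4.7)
The plan is to reformulate the comparison of cycle classes as the question of whether a specific rational function is a $p^r$-th power in $A(Z)^\times$, and then to handle the two inequalities by complementary methods: differentiation for \cref{distance and cycle class-1} and a logarithm/exponential construction for \cref{distance and cycle class-2}. Setting $S = T^{-1}$, I combine \cref{cycle class in terms of first compactification} with the commutative diagram and exact sequences of \cref{Thm: cpt supp coh of disk} and \cref{second compactification proposition} to identify the difference $c\ell_{\DD^1}(b) - c\ell_{\DD^1}(a) \in \Hh^2_c(\DD^1, \mu_{p^r})$ with the class of $(1 - bS)/(1 - aS) \in A(Z)^\times$ under the \emph{injection} $A(Z)^\times/(A(Z)^\times)^{p^r} \hookrightarrow \Hh^2_c(\DD^1, \mu_{p^r})$ furnished by \cref{second compactification proposition}, where $A(Z)$ is as in \cref{notation:ring-AZ}. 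Hence both assertions reduce to asking whether $(1 - bS)/(1 - aS)$ is a $p^r$-th power in $A(Z)^\times$.

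For \cref{distance and cycle class-1}, assume for contradiction that $(1 - bS)/(1 - aS) = h^{p^r}$ with $h \in A(Z)^\times$. By \cref{lemma:A(z)-units} I may write $h = c \cdot g$ with $c \in C^\times$ and $g \in A(Z)^{+, \times}$. The evaluation $A(Z) \to C$ given by $S \mapsto 0$ sends $(1 - bS)/(1 - aS)$ to $1$, so $(c g(0))^{p^r} = 1$; since $\mu_{p^r}(C) \subset \O_C^\times$ and $g(0) \in \O_C^\times$, this forces $c \in \O_C^\times$ and hence $h \in A(Z)^{+, \times}$. The ring $A(Z)^+ = \O_C[S]^\h_{(\varpi, S)}$ inherits an $\O_C$-linear derivation $\partial_S$ from $\O_C[S]$ (since henselization is ind-\'etale), and differentiating $h^{p^r} = (1 - bS)/(1 - aS)$ and then evaluating at $S = 0$ yields $p^r \cdot h(0)^{p^r - 1} \cdot h'(0) = a - b$. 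Since $h(0) \in \O_C^\times$ and $h'(0) \in \O_C$, this forces $a - b \in p^r \O_C$, whence $\abs{a - b} \le \abs{p^r} < 1$, contradicting the hypothesis $\abs{a - b} = 1$.

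For \cref{distance and cycle class-2}, write $(1 - bS)/(1 - aS) = 1 + u$ with $u = (a - b) S/(1 - aS) \in (a - b) \cdot A(Z)^+$, and work inside the $(\varpi, S)$-adic completion $\O_C\llbracket S \rrbracket$ of $A(Z)^+$. The hypothesis $\abs{b - a} < \abs{p^r(\zeta_p - 1)}$ implies that each coefficient of $\log(1 + u) = \sum_{k \ge 1}(-1)^{k-1} u^k/k$ lies in $\O_C$ (since the $S^N$-coefficient is a finite sum of terms with $\varpi$-valuation at least $k v_\varpi(a - b) - v_\varpi(k) > 0$), and that after dividing by $p^r$ the coefficients in fact have $\varpi$-valuation strictly greater than $v_\varpi(\zeta_p - 1) = v_\varpi(p)/(p - 1)$, so the exponential series converges to an $h \in 1 + S \cdot \O_C\llbracket S \rrbracket$ with $h^{p^r} = 1 + u$. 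The main technical obstacle is descending this $h$ from $\O_C\llbracket S \rrbracket$ back to $A(Z)$: while $Y^{p^r} - (1 + u)$ is not \'etale at $Y = 1$ over $A(Z)^+$ (its derivative $p^r$ lies in $\fm_C \cdot A(Z)^+$), inverting $\varpi$ makes $p^r$ a unit in $A(Z)$, so the polynomial becomes \'etale at $Y = h$ there. Applying Hensel's lemma in the local henselian ring $A(Z)$ (whose henselianness follows from the henselianness of the pair $(A(Z)^+, S \cdot A(Z)^+)$, itself inherited from the henselian properties of $(A(Z)^+, (\varpi, S))$ and $(\O_C, \fm_C)$) then produces an element of $A(Z)^\times$ matching $h$ in the completion.

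The ``in particular'' clause follows immediately from \cref{distance and cycle class-1}: since $C$ is algebraically closed of mixed characteristic, $k_C$ is an algebraically closed field of characteristic $p$, hence infinite; any two lifts $a, b \in \O_C$ of distinct elements $\bar a \ne \bar b \in k_C$ satisfy $\abs{a - b} = 1$, so their cycle classes $c\ell_{\DD^1}(a)$ and $c\ell_{\DD^1}(b)$ are distinct in $\Hh^2_c(\DD^1, \mu_p)$, yielding $\# \Hh^2_c(\DD^1, \mu_p) \ge \# k_C$.
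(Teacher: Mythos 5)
Your reduction to the question of whether $(1-bS)/(1-aS)$ is a $p^r$-th power in $A(Z)^\times$ is valid, and your argument for part~\cref{distance and cycle class-1} is correct via a route genuinely different from the paper's: the paper reduces modulo $\fm_C$ to $k_C[T^{-1}]^\h_{(T^{-1})}$ and checks failure of $p$-th-powerness there, whereas you differentiate and evaluate at $S=0$. The $\O_C$-linear derivation $\partial_S$ does extend to $A(Z)^+$ (henselization is ind-\'etale), the evaluation $A(Z)^+ \to \O_C$, $S \mapsto 0$, exists by the universal property of the henselian pair since $(\O_C,(\varpi))$ is henselian, and your reduction to $h \in A(Z)^{+,\times}$ via \cref{lemma:A(z)-units} is clean; the resulting inequality $\abs{a-b} \le \abs{p^r}$ is in fact a sharper threshold than the stated $\abs{a-b}=1$.

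Part~\cref{distance and cycle class-2}, however, has a genuine gap in the descent step. You assert that $A(Z)$ is a ``local henselian ring,'' but $A(Z) = A(Z)^+[\tfrac{1}{\varpi}]$ is neither: both $S\,A(Z)$ and $(S-\varpi)A(Z)$ are maximal ideals with residue field $C$ (the latter because $A(Z)^+/(S-\varpi) \simeq \O_C$), so $A(Z)$ is not local and $S\,A(Z)$ is not contained in the Jacobson radical; hence $(A(Z), S\,A(Z))$ is not a henselian pair and Hensel's lemma does not apply. Working instead in the henselian pair $(A(Z)^+, S\,A(Z)^+)$ (which is indeed henselian, as you argue via \cite[\href{https://stacks.math.columbia.edu/tag/0DYD}{Tag 0DYD}]{stacks-project}) does not save the argument: mod $(S)$ the polynomial $Y^{p^r}-(1+u)$ becomes $Y^{p^r}-1$ over $\O_C$ with derivative $p^r \in \fm_C$ at $Y=1$, so the root is not simple, and Newton--Hensel requires $u \in p^{2r}(S)A(Z)^+$, i.e.\ $\abs{a-b} \le \abs{p}^{2r}$, which is strictly stronger than your hypothesis whenever $p>2$. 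So neither Hensel nor Newton--Hensel carries your element $h \in \O_C\llbracket S\rrbracket$ back into $A(Z)$. The paper avoids this by exhibiting the $p^r$-th root in $\wdh{k(x_+)}^+$ (which suffices via the \emph{bottom} row of \cref{Thm: cpt supp coh of disk}, where no injectivity of the descent is needed): $\wdh{k(x_+)}^+$ is a rank-$2$ valuation ring complete for the rank-$1$ (i.e.\ $\varpi$-adic) topology, so the binomial series $(1-a/T)^{p^{-r}}$, whose partial sums lie in $\O_C[T^{-1}] \subset \wdh{k(x_+)}^+$ and whose terms tend $p$-adically to $0$, converges there. To repair your proof, replace the descent-to-$A(Z)$ step by this: show that the image of $(1-bS)/(1-aS)$ dies under the surjection $\wdh{k(x_+)}^{\h,\times}/(\cdot)^{p^r} \twoheadrightarrow \Hh^2_c(\DD^1,\mu_{p^r})$, using the binomial series in $\wdh{k(x_+)}^+$ rather than the exp/log series in $\O_C\llbracket S\rrbracket$.
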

\begin{proof}
    Since the statements are unchanged under automorphisms of $\bf{D}^1$, we can assume that $b=0$.
    Then \cref{cycle class in terms of first compactification} ensures that $c\ell_{\bf{D}^1}(b)-c\ell_{\bf{D}^1}(a)$ is given by the image of $\frac{T - a}{T} = 1 - \frac{a}{T}$ under the map $\wdh{k(x_+)}^{\rm{h}, \times}/\Bigl(\wdh{k(x_+)}^{\rm{h},\times}\Bigr)^{p^r}  \to \rm{H}^{2}_{c}(\bf{D}^1, \mu_{p^r})$.
    \Cref{comp-supp-cohomology-affine-curve} and \cref{invertible functions on closed disk} imply that we have the exact sequence
    \[ \bigl(1 + \mathfrak{m}_C T \langle T \rangle\bigr) \times \Bigl(\wdh{k(x_+)}^{\rm{h},\times}\Bigr)^{p^r} \longrightarrow \wdh{k(x_+)}^{\rm{h}, \times} \longrightarrow \Hh^2_c(\mathbf{D}_{C}, \mu_{p^r}) \longrightarrow 0. \]
    We are now reduced the question when $1 - \frac{a}{T}$ comes from an element in
    $\bigl(1 + \mathfrak{m}_C T \langle T \rangle\bigr) \times \Bigl(\wdh{k(x_+)}^{\rm{h},\times}\Bigr)^{p^r}$, depending on the norm of $a$.
    
    The explicit description of the valuation $v_{x_+}$ for the point $x_+$ (see \cref{lemma:compacitification-of-the-disc} and \cref{specialization}\cref{vanishing-order-rings}) implies that $1 - \frac{a}{T} \in \Bigl(\wdh{k(x_+)}^{+, \rm{h}}\Bigr)^{\times}$ and $1 + \mathfrak{m}_C T \langle T \rangle \subset \Bigl(\wdh{k(x_+)}^{+, \rm{h}}\Bigr)^{\times}$.
    Combining these two observations, we see that the question is now further reduced to when the element
    $1 - \frac{a}{T}$ comes from an element in
    $\bigl(1 + \mathfrak{m}_C T \langle T \rangle\bigr) \times 
    \Bigl(\wdh{k(x_+)}^{+, \rm{h}}\Bigr)^{\times, p^r}$. 

    For statement \cref{distance and cycle class-1}, note that \cref{specialization}\cref{vanishing-order-rings} and \cite[\href{https://stacks.math.columbia.edu/tag/0DYE}{Tag 0DYE}]{stacks-project} imply that 
    \[
    \wdh{k(x_+)}^{+, \rm{h}}/\mathfrak{m}_C\wdh{k(x_+)}^{+, \rm{h}} \simeq \O_{\bf{P}^1_{k_C}, \infty}^\h \simeq k_C[T^{-1}]^\h_{(T^{-1})}.
    \]
    Under this quotient, the set $\bigl(1 + \mathfrak{m}_C T \langle T \rangle\bigr)$ is projected to $1$.
    Thus, it suffices to know that $1 - \frac{\overline{a}}{T}$ is not a $p$-th power in $k_C[T^{-1}]^{\rm{h}}_{(T^{-1})}$
    whenever $\overline{a} \neq 0$ in $k_C$, which can be seen once one further completes with respect to $T^{-1}$.
    
    To prove statement \cref{distance and cycle class-2}, we use that $\frac{1}{T} \in \wdh{k(x_+)}^{+}$ by virtue of \cref{specialization}\cref{vanishing-order-rings}.
    Therefore, it suffices to show that the power series 
    \[
    \Bigl(1 - \frac{a}{T}\Bigr)^{p^{-r}} \coloneqq \sum_{i \geq 0} \binom{p^{-r}}{i} \cdot \Bigl(- \frac{a}{T}\Bigr)^i = \sum_{i\geq 0} \biggl( \binom{p^{-r}}{i} (-a)^i \biggr) \cdot \Bigl(\frac{1}{T}\Bigr)^i
    \]
    converges $p$-adically to an element in $\wdh{k(x_+)}^{+, \rm{h}}$.
    In other words, we need to show that the additive $p$-adic valuation $\ord_p\bigl( \binom{p^{-r}}{i} (-a)^i \bigr) \to \infty$ as $i \to \infty$. 
    For this, we note that this $p$-adic valuation equals
    \[
    \ord_p(a)i -ri - \ord_p(i!) = \ord_p(a)i -ri - \sum_{m \geq 1} \lfloor i/p^m \rfloor > \Bigl(\ord_p(a) - r - \frac{1}{p-1}\Bigr) i.
    \]
    Our assumption on $a$ implies that $\ord_p(a) > \ord_p\bigl(p^r\cdot (\zeta_p-1)\bigr) = r + \frac{1}{p-1}$. Therefore, $\bigl(\ord_p(a) - r - \frac{1}{p-1}\bigr) i \to \infty$ as $i\to \infty$. This finishes the proof. %
\end{proof}
\begin{remark}
It would be interesting to know if the distance inequality in
\cref{distance and cycle class}\cref{distance and cycle class-2} is sharp.
\end{remark}

Following a suggestion of Scholze,
we also show that $\Hh^2_c(\DD^1_C, \mu_p)$ is \emph{not} generated by cycle classes when $C$ is
an algebraically closed nonarchimedean field
of mixed characterstic $(0, p)$. For this, we first need the following lemma:

\begin{lemma} 
\label{bounded ring of disk henselized at infty}
Let $C$ be an algebraically closed nonarchimedean field of mixed characterstic $(0, p)$ with maximal ideal $\fm\subset \O_C$ and residue field $k\coloneqq \O_C/\fm$. Let $\DD^1_C = \Spa (C\langle T \rangle, \O_C\langle T\rangle)$ be the closed unit disk with coordinate $T$ and let $x_+$ be the unique (rank-$2$) point of $\DD_C^{1, c}\smallsetminus \DD^1_C$. Then there is a canonical isomorphism
\begin{equation}\label{eqn:mod-m-circ-henselized}
\frac{\bigl(k(x_+)^\h\bigr)^\circ}{\fm\cdot \bigl(k(x_+)^\h\bigr)^\circ}  \simeq \bigl(\O_{\PP^1_k, \infty}\bigr)^\h_{(T^{-1})}\Bigl[\frac{1}{T^{-1}}\Bigr].
\end{equation}
\end{lemma}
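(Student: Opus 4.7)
The plan is to identify $(k(x_+)^\h)^\circ$ with the localization of the rank-$2$ henselian valuation ring $k(x_+)^{+,\h}$ at its unique height-$1$ prime $\p^\h$, and then to show that this height-$1$ prime coincides with $\m \cdot k(x_+)^{+,\h}$. Once that is done, the left-hand side becomes a direct localization of the already-understood quotient $k(x_+)^{+,\h}/\m \cdot k(x_+)^{+,\h}$.

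First I would recall that the unique rank-$1$ generalization of $x_+$ in $\DD^{1,c}_C$ is the Gauss point $z$ of $\DD^1_C$, so in particular $k(x_+) = k(z) = \Frac(C\langle T \rangle)$, and that by \cref{lemma:extra-points-curve-like} and \cref{lemma:structure-curve-like-valuations} the affinoid field $(k(x_+), k(x_+)^+)$ is curve-like with value group isomorphic to $\Gamma_C \times \Z$ in the lexicographic order. Since $k(x_+)^+$ is a rank-$2$ valuation ring, its prime spectrum is a chain $0 \subsetneq \p \subsetneq \m_{x_+}$, and the ring of power-bounded elements $(k(x_+))^\circ$ equals the Gauss valuation ring, i.e., the localization $k(x_+)^+_\p$. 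Because the primes of a valuation ring are in bijection with the convex subgroups of its value group and henselization preserves the value group \cite[\href{https://stacks.math.columbia.edu/tag/0ASK}{Tag 0ASK}]{stacks-project}, one likewise has $(k(x_+)^\h)^\circ = k(x_+)^{+,\h}_{\p^\h}$.

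The main technical step is the claim $\p = \m \cdot k(x_+)^+$, together with its henselian analogue $\p^\h = \m \cdot k(x_+)^{+,\h}$. The inclusion ``$\supset$'' is immediate because $v_z(mb) \le \abs{m} < 1$ for $m \in \m$ and $b \in k(x_+)^+$. For ``$\subset$'', given $a \in \p$ with $\gamma \coloneqq v_z(a) < 1$ in $\Gamma_C$, I use that $\Gamma_C$ is divisible of rank $1$ (hence order-dense in $\RR_{>0}$) to pick $c \in C^\times$ with $\gamma < \abs{c} < 1$; writing $v_{x_+}(a) = (\gamma, n)$ via the decomposition of \cref{lemma:structure-curve-like-valuations}, one computes $v_{x_+}(a/c) = (\gamma/\abs{c}, n) < 1$ in lex order, so $a/c \in k(x_+)^+$ and hence $a = c \cdot (a/c) \in \m \cdot k(x_+)^+$. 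This is the only place the divisibility of $\Gamma_C$ is used and is the step I expect to be the main substantive one. The identical argument applied to $k(x_+)^{+,\h}$, whose value group is still $\Gamma_C \times \Z$, yields $\p^\h = \m \cdot k(x_+)^{+,\h}$.

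To conclude, I would observe that, as in the proof of \cref{distance and cycle class} (using \cref{specialization}\cref{vanishing-order-rings} together with the identification $A_{x_+ \to z} = k(x_+)^+$ coming from $k(x_+) = k(z)$ via \cref{lemma:valuation-generalization}), one has $k(x_+)^+/\m \cdot k(x_+)^+ \simeq \O_{\PP^1_k, \infty}$; the compatibility of henselization with quotients by ideals contained in the maximal ideal \cite[\href{https://stacks.math.columbia.edu/tag/0DYE}{Tag 0DYE}]{stacks-project} then upgrades this to
\[ k(x_+)^{+,\h}/\m \cdot k(x_+)^{+,\h} \simeq \bigl(\O_{\PP^1_k, \infty}\bigr)^\h_{(T^{-1})}. \]
Localizing at the image of $\p^\h = \m \cdot k(x_+)^{+,\h}$, which is the zero ideal on the right-hand side, yields
\[ (k(x_+)^\h)^\circ \big/ \m \cdot (k(x_+)^\h)^\circ \simeq \Frac\Bigl(\bigl(\O_{\PP^1_k, \infty}\bigr)^\h_{(T^{-1})}\Bigr) = \bigl(\O_{\PP^1_k, \infty}\bigr)^\h_{(T^{-1})}\Bigl[\frac{1}{T^{-1}}\Bigr], \]
which is the desired isomorphism.
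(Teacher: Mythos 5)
Your proof is correct and follows the same overall route as the paper's: identify $(k(x_+)^\h)^\circ$ with the localization of the rank-$2$ valuation ring $k(x_+)^{+,\h}$ at its unique height-$1$ prime $\p^\h$ (the paper obtains the same ring by inverting the single element $T^{-1}$, and then checks it is the rank-$1$ coarsening via the ``rank $\le 1$ and not a field'' criterion together with the characterization of the power-bounded elements from \cite{FujKato}), and then compute the quotient modulo $\m$ via the formal-model description from \cref{specialization}\cref{vanishing-order-rings} together with the compatibility of henselization with quotients \cite[\href{https://stacks.math.columbia.edu/tag/0DYE}{Tag 0DYE}]{stacks-project}. The one genuinely new ingredient in your write-up is the explicit identity $\p = \m\cdot k(x_+)^+$ (and hence $\p^\h = \m\cdot k(x_+)^{+,\h}$), proved via density of the divisible group $\Gamma_C$; the paper does not record this, but it is a clean structural observation that makes the final localization step transparent.
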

\begin{proof}
    \cref{lemma:valuation-generalization} and \cref{specialization}\cref{vanishing-order-rings} show that $\frac{k(x_+)^+}{(\fm, T^{-1})} \simeq \frac{\O_{\PP^1_k, \infty}}{(T^{-1})} \simeq k$. In particular, $(\fm, T^{-1})$ is the maximal ideal in $k(x_+)^+$.
    By the construction of henselizations, $(\fm, T^{-1})$ remains the maximal ideal in $k(x_+)^{+, \h}$.
    Since $T\in k(x_+)^\circ$, we conclude that $T^{-1}$ is invertible in $\bigl(k(x_+)^\h\bigr)^\circ$. 
    Therefore, there is a natural morphism $\bigl(k(x_+)^{+, \h}\bigr)\bigl[\frac{1}{T^{-1}}\bigr] \to \bigl(k(x_+)^\h\bigr)^\circ$. We claim that this is an isomorphism. A combination of \cite[Prop.~0.6.2.9 and Prop.~0.6.3.1]{FujKato} implies that $\bigl(k(x_+)^\h\bigr)^\circ$ is the unique rank-$1$ valuation subring of $k(x_+)^\h$ containing $k(x_+)^{+, \h}$, so we only need to show that $\bigl(k(x_+)^{+, \h}\bigr)\bigl[\frac{1}{T^{-1}}\bigr]$ is a rank-$1$ valuation subring. First, we note that $\bigl(k(x_+)^{+, \h}\bigr)\bigl[\frac{1}{T^{-1}}\bigr]$ is a valuation ring of rank $\leq 1$ because it is a non-trivial localization of a valuation ring of rank $2$. Therefore, it suffices to show that $\bigl(k(x_+)^{+, \h}\bigr)\bigl[\frac{1}{T^{-1}}\bigr]$ is not a field. 
    Now \cite[\href{https://stacks.math.columbia.edu/tag/0DYE}{Tag 0DYE}]{stacks-project} and \cref{specialization}\cref{vanishing-order-rings} imply that 
    \begin{equation}\label{eqn:mod-m-henselized}
    \frac{\bigl(k(x_+)^{+, \h}\bigr)\bigl[\frac{1}{T^{-1}}\bigr]}{\fm \cdot \bigl(k(x_+)^{+, \h}\bigr)\bigl[\frac{1}{T^{-1}}\bigr]} \simeq \bigl(\O_{\PP^1_k, \infty}\bigr)^\h_{(T^{-1})}\Bigl[\frac{1}{T^{-1}}\Bigr] \neq 0. 
    \end{equation}
    Hence, we conclude that $\bigl(k(x_+)^{+, \h}\bigr)\bigl[\frac{1}{T^{-1}}\bigr]$ is not a field, and so $\bigl(k(x_+)^{+, \h}\bigr)\bigl[\frac{1}{T^{-1}}\bigr] \simeq \bigl(k(x_+)^\h\bigr)^\circ$. 
    Finally, \cref{eqn:mod-m-circ-henselized} follows formally from this isomorphism and \cref{eqn:mod-m-henselized}. 
\end{proof}

Before we show that $\Hh^2_c(\DD^1, \mu_p)$ is not generated by cycle classes, we need another technical lemma from algebraic geometry.
In what follows, given a smooth connected $k$-curve $C$, we denote by $k(C)$ the field of rational functions on $C$. Likewise, we denote by $k(T^{-1})\coloneqq \bigl(\O_{\PP^1_k, \infty}\bigr)\bigl[\frac{1}{T^{-1}}\bigr] \simeq \Frac\bigl(\O_{\PP^1_k, \infty}\bigr)$ the field of rational functions on $\PP^1_k$ and by $k(T^{-1})^\h \coloneqq \bigl(\O_{\PP^1_k, \infty}\bigr)^\h_{(T^{-1})}\bigl[\frac{1}{T^{-1}}\bigr]$ its ``henselization''. 
\begin{lemma}
\label{explicit elements in the residue}
There is an element $f \in k(T^{-1})^\h$ satisfying $f^p - f = T^{-1}$
such that there is \emph{no} expression of the form
\[
f = a^p \cdot b
\]
with $a \in k(T^{-1})^\h$
and $b \in k(T^{-1})$.
\end{lemma}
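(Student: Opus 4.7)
The approach is to produce $f$ explicitly via Hensel's lemma and then obstruct any factorization $f = a^p \cdot b$ by differentiating with respect to $T^{-1}$.

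First, the henselian local ring $\bigl(\O_{\PP^1_k,\infty}\bigr)^\h_{(T^{-1})}$ has residue field $k$, and $X^p - X - T^{-1}$ reduces modulo $T^{-1}$ to $X^p - X \in k[X]$, which has the simple root $0 \in \FF_p \subset k$. Hensel's lemma therefore lifts this root uniquely to some $f \in k(T^{-1})^\h$ with $f^p - f = T^{-1}$. Next, I claim $f \notin k(T^{-1})$: for any $g \in k(T^{-1}) = k(T)$, comparing orders at the place $T = 0$ shows that $g^p - g$ either has no pole at $T = 0$ (if $g$ does not) or a pole of order $p \cdot m$ for some $m \geq 1$ (if $g$ does), neither of which matches the simple pole of $T^{-1}$ at $T = 0$. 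Equivalently, $X^p - X - T^{-1}$ is Artin--Schreier irreducible over $k(T^{-1})$.

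Write $u = T^{-1}$ and let $\partial = d/du$ on $k(u)$. Since $k(u)^\h$ is a filtered union of finite \'etale, hence separable algebraic, extensions of $k(u)$, the derivation $\partial$ extends uniquely to a derivation on $k(u)^\h$. Differentiating $f^p - f = u$ and using that $p = 0$ in characteristic $p$ yields $\partial f = -1$.

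Finally, suppose for contradiction that $f = a^p \cdot b$ with $a \in k(u)^\h$ and $b \in k(u)$. Then $b \neq 0$ (else $f = 0$) and $f/b = a^p$, so $\partial(f/b) = p \cdot a^{p-1} \cdot \partial a = 0$. The quotient rule together with $\partial f = -1$ gives
\[ 0 \;=\; \partial\Bigl(\frac{f}{b}\Bigr) \;=\; \frac{(\partial f)\cdot b - f\cdot \partial b}{b^2} \;=\; \frac{-b - f\cdot \partial b}{b^2}, \]
hence $f \cdot \partial b = -b$ in $k(u)^\h$. If $\partial b = 0$, this forces $b = 0$, a contradiction; otherwise $\partial b \in k(u)^\times$, and solving gives $f = -b/\partial b \in k(u)$, contradicting $f \notin k(u)$. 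The conceptual content is that $\partial$ annihilates every $p$-th power but preserves the subfield $k(u) \subset k(u)^\h$, so any factorization $f = a^p b$ with $b \in k(u)$ would force $f$ itself into $k(u)$. I do not anticipate any serious technical obstacle beyond the routine verification that $\partial$ extends to the separable algebraic extension $k(u)^\h$.
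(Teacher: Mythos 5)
Your proof is correct and takes a genuinely different route from the paper. The paper works geometrically with the Artin--Schreier cover $\alpha \colon \PP^1_k \to \PP^1_k$, $[X{:}Y]\mapsto [Y^p{:}X^p - XY^{p-1}]$, realizes $f$ as the coordinate $x$ on the source, and first reduces to $a$ lying in the function field $L = k(\PP^1_k)$ of the cover (using that $a^p \in L$ and $L \subset k(T^{-1})^\h$ is separable). The contradiction then comes from orders of vanishing: invariance of $\alpha$ under the $\ZZ/p\ZZ$-translations forces $\ord_0(b)=\ord_1(b)$, while $\ord_0(x)-\ord_1(x)=1$ cannot equal $p\bigl(\ord_0(a)-\ord_1(a)\bigr)$. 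You instead use the derivation $\partial = d/du$ extended (uniquely, by separability) from $k(u)$ to $k(u)^\h$: applying $\partial$ to $f^p - f = u$ gives $\partial f = -1$, applying it to $f/b = a^p$ gives $\partial(f/b)=0$, and the quotient rule then forces $f \cdot \partial b = -b$, hence either $b=0$ or $f = -b/\partial b \in k(u)$, both impossible. Your differential argument sidesteps any explicit cover geometry and bypasses the reduction to $a \in L$ entirely (since $\partial(a^p)=0$ already holds in $k(u)^\h$); it arguably packages the relevant information --- that $p$-th powers are killed by $\partial$ while $k(u)$ is preserved --- more economically. Both proofs share the preliminary step that $X^p - X - T^{-1}$ is Artin--Schreier irreducible over $k(T^{-1})$ (so $f \notin k(T^{-1})$), though the paper establishes it implicitly via the geometry of the cover while you argue directly via pole orders.
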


\begin{proof}
We consider the Artin--Schreier map $\alpha \colon C_2 \coloneqq \PP^1_k \to C_1 \coloneqq \PP^1_k$ 
given by $[X:Y] \mapsto [Y^p: X^p - XY^{p-1}]$ in homogeneous coordinates. We denote by $x$ the ``coordinate'' on $C_2$, then it satisfies the equation $x^p-x=T^{-1}$ in $k(C_2)$. The map $\alpha$ is \'etale at $\infty \in C_1(k)$, the vanishing locus of the rational function $x$ is exactly $0\in C_2(k)$, and $\alpha(0)=\infty$. Therefore, we conclude that there is a natural map of $\O_{\PP^1_k, \infty}\simeq \O_{C_1, \infty}$-algebras $\O_{C_2, 0} \to \big(\O_{C_1, \infty}\big)^\h_{(T^{-1})}$. Consequently, we get the field extensions
\[
k(T^{-1}) \simeq k(C_1) \subset L \coloneqq k(C_2) \subset k(T^{-1})^\h.
\]

We claim that $f \colonequals x\in L \subset k(T^{-1})^\h$ does the job.
Indeed, suppose there are $a\in k(T^{-1})^\h$ and $b\in k(T^{-1})$ such that $x = a^p\cdot b$.
We first show that $a\in L$. For this, we observe that $a^p = \frac{x}{b} \in L$, so the extension $L\subset L(a)$ is purely inseparable. On the other hand, $L\subset k(T^{-1})^\h$ is separable algebraic by construction, so $L\subset L(a)$ must also be separable.
Combining these two observations, we obtain $L=L(a)$. In other words, $a\in L$.

Now suppose $x = a^p \cdot b$ with $a\in L$ and $b\in k(T^{-1})$. Since $\alpha$ is invariant under the action of $\ZZ/p\ZZ$ on $C_2$ by usual translations, we conclude that $\rm{ord}_{0}(b) = \rm{ord}_{1}(b)$ where we consider $b$ as a rational function on $C_2$ (via the natural inclusion $k(C_1) \hookrightarrow k(C_2)$). Therefore, we deduce that
\[
1 = \rm{ord}_0(x) - \rm{ord}_1(x) = p\bigl( \rm{ord}_0(a) -\rm{ord}_1(a)\bigr)
\]
is divisible by $p$. This is clearly absurd, so no such $a$ and $b$ exist. 
\end{proof}
We are finally ready to show the promised result:
\begin{lemma}
\label{cycle class of points do not generate}
Let $C$ be an algebraically closed nonarchimedean field of mixed characteristic $(0, p)$.
Then $\Hh^2_c(\DD^1_C, \mu_p)$ is \emph{not} generated by the cycle classes of points. 
\end{lemma}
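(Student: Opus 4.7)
The plan is to use the element $f \in k(T^{-1})^\h$ from \cref{explicit elements in the residue} to construct a class in $\Hh^2_c(\DD^1_C,\mu_p)$ that lies outside the $\FF_p$-span of the cycle classes of $C$-points. First, combining \cref{Thm: cpt supp coh of disk}, \cref{cycle class in terms of first compactification}, \cref{invertible functions on closed disk}, and the fact that $C$ is algebraically closed (so $C^\times$ is $p$-divisible and $C^\times \subset (C\langle T\rangle^\times)^p$), one obtains a right exact sequence
\[
\bigl(1 + \m_C T\langle T\rangle\bigr) \to \wdh{k(x_+)}^{\h,\times}\big/\bigl(\wdh{k(x_+)}^{\h,\times}\bigr)^p \twoheadrightarrow \Hh^2_c(\DD^1_C,\mu_p)
\]
in which the cycle class $c\ell_{\DD^1}(a)$ for $a \in \O_C$ is the image of $(T-a)^{-1}$.

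Next, take the element $f \in k(T^{-1})^{\h,\times}$ from \cref{explicit elements in the residue}, and use \cref{bounded ring of disk henselized at infty} to choose a lift $\tilde f \in \wdh{k(x_+)}^{\h,\circ}$ of $f$. Since $\wdh{k(x_+)}^{\h,\circ}$ is a rank-$1$ valuation ring with residue field $k(T^{-1})^\h$ and $f$ is nonzero, $\tilde f$ is automatically a unit. The claim is that the resulting class $[\tilde f] \in \Hh^2_c(\DD^1_C,\mu_p)$ is not an $\FF_p$-linear combination of cycle classes of $C$-points.

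Verifying this claim is the main work. Arguing by contradiction, suppose there exist $a_1,\dotsc,a_r \in \O_C$, $m_i \in \ZZ$, $v \in \wdh{k(x_+)}^{\h,\times}$, and $w \in C\langle T\rangle^\times$ with $\tilde f = v^p \cdot w \cdot \prod_{i=1}^r (T-a_i)^{m_i}$. Using \cref{invertible functions on closed disk} and $p$-divisibility of $C^\times$, one absorbs the scalar part of $w$ into $v^p$ to reduce to $w \in 1 + \m_C T\langle T\rangle$. Applying $\#\circ v_{x_+}$ from \cref{defn:reduction}, and using $\#\circ v_{x_+}(T-a_i) = -1$ together with the vanishing of $\#\circ v_{x_+}$ on $C\langle T\rangle^\times$ (see \cref{an-tr-disk}), one forces $\sum m_i = pn$ where $n \colonequals \#\circ v_{x_+}(v)$. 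Rewriting each $T - a_i = T(1 - a_i/T)$ and setting $v_1 \colonequals v \cdot T^n$, the equation becomes
\[ \tilde f = v_1^p \cdot w \cdot \prod_{i=1}^r (1 - a_i/T)^{m_i}, \]
and comparing rank-$2$ valuations on both sides (all factors on the right besides $v_1^p$ have valuation $1$, using torsion-freeness of $\Gamma_C \times \ZZ$) shows $v_1 \in \wdh{k(x_+)}^{\h,\circ,\times}$.

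Finally, reducing this last identity modulo $\m_C$ via \cref{bounded ring of disk henselized at infty}, and noting that $w$ maps to $1$ in $k(T^{-1})^\h$ while each $(1 - a_i/T)$ maps to $(1 - \bar a_i T^{-1}) \in k(T^{-1})$, yields
\[ f = \bar v_1^p \cdot \prod_{i=1}^r (1 - \bar a_i T^{-1})^{m_i} \]
with $\bar v_1 \in k(T^{-1})^\h$ and the product in $k(T^{-1})$, contradicting \cref{explicit elements in the residue}. The most delicate step is the normalization $v_1 = v \cdot T^n$: it carefully exploits the rank-$2$ structure of $v_{x_+}$ to absorb all factors of $T$ into the $p$-th power $v_1^p$, so that the remaining factors land in $\wdh{k(x_+)}^{\h,\circ,\times}$ and can be meaningfully reduced modulo $\m_C$.
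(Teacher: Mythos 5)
Your proof follows the same strategy as the paper's: realize $\Hh^2_c(\DD^1_C,\mu_p)$ as a quotient of the unit group of the henselized residue field at $x_+$, identify the subgroup generated by cycle classes of $C$-points with the image of $\Frac(C\langle T\rangle)^\times$, and obstruct surjectivity by reducing modulo $\m_C$ and invoking \cref{explicit elements in the residue}. (You work with $\wdh{k(x_+)}^\h$ where the paper works with $k(x_+)^\h$ and invokes \cref{thm:topos-of-a-point} to pass between the two; this is harmless.)

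There is, however, an error in the normalization step. You assert that applying $\#\circ v_{x_+}$ ``forces $\sum m_i = pn$.'' This would require $\#\circ v_{x_+}(\tilde f) = 0$, which is false. Since $\tilde f$ is a unit of the rank-$1$ ring $(\wdh{k(x_+)}^\h)^\circ$, the first component of $v_{x_+}(\tilde f) \in \Gamma_C \times \ZZ$ is $1$, but the second component need not vanish; in fact one checks (compare $\#\circ v_{x_+}(T) = -1$ against $\ord_{T^{-1}}\bigl(r(T)\bigr) = -1$) that $\#\circ v_{x_+}(a) = \ord_{T^{-1}}(r(a))$ for $a \in (\wdh{k(x_+)}^\h)^{\circ,\times}$, and since $f^p - f = T^{-1}$ forces $\ord_{T^{-1}}(f) = 1$, you get $\#\circ v_{x_+}(\tilde f) = 1 \ne 0$. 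Hence $\sum m_i \equiv -1 \pmod p$, and the correct rewriting carries an extra factor of $T^{-1}$: $\tilde f = v_1^p \cdot T^{-1} \cdot w \cdot \prod(1 - a_i/T)^{m_i}$. Your final displayed identity is therefore wrong as stated. The \emph{conclusion} survives, because the missing $T^{-1}$ reduces to $T^{-1} \in k(T^{-1})$ and can be absorbed into $b$, so the contradiction with \cref{explicit elements in the residue} still holds — but the proof needs this repair.

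In fact the whole $\#\circ v_{x_+}$ detour is unnecessary. Comparing the rank-$1$ (Gauss) valuation $v$ on both sides of $\tilde f = v^p w \prod(T-a_i)^{m_i}$ already gives $v(v)^p = 1$, hence $v \in (\wdh{k(x_+)}^\h)^{\circ,\times}$, and one can reduce the original identity modulo $\m_C$ directly, using $r(T - a_i) = (T^{-1})^{-1}(1-\bar a_i T^{-1}) \in k(T^{-1})$. This is essentially what the paper does: after writing $F = A^p B$ with $B \in \Frac(C\langle T\rangle)$, it normalizes $B$ by a scalar $c \in C^\times$ to have Gauss norm $1$ (absorbing $c^{1/p}$ into $A$), and then uses that $\Frac(C\langle T\rangle)_{v \le 1}$ reduces into $k(T^{-1})$. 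That route avoids the $\#\circ v_{x_+}$ bookkeeping entirely and is the cleaner way to set up the contradiction.
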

\begin{proof}
First, we note that the explicit description of $x_+$ from \cref{lemma:compacitification-of-the-disc} implies that its unique rank-$1$ generalization is the Gauss point $\eta\in \DD^1_C$. 
Consider the rank-$1$ valuation $v\colon k(x_+)^\h \to \Gamma_v \cup \{0\}$ that corresponds to the valuation ring $\bigl(k(x_+)^\h\bigr)^\circ$. The observation above implies that the composition $\Frac(C\langle T\rangle) \to k(x_+)^\h \xr{v} \Gamma_v \cup \{0\}$ is equivalent to the usual Gauss norm on $\Frac(C\langle T\rangle)$. 

\Cref{comp-supp-cohomology-affine-curve} and \cref{cycle class in terms of first compactification} (and \cref{thm:topos-of-a-point} to get rid of completions) imply that it suffices to show that the natural morphism
\[
\bigl(\Frac C\langle T \rangle\bigr)^{\times} \to k(x_+)^{\h, \times}/p
\]
is not surjective.
By virtue of \cref{bounded ring of disk henselized at infty}, we can lift the element $f$ from \cref{explicit elements in the residue} to an element $F \in \bigl(k(x_+)^\h\bigr)^{\circ, \times} \subset k(x_+)^{\h, \times}$.
We claim that its image $\overline{F} \in \bigl(k(x_+)^\h\bigr)^{\times}/p$ does not come from
$\bigl(\Frac C\langle T \rangle\bigr)^{\times}$.

Suppose otherwise. Then we have an expression
\begin{equation}\label{eqn:in-rank-1-of-rank-2}
F = A^p \cdot B
\end{equation}
with $A \in k(x_+)^\h$ and $B \in \Frac(C\langle T \rangle)$. Since $F \in \bigl(k(x_+)^\h\bigr)^{\circ, \times}$, we conclude that $v(F)=1$. Furthermore, since the restriction of $v$ to $\Frac(C\langle T\rangle)$ is equivalent to the Gauss norm, we can find a scalar $c\in C^\times$ such that $\abs{c} = v(B)$. Therefore, we can replace $A$ and $B$ by $c^{1/p} \cdot A$ and $B/c$ to assume that $v(B)=1$ (and then $v(A)=1$ as well).
Consequently, we may assume that \cref{eqn:in-rank-1-of-rank-2} is an identity inside $\bigl(k(x_+)^\h\bigr)^{\circ, \times}$.

Now consider the reduction morphism $r\colon \bigl(k(x_+)^\h\bigr)^{\circ} \to k(T^{-1})^\h \coloneqq \bigl(\O_{\PP^1_k, \infty}\bigr)^\h_{(T^{-1})}\bigl[\frac{1}{T^{-1}}\bigr]$ from  \cref{bounded ring of disk henselized at infty}.
We claim that $r\bigl(\Frac(C\langle T\rangle)_{v \leq 1}\bigr) \subset k(T^{-1})$.
Indeed, it suffices to prove the claim for $\O_C\langle T\rangle$ where it is clear because the restriction of $v$ to $\Frac(C\langle T\rangle)$ coincides with the Gauss norm.
Therefore, after applying $r$ to \cref{eqn:in-rank-1-of-rank-2}, we get an expression
\[
f = a^p \cdot b
\]
with $a\in k(T^{-1})^\h$ and $b\in k(T^{-1})$, which is impossible due to \cref{explicit elements in the residue}.
\end{proof}

\subsection{Compatibility with the algebraic trace map: general case}

The main goal of this subsection is to prove \cref{thm:analytic-trace-algebraic} in full generality. We recall that \cref{an trace equals alg trace for disk} already proves the result for the standard open immersion $\bf{D}^1 \hookrightarrow \bf{P}^{1,\an}$.
In the general case, our strategy is to use the refined version of Noether normalization from \cref{lemma:good-noether} to reduce the general case to the case of the disk.  
In order to run these reductions, it will be convenient to introduce some definitions: 

\begin{definition}\label{defn:pointed-semi-stable} A \textit{pointed semi-stable formal $\O_C$-curve} $(\cX, \{x_1, \dots, x_n\})$ is a pair consisting of a rig-smooth connected semi-stable proper formal $\O_C$-curve $\cX$ and a finite non-empty set of closed point $\{x_1, \dots, x_n\} \subset \abs{\cX} = \abs{\cX_s}$. We denote by $j_{\ud{x}} \colon \cU_{\ud{x}} \hookrightarrow \cX$ the unique open formal $\O_C$-subscheme with special fiber $\cU_{\ud{x}, s} = \cX_s \smallsetminus \{x_1, \dots, x_n\}$. 
\end{definition}

\begin{remark} We note that \cite[Lem.~B.12]{Z-thesis} ensures that $\cX_\eta$ is connected for any pointed semi-stable formal $\O_C$-curve $(\cX, \{x_1, \dots, x_n\})$. Furthermore, \cref{useful proposition on rigid curves}\cref{useful proposition on rigid curves-1} then implies that $\cU_{\ud{x}, \eta}$ is always affinoid.
\end{remark}

\begin{definition}
A pointed semi-stable formal $\O_C$-curve $(\cX, \{x_1, \dots, x_n\})$ is \textit{trace-friendly} if the diagram
\[
\begin{tikzcd}
    \rm{H}^2_c(\cU_{\ud{x}, \eta}, \mu_n) \arrow{rr}{\Hh^2_c\bigl(\ttr^\et_{j_{\ud{x},\eta}}(1)\bigr)} \arrow{rd}{t_{\cU_{\ud{x},\eta}}} & & \rm{H}^2(\cX_\eta, \mu_n) \arrow[ld, swap, "t_{\cX_\eta}^{\rm{alg}}"] \\
    & \Z/n\Z &
\end{tikzcd}
\]
commutes. 
\end{definition}

\begin{example}[\cref{an trace equals alg trace for disk}] The pointed semi-stable curve $(\bf{\wdh{P}}^1_{\O_C}, \{\infty\})$ is trace-friendly.
\end{example}

Our first goal is to show that every pointed semi-stable curve is trace-friendly (\cref{thm:everything-is-trace-friendly}). This will be the hardest part in our proof of \cref{thm:analytic-trace-algebraic}. Before we start showing this claim, we need to introduce some further notation: 

\begin{notation}\label{notation:Z_i} Let $(\cX, \{x_1, \dots, x_n\})$ be a pointed semi-stable formal $\O_C$-curve.
Then, for each smooth (resp.\ nodal) point $x_i$, let $Z_{x_i}\subset \cU_{\ud{x}, \eta}^c$ be the pseudo-adic space consisting of the unique rank-$2$ point $\{u_i\}$ in $\cU_{\ud{x}, \eta}^c$ (resp.~the two rank-$2$ points $\{v_i\} \sqcup \{w_i\}$ in $\cU_{\ud{x}, \eta}^c$) from \cref{rmk:extra-points-on-different-compactifications}.
\end{notation}

We note that \cref{specialization}\cref{specialization-comp} together with \cref{lemma:pre-adic-dijoint-union} and \cref{thm:topos-of-a-point} imply that $\abs{\cU_{\ud{x}, \eta}^c} \smallsetminus \abs{\cU_{\ud{x}, \eta}} = \bigsqcup_{i=1}^n Z_{x_i}$ and that
\begin{multline*}
    \rm{R}\Gamma(\cU_{\ud{x}, \eta}^c  \smallsetminus \cU_{\ud{x}, \eta}, \mu_n)  \simeq \rm{R}\Gamma\Bigl(\bigsqcup_{i=1}^n Z_{x_i}, \mu_n\Bigr) \simeq \bigoplus_{i=1}^n \rm{R}\Gamma(Z_{x_i}, \mu_n) \\
    \simeq \biggl(\bigoplus_{i \suchthat x_i \in \cX_s^{\rm{sm}}} \rm{R}\Gamma\Bigl(\Spec \wdh{k(u_i)}^\h, \mu_n\Bigr) \biggr) \oplus \biggl(\bigoplus_{i \suchthat x_i\in \cX_s^{\rm{sing}}} \rm{R}\Gamma\Bigl(\Spec \wdh{k(v_i)}^\h, \mu_n\Bigr) \oplus \rm{R}\Gamma\Bigl(\Spec \wdh{k(w_i)}^\h, \mu_n\Bigr)\biggr),
\end{multline*}
where we treat $\cU_{\ud{x}, \eta}^c  \smallsetminus \cU_{\ud{x}}$ and $Z_{x_i}$ as pseudo-adic spaces inside $\cU_{\ud{x}, \eta}^c$. With this notation, the boundary morphism $\partial_{\cU_{\ud{x}, \eta}}$ from \cref{comp-supp-cohomology-affine-curve} has the form $\partial_{\cU_{\ud{x}, \eta}} \colon \bigoplus_{i=1}^n \rm{H}^1(Z_{x_i}, \mu_n) \to \Hh^2_c(\cU_{\ud{x}, \eta}, \mu_n)$. 

The next two results form the core of our proof that every pointed semi-stable formal $\O_C$-curve is trace-friendly. 

\begin{lemma}
\label{further reduction thm:analytic-trace-algebraic} Let $(\cX, \ud{x}) = (\cX, \{x_1, \dots, x_n\})$ be a pointed semi-stable $\O_C$-curve. For each $1\leq m\leq n$, let $(\cX, \ud{x}')=(\cX, \{x_1, \dots, x_m\})$ be a pointed semi-stable $\O_C$-curve obtained by forgetting the last $n-m$ marked points. Then the diagram
\[
\begin{tikzcd}[column sep=5.5em]
    \bigoplus_{i=1}^m \rm{H}^1(Z_{x_i}, \mu_n) \arrow[r,hook,"\rm{incl}"] \arrow{d}{\partial_{\cU_{\ud{x}', \eta}}} & \bigoplus_{i=1}^n \rm{H}^1(Z_{x_i}, \mu_n)  \arrow{d}{\partial_{\cU_{\ud{x}, \eta}}} \\
    \rm{H}^2_c(\cU_{\ud{x}', \eta}, \mu_n) &  \arrow[l,"\Hh^2_c\bigl(\ttr^\et_{j'_{\ud{x},\eta}}(1)\bigr)"'] \rm{H}^2_c(\cU_{\ud{x}, \eta}, \mu_n),
\end{tikzcd}
\]
commutes, where $\rm{incl}$ is the evident inclusion morphism and $H^2_c\bigl(\ttr^\et_{j'_{\ud{x},\eta}}(1)\bigr)$ is the \'etale trace coming from the open immersion $j'_{\ud{x},\eta} \colon \cU_{\ud{x}, \eta} \hookrightarrow \cU_{\ud{x}', \eta}$. 
\end{lemma}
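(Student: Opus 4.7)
The strategy is to use a single canonical morphism between the two universal compactifications to transport the excision triangle defining $\partial_{\cU_{\ud{x}', \eta}}$ back to $\cU_{\ud{x}, \eta}^c$, where it can be compared directly with the one defining $\partial_{\cU_{\ud{x}, \eta}}$. The universal property of $\cU_{\ud{x}', \eta}^c$ applied to the composition $\cU_{\ud{x}, \eta} \xhookrightarrow{j'_{\ud{x},\eta}} \cU_{\ud{x}', \eta} \hookrightarrow \cU_{\ud{x}', \eta}^c$ produces a canonical morphism $\alpha \colon \cU_{\ud{x}, \eta}^c \to \cU_{\ud{x}', \eta}^c$ extending $j'_{\ud{x}, \eta}$. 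Combining \cref{specialization}\cref{specialization-comp} for both universal compactifications (taking the formal model $\cX$ as the common proper $\cO_C$-model and noting that the extra rank-$2$ points biject with $\nu^{-1}(\{x_1,\dotsc,x_n\})$ vs.\ $\nu^{-1}(\{x_1,\dotsc,x_m\})$ in the normalized special fibre), I identify the preimage $V \coloneqq \alpha^{-1}(\cU_{\ud{x}', \eta})$ as the open subspace $\cU_{\ud{x}, \eta} \sqcup Z_{x_{m+1}} \sqcup \cdots \sqcup Z_{x_n}$ of $\cU_{\ud{x}, \eta}^c$; the restriction of $\alpha$ to $Z_{x_i}$ for $i \leq m$ is the natural identification with the corresponding extra rank-$2$ point(s) $Z'_{x_i}$ of $\cU_{\ud{x}', \eta}^c$.

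Next, I factor the open immersion $\cU_{\ud{x}, \eta} \hookrightarrow \cU_{\ud{x}, \eta}^c$ as $\cU_{\ud{x}, \eta} \xhookrightarrow{k} V \xhookrightarrow{\ell} \cU_{\ud{x}, \eta}^c$, giving the morphism of excision triangles in $D\bigl((\cU_{\ud{x}, \eta}^c)_\et;\Lambda\bigr)$
\[
\begin{tikzcd}
(j_{\ud{x}, \eta})_! \mu_n \arrow[r] \arrow[d] & \mu_n \arrow[d, equals] \arrow[r] & (\iota_n)_* \mu_n \arrow[d, "\rm{proj}"] \\
\ell_! \mu_n \arrow[r] & \mu_n \arrow[r] & (\iota_m)_* \mu_n,
\end{tikzcd}
\]
whose left vertical comes from the counit $k_!k^* \to \id$ and whose right vertical is restriction along the clopen inclusion $Z_{x_1} \sqcup \cdots \sqcup Z_{x_m} \hookrightarrow Z_{x_1} \sqcup \cdots \sqcup Z_{x_n}$. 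Taking $\rR\Gamma(\cU_{\ud{x}, \eta}^c,-)$ and extracting the connecting homomorphisms produces the commutative square
\[
\begin{tikzcd}[column sep=large]
\bigoplus_{i=1}^n \Hh^1(Z_{x_i}, \mu_n) \arrow[r,"\partial_{\cU_{\ud{x}, \eta}}"] \arrow[d, "\rm{proj}"] & \Hh^2_c(\cU_{\ud{x}, \eta}, \mu_n) \arrow[d,"{\Hh^2_c(\ttr^\et_k(1))}"] \\
\bigoplus_{i=1}^m \Hh^1(Z_{x_i}, \mu_n) \arrow[r,"\partial_V"] & \Hh^2_c(V, \mu_n),
\end{tikzcd}
\]
and precomposing with the right inverse $\rm{incl}$ of $\rm{proj}$ gives $\partial_V = \Hh^2_c(\ttr^\et_k(1)) \circ \partial_{\cU_{\ud{x}, \eta}} \circ \rm{incl}$.

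To complete the proof, I use that base change along $\alpha$ for the cartesian squares attached to the open immersion $\cU_{\ud{x}', \eta} \hookrightarrow \cU_{\ud{x}', \eta}^c$ and the closed immersion of the extra rank-$2$ points yields natural isomorphisms $\alpha^*(j_{\ud{x}', \eta})_!\mu_n \simeq \ell_!\mu_n$ and $\alpha^*(\iota'_m)_*\mu_n \simeq (\iota_m)_*\mu_n$. The unit of the adjunction $(\alpha^*,\rR\alpha_*)$ then produces a natural morphism $\phi\colon \rR\Gamma_c(\cU_{\ud{x}', \eta}, \mu_n) \to \rR\Gamma_c(V, \mu_n)$ intertwining the two excision triangles; in particular $\phi \circ \partial_{\cU_{\ud{x}', \eta}} = \partial_V$ and, by the naturality of the counit $k_!k^* \to \id$ under $\alpha$, $\phi \circ \Hh^2_c(\ttr^\et_{j'_{\ud{x},\eta}}(1)) = \Hh^2_c(\ttr^\et_k(1))$. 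Combining these compatibilities with the equality of the previous paragraph, the lemma follows once $\phi$ is shown to be an isomorphism.

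The main obstacle is precisely this last invertibility: it is equivalent to the natural morphism $\mu_{n,\,\cU_{\ud{x}', \eta}} \to \rR(\alpha|_V)_*\mu_{n,V}$ being an equivalence of \'etale sheaves on $\cU_{\ud{x}', \eta}$. Because $\alpha|_V$ restricts to the identity on $\cU_{\ud{x}, \eta}$, this is a local question around the ``missing'' points $x_i$ for $i > m$; I expect to verify it by passing to a formal-model neighborhood and applying \cref{specialization} together with the overconvergence of $\mu_n$ \cite[Prop.~8.2.3]{Huber-etale}, which lets one compute the relevant stalks in terms of the rank-$2$ boundary points $Z_{x_i}$ already carved out inside $V$.
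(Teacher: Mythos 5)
Your proposal diverges from the paper's proof in a genuinely different (and unfortunately unsalvageable) way, so let me focus on the gap.

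The first half of your argument is sound: the factorization $\cU_{\ud{x}, \eta} \xhookrightarrow{k} V \xhookrightarrow{\ell} \cU_{\ud{x}, \eta}^c$ and the resulting commutative square
\[
\begin{tikzcd}[column sep=large]
\bigoplus_{i=1}^n \Hh^1(Z_{x_i}, \mu_n) \arrow[r,"\partial_{\cU_{\ud{x}, \eta}}"] \arrow[d, "\rm{proj}"'] & \Hh^2_c(\cU_{\ud{x}, \eta}, \mu_n) \arrow[d,"{\Hh^2_c(\ttr^\et_k(1))}"] \\
\bigoplus_{i=1}^m \Hh^1(Z_{x_i}, \mu_n) \arrow[r,"\partial_V"] & \Hh^2_c(V, \mu_n)
\end{tikzcd}
\]
are correct (the map $\alpha$ exists by the universal property of $\cU_{\ud{x}, \eta}^c$, not of $\cU_{\ud{x}', \eta}^c$ as you wrote). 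The trouble is the final reduction: your argument only yields $\phi \circ \partial_{\cU_{\ud{x}', \eta}} = \phi \circ \Hh^2_c\bigl(\ttr^\et_{j'_{\ud{x},\eta}}(1)\bigr) \circ \partial_{\cU_{\ud{x}, \eta}} \circ \rm{incl}$, so you need $\phi$ to be injective (let alone an isomorphism), and this is simply false. The morphism $\alpha|_V \colon V \to \cU_{\ud{x}', \eta}$ is injective with image $\cU_{\ud{x}, \eta}$ together with finitely many rank-$2$ points, which is nowhere near all of $\cU_{\ud{x}', \eta}$: it omits the open tubes $\sp_{\cX}^{-1}(x_i)$ for $m < i \le n$ except for a single rank-$2$ point each. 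At any classical point $y$ in such a tube, small open neighborhoods of $y$ miss $\cU_{\ud{x}, \eta}$ entirely, so $\bigl(\rR(\alpha|_V)_*\mu_{n,V}\bigr)_{\overline{y}} = 0$ while $(\mu_n)_{\overline{y}} \ne 0$; the sheaf map you wish to be an equivalence is therefore not one, and overconvergence of $\mu_n$ offers no help.

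This failure is already visible in the simplest example $\cX = \wdh{\PP}^1_{\cO_C}$, $\ud{x}' = \{\infty\}$, $\ud{x} = \{\infty, 0\}$, with $n$ coprime to $p$. Here $\cU_{\ud{x}', \eta} = \DD^1$ and $\cU_{\ud{x}, \eta}$ is the unit annulus $A(1,1) = \{\abs{z} = 1\}$, whose universal compactification has two extra rank-$2$ points $z_\pm$; one has $V = A(1,1) \cup \{z_+\}$ and $V^c \smallsetminus V = \{z_-\}$. Then $\Hh^2_c(\DD^1,\mu_n) \cong \Lambda$, but $\Hh^2_c(V,\mu_n) = \coker\bigl(\Hh^1(A(1,1),\mu_n) \to \Hh^1(\{z_-\},\mu_n)\bigr) = 0$ because the Kummer class of $z$ maps to a generator of $\Hh^1(\{z_-\},\mu_n) \cong \Lambda$ (via $\# \circ v_{z_-}$). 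So $\phi \colon \Lambda \to 0$ is not injective, and in fact your identity reduces to $0 = 0$ and carries no content.

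What the paper does instead avoids introducing $\phi$ altogether: it works on the \emph{larger} compactification $\cU_{\ud{x}', \eta}^c$ and compares three excision triangles there, using the key geometric fact that the complement $\abs{\cU_{\ud{x}', \eta}^c} \smallsetminus \abs{\cU_{\ud{x}, \eta}}$ decomposes as the \emph{clopen} disjoint union of $\abs{\cU_{\ud{x}', \eta}^c} \smallsetminus \abs{\cU_{\ud{x}', \eta}}$ (the $m$ rank-$2$ points) and $\abs{\cU_{\ud{x}', \eta}} \smallsetminus \abs{\cU_{\ud{x}, \eta}}$ (the tubes). Clopenness produces a \emph{section} $\iota$ of the restriction map $\beta$ on $\rR\Gamma$ of these pseudo-adic boundaries, and the identity $\delta_3 = \delta_3 \circ \beta \circ \iota$ then propagates through the morphisms of triangles to yield the desired square, with no invertibility required. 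Your $V$-based clopen decomposition only sees the splitting inside $\cU_{\ud{x}, \eta}^c \smallsetminus \cU_{\ud{x}, \eta}$, which is too small to encode the trace map $\ttr^\et_{j'_{\ud{x},\eta}}$; the decomposition that matters lives one level up.
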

\begin{proof}
In this proof, all spaces are regarded as pseudo-adic spaces;
see \cref{section:pseudo-adic} for the necessary definitions and results.
By the discussion before this lemma, we have canonical isomorphisms
\[
\bigoplus_{i=1}^n \rm{H}^1(Z_{x_i}, \mu_n) \simeq \rm{H}^1(\cU_{\ud{x}, \eta}^c \smallsetminus \cU_{\ud{x}, \eta}, \mu_n), \text{ and }  \bigoplus_{i=1}^m \rm{H}^1(Z_{x_i}, \mu_n) \simeq \rm{H}^1\bigl(\cU_{\ud{x}', \eta}^c \smallsetminus \cU_{\ud{x}', \eta}, \mu_n\bigr). 
\]
\begin{enumerate}[wide,label={\textit{Step~\arabic*}.},ref={Step~\arabic*}]
\item \textit{We recall the definitions of $\partial_{\cU_{\ud{x}', \eta}}$, $\partial_{\cU_{\ud{x}, \eta}}$, and $\rm{can}$.}
Consider the following diagram of inclusions:
\[ \begin{tikzcd}
\cU_{\ud{x}, \eta} \arrow[r,hook] \arrow[d,hook] & \cU_{\ud{x}', \eta} \arrow[d,hook] \\
\cU_{\ud{x}, \eta}^c \arrow[r,hook] & \cU_{\ud{x}', \eta}^c.
\end{tikzcd} \]
Using the triangles for both compositions from the top left to the bottom right, we obtain the following commutative diagram:
\begin{equation}
\label{eqn:proof-1-extra-point}
\begin{tikzcd}
\rR\Gamma_c(\cU_{\ud{x}, \eta}, \mu_n) \arrow{r} \arrow[d,equals] & \rR\Gamma(\cU_{\ud{x}, \eta}^c, \mu_n) \arrow{r} & 
\rR\Gamma(\cU_{\ud{x}, \eta}^c \smallsetminus \cU_{\ud{x}, \eta}, \mu_n) \arrow{r}{\delta_1} & \rm{R}\Gamma_c(\cU_{\ud{x}, \eta}, \mu_n)[1] \arrow[d,equals] \\
\rR\Gamma_c(\cU_{\ud{x}, \eta}, \mu_n) \arrow{r} \arrow{d}[swap]{\rR\Gamma_c\bigl(\ttr^\et_{j'_{\ud{x},\eta}}(1)\bigr)} & 
\rR\Gamma(\cU_{\ud{x}', \eta}^c, \mu_n) \ar[r] \ar[u] \arrow[d,equals] & 
\rR\Gamma(\cU_{\ud{x}', \eta}^c \smallsetminus \cU_{\ud{x}, \eta}, \mu_n) \arrow{u}{\alpha} \arrow{d}[swap]{\beta} \arrow{r}{\delta_2} & \rm{R}\Gamma_c(\cU_{\ud{x}, \eta}, \mu_n)[1] \arrow{d}[swap]{\rR\Gamma_c\bigl(\ttr^\et_{j'_{\ud{x},\eta}}(1)\bigr)[1]} \\
\rR\Gamma_c(\cU_{\ud{x}', \eta}, \mu_n) \arrow{r} & \rR\Gamma(\cU_{\ud{x}', \eta}^c, \mu_n) \arrow{r} & 
\rR\Gamma(\cU_{\ud{x}', \eta}^c \smallsetminus \cU_{\ud{x}', \eta}, \mu_n) \arrow{r}{\delta_3} & \rm{R}\Gamma_c(\cU_{\ud{x}', \eta}, \mu_n)[1]
\end{tikzcd}
\end{equation}
By the very definition, we have
\[ \partial_{\cU_{\ud{x}, \eta}}=\rm{H}^1\bigl(\delta_1\bigr), \quad \partial_{\cU_{\ud{x}', \eta}}=\rm{H}^1\bigl(\delta_3\bigr), \quad \text{and} \quad \Hh^2_c\bigl(\ttr^\et_{j'_{\ud{x},\eta}}(1)\bigr) = \Hh^1\bigl(\rR\Gamma_c\bigl(\ttr^\et_{j_{\ud{x},\ud{x}',\eta}}(1)\bigr)[1]\bigr). \]

\item \textit{We express $\rm{incl}$ in more geometric terms.} Next, in order to get our hands on the morphism $\rm{incl}$, we need to understand the space $\cU_{\ud{x}', \eta}^c\smallsetminus \cU_{\ud{x}, \eta}$ better.  

First, we note that $\cU_{\ud{x}', \eta}^c \smallsetminus \cU_{\ud{x}, \eta}$ decomposes as a set into the disjount union $\bigl(\cU_{\ud{x}', \eta}^c \smallsetminus \cU_{\ud{x}', \eta}\bigr)\sqcup \bigl(\cU_{\ud{x}', \eta} \smallsetminus \cU_{\ud{x}, \eta}\bigr)$. Clearly, $\cU_{\ud{x}', \eta}^c \smallsetminus \cU_{\ud{x}', \eta}$ is closed in $\cU_{\ud{x}', \eta}^c \smallsetminus \cU_{\ud{x}, \eta}$, but we claim that it is also open.
Indeed, \cite[\href{https://stacks.math.columbia.edu/tag/0903}{Tag 0903}]{stacks-project} ensures that it suffices to show that $\cU_{\ud{x}', \eta}^c \smallsetminus \cU_{\ud{x}', \eta}$ is stable under generalizations in $\cU_{\ud{x}', \eta}^c \smallsetminus \cU_{\ud{x}, \eta}$.
Equivalently, we need to prove that for each $i=1, \dotsc, m$, the (unique) rank-$1$ generalization of points in $Z_{x_i}$ lies in $\cU_{\ud{x}, \eta}$. This follows from the fact that these rank-$1$ generalizations correspond to generic points of the special fiber (see \cref{specialization}).%
Therefore, we conclude that $\cU_{\ud{x}', \eta}^c \smallsetminus \cU_{\ud{x}, \eta}$ has a clopen decomposition $(\cU_{\ud{x}', \eta}^c \smallsetminus \cU_{\ud{x}', \eta})\sqcup \left(\cU_{\ud{x}', \eta} \smallsetminus \cU_{\ud{x}, \eta}\right)$.

Thanks to this decomposition, \cref{lemma:pre-adic-dijoint-union} ensures that we have a canonical isomorphism
\[
\rR\Gamma(\cU_{\ud{x}', \eta}^c \smallsetminus \cU_{\ud{x}, \eta}, \mu_n) \simeq \rm{R}\Gamma(\cU_{\ud{x}', \eta}^c \smallsetminus \cU_{\ud{x}', \eta}, \mu_n) \oplus \rm{R}\Gamma(\cU_{\ud{x}', \eta} \smallsetminus \cU_{\ud{x}, \eta}, \mu_n). 
\]
This implies that the map $\beta$ from \cref{eqn:proof-1-extra-point}
admits a canonical section 
\[
\iota \colon \rm{R}\Gamma(\cU_{\ud{x}', \eta}^c \smallsetminus \cU_{\ud{x}', \eta}, \mu_n) \to \rm{R}\Gamma(\cU_{\ud{x}', \eta}^c \smallsetminus \cU_{\ud{x}', \eta}, \mu_n) \oplus \rm{R}\Gamma(\cU_{\ud{x}', \eta} \smallsetminus \cU_{\ud{x}, \eta}, \mu_n) \simeq  \rm{R}\Gamma(\cU_{\ud{x}', \eta}^c \smallsetminus \cU_{\ud{x}, \eta}, \mu_n)
\]
such that the composition $\alpha \circ \iota$ coincides with the map 
\[ \begin{tikzcd}[row sep=tiny]
 \bigoplus_{i=1}^m \rm{R}\Gamma(Z_{x_i}, \mu_n) \arrow[r,hook,"\widetilde{\rm{incl}}"] \arrow[d,phantom,sloped,"\simeq"] & \bigoplus_{i=1}^n \rm{R}\Gamma(Z_{x_i}, \mu_n) \arrow[d,phantom,sloped,"\simeq"] \\
 \rm{R}\Gamma(\cU_{\ud{x}', \eta}^c\smallsetminus \cU_{\ud{x}', \eta}, \mu_n) \arrow[r,hook, "\alpha \circ \iota"] & \rm{R}\Gamma(\cU_{\ud{x}, \eta}^c\smallsetminus \cU_{\ud{x}, \eta}, \mu_n),
\end{tikzcd} \]
where $\widetilde{\rm{incl}}$ is induced by the inclusion $\bigsqcup_{i=1}^m Z_{x_i} \to \bigsqcup_{i=1}^n Z_{x_i}$.
In particular, we conclude that the composition 
\[
\rm{H}^1(\alpha) \circ \rm{H}^1(\iota) =\rm{incl},
\]
where $\rm{incl}$ is from \cref{eqn:proof-1-extra-point}. 

\item \textit{End of proof.}
Now the result follows by applying $\rm{H}^1$ to the following sequence of equalities:
\[
\delta_3 =\delta_3 \circ \beta \circ \iota  = \rR\Gamma_c\bigl(\ttr^\et_{j'_{\ud{x},\eta}}(1)\bigr)[1] \circ \delta_2 \circ \iota = \rR\Gamma_c\bigl(\ttr^\et_{j'_{\ud{x},\eta}}(1)\bigr)[1] \circ \delta_1 \circ \alpha \circ \iota. \qedhere
\]
\end{enumerate}
\end{proof}

\begin{corollary}
\label{cor:add-points}
Let $\cX$ be a rig-smooth connected semi-stable proper formal $\O_C$-curve,
and let $\{x_1, \dots, x_n\}$ and $\{y_1, \dots, y_m\}$ be finite non-empty sets of closed points in $\cX_s$. Then the pair $(\cX, \{x_1, \dots, x_n, y_1, \dots, y_m\})$ is trace-friendly if and only if both $(\cX, \{x_1, \dots, x_n\})$ and $(\cX, \{y_1, \dots, y_m\})$ are so.
\end{corollary}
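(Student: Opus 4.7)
The strategy is to reduce the trace-friendliness condition to a statement about the behavior of the map
\[
t^{\rm{alg}}_{\cX_\eta} \circ \Hh^2_c\bigl(\ttr^\et_{j_{\ud{z},\eta}}(1)\bigr) - t_{\cU_{\ud{z},\eta}} \colon \Hh^2_c(\cU_{\ud{z}, \eta}, \mu_n) \longrightarrow \ZZ/n\ZZ
\]
on the individual summands of $\bigoplus \Hh^1(Z_{z_k}, \mu_n)$ where $\ud{z} = \ud{x} \cup \ud{y}$. By \cref{comp-supp-cohomology-affine-curve}, the boundary map $\partial_{\cU_{\ud{z}, \eta}}$ is surjective, so trace-friendliness of $(\cX, \ud{z})$ is equivalent to the vanishing of the above difference after precomposing with $\partial_{\cU_{\ud{z}, \eta}}$, and by the clopen decomposition $\cU^c_{\ud{z}, \eta} \smallsetminus \cU_{\ud{z}, \eta} = \bigl(\bigsqcup_{i=1}^n Z_{x_i}\bigr) \sqcup \bigl(\bigsqcup_{j=1}^m Z_{y_j}\bigr)$ established prior to \cref{further reduction thm:analytic-trace-algebraic}, this reduces to a check on each summand $\Hh^1(Z_{x_i}, \mu_n)$ and $\Hh^1(Z_{y_j}, \mu_n)$.

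Next, I would identify the restriction of each side to, say, $\Hh^1(Z_{x_i}, \mu_n)$ with the analogous ingredients for $(\cX, \ud{x})$. For the algebraic side, the factorization $j_{\ud{z},\eta} = j_{\ud{x},\eta} \circ j'$ (where $j' \colon \cU_{\ud{z},\eta} \hookrightarrow \cU_{\ud{x},\eta}$), together with the compatibility of \'etale traces with compositions from \cref{etale-trace-compatibility}, gives
\[
\Hh^2_c\bigl(\ttr^\et_{j_{\ud{z},\eta}}(1)\bigr) = \Hh^2_c\bigl(\ttr^\et_{j_{\ud{x},\eta}}(1)\bigr) \circ \Hh^2_c\bigl(\ttr^\et_{j'}(1)\bigr).
\]
Applying \cref{further reduction thm:analytic-trace-algebraic}, the composition $\Hh^2_c\bigl(\ttr^\et_{j'}(1)\bigr) \circ \partial_{\cU_{\ud{z}, \eta}}$ restricted to the summand $\Hh^1(Z_{x_i}, \mu_n)$ agrees with $\partial_{\cU_{\ud{x}, \eta}}$ restricted to the same summand. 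Therefore the algebraic side on $\Hh^1(Z_{x_i}, \mu_n)$ coincides with the corresponding piece of $t^{\rm{alg}}_{\cX_\eta} \circ \Hh^2_c\bigl(\ttr^\et_{j_{\ud{x},\eta}}(1)\bigr) \circ \partial_{\cU_{\ud{x}, \eta}}$.

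For the analytic side, recall from \cref{defn:analytic-trace-curves} that $t_{\cU_{\ud{z}, \eta}} \circ \partial_{\cU_{\ud{z}, \eta}} = \widetilde{t}_{\cU_{\ud{z},\eta}}$, where the analytic pre-trace $\widetilde{t}_{\cU_{\ud{z},\eta}}$ of \cref{analytic-pre-trace} is simply the sum $\sum_k \# \circ v_k$ over all rank-$2$ points of the boundary. Since this sum splits as a direct sum over the summands $\Hh^1(Z_{z_k},\mu_n)$, and since the maps $\#\circ v_k$ associated to the points in $Z_{x_i}$ depend only on the local behavior at those points (not on whether $\ud{y}$ has also been removed), the restriction of $\widetilde{t}_{\cU_{\ud{z},\eta}}$ to $\Hh^1(Z_{x_i}, \mu_n)$ agrees with the restriction of $\widetilde{t}_{\cU_{\ud{x},\eta}}$ to the same summand.

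Combining the last two paragraphs, trace-friendliness of $(\cX, \ud{z})$ on each summand $\Hh^1(Z_{x_i}, \mu_n)$ is \emph{equivalent} to trace-friendliness of $(\cX, \ud{x})$ on the same summand, and analogously for the $\ud{y}$-summands. Since trace-friendliness of $(\cX, \ud{x})$ (resp.\ $(\cX, \ud{y})$) is itself equivalent, via surjectivity of $\partial_{\cU_{\ud{x},\eta}}$ (resp.\ $\partial_{\cU_{\ud{y},\eta}}$), to the vanishing on all of these summands, the desired ``if and only if'' drops out. The main bookkeeping obstacle is keeping careful track of the compatibility between the \'etale trace compositions and the analytic pre-trace summands, for which \cref{further reduction thm:analytic-trace-algebraic} is exactly the required tool.
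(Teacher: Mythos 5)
Your proposal is correct and follows essentially the same route as the paper: both proofs hinge on \cref{further reduction thm:analytic-trace-algebraic}, on the surjectivity of the boundary maps $\partial$ from \cref{comp-supp-cohomology-affine-curve}, and on the additivity of the analytic pre-trace over the direct summands $\Hh^1(Z_{z_k},\mu_n)$, so that the trace-friendliness condition can be checked separately on the $\ud{x}$- and $\ud{y}$-parts. The only cosmetic difference is that you unpack the analytic side explicitly in terms of the pre-trace $\widetilde{t}$ from \cref{analytic-pre-trace}, whereas the paper packages the same information into a single commutative diagram.
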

\begin{proof}
Denote by $\cU_{\ud{x}}$ (resp.\ $\cU_{\ud{y}}$, resp.\ $\cU_{\ud{x, y}}$) the open ``complement'' of $\{x_1, \dots, x_m\}$ 
(resp.~$\{y_1, \dots, y_m\}$, resp.~$\{x_1, \dots, x_m, y_1, \dots, y_n\}$) in $\cX$.
We also denote by $j'_{\ud{x,y}} \colon \cU_{\ud{x,y}} \hookrightarrow \cU_{\ud{x}}$ (resp.\ $j''_{\ud{x,y}} \colon \cU_{\ud{x,y}} \hookrightarrow \cU_{\ud{y}}$) the canonical open immersions and by $Z_{x_i}$ (resp.\ $Z_{y_i}$) the pseudo-adic spaces from \cref{notation:Z_i} applied to $(\cX, \{x_1, \dots, x_n\})$
(resp.\ $(\cX, \{y_1, \dots, y_m\})$).  
Then \cref{further reduction thm:analytic-trace-algebraic} implies that the diagram
\begin{equation}\label{eqn:add-points}
\begin{tikzcd}[column sep=5.5em]
    \bigoplus_{i=1}^n \rm{H}^1(Z_{x_i}, \mu_n) \arrow{r}{\rm{incl}'} \arrow[d,shorten <= -1em,"\partial_{\cU_{\ud{x}, \eta}}"]  & \bigoplus_{i=1}^n \rm{H}^1(Z_{x_i}, \mu_n) \oplus  \bigoplus_{j=1}^m \rm{H}^1(Z_{y_j}, \mu_n) \arrow[d,shorten <= -1em,"\partial_{\cU_{\ud{x, y}, \eta}}"] & \arrow[l, swap, "\rm{incl}''"] \arrow[d,shorten <=-1em,"\partial_{\cU_{\ud{y}, \eta}}"] \bigoplus_{j=1}^m \rm{H}^1(Z_{y_j}, \mu_n) \\
    \rm{H}^2_c(\cU_{\ud{x}, \eta}, \mu_n) \arrow[dr, swap, "\Hh^2_c\bigl(\ttr^\et_{j_{\ud{x},\eta}}(1)\bigr)"] & \arrow[l, swap, "\Hh^2_c\bigl(\ttr^\et_{j'_{\ud{x,y},\eta}}(1)\bigr)"] \rm{H}^2_c(\cU_{\ud{x, y}, \eta}, \mu_n) \arrow[d,pos=.4,"\Hh^2_c\bigl(\ttr^\et_{j_{\ud{x,y},\eta}}(1)\bigr)"] \arrow[r,"\Hh^2_c\bigl(\ttr^\et_{j''_{\ud{x,y},\eta}}(1)\bigr)"] &  \rm{H}^2_c(\cU_{\ud{y}, \eta}, \mu_n) \arrow{dl}{\Hh^2_c\bigl(\ttr^\et_{j_{\ud{y},\eta}}(1)\bigr)} \\[.5em]
    & \rm{H}^2(\cX_\eta, \mu_n) &
\end{tikzcd}
\end{equation}
commutes. Since the top vertical arrows in \cref{eqn:add-points} are surjective by \cref{comp-supp-cohomology-affine-curve}, and the images of $\rm{incl}'$ and $\rm{incl}''$ generate the group $\bigoplus_{i=1}^n \rm{H}^1(Z_{x_i}, \mu_n) \oplus  \bigoplus_{j=1}^m \rm{H}^1(Z_{y_j}, \mu_n)$, we conclude that $(\cX, \{x_1, \dots, x_n, y_1, \dots, y_m\})$ is trace-friendly if and only if 
\begin{equation}\label{eqn:trace-friendly-proof}
\rm{t}_{\cX_\eta}^{\rm{alg}} \circ \Hh^2_c\bigl(\ttr^\et_{j_{\ud{x,y},\eta}}(1)\bigr)(f) = t_{\cU_{\ud{x, y}, \eta}}(f) \quad \text{for}
\end{equation}
\begin{equation}\label{eqn:case-1-add-points}
f \in \rm{Im}(\partial_{\cU_{\ud{x, y}, \eta}} \circ \rm{incl}') \quad \text{and} 
\end{equation}
\begin{equation}\label{eqn:case-2-add-points}
f \in \rm{Im}(\partial_{\cU_{\ud{x, y}, \eta}} \circ \rm{incl}'').
\end{equation}
Using \cref{eqn:add-points} and the definition of the analytic trace map (see \cref{defn:analytic-trace-curves}), we conclude that Equation~\cref{eqn:trace-friendly-proof} for $f$ as in \cref{eqn:case-1-add-points} is equivalent to $(\cX, \{x_1, \dots, x_n\})$ being trace-friendly, while Equation~\cref{eqn:trace-friendly-proof} for $f$ as in \cref{eqn:case-2-add-points} is equivalent to $(\cX, \{y_1, \dots, y_m\})$ being trace-friendly. Combining these results, we get that $(\cX, \{x_1, \dots, x_n, y_1, \dots, y_m\})$ is trace-friendly if and only if both $(\cX, \{x_1, \dots, x_n\})$ and $(\cX, \{y_1, \dots, y_m\})$ are so.
\end{proof}

\begin{theorem}\label{thm:everything-is-trace-friendly} Let $(\cX, \{x_1, \dots, x_n\})$ be a pointed semi-stable formal $\O_C$-curve. Then it is trace-friendly. 
\end{theorem}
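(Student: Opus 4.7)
The plan is to reduce to the already-verified trace-friendly pair $(\wdh{\bf{P}}^1_{\O_C}, \{\infty\})$ from \cref{an trace equals alg trace for disk} via a Noether normalization adapted to the marked points. First, by adding at most one closed point from each irreducible component of $\cX_s$ not already touched by $\{x_1,\dots,x_n\}$, we may enlarge the set to $T = \{x_1,\dots,x_n\} \cup \{y_1,\dots,y_m\}$ that meets every component. By \cref{cor:add-points}, trace-friendliness of $(\cX, T)$ will imply trace-friendliness of $(\cX, \{x_1,\dots,x_n\})$. So we may assume outright that $\{x_1,\dots,x_n\}$ meets every irreducible component of $\cX_s$; applying \cref{lemma:good-noether} yields a finite morphism $f\colon \cX \to \wdh{\bf{P}}^1_{\O_C}$ such that $f^{-1}_s(\{\infty\}) = \{x_1,\dots,x_n\}$ set-theoretically. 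Functoriality of specialization then shows that $f_\eta$ restricts to a morphism $g\colon \cU_{\ud{x},\eta} \to \bf{D}^1_C$, so we obtain a Cartesian square of rigid-analytic curves whose horizontal arrows are open immersions and whose vertical arrows $f_\eta$ and $g$ are finite, and flat by \cref{lemma:finite-flat-curves}.

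The key technical step is to show that the resulting square of cohomology groups
\[
\begin{tikzcd}
\Hh^2_c(\cU_{\ud{x},\eta},\mu_n) \arrow[r,"\Hh^2_c(\ttr^\et_{j_{\ud{x},\eta}}(1))"] \arrow[d,"\Hh^2_c(\tr_g)"'] & \Hh^2(\cX_\eta,\mu_n) \arrow[d,"\Hh^2(\tr_{f_\eta})"] \\
\Hh^2_c(\bf{D}^1_C,\mu_n) \arrow[r,"\Hh^2_c(\ttr^\et_j(1))"'] & \Hh^2(\bf{P}^{1,\an}_C,\mu_n)
\end{tikzcd}
\]
commutes. I would deduce this from the commutativity of a corresponding square of \'etale sheaves on $\bf{P}^{1,\an}_C$, whose vertices are $f_{\eta,*}j_{\ud{x},\eta,!}\mu_n$, $f_{\eta,*}\mu_n$, $j_!\mu_n$, and $\mu_n$. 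The left vertical identification $f_{\eta,*}j_{\ud{x},\eta,!}\mu_n \simeq j_! g_*\mu_n$ is proper base change (using that $f_\eta$ is finite, hence $f_{\eta,*} = f_{\eta,!}$). Property \cref{thm:flat-trace-3} of \cref{thm:flat-trace} applied to this Cartesian square then gives $j^*(\tr_{f_\eta}) = \tr_g$ under the identification $j^*f_{\eta,*}\mu_n \simeq g_*\mu_n$; applying $j_!$ and combining with the naturality of the adjunction counit $j_!j^* \Rightarrow \rm{id}$ of the pair $(j_!,j^*)$ supplies the required 2-cell.

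Once the square commutes, the conclusion is a short diagram chase that strings together four ingredients: (i) the analytic-versus-algebraic trace equality for the closed unit disk from \cref{an trace equals alg trace for disk}, namely $t^{\rm{alg}}_{\bf{P}^{1,\an}_C} \circ \Hh^2_c(\ttr^\et_j(1)) = t_{\bf{D}^1_C}$; (ii) the compatibility of the analytic trace with finite flat traces from \cref{thm:analytic-trace-compatible-finite-flat-trace}, giving $t_{\bf{D}^1_C} \circ \Hh^2_c(\tr_g) = t_{\cU_{\ud{x},\eta}}$; (iii) the algebraic counterpart $t^{\rm{alg}}_{\bf{P}^{1,\an}_C} \circ \Hh^2(\tr_{f_\eta}) = t^{\rm{alg}}_{\cX_\eta}$, which follows from \cref{useful proposition on rigid curves}\cref{useful proposition on rigid curves-6}, property \cref{thm:flat-trace-7}, and the classical schematic version of this compatibility; and (iv) the square of Step~2. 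Composing these four identities around the outside of the square produces $t^{\rm{alg}}_{\cX_\eta} \circ \Hh^2_c(\ttr^\et_{j_{\ud{x},\eta}}(1)) = t_{\cU_{\ud{x},\eta}}$, establishing trace-friendliness. The main obstacle is the sheaf-level verification in Step~2: although morally a Beck--Chevalley compatibility, it requires carefully threading together the proper base change isomorphism with the axiomatic characterization of the finite flat trace and the adjunction description of the \'etale trace.
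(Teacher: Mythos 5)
Your proposal is essentially the same argument as the paper's proof and cites the same sequence of lemmas: reducing via \cref{cor:add-points} so the marked points hit every component, applying \cref{lemma:good-noether} together with \cref{lemma:finite-flat-curves} to obtain a finite flat $h \colon \cX_\eta \to \bf{P}^{1,\an}$ with $h^{-1}(\bf{D}^1) = \cU_{\ud{x},\eta}$, deducing commutativity of the resulting cohomology square from \cref{thm:flat-trace}\cref{thm:flat-trace-3}, deducing compatibility with the algebraic trace from \cref{thm:flat-trace}\cref{thm:flat-trace-7} and the classical schematic statement, and assembling the conclusion from \cref{an trace equals alg trace for disk} and \cref{thm:analytic-trace-compatible-finite-flat-trace}. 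The only cosmetic difference is that you expand the commutativity verification of the left square into an explicit sheaf-level Beck--Chevalley computation, whereas the paper treats this as an immediate consequence of the pullback compatibility axiom for finite flat traces.
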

\begin{proof}
By \cref{cor:add-points}, it suffices to prove the claim after replacing $(\cX, \{x_1, \dots, x_n\})$ with \\
$(\cX, \{x_1, \dots, x_n, x_{n+1}, \dots, x_m\})$ for any set of closed points $x_{n+1}, \dots, x_m \in \cX_s$. Therefore, we may and do assume that each irreducible component of $\cX_s$ contains at least one point from the set $\{x_1, \dots, x_n\}$. In this situation, \cref{lemma:good-noether} and \cref{lemma:finite-flat-curves} imply that we can find a finite flat morphism $h\colon \cX_\eta \to \bf{P}^{1, \an}$ such that $h^{-1}(\bf{D}^1) = \cU_{\ud{x}, \eta}$. We denote by $h'\colon \cU_{\ud{x}, \eta} \to \bf{D}^1$ the restriction of $h$ and by $j \colon \bf{D}^1 \hookrightarrow \bf{P}^{1,\an}$ the natural open immersion.
Consider the diagram
\begin{equation}\label{eqn:trace-trick}
\begin{tikzcd}[column sep =5em,row sep=large]
\rm{H}^2_c(\cU_{\ud{x}, \eta}, \mu_n) \arrow{d}{\rm{H}^2_c(\ttr_{h'})} \arrow{r}{\Hh^2_c\bigl(\ttr^\et_{j_{\ud{x},\eta}}(1)\bigr)} & \rm{H}^2(\cX_\eta, \mu_n) \arrow{r}{t_{\cX_\eta}^{\rm{alg}}} \arrow{d}{\rm{H}^2(\ttr_h)} & \Z/n\Z. \\
\rm{H}^2_c(\bf{D}^1, \mu_n) \arrow{r}{\Hh^2_c(\ttr^\et_j(1))} & \rm{H}^2(\bf{P}^{1, \an}, \mu_n) \arrow[ru, swap, "t_{\bf{P}^{1, \an}}^{\rm{alg}}"], & 
\end{tikzcd}
\end{equation}
where $\ttr_h\coloneqq \ttr_{h,\mu_n}$ and $\ttr_{h'}\coloneqq \ttr_{h', \mu_n}$ are the finite flat trace maps from \cref{thm:flat-trace}. Then \cref{thm:flat-trace}\cref{thm:flat-trace-3} implies that the left square in \cref{eqn:trace-trick} commutes, while 
\cref{thm:flat-trace}\cref{thm:flat-trace-7} and  \cite[Exp.~ XVIII, Th.~2.9 and Prop.~2.10]{SGA4} imply that the right triangle commutes. Therefore, we conclude that \cref{an trace equals alg trace for disk}, \cref{thm:analytic-trace-compatible-finite-flat-trace}, and \cref{eqn:trace-trick} imply that 
\[
t^{\rm{alg}}_{\cX_\eta} \circ \Hh^2_c\bigl(\ttr^\et_{j_{\ud{x},\eta}}(1)\bigr) = t^{\rm{alg}}_{\bf{P}^{1, \an}} \circ \Hh^2_c\bigl(\ttr^\et_j(1)\bigr) \circ \rm{H}^2_c(\ttr_{h'}) = t_{\bf{D}^1} \circ \rm{H}^2_c(\ttr_{h'}) = t_{\cU{\ud{x}, \eta}}.
\]
In other words, $(\cX, \{x_1, \dots, x_n\})$ is trace-friendly. 
\end{proof}

Finally, we are ready to give the full proof of \cref{thm:analytic-trace-algebraic}: 

\begin{proof}[Proof of \cref{thm:analytic-trace-algebraic}]
First, we can clearly assume that $\ov{X}$ is connected. Then \cref{useful proposition on rigid curves}\cref{useful proposition on rigid curves-4} implies that $X \subset \overline{X}$ is the adic generic fiber of
$\cX \subset \cX^c$, where
$\cX^c$ is a semistable connected proper $\O_C$-curve and $\cX$ is an open formal subscheme of $\cX^c$.  

Now we consider the open subscheme $\cY_s\coloneqq \cX^c_s \smallsetminus \overline{\cX_s}$, and denote the corresponding open formal $\O_C$-subscheme of $\cX^c$ by $\cY$. We also denote its rigid generic fiber $Y$. By construction, $\cY$ and $\cX$ are disjoint in $\cX^c$ (so $X$ and $Y$ are disjoint as well), and $(\cY \sqcup \cX)_s\subset \cX^c_s$ is dense.  

Now we note that $X \sqcup Y = (\cX \sqcup \cY)_\eta$ is affinoid due to \cref{useful proposition on rigid curves}. Therefore, validity of \cref{thm:analytic-trace-algebraic} for the inclusion $X \sqcup Y \subset \overline{X}$ implies validity of \cref{thm:analytic-trace-algebraic} for both $X\subset \overline{X}$ and $Y \subset \overline{X}$. Therefore, we can replace $X$ with $X \sqcup Y$ to assume that $\cX_s \subset \cX_s^c$ is dense.  

Let $(x_1, \dots, x_n)$ be the finite non-empty set of points of $\abs{\cX_s^c} \smallsetminus \abs{\cX_s}$. Then $(\cX^c, \{x_1, \dots, x_n\})$ is a pointed semi-stable $\O_C$-curve and $\cU_{\ud{x}, \eta} = X$ (see \cref{defn:pointed-semi-stable}). Therefore, \cref{thm:analytic-trace-algebraic} for the inclusion $X\subset \ov{X}$ follows directly from \cref{thm:everything-is-trace-friendly}.
\end{proof}

\section{The trace map for smooth morphisms}\label{smooth-traces}

In this section, we discuss the trace map for separated taut smooth morphisms between locally noetherian analytic adic spaces for constant coefficients and use it to revisit the behavior of lisse complexes under smooth proper pushforwards.
In \cref{proper-trace}, we will then deal with traces for dualizing complexes along maps of rigid-analytic spaces.
Throughout, we fix a positive integer $n$ and set $\Lambda \colonequals \ZZ/n$.

\subsection{Construction}\label{constructing smooth traces}
We begin with the construction of smooth traces, loosely following the strategy in \cite[Exp.~XVII]{SGA4} and \cite[\S~7.2]{Berkovich} by reducing to the case of curves.
Even though our eventual Poincar\'e duality statement in \cref{Poincare dualizability theorem} uses smooth proper morphisms, for our construction of the smooth trace it will be vital to allow nonproper morphisms as well.
\begin{theorem}
\label{smooth-trace-constant}
There is a unique way to assign to any separated taut smooth of equidimension $d$ morphism $f \colon X \to Y$ of locally noetherian analytic adic spaces with $n \in \cO^\times_Y$ a trace map
$\ttr_f \colon \rR f_!\,\LLambda_X(d)[2d] \to \LLambda_Y$
of complexes on $Y_\et$, satisfying the following properties:
\begin{enumerate}[label=\upshape{(\arabic*)}]
\item\label{smooth-trace-constant-composition} (compatibility with compositions) For any two morphisms $f \colon X \to Y$ and $g \colon Y \to Z$ as above of equidimension $d$ and $e$, respectively, the following diagram is commutative:
\[ \begin{tikzcd}[column sep=huge]
   \rR(g \circ f)_!\, \LLambda_X(d+e)[2(d+e)] \arrow[r,"\ttr_{g \circ f}"] \arrow[d,sloped,"\sim"] & \LLambda_Z \\ 
   \rR g_! \Bigl((\rR f_!\, \LLambda_X(d)[2d]) \otimes \LLambda_Y(e)[2e] \Bigr) \arrow[r,"{\rR g_!\, (\ttr_f(e)[2e])}"] & \rR g_!\,\LLambda_Y(e)[2e] \arrow[u,"\ttr_g"]
\end{tikzcd} \]
\item\label{smooth-trace-constant-pullback} (compatibility with pullbacks) For any pullback diagram
\[ \begin{tikzcd}
    X' \arrow[r,"g'"] \arrow[d,"f'"] & X \arrow[d,"f"] \\
    Y' \arrow[r,"g"] & Y
\end{tikzcd} \]
in which $f$ and $f'$ are separated taut smooth of equidimension $d$ as above, the following diagram is commutative (with the top row induced by the base change map from \cite[Th.~5.4.6]{Huber-etale}\footnote{
We warn the reader that the base change map is not always an isomorphism unless $n$ is invertible in $\cO^+$.}):
\[ \begin{tikzcd}
    g^*\rR f_!\,\LLambda_X(d)[2d] \arrow[r] \arrow[d,"g^*\ttr_f"] & \rR f'_!\, \LLambda_{X'}(d)[2d] \arrow[d,"\ttr_{f'}"] \\
    g^* \LLambda_Y \arrow[r,"\sim"] & \LLambda_{Y'}
\end{tikzcd}\]
\item\label{smooth-trace-constant-etale} (compatibility with the \'etale traces from \cref{etale-trace}) If $f$ is \'etale, then $\ttr_f$ is given by the counit
\[ \rR f_!\,\LLambda_X \simeq f_! f^* \LLambda_Y \to \LLambda_Y \]
of the adjunction between $f_!$ and $f^*$.
\item\label{smooth-trace-constant-P1} (compatibility with algebraic traces) If $f$ is the structure morphism $\PP^{1, \an}_C \to \Spa(C,\cO_C)$ for some complete, algebraically closed nonarchimedean field $C$, then $\ttr_f$ is identified with the algebraic trace from \cref{defn:algebraic-trace-map}.
\end{enumerate}
\end{theorem}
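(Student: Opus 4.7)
The plan is to carry out the standard d\'evissage of \cite[Exp.~XVII]{SGA4} and \cite[\S~7.2]{Berkovich}, reducing both uniqueness and existence to the case of smooth affinoid curves over an algebraically closed nonarchimedean field, where the analytic trace from \cref{defn:analytic-trace-curves} is already available. The essential novelty compared to earlier approaches is the use of universal compactifications together with the geometry of higher-rank boundary points to handle the case when $n \in \cO^\times_Y$ but not in $(\cO^+_Y)^\times$ and $f$ is not partially proper.

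For uniqueness, I would first apply property~\cref{smooth-trace-constant-pullback} to reduce to $Y = \Spa(C, \cO_C)$ for an algebraically closed nonarchimedean field $C$. The \'etale-local structure of smooth morphisms \cite[Cor.~1.6.10]{Huber-etale}, combined with \cref{smooth-trace-constant-composition} and \cref{smooth-trace-constant-etale}, reduces us to the structure morphism $f \colon \DD^d_C \to \Spa(C, \cO_C)$. Factoring $f$ as a composition of $d$ relative projections of dimension one via \cref{smooth-trace-constant-composition} reduces further to $d = 1$. Finally, the open immersion $\DD^1_C \hookrightarrow \PP^{1,\an}_C$ is \'etale, so \cref{smooth-trace-constant-composition}, \cref{smooth-trace-constant-etale}, and \cref{smooth-trace-constant-P1} pin down $\ttr_f$ uniquely.

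For existence, I would run this reduction in reverse. For a smooth separated taut morphism $f \colon X \to Y$ of relative dimension one, I would construct $\ttr_f \colon \rR f_!\,\LLambda_X(1)[2] \to \LLambda_Y$ by working \'etale-locally on $Y$ (which we may take to be affinoid) and using the universal compactification $\ov{f} \colon \ov{X} \to Y$ from \cite[Th.~5.1.5]{Huber-etale}. The excision triangle $\rR f_! \to \rR \ov{f}_* \to \rR \ov{f}_*\,i_* i^*$ associated to $i \colon \ov{X} \smallsetminus X \hookrightarrow \ov{X}$ realizes $\rR f_!\,\LLambda_X(1)[2]$ as a suitable ``cokernel'' of boundary cohomology. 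The reduction maps~$\#$ from \cref{defn:sharp-map} applied to the rank-$2$ curve-like points of $\ov{X} \smallsetminus X$ (see \cref{lemma:extra-points} and \cref{lemma:extra-points-curve-like}) provide the pre-trace of \cref{analytic-pre-trace}, which descends to $\rR f_!\,\LLambda_X(1)[2]$ by virtue of \cref{thm:trace-well-defined}. Compatibility with \'etale pullback follows from \cref{curve-trace-etale-compatibility}, yielding a well-defined morphism in $D(Y_\et; \Lambda)$. For higher relative dimension $d$, I factor $f$ \'etale-locally through a tower $X \to \DD^d_Y \to \DD^{d-1}_Y \to \dotsc \to \DD^1_Y \to Y$ and define $\ttr_f$ as the appropriate composition; independence of the factorization follows from the uniqueness part applied pointwise on $Y$. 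The axioms \cref{smooth-trace-constant-composition}--\cref{smooth-trace-constant-etale} are immediate from the construction, and \cref{smooth-trace-constant-P1} follows from \cref{an trace equals alg trace for disk} applied to the standard cover of $\PP^{1,\an}_C$ by two unit disks together with a Mayer--Vietoris argument.

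The main obstacle is the relative-dimension-one case of existence: while the analytic trace of \cref{defn:analytic-trace-curves} is defined fiberwise, assembling these traces into a single morphism of complexes on $Y_\et$ requires careful use of universal compactifications (which exist in full generality only in Huber's formalism) and the delicate boundary analysis of \cref{section:curves}. This is precisely the point where neither Huber's construction (which assumes $n \in (\cO^+_Y)^\times$) nor Berkovich's (which assumes $f$ partially proper) applies, and where the higher-rank valuation-theoretic description from Sections~\ref{section:curves} and~\ref{section:analytic-trace} becomes indispensable.
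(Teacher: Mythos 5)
Your uniqueness argument matches the paper's. The existence construction, however, has a gap precisely where the paper expends the most care: the passage from relative dimension one to $d > 1$ via towers of projections. You propose to factor \'etale-locally through a tower $X \to \DD^d_Y \to \DD^{d-1}_Y \to \dotsb \to \DD^1_Y \to Y$ of closed unit disks and to deduce independence of the factorization from "the uniqueness part applied pointwise on $Y$." This is circular: uniqueness compares two assignments that already satisfy all four axioms, but one cannot verify compatibility with compositions (or even well-definedness) for a candidate that is a priori tower-dependent. The paper breaks this circle by separately proving factorization independence (\cref{factorization independence}), and the mechanism it uses — the Steinitz exchange lemma reducing any two \'etale charts to a sequence of coordinate permutations, combined with the permutation invariance of the trace on $\AA^{d,\an}_Y$ (\cref{trace-relative-disk}\cref{trace-relative-disk-permutation-invariance}) — is unavailable with disks. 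Indeed, \cref{affine-space-better} shows that the $\fS_2$-action on $\Hh^4_c(\DD^2_C, \mu_p^{\otimes 2})$ is \emph{nontrivial}, so the divisibility argument (via $\GL_d(C)^{\rm ab} \cong C^\times$) that works for the one-dimensional group $\Hh^{2d}_c\bigl(\AA^{d,\an}_C,\Lambda(d)\bigr) \cong \Lambda$ fails for disks, whose top compactly supported cohomology is enormous. Replacing $\DD^d_Y$ by $\AA^{d,\an}_Y$ throughout your tower is essential, not a matter of taste.

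A second, smaller gap is in your $d=1$ existence step. The boundary analysis of \cref{section:curves} and \cref{section:analytic-trace} (rank-2 curve-like points, reduction map $\#$, the pre-trace of \cref{analytic-pre-trace} and \cref{thm:trace-well-defined}) is carried out only for curves over an algebraically closed field $C$, whereas you need a morphism $\rR f_!\,\LLambda_X(1)[2] \to \LLambda_Y$ of complexes on $Y_\et$ for a general base $Y$. The paper circumvents this by constructing the relative trace for $\pi_Y \colon \AA^{1,\an}_Y \to Y$ from the first Chern class $c_1(\cO(1))$ on $\PP^{1,\an}_Y$ (which works over any base) and only invoking the §5 analytic trace on fibers, via \cref{smooth trace lives in discrete space} and \cref{overconvergent-target}, to verify independence of choices. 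If you insist on building the trace directly from the universal compactification of $X$ over $Y$, you would need to extend the curve-like boundary-point analysis to the relative setting, which the paper does not do.
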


For the general construction of trace maps, the following lemmas will turn out to be useful.
\begin{lemma}
\label{smooth trace lives in discrete space}
Let $f \colon X \to Y$ be a separated taut smooth of dimension $d$ morphism between locally noetherian analytic adic space.
Then $\rR f_!\, \LLambda_X(d)[2d]$ lies in $D^{\leq 0}(Y_\et; \Lambda)$. In particular, $\rR\cHom(\rR f_!\, \LLambda_X(d)[2d], \LLambda_Y)$ lies in $D^{\geq 0}(Y_\et; \Lambda)$, and every morphism $\rR f_!\, \LLambda_X(d)[2d] \to \LLambda_Y$ uniquely factors as the composition
\[
\rR f_!\, \LLambda_X(d)[2d] \xrightarrow{\tau^{\ge 0}} \rR^{2d} f_!\, \LLambda_X(d) \to \LLambda_Y.
\]
\end{lemma}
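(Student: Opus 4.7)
The lemma has three parts; the first is the substantive one and the other two follow formally. I begin with the first: showing $\rR^i f_!\,\LLambda_X = 0$ for $i > 2d$. The question being \'etale-local on $Y$, I may assume $Y$ is quasicompact, and I verify the vanishing on stalks at geometric points of $Y$. Huber's description of the stalks of $\rR f_!$ in \cite[Th.~5.4.6]{Huber-etale} reduces the problem to bounding the compactly supported cohomology of the fibers $X_{\bar y}$, which are separated taut smooth analytic adic spaces of dimension $\le d$ over algebraically closed nonarchimedean fields. The needed vanishing $\Hh^i_c(X_{\bar y}, \LLambda) = 0$ for $i > 2d$ is then the standard cohomological dimension bound for such spaces (see, e.g., \cite[Prop.~5.3.11]{Huber-etale}), whose hypotheses hold since $n \in \cO_Y^\times$.

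Given the first part, the remaining assertions are formal consequences of the standard $t$-structure on $D(Y_\et; \Lambda)$. Indeed, $\LLambda_Y$ is concentrated in degree $0$ and hence lies in $D^{\ge 0}$; the general estimate $\rR\cHom(D^{\le 0}, D^{\ge 0}) \subseteq D^{\ge 0}$ then yields $\rR\cHom(\rR f_!\,\LLambda_X(d)[2d], \LLambda_Y) \in D^{\ge 0}(Y_\et; \Lambda)$. For the unique factorization, for any $A \in D^{\le 0}$ the truncation triangle $\tau^{<0} A \to A \to \tau^{\ge 0} A$ induces an isomorphism $\Hom(\tau^{\ge 0} A, \LLambda_Y) \xrightarrow{\sim} \Hom(A, \LLambda_Y)$, since $\Hom(\tau^{<0} A[j], \LLambda_Y) = 0$ for $j \in \{0,1\}$ as $\tau^{<0} A[j]$ is concentrated in strictly negative degrees and $\LLambda_Y$ in degree~$0$. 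Applied to $A = \rR f_!\,\LLambda_X(d)[2d]$, for which $\tau^{\ge 0} A = H^0(A) = \rR^{2d} f_!\,\LLambda_X(d)$, this gives exactly the claimed factorization.

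The only real obstacle is the cohomological dimension bound itself; in the generality required here (with $n$ invertible only in $\cO_Y$, not necessarily in $\cO_Y^+$) one must be careful, but the desired bound is a purely fibral statement and Huber's stalk computation for $\rR f_!$ reduces it to the case of an algebraically closed ground field, where the standard bound applies uniformly without restrictions on $n$.
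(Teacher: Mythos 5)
Your high-level structure is sound and the formal second half (the truncation-triangle argument) is correct, but there is a problem with the key citation in the first half. You cite \cite[Prop.~5.3.11]{Huber-etale} as the source of the cohomological dimension bound, but that result concerns $\rR f_*$, i.e.\ \emph{ordinary} (not compactly supported) cohomology. Over an algebraically closed $C$, the vanishing of $\Hh^i(X_{\bar y},\Lambda)$ for $i>2d$ does not give you $\Hh^i_c(X_{\bar y},\Lambda)=0$: for non-proper $X_{\bar y}$, the supports and ordinary cohomology satisfy different bounds, and in fact the top nonvanishing compactly supported group can sit in degree $2d$ precisely when $\Hh^{2d}=0$ (think of $\AA^{d,\an}_C$). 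The relevant result in Huber's book is \cite[Prop.~5.5.8]{Huber-etale}, which gives a cohomological dimension bound for $\rR f_!$ for taut separated morphisms of finite transcendence dimension. That, combined with \cite[Prop.~1.8.7~ii)]{Huber-etale} (to translate ``smooth of dimension $d$'' into $\dim.\ttr(f) = d$), gives the conclusion directly -- and this is what the paper does. The stalkwise reduction via \cite[Th.~5.4.6]{Huber-etale} that you propose is not wrong, but it is a detour: after passing to the fiber $X_{\bar y} \to \Spa(C,\cO_C)$ you still need the $\rR f_!$ cohomological dimension bound of Prop.~5.5.8 for the structure morphism, so the reduction buys nothing. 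Your worry about $n\in\cO_Y^\times$ versus $n\in(\cO_Y^+)^\times$ is understandable given how much of Huber's book assumes the latter, but Prop.~5.5.8 applies for arbitrary torsion coefficients, so the extra reduction step isn't needed to sidestep this either.
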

\begin{proof}
Note that by \cite[Prop.~1.8.7~ii)]{Huber-etale} $\dim.\ttr(f) = d$.
The lemma now simply follows from the fact that $\rR f_!$ has cohomological dimension $\leq 2d$;
see \cite[Prop.~5.5.8]{Huber-etale}.
\end{proof}

\begin{lemma}[Gluing trace maps locally on the source]
\label{trace local source}
Let $f \colon X \to Y$ be a separated taut smooth of dimension $d$ morphism between locally noetherian analytic adic spaces.
Let $X = \bigcup_{i \in I} U_i$ be an open cover for which the corresponding open immersions $U_i \hookrightarrow X$ are taut.
For all $i,i' \in I$, let $j_i \colon U_i \hookrightarrow X$ and $j_{i'i} \colon U_{i, i'}\coloneqq U_i \cap U_{i'} \hookrightarrow U_i$ be the natural open immersions.
Let $\ttr^\et_{j_{i'i}}$ be the corresponding \'etale traces in the sense of \cref{etale-trace} and let $f_i \colonequals \restr{f}{U_i}$ be the restriction of $f$ to $U_i$.
Assume there exist maps $\tau_i \colon \rR f_{i,!}\, \LLambda_{U_i}(d)[2d] \to \LLambda_Y$ such that for all $i,i' \in I$, the diagram
\begin{equation}\label{trace local source compatibility} 
\begin{tikzcd}[row sep=tiny,column sep=7em]
(\rR f_{i,!} \circ j_{i'i,!}) \LLambda_{U_{i, i'}}(d)[2d] \arrow[r,"{\rR f_{i,!}(\ttr^\et_{j_{i'i}}(d)[2d])}"] \arrow[dd,equals] & \rR f_{i,!}\, \LLambda_{U_i}(d)[2d] \arrow[rd,"\tau_i"] & \\
&& \LLambda_Y \\
(\rR f_{i',!} \circ j_{ii',!}) \LLambda_{U_{i', i}}(d)[2d] \arrow[r, "{\rR f_{i',!}(\ttr^\et_{j_{ii'}}(d)[2d])}"'] & \rR f_{i',!}\, \LLambda_{U_{i'}}(d)[2d] \arrow[ru, swap, "\tau_{i'}"] & \\
\end{tikzcd}
\end{equation}
commutes.
Then there exists a unique map $\tau \colon \rR f_!\, \LLambda_X(d)[2d] \to \LLambda_Y$ such that for all $i \in I$ the following diagram is commutative:
\[ \begin{tikzcd}
\rR f_{i,!}\, \LLambda_{U_i}(d)[2d] \arrow[rr,"{\rR f_!\,(\ttr^{\et}_{j_i}(d)[2d])}"] \arrow[rd,"\tau_i"] && \rR f_!\,\LLambda_X(d)[2d] \arrow[ld,swap, "\tau"] \\
& \LLambda_Y &
\end{tikzcd} \]
\end{lemma}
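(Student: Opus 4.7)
The plan is to reduce the problem of constructing $\tau$ to the problem of assembling a single morphism of \'etale sheaves on $Y_\et$, and then to realize $\rR^{2d} f_!\,\Lambda_X(d)$ as an explicit cokernel controlled by the cover $\{U_i\}_{i \in I}$. Specifically, \cref{smooth trace lives in discrete space} guarantees that morphisms $\rR f_!\,\LLambda_X(d)[2d] \to \LLambda_Y$ (resp.\ $\rR f_{i,!}\,\LLambda_{U_i}(d)[2d] \to \LLambda_Y$) are in canonical bijection with sheaf morphisms $\rR^{2d} f_!\,\Lambda_X(d) \to \LLambda_Y$ (resp.\ $\rR^{2d} f_{i,!}\,\Lambda_{U_i}(d) \to \LLambda_Y$); let $\tau_i^\flat$ denote the sheaf morphism attached to $\tau_i$. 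It then suffices to produce a sheaf morphism $\tau^\flat$ whose precomposition with the natural map $\rR^{2d} f_{i,!}\,\Lambda_{U_i}(d) \to \rR^{2d} f_!\,\Lambda_X(d)$ equals $\tau_i^\flat$ for each $i \in I$.

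The central technical input will be the identification
\[
\rR^{2d} f_!\,\Lambda_X(d) \;\simeq\; \mathrm{coker}\biggl(\bigoplus_{i,i' \in I} \rR^{2d}(f \circ j_i \circ j_{i'i})_!\,\Lambda_{U_{i, i'}}(d) \xrightarrow{\,\delta\,} \bigoplus_{i \in I} \rR^{2d} f_{i,!}\,\Lambda_{U_i}(d)\biggr),
\]
where the $(i,i')$-component of $\delta$ is the difference of the two natural maps arising from $\rR f_!$ applied to $\ttr^\et_{j_{i'i}}(d)[2d]$ and $\ttr^\et_{j_{ii'}}(d)[2d]$. To set this up, I would begin with the \v{C}ech-type exact sequence of sheaves on $X_\et$
\[
\bigoplus_{i,i'} j_{i,!}\,j_{i'i,!}\,\LLambda_{U_{i, i'}} \to \bigoplus_{i} j_{i,!}\,\LLambda_{U_i} \to \LLambda_X \to 0,
\]
whose exactness can be verified on stalks: at any geometric point $\bar x$ of $X$, with the nonempty index set $I_{\bar x} \colonequals \{ i \in I : \bar x \in U_i\}$, the sequence degenerates to the tautological exact sequence $\bigoplus_{i, i' \in I_{\bar x}} \Lambda \to \bigoplus_{i \in I_{\bar x}} \Lambda \to \Lambda \to 0$. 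Splitting this into two short exact sequences via its middle-term kernel $K$ and applying $\rR f_!(\blank)(d)$, the resulting long exact sequences collapse in degree $2d+1$ thanks to the cohomological dimension bound $\le 2d$ on $\rR f_!$ for arbitrary abelian sheaves (see \cite[Prop.~5.5.8]{Huber-etale}, as used in the proof of \cref{smooth trace lives in discrete space}). This collapse simultaneously yields the surjectivity of $\bigoplus_i \rR^{2d} f_{i,!}\,\Lambda_{U_i}(d) \twoheadrightarrow \rR^{2d} f_!\,\Lambda_X(d)$ and the identification of its kernel with the image of $\delta$.

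Granted this cokernel presentation, the compatibility hypothesis encoded in \cref{trace local source compatibility}, passed through the bijection of the first paragraph, says precisely that $\bigoplus_i \tau_i^\flat$ vanishes on the image of $\delta$. It therefore descends uniquely to $\tau^\flat\colon \rR^{2d} f_!\,\Lambda_X(d) \to \LLambda_Y$, from which we extract the desired $\tau\colon \rR f_!\,\LLambda_X(d)[2d] \to \LLambda_Y$ by the first paragraph. The required commutative triangles with each $\tau_i$ hold by construction, and uniqueness of $\tau$ follows at once from the surjectivity of $\bigoplus_i \rR^{2d} f_!(\ttr^\et_{j_i}(d))$. The main obstacle I anticipate is the careful bookkeeping needed to establish the cokernel presentation: this hinges on two separate applications of the cohomological dimension bound (first to the short exact sequence $0 \to K \to \bigoplus_i j_{i,!}\LLambda_{U_i} \to \LLambda_X \to 0$, and then to a resolution of $K$ by $\bigoplus_{i,i'} j_{i,!}\,j_{i'i,!}\LLambda_{U_{i,i'}}$), both of which are legitimate because the bound applies uniformly to all abelian sheaves on $X_\et$, irrespective of whether they are of constant type.
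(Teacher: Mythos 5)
Your proof is correct and takes essentially the same approach as the paper: both pass via \cref{smooth trace lives in discrete space} to the cokernel presentation of $\rR^{2d} f_!\,\Lambda_X(d)$ in terms of the cover, assemble the $\tau_i^\flat$, and descend to the cokernel using the compatibility hypothesis. The only difference is how the cokernel presentation is derived — the paper reads it off the \v{C}ech spectral sequence of \cite[Rmk.~5.5.12~iii)]{Huber-etale}, while you unpack the same content by splitting the explicit \v{C}ech resolution into short exact sequences and twice invoking the cohomological dimension bound \cite[Prop.~5.5.8]{Huber-etale}.
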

\begin{proof}
For any finite subset $J \subseteq I$, let $U_J \colonequals \bigcap_{i \in J} U_i$ and $f_J \colonequals \restr{f}{U_J}$ be the restriction of $f$ to $U_J$.
Note that giving a map $\rR f_{J,!}\, \LLambda_{U_J}(d)[2d] \to \LLambda_Y$ is equivalent to giving a map $\rR^{2d}f_{J,!}\, \LLambda_{U_J}(d) \to \LLambda_Y$ due to \cref{smooth trace lives in discrete space}.
By \cite[Rmk.~5.5.12~iii)]{Huber-etale}, we have a spectral sequence
\[ \mathrm{E}^{pq}_1 \colonequals \bigoplus_{\substack{J \subseteq I \\ \abs{J} = -p+1}} \rR^q f_{J,!}\, \LLambda_{U_J} \Longrightarrow \rR^{p+q}f_!\,\LLambda_Y. \]
Since the $\rR f_{J,!}$ have cohomological dimension $\leq 2d$ \cite[Prop.~5.5.8]{Huber-etale}, the associated abutment filtration for the antidiagonal $p + q = 2d$ reduces to an isomorphism
\[ \rR^{2d} f_! \,\LLambda_X(d) \simeq \coker\Bigl(\bigoplus_{\{i,i'\} \subseteq I} \rR^{2d}f_{ii',!} \,\LLambda_{U_{i, i'}}(d) \to \bigoplus_{i \in I} \rR^{2d}f_{i,!}\, \LLambda_{U_i}(d)\Bigr) \]
and the $\tau_i$ assemble to a map $\tau \colon \bigoplus_{i \in I} \rR^{2d}f_{i,!}\, \LLambda_{U_i}(d) \to \LLambda_Y$.
On the other hand, the assumption on the commutativity of \cref{trace local source compatibility} guarantees that $\tau$ factors uniquely through the cokernel, so we win.
\end{proof}
\begin{lemma}\label{overconvergent-target}
    Let $Y$ be a locally noetherian analytic adic space and let $a,b \colon \F \to \G$ be two morphisms of \'etale sheaves on $Y$.
    Assume that $\cG$ is overconvergent (in the sense of \cite[Def.~8.2.1]{Huber-etale}) and that $a_{\overline{\eta}} = b_{\overline{\eta}}$ for every geometric point $\overline{\eta} \colon \Spa(C_{\overline{\eta}},\cO_{C_{\overline{\eta}}}) \to Y$ of rank $1$.
    Then $a = b$. 
\end{lemma}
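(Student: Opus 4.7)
The plan is to reduce the question to showing that if a morphism $c \colon \F \to \G$ of \'etale sheaves on $Y$ vanishes on all rank-$1$ geometric points of $Y$ (with $\G$ overconvergent), then $c = 0$. Applying this to $c \colonequals a - b$ will give the result. To check that $c = 0$, I will invoke \cite[Prop.~2.5.5]{Huber-etale}, which implies that $c = 0$ if and only if $c_{\overline{y}} = 0 \colon \F_{\overline{y}} \to \G_{\overline{y}}$ for every geometric point $\overline{y}$ of $Y$ (not just the rank-$1$ ones).

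Fix such a geometric point $\overline{y}$ lying over some $y \in Y$. By \cite[Lem.~1.1.10]{Huber-etale}, $y$ admits a unique rank-$1$ generalization $y_\gen \in Y$, and one can lift $\overline{y}$ to a rank-$1$ geometric point $\overline{y}_\gen$ above $y_\gen$ equipped with a canonical map of \'etale topoi realizing $\overline{y}_\gen$ as a generalization of $\overline{y}$. This induces naturality squares
\[
\begin{tikzcd}
\F_{\overline{y}} \arrow{r}{c_{\overline{y}}} \arrow{d} & \G_{\overline{y}} \arrow{d}{\sim} \\
\F_{\overline{y}_\gen} \arrow{r}{c_{\overline{y}_\gen}} & \G_{\overline{y}_\gen}
\end{tikzcd}
\]
in which the right vertical arrow is an isomorphism because $\G$ is overconvergent (see \cite[Prop.~8.2.3]{Huber-etale}, which characterizes overconvergence via bijectivity of specialization maps between stalks at rank-$1$ generalizations). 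The bottom arrow is zero by hypothesis, so the composition around the square is zero, and since the right vertical map is an isomorphism, we conclude that $c_{\overline{y}} = 0$.

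There is no real obstacle here: the argument is entirely formal once one has (i) the criterion that vanishing of stalks at all geometric points implies vanishing of the morphism, and (ii) the characterization of overconvergent sheaves via the specialization-to-rank-$1$ map being bijective on stalks. Both are available in \cite{Huber-etale}. The only subtle point is making sure that the rank-$1$ generalization $\overline{y}_\gen$ can be chosen compatibly so that the naturality square exists; this is standard and follows from the fact that the local ring at a geometric point maps canonically to the local ring at any geometric generalization.
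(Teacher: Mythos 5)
Your proof takes essentially the same route as the paper: reduce to comparing stalks at all geometric points via \cite[Prop.~2.5.5]{Huber-etale}, then pass to the unique rank-$1$ generalization and use that overconvergence makes the specialization map an isomorphism on stalks of $\G$. One small inelegance worth flagging: your reduction to $c \colonequals a - b$ implicitly assumes $\F$ and $\G$ are sheaves of abelian groups, whereas the paper's statement (and proof) works for arbitrary sheaves of sets by directly reading off $a_{\overline{y}} = \spec_\G^{-1} \circ a_{\overline{\eta}} \circ \spec_\F = \spec_\G^{-1} \circ b_{\overline{\eta}} \circ \spec_\F = b_{\overline{y}}$ from the commutative square; the subtraction buys you nothing and costs generality.
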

\begin{proof}
    Any $y \in \abs{Y}$ has an associated geometric point (\cite[(2.5.2)]{Huber-etale})
    \[ \overline{y} \colon \Bigl(\Spa\bigl(\wdh{\overline{k(y)}},\wdh{\overline{k(y)}}^+\bigr),\{y\}\Bigr) \to Y, \]
    where the source is the (strongly) pseudo-adic space whose underlying topological space is the closed point $\{y\}$ of $\Spa\Bigl(\wdh{\overline{k(y)}},\wdh{\overline{k(y)}}^+\Bigr)$.
    By \cite[Prop.~2.5.5]{Huber-etale}, it suffices to check that the induced maps on stalks $a_{\overline{y}},b_{\overline{y}} \colon F_{\overline{y}} \to G_{\overline{y}}$ coincide.
    The rank-$1$ generalization of $\overline{y}$ is given by the geometric point 
    \[ \overline{\eta} \colon \Spa\Bigl(\wdh{\overline{k(y)}},\wdh{\overline{k(y)}}^\circ\Bigr) \to Y. \]
    The resulting specialization maps for $\F$ and $\G$ from \cite[(2.5.16)]{Huber-etale} fit into the commutative diagram
    \[ \begin{tikzcd}
        \F_{\overline{y}} \arrow[r,shift left,"a_{\overline{y}}"] \arrow[r,shift right,"b_{\overline{y}}"'] \arrow[d,"\spec_\F"] & \G_{\overline{y}} \arrow[d,"\spec_\G"]  \\
        \F_{\overline{\eta}} \arrow[r,shift left,"a_{\overline{\eta}}"] \arrow[r,shift right,"b_{\overline{\eta}}"'] & \G_{\overline{\eta}}
    \end{tikzcd}\]
    and the overconvergence assumption on $G$ guarantees that $\spec_G$ is an isomorphism.
    Thus, we obtain the desired
    \[ a_{\overline{y}} = \spec^{-1}_\G \circ a_{\overline{\eta}} \circ \spec_\F = \spec^{-1}_\G \circ b_{\overline{\eta}} \circ \spec_\F = b_{\overline{y}}. \qedhere \]
\end{proof}

This finishes the sequence of preliminary lemmas.
We are ready to show the uniqueness part of \cref{smooth-trace-constant}.
For the proof, recall that for any locally noetherian analytic adic space $Y$, the $d$-dimensional unit disk over $Y$ is defined as $\DD^d_Y \colonequals \Spa(\ZZ[T_1,\dotsc,T_d],\ZZ[T_1,\dotsc,T_d]) \times_{\Spa(\ZZ,\ZZ)} Y$.%
\begin{proof}[Proof of \cref{smooth-trace-constant}, uniqueness]
    Suppose there are two ways to assign to any separated taut smooth of equidimension $d$ morphism $f \colon X \to Y$ of locally noetherian analytic adic spaces with $n \in \cO^\times_Y$ trace morphisms $\ttr_f$ and $\ttr'_f$ satisfying the properties of the statement.
    We need to show that $\ttr_f = \ttr'_f$ for all $f$.

    First, for any such $f$, we may pick an open affinoid cover $Y = \bigcup_{j \in J} V_j$ and an affinoid open cover $f^{-1}(V_j) = \bigcup_{i \in I_j} U_{ji}$ such that $f_{ji} \colonequals \restr{f}{U_{ji}} \colon U_{ji} \to V_j$ factors as
    \[ \begin{tikzcd}
        U_{ji} \arrow[r,"g_{ji}"] \arrow[rd,"f_{ji}"'] & \DD^d_{V_j} \arrow[d,"\pi_{V_j}"] \\
        & V_j &
    \end{tikzcd} \]
    with $g_{ji}$ \'etale \cite[Cor.~1.6.10]{Huber-etale}.
    Further, since the $U_{ji}$ are qcqs, the open immersions $U_{ji} \hookrightarrow X$ are taut due to \cite[Lem.~5.1.3]{Huber-etale}. 
    By the uniqueness assertions in \cref{trace local source} and \cref{smooth trace lives in discrete space}, it suffices to show that $\ttr_{f_{ji}} = \ttr'_{f_{ji}}$ for all $f_{ji}$.
    
    Moreover, the compatibility with \'etale traces gives $\ttr_{g_{ji}} = \ttr^\et_{g_{ji}} = \ttr'_{g_{ji}}$.
    Thanks to the compatibility with compositions, it then suffices to show that $\ttr_{\pi_{V_j}} = \ttr'_{\pi_{V_j}}$.
    In fact, by writing $\pi_{V_j}$ as a composition of projections $\pi_n \colon \DD^n_{V_j} \simeq \DD^1_{\DD^{n-1}_{V_j}} \to \DD^{n-1}_{V_j}$ away from the last coordinate for $n = 1,\dotsc,d$, it is enough to check that $\ttr_{\pi_n} = \ttr'_{\pi_n}$ for all $n$.
    By \cref{smooth trace lives in discrete space} and \cref{overconvergent-target}, this can be checked on stalks at every geometric point of $\DD^{n-1}_{V_j}$ of rank $1$.
    The compatibility with pullbacks and (weak) proper base change \cite[Cor.~5.4.8]{Huber-etale} guarantee that the stalks of the trace maps at these points are just the trace maps of the fibers.
    In conclusion, we are therefore reduced to the verification that $\ttr$ and $\ttr'$ agree on the closed unit disk $\DD^1_C$ over a complete, algebraically closed nonarchimedean field $C$.
    This is a consequence of property \cref{smooth-trace-constant-P1}, the compatibility with compositions, and the compatibility with the \'etale trace for the open immersion $j \colon \DD^1_C \hookrightarrow \PP^{1,\an}_C$:
    \[ \ttr_{\DD^1_C} = \ttr_{\PP^1_C} \circ \ttr_j(1)[2] = \ttr_{\PP^1_C} \circ \ttr^\et_j(1)[2] = \ttr'_{\PP^1_C} \circ \ttr^\et_j(1)[2] = \ttr'_{\PP^1_C} \circ \ttr'_j(1)[2] = \ttr'_{\DD^1_C}. \qedhere \]
\end{proof}

The uniqueness proof already suggests that we should define the trace of a smooth morphism by locally factoring it into an \'etale morphism and a relative disk.
The main work is to show that this is well-defined, that is, independent of the factorization.
For technical reasons (cf.\ \cref{affine-space-better}), it will be advantageous to work with affine spaces instead of disks; 
recall that the $d$-dimensional affine space over any locally noetherian analytic adic space $Y$ is defined as $\AA^{d,\an}_Y \colonequals \Spa(\ZZ[T_1,\dotsc,T_d],\ZZ) \times_{\Spa(\ZZ,\ZZ)} Y$.
We begin by constructing the trace map for such families of affine spaces, following again the strategy in the uniqueness part of the proof of \cref{smooth-trace-constant}.
\begin{lemma}\label{trace-relative-disk}
    For any a locally noetherian analytic adic space $Y$ with $n \in \cO^\times_Y$ and structure map $\pi_Y \colon \AA^{d,\an}_Y \to Y$, there exist maps $\ttr_{\pi_Y} \colon \rR\pi_{Y,!}\LLambda(d)[2d] \to \LLambda$ with the following properties:
    \begin{enumerate}[label=\upshape{(\arabic*)}]
        \item\label{trace-relative-disk-pullback} $\ttr_{\pi_Y}$ is compatible with pullbacks in $Y$ in the sense of \cref{smooth-trace-constant}.
        \item\label{trace-relative-disk-permutation-invariance} $\ttr_{\pi_Y}$ is invariant under permutations;
        that is, if
        \[ \sigma_Y \colon \AA^{d,\an}_Y \xrightarrow{\sim} \AA^{d,\an}_Y, \quad (y_1,\dotsc,y_d) \mapsto (y_{\sigma(1)},\dotsc,y_{\sigma(d)}) \]
        is the isomorphism permuting the coordinates according to some $\sigma \in \fS_d$, then the map
        \[ \rR\pi_{Y,!} \LLambda(d)[2d] \simeq \rR (\pi_Y \circ \sigma_Y)_! \LLambda(d)[2d] \simeq \rR \pi_{Y,!}(\sigma_{Y,!}\LLambda)(d)[2d] \xrightarrow[\sim]{\rR \pi_{Y,!}(\ttr_{\sigma_Y}(d)[2d])} \rR \pi_{Y,!}\LLambda(d)[2d] \xrightarrow{\ttr_{\pi_Y}} \LLambda \]
        agrees with $\ttr_{\pi_Y}$ in $\Hom\bigl(\rR \pi_{Y,!}\, \LLambda(d)[2d], \LLambda\bigr)$.
        \item\label{trace-relative-disk-dim1} Assume that $d=1$ and that $Y = \Spa(C,\cO_C)$ for some complete, algebraically closed nonarchimedean field $C$.
        Denote by $\tilde{j} \colon \DD^1_Y \hookrightarrow \AA^{1,\an}_Y$ the canonical open immersion.
        Then the map $\Hh^0\bigl(\ttr_{\pi_Y} \circ R\pi_{Y,!}(\ttr^\et_{\tilde{j}}(1)[2]) \bigr)$ is the analytic trace map from \cref{an-tr-disk}.
    \end{enumerate}
\end{lemma}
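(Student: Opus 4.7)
The plan is to construct $\ttr_{\pi_Y}$ uniformly for all $d \geq 1$ via the canonical compactification $j \colon \AA^{d,\an}_Y \hookrightarrow \PP^{d,\an}_Y$ and the analytic projective bundle formula from \cref{cycle classes section}. Let $\ov{\pi}_Y \colon \PP^{d,\an}_Y \to Y$ denote the structure map and let $i \colon \PP^{d-1,\an}_Y \hookrightarrow \PP^{d,\an}_Y$ be the hyperplane at infinity. Applying $\rR \ov{\pi}_{Y,*}$ to the excision triangle $j_! \LLambda_{\AA^d}(d)[2d] \to \LLambda_{\PP^d}(d)[2d] \to i_* \LLambda_{\PP^{d-1}}(d)[2d]$ yields a distinguished triangle whose third term lies in $D^{\leq -2}(Y_\et; \Lambda)$ by the projective bundle formula applied to $\PP^{d-1,\an}_Y$. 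The same formula applied to $\PP^{d,\an}_Y \simeq \bf{P}_Y(\cO_Y^{\oplus (d+1)})$ identifies $\rR^{2d} \ov{\pi}_{Y,*} \LLambda_{\PP^d}(d)$ canonically with $\LLambda_Y$ via $c_1\bigl(\cO_{\PP^d}(1)\bigr)^d$. Putting these together, the edge map $\rR^{2d} \pi_{Y,!} \LLambda(d) \to \rR^{2d} \ov{\pi}_{Y,*} \LLambda_{\PP^d}(d)$ is an isomorphism, and since $\rR \pi_{Y,!} \LLambda(d)[2d]$ lies in $D^{\leq 0}$ by \cref{smooth trace lives in discrete space}, I will define $\ttr_{\pi_Y}$ as the composition
\[ \rR \pi_{Y,!} \LLambda(d)[2d] \xrightarrow{\tau^{\geq 0}} \rR^{2d} \pi_{Y,!} \LLambda(d) \xrightarrow{\sim} \LLambda_Y. \]

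Once this construction is in place, property \cref{trace-relative-disk-pullback} will follow from the functoriality of the excision triangle, the projective bundle formula, and the compatibility of first Chern classes with pullbacks (\cref{rmk:base-change-chern-classes}). For property \cref{trace-relative-disk-permutation-invariance}, the key observation is that the permutation $\sigma_Y$ on $\AA^{d,\an}_Y$ extends naturally to an automorphism $\ov{\sigma}_Y$ of $\PP^{d,\an}_Y$ permuting the first $d$ homogeneous coordinates while fixing the one at infinity. This extension preserves $\cO_{\PP^d}(1)$ up to canonical isomorphism, hence fixes $c_1\bigl(\cO_{\PP^d}(1)\bigr)^d$; combined with the $\ov{\sigma}_Y$-equivariance of the excision triangle, this yields the desired permutation invariance.

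The subtlest point is property \cref{trace-relative-disk-dim1}. For $d = 1$ and $Y = \Spa(C, \O_C)$, I will first unravel the construction to see that the map
\[ \Hh^0\bigl(\ttr_{\pi_Y} \circ \rR \pi_{Y,!}\bigl(\ttr^\et_{\tilde{j}}(1)[2]\bigr)\bigr) \colon \Hh^2_c(\DD^1_C, \mu_n) \to \mu_n(C) \]
equals $c_1\bigl(\cO_{\PP^1}(1)\bigr)^{-1} \circ \Hh^2_c\bigl(\ttr^\et_{j \circ \tilde{j}}(1)\bigr)$, where $j \colon \AA^{1,\an}_C \hookrightarrow \PP^{1,\an}_C$. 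Using \cref{lemma:comparison-first-chern-classes} to identify $c_1\bigl(\cO_{\PP^1}(1)\bigr)$ with the analytification of its algebraic counterpart, whose inverse is the schematic trace of $\PP^{1,\alg}_C \to \Spec C$ by the standard normalization of the algebraic trace, I conclude that $c_1\bigl(\cO_{\PP^1}(1)\bigr)^{-1}$ agrees with $t^{\alg}_{\PP^{1,\an}_C}$ from \cref{defn:algebraic-trace-map}. Finally, \cref{an trace equals alg trace for disk} gives $t_{\DD^1_C} = t^{\alg}_{\PP^{1,\an}_C} \circ \Hh^2_c\bigl(\ttr^\et_{j \circ \tilde{j}}(1)\bigr)$, completing the verification.

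The hard part will be coordinating the three independent inputs needed for property \cref{trace-relative-disk-dim1}: the analytic projective bundle formula from \cref{cycle classes section}, the comparison of analytic and algebraic first Chern classes in \cref{lemma:comparison-first-chern-classes}, and the identification of the analytic and algebraic traces on the disk via the \'etale trace into $\PP^{1,\an}_C$ established in \cref{an trace equals alg trace for disk}. Once the individual pieces are set up, the verification is a diagram chase, but aligning the normalizations between the various Chern-class and trace constructions requires care.
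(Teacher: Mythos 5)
Your construction is correct, but it takes a genuinely different route from the paper's.

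The paper first builds $\ttr_{\pi_Y}$ for $d=1$ via the open immersion $\AA^{1,\an}_Y \hookrightarrow \PP^{1,\an}_Y$ and the \'etale first Chern class of $\cO(1)$ (using \cite[Prop.~6.1.6]{Z-revised} for the isomorphism $\LLambda_Y \simeq \rR^2\ov{\pi}_{Y,*}\LLambda(1)$), then defines the trace for general $d$ by composing the $d=1$ traces along the successive coordinate projections $\AA^{d,\an}_Y \to \AA^{d-1,\an}_Y \to \cdots \to Y$. For permutation invariance, the paper reduces to adjacent transpositions ($d=2$), checks the claim on stalks at geometric rank-$1$ points $Y = \Spa(C,\cO_C)$, computes $\Hh^4_c(\AA^{2,\an}_C,\Lambda(2)) \simeq \Lambda$, and observes that the full $\GL_2(C)$-action must be trivial because the abelianization $\GL_2(C)/[\GL_2(C),\GL_2(C)] \simeq C^\times$ is divisible and hence cannot act nontrivially on a finite $\Lambda$-module. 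Your approach instead builds $\ttr_{\pi_Y}$ uniformly in $d$ via $\AA^{d,\an}_Y \hookrightarrow \PP^{d,\an}_Y$ and the full $\PP^d$ projective bundle formula, and derives permutation invariance from the observation that $\sigma_Y$ extends to a linear automorphism $\ov{\sigma}_Y$ of $\PP^{d,\an}_Y$ over $Y$ fixing the hyperplane at infinity and $c_1(\cO(1))$. This argument works at the level of sheaves on $Y$, without reduction to stalks or to $d=2$, and is arguably more transparent. What the paper's iterative construction buys in return is that it mirrors exactly the composition-compatibility one needs later for \cref{smooth-trace-constant} (writing $\pi_Y$ as a tower of $\AA^1$-bundles is the local model used throughout \cref{constructing smooth traces}), and the $\GL_2$-divisibility trick is a reusable device. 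Your argument for property \cref{trace-relative-disk-dim1} proceeds as the paper's does, via \cref{an trace equals alg trace for disk}; the only additional ingredient you use is \cref{lemma:comparison-first-chern-classes}, which is fine. One small point worth making explicit in a write-up: the edge map $\rR^{2d}\pi_{Y,!}\LLambda(d) \to \rR^{2d}\ov{\pi}_{Y,*}\LLambda_{\PP^d}(d)$ is an isomorphism because both $\Hh^{-1}$ and $\Hh^0$ of the third term $\rR\ov{\pi}^{d-1}_{Y,*}\LLambda_{\PP^{d-1}}(d)[2d] \simeq \bigoplus_{j=1}^d\LLambda_Y(j)[2j]$ vanish, which you implicitly use but do not spell out.
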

\begin{proof}
    \begin{enumerate}[wide,label={\textit{Step~\arabic*}.},ref={Step~\arabic*}]
    \item\label{trace-relative-disk-d1} \textit{Proof for $d=1$.}
    Let $\ov{\pi}_Y \colon \PP^{1,\an}_Y \to Y$ be the structure map of the relative (analytic) projective line;
    see e.g.\ \cite[\S~7]{adic-notes} for an account of the latter in the locally noetherian adic context.
    Below, we will describe a trace map $\ttr_{\ov{\pi}_Y} \colon \rR \ov{\pi}_{Y, !}\LLambda(1)[2] \to \LLambda$.
    Granted the existence of $\ttr_{\ov{\pi}_Y}$, we can use the commutative diagram
    \[ \begin{tikzcd}
        \AA^{1,\an}_Y \arrow[r,hook,"\overline{j}"] \arrow[d,"\pi_Y"] & \PP^{1,\an}_Y \arrow[ld,"\ov{\pi}_Y"] \\
        Y &
    \end{tikzcd}\]
    and the trace map $\ttr^{\et}_{\overline{j}} \colon \overline{j}_!\LLambda \to \LLambda$ for the (\'etale) open immersion $\overline{j}$ from \cref{etale-trace} to define $\ttr_{\pi_Y}$ as the composition
    \[ \ttr_{\pi_Y} \colon \rR \pi_{Y,!}\,\LLambda(1)[2] \simeq \rR {\ov{\pi}_Y}_!\bigl(\overline{j}_!\,\LLambda(1)[2]\bigr) \xrightarrow{\rR {\ov{\pi}_Y}_!(\ttr^\et_{\overline{j}}(1)[2])} \rR {\ov{\pi}_Y}_!\,\LLambda(1)[2] \xrightarrow{\ttr_{\ov{\pi}_Y}} \LLambda. \]
    
    One way to define $\ttr_{\ov{\pi}_Y}$ comes from algebraic geometry:
    By \cref{smooth trace lives in discrete space}, it suffices to do it locally on $Y$. So we may and do assume that $Y = \Spa(A,A^+)$ for a strongly noetherian Tate--Huber pair $(A, A^+)$ with $n \in A^\times$.
    Then $\ov{\pi}_Y$ is the relative analytification of the morphism of schemes $\PP^1_A \to \Spec(A)$ along the map of locally ringed spaces $\Spa(A,A^+) \to \Spec(A)$ as in \cref{construction:relative-analytification}. Thus we can construct $\ttr_{\ov{\pi}_Y}$ following \cref{algebraic-analytic-trace-comparison} from the algebraic trace map via \cite[Th.~3.7.2]{Huber-etale}.
    However, in order to avoid any reliance on the algebraic trace map, we can alternatively proceed as follows:
    By \cite[Prop.~6.1.6]{Z-revised}, the \'etale first Chern class of the universal line bundle $\cO_{\PP^{1,\an}_Y}(1)$ (defined in the analytic context in \cite[Def.~6.1.2]{Z-revised}) induces an isomorphism
    \[ c^{\et}_1\bigl(\cO_{\PP^{1,\an}_Y}(1)\bigr) \colon \LLambda \xrightarrow{\sim} \rR^2\ov{\pi}_{Y, *}\LLambda(1) = \rR^2\ov{\pi}_{Y, !}\LLambda(1). \]
    Using \cite[Prop.~5.5.8]{Huber-etale}, one then sets
    \[ \ttr_{\ov{\pi}_Y} \colon \rR \ov{\pi}_{Y, !}\LLambda(1)[2] \to \tau^{\ge 0}\rR \ov{\pi}_{Y, !}\LLambda(1)[2] \simeq \rR^2 \ov{\pi}_{Y, !}\LLambda(1) \xrightarrow[\sim]{c^{\et}_1\bigl(\cO_{\PP^{1,\an}_Y}(1)\bigr)^{-1}} \LLambda. \]

    Since the formation of the adjunction map $\overline{j}_!\LLambda \to \LLambda$ and of the first Chern classes for $\cO_{\PP^{1,\an}_Y}(1)$ commutes with arbitrary base change in $Y$ (\cref{etale-trace-compatibility}, \cref{rmk:base-change-chern-classes}), we conclude \cref{trace-relative-disk-pullback} for $d=1$.
    Further, \cref{trace-relative-disk-permutation-invariance} is an empty statement for $d=1$, so we are only left to show \cref{trace-relative-disk-dim1}. For this, let $j \colon \DD^1_Y \hookrightarrow \PP^{1,\an}_Y$ be the canonical open immersion.
    Since $j = \overline{j} \circ \tilde{j}$, \cref{etale-trace-compatibility} guarantees that the composition 
    \[ \ttr_{\pi_Y} \circ \rR\pi_{Y,!}\bigl(\ttr^\et_{\tilde{j}}(1)[2]\bigr) \colon \rR(\ov{\pi}_Y \circ j)_!\LLambda(1)[2] \simeq \rR(\pi_Y \circ \tilde{j})_!\LLambda(1)[2] \to \LLambda \]
    is given by $\ttr_{\ov{\pi}_Y} \circ \rR \ov{\pi}_{Y, !}\bigl(\ttr^\et_j(1)[2]\bigr)$.
    By \cref{section:disc-p1-compatibility}, especially \cref{an trace equals alg trace for disk}, and \cref{rmk:cycle-class-chern-class}, the latter map agrees with the one from \cref{an-tr-disk} when $Y = \Spa(C,\cO_C)$, yielding \cref{trace-relative-disk-dim1}. 

    \item \textit{Proof for general $d \in \ZZ_{\ge 1}$.}
    By successive projections away from the last coordinate, one can factor $\pi_Y$ as
    \[ \AA^{d,\an}_Y \simeq \AA^{d-1,\an}_Y \times_Y \AA^{1,\an}_Y \simeq \AA^{1,\an}_{\AA^{d-1,\an}_Y} \xrightarrow{\pi_d} \AA^{d-1,\an}_Y \simeq \AA^{d-2,\an}_Y \times_Y \AA^{1,\an}_Y \simeq \AA^{1,\an}_{\AA^{d-2,\an}_Y} \xrightarrow{\pi_{d-1}} \dotsb \xrightarrow{\pi_2} \AA^{1,\an}_Y \xrightarrow{\pi_1} Y. \]
    We set 
    \begin{equation}\label{trace-relative-disk-higher-dim}
        \ttr_{\pi_Y} \colonequals \ttr_{\pi_1} \circ \rR \pi_{1,!}\bigl(\ttr_{\pi_2}(1)[2]\bigr) \circ \dotsb \circ \rR (\pi_1 \circ \dotsb \pi_{d-1})_!\bigl(\ttr_{\pi_d}(d-1)[2d-2]\bigr) \colon \rR \pi_{Y,!}\LLambda(d)[2d] \to \LLambda.
    \end{equation}
    Then $\ttr_{\pi_Y}$ satisfies again \cref{trace-relative-disk-pullback} because the $\ttr_{\pi_i}$ do so separately by \cref{trace-relative-disk-d1}.
    Thus, it remains to verify property \cref{trace-relative-disk-permutation-invariance} that the definition of $\ttr_{\pi_Y}$ is independent of the coordinates under permutation.
    
    Since $\fS_d$ is generated by adjacent transpositions, we may assume for this that $d=2$ and $\sigma \in \fS_2$ is the nontrivial element.
    Thanks to \cref{smooth trace lives in discrete space}, it suffices to verify that
    \[ \rR^4 \pi_{Y,!}(\sigma_{Y,!}\LLambda)(2) \xrightarrow[\sim]{\rR^4\pi_{Y,!}(\ttr_{\sigma_Y}(2))} \rR^4 \pi_{Y,!}\LLambda(2) \xrightarrow{\Hh^0(\ttr_{\pi_Y})} \LLambda \]
    agrees with $\Hh^0(\ttr_{\pi_Y})$.
    The sheaf $\LLambda$ is overconvergent, so \cref{overconvergent-target} allows us to check this on stalks at geometric points of rank $1$.
    Since the formation of derived proper pushforwards commutes with talking stalks \cite[Th.~5.4.6]{Huber-etale} and $\ttr_{\pi_Y}$ is compatible with pullbacks by \cref{trace-relative-disk-pullback}, we may therefore further assume that $Y = \Spa(C,\cO_C)$ for some algebraically closed complete nonarchimedean field $C$.
 
    In this case, we note that it suffices to show a stronger claim that $\fS_2$ acts trivially on $\Hh^4_c\bigl(\AA^{2,\an}_C,\Lambda(2)\bigr)$. In order to justify this, we prove an even stronger claim that $\GL_2(C)$ acts trivially on $\Hh^4_c\bigl(\AA^{2,\an}_C,\Lambda(2)\bigr)$. For this, we note that $\Hh^4_c\bigl(\AA^{2,\an}_C,\Lambda(2)\bigr) \simeq \Lambda$:
    this follows from the analogous statement for the algebraic compactly supported cohomology $\Hh^4_c\bigl(\AA^{2, \an}_C,\Lambda(2)\bigr)$ by Huber's comparison theorem \cite[Th.~5.7.2]{Huber-etale}.
    Alternatively (if one wants to avoid using the nontrivial \cite[Th.~3.2.10]{Huber-etale}), one can adapt the proof of \cite[Th.~7.1.1]{Berkovich} to the adic context. Now we observe that the action of $\GL_2(C)$ on $\Lambda^*$ has to factor through the maximal abelian quotient
    \[ \GL_2(C)/\bigl[ \GL_2(C),\GL_2(C)\bigr] \simeq \GL_2(C) / \SL_2(C) \xrightarrow[\sim]{\det} C^*, \]
    which is divisible and therefore cannot admit any nontrivial maps to the torsion group $\Lambda^*$. \qedhere %
    \end{enumerate}
\end{proof}
\begin{remark}\label{affine-space-better}
As we explain below, the action of $\fS_2$ on $\rm{H}^4_c(\bf{D}^2_C, \mu_p^{\otimes 2})$ is nontrivial when the ground field $C$ is of mixed characteristic $(0, p)$. 
Therefore, in the \emph{proof} of \cref{trace-relative-disk}, it is crucial to use $\bf{A}^{d, \an}_Y$ instead of $\bf{D}^d_Y$. 

To see that the action of $\fS_2$ is nontrival, we set $x_1 \colonequals (0, 1)\in \DD^2$ and $x_2 \colonequals (1, 0) \in \DD^2$.
Then \cref{lemma:base-change-lci-classes} ensures that $\sigma^*c\ell_{\DD^2}(x_1) = c\ell_{\DD^2}(\sigma(x_1)) = c\ell_{\DD^2}(x_2)$, so it suffices to show that 
\[
c\ell_{\DD^2}(x_1) \neq c\ell_{\DD^2}(x_2) \in \Hh^4_c(\bf{D}^2_C, \mu_p^{\otimes 2}).
\]
For this, we consider the hyperplane sections $\{x_i\} \xhookrightarrow{\iota_{x_i}} \{x_i\} \times \DD^1 \xhookrightarrow{\iota'_{x_i}} \DD^2$ for $i=1, 2$ and the projection onto the second factor $\rm{pr}_2\colon \DD^2 \to \DD^1$.
\Cref{cor:gysin-composition} and \cref{tr-vs-cl} below (whose proof does not use this remark) imply that $\ttr_{\pr_2}\bigl(c\ell_{\DD^2}(x_1)\bigr) = c\ell_{\DD^1}\bigl(\{0\}\bigr)$ and $\ttr_{\rm{pr}_2}\bigl(c\ell_{\DD^2}(x_2)\bigr) = c\ell_{\DD^1}\bigl(\{1\}\bigr)$.
On the other hand,
\[
c\ell_{\DD^1}(\{0\}) \neq c\ell_{\DD^1}(\{1\}) \in \Hh^2_c(\DD^1, \mu_p).
\]
thanks to \cref{distance and cycle class}\cref{distance and cycle class-1}, yielding the claim.
\end{remark}

The trace map for affine spaces from \cref{trace-relative-disk} is related to the trace maps for smooth affinoid curves from \cref{section:analytic-trace}:
\begin{lemma}\label{compatibility-smooth-curve-trace}
    Let $X$ be a smooth affinoid curve over an algebraically closed complete nonarchimedean field $C$ and $n \in C^\times$.
    Assume that the structure morphism $f \colon X \to \Spa(C,\cO_C)$ factors as 
    \[ \begin{tikzcd}
        X \arrow[r,"g"] \arrow[rd,"f"'] & \AA^{1,\an}_C \arrow[d,"\pi_C"] \\
        & \Spa(C,\cO_C)
    \end{tikzcd} \]
    with $g$ \'etale.
    Then the map $\Hh^0\bigl(\ttr_{\pi_C} \circ \rR\pi_{C,!}(\ttr^\et_g(1)[2]) \bigr)$ is the analytic trace $t_X \colon \Hh^2_c(X,\mu_n) \to \ZZ/n$ from \cref{defn:analytic-trace-curves}.
\end{lemma}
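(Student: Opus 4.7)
My strategy is to reduce to the case where $g$ factors through the standard closed disk $\DD^1 \hookrightarrow \AA^{1,\an}_C$, at which point \cref{trace-relative-disk}\cref{trace-relative-disk-dim1} and \cref{curve-trace-etale-compatibility} will give the conclusion.

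Since $X$ is affinoid and hence quasicompact, $g(\abs{X})$ is contained in a closed disk $\DD^1(r_0) \subset \AA^{1,\an}_C$ for some $r_0 \in \abs{C^\times}$. Composing $g$ with the automorphism $\phi \colon \AA^{1,\an}_C \to \AA^{1,\an}_C$, $T \mapsto T/r_0$, which is an isomorphism over $\Spa(C,\cO_C)$, yields an \'etale morphism $g' \coloneqq \phi \circ g$ that factors through $\DD^1$. To justify this substitution, I will need the invariance
\[ \ttr_{\pi_C} \circ \rR\pi_{C,!}(\ttr^\et_\phi(1)[2]) = \ttr_{\pi_C} \]
as maps $\rR\pi_{C,!}\LLambda(1)[2] \to \LLambda$. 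Unwinding the construction of $\ttr_{\pi_C}$ in the proof of \cref{trace-relative-disk} via the projective compactification $\overline{j} \colon \AA^{1,\an}_C \hookrightarrow \PP^{1,\an}_C$, this reduces (using \cref{etale-trace-compatibility} for the composition $\pi_C = \pi_C \circ \phi$) to the invariance of $\ttr_{\overline\pi_C}$ under the extension $\overline\phi \in \Aut_{\Spa(C,\cO_C)}(\PP^{1,\an}_C)$ fixing $\infty$. Since $\ttr_{\overline\pi_C}$ is defined as the inverse of the first Chern class map $c_1^\et(\cO(1)) \colon \LLambda \xrightarrow{\sim} \rR^2 \overline\pi_{C,!}\LLambda(1)$, and $\overline\phi^*\cO(1) \cong \cO(1)$, this invariance follows from the fact that the first Chern class depends only on the isomorphism class of the line bundle.

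Having arranged a factorization $g = \tilde j \circ g_1$ with $g_1 \colon X \to \DD^1$ \'etale and $\tilde j \colon \DD^1 \hookrightarrow \AA^{1,\an}_C$ the canonical open immersion, I apply \cref{etale-trace-compatibility} to write $\ttr^\et_g = \ttr^\et_{\tilde j} \circ \tilde j_!(\ttr^\et_{g_1})$. Substituting into the left-hand side and using $\rR\pi_{C,!} \circ \tilde j_! = \rR f_{\DD^1,!}$, where $f_{\DD^1} \colon \DD^1 \to \Spa(C,\cO_C)$ is the structure morphism, the map of interest becomes
\[ \Hh^0\bigl(\ttr_{\pi_C} \circ \rR\pi_{C,!}(\ttr^\et_{\tilde j}(1)[2])\bigr) \circ \Hh^2_c(\ttr^\et_{g_1}(1)). \]
By \cref{trace-relative-disk}\cref{trace-relative-disk-dim1}, combined with the construction in \cref{an-tr-disk} and \cref{defn:analytic-trace-curves}, the first factor coincides with the analytic trace $t_{\DD^1}$. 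Finally, \cref{curve-trace-etale-compatibility} applied to the \'etale morphism $g_1$ of smooth affinoid curves identifies $t_{\DD^1} \circ \Hh^2_c(\ttr^\et_{g_1}(1))$ with $t_X$, completing the proof.

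The main obstacle is the initial rescaling step: once the invariance of $\ttr_{\pi_C}$ under translation/dilation automorphisms of $\AA^{1,\an}_C$ is established through the Chern class argument above, the remainder of the proof is a formal chain of functorialities relying on results already proved in \cref{trace-relative-disk}, \cref{etale-trace-compatibility}, and \cref{curve-trace-etale-compatibility}.
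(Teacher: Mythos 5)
Your proposal is correct and follows the same overall structure as the paper's proof: factor $g$ through a closed disk, rescale to land in the unit disk, then invoke \cref{trace-relative-disk}\cref{trace-relative-disk-dim1} and \cref{curve-trace-etale-compatibility}. The one place where you diverge is in justifying the rescaling step. The paper dispenses with it by citing the group-theoretic argument already used in the last paragraph of the proof of \cref{trace-relative-disk}: the action of $C^\times = \GL_1(C)$ on $\Hh^2_c(\AA^{1,\an}_C,\mu_n) \cong \Lambda$ must factor through a homomorphism from the divisible group $C^\times$ to the finite group $\Lambda^\times$, hence is trivial. You instead pass to the projective compactification, use $\ttr_{\pi_C} = \ttr_{\overline\pi_C} \circ \rR\overline\pi_{C,!}(\ttr^\et_{\overline j}(1)[2])$ together with the composition rule $\overline j \circ \phi = \overline\phi \circ \overline j$, and reduce to showing $\overline\phi^*$ fixes $c_1^\et(\cO(1))$, which holds because $\overline\phi^*\cO(1) \cong \cO(1)$ and the first Chern class depends only on the isomorphism class (\cref{rmk:base-change-chern-classes}). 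Both arguments are valid; the paper's is shorter since it recycles an argument already on hand, while yours is more hands-on and avoids the abstract divisibility step. One small informality: you write $\phi \colon T \mapsto T/r_0$ with $r_0 \in \abs{C^\times}$ a norm, where you should pick $c \in C^\times$ with $\abs{c} = r_0$ and set $\phi \colon T \mapsto T/c$.
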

\begin{proof}
    Since $X$ is quasicompact, $g$ factors through a closed unit disk $\DD^1(r)$ of some radius $r$.
    By the same argument as in the last paragraph of the proof of \cref{trace-relative-disk}, the natural action of $C^* = \GL_1(C)$ on $\Hh^2_c(\AA^1_C,\mu_n)$ is trivial.
    After a renormalization, we therefore end up in the situation
        \[ \begin{tikzcd}[column sep=large]
        U \arrow[rr, bend left,"g"] \arrow[r,"\tilde{g}"] \arrow[rrd,bend right=15,"f"'] & \DD^1_C \arrow[r,hook,"\tilde{j}"] \arrow[rd,bend right=10] & \AA^{1, \an}_C  \arrow[d,"\pi_C"] \\
        && \Spa(C,\cO_C),
    \end{tikzcd} \]
    where $\tilde{j} \colon \DD^1_C \hookrightarrow \AA^{1, \an}_C$ denotes the canonical open immersion.
    In that case, \cref{trace-relative-disk}\cref{trace-relative-disk-dim1} guarantees that $\Hh^0\bigl(\ttr_{\pi_C} \circ \rR\pi_{C,!}(\ttr^\et_{\tilde{j}}(1)[2]) \bigr)$ is the analytic trace morphism $t_{\DD^1_C}$ from \cref{an-tr-disk}.
    Moreover, the analytic trace for smooth affinoid curves in \cref{defn:analytic-trace-curves} is compatible with \'etale morphisms (\cref{curve-trace-etale-compatibility}) and the \'etale trace is compatible with compositions (\cref{etale-trace-compatibility}), so we conclude that
    \[ \Hh^0\bigl(\ttr_{\pi_C} \circ \rR \pi_{C,!}(\ttr^\et_g(1)[2])\bigr) = \Hh^0\bigl(\ttr_{\pi_C} \circ \rR\pi_{C,!}(\ttr^\et_{\tilde{j}}(1)[2]) \circ (\rR\pi_{C,!} \circ \tilde{j}_!)(\ttr^\et_{\tilde{g}}(1)[2])\bigr) = t_{\DD^1_C} \circ \Hh^2_c\bigl(\ttr^\et_{\tilde{g}}(1)\bigr) = t_X. \qedhere \]
\end{proof}
\Cref{compatibility-smooth-curve-trace} leads to the following uniqueness statement.
\begin{lemma}\label{factorization independence}
    Let $f \colon X \to Y$ be a separated taut smooth of equidimension $d$ morphism of locally noetherian analytic adic spaces with $n \in \cO^\times_Y$.
    Let
    \[ \begin{tikzcd}
        X \arrow[r,"g_1"] \arrow[rd,"f"] \arrow[d, swap, "g_2"] & \bf{A}^{d, \an}_Y \arrow[d, "\pi_Y"] \\
        \bf{A}^{d, \an}_Y \arrow[r, "\pi_Y"] & Y
    \end{tikzcd}\]
    be a commutative diagram of factorizations of $f$ such that $g_i$ is \'etale for $i = 1,2$. Then the induced diagram
    \[ \begin{tikzcd}[row sep=tiny]
        & (\rR\pi_{Y,!} \circ g_{1,!})\LLambda(d)[2d] \arrow[r,"{\rR\pi_{Y,!} (\ttr^{\et}_{g_1}(d)[2d])}"] &[6em] \rR\pi_{Y,!} \LLambda(d)[2d] \arrow[rd,near start,"\ttr_{\pi_Y}"] &[2em] \\
        \rR f_!\LLambda(d)[2d] \arrow[ru,equals] \arrow[rd,equals] &&& \LLambda \\ 
        & (\rR\pi_{Y,!}  \circ g_{2,!})\LLambda(d)[2d] \arrow[r, swap, "{\rR\pi_{Y,!} (\ttr^{\et}_{g_1}(d)[2d])}"] & \rR\pi_{Y,!}\LLambda(d)[2d] \arrow[ru,swap,near start,"{\ttr_{\pi_Y}}"] &
    \end{tikzcd}\]
    involving the maps $\ttr^{\et}_{g_i}$ from \cref{etale-trace} and the map $\ttr_{\pi_Y}$ from \cref{trace-relative-disk} commutes as well.
\end{lemma}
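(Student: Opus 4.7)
The plan is to reduce the comparison to the case of smooth affinoid curves over a geometric point, where the intrinsic analytic trace of \cref{defn:analytic-trace-curves} depends only on the source and is manifestly factorization-independent. First, both morphisms in question have target $\LLambda_Y$, concentrated in degree $0$, while $\rR f_!\,\LLambda_X(d)[2d]$ lies in $D^{\le 0}(Y_\et;\Lambda)$ by \cref{smooth trace lives in discrete space}. Hence both factor uniquely through $\rR^{2d}f_!\,\LLambda_X(d) \to \LLambda_Y$, and it suffices to compare the induced $\Hh^0$-morphisms. Since $\LLambda_Y$ is overconvergent, \cref{overconvergent-target} reduces the verification to an equality on stalks at geometric rank-$1$ points of $Y$. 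Using the base-change compatibility of $\ttr_{\pi_Y}$ from \cref{trace-relative-disk}\cref{trace-relative-disk-pullback}, of $\ttr^\et$ from \cref{etale-trace-compatibility}, and of $\rR\pi_{Y,!}$ from \cite[Cor.~5.4.8]{Huber-etale}, the problem reduces to the case $Y = \Spa(C,\cO_C)$ with $C$ a complete algebraically closed nonarchimedean field.

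For $d=1$, I would cover $X$ by open affinoid subspaces $\{U_k\}_k$; the inclusions $j_k \colon U_k \hookrightarrow X$ are taut by \cite[Lem.~5.1.3]{Huber-etale} since the $U_k$ are qcqs. For each $k$ and each $i \in \{1,2\}$, the restriction $g_i|_{U_k} \colon U_k \to \AA^{1,\an}_C$ is \'etale, and \cref{compatibility-smooth-curve-trace} identifies the $\Hh^0$-morphism induced by $\ttr_{\pi_C} \circ \rR\pi_{C,!}(\ttr^\et_{g_i|_{U_k}}(1)[2])$ with the intrinsic analytic trace $t_{U_k} \colon \Hh^2_c(U_k,\mu_n) \to \ZZ/n$ of \cref{defn:analytic-trace-curves}. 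Because $t_{U_k}$ depends only on $U_k$, the two $\Hh^0$-morphisms attached to $g_1$ and $g_2$ agree on each $U_k$; the overlap compatibility required by \cref{trace local source} is automatic, since both sides restrict to $t_{U_k \cap U_{k'}}$. Invoking \cref{trace local source} then glues them to the desired equality on $X$.

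For $d > 1$, I would induct on $d$ using the iterative description of $\ttr_{\pi_C}$ from \cref{trace-relative-disk-higher-dim}, which writes it as $\ttr_{\pi'} \circ \rR\pi'_!\bigl(\ttr_{\pi_d}(d-1)[2d-2]\bigr)$ along the projection $\pi_d \colon \AA^{d,\an}_C \to \AA^{d-1,\an}_C$ away from the last coordinate and $\pi' \colon \AA^{d-1,\an}_C \to \Spa(C,\cO_C)$. Under this decomposition, the composed trace attached to $g_i$ rewrites as $\ttr_{\pi'} \circ \rR\pi'_!(\tau_i(d-1)[2d-2])$, where $\tau_i \colonequals \ttr_{\pi_d} \circ \rR\pi_{d,!}(\ttr^\et_{g_i}(1)[2])$ is a relative $1$-dimensional trace attached to the smooth morphism $h_i \colonequals \pi_d \circ g_i \colon X \to \AA^{d-1,\an}_C$. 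The central obstacle is that $h_1$ and $h_2$ are typically different morphisms, so one cannot directly compare $\tau_1$ and $\tau_2$; to bridge this gap, I would first upgrade the previous paragraph to a relative statement showing that for any smooth morphism $h \colon X \to Z$ of relative dimension $1$ and any two \'etale factorizations through $\AA^{1,\an}_Z$, the resulting relative traces agree. This upgrade follows from exactly the same stalks-plus-base-change reduction applied over a general base $Z$, so that one ends up with a well-defined smooth trace $\tau_h \colon \rR h_!\,\LLambda(1)[2] \to \LLambda_Z$ that depends only on $h$. To close the induction, I would then further localize on $X$ to common \'etale refinements of $g_1$ and $g_2$ and compare the two smooth traces of $f$ step by step through the iterated $1$-dimensional projections, invoking the inductive hypothesis for the $(d-1)$-dimensional piece and the $d=1$ case (in its upgraded form) for each $1$-dimensional slice. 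I expect this matching step, together with keeping track of the Gysin-type cycle-class computations from \cref{cor:gysin-composition} on $\Hh^{2d}_c(X,\mu_n^{\otimes d})$, to be the most delicate and technically demanding part of the proof.
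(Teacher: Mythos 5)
Your $d=1$ argument is correct and is essentially the paper's own: reduce to stalks at rank-$1$ geometric points, then to smooth affinoid curves, where \cref{compatibility-smooth-curve-trace} identifies both composites with the intrinsic trace $t_U$ of \cref{defn:analytic-trace-curves}, which depends only on $U$.

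For $d>1$, however, you correctly diagnose the problem but do not solve it. After the iterative decomposition of $\ttr_{\pi_C}$, the two composed traces read $\ttr_{\pi'} \circ \rR\pi'_!(\tau_i(d-1)[2d-2])$, where $\tau_i$ is a relative trace attached to $h_i = \pi_d \circ g_i$. As you note, $h_1 \neq h_2$, so the decompositions do not align and $\tau_1$, $\tau_2$ are not maps between the same objects over $\AA^{d-1,\an}_C$. Your proposed fix — establishing a relative $d=1$ independence statement over a general base $Z$ — is true but only compares two étale factorizations of a \emph{fixed} $h$; it does not help compare factorizations sitting over different $h_1$ and $h_2$. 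The plan to ``localize to common étale refinements and compare step by step'' is exactly the hole: you never explain what sequence of intermediate factorizations you are passing through, and there is no obvious candidate.

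The missing idea in the paper is a Steinitz-exchange interpolation. Writing $g_i = (g^{(1)}_i,\dotsc,g^{(d)}_i)$, one uses the fact that both $(dg^{(j)}_1)_j$ and $(dg^{(j)}_2)_j$ are fiberwise bases of $\Omega^1_{X/C}$ to produce, after a permutation $\sigma \in \fS_d$ and Zariski-localization on $X$, a chain of factorizations each still étale and differing from the next in a single coordinate. Combined with the permutation invariance of $\ttr_{\pi_Y}$ from \cref{trace-relative-disk}\cref{trace-relative-disk-permutation-invariance} and the fact that Zariski-open immersions are taut so \cref{trace local source} applies, this reduces the $d>1$ case to consecutive pairs that differ only in the last coordinate, hence have the \emph{same} composite $h$ to $\AA^{d-1,\an}_C$. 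Only then does your relative $d=1$ statement (after another stalk reduction to $n=d=1$) close the argument. Without some such interpolation device the induction does not close; the Gysin computations you invoke from \cref{cor:gysin-composition} play no role here.
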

\begin{proof}
    Since the sheaf $\LLambda$ is overconvergent, \cref{smooth trace lives in discrete space} and \cref{overconvergent-target} allow us to check the commutativity of the diagram on stalks at geometric points of rank $1$.
    Again, the formation of derived proper pushforwards commutes with taking such stalks \cite[Th.~5.4.6]{Huber-etale} and the trace maps $\ttr^{\et}_{g_i}$ and $\ttr_{\pi_Y}$ are compatible with pullbacks by \cref{etale-trace-compatibility} and \cref{trace-relative-disk}\cref{trace-relative-disk-pullback}.
    Thus, we may assume that $Y = \Spa(C,\cO_C)$ for some algebraically closed complete nonarchimedean field $C$ and that $X$ is a separated taut smooth rigid space of equidimension $d$ over $C$.
    
    The two functions $g_i \colon X \to \bf{A}^{d, \an}_C$ are given by tuples $(g^{(1)}_i,\dotsc,g^{(d)}_i)$ with $g^{(n)}_i \in \cO(X)$.
    Let $x \in X$.
    Since the $g_i$ are \'etale at $x$, the differentials $dg^{(1)}_i,\dotsc,dg^{(d)}_i$ reduce to a basis of the fiber $\Omega^1_{X/C} \otimes k(x)$ for $i=1,2$ \cite[Prop.~1.6.9~iii)]{Huber-etale}.
    By the Steinitz exchange lemma, we can find $\sigma \in \fS_d$ such that for each $n = 1,\dotsc,d$, the differentials $dg^{(\sigma(1))}_1,\dotsc,dg^{(\sigma(n-1))}_1,dg^{(n)}_2,\dotsc,dg^{(d)}_2$ reduce to a basis of $\Omega^1_{X/C} \otimes k(x)$.
    Since the zero locus of $\bigl(dg^{(\sigma(1))}_1 \wedge \dotsb \wedge dg^{(\sigma(n-1))}_1 \wedge dg^{(n)}_2 \wedge \dotsb \wedge dg^{(d)}_2\bigr) \in \Omega^d_X(X)$ is Zariski-closed as the section of a line bundle, its complement then gives a Zariski-open neighborhood $U$ of $x$ over which each
    \[ \Bigl(g^{(\sigma(1))}_1,\dotsc,g^{(\sigma(n-1))}_1,g^{(n)}_2,\dotsc,g^{(d)}_2\Bigr) \colon U \to \bf{A}^{d, \an}_C \]
    is \'etale (again thanks to \cite[Prop.~1.6.9~iii)]{Huber-etale}).
    As the Zariski-open neighborhoods for various points $x$ cover the adic space $X$ and the corresponding Zariski-open embeddings are taut \cite[Lem.~5.1.4!i), Lem.~5.1.3~iii)]{Huber-etale}, it suffices to show that the diagram commutes over each $U$ by the uniqueness assertion in \cref{trace local source}.
    Combined with \cref{trace-relative-disk}\cref{trace-relative-disk-permutation-invariance}, we may therefore assume that $(g^{(1)}_1,\dotsc,g^{(n-1)}_1,g^{(n)}_2,\dotsc,g^{(d)}_2) \colon U \to \bf{A}^{d, \an}_C$ is \'etale for all $n = 1,\dotsc,d$. %

    Arguing one coordinate at a time, it now suffices to show the statement when $\pi_n \circ g_1 = \pi_n \circ g_2 \equalscolon f'$, where $\pi_n \colon \AA^{d, \an}_C \to \AA^{d-1, \an}_C$ is the projection away from the $n$-th coordinate for some $1 \le n \le d$.
    In that case, we are in the situation of the following commutative diagram:
    \[ \begin{tikzcd}
        U \arrow[r,"g_1"] \arrow[rd,"f'"] \arrow[d,"g_2"] & \AA^{d, \an}_C \arrow[d, "\pi_n"]  \arrow[rdd,bend left=15,"\pi_Y"] &\\
        \AA^{d, \an}_C \arrow[r,"\pi_n"]  \arrow[rrd,bend right=15,"\pi_Y"]& \AA^{d-1, \an}_C \arrow[rd, "\pi'_Y"] & \\
        & & \Spa(C, \O_C)
    \end{tikzcd} \]
    By the definition of the trace morphisms in \cref{trace-relative-disk} (and the invariance under permutation of coordinates), we have $\ttr_{\pi_Y} = \ttr_{\pi'_Y} \circ \rR\pi'_{Y,!}(\ttr_{\pi_n}(d-1)[2d-2])$.
    Therefore, we only need to prove that 
    \[ \ttr_{\pi_n} \circ \rR\pi_{n,!}\bigl(\ttr^\et_{g_1}(1)[2]\bigr) = \ttr_{\pi_n} \circ  \rR\pi_{n,!} \bigl(\ttr^\et_{g_2}(1)[2]\bigr). \]
    As in the first paragraph of the proof, we may check this statement on stalks at geometric rank-$1$ points and use (weak) proper base change \cite[Th.~5.4.6]{Huber-etale} to reduce to the case where $d=n=1$.

    Now we note that for each affinoid open $j\colon V\hookrightarrow U$, the morphism is taut due to \cite[Lem.~5.1.3~(i),(iii)]{Huber-etale}. 
    Therefore, we can shrink $U$ even further to assume that $U$ is a smooth affinoid curve over $C$.
    In that case, we have
    \[ \Hh^0\bigl(\ttr_{\pi_1} \circ \rR \pi_{1, !}(\ttr_{g_1}^\et(1)[2])\bigr) = t_U = \Hh^0\bigl(\ttr_{\pi_1} \circ \rR \pi_{1, !}(\ttr_{g_2}^\et(1)[2])\bigr) \]
    by \cref{compatibility-smooth-curve-trace}.
    The desired statement follows from \cref{smooth trace lives in discrete space}.
\end{proof}

We are finally ready to discuss the trace for smooth morphisms in general.
The following construction, which was forced upon us by the uniqueness part of the proof, is essentially independent of the construction of the analytic trace map in \cref{section:analytic-trace}.
However, in order to see that it does not depend on any of the choices made in the process, the existence of an \textit{a priori} well-defined analytic trace map, which was used in the proof of \cref{factorization independence}, is indispensable.
\begin{proof}[{Proof of \cref{smooth-trace-constant}, existence}]
    As in the uniqueness part of the proof, we may pick open affinoid (and thus also taut) covers $Y = \bigcup_{j \in J} V_j$ and $f^{-1}(V_j) = \bigcup_{i \in I_j} U_{ji}$ such that $f_{ji} \colonequals \restr{f}{U_{ji}} \colon U_{ji} \to V_j$ factors as
    \begin{equation}\label{smooth trace factorization} \begin{tikzcd}
        U_{ji} \arrow[r,"g_{ji}"] \arrow[rd,"f_{ji}"'] & \AA^{d,\an}_{V_j} \arrow[d,"\pi_{V_j}"] \\
        & V_j &
    \end{tikzcd} \end{equation}
    with $g_{ji}$ \'etale.
    Set $\ttr_{f_{ji}} \colonequals \ttr_{\pi_{V_j}} \circ \rR\pi_{V_j,!}(\ttr^\et_{g_{ji}}(d)[2d])$.
    This is independent of the factorization by \cref{factorization independence};
    in particular, all the various traces agree on intersections and we may glue them to a trace
    \[ \ttr_f \colon \rR f_!\,\LLambda_X(d)[2d] \to \LLambda_Y \]
    thanks to \cref{trace local source} and \cref{smooth trace lives in discrete space}.

    It remains to verify that $\ttr_f$ satisfies the desired properties.
    In the situation of \cref{smooth-trace-constant-composition}, pick open affinoid covers $Z = \bigcup_{k \in K} W_k$, $g^{-1}(W_k) = \bigcup_{j \in J_k} V_{kj}$, and $f^{-1}(V_{kj}) = \bigcup_{i \in I_{kj}} U_{kji}$ which fit into the commutative diagram
    \begin{equation}\label{smooth-trace-constant-compostion-compatibility} \begin{tikzcd}
        X \arrow[r,phantom,"\supseteq"] \arrow[d,"f"'] &[-2em] U_{kji} \arrow[r,"h^{\prime\prime}_{kji}"] \arrow[d,"f_{kji}"'] & \AA^{d, \an}_{V_{kj}} \simeq \AA^{d, \an}_{W_k} \times_{W_k} V_{kj} \arrow[r,"h'_{kj}"] \arrow[ld,"\pi_{V_{kj}}"'] & \AA^{d+e, \an}_{W_k} \arrow[ld,"\pi_{\AA^{e, \an}_{W_k}}"] \\
        Y \arrow[r,phantom,"\supseteq"] \arrow[d,"g"'] & V_{kj} \arrow[r,"h_{kj}"] \arrow[d,"g_{kj}"'] & \AA^{e, \an}_{W_k} \arrow[ld, swap, "\pi_{W_k}"'] & \\
        Z \arrow[r,phantom,"\supseteq"] & W_k &&
    \end{tikzcd} \end{equation}
    with $h_{kj}$ and $h^{\prime\prime}_{kji}$ \'etale.
    The construction of the traces in the first paragraph makes it clear that
    \[ \ttr_{\pi_{V_{kj}}} \circ \rR\pi_{V_{kj},!}(\ttr^\et_{h^{\prime\prime}_{kji}}(d)[2d]) = \ttr_{f_{kji}} \quad \text{and} \quad \ttr_{\pi_{W_k}} \circ \rR\pi_{W_k,!}(\ttr^\et_{h_{kj}}(e)[2e]) = \ttr_{g_{kj}}. \]
    Therefore, the compatibility of traces under composition boils down to the verification that the diagram
    \[ \begin{tikzcd}[row sep=tiny,column sep=huge]
        (\rR\pi_{\AA^{e, \an}_{W_k},!} \circ \rR h'_{kj,!})\LLambda(e)[2e] \arrow[r,"{\rR\pi_{\AA^{e, \an}_{W_k},!}(\ttr^\et_{h'_{kj}}(e)[2e])}"] \arrow[dd,equals] &[5em] \rR\pi_{\AA^{e, \an}_{W_k},!}\LLambda(e)[2e] \arrow[rd,"\ttr_{\pi_{\AA^{e, \an}_{W_k}}}"] & \\
        && \LLambda \\ 
        (\rR h_{kj,!} \circ \rR\pi_{V_{kj},!})\LLambda(e)[2e] \arrow[r,"\rR h_{kj,!}(\ttr_{\pi_{V_{kj}}})"] & \rR h_{kj,!}\LLambda \arrow[ru,"\ttr^\et_{h_{kj}}"] &
    \end{tikzcd}\]
    of traces in the parallelogram commutes.
    As before, \cite[Prop.~2.5.5]{Huber-etale}, \cref{smooth trace lives in discrete space} and \cref{overconvergent-target} allow us to check this on stalks at geometric points of rank $1$ of $\AA^{e, \an}_{W_k}$.
    Moreover, the base change isomorphism of derived pushfowards with compact support from \cite[Th.~5.4.6]{Huber-etale} is compatible with the formation of $\ttr_{\pi_{V_{kj}}}$ and $\ttr_{\pi_{\AA^{e, \an}_{W_k}}}$ (\cref{trace-relative-disk}\cref{trace-relative-disk-pullback}) as well as $\ttr^\et_{h_{kj}}$ and $\ttr^\et_{h'_{kj}}$ (\cref{etale-trace-compatibility}).
    Therefore, we may check the commutativity of traces in the parallelogram after pulling back along a geometric point of rank $1$ of $\DD^e_{W_k}$, where the statement is clear because the pullback of $V_{kj}$ decomposes as finite disjoint union of geometric points of rank $1$.
   
    Next, the $\ttr_{\pi_{V_j}}$ are compatible with pullbacks by \cref{trace-relative-disk}\cref{trace-relative-disk-pullback} and the $\ttr^\et_{g_{ji}}$ are compatible with pullbacks by \cref{etale-trace-compatibility}, so $\ttr_f$ satisfies property \cref{smooth-trace-constant-pullback}.
    Property \cref{smooth-trace-constant-etale} holds by definition.  

    Lastly, we show \cref{smooth-trace-constant-P1}. Consider the morphism $f \colon \PP^{1}_C \to \Spec C$. Let $\PP^{1}_C = \AA^1_C(0) \cup \AA^1_C(\infty)$ be the open affine cover of $\PP^1_C$ by the affine lines around $0$ and $\infty$ and denote the restricted structure morphisms by $f_i \colon \AA^1_C(i) \to \Spa(C,\cO_C)$ for $i \in \{0,\infty\}$. Since both algebraic and analytic trace maps are compatible with open immersions, and the morphism $\Hh^2_c\bigl(\AA^{1, \an}_C(0), \Lambda(1)\bigr) \oplus \Hh^2_c\bigl(\AA^{1, \an}_C(\infty), \Lambda(1)\bigr) \to \Hh^2\bigl(\PP^{1, \an}_C, \Lambda(1)\bigr)$ is surjective, it suffices to show that the following diagrams 
    \[
    \begin{tikzcd}
        \Hh^2_c(\bf{A}^1_C(i), \Lambda(1)) \arrow{d}{\Hh^0(\ttr_{f_i})} \arrow{r}{\sim} & \Hh^2_c(\bf{A}^{1, \an}_C(i), \Lambda(1)) \arrow{ld}{\Hh^0(\ttr_{f_i^{\an}})} \\
        \Lambda & 
    \end{tikzcd}
    \]
    commute for $i\in \{0, \infty\}$. This follows directly from the definition of the analytic trace map on the analytic affine line and \cref{etale-trace-compatibility}. %
\end{proof}

\subsection{Properties of the smooth trace}

In this subsection, we establish some properties of the smooth trace with constant coefficients constructed in \cref{smooth-trace-constant}.
We begin with the compatibility with the analytic trace for affinoid curves constructed in \cref{defn:analytic-trace-curves}.
\begin{lemma}
    Let $f \colon X \to \Spa(C,\cO_C)$ be a smooth affinoid curve over an algebraically closed complete nonarchimedean field $C$ and $n \in C^\times$.
    Then the analytic trace $t_X \colon \Hh^2_c(X,\mu_n) \to \ZZ/n$ from \cref{defn:analytic-trace-curves} agrees with $\Hh^0(\ttr_f)$ for the smooth trace $\ttr_f$ constructed in \cref{smooth-trace-constant}. 
\end{lemma}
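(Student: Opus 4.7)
The plan is to reduce to the special case already handled in \cref{compatibility-smooth-curve-trace} by combining the local construction of $\ttr_f$ with the compatibility of the analytic curve trace under \'etale morphisms.

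First, I would choose a finite open affinoid cover $X = \bigcup_{i \in I} U_i$ such that each restriction $f_i \colonequals \restr{f}{U_i} \colon U_i \to \Spa(C,\O_C)$ admits a factorization $U_i \xrightarrow{g_i} \AA^{1,\an}_C \xrightarrow{\pi_C} \Spa(C,\O_C)$ with $g_i$ \'etale, exactly as in the existence proof of \cref{smooth-trace-constant}. Write $j_i \colon U_i \hookrightarrow X$ for the inclusions (which are taut since each $U_i$ is qcqs). By the construction of $\ttr_f$, we have $\ttr_{f_i} = \ttr_{\pi_C} \circ \rR\pi_{C,!}\bigl(\ttr^\et_{g_i}(1)[2]\bigr)$, and the gluing statement underlying \cref{trace local source} (combined with the coconnectivity from \cref{smooth trace lives in discrete space}) gives
\[
\ttr_f \circ \rR f_!\bigl(\ttr^\et_{j_i}(1)[2]\bigr) = \ttr_{f_i}.
\]
Taking $\Hh^0$ and applying \cref{compatibility-smooth-curve-trace} to the factorization $f_i = \pi_C \circ g_i$ then yields
\[
\Hh^0(\ttr_f) \circ \Hh^2_c\bigl(\ttr^\et_{j_i}(1)\bigr) = \Hh^0(\ttr_{f_i}) = t_{U_i}.
\]

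Next, I would compare this to $t_X$ via \cref{curve-trace-etale-compatibility} applied to each open immersion $j_i$, which gives
\[
t_X \circ \Hh^2_c\bigl(\ttr^\et_{j_i}(1)\bigr) = t_{U_i} = \Hh^0(\ttr_f) \circ \Hh^2_c\bigl(\ttr^\et_{j_i}(1)\bigr).
\]
To upgrade this from an equality after precomposition to the genuine equality $t_X = \Hh^0(\ttr_f)$, I would use that the natural map $\bigoplus_{i \in I} j_{i,!}\mu_n \to \mu_n$ is an epimorphism of \'etale sheaves on $X$ and that $\Hh^2_c(X,\blank)$ is right exact because $X$ is $1$-dimensional, by \cite[Prop.~5.5.8]{Huber-etale}. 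Consequently the induced map $\bigoplus_{i \in I} \Hh^2_c(U_i,\mu_n) \to \Hh^2_c(X,\mu_n)$ is surjective, which forces $\Hh^0(\ttr_f) = t_X$.

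This proof is essentially a bookkeeping step, and there is no genuine obstacle: all the real work has already been absorbed into \cref{compatibility-smooth-curve-trace} (which handles the case of an \'etale map to the affine line via the disk-versus-$\PP^{1,\an}$ comparison of \cref{section:disc-p1-compatibility}) and \cref{curve-trace-etale-compatibility} (which establishes the compatibility of the analytic curve trace under arbitrary \'etale maps of smooth affinoid curves). The only subtle point is to make sure that the local-on-source gluing description of $\ttr_f$ is applied unambiguously, which is guaranteed by the uniqueness assertion of \cref{trace local source} together with \cref{smooth trace lives in discrete space}.
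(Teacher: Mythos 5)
Your proof is correct and matches the paper's own proof step for step: both cover $X$ by affinoids factoring through $\AA^{1,\an}_C$, invoke \cref{compatibility-smooth-curve-trace} to identify $\Hh^0(\ttr_{f_i})$ with $t_{U_i}$, use \cref{curve-trace-etale-compatibility} for the open immersions $j_i$, and conclude via the epimorphism $\bigoplus_i \Hh^2_c(U_i,\mu_n) \twoheadrightarrow \Hh^2_c(X,\mu_n)$ from \cref{trace local source}.
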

\begin{proof}
    Recall the construction of the smooth trace $\ttr_f$:
    we pick an affinoid open cover $X = \bigcup_{i \in I} U_i$ and diagrams
    \[ \begin{tikzcd}
        U_i \arrow[r,"g_i"] \arrow[rd,"f_i"'] & \AA^{1,\an}_C \arrow[d,"\pi_C"] \\
        & \Spa(C,\cO_C)
    \end{tikzcd} \]
    with $g_i$ \'etale, set $\ttr_{f_i} \colonequals \ttr_{\pi_C} \circ \rR\pi_{C,!}(\ttr^{\et}_{g_i}(1)[2])$ (using \cref{trace-relative-disk} and the \'etale traces $\ttr^\et_{g_i}$ from \cref{etale-trace}), and descend $\bigoplus_i \ttr_{f_i}$ to a morphism $\ttr_f \colon \rR f_!\mu_n[2] \to \ZZ/n$ via \cref{smooth trace lives in discrete space} and the epimorphism $\bigoplus_i \Hh^2_c(U_i,\mu_n) \twoheadrightarrow \Hh^2_c(X,\mu_n)$ from the proof of \cref{trace local source}.
    
    By \cref{compatibility-smooth-curve-trace}, $\Hh^0\bigl(\ttr_{\pi_C} \circ \rR\pi_{C,!}(\ttr^\et_{g_i}(1)[2]) \bigr)$ is given by the analytic trace $t_{U_i} \colon \Hh^2_c(U_i,\mu_n) \to \ZZ/n$ from \cref{defn:analytic-trace-curves}.
    On the other hand, \cref{curve-trace-etale-compatibility} applied to the open immersions $U_i \hookrightarrow X$ guarantees that the composition $\bigoplus_i \Hh^2_c(U_i,\mu_n) \twoheadrightarrow \Hh^2_c(X,\mu_n) \xrightarrow{t_X} \ZZ/n$ is given by $\bigoplus_i t_{U_i}$.
    Since the first map is an epimorphism, we can conclude the desired identity $t_X = \Hh^0(\ttr_f)$.
\end{proof}
\begin{lemma}\label{tr-vs-cl}
Let $f \colon X \to Y$ be a separated taut smooth of equidimension $d$ morphism of locally noetherian analytic adic spaces with $n \in \cO^\times_Y$.
Assume that $f$ has a section $s \colon Y \hookrightarrow X$.
Then $s$ is an lci immersion of pure codimension $d$ and the composition
\[ \LLambda_Y = \rR f_! \circ s_* \LLambda_Y \xrightarrow{\rR f_!(\cl_s)} \rR f_!\LLambda_X(d)[2d] \xrightarrow{\ttr_f} \LLambda_Y, \]
of the induced cycle class map from \cref{variant:cycle-class-map} with the smooth trace of $f$ from \cref{smooth-trace-constant} is the identity.
\end{lemma}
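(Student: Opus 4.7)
The plan is to verify the lci claim locally via the \'etale factorization of a smooth morphism and then to prove the trace-cycle class identity by induction on $d$. First, since $s$ is a section of the separated morphism $f$, it is automatically a closed immersion. By \cite[Cor.~1.6.10]{Huber-etale}, one may locally factor $f$ as $X \xrightarrow{g} \AA^{d,\an}_Y \xrightarrow{\pi_Y} Y$ with $g$ \'etale. The map $g \circ s$ is then a section of $\pi_Y$, cut out in $\AA^{d,\an}_Y$ by $d$ normally crossing coordinate hyperplanes (shifted by the value of the section). Pulling these back under the \'etale $g$ yields $d$ normally crossing effective Cartier divisors in $X$ whose scheme-theoretic intersection is $s(Y)$, so $s$ is an lci immersion of pure codimension $d$ in the sense of \cref{defn:normal-intersection}.

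For the trace identity, by \cref{smooth trace lives in discrete space} and \cref{overconvergent-target}, it suffices to verify $\Hh^0(\ttr_f \circ \rR f_!(\cl_s)) = \id$ after restriction to stalks at geometric rank-$1$ points of $Y$. Using the pullback compatibility in \cref{smooth-trace-constant-pullback} and the transversal base change of cycle classes in \cref{lemma:base-change-lci-classes}, we may assume $Y = \Spa(C,\cO_C)$ for $C$ algebraically closed, in which case the statement becomes $\Hh^0(\ttr_f)(\cl_X(x)) = 1$ for the classical point $x \coloneqq s(Y) \in X(C)$ and its compactly supported cycle class $\cl_X(x) \in \Hh^{2d}_c(X,\Lambda(d))$ (see \cref{defn:cycle-clas-divisors-compact}). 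Pick an \'etale $g \colon U \to \AA^{d,\an}_C$ on an open neighborhood $U \subseteq X$ of $x$; after translating along $\AA^{d,\an}_C(C)$ we may assume $g(x) = 0$, and shrinking $U$ we may arrange $U \cap g^{-1}(0) = \{x\}$. Applying the open immersion $U \hookrightarrow X$ together with \cref{etale-trace-compatibility}, \cref{smooth-trace-constant-composition}, and \cref{lemma: cycle class of points compatible with etale morphism} reduces the identity to
\[ \Hh^0(\ttr_{\pi_C})\bigl(\cl_{\AA^{d,\an}_C}(0)\bigr) = 1. \]

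We prove this last identity by induction on $d$. For the base case $d = 1$, the class $\cl_{\AA^{1,\an}_C}(0)$ is the image of $\cl_{\DD^1_C}(0)$ under the open immersion $\tilde{j} \colon \DD^1_C \hookrightarrow \AA^{1,\an}_C$ by \cref{lemma: cycle class of points compatible with etale morphism}, and combining \cref{trace-relative-disk}\cref{trace-relative-disk-dim1} with \cref{an tr disk is 1} gives $\Hh^0(\ttr_{\pi_C})(\cl_{\AA^{1,\an}_C}(0)) = t_{\DD^1_C}(\cl_{\DD^1_C}(0)) = 1$. For $d > 1$, write $\pi_C = \pi' \circ \pi_d$, where $\pi_d \colon \AA^{d,\an}_C \to \AA^{d-1,\an}_C$ is the projection away from the last coordinate. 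Let $s_d \colon \AA^{d-1,\an}_C \hookrightarrow \AA^{d,\an}_C$ be the zero section of $\pi_d$ and $i_0 \colon \Spa(C,\cO_C) \hookrightarrow \AA^{d-1,\an}_C$ the origin; then $s_d \circ i_0$ is the origin of $\AA^{d,\an}_C$. Applying \cref{cor:gysin-composition} to the normally crossing divisors $\{x_i = 0\}_{i=1}^d$ yields
\[ \cl_{s_d \circ i_0} = \cl_{s_d}(d-1)[2d-2] \circ s_{d,*}(\cl_{i_0}). \]
Pushing forward by $\rR\pi_{C,!}$, combining with the factorization $\ttr_{\pi_C} = \ttr_{\pi'} \circ \rR\pi'_!\bigl(\ttr_{\pi_d}(d-1)[2d-2]\bigr)$ from \cref{trace-relative-disk-higher-dim}, and invoking the lemma for the pair $(\pi_d, s_d)$ of equidimension $1$ (which reduces by the second paragraph to the absolute $d = 1$ case just established) produces the identity
\[ \Hh^0(\ttr_{\pi_C})\bigl(\cl_{\AA^{d,\an}_C}(0)\bigr) = \Hh^0(\ttr_{\pi'})\bigl(\cl_{\AA^{d-1,\an}_C}(0)\bigr), \]
and the right-hand side equals $1$ by the inductive hypothesis. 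The main technical point is the use of the lemma for $(\pi_d, s_d)$ within the inductive step; since this instance has equidimension $1$, the reduction procedure of the second paragraph identifies it with the $d = 1$ base case, avoiding circularity.
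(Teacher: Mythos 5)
Your proof is correct and uses the same essential ingredients as the paper — reduction to rank-$1$ geometric stalks via overconvergence (\cref{smooth trace lives in discrete space}, \cref{overconvergent-target}), local \'etale factorization through affine space, the Gysin composition formula \cref{cor:gysin-composition}, and the explicit $d=1$ base case tied back to \cref{trace-relative-disk} and \cref{an tr disk is 1}. The organizational difference is that you perform the stalk reduction first and then do the induction on $d$ in the resulting $\AA^{d,\an}_C$ case, while the paper does the induction first (Step~1), directly over an arbitrary base $Y$. Because the paper's Step~1 quantifies over all $Y$, the equidimension-$1$ instance $(\pi_d, s_d)$ over $\AA^{d-1,\an}_Y$ that arises in the inductive step is \emph{already} covered by the induction hypothesis, so no re-nesting is needed. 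In your version, $(\pi_d, s_d)$ lives over $\AA^{d-1,\an}_C$, which is outside the scope of your point-case induction, so you have to re-run the stalk reduction of your second paragraph to drop back to the $d=1$ base case. You flag this explicitly and it does terminate (the recursive call always lands on $d=1$), so there is no circularity; but the paper's ordering is cleaner precisely because it avoids this second layer of reduction. Also note that your first paragraph re-derives the lci claim which the paper handles by a single citation to \cite[Cor.~5.11]{adic-notes}; your sketch is plausible but glosses over the construction of the global regular sequence structure.
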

\begin{proof}
    The first statement that $s$ is an lci immersion of pure codimension $d$ is proven in \cite[Cor.~5.11]{adic-notes}.
    To verify the second statement, we proceed in two steps:
    \begin{enumerate}[wide,label={\textit{Step~\arabic*}.},ref={Step~\arabic*}]
        \item\label{tr-vs-cl-AAd} \textit{Case when $f$ is the structure morphism $X=\AA^{d,\an}_Y \to Y$ and $s$ is the zero section.}
        We argue by induction on $d$. 
        If $d=0$, the claim is trivial. 
        If $d=1$, the claim essentially follows from the construction of the trace map in \cref{trace-relative-disk}.
        
        Now we fix $d>1$ and assume that the claim has been proven in dimensions $<d$.
        Consider the commutative diagram
        \[
        \begin{tikzcd}[column sep=huge]
        Y \arrow[r, hook, "i"] \arrow[rd, hook, "i"] \arrow[rdd,equals] \arrow[rr, hook, bend left=20, "s"] &\AA^{d-1, \an}_Y \arrow[d, equals] \arrow[r, hook, "j"] & \AA^{d, \an}_Y \arrow[dl, swap, "g"] \arrow[ldd, "f"] \\
        & \AA^{d-1, \an}_Y \arrow[d, near start, "h"] & \\
        & Y,&  
        \end{tikzcd}
        \]
        where $i \colon Y \hookrightarrow \AA^{d-1, \an}_Y$ is the zero section, $j\colon \AA^{d-1, \an}_Y \hookrightarrow \AA^{d, \an}_Y$ is the natural inclusion as the vanishing locus of the last coordinate, $g$ is the projection onto the first $d-1$ factors, and $h$ is the structure morphism. 
        Then \cref{smooth-trace-constant}\cref{smooth-trace-constant-composition}, \cref{cor:gysin-composition}, and the induction hypothesis guarantee that 
        \begin{multline*}
        \ttr_f \circ \rR f_!(\cl_s) = \ttr_h \circ \rR h_!\bigl(\ttr_g(d-1)[2d-2]\bigr) \circ \rR f_!\bigl(\cl_j(d-1)[2d-2]\bigr) \circ \rR h_!(\cl_i)= \\
        = \ttr_h \circ \rR h_!(\id) \circ \rR h_!(\cl_i) = \ttr_h \circ \rR h_!(\cl_i) = \id. 
        \end{multline*}
        
        \item \textit{General case.}
        Since the sheaf $\LLambda_Y$ is overconvergent, the equality of two endomorphisms may be checked on stalks at geometric points attached to rank-$1$ points of $\abs{Y}$ (\cref{smooth trace lives in discrete space} and \cref{overconvergent-target}).
        Moreover, the formation of $\rR f_!$ and $\cl_X(Y)$ is compatible with pullbacks to these geometric points by \cite[Cor.~5.4.8]{Huber-etale} and \cref{lemma:base-change-lci-classes}, respectively.
        Thus, we are reduced to the case where $Y = \Spa(C,\cO_C)$ for some algebraically closed complete nonarchimedean field $C$ and $s$ is given by a rational point $y \in X(C)$.
        
        Since the morphism $\cl_s \colon s_*\LLambda_Y \to \LLambda_X$ in \cref{variant:cycle-class-map} is constructed from the cycle class $c\ell_s$ via the adjunction $(s_*,Rs^!)$, the image of $1 \in \Lambda$ under the compactly supported pushforward
        \[ \rR\Gamma_c\bigl(\cl_X(y)\bigr) \colon \Lambda \simeq \rR\Gamma_c(X,s_*\Lambda_y) \to \rR\Gamma_c\bigl(X,\Lambda(d)[2d]\bigr) \to \Hh^{2d}_c\bigl(X,\Lambda(d)\bigr) \]
        is the compactly supported cohomology cycle class $c\ell_X(y)$ from \cref{defn:cycle-clas-divisors-compact}, which similarly arises from composing with the counit of adjunction.
        In conclusion, it suffices to prove that for any separated taut smooth rigid space $f \colon X \to \Spa(C,\cO_C)$ and any rational point $y \in X(C)$, we have $\ttr_f\bigl(c\ell_X(y)\bigr) = 1 \in \Lambda$ for the compactly supported cycle class $c\ell_X(y) \in \Hh^{2d}_c\bigl(X,\Lambda(d)\bigr)$.
    
        By \cite[Cor.~1.6.10]{Huber-etale}, the point $y$ has a quasicompact open neighborhood $U \subseteq X$ such that $\restr{f}{U}$ factors through an \'etale map $U \to \DD^d_C$;
        we may assume that it sends $y$ to the origin $0 \in \DD^d_C(C)$. \Cref{lemma: cycle class of points compatible with etale morphism}, \cref{smooth-trace-constant}\cref{smooth-trace-constant-composition} and \cref{smooth-trace-constant}\cref{smooth-trace-constant-etale} applied to the diagram of pointed \'etale maps 
        \[ (X,y) \longleftarrow (U,y) \longrightarrow (\DD^d_C,0) \longrightarrow \bigl(\AA^{d,\an}_C,0\bigr) \]
        then allow us to deduce the general statement from that for $0 \in \AA^{d,\an}_C$. 
        This case was already treated in \cref{tr-vs-cl-AAd}. \qedhere
    \end{enumerate}
\end{proof}
\begin{lemma}\label{tr-epi}
    Let $f \colon X \to Y$ be a separated taut smooth of equidimension $d$ morphism of locally noetherian analytic adic spaces with $n \in \cO^\times_Y$ such that the fibers of $f$ are nonempty.
    Then the (truncation of the) trace map $\ttr_f \colon \rR^{2d}f_!\LLambda_X(d) \to \LLambda_Y$ is an epimorphism of \'etale sheaves on $Y$.
\end{lemma}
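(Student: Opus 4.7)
The plan is to show that $\ttr_f$ is étale-locally on $Y$ a split epimorphism, hence an epimorphism. Concretely, for every $y \in Y$, I aim to produce an étale neighborhood $u \colon U \to Y$ of $y$ and a section $s \colon U \to X \times_Y U$ of $f_U \coloneqq f \times_Y U$. Granted such data, the pullback compatibility \cref{smooth-trace-constant}\cref{smooth-trace-constant-pullback} identifies $u^*\ttr_f$ with $\ttr_{f_U}$, and \cref{tr-vs-cl} yields that the composition
\[ \LLambda_U = \rR f_{U, !}\, s_*\, \LLambda_U \xrightarrow{\rR f_{U, !}(\cl_s)} \rR f_{U, !}\, \LLambda_{X \times_Y U}(d)[2d] \xrightarrow{\ttr_{f_U}} \LLambda_U \]
is the identity. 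Taking $\Hh^0$, the pullback $u^*\ttr_f$ is a split epimorphism of \'etale sheaves on $U$. Since the resulting $U$'s cover $Y$ étale-locally as $y$ varies over $Y$, $\ttr_f$ itself is an epimorphism.

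To produce the étale-local section of $f$ near $y$, I first pick $x \in f^{-1}(y)$ using the nonempty fibers hypothesis. By \cite[Cor.~1.6.10]{Huber-etale}, some open neighborhood $V \subseteq X$ of $x$ admits an étale morphism $g \colon V \to \AA^{d, \an}_Y$. Since $g$ is open, the intersection $g(V) \cap \AA^{d, \an}_{y}$ is a nonempty open subset of the fiber, and by density of classical points in $\AA^{d, \an}_{y}$, it contains a classical point $p$. After passing to an étale neighborhood $U \to Y$ of $y$ that enlarges $k(y)$ to contain the residue field of $p$, we may assume $p$ is rational over the new residue field; then each coordinate of $p$ lifts from the residue field to $\cO_Y(U)$ after possibly shrinking $U$, yielding a section $s_\pi \colon U \to \AA^{d, \an}_U$ of the projection with $s_\pi$ mapping $y$ to $p$. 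The pullback $V \times_{\AA^{d, \an}_Y} U \to U$ is étale with nonempty fiber over $y$ (containing any point in $g^{-1}(p)$), so a further étale localization provides a section of this pullback, which composes with the inclusion $V \hookrightarrow X$ to yield the desired section $s$ of $f_U$.

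The principal obstacle is the construction of the étale-local section of $f$: the density of classical points in $\AA^{d, \an}_{y}$ and the lifting of residue-field-rational points to sections over étale neighborhoods are standard facts but require some care in the locally noetherian adic framework. Once these are in place, the rest of the argument is a formal consequence of \cref{tr-vs-cl} and the pullback compatibility of the trace.
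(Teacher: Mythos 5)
The high-level strategy is the same as the paper's: produce a local section of $f$ and apply \cref{tr-vs-cl} to conclude that the trace is a split epimorphism locally. Where the approaches diverge is in \emph{how} that section is obtained, and there is a genuine gap in your version.

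Your argument locates a classical point $p$ in the fiber $g(V) \cap \AA^{d,\an}_y$ and then claims to "enlarge $k(y)$ to contain the residue field of $p$" via an \'etale neighborhood. But a classical point of the fiber over $\Spa\bigl(\wdh{k(y)}, \wdh{k(y)}^+\bigr)$ has residue field finite over the \emph{completed} residue field $\wdh{k(y)}$, while \'etale localizations of $Y$ near $y$ only realize finite separable extensions of the \emph{uncompleted} field $k(y)$. For non-classical $y$ (e.g.\ a Gauss point), the extension $\wdh{k(y)}/k(y)$ is huge and transcendental, so $k(p)/k(y)$ is far from finite and cannot be achieved by \'etale localization. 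The fix would be an approximation argument (using density of $k(y)$ in $\wdh{k(y)}$ to replace $p$ by a nearby point in the same rational open with residue field $k(y)$, and being careful with valuations of higher rank), but this is not what you wrote, and it is not a step you can omit. Moreover, these non-classical higher-rank $y$ cannot be avoided: the cokernel of $\ttr_f$ need not be overconvergent, so the epimorphism must be checked at geometric points attached to \emph{all} $y \in \abs{Y}$.

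The paper sidesteps all of this by first checking the epimorphism on stalks at geometric points $\overline{y}$, reducing via weak proper base change \cite[Th.~5.4.6]{Huber-etale} and the pullback compatibility of \cref{smooth-trace-constant}\cref{smooth-trace-constant-pullback} to the case $Y = \Spa(C,C^+)$ with $C$ algebraically closed and $C^+ \subset C$ an open bounded valuation subring (possibly of higher rank), and then invoking \cite[Lem.~9.5]{Diamonds} to produce a section of a nonempty open in $\DD^d_Y$. That cited lemma is exactly the black box that does the work your construction attempts to do by hand; the "density of classical points" and residue-field lifting for arbitrary affinoid fields is where the real content lies, and it is better to isolate it as a reference than to gloss over it. As written, your proof would not go through without substantial repair of the section-construction step.
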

\begin{proof}
    By \cite[Prop.~2.5.5]{Huber-etale}, it suffices to check that for any $y \in \abs{Y}$, the induced maps on stalks $\bigl(\ttr_f\bigr)_{\overline{y}} \colon \bigl(\rR^{2d}f_!\LLambda_X(d)\bigr)_{\overline{y}} \to \LLambda_{Y, \overline{y}}$ at the geometric point $\overline{y} \colon \Bigl( \Spa\bigl(\widehat{\overline{k(y)}},\widehat{\overline{k(y)}}^+\bigr),\{y\}\Bigr) \to Y$ attached to $y$ is an epimorphism.
    This would follow from the assertion that $\ttr_f$ becomes an epimorphism after pullback along the natural map $\Spa\Bigl(\widehat{\overline{k(y)}},\widehat{\overline{k(y)}}^+\Bigr) \to Y$ through which $\overline{y}$ factors.
    Thanks to the proper base change isomorphism for morphisms of transcendence dimension $0$ \cite[Th.~5.4.6]{Huber-etale} and the compatibility of the smooth trace under pullbacks (\cref{smooth-trace-constant}\cref{smooth-trace-constant-pullback}), we may therefore assume that $Y = \Spa(C,C^+)$ for some algebraically closed nonarchimedean field $C$ with $n \in C^\times$ and some open and bounded valuation subring $C^+ \subset C$.

    By \cite[Cor.~1.6.10]{Huber-etale} applied to a point lying over the closed point of $Y = \Spa(C,C^+)$, we can pick an open $U \subseteq X$ for which $\restr{f}{U}$ is still surjective and factors as 
    \[ \begin{tikzcd}
        U \arrow[r,"g"] \arrow[rd,"\restr{f}{U}"'] & \DD^d_Y \arrow[d,"\pi_Y"] \\
        & Y
    \end{tikzcd} \]
    with $g$ \'etale.
    Since $g(U) \subseteq \DD^d_Y$ is open, $U \to g(U)$ is surjective, $\ttr^\et_g$ is given by summing over fibers and $\rR^{2d}\pi_{Y,!}$ is right exact, it suffices to prove the statement for $\restr{\pi_Y}{g(U)} \colon g(U) \to Y$.
    But $\restr{\pi_Y}{g(U)}$ has a section by \cite[Lem.~9.5]{Diamonds} because $g(U)$ still surjects onto $Y$;
    an application of \cref{tr-vs-cl} then finishes the proof.
\end{proof}
Next, we formulate and prove the comparison between our smooth trace and the ``usual'' one coming from algebraic geometry.
First, we fix some notation. Let $S=\Spa(A, A^+)$ be a strongly noetherian Tate affinoid, let $f \colon X \to Y$ be a separated finite type morphism between locally finite type $A$-schemes, and let relative analytification $f^{\an/S} \colon X^{\an/S} \to Y^{\an/S}$ be its relative analytification (see \cref{construction:relative-analytification}).
Then by \cite[(3.2.8)]{Huber-etale}, we have a commutative diagram of \'etale topoi
\[
\begin{tikzcd}
X^{\an/S}_\et \arrow{r}{c_{X/S}} \arrow{d}{f_\et^{\an/S}} & X_\et \arrow{d}{f_\et} \\
Y^{\an/S}_\et \arrow{d} \arrow{r}{c_{Y/S}} & Y_\et \arrow{d} \\
\Spa(A,A^+)_\et \arrow{r}{c_{S}} & (\Spec A)_\et.
\end{tikzcd}
\]
The algebraic pushforward with compact support and the analytic pushforward with compact support are related via the natural isomorphism of functors
$c_{Y/S}^*\bigl(\rR f_!(\blank)\bigr) \xrightarrow{\sim} \rR f^{\an/S}_!\bigl(c_{X/S}^*(\blank)\bigr)$,
see \cite[Th.~5.7.2]{Huber-etale}.
If the map $f$ is in addition smooth of equidimension $d$, it comes equipped with a trace map $\rm{tr}_f \colon \rR f_!\LLambda_X(d)[2d] \to \LLambda_Y$ (see \cite[Exp.~XVIII, Th.~2.9]{SGA4}). 
\begin{proposition}[Compatibility with algebraic geometry]
\label{Compatibility with algebraic geometry} Let $S=\Spa(A, A^+)$ be a strongly noetherian Tate affinoid, let $X$ and $Y$ be locally finite type $A$-schemes, and let $f \colon X \to Y$ be a finite type, smooth, separated morphism of equidimension $d$. Then we have $c_{Y/S}^*\bigl(\rm{tr}_f\bigr) = \rm{tr}_{f^{\an/S}}$.
\end{proposition}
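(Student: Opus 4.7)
The plan is to reduce both traces to the case of the relative analytic projective line over a geometric rank-$1$ point, where the comparison is built into the construction (\cref{smooth-trace-constant}\cref{smooth-trace-constant-P1}). First I would note that by \cref{smooth trace lives in discrete space} combined with \cref{overconvergent-target}, any equality of morphisms $\rR f^{\an/S}_!\LLambda_{X^{\an/S}}(d)[2d] \to \LLambda_{Y^{\an/S}}$ can be checked after restriction to stalks at geometric points of rank~$1$ of $Y^{\an/S}$. Moreover, since $c_{Y/S}^*$ intertwines the relative analytification of the base-change comparison for algebraic $\rR f_!$ with the analytic one, and since both the algebraic trace (\cite[Exp.~XVIII, Th.~2.9]{SGA4}) and the analytic trace (\cref{smooth-trace-constant}\cref{smooth-trace-constant-pullback}) are compatible with arbitrary pullbacks, we may assume $Y = \Spa(C,\cO_C)$ for some algebraically closed complete nonarchimedean field $C$ and $Y^{\alg} = \Spec C$.

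Next, I would mimic the gluing/reduction strategy from the uniqueness part of \cref{smooth-trace-constant}. By \cref{trace local source} and its algebraic counterpart, the comparison can be checked Zariski-locally on the source $X$; using the structure of smooth morphisms, we may factor $X \to Y$ through an \'etale morphism $g \colon X \to \AA^d_Y$ followed by the structure map $\pi_Y \colon \AA^d_Y \to Y$. Both $c_{Y/S}^*(\ttr_{g}^{\alg})$ and $\ttr^{\et}_{g^{\an/S}}$ are given by the counit of the $(g_!, g^*)$-adjunction, which is preserved under analytification (see \cref{etale-trace-compatibility}). Combined with the compatibility of traces under composition (\cref{smooth-trace-constant}\cref{smooth-trace-constant-composition}) on both sides, the assertion reduces to the case $f = \pi_Y$. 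Factoring through projections onto single coordinates further reduces to $d = 1$.

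The final step is to compare the algebraic and analytic traces for $\pi_Y \colon \AA^1_Y \to Y$ with $Y = \Spa(C,\cO_C)$. Here I would use the commutative diagram
\[
\begin{tikzcd}
\AA^{1,\an}_C \arrow[r,hook,"\bar{j}"] \arrow[rd] & \PP^{1,\an}_C \arrow[d,"\bar{\pi}_C"] \\
& \Spa(C,\cO_C),
\end{tikzcd}
\]
together with its algebraic analogue. Since the \'etale trace for the open immersion $\bar j$ is preserved under analytification, and since \cref{smooth-trace-constant}\cref{smooth-trace-constant-P1} (combined with \cref{defn:algebraic-trace-map} and \cref{useful proposition on rigid curves}\cref{useful proposition on rigid curves-6}) furnishes the comparison for $\bar\pi_C$ itself, the identity $c_{Y/S}^*(\ttr_{\pi_Y}^{\alg}) = \ttr_{\pi^{\an/S}_Y}$ follows at the level of $\Hh^2_c$, which by \cref{smooth trace lives in discrete space} is all we need.

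The main point requiring care is the bookkeeping that the various \emph{comparison} isomorphisms---relative analytification of $\rR f_!$ from \cite[Th.~5.7.2]{Huber-etale}, the analytic projection and base-change formulas, the identification of \'etale \'etale traces, and the Chern-class/trace comparison used in \cref{smooth-trace-constant}\cref{smooth-trace-constant-P1}---all fit into a single commutative diagram. I do not expect any genuine obstacle beyond this diagram-chase; the substance of the comparison has already been absorbed into the $\PP^{1,\an}_C$ base case used to pin down the analytic trace.
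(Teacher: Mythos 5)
Your proposal is correct and follows essentially the same route as the paper's own proof: reduce to the affine/projective line over a geometric rank-$1$ point $\Spa(C,\cO_C)$, using $\cref{smooth trace lives in discrete space}$, $\cref{overconvergent-target}$, $\cref{trace local source}$, $\cref{etale-trace-compatibility}$, and the compatibility of both traces with compositions and pullbacks, and then invoke $\cref{smooth-trace-constant}\cref{smooth-trace-constant-P1}$. The only (immaterial) difference is the order of the reductions: you pass to stalks at rank-$1$ geometric points before localizing on $X$ and reducing to the relative affine line, whereas the paper first localizes on $X$ to reduce to $\AA^1_Y$ and only then passes to stalks.
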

\begin{proof}
First, we note that \cref{etale-trace-compatibility} proves the claim when $f$ is \'etale. Then we recall that Zariski-open immersions are taut (see \cite[Lem.~5.1.4]{Huber-etale}). Therefore, the established above case of \'etale maps and \cref{trace local source} imply that the statement is local on $X$, so we can further assume that the morphism $f\colon X \to Y$ factors as a composition of an \'etale map $g \colon X \to \AA^d_Y$ followed by the projection $\pi_Y \colon \AA^d_Y \to Y$. Using that both the algebraic trace maps are compatible with compositions and the established above case of an \'etale map, we conclude that it suffices to show the claim when $f$ is the relative affine line $\AA^1_Y \to Y$. 

In either case, we note that \cref{smooth trace lives in discrete space} and \cref{overconvergent-target} ensure that it suffices to check equality $c_{Y/S}^*\bigl(\rm{tr}_f\bigr) = \rm{tr}_{f^{\an/S}}$ on stalks at rank-$1$ geometric points. Using algebraic proper base change and weak analytic proper base change (see \cite[Th.~5.4.6]{Huber-etale}), we can assume that $A=C$, $A^+=\O_C$ for an algebraically closed non-archimedean field $C$, and $Y=\Spec C$. 

In this case, we can use the case of \'etale morphisms again to reduce the question to the case of the projective line $f\colon \PP^1_C\to \Spec C$. Then the result follows from \cref{smooth-trace-constant}\cref{smooth-trace-constant-P1}.
\end{proof}

Lastly, we show that our trace map is compatible with other constructions in the context of rigid geometry.
We begin with a comparison with Berkovich's trace, whenever both are defined.
Fix a complete nonarchimedean field $K$ with a valuation of rank $1$.
Recall that Huber constructed an equivalence of categories
\begin{equation}\label{adic-Berkovich}
    u \colon (A)' \colonequals \left\{ \parbox{1.95in}{\centering taut adic spaces locally of finite type over $\Spa(K,\cO_K)$} \right\} \longrightarrow \left\{ \parbox{1.8in}{\centering Hausdorff strictly $K$-analytic Berkovich spaces} \right\} \equalscolon (An) 
\end{equation}
which roughly sends an adic space $X$ in $(A)'$ to its maximal Hausdorff quotient;
see \cite[Rmk.~8.3.2]{Huber-etale}.
By \cite[p.~427,~(a)]{Huber-etale}, a morphism $f$ in $(A)'$ is partially proper and \'etale if and only if $u(f)$ is \'etale in the sense of Berkovich \cite[Def.~3.3.4]{Berkovich}.
Thus, for any $X \in (A)'$, the equivalence $u$ induces a morphism of topoi
\[ \theta_X \colon X_\et \longrightarrow u(X)_\et. \]
where $X_\et$ denotes Huber's \'etale topos used in our paper and $u(X)_\et$ the \'etale topos defined by Berkovich \cite[\S~4.1]{Berkovich};
cf.\ \cite[p.~426]{Huber-etale}.\footnote{The \'etale site from \cite[\S~4.1]{Berkovich} uses \'etale covers by arbitrary (not neccessarily \emph{stricly} $K$-analytic) Berkovich spaces, which are not included in the equivalence $u$.
Therefore, Huber first considers the morphism $\theta_X \colon \Et_{/X} \to s.\Et_{/u(X)}$ to the strict \'etale site of $u(X)$, a slightly modified version of the \'etale site $\Et_{/u(X)}$ that only includes covers in $(An)$.
Since the natural morphism $\Et_{/u(X)} \to s.\Et_{/u(X)}$ induces an equivalence of topoi \cite[Cor.~A.5]{Huber-etale}, this yields the mentioned comparison of the adic \'etale and the Berkovich \'etale topos.}
This morphism is fully faithful, with essential image the overconvergent sheaves \cite[Th.~8.3.5]{Huber-etale}.
Moreover, it is functorial in the following sense:
for any partially proper morphism $f \colon X \to Y$ in $(A)'$, there is a natural isomorphism of functors on $D^+_\et(u(X),\ZZ)$
\[ \alpha_f \colon \rR f_! \circ \theta^*_X \xlongrightarrow{\sim} \theta^*_Y \circ \rR u(f)_!, \]
where $\rR u(f)_!$ is Berkovich's derived pushforward with compact support from \cite[\S~5.1]{Berkovich};
cf.\ \cite[Prop.~8.3.6]{Huber-etale} and \cite[Th.~A.15]{Z-thesis}.

\begin{proposition}[Compatibility with the Berkovich trace]\label{prop:compatibility-berkovich-trace-1}
    Let $f \colon X \to Y$ be a partially proper, smooth of equidimension $d$ morphism of taut adic spaces that are locally of finite type over $\Spa(K,\cO_K)$.
    Let $u(f) \colon u(X) \to u(Y)$ be the separated smooth of equidimension $d$ morphism of Hausdorff Berkovich spaces that is associated with $f$ under the equivalence \cref{adic-Berkovich} and denote by $\ttr_{u(f)} \colon \rR u(f)_!\LLambda_{u(X)}(d)[2d] \to \rR^{2d}u(f)_!\LLambda_{u(X)}(d) \to \LLambda_{u(Y)}$ the trace morphism defined in \cite[Th.~7.2.1]{Berkovich}.
    Then the following natural diagram commutes:
    \[ \begin{tikzcd}[column sep=-2em]
        \rR f_!\LLambda_X(d)[2d] \simeq \rR f_!\theta^*_X\LLambda_{u(X)}(d)[2d] \arrow[rr,"\alpha_f","\sim"'] \arrow[rd,start anchor=230,"\ttr_f"'] && \theta^*_Y\rR u(f)_!\LLambda_{u(X)}(d)[2d] \arrow[ld,"\theta^*_Y\ttr_{u(f)}"] \\
        & \LLambda_Y \simeq \theta^*_Y\LLambda_{u(Y)} &
    \end{tikzcd} \]
\end{proposition}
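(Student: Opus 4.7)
The strategy is to exploit the characterization of $\ttr_f$ from \cref{smooth-trace-constant} by verifying that the candidate morphism
\[ \tau_f \colonequals \bigl(\theta^*_Y \ttr_{u(f)}\bigr) \circ \alpha_f \colon \rR f_! \LLambda_X(d)[2d] \to \LLambda_Y \]
satisfies the same defining compatibilities as $\ttr_f$. As a first reduction, \cref{smooth trace lives in discrete space} and \cref{overconvergent-target} allow us to check agreement on stalks at geometric points of $Y$ of rank $1$. The compatibility of $\ttr_f$ with such base change comes from \cref{smooth-trace-constant}\cref{smooth-trace-constant-pullback}, while the analogous compatibility of $\ttr_{u(f)}$ follows from \cite[Th.~7.2.1]{Berkovich} and the natural base change isomorphism for $\rR u(f)_!$; together with the functoriality of $\alpha_f$, this reduces us to the case $Y = \Spa(C, \cO_C)$ for some algebraically closed complete nonarchimedean field $C$, so that $X$ is a partially proper smooth rigid-analytic space over $C$ of equidimension $d$.

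In this setting, $\tau_f$ inherits from the corresponding properties of $\ttr_{u(f)}$ established in \cite[Th.~7.2.1]{Berkovich} the following analogs of \cref{smooth-trace-constant}\cref{smooth-trace-constant-composition}--\cref{smooth-trace-constant-etale}, valid within the class of partially proper smooth morphisms: compatibility with compositions, compatibility with pullbacks along morphisms of partially proper smooth spaces, and agreement with the counit $(\rR f_!, f^*)$ whenever $f$ is \'etale (the latter because $\theta$ intertwines the two $(f_!, f^*)$-adjunctions). Moreover, both $\ttr_f$ and $\tau_f$ agree with the algebraic trace whenever $f$ is the analytification of a smooth proper morphism of finite type $C$-schemes: for $\ttr_f$ this is \cref{Compatibility with algebraic geometry}, and for $\tau_f$ this is a reformulation of \cite[Th.~7.5.3]{Berkovich} via Huber's comparison theorem \cite[Th.~3.7.2]{Huber-etale}. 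In particular, the two traces agree for the structure morphism $\PP^{d, \an}_C \to \Spa(C, \cO_C)$.

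The main obstacle is that our uniqueness proof of \cref{smooth-trace-constant} proceeds by reducing via \'etale-local factorizations through relative disks $\DD^d_C$, which are \emph{not} partially proper, and hence the transferred Berkovich trace $\tau_f$ cannot be compared with $\ttr_f$ at these intermediate stages. To bypass this, we work with cycle classes of classical points: by \cref{tr-vs-cl}, $\ttr_f \bigl(c\ell_X(x)\bigr) = 1 \in \Lambda$ for every $x \in X(C)$, and the analog for $\tau_f$ follows from the Berkovich version of this identity (proved by the same formal argument, since Berkovich's trace also satisfies \cref{tr-vs-cl}), together with compatibility of cycle classes under $\theta$. The identity $\ttr_f = \tau_f$ will then follow once we show that cycle classes of classical points generate $\rR^{2d}f_! \LLambda_X(d)$ \'etale-locally on $Y$; this is done by covering $X$ by quasi-compact opens $U_i \hookrightarrow X$ (which are necessarily taut by \cite[Lem.~5.1.3]{Huber-etale}) admitting an \'etale morphism $g_i \colon U_i \to \DD^d_C$, observing that on the relative disk the corresponding generation statement holds by \cref{tr-epi} and its Berkovich analog, and gluing as in \cref{trace local source}. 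The most delicate point in this plan is verifying that the cycle-class identity $\tau_f\bigl(c\ell_X(x)\bigr) = 1$ truly suffices, which requires carefully comparing the \'etale-local generation statements on both sides of the equivalence $\theta$.
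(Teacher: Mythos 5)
Your proposal correctly identifies the central obstacle --- the relative disks $\DD^d_C$ used in the uniqueness part of \cref{smooth-trace-constant} are not partially proper, so the transferred Berkovich trace $\tau_f$ cannot directly be compared with $\ttr_f$ on these intermediate spaces --- but the workaround you propose is fatally flawed. You want to conclude $\ttr_f = \tau_f$ from the identity $\ttr_f(c\ell_X(x)) = 1 = \tau_f(c\ell_X(x))$ on cycle classes of classical points, and for this you need cycle classes to generate $\rR^{2d} f_! \ud{\Lambda}_X(d)$ \'etale-locally on $Y$. That generation claim is false: the paper itself proves in \cref{cycle class of points do not generate} that $\Hh^2_c(\DD^1_C, \mu_p)$ is \emph{not} generated by cycle classes of classical points when $C$ is algebraically closed of mixed characteristic $(0,p)$, and \cref{distance and cycle class} shows additionally that distinct classical points can even have distinct cycle classes. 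The appeal to \cref{tr-epi} does not rescue this; \cref{tr-epi} asserts only that the trace map is an epimorphism, which says nothing about the image of cycle classes of points. So two maps out of $\Hh^{2d}_c$ can agree on all cycle classes without being equal, and your argument collapses at its final step.

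The paper sidesteps the obstacle entirely rather than trying to bridge it with cycle classes. The key observation is that one should work with the \emph{Berkovich} local structure of $u(f)$, not the adic one: since $u(f)$ is smooth in Berkovich's sense, every point of $u(X)$ has an open neighborhood $V$ over which $u(f)$ factors as an \'etale map $V \to \AA^{d,B}_{u(Y)}$ followed by the projection, and \emph{affine spaces are partially proper}. Applying $u^{-1}$ yields a factorization of $\restr{f}{u^{-1}(V)}$ through $\AA^{d,\an}_Y$ in which every morphism is partially proper and the open $u^{-1}(V) \subset X$ is partially proper (indeed, $u$ is the maximal Hausdorff quotient, so $u^{-1}$ of an open is automatically partially proper, and the \'etale map to $\AA^{d,\an}_Y$ is partially proper by \cite[p.~427,~(a)]{Huber-etale}). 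Since all intermediate spaces and maps stay within the world where the Berkovich trace is defined, one can now invoke compatibility with compositions on both sides (using \cref{trace local source} to glue over the cover of $X$), reducing to the cases of a partially proper \'etale map (which is treated in \cite[Lem.~A.19]{Z-thesis}) and the relative affine line $\AA^{1,\an}_Y \to Y$; for the latter, one checks on stalks at rank-$1$ geometric points as you suggest, and both traces are then the analytification of the algebraic trace on the schematic affine line.
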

Note that partially proper morphisms of locally noetherian analytic adic spaces are automatically separated and taut \cite[Def.~1.3.3~ii), Lem.~5.1.10~i)]{Huber-etale}, so $\ttr_f$ is indeed defined. 
Moreover, we use that the induced morphism $u(f)$ of Berkovich spaces is smooth in the sense of Berkovich because $f$ is partially proper and smooth \cite[Cor.~5.4.8]{Ducros} (cf.\ also \cite[Cor.~A.11]{Z-thesis}).
\begin{proof}
    First, we note that \cite[Lem.~A.19]{Z-thesis} proves the claim when $g$ is partially proper and \'etale. Thus \cref{trace local source} suffices that we can argue locally on $X^B$. Since the induced morphism $u(f)$ of Berkovich spaces is smooth in the sense of Berkovich, so any $x \in u(X)$ has an open neighborhood $V \subset u(X)$ such that $\restr{u(f)}{V}$ factors as
    \[ \begin{tikzcd}
        V \arrow[r,"h"] \arrow[rd,"\restr{u(f)}{V}"'] & \AA^{d,B}_Y \arrow[d,"u(\pi)"] \\
        & Y
    \end{tikzcd} \]
    such that $h$ is \'etale (cf.\ \cite[Def.~3.5.1]{Berkovich}).
    Applying $u^{-1}$, one obtains a factorization
    \[ \begin{tikzcd}
        U \arrow[r,"g"] \arrow[rd,"\restr{f}{U}"'] & \AA^{d,\an}_Y \arrow[d,"\pi"] \\
        & Y
    \end{tikzcd} \]
    with $g$ being partially proper and \'etale (see \cite[p.~427,~(a)]{Huber-etale}). Moreover, $U = u^{-1}(V) \subseteq X$ is open and partially proper inside $X$ because $u$ is defined by passing to the maximal Hausdorff quotient. Since both the analytic and the Berkovich trace are compatible with compositions, we conclude that it suffices to prove the claim when $f$ is partially proper \'etale or the relative affine line $\AA^{1, \an}_Y \to Y$. The case of a partially proper \'etale morphism was solved above, so we only need to consider the case of the relative affine line. 
    
    Since $\LLambda_Y$ is overconvergent, we may check the equality of the two trace morphisms after passing to stalks at geometric points of rank $1$ (\cref{smooth trace lives in discrete space} and \cref{overconvergent-target}).
    Thanks to \cite[Cor.~5.4.8]{Huber-etale} and \cite[Th.~5.3.1]{Berkovich}, we can therefore assume that $Y = \Spa(C,\cO_C)$ for some algebraically closed nonarchimedean field $C$, and $X=\AA^{1, \an}_C$. In this case, both the analytic trace and the Berkovich trace comes as an analytification of the algebraic trace for the schematic affine line $\AA^1_C \to \Spec C$. This finishes the proof. %
\end{proof}

\begin{corollary}\label{cor:compatibility-berkovich-trace-2} Let $f \colon X \to Y$ be a partially proper smooth of equidimension $d$ morphism between rigid-analytic spaces over $\Spa(K, \O_K)$. Then $\cal{H}^{2d}(\ttr_f) \colon \rR^{2d} f_! \ud{\Lambda}_X(d) \to \ud{\Lambda}_Y$ coincides with the Berkovich trace $t_f$ from \cite[Th.~5.3.3]{Z-thesis}.
\end{corollary}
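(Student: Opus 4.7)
The plan is to deduce this corollary almost immediately from \cref{prop:compatibility-berkovich-trace-1}, using the fact that the adic-theoretic trace $t_f$ defined in \cite[Th.~5.3.3]{Z-thesis} is, by construction, nothing but the translation of the Berkovich trace $\ttr_{u(f)}$ under the equivalence of topoi from Huber's comparison theory.

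More precisely, first I would recall that, for a partially proper smooth morphism $f \colon X \to Y$ of rigid-analytic spaces, the map $u(f) \colon u(X) \to u(Y)$ is a separated smooth morphism of Hausdorff strictly $K$-analytic Berkovich spaces of the same equidimension $d$, and that the construction of $t_f$ in \cite[Th.~5.3.3]{Z-thesis} is obtained by pulling back Berkovich's trace $\ttr_{u(f)} \colon \rR^{2d} u(f)_! \ud{\Lambda}_{u(X)}(d) \to \ud{\Lambda}_{u(Y)}$ along the fully faithful functor $\theta_Y^*$ and transporting it via the canonical isomorphism $\alpha_f \colon \rR f_! \theta^*_X \xrightarrow{\sim} \theta^*_Y \rR u(f)_!$ discussed above. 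In particular, under the identifications $\ud{\Lambda}_X \simeq \theta^*_X \ud{\Lambda}_{u(X)}$ and $\ud{\Lambda}_Y \simeq \theta^*_Y \ud{\Lambda}_{u(Y)}$, the map $t_f$ fits into a commutative diagram
\[
\begin{tikzcd}[column sep=0em]
\rR^{2d} f_! \ud{\Lambda}_X(d) \arrow{rr}{\cal{H}^{2d}(\alpha_f)}[swap]{\sim} \arrow{rd}[swap]{t_f} & & \theta^*_Y \rR^{2d} u(f)_! \ud{\Lambda}_{u(X)}(d) \arrow{ld}{\theta^*_Y \ttr_{u(f)}} \\
& \ud{\Lambda}_Y. &
\end{tikzcd}
\]

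Now \cref{prop:compatibility-berkovich-trace-1} asserts that the analogous diagram, with $t_f$ replaced by $\cal{H}^{2d}(\ttr_f)$, also commutes. Since $\cal{H}^{2d}(\alpha_f)$ is an isomorphism, comparing the two diagrams immediately yields the equality $\cal{H}^{2d}(\ttr_f) = t_f$.

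The only potential subtlety is pinning down that the definition of $t_f$ in \cite[Th.~5.3.3]{Z-thesis} really is the one just described; assuming this (which is the whole point of that reference), there is no remaining obstacle, as the result is then a formal consequence of \cref{prop:compatibility-berkovich-trace-1} and the fact that the functor $\theta^*_Y$ restricted to overconvergent sheaves is faithful.
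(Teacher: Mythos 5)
Your argument is essentially the paper's: both deduce the result from \cref{prop:compatibility-berkovich-trace-1} by observing that $t_f$ from \cite[Th.~5.3.3]{Z-thesis} is, by its very construction, the translation of Berkovich's trace $\ttr_{u(f)}$ through the comparison of topoi $\theta_Y$.

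There is one small gap you should close. The equivalence $u$ from \cref{adic-Berkovich} (and hence the morphism of topoi $\theta_Y$, the isomorphism $\alpha_f$, and \cref{prop:compatibility-berkovich-trace-1} itself) is only available when $X$ and $Y$ are \emph{taut} adic spaces locally of finite type over $\Spa(K,\cO_K)$. Rigid-analytic spaces over $K$ in the sense of this paper are locally of finite type but not automatically taut, so the objects $u(X)$ and $u(Y)$ in your diagram are not defined a priori. The fix is to note that the claimed equality of maps $\rR^{2d}f_!\ud{\Lambda}_X(d) \to \ud{\Lambda}_Y$ can be checked locally on $Y$ (for instance because $\ud{\Lambda}_Y$ is overconvergent and the source lies in degree $\le 0$; cf.\ \cref{smooth trace lives in discrete space} and \cref{overconvergent-target}), so one may assume $Y$ is affinoid. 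Then $Y$ is quasi-compact and separated, hence taut, and so is $X$ since $f$ is partially proper; only now does \cref{prop:compatibility-berkovich-trace-1} apply, and your diagram comparison goes through as stated.
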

\begin{proof}
    The question is local on $Y$, so we can assume that $Y$ is an affinoid. Then the result follows directly from \cref{prop:compatibility-berkovich-trace-1}.
\end{proof}

Assume now that $K$ is a $p$-adic field (i.e., a complete discrete valuation field of mixed characteristic $(0,p)$ whose residue field is perfect).
Set $C \colonequals \widehat{\overline{K}}$.
Recall that for every Zariski-compactifiable smooth rigid-analytic space $U$ of equidimension $d$ over $K$, Lan--Liu--Zhu also constructed in \cite[Th.~1.3]{LLZ} a $p$-adic rational trace
\[
t_{U, \et} \colon \rm{H}^{2d}_c\bigl(U_C, \mathbf{Q}_p(d)\bigr) \coloneqq \Bigl(\lim_r \rm{H}^{2d}_c\bigl(U_C,\Z/p^r\Z(d)\bigr)\Bigr)\Bigl[\frac{1}{p}\Bigr] \longrightarrow \mathbf{Q}_p.
\]
The trace maps from \cref{smooth-trace-constant} gives rise to another $p$-adic rational trace map $\rm{tr}_U \colon \rm{H}^{2d}_c\bigl(U_C, \mathbf{Q}_p(d)\bigr) \to \mathbf{Q}_p$ via the formula $\rm{tr}_U\coloneqq \bigl(\lim_r \Hh^{2d}(\rm{tr}_{\Z/p^r\Z})\bigr)\bigl[\frac{1}{p}\bigr]$. We claim that these two maps coincide:

\begin{lemma}\label{lemma:comparison-LLZ} 
In the situtation described above, we have $t_{U, \et} = \ttr_U \colon \rm{H}^{2d}_c\bigl(U_C, \mathbf{Q}_p(d)\bigr) \to \mathbf{Q}_p$.
\end{lemma}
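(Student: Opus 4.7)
The plan is to exploit the characterization of $\ttr_U$ by its compatibility with \'etale morphisms, compositions, and the algebraic trace, and to verify that the LLZ rational trace $t_{U,\et}$ satisfies the same compatibilities, at least to the extent needed for the comparison.

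First, I would reduce to the smooth proper case. Since $U$ is Zariski-compactifiable, one can choose an open immersion $j \colon U \hookrightarrow \overline{U}$ with $\overline{U}$ proper, and Temkin's resolution of singularities lets us further arrange that $\overline{U}$ is smooth proper of equidimension $d$. By \cref{smooth-trace-constant}\cref{smooth-trace-constant-composition} combined with \cref{smooth-trace-constant}\cref{smooth-trace-constant-etale}, our trace fits into the factorization
\[ \ttr_U = \ttr_{\overline{U}} \circ \Hh^{2d}\bigl(\ttr^\et_j(d)\bigr) \]
after applying $\Hh^{2d}_c(\blank_C)$ and inverting $p$. The LLZ trace $t_{U,\et}$ satisfies the same factorization, essentially by how it is built from the proper trace in \cite[Th.~1.3]{LLZ}. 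Thus it suffices to establish $\ttr_{\overline{U}} = t_{\overline{U},\et}$ for smooth proper $\overline{U}$.

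Second, I would handle the smooth proper case via alterations. By de Jong's or Temkin's alteration theorem, there is a proper generically finite surjection $\pi \colon V \to \overline{U}$ with $V$ smooth projective, hence the analytification of a smooth projective $K$-scheme $V^{\mathrm{alg}}$. Since $\deg(\pi)$ is invertible in $\mathbf{Q}_p$, the rationalized pushforward $\Hh^{2d}\bigl(V_C, \mathbf{Q}_p(d)\bigr) \to \Hh^{2d}\bigl(\overline{U}_C, \mathbf{Q}_p(d)\bigr)$ induced by the (generically finite) finite flat trace is surjective. Both $\ttr$ and $t_\et$ are natural under this pushforward---for $\ttr$, this follows from \cref{thm:analytic-trace-compatible-finite-flat-trace} together with the behaviour of smooth traces under finite flat morphisms; for $t_\et$, it is built into the LLZ construction---so we reduce to comparing on $V$. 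But on the algebraizable $V$, our trace agrees with the algebraic trace on $V^{\mathrm{alg}}$ via \cref{Compatibility with algebraic geometry} and Huber's comparison \cite[Th.~3.7.2]{Huber-etale}, and the LLZ trace is by construction normalized to coincide with the algebraic trace in this situation, yielding equality after passing back through the reduction steps.

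The main technical obstacle will be cleanly extracting and matching the functoriality properties of the LLZ trace with those we have developed for $\ttr$---in particular, the compatibility with \'etale morphisms, with open immersions into smooth proper compactifications, and with the rational trace attached to generically finite covers. Once these compatibilities are made precise (by unraveling the construction in \cite[Th.~1.3]{LLZ} and tracking normalizations through the reduction to the algebraic case), the comparison becomes a formal diagram chase.
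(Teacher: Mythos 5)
Your first reduction---passing from the Zariski-compactifiable $U$ to a smooth proper $\overline{U}$ via Temkin resolution and compatibility of both traces with the \'etale trace of the open immersion $j$---matches the paper's first step (the paper cites \cite[Th.~4.4.1~(1)]{LLZ} rather than \cite[Th.~1.3]{LLZ} for the LLZ side, but that is a minor bookkeeping point).

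The second step, however, has a genuine gap. You propose to alter $\overline{U}$ to a smooth \emph{projective} rigid space $V$ and then invoke that $V$ is the analytification of a smooth projective $K$-scheme. But smooth proper rigid-analytic spaces over a nonarchimedean field need not be algebraizable, and it is not known (and likely false) that they admit proper generically finite covers by analytifications of projective schemes. Temkin's theorems give altered local uniformization or resolution within the rigid-analytic category, producing a smooth rigid space, not an algebraic one. Without such an algebraizing alteration, the final comparison with the algebraic trace via \cref{Compatibility with algebraic geometry} cannot be reached. In addition, the claimed naturality of the LLZ trace under generically finite pushforwards is asserted as ``built into the LLZ construction'' but would itself require justification, since \cite[Th.~1.3]{LLZ} is stated for a single morphism and its compatibilities are established under specific operations (Gysin maps for divisors, open immersions), not general alterations.

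The paper sidesteps algebraization entirely: after reducing to smooth proper geometrically connected $X$ with a rational point $x$, it uses \cref{Poincare dualizability theorem} and \cref{tr-vs-cl} to see that $\Hh^{2d}(X_C,\QQ_p(d))$ is one-dimensional with $\ttr_X(c\ell_X(x)) = 1$, so the comparison collapses to showing $t_{X,\et}(c\ell_X(x)) = 1$. This is then done by induction on $\dim X$, using the Gysin compatibility from the proof of \cite[Th.~4.4.1]{LLZ} for an effective Cartier divisor $E \ni x$ and \cref{lemma:class-of-point-composes-well} to reduce to the same statement on $E$. The cycle-class argument is essential precisely because it only ever invokes algebraization implicitly in dimension $0$, where it is automatic. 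If you want to repair your argument, you should replace the alteration step with this cycle-class normalization.
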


\begin{proof}[Sketch of the proof.]
Using resolution of singularities (see \cite[Th.~5.2.2]{Tem12}), we can assume that $U$ admits a smooth proper (Zariski-)compactification $X$. 
By virtue of \cite[Th.~4.4.1~(1)]{LLZ} and \cref{smooth-trace-constant}\cref{smooth-trace-constant-etale}, it then suffices to prove the statement for the smooth and proper $X$.
After passing to a finite extension of $K$ and a component of $X$, we may further assume that $X$ is geometrically connected and admits a rational point $x \in X(K)$. 

Now we argue by induction on $d=\dim X$. If $d=0$, the claim is obvious.
Therefore, we assume that $d = \dim X>0$ and that the claim is known in dimensions $<d$.
\Cref{Poincare dualizability theorem} below (whose proof does not use \cref{lemma:comparison-LLZ}) and \cref{tr-vs-cl} imply that $\Hh^{2d}\bigl(X_C, \QQ_p(d)\bigr)$ is one-dimensional and that $\ttr_X\bigl(c\ell_X(x)\bigr)=1$.
Thus, it suffices to prove that $t_{X, \et}\bigl(c\ell_X(x)\bigr)=1$. 

An inspection of the proof of \cite[Th.~4.4.1]{LLZ} shows that we can further assume that there is an effective Cartier divisor $x\in E\xhookrightarrow{i} X$.
In this case, the proof of \textit{loc.\ cit.} guarantees that the composition\footnote{
We implicitly use that the Gysin map $i_* \ud{\QQ}_p \to \ud{\QQ}_p(1)[2]$ defined in \cite[Rmk.~4.3.12]{LLZ} coincides with $\cl_i$. For this, we note that \cite[Th.~3.21]{BH} implies that $\rR\cHom\bigl(i_* \ud{\QQ}_p, \ud{\QQ}_p(1)[2]\bigr) = i_*\ud{\QQ}_p$, so the question whether two maps between these complexes coincide is \'etale local on $X$.
Thus, it suffices to prove the claim for $\AA_k^{d-1,\an} \hookrightarrow \AA_k^{d,\an}$. In this case, both maps coincide with the analytification of the algebraic cycle class map due to \cref{lemma:analytification-cycle-classes} and the claim that the construction in \cite[Rmk.~4.3.12]{LLZ} coincides with the construction in \cite[\S~4]{Faltings-almost-extensions}.}
$\Hh^{2d-2}\bigl(E_C, \QQ_p(d-1)\bigr) \xr{\Hh^{2d}(X_C, \cl_i(d-1))} \Hh^{2d}\bigl(X_C, \QQ_p(d)\bigr) \xr{t_{X, \et}} \QQ_p$ is equal to $t_{E, \et}$. 
Combined with \cref{lemma:class-of-point-composes-well}, the statement is therefore reduced to proving that $t_{E, \et}\bigl(c\ell_E(x)\bigr)=1$.
This follows immediately from the induction hypothesis. 
\end{proof}

\begin{remark}
    In \cite[Cor.~3.10.22]{Mann-thesis}, Mann proves Poincar\'e duality for smooth proper morphisms of rigid-analytic spaces over a nonarchimedean field extension $K$ of $\QQ_p$ along the following lines:
    \begin{enumerate}[label={\upshape{(\arabic*)}},leftmargin=*]
        \item For any analytic adic space $X$ over $\QQ_p$ (or more generally small v-stack), he defines an $\infty$-category $\mathcal{D}^\ra_\square(\cO^+_X/p)^\varphi$ of ``almost quasicoherent solid $\varphi$-modules over $\cO^+_X/p$'' \cite[Th.~3.9.10~(b)]{Mann-thesis} together with a fully faithful ``Riemann--Hilbert functor'' from overconvergent \'etale $\FF_p$-sheaves
        \begin{equation}\label{Mann-RH}
            \blank \otimes^L \cO^{+,\ra}_X/p \colon \mathcal{D}_\et(X,\FF_p)^{\mathrm{oc}} \hookrightarrow \mathcal{D}^\ra_\square(\cO^+_X/p)^\varphi.
        \end{equation}
        This functor admits a right adjoint, which is (locally in the v-topology) roughly given by taking $\varphi$-invariants and induces an equivalence on perfect objects \cite[Th.~1.2.7]{Mann-thesis}.
        \item\label{Mann-6functor} He develops a $6$-functor formalism for $\mathcal{D}^\ra_\square(\cO^+_X/p)^\varphi$ \cite[Th.~1.2.4]{Mann-thesis}.
        \item For any smooth morphism $f \colon X \to Y$ of equidimension $d$ between analytic adic spaces over $\QQ_p$, he proves that $\rR f^!\bigl(\cO^{+,\ra}_Y/p\bigr) \simeq \cO^{+,\ra}_X/p(d)[2d]$ \cite[Th.~1.2.8]{Mann-thesis}.
        When $f$ is in addition proper, this leads, under the equivalence from \cref{Mann-RH}, to a Poincar\'e duality for perfect complexes of overconvergent \'etale $\FF_p$-sheaves;
        the resulting trace map corresponds to the counit $\rR f_! \rR f^! \bigl(\cO^{+,\ra}_Y/p\bigr) \simeq \rR f_*\bigl(\cO^{+,\ra}_X/p(d)[2d]\bigr) \to \cO^{+,\ra}_Y/p$.
    \end{enumerate}

    However, it does not seem straightforward to compare Mann's trace map and Poincar\'e duality isomorphism with the one from this paper.
    In fact, the 6-functor formalism in \cref{Mann-6functor} is not compatible with Huber's functors from \cite{Huber-etale} when the latter are defined.
    On the one hand, Huber's $\rR f_!$ need not preserve overconvergent sheaves unless $f$ is partially proper, hence cannot be given as the $\varphi$-invariants of Mann's compactly supported pushforward functor.
    
    On the other hand, Mann's compactly supported pushforward functor is not the image of Huber's compactly supported pushforward under \cref{Mann-RH}:
    For instance, when $f \colon X\coloneqq \accentset{\circ}{\DD}^1_C \to \Spa(C, \O_C)$ is the structure morphism of the open unit disk over an algebraically closed nonarchimedean field $C$ of mixed characteristic $(0,p)$, Mann proves that in his formalism the natural transformation of functors $\rR f^!\bigl(\cO^{+,\ra}_X/p\bigr) \otimes^L Lf^*(\blank) \to \rR f^!(\blank)$ is an equivalence \cite[Th.~3.10.17, Prop.~3.8.4~(i)]{Mann-thesis}.
    In particular, $\rR f^!(\blank)$ preserves filtered colimits, so its left adjoint $\rR f_!(\blank)$ preserves almost compact objects.
    If Mann's $\rR f_!\bigl(\cO^{+,\ra}_X/p\bigr)$ was isomorphic to the image of Huber's $\rR f_!(\ud{\FF}_p)$ under the fully faithful functor \cref{Mann-RH}, it would also be discrete and thus a perfect object of $\mathcal{D}^\ra_\square(\cO_C/p)^\varphi$;
    cf.\ \cite[Prop.~3.7.5, Def.~3.9.15]{Mann-thesis}.
    However, since \cref{Mann-RH} induces an equivalence of perfect objects, this would imply that Huber's $\Hh^2_c\bigl(\accentset{\circ}{\DD}^1_C,\FF_p\bigr)$ is finite-dimensional, contradicting \cref{example:no-proper-base-change}\cref{example:no-proper-base-change-2}.
\end{remark}

\subsection{Digression: K\"unneth Formula}

In this subsection, we establish a version of the K\"unneth Formula that we will crucially use in our proof of Poincar\'e duality in \cref{section:smooth-poincare-duality}. 
We recall that we fix a positive integer $n$ and put $\Lambda \colonequals \Z/n\Z$.

\begin{lemma}[Proper Base Change]\label{lemma:proper-base-change}
Consider a cartesian diagram of locally noetherian analytic adic spaces
\[
\begin{tikzcd}
    X' \arrow{r}{g'} \arrow{d}{f'} & X \arrow{d}{f} \\
    Y' \arrow{r}{g} & Y
\end{tikzcd}
\]
with $n\in \O_Y^\times$ and $f$ proper.
Let $\F\in D(X_\et; \Lambda)$ be a complex of \'etale sheaves such that, for each geometric point $\ov{y} \to Y$ of rank $1$, the restriction $\restr{\F}{X_{\ov{y}}}$ lies in $D_{zc}(X_{\ov{y}, \et}; \Lambda)$. Then the natural morphism
\[
\rm{BC}_{f, g}\colon g^* \rR f_* \F \to \rR f'_* g'^* \F
\]
is an isomorphism.
\end{lemma}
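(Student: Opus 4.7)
The plan is to reduce, via stalkwise checking and Huber's weak proper base change, to the invariance of \'etale cohomology of a proper rigid-analytic space with Zariski-constructible coefficients under extensions of the algebraically closed nonarchimedean ground field, where one can invoke \cite[Lem.~3.25]{BH}.

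More precisely, I would first use \cite[Prop.~2.5.5]{Huber-etale} to reduce the verification that $\BC_{f, g}$ is an isomorphism to a check on stalks at geometric points $\ov{y}' \to Y'$ of rank $1$. Writing $\ov{y} \coloneqq g(\ov{y}')$ for the image geometric point of $Y$ (also of rank $1$), the morphisms $\ov{y}' \to Y'$ and $\ov{y} \to Y$ have transcendence dimension $0$; the weak proper base change theorem \cite[Th.~5.4.6]{Huber-etale} therefore identifies $(g^*\rR f_*\F)_{\ov{y}'}$ with $\rR\Gamma(X_{\ov{y}}, \restr{\F}{X_{\ov{y}}})$ and $(\rR f'_*g'^*\F)_{\ov{y}'}$ with $\rR\Gamma(X_{\ov{y}'}, \restr{\F}{X_{\ov{y}'}})$. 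After a diagram chase confirming that the base change map $\BC_{f, g}$ corresponds, under these identifications, to the natural pullback morphism, the claim reduces to showing that for the proper rigid-analytic space $X_{\ov{y}}$ over the algebraically closed nonarchimedean field $C \coloneqq k(\ov{y})$, the extension to $C' \coloneqq k(\ov{y}')$ of complete algebraically closed nonarchimedean fields induces an isomorphism
\[
\rR\Gamma\bigl(X_{\ov{y}}, \restr{\F}{X_{\ov{y}}}\bigr) \longrightarrow \rR\Gamma\bigl(X_{\ov{y}'}, \restr{\F}{X_{\ov{y}'}}\bigr).
\]
Here the Zariski-constructibility of $\restr{\F}{X_{\ov{y}}}$ is precisely the hypothesis of the lemma.

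The last statement is Bhatt--Hansen's proper base change for Zariski-constructible sheaves across extensions of algebraically closed nonarchimedean fields, namely \cite[Lem.~3.25]{BH}, which the introduction already singles out as one of the rare places where the deeper perfectoid input is required. Granting this result, the proof is complete. The main obstacle in the plan above is the diagram chase establishing that $\BC_{f, g}$ indeed matches, under the two weak proper base change identifications, the natural comparison morphism of cohomology complexes of fibers: this requires tracking the various adjunction units, counits, and base change maps carefully, but it is bookkeeping rather than genuinely new input. Everything else is a standard manipulation within Huber's formalism.
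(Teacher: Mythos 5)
Your overall strategy coincides with the paper's: both proofs reduce the assertion to a fiberwise comparison over rank-$1$ geometric points and then invoke Bhatt--Hansen's invariance of \'etale cohomology of proper rigid spaces with Zariski-constructible coefficients under extensions of algebraically closed nonarchimedean ground fields. (The paper's proof cites \cite[Th.~3.15]{BH} for this final step; you cite \cite[Lem.~3.25]{BH}, which is the more primitive input quoted in the introduction.) The minor procedural difference is that you propose to identify the stalks of $\rR f_*\F$ via Huber's weak proper base change \cite[Th.~5.4.6]{Huber-etale}, whereas the paper instead uses \cite[Prop.~2.6.1]{Huber-etale} (compatibility of $\rR f_*$ with stalks) after replacing $Y'$, $Y$ themselves by $\Spa(C',\cO_{C'})$, $\Spa(C,\cO_C)$; these are functionally interchangeable.

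There are, however, two genuine gaps. First, \cite[Prop.~2.5.5]{Huber-etale} only says that stalks at \emph{all} geometric points form a conservative family; it does not license you to check only at geometric points of rank $1$. That restriction is what makes the final reduction to $\Spa(C,\cO_C)$ with $C$ algebraically closed possible, and it needs justification. The paper gets it by first localizing to affinoid $Y$, using that $\rR f_*$ has bounded cohomological dimension to reduce from a complex $\F$ to a single sheaf, and then appealing to overconvergence of $\rR^q f_*$ via \cite[Prop.~8.2.3]{Huber-etale}; your proposal skips all of this, and without some such argument the rank-$1$ reduction step is unsupported. Second, the lemma is stated for general locally noetherian analytic adic spaces, so the residue characteristic may be $p>0$ (in which case $n\in\cO_Y^\times$ forces $(n,p)=1$); Bhatt--Hansen works only in characteristic $0$, so that case must be handled separately using Huber's classical proper base change \cite[Th.~4.4.1~(a)]{Huber-etale}, as the paper does, and your proof as written does not cover it.
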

\begin{proof}
    First, the claim is \'etale local on $Y$ and $Y'$, so we can assume that they are both affinoid. Then \cite[Lem.~9.1~(1)]{adic-notes} implies that $f$ has bounded cohomological dimension. 
    Thus, a standard argument allows us to reduce to the case $\F\in \Shv(X_\et; \Lambda)$. In this case, \cite[Prop.~8.2.3]{Huber-etale} ensures that all cohomology sheaves of $g^* \rR f_* \F$ and $\rR f'_* g'^* \F$ are overconvergent. Therefore, it suffices to show that $\rm{BC}_{f, g}$ is an isomorphism at geometric rank-$1$ points.
    Thanks to \cite[Prop.~2.6.1]{Huber-etale}, we may thus assume that $Y'=\Spa(C', \O_{C'})$ and $Y=\Spa(C, \O_C)$ are both geometric points of rank $1$.
    Now if $\charac C >0$, the result follows from \cite[Th.~4.4.1~(a)]{Huber-etale}. If $\charac C = 0$, then the result follows from \cite[Th.~3.15]{BH}.
\end{proof}

\begin{remark}\label{use-perfectoid}
We note that \cite[Th.~3.15]{BH} crucially uses perfectoid spaces in its proof. However, this is the only instance where we need to use perfectoid spaces in this paper.
\end{remark}

We note that proper base change fails for more general coefficients: 

\begin{example}\label{example:no-proper-base-change} Let $C$ be an algebraically closed nonarchimedean field of mixed characteristic $(0, p)$. 
\begin{enumerate}[leftmargin=*]
    \item\label{example:no-proper-base-change-1} (constructible example\footnote{Here, we use the word ``constructible'' in the sense of \cite[Def.~2.7.2]{Huber-etale}. In particular, a sheaf which is both constructible and Zariski-constructible must be a local system due to \cite[Lem.~2.7.10]{Huber-etale}.}) Let $j\colon \DD^1_C \hookrightarrow \PP^{1, \an}_C$ be the natural inclusion of the closed unit disk into the analytic projective line. Then proper base change for the sheaf $\F = j_!\mu_p$ would, in particular, imply that $\rR\Gamma_c(\DD^1_C, \mu_p)$ does not depend on the choice of the algebraically closed ground field $C$.
    However, $\Hh^2_c(\DD^1_C, \mu_p)$ does depend on $C$ due to \cref{distance and cycle class}:
    one can take $C\subset C'$ such that the cardinality of the residue field of $C'$ is bigger than $\Hh^2_c(\DD^1_C, \mu_p)$.
    \item\label{example:no-proper-base-change-2} (overconvergent example) Let $j\colon \accentset{\circ}{\DD}^1_C \hookrightarrow \PP^{1, \an}_C$ be the natural inclusion of the open unit disk into the analytic projective line.
    Similarly, proper base change for the sheaf $\F = j_!\mu_p$ would, in particular, imply that $\rR\Gamma_c(\accentset{\circ}{\DD}^1_C, \mu_p)$ does not depend on the choice of the algebraically closed ground field $C$.
    The closed complement to $j$ is equal to $\DD^{1, c}_C$, the universal compactification of the closed unit disc. Therefore, the excision sequence and \cref{cohomology-affine-curve} imply that we have the following exact triangle:
    \[
    \rR \Gamma_c(\accentset{\circ}{\DD}^1_C, \mu_p) \to \rR \Gamma(\PP^{1, \an}_C,\mu_p) \to \rR \Gamma(\DD^1_C, \mu_p)
    \]
    Hence, proper base change for $j_!\mu_p$ would, in particular, imply that $\rR\Gamma(\DD^1_C, \mu_p)$ is independent of the choice of algebraically closed $C$. However, $\Hh^1(\DD^1_C, \mu_p)$ does depend on $C$ due to \cref{rmk:infinite-cohomology}:
    one can take $C\subset C'$ such that the cardinality of $\m_{C'}/p\m_{C'}$ is bigger than $\Hh^1(\DD^1_C, \mu_p)$).
\end{enumerate}
\end{example}

\begin{construction}[K\"unneth map]\label{construction:kunneth-map}
Consider the following commutative diagram of locally noetherian analytic adic spaces:
\begin{equation}\label{eqn:kunneth-data}
\begin{tikzcd}
W \arrow[r,"g"] \arrow[d,"g'"] \arrow[rd, "h"] & X \arrow[d,"f"] \\
X' \arrow[r,"f'"] & Y
\end{tikzcd}
\end{equation}
Let $\cal{E}\in D(X_\et; \Lambda)$ and $\cal{E}'\in D(X'_\et; \Lambda)$. We define the \emph{K\"unneth map}
\[
\rm{KM}\colon \rR f_*\cal{E} \otimes^L \rR f'_*\cal{E}' \to \rR h_*(g^*\cal{E} \otimes^L g'^*\cal{E}')
\]
as the adjoint to the map
\[
h^*\bigl(\rR f_*(\cal{E}) \otimes^L \rR f'_*(\cal{E}')\bigr) \simeq (g^* f^* \rR f_*\cal{E}) \otimes^L (g'^* f'^* \rR f'_* \cal{E}') 
\xrightarrow{g^*(\epsilon_f) \otimes^L g'^*(\epsilon_{f'})} g^*\cal{E} \otimes^L g'^*\cal{E}',
\]
where $\epsilon_f$ (resp.\ $\epsilon_{f'}$) denotes the counit of the $(f^*, \rR f_*)$-adjunction (resp.\ the $(f'^*, \rR f'_*)$-adjunction).
\end{construction}

The K\"{u}nneth map is functorial in both variables $\cal{E}$ and $\cal{E}'$, and in Diagram~\cref{eqn:kunneth-data}. 

\begin{remark}\label{cup-product}
We note that the K\"unneth map boils down to the cup-product map (see \cite[\href{https://stacks.math.columbia.edu/tag/0B6C}{Tag~0B6C}]{stacks-project}) when $W=X=X'$, $f=f'$, and $g=g'=\rm{id}_X$. 
\end{remark}

\begin{remark}
Now consider the case $X=X'$, $f=f'$, and $W=X\times_Y X$ with $g$ and $g'$ being the natural projection maps. 
In this situation, the functoriality in $W$ (with respect to the morphism $\Delta \colon X \to X\times_Y X$) implies that the composition
\begin{multline*}
\rR f_*\cal{E} \otimes^L \rR f_* \cal{E}'
\xrightarrow{\text{KM}}
\rR h_*(g^*\cal{E} \otimes^L g'^*\cal{E}') 
\xrightarrow{\rR h_*(\eta_{\Delta})}  \rR h_*\bigl(\rR \Delta_*\Delta^*(g^*\cal{E} \otimes^L g'^*\cal{E}') \bigr) = \\ = \rR f_*(\Delta^*\left(g^*\cal{E} \otimes^L g'^*\cal{E}'\right)) \simeq \rR f_*(\cal{E} \otimes^L \cal{E}')
\end{multline*}
is equal to the cup-product morphism, where $\eta_{\Delta}$ denotes the unit of the $(\Delta^*, \rR\Delta_*)$-adjunction. 
\end{remark}

Now we wish to prove that under some assumptions, the K\"unneth map is an isomorphism. For this, we will need the following very general lemma:

\begin{lemma}
\label{claim factorizing Kunneth map}
Consider a commutative diagram of locally noetherian analytic adic spaces
\[
\begin{tikzcd}
W \arrow[r,"g"] \arrow[d,"g'"] \arrow[rd, "h"] & X \arrow[d,"f"] \\
X' \arrow[r,"f'"] & Y.
\end{tikzcd}
\]
Let $\cal{E} \in D(X_\et; \Lambda)$ and $\cal{E}' \in D(X'_\et; \Lambda)$.
Then the following diagram commutes:
\[
\begin{tikzcd}[column sep =5em]
\rR f_* \cal{E} \otimes^L \rR f'_* \cal{E}' \arrow[d,"\rm{KM}"] \arrow[r,"\PF_f"] & \rR f_*\bigl(\cE \otimes^L (f^*\rR f'_* \cE')\bigr) \arrow[d,"\rR f_*(\id \otimes \BC_{f',f})"] \\
\rR h_*(g^*\cE \otimes^L g'^*\cE') \simeq \rR f_* \rR g_*(g^*\cE \otimes^L g'^*\cE')& \rR f_*\bigl(\cE \otimes^L (\rR g_* g^{\prime *}\cE')\bigr) \arrow[l,"\rR f_*(\PF_g)"].
\end{tikzcd}
\]
Here, $\rm{BC}$ stands for the base change morphism, and $\rm{PF}$ stands for the projection formula morphism (see \cite[\href{https://stacks.math.columbia.edu/tag/07A7}{Tag 07A7}]{stacks-project} and \cite[\href{https://stacks.math.columbia.edu/tag/0B56}{Tag 0B56}]{stacks-project}). 
\end{lemma}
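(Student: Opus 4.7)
The plan is to reduce both compositions to a common adjoint form via the $(h^*, \rR h_*)$-adjunction. Since $h = f \circ g = f' \circ g'$, we have canonical identifications $h^* \simeq g^* f^* \simeq g'^* f'^*$ and $\rR h_* \simeq \rR f_* \rR g_*$, and both morphisms in the diagram have the shape $\rR f_*\cE \otimes^L \rR f'_*\cE' \to \rR h_*(g^*\cE \otimes^L g'^*\cE')$. Thus, to prove their equality, it suffices to show that their images under the bijection
\[
\Hom\bigl(\rR f_*\cE \otimes^L \rR f'_*\cE',\, \rR h_*(g^*\cE \otimes^L g'^*\cE')\bigr) \xrightarrow{\sim} \Hom\bigl(h^*(\rR f_*\cE \otimes^L \rR f'_*\cE'),\, g^*\cE \otimes^L g'^*\cE'\bigr)
\]
coincide.

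First, I would unwind the definition of $\KM$: by construction, its adjoint under $(h^*,\rR h_*)$ equals
\[
h^*(\rR f_*\cE \otimes^L \rR f'_*\cE') \simeq (g^*f^*\rR f_*\cE) \otimes^L (g'^*f'^*\rR f'_*\cE') \xrightarrow{(g^*\epsilon_f) \,\otimes^L\, (g'^*\epsilon_{f'})} g^*\cE \otimes^L g'^*\cE',
\]
using that $h^*$ commutes with tensor products and the two identifications $h^* \simeq g^*f^* \simeq g'^*f'^*$. Next, I would compute the adjoint of the other composition $\Phi \colonequals \rR f_*(\PF_g) \circ \rR f_*(\id \otimes \BC_{f',f}) \circ \PF_f$ step by step, using the defining adjoint descriptions: $\PF_f \colon A \otimes^L \rR f_* B \to \rR f_*(f^*A \otimes^L B)$ is adjoint to $f^*A \otimes^L f^*\rR f_* B \xrightarrow{\id\,\otimes\,\epsilon_f} f^*A \otimes^L B$; $\BC_{f',f} \colon f^*\rR f'_*\cE' \to \rR g_* g'^*\cE'$ is adjoint under $(g^*,\rR g_*)$ to $g^*f^*\rR f'_*\cE' \simeq g'^*f'^*\rR f'_*\cE' \xrightarrow{g'^*\epsilon_{f'}} g'^*\cE'$; and $\PF_g$ has an analogous adjoint description. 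Combined with the standard factorization $\epsilon_h \simeq \epsilon_g \circ g^*(\epsilon_f)_{\rR g_*(\blank)}$ of the counit of the composite adjunction, a direct diagram chase simplifies the adjoint of $\Phi$ to exactly the map above.

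More concretely, one passes the adjunction across in two stages: first across $(f^*,\rR f_*)$, which converts $\PF_f$ into the map $\id \otimes \epsilon_f$ and absorbs the outer $\rR f_*$; then across $(g^*,\rR g_*)$, which converts the inner $\PF_g$ into $\id \otimes \epsilon_g$ and the inner $\BC_{f',f}$ into the counit $g'^*\epsilon_{f'}$ composed with the natural identification $g^*f^* \simeq g'^*f'^*$. The remaining pieces — the units $\eta_f, \eta_g$ introduced by adjoining across — cancel against the resulting counits via the triangle identities, leaving only the combination $(g^*\epsilon_f) \otimes^L (g'^*\epsilon_{f'})$, which is precisely the adjoint of $\KM$.

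The main obstacle is purely bookkeeping: carefully tracking the coherence between the two identifications $h^* \simeq g^*f^*$ and $h^* \simeq g'^*f'^*$ used in computing $\KM$ and $\BC_{f',f}$, and ensuring that the various tensor-product commutations with $f^*, g^*, g'^*, h^*$ are consistent. Once a coherent choice of pseudofunctoriality isomorphisms is fixed, the argument is entirely formal and uses nothing beyond naturality of the units/counits and the triangle identities for the adjunctions $(f^*,\rR f_*)$, $(g^*,\rR g_*)$, $(f'^*,\rR f'_*)$, $(g'^*,\rR g'_*)$ and their composites.
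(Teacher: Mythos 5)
Your proposal is correct and takes essentially the same approach as the paper: both reduce the statement to comparing the $(h^*,\rR h_*)$-adjoints of the two compositions, unwinding $\KM$, $\PF$, and $\BC$ via their defining adjoint descriptions. The paper packages the comparison into a single large commutative diagram whose boundary paths give the two adjoints, whereas you phrase it as a two-stage adjunction-passing computation (first across $(f^*,\rR f_*)$, then across $(g^*,\rR g_*)$) followed by triangle-identity cancellations, but the underlying argument is the same.
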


\begin{proof}
In this proof, we denote the counit of the $(f^*, \rR f_*)$-adjunction by $\epsilon_f$. Then we consider the following diagram:
\[
\begin{tikzcd}[scale cd=.8,center picture,column sep =5em]
h^*(\rR f_*\cal{E} \otimes^L \rR f'_*\cal{E}') \arrow[rd,"g^*(\epsilon_f \otimes \id)"'] \arrow[r,"h^*(\PF_f)"] & h^*{\rR f_*}(\cE \otimes^L f^*\rR f'_*\cE') \arrow[d,"g^*(\epsilon_f)"] \arrow[r,"h^*\rR f_*(\id \otimes \BC_{f',f})"] & h^*\rR f_*(\cE \otimes^L \rR g_*g^{\prime *}\cE') \arrow[d,"g^*(\epsilon_f)"] \arrow[r,"h^*(\rR f_*(\PF_g))"] & h^*\rR f_*\rR g_*(g^*\cE \otimes^L g'^*\cE') \arrow[d,"g^*(\epsilon_f)"] \\
& g^*(\cE \otimes^L f^*\rR f'_* \cE') \arrow[r,"g^*(\id \otimes \rm{BC}_{f',f})"] \arrow[dr,sloped,"\sim"'] & g^*(\cE \otimes^L \rR g_*g^{\prime *}\cE') \arrow[dr,"\id \otimes \epsilon_g"] \arrow[r,"g^*(\PF_g)"] & g^*\rR g_*(g^*\cal{E} \otimes^L g^{\prime *}\cal{E}') \arrow[d,"\epsilon_g"] \\
& & g^*\cE \otimes^L g'^* f^{\prime *}\rR f'_* \cE'  \arrow[r,"\id \otimes g^{\prime *}(\epsilon_{f'})"] & g^*\cE \otimes^L g'^*\cE',
\end{tikzcd}
\]
where we implicitly identify $g^*\circ f^* \simeq h^* \simeq g'^*\circ f'^*$.
Note that the diagram above commutes:
indeed, the top left and bottom right triangles commute due to the definition of the projection formula morphism, the bottom parallelogram commutes due to the definition of the base change morphism, and the top two squares commute due to the functoriality of $\epsilon_f$.
Now it only remains to observe that if we go from the top left to the bottom right corner in the clockwise direction in the above diagram, we get the $(h^*, \rR h_*)$-adjoint of the composition $\rR f_*(\PF_g) \circ \rR f_*(\id \otimes \BC_{f',f}) \circ \PF_f$, whereas if we go counterclockwise, we get the morphism $g^*(\epsilon_f) \otimes g'^*(\epsilon_{f'})$. By definition, the latter is the $(h^*, \rR h_*)$-adjoint of the K\"unneth map from \cref{construction:kunneth-map}.
\end{proof}

\begin{corollary}[K\"{u}nneth formula]
\label{Kunneth formula}
Consider a cartesian diagram of locally noetherian analytic adic spaces
\[
\begin{tikzcd}
    X\times_Y X' \arrow{d}{g'} \arrow{rd}{h} \arrow{r}{g} & X \arrow{d}{f} \\
    X' \arrow{r}{f'} & Y
\end{tikzcd}
\]
with $n \in \cO^\times_Y$.
Let $\cal{E} \in D(X_\et; \Lambda)$ and $\cal{E}' \in D(X'_\et; \Lambda)$ such that, for each geometric point geometric point $\ov{y} \to Y$ of rank $1$, the restriction $\restr{\cal{E}}{X_{\ov{y}}}$ lies in $D_{zc}(X_{\ov{y}, \et}; \Lambda)$.
If $f$ and $f'$ are proper, then the K\"unneth map 
\[
\rm{KM} \colon \rR f_*\cal{E} \otimes^L \rR f'_*\cal{E}' \to
\rR h_*(g^*\cal{E} \otimes^L g'^*\cal{E'})
\]
is an isomorphism.
\end{corollary}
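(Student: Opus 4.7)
The plan is to use \cref{claim factorizing Kunneth map} to decompose the K\"unneth map into a composition of projection formula and base change maps, each of which can be verified to be an isomorphism under our hypotheses. Since the lemma is symmetric in its data, I will apply it with the roles of $(X, f, \cE)$ and $(X', f', \cE')$ swapped, combined with the natural symmetry isomorphism $A\otimes^L B\simeq B\otimes^L A$ on both sides of the K\"unneth map. This yields a factorization
\[
\rR f_*\cE \otimes^L \rR f'_*\cE' \xrightarrow{\PF_{f'}} \rR f'_*\bigl(\cE' \otimes^L f'^*\rR f_*\cE\bigr) \xrightarrow{\rR f'_*(\id \otimes \BC_{f,f'})} \rR f'_*\bigl(\cE' \otimes^L \rR g'_*g^*\cE\bigr) \xrightarrow{\rR f'_*(\PF_{g'})} \rR h_*\bigl(g^*\cE \otimes^L g'^*\cE'\bigr)
\]
equal to the K\"unneth map, so it will suffice to show that each of the three arrows is an isomorphism.

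The middle morphism is $\id \otimes \BC_{f,f'}$, where $\BC_{f,f'} \colon f'^*\rR f_*\cE \to \rR g'_*g^*\cE$ is the base change morphism for the cartesian square. Since $f$ is proper and $\cE$ has fiberwise Zariski-constructible restrictions over geometric rank-$1$ points of $Y$, proper base change (\cref{lemma:proper-base-change}) guarantees that $\BC_{f,f'}$ is an isomorphism. It is precisely this step that compels us to orient the factorization so that $\cE$, rather than $\cE'$, is the complex being base-changed, since our hypothesis is asymmetric.

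The projection formula maps $\PF_{f'}$ and $\PF_{g'}$ will be isomorphisms because both $f'$ and $g'$ are proper ($g'$ is proper as the base change of the proper morphism $f$). For proper morphisms of locally noetherian analytic adic spaces one has $\rR f'_* \simeq \rR f'_!$ and $\rR g'_* \simeq \rR g'_!$, so the desired projection formulas follow from Huber's projection formula for $\rR f_!$ for taut separated morphisms of finite type, \cite[Prop.~5.5.4]{Huber-etale}.

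The main obstacle in this argument is conceptual rather than technical: while each of the individual ingredients (projection formula for $\rR f_!$ and proper base change) is already available, the asymmetry of the hypothesis (only $\cE$ is assumed fiberwise Zariski-constructible, not $\cE'$) forces a careful orientation of the factorization. This asymmetry is unavoidable, since the analogue of proper base change for non-Zariski-constructible complexes fails in this setting (see \cref{example:no-proper-base-change}).
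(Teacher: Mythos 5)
Your proof is correct and follows the same decomposition as the paper's one-line argument: factor the K\"unneth map via \cref{claim factorizing Kunneth map} and then conclude from the projection formula for proper pushforwards and proper base change (\cref{lemma:proper-base-change}). The orientation point you make is the right reading of the terse proof in the paper: applying the factorization lemma verbatim would make $\BC_{f',f}$ the base change map on $\cE'$, which does not carry the fiberwise Zariski-constructibility hypothesis, so one must indeed either interchange $(f,\cE) \leftrightarrow (f',\cE')$ and conjugate by the symmetry isomorphism as you do, or equivalently apply \cref{lemma:proper-base-change} in the transposed square; one small citation issue is that the paper refers to \cite[Prop.~9.3~(2)]{adic-notes} for the projection formula, and Huber's version is \cite[Th.~5.5.9~(ii)]{Huber-etale} rather than \cite[Prop.~5.5.4]{Huber-etale}.
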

\begin{proof}
    This follows directly from \cref{claim factorizing Kunneth map}, proper base change (see \cref{lemma:proper-base-change}), and the projection formula (see \cite[Prop.~9.3~(2)]{adic-notes}). 
\end{proof}

\subsection{Poincar\'{e} duality for locally constant coefficients, revisited}\label{section:smooth-poincare-duality}

In this subsection, we prove Poincar\'e duality for smooth proper morphisms and lisse sheaves. Our proof will simultaneously show that higher direct images along smooth proper morphisms preserve lisse sheaves. Somewhat surprisingly, our proofs will be essentially formal and diagrammatic in nature.
As before, we fix a positive integer $n$ and set $\Lambda \colonequals \ZZ/n$.

We first verify both claims for the subclass of perfect lisse complexes, and then extend the results to the general case. For this, we recall that perfect lisse complexes have the following categorical description: for a locally noetherian analytic adic space $X$, the category of perfect complexes in $D(X_\et; \Lambda)$ coincides with the category dualizable\footnote{
For the notion of dualizable objects (or objects having ``left dual''),
we refer the reader to \cite[\href{https://stacks.math.columbia.edu/tag/0FFP}{Tag 0FFP}]{stacks-project}.} objects and this category is contained in the full subcategory $D^{(b)}_\lisse(X_\et; \Lambda)$ of locally bounded complexes of \'etale sheaves with lisse cohomology sheaves; see e.g.\ \cite[\href{https://stacks.math.columbia.edu/tag/0FPU}{Tag~0FPU}, \href{https://stacks.math.columbia.edu/tag/0FPV}{Tag~0FPV}]{stacks-project}, and \cite[Lem.~11.1]{adic-notes}. 

\begin{theorem}[{cf.\ \cite[Th.~1.1.2]{Z-thesis} and \cite[Cor.~1.2.9]{Mann-thesis}}]
\label{Poincare dualizability theorem}
Let $X$ and $Y$ be locally noetherian analytic adic spaces such that $n \in \cO^\times_Y$, let $f \colon X \to Y$ be a smooth proper morphism of equidimension $d$, and let $\cal{E} \in D(X_\et; \Lambda)$ be a dualizable object with left dual $\cal{E}^{\vee} \coloneqq \rR\cHom_X(\cal{E}, \LLambda_X)$.
Then the evaluation and coevaluation maps from \cref{evaluation and coevaluation maps} below make 
$\rR f_*(\cal{E}^{\vee}(d)[2d])$ into a left dual of $\rR f_*\cal{E}$.
In particular $\rR f_*\cal{E}$ is a dualizable object of $D(Y_\et; \Lambda)$ and there is a natural isomorphism 
\[
\PD_f(\cal{E}) \colon \rR f_*(\cal{E}^{\vee}(d)[2d]) \xrightarrow{\sim} \rR\cHom_Y(\rR f_*\cal{E}, \LLambda_Y).
\]
\end{theorem}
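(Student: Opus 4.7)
The plan is to construct explicit evaluation and coevaluation maps exhibiting $\rR f_*(\cE^\vee(d)[2d])$ as a left dual of $\rR f_*\cE$ in $D(Y_\et;\Lambda)$, and then verify the two triangle identities via diagram chases that reduce to two key inputs. Once these are established, the asserted duality equivalence $\PD_f(\cE)$ is a tautological consequence of the uniqueness of left duals in a closed symmetric monoidal category.

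For the evaluation map, I would use the K\"unneth isomorphism \cref{Kunneth formula}---which applies because $f$ is proper and $\cE$, being dualizable, has constructible cohomology on fibers---to identify $\rR f_*(\cE^\vee(d)[2d]) \otimes^L \rR f_*\cE \simeq \rR f_*(\cE^\vee \otimes^L \cE)(d)[2d]$, and then compose with $\rR f_*$ of the internal evaluation $\cE^\vee \otimes \cE \to \ud{\Lambda}_X$ followed by the smooth trace $\ttr_f$ from \cref{smooth-trace-constant}. For the coevaluation map, let $p_1, p_2 \colon X\times_Y X \to X$ be the two projections and $\Delta \colon X \hookrightarrow X \times_Y X$ the diagonal, which is an lci immersion of pure codimension $d$ by \cite[Cor.~5.11]{adic-notes} since $f$ is smooth of equidimension $d$. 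Tensoring the cycle class map $\cl_\Delta \colon \Delta_* \ud{\Lambda}_X \to \ud{\Lambda}_{X\times_Y X}(d)[2d]$ from \cref{variant:cycle-class-map} with $p_1^*\cE$ and applying the projection formula yields a map $\Delta_*\cE \to p_1^*\cE(d)[2d]$; precomposing with the unit $p_2^*\cE \to \Delta_*\Delta^*p_2^*\cE = \Delta_*\cE$ and using the dualizability of $p_2^*\cE$ promotes this to a map $\ud{\Lambda}_{X\times_Y X} \to p_1^*\cE \otimes^L p_2^*\cE^\vee(d)[2d]$. Applying $\rR h_*$ for $h = f \circ p_1 \colon X \times_Y X \to Y$, invoking the K\"unneth isomorphism once more, and using the $(h^*, \rR h_*)$-adjunction then produces the coevaluation $\ud{\Lambda}_Y \to \rR f_*\cE \otimes^L \rR f_*(\cE^\vee(d)[2d])$.

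The two triangle identities then reduce, after unwinding the various K\"unneth and projection-formula isomorphisms on the triple fiber product $X \times_Y X \times_Y X$, to the following two inputs. The first is the identity $\ttr_{p_1} \circ \rR p_{1,*}(\cl_\Delta) = \id$, which is precisely \cref{tr-vs-cl} applied to the section $\Delta$ of the smooth proper projection $p_1$; this collapses each composition back to the identity. The second is that the braiding automorphism of $\ud{\Lambda}_{X\times_Y X}(d)[2d]^{\otimes 2}$ is the identity, since the Koszul sign associated to swapping two copies of an even shift is trivial and Tate twists carry no sign. To align these two identities with the actual compositions, one invokes the proper base change isomorphism \cref{lemma:proper-base-change} to commute pullbacks past pushforwards, and the compatibility \cref{claim factorizing Kunneth map} of the K\"unneth map with the projection formula.

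The main obstacle will be the bookkeeping in this diagram chase: the evaluation and coevaluation maps involve different copies of $X \times_Y X$, so their composition is naturally laid out on $X^{\times_Y 3}$, and one must carefully track which projection plays the role of $p_1$ versus $p_2$ at each step. The braiding input is exactly what is needed to interchange the two copies of $\ud{\Lambda}(d)[2d]$ that appear when the two cycle classes are composed, while the identity $\ttr_{p_1}(\cl_\Delta) = \id$ exhausts the remaining non-formal content. Once the triangle identities are verified, the final natural isomorphism $\PD_f(\cE) \colon \rR f_*(\cE^\vee(d)[2d]) \xrightarrow{\sim} \rR\cHom_Y(\rR f_*\cE, \ud{\Lambda}_Y)$ follows immediately from the uniqueness of left duals.
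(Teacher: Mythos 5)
Your proposal is correct and follows essentially the same route as the paper: the evaluation map is built from the cup product, the sheaf-level evaluation, and the smooth trace; the coevaluation map is built from the cycle class of the diagonal, the projection formula, and the inverse of the Künneth isomorphism (\cref{Kunneth formula}); and the triangle identities reduce, after the diagram chase, precisely to the two non-formal inputs you name, namely $\ttr_{\pr}(\cl_\Delta)=\id$ (which is \cref{tr-vs-cl} and is packaged in the paper as \cref{lemma:preliminary-duality}) and the triviality of the braiding on $\bigl(\ud{\Lambda}(d)[2d]\bigr)^{\otimes 2}$ (again \cref{lemma:preliminary-duality}), together with the compatibility of the Künneth map with the projection formula (\cref{claim factorizing Kunneth map}, used via \cref{lemma:base-change-trace}). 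One minor remark: for the evaluation map the paper uses the cup product as a map rather than invoking the Künneth isomorphism, so that step needs no hypothesis on $\cE$; the Künneth isomorphism is needed only to invert the map appearing in the coevaluation, as you correctly do.
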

We first explain how to construct the evaluation map, the coevalution map, and the duality map $\PD_f(\EE)$ in the statement of \cref{Poincare dualizability theorem}. 
Later, we check that these maps indeed define the structure of a dualizable object on $\rR f_*\cal{E}$. 

\begin{construction}\label{construction:evaluation-coevaluation-poincare-duality}
\label{evaluation and coevaluation maps}
Let $f \colon X \to Y$ be as in \cref{Poincare dualizability theorem}. 
Let $\cal{E}\in D_\lisse(X; \Lambda)$ and let $\cal{E}^{\vee} \coloneqq \rR\cHom_X(\cal{E}, \LLambda_X)$ be its (naive) dual. We denote by $ev_{\cal{E}} \colon \cal{E}^{\vee} \otimes \cal{E} \to \LLambda_X$ the natural evaluation map. 
If $\cal{E}$ is dualizable, we also denote by $coev_{\cal{E}}\colon \LLambda_X \to \cal{E}^{\vee} \otimes \cal{E}$ its coevaluation map. 
Moreover, we use the notation of the following commutative diagram:
\[ \begin{tikzcd}
X \arrow[rd,"\Delta"]\arrow[rdd, swap, bend right, "\rm{id}"] \arrow[rrd, bend left, "\rm{id}"]& & \\
& X\times_Y X \arrow{r}{\pi_2} \arrow{d}{\pi_1} \arrow{rd}{h} & X \arrow{d}{f} \\
& X \arrow[r, swap, "f"] & Y
\end{tikzcd} \]

\begin{enumerate}[label=\upshape{(\roman*)},leftmargin=*]
\item\label{evaluation map} (\emph{Evaluation map}) We define the \emph{evaluation map} $e(f, \cal{E}) \colon \rR f_*(\cal{E}^{\vee}(d)[2d]) \otimes^L \rR f_*(\cal{E}) \to \LLambda_Y$ as the composition
\[ \rR f_*(\cal{E}^{\vee}(d)[2d]) \otimes^L \rR f_*(\cal{E})
\xlongrightarrow{\cup} \rR f_*(\cal{E}^{\vee}(d)[2d] \otimes^L \cal{E})
\xrightarrow{\rR f_*(ev_{\cal{E}}(d)[2d])} \rR f_*(\LLambda_X(d)[2d])
\xlongrightarrow{\ttr_f} \LLambda_Y, \]
where $\cup$ is the cup-product map from \cite[\href{https://stacks.math.columbia.edu/tag/0B6C}{Tag~0B6C}]{stacks-project} (\cref{cup-product}) and $\ttr_f$ is the trace morphism from \cref{smooth-trace-constant}.
\item\label{PD map} (\emph{Duality map}) We define the \emph{duality map} $\rm{PD}_f(\cal{E}) \colon \rR f_*(\cal{E}^{\vee}(d)[2d]) \to \rR \cHom_Y(\rR f_*\cal{E}, \LLambda_Y)$ as the map adjoint to $e(f, \cal{E})$ under the tensor-hom adjunction.
In other words, $\PD_f(\cal{E})$ is the composition
\[
\rR f_*(\cal{E}^{\vee}(d)[2d]) \longrightarrow \rR \cHom_Y(\rR f_*\cal{E}, \rR f_*\LLambda_X(d)[2d]) \xrightarrow{\ttr_f \circ -} \rR \cHom_Y(\rR f_*\cal{E}, \LLambda_X),
\]
where the first map comes from \cite[\href{https://stacks.math.columbia.edu/tag/0B6D}{Tag~0B6D}]{stacks-project}.
\item\label{coevaluation map} (\emph{Coevaluation map}) Now we also assume that $\cal{E}$ is dualizable. We define the \emph{coevaluation map} 
\[
c(f, \cal{E}) \colon \LLambda_Y \to \rR f_*\cal{E} \otimes^L \rR f_*(\cal{E}^{\vee}(d)[2d])
\]
as the composition
\[ \begin{tikzcd}[nodes in empty cells,center picture,scale cd=.9,column sep=large]
\LLambda_Y \arrow[r,"\eta_f"] & \rR f_*\LLambda_X \arrow[r,"\rR f_*(coev_{\cal{E}})"] &[1em] \rR f_*(\cal{E}\otimes^L \cal{E}^{\vee}) \arrow[r,"\sim"] & \rR h_*(\rR \Delta_* \Delta^* (\pi_1^*\cal{E} \otimes^L \pi_2^*\cal{E}^{\vee})) \arrow[d,"\rR h_*(\rm{PF}^{-1}_{\Delta})","\sim"'{sloped}] \\
\rR f_*(\cal{E}) \otimes^L \rR f_*(\cal{E}^{\vee}(d)[2d]) & \multi{r}{{\rR h_*\left(\pi_1^*\cE \otimes^L \pi_2^*\cE^\vee(d)[2d]\right)}} && \rR h_*\left((\pi_1^*\cE \otimes^L \pi_2^*\cE^\vee) \otimes^L \Delta_*(\LLambda_X)\right),
\arrow[from=A,to=2-1, "\sim"',"\KM^{-1}"] 
\arrow[from=2-4,to=A,"\rR h_*(\rm{id} \otimes^L \cl_{\Delta})"]
\end{tikzcd} \]
where $\rm{cl}_{\Delta}\coloneqq \cl_{X \times_Y X}(X)$ is the cycle class map\footnote{Note that $\Delta$ is an lci immersion of pure codimension $d$ due to \cite[Cor.~5.11]{adic-notes}.} introduced in \cref{variant:cycle-class-map}, $\rm{PF}$ is the projection formula map, and $\rm{KM}$ is the K\"unneth map introduced in \cref{construction:kunneth-map}.
We crucially use \cite[Prop.~9.3~(2)]{adic-notes} and \cref{Kunneth formula} to invert the projection formula map and the K\"unneth map, respectively. 
\end{enumerate}
\end{construction}

Given the evaluation and coevaluation maps, the proof of \cref{Poincare dualizability theorem} essentially boils down to verifying that some diagrams commute. To do this, we need some preliminary lemmas: 

\begin{lemma}\label{lemma:base-change-trace} Keep the notation of \cref{construction:evaluation-coevaluation-poincare-duality}. Then the diagram
\[
\begin{tikzcd}[column sep = huge]
    \rR f_* \cal{E} \otimes^L \rR f_*\LLambda_X(d)[2d] \arrow{d}{\rm{KM}} \arrow{r}{\rm{id} \otimes^L \ttr_f} & \rR f_* \cal{E} \\
    \rR h_*(\pi_1^*\cal{E} \otimes^L \LLambda_{X\times_Y X}(d)[2d]) \arrow{r}{\rR f_*(\rm{PF}^{-1}_{\pi_1})} &\rR f_*(\cal{E} \otimes^L \rR \pi_{1, *} \Lambda_{X\times_Y X}(d)[2d]) \arrow{u}{\rR f_*(\rm{id} \otimes^L \ttr_{\pi_1})} \\
\end{tikzcd}
\]
commutes.
\end{lemma}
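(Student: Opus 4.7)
The plan is to reduce the commutativity of the diagram to the pullback compatibility of the smooth trace from \cref{smooth-trace-constant}\cref{smooth-trace-constant-pullback}, applied to the cartesian square
\[
\begin{tikzcd}
X \times_Y X \arrow{r}{\pi_2} \arrow{d}{\pi_1} & X \arrow{d}{f} \\
X \arrow{r}{f} & Y.
\end{tikzcd}
\]
This approach will work because $f$ is smooth proper of equidimension $d$, so its base change $\pi_1$ is also smooth proper of equidimension $d$, and both $\ttr_f$ and $\ttr_{\pi_1}$ are defined.

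First, I would apply \cref{claim factorizing Kunneth map} to this cartesian square with the sheaves $\cE$ and $\LLambda_X(d)[2d]$, taking $g = \pi_1$ and $g' = \pi_2$. This produces the factorization
\[
\KM \;=\; \rR f_*(\PF_{\pi_1}) \circ \rR f_*(\id \otimes^L \BC_{f,f}) \circ \PF_f,
\]
where $\BC_{f,f} \colon f^*\rR f_*\LLambda_X(d)[2d] \to \rR\pi_{1,*}\LLambda_{X\times_Y X}(d)[2d]$ is the usual base change map. Substituting this into the bottom path of the diagram and canceling $\rR f_*(\PF_{\pi_1}^{-1}) \circ \rR f_*(\PF_{\pi_1}) = \id$ rewrites the bottom path as $\rR f_*\bigl(\id \otimes^L (\ttr_{\pi_1} \circ \BC_{f,f})\bigr) \circ \PF_f$.

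For the top path, I would use the naturality of $\PF_f$ in the second argument: applied to the map $\ttr_f \colon \rR f_*\LLambda_X(d)[2d] \to \LLambda_Y$, the resulting commutative square has bottom edge the projection formula $\PF_f$ in the special case $\cG = \LLambda_Y$, which is the identity after the canonical identification $\rR f_*(\cE \otimes^L f^*\LLambda_Y) \simeq \rR f_*\cE$. Hence $\id \otimes^L \ttr_f = \rR f_*(\id \otimes^L f^*\ttr_f) \circ \PF_f$, so comparing with the rewritten bottom path, the commutativity of the diagram boils down to the equality
\[
\ttr_{\pi_1} \circ \BC_{f,f} \;=\; f^*\ttr_f \colon f^*\rR f_*\LLambda_X(d)[2d] \to \LLambda_X.
\]
Since $f$ and $\pi_1$ are proper, $\rR f_* = \rR f_!$ and $\rR\pi_{1,*} = \rR\pi_{1,!}$, and $\BC_{f,f}$ becomes the base change isomorphism from \cite[Th.~5.4.6]{Huber-etale}. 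This last equality is then exactly the content of \cref{smooth-trace-constant}\cref{smooth-trace-constant-pullback}.

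The argument is purely diagrammatic, so I do not foresee a substantial obstacle. The main subtleties are to match the orientation of the cartesian square in \cref{claim factorizing Kunneth map} with the convention used in the lemma (so that $\cE$ is pulled back by $\pi_1$ and the constant sheaf by $\pi_2$) and to verify that the two a priori different base change maps (for $\rR f_*$ and $\rR f_!$) agree under the identification $\rR f_* \simeq \rR f_!$ for proper $f$. Both of these are routine verifications.
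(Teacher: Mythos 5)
Your proposal is correct and takes essentially the same approach as the paper: both proofs factor $\KM$ via \cref{claim factorizing Kunneth map}, use the definition/naturality of the projection formula for the top path, and reduce to the identity $\ttr_{\pi_1} \circ \BC_{f,f} = f^*\ttr_f$, which is \cref{smooth-trace-constant}\cref{smooth-trace-constant-pullback}. The only cosmetic difference is that the paper draws a single intermediate diagram with three commuting cells rather than cancelling $\PF_{\pi_1}^{-1} \circ \PF_{\pi_1}$ explicitly.
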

\begin{proof}
    First, we note that it suffices to show that the following diagram commutes:
    \[ \begin{tikzcd}[column sep = 8em]
        \rR f_* \cal{E} \arrow[d, equals] & \arrow[l, swap, "\rm{id} \otimes^L \ttr_f"] \rR f_*\cal{E} \otimes^L \rR f_* \LLambda_X(d)[2d] \arrow{d}{\rm{PF}_f} \arrow{r}{\rm{KM}} & \rR h_*(\pi_1^*\cal{E} \otimes^L \LLambda_{X\times_Y X}(d)[2d]) \\
        \rR f_* \cal{E} & \arrow[l, "\rR f_*(\rm{id}\otimes^L f^*\ttr_f)"'] \rR f_*(\cal{E} \otimes f^*\rR f_*\LLambda_{X}(d)[2d]) \arrow[r,"\rR f_*(\id \otimes^L \rm{BC}_{f, f})"] & \rR f_*(\cal{E} \otimes^L \rR \pi_{1, *} \LLambda_{X\times_Y X}(d)[2d]) \arrow[ll, bend left=10, "\rR f_*(\rm{id} \otimes^L \ttr_{\pi_1})"] \arrow[u,"\rR f_*(\rm{PF}_{\pi_1})"]
    \end{tikzcd} \]
    For this, one can easily check that the left square commutes using the very definition of the projection formula morphism (see \cite[\href{https://stacks.math.columbia.edu/tag/0B56}{Tag 0B56}]{stacks-project}).
    \Cref{claim factorizing Kunneth map} ensures that the right square commutes and \cref{smooth-trace-constant}\cref{smooth-trace-constant-pullback} guarantees that the bottom triangle commutes.
\end{proof}

For the next lemma, we need to introduce a new construction:

\begin{construction}\label{complex-cycle-class}
Let $i\colon X \hookrightarrow Y$ be an lci closed immersion of pure codimension $c$ and let $\F$ be an object of $D(Y_\et; \Lambda)$. 
Then we define the \emph{cycle class morphism} $\rm{cl}_{i}(\F) \colon i_* i^* \F \to \F \otimes \LLambda_Y(c)[2c]$ as the composition
\[ i_* i^* \F \xlongrightarrow[\sim]{\rm{PF}^{-1}_i} \F \otimes^L  i_* \LLambda_X \xlongrightarrow{\rm{id}\otimes^L \rm{cl}_i} \F \otimes^L \LLambda_Y(c)[2c], \]
where $\rm{cl}_i$ is the cycle class map from \cref{variant:cycle-class-map}.
\end{construction}
\begin{lemma}\label{lemma:preliminary-duality}
Let $X$ and $Y$ be locally noetherian analytic adic spaces with $n \in \cO^\times_Y$, let $f \colon X \to Y$ be a smooth proper morphism of equidimension $d$.
Assume that $f$ has a section $s\colon Y \to X$.
Then the compositions
\begin{gather*}
\LLambda_Y \simeq \rR f_*(s_* \LLambda_Y) \xrightarrow{\rm{cl}_s} \rR f_*(\LLambda_X(d)[2d]) \xrightarrow{\ttr_f} \LLambda_Y \quad \text{and} \\
\begin{split}
\LLambda_Y(d)[2d] \simeq \rR f_*\bigl(s_* \LLambda_Y(d)[2d]\bigr) \xrightarrow{\rR f_*(\rm{cl}_s(\LLambda(d)[2d]))} & \rR f_*\bigl(\LLambda_X(d)[2d] \otimes^L \LLambda_X(d)[2d]\bigr) \\ 
& \xrightarrow[\sim]{\PF^{-1}_f} \rR f_*\bigl(\LLambda_X(d)[2d]\bigr) \otimes^L \LLambda_Y(d)[2d] \xrightarrow{\ttr_f\otimes^L \rm{id}} \LLambda_Y(d)[2d]
\end{split}
\end{gather*}
are equal to the identity morphisms.
\end{lemma}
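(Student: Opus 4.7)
The first composition is essentially the content of \cref{tr-vs-cl}: since $f$ is proper we have $\rR f_!=\rR f_*$, and unwinding the definition of $\cl_s$ as a morphism $s_*\LLambda_Y\to\LLambda_X(d)[2d]$ of sheaves on $X$ (via \cref{variant:cycle-class-map}) shows that the composition in question is precisely $\ttr_f\circ \rR f_!(\cl_s)$, which equals $\id$ by \cref{tr-vs-cl}.

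For the second composition, my plan is to reduce it to the first one by separating the ``second'' $\LLambda_X(d)[2d]$-factor (the one produced by $\cl_s$ in \cref{complex-cycle-class}) from the ``first'' one (on which $\ttr_f$ acts). Concretely, I would unfold $\cl_s(\LLambda_X(d)[2d])=(\id_{\LLambda_X(d)[2d]}\otimes^L \cl_s)\circ\PF_s^{-1}$, so that the second composition rewrites as
\[
\begin{tikzcd}[column sep=1.5em]
\LLambda_Y(d)[2d] \arrow[r,"\sim"] & \rR f_*\bigl(s_*s^*\LLambda_X(d)[2d]\bigr) \arrow[r,"\rR f_*(\PF_s^{-1})","\sim"'] & \rR f_*\bigl(\LLambda_X(d)[2d]\otimes^L s_*\LLambda_Y\bigr) \arrow[d,"\rR f_*(\id\otimes^L\cl_s)"] \\
& \LLambda_Y(d)[2d] & \arrow[l,"\ttr_f\otimes^L\id"] \rR f_*\LLambda_X(d)[2d]\otimes^L\LLambda_Y(d)[2d] \arrow[l,"\PF_f^{-1}"',"\sim"].
\end{tikzcd}
\]
Observing that $\LLambda_X(d)[2d]\simeq f^*\LLambda_Y(d)[2d]$, the inverse projection formula $\PF_f^{-1}$ extracts the first tensor factor as $\LLambda_Y(d)[2d]$. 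The main diagrammatic step is then to show that, under these projection-formula identifications, the above composition factors as
\[ \LLambda_Y(d)[2d]\otimes^L\LLambda_Y \xrightarrow{\id\otimes^L\alpha} \LLambda_Y(d)[2d]\otimes^L \LLambda_Y, \]
where $\alpha\colon \LLambda_Y\to \LLambda_Y$ is exactly the first composition of the lemma. Given Step~1, this endomorphism is the identity, proving the claim.

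The heart of the argument is therefore the compatibility of the two projection formulas $\PF_s$ and $\PF_f$ with the tensor factorization along $f\circ s=\id_Y$; once that diagram is known to commute, the conclusion is formal from \cref{tr-vs-cl}. I expect the main obstacle to be this diagram chase, which is of the same flavor as (and can in fact be extracted from) the general factorization statement \cref{claim factorizing Kunneth map} and the naturality of the projection formula under composition. Everything in sight is natural and the adjunction units/counits for $s_*$ and $\rR f_*$ appearing in $\PF$ commute with each other because $s$ is a section of $f$, so no essentially new computation is required.
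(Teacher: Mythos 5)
Your Step~1 is correct and matches the paper. For Step~2, the high-level strategy of showing $\beta = \id_{\LLambda_Y(d)[2d]} \otimes^L \alpha$ is also the one the paper uses, but your execution has a genuine gap: you do not account for a braiding and the attendant sign, which is in fact the one non-formal ingredient of the proof, and you appear to have the projection-formula convention backwards, which obscures the issue.

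Concretely, the map $\PF_f^{-1}\colon \rR f_*\bigl(\LLambda_X(d)[2d] \otimes^L \LLambda_X(d)[2d]\bigr) \to \rR f_*\LLambda_X(d)[2d] \otimes^L \LLambda_Y(d)[2d]$ as written in the lemma keeps the \emph{first} tensor factor inside the pushforward and pulls the \emph{second} one out (it does not ``extract the first tensor factor as $\LLambda_Y(d)[2d]$''). Hence $\ttr_f \otimes^L \id$ is applied to the first factor, whereas the image of $\cl_s$ (coming from $\id \otimes^L \cl_s$) sits in the second factor and passes through unchanged. So $\beta$ traces out the factor that does \emph{not} carry the cycle class, which is the opposite of what $\alpha$ does. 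To compare $\beta$ with $\id \otimes^L \alpha$ one must insert the braiding $\rR f_*(\sigma)$ on $\rR f_*\bigl(\LLambda_X(d)[2d]^{\otimes 2}\bigr)$ before the projection formula and compensate it by a braiding of the outputs (a formal compatibility of $\PF_f$ with the commutativity constraint); the argument then closes only because $\sigma$ is the \emph{identity} on $\LLambda_X(d)[2d]^{\otimes 2}$, owing to the Koszul sign $(-1)^{(2d)(2d)}=1$. This sign observation is essential and cannot be replaced by naturality; the paper even highlights it in the introduction as one of the two calculational inputs to its Poincar\'e duality proof, and it has no analogue in \cref{claim factorizing Kunneth map}, which involves no braiding at all. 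Your assertion that ``no essentially new computation is required'' is precisely the gap.
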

\begin{proof}
    For brevity, we denote the first composition by $\alpha$ and the second composition by $\beta$.
    For any objects $\F, \G\in D(X_\et; \Lambda)$, we denote by $\sigma\colon \F \otimes^L \G \xr{\sim} \G\otimes^L \F$ the morphism that ``swaps factors''.
    Then \cref{tr-vs-cl} directly implies that $\alpha=\id$.
    
    To see that $\beta = \id$, we now prove the stronger assertion that $\beta = \alpha(d)[2d]$.
    Since $\LLambda_Y(d)[2d]$ is a locally free sheaf concentrated in degree $2d$, our sign conventions for the commutativity constraint (see \cite[\href{https://stacks.math.columbia.edu/tag/0GWN}{Tag~0GWN}]{stacks-project}) imply that the morphism $\sigma \colon \ud{\Lambda}_{X}(d)[2d] \otimes^L \ud{\Lambda}_{X}(d)[2d]  \to \ud{\Lambda}_{X}(d)[2d] \otimes^L \ud{\Lambda}_{X}(d)[2d]$ is given by multiplication with $(-1)^{(2d)\cdot (2d)}=1$;
    that is, it is the identity morphism.
    This observation implies that the following diagram commutes: 
    \[ \begin{tikzcd}[scale cd=.8,center picture,column sep=large]
        \rR f_*s_* \ud{\Lambda}_Y(d)[2d] \arrow[r,blue,"\rR f_*(\PF^{-1}_s)"] & \rR f_*\bigl(\ud{\Lambda}_X(d)[2d] \otimes^L s_*\ud{\Lambda}_Y\bigr)  \arrow[r,blue,"\rR f_*(\id \otimes^L \cl_s)"] \arrow[d,"\PF^{-1}_f"] &[1em] \rR f_*\bigl(\ud{\Lambda}_X(d)[2d] \otimes^L \ud{\Lambda}_X(d)[2d]\bigr) \arrow[r,blue,"\rR f_*(\sigma)=\rm{id}","\sim"'] \arrow[d,"\PF^{-1}_f"]  & \rR f_*\bigl(\ud{\Lambda}_X(d)[2d] \otimes^L \ud{\Lambda}_X(d)[2d]\bigr) \arrow[d,blue,"\PF^{-1}_f"] \\
        \ud{\Lambda}_Y(d)[2d] \arrow[u,blue,sloped,"\sim"] \arrow[r,red,"\sim"] & \ud{\Lambda}_Y(d)[2d] \otimes^L \rR f_* s_* \ud{\Lambda}_Y \arrow[r,red,"\id \otimes^L \cl_s"]  & \ud{\Lambda}_Y(d)[2d] \otimes^L \rR f_* \ud{\Lambda}_X(d)[2d] \arrow{r}{\sigma} \arrow[d,red,"\id \otimes^L \ttr_f"] & \rR f_* \ud{\Lambda}_X(d)[2d] \otimes^L \ud{\Lambda}_Y(d)[2d] \arrow[ld,blue,"\ttr_f \otimes^L \id"] \\
        & & \ud{\Lambda}_Y(d)[2d]. & 
    \end{tikzcd} \]
    Therefore, the map $\beta \colon \ud{\Lambda}_Y(d)[2d] \to \ud{\Lambda}(d)[2d]$ (given by the blue arrows in the diagram above) is equal to $\id\otimes \alpha = \alpha(d)[2d]$ (given by the red arrows), as desired.
\end{proof}

\begin{proof}[Proof of \cref{Poincare dualizability theorem}]
    The only thing we need to check is that the compositions\footnote{In these formulas, we implicitly make the usual identifications $\rR f_* \cal{E} \otimes^L \LLambda_Y \simeq \rR f_* \cal{E}$, $\LLambda_Y \otimes^L \rR f_*\cal{E} \simeq \rR f_*\cal{E}$, etc.
    To indicate this subtlety, we use quotation marks in the maps which implicitly use these identifications.}
    \[
    \begin{tikzcd}[column sep = huge]
    \rR f_* \cal{E} \arrow{r}{``c(f, \cal{E}) \otimes^L \rm{id}\text{''}} & \rR f_* \cal{E} \otimes^L \rR f_* \cal{E}^{\vee}(d)[2d] \otimes^L \rR f_* \cal{E} \arrow{r}{``\rm{id}\otimes^L e(f, \cal{E})\text{''}}  & \rR f_* \cal{E},
    \end{tikzcd}
    \]
    \[
    \begin{tikzcd}[column sep = huge]
    \rR f_* \cal{E}^{\vee}(d)[2d] \arrow{r}{``\rm{id} \otimes^L c(f, \cal{E})\text{''}} & \rR f_* \cal{E}^{\vee}(d)[2d] \otimes^L \rR f_* \cal{E} \otimes^L \rR f_* \cal{E}^{\vee}(d)[2d] \arrow{r}{``e(f, \cal{E}) \otimes^L \rm{id}\text{''}} & \rR f_* \cal{E}^{\vee}(d)[2d]
    \end{tikzcd}
    \]
    are equal to the identity morphisms. For brevity, we denote the first composition by $\varphi(f, \cal{E})$ and the second composition by $\psi(f, \cal{E})$. We give a full justification for why $\varphi(f, \cal{E}) = \rm{id}$, and then only describe the necessary changes to justify that $\psi(f, \cal{E}) = \rm{id}$.

    Recall that we denote by $\eta$ the unit of the (derived) pullback-pushforward adjunction, by $\KM$  the K\"unneth map from \cref{construction:kunneth-map}, by $\PF$ the projection formula map, and, for any objects $\F, \G\in D(X_\et; \Lambda)$, by $\sigma\colon \F \otimes^L \G \xr{\sim} \G\otimes^L \F$ the morphism that ``swaps factors''.
    In what follows, we also freely use \cref{Kunneth formula} and \cite[Prop.~9.3~(2)]{adic-notes} which guarantee that the K\"unneth map and the projection formula map are isomorphisms under some assumptions that are always satisfied in this proof. 

    That being said, we resume the notation of \cref{construction:evaluation-coevaluation-poincare-duality} and consider the diagram in \cref{gigantic-diagram}. 
    Using the definitions of $\PF$, $\KM$, and basic properties of adjunctions, one can check that this diagram commutes. 
    For the most part, the verification is very similar to that in the proof of \cref{claim factorizing Kunneth map} with the following two exceptions: 
    triangle~\cref{gigantic-diagram-evcoev} commutes due to the assumption $ev_{\cal{E}}$ and $coev_{\cal{E}}$ define a duality datum on $\cal{E}$, and trapezoid~\cref{gigantic-diagram-base-change-trace} commutes due to \cref{lemma:base-change-trace}. 
    
    \begin{sidewaysfigure}

    \thisfloatpagestyle{empty}
    \tiny

    \vspace{60em}
    \[ \begin{tikzcd}[center picture,row sep=huge, column sep = 8em]
    \LLambda \otimes \rR f_*\cal{E} \arrow[rr, "\PF_f"] \arrow[d,"\eta_f \otimes \rm{id}"] & & \rR f_*\bigl(\LLambda \otimes \cal{E}\bigr) \arrow[d,"\rR f_*(\eta_{\pi_2})"] \arrow[rd, start anchor=east,bend left=15,"\rR h_*(\PF^{-1}_\Delta)"] && \\
    \rR f_* \LLambda \otimes \rR f_* \cal{E} \arrow[rr,"\KM"] \arrow[d,"\rR f_*(coev_{\cal{E}}) \otimes \id"] & & \rR h_*\bigl(\LLambda \otimes \pi^*_2 \cal{E} \bigr) \arrow[d,"\rR h_*\bigl(\pi^*_1(coev_{\cal{E}}) \otimes \id\bigr)"] \arrow{r}{\rR h_*(\eta_{\Delta} \otimes \rm{id})} & \rR h_*\bigl(\Delta_*\LLambda \otimes \pi_2^* \cal{E}\bigr)  \arrow{d}{\rR h_*\bigl(\Delta_*(coev_{\cal{E}})\otimes \rm{id}\bigr)} \arrow[dr, start anchor=east, bend left=15, "\rR h_*(\PF_\Delta)"]& \\
    \rR f_* \bigl(\cal{E} \otimes \cal{E}^\vee\bigr) \otimes \rR f_* \cal{E} \arrow[rr,"\KM"] \arrow[d,sloped, equals] & & \rR h_*\bigl(\pi^*_1\cal{E} \otimes \pi^*_1\cal{E}^\vee \otimes \pi^*_2\cal{E}\bigr) \arrow{r}{\rR h_*(\eta_{\Delta}\otimes \rm{id})} \arrow[d,sloped, equals] & \rR h_*\Bigl(\Delta_*\bigl(\cal{E}\otimes \cal{E}^{\vee}\bigr)\otimes \pi_2^*\cal{E}\Bigr) \arrow{dd}{\rR h_*(\PF_\Delta)}&  \rR f_*\bigl(\LLambda \otimes \cal{E}\bigr) \arrow{ldd}{\rR f_*(coev_{\cal{E}}\otimes \rm{id})}  \arrow{dd}{\rR f_*(\sigma)} \arrow[dddl, eq=gigantic-diagram-evcoev] \\
    \rR f_* \Delta^* \bigl(\pi^*_1 \cal{E} \otimes \pi^*_2\cal{E}^\vee\bigr) \otimes \rR f_* \cal{E} \arrow[r,"\rR f_*(\eta_{\pi_1}) \otimes \rR f_*(\eta_{\pi_2})"] \arrow[d,equals] & \rR h_*\pi_1^*\Delta^*\bigl(\pi_1^*\cal{E} \otimes \pi_2^*\cal{E}^\vee\bigr) \otimes \rR h_*\pi_2^*\cal{E} \arrow{r}{\cup} \arrow{d}{\rR h_*(\eta_{\Delta})\otimes \rm{id}} & \rR h_*\Bigl(\pi^*_1\bigl( \Delta^* \pi^*_1\cal{E} \otimes \Delta^* \pi^*_2\cal{E}^\vee\bigr) \otimes \pi^*_2\cal{E}\Bigr) \arrow[d, "\rR h_*(\eta_\Delta \otimes \rm{id})"] & &   \\
    \rR h_* \Delta_* \Delta^* \bigl(\pi^*_1 \cal{E} \otimes \pi^*_2\cal{E}^\vee\bigr) \otimes \rR f_*\cal{E} \arrow[r,"\rm{id} \otimes \rR f_*(\eta_{\pi_2})"] \arrow[d,"\rR h_*(\PF^{-1}_\Delta) \otimes \id"]&  \rR h_*\Bigl(\Delta_*\Delta^*\bigl(\pi_1^*\cal{E} \otimes \pi_2^*\cal{E}^{\vee}\bigr) \Bigr) \otimes \rR h_*\pi_2^*\cal{E} \arrow{r}{\cup} \arrow{d}{\rR h_*(\PF^{-1}_\Delta) \otimes \rm{id}} & \rR h_*\Bigl( \Delta_*\Delta^*\bigl(\pi_1^*\cal{E}\otimes \pi_2^* \cal{E}^\vee\bigr) \otimes \pi_2^*\cal{E}\Bigr) \arrow{r}{\rR h_*(\PF_{\Delta})} \arrow{d}{\rR h_*(\PF^{-1}_{\Delta}\otimes \rm{id})} & \rR f_*\bigl(\cal{E} \otimes \cal{E}^\vee \otimes \cal{E}\bigr) \arrow{r}{\rR f_*(\rm{id}\otimes ev_{\cal{E}})} \arrow{d}{\rR h_*(\PF^{-1}_\Delta)} &   \rR f_*\bigl(\cal{E} \otimes \LLambda\bigr) \arrow{d}{\rR h_*(\PF^{-1}_\Delta)} \\
    \rR h_*\bigl(\pi^*_1\cal{E} \otimes \pi^*_2\cal{E}^\vee \otimes \Delta_*\LLambda \bigr) \otimes \rR f_*\cal{E}  \arrow[d,"\rR h_*(\rm{id}\otimes \cl_\Delta) \otimes \rm{id}"] \arrow[r,"\rm{id}\otimes  \rR f_*(\eta_{\pi_2})"] & \rR h_*(\pi_1^* \cal{E} \otimes \pi_2^*\cal{E}^\vee \otimes \Delta_*\LLambda) \otimes \rR h_*\pi_2^*\cal{E}  \arrow{r}{\cup} \arrow[d,"\rR h_*(\rm{id}\otimes \cl_\Delta) \otimes \rm{id}"] & \rR h_*(\pi_1^*\cal{E} \otimes \pi_2^*\cal{E}^\vee \otimes\Delta_* \LLambda \otimes \pi_2^* \cal{E} ) \arrow[rr, bend right=20, "\rR h_*\bigl(\rm{id} \otimes \pi_2^*(ev_{\cal{E}}) \otimes \rm{id}\bigr)"] \arrow{r}{\rR h_*(\rm{id} \otimes \PF_{\Delta})}  \arrow[d,swap, "\rR h_*(\rm{id}\otimes \cl_\Delta \otimes \rm{id})"] & \rR h_*\bigl(\pi_1^* \cal{E} \otimes \Delta_*(\cal{E}^\vee \otimes \cal{E})\bigr)  \arrow{r}{\rR h_*\bigl(\rm{id}\otimes \Delta_*(ev_{\cal{E}})\bigr)}& \rR h_*(\pi_1^*\cal{E} \otimes \Delta_* \LLambda)  \arrow[dddl, end anchor=east, bend left, swap, "\rR h_*(\rm{id}\otimes \cl_\Delta)"] \arrow[ddd, "\rR f_*(\PF^{-1}_{\pi_1})"]  \\
    \rR h_* \bigl(\pi^*_1 \cal{E} \otimes \pi^*_2\cal{E}^\vee(d)[2d]\bigr) \otimes \rR f_* \cal{E} \arrow[r,"\rm{id} \otimes \rR f_*(\eta_{\pi_2})"] \arrow[d,"\KM^{-1} \otimes \id"] & \rR h_*(\pi_1^* \cal{E} \otimes \pi_2^* \cal{E}^\vee (d)[2d]) \otimes \rR h_*\pi_2^*\cal{E} \arrow{r}{\cup} & \rR h_*(\pi_1^*\cal{E} \otimes \pi_2^*\cal{E}^\vee(d)[2d] \otimes \pi_2^*\cal{E}) \arrow[dd,equals]  &   &  \\
    \rR f_*\cal{E} \otimes \rR f_*(\cal{E}^\vee(d)[2d]) \otimes \rR f_*\cal{E}  \arrow[d,"\id \otimes \cup"] & & & \\
    \rR f_* \cal{E} \otimes \rR f_*(\cal{E}^\vee(d)[2d] \otimes \cal{E}) \arrow{d}{\rm{id} \otimes \rR f_*\bigl(ev_{\cal{E}}(d)[2d]\bigr)} \arrow{rr}{\KM}  &  & \rR h_*(\pi_1^*\cal{E} \otimes \pi_2^*\cal{E}^\vee(d)[2d] \otimes \pi_2^*\cal{E}) \arrow{r}{\rR h_*\bigl(\rm{id}\otimes \pi_2^*(ev_{\cal{E}}(d)[2d])\bigr)} \arrow{d}{\rR h_*\bigl(\rm{id} \otimes \pi_2^*(ev_{\cal{E}}(d)[2d])\bigr)}& \rR h_*(\pi_1^*\cal{E} \otimes \LLambda(d)[2d]) \arrow{d}{\rR f_*(\PF^{-1}_{\pi_1})} & \rR f_*(\cal{E} \otimes \LLambda)  \arrow{dl}{\rR f_*\bigl(\rm{id} \otimes \rR \pi_{1, *}(\cl_\Delta)\bigr)} \\
    \rR f_* \cal{E} \otimes \rR f_* \LLambda(d)[2d] \arrow[d,"\id \otimes \ttr_f"] \arrow{rr}{\KM} \arrow[rrd,eq=gigantic-diagram-base-change-trace] & & \rR h_*(\pi^*_1\cal{E} \otimes \LLambda(d)[2d]) \arrow{r}{\rR f_*(\PF^{-1}_{\pi_1})} &  \rR f_*(\cal{E}\otimes \rR \pi_{1, *}\LLambda(d)[2d])   \arrow{ld}{\rR f_*(\rm{id} \otimes \ttr_{\pi_1})} &  \\
    \rR f_* \cal{E} \otimes \LLambda & & \arrow[ll, "\PF_f^{-1}"] \rR f_*(\cal{E}\otimes \LLambda) & &
    \end{tikzcd} \]
    \caption{The gigantic diagram. Beware that $\otimes$ denotes the derived tensor product so that the figure fits into the page.}\label{gigantic-diagram}
    \end{sidewaysfigure}   
    
    The map $\Lambda \otimes^L \rR f_*\cal{E} \to \rR f_* \cE \otimes^L \Lambda$ obtained by going down the entire left column is equal to
    \[
    \bigl(\rm{id}\otimes^L e(f, \cal{E})\bigr) \circ \bigl(c(f, \cal{E}) \otimes^L \rm{id}\bigr)
    \]
    by its very construction. The commutativity of the diagram in \cref{gigantic-diagram} implies that this composition can be computed by going around the outer diagram from the top left corner to the bottom left corner in a clockwise direction.
    Furthermore, we see that \cref{lemma:preliminary-duality} and the formula $\rR f_*(\PF_{\pi_1}^{-1}) \circ \rR h_*(\PF_{\Delta}^{-1}) = \rm{id}$ imply that 
    \[
    \bigl(\rm{id}\otimes^L e(f, \cal{E})\bigr) \circ \bigl(c(f, \cal{E}) \otimes^L \rm{id}\bigr) = \PF_{f}^{-1} \circ \rR f_*(\sigma)\circ \PF_{f} =\sigma \colon \Lambda \otimes^L \rR f_*\cal{E} \to \rR f_*\cal{E} \otimes^L \Lambda.
    \]
    This formally implies that $\varphi(f, \cal{E}) = \rm{id} \colon \rR f_* \cal{E} \to \rR f_*\cal{E}$. 

    To see that $\psi(f, \cal{E})=\rm{id}$, we need to use a diagram similar to that of \cref{gigantic-diagram}; we leave it to the reader to figure out the exact shape of the diagram. We only mention that every instance of $\pi_1$ should be replaced with $\pi_2$ (and vice versa) and one needs to use the second part of \cref{lemma:preliminary-duality} (as opposed to the first part used in the proof above).
\end{proof}

As the first application of \cref{Poincare dualizability theorem}, we show that derived pushforwards along smooth and proper morphisms preserve lisse sheaves. 

\begin{corollary}\label{cor:preservation-lisse-sheaves}
Let $f \colon X \to Y$ be a smooth proper morphism of locally noetherian analytic adic spaces with $n \in \cO^\times_Y$. 
Let $\mathcal{E} \in D_\lisse(X_\et; \Lambda)$ be a lisse complex. Then $\rR f_*\mathcal{E}$ lies in $D_\lisse(Y_\et; \Lambda)$.
If $\mathcal{E}$ is locally bounded (resp.\ perfect), then so is $\rR f_*\mathcal{E}$.
\end{corollary}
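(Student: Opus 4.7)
The proof will proceed by devissage, with the perfect case serving as the base. If $\cE$ is perfect, then $\cE$ is dualizable in $D(X_\et;\Lambda)$ by the categorical description of perfect complexes recalled just before \cref{Poincare dualizability theorem}. Applying that theorem, $\rR f_*\cE$ is dualizable, hence perfect, in $D(Y_\et;\Lambda)$. Since perfect complexes are automatically locally bounded with lisse cohomology sheaves, this settles the perfect case and yields the $(\resp)$-clause for perfect $\cE$.

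For the locally bounded case, I would first observe that $f_!=f_*$ for proper $f$ and that $\rR f_!$ has cohomological amplitude $\leq 2d$ by \cite[Prop.~5.5.8]{Huber-etale}, so local boundedness of $\rR f_*\cE$ is immediate. The truncation triangles together with this finite amplitude then reduce the lisse-ness claim to showing that, for a single lisse sheaf $\F$ on $X$, each $\rR^i f_*\F$ is a lisse sheaf on $Y$. To treat this, I plan to trivialize $\F$ via a finite \'etale cover. Since $\Lambda=\Z/n$ is finite, the monodromy of $\F$ factors through a finite quotient on each connected component of $X$, so there is a (taut) finite \'etale Galois cover $\pi\colon X'\to X$ with group $G$ such that $\pi^*\F\simeq \ud{M}_{X'}$ for a finite $\Lambda[G]$-module $M$. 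The perfect case applied to the smooth proper composition $f\circ\pi\colon X'\to Y$ and to $\ud{\Lambda}_{X'}$ gives that $\rR(f\circ\pi)_*\ud{\Lambda}_{X'}$ is perfect, and the projection formula \cite[Prop.~9.3~(2)]{adic-notes} (applied to the constant sheaf $\ud{M}_{X'}=(f\circ\pi)^*\ud{M}_Y$) yields
\[
\rR(f\circ\pi)_*\ud{M}_{X'}\simeq \rR(f\circ\pi)_*\ud{\Lambda}_{X'}\otimes^L_{\ud{\Lambda}_Y}\ud{M}_Y,
\]
which is locally bounded with lisse cohomology sheaves on $Y$. To descend from $X'$ back to $X$, I would use the Amitsur--\v{C}ech cosimplicial object associated with $\pi$ (which is of effective descent for \'etale sheaves): the identification $\rR f_*\F\simeq \Tot\bigl(\rR(f\circ\pi_\bullet)_*\pi_\bullet^*\F\bigr)$, where each term is locally bounded lisse by the constant case applied to the smooth proper morphism $f\circ\pi_n$, together with the degeneration of the associated spectral sequence in bounded range (guaranteed by the amplitude bound), yields the desired lisse-ness.

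For the general unbounded lisse case, I would write $\cE\simeq\colim_n\tau^{\geq -n}\cE$ and use that the amplitude bound forces $\cH^i(\rR f_*\cE)\simeq \cH^i(\rR f_*(\tau^{\geq -n}\cE))$ for $n$ sufficiently large depending on $i$; the locally bounded case then gives lisse-ness of each cohomology sheaf of $\rR f_*\cE$. The main obstacle I expect is carrying out the \v{C}ech descent step cleanly: while totalization of a cosimplicial locally-bounded-lisse complex need not automatically produce lisse sheaves, the combination of the amplitude bound with the stalk-finiteness coming from proper base change (\cref{lemma:proper-base-change}) and the finiteness of cohomology of the smooth proper geometric fibers should suffice to conclude that the cohomology sheaves of $\rR f_*\F$ have finitely generated, locally constant stalks on $Y$.
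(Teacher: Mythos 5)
Your treatment of the perfect case and of local boundedness matches the paper.  The divergence is in how a single lisse sheaf $\F$ is handled.  The paper first reduces, via the Chinese remainder theorem and then the $p$-adic filtration, to the case $\Lambda=\FF_p$ and $\F$ a lisse $\FF_p$-module; over the field $\FF_p$ any lisse module has locally free (hence dualizable) stalks, so \cref{Poincare dualizability theorem} applies directly and \emph{no descent is needed at all}.  You instead trivialize $\F$ by a finite \'etale Galois cover $\pi\colon X'\to X$ and try to descend $\rR f_*\F$ along the \v{C}ech nerve of $\pi$.  This is a genuinely different route.  (The paper does carry out a closely related geometric reduction in the proof of \cref{general smooth Poincare duality}, but via the ``m\'ethode de la trace'', choosing $\pi$ of degree prime to $p$ so that $\F$ is a direct \emph{summand} of $\pi_*\pi^*\F$ — again sidestepping \v{C}ech totalization.)  Along the way you should also insert the paper's preliminary step of splitting $X$ into clopen pieces of pure equidimension (or at least localize $Y$ to be affinoid so $X$ is quasicompact and of bounded dimension) before invoking the $2d$-amplitude bound and \cref{Poincare dualizability theorem}, both of which require a fixed equidimension.

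The genuine gap is in your final paragraph.  You correctly flag the \v{C}ech totalization as the delicate point, but your proposed fix — stalk-finiteness of $\rR^i f_*\F$ via proper base change — does not give lisse-ness: a constructible sheaf has finite stalks without being locally constant, so this line of reasoning cannot close the argument.  What does close it is the spectral sequence you allude to but do not run: $E_1^{pq}=\rR^q(f\circ\pi_p)_*\bigl(\pi_p^*\F\bigr) \Rightarrow \rR^{p+q}f_*\F$, concentrated in the strip $0\le q\le 2d$, hence degenerate at $E_{2d+2}$ with a finite filtration on each abutment.  One then needs the (true, but unstated) fact that kernels and cokernels of morphisms of lisse $\Lambda$-sheaves are lisse — \'etale locally on a connected base such a morphism is a map of constant sheaves, hence has constant kernel and cokernel — so each page $E_r$ and thus $E_\infty$ consists of lisse sheaves, and finally that a finitely filtered sheaf with lisse graded pieces is lisse.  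None of this is a fundamental obstruction, but as written your argument substitutes an inadequate finiteness observation for this chain of reasoning, and it leaves more to verify (cohomological descent along $\pi$, the finite filtration, the stability of lisse under kernels/cokernels/extensions) than the paper's algebraic reduction to dualizable $\FF_p$-coefficients.
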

\begin{proof}
The statement is local on $Y$, so we may assume that $Y$ is affinoid. 
As the property of being smooth of equidimension $d$ is open on the source, we have a finite disjoint decomposition $X = \bigsqcup_{i = 0}^n X_i$ such that $X_i \to Y$ is smooth of equidimension $i$.
Since each $X_i$ is clopen in $X$, we conclude that each $X_i$ is also proper over $Y$.
Therefore, we can replace $X$ with each $X_i$ separately to assume that $f$ is smooth proper of equidimension $d$ for some $d \in \ZZ_{\ge 0}$.
Then \cref{Poincare dualizability theorem} and  \cite[Lem.~11.1]{adic-notes} show that $\rR f_*$ preserves perfect complexes. Furthermore, the cohomological dimension of $\rR f_*$ is bounded by $2d$ due to \cite[Prop.~5.3.11]{Huber-etale}, so it only remains to show that $\rR f_*$ preserves lisse complexes.

Using again the finite cohomological dimension of $\rR f_*$, we may assume that $\cE$ is a bounded below lisse complex.
Then a standard argument using \cite[\href{https://stacks.math.columbia.edu/tag/093U}{Tag~093U}]{stacks-project} and the Leray spectral sequence from \cite[\href{https://stacks.math.columbia.edu/tag/0732}{Tag 0732}]{stacks-project} implies that we can assume that $\cal{E}$ is a lisse $\Lambda$-module on $X_\et$.
The Chinese remainder theorem implies that we can assume that $\Lambda = \ZZ/p^m$ for some prime number $p\in \O_Y^\times$.
By considering the $p$-adic filtration on $\mathcal{E}$ and arguing one graded piece at a time, we reduce to the case when $\Lambda = \bf{F}_p$ and $\cal{E}$ is a lisse $\bf{F}_p$-module on $X_\et$. In this case, $\cal{E}$ is a dualizable object of $D(X_\et; \Lambda)$ by virtue of \cite[Lem.~11.1]{adic-notes}.
Then \cref{Poincare dualizability theorem} and another application of \textit{loc.\ cit.} imply that $\rR f_*\cal{E}\in D^b_\lisse(Y_\et; \Lambda)$.
\end{proof}

\begin{remark}
\label{remark comparing with previous results}
It seems that \cref{cor:preservation-lisse-sheaves} is a new result in this level of generality. However, it was certainly known under some additional assumptions. 
If $Y$ admits a map to $\Spa(K, \O_K)$ for a nonarchimedean field $K$ and $n\in (\O_Y^+)^\times$, this result was shown in \cite[Cor.~6.2.3]{Huber-etale} by an extremely elaborate argument. The assumption that $Y$ admits a map to $\Spa(K, \O_K)$ was recently removed in \cite[App.~1.3.4(4)]{Z-revised}. Now if $Y$ is a rigid-analytic space over $\Spa(K, \O_K)$ and $p$ is equal to the characteristic of the residue field of $\O_K$, this result was shown in \cite[Th.~10.5.1]{Berkeley} using the full strength of the perfectoid and diamond machinery. 
In contrast to these two proofs, our proof is uniform in $n$, is essentially formal, and remains largely in the world of locally noetherian analytic adic spaces.\footnote{The only exception to this occurs in the proof of \cref{lemma:proper-base-change}; see \cref{use-perfectoid}.}
\end{remark}

The main goal of the rest of this subsection is to extend Poincar\'e duality to general (not necessarily dualizable) lisse sheaves.
The essential difficulty comes from the fact that the constant sheaf $\ud{\Z/p \Z}$ is not dualizable in $D(X_\et; \Z/p^2\Z)$ for any prime $p$.
Nevertheless, our extension of Poincar\'e duality to this kind of coefficients will be essentially formal. 

\begin{theorem}
\label{general smooth Poincare duality}
Let $X$ and $Y$ be locally noetherian analytic adic spaces such that $n \in \cO^\times_Y$, let $f\colon X \to Y$ be a smooth proper morphism of equidimension $d$, and let $\cal{E}\in D_\lisse(X_\et; \Lambda)$ be a complex with lisse cohomology sheaves.
Then the duality morphism 
\[ \PD_f(\mathcal{E}) \colon \rR f_*(\rR \cHom_{\Lambda}(\mathcal{E}, \LLambda_X(d)[2d])) \to 
\rR \cHom_{\Lambda}(\rR f_*\mathcal{E}, \LLambda_Y) \]
from \cref{evaluation and coevaluation maps}\cref{PD map} is an isomorphism.
\end{theorem}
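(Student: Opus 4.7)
The plan is to reduce the general case to the dualizable case already handled by \cref{Poincare dualizability theorem} via a sequence of standard d\'evissages. The only substantive new input needed is a comparison between duality with $\Lambda$-coefficients and $\bf{F}_p$-coefficients in the base case of the induction.

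The question is local on $Y$, so we may assume $Y$ is affinoid, and by decomposing $X$ into clopen subspaces of constant equidimension we may further assume $f$ is smooth proper of constant equidimension $d$. Since $\Lambda = \Z/n\Z$ is a self-injective (``quasi-Frobenius'') ring, $\rR\cHom_\Lambda(M,\Lambda)$ is concentrated in degree $0$ for any finitely generated $\Lambda$-module $M$; combined with the bound on the cohomological dimension of $\rR f_*$ by $2d$ from \cite[Prop.~5.3.11]{Huber-etale}, both sides of $\PD_f(\cE)$ have bounded cohomology for bounded lisse $\cE$.

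Let $\mathcal{C} \subseteq D(X_\et;\Lambda)$ denote the full subcategory of $\cE$ for which $\PD_f(\cE)$ is an isomorphism. By the naturality of $\PD_f$ in $\cE$ and the five-lemma, $\mathcal{C}$ is closed under shifts, direct summands, and extensions in distinguished triangles. By \cref{Poincare dualizability theorem}, $\mathcal{C}$ contains every dualizable object of $D(X_\et;\Lambda)$, in particular $\ud{\Lambda}_X$. Using truncation triangles and the five-lemma, we reduce to the case where $\cE$ is a lisse $\Lambda$-sheaf placed in degree $0$. The Chinese Remainder Theorem then lets us assume $\Lambda = \Z/p^m\Z$ for a prime $p \in \cO^\times_Y$. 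We induct on the minimal integer $k \ge 1$ with $p^k\cE = 0$, using the short exact sequence of lisse $\Lambda$-sheaves
\[ 0 \to p^{k-1}\cE \to \cE \to \cE/p^{k-1}\cE \to 0, \]
whose first term is annihilated by $p$ and whose third term is annihilated by $p^{k-1}$. This reduces everything to the base case $k = 1$: a lisse $\bf{F}_p$-sheaf $\cE$ regarded as a $\Lambda$-sheaf via the quotient $\Lambda \twoheadrightarrow \bf{F}_p$.

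For this base case, we compare the $\Lambda$-valued duality map $\PD_f^\Lambda(\cE)$ with its $\bf{F}_p$-valued counterpart $\PD_f^{\bf{F}_p}(\cE)$. The Frobenius property of $\Lambda$ gives a natural isomorphism $\rR\cHom_\Lambda(\ud{\bf{F}_p},\ud{\Lambda}) \simeq \ud{\bf{F}_p}$ concentrated in degree $0$; via Hom-tensor adjunction, this identifies $\rR\cHom_\Lambda(\cE,\ud{\Lambda}_X(d)[2d])$ with $\rR\cHom_{\bf{F}_p}(\cE,\ud{\bf{F}_p, X}(d)[2d])$ and similarly for the target side. Using the uniqueness characterization of $\ttr_f$ in \cref{smooth-trace-constant}, in particular its compatibility with the change of coefficients $\Lambda \twoheadrightarrow \bf{F}_p$, these identifications carry $\PD_f^\Lambda(\cE)$ to $\PD_f^{\bf{F}_p}(\cE)$. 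The latter is an isomorphism by \cref{Poincare dualizability theorem} applied with $n = p$, since any lisse $\bf{F}_p$-sheaf is locally free of finite rank and hence dualizable in $D(X_\et;\bf{F}_p)$ by \cite[Lem.~11.1]{adic-notes}. The main technical point in this plan is the verification of the change-of-coefficients compatibility of $\ttr_f$, which is routine but requires careful bookkeeping.
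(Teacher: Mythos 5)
Your d\'evissage matches the paper's up to the base case ($\cE$ a lisse $\bf{F}_p$-sheaf, $\Lambda = \Z/p^r\Z$): localize on $Y$, pass to constant equidimension, truncate to a single lisse sheaf, CRT, and induct on the order of $p$-torsion. At the base case you and the paper genuinely diverge. The paper stays entirely within $\Z/p^r$-coefficients: it first uses the m\'ethode de la trace to reduce from an arbitrary lisse $\bf{F}_p$-sheaf to the constant sheaf $\ud{\bf{F}}_{p,X}$, then resolves $\ud{\bf{F}}_{p,X}$ by the explicit free $\Lambda$-complex $C = \bigl(\dotsb \xrightarrow{p} \Lambda_X \xrightarrow{p^{r-1}} \Lambda_X \xrightarrow{p} \Lambda_X\bigr)$, and compares $\PD_f(\ud{\bf{F}}_{p,X})$ with $\PD_f(C_i)$ for the perfect naive truncations $C_i = \sigma^{\ge -i}C$, absorbing the error via a coconnectivity bound. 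You instead change the coefficient ring, using self-injectivity of $\Lambda$ to identify both sides of $\PD_f^\Lambda(\cE)$ with the corresponding $\bf{F}_p$-coefficient duality diagram, and then apply \cref{Poincare dualizability theorem} with $n=p$ directly, since every lisse $\bf{F}_p$-sheaf is already dualizable in $D(X_\et;\bf{F}_p)$. This sidesteps both the m\'ethode de la trace and the free resolution, which is cleaner.

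The cost, which you correctly flag but do not discharge, is that you must prove $\PD_f^\Lambda(\cE)$ and $\PD_f^{\bf{F}_p}(\cE)$ actually correspond under your identifications. Be careful that this is not an immediate consequence of ``the uniqueness characterization of $\ttr_f$'': the defining properties \cref{smooth-trace-constant-composition}--\cref{smooth-trace-constant-P1} in \cref{smooth-trace-constant} are all internal to a fixed $n$, so compatibility under the change $\Z/p^r \twoheadrightarrow \bf{F}_p$ is an extra statement, not part of the theorem. It does follow from uniqueness: the source $\rR f_!\,\mu_p^{\otimes d}[2d]$ is $p$-torsion, so the restriction of $\ttr^\Lambda_f$ along $\mu_p^{\otimes d}\hookrightarrow \mu_{p^r}^{\otimes d}$ automatically lands in $(\Z/p^r)[p]=p^{r-1}\Z/p^r\cong\bf{F}_p$, and one then checks this candidate satisfies the four defining properties with $n=p$ (using that the algebraic trace on $\PP^1_C$ is compatible with reduction of coefficients), hence agrees with $\ttr^{\bf{F}_p}_f$. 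You also need the (formal) compatibility of $\Ev_f$ with the base-change of rings. Both are indeed routine, but they constitute an additional lemma that the paper's argument avoids entirely by never changing $n$.
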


\begin{proof}
The strategy of this proof is to reduce to the case where $\Lambda = \Z/p^r\Z$ for some prime number $p\in \O_Y^\times$ and $\cal{E} = \ud{\bf{F}}_p$. In this case, we deduce the result from \cref{Poincare dualizability theorem}. 

\begin{enumerate}[wide,label={\textit{Step~\arabic*}.},ref={Step~\arabic*}]
    \setcounter{enumi}{-1}
    \item \textit{We reduce to the case when $X$ is qcqs and connected and $\Lambda=\Z/p^r\Z$ for a prime number $p\in \O_Y^\times$.}
    First, the question is clearly local on $Y$, so we can assume that $Y$ is an affinoid. 
    Furthermore, \cite[Cor.~2.3]{adic-notes} shows that connected components of $X$ are clopen, so we may and do assume that $X$ is qcqs and connected.
    Then the Chinese Remainder Theorem implies that we can assume that $\Lambda=\Z/p^r\Z$ for some prime number $p\in \O^\times_Y$ and some integer $r > 0$. 

    In the rest of the proof, we will freely use the following two basic ``reduction principles'':
    \begin{enumerate}
    \item (``two-out-of-three'') If we have a triangle $\mathcal{E}_1 \to \mathcal{E}_2 \to \mathcal{E}_3$ in $D_\lisse(X_\et; \Lambda)$ and $\PD_f(\mathcal{E}_i)$ is an isomorphism for two of the three $\cE_i$, then $\PD_f(\cE_i)$ is an isomorphism for all three $\cE_i$.
    \item (``closure under retracts'') If $\mathcal{E}$ is a direct summand of $\mathcal{G}$ and
    $\PD_f(\mathcal{G})$ is an isomorphism, then $\PD_f(\mathcal{E})$ is an isomorphism as well.
    \end{enumerate}

    \item\label{general smooth Poincare duality - sheaf reduction} \textit{We reduce to the case when $\cal{E}$ is a lisse sheaf of $\Lambda$-modules.}
    First, we note that $\rR f_*$ commutes with sequential homotopy colimits (e.g., as defined in \cite[\href{https://stacks.math.columbia.edu/tag/0A5K}{Tag~0A5K}]{stacks-project}) due to \cite[Lem.~9.1]{adic-notes}.
    This implies that both the source and target of $\PD_f$ (viewed as functors in $\cal{E}$) transform sequential homotopy colimits into sequential homotopy limits (e.g., as defined in \cite[\href{https://stacks.math.columbia.edu/tag/08TB}{Tag~08TB}]{stacks-project}).
    Since the natural morphism $\hocolim_N \tau^{\le N} \cal{E} \to \cal{E}$ is an isomorphism (this can be deduced from \cite[\href{https://stacks.math.columbia.edu/tag/0CRK}{Tag 0CRK}]{stacks-project}), we reduce to the case when $\cal{E}\in D^-_\lisse(X_\et; \Lambda)$.
    In this case, we consider the exact triangle
    \[
    \tau^{\leq -N}(\mathcal{E}) \to \mathcal{E} \to \tau^{> -N} \mathcal{E}.
    \]
    Recall that $\rR f_*$ has cohomological dimension $2d$ by virtue of \cite[Prop.~5.3.11]{Huber-etale}.
    As a consequence, both $\rR f_*\Bigl(\rR \cHom_{\Lambda}\bigl(\tau^{\leq -N}(\mathcal{E}), \LLambda_X(d)[2d]\bigr)\Bigr)$ and $\rR \cHom_{\Lambda}\bigl(\rR f_*\tau^{\leq -N}(\mathcal{E}), \LLambda_Y\bigr)$ lie in $D^{\geq N-2d}(Y_\et; \Lambda)$. 
    Given an integer $q$, the map on cohomology sheaves $\cal{H}^q\bigl(\PD_f(\cal{E})\bigr)$ is therefore an isomorphism if and only if $\cal{H}^q\bigl(\PD_f(\tau^{> - (q+1+2d)} \cal{E})\bigr)$ is so.
    In particular,  if $\PD_f(\tau^{> - N} \cal{E})$ is an isomorphism for all $N$, then $\PD_f(\cal{E})$ is an isomorphism as well. 
    Thus, we reduce to the case when $\cal{E}$ is bounded.
    In this case, the two-out-of-three reduction principle reduces the question further to the case when $\mathcal{E}$ is a lisse sheaf of $\Lambda$-modules on $X_\et$.

    \item \textit{We reduce to the case $\cal{E}=\ud{\bf{F}}_p$.}
    First, the two-out-of-three reduction principle implies that it suffices to prove the claim for $p^k\cal{E}/p^{k+1}\cal{E}$ for each $0\leq k\leq r-1$.
    Therefore, we can assume that $\cal{E}$ is an $\bf{F}_p$-lisse sheaf (considered as a $\Z/p^r\Z$-lisse sheaf).
    The ``m\'ethode de la trace''\footnote{To make this precise, one can argue as in the proof of \cite[\href{https://stacks.math.columbia.edu/tag/0A3R}{Tag 0A3R}]{stacks-project}.} then implies that there is a finite \'etale morphism $\pi\colon X' \to X$ of constant degree prime to $p$ such that $\cal{E}'\coloneqq \restr{\cal{E}}{X'}$ is a finite successive extension of constant sheaves $\ud{\bf{F}}_p$.
    The composition
    \[
    \cal{E} \to \pi_* \cal{E}' \xr{\ttr_{\pi, \cal{E}}} \cal{E}
    \]
    is equal to $\deg(\pi)$ (see \cref{thm:flat-trace}\cref{thm:flat-trace-4}).
    Since $\deg(\pi)$ is coprime to $p$, we conclude that $\cal{E}$ is a direct summand of $\pi_*\cal{E}'$. 
    By the closure under retracts reduction principle, it suffices to show that $\PD_f\bigl(\pi_*\cal{E}'\bigr)$ is an isomorphism.
    Since the smooth trace is compatible with compositions
    (\cref{smooth-trace-constant}\cref{smooth-trace-constant-composition}),
    we see that the composition
    \begin{multline*}
    \rR f_*\pi_*\bigl(\rR\cHom_{\Lambda}(\cal{E}', \LLambda_{X'}(d)[2d])\bigr)
    \xrightarrow{\rR f_*(\PD_{\pi}(\cal{E}')(d)[2d])}
    \rR f_*\bigl(\rR\cHom_{\Lambda}(\pi_*\cal{E}', \LLambda_X(d)[2d])\bigr) 
    \longrightarrow \\
    \xrightarrow{\PD_f(\pi_*\cal{E}')}
    \rR\cHom_{\Lambda}(\rR f_*\pi_*\cal{E}', \LLambda_Y)
    \end{multline*}
    is given by $\PD_{f \circ \pi}(\cal{E}')$.
    Hence, we are reduced to showing that both $\PD_{\pi}(\cal{E}')$
    and $\PD_{f \circ \pi}(\cal{E}')$ are isomorphisms. In other words, we can assume\footnote{We note that we replace $X$ with $X'$, which might be disconnected. This will not be important for the rest of the argument. At any rate, we can further replace $X'$ with its connected component to preserve the assumption that $X$ is connected.} 
    that $\cE$ is a finite successive extension of constant sheaves $\bf{F}_p$. 
    The two-out-of-three reduction principle then allows us to reduce to $\cal{E}=\ud{\bf{F}}_p$.
 
    \item \textit{End of proof.}
    Now we prove the claim for $\Lambda=\Z/p^r\Z$ and $\cal{E}=\bf{F}_p$.
    In this case, the lisse sheaf of $\Lambda$-modules $\ud{\bf{F}}_{p, X}$ has the following free resolution:
    \[
    C\coloneqq \Bigl( \ldots \xrightarrow{p} \LLambda_X \xrightarrow{p^{r-1}} \LLambda_X \xrightarrow{p} \LLambda_X \Bigr) \xr{\sim} \ud{\bf{F}}_{p, X}.
    \]
    For any integer $i$, denote the naive truncation of $C$ by $C_i \coloneqq \sigma^{\geq -i} C$.
    Then $C_i$ fits into the exact sequence
    \[
    \ud{\bf{F}}_{p, X}[i] \to C_i \to \ud{\FF}_{p, X}. 
    \]
    This induces the following morphism of exact triangles
    \[ \begin{tikzcd}[column sep=small]
    \rR f_*\bigl(\rR \cHom_{\Lambda}(\ud{\bf{F}}_{p, X}, \LLambda_X(d)[2d])\bigr) \arrow[d,"{\PD_f(\ud{\mathbf{F}}_{p, X})}"] \arrow[r] & \rR f_*\bigl(\rR \cHom_{\Lambda}(C_i, \LLambda_X(d)[2d])\bigr) \arrow[d,"\PD_f(C_i)"] \arrow[r] & \rR f_*\bigl(\rR \cHom_{\Lambda}(\ud{\mathbf{F}}_{p, X}[i], \LLambda_X(d)[2d])\bigr) \arrow[d,"{\PD_f(\ud{\bf{F}}_{p, X}[i])}"] \\
    \rR\cHom_{\Lambda}(\rR f_*\ud{\mathbf{F}}_{p, X}, \LLambda_Y) \arrow[r] & \rR\cHom_{\Lambda}(\rR f_*C_i, \LLambda_Y) \arrow[r] & \rR\cHom_{\Lambda}(\rR f_*\ud{\mathbf{F}}_{p, X}[i], \LLambda_Y).
    \end{tikzcd} \]
    By construction, $C_i$ is a perfect (hence dualizable) object in $D(X_\et; \Lambda)$ for each $i$, so \cref{Poincare dualizability theorem} implies that $\PD_f(C_i)$ is an isomorphism for every integer $i$.
    Now as in \cref{general smooth Poincare duality - sheaf reduction}, both $\rR f_*\bigl(\rR \cHom_{\Lambda}(\ud{\mathbf{F}}_{p, X}[i], \LLambda_X(d)[2d])\bigr)$ and $\rR\cHom_{\Lambda}(\rR f_*\ud{\mathbf{F}}_{p, X}[i], \LLambda_Y)$ lie in $D^{\geq i-2d}(Y_\et; \Lambda)$. Therefore, we conclude that $\cal{H}^q\bigl(\PD_f(\ud{\mathbf{F}}_{p, X})\bigr)$ is an isomorphism for $q< i-2d$. Since $i$ was an arbitrary integer, $\PD_f(\ud{\mathbf{F}}_{p, X})$ is an isomorphism, finishing the proof. \qedhere
\end{enumerate}
\end{proof}

\begin{remark}\label{rmk:no-Poincare-duality-closed-unit-disk}
We point out that the example of the closed unit disk prevents any na\"{i}ve form of ``weak'' Poincar\'{e} duality to hold.
Namely, let $C$ be an algebraically closed nonarchimedean field of mixed characteristic $(0, p)$, let $X=\DD^1_C$, and let $n=p>0$. Then \cref{smooth-trace-constant} induces a pairing
\[
\Hh^i(X, \bf{F}_p) \otimes \Hh^{2-i}_c(X, \mu_p) \xlongrightarrow{\cup} \Hh^2_c(X, \mu_p) \xlongrightarrow{\Hh^2(\ttr_X)} \bf{F}_p
\]
for each integer $0\leq i\leq 2$. One may wonder whether, for a fixed $i$, the duality map from one of these two $\bf{F}_p$-vector spaces to the dual of the other can be an isomorphism (or at least injective or surjective).
It turns out that none of these options holds: 
\begin{enumerate}[leftmargin=*]
    \item \Cref{distance and cycle class} guarantees that $\Hh^2_c(X, \mu_p)$ is infinite, while \cref{cohomology-affine-curve} guarantees that $\Hh^0(X, \bf{F}_p)\simeq \bf{F}_p$. Thus, the map
    $\Hh^{2}_c(X, \mu_p) \to \Hh^{0}(X, \bf{F}_p)^{\vee}$
    cannot be injective, and the map 
    $\Hh^{0}(X, \bf{F}_p) \to \Hh^{2}_c(X, \mu_p)^{\vee}$
    cannot be surjective.
    \item On the other hand, \cref{rmk:infinite-cohomology} implies that $\Hh^1(X, \ZZ/p) \cong \Hh^1(X, \mu_p)$ is infinite, while \cref{cor:first-cohomology-affinoids} implies that $\Hh^1_c(X, \mu_p) =0$. Thus, the map $\Hh^{1}_c(X, \mu_p) \to \Hh^{1}(X, \bf{F}_p)^{\vee}$
    is not surjective,
    whereas the map $\Hh^{1}(X, \bf{F}_p) \to \Hh^1_{c}(X, \mu_p)^{\vee}$ is not injective.
\end{enumerate}
A similar computation can be adapted to the \emph{open} unit disk $X=\accentset{\circ}{\DD}^1$ showing that no form of ``weak'' Poincar\'e duality could hold in the partially proper case as well. 
\end{remark}

\begin{remark}
It seems plausible that there could be a more sophisticated version of Poincar\'e duality in the style of \cite{CGN}.
For example, in the case of smooth connected affinoid curves, 
\cref{cohomology-affine-curve}, \cref{comp-supp-cohomology-affine-curve}, and \cref{cor:first-cohomology-affinoids} imply that, among the cohomology groups involved in Poincar\'e duality, only $\Hh^1(X, \bf{F}_p)$ and $\Hh^2_c(X, \mu_p)$ could be infinite.
It is believable that the hugeness of $\Hh^1(X, \bf{F}_p)$ is ``dual'' to the hugeness of $\Hh^2_c(X, \mu_p)$, or rather to the hugeness of $\ker (\ttr_{X}) \subset \Hh^2_c(X, \mu_p)$.
Even though the numerology of the usual Poincar\'e duality does not allow this, a more elaborate form of duality (involving higher Ext groups) might ``mix'' degrees appropriately.
A similar phenomenon occurs in \cite{CGN}. Unfortunately, we do not know how to make this precise. 
\end{remark}

\section{The trace map for proper morphisms}\label{proper-trace}

In this section, we discuss the construction of a trace map for an arbitrary proper morphism of rigid-analytic spaces over a non-archimedean field of characteristic $0$. Then we prove a version of Poincar\'e duality for an arbitrary proper morphism; this positively answers the question raised in \cite[Rmk.~3.23]{BH}.
In order to even formulate the notion of a trace map and of Poincar\'e duality for proper morphisms that are not necessarily smooth, we need to use the notions of Zariski-constructible sheaves and of dualizing complexes developed in \cite[\S~3.1-3.2]{BH} and \cite[\S~3.4]{BH}, respectively.
Since the theory has only been worked out for rigid-analytic spaces over a nonarchimedean field of $0$, we always work in this setup in this section (in contrast to \cref{smooth-traces}, where we considered general locally noetherian analytic adic spaces). 

Throughout this section, we fix a non-archimedean field $K$ of characteristic $0$, an integer $n>0$, and put $\Lambda \colonequals \Z/n\Z$. 

\subsection{Preliminaries on dualizing complexes}

We recall that \cite[Th.~3.21]{BH} constructs a dualizing complex $\omega_X$ for any rigid-analytic space $X$ over $K$.
The main goal of this subsection is to record some basic facts about these dualizing complexes that are not addressed in \cite{BH}. 
Namely, we show that the formation of $\omega_X$ behaves well with respect to smooth morphisms and relative analytifications. 

First, we start with the following lemma:

\begin{lemma}\label{lemma:geometry-to-algebra} Let $f\colon X=\Spa(B, B^+) \to Y=\Spa(A, A^+)$ be a smooth morphism of affinoid rigid-analytic spaces over $K$. Then the morphism $f^\sharp \colon A \to B$ is regular. If $f$ is of equidimension $d$ and $\m\subset A$ is a maximal ideal such that $B\otimes_A k(\m) \neq 0$, then $B\otimes_A k(\m)$ has pure Krull dimension $d$.
\end{lemma}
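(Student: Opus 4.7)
The statement splits into two parts: regularity of the ring homomorphism $f^\sharp\colon A\to B$ (flatness together with geometrically regular fibers), and pure Krull dimension $d$ of nonempty fibers $B\otimes_A k(\fm)$ at maximal ideals $\fm\subset A$. Both are local on $\Spec B$, and the main tool will be the local structure theory of smooth rigid-analytic morphisms.

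First, I would test regularity of $f^\sharp$ at each prime $\mathfrak{q}\subset B$ by examining the localization $A_\p\to B_\mathfrak{q}$ with $\p = (f^\sharp)^{-1}(\mathfrak{q})$. Since $A$ and $B$ are Jacobson, it suffices to treat maximal $\mathfrak{q}$, which correspond to classical points $x\in X$. Around such $x$, smoothness of $f$ provides open affinoid neighborhoods $U\subseteq X$ of $x$ and $V\subseteq Y$ of $f(x)$ together with a factorization $\restr{f}{U} = \pi_V\circ g$, where $g\colon U\to \DD^d_V$ is \'etale and $\pi_V\colon \DD^d_V\to V$ is the disk projection \cite[Cor.~1.6.10]{Huber-etale}. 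Replacing $X$ and $Y$ by these affinoid neighborhoods, regularity of $f^\sharp$ at the stalks in question reduces to regularity of the composite $\cO(V)\to \cO(V)\langle T_1,\dotsc,T_d\rangle\to \cO(U)$. Since regular ring maps compose, it suffices to handle the \'etale factor $g^\sharp$ and the disk projection $\pi_V^\sharp$ separately.

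The \'etale case follows from standard facts: \'etale rigid-analytic morphisms induce flat maps on classical stalks with finite \'etale (hence geometrically regular) classical fibers. The key technical input is regularity of the disk projection $\cO(V)\to \cO(V)\langle T_1,\dotsc,T_d\rangle$: flatness is classical (e.g., via rings of definition), while geometric regularity of fibers rests on the excellence of strongly noetherian Tate $K$-algebras (a theorem going back to Kiehl in characteristic $0$, with later treatments by Conrad and Ducros); excellence ensures that the formal fibers, and hence the fibers of the disk projection, are geometrically regular. This establishes regularity of $f^\sharp$ and constitutes the main technical obstacle of the argument.

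For the dimension assertion, fix a maximal ideal $\fm\subset A$ with $B\otimes_A k(\fm)\neq 0$ and corresponding classical point $y\in Y$; then the rigid-analytic classical fiber $X_y$ is smooth of equidimension $d$ over $k(\fm)$. Flatness of $A\to B$ from above and the fiber dimension formula give $\dim B_\fn = \dim A_\fm + \dim(B_\fn/\fm B_\fn)$ for every maximal ideal $\fn\subset B$ lying over $\fm$. Using \cite[Lem.~1.8.6~(ii)]{Huber-etale} to identify $\dim B_\fn$ and $\dim A_\fm$ with local dimensions of the adic spaces $X$ at $x\leftrightarrow \fn$ and $Y$ at $y$, equidimensionality of $f$ yields $\dim(B_\fn/\fm B_\fn) = d$ for each such $\fn$. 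Since $B\otimes_A k(\fm)$ is a smooth affinoid $k(\fm)$-algebra whose every maximal localization has dimension $d$, it has pure Krull dimension $d$.
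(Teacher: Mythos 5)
Your strategy for regularity — factoring $f$ locally through an \'etale map followed by a relative disk projection via \cite[Cor.~1.6.10]{Huber-etale} — is genuinely different from the paper's, which works directly with the fibers $k(\m)\otimes_A B$ over closed points, base-changes to $C_\m := \wdh{\ov{k(\m)}}$, and invokes \cite[Th.~3.6.3]{FvdP04} for regularity together with \cite[Lem.~1.1.5(i)]{Conrad99} to descend. However, your route as written has two gaps.

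First, the step ``excellence ensures that the formal fibers, and hence the fibers of the disk projection, are geometrically regular'' is not a valid argument. The fibers of $\cO(V)\to\cO(V)\langle T_1,\dotsc,T_d\rangle$ are not formal fibers of a Noetherian local ring, and $\cO(V)\langle\ud{T}\rangle$ is not the $I$-adic completion of $\cO(V)[\ud{T}]$ for any ideal $I$ of the latter (the Gauss-norm completion comes from an ideal of the subring $\cO(V)^\circ[\ud{T}]$, which need not even be Noetherian if $\cO_K$ is not). The fix is to observe directly that the closed fibers of the disk projection are Tate algebras $\kappa(\fm)\langle T_1,\dotsc,T_d\rangle$ over complete nonarchimedean fields and are therefore regular (and, in characteristic $0$, automatically geometrically regular); then apply Andr\'e's localization theorem as in \cite[\href{https://stacks.math.columbia.edu/tag/07NQ}{Tag 07NQ}]{stacks-project} and \cite[Th.~p.1]{Andre} to propagate to all primes. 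But once you do this, you are running the paper's argument in a special case, so the detour through the local structure theorem buys nothing.

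Second, the shrinking step ``replacing $X$ and $Y$ by these affinoid neighborhoods'' requires care: for a rational open $U\subset X$ the map $B\to\cO(U)$ is not a localization, so transferring regularity of the new ring map back to $A\to B$ at the chosen prime is not automatic. It works because these restriction maps are flat, local, and induce isomorphisms on completions at classical points (\cite[Prop.~4.1/2]{B}), combined with the excellence of strongly Noetherian Tate algebras and descent of geometric regularity along faithfully flat local maps, but this needs to be said. For the dimension assertion your argument via the flat-fiber dimension formula is essentially equivalent to the paper's, which applies \cite[Lem.~1.8.6~(ii)]{Huber-etale} directly to the fiber $X_y$ and combines it with regularity of the fiber ring to conclude pure dimension $d$.
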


This lemma holds without the assumption that $\charac K=0$. 

\begin{proof}
    First, we show that $f^\sharp$ is regular. For this, we note that $f^\sharp \colon A \to B$ is flat due to flatness of $f$ and \cite[Lem.~B.4.3]{Z-quotients}. 
    Furthermore, \cite[Satz~3.3.3]{Differentialrechnung} and \cite[Th.~3.3]{Kiehl-excellence} imply that the rings $A$ and $B$ are excellent.
    Therefore, \cite[\href{https://stacks.math.columbia.edu/tag/07NQ}{Tag 07NQ}]{stacks-project} and \cite[Th.~p.1]{Andre} ensure that it suffices to show that $k(\m)\otimes_A B$ is either zero or geometrically regular for any maximal ideal $\m\subset A$. We put $C_\m\coloneqq \wdh{\ov{k(\m)}}$. Then \cite[Lem.~1.1.5(i)]{Conrad99} implies that it suffices to show that the $C_\m$-algebra $C_\m \wdh{\otimes}_A B$ is either regular or zero. For this, the maximal ideal $\m$ uniquely defines a (classical) point $y\in Y$. Then the geometric fiber $X_{\ov{y}}$ of $f$ over $y$ is given by $\Spa\Bigl(C_\m \wdh{\otimes}_A B, \bigl(C_\m \wdh{\otimes}_A B\bigr)^\circ\Bigr)$. Since $f$ is smooth, we conclude that $X_{\ov{y}}$ is smooth over $\Spa(C_\m, C_\m^\circ)$, so \cite[Th.~3.6.3]{FvdP04} implies that $C_\m \wdh{\otimes}_A B$ is regular or zero. 

    We are left to show that $B\otimes_A k(\m)$ has pure dimension $d$ if it is non-zero and $f$ is of equidimension $d$. We keep the notation of the previous paragraph and observe that the fiber $X_y$ is given by $\Spa\Bigl(B\otimes_A k(\m), \bigl(B\otimes_A k(\m)\bigr)^\circ\Bigr)$. Then \cite[Lem.~1.8.6~(ii)]{Huber-etale} implies that each connected component of $\Spec \bigl(B\otimes_A k(\m)\bigr)$ is of Krull dimension $d$. Since we already know that $B\otimes_A k(\m)$ is regular, we conclude that it is of pure Krull dimension $d$. 
\end{proof}

\begin{corollary}\label{cor:smooth-pullback}
Let $f\colon X \to Y$ be a smooth morphism of equidimension $d$ between  rigid-analytic spaces over $K$.
Then there is a canonical isomorphism 
\[
\alpha_f\colon f^*\omega_Y(d)[2d] \xlongrightarrow{\sim} \omega_X.
\]
\end{corollary}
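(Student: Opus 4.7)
\textit{Plan.} The strategy is to reduce to simple building blocks and then piece them together via functoriality. The statement is local on both $X$ and $Y$, so I may assume that both are affinoid. Using \cite[Cor.~1.6.10]{Huber-etale}, I further factor $f$ \'etale-locally as $X \xrightarrow{g} \DD^d_Y \xrightarrow{\pi} Y$ with $g$ \'etale. It then suffices to construct canonical isomorphisms $\alpha_g$ and $\alpha_\pi$ separately, letting the composite give $\alpha_f$; independence from the choice of factorization will reduce to the \'etale case treated immediately below.

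For \'etale $g$ (smooth of equidimension $0$), the claim $g^*\omega_{\DD^d_Y} \simeq \omega_X$ is immediate from the \'etale-local nature of the dualizing complex built into \cite[Th.~3.21]{BH}: if $\omega_Y$ is constructed locally as $i^!$ applied to the constant sheaf on a smooth ambient space, the same formula pulls back compatibly along any \'etale map, because $i^!$ commutes with \'etale base change.

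For $\pi$, I would shrink $Y$ to choose a locally closed immersion $j \colon Y \hookrightarrow \w Y$ into a smooth rigid-analytic $K$-space of equidimension $e$, which exists since any affinoid $Y$ over $K$ closed-embeds into a polydisk. The resulting Cartesian square
\[
\begin{tikzcd}
\DD^d_Y \arrow[r, "j'"] \arrow[d, "\pi"] & \DD^d_{\w Y} \arrow[d, "\w \pi"] \\
Y \arrow[r, "j"] & \w Y
\end{tikzcd}
\]
has both $\w Y$ and $\DD^d_{\w Y}$ smooth over $K$, so the smooth case of \cite[Th.~3.21]{BH} identifies $\omega_{\w Y} \simeq \LLambda(e)[2e]$ and $\omega_{\DD^d_{\w Y}} \simeq \LLambda(e+d)[2(e+d)]$. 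Combining the identifications $\omega_Y \simeq j^!\omega_{\w Y}$ and $\omega_{\DD^d_Y} \simeq (j')^!\omega_{\DD^d_{\w Y}}$ with a smooth base change isomorphism $\pi^* j^! \xrightarrow{\sim} (j')^! \w\pi^*$ (applied to $\LLambda$, then suitably twisted and shifted so that $\w\pi^* \LLambda(e)[2e](d)[2d] = \LLambda(e+d)[2(e+d)]$) will produce the desired $\alpha_\pi \colon \pi^*\omega_Y(d)[2d] \xrightarrow{\sim} \omega_{\DD^d_Y}$.

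The hard part will be establishing this smooth base change $\pi^* j^! \xrightarrow{\sim} (j')^! \w\pi^*$ in the generality needed here, since $n$ is not assumed invertible in $\O_Y^+$ and the classical arguments with shriek functors require care in this regime. However, the complexes $j^!\omega_{\w Y}$ and $(j')^!\omega_{\DD^d_{\w Y}}$ lie in $D_\zc$ by \cite[Th.~3.21]{BH}, so one expects the required base change to fall within the BH formalism; once it is in place, the remaining verification that $\alpha_\pi$ (and hence $\alpha_f$) is canonical and compatible with the two reductions above will be essentially diagrammatic via functoriality of shriek pullback.
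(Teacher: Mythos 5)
Your route is genuinely different from the paper's and contains a real gap precisely at the step you flag as ``the hard part.'' The paper's proof is much shorter because it exploits the way the dualizing complex is defined: $\omega_X = c_X^*(\omega_{\Spec B})$ for the analytification morphism $c_X \colon X_\et \to (\Spec B)_\et$, where $\omega_{\Spec B}$ is a potential dualizing complex in the sense of \cite[Exp.~XVII]{deGabber}. Since $f^* c_Y^* \simeq c_X^* f^{\alg,*}$, the desired isomorphism is literally equivalent to the algebraic statement $f^{\alg,*}\omega_{\Spec A}(d)[2d] \simeq \omega_{\Spec B}$, and this is supplied by \cref{lemma:geometry-to-algebra} (the ring map $A\to B$ is regular with closed fibers of pure dimension $d$) plus \cite[Lem.~3.22]{BH} on pullback of potential dualizing complexes along regular morphisms. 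No adic-space base change is ever needed; the compatibility lives entirely in scheme theory where it is already established.

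Your factorization into ``\'etale followed by relative disk,'' combined with a closed embedding $Y \hookrightarrow \widetilde Y$ into a smooth space and the identification $\omega_Y \simeq j^!\omega_{\widetilde Y}$, is a reasonable plan, but it hinges on the isomorphism $\pi^* j^! \xrightarrow{\sim} (j')^!\widetilde\pi^*$ for the Cartesian square
\[
\begin{tikzcd}
\DD^d_Y \arrow[r, "j'"] \arrow[d, "\pi"] & \DD^d_{\widetilde Y} \arrow[d, "\widetilde\pi"] \\
Y \arrow[r, "j"] & \widetilde Y,
\end{tikzcd}
\]
applied to $\omega_{\widetilde Y}$. Unraveling $j^!$ and $(j')^!$ via the complementary open immersions $u\colon \widetilde Y\smallsetminus Y\hookrightarrow \widetilde Y$ and $\widetilde u \colon \DD^d_{\widetilde Y}\smallsetminus \DD^d_Y \hookrightarrow \DD^d_{\widetilde Y}$, this is exactly the assertion that the base change map $\widetilde\pi^* Ru_* \to R\widetilde u_* \widetilde\pi^*$ is an isomorphism on the relevant coefficients. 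In the regime $n\notin (\cO_Y^+)^\times$ this kind of base change is not available from general principles: the paper's \cref{example:no-proper-base-change} shows that even proper base change fails for quite ordinary constructible inputs, and the closed unit disk is precisely the source of the pathology. You would need to establish this smooth base change specifically for the constant sheaf on a smooth ambient space (perhaps using the Zariski-constructibility of $j^!\omega_{\widetilde Y}$ and arguments in the spirit of \cref{lemma:proper-base-change}), but that is a nontrivial new lemma, not something that ``falls within the BH formalism'' automatically. A secondary point: you would also need to verify that $\alpha_f$ is independent of the chosen local factorization $X \to \DD^d_Y \to Y$ and of the embedding $Y\hookrightarrow \widetilde Y$; the paper sidesteps this because canonicity is inherited from the algebraic theory of pinned potential dualizing complexes.
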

\begin{proof}
    After unraveling the definition of the dualizing complex in \cite[Th.~3.21]{BH}, we reduce the question to showing that for a smooth morphism $f\colon \Spa(B, B^\circ) \to \Spa(A, A^\circ)$ of equidimension $d$ with associated morphism $f^\alg\colon \Spec B \to \Spec A$, there is a unique isomorphism (compatible with the pinnings) of potential dualizing complexes\footnote{See \cite[Exp.~XVII, \S~2]{deGabber} for the detailed and self-contained discussion of potential dualizing complexes.}
    $\alpha_{f^\alg} \colon f^{\alg, *} \omega_A(d)[2d] \xr{\sim} \omega_B$. This follows from \cref{lemma:geometry-to-algebra} and \cite[Lem.~3.22]{BH}.
\end{proof}

\begin{remark}[Algebraic version of \cref{cor:smooth-pullback}]\label{rmk:smooth-pullback-in-algebraic-geometry}
\begin{enumerate}[leftmargin=*]
    \item A proof similar to that of \cref{cor:smooth-pullback} (in fact, easier), shows that for any $K$-affinoid algebra $A$ and any smooth morphism $f\colon X \to Y$ of equidimension $d$ between locally finite type $A$-schemes, there is a canonical isomorphism
    \[
    \alpha_{f}^\alg\colon f^* \omega_Y(d)[2d] \xlongrightarrow{\sim} \omega_X.
    \]
    \item In the proof of \cref{cor:smooth-pullback}, we also used the following fact: for a smooth morphism $f\colon \Spa(B, B^\circ) \to \Spa(A, A^\circ)$ of equidimension $d$ between rigid-analytic spaces over $K$ and the corresponding morphism $f^\alg \colon \Spec B \to \Spec A$ of affine schemes (which is not necessarily of finite type), there is a canonical isomorphism
    \[
    \alpha_{f^\alg} \colon f^{\alg, *}\omega_{\Spec A}(d)[2d] \xlongrightarrow{\sim} \omega_{\Spec B}.
    \]
    \end{enumerate}
Both of these isomorphisms essentially come from \cite[Lem.~3.22]{BH} (or \cite[Exp.~XVII, Prop.~4.1.1]{deGabber}).
\end{remark}

Now we discuss the behavior of dualizing complexes with respect to relative analytifications. Again, we first need to verify an algebra result: 

\begin{lemma}\label{lemma:relative-analytification-regular} Let $A$ be a $K$-affinoid algebra, let $B$ be a finite type $A$-algebra, let $X=\Spec B$, and let $U=\Spa(R, R^\circ) \subset X^{\an/A}$ be an open affinoid in the relative analytification of $X$ (see \cref{construction:relative-analytification}). Then the natural morphism $B \to R$ is regular, and $\dim R \otimes_B k(\m)=0$ for any maximal ideal $\m\subset B$ such that $R\otimes_B k(\m) \neq 0$.
\end{lemma}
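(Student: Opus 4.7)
The plan is to mirror the proof of \cref{lemma:geometry-to-algebra}, but using the structure of open affinoids in relative analytifications in place of smoothness. First, I would observe that since $U$ is a qcqs open in the adic space $X^{\an/A}$, which is locally of finite type over $\Spa(A, A^\circ)$, it is itself of finite type over $\Spa(A, A^\circ)$. Hence $R$ is a $K$-affinoid algebra, and by \cite[Satz~3.3.3]{Differentialrechnung} and \cite[Th.~3.3]{Kiehl-excellence}, both $R$ and $B$ are Noetherian and excellent. By \cite[\href{https://stacks.math.columbia.edu/tag/07QI}{Tag~07QI}]{stacks-project} it suffices to verify that $B \to R$ is flat and has geometrically regular fibers.

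Next, I would fix a closed immersion $X \hookrightarrow \AA^n_A$ corresponding to a presentation $B = A[T_1, \dotsc, T_n]/I$, which induces a closed immersion of adic spaces $X^{\an/A} \hookrightarrow \AA^{n,\an}_{\Spa(A, A^\circ)}$. After rescaling the coordinates by an element of $K^\times$, the affinoid $U$ lies inside the closed disk $V = \DD^n_{\Spa(A, A^\circ)}$, and is an affinoid subdomain of $V \cap X^{\an/A} = \Spa(C, C^\circ)$ with $C = A\langle T_1, \dotsc, T_n\rangle/IA\langle T_1, \dotsc, T_n\rangle$. The map $B \to R$ thus factors as
\[ B = A[T_1, \dotsc, T_n]/I \xlongrightarrow{\varphi} C \xlongrightarrow{\psi} R, \]
where $\psi$ is a rational localization of $K$-affinoid algebras---this is flat and has zero-dimensional geometrically regular fibers. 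The map $\varphi$ is the base change along $A[T] \to B$ of the ``affine-space analytification map'' $A[T_1, \dotsc, T_n] \to A\langle T_1, \dotsc, T_n\rangle$. I would verify regularity of the latter by reducing via base change to the case $A = K$, where it can be checked directly using the structure theory of $K\langle T\rangle$ (via Weierstrass preparation for the fibers over closed points, and direct analysis for the generic fiber). Since regular morphisms between Noetherian rings with geometrically regular fibers are preserved under arbitrary base change, $\varphi$ is regular, and composition yields regularity of $B \to R$.

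For the dimension claim, a maximal ideal $\m \subset B$ corresponds to a classical point $x_\m \in X$, whose residue field $k(\m)$ is finite over $K$ by the Nullstellensatz for affinoid algebras and finite type extensions. If $R \otimes_B k(\m) \neq 0$, the fiber is supported on the finite set of classical points of $U$ lying over $x_\m$ under the natural map $U \to X$, each contributing a finite extension of $k(\m)$. Hence $R \otimes_B k(\m)$ is a finite-dimensional $k(\m)$-algebra, so $\dim R \otimes_B k(\m) = 0$.

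The main obstacle is the regularity of the analytification map $A[T_1, \dotsc, T_n] \to A\langle T_1, \dotsc, T_n\rangle$, in particular the geometric regularity of its fibers over generic points; this is the technical heart of the argument and is what forces the reduction to the field case $A = K$.
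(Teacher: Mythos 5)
Your approach diverges substantially from the paper's, and it has a genuine gap at the step you yourself flag as the ``technical heart.''

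The key problem is the reduction ``via base change to the case $A = K$.'' The Tate algebra $A\langle T_1,\dotsc,T_n\rangle$ is the \emph{completed} tensor product $A \widehat{\otimes}_K K\langle T\rangle$, not the ordinary tensor product $A \otimes_K K\langle T\rangle$ (e.g.\ for $A = K\langle S\rangle$, one has $A\langle T\rangle = K\langle S,T\rangle \supsetneq K\langle S\rangle \otimes_K K\langle T\rangle$). Consequently $A[T]\to A\langle T\rangle$ is \emph{not} the base change of $K[T]\to K\langle T\rangle$ along $K\to A$, and the regularity of the latter cannot be transported by a base-change argument. You would have to additionally control the completion map $A\otimes_K K\langle T\rangle\to A\langle T\rangle$, which is a separate and nontrivial issue you never address. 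Two further points need care: the assertion that regular ring maps are ``preserved under arbitrary base change'' is itself a nontrivial theorem (it is where Andr\'e's localization-of-formal-smoothness theorem enters, and even then one needs Noetherianity of the base-changed ring), not something one can invoke in passing; and $U$ being an affinoid subdomain of $\Spa(C,C^\circ)$ does not make $\psi$ a single rational localization---one would have to go through Gerritzen--Grauert and then still justify that such maps are \emph{regular} ring maps (flat with geometrically regular fibers over \emph{all} primes, not just classical points).

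The paper's route avoids all of this. Using that $A$ and $R$ are Jacobson and the bijection on classical points from \cite[Lem.~5.1.2]{Conrad99}, it shows that $\Spec R \to \Spec B$ is injective on closed points with matching residue fields: for every maximal $\m\subset B$ with nonzero fiber, $\m R$ is maximal and $B_\m\to R_\m$ is a local map inducing an isomorphism on residue fields. Since $B$ is excellent, Andr\'e's theorem combined with \cite[\href{https://stacks.math.columbia.edu/tag/07NQ}{Tag 07NQ}]{stacks-project} reduces regularity of $B\to R$ to flatness at maximal ideals plus geometric regularity of the closed fibers, and the latter is trivial because those fibers are single residue-field points. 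Flatness at closed points is then quoted from the literature. This makes the dimension claim immediate ($R/\m R$ is a field), and it entirely sidesteps the need to analyze fibers over non-closed points, which is exactly what your factorization through $A\langle T\rangle$ cannot avoid.
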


This lemma holds without the assumption that $\charac K=0$. 
\begin{proof}
    First, we note that $A$ is a Jacobson ring by virtue of \cite[Prop.~3.1/3]{B}.
    Therefore, \cite[\href{https://stacks.math.columbia.edu/tag/00GB}{Tag 00GB}]{stacks-project} implies that $\Spec B \to \Spec A$ sends closed points to closed points.
    Since $X^{\an/A} \to \Spa(A, A^\circ)$ sends classical points to classical points, \cite[Lem.~5.1.2]{Conrad99} implies that $\abs{c_{X/A}} \colon \abs{X^{\an/A}} \to \abs{X}$ defines a bijection between classical points of $X^{\an/A}$ and closed points of $X$.
    As a consequence, the natural morphism $r\colon \Spec R \to \Spec B$ sends closed points to closed points and is injective on closed points. 
    Since $R$ is Jacobson, we conclude that $r^{-1}(\{s\})$ consists of at most one closed point for any closed point $s\in \Spec B$. Combining these results with \cite[Prop.~4.1/2]{B} and \cite[Lem.~5.1.2(2)]{Conrad99}, we conclude that, for every maximal ideal $\m\subset B$ such that $k(\m)\otimes_B R\neq 0$, the ideal $\m R \subset R$ is maximal, and the morphism $B_\m \to R_\m$ induces an isomorphism on residue fields. 

    Now we recall that $B$ is excellent due to \cite[Th.~3.3]{Kiehl-excellence} and \cite[Scholie 7.8.3(ii)]{EGA4_2}. Therefore, \cite[\href{https://stacks.math.columbia.edu/tag/07NQ}{Tag 07NQ}]{stacks-project}, \cite[Th.~p.1]{Andre}, and the conclusion of the previous paragraph imply that, in order to obtain both claims of the lemma, it suffices to show that, for every maximal ideal $\m\subset B$ such that $k(\m)\otimes_B R\neq 0$, the natural morphism 
    \[
    B_\m \to R_\m
    \]
    is flat and induces an isomorphism on residue fields. The latter claim was already verified in the previous paragraph.
    The first claim follows from \cite[Lem.~6.4]{adic-notes}, \cite[\href{https://stacks.math.columbia.edu/tag/0523}{Tag 0523}]{stacks-project}, and \cite[Prop.~4.1/2]{B}.
\end{proof}

\begin{corollary}\label{cor:analytification-pullback} Let $A$ be a $K$-affinoid algebra, let $X$ be a locally finite type $A$-scheme with the relative analytification $c_{X/A} \colon X^{\an/A}_\et \to X_\et$. Then there is a canonical isomorphism
\[
\beta_{X/A} \colon c_{X/A}^* \omega_{X} \xlongrightarrow{\sim} \omega_{X^{\an/A}},
\]
where $\omega_X$ is a potential dualizing complex on $X$ \emph{(}see \cite[Th.~3.19]{BH} and \cite[Exp.~XVII, Th.~5.1.1]{deGabber}\emph{)}.
\end{corollary}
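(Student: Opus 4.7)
The plan is to imitate the proof of \cref{cor:smooth-pullback}, replacing the smooth pullback \cite[Lem.~3.22]{BH} input by its zero-relative-dimension analog applied to the structure map $B \to R$ of an affinoid open in the relative analytification. Concretely, I would first unwind the constructions of the two dualizing complexes in terms of \emph{potential} dualizing complexes of schemes (in the sense of \cite[Exp.~XVII, Th.~5.1.1]{deGabber}).  Recall from \cite[Th.~3.21]{BH} that on any open affinoid $U = \Spa(R, R^+) \subset X^{\an/A}$ one has a canonical equivalence $\omega_{X^{\an/A}}\vert_U \simeq c_U^*\,\omega_{\Spec R}$, where $c_U \colon U_\et \to (\Spec R)_\et$ is the morphism from \cref{construction:relative-analytification} and $\omega_{\Spec R}$ is the unique potential dualizing complex on $\Spec R$ that is suitably normalized. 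Likewise, for any open affine $\Spec B \subset X$ which receives $U$, the restriction $\restr{\omega_X}{\Spec B}$ is canonically the potential dualizing complex $\omega_{\Spec B}$.

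Next, I would produce the desired local isomorphism.  Given such $U\subset X^{\an/A}$ lying over $\Spec B \subset X$, the induced ring map $r\colon B \to R$ is, by \cref{lemma:relative-analytification-regular}, regular with the property that every non-zero fiber $R \otimes_B k(\fm)$ has Krull dimension $0$.  Therefore \cite[Lem.~3.22]{BH} (or equivalently \cite[Exp.~XVII, Prop.~4.1.1]{deGabber}) produces a canonical isomorphism of potential dualizing complexes
\[
\gamma_{R/B}\colon r^{\alg,*}\,\omega_{\Spec B}(0)[0] \xlongrightarrow{\sim} \omega_{\Spec R},
\]
compatible with the pinnings. Applying $c_U^*$ to $\gamma_{R/B}$ and combining with the two trivializations recalled above yields a local isomorphism
\[
\beta^{\mathrm{loc}}_{U,B}\colon \restr{c_{X/A}^*\,\omega_X}{U} \xlongrightarrow{\sim} \restr{\omega_{X^{\an/A}}}{U},
\]
using that $c_{X/A}$ restricts on $U$ to the composition $c_U \circ r^{\alg}_\et$.

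Finally, I would glue the $\beta^{\mathrm{loc}}_{U, B}$ into a global isomorphism $\beta_{X/A}$.  For this, it suffices to check compatibility on overlaps $U' \subset U$ and to see that the choice of $\Spec B$ containing the image of $U$ is irrelevant.  Both compatibilities are formal consequences of the functoriality statement in \cite[Lem.~3.22]{BH}: regularity is stable under composition and localization, the formation of $\gamma$ is transitive along composable regular morphisms of finite Tor-dimension, and the pinnings force the isomorphisms to agree wherever they are both defined.  Equivalently, one may invoke the uniqueness up to unique isomorphism of potential dualizing complexes normalized by the same pinning on a common open: both $c_{X/A}^*\omega_X$ and $\omega_{X^{\an/A}}$ are, by construction, such potential dualizing complexes on $X^{\an/A}_\et$ with matching pinnings at classical points.

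The main obstacle is not the existence of the local maps, which is essentially immediate from \cref{lemma:relative-analytification-regular} and \cite[Lem.~3.22]{BH}, but rather keeping track that the pinnings produced analytically (via the construction in \cite[Th.~3.21]{BH}) agree with the pinnings produced algebraically (via \cite[Exp.~XVII]{deGabber}) at each classical point of $X^{\an/A}$. Once this bookkeeping is done, gluing is formal and $\beta_{X/A}$ is determined uniquely.
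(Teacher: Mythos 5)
Your proposal follows essentially the same route as the paper: the paper's proof is a one-liner citing the analogous argument for \cite[Th.~3.21~(7)]{BH}, together with \cref{lemma:relative-analytification-regular} and \cite[Lem.~3.22]{BH}, which are precisely the two ingredients you use. Your write-up merely unpacks what that cited argument does (local trivialization via regularity of $B \to R$, followed by gluing via the pinning structure of potential dualizing complexes), so the content is identical.
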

\begin{proof}
    The proof is completely analogous to that of \cite[Th.~3.21~(7)]{BH} using \cref{lemma:relative-analytification-regular} and \cite[Lem.~3.22]{BH}.
\end{proof}

\begin{lemma}\label{lemma:analytification-of-smooth-iso} Let $A$ be a $K$-affinoid algebra, let $f^\alg\colon X \to \Spec A$ be a smooth morphism of equidimension $d$, and let $f\colon X^{\an/A} \to \Spa(A, A^\circ)$ be its relative analytification.
Then the diagram
\begin{equation}\label{eqn:compatibility-of-alphas}
\begin{tikzcd}[column sep = huge]
    c_{X/A}^* f^{\alg, *} \omega_{\Spec A} \simeq f^* c_A^* \omega_{\Spec A} \arrow[d, "c_{X/A}^*\alpha_{f^\alg}","\sim"'{sloped}] \arrow[r, "f^*(\beta_{\Spec A/A})", "\sim"'] & f^* \omega_{\Spa (A, A^\circ)} \arrow[d, "\alpha_f", "\sim"'{sloped}]   \\
    c^*_{X/A} \omega_X \arrow[r, "\beta_{X/A}", "\sim"'] & \omega_{X^{\an/A}}
\end{tikzcd}
\end{equation}
commutes, where the $\beta$'s are the isomorphisms from \cref{cor:analytification-pullback}, $\alpha_f$ is the isomorphism from \cref{cor:smooth-pullback}, and $\alpha_{f^{\alg}}$ is the isomorphism from \cref{rmk:smooth-pullback-in-algebraic-geometry}.
\end{lemma}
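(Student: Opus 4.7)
The plan is to reduce the claimed commutativity to the uniqueness statement in \cite[Lem.~3.22]{BH}, from which all four comparison isomorphisms $\alpha_f$, $\alpha_{f^{\alg}}$, $\beta_{X/A}$, $\beta_{\Spec A/A}$ ultimately derive. Recall that \emph{loc.\ cit.} asserts that, given a regular morphism $\varphi\colon R \to S$ of (excellent) Noetherian rings with geometrically regular fibres of pure dimension $d$, together with potential dualizing complexes on either side, there is a \emph{unique} isomorphism $\varphi^*\omega_R(d)[2d] \xrightarrow{\sim} \omega_S$ compatible with the canonical trivializations (``pinnings'') at the generic points of the fibres.

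First I would localize: the assertion that the diagram \cref{eqn:compatibility-of-alphas} commutes may be checked Zariski-locally on $X^{\an/A}$. So I pass to an arbitrary affinoid open $U = \Spa(R, R^\circ) \subset X^{\an/A}$, which maps under $c_{X/A}$ into an affine open $V = \Spec B \subset X$ (after shrinking $X$). This reduces the diagram to an algebraic statement about ring maps $A \to B \to R$, together with the analytification map $A \to A$ (which, at the level of classical points, is the identity).

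Next I would verify that each of the four morphisms appearing in the diagram is regular with geometrically regular fibres of suitable pure dimension: $A \to B$ is regular with fibres of pure dimension $d$ by \cref{lemma:geometry-to-algebra}; $B \to R$ is regular with $0$-dimensional fibres by \cref{lemma:relative-analytification-regular}; the same applies to $A \to A$ (trivially of dimension $0$) and, either by composing or by a direct application of \cref{lemma:geometry-to-algebra} to $f$, the composition $A \to R$ is regular with fibres of pure dimension $d$. Thus \cite[Lem.~3.22]{BH} applies to the composite morphism $A \to R$ and singles out a unique isomorphism $(A \to R)^*\omega_A(d)[2d] \xrightarrow{\sim} \omega_R$ compatible with the natural pinnings at the generic points of the fibres.

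Both compositions in \cref{eqn:compatibility-of-alphas} yield such an isomorphism, so the task is to check that they both respect the common pinning coming from the composite $A \to R$. This is where the main technical work lies: one has to verify that if $\varphi\colon R \to S$ and $\psi\colon S \to T$ are regular morphisms with fibres of pure dimensions $d_1$ and $d_2$ as above, the composition of the canonical comparison isomorphisms for $\varphi$ and $\psi$ agrees, under the identification $(\psi \circ \varphi)^* \simeq \varphi^* \circ \psi^*$, with the canonical comparison isomorphism for $\psi \circ \varphi$. This transitivity is proved by passing to local rings at the generic points of the fibres, where the pinning isomorphisms are the identifications induced by local duality, and checking that the two composite identifications agree; at the level of local rings, both reduce to the standard fact that relative duality for smooth or $0$-dimensional regular morphisms composes in the expected way (essentially \cite[Exp.~XVII, Prop.~4.1.1]{deGabber} applied twice). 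Granted this transitivity, both paths around the square coincide with the unique Bhatt--Hansen isomorphism for $A \to R$, and the lemma follows. The main obstacle is the careful bookkeeping required to match the pinnings: one needs to confirm that the generic points of the fibres of $A \to R$ are compatible, under the relevant localizations, with generic points of the fibres of $A \to B$ and of $B \to R$, and that the corresponding trivializations of $\omega$ transport correctly through both $c_{X/A}^*$ and $f^*$.
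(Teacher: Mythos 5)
Your plan follows the paper's proof closely: localize, then combine the regularity computations (\cref{lemma:geometry-to-algebra}, \cref{lemma:relative-analytification-regular}) with the transitivity of the comparison isomorphisms for regular morphisms. Two points deserve attention. First, you assert without justification that commutativity of the square can be checked Zariski-locally; but equality of two morphisms in a derived category cannot in general be verified locally, so one must first observe -- as the paper does -- that $\rR\cHom\bigl(c_{X/A}^* f^{\alg,*}\omega_{\Spec A}, \omega_{X^{\an/A}}\bigr) \simeq \ud{\Lambda}_{X^{\an/A}}$ lies in $D^{\geq 0}$ by \cite[Th.~3.21~(3)]{BH}, so that the relevant $\Hom$-group is the global sections of a sheaf and hence a genuinely local quantity; this is what licenses the (\'etale-)local reduction. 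Second, the transitivity you identify as the main technical hurdle is not something to be reconstructed by applying \cite[Exp.~XVII, Prop.~4.1.1]{deGabber} twice together with bookkeeping at generic points of fibres -- it is stated directly as \cite[Exp.~XVII, Rmq.~4.1.3]{deGabber}, which is the reference the paper invokes and which short-circuits the pinning-matching you describe.
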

\begin{proof}
We note that $c_{X/A}^*f^{\alg, *} \omega_{\Spec A}$ is isomorphic to $\omega_{X^{\an/A}}$ via the composition $\alpha_f \circ f^*(\beta_{\Spec A/A})$. Therefore, \cite[Th.~3.21~(3)]{BH} implies that $\rR\cHom(c_{X/A}^*f^{\alg, *} \omega_{\Spec A}, \omega_{X^{\an}/A}) \simeq \ud{\Lambda}_{X^{\an/A}}$ lies in $D^{\geq 0}(X^{\an/A}_\et; \Lambda)$.
As a consequence, it suffices to check that Diagram~\cref{eqn:compatibility-of-alphas} commutes \'etale locally on $X^{\an/A}$.
After unraveling the definitions, the result then follows from \cref{lemma:geometry-to-algebra}, \cref{lemma:relative-analytification-regular}, and (most importantly) \cite[Exp.~XVII, Rmq.~4.1.3]{deGabber}.
\end{proof}

\subsection{Smooth and closed traces}

The main goal of this subsection is to define versions of trace maps for closed immersions and smooth morphisms (with coefficients in dualizing sheaves). The first construction will essentially come from adjunction, while the second construction will essentially come from the smooth trace map of \cref{smooth-trace-constant}. 

We start with the case of closed immersions. For this, we recall that \cite[Th.~3.21~(1)]{BH} provides us with a canonical isomorphism $c_i \colon \omega_X \xr{\sim} \rR i^! \omega_Y$ for any closed immersion $i\colon X\hookrightarrow Y$.  This gives us the desired trace via the following construction: 

\begin{construction}[Closed trace]
\label{trace for closed immersion}
Let $i\colon X \hookrightarrow Y$ be a closed immersion of rigid-analytic spaces over $K$.
The \emph{closed trace map} is the morphism $\tr_i \colon i_* \omega_X \to \omega_Y$ defined as the composition
\[
i_* \omega_X \xr{i_*(c_i)} i_* \rR i^! \omega_Y \xr{\epsilon_i} \omega_Y
\]
where $\epsilon_i$ is the counit of the $(i_*, \rR i^!)$-adjunction.
In other words, $\tr_i$ is adjoint to the isomorphism $c_i$. 
\end{construction}
The closed trace has its obvious analog in algebraic geometry:
\begin{construction}[Closed trace in algebraic geometry]
\label{trace for closed immersion in algebraic geometry}
Let $A$ be a $K$-affinoid algebra, and let $i\colon X \hookrightarrow Y$ be a closed immersion of locally finite type $A$-schemes. The \emph{closed trace map} is the morphism $\tr_i^{\alg} \colon i_* \omega_X \to \omega_Y$ defined as the composition
\[
i_* \omega_X \xr[\sim]{i_*(c^\alg_i)} i_* \rR i^! \omega_Y \xr{\epsilon_i} \omega_Y
\]
where $c^\alg_i$ is the isomorphism induced by \cite[Exp.~XVII, Prop.~4.1.2]{deGabber} and $\epsilon_i$ is the counit of the $(i_*, \rR i^!)$-adjunction.
\end{construction}

\begin{remark}\label{rmk:basic-properties-closed-trace}
The closed trace map satisfies the following basic properties:
\begin{enumerate}[label=(\arabic*),leftmargin=*]
    \item\label{rmk:basic-properties-closed-trace-composition} the closed trace is compatible with compositions, i.e., for a pair of Zariski-closed immersions $i_1\colon X \hookrightarrow Y$ and $i_2\colon Y \hookrightarrow Z$, we have the following equality:
    \[
    \tr_{i_2} \circ i_{2, *}(\tr_{i_1}) = \tr_{i_2 \circ i_1} \colon (i_2 \circ i_1)_* \omega_X \to \omega_Z;
    \]
    \item\label{rmk:basic-properties-closed-trace-local} the closed trace is \'etale local, i.e., for a Zariski-closed immersion $i\colon X \hookrightarrow Y$, an \'etale morphism $g\colon Y' \to Y$, and the fiber product $X' \coloneqq Y' \times_Y X$ with the two projections $i' \colon X' \hookrightarrow Y'$ and $g'\colon X' \to X$, the diagram
    \[ \begin{tikzcd}[column sep=large]
        i'_{*} \omega_{X'} \arrow[rr,"\tr_{i'}"]  & & \omega_{Y'}  \\
        i'_{*}g^{\prime *} \omega_X \arrow[u,"i'_{*}(\alpha_{g'})","\sim"'{sloped}] & \arrow[l,"\BC"',"\sim"] g^*i_{*} \omega_X \arrow[r,"g^*\tr_{i}"] & g^*\omega_Y \arrow[u,"\alpha_g","\sim"'{sloped}],
    \end{tikzcd} 
    \]
    commutes, where the $\alpha$'s are the isomorphism from \cref{cor:smooth-pullback};
    \item\label{rmk:basic-properties-closed-trace-analytification-of-closed trace} the closed trace is compatible with relative analytification, i.e., for a $K$-affinoid algebra $A$ and a closed immersion $i\colon X \to Y$ of locally finite type $A$-schemes, the diagram
    \[
        \begin{tikzcd}[column sep = huge]
        c_{Y/A}^* i_* \omega_X \arrow{r}{\sim} \arrow{d}{c_{Y/A}^*(\tr_i^\alg)}& i_*^{\an/A}c_{X/A}^* \omega_X \arrow[r, "i^{\an/A}_*(\beta_{X/A})", "\sim"'] & i^{\an/A}_* \omega_{X^{\an/A}} \arrow{d}{\tr_{i^{\an/A}}} \\
        c_{Y/A}^* \omega_Y \arrow[rr, "\beta_{Y/A}", "\sim"'] & & \omega_{Y^{\an}/A},
        \end{tikzcd}
    \]
    commutes, where the top left arrow is the isomorphism from \cite[Th.~5.7.2]{Huber-etale} and the $\beta$'s are the isomorphisms from \cref{cor:analytification-pullback}.
    \end{enumerate}
    We do not justify these facts fully.
    Instead, we only mention the main ingredients and leave the details to the interested reader.
    Using the constructions of $\alpha$ and of the trace map (and the construction of $c_i$ in \cite[Th.~3.21~(1)]{BH}), one reduces the first two claims to the analogous claims in algebraic geometry,\footnote{
    For the second claim, note that the ring map $A \to B$ induced by an \'etale morphism $g \colon \Spa(B,B^\circ) \to \Spa(A,A^\circ)$ is in general not an \'etale ring morphism, but only a regular morphism for which all nonempty fibers over the closed points have pure dimension $0$. Fortunately, \cite[Exp.~XVII, \S~4]{deGabber} is written in the generality of regular morphisms and can thus still be applied.}
    then using the $(g_!, g^*)$- and the $(i_*,\rR i^!)$-adjunctions, one reduces the first claim to \cite[Exp.~XVII, Rmq.~4.1.3]{deGabber} and the second claim to \cite[Exp.~XVII, Lem.~4.3.2.3]{deGabber}.\footnote{
    The statement \cite[Exp.~XVII, Lem.~4.3.2.3]{deGabber} imposes the additional assumption that the morphism $g$ is surjective, but the proof does not use it.}
    The last claim follows from the construction of closed traces, \cref{lemma:relative-analytification-regular}, and \cite[Exp.~XVII, Lem.~4.3.2.3]{deGabber}.
\end{remark}

For future reference, we also record the following basic result:

\begin{lemma}\label{dualizing-nilimmersion} Let $i\colon X \to Y$ be a nil-immersion of rigid-analytic spaces over $K$. Then $\tr_i\colon i_* \omega_X \to \omega_Y$ is an isomorphism.
\end{lemma}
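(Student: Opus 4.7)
My plan is to exploit topological invariance of the \'etale topos under nilpotent thickenings in order to reduce the assertion to the fact, already established in \cite[Th.~3.21~(1)]{BH}, that $c_i\colon \omega_X\to\rR i^!\omega_Y$ is an isomorphism. Since $i$ is a nil-immersion, its defining ideal sheaf is locally nilpotent; by induction on the order of nilpotence one reduces to the square-zero case, where the standard unique lifting property of \'etale morphisms (analogous to \cite[\href{https://stacks.math.columbia.edu/tag/04DY}{Tag~04DY}]{stacks-project} in the scheme setting, and adapting directly to the adic setting since \'etaleness of a morphism of adic spaces is a local condition compatible with arbitrary nilpotent thickening) guarantees that every \'etale morphism $U\to Y$ arises uniquely from its pullback $U\times_Y X\to X$. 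Consequently, $i$ induces an equivalence of \'etale sites $X_\et\simeq Y_\et$, and $i_*\colon D(X_\et;\Lambda)\to D(Y_\et;\Lambda)$ is an equivalence of triangulated categories.

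Once this equivalence is in hand, both the (already-known) left adjoint $i^*$ and the right adjoint $\rR i^!$ of $i_*$ are forced to agree with the quasi-inverse of $i_*$; in particular, $\rR i^!\simeq i^*$ canonically and the counit of adjunction $\epsilon_i\colon i_*\rR i^!\to\id$ is a natural isomorphism. Combined with the canonical isomorphism $c_i\colon \omega_X\xrightarrow{\sim}\rR i^!\omega_Y$ from \cite[Th.~3.21~(1)]{BH}, the factorization
\[ \tr_i\colon i_*\omega_X\xrightarrow{i_*(c_i)}i_*\rR i^!\omega_Y\xrightarrow{\epsilon_{i,\omega_Y}}\omega_Y \]
from \cref{trace for closed immersion} then exhibits $\tr_i$ as a composition of two isomorphisms, proving the lemma.

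The only substantive step is the first one, namely the equivalence of \'etale topoi along a nil-immersion in Huber's framework. This is standard, and I expect no genuine obstacle beyond verifying that the nilpotent-thickening lifting argument carries over from schemes to locally noetherian analytic adic spaces verbatim; everything else is formal from the construction of $\tr_i$ and from \cite[Th.~3.21~(1)]{BH}.
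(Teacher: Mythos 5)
Your approach is essentially the same as the paper's: both reduce the assertion to the statement that $i_*\colon \Shv(X_\et;\Lambda)\to\Shv(Y_\et;\Lambda)$ is an equivalence of topoi, so that the counit $\epsilon_i$ is an isomorphism, and then observe that $\tr_i$ is the composition of the isomorphisms $i_*(c_i)$ and $\epsilon_i$. The one difference is that the paper obtains the topological invariance of the \'etale topos along a nil-immersion of adic spaces by citing \cite[Prop.~2.3.7]{Huber-etale} directly, whereas you sketch a re-derivation via induction on the order of nilpotence together with the unique lifting property of \'etale morphisms; this is morally fine but leaves small gaps that the citation avoids (the ideal sheaf of a nil-immersion is only locally nilpotent, so the induction needs to be carried out after localizing on quasicompact opens, and the adaptation of the square-zero lifting argument to Huber's framework deserves at least a word). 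Also, the intermediate observation that $\rR i^!\simeq i^*$ is true but unnecessary: once $i_*$ is known to be an equivalence, the counit of the $(i_*,\rR i^!)$-adjunction is automatically a natural isomorphism, which is all that is needed.
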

\begin{proof}
    After unraveling the definition, we see that it suffices to show that the counit morphism $i_* \rR i^! \omega_Y \xr{\epsilon_i} \omega_Y$ is an isomorphism. For this, it suffices to show that $i_*\colon \Shv(X_\et; \Lambda) \to \Shv(Y_\et; \Lambda)$ is an equivalence. This follows directly from \cite[Prop.~2.3.7]{Huber-etale}.
\end{proof}

Now we wish to explicate this construction in some cases. For this, we assume that $i\colon X \to Y$ is a closed immersion, $Y$ is smooth of equidimension $d_Y$, and $X$ is smooth of equidimension $d_X$. 
Then \cite[Th.~3.21~(1)]{BH} ensures that there are canonical isomorphisms $\omega_X \simeq \ud{\Lambda}_X(d_X)[2d_X]$ and $\omega_Y \simeq \ud{\Lambda}_Y(d_Y)[2d_Y]$. Furthermore, \cite[Lem.~5.9, Lem.~5.6]{adic-notes} ensure that $i$ is an lci closed immersion of pure codimension $d_Y-d_X$. %
\begin{construction}[Cycle class]\label{construction:cycle-class-dualizing-complexes}
With the notation above, we define the \emph{cycle class} $\cl_i \colon i_*\omega_X \to \omega_Y$ as the composition
\[
i_* \omega_X \simeq i_* \ud{\Lambda}_X(d_X)[2d_X] \xr{\cl_i(\ud{\Lambda}_Y(d_X)[2d_X])} \ud{\Lambda}_Y(d_Y)[2d_Y] \simeq \omega_Y,
\]
where $\cl_i\bigl(\ud{\Lambda}_Y(d_X)[2d_X]\bigr)$ is the cycle class from \cref{complex-cycle-class}.
\end{construction}
Note that the notation in \cref{construction:cycle-class-dualizing-complexes} leads to a slight ambiguity because $\cl_i$ already denoted the cycle class map $i_* \ud{\Lambda}_X \to \ud{\Lambda}_Y(d_Y-d_X)[2(d_Y-d_X)]$ in \cref{variant:cycle-class-map}.
However, as we always consider cycle class morphisms for dualizing complexes in this section, this should not cause any confusion.
Now we show that \cref{trace for closed immersion} and \cref{construction:cycle-class-dualizing-complexes} agree with one another:
\begin{lemma}
\label{closed trace and cycle class for smooth things}
Let $X$ and $Y$ be smooth rigid-analytic spaces over $K$ of equidimension $d_X$ and $d_Y$, respectively, and let $i\colon X \to Y$ be a closed immersion. Then 
\[
\cl_i=\tr_i \colon i_* \omega_X \to \omega_Y.
\]
\end{lemma}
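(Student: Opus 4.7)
The plan is to reduce the equality first to an \'etale local statement on $X$, then via relative analytification to the classical algebraic compatibility between the cycle class of a smooth closed immersion and Gabber's purity isomorphism.

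First, combining the canonical isomorphism $c_i\colon \omega_X \xrightarrow{\sim} \rR i^!\omega_Y$ from \cite[Th.~3.21~(1)]{BH} with \cite[Th.~3.21~(3)]{BH} yields a natural equivalence $\rR\cHom_Y(i_*\omega_X, \omega_Y) \simeq i_*\rR\cHom_X(\omega_X, \omega_X) \simeq i_*\ud{\Lambda}_X$, which lies in the heart of the natural $t$-structure on $D(Y_\et;\Lambda)$. Hence both $\tr_i$ and $\cl_i$ are global sections of the same overconvergent \'etale sheaf on $Y$, and it suffices to check $\tr_i = \cl_i$ \'etale locally on $X$.

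Second, since $i$ is lci of pure codimension $c \colonequals d_Y - d_X$ with smooth source and target (see \cite[Cor.~5.11]{adic-notes}), the Jacobian criterion lets us \'etale locally on $Y$ extend a regular sequence cutting out $X$ to a system of \'etale coordinates on $Y$; after shrinking $Y$, there is therefore an \'etale morphism $g\colon Y \to \AA^{d_Y, \an}_K$ under which $i$ is the pullback of the standard linear closed immersion $i_0 \colon \AA^{d_X, \an}_K \hookrightarrow \AA^{d_Y, \an}_K$. Combining the \'etale-local compatibility of the closed trace (\cref{rmk:basic-properties-closed-trace}\cref{rmk:basic-properties-closed-trace-local}) with the transversal base change property of cycle classes (\cref{lemma:base-change-lci-classes}) then reduces the problem to showing $\tr_{i_0} = \cl_{i_0}$.

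Third, the closed immersion $i_0$ is the relative analytification over $\Spa(K, \cO_K)$ of the algebraic closed immersion $i_0^\alg \colon \AA^{d_X}_K \hookrightarrow \AA^{d_Y}_K$. By \cref{rmk:basic-properties-closed-trace}\cref{rmk:basic-properties-closed-trace-analytification-of-closed trace}, the map $\tr_{i_0}$ is the analytification of the algebraic closed trace $\tr^\alg_{i_0^\alg}$, while \cref{lemma:analytification-cycle-classes} (applied after unwinding \cref{construction:cycle-class-dualizing-complexes}) realizes $\cl_{i_0}$ as the analytification of the algebraic Fujiwara--Gabber cycle class $\cl^\alg_{i_0^\alg}$ of \cite[Def.~1.1.2]{Fujiwara-purity}. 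Since analytification is injective on $\Hh^{2c}_{\AA^{d_X}_K}\bigl(\AA^{d_Y}_K, \Lambda(c)\bigr)$ (using \cref{lemma:analytification-cycle-classes} itself to witness a nonzero test class), we are reduced to the algebraic identity $\tr^\alg_{i_0^\alg} = \cl^\alg_{i_0^\alg}$.

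The hard part will be this last algebraic identity, but it is the classical compatibility between the adjoint of Gabber's purity isomorphism and the Fujiwara--Gabber cycle class for a smooth closed immersion. Using \cref{lemma:composition-of-cycles} together with the compatibility of $\tr^\alg$ under compositions (\cref{rmk:basic-properties-closed-trace}\cref{rmk:basic-properties-closed-trace-composition}), one factors $i_0^\alg$ as a transverse intersection of $c$ smooth coordinate hyperplanes to reduce to the codimension-$1$ case. For a smooth Cartier divisor both maps are then computed explicitly via the boundary morphism of the Kummer sequence applied to the canonical section of the associated line bundle; this is essentially the content of \cite[Exp.~XVI, Th.~2.3.3]{deGabber} and \cite{Fujiwara-purity}.
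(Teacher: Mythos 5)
Your proof is correct in spirit and arrives at the same ultimate comparison as the paper's — reducing to the algebraic compatibility between the closed trace and the cycle class, then using the analytification compatibilities — but you take a noticeably longer route and misattribute the key classical input.

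The paper's proof localizes to affinoid $Y=\Spa(A,A^\circ)$, $X=\Spa(A/I,(A/I)^\circ)$ (both regular) and stops there: it does \emph{not} further reduce to the coordinate embedding $\AA^{d_X,\an}_K\hookrightarrow\AA^{d_Y,\an}_K$. Your \'etale-coordinates reduction via the Jacobian criterion and transversal base change of cycle classes (\cref{lemma:base-change-lci-classes}) is valid, but it is unnecessary: \cref{rmk:basic-properties-closed-trace}\cref{rmk:basic-properties-closed-trace-analytification-of-closed trace} and \cref{lemma:analytification-cycle-classes} already apply to any affinoid closed immersion, so one can pass directly to the corresponding algebraic closed immersion $i^\alg\colon\Spec A/I\hookrightarrow\Spec A$. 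More importantly, the algebraic identity you need — that the adjoint of Gabber's purity isomorphism for a regular closed immersion agrees with the Fujiwara--Gabber cycle class — is cited directly by the paper as \cite[Exp.~XVII, Lem.~2.4.3.4]{deGabber}. Your citation \cite[Exp.~XVI, Th.~2.3.3]{deGabber} is to a different statement (functoriality of cycle classes under compositions of lci immersions), and the codimension-$1$ reduction you sketch in its stead is a longer and more delicate argument that is not needed once the correct reference is in hand. Two minor points: the objects $\rR\cHom(i_*\omega_X,\omega_Y)\simeq i_*\ud{\Lambda}_X$ are concentrated in degree $0$, so your ``heart'' claim is fine, but localization is most naturally stated \'etale locally on $Y$ (not $X$); and your parenthetical ``injectivity of analytification'' remark is both unjustified as stated and unnecessary — analytification is a functor, so it sends an algebraic equality to the corresponding analytic equality with no injectivity input required.
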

\begin{proof}
First, \cite[Th.~3.21~(1),(3)]{BH} implies that $\rR \cHom(i_*\omega_X, \omega_Y) \simeq i_* \rR \cHom(\omega_X, \omega_X) \simeq i_* \ud{\Lambda}_X \in D^{\geq 0}(Y_\et; \Lambda)$. 
Therefore, we can check locally on $Y$ that $\cl_i$ and $\tr_i$ coincide. So we may and do assume that $Y=\Spa(A, A^\circ)$ is affinoid, and then $X=\Spa(A/I, (A/I)^\circ)$ for some ideal $I\subset A$. In this case, \cite[Th.~3.6.3]{FvdP04} implies that both $A$ and $A/I$ are regular.
The dualizing complexes of $Y$ and $X$ are constructed as an analytification of potential dualizing complexes on $\Spec A$ and $\Spec A/I$ respectively. Tracing through the proof of \cite[Th.~3.21~(1)]{BH} and using \cite[Exp. XVII, Lem.~2.4.3.4]{deGabber}, we see that $\tr_i$ is the analytification of the (appropriately twisted and shifted) algebraic cycle class map for $i^{\alg} \colon \Spec A/I \to \Spec A$. Thus, we reduce the question to showing that the analytic cycle class map is equal to the analytification of the algebraic one. This was already proven in  \cref{lemma:analytification-cycle-classes}.
\end{proof}
Next, we define a version of smooth trace maps with coefficients in dualizing complexes.
\begin{construction}[Smooth trace]\label{constructing smooth trace for omega}
Let $f\colon X \to Y$ be a separated taut smooth morphism of rigid-analytic spaces over $K$.
\begin{enumerate}[leftmargin=*,label=(\arabic*)]
    \item Assume that $f$ is of equidimension $d$.
    Then we define the \textit{smooth trace map} $\tr_f\colon \rR f_! \omega_X \to \omega_Y$ as the composition
    \[
    \rR f_! \,\omega_X \xr{\rR f_!(\alpha^{-1}_f)} \rR f_! \bigl(f^*\omega_Y(d)[2d]\bigr) \xr{\PF_f^{-1}} \omega_Y \otimes^L \rR f_! \bigl( \ud{\Lambda}_X(d)[2d] \bigr) \xr{\rm{id} \otimes^L \ttr_f} \omega_Y,
    \]
    where $\alpha_f$ is the isomorphism from \cref{cor:smooth-pullback}, $\PF_f$ is the projection formula isomorphism from \cite[Th.~5.5.9~(ii)]{Huber-etale}, and $\ttr_f$ is the trace morphism from \cref{smooth-trace-constant}. 
    \item\label{constructing smooth trace for omega-2} Now let $f$ be a general separated taut smooth morphism.
    Then there exists a clopen decomposition $X=\bigsqcup_{d\in \NN} X_d$ such that $f_d \coloneqq \restr{f}{X_d} \colon X_d \to Y$ is of equidimension $d$.
    We define
    \[
    \tr_f \colon \rR f_! \, \omega_X \simeq \bigoplus_{d\in \NN} \rR f_{d, !}\, \omega_{X_d} \xlongrightarrow{\sum \tr_{f_d}} \omega_Y. 
    \]
\end{enumerate}
\end{construction}

\begin{construction}[Smooth trace in algebraic geometry]\label{constructing smooth trace for omega in algebraic geometry}
Let $A$ be a $K$-affinoid algebra and let $f\colon X \to Y$ be a separated smooth morphism of locally finite type $A$-schemes. Then one can define the trace map
\[
\tr^{\alg}_f \colon \rR f_! \omega_X \to \omega_Y
\]
similarly to \cref{constructing smooth trace for omega} (using \cite[Exp.~XVIII, Th.~2.9]{SGA4} in place of the analytic trace map and the isomorphism $\alpha_f^\alg$ from \cref{rmk:smooth-pullback-in-algebraic-geometry} in place of $\alpha_f$). 
\end{construction}

\begin{remark}\label{rmk:etale-trace-dualizing} 
If $f$ is separated taut \'etale, then \cref{smooth-trace-constant}\cref{smooth-trace-constant-etale} implies that $\tr_f$ is given by the composition $f_!\, \omega_{X'} \xrightarrow{f_!(\alpha_f^{-1})}f_! f^*\omega_X \xrightarrow{\epsilon_f} \omega_X$, where $\epsilon_f$ is the counit of the $(f_!, f^*)$-adjunction.
\end{remark}

\begin{remark}
\label{rmk:basic-properties-smooth-trace}
The smooth trace map satisfies the following properties:
\begin{enumerate}[label=(\arabic*),leftmargin=*]
    \item\label{rmk:basic-properties-smooth-trace-composition} the smooth trace is compatible with compositions, i.e., for a pair of smooth separated taut morphisms $f_1\colon X \to Y$ and $f_2\colon Y \to Z$, we have the following equality:
    \[
    \tr_{f_2} \circ \rR f_{2, !}(\tr_{f_1}) = \tr_{f_2 \circ f_1} \colon \rR (f_2 \circ f_1)_! \omega_X \to \omega_Z;
    \]
    \item\label{rmk:basic-properties-smooth-trace-local} the smooth trace is \'etale local, i.e., for a smooth separated taut morphism $f\colon X \to Y$, an \'etale morphism $g\colon Y' \to Y$, and the fiber product $X' \coloneqq Y' \times_Y X$ with the two projections $f' \colon X' \to Y'$ and $g'\colon X' \to X$, the diagram
    \[ \begin{tikzcd}[column sep=large]
        \rR f'_{!} \omega_{X'} \arrow[rr,"\tr_{f'}"]  & & \omega_{Y'}  \\
        \rR f'_{!}g^{\prime *} \omega_X \arrow[u,"\rR f'_{!}(\alpha_{g'})","\sim"'{sloped}] & \arrow[l,"\BC_!"',"\sim"] g^*\rR f_{!} \omega_X \arrow[r,"g^*\tr_{f}"] & g^*\omega_Y \arrow[u,"\alpha_g","\sim"'{sloped}]
    \end{tikzcd} 
    \]
    commutes, where the $\alpha$'s are the isomorphisms from \cref{cor:smooth-pullback} and $\BC_!$ is the base change map for compactly supported pushforward from \cite[Th.~5.4.6]{Huber-etale};
    \item\label{rmk:basic-properties-smooth-trace-compatible-with-algebraization} the smooth trace is compatible with relative analytifications, i.e., for a $K$-affinoid algebra $A$ and a smooth separated morphism $f\colon X \to Y$ of locally finite type $A$-schemes, the diagram
    \[
        \begin{tikzcd}[column sep = huge]
        c_{Y/A}^* \rR f_! \omega_X \arrow{r}{\sim} \arrow{d}{c_{Y/A}^*(\tr_f^\alg)}& \rR f_!^{\an/A}c_{X/A}^* \omega_X \arrow[r, "\rR f_!^{\an/A}(\beta_{X/A})", "\sim"'] &[1em] \rR f_!^{\an/A} \omega_{X^{\an/A}} \arrow{d}{\tr_{f^{\an/A}}} \\
        c_{Y/A}^* \omega_Y \arrow[rr, "\beta_{Y/A}", "\sim"'] & & \omega_{Y^{\an}/A},
        \end{tikzcd}
    \]
    commutes, where the top left arrow is the isomorphism from \cite[Th.~5.7.2]{Huber-etale} and the $\beta$'s are the isomorphism from \cref{cor:analytification-pullback}.
    \end{enumerate}
    The first two claims follow immediately from \cref{smooth-trace-constant}\cref{smooth-trace-constant-composition} and \cref{smooth-trace-constant}\cref{smooth-trace-constant-pullback}, respectively.
    The last claim follows from \cref{lemma:analytification-of-smooth-iso} and \cref{Compatibility with algebraic geometry}.
\end{remark}

The rest of this subsection is devoted to showing that closed and smooth trace maps are compatible with each other in a precise way. We start with the following basic lemma: 
\begin{lemma}
\label{closmooth trace lives in discrete space}
Let $X'$, $X$, $Y$, and $Z$ be rigid-analytic spaces over $K$, let $h\colon X' \to X$ be a surjective separated taut \'etale morphism, let $g\colon X \to Y$ be a separated taut smooth morphism, and let $i \colon Y \to Z$ be a closed immersion.
Set $f \colonequals i \circ g \colon X \to Z$ and $f' \colonequals i \circ g \circ h \colon X' \to Z$.
Then:
\begin{enumerate}[itemsep=0.2\baselineskip,leftmargin=*,label=\upshape{(\roman*)}]
    \item\label{closmooth trace lives in discrete space-1} $\rR\cHom(\rR f_!\, \omega_X, \omega_Z)$ lies in $D^{\geq 0}(Z_\et; \Lambda)$;
    \item\label{closmooth trace lives in discrete space-2} $\Hom(\rR f_! \omega_X, \omega_Z) \simeq \rm{H}^0\bigl(Y,\cHom( \rR^{2d} g_! \ud{\Lambda}_X(d), \ud{\Lambda}_Y)\bigr)$ if $g$ is of equidimension $d$;
    \item\label{closmooth trace lives in discrete space-3} the morphism $\Hom(\rR f_! \omega_X, \omega_Z) \to \Hom(\rR f'_!\, \omega_{X'}, \omega_Z)$ induced by $\tr_h$ is injective. 
\end{enumerate}

\end{lemma}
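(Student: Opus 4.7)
The plan is to handle (i) and (ii) simultaneously by rewriting $\rR\cHom(\rR f_!\omega_X, \omega_Z)$ through a chain of adjunctions until it becomes an internal Hom into the constant sheaf $\ud{\Lambda}_Y$, and then to deduce (iii) from the surjectivity of the \'etale trace at the level of sheaves. As a first step, I would apply the clopen decomposition $X = \bigsqcup_{d \in \NN} X_d$ of \cref{constructing smooth trace for omega}\cref{constructing smooth trace for omega-2} corresponding to the local equidimension of $g$, together with the observation that $\rR f_!$ converts this decomposition into a direct sum while $\rR\cHom(\blank,\omega_Z)$ and $\Hom(\blank,\omega_Z)$ turn it into a product, to reduce all three assertions to the case where $g$ has equidimension $d$.

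For (i) and (ii), writing $\rR f_! = i_* \circ \rR g_!$, the $(i_*,\rR i^!)$-adjunction together with the canonical isomorphism $\omega_Y \simeq \rR i^! \omega_Z$ from \cite[Th.~3.21~(1)]{BH} gives $\rR\cHom(\rR f_!\omega_X, \omega_Z) \simeq i_* \rR\cHom(\rR g_! \omega_X, \omega_Y)$. Next, \cref{cor:smooth-pullback} yields $\omega_X \simeq g^*\omega_Y(d)[2d]$; applying the projection formula \cite[Prop.~9.3~(2)]{adic-notes}, the tensor--Hom adjunction, and the identification $\rR\cHom(\omega_Y, \omega_Y) \simeq \ud{\Lambda}_Y$ from \cite[Th.~3.21~(3)]{BH}, I would rewrite the right-hand side as $i_* \rR\cHom\bigl(\rR g_!\ud{\Lambda}_X(d)[2d], \ud{\Lambda}_Y\bigr)$. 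Since \cref{smooth trace lives in discrete space} places $\rR g_!\ud{\Lambda}_X(d)[2d]$ in $D^{\leq 0}(Y_\et;\Lambda)$, this internal Hom lies in $D^{\geq 0}(Y_\et;\Lambda)$, and applying the t-exact pushforward $i_*$ gives (i). For (ii), using $\Hh^0(Z,i_*\blank) = \Hh^0(Y,\blank)$ and the distinguished triangle $\tau^{<0} C \to C \to \cal{H}^0(C)$ for $C = \rR g_!\ud{\Lambda}_X(d)[2d]$ (whose left term contributes nothing to $\Hh^0$ of $\rR\cHom(\blank,\ud{\Lambda}_Y)$), I would obtain the identification $\Hom(\rR f_!\omega_X, \omega_Z) \simeq \Hh^0\bigl(Y, \cHom(\rR^{2d}g_!\ud{\Lambda}_X(d), \ud{\Lambda}_Y)\bigr)$.

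For (iii), applying (ii) to both $f$ and $f' = f \circ h$ (noting that $g \circ h$ remains smooth of equidimension $d$ because $h$ is \'etale), the map in question translates into the one on $\Hh^0(Y,\blank)$ induced by the sheaf homomorphism $\rR^{2d}(g\circ h)_!\ud{\Lambda}_{X'}(d) \to \rR^{2d} g_!\ud{\Lambda}_X(d)$ arising from the \'etale trace $\ttr^{\et}_h \colon h_!\ud{\Lambda}_{X'} \to \ud{\Lambda}_X$ of \cref{etale-trace}. Because $h$ is surjective and \'etale, $\ttr^{\et}_h$ is the sum-over-fibers map and is thus surjective as a morphism of sheaves; since \cite[Prop.~5.5.8]{Huber-etale} bounds the cohomological dimension of $\rR g_!$ by $2d$, the functor $\rR^{2d} g_!$ is right exact and preserves this surjection. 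Left exactness of $\cHom(\blank, \ud{\Lambda}_Y)$ and of $\Hh^0(Y,\blank)$ then yield the desired injectivity. The main point requiring care is the bookkeeping in the chain of adjunctions and the identification of the transition maps, but no substantive obstacle is anticipated, as every ingredient has been prepared in earlier sections of the paper.
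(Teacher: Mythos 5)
Your proposal follows essentially the same route as the paper: the clopen reduction to equidimension $d$, the chain of adjunctions (using \cite[Th.~3.21~(1),(3)]{BH}, \cref{cor:smooth-pullback}, the projection formula, and tensor--Hom) landing on $i_*\rR\cHom(\rR g_!\ud{\Lambda}_X(d)[2d],\ud{\Lambda}_Y)$, the invocation of \cref{smooth trace lives in discrete space} for the coconnectivity, and the surjectivity of $\ttr^\et_h$ combined with right exactness of $\rR^{2d}g_!$ via \cite[Prop.~5.5.8]{Huber-etale} for part (iii). The only point you leave implicit, which the paper makes explicit by citing \cref{rmk:etale-trace-dualizing}, is the verification that the map induced by $\tr_h$ on $\Hom$-groups is intertwined under the identifications of (ii) with the map induced by $\ttr^\et_h$ at the level of $\rR^{2d}g_!$; this is the ``bookkeeping'' you flag, and it is genuinely the one step that needs to be checked.
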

\begin{proof}
Using a similar clopen decomposition as in \cref{constructing smooth trace for omega}\cref{constructing smooth trace for omega-2}, we can reduce all three parts to the case where $g$ is of relative equidimension $d$ for some integer $d\geq 0$.
Then \cref{cor:smooth-pullback} and the projection formula imply that $\rR g_!\, \omega_X \simeq \rR g_! \ud{\Lambda}_X(d)[2d] \otimes^L \omega_Y$. Therefore, we have
\begin{multline}\label{eqn:formula-closmoth-shriek-push}
\rR\cHom\bigl(\rR f_!\, \omega_{X}, \omega_Z\bigr) \simeq  \rR \cHom(i_* \rR g_!\,\omega_X, \omega_Z) \simeq i_* \rR\cHom\bigl(\rR g_!\,\ud{\Lambda}_X(d)[2d] \otimes^L \omega_{Y}, \omega_Y\bigr) \\
\simeq  i_* \rR\cHom\bigl(\rR g_!\,\ud{\Lambda}_X(d)[2d], \rR\cHom(\omega_Y, \omega_Y)\bigr) \simeq  i_* \rR\cHom\bigl(\rR g_!\,\ud{\Lambda}_X(d)[2d], \ud{\Lambda}_Y\bigr) \in D^{\geq 0}(Z_\et; \Lambda),
\end{multline}
where the first isomorphism follows from $\rR i_! \simeq i_*$, the second isomorphism follows from \cite[Th.~3.21~(1)]{BH} and $\rR g_!\, \omega_X \simeq \rR g_! \ud{\Lambda}_X(d)[2d] \otimes^L \omega_Y$, the third isomorphism follows from the (derived) tensor-hom adjunction, the fourth isomorphism follows from \cite[Th.~3.21~(3)]{BH}, and the last containment follows from \cref{smooth trace lives in discrete space}.
This finishes the proof of \cref{closmooth trace lives in discrete space-1}.

Now \cref{eqn:formula-closmoth-shriek-push} and \cite[Prop.~5.5.8]{Huber-etale} directly imply that
\[
\Hom(\rR f_! \omega_X, \omega_Z) \simeq \rm{H}^0\bigl(Y, \cHom(\rR^{2d} g_! \ud{\Lambda}_X(d), \ud{\Lambda}_Y)\bigr).
\]
This proves \cref{closmooth trace lives in discrete space-2}. Now we deal with \cref{closmooth trace lives in discrete space-3}.
Set $g' \colonequals g \circ h \colon X' \to Y$;
this is also a separated smooth taut morphism of equidimension $d$.
Thus, \cref{closmooth trace lives in discrete space-2} implies that
\[
\Hom(\rR f_! \omega_X, \omega_Z) \simeq \Hh^0\bigl(Y, \cHom(\rR^{2d} g_! \ud{\Lambda}_X(d), \ud{\Lambda}_Y)\bigr) \text{ and } \Hom(\rR f'_! \omega_{X'}, \omega_Y) \simeq \Hh^0\bigl(Y, \cHom(\rR^{2d} g'_! \ud{\Lambda}_{X'}(d), \ud{\Lambda}_Y)\bigr).
\]
Therefore, it suffices to show injectivity of the morphism $\cHom(\rR^{2d} g_! \ud{\Lambda}_X(d), \ud{\Lambda}_Y) \to \cHom(\rR^{2d} g'_! \ud{\Lambda}_{X'}(d), \ud{\Lambda}_Y)$, which is induced by 
\[
\rR^{2d} g_!\bigl(\ttr_h^\et(d)\bigr) \colon \rR^{2d} g'_! \ud{\Lambda}_{X'}(d) \to \rR^{2d} g_! \ud{\Lambda}_{X}(d)
\]
thanks to \cref{rmk:etale-trace-dualizing}.
For this, it suffices to prove that $\rR^{2d} g_!\bigl(\ttr_h^\et(d)\bigr)$ is surjective.
This follows from \cite[Prop.~5.5.8]{Huber-etale} and (an easy case of) \cref{tr-epi}.
\end{proof}

The following technical lemma comes in handy later.
\begin{lemma}
\label{Lemma: Identifying natural transformations}
Let     
\[ 
\begin{tikzcd}
X' \arrow{d}{f'} \arrow{rd}{h} \arrow[r, "g'"] & X \arrow{d}{f} \\
Y' \arrow[r, "g"] & Y
\end{tikzcd} 
\]
be a Cartesian diagram of locally noetherian analytic adic spaces
with $f$ \'etale and $g$ proper.
Then:
\begin{enumerate}[leftmargin=*,label=\upshape{(\roman*)}]
    \item\label{Lemma: Identifying natural transformations-1} the base change natural transformation 
    $\BC \colon f^*\rR g_* \to \rR g'_*f^{\prime,*}$
    and the reverse direction natural transformation
    $\BC'\colon \rR g'_*f^{\prime,*} \simeq \rR g'_!f^{\prime,!} \to f^!\rR g_! \simeq f^*\rR g_*$
    are inverse to each other;
    \item\label{Lemma: Identifying natural transformations-2} the two natural transformations
    \[ \rR g'_* \xlongrightarrow{\rR g'_*(\eta_{f'})} \rR g'_* f^{\prime,*} f'_! \xlongrightarrow{\BC^{-1}} f^* \rR g_* f'_! \simeq f^* f_! \rR g'_* \quad \text{and} \quad \rR g'_* \xlongrightarrow{\eta_f} f^* f_! \rR g'_* \]
    agree, where $\eta_{f}$ \emph{(}resp.~$\eta_{f'}$\emph{)} denotes the unit of $(f_!, f^*)$ \emph{(}resp.\ $(f'_!, f^{\prime,*})$\emph{)}.
\end{enumerate}
\end{lemma}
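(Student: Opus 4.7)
Both parts are formal consequences of the six-functor formalism, relying on standard compatibilities of the base change morphisms with the units and counits of the $(f_!, f^*)$, $(f'_!, f'^*)$, $(f^*, \rR f_*)$, and $(f'^*, \rR f'_*)$ adjunctions; no serious geometric input is needed beyond proper base change (\cref{lemma:proper-base-change}). The central ingredient is the canonical isomorphism $\gamma \colon f_!\, \rR g'_* \xrightarrow{\sim} \rR g_*\, f'_!$ arising from the identifications $f_!\, \rR g'_! \simeq \rR h_! \simeq \rR g_!\, f'_!$ together with the equalities $\rR g_* = \rR g_!$, $\rR g'_* = \rR g'_!$, $\rR h_* = \rR h_!$ (valid since $g$, $g'$, and $h$ are all proper). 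This $\gamma$ is precisely the isomorphism $\rR g_*\, f'_! \simeq f_!\, \rR g'_*$ used in the statement of (ii).

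For part (i), the strategy is to characterize $\BC$ and $\BC'$ by their $(f_!, f^*)$-adjoint descriptions. Unfolding the construction of $\BC$ from proper base change, one verifies that the $(f^*, \rR f_*)$-adjoint of $\BC$ is the composite $\rR g_* \xrightarrow{\rR g_*(\eta)} \rR g_*\, \rR f'_*\, f'^* \xleftarrow[\sim]{} f_*\, \rR g'_*\, f'^*$, where $\eta \colon \id \to \rR f'_*\, f'^*$ is the unit of $(f'^*, \rR f'_*)$ and the last arrow is the proper base change $f_*\, \rR g'_* \simeq \rR g_*\, \rR f'_*$. On the other hand, unpacking $\BC'$ through the identifications with the shriek functors shows that its $(f_!, f^*)$-adjoint $f_!\, \rR g'_*\, f'^* \to \rR g_*$ equals the composite
\[
f_!\, \rR g'_*\, f'^* \xrightarrow[\sim]{\gamma \cdot f'^*} \rR g_*\, f'_!\, f'^* \xrightarrow{\rR g_*(\epsilon_{f'})} \rR g_*,
\]
where $\epsilon_{f'}\colon f'_!\, f'^* \to \id$ is the counit of $(f'_!, f'^*)$. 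A direct diagram chase then shows $\BC' \circ \BC = \id$, using only the triangle identities for $(f_!, f^*)$ and $(f'_!, f'^*)$ together with the compatibility of $\gamma$ with these adjunctions. Since $\BC$ is an isomorphism by proper base change, this forces $\BC' = \BC^{-1}$.

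For part (ii), granting (i), we substitute $\BC^{-1} = \BC'$ in the first natural transformation. Using the explicit adjoint description of $\BC'$ from part (i), the first natural transformation is identified as the $(f_!, f^*)$-adjoint of a composite that, by the triangle identity for $(f'_!, f'^*)$ (which cancels an $\eta_{f'}$ against a subsequent $\epsilon_{f'}$), collapses to $\gamma$ itself. But $\gamma$ is, essentially by construction, the $(f_!, f^*)$-adjoint of $\eta_f \cdot \rR g'_*$ via the canonical identification $\rR g_*\, f'_! \simeq f_!\, \rR g'_*$, so the two natural transformations in (ii) coincide. The main obstacle throughout is the notational bookkeeping: one must track the interplay of four different adjunctions, the canonical iso $\gamma$, and several base change morphisms simultaneously. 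However, the underlying combinatorics is entirely standard, and no new geometric input is required beyond proper base change.
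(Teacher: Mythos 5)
Your approach is genuinely different from the paper's: you work abstractly through adjunctions, whereas the paper proves \cref{Lemma: Identifying natural transformations-1} by a concrete reduction — since $f, f'$ are \'etale, $f^*, f'^*$ preserve $K$-injective complexes, so both $\BC$ and $\BC'$ can be computed termwise on a $K$-injective resolution; one then checks the claim at the sheaf level on stalks, and after an \'etale localization may assume $f$ is an open immersion, where the two maps are visibly inverse. Part~\cref{Lemma: Identifying natural transformations-2} is then deduced by concatenating two pullback squares and using the compatibility of the reverse base change map $\BC'$ with ``pasting'' of Cartesian diagrams, rather than via the triangle identity as you do.

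Your argument for part \cref{Lemma: Identifying natural transformations-2} (granting \cref{Lemma: Identifying natural transformations-1}) is sound: taking the $(f_!,f^*)$-adjoint of the first natural transformation, the naturality of $\gamma$ lets you slide $\gamma$ past $\rR g'_*(\eta_{f'})$, after which the triangle identity $\epsilon_{f'}f'_! \circ f'_!\eta_{f'} = \id_{f'_!}$ collapses everything to $\gamma$; undoing the identification $\rR g_*f'_! \simeq f_!\rR g'_*$ (which \emph{is} $\gamma$) gives the identity, which is indeed the $(f_!,f^*)$-adjoint of $\eta_f\rR g'_*$. That is a clean and valid alternative to the paper's pasting argument.

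However, part \cref{Lemma: Identifying natural transformations-1} has a genuine gap as written. Your key claim — that ``a direct diagram chase then shows $\BC' \circ \BC = \id$, using only the triangle identities'' — is precisely the nontrivial content of the lemma, not a formality that follows from them. The difficulty is that $\BC$ is naturally described through the $(f^*,\rR f_*)$ (or $(g'^*,\rR g'_*)$) adjunction, while your description of $\BC'$ uses the $(f_!,f^*)$ adjunction and the composition isomorphism $\gamma$. Verifying $\BC' \circ \BC = \id$ requires exchanging adjoint descriptions across these \emph{different} adjunctions and, crucially, establishing that $\gamma$ (which arises purely from $\rR(\blank)_!$-composition compatibility) interacts correctly with $\eta_{g'}, \epsilon_g, \eta_{f'}, \epsilon_{f'}$ and the exchange isomorphism $g'^*f^* \simeq f'^*g^*$. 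None of these compatibilities is a triangle identity, and in Huber's framework (which does not package $\rR(\blank)_!$ into a full six-functor formalism) they are not automatic; establishing them at this level of generality is essentially equivalent to the statement you want to prove. A secondary issue: you cite \cref{lemma:proper-base-change} to conclude that $\BC$ is an isomorphism, but that lemma concerns Zariski-constructible complexes over rank-$1$ geometric points and does not apply here. The relevant fact is the much more elementary ``\'etale base change'' isomorphism $f^*\rR g_* \simeq \rR g'_* f'^*$ valid for all complexes when $f$ is \'etale, whose proof is essentially the $K$-injective/stalk argument that also gives the paper's direct proof of \cref{Lemma: Identifying natural transformations-1}. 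In short, your route cannot avoid something equivalent to the paper's concrete verification.
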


\begin{proof}
To see \cref{Lemma: Identifying natural transformations-1}, note that the functors involved are derived functors of functors defined at sheaf level.
Since $f$ is \'etale, the functors $f^*$, $f^{\prime,*}$, $g_*$, and $g'_*$ preserve $K$-injective complexes.
For each $\F\in D(Y_\et; \Lambda)$, both $\BC(\F)$ and $\BC'(\F)$ are hence computed by applying them termwise to a fixed $K$-injective complex representing $\F$.
Therefore, it suffices to check the claim for injective sheaves,
and at sheaf level one can argue on stalks.
Now the question is \'{e}tale local on $Y$,
so we may even assume that the \'{e}tale map is an open immersion, in which case it follows directly from 
the definition that the two natural transformations are inverse to each other.

To deduce \cref{Lemma: Identifying natural transformations-2} from \cref{Lemma: Identifying natural transformations-1}, we consider the reverse base change maps $\BC'$ for the two diagrams
\[ 
\begin{tikzcd}
X' \arrow[r,"\id"] \arrow[d,"\id"] & X' \arrow[r,"g'"] \arrow[d,"f'"] & X \arrow[d,"f"] \\
X' \arrow[r,"f'"] & Y' \arrow[r,"g"] & Y
\end{tikzcd} 
\quad \text{and} \quad 
\begin{tikzcd}
X' \arrow[r,"g'"] \arrow[d,"\id"] & X \arrow[r,"\id"] \arrow[d,"\id"] & X \arrow[d,"f"] \\
X' \arrow[r,"g'"] & X \arrow[r,"f"] & Y.
\end{tikzcd} 
\]
Our claim then follows from the fact that
the two outer diagrams are the same and the reverse base change maps are compatible with ``concatenation'' of diagrams.
\end{proof}

We can now study the compatibility of closed and smooth trace maps in cartesian diagrams:
\begin{lemma}[Compatibility for Cartesian diagrams]
\label{compatibility lemma: Cartesian diagram case}
Let $f\colon X \to Y$ be a separated smooth taut morphism of rigid-analytic spaces over $K$, let $i\colon Y' \to Y$ be a closed immersion of rigid-analytic spaces over $K$, and let 
\[ \begin{tikzcd}
X' \arrow{d}{f'} \arrow{rd}{h} \arrow[r, hook, "i'"] & X \arrow{d}{f} \\
Y' \arrow[r, hook, "i"] & Y
\end{tikzcd} \]
be the resulting pullback square.
Then the following diagram commutes:
\begin{equation}\label{eqn:cartesian-square-compatibility-lemma-2}
\begin{tikzcd}[column sep = huge]
\rR h_! \,\omega_{X'} \arrow{r}{\rR f_!(\tr_{i'})} \arrow{d}{i_*(\tr_{f'})} &
\rR f_! \,\omega_X \arrow{d}{\tr_f}  \\
i_* \omega_{Y'} \arrow{r}{\tr_i} & \omega_Y.
\end{tikzcd}
\end{equation}
\end{lemma}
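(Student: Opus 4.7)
The plan is to reduce the commutativity of \cref{eqn:cartesian-square-compatibility-lemma-2} to a single base-change compatibility between the canonical isomorphisms $\alpha_f$ (smooth pullback) and $c_i$ (closed immersion) underlying the definitions of the smooth and closed traces. First, using the clopen decomposition $X = \bigsqcup_{d \in \NN} X_d$ from \cref{constructing smooth trace for omega}\cref{constructing smooth trace for omega-2}, I reduce to the case where $f$ has pure equidimension $d$. Unpacking the definitions, $\tr_f$ is built from $\alpha_f \colon f^*\omega_Y(d)[2d] \xrightarrow{\sim} \omega_X$, the projection formula, and the constant-coefficient trace $\ttr_f$ (and likewise for $\tr_{f'}$); meanwhile $\tr_{i'}$ is the $(i'_*, \rR (i')^!)$-adjoint of $c_{i'} \colon \omega_{X'} \xrightarrow{\sim} \rR(i')^!\omega_X$, and $\tr_i$ of $c_i$.

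The central technical step is to establish that the square
\[
\begin{tikzcd}[column sep=huge]
\omega_{X'} \arrow[r,"c_{i'}"] \arrow[d,"\alpha_{f'}^{-1}"'] & \rR(i')^!\omega_X \arrow[d,"\rR(i')^!(\alpha_f^{-1})"] \\
(f')^*\omega_{Y'}(d)[2d] \arrow[r,"(f')^*(c_i)(d)[2d]"'] & \rR(i')^! f^*\omega_Y(d)[2d]
\end{tikzcd}
\]
commutes, where the bottom right term is identified with the top right via the smooth base-change isomorphism $(f')^*\rR i^! \xrightarrow{\sim} \rR(i')^! f^*$ (which, since $\rR f^! \simeq f^*(d)[2d]$ and $\rR (f')^! \simeq (f')^*(d)[2d]$, comes from the identification $\rR(i')^! \rR f^! \simeq \rR h^! \simeq \rR (f')^! \rR i^!$). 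This assertion is local on $Y$: indeed, \cite[Th.~3.21~(1),(3)]{BH} gives $\rR\cHom(\omega_{X'}, \rR(i')^!\omega_X) \simeq \ud{\Lambda}_{X'} \in D^{\ge 0}$, so any two morphisms between these complexes agree provided they agree \'etale-locally. Hence I may assume $Y = \Spa(A, A^\circ)$ is affinoid and, after further localization, that the whole Cartesian square is the relative analytification of a Cartesian square of locally finite type $A$-schemes. Combining the compatibility of $\alpha_f$ with relative analytification (\cref{lemma:analytification-of-smooth-iso}) with the construction of $c_i$ in \cite[Th.~3.21~(1)]{BH} via algebraization then reduces the claim to its algebraic counterpart, which is \cite[Exp.~XVII, Rmq.~4.1.3, Lem.~4.3.2.3]{deGabber}.

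Granted this compatibility, the commutativity of \cref{eqn:cartesian-square-compatibility-lemma-2} follows by a diagram chase that additionally invokes: (a) the pullback compatibility $i^*\ttr_f = \ttr_{f'}$ from \cref{smooth-trace-constant}\cref{smooth-trace-constant-pullback}; (b) the proper base change isomorphism $\rR f_! i'_* \simeq i_* \rR f'_!$, which rewrites $\rR h_! \omega_{X'}$ in both ways used in the statement; and (c) standard compatibilities of the projection formula under pullbacks and with the $(i_*, \rR i^!)$-adjunction. I expect the main obstacle to be the verification of the compatibility square above: although it ultimately reduces to Gabber's algebraic results, threading the smooth base-change isomorphism $(f')^*\rR i^! \xrightarrow{\sim} \rR(i')^! f^*$ through the algebraic construction of $c_i$ and keeping track of all the identifications requires attention. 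Once the square is established, assembling the remaining pieces is a mechanical (if lengthy) diagram chase.
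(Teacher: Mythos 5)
Your proposal identifies a plausible ``central square'' relating the canonical isomorphisms $\alpha$ (smooth pullback) and $c$ (closed immersion), and the idea of localizing on $Y$ using the coconnectivity of $\rR\cHom(\omega_{X'},\rR(i')^!\omega_X)\simeq \ud{\Lambda}_{X'}$ is sound. However, the route you sketch has several genuine gaps that the paper's proof is specifically structured to avoid.

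First, your identification $(f')^*\rR i^! \xrightarrow{\sim} \rR(i')^!f^*$ is derived from $\rR f^! \simeq f^*(d)[2d]$ and $\rR(f')^! \simeq (f')^*(d)[2d]$. In this paper's setting these identities are not available for $f$ smooth but not proper: Poincar\'e duality for the smooth morphism $f$ with arbitrary torsion coefficients $\Lambda = \ZZ/n$ is only established here for smooth \emph{proper} $f$ (\cref{general smooth Poincare duality}), and Huber's version requires $n\in(\cO^+_Y)^\times$. So the map through which you express the central square is itself not yet defined. One would need to build the exchange morphism $(f')^*\rR i^! \to \rR(i')^!f^*$ directly (e.g.\ from the finiteness of $i$), which is a separate nontrivial step.

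Second, the claim that ``after further localization, the whole Cartesian square is the relative analytification of a Cartesian square of locally finite type $A$-schemes'' is false for a general smooth, or even \'etale, morphism of rigid spaces. What is true is that locally $f$ factors as an \'etale map followed by $\AA^{d,\an}_Y \to Y$, but the \'etale piece is generically not an analytification of a finite type $A$-scheme. The paper's proof therefore handles the \'etale case separately, by an adjunction argument that never invokes algebraization (using that the \emph{closed} trace is \'etale-local on the target, \cref{rmk:basic-properties-closed-trace}\cref{rmk:basic-properties-closed-trace-local}), and only then reduces the smooth case to the model family $\PP^{d,\an}_Y \to Y$, which \emph{is} the relative analytification of $\PP^d_A \to \Spec A$ and for which \cref{rmk:basic-properties-closed-trace}\cref{rmk:basic-properties-closed-trace-analytification-of-closed trace} and \cref{rmk:basic-properties-smooth-trace}\cref{rmk:basic-properties-smooth-trace-compatible-with-algebraization} apply. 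If instead you wanted to algebraize via the affine spectra $\Spec B \to \Spec A$ (which is how $\alpha_f$ and $c_i$ are actually constructed, cf.\ \cref{cor:smooth-pullback} and \cite[Th.~3.21]{BH}), that is a different mechanism from relative analytification of finite type schemes and would have to be stated and justified as such.

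Third, even granting the central square, the ``mechanical diagram chase'' that is supposed to finish the proof — passing the $(i_*, \rR i^!)$-adjunction through $\rR f_!$, matching the projection formula maps, and threading the base change isomorphism $\rR f_! i'_* \simeq i_*\rR f'_!$ through the definitions of $\tr_f$ and $\tr_{f'}$ — is where most of the work lives. The paper's Step 4 does precisely this unwinding (for $\PP^{d,\an}_Y$, via the algebraic Poincar\'e duality isomorphism $\PD_{f^\alg}$ and the compatibility of adjunction counits under composition); your plan defers it without indicating why it goes through. In summary, the proposal takes a genuinely different route, but two of its key steps are either unavailable in the paper's framework or misstated, and the closing diagram chase is the substance of the proof rather than a formality.
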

\begin{proof}
\begin{enumerate}[wide,label={\textit{Step~\arabic*}.},ref={Step~\arabic*}]
    \item\label{compatibility lemma: Cartesian diagram case etale} \textit{Proof for \'etale $f$.}
    We first establish the claim when $f$ is additionally assumed to be \'etale.
    To do so, we verify the commutativity of the $(f_!, f^*)$-adjoint of \cref{eqn:cartesian-square-compatibility-lemma-2}.
    Explicitly, this adjoint is given by the red rectangle in the following diagram:
    \[ \begin{tikzcd}[row sep=tiny,column sep=huge]
        & i'_*f'_* \omega_{Y'} \arrow[lddd,"i'_*(\eta_{f'})"'] \arrow[rddd,pos=.67,font=\scriptsize,eq=cartesian-square-compatibility-lemma-identifying] & i'_* \omega_{X'} \arrow[l,"i'_*(\alpha^{-1}_{f'})"',"\sim"] \arrow[r,red,"\tr_{i'}"] \arrow[dd,red,"\eta_f"] \arrow[lddd,"i'_*(\eta_{f'})"'] & \omega_X \arrow[rd,"\alpha^{-1}_f","\sim"'{sloped}] \arrow[dd,red,"\eta_f"] & \\
        &&&& f^*\omega_Y \arrow[dd,"\eta_f"] \\
        && f^*f_!i'_* \omega_{X'} \arrow[r,red,"f^*f_!(\tr_{i'})"] \arrow[d,phantom,red,sloped,"\simeq"] & f^*f_! \omega_X \arrow[rd,near start,"f^*f_!(\alpha^{-1}_f)","\sim"'{sloped,midway}] \arrow[dd,red,"f^*(\tr_f)"] & \\
        i'_*f^{\prime,*}f'_!f^{\prime,*} \omega_{Y'} \arrow[rd,"i'_*f^{\prime,*}(\epsilon_{f'})"'] & i'_*f^{\prime,*}f'_! \omega_{X'} \arrow[l,"i'_*f^{\prime,*}f'_!(\alpha^{-1}_{f'})"',"\sim"] \arrow[r,"\BC^{-1}","\sim"'] \arrow[d,"i'_*f^{\prime,*}(\tr_{f'})"] & f^*i_*f'_! \omega_{X'} \arrow[d,red,"f^*i_*(\tr_{f'})"] && f^*f_!f^* \omega_Y \arrow[ld,"f^*(\epsilon_f)"] \\[2em]
        & i'_*f^{\prime,*} \omega_{Y'} \arrow[r,"\BC^{-1}","\sim"'] & f^*i_* \omega_{Y'} \arrow[r,red,"f^*(\tr_i)"] & f^* \omega_Y &
    \end{tikzcd} \]
    Note that except for the lower red rectangle, every other part of this diagram commutes, thanks to the naturality of the base change maps $\BC^{-1}$ and the adjunction units $\eta_f$ and $\eta_{f'}$, the description of \'etale trace maps (\cref{rmk:etale-trace-dualizing}), as well as \cref{Lemma: Identifying natural transformations}\cref{Lemma: Identifying natural transformations-2} in the case of \cref{cartesian-square-compatibility-lemma-identifying}.
    Thus, it suffices to show that the outer diagram commutes.
    Since $f^*(\epsilon_f) \circ \eta_f = \id$ and $f^{\prime,*}(\epsilon_{f'}) \circ \eta_{f'} = \id$, this amounts to the commutativity of  
    \[  \begin{tikzcd}[column sep=large]
            i'_* \omega_{X'} \arrow[rr,"\tr_{i'}"] \arrow[d,"i'_*(\alpha^{-1}_{f'})","\sim"'{sloped}] & & \omega_X \arrow[d,"\alpha^{-1}_f","\sim"'{sloped}] \\
            i'_* f^{\prime,*} \omega_{Y'} \arrow[r,"\BC^{-1}","\sim"'] & f^*i_* \omega_{Y'} \arrow[r,"f^*(\tr_i)"] & f^*\omega_Y,
    \end{tikzcd} \]
    which follows directly from the observation that the closed trace map is \'etale-local on the target (\cref{rmk:basic-properties-closed-trace}).
        
    \item\label{compatibility lemma: Cartesian diagram case local} \textit{The statement is \'etale local on $X$ and $Y$.}
    Next, we prove that if $g' \colon U \twoheadrightarrow X$ and $g \colon V \twoheadrightarrow Y$ are separated taut \'etale covers and $\tilde{f} \colon U \to V$ is a separated smooth taut morphism fitting into a diagram
    \[ \begin{tikzcd}[column sep=large]
        U \arrow[d,two heads,"g'"] \arrow[r,"\tilde{f}"] & V \arrow[d,two heads,"g"] \\
        X \arrow[r,"f"] & Y,
    \end{tikzcd}\]
    then it suffices to show the assertion for $\tilde{f} \colon U \to V$ and $\tilde{i} \colonequals (i,\id) \colon Y' \times_Y V \to V$ instead of $f$ and $i$.
    To see this, note that we can extend \cref{eqn:cartesian-square-compatibility-lemma-2} to the following diagram, in which we set $\tilde{f}' \colonequals (\id,\tilde{f}) \colon Y' \times_Y U \to Y' \times_Y V$, $\tilde{i}' \colonequals (i,\id) \colon Y' \times_Y U \to U$, and $\tilde{h} \colonequals g \circ \tilde{f} \circ \tilde{i}' = g \circ \tilde{i} \circ \tilde{f}'$ and all the diagonal arrows are \'etale trace maps:
    \[ \begin{tikzcd}
        & \rR \tilde{h}_!\omega_{Y' \times_Y U} \arrow[rr,"g_!\rR \tilde{f}_!(\tr_{\tilde{i}'})"] \arrow[dd,near start,"g_!\tilde{i}_*(\tr_{\tilde{f}'})"] \arrow[ld] && g_!\rR \tilde{f}_! \omega_U \arrow[dd,"g_!(\tr_{\tilde{f}})"] \arrow[ld] \\
        \rR h_! \omega_{X'} \arrow[rr,crossing over,near end,"\rR f_!(\tr_{i'})"'] \arrow[dd,"i_*(\tr_{f'})"] && \rR f_! \omega_X & \\
        & g_!\tilde{i}_* \omega_{Y' \times_Y V} \arrow[rr,near start,"g_!(\tr_{\tilde{i}})"] \arrow[ld] && g_!\omega_V \arrow[ld] \\
        i_* \omega_{Y'} \arrow[rr,"\tr_i"] && \omega_Y \arrow[from=uu,crossing over,near start,"\tr_f"] &
    \end{tikzcd}\]
    The top and bottom side of the diagram are induced by cartesian squares and commute thanks to \cref{compatibility lemma: Cartesian diagram case etale}.
    The left and right sides commute by the compatibility of smooth traces with compositions (\cref{rmk:basic-properties-smooth-trace}).
    Since the morphism
    \[ \Hom(\rR h_!\, \omega_{X'},\omega_Y) \to \Hom(\rR \tilde{h}_!\, \omega_{Y' \times_Y U} ,\omega_Y) \]
    induced by $\tr_{(\id,g')}$ is injective by virtue of \cref{closmooth trace lives in discrete space}\cref{closmooth trace lives in discrete space-3}, a simple diagram chase shows that the commutativity of the back side of the diagram implies the commutativity of the front side.

    \item \textit{Reduce to the case of $X=\PP^{d,\an}_Y$ and $Y$ is affinoid.}
    By \cref{compatibility lemma: Cartesian diagram case local} and \cite[Cor.~1.6.10, Lem.~5.1.3]{Huber-etale}, we may assume that $Y$ is affinoid, that $f$ is of equidimension $d$ for some integer $d \ge 0$, and that $f$ factors as $X \xrightarrow{g} \AA^{d,\an}_Y \to Y$ for some separated taut \'etale morphism $g \colon X \to \AA^{d, \an}_Y$.
    Since the commutativity of \cref{eqn:cartesian-square-compatibility-lemma-2} can be checked for the two morphisms separately and was verified for $g$ in \cref{compatibility lemma: Cartesian diagram case etale}, we may assume that $X = \AA^{d,\an}_Y$.
    Another application of \cref{compatibility lemma: Cartesian diagram case etale} to the open immersion $\AA^{d,\an}_Y \hookrightarrow \PP^{d,\an}_Y$ reduces us to the case when $X = \PP^{d,\an}_Y$ and $f$ is the structure map $\PP^{d,\an}_Y \to Y$.

    \item \textit{End of proof.}
    Now we are in the situation where $Y$ is affinoid and $X= \PP^{d, \an}_Y$. We put $Y=\Spa(A, A^\circ)$ and $Y'=\Spa(A/I, (A/I)^\circ)$. 
    Thanks to \cref{rmk:basic-properties-closed-trace}\cref{rmk:basic-properties-closed-trace-analytification-of-closed trace} and \cref{rmk:basic-properties-smooth-trace}\cref{rmk:basic-properties-smooth-trace-compatible-with-algebraization}, it suffices to show that for the corresponding Cartesian diagram of finite type $A$-schemes
    \[
    \begin{tikzcd}
        \PP^d_{A/I} \arrow[rd, "h^\alg"] \arrow[r, hook, "i^{\prime,\alg}"] \arrow[d, "f^{\prime,\alg}"] & \PP^{d}_A \arrow{d}{f^\alg} \\
        \Spec A/I \arrow[r, hook, "i^\alg"] & \Spec A,
    \end{tikzcd}
    \]
    the induced diagram 
    \begin{equation}\label{eqn:many-traces}
    \begin{tikzcd}[column sep = huge]
        \rR h^\alg_* \,\omega_{\PP^d_{A/I}} \arrow{r}{\rR f^\alg_*(\tr^\alg_{i^{\prime,\alg}})} \arrow{d}{i^\alg_*(\tr^\alg_{f^{\prime,\alg}})} &
        \rR f^\alg_* \,\omega_{\PP^d_A} \arrow{d}{\tr^\alg_{f^\alg}}  \\
        i^\alg_* \omega_{\Spec A/I} \arrow{r}{\tr^\alg_{i^\alg}} & \omega_{\Spec A}
    \end{tikzcd}
    \end{equation}
    commutes.
    Recall that the adjoint to the algebraic trace morphism $\rR f^\alg_* f^{\alg, *} \omega_{\Spec A}(d)[2d] \to \omega_{\Spec A}$ (see \cite[Exp.~XVIII, Th.~2.9]{SGA4}) defines the Poincar\'e duality isomorphism $\PD_{f^\alg} \colon f^{\alg, *} \omega_{\Spec A}(d)[2d] \xr{\sim} \rR f^{\alg, !} \omega_{\Spec A}$, and similarly for $f^{\prime,\alg}$.
    We denote by $c_{f^\alg} \colon \omega_{\PP^d_A} \xr{\sim} \rR f^{\alg, !} \omega_{\Spec A}$ the isomorphism coming from \cite[Exp.~XVII, Prop.~4.1.2]{deGabber} (and similarly for $f^{\prime,\alg}$, $i^\alg$, and $i^{\prime,\alg}$) and remind the reader of the isomorphism $\alpha_{f^\alg} \colon f^{\alg, *} \omega_{\Spec A}(d)[2d] \xr{\sim} \omega_{\PP^d_A}$ from \cref{rmk:smooth-pullback-in-algebraic-geometry}.
    Then the second paragraph after \cite[Exp.~XVII, Lem.~4.4.1]{deGabber} implies that the composition
    \[
    f^{\alg, *}\omega_{\Spec A}(d)[2d] \xr[\sim]{\alpha_{f^\alg}} \omega_{\PP^d_A} \xr[\sim]{c_{f^\alg}} \rR f^{\alg, !} \omega_{\Spec A}
    \]
    is equal to the Poincar\'e duality isomorphism $\PD_{f^\alg}$ defined above;
    the same claim holds for $f'$.
    Therefore, after unraveling the definition of $\tr^\alg_{f^\alg}$, we conclude that $\tr_{f^\alg}^\alg$ is given by the composition
    \[
    \rR f_*^\alg \omega_{\PP^d_A} \xr[\sim]{\rR f^\alg_*(c_{f^\alg})} \rR f^\alg_* \rR f^{\alg, !} \omega_{\Spec A} \xr{\epsilon_{f^\alg}} \omega_{\Spec A},
    \]
    where $\epsilon_{f^\alg}$ is the counit of the $(\rR f^\alg_!, \rR f^{\alg, !})$-adjunction;
    a similar formula holds for $f^{\prime,\alg}$.
    After unraveling the definition of $\tr_{i^\alg}$, we see that it is also given by the composition
    \[
    \rR i_*^\alg \omega_{\Spec A/I} \xr[\sim]{i^\alg_*(c_{i^\alg})} i^\alg_* \rR i^{\alg, !} \omega_{\Spec A} \xr{\epsilon_{i^\alg}} \omega_{\Spec A}. 
    \]
    In conclusion, for the purpose of proving the commutativity of Diagram~\cref{eqn:many-traces}, it suffices to show that the following diagram commutes: 
    \[
    \begin{tikzcd}[column sep = 9em]
    \rR h^\alg_* \omega_{\PP^d_{A/I}} \arrow[r, "\rR h^\alg_*(c_{i^{\prime,\alg}})"] \arrow[d, "\rR h^\alg_*(c_{f^{\prime,\alg}})"] & \rR h^\alg_* \rR i^{\prime,\alg,!} \omega_{\PP^d_A} \arrow[r, "\rR f_*^\alg(\epsilon_{i^\alg})"] \arrow[d, "\rR h^\alg_* \rR i^{\prime,\alg,!}(c_{f^\alg})"] & \rR f^\alg_* \omega_{\PP^d_A} \arrow[d, "\rR f^\alg_*(c_{f^\alg})"] \\
    \rR h^\alg_* \rR f^{\prime,\alg, !} \omega_{\Spec A/I} \arrow[r, "\rR h^\alg_* \rR f^{\prime,\alg,!}(c_{i^\alg})"] \arrow[d, "i^\alg_*(\epsilon_{f^{\prime,\alg}})"] & \rR h^\alg_* \rR h^{\alg, !} \omega_{\Spec A}\arrow[d, "i^\alg_*(\epsilon_{f^{\prime,\alg}} i^{\alg,!})"]  \arrow[r, "\rR f^\alg_*(\epsilon_{i^{\prime,\alg}}\rR f^{\alg,!})"] & \rR f^\alg_* \rR f^{\alg,!} \omega_{\Spec A} \arrow[d, "\epsilon_{f^\alg}"] \\
    i^\alg_* \omega_{\Spec A/I} \arrow[r, "c_{i^\alg}"] & i^\alg_* \rR i^{\alg,!} \omega_{\Spec A} \arrow[r, "\epsilon_{i^\alg}"] & \omega_{\Spec A}
    \end{tikzcd}
    \] 
    The top right square and the bottom left square commute due to functoriality of $\epsilon_{i^\alg}$ and $\epsilon_{f^{\prime,\alg}}$, respectively, the bottom right square commutes due to the compatibility of adjunction counits with compositions, and the top left square commutes due to \cite[Exp.~XVII, Rmq.~4.1.3]{deGabber}. \qedhere
\end{enumerate}
\end{proof}

Now we are ready to show a version of \cref{tr-vs-cl} for traces with coefficients in dualizing complexes: 

\begin{lemma}
\label{compatibility lemma: trace of section class is 1}
Let $f \colon X \to Y$ be a separated taut smooth morphism of rigid-analytic spaces over $K$ and let $s \colon Y \hookrightarrow X$ be a section.
Then the composition of trace maps
\[
\omega_Y \simeq (\rR f_! \circ s_*) (\omega_Y) \xrightarrow{\rR f_!(\tr_{s})} \rR f_!\, \omega_X
\xrightarrow{\tr_f} \omega_Y
\]
is the identity.
\end{lemma}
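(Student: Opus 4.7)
The plan is to reduce the statement to \cref{tr-vs-cl}, the analogous identity for constant coefficients, by exploiting the canonical isomorphism $\alpha_f\colon f^*\omega_Y(d)[2d] \xrightarrow{\sim} \omega_X$ from \cref{cor:smooth-pullback}. First, using the clopen decomposition from \cref{constructing smooth trace for omega}\cref{constructing smooth trace for omega-2}, I would reduce to the case where $f$ is of pure equidimension $d$; this also guarantees that $s$ is an lci closed immersion of pure codimension $d$ by \cite[Cor.~5.11]{adic-notes}. Next, via $\alpha_f$ together with the projection formula \cite[Prop.~9.3~(2)]{adic-notes}, one gets a canonical identification
\[ \rR f_!\,\omega_X \simeq \rR f_!\bigl(f^*\omega_Y(d)[2d]\bigr) \simeq \omega_Y \otimes^L \rR f_!\,\ud{\Lambda}_X(d)[2d], \]
under which the smooth trace $\tr_f$ becomes $\id_{\omega_Y} \otimes \ttr_f$ directly from the definition in \cref{constructing smooth trace for omega}.

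The core step will be to show that, under the corresponding identifications $s_*\omega_Y \simeq f^*\omega_Y \otimes^L s_*\ud{\Lambda}_Y$ (via $s^*f^*\omega_Y \simeq \omega_Y$ and the projection formula for $s$) and $\omega_X \simeq f^*\omega_Y(d)[2d]$, the closed trace $\tr_s$ becomes $\id_{f^*\omega_Y} \otimes \cl_s$, where $\cl_s\colon s_*\ud{\Lambda}_Y \to \ud{\Lambda}_X(d)[2d]$ is the cycle class from \cref{variant:cycle-class-map}. Granted this, applying $\rR f_!$ and using $\rR(f\circ s)_! = \id$ turns $\rR f_!(\tr_s)$ into $\id_{\omega_Y} \otimes \rR f_!(\cl_s)$, and the composition in the statement then reduces to $\id_{\omega_Y} \otimes \bigl(\ttr_f \circ \rR f_!(\cl_s)\bigr)$, which is the identity by \cref{tr-vs-cl}.

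The hard part will be this identification of $\tr_s$ with $\id \otimes \cl_s$. Both are elements of $\Hom(s_*\omega_Y, \omega_X)$, and by the $(s_*, \rR s^!)$-adjunction together with \cite[Th.~3.21~(1),(3)]{BH} one computes
\[ \rR\cHom(s_*\omega_Y, \omega_X) \simeq s_* \rR\cHom(\omega_Y, \rR s^!\omega_X) \simeq s_*\rR\cHom(\omega_Y,\omega_Y) \simeq s_*\ud{\Lambda}_Y \in D^{\geq 0}(X_\et; \Lambda), \]
so this Hom-group is canonically $\Hh^0(Y, \ud{\Lambda}_Y)$. The morphism $\tr_s$ corresponds to the unit section by its very definition through $c_s$ (\cref{trace for closed immersion}), and I need to argue that $\id_{f^*\omega_Y} \otimes \cl_s$ does as well. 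This amounts to a compatibility between the canonical isomorphism $c_s\colon \omega_Y \xrightarrow{\sim} \rR s^!\omega_X$ from \cite[Th.~3.21~(1)]{BH} and the purity isomorphism $\pi_s\colon \ud{\Lambda}_Y \xrightarrow{\sim} \rR s^!\ud{\Lambda}_X(d)[2d]$ adjoint to $\cl_s$: namely, that $c_s$ agrees with $\id_{\omega_Y} \otimes \pi_s$ under the identification $\rR s^!\omega_X \simeq \omega_Y \otimes^L \rR s^!\ud{\Lambda}_X(d)[2d]$ coming from $\alpha_f$ together with a projection-formula-type map for $\rR s^!$. This can be checked \'etale locally on $X$, where both $c_s$ and the purity isomorphism are built via analytification of their algebraic counterparts (\cref{cor:analytification-pullback}, \cref{lemma:analytification-of-smooth-iso}, \cref{lemma:analytification-cycle-classes}), and it ultimately reduces to the classical algebraic compatibility recorded in \cite[Exp.~XVII]{deGabber}; this is analogous in spirit to \cref{closed trace and cycle class for smooth things}, which treated the special case when $Y$ is itself smooth over $K$.
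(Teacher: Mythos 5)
Your approach is genuinely different from the paper's and, in its present form, has a real gap at the step you yourself identify as ``the hard part.''

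The paper's proof reduces the statement to the case where $Y$ is a classical point by a scalar argument: assuming $Y$ connected, the group $\Hom(\rR f_!s_*\omega_Y, \omega_Y) \simeq \rR\Hh^0(Y,\ud{\Lambda}_Y) \simeq \Lambda$ is one-dimensional, so the composite in question is multiplication by a scalar in $\Lambda$, which can be computed after restricting to a classical point $i_y\colon y\hookrightarrow Y$. The compatibility with this restriction follows from \cref{rmk:basic-properties-closed-trace}\cref{rmk:basic-properties-closed-trace-composition} and \cref{compatibility lemma: Cartesian diagram case}, which have already been proved. After the reduction, $Y$ is a point and hence \emph{smooth}, and at that stage \cref{closed trace and cycle class for smooth things} (which requires both $X$ and $Y$ smooth) together with \cref{tr-vs-cl} finishes the argument. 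Your proposal instead tries to directly tensor-factor the whole composition through $\alpha_f$ and the projection formula.

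The gap is that your core identification, namely $\tr_s = \id_{f^*\omega_Y}\otimes\cl_s$ — equivalently, that $c_s$ from \cite[Th.~3.21~(1)]{BH} corresponds to $\id_{\omega_Y}\otimes\pi_s$ under the stated identifications — would require a version of \cref{closed trace and cycle class for smooth things} with \emph{singular} $Y$. That lemma is proved in the paper only when both $X$ and $Y$ are smooth, and its proof crucially exploits that $\omega_X$ and $\omega_Y$ are then shifted twisted constant sheaves, which is false for singular $Y$. Checking the new compatibility \'etale-locally is legitimate (the relevant $\rR\cHom$ is coconnective), but the reduction to a specific result in \cite[Exp.~XVII]{deGabber} is asserted without being carried out, and it is not a priori clear which result from there would apply, since one now has a regular immersion $s\colon Y\hookrightarrow X$ with singular target $Y$. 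This is a genuinely harder comparison than the one appearing in the paper, where the reduction to a point sidesteps singular $Y$ entirely. Your scalar computation of $\Hom(s_*\omega_Y,\omega_X)\simeq\Hh^0(Y,\ud{\Lambda}_Y)$ in fact points toward the same rescue: once you know both $\tr_s$ and $\id\otimes\cl_s$ are scalars, you could evaluate both at a classical point of $Y$ where $Y$ is smooth and apply \cref{closed trace and cycle class for smooth things} there — but this is essentially the paper's argument, transplanted from the $\rR f_!$ level down to the $\tr_s$ level, and would require a compatibility of $\tr_s$ and $\cl_s$ with $\rR i^!$-pullback along $i_y\colon y\hookrightarrow Y$ akin to what \cref{compatibility lemma: Cartesian diagram case} supplies at the level of traces. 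As written, the proposal does not close this loop.
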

\begin{proof}
    Without loss of generality, we may and do assume that $Y$ is connected.
    Pick a classical point $y \in Y$. The canonical inclusions fit into a cartesian diagram
    \[ \begin{tikzcd}
        X_y \arrow[r,hook,"i'"] \arrow[d,"f'"] & X \arrow[d,"f"] \\
        y \arrow[r,hook,"i"] \arrow[u,hook',bend left=40,"s'"] & Y. \arrow[u,hook',bend left=40,"s"]
    \end{tikzcd} \]
    Thanks to \cref{rmk:basic-properties-closed-trace}\cref{rmk:basic-properties-closed-trace-composition} and \cref{compatibility lemma: Cartesian diagram case}, respectively, the left and right square in the following diagram commute:
    \[ \begin{tikzcd}[column sep=huge]
        i_*\omega_y \arrow[r,phantom,"\simeq"] \arrow[d,"\tr_i"] &[-4.4em] i_* \rR f'_! s'_* \omega_y \arrow[r,"i_*\rR f'_!(\tr_{s'})"] &[1em] i_* \rR f'_! \omega_{X_y} \arrow[r,"i_*(\tr_{f'})"] \arrow[d,"\rR f_!(\tr_{i'})"] & i_* \omega_y \arrow[d,"\tr_i"] \\
        \omega_Y \arrow[r,phantom,"\simeq"] & \rR f_! s_* \omega_Y \arrow[r,"\rR f_!(\tr_s)"] & \rR f_! \omega_X \arrow[r,"\tr_f"] & \omega_Y.
    \end{tikzcd} \]
    It follows that it suffices to prove the assertion for $f'$ instead of $f$:
    indeed, by \cite[Th.~3.21~(3)]{BH} and the full faithfulness of $i_*$, the two horizontal compositions are given by scalar multiplication with an element of $\Lambda$ and the commutativity of the diagram guarantees that both classes map to the same element in 
    \[ \begin{tikzcd}[column sep=huge]
        \Lambda \arrow[r,"\sim"] \arrow[d,sloped,"\sim"] & \Hom(\omega_Y,\omega_Y) \arrow[d,"\blank \circ \tr_i", "\sim"'{sloped}] \\
        \Hom(\omega_y,\omega_y) \arrow[r,"\tr_i \circ i_*(\blank)", "\sim"'] & \Hom(i_*\omega_y,\omega_Y),
    \end{tikzcd} \]
    hence they must be equal.
    In conclusion, it is enough to prove the statement when $Y$ is a smooth rigid space over $K$ (in fact, $Y = \Spa(K',\cO_{K'})$ for some finite extension $K'/K$) and $X$ is a separated taut smooth rigid-analytic space over $K$.
    In this case, the result follows directly from \cref{tr-vs-cl} and \cref{closed trace and cycle class for smooth things}. 
\end{proof}

Lastly, we extend \cref{compatibility lemma: Cartesian diagram case} to commutative diagrams that are not necessarily cartesian.
\begin{theorem}[Compatibility for commutative diagrams]\label{compatibility proposition}
Consider a commutative diagram of rigid-analytic spaces over $K$
\[
\begin{tikzcd}
X' \arrow{d}{f'} \arrow{rd}{h} \arrow[r, hook, "i'"]& X \arrow{d}{f} \\
Y' \arrow[r, hook, "i"] & Y.
\end{tikzcd}
\]
Suppose that $f$ and $f'$ are separated, taut and smooth, and that $i$ and $i'$ are closed immersions.
Then the following diagram in $D(Y_\et; \Lambda)$ commutes:
\[
\begin{tikzcd}[column sep = huge]
\rR h_!\, \omega_{X'} \arrow{r}{\rR f_!(\tr_{i'})} \arrow{d}{i_*(\tr_{f'})} &
 \rR f_!\, \omega_X \arrow{d}{\tr_f} \\
i_* \omega_{Y'} \arrow{r}{\tr_i} & \omega_Y.
\end{tikzcd}
\]
\end{theorem}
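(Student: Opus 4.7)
The plan is to reduce the statement to two successive applications of \cref{compatibility lemma: Cartesian diagram case} and \cref{compatibility lemma: trace of section class is 1}, via a pair of Cartesian factorizations. The overall strategy will be to first factor the given non-Cartesian square through the canonical Cartesian square associated with the fiber product, then to exploit a graph trick to rewrite $\tr_{f'}$ in terms of $\tr_{\w{f}}$ and the closed trace of $j$; the final identity will be resolved by a second Cartesian factorization through the diagonal.

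For the first reduction, set $\w{X} \coloneqq X \times_Y Y'$, with projections $\w{i} \colon \w{X} \to X$ (a closed immersion, as the base change of $i$) and $\w{f} \colon \w{X} \to Y'$ (separated taut smooth, as the base change of $f$). By the universal property of the fiber product, there is a morphism $j \colon X' \to \w{X}$ with $\w{i} \circ j = i'$ and $\w{f} \circ j = f'$; moreover $j$ is itself a closed immersion, since $\w{i} \circ j = i'$ is a closed immersion and $\w{i}$ is separated. Applying \cref{compatibility lemma: Cartesian diagram case} to the Cartesian square $(\w{f}, f, \w{i}, i)$, together with the compatibility of closed traces with compositions (\cref{rmk:basic-properties-closed-trace}\cref{rmk:basic-properties-closed-trace-composition}) applied to $i' = \w{i} \circ j$, will show that the proposition follows once one knows, in the special case $i = \id_Y$: for separated taut smooth $\w{f}, f'$ and a closed immersion $j \colon X' \hookrightarrow \w{X}$ with $\w{f} \circ j = f'$, the equality $\tr_{f'} = \tr_{\w{f}} \circ \rR \w{f}_!(\tr_j)$ holds.

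For this reduced statement, I would consider the graph $\Gamma_j \coloneqq (\id_{X'}, j) \colon X' \hookrightarrow X' \times_Y \w{X}$. Since $\w{f}$ is separated, its base change $\pr_1 \colon X' \times_Y \w{X} \to X'$ is separated, so $\Gamma_j$ is a closed immersion and a section of $\pr_1$. By \cref{compatibility lemma: trace of section class is 1}, $\tr_{\pr_1} \circ \rR \pr_{1, !}(\tr_{\Gamma_j}) = \id_{\omega_{X'}}$. Post-composing with $\tr_{f'} \circ \rR f'_!(\blank)$ and invoking the compatibility of smooth traces with compositions (\cref{rmk:basic-properties-smooth-trace}\cref{rmk:basic-properties-smooth-trace-composition}) for the two equal compositions $f' \circ \pr_1 = \w{f} \circ \pr_2$ will then yield
\[ \tr_{f'} = \tr_{\w{f}} \circ \rR \w{f}_!\bigl(\tr_{\pr_2} \circ \rR \pr_{2, !}(\tr_{\Gamma_j})\bigr). \]
Hence it will suffice to prove that $\tr_j = \tr_{\pr_2} \circ \rR \pr_{2, !}(\tr_{\Gamma_j})$ as morphisms $j_*\omega_{X'} \to \omega_{\w{X}}$.

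Finally, I would establish this last identity by a second Cartesian factorization. Factor $\Gamma_j = \tau \circ \Delta$, where $\Delta \colon X' \hookrightarrow X' \times_Y X'$ is the diagonal (a closed immersion, as $f'$ is separated) and $\tau \colon X' \times_Y X' \to X' \times_Y \w{X}$ is the closed immersion $(u, v) \mapsto (v, j(u))$. A direct check shows that the square
\[ \begin{tikzcd}
X' \times_Y X' \arrow[r, hook, "\tau"] \arrow[d, "\pi_1"] & X' \times_Y \w{X} \arrow[d, "\pr_2"] \\
X' \arrow[r, hook, "j"] & \w{X}
\end{tikzcd} \]
(with $\pi_1$ the first projection) is Cartesian, exhibiting $\tau$ as the base change of $j$ along $\pr_2$. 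Applying \cref{compatibility lemma: Cartesian diagram case} to this square gives $\tr_{\pr_2} \circ \rR \pr_{2, !}(\tr_\tau) = \tr_j \circ j_*(\tr_{\pi_1})$; combining this with $\tr_{\Gamma_j} = \tr_\tau \circ \tau_*(\tr_\Delta)$ (closed trace composition) and the base change isomorphism $\rR \pr_{2, !} \tau_* \simeq j_* \rR \pi_{1, !}$ coming from $\pr_2 \circ \tau = j \circ \pi_1$ rewrites the right-hand side as $\tr_j \circ j_*\bigl(\tr_{\pi_1} \circ \rR \pi_{1, !}(\tr_\Delta)\bigr)$. Since $\Delta$ is a section of $\pi_1$, a second application of \cref{compatibility lemma: trace of section class is 1} will collapse the inner composition to the identity, yielding the desired equality. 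The hardest part will be identifying the correct pair of Cartesian factorizations---particularly the graph trick in the third paragraph---so that the apparently asymmetric ``mixed'' compatibility reduces to a chain of formal identities among trace maps already established in \cref{compatibility lemma: Cartesian diagram case} and \cref{compatibility lemma: trace of section class is 1}.
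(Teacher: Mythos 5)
Your proposal is correct and takes essentially the same approach as the paper: reduce to the Cartesian case via the fiber product $X \times_Y Y'$ together with the closed-trace composition identity, and then handle the resulting "$i = \mathrm{id}$" case by combining \cref{compatibility lemma: trace of section class is 1} and \cref{compatibility lemma: Cartesian diagram case} via the graph/diagonal trick. The paper packages this a bit differently (three stacked special cases, with the "$f' = \mathrm{id}$" case proved first and then re-used), whereas you inline that case via the $\Gamma_j = \tau \circ \Delta$ factorization; the underlying chain of lemma applications and the fiber products $X' \times_{Y'} \w{X}$ and $X' \times_{Y'} X'$ are the same. One small notational slip: you should take the fiber products over $Y'$ rather than $Y$ in paragraphs three and four (this is needed, e.g., for $\pr_1$ to be smooth as the base change of $\w{f}$, and for $f' \circ \pr_1 = \w{f} \circ \pr_2$ to make sense); the two agree here since $i$ is a monomorphism, but the argument reads more cleanly over $Y'$.
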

\begin{proof}
    We first deal with two special cases in which one of the morphisms is the identity and then use them to deduce the general version.
    \begin{enumerate}[wide,label={\textit{Step~\arabic*}.},ref={Step~\arabic*}]
        \item\label{compatibility proposition-closed-closed-smooth} \textit{Proof when $X' = Y'$ and $f' = \id$.}
        The fiber product $W \colonequals X \times_Y Y'$ comes with natural projections $g \colon W \to Y'$ and $j \colon W \hookrightarrow X$.
        Moreover, $i'$ induces a natural section $s \colon Y' \to W$ of $g$.
        These maps fit into the commutative diagram
        \[ \begin{tikzcd}
        W \arrow[r,hook,"j"] \arrow[d,"g"] & X \arrow[d,"f"] \\
        Y' \arrow[r,hook,"i"] \arrow[u,hook',bend left=40,"s"] \arrow[ru,hook,"i'"] & Y.
        \end{tikzcd} \]
        An application of \cref{rmk:basic-properties-closed-trace}\cref{rmk:basic-properties-closed-trace-composition}, \cref{compatibility lemma: Cartesian diagram case},
        and \cref{compatibility lemma: trace of section class is 1}, respectively, then gives the desired identity
        \[ \tr_f \circ \rR f_!(\tr_{i'}) = \tr_f \circ \rR f_!(\tr_j) \circ \rR(f \circ j)_!(\tr_s) = \tr_i \circ i_*(\tr_g) \circ \rR(i \circ g)_!(\tr_s) = \tr_i. \]

        \item\label{compatibility proposition-smooth-closed-smooth} \textit{Proof when $Y' = Y$ and $i = \id$.}
        The fiber product $W \colonequals X' \times_Y X$ comes with natural projections $g \colon W \to X'$ and $g' \colon W \to X$, which are again separated, taut and smooth.
        Moreover, $i'$ induces a natural section $s \colon X' \to W$ of $g$.
        These maps fits into the commutative diagram
        \[ \begin{tikzcd}
        W \arrow[r,"g'"] \arrow[d,"g"] & X \arrow[d,"f"] \\
        X' \arrow[r,"f'"] \arrow[u,hook',bend left=40,"s"] \arrow[ru,hook,"i'"] & Y.
        \end{tikzcd} \]
        An application of \cref{compatibility proposition-closed-closed-smooth}, \cref{rmk:basic-properties-smooth-trace}\cref{rmk:basic-properties-smooth-trace-composition}, and \cref{compatibility lemma: trace of section class is 1}, respectively, then gives the desired identity
        \[ \tr_f \circ \rR f_!(\tr_{i'}) = \tr_f \circ \rR f_!(\tr_{g'}) \circ \rR (f \circ g')_!(\tr_s) = \tr_{f'} \circ \rR f'_!(\tr_g) \circ \rR (f' \circ g)_!(\tr_s) = \tr_{f'}. \]

        \item \textit{Proof in the general case.}
        The fiber product  $W \coloneqq X' \times_Y Y$ fits into the commutative diagram
        \[ \begin{tikzcd}
        X' \arrow[rd,hook,"j'"] \arrow[rrd,hook,bend left=20,"i'"] \arrow[rdd,bend right=25,"f'"] && \\
        & W \arrow[r,hook,"j"] \arrow[d,"g"] & X \arrow[d,"f"] \\
        & Y' \arrow[r,hook,"i"] & Y,
        \end{tikzcd} \]
        where $j$ and $j'$ are closed immersions.
        An application of \cref{rmk:basic-properties-closed-trace}\cref{rmk:basic-properties-closed-trace-composition}, \cref{compatibility lemma: Cartesian diagram case}, and \cref{compatibility proposition-smooth-closed-smooth} respectively, then gives the desired identity
        \[ \tr_f \circ \rR f_!(\tr_{i'}) = \tr_f \circ \rR f_!(\tr_j) \circ \rR (f \circ j)_!(\tr_{j'}) = \tr_i \circ i_*(\tr_g) \circ \rR (i \circ g)_!(\tr_{j'}) = \tr_i \circ i_*(\tr_{f'}). \qedhere \]
    \end{enumerate}
\end{proof}

\subsection{Smooth-source trace}

The main goal of this subsection is to construct a trace map for any separated taut morphism $f\colon X \to Y$ of rigid-analytic spaces over $K$ with $X$ smooth and $Y$ separated and taut.
In the next subsection, we will drop the assumptions on $X$ and $Y$ at the expense of assuming properness of $f$.

For the next construction, we fix a separated taut morphism $f\colon X \to Y$ as above.
This automatically implies that $X$ is separated and taut as well.
Then we factor $f$ as the composition
\[
X \xhookrightarrow{\Gamma_f} X\times Y \xrightarrow{\pi_Y} Y
\]
of the graph morphism $\Gamma_f$ and the natural projection $\pi_Y$.
Note that $\Gamma_f$ is a closed immersion since $Y$ is separated (see \cite[Cor.~B.6.10 and B.7.4]{Z-quotients}) and that $\pi_Y$ is a separated taut smooth morphism because $X$ is separated taut and smooth over $K$. 

\begin{construction}[Smooth-source trace]\label{smooth source trace}
For $f$ as above, the \emph{smooth-source trace map} $\tr_f\colon \rR f_!\, \omega_X \to \omega_Y$ is the composition
\[ \rR f_!\,\omega_X \simeq (\rR \pi_{Y,!} \circ \Gamma_{f,*}) \omega_X \xrightarrow{\rR \pi_{Y, !}(\tr_{\Gamma_f})} \rR \pi_{Y, !}\, \omega_{X\times Y} \xrightarrow{\tr_{\pi_{Y}}} \omega_Y, \]
where $\tr_{\Gamma_f}$ is the closed trace from \cref{trace for closed immersion} and $\tr_{\pi_Y}$ is the smooth trace from \cref{constructing smooth trace for omega}. 
\end{construction}
We now verify some basic properties of smooth-source trace maps. 
\begin{proposition}\label{compatibility for smooth source trace}
Let $f\colon X \to Y$ and $g\colon Y \to Z$ be separated taut morphisms of rigid-analytic spaces over $K$.
Assume that $X$ is smooth over $K$ and that $Z$ is separated and taut over $K$. 
\begin{enumerate}[leftmargin=*,label=\upshape{(\roman*)}]
\item\label{compatibility for smooth source trace-1}(Compatibility with smooth trace) If $f$ is smooth, then $\tr_f$ is equal to the smooth trace map from \cref{constructing smooth trace for omega}. 
In particular, $\tr_f=\rm{id}$ when $f=\rm{id}$. 
\item\label{compatibility for smooth source trace-2}(Compatibility with closed trace) If $f$ is a closed immersion, then $\tr_f$ is equal to the closed trace map from \cref{trace for closed immersion}.
\item\label{compatibility for smooth source trace-3}(Compatibility with smooth maps) If $h\colon Y' \to Y$ is a separated taut smooth morphism and 
\[ \begin{tikzcd}
X' \arrow[r,"h'"] \arrow[d,"f'"] & X \arrow[d,"f"] \\
Y' \arrow[r,"h"] & Y
\end{tikzcd} \]
is a cartesian diagram, then $\tr_h \circ \rR h_!(\tr_{f'}) = \tr_f \circ \rR f_!(\tr_{h'})$, where $\tr_h$ and $\tr_{h'}$ denote the smooth trace maps from \cref{constructing smooth trace for omega}.
\item\label{compatibility for smooth source trace-4}(Compatibility with compositions I) If $Y$ is smooth, then $\tr_{g\circ f}= \tr_g \circ \rR g_!(\tr_f)$.
\item\label{compatibility for smooth source trace-5}(Compatibility with compositions II) If $g$ is a closed immersion (resp.\ smooth), then $\tr_{g\circ f} = \tr_g \circ \rR g_!(\tr_f)$ where $\tr_g$ is the closed trace from \cref{trace for closed immersion} (resp.\ the smooth trace from \cref{constructing smooth trace for omega}). 
\end{enumerate}
\end{proposition}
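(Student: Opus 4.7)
The plan is to reduce everything to two tools already established: the commutative-diagram compatibility \cref{compatibility proposition} between smooth and closed traces, together with the composition and pullback properties of smooth and closed traces from \cref{rmk:basic-properties-smooth-trace} and \cref{rmk:basic-properties-closed-trace}. Parts \cref{compatibility for smooth source trace-1} and \cref{compatibility for smooth source trace-2} will be direct applications of \cref{compatibility proposition}. For \cref{compatibility for smooth source trace-1}, I apply it to the commutative diagram
\[
\begin{tikzcd}
X \ar[r, hook, "\Gamma_f"] \ar[d, "f"'] \ar[rd, "f"] & X \times Y \ar[d, "\pi_Y"] \\
Y \ar[r, equals] & Y
\end{tikzcd}
\]
in which both verticals are smooth (here $f$ is assumed smooth) and both horizontals are closed immersions; combined with $\tr_{\id_Y}=\id$, this identifies the smooth-source trace of $f$ with its smooth trace. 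For \cref{compatibility for smooth source trace-2}, the analogous diagram
\[
\begin{tikzcd}
X \ar[r, hook, "\Gamma_f"] \ar[d, equals] \ar[rd, equals] & X \times Y \ar[d, "\pi_Y"] \\
X \ar[r, hook, "f"'] & Y
\end{tikzcd}
\]
(now with $f$ a closed immersion) yields the identification with the closed trace.

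The key intermediate step is an \emph{independence-of-factorization} lemma: for any factorization $f = p \circ i$ with $i$ a closed immersion into a (necessarily separated taut) rigid space $W$ and $p \colon W \to Y$ separated taut smooth, one has $\tr_f = \tr_p \circ \rR p_!(\tr_i)$. I will prove this by first using \cref{compatibility for smooth source trace-2} to rewrite $\tr_i = \tr_{\pi_W^X} \circ \rR \pi_{W,!}^X(\tr_{\Gamma_i})$, then using composition of smooth traces twice to reorganize $\tr_p \circ \rR p_!(\tr_{\pi_W^X})$ as $\tr_{\pi_Y^X} \circ \rR \pi_{Y,!}^X(\tr_{\id\times p})$ via $p\circ\pi_W^X = \pi_Y^X \circ (\id\times p)$, and finally recognizing the remaining identity $\tr_{\id \times p} \circ \rR(\id \times p)_!(\tr_{\Gamma_i}) = \tr_{\Gamma_f}$ as an instance of \cref{compatibility proposition} applied to
\[
\begin{tikzcd}
X \ar[r, hook, "\Gamma_i"] \ar[d, equals] \ar[rd, "\Gamma_i"'] & X \times W \ar[d, "\id \times p"] \\
X \ar[r, hook, "\Gamma_f"'] & X \times Y,
\end{tikzcd}
\]
using $\Gamma_f = (\id \times p) \circ \Gamma_i$ together with $\tr_{\id_X}=\id$.

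With this lemma in hand, the remaining parts follow formally. For \cref{compatibility for smooth source trace-5} with $g$ closed, I decompose $\Gamma_{g \circ f} = (\id \times g) \circ \Gamma_f$ as a composition of closed immersions and apply \cref{compatibility lemma: Cartesian diagram case} to the cartesian square
\[
\begin{tikzcd}
X \times Y \ar[r, hook, "\id \times g"] \ar[d, "\pi_Y"'] & X \times Z \ar[d, "\pi_Z"] \\
Y \ar[r, hook, "g"'] & Z.
\end{tikzcd}
\]
For \cref{compatibility for smooth source trace-5} with $g$ smooth, I apply the independence lemma to $g \circ f = (g \circ \pi_Y) \circ \Gamma_f$ and use composition of smooth traces. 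For \cref{compatibility for smooth source trace-3}, the identity $\tr_h \circ \rR h_!(\tr_{f'}) = \tr_{h \circ f'}$ is immediate from \cref{compatibility for smooth source trace-5} applied to the smooth outer map $h$, while $\tr_f \circ \rR f_!(\tr_{h'}) = \tr_{f \circ h'}$ follows from \cref{compatibility proposition} applied to the (in fact cartesian) square
\[
\begin{tikzcd}
X' \ar[r, hook, "\Gamma_{f \circ h'}"] \ar[d, "h'"'] & X' \times Y \ar[d, "h' \times \id"] \\
X \ar[r, hook, "\Gamma_f"'] & X \times Y
\end{tikzcd}
\]
combined with composition of smooth traces along $\pi_Y^X\circ(h'\times\id) = \pi_Y^{X'}$; the two sides of \cref{compatibility for smooth source trace-3} then agree thanks to $h \circ f' = f \circ h'$. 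Finally, \cref{compatibility for smooth source trace-4} reduces to the previous parts by factoring $g = \pi_Z^Y \circ \Gamma_g$, applying \cref{compatibility for smooth source trace-5} once with smooth outer $\pi_Z^Y$ and once with closed outer $\Gamma_g$, and recognizing the composition $\tr_{\pi_Z^Y} \circ \rR\pi_{Z,!}^Y(\tr_{\Gamma_g})$ as the smooth-source trace $\tr_g$.

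The main obstacle is the independence-of-factorization lemma: once it is in place, everything else is systematic bookkeeping with the composition and base-change properties of smooth and closed traces, and the only remaining challenge is to order the steps so that the logical dependencies line up correctly.
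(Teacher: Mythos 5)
Your proof is correct and rests on the same main tool as the paper's, namely the commutative-diagram compatibility \cref{compatibility proposition}, with parts \cref{compatibility for smooth source trace-1} and \cref{compatibility for smooth source trace-2} handled the same way. The organization of the remaining parts, however, is genuinely different. The novelty is your explicit \emph{independence-of-factorization} lemma --- that the smooth-source trace of a map with smooth source can be computed from any factorization through a closed immersion followed by a separated taut smooth map, not only the graph factorization --- which you use to establish \cref{compatibility for smooth source trace-5} first and then deduce \cref{compatibility for smooth source trace-3} and \cref{compatibility for smooth source trace-4} from it. The paper instead proves \cref{compatibility for smooth source trace-3} by a direct diagram chase through a two-storey graph factorization, then proves \cref{compatibility for smooth source trace-4} in two steps (first $f$ a closed immersion, then general $f$ by factoring through the graph), and leaves \cref{compatibility for smooth source trace-5} to the reader as ``similar.'' Each route has its advantage: the paper avoids the extra intermediate lemma, while your ordering is more robust, because the equality $\tr_g \circ \rR g_!(\tr_{\pi_Y}) = \tr_{\pi_Z} \circ \rR \pi_{Z,!}(\tr_{\id\times g})$ appearing in the paper's proof of the general case of \cref{compatibility for smooth source trace-4} is attributed there to \cref{compatibility proposition}, but $g$ and $\id\times g$ are in general neither closed immersions nor smooth, so that theorem does not apply directly; the identity is really an instance of the already-proved \cref{compatibility for smooth source trace-3}, or equivalently of your \cref{compatibility for smooth source trace-5}. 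Your independence lemma also isolates and names a manipulation the paper repeats implicitly across \cref{compatibility for smooth source trace-3}--\cref{compatibility for smooth source trace-5}, a small gain in clarity for the cost of one extra stated lemma.
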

\begin{proof}
Parts \cref{compatibility for smooth source trace-1} and \cref{compatibility for smooth source trace-2} follow directly from \cref{compatibility proposition} (more specifically, from the special cases treated in \cref{compatibility proposition-smooth-closed-smooth} and \cref{compatibility proposition-closed-closed-smooth} of its proof, respectively).

Now we deal with \cref{compatibility for smooth source trace-3}. For this, we consider the following commutative diagram:\footnote{We warn the reader that the inner squares in the diagram below are not cartesian.}
\[
\begin{tikzcd}[column sep=large]
X' \arrow[rr, bend left, "f'"] \arrow[r, hook, "\Gamma_{f'}"] \arrow{d}{h'} & X' \times Y' \arrow{r}{\pi_{Y'}}\arrow{d}{h'\times h} & Y' \arrow{d}{h}\\
X \arrow[rr, bend right, "f"] \arrow[r, hook, "\Gamma_f"]  & X\times Y \arrow{r}{\pi_Y} & Y
\end{tikzcd}
\]
Note that $h$, $h'$, $\pi_Y$, and $\pi_{Y'}$ are separated, taut and smooth, and that $\Gamma_f$ and $\Gamma_{f'}$ are closed immersions.
Therefore,  the assertion results from
\begin{align*}
\tr_h \circ \rR h_!(\tr_{f'}) & = \tr_h \circ \rR h_!(\tr_{\pi_{Y'}}) \circ \rR (h\circ \pi_{Y'})_!(\tr_{\Gamma_{f'}}) \\ 
& = \tr_{\pi_Y} \circ \rR \pi_{Y, !} (\tr_{h'\times h}) \circ \rR \bigl(\pi_Y \circ (h'\times h)\bigr)_!(\tr_{\Gamma_{f'}}) \\
& = \tr_{\pi_Y} \circ \rR \pi_{Y, !} \bigl(\tr_{\Gamma_f} \circ \rR\Gamma_{f,!}(\tr_{h'})\bigr) \\
& = \tr_{\pi_Y} \circ \rR \pi_{Y, !} (\tr_{\Gamma_f}) \circ \rR f_!(\tr_{h'}) \\
& = \tr_{f} \circ \rR f_!(\tr_{h'}), 
\end{align*}
where the first equality follows from \cref{smooth source trace}, the second equality follows from \cref{rmk:basic-properties-smooth-trace}\cref{rmk:basic-properties-smooth-trace-composition}, third equality follows from \cref{compatibility proposition}, the fourth equality follows from $f=\pi_Y \circ \Gamma_f$, and the last equality follows again from \cref{smooth source trace}. 

To prove \cref{compatibility for smooth source trace-4}, we first treat the case when $f$ is a closed immersion.
In this case, we consider the following commutative diagram:
\[
\begin{tikzcd}
X \arrow[rr, hook, "\Gamma_{g\circ f}"] \arrow[rd, hook, swap, "f"] & & X\times Z \arrow[d, hook, "f\times \id"] \arrow[dd, bend left = 80, "\pi_Z^X"]\\
& Y \arrow[dr, swap, "g"] \arrow[r, hook, "\Gamma_g"] & Y\times Z \arrow[d, "\pi_Z^Y"] \\
& &  Z
\end{tikzcd}
\]
Note that $f$, $\Gamma_{g\circ f}$, $f\times \id$, and $\Gamma_g$ are closed immersions. Therefore, the assertion results from
\begin{align*}
    \tr_g \circ \rR g_!(\tr_f) & = \tr_{\pi^Y_Z} \circ \rR \pi^Y_{Z, !}(\tr_{\Gamma_g}) \circ \rR\pi^Y_{Z, !}\bigl(\rR\Gamma_{g,!}(\tr_{f})\bigr) \\
    & = \tr_{\pi_Z^Y} \circ \rR \pi^Y_{Z, !}(\tr_{f\times \id}) \circ \rR \pi_{Z, !}^X(\tr_{\Gamma_{g\circ f}}) \\ 
    & = \tr_{\pi_Z^X} \circ \rR \pi_{Z, !}^X(\tr_{\Gamma_{g\circ f}}) \\
    & = \tr_{g\circ f},
\end{align*}
where the first equality follows from \cref{smooth source trace}, the second equality follows from part \cref{compatibility for smooth source trace-2} and \cref{rmk:basic-properties-closed-trace}\cref{rmk:basic-properties-closed-trace-composition}, the third equality follows from \cref{compatibility proposition}, and the last equality follows again from \cref{smooth source trace}.

Now we prove \cref{compatibility for smooth source trace-4} for a general $f$.
For this, we consider the following commutative diagram:
\begin{equation*}\label{eqn:nice-diagram-compatibility-trace}
\begin{tikzcd}[column sep =3em, row sep = 3em]
X \arrow[r, hook, "\Gamma_f"] \arrow{dr}{f}  \arrow[rr, hook, bend left, "\Gamma_{g\circ f}"] & X\times Y \arrow{d}{\pi_{Y}} \arrow{r}{\id \times g} & X\times Z  \arrow{d}{\pi_Z} \\
& Y \arrow{r}{g} & Z 
\end{tikzcd}
\end{equation*}
Then the assertion results from
\begin{align*}
    \tr_g \circ \rR g_!(\tr_f) & =  \tr_g \circ \rR g_!(\tr_{\pi_Y}) \circ \rR g_!\bigl(\rR \pi_{Y, !} (\tr_{\Gamma_f})\bigr) \\
    & = \tr_{\pi_Z} \circ \rR \pi_{Z, !}(\tr_{\id\times g}) \circ \rR \pi_{Z, !}\bigl(\rR (\id\times g)_!(\tr_{\Gamma_f})\bigr) \\
    & = \tr_{\pi_Z} \circ \rR \pi_{Z, !}(\tr_{\Gamma_{g\circ f}}) \\
    & = \tr_{g\circ f},
\end{align*}
where the first equality follows from \cref{smooth source trace}, the second equality follows from \cref{compatibility proposition}, the third equality follows from the observation that $X\times Y$ is smooth and the case of closed immersions established above, and the last equality follows again from \cref{smooth source trace}.

The proof of \cref{compatibility for smooth source trace-5} is similar to that of \cref{compatibility for smooth source trace-4}.
We leave the details to the interested reader.
\end{proof}

\Cref{compatibility for smooth source trace}\cref{compatibility for smooth source trace-3} formally implies that the smooth-source trace is \'etale local, yielding the following variant of \cref{rmk:basic-properties-closed-trace}\cref{rmk:basic-properties-closed-trace-local} and \cref{rmk:basic-properties-smooth-trace}\cref{rmk:basic-properties-smooth-trace-local}: 

\begin{remark}[Smooth-source trace is \'etale-local on the target]\label{rmk:smooth-source-trace-etale-local}
Let $f \colon X \to Y$ be a separated taut morphism and $h \colon Y' \to Y$ be a separated taut \'etale morphism of rigid-analytic spaces over $K$.
Assume that $X$ is smooth over $K$ and that $Y$ is separated and taut over $K$.
Let $X' \colonequals Y'\times_Y X$ be the fiber product and $f' \colon X' \to Y'$ and $h' \colon X' \to X$ the two natural projections.
Then the $(h_!, h^*)$-adjoint of the equality $\tr_h \circ \rR h_!(\tr_{f'}) = \tr_f \circ \rR f_!(\tr_{h'})$ from \cref{compatibility for smooth source trace}\cref{compatibility for smooth source trace-3} amounts by virtue of \cref{rmk:etale-trace-dualizing} to the commutativity of the following diagram:
\[ \begin{tikzcd}[column sep=large]
    \rR f'_! \omega_{X'} \arrow[rr,"\tr_{f'}"]  &[-3em] & \omega_{Y'}  \\
    \rR f'_!h^{\prime *} \omega_X \arrow[u,"\rR f'_!(\alpha_{h'})","\sim"'{sloped}] \arrow[r,phantom,"="] & h^*\rR f_! \omega_X \arrow[r,"h^*(\tr_f)"] & h^*\omega_Y \arrow[u,"\alpha_h","\sim"'{sloped}]. 
\end{tikzcd} \]
\end{remark}

Our first application of the smooth-source trace will be a vanishing result for the Verdier dual of the derived pushforward of a dualizing sheaf (see \cref{proper trace lives in discrete space}). This vanishing result will play a crucial role in our construction of general proper trace in next subsection. 

Before we start proving this vanishing result, we need a number of preliminary lemmas. 

\begin{lemma}\label{lemma:shriek-star-base-change}
Let $f\colon X \to Y$ be a morphism of rigid-analytic spaces over $K$, let $i \colon Y' \hookrightarrow Y$ be a closed immersion of rigid-analytic spaces over $K$, and let 
\[ \begin{tikzcd}
X' \arrow{d}{f'} \arrow[r, hook, "i'"] & X \arrow{d}{f} \\
Y' \arrow[r, hook, "i"] & Y
\end{tikzcd} \]
be the resulting pullback square. Then the natural transformation of functors
\begin{equation}\label{eqn:natural-transformation-!-*}
\rR f'_* \rR i^{\prime,!}(\blank) \to \rR i^! \rR f_*(\blank) \colon D(X_\et; \Lambda) \longrightarrow D(Y'_\et; \Lambda),
\end{equation}
given by the $(i_*,\rR i^!)$-adjoint to $i_* \rR f'_* \rR i^{\prime,!}(\blank) \simeq \rR f_* i'_* \rR i^{\prime,!}(\blank) \xr{\rR f_*(\epsilon_{i'})} \rR f_*(\blank)$, is an equivalence. 
\end{lemma}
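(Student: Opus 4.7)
My approach would be to reduce the claim, via a standard excision argument, to the much simpler base change for open immersions. Since $i_*$ is fully faithful by \cite[Prop.~2.3.7]{Huber-etale}, the natural transformation \cref{eqn:natural-transformation-!-*} is an equivalence if and only if it is so after applying $i_*$. Using the canonical isomorphism $i_* \rR f'_* \simeq \rR f_* i'_*$ (coming from composition of right adjoints) together with the standard identifications $i_* \rR i^! \simeq \rR\underline{\Gamma}_{Y'}$ and $i'_* \rR i^{\prime,!} \simeq \rR\underline{\Gamma}_{X'}$, where $\rR\underline{\Gamma}_Z$ denotes the derived functor of sections with support in a closed subset $Z$, the assertion reduces to showing that the natural map
\[ \rR f_* \circ \rR\underline{\Gamma}_{X'} \longrightarrow \rR\underline{\Gamma}_{Y'} \circ \rR f_* \]
is an equivalence.

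Next, I would invoke the excision triangles
\[ \rR\underline{\Gamma}_{Y'} \to \id \to \rR j_* j^*, \qquad \rR\underline{\Gamma}_{X'} \to \id \to \rR j'_* j^{\prime,*}, \]
where $j \colon Y \smallsetminus Y' \hookrightarrow Y$ and $j' \colon X \smallsetminus X' \hookrightarrow X$ are the complementary open immersions. By two-out-of-three (in the triangle involving $\id$), the problem becomes the equivalence $\rR f_* \rR j'_* j^{\prime,*} \simeq \rR j_* j^* \rR f_*$. Composing the two right adjoints on the left-hand side gives $\rR f_* \rR j'_* \simeq \rR (f \circ j')_* = \rR(j \circ f_U)_* \simeq \rR j_* \rR f_{U,*}$, where $f_U \colon X \smallsetminus X' \to Y \smallsetminus Y'$ is the restriction of $f$. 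Thus the question further reduces to the base change equivalence $j^* \rR f_* \simeq \rR f_{U,*} j^{\prime,*}$ along an open immersion $j$. This is elementary, as $j^*$ and $j^{\prime,*}$ are just restriction of sheaves along open subsets: for any object $\cF \in D(X_\et; \Lambda)$ and any \'etale map $V \to Y \smallsetminus Y'$, both sides compute $\rR\Gamma\bigl(V \times_{Y \smallsetminus Y'} (X \smallsetminus X'), \restr{\cF}{X \smallsetminus X'}\bigr)$.

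The main bookkeeping, and the only mild obstacle I anticipate, is to verify that the natural transformation in the statement really corresponds under these identifications to the one produced by the above reductions. This amounts to a routine diagram chase using the triangle identities for the adjunctions $(i_*, \rR i^!)$ and $(i'_*, \rR i^{\prime,!})$, and the compatibility of these adjunctions with the excision triangles. No new ideas are required.
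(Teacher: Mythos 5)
Your proposal is correct but takes a genuinely different route from the paper. The paper's proof is essentially three lines: since $\rR f'_*\rR i'^!$ and $\rR i^!\rR f_*$ are right adjoints of $i'_*f'^*$ and $f^*i_*$ respectively, the transformation in \cref{eqn:natural-transformation-!-*} is an equivalence if and only if its transpose $f^*i_*\to i'_*f'^*$ is, and the latter is checked on stalks (both sides are the stalk of $\cG$ at $\bar{x}$ viewed as a point of $X'$ if $\bar{x}\in X'$, and zero otherwise). By contrast, you keep the transformation on the right-adjoint side, apply $i_*$ (using its full faithfulness), rewrite $i_*\rR i^!$ and $i'_*\rR i'^!$ as local cohomology functors, feed in the excision triangles, and reduce to smooth base change along the complementary open immersion. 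Both are valid; the paper's argument buys brevity (no triangles to chase) while yours stays entirely on the covariant side and reveals the result as a shadow of the recollement structure.

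The ``routine diagram chase'' you defer is in fact the only non-trivial part of your route, and it deserves a slightly more careful framing than ``two-out-of-three.'' In a triangulated category, to apply the five lemma you need an honest morphism of distinguished triangles, i.e.\ all three squares must be shown to commute with your prescribed vertical maps ($\phi = i_*$ of the statement's transformation, $\id$, and the open base-change map). Verifying only the square involving the counits $\epsilon_i, \epsilon_{i'}$ and then invoking TR3 produces \emph{some} third vertical arrow making a morphism of triangles, but not a priori the base-change map; conversely, starting from $\id$ and the base-change map produces some first vertical arrow, not a priori $\phi$. So you either need to chase both non-boundary squares and argue the boundary square follows, or work in the $\infty$-categorical (or dg) enhancement where fibers of commuting squares are canonical. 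The ingredients are all there and I believe the argument goes through, but this is a genuine step, not pure bookkeeping; the paper's route sidesteps it entirely.
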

\begin{proof}
    Since both $\rR f_*$ and $\rR i^!$ are right adjoints, we may show instead that the natural transformation \cref{eqn:natural-transformation-!-*} is an equivalence after passing to left adjoints.
    In other words, it suffices to prove that the natural transformation $f^* i_*(\blank) \to i'_*f'^*(\blank)$ is an equivalence.
    This can be checked easily by arguing on stalks.
\end{proof}

\begin{corollary}\label{cor:shriek-star-base-change-dualizing-complex}
Under the assumptions of \cref{lemma:shriek-star-base-change}, there is a canonical isomorphism
\[
c_{f, i}\colon \rR f'_* \omega_{X'} \xrightarrow{\sim} \rR i^! \rR f_*\omega_X.
\]
\end{corollary}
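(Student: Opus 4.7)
The plan is to combine the natural equivalence of functors established in \cref{lemma:shriek-star-base-change} with the canonical isomorphism $c_{i'} \colon \omega_{X'} \xrightarrow{\sim} \rR i^{\prime,!}\omega_X$ from \cite[Th.~3.21~(1)]{BH}. More precisely, I would define $c_{f,i}$ as the composition
\[
\rR f'_*\,\omega_{X'} \xrightarrow{\rR f'_*(c_{i'})} \rR f'_*\,\rR i^{\prime,!}\omega_X \xrightarrow{\sim} \rR i^!\rR f_*\omega_X,
\]
where the second arrow is the instance of the base change transformation from \cref{lemma:shriek-star-base-change} applied to $\omega_X \in D(X_\et;\Lambda)$. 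Both arrows are isomorphisms: the first because $c_{i'}$ is an isomorphism, and the second by \cref{lemma:shriek-star-base-change}.

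There is essentially no obstacle here: the only subtlety is that one should verify that the base change transformation in \cref{lemma:shriek-star-base-change} is defined for arbitrary complexes (which it is, being constructed by adjunction as described in that lemma), so that we may evaluate it at the dualizing complex $\omega_X$. Beyond that, the proof is a one-line assembly of previously established facts, and I would simply write down the composition above.
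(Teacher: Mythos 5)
Your proposal is correct and matches the paper's argument exactly: the paper also constructs $c_{f,i}$ by composing the isomorphism $\omega_{X'} \xrightarrow{\sim} \rR i^{\prime,!}\omega_X$ from \cite[Th.~3.21~(1)]{BH} with the equivalence $\rR f'_* \rR i^{\prime,!} \to \rR i^! \rR f_*$ of \cref{lemma:shriek-star-base-change} evaluated at $\omega_X$.
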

\begin{proof}
    This follows directly from \cref{lemma:shriek-star-base-change} and \cite[Th.~3.21~(1)]{BH}.
\end{proof}

\begin{notation}\label{tr-!-pullback}
Let $f\colon X \to Y$ and $i\colon Y' \hookrightarrow Y$ be as in \cref{lemma:shriek-star-base-change}. Assume that $f$ is proper, $X$ is smooth over $K$, and $Y$ is taut and separated. Then we denote by 
\[
\rR i^!(\tr_f) \colon \rR f'_* \omega_{X'} \to \omega_{Y'}
\]
the following composition
\[
\rR f'_* \omega_{X'} \xrightarrow[\sim]{c_{f, i}} \rR i^! \rR f_* \omega_X \xrightarrow{\rR i^!(\tr_f)} \rR i^! \omega_X \xrightarrow[\sim]{c_i^{-1}} \omega_Z,
\]
where $\tr_f$ is the smooth-source trace from \cref{smooth source trace}.
\end{notation}

For future reference, it will be convenient to introduce the following definition: 

\begin{definition}\label{defn:modification} Let $X$ be a rigid-analytic space over $K$, $i\colon Z \hookrightarrow X$ a Zariski-closed immersion, and $U\subset X$ be its (Zariski) open complement. 
\begin{enumerate}[leftmargin=*,label=\upshape{(\roman*)}]
    \item A \textit{$U$-modification} $\pi \colon X' \to X$ is a proper morphism of rigid-analytic spaces over $K$ such that $\restr{\pi}{\pi^{-1}(U)} \colon \pi^{-1}(U) \to U$ is an isomorphism.
    \item A \textit{$U$-admissible modification} $\pi \colon X' \to X$ is a $U$-modification such that $U\subset X$ and $U\simeq \pi^{-1}(U) \subset X'$ are dense.
    \item A \textit{regular $U$-modification} $\pi \colon X' \to X$ is a $U$-modification such that $X'$ is smooth over $K$.
    \item A \textit{regular $U$-admissible modification} is a regular $U$-modification which is $U$-admissible.
\end{enumerate}
For a $U$-modification $\pi\colon X' \to X$, we will often denote by $i'\colon Z' \coloneqq \pi^{-1}(Z)=X'\times_X Z \hookrightarrow X'$ the preimage of $Z$ along $X$. 
\end{definition}

There is an abundance of regular $U$-admissible modifications:
\begin{proposition}\label{modifications-exist}
    Let $X$ be a quasicompact reduced rigid-analytic space over $K$.
    Then:
    \begin{enumerate}[leftmargin=*,label=\upshape{(\roman*)}]
        \item\label{modification-exist-smooth-locus} The smooth locus of $X$ is Zariski-open and dense.
        \item\label{modifications-exist-Temkin} (Temkin) For any Zariski-open and dense subspace $U \subseteq X$ that is contained in the smooth locus of $X$, there exists a regular $U$-admissible modification $\pi \colon X' \to X$.
    \end{enumerate}
\end{proposition}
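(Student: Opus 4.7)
The plan is as follows. For \cref{modification-exist-smooth-locus}, I reduce to the case where $X = \Spa(A, A^\circ)$ is affinoid with $A$ reduced. Since $A$ is then a reduced strictly $K$-affinoid algebra, it is excellent by \cite[Th.~3.3]{Kiehl-excellence}, so the regular locus of $\Spec A$ is Zariski-open. It is also dense: the reducedness of $A$ ensures that the localizations at the minimal primes of $\Spec A$ are fields, hence regular, so the regular locus contains all of the generic points of the irreducible components. As $K$ has characteristic $0$ (and is in particular perfect), a closed point of $X$ is smooth precisely when the corresponding maximal ideal of $\Spec A$ is regular (cf.\ \cite[Prop.~1.6.9]{Huber-etale}), and the openness and density of the regular locus of $\Spec A$ transfer accordingly to the smooth locus of $X$.

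For \cref{modifications-exist-Temkin}, I would invoke Temkin's functorial resolution of singularities for quasi-compact reduced rigid-analytic spaces in characteristic $0$; this is the same result used in the sketched proof of \cref{lemma:comparison-LLZ} above (see \cite[Th.~5.2.2]{Tem12}). Applied to $X$, it yields a proper birational morphism $\pi \colon X' \to X$ with $X'$ smooth over $K$, obtained as a finite composition of blow-ups along smooth centers contained in the singular locus of $X$. Since $U$ is contained in the smooth locus of $X$ by assumption, each blow-up center is disjoint from $U$, and hence $\pi$ restricts to an isomorphism over $U$. In particular, $\pi^{-1}(U) \simeq U$ is Zariski-open in $X'$, and the density of $\pi^{-1}(U)$ in $X'$ follows from the birationality of $\pi$ and the density of $U$ in $X$, making $\pi$ a regular $U$-admissible modification in the sense of \cref{defn:modification}.

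The main technical point is the applicability of Temkin's algorithm in a form that leaves the prescribed open subspace $U$ untouched. This is guaranteed by the functoriality of Temkin's desingularization under smooth restrictions: the algorithm restricted to the smooth locus $X^{\mathrm{sm}}$ acts as the identity, so all blow-up centers are contained in $X \smallsetminus X^{\mathrm{sm}} \subseteq X \smallsetminus U$, yielding the required compatibility with $U$ automatically.
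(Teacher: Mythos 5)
Your proof is correct and takes essentially the same approach as the paper: part (i) via excellence of $K$-affinoid algebras, part (ii) via Temkin's desingularization theorem, with the observation that the blow-up centers avoid $U$. The paper simply cites these results (the discussion after \cite[Lem.~3.3.1]{Conrad99} for (i), and \cite[Th.~5.2.2]{Tem12} for (ii)); your version spells out the same reasoning in more detail.
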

We remind the reader that we always (implicitly) assume that $\charac K =0$ in this section.
\begin{proof}
    Part \cref{modification-exist-smooth-locus} follows from the fact that $K$-affinoid algebras are excellent;
    see e.g.\ the discussion after \cite[Lem.~3.3.1]{Conrad99}.
    Part \cref{modifications-exist-Temkin} is \cite[Th.~5.2.2]{Tem12} (cf.\ also \cite[Th.~1.2.1]{Tem12}).
\end{proof}

\begin{lemma}\label{lemma:dimension-of-fibers-modifications} Let $U\subset X$ be a dense Zariski-open subspace, and let $\pi \colon X' \to X$ be a $U$-admissible modification. Then, for any classical point $x\in X$, we have $\dim \pi^{-1}(x)< \max(\dim X, 1)$.
\end{lemma}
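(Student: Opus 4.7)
My plan is to split into the two cases $x\in U$ and $x\in X\smallsetminus U$. The case $x\in U$ is immediate from the definition of a $U$-modification: the isomorphism $\pi^{-1}(U)\xrightarrow{\sim}U$ shows that $\pi^{-1}(x)$ is a single classical point, so $\dim\pi^{-1}(x)=0<\max(\dim X,1)$. In the nontrivial case $x\notin U$, I would first dispense with the edge case $\dim X=0$: then $X$ is a disjoint union of classical points and the only Zariski-dense open subspace is $X$ itself, contradicting $x\notin U$. Hence we may assume $\dim X\geq 1$ and reduce the statement to $\dim\pi^{-1}(x)<\dim X$.

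The heart of the argument is a component-by-component estimate driven by the Zariski-density of $\pi^{-1}(U)$ in $X'$. Given any irreducible component $C$ of $\pi^{-1}(x)$, I would choose an irreducible component $X'_j$ of $X'$ with $C\subseteq X'_j$. Because $X'_j\smallsetminus\bigcup_{i\neq j}X'_i$ is a nonempty Zariski-open subspace of $X'$ and $\pi^{-1}(U)$ is Zariski-dense in $X'$, the intersection $X'_j\cap\pi^{-1}(U)$ is nonempty, and hence Zariski-dense in $X'_j$ as a nonempty open subspace of an irreducible space. Transporting along $\pi\colon \pi^{-1}(U)\xrightarrow{\sim}U$, this dense open of $X'_j$ is identified with a locally closed subspace of $U$, which gives the dimension bound
\[
\dim X'_j=\dim\bigl(X'_j\cap \pi^{-1}(U)\bigr)\leq \dim U=\dim X.
\]
Since $x\notin U$, the component $X'_j$ cannot be contained in $\pi^{-1}(x)$, so $C$ is a proper irreducible closed subspace of the irreducible space $X'_j$, and therefore $\dim C<\dim X'_j\leq \dim X$. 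Taking the supremum over all components $C$ of $\pi^{-1}(x)$ yields the desired inequality.

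The main subtlety (and potential obstacle) is the irreducible-component bookkeeping in the rigid-analytic setting: one needs that every nonempty Zariski-open subspace of an irreducible rigid-analytic space is Zariski-dense and of the same dimension, and that a proper irreducible closed subspace of an irreducible rigid-analytic space has strictly smaller dimension. These are standard consequences of the excellence of $K$-affinoid algebras and Huber's dimension theory for analytic adic spaces, but some care is required because $X$ is not assumed equidimensional, so different irreducible components of $X'$ may have different dimensions and the bound $\dim X'_j\leq \dim X$ must be extracted one component at a time rather than globally.
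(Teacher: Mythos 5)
Your argument is correct and follows essentially the same route as the paper's proof: split into $x\in U$ and $x\notin U$, and in the latter case exploit density of $\pi^{-1}(U)$ in $X'$ together with $\dim U=\dim X$ to force a strict dimension drop. The paper phrases this more compactly as $\dim\pi^{-1}(x)\le\dim Z'<\dim X'=\dim U=\dim X$ with $Z'=\pi^{-1}(X\smallsetminus U)$ nowhere dense in $X'$, and your component-by-component bookkeeping is simply an unfolding of the same strict inequality.
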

\begin{proof}
    We denote by $Z\subset X$ Zariski-closed complement to $U$ (with reduced adic space structure) and by $Z'\subset X'$ the fiber product $Z' \coloneqq Z\times_X X'$. 

    Now we start the proof. If $x\in U$, then $\pi^{-1}(x)$ is a singleton. In particular, $\dim \pi^{-1}(x)=0< \max(\dim X, 1)$. If $x\in Z$, then $\dim \pi^{-1}(x) \leq \dim Z' < \dim X' = \dim U = \dim X \leq \max(\dim X, 1)$, where the second inequality holds due to the assumption that $Z'\subset X'$ is nowhere dense.
\end{proof}

\begin{lemma}\label{lemma:exact-triangle} Let $X$ be a rigid-analytic space over $K$, let $i\colon Z\hookrightarrow X$ be a Zariski-closed subspace over $K$ with the open complement $U\subset X$, and let $\pi \colon X' \to X$ be a regular $U$-modification. Put $Z' \coloneqq X'\times_X Z$ and $i'\colon Z' \hookrightarrow X'$ and $\pi' \colon Z' \to Z$ be the natural projections and $h\colon Z' \to X$ the evident composition. Then there is an exact triangle
\[
\rR h_* \omega_{Z'} \xr{\bigl(-\rR \pi_*(\tr_{i'}), i_*\rR i^!(\tr_\pi)\bigr)} \rR \pi_*\omega_{X'} \oplus i_* \omega_{Z} \xr{\tr_\pi \oplus \tr_i} \omega_X \to \rR h_* \omega_{Z'}[1]
\]
in $D(Y_\et; \Lambda)$.
\end{lemma}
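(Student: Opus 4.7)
The plan is to construct the triangle by combining two localization triangles related by trace maps. Let $j \colon U \hookrightarrow X$ be the open complement of $i$, set $U' \coloneqq \pi^{-1}(U)$ with its open immersion $j' \colon U' \hookrightarrow X'$, and note that $\pi$ restricts to an isomorphism $\alpha \colon U' \xrightarrow{\sim} U$ since $\pi$ is a $U$-modification.

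First I would apply $\rR\pi_*$ to the localization triangle $i'_*\omega_{Z'} \xrightarrow{\tr_{i'}} \omega_{X'} \to \rR j'_*\omega_{U'} \to i'_*\omega_{Z'}[1]$ on $X'$. Using the proper base change isomorphism $\rR\pi_*i'_* \simeq i_*\rR\pi'_*$ (which identifies $\rR\pi_*i'_*\omega_{Z'}$ with $\rR h_*\omega_{Z'}$) and the identification $\rR\pi_*\rR j'_*\omega_{U'} \simeq \rR j_*\rR\alpha_*\omega_{U'} \simeq \rR j_*\omega_U$ (the last step because $\alpha$ is an isomorphism, so $\omega_{U'} \simeq \alpha^*\omega_U$ and hence $\rR\alpha_*\omega_{U'} \simeq \omega_U$), this yields a distinguished triangle $\rR h_*\omega_{Z'} \xrightarrow{\rR\pi_*(\tr_{i'})} \rR\pi_*\omega_{X'} \to \rR j_*\omega_U \to \rR h_*\omega_{Z'}[1]$. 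Next I would assemble this with the analogous localization triangle $i_*\omega_Z \xrightarrow{\tr_i} \omega_X \to \rR j_*\omega_U \to i_*\omega_Z[1]$ on $X$ into a morphism of distinguished triangles whose vertical arrows are $i_*\rR i^!(\tr_\pi)$, $\tr_\pi$, and $\mathrm{id}_{\rR j_*\omega_U}$. Since the third vertical arrow is an isomorphism, a standard application of the octahedral axiom then produces a distinguished triangle of precisely the form stated in the lemma, whose middle object is $\rR\pi_*\omega_{X'} \oplus i_*\omega_Z$; in particular, the sign on $\rR\pi_*(\tr_{i'})$ falls out naturally from this ``Mayer--Vietoris'' construction.

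The main obstacle is verifying the commutativity of the leftmost square in the morphism of triangles above. After unwinding the definition of $\rR i^!(\tr_\pi)$ from \cref{tr-!-pullback} and of the base change isomorphism $c_{\pi,i}$ from \cref{cor:shriek-star-base-change-dualizing-complex}, this commutativity reduces to the naturality of the adjunction counit $\epsilon_i \colon i_*\rR i^! \to \mathrm{id}$ applied to $\tr_\pi$, together with the fact that $\tr_{i'}$ is defined in \cref{trace for closed immersion} as the map adjoint to the canonical isomorphism $\omega_{Z'} \xrightarrow{\sim} \rR i'^!\omega_{X'}$; this ensures that $\rR\pi_*(\tr_{i'})$ agrees with the adjunction counit $\epsilon_{i'}$ pushed forward along $\pi$, after the appropriate identifications. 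Once these compatibilities are made explicit, the required commutativities follow from formal diagram chases with units and counits of adjunctions.
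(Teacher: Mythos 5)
Your approach is correct, and it takes a genuinely different route from the paper. You build the statement as a ``Mayer--Vietoris'' triangle: take the localization triangle $i'_*\rR i'^!\omega_{X'} \to \omega_{X'} \to \rR j'_*j'^*\omega_{X'}$ on $X'$, identify the first map with $\tr_{i'}$, push forward along $\rR\pi_*$ (an exact functor) to get a distinguished triangle $\rR h_*\omega_{Z'} \to \rR\pi_*\omega_{X'} \to \rR j_*\omega_U \to \rR h_*\omega_{Z'}[1]$, and compare it to the localization triangle on $X$ via the vertical traces. Since the third vertical map is an isomorphism, the first square is homotopy cocartesian, which is exactly the stated triangle. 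The paper instead defines $C = \mathrm{fib}(\tr_\pi \oplus \tr_i)$, uses the same commutativity $\tr_\pi \circ \rR\pi_*(\tr_{i'}) = \tr_i \circ i_*\rR i^!(\tr_\pi)$ to produce a comparison map $A\colon \rR h_*\omega_{Z'} \to C$, and then checks directly that $j^*A$ and $\rR i^!A$ are both isomorphisms (the latter by exhibiting a section of $\rR i^!\beta$), concluding by conservativity of $(j^*, \rR i^!)$. Both proofs hinge on the same key square commutativity, which in both cases unwinds to the definition of $\tr_{i'}$ as the adjoint of $c_{i'}$, the base change isomorphism $c_{\pi,i}$ from \cref{cor:shriek-star-base-change-dualizing-complex}, and naturality of the counit $\epsilon_i$. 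Your approach buys you a cleaner structural argument at the cost of invoking the octahedral/homotopy-cocartesian machinery; the paper's avoids that machinery in favor of an explicit verification that the canonical comparison map is an isomorphism.

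One small thing you should spell out: the commutativity of the \emph{right} square, equivalently that $j^*(\tr_\pi)$ becomes the identity under the identification $j^*\rR\pi_*\omega_{X'} \simeq \rR\alpha_*\omega_{U'} \simeq \omega_U$. This does hold, but it is not automatic---it uses \'etale-locality of the smooth-source trace on the target (\cref{rmk:smooth-source-trace-etale-local}) together with the fact from \cref{compatibility for smooth source trace}\cref{compatibility for smooth source trace-1} that $\tr_{\mathrm{id}}=\mathrm{id}$. The paper's proof needs the exact same observation ($\restr{\tr_\pi}{U}=\id$) at the step showing $\restr{A}{U}$ is an isomorphism, so this is a shared ingredient rather than a gap specific to your route, but since you construct the morphism of triangles rather than just restricting, you should verify it explicitly before applying the Mayer--Vietoris lemma.
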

\begin{proof}
    For brevity, we denote by $\alpha \colon \rR h_* \omega_{Z'} \to \rR \pi_*\omega_{X'} \oplus i_* \omega_{Z}$ the morphism $\bigl(-\rR \pi_*(\tr_{i'}), i_*\rR i^!(\tr_\pi)\bigr)$ and by $\beta \colon \rR \pi_*\omega_{X'} \oplus  i_* \omega_{Z} \to \omega_X$ the morphism $\tr_\pi \oplus \tr_i$.
    We also set $C\coloneqq \rm{fib}(\beta) = \rm{cone}(\beta)[-1]$.
    It fits into an exact triangle
    \begin{equation*}\label{eqn:fiber}
        C \xr{\alpha'} \rR \pi_*\omega_{X'} \oplus i_* \omega_{Z} \xr{\beta} \omega_X \xr{\gamma} C[1].
    \end{equation*}
    Now, after unravelling all the definitions, we see that the following diagram commutes:
    \[
    \begin{tikzcd}[column sep = 5em]
        i_* \rR \pi'_* \omega_{Z'} \arrow[d, equals] \arrow[r, red, "i_*(c_{\pi, i})"] & i_* \rR i^! \pi_*\omega_{X'} \arrow[d, "\epsilon_i"] \arrow[r, red, "i_*\rR i^!(\tr_\pi)"] &  i_*\rR i^! \omega_X \arrow[dr, "\epsilon_i"] \arrow[r, red, "i_*(c_i^{-1})"] & i_* \omega_{Z} \arrow[d, red, "\tr_i"] \\
        \rR \pi_* i'_* \omega_{Z'} \arrow[r,"\rR \pi_*(\tr_{i'})"] & \rR \pi_* \omega_{X'}  \arrow[rr, "\tr_\pi"] & & \omega_X,
    \end{tikzcd}
    \]
    where $\epsilon_i$ is the counit of the $(i_*, \rR i^!)$-adjunction. Since the composition of red arrows is equal to $\tr_i\circ i_*\rR i^!(\tr_\pi)$, we conclude that $\beta \circ \alpha =0$. Then the axioms of triangulated categories imply that there is a morphism $A\colon \rR h_* \omega_{Z'} \to C$ such that the following diagram commutes
    \begin{equation}\label{eqn:maps-of-fibers}
    \begin{tikzcd}
        \rR h_* \omega_{Z'}  \arrow{d}{A} \arrow{r}{\alpha} & \rR \pi_*\omega_{X'} \oplus i_* \omega_{Z} \arrow[d, "\id"]\arrow[r, "\beta"] & \omega_X \arrow[d, "\id"] \\
        C \arrow{r}{\alpha'}& \rR \pi_*\omega_{X'} \oplus  i_* \omega_{Z} \arrow{r}{\beta}& \omega_X.
    \end{tikzcd}
    \end{equation}
    Therefore, it suffices to show that $A$ is an isomorphism. For this, it suffices to check that both $\restr{A}{U}$ and $\rR i^!A$ are isomorphisms. 

    We first show that $\restr{A}{U}$ is an isomorphism. Clearly, $\restr{\bigl(\rR h_* \omega_{Z'}\bigr)}{U} =0$, so it suffices to show that $\restr{C}{U}=0$. This follows from the observations that $\restr{\bigl(i_* \omega_{Z}\bigr)}{U}=0$ and $\restr{(\tr_\pi)}{U}=\id$ (see \cref{compatibility for smooth source trace}\cref{compatibility for smooth source trace-1}).
    
    Now we show that $\rR i^!A$ is an isomorphism. In this case, we note that, after applying $\rR i^!$ to \cref{eqn:maps-of-fibers}, it becomes isomorphic to the following commutative diagram:
    \begin{equation}\label{eqn:maps-of-fibers-2}
    \begin{tikzcd}[column sep = 10em]
        \rR \pi'_* \omega_{Z'}  \arrow{d}{\rR i^!A} \arrow{r}{\rR i^! \alpha = \bigl(-\id, \rR i^!(\tr_\pi)\bigr)} & \rR \pi'_*\omega_{Z'} \oplus \omega_{Z} \arrow[d, "\id"]\arrow[r, "\rR i^!\beta = \rR i^!(\tr_\pi)\oplus \id"] & \omega_{Z} \arrow[d, "\id"] \\
        \rR i^! C \arrow{r}{\rR i^! \alpha'}& \rR \pi'_*\omega_{Z'} \oplus  \omega_{Z} \arrow{r}{\rR i^!\beta = \rR i^!(\tr_\pi)\oplus \id}& \omega_{Z}.
    \end{tikzcd}
    \end{equation}
    Now we note that the map $\rR i^! \beta$ admits a section $(0, \id) \colon \omega_{Z} \to \rR \pi'_*\omega_{Z'} \oplus  \omega_{Z}$. Therefore, we conclude that the boundary map $\rR i^!(\gamma) =0$. Likewise, we use that $\rR i^! \beta$ admits a section to verify that the first row of \cref{eqn:maps-of-fibers-2} extends to a distinguished triangle $\rR \pi'_* \omega_{Z'} \xr{\rR i^!(\alpha)} \rR \pi'_*\omega_{Z'} \oplus \omega_{Z} \xr{\rR i^! \beta} \omega_{Z} \xr{0} \rR \pi'_*\omega_{Z'}[1]$. Therefore, we conclude that we can extend \cref{eqn:maps-of-fibers-2} to a morphism of distinguished triangles
    \[
    \begin{tikzcd}[column sep=9em]
        \rR \pi'_* \omega_{Z'}  \arrow{d}{\rR i^!A} \arrow{r}{\rR i^! \alpha = (-\id, \rR i^!(\tr_\pi))} & \rR \pi'_*\omega_{Z'} \oplus \omega_{Z} \arrow[d, "\id"]\arrow[r, "\rR i^!\beta = \rR i^!(\tr_\pi)\oplus \id"] & \omega_{Z} \arrow[d, "\id"] \arrow[r, "0"] & \rR \pi_* \omega_{Z'}[1] \arrow{d}{\rR i^!A[1]} \\
        \rR i^! C \arrow{r}{\rR i^! \alpha'}& \rR \pi'_*\omega_{Z'} \oplus  \omega_{Z} \arrow{r}{\rR i^!\beta = \rR i^!(\tr_\pi)\oplus \id}& \omega_{Z} \arrow[r, "\rR i^! \gamma = 0"] & \rR i^! C[1].
    \end{tikzcd}
    \]
    Since this is a morphism of distinguished triangles and two-out-of-three vertical arrows are isomorphisms, we conclude that $\rR i^! A$ must be an isomorphism as well. This finishes the proof.
\end{proof}

Finally, we are ready to prove the desired vanishing result: 

\begin{theorem}\label{proper trace lives in discrete space}
Let $f \colon X \to Y$ be a proper map of rigid-analytic spaces over $K$. Then $\rR \cHom (\rR f_* \omega_X, \omega_Y)$ lies in $D^{\geq 0}(Y_\et;\Lambda)$.
\end{theorem}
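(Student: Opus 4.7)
The plan is to reduce the claim, by d\'evissage, to the case where $X$ is smooth over $K$ and $Y$ is a geometric point, where it then follows from Huber's bound on \'etale cohomological dimension. First, since $\rR\cHom$ is compatible with \'etale pullback and the formations of both $\rR f_*\omega_X$ (via proper base change, \cref{lemma:proper-base-change}) and $\omega_Y$ commute with \'etale base change, the coconnectivity claim is \'etale local on $Y$, so we may assume $Y$ is affinoid.

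Next, we argue by induction on $\dim X$. The case $\dim X = 0$ is immediate. For the inductive step, Temkin's theorem (\cref{modifications-exist}\cref{modifications-exist-Temkin}) provides a Zariski-open dense smooth locus $U \subseteq X$ together with a regular $U$-admissible modification $\pi \colon X' \to X$. Writing $i \colon Z \hookrightarrow X$ for the reduced complement of $U$, $i' \colon Z' = X' \times_X Z \hookrightarrow X'$, and $h = \pi \circ i' \colon Z' \to X$, the exact triangle of \cref{lemma:exact-triangle} becomes, after applying $\rR f_*$ and $\rR\cHom(\blank, \omega_Y)$, a distinguished triangle linking $\rR\cHom(\rR f_*\omega_X, \omega_Y)$ to the analogous complexes for $f \circ \pi$, $f \circ i$, and $f \circ h$. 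Combined with the bound $\dim Z, \dim Z' < \dim X$ from \cref{lemma:dimension-of-fibers-modifications} and the inductive hypothesis, this reduces the problem to the proper morphism $f \circ \pi \colon X' \to Y$ with $X'$ smooth. Decomposing $X'$ into connected components, we may thus assume $X$ is smooth of pure dimension $d$.

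Now that $X$ is smooth, both $\rR f_*\omega_X$ and $\omega_Y$ lie in $D^b_\zc(Y_\et; \Lambda)$ by \cite[Th.~3.21]{BH}, and hence so does their inner hom by the Verdier duality framework of \cite[Sec.~3]{BH}. Its coconnectivity can therefore be checked on stalks at rank-$1$ geometric points $\bar y \to Y$. By proper base change (\cref{lemma:proper-base-change}) and the compatibility of dualizing complexes with extension of base fields \cite[Th.~3.21]{BH}, the stalk at $\bar y = \Spa(C, \O_C)$ is identified with $\rR\cHom_\Lambda(\rR\Gamma(X_{\bar y}, \omega_{X_{\bar y}}), \Lambda)$, where $X_{\bar y}$ is smooth of pure dimension $d$ and proper over the algebraically closed field $C$. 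Since $\omega_{X_{\bar y}} \simeq \LLambda_{X_{\bar y}}(d)[2d]$ and $\rR\Gamma(X_{\bar y}, \LLambda) \in D^{[0,2d]}$ by \cite[Prop.~5.5.8]{Huber-etale}, we have $\rR\Gamma(X_{\bar y}, \omega_{X_{\bar y}}) \in D^{[-2d, 0]}$, and its $\rR\cHom$-dual into $\LLambda$ lies in $D^{[0,2d]} \subseteq D^{\ge 0}$, as desired.

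The hardest part will be justifying the reduction to geometric-point stalks in the last paragraph: it requires that $\rR\cHom$ between Zariski-constructible complexes on $Y_\et$ commutes with stalks at rank-$1$ geometric points, a property expected to follow from the Verdier duality theory of \cite[Sec.~3.4]{BH} together with proper base change, but which must be carefully extracted from that framework. Should this direct stalk argument prove too delicate, an alternative is to compute the inner hom directly via the graph factorization $f = \pi_Y \circ \Gamma_f$: the $(\Gamma_{f,*}, \rR\Gamma_f^!)$-adjunction, a form of Verdier duality for the smooth projection $\pi_Y \colon X \times Y \to Y$ (yielding $\rR\pi_Y^!\omega_Y \simeq \omega_{X \times Y}$), and the identification $\rR\cHom(\omega_X, \omega_X) \simeq \LLambda_X$ from \cite[Th.~3.21]{BH} together produce an isomorphism $\rR\cHom(\rR f_*\omega_X, \omega_Y) \simeq \rR f_*\LLambda_X$, which manifestly lies in $D^{\ge 0}$.
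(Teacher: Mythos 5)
Your outline shares two ingredients with the paper's proof (the dévissage via \cref{lemma:exact-triangle} over a Zariski-dense smooth open, reducing to the smooth-source case; and the cohomological dimension bound from \cref{smooth trace lives in discrete space}), but the \emph{order} of the reductions matters, and your Step 3 contains a genuine gap.

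The paper first reduces to $Y = \Spa(K,\cO_K)$ and \emph{then} runs the dévissage. The reduction to a point is done at classical points $i_y \colon y \hookrightarrow Y$ (which are closed immersions), using the biduality swap $i_y^*\DD_Y(\cF) \simeq \DD_y(\rR i_y^!\cF)$ from the proof of \cite[Th.~3.21~(4)]{BH} together with the identity $\rR i_y^! \rR f_*\omega_X \simeq \rR f_{y,*}\omega_{X_y}$ of \cref{cor:shriek-star-base-change-dualizing-complex}; these give $i_y^*\DD_Y(\rR f_*\omega_X) \simeq \DD_y(\rR f_{y,*}\omega_{X_y})$, where $\omega_y = \Lambda$. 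Once over a point the dévissage only needs the cohomological dimension bound for the smooth term. Your order — dévissage to smooth $X$ first, \emph{then} a stalk computation over a general affinoid $Y$ — leaves you having to analyze $\rR\cHom$ between Zariski-constructible complexes over a nontrivial base, which is exactly where your gap sits.

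Concretely, the identification $\bigl(\rR\cHom(\rR f_*\omega_X, \omega_Y)\bigr)_{\bar y} \simeq \rR\cHom_\Lambda(\rR\Gamma(X_{\bar y}, \omega_{X_{\bar y}}), \Lambda)$ fails at two places. First, the naive commutation $i_{\bar y}^*\rR\cHom(A,B) \simeq \rR\cHom(A_{\bar y}, B_{\bar y})$ is not what the Verdier duality framework provides; what it provides is the swap involving $\rR i^!$ on the input, not $i^*$ on both input and output, and you acknowledge you do not know how to extract the $i^*$-version. Second, even granting that identification, you misstate the stalk: your dévissage makes $X$ smooth over $K$, not $f$ smooth, so the fibers $X_{\bar y}$ need not be smooth. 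What you would actually obtain from proper base change is $\rR\Gamma\bigl(X_{\bar y}, \LLambda_{X_{\bar y}}(d)[2d]\bigr)$, i.e.\ the restriction $i_{X_{\bar y}}^*\omega_X$, which generally differs from $\omega_{X_{\bar y}}$. (You also write $(\omega_Y)_{\bar y} \simeq \Lambda$, which is only correct up to a twist and shift $\Lambda(e)[2e]$ by the local dimension $e$ of $Y$ at $\bar y$.)

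Your fallback via the graph factorization is circular. It requires the identity $\rR\pi_Y^!\omega_Y \simeq \omega_{X\times Y}$ together with the ``local adjunction'' $\rR\cHom(\rR\pi_{Y,!}(-),\omega_Y) \simeq \rR\pi_{Y,*}\rR\cHom(-,\rR\pi_Y^!\omega_Y)$ for the smooth projection $\pi_Y \colon X\times Y \to Y$. But $\pi_Y$ is not proper (that would require $X$ proper over $K$, not merely over $Y$), and Poincar\'e duality for non-proper smooth morphisms is exactly what fails in the $p$-adic setting (\cref{rmk:no-Poincare-duality-closed-unit-disk}); indeed Huber's $\rR f^!$ is not even constructed in this setting for non-\'etale, non-finite morphisms when $n$ is not invertible in $\cO^+$. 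The conclusion $\rR\cHom(\rR f_*\omega_X,\omega_Y) \simeq \rR f_*\LLambda_X$ is of course true \emph{a posteriori} — it is a special case of \cref{general Paincare duality}\relax\cref{general Poincare duality} — but you cannot use it to establish the coconnectivity statement, since that statement is a prerequisite for constructing the proper trace on which \cref{general Poincare duality} rests.
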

\begin{proof}
\begin{enumerate}[wide,label={\textit{Step~\arabic*}.},ref={Step~\arabic*}]
    \item \textit{Reduce to the case when $Y$ is a geometric point.}
    First, we note that $\rR \cHom(\rR f_* \omega_X, \omega_Y)$ lies in $D_\zc(Y_\et; \Lambda)$ \cite[Th.~3.10, Th.~3.21~(3)]{BH}.
    Therefore, it suffices to show that for every classical point $i_y \colon y = \Spa\bigl(k(y), k(y)^\circ\bigr)\hookrightarrow Y$, the pullback $i_y^* \rR \cHom(\rR f_* \omega_X, \omega_Y)$ lies in $D^{\geq 0}(y_\et; \Lambda)$.
    To simplify this complex even further, we consider the following cartesian square:
    \[ \begin{tikzcd}
    X_{y} \arrow[d,"f_y"] \arrow[r,"i'_y"] & X \arrow[d,"f"] \\
    y \arrow[r,"i_y"] & Y.
    \end{tikzcd} \]
    \Cref{cor:shriek-star-base-change-dualizing-complex} constructs a canonical isomorphism $\rR f_{y, *}\omega_{X_y} \simeq \rR i_y^! \rR f_* \omega_X$. Combining this with (the proof of) \cite[Th.~3.21~(4)]{BH}, we obtain 
    \begin{equation*}
    i_y^* \rR \cHom (\rR f_* \omega_X, \omega_Y) = i_y^* \DD_Y(\rR f_*\omega_X) \simeq \DD_y(\rR i_y^!\rR f_* \omega_X) \simeq \DD_y(\rR f_{y, *} \omega_{X_y}).
    \end{equation*}
    Therefore, it suffices to show that $\DD_y(\rR f_{y, *} \omega_{X_y})$ lies in $D^{\ge 0}(y_\et; \Lambda)$. Since the statement does not depend on $K$, we can replace $K$ with $k(y)$ to assume that $Y=\Spa(K, \O_K)$. Then \cite[Prop.~3.24]{BH} ensures that we can replace $K$ with $\wdh{\ov{K}}$ to assume that $K$ is algebraically closed.
    
    \item \textit{Proof when $Y$ is a geometric point.}
    In this case, we have $D(Y_\et; \Lambda)=D(\Lambda)$ and $\omega_Y = \Lambda$.
    Therefore, the question becomes equivalent to showing that $\rR \Gamma(X, \omega_X)$ lies in $D^{\leq 0}(\Lambda)$.
    Thanks to \cref{dualizing-nilimmersion}, we have $\rR \Gamma(X,\omega_X) \simeq \rR \Gamma(X_\red,\omega_{X_\red})$.
    After replacing $X$ with $X_\red$, we may thus assume that $X$ is reduced.
    
    Since $X$ is proper, it has finite dimension.
    We prove the claim by induction on $d=\dim X$.
    If $d \le 0$, then $X$ is either empty or a finite disjoint union of points, so the result is obvious.
    Therefore, we assume that $d\geq 1$ and that we know the result for all $\dim X \le d-1$. 

    Let $U \subseteq X$ the smooth locus of $X$.
    By \cref{modifications-exist}, $U$ is Zariski-open and dense and there exists a regular $U$-admissible modification $\pi\colon X' \to X$.
    Set $Z \colonequals X \smallsetminus U$ (with the reduced adic space structure) and $Z' \colonequals Z\times_X X'$;
    we have $\dim Z< \dim X$ and $\dim Z' < \dim X' = \dim X$ since $U$ is dense in $X$ and $U \simeq \pi^{-1}(U)$ is dense in $X'$. We denote by $h\colon Z' \to X$ the natural composition. Then \cref{lemma:exact-triangle} implies that we have the following exact triangle
    \[
    \rR h_* \omega_{Z'} \to i_* \omega_Z \oplus \rR \pi_* \omega_{X'} \to \omega_X.
    \]
    Therefore, it suffices to show that the complexes $\rR\Gamma(X, \rR h_* \omega_{Z'}) = \rR \Gamma(Z', \omega_{Z'})$, $\rR\Gamma(X, i_* \omega_Z)=\rR\Gamma(Z, \omega_Z)$, and $\rR\Gamma(X, \rR \pi_* \omega_{X'}) = \rR \Gamma(X', \omega_{X'})$ lie in $D^{\leq 0}(\Lambda)$.
    Since $\dim Z< \dim X=d$ and $\dim Z' < \dim X' = d$,
    we conclude that the first two complexes lie in $D^{\leq 0}(\Lambda)$ by the induction hypothesis. Finally, \cref{smooth trace lives in discrete space} and smoothness of $X'$ imply that $\rR\Gamma(X', \omega_{X'})$ lies in $D^{\leq 0}(\Lambda)$ as well. This finishes the proof. \qedhere
\end{enumerate}
\end{proof}

\begin{remark}\label{rmk:general-coconnectivity}
It seems reasonable to expect that $\rR\cHom(\rR f_!\omega_X, \omega_Y)$ lies in $D^{\geq 0}(Y_\et; \Lambda)$ for any taut separated morphism $f$. For instance, \cref{closmooth trace lives in discrete space} implies that this holds for smooth $f$, while \cref{proper trace lives in discrete space} 
ensures it for proper maps.
However, we cannot justify this more general expectation in full generality. 
\end{remark}

\begin{corollary}\label{cor:injective-homs-dualizing}
Keep the notation of \cref{lemma:exact-triangle}. Let $f\colon X \to Y$ be a proper morphism of rigid-analytic spaces over $K$, and let $i_Y \colon Z \to Y$, $\pi_Y \colon X' \to Y$, and $h_Y \colon Z' \to Y$ be the natural compositions.
Then the sequence
\begin{multline*}
0 \to \Hom\bigl(\rR f_*\omega_X, \omega_Y\bigr) \xr{(\blank\circ \rR f_*(\tr_\pi), \blank\circ \rR f_*(\tr_i))} \Hom\bigl(\rR \pi_{Y,*} \omega_{X'}, \omega_Y\bigr) \oplus \Hom\bigl(\rR i_{Y,*} \omega_{Z}, \omega_Y\bigr) \\ \xr{\blank\circ \rR \pi_{Y, *}((-1)\cdot\tr_{i'}) \oplus \blank \circ \rR i_{Y,*}(\rR i^!\tr_\pi)} \Hom\bigl(\rR h_* \omega_{Z'}, \omega_Y\bigr)
\end{multline*}
is exact. 
\end{corollary}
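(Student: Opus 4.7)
The plan is to apply $\rR f_*$ to the distinguished triangle of \cref{lemma:exact-triangle}, then dualize against $\omega_Y$ and take global sections. Concretely, pushing forward along $f$ yields a distinguished triangle
\[
\rR h_{Y,*}\,\omega_{Z'} \xrightarrow{\rR f_*(\alpha)} \rR \pi_{Y,*}\,\omega_{X'} \oplus \rR i_{Y,*}\,\omega_Z \xrightarrow{\rR f_*(\beta)} \rR f_*\,\omega_X
\]
in $D(Y_\et; \Lambda)$, where $\alpha$ and $\beta$ denote the maps from \cref{lemma:exact-triangle}. Applying the contravariant functor $\rR\cHom(-, \omega_Y)$ followed by $H^0(Y, -) = \Hom_{D(Y_\et; \Lambda)}(-, \omega_Y)$ produces a long exact sequence whose relevant segment reads
\[
H^{-1}\bigl(Y, \rR\cHom(\rR h_{Y,*}\omega_{Z'}, \omega_Y)\bigr) \to \Hom(\rR f_*\omega_X, \omega_Y) \to \Hom(\rR \pi_{Y,*}\omega_{X'} \oplus \rR i_{Y,*}\omega_Z, \omega_Y) \to \Hom(\rR h_{Y,*}\omega_{Z'}, \omega_Y).
\]
Thus, to obtain the desired exact sequence starting with $0$, it suffices to establish the vanishing of the leftmost term.

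This vanishing is precisely where \cref{proper trace lives in discrete space} enters. The morphism $h_Y = f \circ \pi \circ i'$ is a composition of proper morphisms (the closed immersion $i'$, the modification $\pi$, and the given proper $f$), and hence itself proper. Therefore \cref{proper trace lives in discrete space} applied to $h_Y$ gives $\rR\cHom(\rR h_{Y,*}\omega_{Z'}, \omega_Y) \in D^{\ge 0}(Y_\et; \Lambda)$, so in particular the required $H^{-1}$ vanishes.

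It remains to identify the two arrows in the exact sequence with the ones written in the statement. Under the canonical splitting $\Hom(A \oplus B, \omega_Y) \simeq \Hom(A, \omega_Y) \oplus \Hom(B, \omega_Y)$, precomposition with $\rR f_*(\beta) = \rR f_*(\tr_\pi) \oplus \rR f_*(\tr_i)$ sends $\phi$ to the pair $\bigl(\phi \circ \rR f_*(\tr_\pi),\, \phi \circ \rR f_*(\tr_i)\bigr)$, as claimed. Similarly, using $\rR f_* \rR \pi_* = \rR \pi_{Y,*}$ and $\rR f_* \, i_* = \rR i_{Y,*}$, the map $\rR f_*(\alpha)$ has components $\rR \pi_{Y,*}(-\tr_{i'})$ and $\rR i_{Y,*}(\rR i^!\,\tr_\pi)$, so precomposition with it recovers the second arrow in the statement. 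The argument is entirely formal given the coconnectivity supplied by \cref{proper trace lives in discrete space}, so there is no real obstacle; the only point requiring attention is precisely the verification that $h_Y$ is proper in order to invoke that theorem.
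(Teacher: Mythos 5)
Your proof is correct and follows essentially the same route as the paper: push the distinguished triangle of \cref{lemma:exact-triangle} forward along $\rR f_*$, apply $\rR\Hom(-,\omega_Y)$, and observe that exactness on the left reduces to the vanishing of $\Ext^{-1}\bigl(\rR h_{Y,*}\omega_{Z'},\omega_Y\bigr)$, which is supplied by \cref{proper trace lives in discrete space}. Your explicit remark that $h_Y = f \circ \pi \circ i'$ is proper (so that \cref{proper trace lives in discrete space} indeed applies) is a useful detail that the paper leaves implicit.
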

\begin{proof}
    \cref{lemma:exact-triangle} implies that there is an exact triangle
    \begin{multline*}
    \rR\Hom\bigl(\rR f_*\omega_X, \omega_Y\bigr) \xr{(-\circ \rR f_*(\tr_\pi), -\circ \rR f_*(\tr_i))} \rR\Hom\bigl(\rR \pi_{Y, *} \omega_{X'}, \omega_Y\bigr) \oplus \rR\Hom\bigl(\rR i_{Y, *} \omega_{Z}, \omega_Y\bigr) \\ \xr{-\circ\rR \pi_{Y, *}((-1)\cdot \tr_{i'}) \oplus -\circ \rR i_{Y, *}(\rR i^!\tr_\pi)} \rR\Hom\bigl(\rR h_* \omega_{Z'}, \omega_Y\bigr)
    \end{multline*}
    Therefore, it suffices to show that $\Ext^{-1}(\rR h_* \omega_{Z'}, \omega_Y)=0$. This follows directly from \cref{proper trace lives in discrete space}. 
\end{proof}

\subsection{Proper trace in general}
\label{general proper trace subsection}

It is time to bootstrap \cref{smooth source trace} to the case of general proper maps.
We begin by stating the main goal for this subsection.
\begin{theorem}\label{most general proper trace!!}
There is a unique way to assign to every proper morphism $f \colon X \to Y$ of rigid-analytic spaces over $K$ a trace map $\tr_f \in \Hom(\rR f_*\omega_X,\omega_Y)$ in $D(Y_\et;\Lambda)$, satisfying the following properties:
\begin{enumerate}[label=\upshape{(\arabic*)}]
    \item\label{most general proper trace!!-closed}(Compatibility with closed trace) When $f$ is a closed immersion, then $\tr_f$ equals the closed trace map from \cref{trace for closed immersion}.
    \item\label{most general proper trace!!-smooth}(Compatibility with smooth-source trace) When $X$ is smooth and $Y$ is separated and taut, then $\tr_f$ equals the smooth-source trace map from \cref{smooth source trace}.
    \item\label{most general proper trace!!-composition}(Compatibility with compositions) For any two proper morphisms $f \colon W \to X$ and $g \colon X \to Y$ of rigid-analytic spaces over $K$, we have $\tr_{g \circ f} = \tr_g \circ \rR g_*(\tr_f)$.
    \item\label{most general proper trace!!-localization}(\'Etale-local on target) For any pullback diagram of rigid-analytic spaces over $K$
    \[ \begin{tikzcd}
        \widetilde{X} \arrow[r,"\tilde{h}"] \arrow[d,"\tilde{f}"] & X \arrow[d,"f"] \\
        \widetilde{Y} \arrow[r,"h"] & Y
    \end{tikzcd} \]
    in which $f$ and $\tilde{f}$ are proper and $h$ and $\tilde{h}$ are \'etale, the following diagram commutes (with the vertical isomorphisms coming from \cref{cor:smooth-pullback}):
    \[ \begin{tikzcd}[column sep=large]
        \rR \tilde{f}_* \omega_{\widetilde{X}} \arrow[rr,"\tr_{\tilde{f}}"]  &[-3em] & \omega_{\widetilde{Y}}  \\
        \rR \tilde{f}_*\tilde{h}^* \omega_X \arrow[u,"\rR \tilde{f}_*(\alpha_{\tilde{h}})","\sim"'{sloped}] \arrow[r,phantom,"="] & h^*\rR f_* \omega_X \arrow[r,"h^*(\tr_f)"] & h^*\omega_Y \arrow[u,"\alpha_h","\sim"'{sloped}] 
    \end{tikzcd} \]
\end{enumerate}
\end{theorem}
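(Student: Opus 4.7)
The overall approach is to first use property~\cref{most general proper trace!!-localization} to reduce to the case when $Y$ is quasicompact, separated (hence taut by \cite[Lem.~5.1.3~(iii)]{Huber-etale}), and then to induct on $d \colonequals \dim X$, gluing a pair of candidate traces along a regular modification via the coconnectivity statement \cref{proper trace lives in discrete space} and the resulting exactness from \cref{cor:injective-homs-dualizing}. The key geometric inputs are Temkin's resolution of singularities (\cref{modifications-exist}) and the compatibility machinery for smooth and closed traces developed in \cref{compatibility proposition} and \cref{compatibility for smooth source trace}.

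For uniqueness in the quasicompact separated case, I would proceed by induction on $d$. Pick a regular $U$-admissible modification $\pi \colon X' \to X$ of the dense smooth locus $U \subseteq X$ (available by \cref{modifications-exist}), and let $i \colon Z \hookrightarrow X$ denote the complement with $Z' \colonequals Z \times_X X'$ and $h \colonequals \pi \circ i' = i \circ \pi'$. Property~\cref{most general proper trace!!-composition} forces $\tr_{f \circ \pi} = \tr_f \circ \rR f_*(\tr_\pi)$ and $\tr_{f \circ i} = \tr_f \circ \rR f_*(\tr_i)$. The smooth-source trace $\tr_\pi$ is determined by~\cref{most general proper trace!!-smooth} (since $X'$ is smooth and $X$ is qcs over $K$), and $\tr_i$ is determined by~\cref{most general proper trace!!-closed}. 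Since $\dim Z, \dim Z' < d$, the traces $\tr_{f \circ i}$ and $\tr_{f \circ h}$ are already fixed by the inductive hypothesis. The injectivity in \cref{cor:injective-homs-dualizing} then pins down $\tr_f$ uniquely.

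For existence in the quasicompact separated case, I would define $\tr_f$ by the same induction, using \cref{cor:injective-homs-dualizing} to descend the pair consisting of $a \colonequals \tr_{f \circ \pi}$ (smooth-source trace, well-defined since $X'$ is smooth and $Y$ is separated taut) and $b \colonequals \tr_{f \circ i}$ (inductive, since $\dim Z < d$). The base case $d = 0$ follows since $X_\red$ is then smooth (being a finite disjoint union of rigid points in characteristic $0$) and $i_* \omega_{X_\red} \simeq \omega_X$ via \cref{dualizing-nilimmersion}, so the smooth-source trace applies directly. For the descent to succeed, one must verify the gluing identity
\begin{equation}\label{key-compat}
  \tr_{f \circ \pi} \circ \rR(f\pi)_*(\tr_{i'}) \;=\; \tr_{f \circ i} \circ \rR(fi)_*\bigl(\rR i^!(\tr_\pi)\bigr) \qquad \text{in } \Hom\bigl(\rR(fh)_*\omega_{Z'}, \omega_Y\bigr).
\end{equation}
Having established $\tr_f$, one then checks properties~\cref{most general proper trace!!-closed}--\cref{most general proper trace!!-localization} by further use of uniqueness and \cref{cor:injective-homs-dualizing}; independence of $\tr_f$ from the choice of $\pi$ is handled by dominating any two regular modifications by a common one.

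The main obstacle will be proving~\cref{key-compat}, since both sides mix an already-defined smooth-source trace ($\tr_\pi$ or $\tr_{f \circ \pi}$) with a closed trace and an inductive trace, and no analogue of \cref{compatibility proposition} is directly available because the modifications $\pi, \pi'$ are merely proper and not smooth. My plan is to show that both sides equal the inductively defined $\tr_{f \circ h}$ (legitimate since $\dim Z' < d$). For the left hand side, I would further resolve $Z'$ by a regular modification $\sigma \colon Z'' \to Z'$ with $Z''$ smooth (and $\dim Z'' < d$) and chase through \cref{compatibility proposition} together with the base-change identities of \cref{claim factorizing Kunneth map} and \cref{Lemma: Identifying natural transformations}, reducing to the already-established compatibility among smooth-source and closed traces in \cref{compatibility for smooth source trace}. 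For the right hand side, I would apply the same reduction and use \cref{cor:shriek-star-base-change-dualizing-complex} to transport $\rR i^!(\tr_\pi)$ across the resolution. Verifying property~\cref{most general proper trace!!-composition} in its full strength for $\tr_f$ requires another round of descent along modifications applied to the composition $g \circ f$, while~\cref{most general proper trace!!-localization} is essentially formal given the analogous localities in \cref{rmk:basic-properties-closed-trace}\cref{rmk:basic-properties-closed-trace-local} and \cref{rmk:smooth-source-trace-etale-local}.
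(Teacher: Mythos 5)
Your overall architecture — reduce to quasicompact separated base, induct on $\dim X$, glue a smooth-source trace and a lower-dimensional trace along a Temkin modification via the injectivity in \cref{cor:injective-homs-dualizing}, and reduce to reduced $X$ by \cref{dualizing-nilimmersion} — is exactly the paper's strategy. The reduction to qcs base, the uniqueness argument, and the independence-of-modification argument via dominating a pair of modifications by a common one all match.

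However, there is a genuine gap in how you propose to establish the gluing identity \cref{key-compat}, and your diagnosis of the obstacle is off. You write that ``no analogue of \cref{compatibility proposition} is directly available because the modifications $\pi,\pi'$ are merely proper and not smooth,'' and you propose to resolve $Z'$ further by a smooth $Z''\to Z'$ and chase through \cref{compatibility proposition}. This is not the right fix, and it is not clear that it closes: to compare two elements of $\Hom(\rR (fh)_*\omega_{Z'},\omega_Y)$ after precomposing with $\tr_\sigma$, you would need precomposition with $\tr_\sigma$ alone to be injective, but \cref{cor:injective-homs-dualizing} only gives injectivity for the \emph{pair} (modification, closed complement); restricting to just the modification loses the nowhere-dense locus in $Z'$ and does not obviously give injectivity. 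Pushing the resolution one step lower simply recreates the same problem on $Z'$.

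The mechanism that makes the gluing identity trivial — and which your proposal is missing — is to carry a \emph{strengthened} composition axiom through the induction: prove not the literal \cref{most general proper trace!!-composition} but a modified version (call it (3)') asserting $\tr_{g\circ f}=\tr_g\circ \rR g_*(\tr_f)$ whenever $\dim W\le d$ \emph{and either $X$ is smooth or $\dim X\le d$}, with $\tr_g$ meaning the smooth-source trace in the smooth case. With (3)' in hand for dimensions $<d$ plus the smooth case, the identity
\[
\tr_{\pi_Y}\circ \rR\pi_{Y,*}(\tr_{i'})=\tr_{\pi_Y\circ i'}=\tr_{i_Y\circ\pi'}=\tr_{i_Y}\circ \rR i_{Y,*}(\tr_{\pi'})
\]
is immediate: the first equality is (3)' with $W=Z'$, $\dim Z'<d$, and $X=X'$ \emph{smooth} (of dimension $d$, which is allowed precisely because of the ``or $X$ is smooth'' clause); the last is (3)' with $\dim Z'<d$ and $\dim Z<d$. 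No further resolution of $Z'$ is needed, and \cref{compatibility proposition} never needs to be applied to the non-smooth $\pi,\pi'$. Passing to $\rR i^!$ then recovers your form $\rR i^!(\tr_\pi)=\tr_{\pi'}$ as in \cref{tr-!-pullback}. Relatedly, proving the \emph{full} composition law \cref{most general proper trace!!-composition} in the verification step is itself delicate and requires a separate inner induction on $\dim W$ together with some factoring through the image $f(W)$; your remark that one ``checks properties (1)--(4) by further use of uniqueness'' underestimates this. The ``or $X$ is smooth'' modification is the crux of making the whole induction close.
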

Before embarking on the proof of \cref{most general proper trace!!}, we explain the main idea:
Assume that $f \colon X \to Y$ is a proper morphism of rigid-analytic spaces over $K$ with $X$ reduced, quasicompact, and separated.
Let $U \subseteq X$ be a dense, Zariski-open subspace which is contained in the smooth locus of $X$ (cf.\ \cref{modifications-exist}\cref{modification-exist-smooth-locus}).
Let $Z \colonequals X \smallsetminus U$ be its complement, endowed with the canonical reduced adic space structure.
\Cref{modifications-exist}\cref{modifications-exist-Temkin} yields a regular $U$-admissible modification $\pi \colon X' \to X$, fitting into a commutative diagram
\begin{equation}\label{modification-square} \begin{tikzcd}[/tikz/column 1/.append style={anchor=base east,column sep=0pt,inner xsep=0pt}]
    Z \times_X X' \equalscolon & Z' \arrow[r,hook,"i'"] \arrow[d,"\pi'"] \arrow[rd,"h"] & X' \arrow[d,"\pi"] \arrow[rdd,bend left=25,"\pi_Y"] & \\
    & Z \arrow[r,hook,"i"] \arrow[rrd,bend right=20,"i_Y"] & X \arrow[rd,"f"] & \\
    &&& Y.
\end{tikzcd} \end{equation}
Since $X'$ is smooth over $K$, \cref{smooth source trace} provides us with a trace map $\tr_{\pi_Y}$, and since $\dim Z < \dim X$, we may assume (by induction on the dimension of the source) that we also have a trace map $\tr_{i_Y}$.
Using \cref{cor:injective-homs-dualizing}, we will then check that these uniquely determine a map $\tr_f \colon \rR f_* \omega_X \to \omega_Y$.
Afterward, we will show that this definition does not depend on the choice of $U$ and $\pi$ and satisfies the desired compatibilities \cref{most general proper trace!!-closed}, \cref{most general proper trace!!-smooth}, \cref{most general proper trace!!-composition}, and \cref{most general proper trace!!-localization}.
Now the details:
\begin{proof}[{Proof of \cref{most general proper trace!!}}]
    \begin{enumerate}[wide,label={\textit{Step~\arabic*}.},ref={Step~\arabic*}]
        \item\label{most general proper trace!!-qcsep} \textit{It suffices to show the statement when all rigid-analytic spaces involved are quasicompact and separated.}
        Indeed, for any proper morphism $f \colon X \to Y$ of general rigid-analytic spaces over $K$, the coconnectivity result from \cref{proper trace lives in discrete space} combined with the BBDG gluing lemma \cite[Prop.~3.2.2]{BBDG} guarantees that $\cHom(\rR f_*\omega_X,\omega_Y)$ forms a sheaf on the \'etale site $Y_\et$.
        For any quasicompact and separated open $U \subseteq Y$, the preimage $f^{-1}(U)$ under the proper map $f$ is again quasicompact and separated.
        Assume that we have unique trace maps $\tr_{f|_U} \in \cHom(\rR f_*\omega_X,\omega_Y)(U)$ which satisfy the desired compatibilities. 
        Then the \'etale locality of traces from \cref{most general proper trace!!-localization} allow us to glue the $\tr_{f|_{U_i}}$ for some quasicompact separated (e.g., affinoid) covering $X = \bigcup_{i \in I} U_i$ to a map $\tr_f \colon \rR f_* \omega_X \to \omega_Y$;
        the uniqueness guarantees that this does not depend on the choice of the covering subspaces $U_i$.
        
        Compatibilities \cref{most general proper trace!!-closed}, \cref{most general proper trace!!-smooth}, and \cref{most general proper trace!!-composition} can be immediately reduced to the case of quasicompact separated rigid-analytic spaces because $\cHom(\rR f_*\omega_X,\omega_Y)$ is a sheaf (so we can check equality of two maps between $\rR f_*\omega_X$ and $\omega_Y$ \'etale locally on $Y$), preimages of quasicompact and separated subspaces under proper maps are quasicompact and separated, and the facts that closed trace and smooth-source traces are \'etale local on the target (see \cref{rmk:basic-properties-closed-trace}\cref{rmk:basic-properties-closed-trace-local} and \cref{rmk:smooth-source-trace-etale-local}).
        Lastly, the $\tr_f$ will still satisfy \cref{most general proper trace!!-localization} because it is \'etale local by its very construction as a section of the \'etale sheaf $\cHom(\rR f_*\omega_X,\omega_Y)$.

        \item \textit{Induction setup.}
        A quasicompact rigid-analytic space over $K$ has finite dimension (see, for example, \cite[Lem.~3.7]{adic-notes}).
        By \cref{most general proper trace!!-qcsep}, it thus remains to prove the following statement, which we show via induction on $d$:
        
        {\itshape Let $d \in \ZZ_{\ge 0} \cup \{-\infty\}$.
        There is a unique way to assign to every proper morphism $f \colon X \to Y$ of quasicompact and separated rigid-analytic spaces over $K$ with $\dim X \le d$ a trace map $\tr_f \colon \rR f_* \omega_X \to \omega_Y$ satisfying properties \cref{most general proper trace!!-closed}, \cref{most general proper trace!!-smooth}, \cref{most general proper trace!!-localization} (with $\widetilde{Y}$ also quasicompact and separated), as well as
        
        \cref{most general proper trace!!-composition}' (Compatibility with compositions II) For any two proper morphisms $f \colon W \to X$ and $g \colon X \to Y$ of quasicompact and separated rigid-analytic spaces over $K$ such that $\dim W \le d$ and either $X$ is smooth or $\dim X \le d$, we have $\tr_{g \circ f} = \tr_g \circ \rR g_*(\tr_f)$, where $\tr_g$ denotes the smooth-source trace from \cref{smooth source trace} when $X$ is smooth.
        }
    
        Note for \cref{most general proper trace!!-composition}' that in the induction step $\tr_g$ is not yet defined in general, so we need to assume that either $X$ is smooth or $\dim X \le d$ in order for the statement to have meaning.
        When $X$ satisfies both assumptions, there will be no ambiguity by \cref{most general proper trace!!-smooth}.
        On the other hand, it will be indispensable during the induction step that we allow for smooth $X$ with $\dim X > d$.
        
        In the base case $d = -\infty$, we have $X = \varnothing$, so there is nothing to show.
        We proceeed with the induction step.

        \item\label{most general proper trace!!-construction} \textit{Induction step: construction and uniqueness.}
        Assume that $d \ge 0$ and that the statement has been shown in dimensions $<d$.
        Let $f \colon X \to Y$ be a proper morphism of quasicompact and separated rigid-analytic spaces over $K$ with $\dim X = d$.

        First, consider the closed immersion $\iota \colon X_\red \hookrightarrow X$ from the maximal reduced closed subspace and set $f_\red \colonequals f \circ \iota$.
        By \cref{dualizing-nilimmersion}, precomposition with the closed trace map $\tr_\iota \colon \iota_*\omega_{X_\red} \to \omega_X$ induces an isomorphism $\Hom(\rR f_* \omega_X,\omega_Y) \xrightarrow{\sim} \Hom(\rR f_{\red,*}\omega_{X_\red},\omega_Y)$.
        In view of properties \cref{most general proper trace!!-closed} and \cref{most general proper trace!!-composition}, any trace map $\rR f_{\red,*} \omega_{X_\red} \to \omega_Y$ hence pins down a unique trace map $\rR f_* \omega_X \to \omega_Y$.
        As a consequence, it suffices to verify uniqueness of a proper trace satisfying \cref{most general proper trace!!-closed}, \cref{most general proper trace!!-smooth}, \cref{most general proper trace!!-composition}', and \cref{most general proper trace!!-localization} for reduced $X$. Furthermore, a construction of a proper trace for reduced $X$ canonically extends to all $X$. 
        We will construct a proper trace map in this Step and verify all the desired compatibilities in \cref{verification-of-properties}. Note, however, that it is a priori not clear that $\tr_f$ satisfies \cref{most general proper trace!!-closed}, \cref{most general proper trace!!-smooth}, \cref{most general proper trace!!-composition}', and \cref{most general proper trace!!-localization} for all $X$ if it satisfied these compatibilities for reduced $X$. 
           
        Now we implement the main idea described before the proof:
        We pick a dense, Zariski-open subspace $U \subseteq X$ which is contained in the smooth locus of $X$ and let $Z \colonequals X \smallsetminus U$ be its reduced complement.
        Thanks to \cref{modifications-exist}\cref{modifications-exist-Temkin}, we can find a regular $U$-admissible modification $\pi \colon X' \to X$ and consider the diagram \cref{modification-square}.
        Since $X'$ is smooth, we have smooth-source traces for all arrows in the diagram starting from $X'$.
        On the other hand, since $\dim Z < \dim X$ and $\dim Z' < \dim X' = \dim X$, the induction hypothesis provides us with unique traces subject to the desired properties for all (compositions of) arrows starting from either $Z$ or $Z'$.
        
        The traces $\tr_i$, $\tr_{i'}$, and $\tr_{\pi'}$ coming from induction and the smooth source trace $\tr_{\pi}$ satisfy
        \[ \tr_\pi \circ \tr_{i'} = \tr_h = \tr_i \circ \tr_{\pi'}; \]
        the first equality uses the modified compatibility \cref{most general proper trace!!-composition}' from the induction hypothesis.
        Under the $(i_*,\rR i^!)$-adjunction, this translates to
        \[ \rR i^! \tr_\pi = \tr_{\pi'}, \]
        where we use \cref{tr-!-pullback}.
        A combination with \cref{cor:injective-homs-dualizing} and property \cref{most general proper trace!!-closed} yields the exact sequence
        \begin{multline}\label{most general proper trace!!-sequence}
            0 \to \Hom\bigl(\rR f_*\omega_X, \omega_Y\bigr) \xrightarrow{(\blank\circ \rR f_*(\tr_\pi), \blank \circ \rR f_*(\tr_i))} \Hom\bigl(\rR \pi_{Y, *} \omega_{X'}, \omega_Y\bigr) \oplus \Hom\bigl(\rR i_{Y, *} \omega_{Z}, \omega_Y\bigr) \\ \xrightarrow{\blank\circ \rR \pi_{Y, *}((-1)\cdot\tr_{i'}) \oplus \blank\circ \rR i_{Y, *}(\tr_{\pi'})} \Hom\bigl(\rR h_* \omega_{Z'}, \omega_Y\bigr).
        \end{multline}
        If $\tr_f \in \Hom\bigl(\rR f_*\omega_X, \omega_Y\bigr)$ is to satisfy property \cref{most general proper trace!!-composition}', it needs to map to $\bigl(\tr_{\pi_Y},\tr_{i_Y}\bigr)$ in this sequence.
        Hence, $\tr_f$ is unique if it exists.
        Conversely, $\bigl(\tr_{\pi_Y},\tr_{i_Y}\bigr)$ must be induced by an element of $\Hom\bigl(\rR f_*\omega_X, \omega_Y\bigr)$ because
        \[ \tr_{\pi_Y} \circ \rR \pi_{Y,*}(\tr_{i'}) = \tr_{\pi_Y \circ i'} = \tr_{i_Y \circ \pi'} = \tr_{i_Y} \circ \rR i_{Y,*}(\tr_{\pi'}) \]
        by property \cref{most general proper trace!!-composition}' from the induction hypothesis.
        This produces a unique trace map $\tr_f \colon \rR f_* \omega_X \to \omega_Y$, which a priori depends on the choice of $U$ and $\pi$.

        To finish the construction, we explain why $\tr_f$ does in fact not depend on the choice of the dense, Zariski-open, smooth subspace $U \subseteq X$, nor on the choice of the regular $U$-admissible modification $\pi$.
        Let $U_j \subseteq X$ and $\pi_j \colon X'_j \to X$ for $j = 1,2$ be two such choices.
        Set $U \colonequals U_1 \cap U_2$ and $Z \colonequals X \smallsetminus U$ (again with the reduced adic space structure).
        Then $U \subset X$ is a dense, Zariski-open, smooth subspace and $(\pi_1,\pi_2) \colon X'_1 \times_X X'_2 \to X$ is a $U$-admissible modification.
        Applying \cref{modifications-exist}\cref{modifications-exist-Temkin} to $(\pi_1,\pi_2)^{-1}(U) \subseteq (X'_1 \times_X X'_2)_\red$, we obtain a regular $U$-admissible modification $\pi \colon X' \to X$ that factors through both $\pi_1$ and $\pi_2$.
        It suffices to compare the trace morphisms obtained from the $U_j$-admissible modifications $\pi_j$ for $j=1,2$ to the trace morphism obtained from the $U$-admissible modification $\pi$.
        Note that the $\pi_j$ can be considered both as $U_j$-modification and as $U$-modification;
        for clarity, we write $(\pi_j,U_j)$ and $(\pi_j,U)$.
        
        The modifications $(\pi,U)$, $(\pi_j,U_j)$, and $(\pi_j,U)$ form the red-purple rectangle, the blue-purple rectangle, and the red-blue-purple square, respectively, in the commutative diagram
        \[ \begin{tikzcd}
            & Z' \arrow[r,hook,red] \arrow[d,red] & X' \arrow[d,red,"\pi_{X'_j}"'] \arrow[rddd,bend left=25,"\pi_Y"] & \\
            Z'_j \arrow[r,hook,blue] \arrow[d,blue] & X'_j \times_X Z \arrow[r,hook,blue] \arrow[d,red] & X'_j \arrow[d,purple,"\pi_j"'] \arrow[rdd,bend left=20,"\pi_{j,Y}"'] & \\
            Z_j \arrow[r,hook,blue,"i_{j,Z}"] \arrow[rrrd,bend right=20,near start,"i_{j,Y}"'] & Z \arrow[r,hook,purple,"i"] \arrow[rrd,bend right=15,near start,"i_Y"'] & X \arrow[rd,near start,"f"'] & \\
            &&& Y.
        \end{tikzcd} \]
        With this notation, the injections in the exact sequence \cref{most general proper trace!!-sequence} for $(\pi,U)$, $(\pi_j,U_j)$, and $(\pi_j,U)$ fit into a square
        \begin{equation}\label{most general proper trace!!-independence} \begin{tikzcd}[column sep=7.5em] 
            \Hom\bigl(\rR f_*\omega_X, \omega_Y\bigr) \arrow[r,hook,"{(\blank\circ \rR f_*(\tr_{\pi_j}), \blank\circ \rR f_*(\tr_{i_j}))}"] \arrow[d,,shift left=3,hook',"{(\blank\circ \rR f_*(\tr_\pi), \blank\circ \rR f_*(\tr_i))}"'] \arrow[rd,hook,"{(\blank \circ \rR f_*(\tr_{\pi_j}),\blank \circ \rR f_*(\tr_i))}" description] & \Hom\bigl(\rR \pi_{j,Y,*} \omega_{X'_j}, \omega_Y\bigr) \oplus \Hom\bigl(\rR i_{j,Y,*} \omega_{Z_j}, \omega_Y\bigr) \\
            \Hom\bigl(\rR \pi_{Y,*} \omega_{X'}, \omega_Y\bigr) \oplus \Hom\bigl(\rR i_{Y,*} \omega_Z, \omega_Y\bigr) & \Hom\bigl(\rR \pi_{j,Y,*} \omega_{X'_j}, \omega_Y\bigr) \oplus \Hom\bigl(\rR i_{Y,*} \omega_Z, \omega_Y\bigr) \arrow[l,"{(\blank \circ \rR \pi_{j,Y,*}(\tr_{\pi_{X'_j}}),\id)}"] \arrow[u,"{(\id,\blank \circ \rR i_{Y,*}(\tr_{i_{j,Z}}))}"'] .
        \end{tikzcd} \end{equation}
        The two triangles commute by virtue of the equalities $\tr_\pi = \tr_{\pi_j} \circ \rR \pi_{j,*}(\tr_{\pi_{X'_j}})$ and $\tr_{i_j} = \tr_i \circ i_*(\tr_{i_{j,Z}})$ from \cref{compatibility for smooth source trace}\cref{compatibility for smooth source trace-4} and \cref{rmk:basic-properties-closed-trace}\cref{rmk:basic-properties-closed-trace-composition}, respectively.
        By construction, the three different choices of $\tr_f$ for $(\pi,U)$, $(\pi_j,U_j)$, and $(\pi_j,U)$ correspond to the elements $(\tr_{\pi_Y},\tr_{i_Y})$, $(\tr_{\pi_{j,Y}},\tr_{i_{j,Y}})$, and $(\tr_{\pi_{j,Y}},\tr_{i_Y})$ in the bottom left, top right, and bottom right corner in Diagram \cref{most general proper trace!!-independence}, respectively.
        Another application of \cref{compatibility for smooth source trace}\cref{compatibility for smooth source trace-4} and \cref{rmk:basic-properties-closed-trace}\cref{rmk:basic-properties-closed-trace-composition} shows that
        \[ \tr_{\pi_Y} = \tr_{\pi_{j,Y}} \circ \rR \pi_{j,Y,*}(\tr_{\pi_{X'_j}}) \quad \text{and} \quad \tr_{i_{j,Y}} = \tr_{i_Y} \circ \rR i_{Y,*}(\tr_{i_{j,Z}}), \]
        so that $(\tr_{\pi_{j,Y}},\tr_{i_Y})$ maps to $(\tr_{\pi_Y},\tr_{i_Y})$ and $(\tr_{\pi_{j,Y}},\tr_{i_{j,Y}})$ under the two maps $(\blank \circ \rR \pi_{j,Y,*}(\tr_{\pi_{X'_j}}),\id)$ and $(\id,\blank \circ \rR i_{Y,*}(\tr_{i_{j,Z}}))$ of Diagram \cref{most general proper trace!!-independence}, respectively.
        Since the three maps emanating from $\Hom\bigl(\rR f_*\omega_X, \omega_Y\bigr)$ are all injective, this shows that the three different choices of $\tr_f$ must all coincide.
        
        \item\label{verification-of-properties} \textit{Induction step: verification of properties.}
        To finish, we show that the trace maps $\tr_f$ constructed in \cref{most general proper trace!!-construction} satisfy the compatibilities \cref{most general proper trace!!-closed}, \cref{most general proper trace!!-smooth}, \cref{most general proper trace!!-composition}', and \cref{most general proper trace!!-localization}.
        When $X$ is smooth, it is in particular reduced and $\tr_f$ is constructed via the modification diagram \cref{modification-square}.
        Moreover, we may choose $U = X$ and $\pi = \id$, thanks to the independence of $\tr_f$ from the choice of $\pi$.
        This immediately yields \cref{most general proper trace!!-smooth}.
        
        When $f$ is a closed immersion, we consider again the modification diagram \cref{modification-square}, but with $f \colon X \to Y$ replaced by $f_\red \colon X_\red \xhookrightarrow{\iota} X \xrightarrow{f} Y$.
        Denote the closed trace map of $\iota$ by $\tr_\iota$.
        We defined $\tr_f$ as the unique element mapping to $(\tr_{\pi_Y},\tr_{i_Y})$ under the injection
        \begin{multline*}
            \Hom\bigl(\rR f_*\omega_X, \omega_Y\bigr) \xrightarrow[\sim]{(\blank \circ \rR f_*(\tr_\iota))} \Hom\bigl(\rR f_{\red,*}\omega_{X_\red}, \omega_Y\bigr) \\
            \xhookrightarrow{(\blank\circ \rR f_{\red,*}(\tr_\pi), \blank \circ \rR f_{\red,*}(\tr_i))} \Hom\bigl(\rR \pi_{Y, *} \omega_{X'}, \omega_Y\bigr) \oplus \Hom\bigl(\rR i_{Y, *} \omega_{Z}, \omega_Y\bigr).
        \end{multline*} 
        On the other hand, \cref{rmk:basic-properties-closed-trace}\cref{rmk:basic-properties-closed-trace-composition} and \cref{compatibility for smooth source trace}\cref{compatibility for smooth source trace-5} ensure that the image of the closed trace of $f$ under this injection has image $(\tr_{\pi_Y},\tr_{i_Y})$.
        This shows \cref{most general proper trace!!-closed}.

        Next, we verify \cref{most general proper trace!!-composition}'.
        Let $f \colon W \to X$ and $g \colon X \to Y$ be two proper morphisms of quasicompact and separated rigid-analytic spaces over $K$ such that $\dim W \le d$ and either $X$ is smooth or $\dim X \le d$.
        Leaving $d$ fixed, we show that $\tr_{g \circ f} = \tr_g \circ \rR g_*(\tr_f)$ by induction on $e \colonequals \dim W \le d$.
        When $e = -\infty$ (so $W = \varnothing$), the statement is clear.
        
        Now assume that $d \ge \dim W = e \ge 0$ and that \cref{most general proper trace!!-composition}' has been proven in dimensions $<e$.
        We assume first that $W$ and $X$ are both reduced.
        Under this assumption, we may choose a dense, Zariski-open, smooth $V \subseteq W$ with reduced complement $A \colonequals W \smallsetminus V$ and a regular $V$-admissible modification $\rho \colon W' \to W$;
        see \cref{modifications-exist}.
        Then the definition of $\tr_f$ via the injection in \cref{most general proper trace!!-sequence} allows us again to check \cref{most general proper trace!!-composition}' for $f \colon W \to X$ replaced by $\restr{f}{A} \colon A \to X$ and by $f \circ \rho \colon W' \to X$.
        For the former morphism, we may apply the induction hypothesis because $\dim A < \dim W = e$.
        For the latter morphism, we may check \cref{most general proper trace!!-composition}' separately on each connected (and hence irreducible) component of $W'$.
        The statement for connected components of dimension $<c$ is again covered by the induction hypothesis.
        Thus, after replacing $W$ by one of the remaining connected components of $W'$, it suffices to verify \cref{most general proper trace!!-composition}' when $W$ is smooth, irreducible and of equidimension $e \le d$. 

        In this case, $\tr_f$ and $\tr_{g \circ f}$ are given by the smooth-source trace thanks to \cref{most general proper trace!!-smooth}, which has already been verified.
        If $X$ is smooth, then the compatibility $\tr_{g \circ f} = \tr_g \circ \rR g_*(\tr_f)$ follows from \cref{compatibility for smooth source trace}\cref{compatibility for smooth source trace-4}.
        Thus, we may assume that $\dim X \le d$.
        Recall that $f(W) \subseteq X$ is a Zariski-closed subset \cite[Prop.~9.6.3/3]{BGR}.
        Since $W$ is irreducible, so is $f(W)$.
        Thus, we can pick an irreducible component $X_0 \subseteq X$ such that $f(W) \subseteq X_0$.
        If $\dim f(W) < \dim X_0$, there exists a dense, Zariski-open, smooth $U \subset X$ whose reduced Zariski-closed complement $i \colon Z \hookrightarrow X$ contains $f(W)$.
        Then $\tr_g$ is computed using a modification square as in \cref{modification-square} for some regular $U$-admissible modification $\pi \colon X' \to X$.
        Denoting by $f_Z \colon W \to Z$ the map factoring $f \colon W \to X$ and setting $i_Y \colon Z \xhookrightarrow{i} X \xrightarrow{g} Y$, we conclude
        \[ \tr_g \circ \rR g_*(\tr_f) = \tr_g \circ \rR g_*(\tr_i) \circ \rR i_{Y,*}(\tr_{f_Z}) = \tr_{i_Y} \circ \rR i_{Y,*}(\tr_{f_Z}); \]
        here, the first equality follows from the fact that $\tr_f$ is given by the smooth-source trace and \cref{compatibility for smooth source trace}\cref{compatibility for smooth source trace-5} and the second equality follows from the construction of $\tr_g$ and property \cref{most general proper trace!!-closed} of the induction hypothesis.
        Replacing $f$ by $f_Z$, $g$ by $i_Y$, and $X_0$ by the irreducible component of $Z$ containing $f_Z(W)$, we may therefore reduce $\dim X_0$ by at least $1$;
        repeating the same process finitely many times, we finally arrive at a situation where $f(W) = X_0$.
        
        Choose once more a dense, Zariski-open, smooth subspace $U \subseteq X$ with reduced complement $Z \subset X$ and a regular $U$-admissible modification $\pi \colon X' \to X$ fitting into a diagram of the form \cref{modification-square}.
        Since $W$ is irreducible and $f(W) = X_0$, the preimage $V \colonequals f^{-1}(U) \subseteq W$ is still dense and Zariski-open.
        Let $W_0 \subseteq (W \times_X X')_\red$ be an irreducible component containing $V \times_X X'$.
        The induced map $\rho \colon W_0 \to W$ is a $V$-admissible modification.
        \Cref{modifications-exist}\cref{modifications-exist-Temkin} yields a regular $\rho^{-1}(V)$-admissible modification $\rho' \colon W' \to W_0$.
        Then $\pi' \colonequals \rho \circ \rho' \colon W' \to W$ is a regular $V$-admissible modification, fitting into a commutative diagram
        \[ \begin{tikzcd}
           W' \arrow[d,"\pi'"] \arrow[r,"f'"] & X' \arrow[d,"\pi"]  \arrow[dr, bend left, "\pi_Y"] & \\
           W \arrow[r,"f"] & X \arrow[r, "g"] & Y.
        \end{tikzcd} \]
        Let $A \subset W$ be the reduced complement of $V$.
        As before, we may check \cref{most general proper trace!!-composition}' for $f \colon W \to X$ replaced by $\restr{f}{A} \colon A \to X$ and by $f \circ \pi' \colon W' \to X$.
        Since $\dim A < \dim W \le e$, the former follows from the induction hypothesis.
        The latter comes from the identity
        \[ \tr_g \circ \rR g_*(\tr_{f \circ \pi'}) = \tr_g \circ \rR g_*(\tr_{\pi \circ f'}) = \tr_g \circ \rR g_*(\tr_\pi) \circ \rR \pi_{Y,*}(\tr_{f'}) = \tr_{\pi_Y} \circ \rR \pi_{Y,*}(\tr_{f'}) = \tr_{\pi'_Y}, \]
        in which every trace map except $\tr_g$ is the smooth-source trace and hence the second and last equality follow from \cref{compatibility for smooth source trace}\cref{compatibility for smooth source trace-4}, whereas the third equality follows from the construction of $\tr_g$.
        This finishes the verification of \cref{most general proper trace!!-composition}' when $W$ and $X$ are both reduced.

        In order to prove \cref{most general proper trace!!-composition}' for $d \ge \dim W = e \ge 0$ in general, consider the commutative diagram
        \[ \begin{tikzcd}
            W_\red \arrow[r,"f_\red"] \arrow[d,hook,"\iota'"] & X_\red \arrow[rd,bend left,"g_\red"] \arrow[d,hook,"\iota"] & \\
            W \arrow[r,"f"] & X \arrow[r,"g"] & Y
        \end{tikzcd} \]
        in which the top row consists of the maximal reduced closed subspaces and the vertical arrows of the canonical closed immersions.
        We still have $\dim X_\red = \dim X \le d$ and $\dim W_\red = \dim W = e \le d$.
        By the construction in \cref{most general proper trace!!-construction} and property \cref{most general proper trace!!-closed}, the trace map $\tr_{g \circ f}$ is uniquely determined by the property
        \[ \tr_{g \circ f} \circ \rR (g \circ f)_*(\tr_{\iota'}) = \tr_{g \circ f \circ \iota'} = \tr_{g_\red \circ f_\red}. \]
        On the other hand, we have 
        \begin{multline*}
            \tr_g \circ \rR g_*(\tr_f) \circ \rR(g \circ f)_*(\tr_{\iota'}) = \tr_g \circ \rR g_*(\tr_{f \circ \iota'}) = \tr_g \circ \rR g_*(\tr_{\iota \circ f_\red}) \\
            = \tr_g \circ \rR g_*(\tr_\iota) \circ \rR g_{\red,*}(\tr_{f_\red}) = \tr_{g_\red} \circ \rR g_{\red,*}(\tr_{f_\red}) = \tr_{g_\red \circ f_\red} 
        \end{multline*}
        where the first and fourth equality follow from the construction of $\tr_f$ and $\tr_g$ via $\tr_{f_\red}$ and $\tr_{g_\red}$, respectively, and the third and fifth equality hold because $W_\red$ and $X_\red$ are reduced of the right dimensions.
        As a consequence, $\tr_{g \circ f} = \tr_g \circ \rR g_*(\tr_f)$, yielding the general case of \cref{most general proper trace!!-composition}'.

        It remains to deal with property \cref{most general proper trace!!-localization}.
        Let $h \colon \widetilde{Y} \to Y$ be an \'etale map from a quasicompact and separated rigid space $\widetilde{Y}$ over $K$;
        in particular, $h$ is in addition separated and taut \cite[Lem.~5.1.3]{Huber-etale}.
        Once more, we assume first that $X$ is reduced.
        In that case, we can choose a dense, Zariski-open, smooth subspace $U \subseteq X$ with reduced complement $Z \subset X$ and a regular $U$-admissible modification $\pi \colon X' \to X$.
        
        Set $\widetilde{U} \colonequals U \times_Y \widetilde{Y} \subseteq X \times_Y \widetilde{Y} \equalscolon \widetilde{X}$.
        The pullback $\tilde{\pi} \colon \widetilde{X}' \to \widetilde{X}$ of $\pi$ along $\tilde{h} \colon \widetilde{X} \to X$ is again a regular $\widetilde{U}$-admissible\footnote{\label{pullback-admissible}To see the $\widetilde{U}$-admissibility, we observe that \'{e}tale maps are open, hence the preimage of a dense open is again dense open.} modification, giving rise to a commutative diagram 
        \[ \begin{tikzcd}[row sep=small,column sep=small]
            & \widetilde{X}' \arrow[rr,"\tilde{h}'_\pi"] \arrow[dd,near start,"\tilde{\pi}"'] \arrow[dddd,bend left,very near start,"\tilde{\pi}_{\widetilde{Y}}"] && X' \arrow[dd,"\pi"'] \arrow[dddd,bend left,"\pi_Y"] \\
            \widetilde{Z}' \arrow[rr,crossing over,near start,"\tilde{h}'_i"'] \arrow[dd] \arrow[ru,hook] && Z' \arrow[ru,hook] & \\
            & \widetilde{X} \arrow[rr,crossing over,near end,"\tilde{h}"] \arrow[dd,near start,"\tilde{f}"'] && X \arrow[dd,"f"'] \\
            \widetilde{Z} \arrow[rr,crossing over] \arrow[rd,"\tilde{i}_{\widetilde{Y}}"'] \arrow[ru,hook,"\tilde{i}"] && Z \arrow[rd,"i_Y"] \arrow[ru,hook,"i"] \arrow[from=2-3,crossing over] & \\
            & \widetilde{Y} \arrow[rr,"h"] && Y,
        \end{tikzcd}\]
        in which all squares are pullback squares.
        Since the closed trace and the smooth-source trace are \'etale local on the target by \cref{rmk:basic-properties-closed-trace}\cref{rmk:basic-properties-closed-trace-local} and \cref{rmk:smooth-source-trace-etale-local}, respectively, the same is true for the short exact sequences \cref{most general proper trace!!-sequence} coming from \cref{cor:injective-homs-dualizing}.
        This yields the following commutative diagram, in which the upper vertical maps are given by pullback along $h$:
        \[ \begin{tikzcd}[column sep=12em]
            \Hom\bigl(\rR f_*\omega_X, \omega_Y\bigr) \arrow[r,hook,"{(\blank\circ \rR f_*(\tr_\pi), \blank \circ \rR f_*(\tr_i))}"] \arrow[d] & \Hom\bigl(\rR \pi_{Y, *} \omega_{X'}, \omega_Y\bigr) \oplus \Hom\bigl(\rR i_{Y, *} \omega_{Z}, \omega_Y\bigr) \arrow[d] \\
            \Hom\bigl(h^*\rR f_*\omega_X, h^*\omega_Y\bigr) \arrow[r,hook,"{(\blank\circ h^*\rR f_*(\tr_\pi), \blank \circ h^*\rR f_*(\tr_i))}"] \arrow[d,"{\alpha_h \circ \blank \circ \rR \tilde{f}_*\bigl(\alpha^{-1}_{\tilde{h}}\bigr)}","\sim"'{sloped}] & \Hom\bigl(h^*\rR \pi_{Y, *} \omega_{X'}, h^*\omega_Y\bigr) \oplus \Hom\bigl(h^*\rR i_{Y, *} \omega_{Z}, h^*\omega_Y\bigr) \arrow[d,"\sim"{sloped},"{\bigl(\alpha_h \circ \blank \circ \rR \tilde{\pi}_{\widetilde{Y},*}\bigl(\alpha^{-1}_{\tilde{h}'_\pi}\bigr)\bigr) \oplus \bigl(\alpha_h \circ \blank \circ \rR \tilde{i}_{\widetilde{Y},*}\bigl(\alpha^{-1}_{\tilde{h}'_i}\bigr)\bigr)}"'] \\
            \Hom\bigl(\rR \tilde{f}_*\omega_{\widetilde{X}}, \omega_{\widetilde{Y}}\bigr) \arrow[r,hook,"{(\blank\circ \rR \tilde{f}_*(\tr_{\tilde{\pi}}), \blank \circ \rR \tilde{f}_*(\tr_{\tilde{i}}))}"] & \Hom\bigl(\rR \tilde{\pi}_{\widetilde{Y},*} \omega_{\widetilde{X}'}, \omega_{\widetilde{Y}}\bigr) \oplus \Hom\bigl(\rR \tilde{i}_{\widetilde{Y}, *} \omega_{\widetilde{Z}}, \omega_{\widetilde{Y}}\bigr)
        \end{tikzcd} \]
        We need to show that $\tr_f \in \Hom\bigl(\rR f_*\omega_X, \omega_Y\bigr)$ maps to $\tr_{\tilde{f}} \in \Hom\bigl(\rR \tilde{f}_*\omega_{\widetilde{X}}, \omega_{\widetilde{Y}}\bigr)$ under the composition of the left vertical maps.
        By the injectivity of the horizontal maps and the construction of $\tr_f$ and $\tr_{\tilde{f}}$ from \cref{most general proper trace!!-construction}, it suffices to see that $\bigl(\tr_{\pi_Y},\tr_{i_Y}\bigr)$ maps to $\bigl(\tr_{\tilde{\pi}_{\widetilde{Y}}},\tr_{\tilde{i}_{\widetilde{Y}}}\bigr)$ under the composition of the right vertical maps.
        For the first component, this follows from \cref{rmk:smooth-source-trace-etale-local}.
        For the second component, one can use property \cref{most general proper trace!!-localization} of the induction hypothesis.
        This shows property \cref{most general proper trace!!-localization} when $X$ is reduced.

        For general $X$, the extended pullback diagram of rigid-analytic spaces over $K$
        \[ \begin{tikzcd}
            \widetilde{X}_\red \arrow[r,hook,"\iota'"] \arrow[rr,bend left,"\tilde{f}_\red"] \arrow[d,"\tilde{h}_\red"] & \widetilde{X} \arrow[r,"\tilde{f}"] \arrow[d,"\tilde{h}"] & \widetilde{Y} \arrow[d,"h"] \\
            X_\red \arrow[r,hook,"\iota"] \arrow[rr,bend right,"f_\red"] & X \arrow[r,"f"] & Y,
        \end{tikzcd}\]
        where as before $\iota$ and $\iota'$ denote the closed immersions from the maximal reduced closed subspaces, induces the following extended diagram:
        \[ \begin{tikzcd}[row sep=tiny,column sep=huge]
            \rR \tilde{f}_{\red,*} \omega_{\widetilde{X}_\red} \arrow[r,"\rR f_*(\tr_{\iota'})","\sim"'] \arrow[rr,bend left=15,"\tr_{\tilde{f}_\red}"] & \rR \tilde{f}_* \omega_{\widetilde{X}} \arrow[r,"\tr_{\tilde{f}}"] & \omega_{\widetilde{Y}} \\[1em]
            \rR \tilde{f}_{\red,*} \tilde{h}^*_\red \omega_{X_\red} \arrow[u,"\rR \tilde{f}_{\red,*}(\alpha_{\tilde{h}_\red})","\sim"'{sloped}] \arrow[d,equals] && \\
            \rR \tilde{f}_* \tilde{h}^* \iota_* \omega_{X_\red} \arrow[r,"\rR \tilde{f}_*\tilde{h}^*(\tr_\iota)","\sim"'] \arrow[d,equals] & \rR \tilde{f}_* \tilde{h}^* \omega_X \arrow[uu,"\rR \tilde{f}_*(\alpha_{\tilde{h}})","\sim"'{sloped}] \arrow[d,equals] & \\
            h^* \rR f_{\red,*} \omega_{X_\red} \arrow[r,"h^*\rR f_*(\tr_\iota)","\sim"'] \arrow[rr,bend right=15,"h^*(\tr_{f_\red})"] & h^* \rR f_* \omega_X \arrow[r,"h^*(\tr_f)"] & h^* \omega_Y \arrow[uuu,"\alpha_h","\sim"'{sloped}]
        \end{tikzcd}\]
        In this diagram, the upper and lower ``triangles'' commute by the construction of $\tr_{\tilde{f}}$ and $\tr_f$ via $\tr_{\tilde{f}_\red}$ and $\tr_{f_\red}$, the upper left rectangle commutes because the closed trace map is \'etale local (\cref{rmk:basic-properties-closed-trace}\cref{rmk:basic-properties-closed-trace-local}), the lower left rectangle commutes by the naturality of the base change isomorphism, and the outer diagram commutes thanks to the special case treated above when the source is reduced.
        On the other hand, \cref{dualizing-nilimmersion} and \cref{cor:smooth-pullback} guarantee that all arrows on the left side of the diagram are isomorphisms, so the right square must commute as well.
        This establishes property \cref{most general proper trace!!-localization} in general.
        We can finally declare victory in the proof of \cref{most general proper trace!!}! \qedhere
    \end{enumerate}
\end{proof}
We now check some compatibilities of the proper trace map beyond those mentioned in statement of \cref{most general proper trace!!}.
\begin{proposition}\label{smooth trace equals proper trace for smooth proper morphism}
Let $f \colon X \to Y$ be a smooth, proper morphism of rigid-analytic spaces over $K$.
Then the smooth trace map of $f$ from \cref{constructing smooth trace for omega} agrees with the proper trace map of $f$ from \cref{most general proper trace!!}.
\end{proposition}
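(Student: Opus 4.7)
The plan is to reduce the equality of the two trace maps to the case where $Y$ is separated and taut, by using étale locality on the target, and then invoke the characterizing properties of the proper trace from \cref{most general proper trace!!} together with the compatibility between the smooth-source trace and the smooth trace recorded in \cref{compatibility for smooth source trace}\cref{compatibility for smooth source trace-1}.

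First, I would observe that if $f\colon X \to Y$ is smooth and proper, then $f$ is automatically separated (properness is stronger than separatedness) and taut, so the smooth trace $\tr_f^{\mathrm{sm}} \colon \rR f_!\,\omega_X \to \omega_Y$ from \cref{constructing smooth trace for omega} is defined; moreover, properness gives the canonical identification $\rR f_! \simeq \rR f_*$, so both sides of the claimed equality live in $\Hom(\rR f_* \omega_X, \omega_Y)$, and by \cref{proper trace lives in discrete space} combined with \cite[Prop.~3.2.2]{BBDG}, the presheaf $U \mapsto \Hom(\rR f_* \omega_X|_U, \omega_Y|_U)$ is an étale sheaf on $Y$.

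Next, since both traces are étale local on the target---the proper trace $\tr_f^{\mathrm{prop}}$ by \cref{most general proper trace!!}\cref{most general proper trace!!-localization} and the smooth trace $\tr_f^{\mathrm{sm}}$ by \cref{rmk:basic-properties-smooth-trace}\cref{rmk:basic-properties-smooth-trace-local}---it suffices to check the equality after pulling back along an affinoid étale cover $\{U_i \to Y\}$. Each $U_i$ is quasicompact and separated, hence in particular separated and taut. After this reduction, we may therefore assume that $Y$ itself is separated and taut.

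In this case, \cref{most general proper trace!!}\cref{most general proper trace!!-smooth} identifies $\tr_f^{\mathrm{prop}}$ with the smooth-source trace of \cref{smooth source trace}. On the other hand, \cref{compatibility for smooth source trace}\cref{compatibility for smooth source trace-1} identifies the smooth-source trace of a smooth separated taut morphism with the smooth trace $\tr_f^{\mathrm{sm}}$ from \cref{constructing smooth trace for omega}. Chaining these two identifications yields $\tr_f^{\mathrm{prop}} = \tr_f^{\mathrm{sm}}$, which completes the proof. There is no real obstacle here: everything is a bookkeeping exercise once one notices that étale locality allows one to enforce the separated-and-taut hypothesis on $Y$ needed to invoke property \cref{most general proper trace!!-smooth} of \cref{most general proper trace!!}.
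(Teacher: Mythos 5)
Your proposal has a genuine gap. The crucial hidden assumption is that $X$ is smooth over $K$, which does \emph{not} follow from the hypothesis that $f \colon X \to Y$ is smooth. Indeed, if $Y$ is singular, a smooth morphism $f \colon X \to Y$ typically has singular source as well (for instance, $f = \id_Y$). Both ingredients you invoke in the final step have this hypothesis built in: item \cref{most general proper trace!!-smooth} of \cref{most general proper trace!!} reads ``When $X$ is smooth and $Y$ is separated and taut,\ldots'', and \cref{compatibility for smooth source trace} (hence its item \cref{compatibility for smooth source trace-1}) is formulated under the standing hypothesis that $X$ is smooth over $K$. So after reducing to $Y$ quasicompact and separated, your chain of equalities $\tr_f^{\mathrm{prop}} = \tr_f^{\text{sm-source}} = \tr_f^{\mathrm{sm}}$ is only justified when $X$ is smooth over $K$. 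The proposition, however, asserts this equality for arbitrary rigid-analytic spaces $X, Y$ over $K$.

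The paper's proof handles the general case by first reducing (via the closed traces of nil-immersions, \cref{dualizing-nilimmersion}, and the Cartesian-square compatibility \cref{compatibility lemma: Cartesian diagram case}) to $X$ and $Y$ reduced, and then running an induction on $\dim Y$. The base case $\dim Y = 0$ is exactly the situation your argument covers: there a reduced $Y$ over a characteristic-$0$ field is automatically smooth, hence so is $X$, and one can apply \cref{most general proper trace!!}\cref{most general proper trace!!-smooth} and \cref{compatibility for smooth source trace}\cref{compatibility for smooth source trace-1}. For $\dim Y > 0$ the source $X$ may be singular, so the paper instead passes to a regular $V$-admissible modification $Y' \to Y$, pulls back to a regular modification of $X$, and uses the injectivity from \cref{cor:injective-homs-dualizing} together with the inductive hypothesis on the closed complement. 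Your argument is missing this entire inductive mechanism, which is precisely what is needed to reach the singular case.
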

\begin{proof}
Since $\rR \cHom(\rR f_*\omega_X,\omega_Y)$ lies in $D^{\geq 0}(Y_\et; \Lambda)$ (see \cref{proper trace lives in discrete space}), we may check the statement \'{e}tale locally on $Y$ and thus assume that $Y$ is quasicompact and separated.
The closed immersions of the maximal reduced closed subspaces fit into a commutative diagram
\[ \begin{tikzcd}
    X_\red \arrow[r,hook,"\iota'"] \arrow[rd,"h"] \arrow[d,"f_\red"] & X \arrow[d,"f"] \\
    Y_\red \arrow[r,hook,"\iota"] & Y.
\end{tikzcd} \]
Since $f$ is smooth, this diagram is even Cartesian;
in particular, $f_\red$ is also smooth and proper.
By virtue of \cref{compatibility proposition} (or even \cref{compatibility lemma: Cartesian diagram case}) and a combination of \cref{most general proper trace!!}\cref{most general proper trace!!-closed} and \cref{most general proper trace!!}\cref{most general proper trace!!-composition}, respectively, the induced diagram
\[ \begin{tikzcd}[column sep=huge]
    \rR h_* \omega_{X_\red} \arrow[r,"\iota_*(\tr_{f_\red})"] \arrow[d,"\rR f_*(\tr_{\iota'})","\sim"'{sloped}] & \iota_* \omega_{Y_\red} \arrow[d,"\tr_\iota","\sim"'{sloped}] \\
    \rR f_* \omega_X \arrow[r,"\tr_f"] & \omega_Y
\end{tikzcd} \]
in $D(Y_\et;\Lambda)$ commutes for both the smooth and the proper traces of $f_\red$ and $f$, where in both cases $\tr_{\iota'}$ and $\tr_\iota$ denote the closed trace maps.
Moreover, these closed trace maps are isomorphisms by \cref{dualizing-nilimmersion}, so the smooth and proper trace for $f$ agree if and only they agree for $f_\red$.
As a consequence, we may also assume that both $X$ and $Y$ are reduced.

After these reductions, we proceed by induction on $d \colonequals \dim(Y)$.
When $d = 0$, the reducedness of $Y$ together with the standing assumption that $\charac K = 0$ means that $Y$ is smooth.
Since $f$ is smooth, $X$ is also smooth and we win thanks to \cref{compatibility for smooth source trace}\cref{compatibility for smooth source trace-1} and \cref{most general proper trace!!}\cref{most general proper trace!!-smooth}.

Now assume that $\dim Y = d > 0$ and the statement has been proven in dimensions $\le d-1$.
\Cref{modifications-exist} allows us to pick a dense, Zariski-open subspace $V \subseteq Y$ which is contained in the smooth locus of $Y$ and a regular $V$-admissible modification $Y' \to Y$.
Denote by $A \subset Y$ the complement of $V$ endowed with the canonical reduced adic space structure;
we have $\dim A < \dim Y$.

Let $U \colonequals f^{-1}(V)$ be the preimage, which is automatically Zariski-open.
Consider the following diagram with Cartesian squares:
\[ \begin{tikzcd}
    Z \arrow[r,hook,"i"] \arrow[d,"\bar{f}"] \arrow[rd,"i_Y"] & X \arrow[d,"f"] & X' \arrow[l,"\pi"'] \arrow[d,"f'"] \arrow[ld,"\pi_Y"] \\
    A \arrow[r,hook,"j"] & Y & Y' \arrow[l,"\rho"']
\end{tikzcd} \]
Since the pullback $\pi \colon X' \to X$ of $\rho$ is still a regular $U$-modification,\footnote{In fact, as in \cref{pullback-admissible}, the $U$-modification $\pi$ is $U$-admissible because $f$ is smooth, hence open.}
\cref{cor:injective-homs-dualizing} gives an injection
\[ \Hom\bigl(\rR f_*\omega_X, \omega_Y\bigr) \xhookrightarrow{(\blank\circ \rR f_*(\tr_\pi), \blank\circ \rR f_*(\tr_i))} \Hom\bigl(\rR \pi_{Y,*} \omega_{X'}, \omega_Y\bigr) \oplus \Hom\bigl(\rR i_{Y,*} \omega_{Z}, \omega_Y\bigr), \]
where $\tr_\pi$ and $\tr_i$ denote the smooth-source trace and the closed trace, respectively.
Thus, we only need to show that the images of the smooth trace of $f$ and the proper trace of $f$ under this injection agree with another.
The first components in $\Hom\bigl(\rR \pi_{Y,*} \omega_{X'}, \omega_Y\bigr)$ both agree with the smooth-source trace for $f \circ \pi \colon X' \to Y$ by \cref{compatibility for smooth source trace}\cref{compatibility for smooth source trace-5} and \cref{most general proper trace!!}\cref{most general proper trace!!-smooth} and \cref{most general proper trace!!-composition}.
For the second components in $\Hom\bigl(\rR i_{Y,*} \omega_{Z}, \omega_Y\bigr)$, we can use \cref{compatibility lemma: Cartesian diagram case}, \cref{most general proper trace!!}\cref{most general proper trace!!-closed} and \cref{most general proper trace!!-composition}, and the fact that the smooth trace and the proper trace of $\bar{f} \colon Z \to A$ agree thanks to the induction hypothesis.
This finishes the induction step.
\end{proof}
\begin{remark}
    \cref{smooth trace equals proper trace for smooth proper morphism} implies a version of \cref{compatibility proposition} when two maps are smooth and proper and the other two maps are proper. Unfortunately, we cannot establish an analogue of \cref{compatibility proposition} for smooth and proper traces in general. The essential difficulty comes from the fact that we cannot prove an analogue of \cref{cor:injective-homs-dualizing} for non-proper $f$ (the key coconnectivity claim \cref{proper trace lives in discrete space} used in the proof is unavailable in the non-proper case; see also \cref{rmk:general-coconnectivity}). It seems that the correct approach should be to construct trace maps for arbitrary taut separated maps compatible with compositions. The main obstacle to do this lies, again, in the fact that we cannot verify \cref{cor:injective-homs-dualizing} beyond the proper case. %
\end{remark}
Next, we record the compatibility of proper traces under change of base field.
\begin{definition}\label{base-field-compatibility}
    Let $f \colon X \to Y$ be a separated taut map of rigid-analytic spaces over $K$, which is either smooth or proper.
    Let $K\subset L$ be an extension of nonarchimedean fields, inducing the Cartesian diagram
    \[ \begin{tikzcd}
        X_L \arrow[d,"f_L"] \arrow[r,"a_X"] & X \arrow[d,"f"] \\
        Y_L \arrow[r,"a_Y"] & Y.
    \end{tikzcd} \]
    The smooth (resp.\ proper) trace map $\tr_f \colon \rR f_!\,\omega_X \to \omega_Y$ in the sense of \cref{constructing smooth trace for omega} (resp.\ \cref{most general proper trace!!}) is \emph{compatible with the base field extension $K\subset L$} if the diagram
    \[ \begin{tikzcd}[column sep=huge]
        a^*_Y\bigl(\rR f_!\,\omega_X\bigr) 
        \arrow[d,"a_Y^*(\tr_f)"] \arrow[r, "\BC_!"] & \rR f_{L,!} \bigl(a^*_X \omega_X\bigr)  \arrow[r, "\sim"', "\rR f_{L, !}(\gamma_{X, L})"] & \rR f_{L, !}\, \omega_{X_L} \arrow[d, "\tr_{f_L}"] \\
        a^*_Y\bigl(\omega_Y\bigr) \arrow[rr, "\sim"', "\gamma_{X, L}"] & & \omega_{Y_L}.
    \end{tikzcd} \]
    commutes, where $\gamma_{X, L}$ is the base change isomorphism from \cite[Th.~3.21~(6)]{BH} and $\BC_!$ is the base change map for compactly supported pushforward from \cite[Th.~5.4.6]{Huber-etale}.
\end{definition}
\begin{proposition}\label{change-base-field}
Let $f \colon X \to Y$ be a separated taut map of rigid-analytic spaces over $K$, which is either smooth or proper.
Then $\tr_f$ is compatible with every nonarchimedean base field extension $K\subset L$ in the sense of \cref{base-field-compatibility}.
\end{proposition}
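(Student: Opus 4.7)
The plan is to verify compatibility with base change in stages that parallel the construction of $\tr_f$ itself, first in the smooth case for constant coefficients, then for $\omega$-coefficients, then for closed immersions, then for the smooth-source trace of \cref{smooth source trace}, and finally for general proper $f$ via the inductive characterization in \cref{most general proper trace!!}.

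First I would treat the smooth trace with constant coefficients from \cref{smooth-trace-constant}. Since this trace is uniquely characterized by properties \cref{smooth-trace-constant-composition}--\cref{smooth-trace-constant-P1}, it suffices to show that the assignment $f_L \mapsto a_Y^*(\tr_f)$, defined for any $f_L$ that arises as a base change, satisfies these axioms and extends to all separated taut smooth morphisms over $L$. More concretely, the construction of $\tr_{\pi_Y}$ for $\pi_Y \colon \AA^{d,\an}_Y \to Y$ goes through the first Chern class $c_1^{\et}(\cO_{\PP^{1,\an}_Y}(1))$, which is compatible with pullback by \cref{rmk:base-change-chern-classes}, and through the \'etale trace $\ttr^\et_{\overline{j}}$ for the open immersion $\overline{j}\colon \AA^{1,\an}_Y \hookrightarrow \PP^{1,\an}_Y$, which is compatible with pullback by \cref{etale-trace-compatibility}. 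All remaining ingredients (successive projections, \'etale traces for local factorizations through $\AA^{d,\an}_Y$, and gluing via \cref{trace local source}) commute with $a_Y^*$, so $\tr_f$ is compatible with base field extensions in the constant coefficient setting.

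Next, for smooth $f$ and $\omega$-coefficients, the smooth trace is defined by composing $\PF^{-1}_f$, the constant coefficient trace, and $\alpha_f$ from \cref{cor:smooth-pullback}. The projection formula isomorphism is compatible with base change, the constant coefficient trace has just been handled, and the base change compatibility of $\alpha_f$ follows from its construction via pulling back the algebraic isomorphism of \cite[Exp.~XVII, Prop.~4.1.1]{deGabber} combined with the base change isomorphism $\gamma_{X,L}$ of \cite[Th.~3.21~(6)]{BH}. This gives base change compatibility for the smooth trace. For closed immersions $i \colon X \hookrightarrow Y$, the closed trace is the $(i_*,\rR i^!)$-adjoint of $c_i$; the base change compatibility of $c_i$ from \cite[Th.~3.21~(1)]{BH} together with the naturality of the adjunction and of the base change maps between $\rR i^!$ and $a_Y^*$ (\cref{lemma:shriek-star-base-change}) yields compatibility for $\tr_i$. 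Combining the smooth and closed cases via the graph factorization in \cref{smooth source trace} then gives base change compatibility for the smooth-source trace.

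For the general proper case, I would argue by induction on $d = \dim X$, mimicking the construction in \cref{most general proper trace!!-construction} of the proof of \cref{most general proper trace!!}. The base case and reduction to reduced quasicompact separated $X$ are straightforward, using that $a_X^*$ preserves the closed trace of the nil-immersion $X_\red \hookrightarrow X$ (which is an isomorphism by \cref{dualizing-nilimmersion}). For the inductive step, choose a dense Zariski-open smooth $U \subseteq X$ and a regular $U$-admissible modification $\pi \colon X' \to X$ as in \cref{modification-square}. Pulling back to $L$ yields $U_L \subseteq X_L$ and $\pi_L \colon X'_L \to X_L$ which remain a dense smooth open and a regular $U_L$-admissible modification, respectively. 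Applying $a_Y^*$ to the exact sequence \cref{most general proper trace!!-sequence} for $f$ produces the analogous sequence for $f_L$ (using that $\rR f_*$ commutes with $a_Y^*$ since $f$ is proper, via \cref{lemma:proper-base-change}, and that the coconnectivity \cref{proper trace lives in discrete space} holds over both $K$ and $L$, so $\Hom$ rather than $\rR\Hom$ appears). By the already established cases, $a_Y^*(\tr_{\pi_Y}) = \tr_{\pi_{Y,L}}$ and $a_Y^*(\tr_{i_Y}) = \tr_{i_{Y,L}}$ (the latter uses the inductive hypothesis since $\dim Z < d$), and the defining element $\tr_f$ maps to $(\tr_{\pi_Y}, \tr_{i_Y})$, so $a_Y^*(\tr_f)$ maps to $(\tr_{\pi_{Y,L}}, \tr_{i_{Y,L}})$ under the analogous injection over $L$. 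By the uniqueness part of the construction, $a_Y^*(\tr_f) = \tr_{f_L}$.

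The main obstacle is bookkeeping: we need to verify that all the structural natural transformations (base change maps for $\rR f_*$, the isomorphism $c_{f,i}$ in \cref{cor:shriek-star-base-change-dualizing-complex}, the projection formula, the counits of $(i_*, \rR i^!)$ and $(f_!, f^*)$, as well as the base change isomorphism $\gamma_{X,L}$ of dualizing complexes) commute with one another in the appropriate diagrams. These are all formal naturality statements, but weaving them together correctly in the inductive step, in particular when matching $a_Y^*$ of the defining exact sequence to the defining exact sequence over $L$, is where the verification becomes delicate.
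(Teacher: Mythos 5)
Your overall architecture matches the paper's proof quite closely: handle constant-coefficient smooth trace, then $\omega$-coefficient smooth trace, then closed immersions, then the smooth-source trace, and finally general proper $f$ by induction on $\dim X$, using the defining injection from \cref{cor:injective-homs-dualizing}. That is essentially the paper's route.

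However, there is a real gap in the closed immersion case. You write that the ``base change compatibility of $c_i$ from \cite[Th.~3.21~(1)]{BH} together with the naturality of the adjunction\dots yields compatibility for $\tr_i$,'' and you summarize the remaining work as verifying ``formal naturality statements.'' But \cite[Th.~3.21~(1)]{BH} only asserts the existence of the isomorphism $c_i \colon \omega_X \xrightarrow{\sim} \rR i^! \omega_Y$; it does \emph{not} state any compatibility between $c_i$ and the base-change isomorphism $\gamma_{X,L}$ of \cite[Th.~3.21~(6)]{BH}. These two isomorphisms are constructed independently from the abstract theory of potential dualizing complexes, and making them commute in the required diagram is a genuine nontrivial verification, not an application of naturality. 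The paper's argument proceeds by reducing to an algebraic diagram involving $c_{f^\alg}$, $c_{f_L^\alg}$, $\gamma_{a_{X^\alg}}$, $\gamma_{a_{Y^\alg}}$, and the shriek base change $\BC^{*,!}$, observing that the composite is an automorphism $F$ of the potential dualizing complex $\omega_{X_L^\alg}$, and then invoking the rigidity statement \cite[Exp.~XVII, Th.~5.1.1]{deGabber}: such an automorphism is the identity once it is shown to be compatible with pinnings, which follows from \cite[Exp.~XVII, Lemme 4.3.2.3]{deGabber}. This rigidity argument is the crucial step that your sketch omits, and it cannot be replaced by naturality considerations alone.

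A smaller point: before the induction on $\dim X$ you should reduce to affinoid $Y$ (not merely quasicompact $X$). The paper makes this an explicit preliminary step, using the coconnectivity result \cref{proper trace lives in discrete space}, proper base change \cref{lemma:proper-base-change}, and the BBDG gluing lemma to see that the commutativity can be checked \'etale locally on $Y_L$. You allude to this (``coconnectivity\dots holds over both $K$ and $L$''), but burying it inside the inductive step obscures the fact that it is needed even to state the induction (so that $X$ has finite dimension and the modification diagrams exist) and to even make sense of the closed-immersion reduction (which requires $Y$, hence $X$, affinoid in order to algebraize).
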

Recall that by \cref{smooth trace equals proper trace for smooth proper morphism}, there is no ambiguity in the notation $\tr_f$ when $f$ is both smooth and proper.
\begin{proof}
    \begin{enumerate}[wide,label={\textit{Step~\arabic*}.},ref={Step~\arabic*}]
        \item\label{change-base-field-smooth} \textit{Proof for smooth $f$.}
        It suffices to prove the statement when $f$ is smooth of equidimension $d$.
        Using the definition of the smooth trace map from \cref{constructing smooth trace for omega}, the diagram in \cref{base-field-compatibility} can then be broken up as follows (using the canonical isomorphisms $\alpha_f\colon f^*\omega_Y(d)[2d] \xrightarrow{\sim} \omega_X$ provided by \cref{cor:smooth-pullback}):
        \[ \begin{tikzcd}[column sep=huge]
            a^*_Y(\rR f_!\omega_X) \arrow[d, "\BC_!"] & a^*_Y \rR f_! f^* \omega_Y(d)[2d] \arrow[l,"a^*_Y\rR f_!(\alpha_f)"',"\sim"] \arrow[d, "\BC_!"] & a^*_Y(\omega_Y \otimes^L \rR f_! \LLambda_X(d)[2d]) \arrow[l,"a^*_Y(\PF_f)"',"\sim"] \arrow[d,equals] \arrow[r,"a^*_Y(\id \otimes \ttr_f)"] & a^*_Y(\omega_Y) \arrow[ddd,"\sim"{sloped}, "\gamma_{Y, L}"'] \\
            \rR f_{L,!} (a^*_X \omega_X) \arrow[dd,"\sim"'{sloped}, "\rR f_{L, !}(\gamma_{X, L})"] & \rR f_{L,!} a^*_X f^* \omega_Y(d)[2d] \arrow[l,"\rR f_{L,!} a^*_X(\alpha_f)"',"\sim"] \arrow[d,equals] & a^*_Y\omega_Y \otimes^L a^*_Y\rR f_! \LLambda_X(d)[2d] \arrow[d, "\id \otimes \BC_!"] \arrow[ru,near start,"\id \otimes a^*_Y(\ttr_f)"'] & \\
            & \rR f_{L,!} f^*_L a^*_Y \omega_Y(d)[2d] \arrow[d,"\sim"'{sloped}, "{\rR f_{L, !}f_L^*(\gamma_{Y, L})(d)[2d]}"] & a^*_Y \omega_Y \otimes^L \rR f_{L,!} a^*_X \LLambda_X(d)[2d] \arrow[l,"\PF_{f_L}"',"\sim"] \arrow[d,"\sim"'{sloped}, "\gamma_{Y, L}\otimes \id"] & \\
            \rR f_{L,!}\omega_{X_L} & \rR f_{L,!} f^*_L \omega_{Y_L}(d)[2d] \arrow[l,"\rR f_{L,!}(\alpha_{f_L})"',"\sim"] & \omega_{Y_L} \otimes^L \rR f_{L,!} \LLambda_{X_L}(d)[2d] \arrow[l,"\PF_{f_L}"',"\sim"] \arrow[r,"\id \otimes \ttr_{f_L}"] & \omega_{Y_L}
        \end{tikzcd} \]
        The upper left square commutes by the naturality of the base change map, the upper middle rectangle by the compatibility of the projection formula with base change (see e.g.\ \cite[\href{https://stacks.math.columbia.edu/tag/0E48}{Tag~0E48}]{stacks-project}), the lower middle square by the naturality of the projection formula, and the right trapezoid thanks to the compatibility of the smooth trace map for constant coefficients with pullbacks (\cref{smooth-trace-constant}\cref{smooth-trace-constant-pullback}).
        
        To finish the argument for smooth $f$, it remains to see that the lower left rectangle commutes.
        We will show that the rectangle commutes even before applying $\rR f_{L,!}$.
        In order to check this stronger claim, we may work locally on $X$ and thus assume that $f$ is of the form $f\colon \Spa(B, B^\circ) \to \Spa(A, A^\circ)$ with associated regular morphism $f^\alg\colon \Spec B \to \Spec A$.
        In this case, $\alpha_f$ and $\alpha_{f_L}$ (up to a twist) are defined as the analytifications of the isomorphisms from \cite[Exp.~XVII, Prop.~4.1.1]{deGabber} applied to the regular morphisms $f$ and $f_L$ (see \cref{rmk:smooth-pullback-in-algebraic-geometry}), while $\gamma_{X, L}$ and $\gamma_{Y, L}$ are defined as the analytifications of the isomorphisms from \cite[Exp.~XVII, Prop.~4.1.1]{deGabber} applied to regular morphisms $\Spec \bigl(B\wdh{\otimes}_K L \bigr) \to \Spec B$ and $\Spec \bigl(A\wdh{\otimes}_K L \bigr) \to \Spec A$ (see the last paragraph of the proof of \cite[Prop.~3.24]{BH} for the fact that $A \to A\wdh{\otimes}_K L$ is regular and the proof of \cite[Th.~3.21~(6)]{BH} for the fact that the dimension function on $\Spec \bigl(A\wdh{\otimes}_K L\bigr)$ introduced in \cite[Exp.~XVII, Prop.~4.1.1]{deGabber} coincides with the dimension function introduced in \cite[Prop.~3.18]{BH}). Therefore, the desired diagram commutes due to the compatibility of the isomorphisms from \cite[Exp.~XVII, Prop.~4.1.1]{deGabber} with compositions (see \cite[Exp.~XVII, Rmk.~4.3.1.3]{deGabber}).

        \item\label{change-base-field-affinoid} \textit{Proof for proper $f$: reduction to affinoid $Y$.}
        Since the rest of the proof deals with proper $f$, we use $\rR f_*$ instead of $\rR f_!$.
        First, we show that the statement can be checked locally on $Y$;
        in particular, we may assume that $Y$ is $K$-affinoid.
        To this end, it suffices to verify that $\rR\cHom\bigl(a_Y^*\rR f_*\,\omega_X, \omega_{Y_L}\bigr) \in D^{\geq 0}(Y_{L, \et})$ thanks to the BBDG gluing lemma \cite[Prop.~3.2.2]{BBDG}.
        Now we observe that \cref{lemma:proper-base-change} implies that the base change morphism $a_Y^*(\rR f_*\omega_X) \to \rR f_{L,*}\omega_{X_L}$ is an isomorphism. Hence, the desired result follows directly from \cref{proper trace lives in discrete space}.
    
        \item\label{change-base-field-climm} \textit{Proof when $f$ is a closed immersion.}
        By \cref{change-base-field-affinoid}, we may assume that $Y=\Spa(A, A^\circ)$.
        Since $f$ is a closed immersion, $X$ is a $K$-affinoid adic space of the form $X=\Spa(A/I, (A/I)^\circ)$ for some ideal $I\subset A$. We consider the following cartesian diagram: 
        \[
        \begin{tikzcd}
            X^\alg_L= \Spec \bigl((A/I) \wdh{\otimes}_K L\bigr) \arrow{r}{f_L^\alg} \arrow{d}{a_{X^\alg}} & Y^\alg_L=\Spec \bigl( A\wdh{\otimes}_K L\bigr) \arrow{d}{a_{Y^\alg}} \\
            X^\alg = \Spec A/I \arrow{r}{f^\alg} & Y^\alg = \Spec A.
        \end{tikzcd}
        \]
        After unraveling all the definitions and using the $(f_{L, *}, \rR f_{L}^!)$-adjunction, we reduce the question to showing that the following diagram commutes:
        \[
        \begin{tikzcd}[column sep = 6em]
        a_{X^\alg}^* \omega_{X^\alg} \arrow[d, "\sim"'{sloped}, "a_{X^\alg}^*(c_{f^{\alg}})"] \arrow[rr, "\sim"', "\gamma_{a_{X^\alg}}"] & & \omega_{X_L^\alg} \arrow[d, "\sim"'{sloped}, "c_{f_L^\alg}"] \\
        a_{X^\alg}^*\rR f^{\alg, !} \omega_{Y^\alg} \arrow[r, "\sim"', "\BC^{*, !}"] & \rR f_{L}^{\alg, !} a_{Y^\alg}^* \omega_{Y^\alg} \arrow[r, "\sim"', "\rR f_L^{\alg, !}(\gamma_{a_{Y^\alg}})"] & \rR f_L^{\alg, !} \omega_{Y_L^\alg},
        \end{tikzcd}
        \]
        where the $\gamma$-isomorphisms come from \cite[Exp.~XVII, Prop.~4.1.1]{deGabber}, the $c$-isomorphisms come from \cite[Exp.~XVII, Prop.~4.1.2]{deGabber}, and the isomorphism $\BC^{*, !}$ comes from \cite[Exp.~XVII, Cor.~4.2.3]{deGabber}. Now we note that the composition 
        \[
        F\coloneqq c_{f_L^\alg}^{-1} \circ \rR f_L^{\alg, !}(\gamma_{a_{Y^\alg}}) \circ \BC^{*, !} \circ a_{X^\alg}^*(c_{f^{\alg}})\circ \gamma^{-1}_{a_{X^\alg}} \colon \omega_{X_L^\alg} \to \omega_{X_L^\alg}
        \]
        is an automorphism of the potential dualizing morphism on $X_L^\alg$. Therefore, \cite[Exp.~XVII, Th.~5.1.1]{deGabber} implies, in order to show that $F=\id$, it suffices to show that $F$ is compatible with pinnings. This, however, follows directly from \cite[Exp.~XVII, Lemme 4.3.2.3]{deGabber} (whose proof does not use surjectivity of the map $g$).

        \item \textit{Proof for general proper $f$.}
        Lastly, we show the statement for general proper morphisms by induction on $d \colonequals \dim X$.
        In the base case $d = -\infty$, there is nothing to prove.
        Now assume that $\dim X = d \ge 0$ and that the statement has been proven in dimensions $<d$.
        At first, we assume moreover that $X$ is reduced.
        Then \cref{modifications-exist} allows us to choose a regular admissible modification $\pi \colon X' \to X$ as in the beginning of proof of \cref{most general proper trace!!}, so we arrive again at the following commutative diagram from \cref{modification-square}:
        \[ \begin{tikzcd}
            Z' \arrow[r,hook,"i'"] \arrow[d,"\pi'"'] & X' \arrow[d,"\pi"] \arrow[rdd,bend left=25,"\pi_Y"] & \\
            Z \ar[r,hook,"i"] \arrow[rrd,bend right=20,"i_Y"] & X \ar[rd,"f"] & \\
            & & Y
        \end{tikzcd} \]
        By the uniqueness from \cref{most general proper trace!!-sequence} in the proof of \cref{most general proper trace!!},
        it suffices to check that the traces $\tr_i$, $\tr_{i_Y}$,
        $\tr_{\pi}$, and $\tr_{\pi_Y}$ are all compatible with change of base field.
        For $\tr_i$ and $\tr_{i_Y}$, this follows from the induction hypothesis.
        Therefore, we may assume that $X$ is smooth, in which case the proper trace agrees with the smooth-source trace in \cref{smooth source trace}, thanks to \cref{most general proper trace!!}\cref{most general proper trace!!-smooth}.
        Our claim for reduced $X$ is now a consequence of the compatibility of both smooth and closed traces with change of base field, as established in \cref{change-base-field-smooth} and \cref{change-base-field-climm}.

        For general (not necessarily reduced) $X$, we consider the diagram
        \[ \begin{tikzcd}
            X_{L,\red} \arrow[d,hook,"\iota_L"] \arrow[dd,bend right,"f_{L,\red}"'] \arrow[r,phantom,"\simeq"] &[-2em] X_{\red,L} \arrow[r,"a_{X_\red}"] &[.5em] X_\red \arrow[d,hook,"\iota"'] \arrow[dd,bend left,"f_\red"] \\
            X_L \arrow[rr,"a_X"] \arrow[d,"f_L"] && X \arrow[d,"f"'] \\
            Y_L \arrow[rr,"a_Y"] && Y,
        \end{tikzcd} \]
        where $\iota$ and $\iota_L$ denote the inclusions of the maximal reduced closed subspaces.
        Since the upper square and the outer rectangle are cartesian, the top right rectangle and the outer rectangle in the following diagram commute thanks to the case of reduced source treated before:
        \[ \begin{tikzcd}[scale cd=.95,center picture,column sep=huge]
            a^*_Y \rR f_{\red, *} (\omega_{X_\red}) \arrow[r, "\BC"] \arrow[d, "\sim"{sloped}, "a_Y^*\rR f_*(\tr_\iota)"'] &[-1em] \rR f_{L, *} a^*_X(\iota_* \omega_{X_\red})\arrow[r, "\rR f_{L, *}(\BC)"] \arrow[d, "\sim"{sloped}, "\rR f_{L, *}a_X^*(\tr_\iota)"']   & \rR f_{L, \red, *} a_{X_\red}^*(\omega_{X_\red}) \arrow[r, "\sim"', "\rR f_{L, \red, *}(\gamma_{X_\red, L})"]  &[2em] \rR f_{L, \red, *}(\omega_{X_{\red, L}}) \arrow[d, "\sim"{sloped}, "\rR f_{L, *}(\tr_{\iota_L})"']  \\ 
            a_Y^* \rR f_* (\omega_X) \arrow[r, "\BC"] \arrow[d, "a_Y^*(\tr_f)"'] & \rR f_{L, *} a_X^*(\omega_X) \arrow[rr, "\sim"', "\rR f_{L, *}(\gamma_{X, L})"]& & \rR f_{L, *}(\omega_{X_L}) \arrow[d, "\tr_{f_L}"'] \\ 
            a_Y^*(\omega_Y) \arrow[rrr, "\gamma_{Y, L}", "\sim"']  & & & \omega_{Y_L}
        \end{tikzcd} \]       
        On the other hand, the upper left rectangle in the diagram commutes by the naturality of the base change map and the left horizontal trace maps are isomorphisms (see \cref{dualizing-nilimmersion}).
        Thus, the right rectangle must also commute.
        This yields the statement for general $X$. \qedhere
    \end{enumerate}
\end{proof}

We recall that \cite[Th.~3.21~(1)]{BH} provides us with a canonical isomorphism $c_f \colon \omega_X \xr{\sim} \rR i^! \omega_Y$ for any finite morphism $f\colon X\to Y$ of rigid-analytic spaces over $K$.
Therefore, for such $f$, we can give an alternative construction of a trace map $\tr_f^{\BH} \colon f_* \omega_X \to \omega_Y$ as the composition
\[
f_* \omega_X \xr[\sim]{f_*(c_f)} f_*\rR f^! \omega_Y \xr{\epsilon_f} \omega_Y.
\]

\begin{proposition}\label{finite trace equals proper trace for finite morphism}
Let $f \colon X \to Y$ be a finite morphism of rigid-analytic spaces over $K$.
Then
\[ \tr_f^\BH = \tr_f \colon f_* \omega_X \longrightarrow \omega_Y. \]
\end{proposition}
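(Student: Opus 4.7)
By \cref{proper trace lives in discrete space}, $\rR\cHom(f_*\omega_X, \omega_Y) \in D^{\ge 0}(Y_\et; \Lambda)$, so both $\tr_f$ and $\tr_f^\BH$ are global sections of a common \'etale sheaf on $Y$; the equality may therefore be checked on stalks at geometric points of $Y$. The plan is to establish enough base-change compatibilities for $\tr^\BH$ to reduce to the case $Y = \Spa(C, C^+)$ for an affinoid field with $C$ algebraically closed, where both traces become transparent.

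To this end, I will first verify three compatibilities of $\tr^\BH$ that parallel known properties of the proper trace $\tr$: (i) compositionality $\tr^\BH_{g \circ f} = \tr^\BH_g \circ g_*(\tr^\BH_f)$ for finite $f, g$, which is formal from the composition formula $c_{g \circ f} = \rR f^!(c_g) \circ c_f$ of BH isomorphisms and the triangle identities for the $(f_*, \rR f^!)$-adjunctions; (ii) \'etale-local nature on $Y$, which follows from the \'etale-local construction of $c_f$ and flat base change for $\rR f^!$; (iii) compatibility with base field extension $K \subset L$, by an argument parallel to \cref{change-base-field-climm}, the only substantive new input being the compatibility of $c_f$ for finite $f$ with the base change isomorphisms $\gamma_{X, L}$, $\gamma_{Y, L}$ of \cite[Th.~3.21(6)]{BH} and the natural transformation $a_X^* \rR f^! \to \rR f_L^! a_Y^*$ coming from \cite[Exp.~XVII, Cor.~4.2.3]{deGabber}. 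The corresponding properties of $\tr$ are \cref{most general proper trace!!}\cref{most general proper trace!!-composition}, \cref{most general proper trace!!}\cref{most general proper trace!!-localization}, and \cref{change-base-field}.

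With these compatibilities in hand for both traces, we reduce to the case $Y = \Spa(C, C^+)$ for an affinoid field with $C$ algebraically closed nonarchimedean. Then $X = \Spa(A, A^+)$ for some finite $C$-algebra $A$, and the canonical nilimmersion $\iota\colon X_\red \hookrightarrow X$ induces an isomorphism $\tr_\iota = \tr^\BH_\iota\colon \iota_* \omega_{X_\red} \xr{\sim} \omega_X$ by \cref{dualizing-nilimmersion} and \cref{most general proper trace!!}\cref{most general proper trace!!-closed}. By compositionality it then suffices to verify $\tr^\BH_{f \circ \iota} = \tr_{f \circ \iota}$. Since $C$ is algebraically closed and $A_\red$ is finite reduced over $C$, we have $A_\red \simeq \prod_i C$, so $X_\red \simeq \bigsqcup_i Y$ and $f \circ \iota$ restricts to the identity on each connected component. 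For the identity morphism both traces are evidently the identity, finishing the proof.

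The main obstacle will be verifying compatibility (iii). Although conceptually parallel to the closed-immersion case handled in \cref{change-base-field-climm}, extending the argument to general finite morphisms requires carefully tracking how the BH isomorphism $c_f\colon \omega_X \xr{\sim} \rR f^! \omega_Y$ interacts with all the base change data involved, and depends on a version of \cite[Exp.~XVII, Lem.~4.3.2.3]{deGabber} valid for finite (not merely closed immersion) morphisms.
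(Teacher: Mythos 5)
There is a genuine gap in your reduction.
You claim that after invoking \cref{proper trace lives in discrete space} the equality "may be checked on stalks at geometric points of $Y$" and that \'etale localness plus base-field-extension compatibility then lets you replace $Y$ by $\Spa(C,C^+)$. Neither mechanism achieves this. \'Etale localness (which you call compatibility~(ii)) only allows you to replace $Y$ by an \'etale neighborhood, which in general is still a positive-dimensional rigid space; compatibility~(iii) changes the ground field $K$ but likewise leaves $Y$ positive-dimensional. Passing to the literal stalk at a geometric point $\bar y$ (a colimit over \'etale neighborhoods) takes you outside the category of rigid-analytic spaces over a nonarchimedean field of characteristic $0$ --- the space $\Spa\bigl(\widehat{\overline{k(y)}},\widehat{\overline{k(y)}}^+\bigr)$ with $\widehat{\overline{k(y)}}^+\neq \cO_{\widehat{\overline{k(y)}}}$ is not locally of finite type over $\Spa(K,\cO_K)$, and the dualizing-complex machinery and traces $\tr$, $\tr^{\BH}$ are simply not defined there. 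The reduction that \emph{would} work, if one insists on getting down to a point, is a $\rR i_y^!$-at-classical-points argument in the style of Step~2 of the proof of \cref{general Poincare duality} combined with \cref{change-base-field}; but that requires a trace analogue of \cref{Poincare transformation and closed immersion compatibility} (compatibility of $\tr$ and $\tr^{\BH}$ with $\rR i^!$-pullback along classical points), which you do not formulate or prove, and which is not the same as \'etale localness.

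The paper avoids all of this by a different route. After reducing to $X$ and $Y$ reduced, $X$ connected, and $f$ surjective, one observes that $\rR\cHom(f_*\omega_X,\omega_Y)\simeq f_*\rR\cHom(\omega_X,\omega_X)\simeq f_*\ud\Lambda_X$, so that the restriction map $\cHom(f_*\omega_X,\omega_Y)(Y)\to \cHom(f_*\omega_X,\omega_Y)(U)$ is \emph{injective} for every nonempty open $U\subset Y$ (since $X$ is connected and $f$ surjective, the section sheaf is $\ud\Lambda_Y$). This lets one shrink $Y$ to any nonempty open without losing information, first to a locus where $f$ is finite \'etale, then to a locus where $f$ is a disjoint union of isomorphisms, where the equality is trivial. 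This sequence never leaves the category of rigid-analytic spaces over a fixed $K$, never needs base-field extension, and in particular completely sidesteps the compatibility~(iii) that you yourself flag as the main obstacle. You should replace your stalkwise reduction by this injectivity-of-restriction argument (or, alternatively, develop the $\rR i^!$-compatibility of both traces and then combine it with $\cref{change-base-field}$), since as written the passage from ``stalks at geometric points'' to ``$Y=\Spa(C,C^+)$'' is not justified.
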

\begin{proof}
First, note that when $f$ is a closed immersion, the statement follows from \cref{most general proper trace!!}\cref{most general proper trace!!-closed}.
Now we can use \cref{dualizing-nilimmersion} and the fact that both $\tr_f$ and $\tr_f^\BH$ respect compositions to reduce the question to the case when $X$ and $Y$ are reduced.
Furthermore, we can assume that $X$ is connected by arguing one clopen connected component of $X$ at a time;
cf.\ \cite[Cor.~2.3]{adic-notes}.
Finally, $f(X)\subset Y$ is Zariski-closed by virtue of \cite[Prop.~9.6.3/3]{BGR}, so we can replace $Y$ with $f(X)$ to assume that $f$ is surjective. 

Thanks to \cite[Th.~3.21]{BH} and our assumption that $X$ is connected,
\[
\rR\cHom(f_*\omega_X, \omega_Y) = \rR f_* \rR\cHom(\omega_X, \omega_X) = \rR f_* \ud{\Lambda}_X \in D^{\geq 0}(Y_\et; \Lambda)
\]
and $\cHom(f_*\omega_X, \omega_Y)(Y) \to \cHom(f_*\omega_X, \omega_Y)(U)$ is injective for any nonempty open subspace $U\subset Y$.
Since both traces are \'etale local on $Y$, we may thus prove the statement after replacing $Y$ with any such nonempty open $U\subset Y$ and $X$ with $X_U\coloneqq X\times_Y U$ (after this procedure, $X$ might be disconnected but it is not important for the rest of the proof).
Recall that we assume that $X$ and $Y$ are reduced and that $f$ is surjective, so there is a nonempty open $U \subset Y$ such that $\restr{f}{f^{-1}(U)}\colon f^{-1}(U)\to U$ is finite \'etale. 
Hence, we can assume that $f$ is finite \'etale.

Finally, both traces are \'etale local on the target and any finite \'etale morphism is \'etale locally a disjoint union of isomorphisms (or $X$ is empty). 
Therefore, we reduce to the case when $f$ is an isomorphism (or $X$ is empty).
In this case, the claim becomes trivial.
\end{proof}

\subsection{Poincar\'{e} duality for Zariski constructible coefficients}
We recall that, throughout this section, we always assume that $K$ is a nonarchimedean field of characteristic $0$, $n$ is a positive integer, and $\Lambda = \Z/n\Z$.

The goal of this subsection is to prove a version of Poincar\'e duality for general proper morphisms of rigid-analytic spaces and Zariski-constructible coefficients.
More precisely, we will show that given a proper morphism $f \colon X \to Y$ of rigid-analytic spaces over $K$, the functor $\rR f_* \colon D_\zc(X_\et; \Lambda) \to D_\zc(Y_\et; \Lambda)$ commutes with Verdier duality;
this confirms an expectation of Bhatt--Hansen (see \cite[Rmk.~3.23]{BH}).
As an application of our main result, we deduce duality for intersection cohomology on certain non-smooth and non-proper rigid-analytic spaces. In particular, this confirms the expectation raised in the comment after  \cite[Th.~4.13]{BH}. 

We begin by setting up some notation:
\begin{notation}\label{morphism-transformations}
Let $f \colon X \to Y$ be a morphism of rigid-analytic spaces over $K$.
\begin{enumerate}[leftmargin=*,label=\upshape{(\roman*)}]
\item\label{morphism-transformations-evaluation} (see e.g.\ \cite[\href{https://stacks.math.columbia.edu/tag/0B6D}{Tag~0B6D}]{stacks-project})
The \emph{evaluation transformation}
\[ \Ev_f \colon \rR f_* \rR\cHom(\blank,\blank) \to \rR\cHom\bigl(\rR f_*(\blank),\rR f_*(\blank)\bigr) \]
is the natural transformation of functors given on objects $\cE,\cE' \in D(X_\et;\Lambda)$ as the tensor-hom adjoint to the composition
\[ \rR f_*\rR\cHom(\cE,\cE') \otimes^L \rR f_* \cE \xr{\cup} \rR f_*\bigl(\rR\cHom(\cE,\cE') \otimes^L \cE\bigr) \xr{\rR f_*(\eval)} \rR f_* \cE' \]
of the relative cup product from \cite[\href{https://stacks.math.columbia.edu/tag/0B6C}{Tag~0B6C}]{stacks-project} (or \cref{cup-product}) and the derived pushforward of the evaluation map.
\item\label{morphism-transformations-adjunction} The adjunction between $f^*$ and $\rR f_*$ upgrades to an isomorphism
\[ \Adj_f \colon \rR f_* \rR\cHom(f^*\cE,\cE') \xlongrightarrow{\Ev_f} \rR\cHom(\rR f_*f^*\cE,\rR f_*\cE') \xlongrightarrow{\blank \circ \eta_f} \rR\cHom(\cE,\rR f_*\cE') \]
for any $\cE \in D(Y_\et;\Lambda)$ and $\cE' \in D(X_\et;\Lambda)$, where $\eta_f$ denotes the unit of the $(f^*,\rR f_*)$-adjunction.
This isomorphism is functorial in $\cE$ and $\cE'$ and hence gives rise to a natural equivalence of functors.
\end{enumerate}
\end{notation}
\begin{remark}\label{morphism-transformations-composition}
    The evaluation and adjunction transformations are compatible with compositions.
    That is, given morphisms $f \colon X \to Y$ and $g \colon Y \to Z$ of rigid-analytic spaces over $K$, we have $\Ev_{g \circ f} = \Ev_g \circ \rR g_*(\Ev_f)$ and $\Adj_{g \circ f} = \Adj_g \circ \rR g_* \Adj_f$.
    Unwinding definitions, the first identity concerning $\Ev$ amounts to the commutativity of the diagram
    \[ \begin{tikzcd}[scale cd=.88,center picture]
        \rR g_* \rR f_* \rR\cHom(\cE, \cE') \otimes^L \rR g_* \rR f_* \cE' \arrow[r,"\cup"] \arrow[rr,bend left=10,"\cup"] \arrow[d,"\rR g_*(\Ev_f) \otimes^L \id"] & \rR g_* (\rR f_* \rR\cHom(\cE, \cE') \otimes^L \rR f_* \cE') \arrow[r,"\rR g_*(\cup)"] \arrow[d,"\rR g_*(\Ev_f \otimes^L \id)"] & \rR g_* \rR f_*(\rR\cHom(\cE, \cE') \otimes^L \cE') \arrow[d,"\rR g_* \rR f_*(\eval)"] \\
        \rR g_* \rR\cHom(\rR f_* \cE, \rR f_* \cE') \otimes^L \rR g_* \rR f_* \cE' \arrow[r,"\cup"] & \rR g_* (\rR\cHom(\rR f_* \cE, \rR f_* \cE') \otimes^L \rR f_* \cE') \arrow[r,"\rR g_*(\eval)"] & \rR g_* \rR f_*\cE';
    \end{tikzcd} \]
    note that the right square commutes already before applying the derived pushforward $\rR g_*$ because the composition with the counit of the derived tensor-hom adjunction in the bottom horizontal map produces the adjoint of $\Ev_f \otimes^L \id$, which is by definition the composition in the clockwise direction.
    The case of $\Adj$ then quickly reduces to the case of $\Ev$ via the commutative diagram
    \[ \begin{tikzcd}
        \rR g_* \rR f_* \rR\cHom(f^* g^* \cE, \cE') \arrow[r,"\Ev_{g \circ f}"] \arrow[d,"\rR g_*(\Ev_f)"] & \rR \cHom(\rR g_* \rR f_* f^* g^* \cE,\rR g_* \rR f_* \cE') \arrow[rd,near start,"\rR g_*(\blank \circ \eta_f)"'] \arrow[r,"\blank \circ \eta_{g \circ f}"] & \rR \cHom(\cE,\rR g_* \rR f_* \cE') \\
        \rR g_* \rR \cHom(\rR f_* f^* g^* \cE, \rR f_* \cE') \arrow[r,"\rR g_*(\blank \circ \eta_f)"] \arrow[ru,"\Ev_g"] & \rR g_* \rR \cHom(g^* \cE,\rR f_* \cE') \arrow[r,"\Ev_g"] & \rR \cHom(\rR g_* g^* \cE,\rR g_* \rR f_* \cE'). \arrow[u,"\blank \circ \eta_g"]
    \end{tikzcd} \]
\end{remark}
The evaluation and adjunction isomorphism satisfy moreover the following compatibility:
\begin{lemma}\label{evaluation-adjunction-compatibility}
    Let $f \colon X \to Y$ be a morphism of rigid-analytic spaces over $K$.
    Let $\cE,\cE' \in D(X_\et;\Lambda)$ and denote the counit of the $(f^*,\rR f_*)$-adjunction by $\epsilon_f \colon f^* \circ \rR f_* \to \id$.
    Then the following diagram commutes:
    \[ \begin{tikzcd}[column sep=huge]
        \rR f_* \rR\cHom(\cE,\cE') \arrow[r,"\rR f_* \circ (\blank \circ \epsilon_f)"] \arrow[rd,"\Ev_f"'] & \rR f_* \rR\cHom(f^*\rR f_*\cE,\cE') \arrow[d,"\Adj_f"] \\
        & \rR\cHom(\rR f_*\cE,\rR f_*\cE')
    \end{tikzcd} \]
\end{lemma}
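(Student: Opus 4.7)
The plan is to unfold the definition of $\Adj_f$ and reduce the claim to a combination of contravariant naturality of $\Ev_f$ in its first argument together with one of the triangle identities of the adjunction $(f^*, \rR f_*)$. By \cref{morphism-transformations}\cref{morphism-transformations-adjunction}, the map $\Adj_f$ applied to the pair $(\rR f_*\cE, \cE')$ equals the composition
\[
(\blank \circ \eta_f) \circ \Ev_f \colon \rR f_*\rR\cHom(f^*\rR f_*\cE, \cE') \longrightarrow \rR\cHom(\rR f_*\cE, \rR f_*\cE'),
\]
so the lemma amounts to showing the identity
\[
(\blank \circ \eta_f) \circ \Ev_f \circ \rR f_*(\blank \circ \epsilon_f) = \Ev_f.
\]

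First, I would check that $\Ev_f$ is natural (contravariantly) in its first argument. This is a routine diagram chase from the definition of $\Ev_f$ in \cref{morphism-transformations}\cref{morphism-transformations-evaluation} as the tensor-hom adjoint of a relative cup product followed by the derived pushforward of the evaluation map, using the functoriality of the cup product and of the evaluation map $\rR\cHom(\blank, \cE') \otimes^L \blank \to \cE'$ in their first argument. Applying this naturality to the counit $\epsilon_f \colon f^*\rR f_*\cE \to \cE$ produces a commutative square that expresses
\[
\Ev_f \circ \rR f_*(\blank \circ \epsilon_f) = (\blank \circ \rR f_*(\epsilon_f)) \circ \Ev_f.
\]
Substituting this into the left-hand side of the target identity yields
\[
(\blank \circ \eta_f) \circ (\blank \circ \rR f_*(\epsilon_f)) \circ \Ev_f = \bigl(\blank \circ (\rR f_*(\epsilon_f) \circ \eta_f)\bigr) \circ \Ev_f,
\]
and the triangle identity $\rR f_*(\epsilon_f) \circ \eta_f = \id_{\rR f_*\cE}$ for the adjunction $(f^*, \rR f_*)$ collapses the parenthesized factor to the identity, leaving $\Ev_f$, as desired.

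The entire argument is essentially formal category theory built on the $(f^*, \rR f_*)$-adjunction, and it does not use any input from rigid-analytic geometry or from the trace map constructions developed earlier in the paper. The only nontrivial step is the verification of the naturality of $\Ev_f$ in its first variable, which is the main (but still routine) point.
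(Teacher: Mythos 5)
Your proof is correct and follows essentially the same route as the paper: unfold $\Adj_f$ via its definition, use naturality of $\Ev_f$ in the first argument to rewrite $\Ev_f \circ \rR f_*(\blank \circ \epsilon_f)$ as $(\blank \circ \rR f_*(\epsilon_f)) \circ \Ev_f$, then collapse with the triangle identity $\rR f_*(\epsilon_f) \circ \eta_f = \id$. The paper organizes this as a commuting square (naturality) plus a lower triangle (unit–counit identity), which is exactly your equational manipulation in diagram form.
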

\begin{proof}
    Using \cref{morphism-transformations}\cref{morphism-transformations-adjunction}, we can expand the diagram in the statement as follows:
    \[ \begin{tikzcd}[column sep=huge]
        \rR f_* \rR\cHom(\cE,\cE') \arrow[r,"\rR f_* \circ (\blank \circ \epsilon_f)"] \arrow[d,"\Ev_f"'] & \rR f_* \rR\cHom(f^*\rR f_*\cE,\cE') \arrow[d,"\Ev_f"] \\
        \rR\cHom(\rR f_*\cE,\rR f_*\cE') \arrow[r,"\blank \circ \rR f_*(\epsilon_f)"] \arrow[rd,equals] & \rR\cHom(\rR f_*f^*\rR f_*\cE, \rR f_*\cE') \arrow[d,"\blank \circ \eta_f"] \\
        & \rR\cHom(\rR f_*\cE,\rR f_*\cE');
    \end{tikzcd} \]
    here, $\eta_f$ denotes the unit for the $(f^*,\rR f_*)$-adjunction.
    The upper square commutes by the naturality of the evaluation transformation.
    The commutativity of the lower triangle is a standard exercise about the relationship of units and counits of adjunctions (see \cite[\href{https://stacks.math.columbia.edu/tag/0GLL}{Tag~0GLL}]{stacks-project}).
    This yields the assertion.
\end{proof}
Now we define the duality morphism.
\begin{definition}\label{duality}
\begin{enumerate}[leftmargin=*,label=\upshape{(\roman*)}]
    \item (\emph{Duality functor})
    Let $X$ be a rigid-analytic space over $K$.
    The \emph{Verdier duality functor} is given by 
    \[ \DD_X(\blank) \colonequals \rR \cHom(\blank,\omega_X) \colon D(X_\et; \Lambda)^\op \to D(X_\et; \Lambda). \]
    
    \item\label{duality-PD-transformation} (\emph{Duality map})
    Let $f \colon X \to Y$ be a proper morphism of rigid-analytic spaces over $K$.
    The \emph{Poincar\'e duality transformation} is given on an object $\cE \in D(X_\et;\Lambda)$ by the composition 
    \begin{multline*}
        \PD_f(\cE) \colon \rR f_* \DD_X(\cE) = \rR f_*\rR \cHom (\cE, \omega_X) \xlongrightarrow{\Ev_f} \rR\cHom(\rR f_*\cE,\rR f_* \omega_X) \\ \xlongrightarrow{\tr_f \circ \blank} \rR\cHom(\rR f_*\cE, \omega_Y) = \DD_Y(\rR f_*\cE),
    \end{multline*}
    where the first morphism comes from \cref{morphism-transformations}\cref{morphism-transformations-evaluation} and the second morphism is given by postcomposition with the proper trace map $\tr_f$ from \cref{most general proper trace!!}.
    
    This composition is functorial in $\cE$ and hence defines a natural transformation of functors 
    \[ \PD_f \colon \rR f_* \circ \DD_X \to \DD_Y \circ \rR f_* \colon D(X_\et; \Lambda)^\op \to D(Y_\et; \Lambda). \]
\end{enumerate}
\end{definition}
The main goal of this subsection is to show that $\PD_f(\cF)$ is an isomorphism for proper $f$ and Zariski-constructible $\cF \in D^{(b)}_\zc(X_\et;\Lambda)$.
Before we embark on the proof, we need to establish some preliminary results. 
The first thing we discuss is the behavior of the Poincar\'e duality transformation with respect to the upper shriek functors. 

\begin{notation}\label{notation:duality-shriek-pullback} Consider a cartesian diagram of rigid-analytic spaces over $K$ 
\[ \begin{tikzcd}
    X' \arrow{d}{f'} \arrow{r}{i'} & X \arrow{d}{f} \\
    Y' \arrow{r}{i} & Y
\end{tikzcd} \]
such that $f$ and $f'$ are proper and $i$ and $i'$ are closed immersions.
Then we have the following natural transformations:
\begin{enumerate}[label=\upshape{(\roman*)}]
    \item the \emph{exchange transformation} $\Ex_i \colon \DD_{Y'} \circ i^* \to \rR i^! \circ \DD_Y$, defined as the $(i_*, \rR i^!)$-adjoint of the composition
    \[ i_* \circ \DD_{Y'} \circ i^* = i_* \rR\cHom\bigl(i^*(\blank),\omega_{Y'}\bigr) \xlongrightarrow{\Adj_i} \rR\cHom(\blank,i_*\omega_{Y'}) \xlongrightarrow{\tr_i \circ \blank} \rR\cHom(\blank,\omega_Y) = \DD_Y, \]
    where $\eta_i$ is the unit of the $(i^*, i_*)$-adjunction;\footnote{Note that the Verdier duality functor $\DD_X$ is contravariant, so it reverses the direction of morphisms.}
    \item the \emph{base change transformation} $\BC\colon i^* \circ \rR f_* \to \rR f'_* \circ i'^*$, defined as the $(f^{\prime,*},\rR f'_*)$-adjoint of
    \[ f^{\prime,*} \circ i^* \circ \rR f_* = i^{\prime,*} \circ f^* \circ \rR f_* \xlongrightarrow{i^{\prime,*}(\epsilon_f)} i^{\prime,*}; \]
    \item the \emph{shriek base change transformation} $\BC^! \colon \rR f'_* \circ \rR i'^! \to \rR i^! \circ \rR f_*$, defined as the $(i_*,\rR i^!)$-adjoint of 
    \[i_* \circ \rR f'_* \circ \rR i'^! = \rR f_* \circ i'_* \circ \rR i'^! \xr{\rR f_*(\epsilon_{i'})} \rR f_*.\]
\end{enumerate}
\end{notation}
These natural transformations interact with the Poincar\'e duality transformation from \cref{duality}\cref{duality-PD-transformation} in the following manner:
\begin{proposition}
\label{Poincare transformation and closed immersion compatibility}
With \cref{notation:duality-shriek-pullback}, the following diagram of natural transformations between contravariant functors from $D(X)$ to $D(Y')$ commutes:
\begin{equation}\label{eqn:dualuty-*-!} \begin{tikzcd}[column sep=huge]
    \rR f'_* \circ \DD_{X'} \circ i^{\prime,*} \arrow[r, "\rR f'_* \circ \Ex_{i'}"] \arrow[d, "\PD_{f'} \circ i^{\prime,*}"] & \rR f'_* \circ \rR i^{\prime,!} \circ \DD_{X} \arrow[r, "\BC^! \circ \DD_X"] & \rR i^! \circ \rR f_* \circ \DD_{X} \arrow[d, "\rR i^! \circ \PD_f"] \\
    \DD_{Y'} \circ \rR f'_* \circ i^{\prime,*} \arrow[r, "\DD_{Y'} \circ \BC"] &
    \DD_{Y'} \circ i^* \circ \rR f_* \arrow[r, "\Ex_{i} \circ \rR f_*"] &
    \rR i^! \circ \DD_{Y} \circ \rR f_*.
\end{tikzcd} 
\end{equation}
    Furthermore, all horizontal transformations become isomorphisms when evaluated on $\F\in D^{(b)}_\zc(X_\et; \Lambda)$. 
\end{proposition}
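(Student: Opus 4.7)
The plan is to prove the two statements of the Proposition in succession: commutativity of the diagram first, then the isomorphism claim on $D_\zc^{(b)}$. Both will ultimately rest on the composition compatibility $\tr_{g\circ h}=\tr_g\circ \rR g_*(\tr_h)$ from \cref{most general proper trace!!}\cref{most general proper trace!!-composition} and the naturality of $\Ev$ and $\Adj$ from \cref{morphism-transformations}. The key preliminary step is to upgrade \cref{morphism-transformations-composition} from $\Ev$ and $\Adj$ to $\PD$ itself: for any composable pair of proper maps $W\xrightarrow{h}X\xrightarrow{g}Y$, the identity $\PD_{g\circ h}=\PD_g\circ \rR g_*(\PD_h)$ holds. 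This follows by unwinding $\PD_{g\circ h}(\F)=(\tr_{g\circ h}\circ\blank)\circ\Ev_{g\circ h}(\F)$, substituting the composition rules for $\tr$ and $\Ev$, and applying naturality of $\Ev_g$ in its second argument to commute $(\rR g_*(\tr_h)\circ\blank)$ past $\Ev_g(\rR h_*\F,\rR h_*\omega_W)$.

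For the commutativity of the diagram, pass both compositions to their $(i_*,\rR i^!)$-adjoints, so that both become morphisms $\rR f_* i'_* \DD_{X'} i'^*\F \to \DD_Y\rR f_*\F$ via the identification $i_*\rR f'_*=\rR f_* i'_*$. Along the top path, the identity $\epsilon_i\circ i_*\BC^!=\rR f_*(\epsilon_{i'})$ (which is the very definition of $\BC^!$) together with the description of $\Ex_{i'}$ as the adjoint of $(\tr_{i'}\circ\blank)\circ\Adj_{i'}$ collapses the composition to $\PD_f(\F)\circ \rR f_*\!\left[(\blank\circ\eta_{i'})\circ\PD_{i'}(i'^*\F)\right]$. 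Naturality of $\PD_f$ in its argument then rewrites this as $\bigl(\blank\circ \rR f_*(\eta_{i'})\bigr)\circ\PD_{f\circ i'}(i'^*\F)$ using the composition rule for $\PD$ established above. A parallel manipulation reduces the adjoint of the bottom path to $\bigl(\blank\circ(i_*\BC\circ\eta_i)\bigr)\circ\PD_{i\circ f'}(i'^*\F)$. The equality of the two expressions follows from $f\circ i'=i\circ f'$ (so $\PD_{f\circ i'}=\PD_{i\circ f'}$) combined with the identity $i_*\BC\circ\eta_i=\rR f_*(\eta_{i'})$, which is the standard compatibility of the units of $(f^*,\rR f_*)$ and $(i^*,i_*)$ attached to a $2$-Cartesian square (both sides are the unit $\eta_{f\circ i'}=\eta_{i\circ f'}$ transported to the outer composition).

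For the isomorphism claim, each horizontal arrow can be handled directly: $\Ex_{i'}$ and $\Ex_i$ are isomorphisms on \emph{all} complexes, since the standard tensor-hom identity $\rR i'^!\rR\cHom(\blank,\omega_X)\simeq \rR\cHom(i'^*(\blank),\rR i'^!\omega_X)$ combined with the canonical identification $\rR i'^!\omega_X\simeq \omega_{X'}$ from \cite[Th.~3.21(1)]{BH} reconstructs $\Ex_{i'}$; the map $\BC$ restricts to an isomorphism on $D_\zc^{(b)}$ by proper base change (\cref{lemma:proper-base-change}), so $\DD_{Y'}(\BC)$ is also an isomorphism; and $\BC^!$ is an isomorphism on all complexes $\cH\in D(X)$ by comparing the canonical local cohomology triangles $i'_*\rR i'^!\cH\to \cH\to \rR j'_*j'^*\cH$ and $i_*\rR i^!\rR f_*\cH\to \rR f_*\cH\to \rR j_*j^*\rR f_*\cH$ (for the open complements $j',j$) after $\rR f_*$, observing that the elementary identity $\rR f_*\rR j'_*j'^*\cH\simeq \rR j_*j^*\rR f_*\cH$ identifies the right-hand terms, and appealing to the conservativity of $i_*$. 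The main technical obstacle is the careful diagram chase in the second paragraph, in particular verifying that the abstract composition rule for $\PD$ combines correctly with the definitions of $\BC^!$, $\Ex_{i'}$, and $\Ex_i$; once this bookkeeping is done, the rest is formal.
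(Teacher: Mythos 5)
Your argument is correct, and it organizes the verification differently from the paper. The paper also begins by passing to $(i_*,\rR i^!)$-adjoints, but then plugs in all the definitions and checks a single large expanded diagram cell by cell (each cell being a naturality square or one of \cref{evaluation-adjunction-compatibility}, \cref{morphism-transformations-composition}, the unit--counit triangle, or \cref{most general proper trace!!}\cref{most general proper trace!!-composition}). You instead isolate the composition law $\PD_{g\circ h}=\PD_g\circ\rR g_*(\PD_h)$ as a standalone consequence of \cref{morphism-transformations-composition} and \cref{most general proper trace!!}\cref{most general proper trace!!-composition}, and then compute the two adjoint paths explicitly, reducing everything to $f\circ i'=i\circ f'$ together with one ``mate'' identity. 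This is arguably cleaner: it makes the role of the trace composition rule transparent and packages most of the cell-by-cell bookkeeping into one reusable lemma. The cost is that the reader must verify the intermediate composition law, which the paper embeds directly into its diagram.

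One imprecision: your justification of the identity $\rR f_*(\eta_{i'}) = i_*(\BC)\circ\eta_i$ is stated as ``both sides are the unit $\eta_{f\circ i'}$ transported to the outer composition,'' which is not literally true ---$\eta_{f\circ i'}$ lands in $\rR f_* i'_* i'^* f^*$, not $\rR f_* i'_* i'^*$. The correct justification is the standard mate computation: both sides are morphisms $\rR f_* \to i_*\rR f'_*i'^*$, and passing to their $(i^*,i_*)$-adjoints yields $\BC$ on the right-hand side by the triangle identity, while the $(i'^*f^*\dashv\rR f_*i'_*)$-adjoint of the left-hand side unwinds, by naturality of $\epsilon_f$ and the triangle identity for $(i'^*,i'_*)$, to $i'^*\epsilon_f$, which is exactly the definition of $\BC$ as an adjoint. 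So the identity holds, but for a slightly different reason than you wrote.

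Your treatment of the isomorphism claim is also correct and in fact slightly sharper than the paper's: you observe that $\Ex_{i'}$, $\Ex_i$, and $\BC^!$ are isomorphisms on all of $D(X_\et;\Lambda)$ (the exchange maps because they decompose, via \cite[Th.~3.21~(1)]{BH} and the tensor-hom identity $\rR i^!\rR\cHom(\cG,\cE)\simeq\rR\cHom(i^*\cG,\rR i^!\cE)$, into isomorphisms; the map $\BC^!$ by the localization-triangle comparison, which is essentially the paper's \cref{lemma:shriek-star-base-change}), so that only $\DD_{Y'}\circ\BC$ genuinely requires $\F\in D^{(b)}_\zc$ through proper base change. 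The paper cites \cref{lemma:proper-base-change}, \cref{lemma:shriek-star-base-change}, and \cite[Th.~3.21~(4)]{BH} together without drawing this distinction; your version is the more informative account.
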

Here, the first bottom horizontal arrow has its direction seemingly reversed, which is due to the fact that the Verdier duality functor $\DD_{Y'}$ is contravariant.
\begin{proof}
    To lighten notation, we drop all the derived notation for pushforwards and exceptional inverse images for the duration of this proof; 
    for example, we write $f_*$ instead of $\rR f_*$ and $i^!$ instead of $\rR i^!$.
    Moreover, we omit all the ``$\circ$''-symbols between functors and natural transformations.
    As a further simplification, we denote the functor $\rR\cHom(\blank,f_* \omega_X) \colon D(Y_\et;\Lambda)^\op \to D(Y_\et; \Lambda)$ by $\DD_{X \to Y}(\blank)$, and similarly for other morphisms.
    With these shorthands, we have for example 
    \[ \PD_f \colon f_* \DD_X \xlongrightarrow{\Ev_f} \DD_{X \to Y} f_* \xlongrightarrow{\tr_f \circ \blank} \DD_Y f_* \quad \text{and} \quad \Adj_f \colon f_* \DD_X f^* \xlongrightarrow{\Ev_f} \DD_{X \to Y}f_*f^* \xlongrightarrow{\blank \circ \eta_f} \DD_{X \to Y}. \]

    We begin with the commutativity of the diagram \cref{eqn:dualuty-*-!}.
    By the $(i_*,i^!)$-adjunction, it suffices to show the commutativity of the corresponding diagram where we add $i_*$ to the left four terms and drop the $i^!$ from the right two terms.
    Plugging in the definitions of all the transformations, this adjoint diagram becomes the outer rim of the following diagram:
    \[ \begin{tikzcd}[scale cd=.9,center picture]
        i_* f'_* \DD_{X'} i^{\prime,*} \arrow[r,"i_* f'_* \eta_{i'}"] \arrow[rd,equals] \arrow[rdd,"i_* f'_* \DD_{X'} i^{\prime,*} \epsilon_f" description] \arrow[dddd,"i_* \Ev_{f'}"] & i_* f'_* i^{\prime,!} i'_* \DD_{X'} i^{\prime,*} \arrow[r,"i_* f'_* i^{\prime,!} \Adj_{i'}"] \arrow[rd,equals] &[.5em] i_* f'_* i^{\prime,!} \DD_{X' \to X} \arrow[r,"i_* f'_* i^{\prime,!} (\tr_{i'} \circ \blank)"] \arrow[rd,equals] &[.5em] i_* f'_* i^{\prime,!} \DD_X \arrow[r,equals] &[.5em] f_* i'_* i^{\prime,!} \DD_X \arrow[r,"f_* \epsilon_{i'}"] & f_* \DD_X \arrow[dddd,"\Ev_f"] \\
        & f_* i'_* \DD_{X'} i^{\prime,*} \arrow[r,"f_* i'_* \eta_{i'}"] \arrow[rd,equals] \arrow[rrd,pos=.35,font=\scriptsize,eq=Poincare transformation and closed immersion compatibility-unit-counit] & f_* i'_* i^{\prime,!} i'_* \DD_{X'} i^{\prime,*} \arrow[r,"f_* i'_* i^{\prime,!} \Adj_{i'}"] \arrow[d,"f_* \epsilon_{i'}"] & f_* i'_* i^{\prime,!} \DD_{X' \to X} \arrow[r,"f_* \epsilon_{i'}"] \arrow[ru,"f_* i'_* i'^!(\tr_{i'} \circ \blank)" description] & f_* \DD_{X' \to X} \arrow[ru,"f_* (\tr_{i'} \circ \blank)" description] \arrow[ddd,"\Ev_f"] \arrow[ld,"f_* \DD_{X' \to X} \epsilon_f" description] \arrow[lddd,pos=.3,font=\scriptsize,eq=Poincare transformation and closed immersion compatibility-evaluation-adjunction] & \\
        & i_* f'_* \DD_{X'} i^{\prime,*} f^* f_* \arrow[dd,"i_* \Ev_{f'}"] & f_* i'_* \DD_{X'} i^{\prime,*} \arrow[rru,"f_* \Adj_{i'}" description] \arrow[rd,"f_* i'_* \DD_{X'} i^{\prime,*} \epsilon_f" description] & f_* \DD_{X' \to X} f^* f_* \arrow[rdd,bend left=25,"\Adj_f" description] \arrow[rdd,pos=.6,font=\scriptsize,eq=Poincare transformation and closed immersion compatibility-adjunction-1] && \\[.5em]
        && i_* f'_* \DD_{X'} f^{\prime,*} i^* f_* \arrow[r,equals] \arrow[d,"i_* \Ev_{f'}"'] \arrow[rd,"i_* \Adj_{f'}" description] \arrow[rrd,bend left=2.5,"\Adj_{i \circ f'}" description] \arrow[rdd,pos=.29,font=\scriptsize,eq=defn-adj] \arrow[rrrdd,shift left=1ex,pos=.35,font=\scriptsize,eq=Poincare transformation and closed immersion compatibility-adjunction-2] & f_* i^{\prime,*} \DD_{X'} i^{\prime,*} f^* f_* \arrow[u,"f_* \Adj_{i'}" description] \arrow[rd,"\Adj_{f \circ i'}" description] && \\[.5em]
        i_* \DD_{X' \to Y'} f'_* i^{\prime,*} \arrow[r,outer sep=.7em,"i_* \DD_{X' \to Y'} f'_* i^{\prime,*} \epsilon_f"] \arrow[d,"i_* (\tr_{f'} \circ \blank)"] & i_* \DD_{X' \to Y'} f'_* i^{\prime,*} f^* f_* \arrow[r,equals] \arrow[d,"i_* (\tr_{f'} \circ \blank)"] & i_* \DD_{X' \to Y'} f'_* f^{\prime,*} i^* f_* \arrow[r,outer sep=.3em,"i_* \DD_{X' \to Y'} \eta_{f'}"'] \arrow[d,"i_* (\tr_{f'} \circ \blank)"] & i_* \DD_{X' \to Y'} i^* f_* \arrow[r,"\Adj_i"'] \arrow[d,"i_* (\tr_{f'} \circ \blank)"] & \DD_{X' \to Y} f_* \arrow[r,outer sep=.4em,"(f_*\tr_{i'} \circ \blank)"] \arrow[d,"(i_*\tr_{f'} \circ \blank)"'] \arrow[rd,font=\scriptsize,eq=Poincare transformation and closed immersion compatibility-trace-composition] & \DD_{X \to Y} f_* \arrow[d,"(\tr_f \circ \blank)"] \\[1em]
        i_* \DD_{Y'} f'_* i^{\prime,*} \arrow[r,"i_* \DD_{Y'} f'_* i^{\prime,*} \epsilon_f"] & i_* \DD_{Y'} f'_* i^{\prime,*} f^* f_* \arrow[r,equals] & i_* \DD_{Y'} f'_* f^{\prime,*} i^* f_* \arrow[r,"i_* \DD_{Y'} \eta_{f'}"] & i_* \DD_{Y'} i^* f_* \arrow[r,"\Adj_i"] & \DD_{Y' \to Y} f_* \arrow[r,"(\tr_i \circ \blank)"] & \DD_Y f_*
    \end{tikzcd} \]
    For the most part, the various cells in this diagram commute thanks to the naturality of the evaluation transformations $\Ev$, the adjunction transformations $\Adj$, the counit $\epsilon_{i'}$, and the transformations given by precomposition with traces, with the following exceptions:
    \begin{itemize}
        \item the triangle \cref{Poincare transformation and closed immersion compatibility-unit-counit} commutes by the unit-counit identity for adjunctions (see \cite[\href{https://stacks.math.columbia.edu/tag/0GLL}{Tag~0GLL}]{stacks-project});
        \item the triangle \cref{Poincare transformation and closed immersion compatibility-evaluation-adjunction} commutes by \cref{evaluation-adjunction-compatibility};
        \item the triangles \cref{Poincare transformation and closed immersion compatibility-adjunction-1} and \cref{Poincare transformation and closed immersion compatibility-adjunction-2} commute by \cref{morphism-transformations-composition}; 
        \item the triangle \cref{defn-adj} commutes due to the definition of $\Adj_f$ (see \cref{morphism-transformations}\cref{morphism-transformations-adjunction}); and
        \item the bottom right square \cref{Poincare transformation and closed immersion compatibility-trace-composition} commutes by \cref{most general proper trace!!}\cref{most general proper trace!!-composition}.
    \end{itemize}
    This finishes the proof of the first assertion.
    To prove the second assertion, we first note that \cite[Th.~3.10, Cor.~3.12, and Cor.~3.14]{BH} imply that all functors involved in \cref{eqn:dualuty-*-!} preserve $D_\zc^{(b)}$. Therefore, a combination of \cref{lemma:proper-base-change}, \cref{lemma:shriek-star-base-change}, and \cite[Th.~3.21~(4)]{BH} shows that all horizontal transformations become isomorphisms when evaluated on $\F\in D^{(b)}_\zc(X_\et; \Lambda)$.
\end{proof}

In order to be able to use \cref{Poincare transformation and closed immersion compatibility} effectively, we need the following general lemma: 

\begin{lemma}\label{lemma:upper-shrieks-conservative} Let $X$ be a rigid-analytic space over $K$ and let $\F \in D^{(b)}_\zc(X_\et; \Lambda)$ be a locally bounded complex with Zariski-constructible cohomology sheaves. Assume that $\rR i_x^! \F =0$ for every classical point $i_x\colon x \hookrightarrow X$. Then $\F=0$.
\end{lemma}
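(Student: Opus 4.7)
The plan is to argue by contradiction: supposing $\F \neq 0$, I will produce a classical point $x$ at which $\rR i_x^! \F \neq 0$. Since the conclusion is local on $X$ and $\F$ is locally bounded, one can first reduce to the case where $X$ is quasicompact and $\F \in D^b_\zc(X_\et; \Lambda)$. Next, let $Z$ be the smallest reduced Zariski-closed subspace of $X$ such that $\F$ vanishes on $X \smallsetminus Z$, which is nonempty by assumption. Denote the inclusion by $i \colon Z \hookrightarrow X$, set $\cG \coloneqq i^* \F \in D^b_\zc(Z_\et; \Lambda)$, and note that the unit $\F \to i_* i^* \F$ becomes an isomorphism since every cohomology sheaf of $\F$ is supported on $Z$ -- in particular, $\F \simeq i_* \cG$. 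By minimality of $Z$, the restriction of $\cG$ to any non-empty Zariski-open of $Z$ is nonzero.

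The next step is to locate a classical point $x \in Z$ such that $Z$ is smooth at $x$ of some local equidimension $d$, every cohomology sheaf of $\cG$ is lisse in a neighborhood of $x$, and $\cG_x \neq 0$. The smooth locus $Z^{\rm{sm}}$ is Zariski-open and dense in $Z$ by \cref{modifications-exist}\cref{modification-exist-smooth-locus}; Zariski-constructibility of the finitely many nonzero cohomology sheaves of $\cG$ supplies a dense Zariski-open $V \subseteq Z$ on which each of them is lisse; and density of classical points in any non-empty Zariski-open of a rigid-analytic space over $K$ (a consequence of the Nullstellensatz for $K$-affinoid algebras) then produces a classical point $x$ in $U \coloneqq Z^{\rm{sm}} \cap V$. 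That $\cG_x \neq 0$ is automatic: since $\cG|_U$ is nonzero on each connected component of $U$ (by minimality of $Z$), and its cohomology sheaves are lisse on $U$, at least one such sheaf has a nonzero stalk at $x$.

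At such an $x$, I will factor $i_x = i \circ i_x^Z$ with $i_x^Z \colon \{x\} \hookrightarrow Z$ and compute
\[
\rR i_x^! \F \simeq \rR (i_x^Z)^! \rR i^! \F \simeq \rR (i_x^Z)^! \cG,
\]
where the second isomorphism uses $\rR i^! i_* \simeq \id$, which holds because $i_*$ is fully faithful for the closed immersion $i$. To evaluate $\rR (i_x^Z)^! \cG$, I will first establish the purity identification $\rR (i_x^Z)^! \ud{\Lambda}_Z \simeq \Lambda(-d)[-2d]$ in a neighborhood of $x$: this follows by combining $\omega_Z \simeq \ud{\Lambda}_Z(d)[2d]$ near $x$ (from \cref{cor:smooth-pullback} applied to the structure morphism $Z \to \Spa(K, \cO_K)$ on the smooth locus), $\omega_{\{x\}} \simeq \rR (i_x^Z)^! \omega_Z$ (from \cite[Th.~3.21~(1)]{BH}), and $\omega_{\{x\}} \simeq \Lambda$. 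Then d\'evissage along the truncation triangles of $\cG$, together with free resolutions of the stalks of the lisse cohomology sheaves, extends the purity to $\rR (i_x^Z)^! \cG \simeq \cG_x(-d)[-2d]$. This is nonzero by construction, contradicting the hypothesis.

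The main subtlety is the identification $\rR i^! \F \simeq \cG$: it crucially exploits that $\F$ is genuinely \emph{supported} on $Z$, so that writing $\F \simeq i_* \cG$ allows us to invoke $\rR i^! i_* \simeq \id$ and avoid the extra twists and shifts that $\rR i^!$ would otherwise produce. The rest of the argument reduces to purity for smooth pairs at a classical point, neatly packaged in the dualizing-complex formalism of \cite{BH}.
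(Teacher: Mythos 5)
Your proof is correct in outline but takes a genuinely different route from the paper's.

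The paper's argument is shorter and purely dualistic: it uses \cite[Th.~3.21~(4)]{BH} to identify $\DD_x(\rR i_x^! \F) \simeq i_x^* \DD_X(\F)$, so the hypothesis $\rR i_x^!\F = 0$ for all classical $x$ translates into $i_x^*\DD_X(\F) = 0$ for all classical $x$; since $\DD_X(\F) \in D^{(b)}_\zc$ and Zariski-constructible complexes with vanishing stalks at all classical points are zero, $\DD_X(\F) = 0$, and finally $\F = 0$ because $\DD_X$ is an anti-equivalence. Your argument is more hands-on: you locate a classical point $x$ on the reduced support $Z$ where $Z$ is smooth of dimension $d$, the restricted complex $\cG$ has lisse cohomology, and $\cG_x \neq 0$, and then contradict the hypothesis via absolute purity $\rR(i^Z_x)^! \cG \simeq \cG_x(-d)[-2d]$. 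That buys you a more explicit picture of ``where the failure occurs,'' at the cost of a longer argument.

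Two small points worth fixing. First, you assert $\cG|_U$ is nonzero on \emph{each} connected component of $U$; this is more than minimality of $Z$ gives you (if $Z$ is irreducible and $U = C_1 \sqcup C_2$, the Zariski closure of $C_1$ is already all of $Z$, so minimality does not rule out $\cG|_{C_2} = 0$). However, your argument only needs $\cG|_U \neq 0$ \emph{somewhere}, which is exactly what minimality provides: if $\cG|_U = 0$ then $\supp\F \subseteq Z \smallsetminus U \subsetneq Z$. Second, the d\'evissage extending purity from $\ud{\Lambda}_Z$ to a bounded lisse complex $\cG$ is left informal; ``free resolutions of the stalks'' will not literally finish it since finitely generated $\Lambda = \ZZ/n$-modules need not have finite free resolutions. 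The cleanest way to pin it down is to observe that $\rR(i^Z_x)^!\cG \neq 0$ follows from $\DD_{\{x\}}\bigl(\rR(i^Z_x)^!\cG\bigr) \simeq (i^Z_x)^*\DD_Z(\cG)$ together with the fact that $\DD_Z$ preserves supports (being an anti-equivalence of $D^b_\zc$) -- at which point you are invoking precisely the local duality input that the paper uses directly, and the two proofs converge.
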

\begin{proof}
    We pick a classical point $i_x\colon x \hookrightarrow X$. Then \cite[Cor.~3.12]{BH} ensures that $\rR i_x^!$ carries $D^{(b)}_\zc(X_\et; \Lambda)$ to $D^{b}_\zc(x_\et; \Lambda)$. Furthermore, (the proof of) \cite[Th.~3.21~(4)]{BH} implies that $\DD_x\bigl(\rR i_x^! \F\bigr) \simeq i_x^*\DD_X\F$. Since $\DD_x$ induces an anti-equivalence of $D^b_\zc(x_\et; \Lambda)$ (see \cite[Th.~3.21~(3)]{BH}), we conclude that $i_x^* \DD_X(\F)=0$.
    Moreover, \textit{loc.\ cit.} implies that $\DD_X$ induces an anti-equivalence of $D^{(b)}_\zc(X_\et; \Lambda)$. In particular, $\DD_X(\F)\in D^{(b)}_\zc(X_\et; \Lambda)$. Since $x\in X$ was an arbitrary classical point, we conclude that $i_x^* \DD_X(\F) = 0$ for any classical point $x\in X$. This implies that $\DD_X(\F)=0$ because $\DD_X(\F)$ has Zariski-constructible cohomology. Finally, we use that $\DD_X$ induces an anti-equivalence of $D^{(b)}_\zc(X_\et; \Lambda)$ once again to conclude that $\F=0$.
\end{proof}

We also discuss a particularly nice set of generators in $D^b_\zc(X_\et; \Lambda)$ for a quasi-compact rigid-analytic space $X$ over a non-archimedean field $K$ of characteristic $0$. 

\begin{lemma}\label{lemma:generators-dbzc} Let $X$ be a quasi-compact rigid-analytic space over $K$. Then $D^b_\zc(X_\et; \Lambda)$ is the smallest thick triangulated subcategory of $D(X_\et; \Lambda)$ containing all objects of the form $\rR f_*\ud{M}_{X'}$ for a finitely generated $\Lambda$-module $M$ and a proper morphism $f\colon X' \to X$ such that $X'$ is smooth and $\dim f^{-1}(x)< \max(\dim X, 1)$ for any classical point $x\in X$. 
\end{lemma}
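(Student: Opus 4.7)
My plan is to prove both containments separately. The easy direction is that each generator $\rR f_*\ud{M}_{X'}$ lies in $D^b_\zc(X_\et;\Lambda)$: the constant sheaf $\ud{M}_{X'}$ is Zariski-constructible, and derived pushforward along a proper morphism preserves $D^b_\zc$ by \cite[Th.~3.10]{BH}. So the smallest thick triangulated subcategory $\cal{C}$ generated by these objects is contained in $D^b_\zc(X_\et;\Lambda)$, and the content of the statement is the reverse containment, which I would prove by induction on $d \colonequals \dim X$. In the base case $d \le 0$, $X$ is a finite disjoint union of adic spectra of finite extensions of $K$, and any $\F \in D^b_\zc(X_\et;\Lambda)$ is built by truncation from constant sheaves $\ud{M}_X$, which are of the required form via $f = \id_X$ (proper and smooth, with trivial fibers satisfying $0 < \max(0,1)=1$).

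For the inductive step with $d \ge 1$, I would first reduce via the truncation triangles $\tau^{\le n-1}\F \to \tau^{\le n}\F \to \cal{H}^n(\F)[-n]$ to the case where $\F$ is a single Zariski-constructible sheaf in degree $0$. Using \cref{modifications-exist}\cref{modification-exist-smooth-locus} and the definition of Zariski-constructibility, I then choose a dense Zariski-open $U \subseteq X$ lying in the smooth locus of $X$ on which $\restr{\F}{U}$ is lisse. Writing $j \colon U \hookrightarrow X$ and $i \colon Z \hookrightarrow X$ for the reduced closed complement (so $\dim Z < d$), the excision triangle $j_!\,j^*\F \to \F \to i_* i^* \F$ splits the problem in two. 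For the closed piece, $i^*\F \in D^b_\zc(Z_\et;\Lambda)$, which by the inductive hypothesis lies in the analogous subcategory $\cal{C}_Z$ for $Z$; postcomposing any generator $g \colon Z'\to Z$ of $\cal{C}_Z$ with $i$ yields a proper map $i\circ g$ with $Z'$ smooth and fibers of dimension $<\max(\dim Z,1)\le \max(d,1)$, so $i_* i^*\F \in \cal{C}$.

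The core difficulty is showing that $j_!\,j^*\F \in \cal{C}$. Imitating the first two steps of the proof of \cref{cor:preservation-lisse-sheaves}, I would use the Chinese remainder theorem and the $p$-adic filtration to reduce to the case $\Lambda = \ZZ/p\ZZ$ and $\restr{\F}{U}$ a lisse $\FF_p$-sheaf. The m\'ethode de la trace then produces a finite \'etale morphism $g \colon U' \to U$ of degree prime to $p$ such that $g^*\restr{\F}{U}$ is a finite successive extension of $\ud{\FF}_p$, so that $\restr{\F}{U}$ becomes a direct summand of $g_*g^*\restr{\F}{U}$ and it suffices, by thickness of $\cal{C}$, to show $j_! g_*\ud{M}_{U'} \in \cal{C}$ for constant coefficients $M$. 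The key geometric input is to extend $g$ to a proper morphism $\pi \colon X' \to X$ with $X'$ smooth, $\pi^{-1}(U) = U'$, and $\restr{\pi}{U'}=g$. Granted such $\pi$, proper base change gives $j^*\rR\pi_*\ud{M}_{X'} \simeq g_*\ud{M}_{U'}$, whence the excision triangle
\[ j_! g_*\ud{M}_{U'} \to \rR\pi_*\ud{M}_{X'} \to i_* i^* \rR\pi_*\ud{M}_{X'} \]
exhibits $j_! g_*\ud{M}_{U'}$ as sitting between two objects known to be in $\cal{C}$: the middle term is a generator (its fibers have dimension $<\max(d,1)$ by \cref{lemma:dimension-of-fibers-modifications} on $Z$ and by $0$-dimensionality of $g$-fibers on $U$), while the right term lies in $i_*\cal{C}_Z \subseteq \cal{C}$ by the inductive hypothesis applied to $i^*\rR\pi_*\ud{M}_{X'}\in D^b_\zc(Z_\et;\Lambda)$.

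The main obstacle is the construction of the proper extension $\pi$. In algebraic geometry one would invoke Nagata compactification directly; in the rigid-analytic setting, I would instead combine a Temkin-style compactification of the separated morphism $g$ (producing some proper $\overline{g}\colon \overline{U'} \to X$ with $U' \subseteq \overline{U'}$ open and dense, extending $g$) with an application of \cref{modifications-exist}\cref{modifications-exist-Temkin} to a regular $U'$-admissible modification of $\overline{U'}$, and, if necessary, a passage to the Zariski closure of $U'$ so as to enforce the equality $\pi^{-1}(U) = U'$. The bookkeeping with the fiber-dimension condition is routine once $\pi$ is constructed.
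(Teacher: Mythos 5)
Your argument is correct in outline but takes a genuinely different route from the paper. The paper invokes \cite[Prop.~3.6]{BH} at the outset, which already says that $D^b_\zc(X_\et;\Lambda)$ is generated under thick triangulated closure by $g_*\ud{M}_{X'}$ for \emph{finite} morphisms $g\colon X'\to X$; the only remaining work is then to resolve the singularities of the source of such a finite map via \cref{modifications-exist}\cref{modifications-exist-Temkin} and to transport the resulting triangle $\ud{M}_{X'}\to \rR h_*\ud{M}_{X''}\to C$ down by $g_*$. Because $g$ is finite, the fiber-dimension bookkeeping for the composite $g\circ h$ is immediate: each fiber of $g$ is finite, so the fibers of $g\circ h$ are bounded by the fibers of the modification $h$ alone, which \cref{lemma:dimension-of-fibers-modifications} handles. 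You instead rebuild the finite-type dévissage from scratch inside the induction: truncation to a single sheaf, a stratification $j_!j^*\F\to\F\to i_*i^*\F$, the m\'ethode de la trace over the lisse locus $U$, and then a relative compactification of the finite \'etale $U'\to U$ over $X$. This is essentially re-deriving the content of \cite[Prop.~3.6]{BH} inline (with smoothness of the source achieved at the same time), which works but costs more machinery. In particular your ``key geometric input'' -- extending $g$ to a proper $\pi\colon X'\to X$ with $X'$ smooth and $\pi^{-1}(U)=U'$ -- requires either a rigid-analytic Nagata compactification of the quasi-finite separated map $U'\to X$, or (cleaner, and what BH's proof actually does) passing to the normalization of $X_\red$ in $U'$, which is finite because affinoid algebras are excellent. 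If you take the Nagata route, you should make explicit why the fiber-dimension condition survives: after restricting to the Zariski closure of $U'$ inside the compactification, the boundary $\overline{U'}\smallsetminus U'$ is nowhere dense of dimension $\le d-1$, so the preimage under the regular $U'$-admissible modification $X'\to\overline{U'}$ is exactly $X'\smallsetminus U'$, again nowhere dense of dimension $\le d-1$; this gives $\dim\pi^{-1}(x)<\max(d,1)$ for $x\in Z$, and on $U$ the fibers are finite. Two smaller remarks: you should pass to $X_\red$ and $X'_\red$ early on (the \'etale site is topologically invariant, \cite[Prop.~2.3.7]{Huber-etale}), since both the closure and the normalization arguments behave best for reduced spaces; and when you reduce via CRT and the $p$-adic filtration to $\FF_p$-coefficients, the graded pieces should be regarded as $\Lambda$-sheaves via restriction of scalars so that you remain inside $D(X_\et;\Lambda)$ throughout, exactly as in the proof of \cref{cor:preservation-lisse-sheaves}.
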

\begin{proof}
    We denote by $D'\subset D(X_\et; \Lambda)$ the smallest thick triangulated subcategory which contains $\rR f_* \ud{M}_{X'}$ for $f$ and $M$ as in the formulation of the lemma. Using \cite[Th.~3.10]{BH}, we conclude that $D'\subset D^b_\zc(X_\et; \Lambda)$. Therefore, it suffices to show that $D^b_\zc(X_\et;\Lambda) \subset D'$. We prove this by induction on $d=\dim X$ (note that $\dim X$ is finite since $X$ is quasi-compact). 
    
    If $d\leq 0$, then the claim is essentially obvious because either $X$ is empty or $X_\red = \sqcup_{i=1}^m \Spa(L_i, L_i^\circ)$ for some finite extensions $K\subset L_i$. Now we assume that $\dim X = d>0$ and the result is known for all spaces of dimension strictly less then $d$. Then \cite[Prop.~3.6]{BH} implies that it suffices to show that sheaves of the form $g_*\ud{M}_{X'}$ lie in $D'$, where $g\colon X' \to X$ is a finite morphism and $M$ is a finitely generated $\Lambda$-module. By the topological invariance of the \'etale site (see \cite[Prop.~2.3.7]{Huber-etale}), we may assume that both $X$ and $X'$ are reduced. Now denote by $U'$ the smooth locus of $X'$ and by $Z'$ its Zariski-closed complement (with reduced adic space structure).

    Then \cref{modifications-exist} implies that there is a regular $U'$-admissible modification $h\colon X'' \to X'$. We denote by $f\colon X'' \to X$ the composed morphism to $X'$ and by $Z''\coloneqq X''\times_X Z'$ the pre-image of $Z'$ in $X''$. 
   
    First, we show that, for every classical point $x\in X$, we have $\dim f^{-1}(x)<\max(\dim X, 1)$. To see this, we first note that $\dim X'\leq \dim X$ since $g$ is finite and, thus, \cref{lemma:dimension-of-fibers-modifications} implies that $\dim h^{-1}(x') < \max(\dim X', 1) \leq \max(\dim X, 1)$. Then we observe that, for every classical point $x\in X$, the fiber $\abs{f^{-1}(x)}$ is (topologically) equal to $\bigsqcup_{x'\in X'_{\rm{cl}}:f(x')=x} \abs{h^{-1}(x')}$. Therefore, we also have $\dim f^{-1}(x)< \max(\dim X, 1)$.

    Now we consider the following exact triangle:
    \begin{equation}\label{eqn:smooth-resolution-constant-sheaf}
    \ud{M}_{X'} \to \rR h_* \ud{M}_{X''} \to C.
    \end{equation}
    By construction, $\supp(C) \subset Z'$. After applying $g_*$ to \cref{eqn:smooth-resolution-constant-sheaf}, we get the following exact triangle
    \begin{equation}\label{eqn:smooth-resolution-constant-sheaf-2}
    g_* \ud{M}_{X'} \to \rR f_* \ud{M}_{X''} \to g_*(C).
    \end{equation}
    By construction, $g_*(C)$ is supported on $Z\coloneqq g(Z')$, which is a Zariski-closed subset of $X$ due to \cite[Th.~9.6.3/3]{BGR}. Since $Z' \to Z$ is surjective and $Z'$ is nowhere dense in $X'$, we conclude that $\dim Z \leq \dim Z' < \dim X' \leq \dim X =d$. Therefore, the induction hypothesis implies that $g_*(C) \in D'$. We also have $\rR f_* \ud{M}_{X''}$ by the very definition of $D'$. Thus, \cref{eqn:smooth-resolution-constant-sheaf-2} ensures that $g_*\ud{M}_{X'}\in D'$ finishing the proof. 
\end{proof}

Now we are ready to prove the general Poincar\'{e} duality result as 
expected by Bhatt and Hansen (see \cite[Rmk.~3.23]{BH}). 

\begin{theorem}
\label{general Poincare duality}
Let $f\colon X \to Y$ be a proper morphism of rigid-analytic spaces over $K$, and let $\F\in D_\zc(X_\et;\Lambda)$. Then the Poincar\'e duality transformation
\[
\PD_f(\F) \colon \rR f_* \bigl( \DD_X(\F) \bigr) \to \DD_Y \bigl( \rR f_* \F \bigr)
\]
is an isomorphism.
\end{theorem}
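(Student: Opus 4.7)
The plan is to prove the theorem by induction on $d \colonequals \dim X$, which is finite once we localize to $Y$ affinoid so that $X$ is quasicompact. The base case $d \leq 0$ reduces after étale localization to the case where $f$ is a finite morphism; by \cref{finite trace equals proper trace for finite morphism}, the proper trace agrees with the trace induced by the adjunction $(f_*, \rR f^!)$ and the canonical isomorphism $\omega_X \simeq \rR f^! \omega_Y$ from \cite[Th.~3.21(1)]{BH}, so $\PD_f(\F)$ is just the usual tensor-Hom and finite-pushforward adjunction isomorphism.

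For the inductive step with $d \geq 1$, assume the claim is known for all proper morphisms whose source has dimension $<d$. The first reduction is to $Y = \Spa(K, \O_K)$. The assertion is étale-local on $Y$ by \cref{most general proper trace!!}\cref{most general proper trace!!-localization}, so we may assume $Y$ is affinoid. By \cite[Th.~3.10, 3.21(3), Cor.~3.12]{BH} both $\rR f_* \DD_X(\F)$ and $\DD_Y(\rR f_* \F)$ lie in $D^{(b)}_\zc(Y_\et; \Lambda)$, hence by \cref{lemma:upper-shrieks-conservative} it is enough to show that $\rR i_y^! \PD_f(\F)$ is an isomorphism at every classical point $y \in Y$. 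Writing $f_y \colon X_y \to y$ for the base change, \cref{Poincare transformation and closed immersion compatibility} (whose horizontal transformations are isomorphisms on Zariski-constructible input) together with \cref{change-base-field} canonically identifies $\rR i_y^! \PD_f(\F)$ with $\PD_{f_y}(\F_y)$, where $\F_y \in D_\zc(X_{y,\et}; \Lambda)$ by \cite[Cor.~3.14]{BH}. Since $k(y)$ is a nonarchimedean field, we may relabel and assume $Y = \Spa(K, \O_K)$.

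With $Y$ now a point, $X$ is quasicompact and a standard truncation via the finite cohomological dimension of $\rR f_*$ and $\DD_X$ on $D^{(b)}_\zc$ allows us to further assume $\F \in D^b_\zc(X_\et; \Lambda)$. By \cref{lemma:generators-dbzc} this category is generated as a thick triangulated subcategory by complexes of the form $\F = \rR g_* \ud{M}_{X'}$, where $g \colon X' \to X$ is proper, $X'$ is smooth, and $\dim g^{-1}(x) < \max(\dim X, 1) = d$ for every classical $x \in X$. Since $\PD_f$ is a natural transformation of triangulated functors, it suffices to check it on such generators. The compatibility of $\PD$ with compositions---deduced from \cref{morphism-transformations-composition} combined with \cref{most general proper trace!!}\cref{most general proper trace!!-composition}---produces the factorization
\[
\PD_{f \circ g}(\ud{M}_{X'}) = \PD_f(\rR g_* \ud{M}_{X'}) \circ \rR f_*\bigl(\PD_g(\ud{M}_{X'})\bigr),
\]
so it is enough to show that both $\PD_{f \circ g}(\ud{M}_{X'})$ and $\PD_g(\ud{M}_{X'})$ are isomorphisms.

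The map $\PD_{f \circ g}(\ud{M}_{X'})$ is handled by \cref{general smooth Poincare duality}: the composition $f \circ g \colon X' \to \Spa(K, \O_K)$ is proper with smooth source and, after decomposing $X'$ into its clopen pieces of constant equidimension, is smooth and proper with locally constant coefficients. For $\PD_g(\ud{M}_{X'})$, we apply the first reduction above to the morphism $g$ itself, which cuts the claim down to verifying $\PD_{g_x}(\ud{M}_{X'_x})$ at each classical point $x \in X$; but the source $X'_x$ has dimension at most $\dim g^{-1}(x) < d$, and so the induction hypothesis applies. The hardest part of the argument is consequently bundled into the two compatibilities of the proper trace---the interaction with closed pullback in \cref{Poincare transformation and closed immersion compatibility} and the behavior under change of base field in \cref{change-base-field}---once these are in hand, the inductive reduction itself is quite formal.
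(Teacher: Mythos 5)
Your argument follows the same broad strategy as the paper's (reduce to $Y$ a point, pass to bounded complexes, use the generators from \cref{lemma:generators-dbzc}, and induct on dimension), but the steps are out of order in a way that leaves a genuine gap. You invoke \cref{lemma:upper-shrieks-conservative} and \cref{Poincare transformation and closed immersion compatibility} before performing the truncation to $\F \in D^b_\zc$. Both of these only apply to locally bounded input: the cited facts \cite[Th.~3.10, Th.~3.21(3), Cor.~3.12]{BH} require $\F \in D^{(b)}_\zc$, and \cref{Poincare transformation and closed immersion compatibility} asserts the horizontal arrows are isomorphisms precisely on $D^{(b)}_\zc$. For an unbounded $\F \in D_\zc$, the complex $\DD_X(\F)$ need not even land in $D_\zc$, so the claim that $\rR f_* \DD_X(\F)$ and $\DD_Y(\rR f_* \F)$ lie in $D^{(b)}_\zc(Y_\et;\Lambda)$ has no support. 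The truncation ``step'' must therefore come first: once $Y$ is reduced to an affinoid (so $X$ is quasicompact and $\omega_Y$, $\rR f_*$ have the needed finiteness), the hocolim/cone argument reduces to $\F \in D^b_\zc$, and only then may one apply the classical-point reduction. A secondary structural consequence is that you should perform this truncation outside the induction on $d = \dim X$ rather than inside it, exactly as the paper does; otherwise you'd be redoing the boundedness reduction every time you recursed.

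Two smaller observations. First, citing \cref{change-base-field} in the classical-point step is unnecessary: residue fields at classical points of an affinoid over $K$ are finite extensions, so it is a relabeling of the ground field, not a base extension; as stated it is not an error, but it obscures why the step is harmless. Second, when you invoke \cref{general smooth Poincare duality} for $\PD_{f\circ g}(\ud{M}_{X'})$, one must also identify the smooth trace used there with the proper trace defining $\PD_{f\circ g}$; the paper does this via \cref{most general proper trace!!}\cref{most general proper trace!!-smooth} and \cref{compatibility for smooth source trace}\cref{compatibility for smooth source trace-1} (or equivalently \cref{smooth trace equals proper trace for smooth proper morphism}). You gloss over this identification, and it should be made explicit.
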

\begin{proof}

\begin{enumerate}[wide,label={\textit{Step~\arabic*}.},ref={Step~\arabic*}]
    \setcounter{enumi}{-1}
    \item \textit{We reduce to the case when $X$ and $Y$ are qcqs.}
    First, \cref{most general proper trace!!}\cref{most general proper trace!!-localization} implies that the question is local on $Y$. Therefore, we can assume that $Y$ is qcqs. In this case, $X$ is automatically qcqs as well.

    \item\label{general Poincare duality - bounded reduction} \textit{We reduce to the case when $\F$ lies in $D^b_\zc(X_\et; \Lambda)$.}
    First, we note that $\rR f_*$ commutes with sequential homotopy colimits (e.g., as defined in \cite[\href{https://stacks.math.columbia.edu/tag/0A5K}{Tag~0A5K}]{stacks-project}) due to \cite[Lem.~9.1]{adic-notes}.
    This implies that both the source and target of $\PD_f$ (viewed as functors in $\cal{E}$) transform sequential homotopy colimits into sequential homotopy limits (e.g., as defined in \cite[\href{https://stacks.math.columbia.edu/tag/08TB}{Tag~08TB}]{stacks-project}).
    Since the natural morphism $\hocolim_n \tau^{\le n} \F \to \F$ is an isomorphism (this can be deduced from \cite[\href{https://stacks.math.columbia.edu/tag/0CRK}{Tag 0CRK}]{stacks-project}), we reduce to the case when $\F\in D^-_\zc(X_\et; \Lambda)$.
    In this case, we consider the exact triangle
    \[
    \tau^{\leq -N}\F \to \F \to \tau^{> -N} \F.
    \]
    Recall that $\rR f_*$ has cohomological dimension $2\dim(f)$ by virtue of \cite[Prop.~5.3.11]{Huber-etale}. Furthermore, $\omega_X\in D^{[-2\dim X, 0]}(X_\et;\Lambda)$ and $\omega_Y\in D^{[-2\dim Y, 0]}(Y_\et; \Lambda)$ by virtue of \cite[Lem.~3.30]{BH}. Therefore, we conclude that
    \begin{gather*}
    \begin{split}
        \rR f_*\bigl( \DD_X(\tau^{\leq -N}\F)\bigr) = \rR f_*\bigl(\rR \cHom_{\Lambda}(\tau^{\leq -N}\F, \omega_X)\bigr) \in D^{\geq N-2\dim X}(X_\et; \Lambda), \\
        \DD_Y\bigl(\rR f_* \tau^{\leq -N} \F\bigr) = \rR \cHom_{\Lambda}\bigl(\rR f_*(\tau^{\leq -N}\F), \omega_Y\bigr) \in D^{\geq N-2\dim f-2\dim Y}(Y_\et; \Lambda). 
    \end{split}
    \end{gather*}    
    Given an integer $q$, the map on cohomology sheaves $\cal{H}^q\bigl(\PD_f(\F)\bigr)$ is therefore an isomorphism if and only if $\cal{H}^q\bigl(\PD_f(\tau^{> - M} \F)\bigr)$ is an isomorphism for any large $M\gg 0$.
    In particular,  if $\PD_f(\tau^{> - N} \F)$ is an isomorphism for all $N$, then $\PD_f(\F)$ is an isomorphism as well. 
    Thus, we reduce to the case when $\F$ is bounded.

    \item\label{step:fiberwise-reduction} \textit{We reduce to the case when $Y=\Spa(K, \O_K)$.} Pick a classical point $i_y\colon y \hookrightarrow Y$ and consider the fiber sequence
    \[
    \begin{tikzcd}
        X_y \arrow{d}{f_y} \arrow{r}{i'_y} & X\arrow{d}{f} \\
        y \arrow{r}{i_y} & Y.
    \end{tikzcd}
    \]
    Then \cite[Th.~3.10 and Th.~3.21~(3)]{BH} imply that both $\rR f_* \DD_X(\F)$ and $\DD_Y(\rR f_*\F)$ lie in $D^b_\zc(Y_\et; \Lambda)$. Therefore, \cref{lemma:upper-shrieks-conservative} (applied to $\rm{cone}(\PD_f(\F))$) implies that it suffices to show that $\rR i_y^!\bigl(\PD_f(\F)\bigr)$ is an isomorphism for any classical point $y\in Y$. Furthermore, \cref{Poincare transformation and closed immersion compatibility} then ensures that it suffices to show that $\PD_{f_y}(i_y^*\F)$ is an isomorphism for any classical point $y\in Y$. In other words, we can assume that $Y=\Spa(L, \O_L)$ for some finite extension $K\subset L$ (and $\F$ still lies in $D^b_\zc(X_\et; \Lambda)$). After replacing $K$ by $L$, we can even assume that $Y=\Spa(K, \O_K)$. 
    
    \item \textit{End of proof.} Finally, we complete the argument under the extra assumptions that $Y=\Spa(K, \O_K)$ and $\F\in D^b_\zc(X_\et; \Lambda)$. In this case, we argue by induction on $d=\dim X$. If $d\leq 0$, then the claim is essentially obvious because either $X$ is empty or $X_\red = \sqcup_{i=1}^m \Spa(L_i, L_i^\circ)$ for some finite extensions $K\subset L_i$. So we assume that $\dim X = d>0$ and that the result is known for all spaces of dimension strictly less then $d$.
    
    Let $D'\subset D(X_\et; \Lambda)$ be the full subcategory consisting of objects $\F$ such that $\PD_f(\F)$ is an isomorphism. We wish to show that $D'$ contains $D^b_\zc(X_\et; \Lambda)$. Now note that $D'$ is a thick triangulated subcategory of $D(X_\et; \Lambda)$. Therefore, \cref{lemma:generators-dbzc} implies that it suffices to show that $\rR g_* \ud{M}_{X'} \in D'$ for a finitely generated $\Lambda$-module $M$ and a proper morphism $g\colon X' \to X$ such that $X'$ is smooth and $\dim f^{-1}(x)< \max(\dim X, 1)$ for any classical point $x\in X$. Since proper trace is compatible with compositions
    (see \cref{most general proper trace!!}\cref{most general proper trace!!-composition}),
    we see that the composition
    \begin{multline*}
    \rR f_*\circ \rR g_*\circ \DD_{X'}(\ud{M}_{X'})
    \xrightarrow{\rR f_*(\PD_{g}(\ud{M}_{X'}))}
    \rR f_*\circ \DD_X\circ \rR g_*(\ud{M}_{X'}) \xrightarrow{\PD_f(\rR g_*\ud{M}_{X'})}
    \DD_Y\circ \rR f_* \circ \rR g_*(\ud{M}_{X'})
    \end{multline*}
    is given by $\PD_{f \circ g}(\ud{M}_{X'})$. 
    Hence, we are reduced to showing that both $\PD_{g}(\ud{M}_{X'})$
    and $\PD_{f \circ g}(\ud{M}_{X'})$ are isomorphisms. The latter map is an isomorphism due to \cref{general smooth Poincare duality} and the combination of \cref{most general proper trace!!}\cref{most general proper trace!!-smooth} and \cref{compatibility for smooth source trace}\cref{compatibility for smooth source trace-1} (see also \cref{constructing smooth trace for omega}). 
    
    Therefore, we reduce the question to showing that $\PD_g(\ud{M}_{X'})$ is an isomorphism. For any classical point $x\in X$, we denote by $X'_x \coloneqq X'\times_X x$ the fiber over $x$ and by $g_x\colon X'_x \to x$ the restriction of $g$. Arguing as in \cref{step:fiberwise-reduction}, we reduce this question to showing that $\PD_{g_x}(\ud{M}_{X'_x})$ is an isomorphism for any classical point  $x\in X$. Since we chose $g$ such that $\dim X'_x<\dim X$, the induction hypothesis implies that $\PD_{g_x}(\ud{M}_{X'_x})$ is indeed an isomorphism for any classical point  $x\in X$. This finishes the proof. \qedhere
\end{enumerate}
\end{proof}

\begin{remark} 
\label{potential different proof remark}
As pointed out to us by Scholze, it seems likely that \cite[Th.~3.9.23 and 3.10.20]{Mann-thesis} can be applied toward a different proof of \cref{general Poincare duality} when $K$ is of mixed characteristic $(0, p)$ and $\Lambda = \Z/p\Z$.
Namely, using the Primitive Comparison Theorem for Zariski-constructible complexes, one deduces that the functor $-\otimes \O_X^{+,a}/p \colon \cal{D}^{\rm{oc}}_\et(X; \Z/p\Z) \hookrightarrow \cal{D}^{\varphi}_{\Box}(X; \O^+/p)^a$ commutes with proper pushforwards. Therefore, the question essentially boils down to constructing an isomorphism $\omega_X \otimes \O_X^{+,a}/p \simeq f^! \O_{\Spa(K, \O_K)}^+/p$ for any morphism $f\colon X \to \Spa(K, \O_K)$ locally of finite type. If $X$ is smooth, this follows from \cite[Th.~3.10.20]{Mann-thesis} and \cite[Th.~3.21~(1)]{BH}. If $X$ admits a Zariski-closed immersion $i\colon X \hookrightarrow Y$ into a smooth $Y$ with the structure morphisms $f_X$ and $f_Y$, respectively, one can show that
\[ \omega_X \otimes \O_X^{+,a}/p\simeq \big( \rR i^! \omega_Y\big) \otimes \O_X^{+, a}/p \simeq i^! f_Y^! \O_{\Spa(K, \O_K)}^{+, a}/p \simeq f_X^! \O_{\Spa(K, \O_K)}^{+, a}/p. \]
In particular, these isomorphisms exist locally on $X$, so the only question is how to glue them globally.
However, gluing these maps seems somewhat subtle due to the inexplicit nature of Poincar\'e duality in \cite{Mann-thesis}. 
\end{remark}

As the main application of the general form of Poincar\'e duality, we show that a version of Poincar\'e duality holds for some non-smooth and non-proper spaces. For this, we need to recall some definitions. 

\begin{definition} A rigid-analytic space $X$ over $K$ is \textit{Zariski-compactifiable} if there is a Zariski-open immersion $j\colon X \hookrightarrow \overline{X}$ such that $\overline{X}$ is proper over $K$.
\end{definition}

In order to formulate this version of duality on non-smooth spaces, we also need to recall the definition of intersection cohomology of rigid-analytic varieties due to Bhatt and Hansen.

\begin{definition}
Let $X$ be a rigid-analytic space over $K$ of equidimension $d$, let $j\colon U\hookrightarrow X$ be a smooth Zariski-open subspace, and let $\bf{L}$ be a lisse sheaf of $\Lambda$-modules on $U_\et$.
\begin{enumerate}[leftmargin=*,label=\upshape{(\roman*)}]
\item The \textit{IC sheaf $\IC_X(\bf{L})$ attached to $\bf{L}$} is the intermediate extension $\IC_X(\bf{L}) \coloneqq j_{!*}\bigl(\bf{L}[d]\bigr)$ (see \cite[Th.~4.2~(5)]{BH}). 

\item The \emph{intersection cohomology complex $\IH(X, \bf{L})$  with coefficients in $\bf{L}$} is the complex $\IH(X, \bf{L})\coloneqq \rR\Gamma\bigl(X, \IC_X(\bf{L})\bigr)$. 

\item The \emph{compactly supported intersection cohomology complex $\IH_c(X, \bf{L})$  with coefficients in $\bf{L}$} is the complex $\IH_c(X, \bf{L})\coloneqq \rR\Gamma_c\bigl(X, \IC_X(\bf{L})\bigr)$. 

\item The \emph{$i$-th intersection cohomology $\IH^i(X, \bf{L})$ with coefficients in $\bf{L}$} is the $\Lambda$-module $\IH^i(X, \bf{L}) \coloneqq \Hh^i\bigl( \IH(X, \bf{L})\bigr)$. 

\item The \emph{$i$-th compactly supported intersection cohomology $\IH^i_c(X, \bf{L})$ with coefficients in $\bf{L}$} is the $\Lambda$-module $\IH^i_c(X, \bf{L}) \coloneqq \Hh^i_c\bigl( \IH(X, \bf{L})\bigr)$. 
\end{enumerate}
\end{definition}

In order to prove the version of Poincar\'e duality mentioned above, we need the following preliminary lemma: 

\begin{lemma}\label{lemma:preservation-zariski-constructible} Let $j\colon U \to X$ be a Zariski-open immersion of rigid-analytic spaces over $K$, let $\F \in D^{(b)}_\zc(U_\et; \Lambda)$. If there is $\ov{\F}\in D^{(b)}_\zc(X_\et; \Lambda)$ such that $j^* \ov{\F} = \F$, then $\rR j_*\F\in D^{(b)}_\zc(X_\et; \Lambda)$ and $j_!\F \in D^{(b)}_\zc(X_\et; \Lambda)$.
\end{lemma}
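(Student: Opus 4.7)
The plan is to reduce both claims, for $\rR j_*\F$ and $j_!\F$, to the basic stability properties of $D^{(b)}_\zc(X_\et;\Lambda)$ under cones, closed pushforward, and Verdier duality, using the extension $\ov\F$ as the main geometric input. The argument splits cleanly into two steps: first handle $j_!\F$ via the excision triangle for $\ov\F$, then deduce the case of $\rR j_*\F$ by applying Verdier duality on both $U$ and $X$. I expect no real obstacle; the only subtle point is verifying that $\DD_U\F$ admits an extension to $X$, which falls out of the identity $j^*\omega_X \simeq \omega_U$ for open immersions.

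For the $j_!$ case, let $i\colon Z \hookrightarrow X$ denote the complementary Zariski-closed immersion. The standard excision triangle gives
\[
j_!\F \;=\; j_!j^*\ov\F \;\longrightarrow\; \ov\F \;\longrightarrow\; i_*i^*\ov\F \;\longrightarrow\; j_!\F[1].
\]
By hypothesis $\ov\F \in D^{(b)}_\zc(X_\et;\Lambda)$, and the pullback $i^*$ and closed pushforward $i_*$ preserve locally bounded Zariski-constructible complexes thanks to \cite[Th.~3.10, Cor.~3.12]{BH}. Hence $i_*i^*\ov\F$ lies in $D^{(b)}_\zc(X_\et;\Lambda)$, and since this category is a thick triangulated subcategory of $D(X_\et;\Lambda)$, the third vertex $j_!\F$ lies there as well.

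For the $\rR j_*$ case, I use Verdier duality. Since $j$ is an open immersion, $j^!=j^*$ and $j^*\omega_X \simeq \omega_U$, so the $(j_!,j^*)$-adjunction upgrades to a canonical isomorphism
\[
\DD_X(j_!\,\blank) \;\simeq\; \rR j_*\,\DD_U(\blank),
\]
obtained by applying $\rR\cHom_X(\blank,\omega_X)$ to $j_!$ and using $\rR\cHom_X(j_!\G,\omega_X) \simeq \rR j_*\rR\cHom_U(\G,j^!\omega_X)$. Combined with the biduality $\DD_U \circ \DD_U \simeq \id$ on $D^{(b)}_\zc(U_\et;\Lambda)$ from \cite[Th.~3.21~(3)]{BH}, this yields $\rR j_*\F \simeq \DD_X\bigl(j_!\,\DD_U\F\bigr)$. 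Now the same formalism for $j^*$ gives $\DD_U\F = \DD_U j^*\ov\F \simeq j^*\DD_X\ov\F$, so $\DD_U\F$ admits an extension to $\DD_X\ov\F$, which lies in $D^{(b)}_\zc(X_\et;\Lambda)$ by \cite[Th.~3.21~(3)]{BH}. Applying the already-established $j_!$ case to $\DD_U\F$ shows $j_!\DD_U\F \in D^{(b)}_\zc(X_\et;\Lambda)$, and a final application of $\DD_X$ (again preserving $D^{(b)}_\zc$ by \cite[Th.~3.21~(3)]{BH}) gives $\rR j_*\F \in D^{(b)}_\zc(X_\et;\Lambda)$, completing the proof.
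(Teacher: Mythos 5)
Your proof is correct. The $j_!$ case is handled exactly as in the paper, via the excision triangle $j_!\F \to \ov\F \to i_*i^*\ov\F$ and thickness of $D^{(b)}_\zc$. For the $\rR j_*$ case, however, the paper takes a more symmetric route: it directly uses the other recollement triangle $i_*\rR i^!\ov\F \to \ov\F \to \rR j_*j^*\ov\F$ together with the fact that $\rR i^!$ preserves $D^{(b)}_\zc$ for a Zariski-closed immersion (this is \cite[Cor.~3.12]{BH}), so both cases are treated by the same two-step argument. You instead bootstrap the $\rR j_*$ case from the $j_!$ case by conjugating with Verdier duality, via $\rR j_*\F \simeq \DD_X(j_!\DD_U\F)$ and the biduality of \cite[Th.~3.21~(3)]{BH}, after noting that $\DD_U\F \simeq j^*\DD_X\ov\F$ extends across $X$. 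This is perfectly valid and uses essentially the same ingredients; the paper's version is a hair shorter and avoids having to check the open-immersion duality isomorphism $\DD_X(j_!\,\blank) \simeq \rR j_*\DD_U(\blank)$, while your version trades the appeal to $\rR i^!$-preservation for an appeal to biduality. Either is a reasonable proof.
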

\begin{proof}
    It clearly suffices to show that $\rR j_* j^* \ov{\F}$ and $j_! j^*\ov{\F}$ lie in $D^{(b)}_\zc(X_\et; \Lambda)$. We denote by $i\colon Z \to X$ the closed complement of $U$ (with the reduced adic space structure on it). Then the exact triangles
    \[
    i_* \rR i^! \ov{\F} \to \ov{\F} \to \rR j_* j^* \ov{\F},
    \]
    \[
    j_! j^* \ov{\F} \to \ov{\F} \to i_* i^* \ov{\F}
    \]
    imply that it suffices to show that the functors $i_*$, $i^*$, and $\rR i^!$ preserve locally bounded Zariski-constructible complexes. The claim is evident for the first two functors, and \cite[Cor.~3.12]{BH} implies the claim for the third functor above.
\end{proof}

\begin{theorem}
\label{intersection cohomology duality} 
Let $\ov{X}$ be a proper rigid-analytic space over $K$, let $U\subset X\subset \ov{X}$ be two rigid-analytic subspaces which are both Zariski-open in $\ov{X}$,\footnote{
We note that this is stronger than requiring the inclusions $U \subset X$ and $X \subset \ov{X}$ to be Zariski-open. For example, $\bigl(\AA^{1,\an}_K \smallsetminus \{(1/p)^\NN\}\bigr) \subset \AA^{1,\an}_K$ and $\AA^{1,\an}_K \subset \PP^{1,\an}_K$ are Zariski-open, but $\bigl(\AA^{1,\an}_K \smallsetminus \{(1/p)^\NN\}\bigr) \subset \PP^{1,\an}_K$ is not because any Zariski-closed subset of $\PP^{1,\an}_K$ is either finite or $\PP^{1,\an}_K$ by rigid GAGA.}
let $\bf{L}$ be a local system of finite free $\Lambda$-modules on $U_\et$, and let $\bf{L}^{\vee}$ be its $\Lambda$-linear dual. Assume that $U$ is smooth of equidimension $d$ and set $C\coloneqq \wdh{\overline{K}}$. Then $\IH_c(X_C, \LL)$ and $\IH(X_C, \LL^\vee(d))$ lie in $D^b_{\coh}(\Lambda)$ and there is a Galois-equivariant isomorphism
\begin{equation}\label{eqn:duality-intersection-cohomology}
\rR \Hom_\Lambda\bigl(\IH_c(X_C, \bf{L}), \Lambda\bigr) \simeq \IH\bigl(X_C, \bf{L}^{\vee}(d)\bigr)
\end{equation}
which is functorial in $\bf{L}$. In particular, for any integer $i$, there is a Galois-equivariant isomorphism
\[ \IH^{-i}_c(X_C, \bf{L})^\vee \simeq \IH^{i}(X_C, \bf{L}^\vee)(d). \]
\end{theorem}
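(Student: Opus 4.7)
\textit{Plan.}
The whole statement will be derived by combining \cref{general Poincare duality} with the self-duality of IC sheaves and the local-to-global comparison of $\rR\Gamma_c(X_C,\blank)$ and $\rR\Gamma(\ov{X}_C,\ov{j}_!\blank)$ for the open immersion $\ov{j}\colon X_C\hookrightarrow \ov{X}_C$. First, I would set up notation: write $j\colon U_C\hookrightarrow X_C$, $\ov{j}\colon X_C\hookrightarrow \ov{X}_C$, and $a\colon \ov{X}_C\to \Spa(C,\cO_C)$ for the structure morphism, so that $\omega_{\Spa(C,\cO_C)}\simeq\LLambda$. Setting $\F\coloneqq \IC_{X_C}(\bf{L})=j_{!*}(\bf{L}[d])\in D_\zc(X_{C,\et};\Lambda)$, the sheaf $\ov{j}_!\F$ is Zariski-constructible on $\ov{X}_C$ (see \cref{lemma:preservation-zariski-constructible}), and by the very definition of compactly supported cohomology for Zariski-open subspaces of a proper space, $\IH_c(X_C,\bf{L})=\rR a_*(\ov{j}_!\F)$.

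Next, I would apply \cref{general Poincare duality} to the proper morphism $a$ and to $\ov{j}_!\F$. Since $\DD_{\Spa(C,\cO_C)}$ is just $\rR\Hom_\Lambda(\blank,\Lambda)$, this gives a canonical, Galois-equivariant isomorphism
\[
\rR\Hom_\Lambda\bigl(\IH_c(X_C,\bf{L}),\Lambda\bigr)\;\simeq\;\rR a_*\bigl(\DD_{\ov{X}_C}(\ov{j}_!\F)\bigr).
\]
Using that $\ov{j}$ is an open immersion, hence $\ov{j}^!=\ov{j}^*$ and $\omega_{X_C}\simeq \ov{j}^!\omega_{\ov{X}_C}$, tensor-hom adjunction yields
$\DD_{\ov{X}_C}(\ov{j}_!\F)\simeq \rR\ov{j}_*\,\DD_{X_C}(\F)$, and therefore
\[
\rR a_*\bigl(\DD_{\ov{X}_C}(\ov{j}_!\F)\bigr)\;\simeq\;\rR\Gamma\bigl(X_C,\DD_{X_C}(\F)\bigr).
\]

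The remaining, more delicate, step is the self-duality of the IC complex: I need
$\DD_{X_C}\bigl(\IC_{X_C}(\bf{L})\bigr)\simeq \IC_{X_C}(\bf{L}^\vee(d))$. On the smooth open $U_C$ of equidimension $d$, \cite[Th.~3.21~(1)]{BH} gives $\omega_{U_C}\simeq\LLambda(d)[2d]$, whence $\DD_{U_C}(\bf{L}[d])\simeq \bf{L}^\vee(d)[d]$, and this is perverse for the middle perversity used by Bhatt--Hansen. Combining the fact that $\DD$ swaps $j_!$ and $\rR j_*$ with the $t$-exactness of $\DD$ on the perverse heart, one deduces that $\DD$ commutes with the intermediate extension $j_{!*}$; this yields the claimed formula, and hence
\[
\rR\Gamma\bigl(X_C,\DD_{X_C}(\F)\bigr)\;\simeq\;\rR\Gamma\bigl(X_C,\IC_{X_C}(\bf{L}^\vee(d))\bigr)\;=\;\IH\bigl(X_C,\bf{L}^\vee(d)\bigr).
\]
Splicing the three isomorphisms above gives \cref{eqn:duality-intersection-cohomology}, and the Galois equivariance is manifest from the naturality of every ingredient.

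Finally, the fact that $\IH_c(X_C,\bf{L})$ and $\IH(X_C,\bf{L}^\vee(d))$ lie in $D^b_\coh(\Lambda)$ follows from the corresponding Zariski-constructibility statement for proper pushforwards \cite[Th.~3.10]{BH} applied to $a$, together with the fact that Zariski-constructible complexes on a geometric point are perfect over $\Lambda$. The ``in particular'' statement is then obtained by taking cohomology of the derived duality and using that $\Lambda=\Z/n\Z$ is self-injective, so that $\Ext^{>0}_\Lambda(\blank,\Lambda)$ vanishes on finitely generated $\Lambda$-modules and $\Hh^i\rR\Hom_\Lambda(M,\Lambda)\simeq\Hh^{-i}(M)^\vee$. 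The main conceptual obstacle to watch carefully is not any single computation but the combination of two independent inputs---\cref{general Poincare duality} and the self-duality of IC---and verifying that they interact cleanly through the open-closed decomposition $U_C\subset X_C\subset \ov{X}_C$, in particular that $\DD_{\ov{X}_C}\ov{j}_!\simeq \rR\ov{j}_*\DD_{X_C}$ interchanges the compactly supported and the ordinary intersection cohomology as required.
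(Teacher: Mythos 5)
Your proof is correct and follows essentially the same route as the paper's: write $\IH_c = \rR\Gamma_c(X_C,\IC)$ as the proper pushforward of $\ov{j}_!\IC$ from the ambient proper space, apply \cref{general Poincare duality}, swap $\ov{j}_!$ with $\rR\ov{j}_*$ under Verdier duality (\cite[Th.~3.21~(5)]{BH}), and invoke the self-duality of the IC complex. The only cosmetic differences are that you rederive IC self-duality from $t$-exactness of $\DD$ on the perverse heart rather than simply citing \cite[Th.~4.2~(5)]{BH}, and that you argue directly over $C$ whereas the paper works over $K$ and then passes to global sections over $\Spa(C,\cO_C)$, which makes the Galois equivariance slightly more transparent. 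One small point worth making explicit: to apply \cref{lemma:preservation-zariski-constructible} to $\ov{j}_!\IC_{X_C}(\bf{L})$, you need to know that $\IC_{X_C}(\bf{L})$ extends to a Zariski-constructible complex on $\ov{X}_C$; this is supplied by $\IC_{\ov{X}_C}(\bf{L})$ together with $\ov{j}^*\IC_{\ov{X}_C}(\bf{L}) \simeq \IC_{X_C}(\bf{L})$, which again follows from \cite[Th.~4.2~(5)]{BH} and uses the standing hypothesis that $U$ is Zariski-open in $\ov{X}$, not merely in $X$.
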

\begin{proof}
    First, we denote by $j \colon X \to \ov{X}$ the natural open immersion and by $f\colon X \to \Spa(K,\O_K)$ and $\ov{f} \colon \ov{X} \to \Spa(K, \O_K)$ the structure morphisms. Then we note that  \cite[Th.~4.2~(5)]{BH} implies that $\IC_X(\bf{L})$ lies in $D^b_\zc(X_\et; \Lambda)$, $\IC_{\ov{X}}(\bf{L})$ lies in $D^b_\zc(\ov{X}_\et; \Lambda)$, and $\ov{j}_X^*\IC_{\ov{X}}(\bf{L}) \simeq \IC_X(\bf{L})$. Furthermore, \cref{lemma:preservation-zariski-constructible} guarantees that $j_! \IC_X(\bf{L})$ lies in $D^b_{\zc}(\ov{X}_\et; \Lambda)$, so \cite[Th.~3.10]{BH} implies that $\IH_c(X, \bf{L}) \in D^b_{\coh}(\Lambda)$. Then we have the following sequence of isomorphisms:
    \begin{multline}\label{eqn:duality-intersection-over-K}
        \DD_{\Spa(K, \O_K)} \bigl(\rR f_! \IC_X(\bf{L})\bigr)  \simeq \DD_{\Spa(K, \O_K)}\bigl(\rR \ov{f}_* j_! \IC_X(\bf{L})\bigr) \\
         \simeq \rR \ov{f}_* \DD_{\ov{X}}\bigl( j_! \IC_X(\bf{L}) \bigr) 
         \simeq \rR \ov{f}_* \rR j_* \DD_{X}\bigl(\IC_X(\bf{L})\bigr)  
         \simeq \rR f_* \IC_X\bigl(\bf{L}^\vee(d) \bigr),
    \end{multline}
        where the first isomorphism follows from the formula $\rR f_! \simeq \rR \ov{f}_* \circ j_!$, the second isomorphism follows from \cref{general Poincare duality} and the observation that $j_! \IC_X(\bf{L}) \in D^b_\zc(\ov{X}; \Lambda)$, the third isomorphism follows from \cite[Th.~3.21~(5)]{BH}, and the last isomorphism follows from the formula $\rR f_* \simeq \rR \ov{f}_* \circ \rR j_*$, \cite[Th.~4.2~(5)]{BH} and the assumption that $U$ is smooth over $K$. Then the isomorphism \cref{eqn:duality-intersection-cohomology} follows directly \cref{eqn:duality-intersection-over-K} by passing to (derived) global section over $\Spa(C, \O_C)$. Likewise, we immediately conclude that $\IH(X_C, \bf{L}^\vee(d))$ lies in $D^b_\coh(\Lambda)$. The last assertion follows directly from \cref{eqn:duality-intersection-cohomology} and the observation that $\Lambda=\Z/n\Z$ is an injective $\Lambda$-module. 
\end{proof}

\begin{remark} The condition that $U \subset \ov{X}$ is Zariski-open is automatically satisfied if $U = X^\sm$ is the smooth locus of $X$. To justify this, we first observe that $\ov{X}^\sm \subset \ov{X}$ is Zariski-open because the smooth locus is always Zariski-open. Therefore, $X^\sm = \ov{X}^\sm \cap X \subset \ov{X}$ is Zariski-open as an intersection of two Zariski-open subspaces. In particular, \cref{intersection cohomology duality} proves Poincar\'e duality for intersection cohomology (with constant coefficients) for any Zariski-compactifiable $X$.  
\end{remark}

Finally, we deduce a usual version of Poincar\'e duality for local systems on smooth Zariski-compactifiable rigid-analytic spaces:

\begin{corollary}\label{cohomology duality} Let $U$ be a smooth Zariski-compactifiable rigid-analytic space over $K$ of equidimension $d$. Let $\bf{L}$ be a local system of finite free $\Lambda$-modules on $U_\et$, let $\bf{L}^{\vee}$ be its $\Lambda$-linear dual, and let $\mathrm{ev}\colon \bf{L} \otimes_{\Lambda} \bf{L}^{\vee} \to \ud{\Lambda}_U$ be the natural evaluation map.
Set $C\coloneqq \widehat{\overline{K}}$. 
Then $\rR \Gamma_c(U_C, \bf{L})$ and $\rR\Gamma(U_C, \bf{L}^\vee(d)[2d])$ lie in $D^b_{\coh}(\Lambda)$ and the Galois-equivariant pairing
\[
\rR \Gamma_c(U_C, \bf{L}) \otimes^L_{\Lambda} \rR \Gamma(U_C, \bf{L}^{\vee}(d)[2d]) \xr{\cup} \rR \Gamma_c(U_C, \bf{L} \otimes \bf{L}^{\vee}(d)[2d]) \xr{\rR \Gamma_c(U_C, \mathrm{ev}(d)[2d])} \rR \Gamma_c(U_C, \Lambda(d)[2d]) \xr{\ttr_{f}} \Lambda
\]
is perfect (in the derived sense), where $\ttr_f$ is the smooth trace from \cref{smooth-trace-constant}. 
\end{corollary}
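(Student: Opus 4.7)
The plan is to deduce this corollary from \cref{general Poincare duality} applied to a compactification of $U$, and then to match the resulting pairing against the one described in the statement. Since $U$ is Zariski-compactifiable, choose a proper rigid-analytic space $\ov{U}$ over $K$ with a Zariski-open immersion $\bar j\colon U \hookrightarrow \ov{U}$. After replacing $\ov{U}$ by the Zariski-closure of $U$ inside it, we may assume $U$ is dense in $\ov{U}$. Applying \cref{modifications-exist}\cref{modifications-exist-Temkin} to the dense, Zariski-open, smooth subspace $U \subset \ov{U}$, we find a regular $U$-admissible modification $\pi\colon \ov{U}' \to \ov{U}$ with $\ov{U}'$ smooth. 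Then $\ov{U}'$ is proper (as a proper modification of a proper space) and smooth of equidimension $d$ over $K$, so after replacing $\ov{U}$ by $\ov{U}'$ we may also assume that $\ov{U}$ itself is smooth and proper. Base-change to $C$, and write $\bar f_C \colon \ov{U}_C \to \Spa(C,\O_C)$, $f_C \colon U_C \to \Spa(C,\O_C)$, so that $f_C = \bar f_C \circ \bar j_C$.

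Since $\bf{L}$ is a locally free local system, the sheaf $\bar j_{C,!}\bf{L}$ lies in $D^b_\zc(\ov{U}_{C,\et};\Lambda)$ by \cref{lemma:preservation-zariski-constructible}, so $\rR\Gamma_c(U_C,\bf{L}) \simeq \rR\Gamma(\ov{U}_C, \bar j_{C,!}\bf{L}) \in D^b_\coh(\Lambda)$ by \cite[Th.~3.10]{BH}. Applying \cref{general Poincare duality} to $\bar f_C$ and the Zariski-constructible complex $\bar j_{C,!}\bf{L}$ yields an isomorphism $\rR\bar f_{C,*} \DD_{\ov{U}_C}(\bar j_{C,!}\bf{L}) \simeq \DD_{\Spa(C,\O_C)}(\rR f_{C,!}\bf{L})$. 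Since $\ov{U}_C$ is smooth of equidimension $d$, \cite[Th.~3.21~(1)]{BH} gives $\omega_{\ov{U}_C} \simeq \ud{\Lambda}(d)[2d]$, and since $\bar j_C$ is an open immersion, $\bar j_C^*\omega_{\ov{U}_C} = \bar j_C^!\omega_{\ov{U}_C} = \omega_{U_C}$. The $(\bar j_{C,!},\bar j_C^*)$-adjunction and dualizability of $\bf{L}$ then identify
\[
\DD_{\ov{U}_C}(\bar j_{C,!}\bf{L}) = \rR\cHom(\bar j_{C,!}\bf{L},\omega_{\ov{U}_C}) \simeq \rR\bar j_{C,*}\rR\cHom(\bf{L},\omega_{U_C}) \simeq \rR\bar j_{C,*}\bigl(\bf{L}^\vee(d)[2d]\bigr).
\]
Taking global sections and using $\rR\bar f_{C,*}\circ\rR\bar j_{C,*} = \rR f_{C,*}$ yields the Galois-equivariant isomorphism $\rR\Gamma(U_C,\bf{L}^\vee(d)[2d]) \simeq \rR\Hom_\Lambda(\rR\Gamma_c(U_C,\bf{L}),\Lambda)$, which in particular shows that $\rR\Gamma(U_C,\bf{L}^\vee(d)[2d]) \in D^b_\coh(\Lambda)$.

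It remains to verify that this isomorphism is the one induced by the pairing in the statement, i.e.\ that the pairing is perfect in the derived sense. Unwinding \cref{duality}\cref{duality-PD-transformation}, the map $\PD_{\bar f_C}$ is tensor-hom adjoint to the pairing obtained by cup-product, evaluation, and the proper trace $\tr_{\bar f_C}$ from \cref{most general proper trace!!}. After the natural identifications above, the only difference with the pairing in the statement is that the latter uses the smooth trace $\ttr_{f_C}$ from \cref{smooth-trace-constant} in place of $\tr_{\bar f_C}$. To reconcile these, it suffices to show that the composition
\[
\rR f_{C,!}\,\ud{\Lambda}_{U_C}(d)[2d] \longrightarrow \rR\bar f_{C,*}\,\ud{\Lambda}_{\ov{U}_C}(d)[2d] \simeq \rR\bar f_{C,*}\,\omega_{\ov{U}_C} \xlongrightarrow{\tr_{\bar f_C}} \ud{\Lambda}
\]
(with first arrow induced by the counit $\bar j_{C,!}\,\ud{\Lambda}_{U_C} \to \ud{\Lambda}_{\ov{U}_C}$) coincides with $\ttr_{f_C}$. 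Since $\ov{U}_C$ is smooth and proper, \cref{smooth trace equals proper trace for smooth proper morphism} identifies $\tr_{\bar f_C}$ with the smooth trace map $\ttr_{\bar f_C}$ (via $\omega_{\ov{U}_C} \simeq \ud{\Lambda}(d)[2d]$). The counit $\bar j_{C,!}\,\ud{\Lambda}_{U_C} \to \ud{\Lambda}_{\ov{U}_C}$ is precisely the \'etale trace $\ttr^\et_{\bar j_C}$ (\cref{smooth-trace-constant}\cref{smooth-trace-constant-etale}), and \cref{smooth-trace-constant}\cref{smooth-trace-constant-composition} applied to $f_C = \bar f_C \circ \bar j_C$ gives the identity $\ttr_{f_C} = \ttr_{\bar f_C} \circ \rR\bar f_{C,!}\bigl(\ttr^\et_{\bar j_C}(d)[2d]\bigr)$. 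This is the compatibility required, and the corollary follows.

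The main obstacle in this plan is the compatibility verification in the last paragraph, where one has to chase through several identifications and carefully invoke both the composition axiom for the smooth trace and the identification of smooth and proper traces for smooth proper morphisms. Using resolution of singularities at the outset to arrange that $\ov{U}$ be smooth is essential here; without this reduction, the proper trace $\tr_{\bar f_C}$ would be considerably less explicit and the direct comparison with $\ttr_{f_C}$ would be much more delicate.
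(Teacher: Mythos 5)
Your proof is correct, and it takes a genuinely different route from the paper.

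The paper's own proof is very short: it simply specializes \cref{intersection cohomology duality} to the case $X = U$ smooth, noting that $\IC_U(\bf{L}) = \bf{L}[d]$, and then asserts the requisite isomorphism. That argument does not choose a smooth compactification, and accordingly it does not spell out the verification that the abstract duality isomorphism coming from $\PD_{\bar f}$ (built from the \emph{proper} trace on a possibly singular $\ov{X}$) agrees with the explicit pairing in the statement (built from the \emph{smooth} trace on $U$). Your argument fills this gap by bypassing \cref{intersection cohomology duality} entirely and applying \cref{general Poincare duality} directly to $\bar j_{C,!}\bf{L}$ on a compactification which you first arrange, via \cref{modifications-exist}\cref{modifications-exist-Temkin}, to be smooth. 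With a smooth proper $\ov{U}$ in hand, you can invoke \cref{smooth trace equals proper trace for smooth proper morphism} to replace $\tr_{\bar f_C}$ by $\ttr_{\bar f_C}$, and then \cref{smooth-trace-constant}\cref{smooth-trace-constant-composition} and \cref{smooth-trace-constant-etale} applied to $f_C = \bar f_C \circ \bar j_C$ give precisely $\ttr_{f_C}$. This is a clean way to reconcile the two traces, at the modest cost of the extra resolution step. One small omission: before invoking \cref{modifications-exist}, one should note that one may replace $\ov{U}$ by its reduction (using topological invariance of the \'etale site), since \cref{modifications-exist} is stated for reduced $X$. Everything else is in order.

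Net comparison: the paper's route through \cref{intersection cohomology duality} is shorter and reuses an already-proved statement, but is terse about the identification of the pairing; your route is longer but more self-contained and makes the match with the explicit smooth-trace pairing transparent.
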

Note that for $\ZZ_p$-local systems and discretely valued $K$, a rational duality statement was obtained in \cite[Th.~1.3]{LLZ}.
\begin{proof}
    The first conclusion of \cref{intersection cohomology duality} implies that both $\rR \Gamma_c(U_C, \bf{L})$ and $\rR \Gamma(U_C, \bf{L}^{\vee}(d)[2d])$ lie in $D^b_{\rm{coh}}(\Lambda)$. Therefore, it suffices to show that the natural morphism
    \begin{equation}\label{eqn:non-proper-duality}
    \rR\Gamma(U_C, \bf{L}^{\vee}(d)[2d]) \to \rR\Hom_\Lambda( \rR \Gamma_c(U_C, \bf{L}), \Lambda)
    \end{equation}
    is an isomorphism. This follows directly from \cref{intersection cohomology duality}. %
\end{proof}

\begin{appendices}

\section{Universal compactifications}
\label{univ-comp}

\begin{theorem}[{\cite[Th.~5.1.5, Cor.~5.1.6]{Huber-etale}}]
\label{compactifications exist}
    Let $X$ be a separated, $+$-weakly finite type adic space over $\Spa(C,\cO_C)$.
    Then there exists a proper adic space $X^c$ over $\Spa(C,\cO_C)$ and an open embedding $j \colon X \hookrightarrow X^c$ with the following universal property:
    if $Y$ is a proper adic space over $\Spa(C,\cO_C)$ and $j' \colon X \hookrightarrow Y$ an open embedding, then there exists a unique morphism $f \colon X^c \to Y$ such that $j' = f \circ j$.
    Moreover every point of $X^c$ is a specialization of a point of $j(X)$,
    and $\cO_{X^{c}} \to j_*(\cO_X)$ is an isomorphism of sheaves of topological rings.
\end{theorem}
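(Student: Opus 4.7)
The plan is to construct $X^c$ first in the affinoid case by directly shrinking the $+$-structure, then globalize by gluing. In the affinoid case $X = \Spa(A, A^+)$, I would set $A'^+$ to be the integral closure in $A$ of the subring $\cO_C + A^{\circ\circ}$. This is an open, integrally closed $\cO_C$-subalgebra of $A^+$ (open since it contains $A^{\circ\circ}$, bounded since it is integral over a bounded subring of $A^+$), so $(A, A'^+)$ is a Huber pair. Setting $X^c \colonequals \Spa(A, A'^+)$, the inclusion $A'^+ \subseteq A^+$ realizes $X$ as the open subset of $X^c$ cut out by the conditions $v(a) \leq 1$ for $a \in A^+$, giving an open immersion $j \colon X \hookrightarrow X^c$. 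The key initial check is that $X^c \to \Spa(C, \cO_C)$ is proper: separatedness is automatic for a map of affinoid adic spaces, finite type follows from $A$ being topologically finitely generated over $C$, and universal closedness reduces via the valuative criterion to the observation that $A'^+$ is the smallest possible open, integrally closed $\cO_C$-subalgebra compatible with the topology, so any continuous valuation $v$ on $A$ with $v(\cO_C) \leq 1$ automatically satisfies $v(A^{\circ\circ}) < 1$ by continuity and hence $v(A'^+) \leq 1$ by integrality.

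Given properness, the universal property in the affinoid case is essentially formal. If $j' \colon X \hookrightarrow Y$ is an open immersion into a proper $Y$, the composition with $j'$ gives a ring map $\cO_Y(Y) \to A$; by continuity this sends topologically nilpotent elements into $A^{\circ\circ}$, and by integral closedness of $A'^+$ together with the $\cO_C$-algebra structure it sends all of $\cO_Y^+(Y)$ into $A'^+$. Thus one obtains a morphism of Huber pairs $(\cO_Y(Y), \cO_Y^+(Y)) \to (A, A'^+)$ and hence a morphism $X^c \to Y$; uniqueness follows because $j(X)$ is dense in $X^c$ under the specialization order. The specialization claim holds because every $x \in X^c$ has a unique rank-$1$ generalization $x_\gen$, and any rank-$1$ continuous valuation on $A$ bounded by $1$ on $\cO_C$ is automatically bounded by $1$ on all of $A^\circ \supseteq A^+$, so $x_\gen \in j(X)$. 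Finally, the isomorphism $\cO_{X^c} \xrightarrow{\sim} j_*\cO_X$ holds on rational localizations because the $+$-ring does not affect the ring of sections once one inverts the defining elements of the rational subset.

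To globalize, one covers $X$ by affinoid opens $\{U_i\}$ and constructs each $U_i^c$ as above. The universal property applied to the open immersions $U_{ij} \colonequals U_i \cap U_j \hookrightarrow U_i$ produces canonical morphisms $U_{ij}^c \to U_i^c$, and uniqueness supplies automatically the cocycle condition over triple intersections. Separatedness of $X$ over $\Spa(C, \cO_C)$ is invoked here both to ensure that each $U_{ij}$ is affinoid (so that $U_{ij}^c$ is defined) and to guarantee separatedness of the glued space $X^c$. The main obstacle I anticipate is precisely this gluing step: one must verify that the maps $U_{ij}^c \to U_i^c$ are open immersions whose images paste together coherently, which requires a careful comparison of the enlarged $+$-structure on $U_i^c$ with the restriction to $U_{ij}$ and its own compactification. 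Once this is established, properness over $\Spa(C, \cO_C)$ and the universal property for a general proper target $Y$ descend from the affinoid local case by invoking uniqueness on the overlaps, and the specialization statement and the structure sheaf identity are local in nature and therefore inherited from the affinoid case.
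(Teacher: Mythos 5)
The statement you are proving is cited by the paper directly from Huber's book \cite[Th.~5.1.5, Cor.~5.1.6]{Huber-etale}; the paper does not reprove it but does reprove the affinoid case in \cref{lemma:universal-compactification-affinoid}. Your affinoid construction of $A'^+ = \cO_C[A^{\circ\circ}]^+$, the identification of $j$ with a rational subdomain inclusion, and the verification of universal closedness via the valuative criterion all match the paper's treatment. However, there are two genuine gaps.

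First, your argument for the affinoid universal property does not go through for non-affinoid targets. Given an open immersion $j' \colon X \hookrightarrow Y$ into a general proper $Y$, you produce a map of Huber pairs $(\cO_Y(Y),\cO^+_Y(Y)) \to (A,A'^+)$ and then claim this gives a morphism $X^c \to Y$. But $Y$ is not assumed affinoid, so a ring map on global sections does not determine a morphism of adic spaces $X^c \to Y$; you would only get a map $X^c \to \Spa\bigl(\cO_Y(Y),\cO^+_Y(Y)\bigr)$, which is not the same as $Y$. The paper avoids this by first establishing properness, the specialization property, and the identity $\cO_{X^c} \xrightarrow{\sim} j_*\cO_X$, and then invoking \cite[Lem.~5.1.7]{Huber-etale}, which packages precisely these three properties into the universal property. (The specialization property together with properness of $Y$ is what lets one extend a map defined on $X$ to the specialization-closure $X^c$ pointwise, and the structure sheaf identity is what lets one upgrade this to a morphism of locally ringed spaces.) Your two-sentence sketch of the structure sheaf identity is also too quick: the assertion is a statement about \emph{all} opens of $X^c$, not just rational ones, and it is really the reduction to rank-$1$ points (which all lie in $j(X)$) that makes it work.

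Second, for the gluing step you acknowledge the difficulty but the formulation you propose is likely to fail. The canonical morphism $U_{ij}^c \to U_i^c$ furnished by the universal property need not be an open immersion, nor need its image be the subset one wants to glue along. For instance, for $U_1 = \DD^1$ and a strictly smaller subdisk $U_{12} \subset U_1$, the boundary rank-$2$ point of $U_{12}^c$ maps into the \emph{interior} $U_1 \subset U_1^c$ rather than onto anything resembling a boundary, so $U_{12}^c$ does not identify with an open subset of $U_1^c$ in any useful way. Huber's actual globalization is organized differently (he constructs the universal compactification by choosing the minimal $+$-structure compatibly over an affinoid cover and checks gluing via the specialization and structure-sheaf properties), and simply pasting the affinoid compactifications $U_i^c$ along the compactifications $U_{ij}^c$ of overlaps is not the right blueprint. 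Given that the result is being cited rather than proved, the most efficient fix is to state the affinoid case as the paper does, establish the three auxiliary properties, and cite Huber's Lemma~5.1.7 and Theorem~5.1.5 for the universal property and the general (non-affinoid) case.
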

\begin{definition}
We call an embedding $X \hookrightarrow X^c$ satisfying the conclusion of \cref{compactifications exist} the \emph{universal compactification} of $X$.
\end{definition}

In the affinoid case, \cref{compactifications exist} specializes to the following lemma.
While the statement is implicit in the proof of \cite[Th.~5.1.5]{Huber-etale}, we recall the argument here for the convenience of the reader.
\begin{lemma}\label{lemma:universal-compactification-affinoid}
Let $C$ be an algebraically closed nonarchimedean field, and $f\colon \Spa(A, A^+)\to \Spa(C, \cO_C)$ be a finite type morphism. Then its universal compactification is equal to
\[ \begin{tikzcd}[column sep=tiny]
\Spa(A,A^+) \arrow[rr,hook,"j"] \arrow[rd, swap, "f"] && \Spa(A,\cO_C[A^{\circ\circ}]^+) \arrow[ld,"f^c"] \\
& \Spa(C,\cO_C), &
\end{tikzcd} \]
where $\cO_C[A^{\circ\circ}]^+$ denotes the integral closure of the smallest subring of $A$ that contains $\cO_C$ and $A^{\circ\circ}$.
\end{lemma}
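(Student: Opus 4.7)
The plan is to verify directly the three defining features of the universal compactification from \cref{compactifications exist} for the pair in the statement, namely: (i) the natural map is an open embedding, (ii) the target is proper over $\Spa(C,\cO_C)$, and (iii) the universal extension property holds.

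First, I will check that $(A,\cO_C[A^{\circ\circ}]^+)$ is a Huber pair and that the map from $\Spa(A,A^+)$ is an open embedding. The ring $\cO_C[A^{\circ\circ}]^+$ is open because it contains the open ideal $A^{\circ\circ}$, bounded because it is integral over a bounded subring, hence lies in $A^\circ$, and integrally closed by construction. For any ring of integral elements $A^+ \subset A$ lying over $\cO_C$, since $A^+$ automatically contains $A^{\circ\circ}$ (standard) and is integrally closed, we get $\cO_C[A^{\circ\circ}]^+ \subseteq A^+$. Consequently $\Spa(A,A^+) \hookrightarrow \Spa(A,\cO_C[A^{\circ\circ}]^+)$ is an open embedding (it is cut out by the open conditions $v(a) \leq 1$ for $a \in A^+$). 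This already gives property (i) and also shows the canonical map $\cO_{X^c} \to j_*\cO_X$ is an isomorphism of sheaves of topological rings, since both have global sections $A$ with the same topology.

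Next, I will verify properness of $f^c$. It is of finite type because $A$ is finitely generated over $C$, and it is separated since it is affinoid. For the remaining condition of being universally specializing, observe that a point $x \in X^c$ is an equivalence class of continuous valuations $v \colon A \to \Gamma \cup \{0\}$ with $v(\cO_C[A^{\circ\circ}]) \le 1$. This inequality is preserved by specializations, and the same description (with $\cO_{C'}$ in place of $\cO_C$) holds after any base change to another affinoid field $(C',\cO_{C'})$, so the property is stable under base change. Combined with separation and finite type, this yields properness.

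Finally, for the universal property, I will extend any open embedding $j' \colon X \hookrightarrow Y$ into a proper $Y$ over $\Spa(C,\cO_C)$ to a unique $g \colon X^c \to Y$. The key point is that every $x \in X^c$ has a unique rank-$1$ generalization $x_{\gen}$, and $x_{\gen}$ actually lies in $X$: indeed, continuity forces $v_{x_{\gen}}(\varpi)<1$ for a pseudouniformizer $\varpi$ of $C$, which in turn pushes $v_{x_{\gen}}$ strictly below $1$ on all of $A^{\circ\circ}$, and the rank-$1$ condition together with $A^+$ being contained in $A^\circ$ gives $v_{x_{\gen}}(A^+) \le 1$ (using that $\bigcap_n \varpi^n A^+$ contains $A^{\circ\circ}\cdot A^+$ up to radicals). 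Then $j'(x_{\gen}) \in Y$, and the valuative criterion applied to the morphism $\Spa(k(x),k(x)^+) \to Y$ over $\Spa(C,\cO_C)$ — available thanks to properness of $Y$ — produces a unique specialization in $Y$, which we take as $g(x)$. Uniqueness of $g$ follows from the fact that every point of $X^c$ specializes from $X$, so any extension of $j'$ is determined by its restriction to $X$. Functoriality and compatibility with the structure sheaves follow by the same valuation-theoretic argument applied to affine opens of $Y$.

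The main obstacle is the last step, particularly the verification that the constructed extension is a morphism of adic spaces (respecting the structure sheaves) rather than just a continuous map of underlying spaces; this is the content of Huber's general construction in \cite[Th.~5.1.5]{Huber-etale}, and for the affinoid case one checks that his local description of $X^c$ yields precisely $\Spa(A,\cO_C[A^{\circ\circ}]^+)$ because $\cO_C[A^{\circ\circ}]^+$ is by the first paragraph the \emph{smallest} ring of integral elements in $A$ compatible with the map from $(C,\cO_C)$, and hence defines the \emph{largest} adic space over $\Spa(C,\cO_C)$ with underlying ring $A$.
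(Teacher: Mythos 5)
Your overall plan (verify the open embedding, properness, and universal property directly) is a reasonable skeleton, but several of the steps have real gaps.

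\textbf{The open-embedding step is incomplete.} You assert that $\Spa(A,A^+) \hookrightarrow \Spa(A,\cO_C[A^{\circ\circ}]^+)$ is open because it is ``cut out by the open conditions $v(a)\le 1$ for $a\in A^+$.'' Each single condition $\{|a|\le 1\}$ is indeed a rational subset, but an \emph{infinite} intersection of rational subsets is in general only pro-constructible and closed under generization, not open. To conclude openness you need to know that $A^+$ is generated over $\cO_C[A^{\circ\circ}]^+$ by \emph{finitely many} elements $E$, so that $X = \{|E|\le 1\}\subset X^c$ is a single rational subset. This is precisely what the $+$-weak finite type condition supplies (and what the paper invokes), and your argument never establishes it.

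\textbf{The properness step is not an argument.} Properness in Huber's theory is not ``universally specializing''; it is quasicompactness, quasiseparation, and the valuative criterion with respect to morphisms $\Spa(K,\cO_K)\to X^c$ over $\Spa(C,\cO_C)$ and extensions of $\cO_K$ by bounded open valuation rings $\cO_C\subset K^+\subset K$. Your claim that ``the inequality is preserved by specializations... so the property is stable under base change'' does not verify this. The needed observation is that any continuous ring map $A\to K$ carries $A^{\circ\circ}$ into $K^{\circ\circ}\subset K^+$ for \emph{any} such $K^+$; that is what makes the lift along the dashed arrow exist, and it is this (short) computation that is the substance of properness here.

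\textbf{The universal-property step punts.} You correctly observe that every point of $X^c$ has a rank-1 generization $x_{\gen}$ lying in $X$, but the reasoning you offer is both overcomplicated and in part incorrect: the parenthetical $\bigcap_n \varpi^n A^+ \supseteq A^{\circ\circ}\cdot A^+$ is false ($A^+$ is $\varpi$-adically separated in this setting). The correct and much simpler argument is that a rank-1 valuation takes values $\le 1$ on every power-bounded element, and $A^+\subseteq A^\circ$. More fundamentally, however, you then attempt to construct the extension $g\colon X^c\to Y$ pointwise and acknowledge that upgrading this to a morphism of adic spaces is ``the content of Huber's general construction.'' The paper avoids this entirely by citing \cite[Lem.~5.1.7]{Huber-etale}, which reduces the verification of the universal-compactification property to three checkable conditions: $j$ is an open immersion, every point of $X^c$ is a specialization of a point of $X$, and $\cO_{X^c}\to j_*\cO_X$ is an isomorphism of sheaves of topological rings. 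Your proof would be shorter, correct, and non-circular if you took this route instead of reconstructing the extension map directly.
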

\begin{proof}
    Set $X \colonequals \Spa(A,A^+)$ and $X^c \colonequals \Spa(A, \cO_C[A^{\circ\circ}]^+)$.
    Let $j \colon X \to X^c$ be the map induced by the identity map on $A$.
    Since $(A,A^+)$ is of topologically finite type, hence $+$-weakly finite type (in the sense of \cite[Def.~1.2.1]{Huber-etale}) over $(C,\cO_C)$, there exists a finite set $E \subset A^+$ such that $A^+$ is the smallest integrally closed subring of $A$ which contains $\cO_C[A^{\circ\circ}]^+$ and $E$.
    Therefore, $j$ can be identified with the inclusion of the rational subset $\{ \abs{E} \le 1 \}$ and is in particular an open embedding.
    
    To show that $f^c \colon X^c \to \Spa(C, \cO_C)$ is proper, we first note that $f^c$ is of $+$-weakly finite type and $X^c$ is spectral, hence quasicompact and quasiseparated.
    Thus, we can invoke the valuative criterion for properness \cite[Lem.~1.3.10]{Huber-etale}:
    for any nonarchimedean field $K$ over $C$ and any open and bounded valuation subring $\cO_C \subset K^+ \subset K$, every diagram
    \[ \begin{tikzcd}
    \Spa(K,\cO_K) \arrow[r] \arrow[d] & X^c = \Spa(A, \cO_C[A^{\circ\circ}]^+) \arrow[d,"f^c"] \\
    \Spa(K,K^+) \arrow[r] \arrow[ur,dashed] & \Spa(C,\cO_C)
    \end{tikzcd} \]
    admits a lift as indicated by the dashed arrow because the map $A \to K$ which determines the top row is continuous and thus sends $A^{\circ\circ}$ into $K^{\circ\circ} \subset K^+$.
    
    Lastly, every point of $x \in X^c$ has a generization $y \in X^c$ corresponding to a valuation of rank $1$ \cite[Lem.~1.1.10~ii)]{Huber-etale}.
    Then necessarily $y(a) \le 1$ for all $a \in A^\circ$, so $y \in X$.
    Since the rational structure sheaf does not depend on the ring of integral elements, the natural map $\cO_{X^c} \to j_*\cO_X$ is an isomorphism of sheaves of topological rings.
    Therefore, $X^c$ is a universal compactification of $X$ by \cite[Lem.~5.1.7]{Huber-etale}.
\end{proof}

\begin{lemma}
\label{lemma:extra-points-higher-rank}
Let $X$ be a separated taut $C$-rigid space, then any $x \in \abs{X^c} \smallsetminus \abs{X}$
has rank $> 1$.
\end{lemma}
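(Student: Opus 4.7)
The plan is to derive a contradiction from the assumption that $x \in |X^c| \setminus |X|$ has rank $1$, using the universal property of the compactification together with the fact that rank-$1$ points are maximal among analytic points.

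First, I would recall the key input from \cref{compactifications exist}: every point of $X^c$ is a specialization of some point of $j(X)$. In particular, given any $x \in |X^c|$, there exists $y \in j(X) \subseteq X^c$ with $x$ in the closure of $\{y\}$, i.e., $y$ is a (horizontal) generization of $x$ inside $X^c$.

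Next I would use the standard fact (\cite[Lem.~1.1.10]{Huber-etale}) that any analytic point of an analytic adic space admits a \emph{unique} rank-$1$ generization, and this rank-$1$ generization admits no proper analytic generizations. Since $X^c$ is analytic over $\Spa(C,\cO_C)$, every point of $X^c$ is analytic. Therefore, if $x$ itself has rank $1$, then $x$ is already its own (unique) rank-$1$ generization, so any analytic generization of $x$ in $X^c$ must equal $x$. Applied to the generization $y \in j(X)$ of $x$ furnished above, this forces $y = x$, whence $x \in j(X)$, contradicting the assumption $x \in |X^c| \setminus |X|$.

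There is no real obstacle here: the argument is purely a combination of the ``every point specializes from $j(X)$'' part of \cref{compactifications exist} with the maximality of rank-$1$ analytic points. The only thing to be slightly careful about is that we apply the maximality statement inside $X^c$ (not inside $X$), which is legitimate because $X^c$ is itself an analytic adic space and $y \rightsquigarrow x$ takes place there.
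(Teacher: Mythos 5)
Your argument is correct and is essentially the same as the paper's, which likewise invokes \cref{compactifications exist} to produce a proper generization of $x$ inside $j(X)$ and then concludes $\rk x > 1$ from maximality of rank-$1$ analytic points. You have merely spelled out the last implication more carefully, explicitly citing \cite[Lem.~1.1.10]{Huber-etale} for maximality of rank-$1$ points among analytic points.
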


\begin{proof}
According to \cref{compactifications exist}, every point in $X^c$ is a specialization
of a point in $X$, in particular it admits generalization, hence of rank $> 1$.
\end{proof}

\section{Pseudo-adic spaces}
\label{section:pseudo-adic}

One of the major subtleties while working with adic spaces is that a (locally) closed subset of an analytic adic spaces is rarely an adic space itself. In fact, a higher rank closed point $x\in X$ of an analytic adic space $X$ \emph{never} admits a structure of an adic space.  

In order to circumvent this issue, Huber has defined the notion of a pseudo-adic space and its \'etale topos.
This theory has not been widely used beyond his book \cite{Huber-etale}, where this notion does play a crucial role to make many arguments work. This theory is also quite crucial for the results of this paper\footnote{Though, we need only a mild part of it.}, so we have decided to recall the main definitions and constructions from this theory in the Appendix. 

\subsection{Basic definitions}

\begin{definition}\label{defn:pseuado-adic-space} A \emph{(strongly) pseudo-adic space} $X$ is a pair $X=(\ud{X}, \abs{X})$ consisting of an adic space $\ud{X}$ and a closed subset $\abs{X} \subset \ud{X}$. A morphism of (strongly) pseudo-adic spaces $f\colon X=(\ud{X}, \abs{X}) \to Y=(\ud{Y}, \abs{Y})$ is a morphism of adic spaces $f\colon \ud{X}\to \ud{Y}$ such that $f(\abs{X}) \subset \abs{Y}$. 
\end{definition}

\begin{remark} Huber defines a more general notion of a pseudo-adic space in \cite[Def.~1.10.3]{Huber-etale}. This level generality is convenient to set up foundations. However, we will never need this level of generality in this paper, so we do not discuss it. We include the word ``strongly'' to emphasize that our definition of pseudo-adic spaces is stronger than the definition given by Huber. 
\end{remark}

\begin{lemma} Let $X=(\ud{X}, \abs{X})$ be a (strongly) pseudo-adic space. Then $X$ is a pseudo-adic space in the sense of \cite[Def.~1.10.3]{Huber-etale}.
\end{lemma}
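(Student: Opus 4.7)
The plan is to compare \cref{defn:pseuado-adic-space} with Huber's definition \cite[Def.~1.10.3]{Huber-etale}. Recall that for Huber a pseudo-adic space is a pair $(\underline{X},|X|)$ consisting of an adic space $\underline{X}$ together with a subset $|X|\subseteq \underline{X}$ which is (a) convex in $\underline{X}$ (i.e., whenever $x,z\in |X|$ and $y\in\underline{X}$ satisfies $x\rightsquigarrow y\rightsquigarrow z$, then $y\in |X|$), (b) locally pro-constructible in $\underline{X}$, and (c) stable under specialization locally on $\underline{X}$. So the whole content of the lemma is that a \emph{closed} subset of an adic space automatically satisfies (a), (b), and (c).

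Conditions (a) and (c) are almost immediate: any closed subset of a topological space is closed under specialization, which yields (c) and the ``downward'' half of (a); and if $x,z\in |X|$ and $x\rightsquigarrow y\rightsquigarrow z$, then $y$ lies in the closure of $\{z\}$, hence in $|X|$ since $|X|$ is closed. So the main work is to verify (b), the local pro-constructibility.

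For this, I would argue locally on $\underline{X}$, reducing to the case where $\underline{X}=\Spa(A,A^+)$ is affinoid adic. Its underlying space $|\Spa(A,A^+)|$ is then a spectral space (by \cite[Th.~3.5.1]{Huber-thesis}), and every closed subset of $\Spa(A,A^+)$ is of the form
\[
 \bigcap_{i\in I}\{x \suchthat v_x(f_i)\le v_x(g_i)\}
\]
for a family of pairs $(f_i,g_i)\in A\times A$, as follows from the definition of the topology on $\Spa(A,A^+)$. Each individual set $\{x\suchthat v_x(f_i)\le v_x(g_i)\}$ is a rational subset (or a finite union of rational subsets when $g_i$ is not a unit on the locus in question), and in particular constructible. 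Hence $|X|$ is an intersection of constructible subsets of $|\underline{X}|$, which is the very definition of pro-constructibility.

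The main obstacle I anticipate is purely notational/bookkeeping: matching the precise formulation of Huber's conditions (which involves specialization relations and a local form of pro-constructibility compatible with covers of $\underline{X}$) against the simpler statement that $|X|$ is closed. Once one writes a closed subset as the intersection of the vanishing loci of rational functions on affinoid opens and invokes that this is a standard family of constructible sets in the spectral topology, the verification of (a), (b), (c) reduces to straightforward point-set arguments, and no further input from the theory is required.
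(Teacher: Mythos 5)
Your overall strategy coincides with the paper's: unwind Huber's definition into checking that a closed subset of an adic space is (locally) pro-constructible and convex, observe that convexity is a consequence of closedness, and reduce pro-constructibility to a statement in spectral-space topology. Your treatment of convexity and stability under specialization is fine.

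The step that does not hold up is the description of closed subsets of $\Spa(A,A^+)$. It is not true that every closed subset is of the form $\bigcap_{i\in I}\{x \suchthat v_x(f_i)\le v_x(g_i)\}$. The topology is generated by rational subsets $\{x : v_x(f_1),\dotsc,v_x(f_n)\le v_x(g)\ne 0\}$, and the complement of such a set is $\{x : v_x(f_j)>v_x(g)\ \text{for some}\ j,\ \text{or}\ v_x(g)=0\}$, which is a disjunctive, not a conjunctive, condition and cannot in general be rewritten in the claimed form. What \emph{is} true (and suffices) is that each complement of a rational subset is constructible, so a closed set, being an intersection of such complements, is pro-constructible. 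Equivalently, one can bypass the adic-geometry specifics altogether, as the paper does: it invokes the general fact that every closed subset of a (locally) spectral space is pro-constructible, citing \cite[Prop.~3.23(i)]{wedhorn}, after reducing to the affinoid/spectral case. So the gap is in the explicit description, not in the final conclusion; replacing your intermediate claim by the cited general fact (or by the correct observation about complements of rational subsets) completes the argument.
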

\begin{proof}
    We need to show that $\abs{X}\subset \ud{X}$ is locally pro-constructible and convex (see \cite[(1.1.3)]{Huber-etale}). We note that closed subspaces of locally spectral spaces are closed under specialization, so convexity of $\abs{X}$ inside $\ud{X}$ is clear. Now we show a stronger claim that any closed subset of locally spectral space is pro-constructible. This is a local statement, so we can assume that $X$ is spectral, then this is \cite[Prop.~3.23(i)]{wedhorn}.
\end{proof}

We give two examples of pseudo-adic spaces that will be important for this paper:

\begin{example}\label{example:main-examples} 
\begin{enumerate}[label=(\arabic*),leftmargin=*]
\item (Closed points) Let $x\in X$ be a closed point of an adic space $X$. Then $(X, \{x\})$ is a (strongly) pseudo-adic space. We will usually denote it simply by $\{x\}$. 
\item (Closed pro-special subsets) Let $X=\Spa(A, A^+)$ be an affinoid adic space (with possibly non-complete $(A, A^+)$) and $\{f_i\}_{i\in I}$ a set of functions in $A^+$. Then
\[
X\left(\abs{f_i}<1\right) = \{x\in X \suchthat \abs{f_i(x)}<1 \text{ for } i\in I\}
\]
is a closed subspace of $X$. So $(X, X\left(\abs{f_i}<1\right))$ is a (strongly) pseudo-adic space.
\end{enumerate} 
\end{example}

\begin{remark}\label{rmk:closed-point-pro-special} Let $(k, k^+)$ be an affinoid field
and $s\in \Spa(k, k^+)$ a closed point. Then $\{s\}\in\Spa(k, k^+)$ is an example of a closed pro-special subset with the set of function $\{f_i\}_{i\in I}$ equal to the set of elements of the maximal ideal $\fm\subset k^+$.
\end{remark}

Now we wish to discuss the notion of an \'etale topos of a (strongly) pseudo-adic space $(X, Z)$. Let $U\coloneqq X\smallsetminus Z$ be the open complement of $Z$ in $X$. Then the \'etale topos $U_\et$ is identified with the slice topos $(X_{\et})_{/h_U}$ and the natural morphism $U_\et \to X_\et$ is fully faithful. Therefore, $U_\et$ is an open subtopos of $X_\et$ in the sense of \cite[\href{https://stacks.math.columbia.edu/tag/08LX}{Tag 08LX}]{stacks-project}. 

\begin{definition}\label{defn:etale-topos-pseudo-adic}
    The \emph{\'etale topos} $(X, Z)_\et$ of a (strongly) pseudo-adic space $(X, Z)$ is the closed subtopos of $X_\et$ obtained as the closed complement of an open subtopos $U_\et \subset X_\et$ (see \cite[\href{https://stacks.math.columbia.edu/tag/08LZ}{Tag 08LZ}]{stacks-project} and \cite[\href{https://stacks.math.columbia.edu/tag/08LZ}{Tag 08LZ}]{stacks-project}). 
\end{definition}

\begin{remark} Explicitly, $(X, Z)_\et$ is a full subcategory of $X_\et$ that consists of sheaves $\F\in X_\et$ such that the natural morphism $\F\times h_U\to h_U$ is an isomorphism. 
\end{remark}

\begin{remark} The \'etale topos of a (strongly) pseudo-adic space is functorial with respect to morphism of (strongly) pseudo-adic spaces. 
\end{remark}

\begin{remark}\label{rmk:excision} Let $j\colon U \to X$ be an open immersion with the complement $i\colon Z\to X$. The \'etale topos $(X, Z)_\et$ comes with a morphism of topoi $i\colon (X, Z)_\et \to X_\et$ by construction. It is essentially formal\footnote{For example, it can be deduced from \cite[Exp.~IV, Prop.~9.4.1]{SGA4}.} that the sequence
\[
0 \to j_!j^*\F \to \F \to i_*i^*\F \to 0
\]
is exact for any sheaf of abelian groups $\F\in X_\et$. 
\end{remark}

\begin{lemma}\label{lemma:Huber-definition-vs-ours} Let $(X, Z)$ be a (strongly) pseudo-adic space. Then $(X, Z)_\et$ defined in \cref{defn:etale-topos-pseudo-adic} coincides with the definition of the \'etale topos of a pseudo-adic space from \cite[Def.~2.3.1]{Huber-etale}.
\end{lemma}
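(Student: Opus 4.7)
The plan is to exhibit both topoi as naturally equivalent to the same full subcategory of $\ud{X}_\et$, namely the sheaves $\F$ whose restriction $j^*\F$ along $j \colon U \colonequals \ud{X} \smallsetminus Z \hookrightarrow \ud{X}$ is terminal. For \cref{defn:etale-topos-pseudo-adic}, this characterization is literally the description of the closed complement of an open subtopos supplied by \cite[\href{https://stacks.math.columbia.edu/tag/08LZ}{Tag 08LZ}]{stacks-project}. So it suffices to identify Huber's topos of \cite[Def.~2.3.1]{Huber-etale} with the same subcategory.

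I would first recall that Huber's étale site has objects the étale morphisms $(Y, f^{-1}(Z)) \to (\ud{X}, Z)$ with $f \colon Y \to \ud{X}$ étale, and coverings those families whose images cover $f^{-1}(Z)$, not necessarily all of $Y$. The evident forgetful functor $\iota$ from Huber's site to $\ud{X}_\et$ is continuous and cocontinuous, so it induces a morphism of topoi $\iota$ whose pushforward $\iota_*$ is fully faithful; the equivalence of topoi will then follow once its essential image is identified with the closed subtopos.

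The key step is to check that a sheaf $\F \in \ud{X}_\et$ lies in the essential image of $\iota_*$ iff $j^*\F$ is terminal. One direction is immediate: any $V \to \ud{X}$ étale with $V\times_{\ud{X}} Z = \varnothing$ is covered in Huber's site by the empty family, so any $\iota_*\mathscr{G}$ must have terminal value there, i.e.\ terminal restriction to $U$. Conversely, given $\F$ with $j^*\F \simeq \ast$, one defines a presheaf $\mathscr{G}$ on Huber's site by $\mathscr{G}(Y, f^{-1}(Z)) \colonequals \F(Y)$ and verifies the sheaf condition by extending any Huber cover to a genuine $\ud{X}_\et$-cover by adjoining an open subspace of $Y$ disjoint from $f^{-1}(Z)$; triviality of $\F$ on $U$ then ensures that the extra piece contributes nothing to the gluing, so the Huber descent condition reduces to the $\ud{X}_\et$ descent condition for $\F$.

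The main obstacle is the careful bookkeeping required for this verification, particularly the compatibility between Huber's weaker covering families (surjective only on the closed part) and the full étale sheaf condition on $\ud{X}_\et$. This ultimately rests on the fact that $Z \subseteq \ud{X}$ is closed and pro-constructible, which guarantees that any Huber covering can indeed be enlarged to an $\ud{X}_\et$-covering by throwing in a suitable open contained in $f^{-1}(U)$; once this is in place, the chain of equivalences is routine.
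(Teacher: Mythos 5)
Your proposal follows essentially the same route as the paper's proof: identify the closed subtopos of \cref{defn:etale-topos-pseudo-adic} via the Stacks Project characterization (sheaves on $\ud{X}_\et$ whose restriction to $U$ is terminal), and show that Huber's site computes exactly that subtopos. The paper accomplishes the second step in one stroke by citing Huber's own \cite[Rmk.~2.3.4~(i)]{Huber-etale}, which already reformulates $(X,Z)^{\rm H}_\et$ as sheaves on the site with \emph{the same} underlying category as $X_\et$ but with covers required only to be jointly surjective onto the preimage of $Z$; it then quotes the proof of \cite[Tag 08LY]{stacks-project}, which is verbatim the statement that this modified site computes the closed subtopos. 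You reconstruct this comparison by hand rather than citing it, which is legitimate but considerably longer.

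There is one genuine technical slip worth flagging: you assert that the forgetful functor $\iota\colon(\text{Huber site})\to\ud{X}_\et$ is continuous, but it is not. Continuity would require Huber coverings to map to $\ud{X}_\et$-coverings, yet Huber's coverings are \emph{strictly weaker} (they need only surject onto the closed part), so the image of a Huber cover need not be an honest \'etale cover. The correct framing is that $\iota$ is cocontinuous but not continuous; equivalently, the identity functor in the \emph{other} direction, $\ud{X}_\et^{\rm{site}}\to(\text{Huber site})$, is a continuous morphism of sites (since adding more coverings to the target preserves the cover property), and it is this morphism whose induced pushforward $\mathrm{Sh}(\text{Huber site})\hookrightarrow\mathrm{Sh}(\ud{X}_\et)$ is the fully faithful inclusion you want. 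Once the direction of the functor and the correct (co)continuity property are straightened out, the rest of your plan — the characterization of the essential image, and the enlargement of a Huber cover to a genuine \'etale cover by adjoining an open contained in $f^{-1}(U)$ — goes through and reproduces the content of \cite[Tag 08LY]{stacks-project}.
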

\begin{proof}
    For the purpose of this proof, we denote by $(X, Z)_\et^{\rm{H}}$ the topos defined in \cite[Def.~2.3.1]{Huber-etale}. Then \cite[Rmk.~2.3.4~(i)]{Huber-etale} gives a particularly nice site $(X, Z)_{\et. \rm{w}}$ defining $(X, Z)_\et^{\rm{H}}$:
    \begin{enumerate}
        \item the underlying category of $(X, Z)_\et^{\rm{H}}$ is the category $\text{\'Et}/X$ of adic spaces \'etale over $X$;
        \item a family $\{f_i\colon Y_i \to Y\}_{i\in I}$ of morphisms in $\text{\'Et}/X$ is a covering if $h^{-1}(S)\subset \cup_{i\in I} f_i(Y_i)$ where $h\colon Y\to X$ is the structure morphism.
    \end{enumerate}
    Now the proof of \cite[\href{https://stacks.math.columbia.edu/tag/08LY}{Tag 08LY}]{stacks-project} implies that the same site defines the closed subtopos $(X, Z)_\et$. This finishes the proof.  
\end{proof}

\begin{remark} The definition of an \'etale topos of a (strongly) pseudo-adic space $(X, Z)$ depends on the ambient space $X$. However, \cite[Cor.~2.3.8]{Huber-etale} shows that it is independent of $X$ in some precise sense.
\end{remark} 

\begin{lemma}\label{lemma:pre-adic-dijoint-union}
Let $X=\bigl(\ud{X},\abs{X}\bigr)$ be a (strongly) pseudo-adic space, and let $\abs{X}=\abs{X_1}\sqcup \abs{X_2}$ is a disjoint union of two closed subsets $X_1$ and $X_2$. Then there is a canonical equivalence 
\[
\bigl(\ud{X}, \abs{X}\bigr)_\et\simeq \bigl(\ud{X}\sqcup \ud{X}, \abs{X_1} \sqcup \abs{X_2}\bigr)_\et\simeq \bigl(\ud{X}, \abs{X_1}\bigr)_\et \times \bigl(\ud{X}, \abs{X_2}\bigr)_\et.
\]
\end{lemma}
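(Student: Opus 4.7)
The plan is to establish the two equivalences separately. The second (right-hand) one is essentially formal: under the canonical equivalence $(\ud{X}\sqcup \ud{X})_\et \simeq \ud{X}_\et\times \ud{X}_\et$ that expresses \'etale sheaves on a disjoint union of adic spaces as pairs, the subtopos condition of being trivial on the complement of $\abs{X_1}\sqcup \abs{X_2}$ translates componentwise to each $\G_i$ being trivial on $\ud{X}\smallsetminus \abs{X_i}$. This yields
\[
(\ud{X}\sqcup \ud{X},\abs{X_1}\sqcup \abs{X_2})_\et \;\simeq\; (\ud{X},\abs{X_1})_\et\times (\ud{X},\abs{X_2})_\et.
\]

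For the first equivalence, my approach is \'etale descent on a carefully chosen open cover of $\ud{X}$. First I would observe that $\abs{X_1}$ and $\abs{X_2}$ are disjoint closed subsets of $\ud{X}$: they are closed in $\abs{X}$ by the topological disjoint-union assumption, and $\abs{X}$ is itself closed in $\ud{X}$ by the definition of a (strongly) pseudo-adic space. Set $U_1\coloneqq \ud{X}\smallsetminus \abs{X_2}$ and $U_2\coloneqq \ud{X}\smallsetminus \abs{X_1}$. Since $\abs{X_1}\cap \abs{X_2}=\varnothing$, we have $U_1\cup U_2=\ud{X}$; by construction $U_i\cap \abs{X}=\abs{X_i}$ and $U_1\cap U_2=\ud{X}\smallsetminus \abs{X}$. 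A sheaf $\F\in \ud{X}_\et$ lying in $(\ud{X},\abs{X})_\et$ is trivial on $\ud{X}\smallsetminus \abs{X}$, so its restriction to $U_i$ is trivial on $U_i\smallsetminus \abs{X_i}$, i.e.\ it lies in $(U_i,\abs{X_i})_\et$. Moreover, both restrictions are automatically trivial on the overlap $U_1\cap U_2=\ud{X}\smallsetminus \abs{X}$, so the gluing datum is forced to be the identity of the final object. \'Etale descent along $\{U_1,U_2\}$ therefore gives an equivalence
\[
(\ud{X},\abs{X})_\et\;\simeq\;(U_1,\abs{X_1})_\et\times (U_2,\abs{X_2})_\et.
\]

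To finish, I would invoke the ambient-space independence of the pseudo-adic \'etale topos: by \cref{lemma:Huber-definition-vs-ours} our definition coincides with Huber's, and \cite[Cor.~2.3.8]{Huber-etale} ensures that the open immersions $U_i\hookrightarrow \ud{X}$ (which are the identity on the common closed subset $\abs{X_i}$) induce equivalences $(U_i,\abs{X_i})_\et\simeq (\ud{X},\abs{X_i})_\et$. Combined with the previous step, this yields the full chain
\[
(\ud{X},\abs{X})_\et\;\simeq\;(\ud{X},\abs{X_1})_\et\times (\ud{X},\abs{X_2})_\et\;\simeq\;(\ud{X}\sqcup \ud{X},\abs{X_1}\sqcup \abs{X_2})_\et,
\]
which is the desired statement. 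The main obstacle I anticipate is purely bookkeeping: verifying that the descent datum on $U_1\cap U_2$ is trivial (so that the fibered product of topoi collapses to an honest product) and threading the ambient-space independence through functorially. Both steps are formal, depending only on the standard behaviour of open and closed subtopoi together with \cite[Cor.~2.3.8]{Huber-etale}.
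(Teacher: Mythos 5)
Your proof is correct, but it takes a different route from the paper's for the first equivalence. The paper argues more directly: it observes that the tautological map of pseudo-adic spaces $\id\sqcup\id\colon \bigl(\ud{X}\sqcup\ud{X},\abs{X_1}\sqcup\abs{X_2}\bigr)\to\bigl(\ud{X},\abs{X}\bigr)$ restricts to a bijection on the marked closed subsets (precisely because $\abs{X_1}$ and $\abs{X_2}$ are disjoint), and then invokes \cite[Prop.~2.3.7]{Huber-etale}, which says that such a morphism induces an equivalence of \'etale topoi. That is a single application of one of Huber's lemmas. You instead cover $\ud{X}$ by the opens $U_i=\ud{X}\smallsetminus\abs{X_{3-i}}$, note that the overlap $U_1\cap U_2=\ud{X}\smallsetminus\abs{X}$ lies entirely outside the marked closed set (so any gluing datum between two objects of the closed subtoposes is forced to be the unique isomorphism of final objects), and then use the ambient-independence result \cite[Cor.~2.3.8]{Huber-etale} to replace $(U_i,\abs{X_i})_\et$ by $(\ud{X},\abs{X_i})_\et$. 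This is slightly longer and trades Prop.~2.3.7 for Cor.~2.3.8, but it has the advantage of making the underlying topological mechanism visible — the disjointness of the $\abs{X_i}$ is what guarantees both that $\{U_1,U_2\}$ is a cover and that the descent datum on the overlap collapses. One small note on presentation: you should make explicit that the descent is along an open (Zariski) cover inside the \'etale topos and that full faithfulness of the restriction functor also uses that the overlap restrictions are final, but this is exactly the "bookkeeping" you already flagged, and it works out.
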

\begin{proof}
For the first isomorphism, it suffices to show that the natural morphism of pseudo-adic spaces $\bigl(\ud{X}\sqcup \ud{X}, \abs{X_1} \sqcup \abs{X_2}\bigr) \xrightarrow{\id \sqcup \id} \bigl(\ud{X}, \abs{X}\bigr)$ induces an equivalence on the associated \'etale topoi.
This follows from \cite[Prop.~2.3.7]{Huber-etale}. The second isomorphism follows from the following sequence of isomorphisms 
\[
\bigl(\ud{X}\sqcup \ud{X}, \abs{X_1} \sqcup \abs{X_2} \bigr)_\et \simeq \Bigl(\bigl(\ud{X}, \abs{X_1}\bigr) \sqcup \bigl(\ud{X}, \abs{X_2}\bigr)\Bigr)_\et \simeq \bigl(\ud{X}, \abs{X_1}\bigr)_\et \times \bigl(\ud{X}, \abs{X_2}\bigr)_\et. \qedhere
\]
\end{proof}

\subsection{\'Etale topos of a closed point}

The main goal of this subsection is to give an explicit characterization of the \'etale topos of a pseudo-adic space $(X, x)$ for $x\in X$ a closed point.  For the rest of the subsection, we fix a locally noetherian analytic adic space $X$. %

Let $x\in X$ be a closed point of an analytic locally noetherian adic space $X$; we wish to understand the cohomology groups of the pseudo-adic space $\{x\}=(X, x)$. For this, we define $(K_x, K_x^+)$ to be either $\Bigl(k(x)^\h, k(x)^{+, \h}\Bigr)$ or $\Bigl(\wdh{k(x)}^\h, \wdh{k(x)}^{+, \h}\Bigr)$ (see \cref{defn:residue-fields-2}), and $s$ to be the unique closed point of $\Spa(K_x, K_x^+)$. Then the morphism of pseudo-adic space $\left(\Spa(K_x, K_x^+), s\right) \to (X, x)$
induces a morphism of topoi
\[
b\colon \left(\Spa(K_x, K_x^+), \{s\}\right)_\et \to (X, x)_\et.
\]
The universal property of affine scheme (see \cite[\href{https://stacks.math.columbia.edu/tag/01I1}{Tag 01I1}]{stacks-project}) gives us a canonical morphism $\Spa(K_x, K_x^+) \to \Spec K_x^+$ that can be easily extended to a morphism of \'etale topoi
\[
a\colon \left(\Spa(K_x, K_x^+), \{s\}\right)_\et \to (\Spec K_x^+)_\et
\]

\begin{theorem}
\label{thm:topos-of-a-point} In the notation as above, both $a$ and $b$ are equivalences of topoi. In particular, 
\[
    \gamma=a\circ b^{-1}\colon (X, x)_\et \to (\Spec K_x)_\et
\]
is an equivalence of topoi. In particular, there are canonical isomorphisms
\[
\rm{R}\Gamma\left(\{x\}, \mu_n\right) \simeq \rm{R}\Gamma\bigl(\Spec k(x)^{\rm{h}}, \mu_n\bigr) \simeq \rm{R}\Gamma\Bigl(\Spec \wdh{k(x)}^{\rm{h}}, \mu_n\Bigr).
\]
for every integer $n$ invertible in $\O_X$.
\end{theorem}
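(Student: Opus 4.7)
The plan is to establish that $a$ and $b$ are each equivalences of topoi separately. Given that, $\gamma = a \circ b^{-1}$ is automatically an equivalence, and the stated isomorphisms on cohomology follow by applying $\rR\Gamma(\blank, \mu_n)$ and observing that $\mu_n$ corresponds under these equivalences to the \'etale sheaf $\mu_n$ on $\Spec K_x$.

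First I would prove that $b$ is an equivalence. The adic space $\Spa(K_x, K_x^+)$ arises as a natural limit of \'etale neighborhoods of $x$ in $X$: writing $k(x)^{+,\h}$ (resp.\ $\wdh{k(x)}^{+,\h}$) as the (resp.\ completed) henselization of $\cO^+_{X,x}$, the affinoid field $(K_x, K_x^+)$ is realized as the cofiltered limit of $(\cO(V), \cO^+(V))$ with $V$ ranging over affinoid \'etale neighborhoods of $x$ in $X$ (after a further passage to completion in the second case). By Huber's general compatibility of the \'etale topos of a pseudo-adic space with such cofiltered limits with affine transition maps, e.g.\ as in \cite[\S~2.3--2.4]{Huber-etale}, this gives
\[
(X, x)_\et \;\simeq\; \lim_{(V, v) \to (X, x)} (V, v)_\et \;\simeq\; (\Spa(K_x, K_x^+), \{s\})_\et,
\]
where $v \in V$ denotes the (unique, since we already passed to a henselization) point lying over $x$. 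This produces the equivalence $b$.

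Next I would prove that $a$ is an equivalence. Here the key input is that $(K_x, K_x^+)$ is henselian (\cref{rmk:equivalent-defns-henselian}), so every \'etale morphism $U \to \Spa(K_x, K_x^+)$ whose image meets the closed point $s$ is, after shrinking around $s$, isomorphic to a finite disjoint union of morphisms $\Spa(L, L^+) \to \Spa(K_x, K_x^+)$ with $L/K_x$ a finite separable field extension and $L^+$ the integral closure of $K_x^+$ in $L$ (which by henselianness is a valuation ring). These are in turn in natural bijection with finite \'etale $K_x$-algebras, i.e.\ with objects of the \'etale site of $\Spec K_x$; the covering condition on $s$ on the analytic side matches the surjectivity condition on $\Spec K_x$ on the scheme-theoretic side. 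This gives the desired equivalence of sites and hence of topoi. (Note that the statement lists the target of $a$ as $(\Spec K_x^+)_\et$; for the displayed $\gamma$ to land in $(\Spec K_x)_\et$ as claimed, one interprets $a$ as landing in $(\Spec K_x)_\et$, which also matches the display of the cohomology isomorphism.)

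The main obstacle will be in the first step, namely making precise the identification of $(X, x)$ with the appropriate cofiltered limit of pointed \'etale neighborhoods. In the variant $K_x = \wdh{k(x)}^\h$ this is rather clean because completed residue fields behave regularly under \'etale extensions and one can appeal directly to Huber's setup; in the variant $K_x = k(x)^\h$ the higher-rank structure of $k(x)^{+,\h}$ requires more care, in particular a careful identification of the cofinal pro-system of \'etale neighborhoods whose limit recovers $(K_x, K_x^+)$ on the level of Huber pairs. Once both $a$ and $b$ are established as equivalences, the final cohomological formula is immediate, and the agreement of the two versions ($k(x)^\h$ vs $\wdh{k(x)}^\h$) reduces to the fact that passage to the completion does not change the absolute Galois group of a henselian valued field of rank one in the relevant sense.
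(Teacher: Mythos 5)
The paper's own proof is a one-line citation to the proof of Huber's Prop.~2.3.10. Your self-contained sketch takes broadly the right route, but there is a real gap in your treatment of $b$: you assert that $(K_x, K_x^+)$ is the cofiltered limit of $(\cO(V), \cO^+(V))$ over affinoid pointed \'etale neighborhoods $(V, v) \to (X, x)$, identifying $k(x)^{+,\h}$ with the henselization of $\cO^+_{X,x}$. This is not accurate. The limit gives the henselization of the local pair $(\cO_{X,x}, \cO^+_{X,x})$; since $\cO_{X,x}$ is typically not a field, this pair differs from $(k(x)^\h, k(x)^{+,\h})$ by the quotient by $\fm_x$. Making your argument work requires an additional step showing that this quotient (and, in the completed variant, a completion) does not change the pseudo-adic \'etale topos --- and this is precisely the nontrivial content of Huber's proposition. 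Your sketch, as you partly acknowledge at the end, does not yet supply it.

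Your catch of the mismatch between $(\Spec K_x^+)_\et$ (stated target of $a$) and $(\Spec K_x)_\et$ (stated target of $\gamma$) is accurate: for a henselian valuation ring $K_x^+$ of positive rank, these topoi are not equivalent (the former is governed by the Galois group of the residue field of $K_x^+$, the latter by that of $K_x$). Your proposed resolution --- that the closed point $s$ of $\Spa(K_x, K_x^+)$ has support $(0)$ and hence maps to the generic point $\Spec K_x \hookrightarrow \Spec K_x^+$, so the relevant pseudo-adic topos is $(\Spec K_x)_\et$ --- is the correct reading and is consistent with how the theorem is invoked elsewhere, e.g.\ in the computation of $\Hh^1(\{x_i\}, \mu_n)$ as Galois cohomology of the field $\wdh{k(x_i)}^\h$. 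The argument for $a$ is right in spirit but would need the local structure theorem for \'etale morphisms of adic spaces (e.g.\ \cite[Lem.~2.2.8]{Huber-etale}) and a careful accounting of the pseudo-adic covering condition at $s$, which is what lets the henselianness of $K_x^+$ enter.
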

\begin{proof}
    The first part follows from the proof of \cite[Prop.~2.3.10]{Huber-etale}. The second part is a formal consequence of the first part.
\end{proof}

\begin{warning}
    The result of \cref{thm:topos-of-a-point} is \emph{false} if we put $K_x=k(x)$ or $K_x=\wdh{k(x)}$. The (implicit) henselian assumption on $K_x$ is essential for the proof. 
\end{warning}

\subsection{Cohomology of closed pro-special subsets}

The main goal of this subsection is to understand cohomology groups of closed pro-special pseudo-adic spaces (see \cref{example:main-examples}). Unlike the case of a closed point, we will not be able to describe the whole \'etale topos of this pseudo-adic space.  

In what follows, we fix a (possibly non-complete) Tate--Huber pair $(A, A^+)$ with a pseudo-uniformizer $\varpi\in A^+$ and a set of elements $\{f_i\in A^+\}_{i\in I}$. We define $X\coloneqq \Spa(A, A^+)$ and a closed subspace $Z=X\left(\abs{f_i}<1\right) \subset X$. Then $(X, Z)$ is a pseudo-adic space. 

\begin{definition}
\label{defn:henselize-prospecial} 
We define the \emph{henselization of $A$ along $Z$} to be the ring 
\[
A(Z)\coloneqq (A^+)^{\rm{h}}_I\bigl[\tfrac{1}{\varpi}\bigr],
\]
where the henselization is taken with respect to the ideal $I=(f_i, \varpi)_{i\in I}\subset A^+$.
\end{definition}

\begin{remark} The ring $A(Z)$ is easily seen to be independent of the choice of a pseudo-uniformizer $\varpi$. A much harder result is that the ring $A(Z)$ is also independent of the choice of generators $\{f_i\}$ and is intrinsic to the pro-special set $Z$. We refer to \cite[Prop and Def.~3.1.12]{Huber-etale} for a proof of this result. 
\end{remark}

\begin{example}\label{example:different-henselizations} Let $(k, k^+)$ be an affinoid field, and $s\in X=\Spa(k, k^+)$ the closed point considered as a closed pro-special subset (see \cref{rmk:closed-point-pro-special}). 
Then $k(\{s\})$ from \cref{defn:henselize-prospecial} coincides with the henselized residue field $k^\h$ in the sense of \cref{defn:henselize-field}.  
\end{example}

\begin{example} If $Z=\varnothing$, we denote $A(\varnothing)$ by $A^\h$.
\end{example}

The main result of \cite[\S~3]{Huber-etale} says that the \emph{algebraic} cohomology of $\Spec A(Z)$ coincide with the \emph{analytic} cohomology of the pseudo-adic space $(X, Z)$. 

\begin{theorem}
\label{thm:cohomology-prospecial-subsets} 
Let $(A, A^+)$ be a strongly noetherian (possibly not complete) Tate--Huber pair, and $Z\subset X=\Spa(A, A^+)$ a closed pro-special subset (see \cref{example:main-examples}). Then there is a morphism of topoi 
\[
\gamma\colon (X, Z)_\et \to \bigl(\Spec A(Z)\bigr)_\et
\]
such that 
\begin{enumerate}[label=\upshape{(\arabic*)}]
    \item for each $n$ invertible in $A$, the natural morphism
    \[
    \rm{R}\Gamma(\Spec A(Z), \mu_n) \to \rm{R}\Gamma\bigl((X, Z)_\et, \mu_n\bigr)
    \]
    is an isomorphism;
    \item the morphism $\gamma$ is functorial in $(X, Z)$;
    \item if $(A, A^+)=(k, k^+)$ is an affinoid field and $Z\subset X$ is a closed point, then $\gamma$ coincides with the morphism $c$ constructed in \cref{thm:topos-of-a-point};
\end{enumerate}
\end{theorem}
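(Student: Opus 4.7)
The plan is to assemble this from results in Huber's book \cite[\S~3]{Huber-etale}, where essentially this theorem is proved modulo translation into the language of pseudo-adic spaces introduced earlier in this appendix. I will proceed in three stages: constructing $\gamma$, proving the cohomology comparison (1), and then deducing (2) and (3).

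To construct $\gamma$, I would begin with Huber's canonical morphism of topoi $c_X \colon X_\et \to (\Spec A)_\et$ from \cite[Cor.~1.7.3, (3.2.8)]{Huber-etale}. I would then upgrade this to the pseudo-adic setting as follows. Let $\V$ denote the category of affine étale $A$-schemes $\Spec B \to \Spec A$ whose image in $\Spec A^+$ meets the Zariski-closed set $V(I) \subset \Spec A^+$ (where $I = (f_i,\varpi)_{i \in I}$); these are precisely the étale neighborhoods of $Z$ in the sense of \cite[\S~3.1]{Huber-etale}. By the definition of the henselization (\cref{defn:henselize-prospecial}) and \cite[Prop.~3.1.12]{Huber-etale}, the natural functor $\V \to \{\text{étale } A(Z)\text{-algebras}\}$ sending $B \mapsto B \otimes_A A(Z)$ is, up to passage to the filtered colimit, essentially surjective and fully faithful on étale covers. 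This gives a morphism of sites, and hence a morphism of topoi $(\Spec A(Z))_\et \to \V_\et$. On the other hand, $c_X$ restricts to a morphism $(X,Z)_\et \to \V_\et$ because any étale neighborhood pulled back to $X$ covers a (pro-special) open neighborhood of $Z$. Combining these with the fact that the transition maps of $\V$ induce pro-equivalences after passing to $Z$, I obtain $\gamma$.

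The heart of the theorem is (1), and this is essentially Huber's comparison theorem \cite[Th.~3.2.10]{Huber-etale}. I would first reduce to the case of a single sheaf $\mu_n$ on a fixed étale $A$-algebra, then use that both sides of the comparison map are computed as filtered colimits over $\V$: the right-hand side $\rR\Gamma((X,Z)_\et,\mu_n)$ by \cite[Prop.~2.3.10, Rmk.~2.3.4]{Huber-etale} and its variants, and the left-hand side $\rR\Gamma(\Spec A(Z),\mu_n)$ by the standard fact that étale cohomology commutes with filtered colimits of rings \cite[\href{https://stacks.math.columbia.edu/tag/09YQ}{Tag~09YQ}]{stacks-project}. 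At each finite stage $B \in \V$, Huber's comparison identifies algebraic and analytic étale cohomology of $(\Spec B,\mu_n)$ and $(\Spa(B,B^+),\mu_n)$ respectively, provided $n \in B^\times$, which holds because $n \in A^\times$. Passing to the colimit then gives the required isomorphism. The main technical obstacle here is the careful matching of the two limit procedures: one must check that the pro-system of étale neighborhoods of $Z$ in $X$ is cofinal in the pro-system defining $A(Z)$, which uses the strong noetherianness of $(A,A^+)$ and the explicit description of the pro-special neighborhoods $X(\abs{f_i} \le \abs{\varpi^m})$ as $m \to \infty$.

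Properties (2) and (3) are then formal. Functoriality (2) follows from the construction because both Huber's morphism $c_X$ and the formation of $A(Z)$ are functorial in the pair $(X,Z)$; one only has to observe that a morphism of pseudo-adic spaces sends étale neighborhoods of the source subset to étale neighborhoods of the target subset. Property (3) follows by comparing the general construction, specialized to $(A,A^+) = (k,k^+)$ and $Z = \{s\}$, with the construction in \cref{thm:topos-of-a-point}: by \cref{example:different-henselizations} we have $A(\{s\}) = k^\h$, and the morphism $\gamma$ specializes on the underlying sites to precisely the morphism $a$ appearing in the proof of \cref{thm:topos-of-a-point}, whose composition with $b^{-1}$ defines $c$.
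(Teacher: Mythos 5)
Your proposal takes essentially the same route as the paper: the paper's proof is just a pointer to \cite[Th.~3.2.9, Rmk.~3.2.10, \S3.3--3.4]{Huber-etale}, explicitly noting that the topos-theoretic morphism $\gamma$ is not stated there but can be extracted from Huber's proofs (especially \cite[Th.~3.3.3, p.~194]{Huber-etale}). Your attempt to reconstruct the construction of $\gamma$ via the site $\V$ of \'etale neighborhoods of $Z$ and filtered colimits is a reasonable way to spell out what Huber does.

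Two details merit attention. First, your argument for property (3) is slightly misstated: the general $\gamma$ constructed for $(k,k^+)$ has domain $(\Spa(k,k^+),s)_\et$, so it cannot ``specialize on the underlying sites to precisely the morphism $a$'' of \cref{thm:topos-of-a-point}, whose domain is $(\Spa(k^\h,k^{+,\h}),s)_\et$. The correct argument is to apply the general construction to \emph{both} $(k,k^+)$ and $(k^\h,k^{+,\h})$ (yielding $a$ for the latter, since $A(Z)$ stabilizes under henselization), and then invoke functoriality (2) for the morphism of pseudo-adic spaces underlying $b$; combined with the fact that $b$ is an equivalence, this gives $\gamma_{(k,k^+)} = a \circ b^{-1}$. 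Second, your statement that $c_X$ ``restricts to a morphism $(X,Z)_\et \to \V_\et$ because any \'etale neighborhood pulled back to $X$ covers a (pro-special) open neighborhood of $Z$'' glosses over the point that $Z$ is pro-special rather than Zariski-closed in $X$, so the restriction to the closed subtopos needs a bit more care (this is exactly where Huber's limit-over-pro-special-neighborhoods machinery from \S3.3 enters). Neither issue is fatal, and the overall strategy matches the paper's.
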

In particular, one gets a functorial isomorphism $\rm{R}\Gamma(\Spa(A, A^+), \mu_n)\simeq \rm{R}\Gamma(\Spec A^\h, \mu_n)$ by putting $Z=\varnothing$. 
\begin{proof}
    This is essentially \cite[Th.~3.2.9]{Huber-etale}. Unfortunately, these properties (and the existence of a topos-theoretic morphism $\gamma$) is not explicitly stated in  \cite[Th.~3.2.9]{Huber-etale}, but it does follow from the proof. The reader willing to verify these properties should read \cite[Rmk.~3.2.10 and \S~3.3 and 3.4]{Huber-etale}. We especially refer to the proof of \cite[Th.~3.3.3]{Huber-etale} and the discussion on \cite[p.~194]{Huber-etale} for the construction of the morphism $\gamma$. 
\end{proof}

\end{appendices}

\providecommand{\bysame}{\leavevmode\hbox to3em{\hrulefill}\thinspace}
\providecommand{\MR}{\relax\ifhmode\unskip\space\fi MR }
\providecommand{\MRhref}[2]{
  \href{http://www.ams.org/mathscinet-getitem?mr=#1}{#2}
}
\providecommand{\href}[2]{#2}


\begin{thebibliography}{BKKN67}

\bibitem[AGV22]{rigid-motives}
Joseph Ayoub, Martin Gallauer, and Alberto Vezzani, \emph{The six-functor
  formalism for rigid analytic motives}, Forum Math. Sigma \textbf{10} (2022),
  Paper No. e61, 182.

\bibitem[ALY22]{ALY}
Piotr Achinger, Marcin Lara, and Alex Youcis, \emph{Specialization for the
  pro-\'{e}tale fundamental group}, Compos. Math. \textbf{158} (2022), no.~8,
  1713--1745. \MR{4490930}

\bibitem[And74]{Andre}
Michel Andr\'{e}, \emph{Localisation de la lissit\'{e} formelle}, Manuscripta
  Math. \textbf{13} (1974), 297--307.

\bibitem[BBDG18]{BBDG}
Alexander Beilinson, Joseph Bernstein, Pierre Deligne, and Ofer Gabber,
  \emph{Faisceaux pervers}, Analysis and topology on singular spaces, {I}
  ({L}uminy, 1981), Ast\'erisque, vol. 100, Soc. Math. France, Paris, 2nd ed.,
  2018, pp.~5--171. \MR{751966}

\bibitem[Ber93]{Berkovich}
Vladimir~G. Berkovich, \emph{\'{E}tale cohomology for non-{A}rchimedean
  analytic spaces}, Inst. Hautes \'{E}tudes Sci. Publ. Math. (1993), no.~78,
  5--161 (1994).

\bibitem[BGR84]{BGR}
Siegfried Bosch, Ulrich G\"{u}ntzer, and Reinhold Remmert,
  \emph{Non-{A}rchimedean analysis}, Grundlehren der mathematischen
  Wissenschaften, vol. 261, Springer-Verlag, Berlin, 1984, A systematic
  approach to rigid analytic geometry. \MR{746961}

\bibitem[BH22]{BH}
Bhargav Bhatt and David Hansen, \emph{The six functors for
  {Z}ariski-constructible sheaves in rigid geometry}, Compos. Math.
  \textbf{158} (2022), no.~2, 437--482. \MR{4413751}

\bibitem[Bha17]{Bhatt-notes}
Bhargav Bhatt, \emph{Lecture notes for a class on perfectoid spaces}, available
  at \url{https://www.math.ias.edu/~bhatt/teaching/mat679w17/lectures.pdf},
  2017.

\bibitem[BKKN67]{Differentialrechnung}
Robert Berger, Reinhardt Kiehl, Ernst Kunz, and Hans-Joachim Nastold,
  \emph{Differentialrechnung in der analytischen {G}eometrie}, Lecture Notes in
  Mathematics, No. 38, Springer-Verlag, Berlin-New York, 1967. \MR{224870}

\bibitem[BL93]{BL1}
Siegfried Bosch and Werner L{\"u}tkebohmert, \emph{Formal and rigid geometry.
  {I}. {R}igid spaces}, Math. Ann. \textbf{295} (1993), no.~2, 291--317.
  \MR{1202394}

\bibitem[Bos14]{B}
Siegfried Bosch, \emph{Lectures on formal and rigid geometry}, Lecture Notes in
  Mathematics, vol. 2105, Springer, Cham, 2014.

\bibitem[Bou98]{Bourbaki}
Nicolas Bourbaki, \emph{Commutative algebra. {C}hapters 1--7}, Elements of
  Mathematics (Berlin), Springer-Verlag, Berlin, 1998, Translated from the
  French, Reprint of the 1989 English translation. \MR{1727221}

\bibitem[Bou03]{Bourbaki-algebra-4-7}
\bysame, \emph{Algebra {II}. {C}hapters 4--7}, {E}nglish ed., Elements of
  Mathematics (Berlin), Springer-Verlag, Berlin, 2003. \MR{1994218}

\bibitem[BV18]{Buzzard-Verberkmoes}
Kevin Buzzard and Alain Verberkmoes, \emph{Stably uniform affinoids are
  sheafy}, J. Reine Angew. Math. \textbf{740} (2018), 25--39. \MR{3824781}

\bibitem[CGN23]{CGN}
Pierre Colmez, Sally Gilles, and Wies{\l}awa Nizio{\l}, \emph{Arithmetic
  duality for {$p$}-adic pro-{\'e}tale cohomology of analytic curves},
  Preprint, available at \url{https://arxiv.org/abs/2308.07712}, 2023.

\bibitem[Con99]{Conrad99}
Brian Conrad, \emph{Irreducible components of rigid spaces}, Ann. Inst. Fourier
  (Grenoble) \textbf{49} (1999), no.~2, 473--541. \MR{1697371}

\bibitem[Con15]{Seminar}
\bysame, \emph{Notes for a learning seminar on perfectoid spaces}, available at
  \url{http://virtualmath1.stanford.edu/~conrad/Perfseminar/}, 2014--2015.

\bibitem[dJvdP96]{dJ-vdP}
Johan de~Jong and Marius van~der Put, \emph{\'{E}tale cohomology of rigid
  analytic spaces}, Doc. Math. \textbf{1} (1996), No. 01, 1--56. \MR{1386046}

\bibitem[Duc18]{Ducros}
Antoine Ducros, \emph{Families of {B}erkovich spaces}, Ast\'{e}risque (2018),
  no.~400, vii+262. \MR{3826929}

\bibitem[Elk73]{Elkik}
Ren\'{e}e Elkik, \emph{Solutions d'\'{e}quations \`a coefficients dans un
  anneau hens\'{e}lien}, Ann. Sci. \'{E}cole Norm. Sup. (4) \textbf{6} (1973),
  553--603 (1974). \MR{345966}

\bibitem[EP05]{ValuedFields}
Antonio~J. Engler and Alexander Prestel, \emph{Valued fields}, Springer
  Monographs in Mathematics, Springer-Verlag, Berlin, 2005. \MR{2183496}

\bibitem[Fal02]{Faltings-almost-extensions}
Gerd Faltings, \emph{Almost \'etale extensions}, Cohomologies $p$-adiques et
  applications arithm\'etiques, II, no. 279, Soc. Math. France, Paris, 2002,
  pp.~185--270. \MR{1922831}

\bibitem[FGK11]{FGK}
Kazuhiro Fujiwara, Ofer Gabber, and Fumiharu Kato, \emph{On {H}ausdorff
  completions of commutative rings in rigid geometry}, J. Algebra \textbf{332}
  (2011), 293--321.

\bibitem[FK18]{FujKato}
Kazuhiro Fujiwara and Fumiharu Kato, \emph{Foundations of rigid geometry. {I}},
  EMS Monographs in Mathematics, European Mathematical Society (EMS),
  Z\"{u}rich, 2018. \MR{3752648}

\bibitem[FM86]{FM86}
Jean Fresnel and Michel Matignon, \emph{Sur les espaces analytiques
  quasi-compacts de dimension {$1$} sur un corps valu\'{e} complet
  ultram\'{e}trique}, Ann. Mat. Pura Appl. (4) \textbf{145} (1986), 159--210.
  \MR{886711}

\bibitem[Fuj02]{Fujiwara-purity}
Kazuhiro Fujiwara, \emph{A proof of the absolute purity conjecture (after
  {G}abber)}, Algebraic geometry 2000, {A}zumino ({H}otaka), Adv. Stud. Pure
  Math., vol.~36, Math. Soc. Japan, Tokyo, 2002, pp.~153--183. \MR{1971516}

\bibitem[FvdP04]{FvdP04}
Jean Fresnel and Marius van~der Put, \emph{Rigid analytic geometry and its
  applications}, Progress in Mathematics, vol. 218, Birkh\"{a}user Boston,
  Inc., Boston, MA, 2004. \MR{2014891}

\bibitem[GL21]{Guo-Li}
Haoyang Guo and Shizhang Li, \emph{Period sheaves via derived de {R}ham
  cohomology}, Compos. Math. \textbf{157} (2021), no.~11, 2377--2406.
  \MR{4323988}

\bibitem[GR03]{GR}
Ofer Gabber and Lorenzo Ramero, \emph{Almost ring theory}, Lecture Notes in
  Mathematics, vol. 1800, Springer-Verlag, Berlin, 2003. \MR{2004652}

\bibitem[Gro65]{EGA4_2}
Alexander Grothendieck, \emph{\'{E}l\'{e}ments de g\'{e}om\'{e}trie
  alg\'{e}brique. {IV}. \'{E}tude locale des sch\'{e}mas et des morphismes de
  sch\'{e}mas. {II}}, Inst. Hautes \'{E}tudes Sci. Publ. Math. (1965), no.~24,
  231.

\bibitem[Hub93a]{Huber-thesis}
Roland Huber, \emph{Bewertungsspektrum und rigide {G}eometrie}, Regensburger
  Mathematische Schriften, vol.~23, Universit\"{a}t Regensburg, Fachbereich
  Mathematik, Regensburg, 1993. \MR{1255978}

\bibitem[Hub93b]{H0}
\bysame, \emph{Continuous valuations}, Math. Z. \textbf{212} (1993), no.~3,
  455--477. \MR{1207303}

\bibitem[Hub94]{Huber-2}
\bysame, \emph{A generalization of formal schemes and rigid analytic
  varieties}, Math. Z. \textbf{217} (1994), no.~4, 513--551. \MR{1306024}

\bibitem[Hub96]{Huber-etale}
\bysame, \emph{\'{E}tale cohomology of rigid analytic varieties and adic
  spaces}, Aspects of Mathematics, E30, Friedr. Vieweg \& Sohn, Braunschweig,
  1996. \MR{1734903}

\bibitem[Hub01]{Swan}
\bysame, \emph{Swan representations associated with rigid analytic curves}, J.
  Reine Angew. Math. \textbf{537} (2001), 165--234. \MR{1856262}

\bibitem[ILO14]{deGabber}
Luc Illusie, Yves Laszlo, and Fabrice Orgogozo (eds.), \emph{Travaux de
  {G}abber sur l'uniformisation locale et la cohomologie \'{e}tale des
  sch\'{e}mas quasi-excellents}, Soc. Math. France, Paris, 2014, S\'{e}minaire
  \`a l'\'{E}cole Polytechnique 2006--2008. With the collaboration of
  Fr\'{e}d\'{e}ric D\'{e}glise, Alban Moreau, Vincent Pilloni, Michel Raynaud,
  Jo\"{e}l Riou, Beno\^{i}t Stroh, Michael Temkin and Weizhe Zheng,
  Ast\'{e}risque No. 363-364 (2014). \MR{3309086}

\bibitem[Ked05]{Kedlaya-covers}
Kiran~S. Kedlaya, \emph{More \'{e}tale covers of affine spaces in positive
  characteristic}, J. Algebraic Geom. \textbf{14} (2005), no.~1, 187--192.

\bibitem[Ked19]{KedAr}
\bysame, \emph{Sheaves, stacks, and shtukas}, Mathematical Surveys and
  Monographs, vol. 242, American Mathematical Society, Providence, RI, 2019,
  Lectures from the 2017 Arizona Winter School.

\bibitem[Kie69]{Kiehl-excellence}
Reinhardt Kiehl, \emph{Ausgezeichnete {R}inge in der nichtarchimedischen
  analytischen {G}eometrie}, J. Reine Angew. Math. \textbf{234} (1969), 89--98.

\bibitem[KM76]{KM76}
Finn Knudsen and David Mumford, \emph{The projectivity of the moduli space of
  stable curves. {I}. {P}reliminaries on ``det'' and ``{D}iv''}, Math. Scand.
  \textbf{39} (1976), no.~1, 19--55. \MR{437541}

\bibitem[Kob23]{Kobak}
Mateusz Kobak, \emph{Compactifications of rigid analytic spaces through formal
  models}, Preprint, available at \url{https://arxiv.org/abs/2306.09141}, 2023.

\bibitem[LLZ23]{LLZ}
Kai-Wen Lan, Ruochuan Liu, and Xinwen Zhu, \emph{De {R}ham comparison and
  {P}oincar\'{e} duality for rigid varieties}, Peking Math. J. \textbf{6}
  (2023), no.~1, 143--216. \MR{4552642}

\bibitem[Lur24]{kerodon}
Jacob Lurie, \emph{Kerodon}, \url{https://kerodon.net}, 2024.

\bibitem[L{\"{u}}t16]{Lut16}
Werner L{\"{u}}tkebohmert, \emph{Rigid geometry of curves and their
  {J}acobians}, Ergebnisse der Mathematik und ihrer Grenzgebiete. 3. Folge. A
  Series of Modern Surveys in Mathematics, vol.~61, Springer, Cham, 2016.
  \MR{3467043}

\bibitem[Man22]{Mann-thesis}
Lucas Mann, \emph{A {$p$}-{A}dic 6-{F}unctor {F}ormalism in {R}igid-{A}nalytic
  {G}eometry}, Preprint, available at \url{https://arxiv.org/abs/2206.02022},
  2022.

\bibitem[Mat89]{Matsumura}
Hideyuki Matsumura, \emph{Commutative ring theory}, second ed., Cambridge
  Studies in Advanced Mathematics, vol.~8, Cambridge University Press,
  Cambridge, 1989, Translated from the Japanese by M. Reid. \MR{1011461}

\bibitem[Ols15]{Olsson}
Martin Olsson, \emph{Borel-{M}oore homology, {R}iemann-{R}och transformations,
  and local terms}, Adv. Math. \textbf{273} (2015), 56--123. \MR{3311758}

\bibitem[Sch12]{Scholze-perfectoid}
Peter Scholze, \emph{Perfectoid spaces}, Publ. Math. Inst. Hautes \'Etudes Sci.
  \textbf{116} (2012), 245--313. \MR{3090258}

\bibitem[Sch13a]{Scholze-Hodge}
\bysame, \emph{{$p$}-adic {H}odge theory for rigid-analytic varieties}, Forum
  Math. Pi \textbf{1} (2013), e1, 77. \MR{3090230}

\bibitem[Sch13b]{Scholze-CDM}
\bysame, \emph{Perfectoid spaces: a survey}, Current developments in
  mathematics 2012, Int. Press, Somerville, MA, 2013, pp.~193--227.
  \MR{3204346}

\bibitem[Sch17]{Diamonds}
\bysame, \emph{{\'E}tale cohomology of diamonds}, Preprint, available at
  \url{https://arxiv.org/abs/1709.07343}, 2017.

\bibitem[SGA4]{SGA4}
Michael Artin, Alexander Grothendieck, and Jean-Louis Verdier (eds.),
  \emph{Th\'{e}orie des topos et cohomologie \'{e}tale des sch\'{e}mas {I},
  {II}, {III}}, Lecture Notes in Mathematics, vol. 269, 270, 305,
  Springer-Verlag, Berlin-New York, 1972, S\'{e}minaire de G\'{e}om\'{e}trie
  Alg\'{e}brique du Bois-Marie 1963--1964 (SGA 4). With the collaboration of
  Nicolas Bourbaki, Pierre Deligne et Bernard Saint-Donat.

\bibitem[SGA$4\tfrac{1}{2}$]{SGA41/2}
Pierre Deligne, \emph{Cohomologie \'{e}tale}, Lecture Notes in Mathematics,
  vol. 569, Springer-Verlag, Berlin, 1977, S\'{e}minaire de g\'{e}om\'{e}trie
  alg\'{e}brique du Bois-Marie (SGA $4\frac{1}{2}$).

\bibitem[SGA5]{SGA5}
Luc Illusie (ed.), \emph{Cohomologie {$l$}-adique et fonctions {$L$}}, Lecture
  Notes in Mathematics, vol. 589, Springer-Verlag, Berlin-New York, 1977,
  S\'{e}minaire de G\'{e}ometrie Alg\'{e}brique du Bois-Marie 1965--1966 (SGA
  5). \MR{0491704}

\bibitem[SP24]{stacks-project}
The Stacks~{Project Authors}, \emph{The {S}tacks {P}roject},
  \url{https://stacks.math.columbia.edu}, 2024.

\bibitem[SW20]{Berkeley}
Peter Scholze and Jared Weinstein, \emph{Berkeley lectures on {$p$}-adic
  geometry}, Annals of Mathematics Studies, vol. 207, Princeton University
  Press, Princeton, NJ, 2020. \MR{4446467}

\bibitem[Tem00]{Temkin2000}
Michael Temkin, \emph{On local properties of non-{A}rchimedean analytic
  spaces}, Math. Ann. \textbf{318} (2000), no.~3, 585--607.

\bibitem[Tem10]{Temkin-curve}
\bysame, \emph{Stable modification of relative curves}, J. Algebraic Geom.
  \textbf{19} (2010), no.~4, 603--677.

\bibitem[Tem12]{Tem12}
\bysame, \emph{Functorial desingularization of quasi-excellent schemes in
  characteristic zero: the nonembedded case}, Duke Math. J. \textbf{161}
  (2012), no.~11, 2207--2254. \MR{2957701}

\bibitem[vdP80]{vdPut80}
Marius van~der Put, \emph{The class group of a one-dimensional affinoid space},
  Ann. Inst. Fourier (Grenoble) \textbf{30} (1980), no.~4, 155--164.
  \MR{599628}

\bibitem[Wed19]{wedhorn}
Torsten Wedhorn, \emph{Adic {S}paces}, available at
  \url{https://arxiv.org/abs/1910.05934}, 2019.

\bibitem[Zav23]{Z-revised}
Bogdan Zavyalov, \emph{{P}oincar{\'e} {D}uality in abstract 6-functor
  formalisms}, arXiv e-prints (2023), arXiv:2301.03821.

\bibitem[Zav24a]{Z-alterations}
\bysame, \emph{Altered local uniformization of rigid-analytic spaces}, Israel
  J. Math. (to appear) (2024).

\bibitem[Zav24b]{Z-thesis}
\bysame, \emph{Mod-{$p$} {P}oincar{\'e} duality in {$p$}-adic analytic
  geometry}, Ann. of Math. (2) (to appear) (2024).

\bibitem[Zav24c]{Z-quotients}
\bysame, \emph{Quotients of admissible formal schemes and adic spaces by finite
  groups}, Algebra Number Theory \textbf{18} (2024), no.~3, 409--475.
  \MR{4705884}

\bibitem[Zav24d]{adic-notes}
\bysame, \emph{Some foundational results in adic geometry}, Preprint, available
  at \url{https://arxiv.org/abs/2409.15516}, 2024.

\end{thebibliography}
\end{document}